%
%

\documentclass[reqno]{memo-l}



\usepackage[T1]{fontenc}
\usepackage[utf8]{inputenc}
\usepackage{lmodern}
\usepackage{microtype}
\usepackage{constants}
\usepackage{upgreek}
\usepackage{array}
\usepackage{esint}
\usepackage{mathtools}
\usepackage{amssymb}
\usepackage{graphicx}
\usepackage{graphbox} 
\usepackage{tikz}
\usepackage{enumerate}

\newenvironment{notationitemize}
{%
  \begin{itemize}
    \setlength{\itemindent}{-2.3em}
    \setlength{\leftmargin}{1em} 
    \setlength{\itemsep}{0.1em}  
  }
  {%
  \end{itemize}
}

\usepackage[
pdftitle={Generic topological screening and approximation of Sobolev maps},%
pdfauthor={Pierre Bousquet, Augusto Ponce and Jean Van Schaftingen},final, unicode=true]{hyperref}
\pdfstringdefDisableCommands{%
  \def\ell{ℓ}%
}

\usepackage[abbrev,backrefs]{amsrefs}


\usepackage{prelim2e}

\usepackage{todonotes}
{}
{}
{}
{}
{}

\counterwithin{figure}{chapter} 

\newtheorem{theorem}{Theorem}[chapter]
\newtheorem{lemma}[theorem]{Lemma}
\newtheorem{proposition}[theorem]{Proposition}

\newtheorem{corollary}[theorem]{Corollary}
\newtheorem{definition}[theorem]{Definition}

\newtheorem{question}{Problem}

\newtheorem*{Claim}{Claim}

\theoremstyle{definition}
\newtheorem{example}[theorem]{Example}
\AtEndEnvironment{example}{\quad\null\hfill\qedsymbol}%

\theoremstyle{remark}
\newtheorem{remark}[theorem]{Remark}
\AtEndEnvironment{remark}{\quad\null\hfill\qedsymbol}%

\numberwithin{section}{chapter}
\numberwithin{equation}{chapter}

\newtheorem{PartThm}{Proof of Theorem~\ref{thm_density_manifold_open} --- Part}

\newtheoremstyle{addendumstyle}{\topsep}{\topsep}{\itshape}{}{\bfseries}{.}{.5em plus 1pt minus 1pt}{#1 #2 to #3}
\theoremstyle{addendumstyle}


\newcommand{\abs}[1]{\mathopen\lvert#1\mathclose\rvert}
\newcommand{\bigabs}[1]{\bigl\lvert#1\bigr\rvert}

\newcommand{\biggabs}[1]{\biggl\lvert#1\biggr\rvert}

\newcommand{\norm}[1]{\mathopen\lVert#1\mathclose\rVert}
\newcommand{\bignorm}[1]{\mathopen\big\lVert#1\mathclose\big\rVert}

\newcommand{\dualprod}[2]{\langle #1, #2 \rangle}

\newcommand{\compose}{\circ}
\newcommand{\seminorm}[1]{\mathopen[#1\mathclose]}
\newcommand{\floor}[1]{\lfloor#1\rfloor}
\newcommand{\N}{{\mathbb N}}
\newcommand{\R}{{\mathbb R}}
\newcommand{\Z}{{\mathbb Z}}

\newcommand{\F}{\mathbb{F}}
\newcommand{\Ball}{\mathbb{B}}
\newcommand{\cBall}{\overline{\Ball}{}}
\newcommand{\Sphere}{{\mathbb S}}
\newcommand{\Disk}{{\mathbb D}}
\newcommand{\Sobolev}{{\mathrm W}}
\newcommand{\Hilbert}{{\mathrm H}}
\newcommand{\ClassR}{{\mathrm R}}
\newcommand{\Lebesgue}{{\mathrm L}}
\newcommand{\Smooth}{{\mathrm C}}
\newcommand{\UContinuous}{{\mathrm{BUC}}}
\newcommand{\BUC}{{\mathrm{BUC}}} 
\newcommand{\Simplex}{{\mathbb T}}
\newcommand{\Fuglede}{{\mathrm{Fug}}}

\newcommand{\st}{\mathpunct{:}}

\usepackage{mathrsfs}

\newcommand{\cE}{\mathscr{E}}

\newcommand{\cK}{\mathscr{K}}
\newcommand{\cL}{\mathscr{L}}

\newcommand{\cS}{\mathscr{S}}
\newcommand{\cT}{\mathscr{T}}
\newcommand{\cU}{\mathscr{U}}

\newcommand{\cZ}{\mathscr{Z}}
\newcommand{\cH}{\mathcal{H}}
\newcommand{\manfM}{\mathcal{M}}
\newcommand{\manfN}{\mathcal{N}}
\newcommand{\manfV}{\mathcal{V}}
\newcommand{\manfA}{\mathcal{A}}
\newcommand{\manfE}{\mathcal{E}}
\newcommand{\manfS}{\mathcal{S}}

\newcommand{\VMO}{\mathrm{VMO}}

\newcommand{\SO}{\mathrm{SO}}
\newcommand{\SU}{\mathrm{SU}}
\newcommand{\RP}{\mathbb{R}\mathrm{P}}
\newcommand{\CP}{\mathbb{C}\mathrm{P}}
\newcommand{\CC}{\mathbb{C}}
\newcommand{\loc}{_\mathrm{loc}}
\newcommand{\weak}{_\mathrm{w}}


\DeclareMathOperator{\supp}{supp}

\DeclareMathOperator{\jac}{Jac} 

\DeclareMathOperator{\Diam}{diam}
\DeclareMathOperator{\Trace}{Tr}

\DeclareMathOperator{\Ker}{ker}
\DeclareMathOperator{\cte}{c}

\newcommand{\Id}{\mathrm{Id}}

\DeclareMathOperator{\Int}{int}
\DeclareMathOperator{\hur}{\mathfrak{hur}}
\DeclareMathOperator{\Hur}{Hur}
\DeclareMathOperator{\Hopf}{Hopf}
\DeclareMathOperator{\Deg}{deg}
\DeclareMathOperator{\hopf}{\mathfrak{hopf}}
\DeclareMathOperator{\Mod}{Mod}
\DeclareMathOperator{\Det}{det}
\newcommand{\Jacobian}[2]{\mathscr{J}_{#1}#2} 
\newcommand{\Lip}{\mathrm{Lip}}
\newcommand{\dif}{\,\mathrm{d}}
\newcommand{\dext}{{}\mathrm{d}}
\newcommand{\dco}{\updelta}
\newcommand{\Matrix}[1]{[#1]}
\newcommand{\Tangent}[1]{\mathrm{T}_{#1}}

\newcommand{\Forms}{\Lambda}
\newcommand{\Maximal}{\mathscr{M}}



\hyphenation{ex-tend-a-ble}
\hyphenation{ex-tend-a-bil-i-ty}

\usepackage{color}




\makeindex

\begin{document}

\frontmatter

\title{Generic topological screening and approximation of Sobolev~maps}



\author[P. Bousquet]{Pierre Bousquet}

\address{
Pierre Bousquet\hfill\break\indent
Universit{\'e} de Toulouse \hfill\break\indent
Institut de Math\'ematiques de Toulouse, UMR CNRS 5219\hfill\break\indent
Universit\'e Paul Sabatier Toulouse 3\hfill\break\indent 
118 Route de Narbonne\hfill\break\indent
31062 Toulouse Cedex 9\hfill\break\indent
France}




\author[A. C. Ponce]{Augusto C. Ponce}

\address{
Augusto C. Ponce\hfill\break\indent
Universit{\'e} catholique de Louvain\hfill\break\indent
Institut de Recherche en Math{\'e}matique et Physique\hfill\break\indent
Chemin du cyclotron 2, bte L7.01.02\hfill\break\indent
1348 Louvain-la-Neuve\hfill\break\indent
Belgium}




\author[J. Van Schaftingen]{Jean Van Schaftingen}

\address{
Jean Van Schaftingen\hfill\break\indent
Universit{\'e} catholique de Louvain\hfill\break\indent
Institut de Recherche en Math{\'e}matique et Physique\hfill\break\indent
Chemin du cyclotron 2, bte L7.01.02\hfill\break\indent
1348 Louvain-la-Neuve\hfill\break\indent
Belgium}




\subjclass[2020]{Primary: 58D15, 46E35, 58C25; Secondary: 47H11, 26A99, 55S35}

\keywords{Sobolev maps between manifolds, strong approximation of smooth maps, Fuglede maps, extension property, VMO maps, Hurewicz currents, distributional Jacobian}


\begin{abstract}
This manuscript develops a framework for the strong approximation of Sobolev maps with values in compact manifolds, emphasizing the interplay between local and global topological properties. Building on topological concepts adapted to \(\VMO\) maps, such as homotopy and the degree of continuous maps, it introduces and analyzes extendability properties, focusing on the notions of \(\ell\)-extendability and its generalization, \((\ell, e)\)-extendability. 

We rely on Fuglede maps, providing a robust setting for handling compositions with Sobolev maps. Several constructions --- including opening, thickening, adaptive smoothing, and shrinking --- are carefully integrated into a unified approach that combines homotopical techniques with precise quantitative estimates.

Our main results establish that a Sobolev map \(u \in \Sobolev^{k, p}\) defined on a compact manifold of dimension \(m > kp\) can be approximated by smooth maps if and only if \(u\) is \((\floor{kp}, e)\)-extendable with \(e = m\). When \(e < m\), the approximation can still be carried out using maps that are smooth except on structured singular sets of rank \(m - e - 1\).
\end{abstract}

\maketitle

\tableofcontents


\mainmatter

\cleardoublepage
\chapter{Introduction}
\label{chapterIntroduction}

Given \(1\leq p<\infty\) and  \(m, \nu \in \N_*\), the vector-valued Sobolev space \(\Sobolev^{1, p} (\Ball^m; \R^\nu)\), defined on the open unit ball \(\Ball^m\subset\R^m\) centered at \(0\), is a powerful framework for problems in calculus of variations and partial differential equations. 
When they involve constraints on the values of the admissible maps, manifold-valued Sobolev spaces come into play. 
More precisely, given a compact smooth manifold \(\manfN\) of dimension \(n\) isometrically embedded in  \(\R^\nu\), we define the class of Sobolev maps from \(\Ball^m\) with values into \(\manfN\) as 
\begin{equation}\label{def-Sob-1}
\Sobolev^{1,p}(\Ball^m; \manfN) = \Bigl\{ u \in \Sobolev^{1,p}(\Ball^m; \R^\nu) : u(x) \in \manfN\ \text{for every \(x \in \Ball^{m}\)} \Bigr\}.
\end{equation}
The role of these nonlinear Sobolev  spaces is prominent in the mathematical analysis of \emph{harmonic maps} in Riemannian geometry \cites{HardtLin1987, LinWang2008, Moser2005, Schoen-Uhlenbeck,Jost2011,Helein2008,EellsSampson1964,Brezis2003,EellsLemaire1995,EellsFuglede2001, Simon1996, GiaquintaMucci2006}. 
These  spaces are also the natural setting for the study of \emph{ordered media} in condensed matter physics: \emph{superconductors} \cites{HoffmannTang2001,Mermin1979}, 
\emph{superfluids} \cite{Mermin1979}, \emph{ferromagnetism} \cites{HubertSchaefer1998,Mermin1979} and  \emph{liquid crystals}  \cites{Mermin1979,BallZarnescu2011,Brezis1991,Mucci2012,Coronetal1991}. 
They are  involved in other areas of physics as well, such as Cosserat materials in \emph{elasticity} \cite{Neff2004} or \emph{gauge theories}   \cite{Lieb1993}. 
Finally, they arise in the analysis of \emph{framefields} (or cross-fields) in \emph{computer graphics} and in \emph{meshing algorithms} for numerical computation of solutions to partial differential equations \cites{HuangTongWeiBao2011,Li-Liu-Yang-Yu-Wang-Guo,Bernard_Remacle_Kowalski_Geuzaine}.

When energy integrals or partial differential equations involve higher-order derivatives, the natural functional framework becomes the  Sobolev space \(\Sobolev^{k,p}(\Ball^m;\manfN)\) with \(k\in \N_*\), which is defined in analogy with \eqref{def-Sob-1} as a subset of \(\Sobolev^{k,p}(\Ball^m;\R^\nu)\) instead of \(\Sobolev^{1,p}(\Ball^m;\R^\nu)\). 
In this regard, we mention biharmonic maps \cites{ChangWangYang1999, Moser2008, Scheven2008, Struwe2008, Urakawa2011, Montaldo-Oniciuc-2006}, minima of higher order energies such as polyharmonic maps with values into manifolds \cites{EellsSampson1966, AngelsbergPumberger2009, GastelScheven2009, GoldsteinStrzeleckiZatorska2009, GongLammWang2012, LammWang2009, Branding-Montaldo-Oniciuc-Ratto-2020, He-Jiang-Lin, Gastel-2016}, and also biharmonic heat flows \cites{Lamm-2004, Gastel-2006} or biharmonic wave maps \cites{Herr-Lamm-Schmid-Schnaubelt-2020, Herr-Lamm-Schmid-Schnaubelt-2020-1}.

Many natural questions related to functional analysis or topology arise in the setting of manifold-valued Sobolev spaces:
\begin{description}
    \item[\emph{Approximation problem}] 
    Decide whether a map in \(\Sobolev^{k,p}(\Ball^m;\manfN)\)
    may be strongly or weakly approximated by smooth maps with values in \(\manfN\)\,;
    \item[\emph{Trace problem}] 
    Characterize the maps \(u|_{\partial \Ball^m}\) that are obtained as the traces of maps \(u\in \Sobolev^{k,p}(\Ball^m;\manfN)\)\,;
    \item[\emph{Lifting problem}] 
    Given a covering \(\pi \colon \manfE \to \manfN\) defined on a Riemannian manifold \(\manfE\), decide whether a given map \(u\in \Sobolev^{k,p}(\Ball^m; \manfN)\) may be written as \(u = \pi\compose v\) for some \(v\in \Sobolev^{k,p}(\Ball^m;\manfE)\)\,;
    \item[\emph{Homotopy problem}] 
    Determine whether two maps in \(\Sobolev^{k,p}(\Ball^m;\manfN)\) are connected by a path.
\end{description}
Although we have mentioned these problems for maps defined in the ball \(\Ball^{m}\), they make sense and are interesting for maps defined more generally on a Riemannian manifold \(\manfM\).

We focus in this monograph on the first question related to the  approximation by smooth maps. 
In classical (linear) functional analysis, this is a standard step as one typically deduces properties of Sobolev functions that are inherited from smooth functions using a density argument.
As for real-valued functions, vector-valued smooth maps are strongly dense in \(\Sobolev^{k,p}(\Ball^m; \R^\nu)\) with respect to the distance 
\begin{equation}
\label{eq-Introduction-distance}
d_{\Sobolev^{k, p}}(u, v)
\vcentcolon = \norm{u - v}_{\Lebesgue^{p}(\Ball^{m})} + \sum_{j = 1}^{k}{\norm{D^{j}u - D^{j}v}_{\Lebesgue^{p}(\Ball^{m})}}.
\end{equation}
In particular any element of \(\Sobolev^{k, p}(\Ball^m; \manfN)\) can be approximated by mappings in \(\Smooth^\infty(\cBall^m; \R^\nu)\).
Such mappings however need not take their values in \(\manfN\),
which leads to the following first fundamental question: 

\begin{question}\label{question-1}
Is \(\Smooth^\infty(\cBall^m; \manfN)\) strongly dense in \(\Sobolev^{k, p}(\Ball^m; \manfN)\)?
\end{question}

This question has not just an intrinsic interest, but also plays a role in the other problems above. 
For instance, the answers to the trace and the homotopy problems rely in some cases on the possibility to approximate a Sobolev map by smooth ones. 
In addition, the tools themselves to answer the approximation problem have also been exploited in the other ones.

Unlike the classical case, the answer to Problem~\ref{question-1} may be negative due to topological constraints coming from \(\manfN\):

\begin{example}
	\label{exampleIntroduction1}
	The map \(u  \colon  \Ball^{m} \to \Sphere^{m - 1}\) defined for \(x \ne 0\) by \(u(x) = x/\abs{x}\) belongs to \(\Sobolev^{k, p}(\Ball^m; \Sphere^{m - 1})\) but cannot be strongly approximated by a sequence of maps in \(\Smooth^\infty(\cBall^m; \manfN)\) for \(m-1 < kp < m\).{}
	Indeed, assume by contradiction that there exists a sequence \((u_{j})_{j \in \N}\) in this set that converges to \(u\) in \(\Sobolev^{k, p}\).{}
	Then, by a Fubini-type argument, for almost every \(0 < r < 1\), there exists a subsequence of restrictions \(u_{j_{i}}|_{\partial B_{r}^{m}}\) to the sphere \(\partial B_{r}^{m}\) of radius \(r\) that converges to \(u|_{\partial B_{r}^{m}}\) in \(\Sobolev^{k, p}\).
	We then have a contradiction since the maps \(u_{j_{i}}|_{\partial B_{r}^{m}}\) have a continuous extension to \(\overline{B}{}_{r}^{m}\) with values into \(\Sphere^{m - 1}\) and converge uniformly to \(u|_{\partial B_{r}^{m}}\), while this map does not have such an extension.
\end{example}

Let us denote by \(\Hilbert^{k,p}(\Ball^m; \manfN)\) the closure of \(\Smooth^{\infty}(\cBall^m; \manfN)\) in \(\Sobolev^{k, p}(\Ball^m; \manfN)\) with respect to the distance \eqref{eq-Introduction-distance}.
We wish to know whether \(\Hilbert^{k,p}(\Ball^m; \manfN)\) coincides with \(\Sobolev^{k, p}(\Ball^m; \manfN)\) for suitable choices of \(k\) and \(p\).
The first affirmative case concerns the range \(kp \ge m\) and goes back to Schoen and Uhlenbeck~\cite{Schoen-Uhlenbeck}*{Section~4, Proposition}:

\begin{theorem}
	\label{theoremSchoen-Uhlenbeck}
	If \(kp \ge m\), then \(\Hilbert^{k,p}(\Ball^m; \manfN) = \Sobolev^{k,p}(\Ball^m; \manfN)\).
\end{theorem}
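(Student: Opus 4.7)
The plan is to combine a linear vector-valued mollification with a smooth nearest-point retraction onto \(\manfN\). Given \(u \in \Sobolev^{k, p}(\Ball^m; \manfN)\), extend it to a Sobolev map on a neighborhood of \(\cBall^m\) (still denoted \(u\)) and set \(u_\varepsilon \vcentcolon= \rho_\varepsilon \ast u\) for a standard family of mollifiers. Then \(u_\varepsilon \in \Smooth^\infty(\cBall^m; \R^\nu)\) and \(u_\varepsilon \to u\) in \(\Sobolev^{k, p}(\Ball^m; \R^\nu)\); the only obstacle is that the \(u_\varepsilon\) are \(\R^\nu\)-valued rather than \(\manfN\)-valued. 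Since \(\manfN\) is a smooth compact submanifold of \(\R^\nu\), it admits a tubular neighborhood \(U\) endowed with a smooth nearest-point retraction \(\Pi \colon U \to \manfN\) satisfying \(\Pi|_{\manfN} = \Id\). Whenever \(u_\varepsilon(\cBall^m) \subset U\), the composition \(v_\varepsilon \vcentcolon= \Pi \compose u_\varepsilon\) is a smooth \(\manfN\)-valued candidate approximation.

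The decisive step is to prove that \(\dist(u_\varepsilon(x), \manfN) \to 0\) uniformly in \(x \in \cBall^m\) as \(\varepsilon \to 0\). This is where the hypothesis \(kp \geq m\) enters: iterating the Poincaré--Sobolev inequality yields an estimate of the form
\[
\fint_{B_\varepsilon(x)} \abs{u - (u)_{B_\varepsilon(x)}} \leq C\, \varepsilon^{k - m/p}\, \norm{D^k u}_{\Lebesgue^p(B_\varepsilon(x))},
\]
whose right-hand side tends to \(0\) uniformly in \(x\) --- trivially if \(kp > m\), and by absolute continuity of \(\int \abs{D^k u}^p\) in the borderline case \(kp = m\). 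In other words, \(u\) belongs to \(\VMO\). Since \(u(y) \in \manfN\) for almost every \(y\), this forces \(\dist((u)_{B_\varepsilon(x)}, \manfN) \to 0\), and hence \(\dist(u_\varepsilon(x), \manfN) \to 0\), uniformly in \(x\). For all sufficiently small \(\varepsilon\) we conclude \(u_\varepsilon(\cBall^m) \subset U\), so that \(v_\varepsilon = \Pi \compose u_\varepsilon \in \Smooth^\infty(\cBall^m; \manfN)\).

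Finally, one verifies that \(v_\varepsilon \to u\) in \(\Sobolev^{k, p}\). Faà di Bruno's formula expresses \(D^j v_\varepsilon\) as a polynomial in the derivatives \(D^i \Pi \compose u_\varepsilon\) (\(1 \leq i \leq j\)) and in \(D^{i_1} u_\varepsilon, \dots, D^{i_r} u_\varepsilon\) with \(i_1 + \cdots + i_r = j\). Since \(\Pi\) is smooth on a neighborhood of \(\manfN\) containing \(u_\varepsilon(\cBall^m)\), its derivatives are uniformly bounded along the sequence, and Young's inequality controls each \(D^i u_\varepsilon\) in \(\Lebesgue^p\) by \(\norm{D^i u}_{\Lebesgue^p}\). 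Combined with the almost-everywhere convergence \(u_\varepsilon \to u\), the identity \(d\Pi = \Id\) on \(T\manfN\), and a Vitali equi-integrability argument, this gives \(D^j v_\varepsilon \to D^j u\) in \(\Lebesgue^p\) for \(0 \leq j \leq k\). The main technical obstacle is the borderline case \(kp = m\), where Morrey's embedding fails and the uniform smallness of \(\dist(u_\varepsilon, \manfN)\) must genuinely be extracted from the \(\VMO\) estimate above; the highest-order term \(d\Pi(u_\varepsilon) \cdot D^k u_\varepsilon\) in \(D^k v_\varepsilon\) must then be handled carefully when passing to the limit, using that \(d\Pi(u) \cdot D^k u = D^k u\) almost everywhere on \(\Ball^m\).
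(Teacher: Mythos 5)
Your proposal is essentially the paper's own argument (the classical Schoen--Uhlenbeck one it sketches): mollify, show \(d(\varphi_\epsilon\ast u,\manfN)\to 0\) uniformly via the Poincaré/\(\VMO\) estimate available precisely when \(kp\ge m\), then compose with the nearest point projection \(\Pi\) and pass to the limit in \(\Sobolev^{k,p}\) using Faà di Bruno and equi-integrability. The only detail to watch is that your linear Sobolev extension beyond \(\partial\Ball^m\) is no longer \(\manfN\)-valued, so the inequality \(d(u_\varepsilon(x),\manfN)\lesssim\fint\fint|u(y)-u(z)|\) should be run over \(z\in B_\varepsilon(x)\cap\Ball^m\), which occupies a fixed fraction of \(B_\varepsilon(x)\); this is a harmless adjustment.
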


Here is a sketch of the argument:
Given \(u \in \Sobolev^{k, p}(\Ball^m; \manfN)\), consider the convolution \(\varphi_\epsilon \ast u\) with a smooth kernel \(\varphi_\epsilon (x) = \varphi(x/\epsilon) / \epsilon^m\). 
When the range of \(\varphi_\epsilon \ast u\) lies in a small tubular neighborhood of \(\manfN\), one may project \(\varphi_\epsilon \ast u\) pointwise into \(\manfN\). 
That is always the case for \(\epsilon\) sufficiently small as long as \(kp \ge m\). 
Indeed, for \(kp > m\), the space \(\Sobolev^{k, p}(\Ball^m; \manfN)\) is continuously imbedded in \(\Smooth^0(\cBall^m; \manfN)\), therefore \(\varphi_\epsilon \ast u\) converges uniformly to \(u\).
In particular, the distance of \(\varphi_{\epsilon} \ast u\) to \(\manfN\), namely \(d(\varphi_\epsilon \ast u, \manfN)\), converges uniformly to \(0\).
For \(kp = m\), one has the weaker property that \(\Sobolev^{k, p}\) imbeds into the space of functions of vanishing mean oscillation \(\VMO\).
As a result, \(\varphi_{\epsilon} \ast u\) need not converge uniformly to \(u\), but one still has that \(d(\varphi_\epsilon \ast u, \manfN)\) converges uniformly to \(0\), see \cite{BrezisNirenberg1995} or Lemma~\ref{lemmaVMOUniformConvergence} below, which entitles one to conclude as before.

When \(kp < m\), Example~\ref{exampleIntroduction1} illustrates that an obstruction to the approximation by smooth maps may arise from the topology of the target manifold \(\manfN\), namely \(\Sphere^{m - 1}\) in the above example.
In fact, it is possible to identify a topological assumption in \(\manfN\) that prevents such an obstruction:

\begin{theorem}
	\label{theoremIntroductionBethuel} Assume that \(kp<m\).
	Then, \(\Hilbert^{k,p}(\Ball^m; \manfN) = \Sobolev^{k, p}(\Ball^{m}; \manfN)\)  if and only if \(\pi_{\floor{kp}}(\manfN) \simeq \{0\}\), where \(\floor{kp}\) denotes the integral part of \(kp\).
\end{theorem}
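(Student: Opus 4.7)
The plan is to establish the two directions by very different methods. Write \(\ell \vcentcolon= \floor{kp}\); then \(\ell \le kp < \ell + 1 \le m\).

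For the necessity, suppose \(\pi_\ell(\manfN) \not\simeq \{0\}\) and pick a smooth \(\phi \colon \Sphere^\ell \to \manfN\) that is not nullhomotopic. Splitting coordinates as \(x = (x', x'') \in \R^{\ell + 1} \times \R^{m - \ell - 1}\), consider the candidate obstruction
\[
u(x) \vcentcolon= \phi(x'/\abs{x'}),
\]
defined away from \(\{0\} \times \R^{m - \ell - 1}\). The pointwise bounds \(\abs{D^j u(x)} \lesssim \abs{x'}^{-j}\) for \(1 \le j \le k\), together with \(kp < \ell + 1\), place \(u\) in \(\Sobolev^{k, p}(\Ball^m; \manfN)\) after integration in polar coordinates on the \(\R^{\ell+1}\)-factor. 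If \(u\) were in \(\Hilbert^{k, p}(\Ball^m; \manfN)\), approximated by a sequence \((u_j) \subset \Smooth^\infty(\cBall^m; \manfN)\), a Fuglede-type Fubini argument would deliver, for a.e.\ small \(r > 0\) and a.e.\ \(x''\) near \(0\), a subsequence converging to \(u\) in \(\Sobolev^{k, p}\) on the \(\ell\)-sphere \(\{(x', x'') \st \abs{x'} = r\}\). On this \(\ell\)-sphere one has \(kp \ge \ell\), so Theorem~\ref{theoremSchoen-Uhlenbeck} applies and the convergence holds in particular in \(\VMO\). Each \(u_j\) restricted to such a sphere admits a continuous \(\manfN\)-valued extension to the enclosed disk, hence is nullhomotopic; the \(\VMO\)-continuity of the homotopy class, in the spirit of \cite{BrezisNirenberg1995}, then forces \(\phi\) to be nullhomotopic too, a contradiction.

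For the sufficiency, assume \(\pi_\ell(\manfN) \simeq \{0\}\). Cover \(\cBall^m\) with a cubical grid of small mesh \(\eta\), chosen with a generic translation so that the restriction of \(u\) to each face of every skeleton belongs to the appropriate Fuglede--Sobolev class. Because \(kp \ge \ell\) on the \(\ell\)-skeleton \(K^\ell\), the Schoen--Uhlenbeck argument underlying Theorem~\ref{theoremSchoen-Uhlenbeck} produces a continuous \(\manfN\)-valued approximation \(\tilde u\) of \(u|_{K^\ell}\) inheriting a well-defined homotopy class; the hypothesis \(\pi_\ell(\manfN) \simeq \{0\}\) then allows a continuous extension of \(\tilde u\) across every \((\ell+1)\)-cell. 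Higher-dimensional cells are handled by the \emph{opening} procedure: in a neighborhood of the dual \((m - \ell - 1)\)-skeleton, one replaces \(u\) by a map constant along the fibers transverse to this dual skeleton, so that the extension across such a fiber reduces to filling in an \(\Sphere^\ell\)-map into \(\manfN\), precisely the obstruction killed by \(\pi_\ell(\manfN) \simeq \{0\}\). Thickening, adaptive smoothing, and shrinking --- the technical toolkit this monograph develops --- then combine the extended map on the opened region with a standard convolve-and-project approximation of \(u\) on its complement, yielding a map in \(\Smooth^\infty(\cBall^m; \manfN)\) that converges to \(u\) in \(\Sobolev^{k, p}\) as \(\eta \to 0\).

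The main obstacle is the quantitative Sobolev control in the sufficiency direction. The nullhomotopies used in the opening step must be realized by maps whose \(\Sobolev^{k, p}\)-norms are controlled by the data of \(u\) on \(K^\ell\) --- not merely by the homotopy type, which would be useless at the level of Sobolev scales --- and the matching between the opened region near the dual skeleton and its complement must be performed so that no extra singularity is introduced and the total \(\Sobolev^{k, p}\)-error tends to zero with \(\eta\). The entire monograph is in essence devoted to rendering these informal steps quantitative in a unified framework, of which Theorem~\ref{theoremIntroductionBethuel} is one of the simplest instances.
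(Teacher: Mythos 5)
Your necessity argument is essentially the paper's own: the map \(u(x)=\phi(x'/\abs{x'})\) is exactly the one used in Example~\ref{exampleIntroduction1bis}, and the restriction-to-generic-spheres plus homotopy-invariance-under-\(\VMO\)-convergence reasoning is the intended route (and correctly covers integer \(kp\), where the paper's sketch only addresses the noninteger case via uniform convergence). One small slip: Theorem~\ref{theoremSchoen-Uhlenbeck} is a density statement for \(kp\ge m\) and is not what gives you \(\VMO\) convergence on the \(\ell\)-sphere; what you need is the Gagliardo--Nirenberg interpolation into \(\Sobolev^{1,kp}\) together with the critical imbedding into \(\VMO\) (the content of the detector machinery, e.g.\ Propositions~\ref{propositionSobolevVMO>1} and~\ref{lemmaFugledeSobolevDetector}), after which Propositions~\ref{propositionHomotopyVMOLimit} and~\ref{propositionHomotopyVMOtoC} conclude. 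This is a mis-citation, not a gap.

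The sufficiency direction, however, is a genuine gap: what you have written is a plan, not a proof. You name the correct toolkit (grid, opening, thickening, adaptive smoothing, shrinking) but carry out none of it, and you yourself concede that the ``quantitative Sobolev control'' --- the matching of the opened region with its complement, the removal of the dual-skeleton singularity with \(\Sobolev^{k,p}\)-error tending to zero, the treatment of bad cubes --- is precisely what is missing; that is the entire content of Chapter~\ref{chapter-approximation-Sobolev-manifolds}. The sketch also contains inaccuracies that would matter if you tried to execute it: opening is performed around the low-dimensional skeleton \(U^{\ell}\) (making the map locally constant in the directions transverse to each \(i\)-cell, \(i\le\ell\)), not ``in a neighborhood of the dual \((m-\ell-1)\)-skeleton''; and a naive cell-by-cell continuous extension of \(\tilde u\) beyond the \((\ell+1)\)-skeleton would require the vanishing of \(\pi_j(\manfN)\) for \(\ell<j<m\) --- the reason only \(\pi_{\floor{kp}}(\manfN)\simeq\{0\}\) is needed is either the contractibility of \(\Ball^m\) or the homogeneous structure produced by thickening, and this must be said. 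Within the paper's framework the complete and short route is different: when \(\pi_{\floor{kp}}(\manfN)\simeq\{0\}\), every \(u\in\Sobolev^{k,p}(\Ball^m;\manfN)\) is \(\floor{kp}\)-extendable (Proposition~\ref{corollaryExtensionHomotopyGroupEll}), hence \((\floor{kp},m)\)-extendable because \(\Ball^m\) is topologically trivial (Proposition~\ref{proposition_condition_domain}, or Corollary~\ref{corollaryHangLin}), and then Theorem~\ref{theoremhkpGreatBall} (equivalently Theorem~\ref{thm_density_manifold_open}) yields \(u\in\Hilbert^{k,p}(\Ball^m;\manfN)\). Your proposal neither makes this reduction nor supplies the estimates needed to redo the construction from scratch, so as it stands the forward implication is unproved.
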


The condition \(\pi_{\floor{kp}}(\manfN) \simeq \{0\}\) of triviality of the \(\floor{kp}\)th homotopy group \(\pi_{\floor{kp}}(\manfN)\) means that any continuous map \(f\) from the \(\floor{kp}\) dimensional sphere \(\Sphere^{\floor{kp}}\) into \(\manfN\) is homotopic to a constant in \(\Smooth^{0}(\Sphere^{\floor{kp}}; \manfN)\) or equivalently that \(f\) has a continuous extension \(\overline{f}  \colon  \cBall^{\floor{kp}} \to \manfN\).{}
The above theorem is due to Bethuel \cite{Bethuel} when \(k = 1\), see also \cite{Hajlasz} for an alternative approach when \(\manfN\) is \(\floor{p}\)-connected, namely \(\pi_j(\manfN)\simeq \{0\}\) for every \(j \in \{0, \dots, \floor{p}\}\). The higher order case \(k \ge 2\) was later established by the authors in \cite{BPVS_MO}.

The reader might be intrigued by the role of the integer \(\floor{kp}\) in the previous theorem. 
This can be clarified by sketching a proof of the converse implication of Theorem~\ref{theoremIntroductionBethuel}, following the approach of Example~\ref{exampleIntroduction1} when \(kp\) is noninteger:

\begin{example}
	\label{exampleIntroduction1bis}
Take any \(f \in \Smooth^\infty(\Sphere^{\floor{kp}}; \manfN)\). 
The map \(u  \colon  \Ball^m \to \manfN\) defined for \(x = (x', x'') \in \R^{\floor{kp} + 1} \times \R^{m - \floor{kp} - 1} \) and \( x'\ne 0 \) by
\begin{equation}
\label{exampleIntroductionClasseR}
u(x) = f\Bigl(\frac{x'}{\abs{x'}}\Bigr)
\end{equation}
belongs to \(\Sobolev^{k, p}(\Ball^m; \manfN)\) since \(kp < \floor{kp} + 1\). 
If there exists a sequence \((u_j)_{j \in \N}\) in \(\Smooth^\infty(\cBall^m; \manfN)\) strongly converging to \(u\) in \(\Sobolev^{k, p}\), then some subsequence \((u_{j_{i}})_{i \in \N}\) converges uniformly in \(\partial B_{r}^{\floor{kp} + 1} \times \{a''\}\) for almost every \(r\) and \(a''\). 
This implies that there exists a sequence of smooth maps on \(\cBall^{\floor{kp} + 1}\) with values in \( \manfN\) that converge uniformly to \(f\) on \(\Sphere^{\floor{kp}}\) and then one deduces that \(f\) is homotopic to a constant in \(\Smooth^{0}(\Sphere^{\floor{kp}}; \manfN)\). 
\end{example}

When \(\pi_{\floor{kp}}(\manfN) \not\simeq \{0\}\), one has
\(
\Hilbert^{k,p}(\Ball^m; \manfN) \subsetneqq \Sobolev^{k, p}(\Ball^{m}; \manfN).
\)
The strong approximation problem may then be reformulated as

\begin{question}\label{question-2}
Identify all maps in \(\Hilbert^{k,p}(\Ball^m; \manfN)\).
\end{question}

This task is not straightforward, as \(\Hilbert^{k,p}(\Ball^m; \manfN)\) contains elements that present genuine discontinuities, that is, those that cannot be removed by modifying the maps in a negligible set.

\begin{example}
	\label{exampleIntroduction2}
	Let \(u  \colon  \Ball^{m} \to \manfN\) be defined for \(x \ne 0\) by 
	\[{}
	u(x) = f\Bigl(\frac{1}{\abs{x}^{\alpha}}\Bigr),
	\]
	where \(f  \colon  [0, \infty) \to \manfN\) is smooth and \(\alpha > 0\).{}
	For \((\alpha + k)p < m\), we have \(u \in \Sobolev^{k, p}(\Ball^{m}; \manfN)\).{}
	Regardless of \(\pi_{\floor{kp}}(\manfN)\), the map \(u\) can be approximated by the sequence \((u_j)_{j \in \N}\) given for every  \( x \in  \cBall^{m} \) by
	\[{}
	u_{j}(x) = f(\varphi_{j}(x)),
	\]
	where \(\varphi_{j}  \colon  \cBall^{m} \to [0, \infty)\) is a smooth approximation of \(1/|x|^{\alpha}\) in \(\Sobolev^{k, p}(\Ball^{m}; \R)\).
\end{example}

This example shows the existence of discontinuities that do not forestall the approximability by smooth maps. 
We tend to see those singularities as purely \emph{analytical}, in contrast with those of Example~\ref{exampleIntroduction1} which yield \emph{topological} obstructions to the approximation.
The map \eqref{exampleIntroductionClasseR} in Example~\ref{exampleIntroduction1bis} exhibits an analytical singularity when \(f\) is nonconstant, while the singularity becomes topological when \(f\) is not homotopic to a constant in \(\Smooth^{0}(\Sphere^{\floor{kp}}; \manfN)\). 
To distinguish these two types of singularity, it is possible to formulate suitable criteria which are based on the notion of genericity that we pursue in this work.

So far, to detect a topological singularity it was enough to consider restrictions on spheres enclosing the point of discontinuity.
The following example based on a \emph{dipole} construction shows that such an approach does not generally work:

\begin{example}
	Given distinct points \(a, b \in \Ball^m\), let \(u \in \Sobolev^{k, p}(\Ball^{m}; \Sphere^{m - 1})\) with \(kp < m\) be a smooth map on \(\Ball^m \setminus \{a, b\}\) such that \(u|_{\partial B_{r}^{m}(a)}\) has degree \(1\) and \(u|_{\partial B_{r}^{m}(b)}\) has degree \(-1\) for every \(0 < r < |a - b|\).
	In particular, \(u|_{\partial{B_{r}^{m}}}\) has degree zero for any ball \(B_r^m\) that contains both \(a\) and \(b\), but proceeding as in Example~\ref{exampleIntroduction1} in a neighborhood of \(a\), one verifies that \(u\) cannot be approximated by smooth maps for \(m-1 < kp < m\).{} 
 
	An explicit example of such a map with \(k = 1\) is given for \(x = (x', x'')  \in \R \times \R^{m - 1}\) by
    \[
	u(x) = f\Bigl( \frac{x''}{\rho - |x'|} \Bigr)
	\]
	where \(0 < \rho < 1\) and \(f  \colon  \R^{m-1} \to \Sphere^{m-1}\) is a suitable smooth function that is constant on \(\R^{m - 1} \setminus \Ball^{m - 1}\).{}
    In this case, \(a = (-\rho, 0'')\) and \(b = (\rho, 0'')\).{}
    This map is constant outside the set of points \(x = (x', x'')\) such that \(|x'| + |x''| < \rho\) and,	by a scaling argument, one has \(\norm{D u}_{\Lebesgue^{p}(\Ball^{m})} \le C\rho^{\frac{m}{p} - 1}\).
    As the right-hand side of the estimate converges to zero when \(\rho \to 0\), it is possible to glue an infinite number of dipole-type maps of this form.
	The resulting map \(U\) has infinitely many topological singularities.
	To detect all of them, one then needs to restrict \(U\) to an infinite number of spheres. 
\end{example}

The previous example shows the necessity of considering enough restrictions to identify the topological obstructions to approximation.
However, not all restrictions may be suitable to rule out approximability by smooth maps:

\begin{example}
	For every \(1 \le p < 2\), there exists \(u \in \Hilbert^{1, p}(\Ball^{2}; \Sphere^{1})\) such that \(u(x) = x/|x|\) for every \(x \in \partial B_{1/2}^{2}\) and also in the sense of traces.
	Indeed, the argument function \(x \in \partial B_{1/2}^{2} \mapsto \arg{(x/|x|)} \in \R\) belongs to the trace space \(\Sobolev^{1-1/p, p}\) when \(p > 1\) or \(\Lebesgue^1\) when \(p = 1\).
    It is therefore the trace of \(\Sobolev^{1, p}\)~\emph{real-valued} functions \(v_{1}\) in \(B^{2}_{1/2}\) and  \(v_{2}\) in \(\Ball^{2} \setminus \overline{B}{}^{2}_{1/2}\)\,.{}
	Then, 
	\[{}
	v \vcentcolon={}
	\begin{cases}
		v_{1} & \text{in \(B^{2}_{1/2}\)\,,}\\
		v_{2} & \text{in \(\Ball^{2} \setminus \overline{B}{}^{2}_{1/2}\)\,,}
	\end{cases}
	\]
	belongs to \(\Sobolev^{1, p}(\Ball^{2}; \R)\) and it suffices to take \(u = (\cos{v}, \sin{v})\).{}
	Note that \(u\) belongs to \(\Sobolev^{1, p}(\Ball^{2}; \Sphere^{1})\), and even \(\Hilbert^{1, p}(\Ball^{2}; \Sphere^{1})\), since \(v\) can be classically approximated by smooth real-valued functions.
	However, the trace of \(u\) in \(\partial B_{1/2}^{2}\) has topological degree \(1\) and is not homotopic to a constant map in \(\Smooth^{0}(\partial B_{1/2}^{2}; \Sphere^{1})\). 
\end{example}

\begin{example}[Variant of the previous example]
For every \(1 \le p < 2\) and every \(d \in \mathbb{Z}\), there exists \(u \in \Hilbert^{1, p}(\Ball^{2}; \Sphere^{1})\) such that \(u \in \Smooth^\infty (\cBall^2 \setminus \{a\}; \Sphere^1)\) and,  for every \(x \in \partial B_{1/2}^{2} \setminus \{a\}\), 
\[
u(x) = f((x - a)/|x - a|),
\] 
where \(a \vcentcolon= (0, -1/2)\) and \(f\in \Smooth^{\infty}(\Sphere^1;\Sphere^1)\) has topological degree \(d\).
Indeed, we start with a function \(\psi \in \Smooth^{\infty}(\Sphere^1; \R)\) such that, for every \(0 \le \theta \le \pi\),
\[
\psi(\cos{\theta}, \sin{\theta}) = 2\theta d.
\]
We then define \(u = (\cos{\varphi}, \sin{\varphi})\)
where, for \(x \in \Ball^{2} \setminus \{a\}\),
\[
\varphi (x) = \psi ((x - a)/\abs{x - a}).
\]
Since \(1 \le p < 2\), we have \(\varphi \in \Sobolev^{1, p}(\Ball^{2}; \R)\) and the classical Sobolev approximation of \(\varphi\) implies that \(u \in \Hilbert^{1, p}(\Ball^{2}; \Sphere^{1})\).
On the other hand, the restriction of \(u\) to  \(\partial B_{1/2}^{2} \setminus \{a\}\) can be extended as a continuous map of degree \(d\) on \(\partial B_{1/2}^{2} \)\,.
\end{example}

In the previous examples, the maps are continuous except at finitely or countably many points, which implies that their restrictions to \emph{generic} spheres remain continuous. 
This allows us to utilize classical topological tools, such as the existence of continuous extensions (equivalent to the existence of a homotopy with a constant map), to identify an obstruction to smooth approximation.
When \(kp < m\), an arbitrary \(\Sobolev^{k, p}\) map \(u\) may exhibit genuine discontinuities. 
However, given a \(\floor{kp}\)-dimensional surface \(\manfS\), a Morrey-type property ensures that for almost every choice of \(\xi\) the restrictions of \(u\) to the translated surfaces \(\manfS + \xi\) belong to \(\Sobolev^{k, p}\).
When \(kp \notin \N\), this space embeds continuously into \(\Smooth^{0}\), hence it still holds that the restrictions of \(u\) to \emph{generic} \(\floor{kp}\)-dimensional surfaces coincide almost everywhere with a continuous map, provided that the term ``generic'' is appropriately interpreted. 

The examples so far deal with genericity involving spheres, although we could also have relied on different geometric objects like cubes or simplices in the domain.
From this observation, it seems enough to have a notion of genericity based on almost every translation and scaling of these objects.
While this approach has yielded valuable insight, it is less clear though how one can transfer some information gathered from one class of generic objects to another.
For instance, when a map is homotopic to a constant on generic spheres, how can one make sure that the same holds on generic cubes or simplices?
Such an \emph{ad hoc} approach is also strongly dependent on the Euclidean geometry of \(\R^{m}\), and would require some further adaptation to manifolds.{}

We therefore aim for a different strategy based on a novel perspective --- \emph{genericity by composition}.
Our approach revolves around the ingenious utilization of summable functions, which enables us to formulate a generic composition with Lipschitz maps defined on various geometric objects.
The heart of the matter relies on the following result that we prove in greater generality in Section~\ref{sectionFugledeSobolev}:

\begin{proposition}
		\label{propositionIntroductionFugledeGeneric}
	Let \(\manfV\) be a compact Riemannian manifold or an open subset of \(\R^{m}\).
    Given \(v \in \Sobolev^{1, p}(\manfV)\), there exists a summable function \(w  \colon  \manfV \to [0, +\infty]\) such that, for every \(\ell \in \N_{*}\) and every Lipschitz map \(\gamma  \colon  \Sphere^{\ell} \to \manfV\) with 
	\begin{equation}
		\label{eqIntroductionFugledeGeneric}
	\int_{\Sphere^{\ell}} w \compose \gamma \dif\cH^{\ell} < \infty\text{,}
	\end{equation}
	we have \(v \compose \gamma \in \Sobolev^{1, p}(\Sphere^{\ell})\) and
	\[{}
	D(v \compose \gamma)(x) = Dv(\gamma(x)) [D\gamma(x)]
	\quad \text{for \(\cH^{\ell}\)-almost every \(x \in \Sphere^{\ell}\).}
	\]
\end{proposition}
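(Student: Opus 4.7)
The plan is to reduce the statement to the elementary chain rule for smooth maps composed with a Lipschitz parametrization, building \(w\) from a rapidly convergent smooth approximation of \(v\) so that its integrability along \(\gamma\) forces both \(v_n \compose \gamma\) and \(Dv_n \compose \gamma\) to converge \(\cH^\ell\)-almost everywhere on \(\Sphere^\ell\).

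First I would pick smooth maps \(v_n \in \Smooth^\infty(\manfV)\) converging to \(v\) in \(\Sobolev^{1,p}(\manfV)\) and, after extraction, pointwise almost everywhere on \(\manfV\), with
\[
\norm{v_{n+1} - v_n}_{\Lebesgue^p(\manfV)}^p + \norm{D(v_{n+1} - v_n)}_{\Lebesgue^p(\manfV)}^p \le 2^{-n}
\quad\text{for every } n \in \N.
\]
Fix Borel representatives \(\tilde v\) of \(v\) and \(\widetilde{Dv}\) of \(Dv\), for instance through the pointwise \(\limsup\) of \(v_n\) and \(Dv_n\) on their convergence set and a conventional value elsewhere, and set
\[
w \vcentcolon= \abs{\widetilde{Dv}}^p + \sum_{n \in \N} 2^{n/2}\Bigl(\abs{v_{n+1} - v_n}^p + \abs{D(v_{n+1} - v_n)}^p\Bigr).
\]
Monotone convergence ensures that \(w\) is summable on \(\manfV\), since each series term has integral bounded by \(2^{-n/2}\).

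Now assume \(\gamma \colon \Sphere^\ell \to \manfV\) is Lipschitz with \(\int_{\Sphere^\ell} w \compose \gamma \dif\cH^\ell < \infty\). Since each \(v_n\) is smooth and \(\gamma\) Lipschitz, Rademacher's theorem and the classical chain rule yield \(v_n \compose \gamma \in \Sobolev^{1,\infty}(\Sphere^\ell)\) with \(D(v_n \compose \gamma)(x) = Dv_n(\gamma(x))[D\gamma(x)]\) for \(\cH^\ell\)-a.e.\ \(x\). Combining \(\abs{D\gamma} \le \Lip(\gamma)\) with the integrability of \(w \compose \gamma\) and a Hölder inequality gives
\[
\sum_{n \in \N} \norm{(v_{n+1} - v_n) \compose \gamma}_{\Sobolev^{1,p}(\Sphere^\ell)} < \infty,
\]
so \((v_n \compose \gamma)_{n \in \N}\) is Cauchy in \(\Sobolev^{1,p}(\Sphere^\ell)\) with some limit \(V\). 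The same telescoping series also forces \(v_n \compose \gamma \to \tilde v \compose \gamma\) and \(Dv_n \compose \gamma \to \widetilde{Dv} \compose \gamma\) at \(\cH^\ell\)-almost every point of \(\Sphere^\ell\), hence \(V = \tilde v \compose \gamma\) \(\cH^\ell\)-a.e.\ and the weak derivative of \(V\) equals \(\widetilde{Dv}(\gamma(x))[D\gamma(x)]\) \(\cH^\ell\)-a.e.

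The main obstacle, and the very reason for introducing the weight \(w\), is that when \(\ell < \dim \manfV\) the pushforward \(\gamma_* \cH^\ell\) need not be absolutely continuous with respect to the measure on \(\manfV\): a null set on the target where \(v_n \not\to \tilde v\) may carry positive \(\cH^\ell\)-mass on \(\Sphere^\ell\), so no standard absolute continuity argument is available. The pointwise hypothesis on \(w \compose \gamma\) is the substitute for this missing absolute continuity, and the delicate part of the proof is to select the coefficients in the series defining \(w\) so that \(w\) remains integrable on \(\manfV\) while being simultaneously stringent enough to rule out every bad null set that might be encountered along an arbitrary admissible \(\gamma\), independently of \(\ell\).
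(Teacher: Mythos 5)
Your overall strategy is the one the paper itself uses: take smooth approximants converging rapidly in \(\Sobolev^{1,p}\), build a summable detector out of the approximation errors, invoke the classical chain rule for a smooth function composed with a Lipschitz \(\gamma\), and pass to the limit in \(\Sobolev^{1,p}(\Sphere^{\ell})\) using completeness/Sobolev stability (the paper runs this through Proposition~\ref{lemmaModulusLebesguesequence} applied to \((v_n)\) and \((Dv_n)\), inside the proof of Proposition~\ref{corollaryCompositionSobolevFuglede}). Your Cauchy estimate on \(\Sphere^{\ell}\) is correct (and needs no H\"older inequality: the pointwise bound \(\abs{v_{n+1}-v_n}^p \le 2^{-n/2}w\) together with \(\abs{D\gamma}\le \Lip(\gamma)\) already gives it).

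There is, however, a genuine gap in the choice of \(w\). The paper's convention is that all functions are defined pointwise, and the conclusion concerns the composition with the \emph{given} pointwise functions \(v\) and \(Dv\); this is exactly what forces the detector to be sensitive to modifications of \(v\) on negligible sets (see the example following Proposition~\ref{propositionFugledeApproximationMaps}). Your \(w\) is built only from the consecutive differences \(\abs{v_{n+1}-v_n}^p+\abs{D(v_{n+1}-v_n)}^p\) and from the Borel representatives \(\tilde v,\widetilde{Dv}\) manufactured as pointwise limits of the approximants; it does not depend on the given representatives \(v, Dv\) at all. Consequently your argument identifies the limit of \(v_n\compose\gamma\) as \(\tilde v\compose\gamma\), with weak derivative \(\widetilde{Dv}(\gamma(\cdot))[D\gamma(\cdot)]\), but not as \(v\compose\gamma\). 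Since, as you observe, \(\gamma_{*}\cH^{\ell}\) need not be absolutely continuous with respect to the volume of \(\manfV\), a Lipschitz \(\gamma\) with \(\int_{\Sphere^{\ell}} w\compose\gamma\,\dif\cH^{\ell}<\infty\) can send a set of positive \(\cH^{\ell}\)-measure into the Lebesgue-null set \(\{\tilde v\ne v\}\cup\{\widetilde{Dv}\ne Dv\}\): redefine \(v\) arbitrarily on a hyperplane and let \(\gamma\) parametrize a piece of it — your \(w\) is unchanged, yet the stated conclusion for \(v\compose\gamma\) fails. So the hypothesis on \(w\compose\gamma\) is \emph{not}, with your \(w\), a substitute for the missing absolute continuity. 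The repair is precisely the paper's device: after extracting a subsequence with \(\sum_n \kappa_n\bigl(\norm{v_n-v}_{\Lebesgue^{p}}+\norm{Dv_n-Dv}_{\Lebesgue^{p}}\bigr)<\infty\) for some \(\kappa_n\to\infty\), take \(w=\bigl(\abs{v}+\abs{Dv}+\sum_n \kappa_n(\abs{v_n-v}+\abs{Dv_n-Dv})\bigr)^p\). This \(w\) equals \(+\infty\) wherever the approximants fail to converge to the actual values, and the pointwise domination \(\abs{v_n\compose\gamma-v\compose\gamma}^p\le \kappa_n^{-p}\,w\compose\gamma\) (and likewise for \(Dv\)) yields \(\Lebesgue^{p}(\Sphere^{\ell})\)-convergence to the genuine compositions; the rest of your argument then goes through unchanged.
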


The summable function \(w\) acts as a \emph{detector}, allowing us to screen various families of Lipschitz maps \(\gamma\) and identify many for which the composition \(v \compose \gamma\) is a Sobolev function.
We call any \(\gamma\) verifying property \eqref{eqIntroductionFugledeGeneric} a \emph{Fuglede map} associated to \(w\).
Our approach, exemplified by Proposition~\ref{propositionIntroductionFugledeGeneric}, is reminiscent of the concept of a property being satisfied for almost every measure, introduced by Fuglede~\cite{Fuglede} through his notion of modulus of a family of measures.
However, our method diverges from that of his, as we explain in more detail in Section~\ref{section_Fuglede}.

This Morrey-type property associated to Fuglede maps encapsulates the essence of generic composition, shedding light on the behavior of Sobolev functions under compositions with Lipschitz maps. 
In this formalism, when \(\manfV = \R^{m}\), the restriction of a function \(v\) to the sphere \(\partial B_{r}^{m}(a)\) can be naturally identified with the composition \(v \compose \gamma\), where \(\gamma  \colon  x \in \Sphere^{m - 1} \mapsto a + rx \in \partial B_{r}^{m}(a)\).
From Proposition~\ref{propositionIntroductionFugledeGeneric}, we have that, for every \(r > 0\) and every \(a \in \R^{m}\) with
\begin{equation}
\label{eqIntroductionFugledeGenericExample}
\int_{\Sphere^{m-1}} w(a + rx) \dif\cH ^{m - 1}(x) = \int_{\Sphere^{m- 1}} w \compose \gamma \dif\cH ^{m-1} < \infty,
\end{equation}
the function \(x \in \Sphere^{m - 1} \mapsto v(a + rx)\) belongs to \(\Sobolev^{1, p}(\Sphere^{m - 1})\) or, equivalently, the restriction \(v|_{\partial B_{r}^{m}(a)}\) belongs to \(\Sobolev^{1, p}(\partial B_{r}^{m}(a))\).
That the genericity condition \eqref{eqIntroductionFugledeGeneric} covers genericity under restrictions can now be seen as a consequence of Tonelli's theorem.
Indeed, condition \eqref{eqIntroductionFugledeGenericExample} is verified for a given \(r > 0\) and almost every \(a \in \R^{m}\) as follows
\[{}
\begin{split}
\int_{\R^{m}} \biggl(  \int_{\Sphere^{m-1}} w(a + rx) \dif\cH ^{m - 1}(x) \biggr) \dif a
& = 
\int_{\Sphere^{m-1}} \biggl(  \int_{\R^{m}} w(a + rx) \dif a \biggr) \dif\cH ^{m - 1}(x)\\
& = \cH ^{m - 1}(\Sphere^{m-1}) \int_{\R^{m}} w
< \infty.
\end{split}
\]

Inspired by Morrey-type characterizations, we gain a deeper understanding of the intrinsic properties of Sobolev functions, even when defined on varying geometric domains, such as cubes, simplices in \(\R^m\), or geodesic balls in a manifold \(\manfM\). 
The framework of generic composition, as formulated in \eqref{eqIntroductionFugledeGeneric}, offers a promising alternative to the traditional notion of generic restriction. 
This approach is particularly suitable for the study of Sobolev maps and for identifying elements in \(\Hilbert^{k, p}(\Ball^{m}; \manfN)\). 
Unlike generic restrictions, where the images of simplices or spheres under diffeomorphisms may not remain simplices or spheres, generic compositions retain a greater degree of flexibility, particularly in relation to compositions with Lipschitz maps. 
This flexibility improves the applicability of topological criteria in the analysis of Sobolev maps.

A counterpart of Proposition~\ref{propositionIntroductionFugledeGeneric} also holds for \(\Sobolev^{k, p}\)~maps under stronger regularity of \(\gamma\), but we do not pursue this direction for the moment, see Proposition~\ref{propositionFugledeWkp}.
Instead, we wish to enjoy the flexibility of composition with Lipschitz maps without losing the topological properties that are related to the \(\Sobolev^{k, p}\)~scale for \(\manfN\)~valued maps.{}
To this aim, we rely, since \(\manfN\) is bounded, on the Gagliardo-Nirenberg interpolation \(\Sobolev^{k, p} \cap \Lebesgue^{\infty} \subset \Sobolev^{1, kp}\) that emphasizes the role of the quantity \(kp\),  justifies the \(\Sobolev^{1, kp}\)~nature of maps in \(\Sobolev^{k, p}(\Ball^{m}; \manfN)\), and yields the imbedding 
\[{}
\Hilbert^{k, p}(\Ball^{m}; \manfN) \subset \Hilbert^{1, kp}(\Ball^{m}; \manfN).
\]
Proposition~\ref{propositionIntroductionFugledeGeneric} applied to \(u \in \Sobolev^{k, p}(\Ball^{m}; \manfN)\) with \(q = kp\) and \(\ell = \floor{kp}\) thus implies that \(u \compose \gamma \in \Sobolev^{1, kp}(\Sphere^{\floor{kp}}; \manfN)\) for every Lipschitz map \(\gamma  \colon  \Sphere^{\floor{kp}} \to \Ball^{m}\) satisfying \eqref{eqIntroductionFugledeGeneric}.
We then deduce that \(u \compose \gamma\) is \(\VMO\), and is even equal almost everywhere to a continuous map when \(kp\) is not an integer.

From the pioneering work of Brezis and Nirenberg~\cite{BrezisNirenberg1995}, one can extend to the \(\VMO\) setting many topological properties from the classical \(\Smooth^{0}\)~setting.{}
For example, a \(\VMO\)~map \(u \compose \gamma\) is homotopic to a constant in \(\VMO(\Sphere^{\floor{kp}}; \manfN)\) whenever there exists a continuous path \(H  \colon  [0, 1] \to \VMO(\Sphere^{\floor{kp}}; \manfN)\) such that \(H(0) = u \compose \gamma\) and \(H(1)\) is a constant map.
This notion agrees with the conventional approach in topology: If \(u \compose \gamma\) is continuous and homotopic to a constant in the \(\VMO\)~sense, then \(u \compose \gamma\) is also homotopic to a constant in \(\Smooth^{0}(\Sphere^{\floor{kp}}; \manfN)\).
By the ability of working with \(\VMO\)~maps, we are well equipped to capture topological properties of \(u \compose \gamma\) for generic maps \(\gamma  \colon  \Sphere^{\floor{kp}} \to \Ball^m\) regardless of whether \(kp\) is an integer or not.

Based on condition \eqref{eqIntroductionFugledeGeneric}, we may now introduce the fundamental concept of generic extension:
We say that \(u \in \Sobolev^{k, p}(\Ball^{m}; \manfN)\) is \emph{\(\floor{kp}\)-extendable} whenever, for any generic Lipschitz map \(\gamma  \colon  \Sphere^{\floor{kp}} \to \Ball^{m}\), the map \(u \compose \gamma\) belongs to \(\VMO(\Sphere^{\floor{kp}}; \manfN)\) and is homotopic to a constant in \(\VMO\).{}
Here, generic means that there exists a summable function \(w  \colon  \Ball^{m} \to [0, +\infty]\) such that the assertion above is verified for every \(\gamma\) for which \eqref{eqIntroductionFugledeGeneric} holds.
This setting provides a screening toolbox where \(w\) detects enough maps that can be used to decide whether \(u\) belongs or not to \(\Hilbert^{k, p}(\Ball^m; \manfN)\).
Indeed, one of the main results of this work is

\begin{theorem}
\label{theoremhkpGreatBall}
Let \(kp < m\) and \(u \in \Sobolev^{k,p}(\Ball^m; \manfN)\). 
Then, \(u \in \Hilbert^{k, p}(\Ball^{m}; \manfN)\) if and only if \(u\) is \(\floor{kp}\)-extendable. 
\end{theorem}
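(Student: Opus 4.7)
\emph{Necessity.} Suppose $u \in \Hilbert^{k,p}(\Ball^m; \manfN)$ and pick a sequence $(u_j) \subset \Smooth^\infty(\cBall^m; \manfN)$ converging to $u$ in $\Sobolev^{k,p}$. The Gagliardo-Nirenberg imbedding $\Sobolev^{k,p} \cap \Lebesgue^{\infty} \subset \Sobolev^{1,kp}$ recalled in the introduction upgrades this to convergence in $\Sobolev^{1,kp}$. Extracting a subsequence with $\|u_{j_i} - u\|_{\Sobolev^{1,kp}} \le 2^{-i}$ and applying Proposition~\ref{propositionIntroductionFugledeGeneric} to each pair $(u_{j_i}, u)$ with exponent $q = kp$, I assemble a single summable detector $w$ on $\Ball^m$ such that every Fuglede map $\gamma \colon \Sphere^{\floor{kp}} \to \Ball^m$ adapted to $w$ satisfies $u_{j_i} \compose \gamma \to u \compose \gamma$ in $\Sobolev^{1,kp}(\Sphere^{\floor{kp}}; \manfN)$, hence in $\VMO$. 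Because each $u_{j_i}$ is smooth on the closed ball, $u_{j_i} \compose \gamma$ admits a continuous extension to $\cBall^{\floor{kp}+1}$ via the radial extension $\bar{\gamma}(t,x) \vcentcolon= t\gamma(x)$, and is in particular null-homotopic in $\Smooth^{0}$. Stability of the $\VMO$ homotopy class under $\VMO$ convergence (in the spirit of Brezis-Nirenberg) then transfers null-homotopy to $u \compose \gamma$, which is precisely $\floor{kp}$-extendability.

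\emph{Sufficiency.} Conversely, assume that $u$ is $\floor{kp}$-extendable with detector $w$. For a small parameter $\eta > 0$, cover $\Ball^m$ with a cubeulation $\cK_\eta$ of side $\eta$ shifted by a generic translation $a$ chosen via a Fubini-type argument so that both the $\floor{kp}$-skeleton $\cK_\eta^{\floor{kp}}$ and the dual $(m - \floor{kp} - 1)$-skeleton consist of Fuglede-good cells for $w$ and for the $\Sobolev^{k,p}$ jet of $u$. I then combine the three devices advertised in the abstract. First, an \emph{opening} procedure in a tubular neighborhood of the dual skeleton modifies $u$ into a map $u^{\mathrm{op}}$ that is constant along the transverse directions and therefore factorizes through a retraction onto a thickening of $\cK_\eta^{\floor{kp}}$, with $\Sobolev^{k,p}$ norm controlled by that of $u$. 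Second, on every $(\floor{kp}+1)$-face $\sigma$ of $\cK_\eta$, the boundary datum $u^{\mathrm{op}}|_{\partial\sigma}$ is a Fuglede composition $u \compose \gamma_{\sigma}$ with a Lipschitz parametrization of $\partial \sigma$, hence by $\floor{kp}$-extendability it is $\VMO$ null-homotopic; this qualitative homotopy is upgraded to a $\Sobolev^{1,kp}$ extension into $\sigma$ with values in $\manfN$, and the construction is iterated upward on the skeleta using the ambient $u$ to fill in higher-dimensional cells. Third, an \emph{adaptive smoothing} step mollifies the resulting map $\tilde u_\eta$ at a scale $\ll \eta$ tuned to the stratification and composes with the nearest-point projection onto $\manfN$, which is well defined since the regime $kp \ge \floor{kp}$ together with the $\VMO$ character of generic restrictions ensures that the convolution stays in a tubular neighborhood of $\manfN$ (via Lemma~\ref{lemmaVMOUniformConvergence}). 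A convergence argument $\tilde u_\eta \to u$ in $\Sobolev^{k,p}$ as $\eta \to 0$, exploiting the shrinking technique to recycle the $\Sobolev^{k,p}$ norm of $u$ in the thin neighborhoods where $\tilde u_\eta$ is constructed, completes the proof.

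\emph{Main obstacle.} The delicate step is the extension in the second bullet: passing from a purely qualitative $\VMO$ null-homotopy of $u \compose \gamma_{\sigma}$ to a quantitative $\Sobolev^{1,kp}(\sigma;\manfN)$ extension whose energy is bounded in terms of the $\Sobolev^{k,p}$ mass of $u$ in a controlled neighborhood of $\partial \sigma$, and reconciling the choices on adjacent faces so that the resulting global map lies in $\Sobolev^{k,p}(\Ball^m; \manfN)$ and converges back to $u$. This is where the finer $(\floor{kp}, e)$-extendability framework and the shrinking construction are tailored precisely to deliver the sharp quantitative bounds needed to survive the summation over all $\eta$-cubes and the final mollification.
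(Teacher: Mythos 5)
Your necessity argument is sound and is essentially the paper's own (Proposition~\ref{propositionDensityExtendable}): Gagliardo--Nirenberg to pass to \(\Sobolev^{1,kp}\), a common detector along a fast subsequence so that \(u_{j_i}\compose\gamma \to u\compose\gamma\) in \(\Sobolev^{1,kp}(\Sphere^{\floor{kp}})\) and hence in \(\VMO\), null-homotopy of each \(u_{j_i}\compose\gamma|_{\Sphere^{\floor{kp}}}\) because \(\gamma\) is already defined on \(\cBall^{\floor{kp}+1}\), and stability of the \(\VMO\) homotopy class under \(\VMO\) convergence (Proposition~\ref{propositionHomotopyVMOLimit}).

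The sufficiency direction, however, has a genuine gap exactly where you flag it, and the mechanism you propose to close it is the wrong one. You plan to convert the qualitative \(\VMO\) null-homotopy of \(u\compose\gamma_\sigma\) on each \((\floor{kp}+1)\)-face into a \emph{quantitative} \(\Sobolev^{1,kp}(\sigma;\manfN)\) extension whose energy is controlled by the local \(\Sobolev^{k,p}\) mass of \(u\), and to glue these fillings over the skeleta. No such quantitative bound is available in general --- the failure of weak density for \(p=2,3\) recalled at the end of Chapter~\ref{chapterIntroduction} shows that the energy needed to realize a topological extension is a delicate analytic matter, not a consequence of extendability --- and even granting it, a cellwise \(\Sobolev^{1,kp}\) filling yields neither smooth competitors nor any control of the derivatives of order \(2,\dots,k\), which is what membership in \(\Hilbert^{k,p}\) requires. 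The paper's proof never estimates the energy of the extension: the \(\floor{kp}\)-extendability is used purely topologically. After opening (around the \(\floor{kp}\)-skeleton of the bad cubes, not the dual skeleton), adaptive smoothing and thickening produce an \(\ClassR_{\floor{kp}^*}\)~map \(u_\eta^{\mathrm{th}}\); the auxiliary map \(u_\eta^{\mathrm{fop}}\), opened along the \emph{full} skeleton \(S^{\floor{kp}}\), together with the good-cube condition (the constant \(\alpha\) in \eqref{eqJEMSGoodBall}), gives a \(\VMO^{\floor{kp}}\)-homotopy between \(\Pi\compose u_\eta^{\mathrm{sm}}\) and \(u_\eta^{\mathrm{fop}}\); since \(\Ball^m\) is topologically trivial, \(\floor{kp}\)-extendability upgrades to \((\floor{kp},m)\)-extendability (Proposition~\ref{proposition_condition_domain}), and the homotopy extension property (Proposition~\ref{propositionEllEExtensionContinuous}) yields a \emph{continuous} extension of \(\Pi\compose u_\eta^{\mathrm{th}}|_{S^{\floor{kp}}}\) to \(S^m\), which Proposition~\ref{propositionDensityNewExtensionSmooth} converts into a smooth replacement near the dual skeleton with no energy control whatsoever. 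The quantitative convergence comes solely from shrinking (Corollary~\ref{corollaryShrinkingCubication}): all uncontrolled modifications are confined to a neighborhood of the dual skeleton, of dimension \(m-\floor{kp}-1 < m-kp\), whose \(\Sobolev^{k,p}\) contribution is made arbitrarily small by choosing \(\tau\) and \(\mu\). In short, the step you identify as the main obstacle is the pivot of the whole construction, it cannot be supplied by an energy-controlled cellwise extension, and without the paper's decoupling of the topological extension from the quantitative shrinking estimate your argument does not go through.
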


It is natural to expect that the constructions of an approximating sequence can be suitably adapted to inherit topological properties satisfied by the initial map \(u\).
The tools introduced in our work, particularly the concept of Fuglede maps, offer a well-adapted framework for encapsulating the genericity of topological properties. 
In this respect, Theorem~\ref{theoremhkpGreatBall} includes as a special case Theorem~\ref{theoremIntroductionBethuel} since all maps \(u \compose \gamma\) are necessarily homotopic to a constant in \(\VMO\) for generic Lipschitz maps \(\gamma  \colon  \Sphere^{\floor{kp}} \to \Ball^{m}\) when \(\pi_{\floor{kp}}(\manfN) \simeq \{0\}\).
The possibility of screening for topological obstructions to the approximation of a \emph{single} given map \(u \in \Sobolev^{k, p}(\Ball^m; \manfN)\) without imposing a further assumption on the topology of \(\manfN\) is one of the main novelties of our present work.

\begin{example}
	Let \(u  \colon  \Ball^{m} \to \manfN\) be defined for \(x \ne 0\) by 
	\[{}
	u(x) = f\Bigl( \frac{x}{|x|} \Bigr),
	\]
	where \(f \in \Smooth^{\infty}(\Sphere^{m - 1}; \manfN)\) is homotopic to a constant in \(\Smooth^{0}(\Sphere^{m-1}; \manfN)\).
	Then, \(u \in \Sobolev^{k, p}(\Ball^{m}; \manfN)\) with \(kp < m\) and, as we show in Lemma~\ref{propositionExtensionHomogeneity}, \(u\) is \(\floor{kp}\)-extendable.{}
	To verify the extendability of \(u\) it suffices to take the summable function \(w(x) = 1/|x|^{\floor{kp}}\).{}
	This choice of \(w\) implies that, for every Lipschitz map \(\gamma  \colon  \Sphere^{\floor{kp}} \to \Ball^{m}\) satisfying the genericity condition \eqref{eqIntroductionFugledeGeneric}, we have \(0 \not\in \gamma( \Sphere^{\floor{kp}})\). 
	Hence, \(u \compose \gamma\) is smooth and, by the assumption on \(f\), is homotopic to a constant in \(\Smooth^{0}(\Sphere^{\floor{kp}}; \manfN)\).{}
	Since \(u\) is \(\floor{kp}\)-extendable, by Theorem~\ref{theoremhkpGreatBall} we deduce that \(u \in \Hilbert^{k, p}(\Ball^{m}; \manfN)\), regardless of \(\pi_{\floor{kp}}(\manfN)\).
\end{example}

As a consequence of Theorem~\ref{theoremhkpGreatBall}, the problem of determining whether a Sobolev map \(u\) can be approximated by smooth maps reduces to verifying its \(\floor{kp}\)-extendability. It is therefore useful to identify conditions that ensure this property while minimizing the number of candidates required to test genericity. Using the notion of generic restrictions on simplices, in Section~\ref{sectionRestrictionsGeneric} we establish the following criterion:

\begin{theorem}\label{propositionSimplexLipschitz-NewBall}
Let \(kp < m\) and \(u \in \Sobolev^{k, p}(\Ball^{m}; \manfN)\).{}
Then, \(u\) is \(\floor{kp}\)-extendable if and only if, for every simplex \(\Simplex^{\floor{kp} + 1} \subset \Ball^{m}\) and for almost every \(\xi \in \R^{m}\) with \(\abs{\xi} < d(\Simplex^{\floor{kp} + 1}, \partial \Ball^{m})\), the map \(u|_{\xi + \partial\Simplex^{\floor{kp} + 1}}\) is homotopic to a constant in \(\VMO(\xi + \partial\Simplex^{\floor{kp} + 1}; \manfN)\).
\end{theorem}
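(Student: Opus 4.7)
My plan splits along the two implications.

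For the forward implication, I would use the detector $w \in \Lebesgue^{1}(\Ball^{m})$ witnessing $\floor{kp}$-extendability to extract the simplex-based condition via Fubini. Fixing a simplex $\Simplex^{\floor{kp}+1} \subset \Ball^{m}$ at distance $r_{0} \vcentcolon= d(\Simplex^{\floor{kp}+1}, \partial \Ball^{m})$, I would parametrize its boundary by a bi-Lipschitz map $\gamma_{0} \colon \Sphere^{\floor{kp}} \to \partial \Simplex^{\floor{kp}+1}$ and consider the translated maps $\gamma_{\xi} \vcentcolon= \xi + \gamma_{0}$ for $\abs{\xi} < r_{0}$. The Tonelli-type estimate
\[
\int_{B_{r_{0}}^{m}} \int_{\Sphere^{\floor{kp}}} w(\xi + \gamma_{0}(y)) \dif \cH^{\floor{kp}}(y) \dif\xi \le \cH^{\floor{kp}}(\Sphere^{\floor{kp}}) \, \norm{w}_{\Lebesgue^{1}(\Ball^{m})} < \infty
\]
shows that $\gamma_{\xi}$ satisfies the Fuglede condition for almost every $\xi$, whence extendability provides the desired VMO null-homotopy of $u \compose \gamma_{\xi}$, which is equivalent to the null-homotopy of $u|_{\xi + \partial \Simplex^{\floor{kp}+1}}$.

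The reverse implication is the substantial one. My plan is, given the hypothesis on simplex boundaries, to construct a detector $w$ and argue that every generic Lipschitz map $\gamma \colon \Sphere^{\floor{kp}} \to \Ball^{m}$ reduces to a simplicial situation via piecewise-affine approximation. First, I would build $w$ by combining the detector from Proposition~\ref{propositionIntroductionFugledeGeneric} (applied to $u \in \Sobolev^{1, kp}$ via Gagliardo-Nirenberg) with an auxiliary weight ensuring the quality of simplicial approximations and cylinder-type homotopies. For a generic $\gamma$, I would use the convexity of $\Ball^{m}$ to extend $\gamma$ radially to a Lipschitz map $\tilde{\gamma} \colon \cBall^{\floor{kp}+1} \to \Ball^{m}$, then fix a fine triangulation $K$ of $\Ball^{\floor{kp}+1}$ into $(\floor{kp}+1)$-simplices and replace $\tilde{\gamma}$ by its piecewise-affine approximation $\tilde{\gamma}_{K}$ relative to $K$.

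The hypothesis applied to each non-degenerate cell $\sigma \in K$ yields a null-set $N_{\sigma} \subset \R^{m}$ of translations for which $u|_{\xi + \partial \tilde{\gamma}_{K}(\sigma)}$ fails to be VMO-null-homotopic. Since $K$ is finite, I can select a single $\xi$ of small norm outside $\bigcup_{\sigma \in K} N_{\sigma}$, valid for every cell simultaneously. The cell-wise VMO null-homotopies then extend to continuous maps on the filled simplices $\xi + \tilde{\gamma}_{K}(\sigma)$, and gluing across $K$ produces a continuous extension of $u \compose (\xi + \gamma_{K})$ from $\Sphere^{\floor{kp}}$ to $\cBall^{\floor{kp}+1}$, proving that $u \compose (\xi + \gamma_{K})$ is VMO-null-homotopic. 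To return to $\gamma$, I would use the Lipschitz homotopy $\Gamma(y, s) \vcentcolon= (1-s)\gamma(y) + s(\xi + \gamma_{K}(y))$, which stays in $\Ball^{m}$ by convexity when $\abs{\xi}$ is small and $K$ is fine; composition with $u$ then yields, via a cylinder analogue of Proposition~\ref{propositionIntroductionFugledeGeneric}, a VMO homotopy between $u \compose \gamma$ and $u \compose (\xi + \gamma_{K})$.

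The main obstacle is assembling a single detector $w$ that simultaneously secures (i) that $u \compose \gamma$ is VMO, (ii) that the cell-wise null-homotopies glue correctly across a countable family of triangulations of finer and finer mesh, and (iii) that the interpolation homotopy $\Gamma$ enjoys $u \compose \Gamma \in \VMO$ on the cylinder $\Sphere^{\floor{kp}} \times [0,1]$. I expect to need a Fuglede-type detector adapted to this cylinder combined with a diagonal extraction in the triangulation scale, with the delicate case of degenerate cells of $\tilde{\gamma}_{K}$ requiring separate treatment.
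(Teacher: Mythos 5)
Your forward implication is essentially the paper's argument: a Tonelli/averaging estimate over translations shows that, for almost every \(\xi\), the translated boundary is a Fuglede set for the detector witnessing \(\floor{kp}\)-extendability, and a bi-Lipschitz change of variables transfers the resulting null-homotopy to \(u|_{\xi + \partial\Simplex^{\floor{kp}+1}}\). The skeleton of your reverse implication — piecewise-affine approximation, one generic small translation chosen to serve all (finitely many) cells at once, and gluing of the cell-wise null-homotopies along the skeleton — also matches the paper's route through Propositions~\ref{propositionCharacterizationExtensionSimplex}, \ref{proposition-Fuglede-translations} and \ref{lemmaApproximationSimplexLipschitz} (carried out on a simplex rather than on \(\cBall^{\floor{kp}+1}\), so that rectilinear subdivisions are available, and with the secant map perturbed so that every affine piece is injective, Lemma~\ref{lemmaApproximationSimplexLipschitz-2}; the degenerate cells you defer to "separate treatment" are in fact eliminated this way, and without that your gluing step is blocked, since the simplex hypothesis says nothing about degenerate images).

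The genuine gap is your final step returning from \(u \compose (\xi + \gamma_{K})\) to \(u \compose \gamma\). Even if the straight-line homotopy \(\Gamma(y,s) = (1-s)\gamma(y) + s(\xi + \gamma_{K}(y))\) is Lipschitz, stays in \(\Ball^{m}\), and is generic so that \(u \compose \Gamma\) lies in \(\Sobolev^{1,kp}\) (or even \(\VMO\)) of the cylinder \(\Sphere^{\floor{kp}} \times [0,1]\), this does not produce a continuous path \(t \mapsto u \compose \Gamma(\cdot, t)\) in \(\VMO(\Sphere^{\floor{kp}}; \manfN)\), which is what Definition~\ref{defnVMOHomotopy} requires: slices of a Sobolev or VMO function on an \((\floor{kp}+1)\)-dimensional cylinder need not vary continuously in \(\VMO\) (and note \(kp < \floor{kp}+1\), so the cylinder dimension exceeds the Sobolev exponent). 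More fundamentally, the implication "\(\gamma_{0} \sim \gamma_{1}\) in \(\Smooth^{0}\) implies \(u \compose \gamma_{0} \sim u \compose \gamma_{1}\) in \(\VMO\)" is exactly what fails for a general Sobolev map — the example \(u(x)=x/\abs{x}\) with translated spheres at the beginning of Chapter~\ref{chapterGenericEllExtension} gives homotopic, uniformly close Fuglede maps whose compositions have different degrees — and the paper's result validating such a step, Theorem~\ref{proposition_Reference_Homotopy-Sphere}, assumes the very \(\floor{kp}\)-extendability you are proving, so invoking it would be circular. The paper closes this step without any cylinder: since \(u\) is bounded, \eqref{eqDetector-882} gives \(u \in \VMO^{\floor{kp}}(\Ball^{m}; \manfN)\); the translation parameters are chosen via Proposition~\ref{propositionFugledeApproximation} so that \(\xi_{i} + \gamma_{K_{i}}\) converges to \(\gamma\) \emph{in the Fuglede class} (not merely uniformly) on the boundary; generic VMO stability then yields \(u \compose (\xi_{i} + \gamma_{K_{i}}) \to u \compose \gamma\) in \(\VMO\), and Proposition~\ref{propositionHomotopyVMOLimit} together with transitivity of the homotopy relation transfers the null-homotopy to \(u \compose \gamma\). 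Replacing your cylinder argument by this stability mechanism (and ensuring, in the simultaneous choice of \(\xi\), both the cell-wise hypothesis and the summability of the detector on the whole \(\floor{kp}\)-skeleton) is what is needed to make the reverse implication complete.
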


This condition considerably simplifies the verification of \(\floor{kp}\)-extendability: Instead of testing compositions with Lipschitz maps, one only needs to examine restrictions to generic simplices. We further develop this approach in Chapter~\ref{chapterExtendability}.

In some cases, the extendability can also be checked through cohomological computations:

\begin{theorem}\label{propositionJacobianSphereIntro}
Let \(n \le kp < n + 1 \le m\) and \(u \in \Sobolev^{k, p}(\Ball^{m}; \Sphere^n)\).{}
Then, \(u\) is \(\floor{kp}\)-extendable if and only if
\begin{equation*}
\dext (u^{\#} \omega_{\Sphere^n}) = 0
\quad \text{in the sense of currents in \(\Ball^m\),}
\end{equation*}
where \(\omega_{\Sphere^n}\) is a volume form on \(\Sphere^n\) such that \(\int_{\Sphere^n} \omega_{\Sphere^n} = 1\).
\end{theorem}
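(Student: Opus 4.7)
My plan is to combine the simplex-restriction criterion of Theorem~\ref{propositionSimplexLipschitz-NewBall} with the classical representation of the degree of an $\Sphere^n$-valued map as an integral of the pulled-back volume form. Since $n \le kp < n + 1$, we have $\floor{kp} = n$, so $u$ is $n$-extendable if and only if for every simplex $\Simplex^{n+1} \subset \Ball^m$ and almost every admissible $\xi$ the restriction $u|_{\xi + \partial \Simplex^{n+1}}$ is homotopic to a constant in $\VMO(\xi + \partial \Simplex^{n+1}; \Sphere^n)$. Because $\partial \Simplex^{n+1}$ is a Lipschitz topological $n$-sphere and $\pi_n(\Sphere^n) \simeq \Z$ is classified by degree, and because the Brezis--Nirenberg $\VMO$-degree classifies $\VMO$-homotopy classes, this condition is equivalent to $\Deg(u|_{\xi + \partial \Simplex^{n+1}}) = 0$ for every simplex and almost every $\xi$.

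Next I would express the degree as an integral. By the Gagliardo--Nirenberg interpolation recalled in the introduction, $u \in \Sobolev^{1, kp}_\loc(\Ball^m; \Sphere^n)$, and since $\omega_{\Sphere^n}$ is a smooth $n$-form on $\Sphere^n$ the pullback $u^\# \omega_{\Sphere^n}$ is an $n$-form with coefficients in $\Lebesgue^{kp/n}_\loc(\Ball^m)$. Proposition~\ref{propositionIntroductionFugledeGeneric}, combined with Fubini, provides a summable detector $w$ on $\Ball^m$ such that, for every simplex $\Simplex^{n+1}$ and almost every translation $\xi$, the parametrization of $\xi + \partial \Simplex^{n+1}$ is a Fuglede map, the restriction $u|_{\xi + \partial \Simplex^{n+1}}$ belongs to $\Sobolev^{1, kp}(\xi + \partial \Simplex^{n+1}; \Sphere^n)$, and the pullback commutes with restriction. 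Since $kp \ge n$, the standard integral representation of the degree yields
\[
\Deg(u|_{\xi + \partial \Simplex^{n+1}}) = \int_{\xi + \partial \Simplex^{n+1}} u^\# \omega_{\Sphere^n}.
\]
Thus the $n$-extendability of $u$ is equivalent to the vanishing of this integral for every simplex and almost every admissible $\xi$.

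The final step is to identify this integral condition with $\dext(u^\# \omega_{\Sphere^n}) = 0$ as a current. For the smooth mollifications $u_\varepsilon \vcentcolon= \varphi_\varepsilon \ast u$, Stokes' formula yields
\[
\int_{\xi + \partial \Simplex^{n+1}} u_\varepsilon^\# \omega_{\Sphere^n} = \int_{\xi + \Simplex^{n+1}} \dext(u_\varepsilon^\# \omega_{\Sphere^n}),
\]
and upon passing to the limit using $\Lebesgue^{kp/n}_\loc$ convergence of the pullback forms and distributional convergence of their exterior derivatives, the boundary integral equals the duality pairing of the current $\dext(u^\# \omega_{\Sphere^n})$ with $\chi_{\xi + \Simplex^{n+1}}$. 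Averaging against a smooth mollifier $\rho$ in $\xi$ shows that $\dext(u^\# \omega_{\Sphere^n})$ annihilates every convolution $\rho \ast \chi_{\Simplex^{n+1}}$; since arbitrary smooth compactly supported test functions on $\Ball^m$ can be approximated by linear combinations of such convolutions by varying the simplex and the mollifier, this is equivalent to $\dext(u^\# \omega_{\Sphere^n}) = 0$ in the sense of currents. The converse implication uses the same Stokes identity read in the opposite direction together with the generic slicing of $u^\# \omega_{\Sphere^n}$ provided by the Fuglede framework.

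The main technical obstacle is the simultaneous passage to the limit in Stokes' formula when both the map has only Sobolev regularity and the simplex boundary is merely Lipschitz: the boundary integral must be interpreted through the generic slicing theory of Proposition~\ref{propositionIntroductionFugledeGeneric} to guarantee that $u^\# \omega_{\Sphere^n}$ restricts to a genuine integrable $n$-form on $\xi + \partial \Simplex^{n+1}$, while the bulk integral is only a distributional pairing since $\dext(u^\# \omega_{\Sphere^n})$ is in general no better than a current of negative order. A clean way to handle this is to fix a radial mollifier, use the Fuglede-genericity of almost every $\xi$ to ensure the boundary slicing survives in the limit, and verify that both sides of the Stokes identity depend continuously on $u$ in the relevant topology.
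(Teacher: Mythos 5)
Your reduction of \(\floor{kp}\)-extendability to the vanishing of the degree of \(u|_{\xi+\partial\Simplex^{n+1}}\) for every simplex and almost every translation (via Theorem~\ref{propositionSimplexLipschitz-NewBall}, the Hopf theorem, the \(\VMO\)-degree, and the generic integral representation \(\Deg(u|_{\xi+\partial\Simplex^{n+1}})=\int_{\xi+\partial\Simplex^{n+1}}u^{\#}\omega_{\Sphere^n}\)) is sound and close in spirit to what the paper does, except that the paper tests with general Fuglede maps \(\gamma\colon\cBall^{n+1}\to\Ball^m\) rather than translated simplices. The genuine gap is in your last step, the equivalence between the vanishing of these boundary integrals and \(\dext(u^{\#}\omega_{\Sphere^n})=0\) as a current, and it breaks down precisely in the codimension case \(n+1<m\) that the theorem allows. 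The current \(\dext(u^{\#}\omega_{\Sphere^n})\) acts on smooth compactly supported \((m-n-1)\)-forms, via \(\alpha\mapsto\int_{\Ball^m}u^{\#}\omega_{\Sphere^n}\wedge\dext\alpha\); when \(m>n+1\) it cannot be paired with \(\chi_{\xi+\Simplex^{n+1}}\), which is the indicator of an \((n+1)\)-dimensional, Lebesgue-null set, so "the bulk integral equals the duality pairing with \(\chi_{\xi+\Simplex^{n+1}}\)" is not defined, and "arbitrary test functions are approximated by linear combinations of \(\rho*\chi_{\Simplex^{n+1}}\)" only makes sense in the equidimensional case \(m=n+1\) (where the test objects are \(0\)-forms). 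Reducing closedness, i.e.\ the vanishing of \(\int u^{\#}\omega\wedge\dext\alpha\) for all \((m-n-1)\)-forms \(\alpha\), to integrals over \((n+1)\)-dimensional boundaries is exactly the technical core of the paper: one writes \(\alpha=\sum_I g_I\,\dif x_I\), approximates each \(g_I\) by sums of radial profiles in the transverse \((n+1)\)-dimensional planes, and uses a coarea formula for differential forms to convert the pairing into slice integrals over spheres parallel to the coordinate axes (Lemmas~\ref{lemma-coarea-forms}, \ref{lemmaIntegralFormRadial}, \ref{lemma-approx-radial-functions} and Proposition~\ref{proposition_char_closed_diff_form-Fuglede-Euclidean}). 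Your proposal replaces this by a dimensionally incoherent shortcut, so the implication "degrees vanish \(\Rightarrow\) current vanishes" is not established.

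A second, smaller issue affects your converse and your Stokes identity: the mollifications \(u_\varepsilon=\varphi_\varepsilon*u\) are not \(\Sphere^n\)-valued, so after extending \(\omega_{\Sphere^n}\) to a neighborhood one has \(\dext(u_\varepsilon^{\#}\widetilde\omega)=u_\varepsilon^{\#}\dext\widetilde\omega\neq0\) in general; the pointwise identity \(u^{\#}\dext\widetilde\omega=0\) holds only for the limit map, and its validity on a lower-dimensional slice \(\xi+\Simplex^{n+1}\), as well as the convergence of \(\int_{\xi+\Simplex^{n+1}}u_\varepsilon^{\#}\dext\widetilde\omega\), requires an additional generic-slicing argument that you only gesture at. The paper avoids this by never mollifying the map: for the direction "closed \(\Rightarrow\) extendable" it approximates the closed \(\Lebesgue^1\)-form \(u^{\#}\omega_{\Sphere^n}\) by \emph{smooth closed} forms (a nontrivial result quoted from Gol'dshtein--Troyanov/de Rham) and passes to the limit along generic spheres using a Fuglede lemma for sequences of differential forms (Propositions~\ref{propositionFugledeForms} and~\ref{proposition_char_closed_diff_form-Fuglede-converse}), then concludes with the fact that the degree classifies null-homotopic maps \(\Sphere^n\to\Sphere^n\) (Corollary~\ref{corollaryHurewiczNul}). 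As written, your argument would need to be restricted to \(m=n+1\) and still patched at the points above to be complete.
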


Since \(n \le kp < n + 1\), we have in this case \(\floor{kp} = n\).
The above result is due to Bethuel~\cite{BethuelH1} for \(k=1\) and \(p=n\), and to Demengel~\cite{Demengel} for \(k=1\), \(1\leq p<2\), and \(n=1\). 
More generally, for \(k=1\) and \(n\leq p<n+1\), a proof is outlined in~\cite{BethuelCoronDemengelHelein}, see also~\cite{Ponce-VanSchaftingenW1p}.
From an algebraic topology perspective, Theorem~\ref{propositionJacobianSphereIntro}, which we establish in Section~\ref{sectionHurewiczGeneric}, is made possible by identifying homotopy classes in \(\pi_{n} (\Sphere^n)\) through the topological degree. 
A more general framework based on the Hurewicz homomorphism is developed in Chapter~\ref{chapter-cohomological-obstructions}.

The notion of \(\floor{kp}\)-extendability can also be defined for maps \(u\in \Sobolev^{k,p}(\manfM;\manfN)\), that is, when the domain is a compact Riemannian manifold \(\manfM\) without boundary instead of the Euclidean ball \(\Ball^{m}\).{}
The difference here is that the generic Lipschitz maps \(\gamma  \colon  \Sphere^{\floor{kp}} \to \manfM\) involved in that condition must have a continuous extension to \(\cBall^{\floor{kp} + 1}\).
This choice is made to avoid interference with the topology of \(\manfM\), although this is insufficient to ensure a satisfactory generalization of Theorem~\ref{theoremhkpGreatBall} due to a global obstruction involving both \(\manfM\) and \(\manfN\).
This issue was first discovered by Hang and Lin~\cite{Hang-Lin}, see Example~\ref{exampleExtensionCP} below: When \(2 \le kp < 3\), every map in \(\Sobolev^{k, p}(\manfM; \RP^3)\) is \(2\)-extendable since \(\pi_{2}(\RP^3) \simeq \{0\}\), nevertheless
\[
\Hilbert^{k, p}(\Ball^4; \RP^3) = \Sobolev^{k, p}(\Ball^4; \RP^3)
\quad \text{and} \quad
\Hilbert^{k, p}(\RP^4; \RP^3) \subsetneqq \Sobolev^{k, p}(\RP^4; \RP^3).
\]

The counterpart of Theorem~\ref{theoremhkpGreatBall} for the manifold \(\manfM\) thus requires a more elaborate version of the extension property that involves simplicial complexes.
Given \(e \in \N\), we recall that a simplicial complex \(\cK^{e}\) is a finite collection of simplices of dimension \(e\) and their faces of dimension \(r \in \{0, \ldots, e-1\}\) in a Euclidean space such that the intersection of two elements in \(\cK^{e}\) still belongs to \(\cK^{e}\), see Definition~\ref{definitionComplex}. 
We then define for \(\ell \in \{0, \ldots, e\}\) the polytope \(K^{\ell}\) as the set of points of all simplices of dimension \(\ell\) in \(\cK^{e}\).{}
Given \(e\geq \floor{kp}+1\), we say that a map \(u \in \Sobolev^{k, p}(\manfM; \manfN)\) is \emph{\((\floor{kp}, e)\)-extendable} whenever, for each simplicial complex \(\cK^{e}\) and each generic Lipschitz map \(\gamma  \colon  K^{e} \to \manfM\), the map \(u \compose \gamma|_{K^{\floor{kp}}}\) belongs to \(\VMO(K^{\floor{kp}}; \manfN)\) and is homotopic in \(\VMO\) to the restriction \(F|_{K^{\floor{kp}}}\) of some map \(F \in \Smooth^{0}(K^{e}; \manfN)\).{}

We show in Chapter~\ref{chapterDecouple} that \(u\) is \((\floor{kp}, m)\)-extendable if and only if \(u\) is \(\floor{kp}\)-extendable and satisfies the additional global topological condition that, for a fixed triangulation \(\cT^{m}\) of \(\manfM\), the restrictions of \(u\) to generic perturbations of its \(\floor{kp}\)-dimensional skeleton \(T^{\floor{kp}}\) are homotopic in \(\VMO\) to continuous maps defined on \(\manfM\), see Theorem~\ref{propositionDecouple}.{}
	Such a condition is satisfied by every \(u \in \Sobolev^{k, p}(\manfM; \manfN)\) for example when there exists \(i \in \{0, \ldots, \floor{kp} - 1\}\) such that
	\[{}
	\pi_0 (\manfM) \simeq \ldots \simeq \pi_i (\manfM) \simeq \{0\}
	\quad \text{and} \quad \pi_{i + 1} (\manfN) \simeq \ldots \simeq \pi_{\floor{kp}} (\manfN) \simeq \{0\},
	\] 
see Corollary~\ref{corollaryHangLin} and also~\citelist{\cite{Hang-Lin}*{Corollary 5.3}\cite{Hajlasz}} in the specific case \(k=1\).

Using the notion of \((\floor{kp}, m)\)-extendability we have a counterpart of Theorem~\ref{theoremhkpGreatBall} for a manifold \(\manfM\)\,:

\begin{theorem}
\label{theoremhkpGreat}
Let \(kp < m\) and \(u \in \Sobolev^{k,p}(\manfM; \manfN)\). 
Then, \(u \in \Hilbert^{k, p}(\manfM; \manfN)\) if and only if \(u\) is \((\floor{kp}, m)\)-extendable. 
\end{theorem}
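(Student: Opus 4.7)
\emph{Proof proposal.} The argument splits into a necessity and a sufficiency direction.

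\textbf{Necessity.} Assume \(u \in \Hilbert^{k, p}(\manfM; \manfN)\) and fix a sequence \((u_j)_{j \in \N}\) in \(\Smooth^{\infty}(\manfM; \manfN)\) converging to \(u\) in \(\Sobolev^{k, p}\). Given a simplicial complex \(\cK^{e}\) with \(e \ge \floor{kp} + 1\), I would combine the Fuglede detectors associated to \(u\) and to each \(u_j\) (via the Gagliardo--Nirenberg embedding \(\Sobolev^{k, p} \cap \Lebesgue^{\infty} \subset \Sobolev^{1, kp}\) and Proposition~\ref{propositionIntroductionFugledeGeneric}) into a single summable function \(w \colon \manfM \to [0, +\infty]\), for instance via a weighted series with weights ensuring the total integral remains finite. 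For any Lipschitz \(\gamma \colon K^{e} \to \manfM\) with \(\int_{K^{e}} w \compose \gamma \dif\cH^{e} < \infty\) and with \(u_{j} \compose \gamma \to u \compose \gamma\) in \(\Sobolev^{1, kp}(K^{e}; \manfN)\), one obtains VMO convergence of the restrictions to \(K^{\floor{kp}}\). Since VMO-homotopy classes are stable under small VMO perturbations, \(u \compose \gamma|_{K^{\floor{kp}}}\) is VMO-homotopic to \((u_{j} \compose \gamma)|_{K^{\floor{kp}}}\) for \(j\) large, and the latter is the restriction of the continuous map \(F \vcentcolon= u_{j} \compose \gamma \colon K^{e} \to \manfN\). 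This certifies \((\floor{kp}, m)\)-extendability (and in fact \((\floor{kp}, e)\)-extendability for every admissible \(e\)).

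\textbf{Sufficiency.} Assume \(u\) is \((\floor{kp}, m)\)-extendable. By the decoupling theorem (Theorem~\ref{propositionDecouple}), this is equivalent to \(u\) being simultaneously \(\floor{kp}\)-extendable and satisfying the global property that, for some triangulation \(\cT^{m}\) of \(\manfM\) and generic perturbations \(\gamma\) of the inclusion of its \(\floor{kp}\)-skeleton \(T^{\floor{kp}}\), the map \(u \compose \gamma\) is VMO-homotopic to the restriction of some continuous \(F \colon \manfM \to \manfN\). The plan is to use the global datum \(F\) as a continuous scaffolding and the local datum (\(\floor{kp}\)-extendability) to fill in smoothly. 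Concretely, I would pick a sufficiently fine triangulation and a generic small translation \(\xi\) of \(T^{\floor{kp}}\), so that both the homotopy to \(F|_{\xi + T^{\floor{kp}}}\) and the \(\Sobolev^{1, kp}\)-regularity of \(u\) along \(\xi + T^{\floor{kp}}\) hold. Using the opening and thickening techniques developed earlier in the monograph around the dual \((m - \floor{kp} - 1)\)-dimensional skeleton, I would modify \(u\) into a map \(\tilde u \in \Sobolev^{k, p}(\manfM; \manfN)\) that coincides with \(F\) on a neighborhood of \(\xi + T^{\floor{kp}}\), remains \(\Sobolev^{k, p}\)-close to \(u\) as the triangulation and thickening parameters shrink, and still inherits the \(\floor{kp}\)-extendability of \(u\) on each top-dimensional cell of the associated dual decomposition.

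On each such cell --- a topological ball \(B\) --- the map \(\tilde u\) is \(\floor{kp}\)-extendable with continuous boundary values prescribed by \(F\), so Theorem~\ref{theoremhkpGreatBall} yields a smooth approximation on \(B\) that one can arrange to agree with \(F\) near \(\partial B\). Gluing the cellwise approximations against the continuous map \(F\) on the scaffolding, then applying adaptive smoothing and the shrinking procedure, produces a sequence in \(\Smooth^{\infty}(\manfM; \manfN)\) converging to \(u\) in \(\Sobolev^{k, p}\).

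\textbf{Main obstacle.} The delicate point is the compatibility of all these constructions. One must simultaneously control (i) the modification \(u \rightsquigarrow \tilde u\) so that it does not corrupt the \(\floor{kp}\)-extendability on each dual cell, (ii) the matching of cellwise smooth approximations along shared \((m - 1)\)-faces through the continuous boundary datum \(F\), and (iii) the accumulation of \(\Sobolev^{k, p}\) errors from triangulation size, opening radius, thickening width, mollification scale, and shrinking factor. The Fuglede framework is precisely what makes this orchestration tractable: a single generic choice of \(\xi\) selects a translate of the skeleton that is simultaneously good for the VMO-homotopy to \(F\), for the Sobolev regularity along the skeleton, and for the preservation of extendability under the opening construction.
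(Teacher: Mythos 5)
Your necessity argument is essentially the paper's (Corollary~\ref{corollary-strong-convergence-extendability} combined with Proposition~\ref{propositionExtensionVMOClosed}), up to one slip: the definition of \((\floor{kp}, e)\)-extendability quantifies over \emph{all} Lipschitz \(\gamma \colon K^{e} \to \manfM\) with merely \(\gamma|_{K^{\floor{kp}}} \in \Fuglede_{w}(K^{\floor{kp}}; \manfM)\), so you are not entitled to restrict to those \(\gamma\) for which \(w \compose \gamma\) is summable on all of \(K^{e}\) or for which \(u_{j}\compose\gamma \to u\compose\gamma\) in \(\Sobolev^{1,kp}(K^{e})\). This is harmless, since only \(\VMO\) convergence of the restrictions to \(K^{\floor{kp}}\) is needed and \(F \vcentcolon= u_{j}\compose\gamma\) is continuous on \(K^{e}\) regardless. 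The sufficiency direction, however, has genuine gaps.

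First, you propose to modify \(u\) so that it \emph{coincides with} the continuous scaffolding \(F\) near a generic translate of the \(\floor{kp}\)-skeleton, and then to glue cellwise smooth approximations along the shared codimension-one faces, invoking adaptive smoothing and shrinking to control the error. This cannot work as described: \(F\) carries no Sobolev bounds relative to \(u\), there is no way to interpolate between two \(\manfN\)-valued maps that are not uniformly close, and, decisively, shrinking gains the factor \(\tau^{(\floor{kp}+1-kp)/p}\) only because the modification is confined to a neighborhood of the \emph{dual} skeleton of dimension \(m-\floor{kp}-1\), i.e.\ of codimension \(\floor{kp}+1 > kp\) (Proposition~\ref{lemmaShrinkingFaceNearDualSkeletonGlobal}, Corollary~\ref{corollaryShrinkingCubication}); no analogous mechanism exists along the \((m-1)\)-dimensional interfaces where your gluing takes place, since codimension one is \(\le kp\). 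In the paper's proof of Theorem~\ref{thm_density_manifold_open}, the map \(F\) is never substituted for \(u\): the topological hypothesis only certifies, via the full-skeleton opening \(u_{\eta}^{\mathrm{fop}}\) and Proposition~\ref{propositionEllEExtensionContinuous}, that \(\Pi \compose u_{\eta}^{\mathrm{sm}}|_{S^{\floor{kp}}}\) extends continuously to \(S^{e}\), and this extension enters the construction only inside a \(\mu\eta\)-neighborhood of the dual skeleton (Proposition~\ref{propositionDensityNewExtensionSmooth}), exactly where shrinking can absorb its uncontrolled derivatives. Second, Theorem~\ref{theoremhkpGreatBall} yields smooth approximations on a ball but says nothing about prescribing boundary values or agreement near \(\partial B\); forcing approximants to match given boundary data is a trace-type problem (related to the gap phenomenon discussed at the end of Chapter~\ref{chapter-approximation-Sobolev-manifolds}) that the theorem does not solve. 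Third, reducing the compact manifold to cellwise Euclidean problems through a triangulation collides with the obstruction the paper explicitly flags for \(k \ge 2\): cells are only biLipschitz images of balls and composition with biLipschitz maps does not preserve \(\Sobolev^{k,p}\). The paper avoids this entirely by extending \(u\) to a tubular neighborhood of \(\manfM\) in \(\R^{\varkappa}\) as \(u \compose \widetilde{\Pi}\) (which preserves \((\floor{kp}, m)\)-extendability, Proposition~\ref{propositionExtensionProjection}), applying the open-set case of Theorem~\ref{thm_density_manifold_open} there, and restricting generic translates of the approximants back to \(\manfM\) (Proposition~\ref{propositionClassRGeneric}); your appeal to the decoupling Theorem~\ref{propositionDecouple} is not wrong, but it does not substitute for this step.
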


In particular, only the integer part of the product \(kp\) is relevant for the approximation of smooth maps and we deduce the following, see Corollary~\ref{corollary_h1kp_manifold}:

\begin{corollary}
\label{corollaryHkpH1kp}
Let \(kp < m\) and \(u\in \Sobolev^{k,p}(\manfM;\manfN)\). 
Then, \(u\in \Hilbert^{k,p}(\manfM;\manfN)\) if and only if \(u\in \Hilbert^{1, \floor{kp}}(\manfM;\manfN)\).
\end{corollary}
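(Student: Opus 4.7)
The plan is to apply Theorem~\ref{theoremhkpGreat} twice and to exploit the fact that the notion of \((\floor{kp}, m)\)-extendability depends on the integer \(\floor{kp}\), on the target \(\manfN\) and on the map \(u\) itself, but not on the particular Sobolev scale \((k,p)\) from which \(u\) originates.

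First, I would verify that the right-hand side of the statement is meaningful, namely that \(u \in \Sobolev^{1,\floor{kp}}(\manfM;\manfN)\) when \(\floor{kp}\ge 1\). Since \(\manfN\) is compact, \(u\) is bounded, and by the Gagliardo--Nirenberg interpolation \(\Sobolev^{k,p}\cap\Lebesgue^{\infty}\subset \Sobolev^{1,kp}\) recalled in the introduction one has \(u\in \Sobolev^{1,kp}(\manfM;\R^{\nu})\). The compactness of \(\manfM\) together with \(kp\ge \floor{kp}\) then yields the continuous inclusion \(\Sobolev^{1,kp}\subset \Sobolev^{1,\floor{kp}}\), so that \(u\in \Sobolev^{1,\floor{kp}}(\manfM;\manfN)\).

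Next, Theorem~\ref{theoremhkpGreat} applied with the exponents \((k,p)\)---admissible since \(kp<m\)---characterizes membership of \(u\) in \(\Hilbert^{k,p}(\manfM;\manfN)\) as the \((\floor{kp},m)\)-extendability of \(u\). Applied with the exponents \((1,\floor{kp})\), which are admissible because \(\floor{kp}\le kp<m\), the same theorem characterizes membership of \(u\) in \(\Hilbert^{1,\floor{kp}}(\manfM;\manfN)\) as the \((\floor{1\cdot\floor{kp}},m)=(\floor{kp},m)\)-extendability of \(u\). Because the definition of \((\ell,m)\)-extendability only involves the integer \(\ell\), the simplicial complexes \(\cK^{m}\), the generic Lipschitz maps \(\gamma \colon K^{m}\to \manfM\), and the \(\VMO\)-homotopy of \(u\compose \gamma|_{K^{\ell}}\) to a trace of a continuous extension---none of which refer to the ambient Sobolev norm---both characterizations reduce to the very same property of \(u\), and the biconditional follows at once.

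The only potential obstacle is the degenerate range \(\floor{kp}=0\), i.e.\ \(kp<1\), where \(\Sobolev^{1,\floor{kp}}\) is not a standard space; but in that range the \((0,m)\)-extendability condition is vacuous (any generic \(\gamma\) has connected, contractible components in the sense required), and Theorem~\ref{theoremhkpGreat} forces \(\Hilbert^{k,p}(\manfM;\manfN)=\Sobolev^{k,p}(\manfM;\manfN)\), so the equivalence becomes a tautology once one agrees on a natural interpretation of the right-hand side. Apart from this essentially notational point, the main content of the corollary is precisely that both sides are captured by the \emph{same} generic topological screening condition, which is exactly the content of Theorem~\ref{theoremhkpGreat}.
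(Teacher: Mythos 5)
Your proof is correct and follows essentially the same route as the paper, which establishes this corollary (as Corollary~\ref{corollary_h1kp_manifold}) by applying the general density theorem (Theorem~\ref{thm_density_manifold_open} with \(e = m\), i.e.\ Theorem~\ref{theoremhkpGreat}) to both pairs of exponents \((k,p)\) and \((1,\floor{kp})\), after noting via Gagliardo--Nirenberg that \(u \in \Sobolev^{1,\floor{kp}}(\manfM;\manfN)\). Your worry about the range \(\floor{kp}=0\) is moot, since \(k\ge 1\) and \(p\ge 1\) force \(kp\ge 1\).
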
 

The proof of the direct implication is straightforward and follows from the inclusion \(\Hilbert^{k,p}(\manfM;\manfN) \subset\Hilbert^{1, kp}(\manfM;\manfN)\), which in turn is a simple consequence of the Gagliardo-Nirenberg interpolation inequality. 
The converse is far more subtle. 
Indeed, for every \(u\in (\Sobolev^{k,p} \cap \Hilbert^{1, \floor{kp}})(\manfM;\manfN)\), there exists a sequence of smooth maps \((u_j)_{j\in \N}\) that converges to \(u\) in \(\Sobolev^{1, \floor{kp}}(\manfM;\manfN)\) but there is no obvious way to modify \(u_j\) and improve the \(\Sobolev^{1, \floor{kp}}\)~convergence to \(\Sobolev^{k,p}\)~convergence. 
As a simple consequence of Corollary~\ref{corollaryHkpH1kp}, let us consider the case \(k=1\) and two exponents \(p\leq q\) which have the same integer parts. Then, a map \(u\in \Sobolev^{1,q}(\manfM;\manfN)\) can be approximated by smooth maps, provided that such an approximation holds for the weaker \(\Sobolev^{1,p}\)~norm. 

In the absence of topological assumptions on \(u\), it is possible to approximate it by maps that are smooth except for a small singular set.
For this purpose, we define for \(i\in \{0, \dots, m-1\}\) the set \(\ClassR_i(\manfM; \manfN)\) containing all maps \(u  \colon  \manfM \to \manfN\) that are smooth on \(\manfM\setminus T^{i}\), where \(T^{i}\) is a \emph{structured singular set of rank \(i\)} in the sense of Definition~\ref{defnStructuredSingularSet}, such that
\[
\abs{D^ju(x)}\leq \frac{C}{(d(x, T^{i}))^j}
\]
for some constant \(C\geq 0\) depending on \(u\) and \(j\).{}
One verifies that
\[{}
\ClassR_i(\manfM; \manfN) \subset \Sobolev^{k, p}(\manfM; \manfN)
\quad \text{for \(i < m - kp\)}
\]
and \(i = m - \floor{kp} - 1\) is the largest integer for which such an inclusion holds.
We then prove the following

\begin{theorem}
	\label{theoremIntroductionClassR}
	Let \(kp < m\).{}
	Then, \(\ClassR_{m - \floor{kp} - 1}(\manfM; \manfN)\) is dense in \(\Sobolev^{k, p}(\manfM; \manfN)\).
\end{theorem}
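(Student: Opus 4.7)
The plan is to build, for each $\epsilon > 0$, a map $u_\epsilon \in \ClassR_{m-\floor{kp}-1}(\manfM; \manfN)$ with $d_{\Sobolev^{k,p}}(u_\epsilon, u) < \epsilon$, by executing on a fine generic triangulation the scheme of opening, adaptive smoothing, thickening, and shrinking announced in the abstract. The target singular set is of codimension $\floor{kp} + 1$, and the whole construction hinges on the strict inequality $\floor{kp} + 1 > kp$ coming from the definition of the floor function.

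First, I fix a triangulation $\cT^m$ of $\manfM$ at small scale $\eta > 0$; let $T^\ell$ denote its $\ell$-skeleton and $L^{m-\floor{kp}-1}$ the dual $(m-\floor{kp}-1)$-skeleton obtained by joining barycenters of complementary faces. Using Proposition~\ref{propositionIntroductionFugledeGeneric} together with the Gagliardo-Nirenberg interpolation $\Sobolev^{k,p} \cap \Lebesgue^\infty \subset \Sobolev^{1,kp}$ (available because $\manfN$ is compact), I perturb $\cT^m$ by a Fuglede-generic translation so that the restrictions of $u$ to each $\floor{kp}$-face are controlled in $\Sobolev^{1,kp}$. Since $kp \ge \floor{kp}$, each such restriction is then VMO, with boundary traces on the common lower-dimensional faces compatible.

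Second, I apply the opening construction in a tubular neighborhood of $L^{m-\floor{kp}-1}$ to replace $u$ by a map that is constant in the directions transverse to $T^{\floor{kp}}$; adaptive smoothing with a mollification scale proportional to $d(\cdot, L^{m-\floor{kp}-1})$ then produces a smooth $\R^\nu$-valued map $\tilde u$ off the dual skeleton. The VMO-uniform-convergence result (Lemma~\ref{lemmaVMOUniformConvergence}) guarantees that $d(\tilde u, \manfN)$ is uniformly small on $\manfM \setminus L^{m-\floor{kp}-1}$, so nearest-point projection onto $\manfN$ yields a smooth $\manfN$-valued map defined on a neighborhood of $T^{\floor{kp}}$. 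Thickening then extends this map homogeneously through each dual cell via the canonical radial retraction of an $m$-simplex minus its barycenter onto its boundary (applied inductively downward from top dimension). The resulting map $u_\epsilon$ is smooth on $\manfM \setminus L^{m-\floor{kp}-1}$, and by the homogeneity of the extension satisfies $\abs{D^j u_\epsilon(x)} \le C/d(x, L^{m-\floor{kp}-1})^j$. Since $L^{m-\floor{kp}-1}$ is a structured singular set of rank $m-\floor{kp}-1$, this places $u_\epsilon$ in $\ClassR_{m-\floor{kp}-1}(\manfM; \manfN)$.

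The main obstacle is the quantitative $\Sobolev^{k,p}$ accounting. The transverse radial profile near a codimension-$(\floor{kp}+1)$ set produces a power $\int r^{-jp}\, r^{\floor{kp}}\, \mathrm{d}r$ whose finiteness is ensured precisely by $jp \le kp < \floor{kp}+1$ for every $j \in \{1,\dots,k\}$; this is the borderline at which the rank $m - \floor{kp} - 1$ is the \emph{largest} admissible. To conclude the convergence $u_\epsilon \to u$ in $\Sobolev^{k,p}$ as $\eta \to 0$, one must match the adaptive smoothing scale to the thickening radius and to the cell size, while summing errors over all simplices. Here the Fuglede-generic choice of the translated triangulation is crucial: it enforces compatibility of the VMO traces on common $\floor{kp}$-faces of adjacent cells, so that opening and thickening can be performed without creating uncontrolled jumps, and the total Sobolev error collapses to zero as $\eta$ shrinks.
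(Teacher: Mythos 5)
There is a genuine gap, and it sits at the core of the argument: you have dropped the good cube/bad cube dichotomy, and without it neither your projection step nor your convergence claim survives. Your assertion that Lemma~\ref{lemmaVMOUniformConvergence} makes \(d(\tilde u, \manfN)\) uniformly small off the dual skeleton is unjustified: for \(kp < m\), a map in \(\Sobolev^{1,kp}\) is \emph{not} VMO on \(m\)-dimensional balls, and opening only modifies \(u\) in an \(O(\rho\eta)\)-neighborhood of the \(\floor{kp}\)-skeleton (it is performed around the skeleton, not around its dual, as you write). Mollifying at a scale \(t\) comparable to \(d(\cdot, L^{m-\floor{kp}-1})\) gives, via Poincaré, only \(d(\varphi_t * u(x), \manfN) \lesssim t^{1 - m/kp}\,\norm{Du}_{\Lebesgue^{kp}(B_t(x))}\), which is small exactly where the rescaled energy at scale \(t\) is small. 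On the cells where energy concentrates --- the bad cubes of \eqref{eqJEMSGoodBall}, whose existence is precisely the obstruction --- this quantity is of order one, the smoothed map leaves the tubular neighborhood \(\manfN + B_\iota^\nu\), and the nearest point projection is undefined. The paper copes with this by mollifying at scale \(\sim\eta\) only on good cubes, at a second, much smaller scale near the opened skeleton inside bad cubes (the two-scale function \(\psi_\eta\)), and by discarding the interior values of bad cubes altogether through thickening.

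Independently, extending ``homogeneously through each dual cell'', i.e.\ thickening on \emph{every} simplex, destroys strong convergence. If \(v_\eta\) is obtained by composing with the retraction of every \(m\)-cell onto (a neighborhood of) its \(\floor{kp}\)-skeleton, then already for a smooth nonconstant \(u\) one has \(\abs{Dv_\eta - Du}\) of order one on a fixed fraction of every cell, so \(\norm{Dv_\eta - Du}_{\Lebesgue^{p}}\) does not tend to zero as \(\eta \to 0\); the claim that ``the total Sobolev error collapses to zero'' after summing over all simplices is false. In the paper the homogenization error vanishes only because thickening is confined to the bad cubes, whose total measure is \(O\bigl(\eta^{kp}\norm{Du}_{\Lebesgue^{kp}}^{kp}\bigr)\) by \eqref{eq-claim_measure_bad}, so that the error terms disappear by equi-integrability (Lemma~\ref{lemmaOpeningApproximation}), while on good cubes the map is kept equal to the projected mollification of \(u\), which does converge. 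Your remaining ingredients --- the pointwise bound \(\abs{D^j u_\epsilon} \le C/d(\cdot, T^{m-\floor{kp}-1})^j\) with integrability from \(kp < \floor{kp}+1\), and a genericity argument to pass from a neighborhood to \(\manfM\) (the paper uses the projection \(\widetilde\Pi\) and generic translations in \(\R^\varkappa\) rather than a ``translated triangulation'' of \(\manfM\), which has no intrinsic meaning) --- are consistent with the paper, but the missing dichotomy makes the central construction fail.
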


Theorem~\ref{theoremIntroductionClassR} was known for \(k = 1\) \cites{Bethuel,Hang-Lin} and also for \(k \ge 2\) when \(\manfM\) is replaced by \(\Ball^{m}\) \cite{BPVS_MO}. 
When \(k=1\), it is easy to deduce an approximation result for Sobolev maps defined on a manifold \(\manfM\) from the analogous result when the maps are defined in an open subset \(\Omega\) of \(\R^m\). 
Indeed, this essentially follows from the existence of a triangulation for \(\manfM\) together with the invariance of \(\Sobolev^{1,p}(\Omega)\) through composition with biLipschitz homeomorphisms.

When \(k\geq 2\), the rigidity of the space \(\Sobolev^{k,p}\) prevents one from using such compositions with merely Lipschitz maps and it is necessary to introduce a new approach that relies on the \((\floor{kp}, e)\)-extendability property for any \(e \in \{\floor{kp} + 1, \dots, m\}\). 
The counterpart of Theorem~\ref{theoremIntroductionClassR} under this extendability property is given by the following statement:

\begin{theorem}\label{thm_density_manifold_open_introduction}
Let \(kp < m\) and \(e \in \{\floor{kp} + 1, \dots, m\}\). 
Then, \(u \in \Sobolev^{k, p}(\manfM; \manfN)\) is \((\floor{kp}, e)\)-extendable if and only if 
there exists a sequence \((u_j)_{j\in \N}\) in \(\ClassR_{m - e - 1}(\manfM; \manfN)\) such that
\[{}
u_{j} \to u 
\quad \text{in \(\Sobolev^{k, p}(\manfM; \manfN)\).}
\]
\end{theorem}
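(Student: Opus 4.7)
The proof splits naturally into the two implications, with the reverse direction (density implies extendability) being the easier one and the direct direction (extendability implies density) requiring the full opening/thickening/adaptive smoothing/shrinking machinery developed in earlier chapters. Both sides exploit the decomposition of \(\manfM\), via a fine triangulation of mesh \(\eta \to 0\), into an \(e\)-dimensional skeleton \(K^e\) and a dual \((m-e-1)\)-dimensional skeleton \(L^{m-e-1}\).

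For the reverse direction, the first step is to verify that any \(v \in \ClassR_{m-e-1}(\manfM;\manfN)\) is itself \((\floor{kp}, e)\)-extendable: denoting its structured singular set by \(T^{m-e-1}\), the detector
\[
w(x) = \frac{1}{d(x, T^{m-e-1})^{e}}
\]
is integrable on \(\manfM\), and any Fuglede map \(\gamma \colon K^e \to \manfM\) associated with \(w\) has image disjoint from \(T^{m-e-1}\), so that \(F \vcentcolon= v \compose \gamma \in \Smooth^{0}(K^{e}; \manfN)\) is a continuous extension of \(v \compose \gamma|_{K^{\floor{kp}}}\). Passing to the limit is then an application of the Fuglede framework: given \(u_j \to u\) in \(\Sobolev^{k,p}\) with \(u_j \in \ClassR_{m-e-1}\), assembling a common detector from \(\sum_{j} 2^{-j} w_j\) together with the Sobolev-type detector built from Proposition~\ref{propositionIntroductionFugledeGeneric} applied to the convergence of \(u_j\) to \(u\) in \(\Sobolev^{1,kp}\) (via the Gagliardo--Nirenberg interpolation mentioned in the introduction), one obtains \(u_j \compose \gamma|_{K^{\floor{kp}}} \to u \compose \gamma|_{K^{\floor{kp}}}\) in \(\Sobolev^{1,kp}\), hence in \(\VMO\), for every Fuglede map \(\gamma\); the Brezis--Nirenberg stability of \(\VMO\)-homotopy classes then transfers the extension \(F_j\) of \(u_j\) to one of \(u\).

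For the direct direction, given \(u\) that is \((\floor{kp}, e)\)-extendable, my plan is to construct approximants \(u_\eta \in \ClassR_{m-e-1}(\manfM;\manfN)\) as follows. Take a generic translate \(K^e_\xi\) of the \(e\)-skeleton of \(\cT^m\) and its dual \(L^{m-e-1}_\xi\), chosen so that the inclusion \(\iota \colon K^e_\xi \hookrightarrow \manfM\) is a Fuglede map for \(u\); extendability then provides a continuous map \(F \colon K^e_\xi \to \manfN\) with \(F|_{K^{\floor{kp}}_\xi}\) \(\VMO\)-homotopic to \(u \compose \iota|_{K^{\floor{kp}}_\xi}\). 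I would then (i) \emph{open} \(u\) in the directions normal to \(L^{m-e-1}_\xi\) so that the resulting map depends only on the projection onto a tubular neighborhood of \(K^e_\xi\); (ii) along this neighborhood, replace the trace on \(K^e_\xi\) by a smooth approximation of \(F\) produced from the \(\VMO\)-homotopy via adaptive smoothing; (iii) \emph{thicken} this modification into a full neighborhood of \(K^e_\xi\) that is smooth off \(L^{m-e-1}_\xi\); (iv) glue with \(u\) outside this neighborhood via \emph{shrinking}. Tuning the scales of these operations against \(\eta\) as in the preceding chapters yields \(u_\eta \to u\) in \(\Sobolev^{k,p}\) while confining the singular set to rank \(m-e-1\).

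The main obstacle is in steps~(ii)--(iii) of the direct direction: the continuous map \(F\) on \(K^e_\xi\) provided by extendability is only \(\VMO\)-homotopic, not \(\Sobolev^{k,p}\)-close, to the trace of \(u\), so producing a smooth interpolant on the thickened neighborhood that is simultaneously close to \(u\) in \(\Sobolev^{k,p}\) and smooth off \(L^{m-e-1}_\xi\) requires quantitative versions of opening, thickening, and adaptive smoothing compatible with the \(\Sobolev^{k,p}\) scale rather than merely the \(\Sobolev^{1,kp}\) scale. This higher-order compatibility, invisible at the \(k=1\) level, is precisely where the earlier chapters on opening, thickening, and shrinking supply the crucial quantitative estimates.
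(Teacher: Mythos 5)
Your reverse direction contains a gap at the very first step. You claim that any \(v \in \ClassR_{m-e-1}(\manfM;\manfN)\) is \((\floor{kp},e)\)-extendable because ``any Fuglede map \(\gamma \colon K^e \to \manfM\) associated with \(w\) has image disjoint from \(T^{m-e-1}\)'', but Definition~\ref{definitionExtensionVMO} only requires the \emph{restriction} \(\gamma|_{K^{\floor{kp}}}\) to be a Fuglede map for the detector; on \(K^e \setminus K^{\floor{kp}}\) the map \(\gamma\) is an arbitrary Lipschitz map and may perfectly well meet the singular set, so \(v \compose \gamma\) need not be continuous on \(K^e\) and cannot serve directly as the extension \(F\). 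The paper closes exactly this gap by showing that \(\ClassR_{m-e-1}\)~maps belong to \(\VMO^{e}\) (Proposition~\ref{proposition_Extension_Property_R_0-New}) and then invoking the consistency result \(\VMO^{e} \Rightarrow (\ell,e)\)-extendability (Corollary~\ref{propositionVMOlExtension}), whose proof rests on the \(\VMO^{\#}\) restriction theory and transversal perturbations of \(\gamma\); that step is genuinely nontrivial and is absent from your argument. (Your limiting argument with a combined detector and stability of \(\VMO\)-homotopy classes is essentially the paper's Proposition~\ref{propositionExtensionVMOClosed}, modulo normalizing the series of detectors and passing to a subsequence.)

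In the direct direction, the ``main obstacle'' you name at the end is precisely where your scheme (i)--(iv) fails, and it is not repaired by quantitative versions of opening, thickening and smoothing: since the continuous map \(F\) is only \(\VMO\)-homotopic to \(u\) on the \(\floor{kp}\)-skeleton, any construction that \emph{replaces the trace of \(u\) near \(K^e_\xi\) by a smoothing of \(F\)} produces a map that can be far from \(u\) in \(\Sobolev^{k,p}\) on a region of fixed size, so no tuning of scales makes it converge. The paper's mechanism is structurally different: one first builds an \(\ClassR_{\ell^*}\)~map \(\Pi \compose u^{\mathrm{th}}_\eta\) (with \(\ell = \floor{kp}\)) that is already \(\Sobolev^{k,p}\)-close to \(u\) by opening near the \(\ell\)-skeleton of the bad cubes, adaptive smoothing and thickening; the extendability hypothesis is then used only \emph{qualitatively}, via a second opening \(u^{\mathrm{fop}}_\eta\) around the full \(\ell\)-skeleton and the \(\VMO^{\ell}\)-homotopy extension property (Proposition~\ref{propositionEllEExtensionContinuous}), to replace this map near the dual skeleton \(T^{\ell^*}\) by one smooth off \(T^{e^*}\) with no energy control at all; finally, shrinking confines that uncontrolled modification to an arbitrarily small neighborhood of \(T^{\ell^*}\), whose \(\Sobolev^{k,p}\) contribution vanishes because \(\floor{kp}+1 > kp\). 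In addition, for the manifold statement you cannot simply run this machinery on a triangulation of \(\manfM\): for \(k \ge 2\) the constructions do not transplant through merely biLipschitz charts, and the paper instead lifts \(u\) to \(u \compose \widetilde{\Pi}\) on a Euclidean neighborhood of \(\manfM\), applies the open-set theorem there, and recovers approximants on \(\manfM\) by generic translations and a transversality argument (Propositions~\ref{propositionExtensionProjection} and~\ref{propositionClassRGeneric}); this reduction is missing from your plan.
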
 

We adopt the convention \(\ClassR_{-1} = \Smooth^{\infty}\) to seamlessly include the case \(e = m\), which corresponds to Theorem~\ref{theoremhkpGreat}. Theorems~\ref{theoremhkpGreat}, \ref{theoremIntroductionClassR}, and~\ref{thm_density_manifold_open_introduction} are all included within Theorem~\ref{thm_density_manifold_open}, which includes the density result for bounded Lipschitz open sets \(\Omega \subset \R^m\) in place of compact manifolds \(\manfM\).

The proof of Theorem~\ref{thm_density_manifold_open} illustrates the versatility of \((\floor{kp}, e)\)-extendability. It is first established for domains \(\Omega \subset \R^m\), where the approximation techniques are more directly implemented. To extend the result to compact manifolds \(\manfM\), we take advantage of the fact that \(\manfM\) retracts smoothly onto itself within its ambient space \(\R^{\varkappa}\). This is achieved using the smooth projection \(\widetilde{\Pi}\) from a tubular neighborhood \(\manfM + B_\iota^\varkappa\) onto \(\manfM\).
Given a Sobolev map \(u \in \Sobolev^{k,p}(\manfM; \manfN)\), we define \(v \vcentcolon= u \circ \widetilde{\Pi}\) on \(\manfM + B_\iota^\varkappa\). Since \(v\) inherits the same extendability properties as \(u\), a suitable approximation for \(u\) can then be obtained by restricting an approximation of \(v\) to \(\manfM\).

In the specific case \(e=\floor{kp}+1\), the
\((\floor{kp}, e)\)-extendability
reduces to the \(\floor{kp}\)-extendability. 
In view of Theorem~\ref{theoremhkpGreatBall}, the latter is equivalent to the approximability by smooth maps when \(\manfM\) is replaced by the ball \(\Ball^m\). 
Interestingly, even when the domain is a manifold \(\manfM\), the \(\floor{kp}\)-extendability has a local character, in the sense that it is satisfied if and only if it holds on every open set of a finite covering \((U_i)_{i \in I}\) of \(\manfM\), see Lemma~\ref{proposition_Local_Charts}. 
This implies that a map \(u\in \Sobolev^{k,p}(\manfM;\manfN)\) satisfies the \(\floor{kp}\)-extendability if and only if, for every \(i\in I\), the restriction \(u|_{U_i}\) can be approximated by a sequence in \(\ClassR_{m-\floor{kp}-2}(U_i; \manfN)\).

The general Theorem~\ref{thm_density_manifold_open} also has some consequences on the weak density problem that we summarize in Corollaries~\ref{corollaryWeak1} and~\ref{corollaryWeak2}.
To begin with, let us introduce the set \(\Hilbert\weak^{k,p}(\Ball^m;\manfN)\) of all maps \(u\in \Sobolev^{k,p}(\Ball^m;\manfN)\) for which there is a sequence \((u_j)_{j \in \N}\) in \(\Smooth^{\infty}(\cBall^m;\manfN)\) that converges strongly to \(u\) in \(\Lebesgue^p(\Ball^m;\manfN)\) and is bounded in \(\Sobolev^{k,p}(\Ball^m; \manfN)\). 
A counterpart of Problem~\ref{question-2} to this setting is the following:

\begin{question}\label{question-3}
Identify all maps in \(\Hilbert\weak^{k,p}(\Ball^m; \manfN)\).
\end{question}

When \(kp\not\in \N\) or when \(kp\in \N\) and \(\pi_{kp}(\manfN)\simeq \{0\}\), we show  in Chapter~\ref{chapter-approximation-Sobolev-manifolds} that
\[
\Hilbert^{k,p}\weak(\Ball^m;\manfN) = \Hilbert^{k,p}(\Ball^m;\manfN),
\]
thus generalizing the case \(k = 1\) originally due to Bethuel~\cite{Bethuel}.
When \(kp\in \N\) and \(\pi_{kp}(\manfN) \not\simeq \{0\}\), by the obstruction to the strong approximation problem we have
\[
\Hilbert^{k,p}(\Ball^m;\manfN) \neq \Sobolev^{k, p}(\Ball^m; \manfN).
\]
Nevertheless, it is known that
\begin{equation*}
\Hilbert^{k,p}\weak(\Ball^m;\manfN) = \Sobolev^{k,p}(\Ball^m;\manfN),
\end{equation*}
provided that \(\manfN\) satisfies 
\[
\pi_0(\manfN) \simeq\ldots \simeq \pi_{kp-1}(\manfN)\simeq \{0\},
\]
see \cites{Hang-W11, Hajlasz, BPVS:2013, DetailleVanSchaftingen}.
Without further assumptions on the manifold \(\manfN\), the general case where \(kp\) is an integer is not fully understood.
Remarkable recent works by Detaille and Van~Schaftingen~\cite{DetailleVanSchaftingen} and by Bethuel~\cite{Bethuel-2020} show that weak density fails for \(k = 1\) and \(p = 2, 3\).
For example, when \(p = 3\) one has 
\[
\Hilbert\weak^{1,3}(\Ball^4;\Sphere^2)\not= \Sobolev^{1, 3}(\Ball^4;\Sphere^2),
\]
The counterexamples suggest that the weak sequential density of smooth functions does not only involve the topology of \(\manfN\), but also depends on purely analytical phenomena related to the energy necessary to create a topological singularity.

Beyond the consequences of Theorem~\ref{thm_density_manifold_open} itself, we believe that the tools used in its proof have suitable flexibility to be exploited in other settings. This is particularly the case for the following techniques: \emph{opening}, \emph{adaptative smoothing} and \emph{thickening} for which we provide a detailed and self-contained description in Chapter~\ref{chapter-approximation-Sobolev-manifolds}.  
In the framework of \emph{fractional} Sobolev spaces,  nonlocal extensions of these tools have been obtained in \cite{Detaille} to establish the analogue of Theorem~\ref{theoremIntroductionBethuel} in \(\Sobolev^{s,p}\) with \(s>0\). 
The density of bounded maps in \(\Sobolev^{1,p}(\Ball^m;\manfN)\) when \(\manfN\) is a complete, not necessarily compact, manifold that has been investigated in \cite{BPVS:2017} also exploits the opening and adaptative smoothing techniques.
The thickening operation is involved in the regularity theory of polyharmonic maps, see \cite{Gastel-2016}. 

\medskip

Let us briefly outline the structure of the manuscript leading to the proof of the main approximation theorems, focusing on the interplay between local and global topological properties:

\subsection*{Chapter~\ref{chapterFuglede}. Genericity and Fuglede Maps}
We introduce the concept of genericity using Fuglede maps and establish fundamental results on the stability of Sobolev functions under generic compositions, as in Proposition~\ref{propositionIntroductionFugledeGeneric}.
We also revisit the opening technique introduced by Brezis and Li~\cite{Brezis-Li}, further developed in \cite{BPVS_MO}, and adapt it to our new setting. We then further exploit it in Chapter~\ref{chapter-approximation-Sobolev-manifolds} to prove the approximation of Sobolev maps.

\subsection*{Chapter~\ref{chapter-3-Detectors}. VMO-Detectors}
Given the critical imbedding of \(\Sobolev^{1,p}\) into \(\VMO\) when the dimension of the domain is \(\floor{p}\), we introduce \(\VMO\)-detectors associated with a given Sobolev map \(u\). These allow us to identify Fuglede maps \(\gamma\) for which the composition \(u \compose \gamma\) remains in \(\VMO\). We are thus led to enlarge the functional framework to \(\VMO^\ell\) maps, which enjoy good stability properties under composition with Fuglede maps and provide the natural setting for our main results on the extendability of maps.

\subsection*{Chapter~\ref{section_VMO}. Topology for \boldmath\(\VMO\) Maps}
Building on the work of Brezis and Nirenberg~\cite{BrezisNirenberg1995}, we extend classical topological concepts to \(\VMO\) maps. We introduce and analyze homotopy in the \(\VMO\) framework, showing its stability under \(\VMO\) convergence. We investigate in particular the notion of homotopy in the \(\VMO\) sense, and its stability with respect to \(\VMO\) convergence. This chapter plays a crucial role in linking analytical properties with topological invariants.

\subsection*{Chapter~\ref{chapterGenericEllExtension}. \boldmath\(\ell\)-Extendability}
This chapter explores the notion of extendability using Fuglede maps on \(\Sphere^\ell\) or the boundary \(\partial\Simplex^{\ell + 1}\) of a simplex. We examine its implications for Sobolev and \(\VMO^\ell\) maps. 
A key result states that if \(u\) is \(\ell\)-extendable and two Fuglede maps \(\gamma_0\) and \(\gamma_1\) are homotopic, then \(u \circ \gamma_0\) and \(u \circ \gamma_1\) remain homotopic. This is reminiscent of the \(\ell\)-homotopy type introduced by White~\cites{White, White-1986}. Our approach relies on transversal perturbations of the identity, inspired by the work of Hang and Lin~\cite{Hang-Lin}.

\subsection*{Chapter~\ref{chapterExtensionGeneral}. \boldmath$(\ell, e)$-Extendability}
Expanding upon the framework of \(\ell\)-extendability, we introduce the more general notion of \((\ell, e)\)-extendability, defined via Fuglede maps on simplicial complexes. This concept extends \(\ell\)-extendability when \(e \geq \ell + 1\).
We establish that every map in \(\Hilbert^{k, p}\) is \((\ell, e)\)-extendable with \(\ell = \floor{kp}\) and \(e = m\), highlighting the significance of this property for approximation results.

\subsection*{Chapter~\ref{chapter-approximation-Sobolev-manifolds}. Approximation of Sobolev Maps}
In this final chapter, we adapt and refine the approximation techniques from our previous work \cite{BPVS_MO} to the new setting of \((\ell, e)\)-extendability. 
The classical density result for \(\Sobolev^{k, p}(\Ball^m; \manfN)\) when \(\pi_{\floor{kp}}(\manfN) \simeq \{0\}\) is no longer applicable, requiring new tools.
We revisit and extend the opening, thickening, and shrinking methods to accommodate cases where topological obstructions prevent a smooth approximation.
This allows us to establish Theorem~\ref{thm_density_manifold_open}, first for open sets and then for compact manifolds, a case previously out of reach in~\cite{BPVS_MO}.

\subsection*{Additional Insights and Applications}
Beyond the core chapters, more insights are developed in Chapters~\ref{chapterExtendability}, \ref{chapter-cohomological-obstructions}, and~\ref{chapterDecouple}:
\begin{description}
    \item[Chapter~\ref{chapterExtendability}] introduces simplified criteria for verifying \(\ell\)-extendability, using Fubini-type properties based on almost everywhere restrictions of a detector.
    \item[Chapter~\ref{chapter-cohomological-obstructions}] explores the cohomological obstructions to extendability, establishing links with topological invariants such as the distributional Jacobian.
    \item[Chapter~\ref{chapterDecouple}] investigates the distinction between the local nature of \(\ell\)-ex\-tend\-a\-bil\-i\-ty and the global behavior of \((\ell, e)\)-extendability, clarifying their interaction.
\end{description}

\bigskip

\subsubsection*{Notational Conventions.}
We systematically use the letter \(\manfV\) to denote a Riemannian manifold of dimension \(m\).  
By a \emph{Riemannian manifold}, we mean either a Lipschitz open subset \(\Omega \subset \R^m\) or a compact Riemannian manifold \(\manfM\) without boundary.  
In the latter case, we consider the standard measure associated with the Riemannian metric to define the Lebesgue space \(\Lebesgue^p\), but for simplicity, we omit explicit mention of this measure in integrals.  
A more comprehensive list of notational conventions is provided in the \textbf{Notation} chapter at the end of the monograph.

\cleardoublepage

\cleardoublepage
\chapter{Fuglede maps}
\label{chapterFuglede}

The restriction of a \(\Sobolev^{1,p}\) function \(u \colon \R^m \to \R\) to generic \(\ell\)-dimensional planes exhibits remarkable regularity properties: For almost every \(\ell\)-dimensional plane \(P + \xi\), with \(\xi \in \R^m\), the restriction \(u|_{P + \xi}\) is \(\Sobolev^{1,p}\) with respect to the \(\ell\)-dimensional Hausdorff measure \(\cH^{\ell}\). 
Here, genericity refers to the property that holds for almost every translation \(\xi\), and can be formulated more robustly in terms of composition: For almost every \(\xi \in \R^m\), the map \(\gamma \colon x \in P \mapsto x + \xi \in \R^m\) satisfies \(u \circ \gamma \in \Sobolev^{1,p}(P)\). 
By expressing this regularity through compositions, we fix the domain \(P\) and focus on various choices of \(\gamma\) within a given functional class.

In this chapter, we generalize this composition framework to generic Lipschitz maps \(\gamma \colon \manfA \to \R^m\), where \(\manfA\) is an \(\ell\)-dimensional Riemannian manifold. 
To describe genericity in this setting, we introduce a summable function \(w \colon \R^m \to [0, +\infty]\), which serves as a screening mechanism to identify suitable maps \(\gamma\).
The fundamental message here is that it is always possible to find a good choice of \(w\) which implies that \emph{the composition \(u \circ \gamma\) is \(\Sobolev^{1,p}\) whenever \(w \circ \gamma\) is summable.}\/
This framework is particularly powerful as, once \(u\) is given, the same function \(w\) applies universally to detect admissible maps \(\gamma\), regardless of the choice of \(\manfA\) or its dimension \(\ell\).

This formulation of genericity might seem overly permissive at first, but it is more structured than it appears. For instance, if a sequence of functions \((u_j)_{j \in \N}\) converges to \(u\) sufficiently quickly in \(\Sobolev^{1,p}\), then there exists a summable function \(w\) such that the compositions \(u_j \circ \gamma\) converge to \(u \circ \gamma\) in \(\Sobolev^{1,p}\) whenever \(w \compose \gamma\) is summable.

To develop the tools necessary for these results, we begin by examining the stability properties of compositions in the simpler setting of Lebesgue spaces \(\Lebesgue^p\). Given \(u \in \Lebesgue^p\), we study the behavior of \(u \circ \gamma\) for generic Borel maps \(\gamma \colon X \to \R^m\) defined on a metric measure space \(X\). 
Two types of stability results are established: One for sequences \((u_j)_{j \in \N}\) in \(\Lebesgue^p(X)\) converging to \(u\), and another for sequences of Borel maps \((\gamma_j)_{j \in \N}\) converging pointwise to \(\gamma\).

In the Sobolev framework, stability under \((\gamma_j)_{j \in \N}\) is more delicate. EquiLipschitz continuity of the sequence \((\gamma_j)_{j \in \N}\) is insufficient to guarantee that \((u \circ \gamma_j)_{j \in \N}\) converges generically to \(u \circ \gamma\) in \(\Sobolev^{1,p}\). However, we establish that uniform convergence occurs when the domain of the maps \(\gamma_j\) is one-dimensional.

\section{Composition with Lebesgue functions}

A standard property from measure theory asserts that every convergent sequence in \(\Lebesgue^{p}\) with \(1 \le p < \infty\) has a subsequence that converges almost everywhere.
We show how such a property can be seen as a particular case involving generic composition with Borel measurable maps \(\gamma\).{}
A fundamental tool that we explore in this work is the existence of a summable function \(w\) that is capable of detecting maps \(\gamma\) that preserve \(\Lebesgue^{p}\) convergence under composition.
The summability of \(w\) is essential as it ensures the genericity of such maps.

\begin{proposition}
\label{lemmaModulusLebesguesequence}
Given a measurable function \(u  \colon  \manfV \to \R\), let \((u_j)_{j \in \N}\) be a sequence of measurable functions in \(\Lebesgue^{p}(\manfV)\) such that
\[{}
u_{j} \to u \quad \text{in \(\Lebesgue^{p}(\manfV)\).}
\]
Then, there exist a summable function \(w  \colon  \manfV \to [0, +\infty]\) and a subsequence \((u_{j_i})_{i\in \N}\)
such that, for every metric measure space \((X, d, \mu)\) and every measurable map \(\gamma  \colon  X \to \manfV\) such that \(w \compose \gamma \in \Lebesgue^{1}(X)\), we have that \(  u \compose \gamma \) and \((u_{j_i} \compose \gamma)_{i\in \N}\) are contained in \(\Lebesgue^{p}(X)\) and satisfy
\[{}
u_{j_i} \compose \gamma \to u \compose \gamma{}
\quad \text{in \(\Lebesgue^{p}(X)\).}
\]
\end{proposition}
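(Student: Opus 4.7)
The plan is to construct the detector $w$ by encoding the pointwise convergence rate of a rapidly convergent subsequence into a single summable function, so that the integrability condition $w \compose \gamma \in \Lebesgue^{1}(X)$ is strong enough to transfer pointwise convergence from $\manfV$ to $X$ along $\gamma$ and simultaneously provide a $\Lebesgue^{p}$~dominant for a dominated convergence argument.

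\medskip

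First, extract a subsequence $(u_{j_{i}})_{i \in \N}$ with $\norm{u_{j_{i}} - u}_{\Lebesgue^{p}(\manfV)} \le 2^{-i}$. Since the partial sums $\sum_{i = 0}^{N} \abs{u_{j_{i}} - u}$ form a Cauchy sequence in $\Lebesgue^{p}(\manfV)$ (by the triangle inequality), the series converges in $\Lebesgue^{p}$, and in particular its pointwise sum is finite almost everywhere. Define
\[
v \vcentcolon= \abs{u} + \sum_{i = 0}^{\infty} \abs{u_{j_{i}} - u} \quad \text{on \(\manfV\)}
\]
as a $[0, +\infty]$-valued measurable function; by the above, $v \in \Lebesgue^{p}(\manfV)$ and $v$ is finite almost everywhere. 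Set $w \vcentcolon= v^{p}$, so $w \in \Lebesgue^{1}(\manfV)$ is the desired summable detector.

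\medskip

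Now let $(X, d, \mu)$ and $\gamma \colon X \to \manfV$ be measurable with $w \compose \gamma \in \Lebesgue^{1}(X)$. Since $(v \compose \gamma)^{p} = w \compose \gamma$, we obtain $v \compose \gamma \in \Lebesgue^{p}(X)$. The pointwise estimates $\abs{u} \le v$ and $\abs{u_{j_{i}}} \le \abs{u} + \abs{u_{j_{i}} - u} \le v$ on $\manfV$ then give $u \compose \gamma, u_{j_{i}} \compose \gamma \in \Lebesgue^{p}(X)$ for every $i \in \N$, with the uniform domination
\[
\abs{u_{j_{i}} \compose \gamma - u \compose \gamma} \le 2 (v \compose \gamma) \quad \text{$\mu$-almost everywhere on \(X\).}
\]

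\medskip

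The key step is pointwise convergence. The condition $w \compose \gamma \in \Lebesgue^{1}(X)$ forces $w(\gamma(x)) < \infty$, hence $v(\gamma(x)) < \infty$, for $\mu$-almost every $x \in X$. By the definition of $v$, this means that $\sum_{i} \abs{u_{j_{i}}(\gamma(x)) - u(\gamma(x))} < \infty$ for $\mu$-a.e.\ $x$, whence $u_{j_{i}}(\gamma(x)) \to u(\gamma(x))$ as $i \to \infty$ for such $x$. Combined with the uniform dominant $2 (v \compose \gamma) \in \Lebesgue^{p}(X)$, dominated convergence in $\Lebesgue^{p}$ yields $u_{j_{i}} \compose \gamma \to u \compose \gamma$ in $\Lebesgue^{p}(X)$.

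\medskip

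The main subtlety --- and the reason the result is nontrivial --- is precisely that pointwise convergence of $(u_{j_{i}})$ holds only outside a null set $N \subset \manfV$, and nothing prevents $\gamma$ from mapping a set of positive $\mu$-measure into $N$. The role of $w$ is to record, inside a single integrable function, the entire divergence behavior of the subsequence: the integrability of $w \compose \gamma$ excludes, almost everywhere on $X$, precisely the bad set where convergence fails. This is the template we shall follow in the Sobolev setting of the next sections, where $w$ will additionally absorb gradient terms.
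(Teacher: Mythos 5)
Your proof is correct, and the construction of the detector is essentially the paper's: both build $w$ as the $p$-th power of $\abs{u}$ plus a series of errors along a rapidly convergent subsequence, and both exploit that finiteness of $w \compose \gamma$ almost everywhere controls the behaviour of the subsequence along $\gamma$. The one genuine difference is in how the convergence is concluded. The paper inserts a divergent sequence of weights $\kappa_i \ge 1$ into the series, choosing the subsequence so that $\sum_i \kappa_i \norm{u_{j_i} - u}_{\Lebesgue^p(\manfV)} < \infty$; this gives the pointwise bound $\abs{u_{j_i} - u}^p \le w/\kappa_i^p$ on $\manfV$, hence $\norm{u_{j_i} \compose \gamma - u \compose \gamma}_{\Lebesgue^p(X)}^p \le \kappa_i^{-p} \norm{w \compose \gamma}_{\Lebesgue^1(X)}$, so the conclusion follows from a single integration with an explicit rate, with no appeal to pointwise convergence or dominated convergence. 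Your unweighted version instead recovers pointwise convergence $\mu$-a.e.\ on $X$ from the summability of the series at points where $w \compose \gamma$ is finite, and then closes with dominated convergence using $2\,(w\compose\gamma)^{1/p}$ as the dominant; this is slightly more classical but equally valid. The weighted variant is worth keeping in mind, as the same trick recurs later in the paper (e.g.\ in the proof of Proposition~\ref{propositionFugledeApproximationMaps}) precisely because it converts qualitative convergence into a quantitative estimate in terms of $\norm{w \compose \gamma}_{\Lebesgue^1(X)}$.
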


By a \emph{metric measure space}  \((X, d, \mu)\) we mean that \(X\) is a metric space equipped with a distance \(d\) and a Borel measure \(\mu\).
We systematically consider Borel measurability, which has the advantage that the composition of two functions with such a property is still Borel measurable.
As we shall be dealing with composition of functions in various spaces, composition is always meant in the classical pointwise sense.
All functions are assumed to be defined pointwise and we discard the use of equivalence classes, including in the setting of Lebesgue or Sobolev spaces.

\begin{proof}[Proof of Proposition~\ref{lemmaModulusLebesguesequence}]
Let \((\kappa_{i})_{i \in \N}\) be a sequence of positive numbers diverging to infinity such that \(\kappa_{i} \ge 1\) for every \(i \in \N\).
We take a subsequence \((u_{j_{i}})_{i \in \N}\) such that  
\begin{equation}
\label{eqFugledeEstimateChoiceSubsequence}
\sum_{i = 0}^{\infty}{\kappa_{i}\norm{u_{j_{i}} - u}_{\Lebesgue^{p}(\manfV)}}
< \infty
\end{equation}
and define the function \(w  \colon  \manfV \to [0, +\infty]\) by 
\begin{equation*}
w = \biggl(\abs{u} + \sum_{i = 0}^{\infty}{\kappa_{i} \abs{u_{j_{i}} - u}} \biggr)^{p}.
\end{equation*}
By the triangle inequality and Fatou's lemma, \(w\) is summable in \(\manfV\) and
\begin{equation}
~ \label{eqFugledeEstimateChoiceSubsequence-bis}
\biggl( \int_{\manfV}{w} \biggr)^{\frac{1}{p}}
= \norm{w^{1/p}}_{\Lebesgue^{p}(\manfV)}
\le \norm{u}_{\Lebesgue^{p}(\manfV)} + \sum_{i = 0}^{\infty}{\kappa_{i} \norm{u_{j_{i}} - u}_{\Lebesgue^{p}(\manfV)}}.
\end{equation}
We also have \( \abs{u}^p \le w\) and, since \(\kappa_{i} \ge 1\), 
\begin{equation*}
\abs{u_{j_{i}}}^{p}
\le \bigl( \abs{u} + \kappa_{i}{\abs{u_{j_{i}} - u}}  \bigr)^{p}{}
\le w.
\end{equation*}
Thus, for every measurable map \(\gamma  \colon  X \to \manfV\) such that \(w \compose \gamma\) is summable in \(X\), \(u \compose \gamma\) and \(u_{j_i} \compose \gamma\) belong to \(\Lebesgue^{p}(X)\) and satisfy
\[{}
\int_{X} \abs{u_{j_{i}} \compose \gamma - u \compose \gamma}^{p} \dif\mu
\le \frac{1}{\kappa_{i}^{p}} \int_{X} w \compose \gamma \dif\mu.
\]
Since \( (\kappa_{i})_{i \in \N} \) diverges to infinity, as \(i \to \infty\) we then get
\[{}
u_{j_i} \compose \gamma \to u \compose \gamma{}
\quad \text{in \(\Lebesgue^{p}(X)\)}.
\qedhere
\]
\end{proof}

Observe that if the entire sequence \((u_{j})_{j \in \N}\) converges sufficiently fast, in the sense that it satisfies \eqref{eqFugledeEstimateChoiceSubsequence}, then the conclusion of Proposition~\ref{lemmaModulusLebesguesequence} holds for \((u_{j})_{j \in \N}\) itself.
Using the notation of Proposition~\ref{lemmaModulusLebesguesequence}, we now deduce as a particular case the classical pointwise convergence of \(\Lebesgue^{p}\) sequences:

\begin{example}
Let \( w \) and \( (u_{j_{i}})_{i \in \N} \) be given by Proposition~\ref{lemmaModulusLebesguesequence}.
For any \(a \in \manfV\) such that \(w(a) < +\infty\), the constant map \(\gamma  \colon  \{0\} \to \manfV\), \(\gamma(0) = a\), is such that \(\int w \compose \gamma \dif\delta_{0} = w(a) < +\infty\).{}
Hence,
\[{}
u_{j_{i}}(a) \to u(a)
\quad \text{whenever \(w(a) < +\infty\).}
\]
By summability of \(w\), this is the case almost everywhere in \(\manfV\).
\end{example}

Every \(\Lebesgue^{p}\) function in \(\R^{m}\) enjoys \(\Lebesgue^{p}\) stability with respect to translations in the sense that if \(\tau_{h}u(x) = u(x + h)\) is the translation of \(u\) with respect to \(h \in \R^m\), then \(\tau_{h_j}u \to u\) in \(\Lebesgue^{p}(\R^{m})\) for every \(u \in \Lebesgue^{p}(\R^{m})\) and every sequence \((h_j)_{j \in \N}\) in \(\R^m\) that converges to \(0\).{}
We now generalize this property in the spirit of Proposition~\ref{lemmaModulusLebesguesequence} in terms of composition with a sequence of Borel-measurable maps.
As before, we rely on a summable function \(w\) to select sequences that are admissible:

\begin{proposition}
	\label{propositionFugledeApproximationMaps}
	Given \(u \in \Lebesgue^{p}(\manfV)\), there exists a summable function \(w  \colon  \manfV \to [0, +\infty]\) with 
	\(w \ge \abs{u}^{p}\) in \(\manfV\) 
	such that, for every metric measure space \((X, d, \mu)\) with finite measure and every sequence of measurable maps \(\gamma_{j}  \colon  X \to \manfV\) with
	\[{}
	\gamma_{j} \to \gamma{}
	\quad \text{pointwise in \(X\),}
	\]
	such that \((w \compose \gamma_{j})_{j \in \N}\) is bounded in \(\Lebesgue^{1}(X)\) and \(\gamma  \colon  X \to \manfV\) satisfies \(w \compose \gamma \in \Lebesgue^{1}(X)\),
	we have
	\[{}
	u \compose \gamma_{j} \to u \compose \gamma \quad \text{in \(\Lebesgue^{p}(X)\).}
	\]
\end{proposition}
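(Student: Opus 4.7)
The plan is to reduce the statement to Proposition~\ref{lemmaModulusLebesguesequence} combined with a dominated convergence argument applied to an auxiliary bounded continuous approximating sequence. First I would pick a sequence \((u_k)_{k \in \N}\) of bounded continuous functions converging to \(u\) in \(\Lebesgue^p(\manfV)\) sufficiently fast; this is possible by density of \(\Smooth_c(\manfV)\) when \(\manfV\) is a Lipschitz open set and of \(\Smooth^0(\manfV)\) when \(\manfV\) is a compact manifold. Applying Proposition~\ref{lemmaModulusLebesguesequence} to \((u_k)_{k \in \N}\), and passing to a subsequence, produces a summable function \(w  \colon  \manfV \to [0, +\infty]\) of the form
\[
w = \Bigl( \abs{u} + \sum_{k = 0}^{\infty} \kappa_k \abs{u_k - u} \Bigr)^p,
\]
with \(\kappa_k \to \infty\). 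By construction \(w \ge \abs{u}^p\), as required. Moreover, the proof of Proposition~\ref{lemmaModulusLebesguesequence} yields the quantitative and \(\gamma\)-uniform estimate
\[
\int_X \abs{u_k \compose \gamma - u \compose \gamma}^p \dif \mu \le \frac{1}{\kappa_k^p} \int_X w \compose \gamma \dif \mu
\]
for every measurable \(\gamma  \colon  X \to \manfV\) with \(w \compose \gamma \in \Lebesgue^1(X)\).

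Next, for the sequence \((\gamma_j)_{j \in \N}\) given by the hypothesis, I would apply the triangle inequality and split
\[
\norm{u \compose \gamma_j - u \compose \gamma}_{\Lebesgue^p(X)} \le A_{j,k} + B_{j,k} + C_k,
\]
with \(A_{j,k} = \norm{u \compose \gamma_j - u_k \compose \gamma_j}_{\Lebesgue^p(X)}\), \(B_{j,k} = \norm{u_k \compose \gamma_j - u_k \compose \gamma}_{\Lebesgue^p(X)}\), and \(C_k = \norm{u_k \compose \gamma - u \compose \gamma}_{\Lebesgue^p(X)}\). The estimate above gives, uniformly in \(j\), that \(A_{j,k}^p \le M/\kappa_k^p\) with \(M \vcentcolon= \sup_j \norm{w \compose \gamma_j}_{\Lebesgue^1(X)}\) (finite by assumption), and that \(C_k^p \le \norm{w \compose \gamma}_{\Lebesgue^1(X)}/\kappa_k^p\). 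For the middle term \(B_{j,k}\), the continuity of \(u_k\) together with \(\gamma_j \to \gamma\) pointwise yields \(u_k \compose \gamma_j \to u_k \compose \gamma\) pointwise on \(X\); since \(\abs{u_k}^p\) is bounded and \(\mu(X) < \infty\), dominated convergence forces \(B_{j,k} \to 0\) as \(j \to \infty\) for each fixed \(k\). Taking \(\limsup_j\) and then letting \(k \to \infty\) delivers the conclusion.

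The delicate point, and the one that drives the whole setup, is that the \emph{same} detector \(w\), built from the fixed approximating sequence \((u_k)\), must control the family \((u \compose \gamma_j)_{j \in \N}\) uniformly in \(j\). The hypothesis that \((w \compose \gamma_j)_{j \in \N}\) is bounded in \(\Lebesgue^1(X)\) is exactly what turns the pointwise, \(\gamma\)-by-\(\gamma\) bound from Proposition~\ref{lemmaModulusLebesguesequence} into a uniform control, so that the outer terms vanish in the limit \(k \to \infty\). The finiteness of \(\mu(X)\) is needed only to close the dominated convergence step on the middle term; no further regularity of the \(\gamma_j\) beyond measurability and pointwise convergence is used.
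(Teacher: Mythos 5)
Your proposal is correct and follows essentially the same route as the paper: the same detector \(w = \bigl(\abs{u} + \sum_{k}\kappa_{k}\abs{u_{k}-u}\bigr)^{p}\) built from a fast-converging sequence of bounded continuous approximants, the same three-term triangle-inequality splitting with the outer terms controlled uniformly by \(\norm{w\compose\gamma_{j}}_{\Lebesgue^{1}}\) and \(\norm{w\compose\gamma}_{\Lebesgue^{1}}\), and the same dominated-convergence argument (continuity of \(u_{k}\), pointwise convergence of \(\gamma_{j}\), finite measure) for the middle term. The only cosmetic difference is that you quote the quantitative estimate from the proof of Proposition~\ref{lemmaModulusLebesguesequence} instead of reconstructing it directly with \(f_{i}\in\Smooth^{0}_{c}(\manfV)\), which changes nothing of substance.
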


\resetconstant
\begin{proof}
	Since \(\manfV\) is locally compact, we may take a sequence \((\kappa_{i})_{i \in \N}\) of positive numbers that diverges to infinity and a sequence \((f_{i})_{i \in \N}\) in \(\Smooth^{0}_{c}(\manfV)\) that converges to \(u\) in \(\Lebesgue^{p}(\manfV)\) with 
	\[{}
	\sum_{i = 0}^{\infty}{\kappa_{i} \norm{u - f_{i}}_{L^{p}(\manfV)}} < \infty.
	\]
	By the triangle inequality and Fatou's lemma, the function \(w  \colon  \manfV \to [0, +\infty]\) defined by
	\[{}
	w = \biggl( \abs{u} + \sum_{i = 0}^{\infty}{\kappa_{i} {\abs{u - f_{i}}}} \biggr)^{p}
	\]
	is summable in \(\manfV\); cf.\@ estimate \eqref{eqFugledeEstimateChoiceSubsequence-bis} above.{}
	If \((\gamma_{j})_{j \in \N}\) is a sequence of maps as in the statement, then for every \(i, j \in \N\) we have
	\begin{multline*}
	\norm{u \compose \gamma_{j} - u \compose \gamma}_{\Lebesgue^{p}(X)}
	\le \norm{u \compose \gamma_{j} - f_{i} \compose \gamma_{j}}_{\Lebesgue^{p}(X)}
	+ \norm{f_{i} \compose \gamma_{j} - f_{i} \compose \gamma}_{\Lebesgue^{p}(X)}\\
	+ \norm{f_{i} \compose \gamma - u \compose \gamma}_{\Lebesgue^{p}(X)}.
	\end{multline*}
	Since \((\kappa_{i} \abs{u - f_{i}})^{p} \le w\) and \((w \compose \gamma_{j})_{j \in \N}\) is bounded in \(\Lebesgue^{1}(X)\),
	\[{}
	{\norm{u \compose \gamma_{j} - f_{i} \compose \gamma_{j}}_{\Lebesgue^{p}(X)}}
	\le \frac{1}{\kappa_{i}} \biggl( \int_{X}{w \compose \gamma_{j} \dif\mu} \biggr)^{\frac{1}{p}}
	\le \frac{\C}{\kappa_{i}}.
	\]
	A similar estimate holds with \(\gamma_{j}\) replaced by \(\gamma\), since \(w \compose \gamma \in \Lebesgue^{1}(X)\).{}
	Thus,
	\[{}
	\norm{u \compose \gamma_{j} - u \compose \gamma}_{\Lebesgue^{p}(X)}	
	\le 
	\frac{\Cl{cte-170}}{\kappa_{i}} + \norm{f_{i} \compose \gamma_{j} - f_{i} \compose \gamma}_{\Lebesgue^{p}(X)}.
	\]
	For \(i\) fixed, we let \(j \to \infty\).{}
	Since \( f_{i} \) is bounded and \( X \) has finite measure, the sequence \( (f_{i} \compose \gamma_{j})_{j \in \N} \) is dominated by a constant \( \Lebesgue^{p} \) function.
    As \( f_{i} \) is also continuous and \( (\gamma_{j})_{j \in \N} \) converges pointwise to \( \gamma \), it then follows from Lebesgue's dominated convergence theorem that
    \[
    \lim_{j \to \infty}{\norm{f_{i} \compose \gamma_{j} - f_{i} \compose \gamma}_{\Lebesgue^{p}(X)}}
    = 0.
    \]
    We then get
	\[{}
	\limsup_{j \to \infty}{	\norm{u \compose \gamma_{j} - u \compose \gamma}_{\Lebesgue^{p}(X)}}
	\le \frac{\Cr{cte-170}}{\kappa_{i}}
	\]
	and the conclusion follows as \(i \to \infty\) since \( (\kappa_{i})_{i \in \N} \) diverges to infinity.
\end{proof}

From Proposition~\ref{propositionFugledeApproximationMaps} one deduces a continuity property for \( \Lebesgue^{p} \) functions:

\begin{example}
\label{exampleFugledePointwiseConvergence}
    Given \(u \in \Lebesgue^{p}(\manfV)\), let \(w\) be the summable function provided by Proposition~\ref{propositionFugledeApproximationMaps}.
    Then, for every \(a \in \manfV\) such that \(w(a) < +\infty\) and every sequence \( (a_j)_{j \in \N} \) in \(\manfV\) such that \( a_j \to a \) for which \( (w(a_j))_{j \in \N} \) is bounded, we have 
    \[
    u(a_j) \to u(a).
    \]
    Indeed, it suffices to apply the proposition with \( X = \{0\} \) equipped with the Dirac mass \( \delta_0 \) and take the sequence of constant maps \(\gamma_j = a_j\).
\end{example}

We also deduce a local continuity of the translation operator in the setting of \(\Lebesgue^{p}\) spaces:

\begin{example}
For any summable function \(w  \colon  \R^{m} \to [0, +\infty]\) and any sequence \((h_{j})_{j \in \N}\) in \(\R^m\) that converges to \(0\), the sequence of maps \(\gamma_{j}  \colon  B_r^{m} \to \R^{m}\) defined by \(\gamma_{j}(x) = x + h_{j}\) converges to the identity map on the open ball \(B^{m}_r \subset \R^m\) and is such that \((w \compose \gamma_{j})_{j \in \N}\) is bounded in \(\Lebesgue^{1}(B_r^{m})\).{}
For every \(u \in \Lebesgue^{p}(\R^{m})\), it then follows from Proposition~\ref{propositionFugledeApproximationMaps} applied with \( \manfV = \R^m  \) and \( X = B^{m}_r \) that \(\tau_{h_{j}}u = u \compose \gamma_{j} \to u\) in \(\Lebesgue^{p}(B_r^{m})\).
\end{example}

The next example illustrates how the summable function \(w\) is sensitive to changes of \(u\) on a negligible set:

\begin{example}
	Given \(f \in \Smooth_{c}^{0}(\R^{m})\) and a negligible Borel set \(A \subset \R^{m}\), let
	\[{}
	u(x)
	=
	\begin{cases}
		f(x)	& \text{if \(x \not\in A\),}\\
		0	& \text{if \(x \in A\).}
	\end{cases}
	\]
	The conclusion of Proposition~\ref{propositionFugledeApproximationMaps} is satisfied in this case with
	\[{}
	w(x)
	=
	\begin{cases}
		(f(x))^{p}	& \text{if \(x \not\in A\),}\\
		+\infty	& \text{if \(x \in A\).}
	\end{cases}
	\]
	Indeed, take a metric measure space \(X\) with finite measure and a sequence of measurable maps  \(\gamma_{j}  \colon  X \to \manfV\) that converges pointwise to \(\gamma\) and such that \(w\compose \gamma\) and each \(w_j\compose \gamma\) belong to \(\Lebesgue^1(X)\).
    We first observe that
    \begin{equation}
    \label{eqFuglede-219}
    u \compose \gamma_{j} \to u \compose \gamma
    \quad \text{almost everywhere in \(X\).}
    \end{equation}
	To this end, by continuity of \(f\), for every \(x \in X \setminus \bigl( \gamma^{-1}(A) \cup \bigcup_{j \in \N}\gamma_{j}^{-1}(A) \bigr)\), we have
	\[{}
	u \compose \gamma_{j}(x)
	= f(\gamma_{j}(x))
	\to f(\gamma(x)){}
	= u \compose \gamma(x).
	\] 
    Next, considering the choice of \(w\) and the fact that \(w \compose \gamma \in \Lebesgue^{1}(X)\), the set \(\gamma^{-1}(A)\) is negligible in \(X\).
	Similarly, since \((w \compose \gamma_{j})_{j \in \N}\) is contained in \(\Lebesgue^{1}(X)\), each set \(\gamma_{j}^{-1}(A)\) is also negligible in \(X\). 
    Hence,  \eqref{eqFuglede-219} holds.
    Using that \(u\) is bounded and \(X\) has finite measure, we then deduce that
    \[
    u \compose \gamma_{j} \to u \compose \gamma 
    \quad \text{in \(\Lebesgue^1(X)\)}
    \] 
    from Lebesgue's dominated convergence theorem.
\end{example}

\section{Composition with Sobolev functions}
\label{sectionFugledeSobolev}

By a classical Morrey-type restriction property for Sobolev functions \(u \in \Sobolev^{1, p}(\R^{m})\), one has that for almost every \(x'' \in \R^{m - \ell}\),
\begin{equation}
\label{eqFuglede-249}
u(\cdot, x'') \in \Sobolev^{1, p}(\R^{\ell})
\end{equation}
and for almost every \(r > 0\),
\begin{equation}
\label{eqFuglede-254}
u|_{\partial B_{r}^{m}}
\in \Sobolev^{1, p}(\partial B_{r}^{m}).
\end{equation}
In another direction, White~\cite{White}*{Lemma~3.1} has shown that, for every Lipschitz map \(h \colon \Omega \to \R^m\) on a bounded open set and for almost every \(\xi \in \R^m\), 
\begin{equation}
u \compose (h + \xi) \in \Sobolev^{1, p}(\Omega)
\end{equation}
and the chain rule \(D(u \compose (h + \xi)) = Du(h + \xi)[Dh]\) is satisfied.
We rely on the formalism of Proposition~\ref{lemmaModulusLebesguesequence} to show that these properties can be detected through composition with Lipschitz maps \(\gamma\) using the same summable function \(w\).{}
To this end, we introduce the class of Fuglede maps associated to \(w\)\,:

\begin{definition}
\label{defnFuglede}
    Given a summable function \(w  \colon  \manfV \to [0, +\infty]\) and a metric measure space \((X, d, \mu)\),  we call a \emph{Fuglede map} any  Lipschitz map \(\gamma  \colon  X \to \manfV\) such that \(w \compose \gamma\) is summable in \(X\). 
	We denote by \(\Fuglede_{w}(X; \manfV)\) the class of Fuglede maps on \(X\) with respect to \(w\).{}
\end{definition}

We acknowledge the foundational work of Fuglede~\cite{Fuglede}, who introduced the notion of modulus to study properties that hold for almost every measure. His ideas, which we discuss in Section~\ref{section_Fuglede} below, inspired the concept of Fuglede maps.
In this context, we introduce a genericity property for Sobolev functions, which generalizes the well-known Morrey-type property regarding the restrictions of Sobolev functions to lower-dimensional sets.
This can be seen as a rephrasing of \cite{White}*{Lemma~3.1} in terms of Fuglede maps defined on a Riemannian manifold, which, as we recall, is either a Lipschitz open subset of a Euclidean space or a compact Riemannian manifold without boundary.

\begin{proposition}
\label{corollaryCompositionSobolevFuglede} 
Given \(u \in \Sobolev^{1, p}(\manfV)\), there exists a summable function \(w  \colon  \manfV \to [0, +\infty]\) such that, for every Riemannian manifold \(\manfA\)
and every \(\gamma \in \Fuglede_{w}(\manfA; \manfV)\), we have 
	\[{}	
	u \compose \gamma \in \Sobolev^{1, p}(\manfA)
	\]
	and
	\[{}
	D(u \compose \gamma)(x){}
	= Du(\gamma(x))[D\gamma(x)]
	\quad \text{for almost every \(x \in \manfA\).}
	\]
When \(\manfA\) is a Lipschitz open subset of a Euclidean space and the continuous extension of \(\gamma\) to \(\overline{\manfA}\) is such that \(\gamma|_{\partial \manfA} \in \Fuglede_{w}(\partial \manfA; \manfV)\), the trace of \(u \compose \gamma\) satisfies
	\[{}
	\Trace{(u \compose \gamma)}(x)
	= u(\gamma(x))
	\quad \text{for almost every \(x \in \partial \manfA\).}
	\]	
\end{proposition}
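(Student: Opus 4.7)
The plan is to build the detector $w$ so that it simultaneously controls pointwise values of $u$ and $\abs{Du}$, and encodes the rate at which a smooth sequence approximates $u$ in $\Sobolev^{1,p}$. First I pick smooth functions $(f_i)_{i \in \N}$ in $\Sobolev^{1,p}(\manfV)$ and positive weights $\kappa_i \to \infty$ with $\sum_{i} \kappa_i \bignorm{u - f_i}_{\Sobolev^{1,p}(\manfV)} < \infty$. Fixing Borel representatives of $u$ and $Du$ on their Lebesgue sets and assigning the value $+\infty$ elsewhere, I define
\[
w = \biggl( \abs{u} + \abs{Du} + \sum_{i = 0}^{\infty} \kappa_i \bigl( \abs{u - f_i} + \abs{Du - Df_i} \bigr) \biggr)^p,
\]
which is summable by Minkowski's inequality and Fatou's lemma, along the same lines as in the proof of Proposition~\ref{lemmaModulusLebesguesequence}.

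Given $\gamma \in \Fuglede_w(\manfA; \manfV)$, summability of $w \compose \gamma$ puts $u \compose \gamma$ and $Du \compose \gamma$ in $\Lebesgue^p(\manfA)$ and ensures that $\gamma(x)$ avoids the ambiguity sets of $u$ and $Du$ for almost every $x \in \manfA$. Since each $f_i$ is smooth and $\gamma$ is Lipschitz, the classical chain rule yields $f_i \compose \gamma \in \Sobolev^{1,p}(\manfA)$ with $D(f_i \compose \gamma)(x) = Df_i(\gamma(x))[D\gamma(x)]$ almost everywhere. The pointwise estimates $\abs{u - f_i}^p \le w/\kappa_i^p$ and $\abs{Du - Df_i}^p \le w/\kappa_i^p$, combined with the essential bound $\abs{D\gamma} \le \Lip(\gamma)$ from Rademacher, force
\[
f_i \compose \gamma \to u \compose \gamma \quad \text{in \(\Lebesgue^p(\manfA)\)} \quad \text{and} \quad D(f_i \compose \gamma) \to Du(\gamma)[D\gamma] \quad \text{in \(\Lebesgue^p(\manfA)\).}
\]
Closedness of the distributional derivative in $\Lebesgue^p$ then identifies $u \compose \gamma$ as a Sobolev map satisfying the announced chain rule.

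For the trace statement, the Fuglede assumption on $\gamma|_{\partial \manfA}$ allows the same $\Lebesgue^p$ estimate on $\partial \manfA$, so $f_i \compose \gamma|_{\partial \manfA} \to u \compose \gamma|_{\partial \manfA}$ in $\Lebesgue^p(\partial \manfA)$. Each $f_i \compose \gamma$ is Lipschitz up to the boundary, hence its trace coincides with its pointwise restriction; continuity of the trace operator $\Sobolev^{1,p}(\manfA) \to \Lebesgue^p(\partial \manfA)$ then identifies $\Trace(u \compose \gamma) = u \compose \gamma$ almost everywhere on $\partial \manfA$.

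The main subtlety I anticipate is the Borel bookkeeping: $Du$ is only defined almost everywhere on $\manfV$, and setting $w = +\infty$ off the Lebesgue sets of $u$ and $Du$ is precisely what forces any Fuglede map to render $Du \compose \gamma$ intrinsically meaningful almost everywhere on $\manfA$. Once this is carefully set up, the remainder of the argument is a standard closure procedure in $\Sobolev^{1,p}$ that closely parallels the Lebesgue strategy already used in Propositions~\ref{lemmaModulusLebesguesequence} and~\ref{propositionFugledeApproximationMaps}.
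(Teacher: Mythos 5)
Your argument is correct and follows essentially the same route as the paper: you inline the fast-converging smooth approximation and weighted-sum detector that the paper obtains by applying Proposition~\ref{lemmaModulusLebesguesequence} to $(u_j)$ and $(Du_j)$, and you then conclude exactly as the paper does, via the chain rule for smooth maps composed with Lipschitz maps, closedness of the distributional derivative under $\Lebesgue^p$ convergence, and continuity of the trace operator. The only presentational difference is that the paper handles the case $\manfV = \manfM$ separately through charts and patched local detectors, whereas your intrinsic formulation covers it directly once one grants the density of smooth functions in $\Sobolev^{1, p}(\manfM)$ and the paper's pointwise conventions for $Du$.
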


The Sobolev space \(\Sobolev^{1,p}(\manfV)\) is defined in the following classical way when \(\manfV = \manfM\) is a compact Riemannian manifold without boundary: 
Consider a finite covering of \(\manfM\) by charts \((U_i,\varphi_i)_{i\in I}\)\,, where each \(\varphi_i\) is a smooth diffeomorphism from an open neighborhood of \(\overline U_i\) in \(\manfM\) onto an open set of \(\R^m\). 
We then say that a map \(u\in \Lebesgue^p(\manfM)\) belongs to \(\Sobolev^{1,p}(\manfM)\) whenever \(u\compose \varphi_{i}^{-1}\in \Sobolev^{1,p}(\varphi_i(U_i))\) for every \(i\in I\).  
This definition does not depend on the choice of the covering. 

Fix \(u\in \Sobolev^{1,p}(\manfM)\) and a chart \((U_i, \varphi_i)\) as above. 
Since \(u\compose \varphi_{i}^{-1}\in \Sobolev^{1,p}(\varphi_i(U_i))\) and \(\varphi_i\) is a smooth diffeomorphism from \(U_i\) onto \(\varphi_i(U_i)\), one can define for every \(x\in U_i\)\,,
\[
Du(x)=D(u\circ \varphi_{i}^{-1}) (\varphi_i(x))\compose D\varphi_i(x).
\]
Then, \(Du(x)\) is a linear form on the tangent space \(\Tangent{x}\manfM\).  
If \((V,\psi)\) is another chart such that \(U_i \cap V\not=\emptyset\), then writing \(u\circ \varphi_{i}^{-1}= (u\circ \psi^{-1})\compose (\psi\compose \varphi_{i}^{-1})\) and using the chain rule, one concludes that the linear form \(D(u\circ \psi^{-1}) (\psi(x))\compose D\psi(x)\) coincides with \(Du(x)\) for almost every \(x\in U_i\cap V\). 
This entitles one to define \(Du(x)\) intrinsically, namely without any reference to a specific choice of charts, up to a negligible set. 
However, remember that according to our convention, all functions are defined pointwise. 
We are thus led to extend arbitrarily \(Du\) as a function defined everywhere on \(\manfM\) with values into \(\Tangent{}^*\manfM\). 
Moreover, the function \(x\mapsto \abs{Du(x)}\) belongs to \(\Lebesgue^p(\manfM)\), where the notation \(\abs{\cdot}\) refers to the norm on the linear space \((\Tangent{x}\manfM)^*\)  inherited from the metric on \(\manfV\).

\resetconstant
\begin{proof}[Proof of Proposition~\ref{corollaryCompositionSobolevFuglede}]
Let us first assume that \(\manfV=\Omega\) is a Lipschitz open set.
We take a sequence \((u_{j})_{j \in \N}\) in \((\Sobolev^{1, p} \cap \Smooth^{\infty})(\Omega)\) that converges to \(u\) in \(\Sobolev^{1, p}(\Omega)\).
By application of Proposition~\ref{lemmaModulusLebesguesequence} to \((u_{j})_{j \in \N}\) and the components of \((Du_{j})_{j \in \N}\)\,, we obtain a subsequence \((u_{j_{i}})_{i \in \N}\) and a summable function \(w  \colon  \Omega \to [0, +\infty]\)
such that, if \(\manfA\) is a Riemannian manifold and \(\gamma \in \Fuglede_{w}(\manfA ; \Omega)\), then 
\begin{equation*}
u_{j_i} \compose \gamma \to u \compose \gamma \quad \text{and} \quad 
D u_{j_i} \compose \gamma \to D u \compose \gamma \quad \text{in \(\Lebesgue^p (\manfA)\).}
\end{equation*}
Since every map \(u_{j}\) is smooth and \(\gamma\) is Lipschitz, we have \(u_{j_{i}} \compose \gamma \in \Sobolev^{1, p}(\manfA)\) and
\[{}
D(u_{j_{i}} \compose \gamma){}
= (Du_{j_{i}} \compose \gamma) [D\gamma{}]
\quad \text{almost everywhere in \(\manfA\),}
\]
that is \(D(u_{j_{i}} \compose \gamma)(x){} = Du_{j_{i}} (\gamma(x)) [D\gamma(x)]\) for almost every \(x \in \manfA\).
Thus,
\[{}
\abs{D(u_{j_{i}} \compose \gamma) - (Du \compose \gamma)[D\gamma]}
\le \norm{D\gamma}_{\Lebesgue^{\infty}(\manfA)} \abs{Du_{j_{i}} \compose \gamma - Du \compose \gamma},
\]
whence 
\begin{equation*}
D(u_{j_i} \compose \gamma) 
\to  (Du \compose \gamma) [D\gamma ]
\quad \text{in \(\Lebesgue^p (\manfA)\).}
\end{equation*}
As a consequence of the stability property in Sobolev spaces, see e.g.~\cite{Brezis}*{Chapter~9, Remark~4} for domains in a Euclidean space, we thus have \(u \compose \gamma \in \Sobolev^{1, p}(\manfA)\) and 
\[{}
D(u \compose \gamma) = (Du \compose \gamma) [ D\gamma{}]
\quad \text{almost everywhere in \(\manfA\).}
\]

We next assume that \(\manfV=\manfM\) is a compact manifold without boundary. We introduce a finite covering of \(\manfM\) by charts \((U_i,\varphi_i)_{i\in I}\), where each \(\varphi_i\) is a smooth biLipschitz diffeomorphism and \(\varphi_i(U_i)\) is a Lipschitz open subset of \(\R^m\). 
Applying the case of open domains to \(u\compose \varphi_{i}^{-1}\in \Sobolev^{1,p}(\varphi_i(U_i))\), we get a summable function \(w_i \colon \varphi_i(U_i)\to [0,+\infty]\) such that, for every
Riemannian manifold \(\manfA\) and every \(\gamma\in \Fuglede_{w_i}(\manfA;\varphi_i(U_i))\), we have 
\begin{equation}
    \label{eqFuglede-358}
    (u\compose \varphi_{i}^{-1}) \compose \gamma \in \Sobolev^{1,p}(\manfA)
\end{equation} 
and 
\begin{equation}
    \label{eqFuglede-363}
D\bigl((u\compose\varphi_{i}^{-1}) \compose \gamma \bigr)
= \bigl(D(u\compose \varphi_{i}^{-1})\compose \gamma \bigr)[D\gamma]
\quad \text{almost everywhere in \(\manfA\).}
\end{equation}
For every \(x\in \manfM\) we then define
\[
\widetilde{w}_{i}(x)=
\begin{cases}
w_i\compose \varphi_i(x) & \textrm{if } x\in U_i,\\
0 & \textrm{if } x\in \manfM\setminus U_i,
\end{cases}
\]
and
\[
w(x)=\sum_{i\in I}{\widetilde{w}_i(x)}.
\]
Since there are finitely many terms in the sum above, \(w\) is summable on \(\manfM\).

Let \(i \in I\).
Since \(0 \le w_i \le w\) in \(U_i\) and \((\varphi_i \compose \gamma)(\gamma^{-1}(U_i)) \subset U_i\)\,,
for every Riemannian manifold \(\manfA\) and every \(\gamma\in \Fuglede_w(\manfA;\manfM)\) we have 
\[
\varphi_i\compose \gamma|_{\gamma^{-1}(U_i)} \in \Fuglede_{w_i}(\gamma^{-1}(U_i);\varphi_i(U_i)). 
\]
It follows that \eqref{eqFuglede-358} and \eqref{eqFuglede-363} that
\[
(u\compose \varphi_{i}^{-1})\compose (\varphi_i\compose \gamma)|_{\gamma^{-1}(U_i)} \in \Sobolev^{1,p}(\gamma^{-1}(U_i))
\]
and, for almost every \(x\in \gamma^{-1}(U_i)\),
\begin{align*}
D\bigl((u\compose \varphi_{i}^{-1})\compose (\varphi_i\compose \gamma)\bigr)(x)
&= \bigl(D(u\compose \varphi_{i}^{-1}) \compose (\varphi_i\compose \gamma(x)) \bigr)[ D(\varphi_i\compose \gamma)(x)]\\
&=D(u\compose \varphi_{i}^{-1})(\varphi_i\compose \gamma(x))\compose D\varphi_i(\gamma(x))\compose D\gamma(x).
\end{align*}
The left-hand side is simply \(D(u\compose \gamma)(x)\), while in the right-hand side we can rely on the definition of \(Du\) to get
\[
D(u\compose \gamma)(x) 
=Du(\gamma(x))[D\gamma(x)].
\]
Since \((U_i)_{i\in I}\) is an open covering of \(\manfM\), this completes the proof of the first part of the proposition.

We now assume that \(\manfV\) is either a compact manifold or a Lipschitz open set, and \(\manfA\) is a Lipschitz open subset of a Euclidean space.
We further suppose that the continuous extension of \(\gamma\) to \(\overline{\manfA}\) is such that \(\gamma|_{\partial \manfA} \in \Fuglede_{w}(\partial \manfA; \manfV)\).{}
In this case, by Proposition~\ref{lemmaModulusLebesguesequence} we also have
\begin{equation*}
u_{j_i} \compose \gamma|_{\partial \manfA} \to u \compose \gamma|_{\partial \manfA} 
\quad \text{in \(\Lebesgue^p (\partial \manfA)\).}
\end{equation*}
By continuity of the trace operator in \(\Lebesgue^p (\partial \manfA)\), we conclude that \(\Trace{(u \compose \gamma)} = u \compose \gamma|_{\partial \manfA}\) almost everywhere on \(\partial \manfA\).
\end{proof}

Using our framework, we now explain the genericity of \(x'' \in \R^{m - \ell}\) and \(r > 0\) that verify \eqref{eqFuglede-249} and \eqref{eqFuglede-254}, respectively, and how to detect them using the same summable function:

\begin{example}
	Given \(u \in \Sobolev^{1, p}(\R^{m})\), let \(w\) be the summable function given by Proposition~\ref{corollaryCompositionSobolevFuglede}.
By Tonelli's theorem,
\[{}
\int_{\R^{m - \ell}}\biggl( \int_{\R^{\ell}} w(x', x'') \dif x' \biggr) \dif x''
= \int_{\R^{m}} w
< \infty.
\]
Thus, for almost every \(x'' \in \R^{m - \ell}\),
\begin{equation}
\label{eqFuglede-45}
\int_{\R^{\ell}} w(x', x'') \dif x' < \infty.
\end{equation}
For any \(x'' \in \R^{m - \ell}\) that satisfies \eqref{eqFuglede-45}, one has \eqref{eqFuglede-249}
since the map \(\gamma  \colon  \R^{\ell} \to \R^{m}\) defined by \(\gamma(x') = (x', x'')\) belongs to \(\Fuglede_{w}(\R^{\ell}; \R^{m})\).{}
Similarly, by the integration formula in polar coordinates,
\[{}
\int_{0}^{\infty} \biggl( \int_{\partial B_{r}^{m}} w \dif\sigma \biggr) \dif r
= \int_{\R^{m}} w 
< \infty.
\]
Thus, for almost every \(r > 0\),
\begin{equation}
\label{eqFuglede-61}
\int_{\partial B_{r}^{m}} w \dif\sigma < \infty{}
\end{equation}
For any \(r > 0\) that verifies \eqref{eqFuglede-61}, the inclusion map of \( \partial B_r^m \) into \(\R^m\) belongs to \(\Fuglede_{w}(\partial B_r^m; \R^{m})\), and then \eqref{eqFuglede-254} holds.
\end{example}

Concerning the generic behavior on sequences of Sobolev functions, the following holds:

\begin{proposition}
\label{lemmaModulusSobolevsequenceHighk}
Let \((u_j)_{j \in \N}\) be a sequence of functions in \(\Sobolev^{1, p}(\manfV)\) such that
\[{}
u_{j} \to u
\quad \text{in \(\Sobolev^{1, p}(\manfV)\).}
\]
Then, there exist a summable function \(w  \colon  \manfV \to [0, +\infty]\) and a subsequence \((u_{j_i})_{i \in \N}\) such that, for every Riemannian manifold \(\manfA\) and every \(\gamma \in \Fuglede_{w}(\manfA; \manfV)\), the sequence \((u_{j_{i}} \compose \gamma)_{i \in \N}\) is contained in \(\Sobolev^{1, p}(\manfA)\) and 
\[{}
u_{j_{i}} \compose \gamma \to u \compose \gamma \quad \text{in \(\Sobolev^{1, p}(\manfA)\).}
\]
\end{proposition}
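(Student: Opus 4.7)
The plan is to combine Propositions~\ref{lemmaModulusLebesguesequence} and~\ref{corollaryCompositionSobolevFuglede}. The first yields generic $\Lebesgue^p$ convergence of both the functions and their derivatives under composition, and the second guarantees that each individual composition is Sobolev and satisfies the chain rule, so that $\Lebesgue^p$ convergence of the derivatives can be upgraded to convergence of the compositions in $\Sobolev^{1,p}$.

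First, I would apply Proposition~\ref{lemmaModulusLebesguesequence} to $(u_j)_{j \in \N}$ and (the components of) $(Du_j)_{j \in \N}$, which converge in $\Lebesgue^p(\manfV)$ to $u$ and $Du$ respectively. In the compact manifold case, the gradient part is handled locally through a finite atlas of charts, as in the second half of the proof of Proposition~\ref{corollaryCompositionSobolevFuglede}. This produces a subsequence $(u_{j_i})_{i \in \N}$ and a summable function $w_1 \colon \manfV \to [0, +\infty]$ such that, for every Riemannian manifold $\manfA$ and every $\gamma \in \Fuglede_{w_1}(\manfA; \manfV)$,
\begin{equation*}
u_{j_i} \compose \gamma \to u \compose \gamma
\quad \text{and} \quad
Du_{j_i} \compose \gamma \to Du \compose \gamma
\quad \text{in \(\Lebesgue^p(\manfA)\).}
\end{equation*}

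Next, I would apply Proposition~\ref{corollaryCompositionSobolevFuglede} to $u$ and to each $u_{j_i}$, obtaining summable functions $w_0$ and $w_i'$ on $\manfV$ such that, for any Riemannian manifold $\manfA$ and any Fuglede map with respect to the corresponding function, the relevant composition lies in $\Sobolev^{1,p}(\manfA)$ with the chain rule. I would bundle these into a single summable function by setting
\begin{equation*}
w \vcentcolon= w_1 + w_0 + \sum_{i=0}^{\infty} \frac{w_i'}{2^i\bigl(1 + \norm{w_i'}_{\Lebesgue^1(\manfV)}\bigr)},
\end{equation*}
which is summable by monotone convergence. Any $\gamma \in \Fuglede_w(\manfA; \manfV)$ is then automatically a Fuglede map with respect to $w_1$, $w_0$ and each $w_i'$, so all the previous conclusions apply. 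Using the chain rule,
\begin{equation*}
D(u_{j_i} \compose \gamma) - D(u \compose \gamma)
= \bigl(Du_{j_i} \compose \gamma - Du \compose \gamma\bigr)[D\gamma]
\quad \text{almost everywhere in \(\manfA\),}
\end{equation*}
and the boundedness of $D\gamma$ combined with the $\Lebesgue^p$ convergence of the derivatives yields $D(u_{j_i} \compose \gamma) \to D(u \compose \gamma)$ in $\Lebesgue^p(\manfA)$. Together with the $\Lebesgue^p$ convergence of the functions themselves, this establishes convergence in $\Sobolev^{1,p}(\manfA)$.

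The main delicate point is assembling the countable family of auxiliary summable functions into a single summable $w$: the weighting $1/(2^i(1 + \norm{w_i'}_{\Lebesgue^1(\manfV)}))$ is tailored so that the resulting series is finite while each term remains proportional to $w_i'$, thereby ensuring that any Fuglede map with respect to $w$ is a Fuglede map with respect to every $w_i'$, which is exactly what the argument needs.
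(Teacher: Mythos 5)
Your proposal is correct and follows essentially the same strategy as the paper's proof: apply Proposition~\ref{lemmaModulusLebesguesequence} to \((u_j)_{j\in\N}\) and the components of \((Du_j)_{j\in\N}\), take detectors from Proposition~\ref{corollaryCompositionSobolevFuglede} for the individual maps, and sum everything with weights chosen to keep the total summable. The only cosmetic difference is at the last step, where you obtain \(u \compose \gamma \in \Sobolev^{1,p}(\manfA)\) and its chain rule by also applying Proposition~\ref{corollaryCompositionSobolevFuglede} to \(u\) itself, whereas the paper deduces this from the stability of Sobolev functions under \(\Lebesgue^{p}\) convergence of the compositions and their derivatives; both arguments are valid.
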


\begin{proof}
We only consider the case where \(\manfV=\Omega\) is a Lipschitz open subset in \(\R^m\); when \(\manfV\) is a compact manifold, one may proceed as in the proof of Proposition~\ref{corollaryCompositionSobolevFuglede}.
Applying Proposition~\ref{lemmaModulusLebesguesequence} to the sequences \((u_{j})_{j \in \N}\) and \((Du_{j})_{j \in \N}\), there exists an increasing sequence of positive integers \((j_{i})_{i \in \N}\) and a summable function \(\overline{w}  \colon  \Omega \to [0, +\infty]\) such that, if \(\gamma \in \Fuglede_{\bar{w}}(\manfA ; \Omega)\), then
\begin{equation}
\label{eqDensityNew-818}
u_{j_i} \compose \gamma \to u \compose \gamma \quad \text{and} \quad 
D u_{j_i} \compose \gamma \to D u \compose \gamma \quad \text{in \(\Lebesgue^p (\manfA)\).}
\end{equation}
By Proposition~\ref{corollaryCompositionSobolevFuglede}, for each \(i \in \N\) there exists a summable function \({w}_{i}  \colon  \Omega \to [0, +\infty]\) such that if \(\gamma \in \Fuglede_{w_{i}}(\manfA ; \Omega)\), then \(u_{j_{i}} \compose \gamma \in \Sobolev^{1, p}(\manfA)\) and
\begin{equation}
\label{eqDensityNew-829}
D(u_{j_{i}} \compose \gamma) 
= (Du_{j_{i}} \compose \gamma) [D\gamma{}]
\quad \text{almost everywhere in \(\manfA\).}
\end{equation}
Since the functions \((w_{i})_{i \in \N}\) are summable, there exists a 
sequence of positive numbers \((\lambda_i)_{i \in \N}\) such that 
\[
\sum_{i = 0}^{\infty}{\lambda_i \int_{\manfA}{{w}_{i}}} < + \infty.
\]
We then define \(w  \colon  \Omega \to [0, +\infty]\) by 
\[
w = \overline{w} + \sum_{i = 0}^{\infty}{\lambda_{i} w_{i}}\,.
\]
Observe that \(\gamma \in \Fuglede_{w}(\manfA; \Omega)\) is a Fuglede map with respect to \(\overline{w}\) and to each \({w}_{i}\). 
One thus has \(u_{j_{i}} \compose \gamma \in \Sobolev^{1, p}(\manfA)\) for every \(i \in \N\).{}
Moreover, by \eqref{eqDensityNew-818} and \eqref{eqDensityNew-829},
\[
D(u_{j_{i}} \compose \gamma) 
\to (Du \compose \gamma) [D\gamma{}]
\quad \text{in \(\Lebesgue^p (\manfA)\).}
\]
By the stability property of Sobolev functions, we deduce that \(u \compose \gamma \in \Sobolev^{1, p}(\manfA)\) and 
\[{}
u_{j_{i}} \compose \gamma \to u \compose \gamma{}
\quad \text{in \(\Sobolev^{1, p}(\manfA)\).}
\qedhere
\]
\end{proof}

We now consider genericity of Sobolev functions in the fractional setting that serves as another example where composition with Fuglede maps can be successfully implemented.
We begin by recalling the definition of \(\Sobolev^{s, p} (\manfV)\)\,:

\begin{definition}
\label{definitionSobolevFractional}
Given \(0 < s < 1\) and \(1 \le p < \infty\), the fractional Sobolev space \(\Sobolev^{s, p} (\manfV)\) is the set of all measurable functions \(u  \colon  \manfV \to \R\) such that \(u \in \Lebesgue^{p}(\manfV)\) and
\[
\seminorm{u}_{\Sobolev^{s, p}(\manfV)} 
\vcentcolon= \biggl( \int_{\manfV} \int_{\manfV} \frac{\abs{u (y) - u (z)}^p}{d(y, z)^{sp + m}} \dif z \dif y \biggr)^{\frac{1}{p}} < \infty.
\]
\end{definition}

We focus ourselves on the composition with Lipschitz maps \(\gamma \colon \manfA \to \manfV\) defined on a Riemannian manifold \(\manfA\) of dimension \( \ell \in \N_{*} \) having uniform measure \(\mu\), that is, such that there exists a constant \(C > 0\) satisfying
\begin{equation}
\label{eqFuglede-411}
\mu(B_{\rho}^{\ell}(a)) 
\le C \rho^{\ell}
\quad \text{for every \(a \in \manfA\) and \(\rho > 0\).}
\end{equation}
In contrast with \(\Sobolev^{1, p}\) spaces, we can choose a summable function \(w\) that encodes \(\Sobolev^{s, p}\)~genericity with an explicit expression in terms of \(u\)\,:

\begin{proposition}
\label{propositionModulusFractionalSobolev}
Given \(u \in \Sobolev^{s, p}(\manfV)\) with \(0 < s < 1\), let \(w  \colon  \manfV \to [0, +\infty]\) be the summable function defined for every \(x \in \manfV\) by
\begin{equation*}
w(x) = \abs{u(x)}^p + \int_{\manfV} \frac{\abs{u (x) - u (y)}^p}{d(x, y)^{sp + m}} \dif y.
\end{equation*}
If there exists \(c > 0\) such that, for every \(\xi \in \manfV\) and every \(0 < \rho \le \Diam{\manfV}\),
\(\cH^{m}(B_{\rho}^{m}(\xi)){}
\ge c \rho^{m}\),
then, for every Riemannian manifold \(\manfA\) of dimension \(\ell\) having uniform measure and every \(\gamma \in \Fuglede_{w}(\manfA; \manfV)\), it follows that 
\[{}
u \compose \gamma \in \Sobolev^{s, p}(\manfA)
\] 
and
\[{}
\seminorm{u \compose \gamma}_{\Sobolev^{s, p}(\manfA)}
\le C \abs{\gamma}_{\Lip}^{s} \norm{w \compose \gamma}_{\Lebesgue^{1}(\manfA)}^{1/p}.
\]
\end{proposition}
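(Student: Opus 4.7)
The pointwise bound \(\abs{u \compose \gamma}^{p} \le w \compose \gamma\) together with the Fuglede hypothesis gives directly that \(u \compose \gamma \in \Lebesgue^{p}(\manfA)\), so the entire work concentrates in controlling the Gagliardo seminorm
\[
I \vcentcolon= \int_{\manfA} \int_{\manfA} \frac{\abs{u(\gamma(y)) - u(\gamma(z))}^{p}}{d_{\manfA}(y, z)^{sp + \ell}} \dif z \dif y.
\]
My strategy is to dominate \(I\) by a double integral over \(\manfA \times \manfV\) whose kernel is the \(\manfV\)-weight \(d_{\manfV}(x, \gamma(y))^{-(sp + m)}\), so that upon integrating in the \(\manfV\)-variable we recover exactly \(w(\gamma(y))\) (or \(w(\gamma(z))\)).

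The tool is the averaging trick: for any \(x \in \manfV\),
\[
\abs{u(\gamma(y)) - u(\gamma(z))}^{p} \le 2^{p - 1}\bigl(\abs{u(\gamma(y)) - u(x)}^{p} + \abs{u(x) - u(\gamma(z))}^{p}\bigr).
\]
I would set \(\rho \vcentcolon= 2 \abs{\gamma}_{\Lip}\, d_{\manfA}(y, z)\), which makes \(\gamma(z) \in B_{\rho}^{m}(\gamma(y))\) since \(\gamma\) is Lipschitz, and average this inequality over \(x\) in that ball. The lower Ahlfors bound \(\cH^{m}(B_{\rho}^{m}(\gamma(y))) \ge c\rho^{m}\) turns the resulting mean into a genuine upper bound for \(\abs{u(\gamma(y)) - u(\gamma(z))}^{p}\), and this splits \(I \le I_{1} + I_{2}\) corresponding to the two summands.

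The decisive step is a Fubini swap. In \(I_{1}\) I would bring the \(z\)-integration inside the \(x\)-integral, recording the constraint \(x \in B_{\rho}^{m}(\gamma(y))\) as \(d_{\manfA}(y, z) \ge t_{0} \vcentcolon= d_{\manfV}(x, \gamma(y))/(2 \abs{\gamma}_{\Lip})\). The upper uniform measure bound \(\mu(B_{r}^{\ell}(y)) \le C r^{\ell}\) together with the layer-cake formula yields
\[
\int_{\{z \st d_{\manfA}(y, z) \ge t_{0}\}} \frac{\dif z}{d_{\manfA}(y, z)^{sp + \ell + m}} \le C\, t_{0}^{-(sp + m)},
\]
and substituting back replaces the \(\manfA\)-kernel by the expected \(\manfV\)-kernel \(d_{\manfV}(x, \gamma(y))^{-(sp + m)}\). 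Recognizing the resulting \(x\)-integral as at most \(w(\gamma(y))\), one arrives at \(I_{1} \le C \abs{\gamma}_{\Lip}^{sp} \norm{w \compose \gamma}_{\Lebesgue^{1}(\manfA)}\). The term \(I_{2}\) is treated symmetrically by swapping the \(y\)-integral inside and using the triangle inequality \(B_{\rho}^{m}(\gamma(y)) \subset B_{3 \abs{\gamma}_{\Lip} d_{\manfA}(y, z)}^{m}(\gamma(z))\) to produce \(w(\gamma(z))\); taking the \(p\)-th root gives the claimed estimate.

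The main pitfall to avoid is carrying out the two averages fully before exchanging integrations: that route delivers only the pointwise bound \(\abs{u(\gamma(y)) - u(\gamma(z))}^{p} \le C \abs{\gamma}_{\Lip}^{sp} d_{\manfA}(y, z)^{sp} (w(\gamma(y)) + w(\gamma(z)))\), whose substitution in \(I\) leaves the non-integrable weight \(d_{\manfA}(y, z)^{-\ell}\). The Fubini step is precisely what transfers the singularity from the \(\ell\)-dimensional space \(\manfA\) to the \(m\)-dimensional space \(\manfV\), the dimension shift from \(\ell\) to \(m\) being exactly absorbed by the upper uniform measure bound on \(\manfA\). A minor technical point is that when \(\rho\) exceeds \(\Diam \manfV\) one should truncate \(\rho\) to \(\Diam \manfV\) in the averaging, but this affects only the constants.
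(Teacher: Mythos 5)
Your argument is correct and follows essentially the same route as the paper's proof: the averaging/splitting over a ball in \(\manfV\) of radius comparable to \(\abs{\gamma}_{\Lip} d_{\manfA}(y,z)\), using the lower bound \(\cH^{m}(B_{\rho}^{m}(\xi)) \ge c\rho^{m}\), followed by a Tonelli swap and a Hedberg-type dyadic estimate exploiting the uniform measure of \(\manfA\) (the paper's Lemma~\ref{lemma-Hedberg}) to trade the \(\manfA\)-kernel for \(d(\gamma(y), x)^{-(sp+m)}\) and hence \(w \compose \gamma\). The only cosmetic difference is the averaging radius: the paper takes \(\rho = d(\gamma(y), \gamma(z)) \le \Diam{\manfV}\), which makes your truncation remark unnecessary and lets it handle your \(I_1\) and \(I_2\) simultaneously by symmetry.
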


We denote the best Lipschitz constant of a Lipschitz map \(\gamma  \colon  \manfA \to \manfV\) by
\[{}
\abs{\gamma}_{\Lip}
= \sup_{\substack{x, y \in \manfA\\ x \ne y}}\frac{d(\gamma(x), \gamma(y))}{d(x, y)},
\]
where in the numerator we use the distance in \(\manfV\).
To prove Proposition~\ref{propositionModulusFractionalSobolev} we need the following estimate:

\begin{lemma}\label{lemma-Hedberg}
If \(\manfA\) is a Riemannian manifold of dimension \( \ell \) having uniform measure \(\mu\), then there exists \(C' > 0\) such that, for every \(\rho>0\) and every \(a\in \manfA\),
\[
\int_{\manfA\setminus B^{\ell}_\rho(a)}\frac{1}{d(a, t)^{sp + \ell}} \dif\mu(t){}
\leq \frac{C'}{\rho^{sp}}.
\]
\end{lemma}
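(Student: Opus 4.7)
The plan is to control the integral via a dyadic decomposition of the complement of the ball, exploiting only the single hypothesis on the measure, namely \(\mu(B_{\rho}^{\ell}(a)) \le C\rho^{\ell}\) for all \(a \in \manfA\) and \(\rho > 0\). Set \(R_{k} \vcentcolon= 2^{k}\rho\) for \(k \in \N\) and decompose
\[
\manfA \setminus B^{\ell}_{\rho}(a)
= \bigcup_{k = 0}^{\infty} \bigl( B^{\ell}_{R_{k + 1}}(a) \setminus B^{\ell}_{R_{k}}(a) \bigr),
\]
with the understanding that each annulus is possibly empty once \(R_{k} \ge \Diam \manfA\); this causes no issue, as we will only use an upper bound for each term.

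On the \(k\)-th annulus, one has the pointwise bound \(d(a, t) \ge R_{k} = 2^{k}\rho\), so that \(d(a, t)^{-(sp + \ell)} \le (2^{k}\rho)^{-(sp + \ell)}\). Combining this with the measure estimate \(\mu(B^{\ell}_{R_{k + 1}}(a)) \le C R_{k + 1}^{\ell} = 2^{\ell} C (2^{k}\rho)^{\ell}\), I get
\[
\int_{B^{\ell}_{R_{k + 1}}(a) \setminus B^{\ell}_{R_{k}}(a)} \frac{1}{d(a, t)^{sp + \ell}} \dif\mu(t)
\le \frac{2^{\ell} C (2^{k}\rho)^{\ell}}{(2^{k}\rho)^{sp + \ell}}
= \frac{2^{\ell} C}{\rho^{sp}}\, 2^{-ksp}.
\]

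Summing over \(k \ge 0\) and using that \(sp > 0\), the geometric series converges:
\[
\int_{\manfA \setminus B^{\ell}_{\rho}(a)} \frac{1}{d(a, t)^{sp + \ell}} \dif\mu(t)
\le \frac{2^{\ell} C}{\rho^{sp}} \sum_{k = 0}^{\infty} 2^{-ksp}
= \frac{2^{\ell} C}{(1 - 2^{-sp})\rho^{sp}}.
\]
This yields the conclusion with \(C' \vcentcolon= 2^{\ell} C / (1 - 2^{-sp})\), which depends only on \(\ell\), \(s\), \(p\), and the uniform-measure constant of \(\manfA\). There is essentially no obstacle here: the only ingredient beyond the dyadic decomposition is the convergence of the geometric series, which is guaranteed by \(sp > 0\); the argument is independent of whether \(\manfA\) is bounded, since truncating the sum when \(R_{k}\) exceeds \(\Diam \manfA\) only decreases the left-hand side.
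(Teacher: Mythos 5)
Your proof is correct and follows essentially the same route as the paper: a dyadic annular decomposition of \(\manfA \setminus B^{\ell}_{\rho}(a)\), the pointwise bound \(d(a,t) \ge 2^{k}\rho\) on each annulus, the uniform measure estimate for the enclosing ball, and summation of the resulting geometric series (convergent since \(sp > 0\)). The only difference is cosmetic — you make the geometric series and the constant explicit, while the paper leaves the summation implicit.
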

\resetconstant
\begin{proof}[Proof of Lemma~\ref{lemma-Hedberg}]
Decomposing \(\manfA\setminus B^{\ell}_\rho(a)\) as the union of dyadic annuli,
\[
\manfA\setminus B^{\ell}_\rho(a) 
= \bigcup\limits_{j = 0}^{\infty}{\{t \in \manfA : 2^j \rho\leq d(a, t) < 2^{j+1}\rho\}},
\]
one gets
\[
\int_{\manfA\setminus B^{\ell}_\rho(a)}\frac{1}{d(a, t)^{sp+\ell}} \dif\mu(t){}
\leq
\sum_{j = 0}^{\infty}{\frac{1}{(2^{j}\rho)^{sp+\ell}}\mu\bigl(\{ t\in \manfA : 2^j \rho \leq d(a, t)< 2^{j+1}\rho\}\bigr)}.
\]
Since \(\mu\) is monotone and uniform,
\[{}
\mu\bigl(\{t\in \manfA : 2^j \rho \leq d(a,t) < 2^{j+1} \rho\}\bigr){}
\leq \mu\bigl(B^{\ell}_{2^{j+1}\rho}(a)\bigr){}
\leq \C (2^{j}\rho)^{\ell}
,
\]
which implies the desired inequality.
\end{proof}

\resetconstant
\begin{proof}[Proof of Proposition~\ref{propositionModulusFractionalSobolev}]
Let \(\gamma  \colon  \manfA \to \manfV\) be a Lipschitz map.
We claim that, for every measurable set \(E \subset \manfA\), we have
\begin{equation}
		\label{eqSobolevFractionalEstimateSubset}
  \int_{E} \int_{E} \frac{\abs{u \compose \gamma (r) - u \compose \gamma (t)}^p}{d(r, t)^{s p + \ell}} \dif\mu(t) \dif\mu(r)
  \le \C \abs{\gamma}_{\Lip}^{s p} \int_{E} w \compose \gamma \dif\mu,
\end{equation}
where \(w\) is the summable function given by the statement and \(\mu\) is the uniform measure of \(\manfA\).
Taking in particular \(E = \manfA\),  for every \(\gamma \in \Fuglede_{w}(\manfA; \manfV)\) we deduce that \(\seminorm{u \compose \gamma}_{\Sobolev^{s, p}(\manfA)} < \infty\).{}
Since \( \abs{u}^p \le w \), we then have \(u \compose \gamma \in \Sobolev^{s, p}(\manfA)\).

It suffices to prove \eqref{eqSobolevFractionalEstimateSubset} when \(\gamma\) is not constant.
We proceed as follows:
For every \(r, t \in \manfA\) and every \(y \in \manfV\), one has 
\begin{equation*}
\abs{u \compose \gamma (r) - u \compose \gamma (t)}^p
\le \Cl{cte-414} \bigl(\abs{u \compose \gamma (r) - u(y)}^p +  \abs{u \compose \gamma (t) - u(y)}^p \bigr).
\end{equation*}
We temporarily assume that \(\rho \vcentcolon= d(\gamma(r), \gamma(t)) > 0\).
Computing the average integral of both sides with respect to \(y\) over \(B^{m}_{\rho}(\xi)\) for some \(\xi \in \manfV\), we get
\[
\abs{u \compose \gamma (r) - u \compose \gamma (t)}^p
\le \Cr{cte-414} \biggl(\fint_{B_{\rho}^{m}(\xi)} \abs{u \compose \gamma (r) - u(y)}^{p} \dif y
+ \fint_{B_{\rho}^{m}(\xi)} \abs{u \compose \gamma (t) - u(y)}^{p} \dif y \biggl).
\]
By assumption, the measure of the ball \(B^{m}_{\rho}(\xi)\) in \( \manfV \) is bounded from below by \(c\rho^{m}\). 
Choosing \( \xi = \gamma(r) \), by the choice of \(\rho\) we have
\[{}
B^{m}_{\rho}(\xi) \subset B^{m}_{2\rho}(\gamma(r)) \cap B^{m}_{2\rho}(\gamma(t)).
\]
Hence,
\begin{multline*}
\abs{u \compose \gamma (r) - u \compose \gamma (t)}^p\\
\le \Cl{cte-442} \biggl(\int_{B_{2\rho}^{m}(\gamma(r))} \frac{\abs{u \compose \gamma (r) - u(y)}^{p}}{d(\gamma(r), y)^{m}} \dif y + \int_{B_{2\rho}^{m}(\gamma(t))} \frac{\abs{u \compose \gamma (t) - u(y)}^{p}}{d(\gamma(t), y)^{m}} \dif y \biggl).{}
\end{multline*}
Such an estimate also holds when \(\rho = 0\) since the left-hand side vanishes.
Dividing both sides by \(d(r, t)^{sp + \ell}\) and integrating with respect to \(r\) and \(t\) over a measurable set \(E \subset \manfA\), 
one gets
\begin{multline*}
  \int_{E} \int_{E} \frac{\abs{u \compose \gamma (r) - u \compose \gamma (t)}^p}{d(r, t)^{s p + \ell}} \dif\mu(t) \dif\mu(r) \\
  \le 2\Cr{cte-442} \int_{E} \int_{E} \int_{B_{2d (\gamma (r), \gamma (t))}^{m}(\gamma (r))}  \frac{1}{d(r, t)^{s p + \ell}}\frac{\abs{u \compose \gamma (r) - u(y)}^p}{d (\gamma(r), y)^m} \dif y \dif\mu(t) \dif\mu(r).
\end{multline*}
We observe that, for every \(r, t \in \manfA\),
\[
d (\gamma (r), \gamma (t)) 
\le \abs{\gamma}_{\Lip} \, d(r, t),
\]
and then
\begin{multline*}
  \int_{E} \int_{E} \frac{\abs{u \compose \gamma (r) - u \compose \gamma (t)}^p}{d(r, t)^{s p + \ell}} \dif\mu(t) \dif\mu(r) \\
  \le 2\Cr{cte-442} \int_{E} \int_{E} \int_{B_{2\abs{\gamma}_{\Lip} d(r, t)}^{m}(\gamma (r))}  \frac{1}{d(r, t)^{s p + \ell}}\frac{\abs{u \compose \gamma (r) - u(y)}^p}{d (\gamma(r), y)^m} \dif y \dif\mu(t) \dif\mu(r).
\end{multline*}
Using Tonelli's theorem, we interchange the order of integration between the variables \(y\) and \(t\) to get
\begin{multline*}
  \int_{E} \int_{E} \frac{\abs{u \compose \gamma (r) - u \compose \gamma (t)}^p}{d(r, t)^{s p + \ell}} \dif\mu(t) \dif\mu(r) \\
  \le 2\Cr{cte-442} \int_{E} \int_{\manfV} \biggl( \int_{\manfA \setminus B_{\frac{d(\gamma(r), y)}{2\abs{\gamma}_{\Lip}}}^{\ell}(r)}  \frac{1}{d(r, t)^{s p + \ell}}  \dif\mu(t)  \biggr) \frac{\abs{u \compose \gamma (r) - u(y)}^p}{d (\gamma(r), y)^m} \dif y  \dif\mu(r).
\end{multline*}
By Lemma~\ref{lemma-Hedberg}, we can estimate the integral with respect to \(t\) as follows
\[
\int_{\manfA \setminus B_{\frac{d(\gamma(r), y)}{2\abs{\gamma}_{\Lip}}}^{\ell}(r)}  \frac{1}{d(r, t)^{s p + \ell}}  \dif\mu(t) 
\le \C \frac{\abs{\gamma}_{\Lip}^{s p}}{d (\gamma(r), y)^{s p}}.
\]
We then get
\begin{multline*}
  \int_{E} \int_{E} \frac{\abs{u \compose \gamma (r) - u \compose \gamma (t)}^p}{d(r, t)^{s p + \ell}} \dif\mu(t) \dif\mu(r)\\
   \le \Cl{cte-479} \abs{\gamma}_{\Lip}^{s p} \int_{E} \int_{\manfV} \frac{\abs{u \compose \gamma (r) - u(y)}^p}{d (\gamma(r), y)^{sp + m}} \dif y  \dif\mu(r),
\end{multline*}
which gives \eqref{eqSobolevFractionalEstimateSubset} and completes the proof.
\end{proof}

\section{Opening}
\label{section_opening}

The composition of Sobolev functions with Fuglede maps sheds new light on the opening technique introduced by Brezis and Li~\cite{Brezis-Li} and  pursued in \cite{BPVS_MO}*{Section~2}. The main aim of this section is to explore these connections to illustrate the potential applications of generic composition with Fuglede maps. 
In order to simplify the presentation, we begin by focusing our attention on the case where the domain of the Sobolev functions and the Fuglede maps is the Euclidean space \(\R^m\). 
In this setting, Proposition~\ref{corollaryCompositionSobolevFuglede} asserts that for every \(u\in \Sobolev^{1,p}(\R^m)\), there exists a summable function \(w \colon \R^m\to [0,+\infty]\) such that, for every \(\gamma\in \Fuglede_w(\R^m;\R^m)\), we have \(u\circ \gamma \in \Sobolev^{1,p}(\R^m)\) and
\[
D(u\circ \gamma)(x)
= Du(\gamma(x))[D\gamma(x)]
\quad \text{for almost every \(x\in \R^m\).}
\]

This statement does not provide a \(\Sobolev^{1,p}\) estimate of the composition \(u\circ \gamma\). 
In this section, we investigate an additional quantitative perspective by considering specific functions \(\gamma\) for which such an estimate is available in terms of the derivatives of \(u\) and \(\gamma\). 
We also pursue this technique for higher-order Sobolev spaces \(\Sobolev^{k,p}(\R^m)\)
since such a generalization is required in Chapter~\ref{chapter-approximation-Sobolev-manifolds} and does not involve substantial new difficulties. 

Figure~\ref{figureJEMSOpeningPoint} illustrates the opening technique in a model situation involving open cubes \( Q_{r}^{m} = (-r, r)^{m} \) with \( r > 0 \).
More precisely, given a function \(u \in \Sobolev^{k, p}(\R^{m})\) and \(\delta > 0\), we wish to construct a smooth map \(\Phi^{\mathrm{op}}  \colon  \R^{m} \to \R^{m}\) such that
\begin{enumerate}[\((\mathrm{op}_1)\)]
	\item 
	\label{itemJEMS-11}
	\(u \circ \Phi^{\mathrm{op}}\) is constant in \(Q_{\delta}^{m}\)\,,{}
	\item{}
	\label{itemJEMS-14}
	 \(u \circ \Phi^{\mathrm{op}} = u\) in \(\R^{m} \setminus Q^{m}_{4\delta}\)\,,{}
	\item{}
	\label{itemJEMS-17}
	 \(u \circ \Phi^{\mathrm{op}} \in \Sobolev^{k, p}(\R^{m})\) and 
	\[{}
	\norm{u \circ \Phi^{\mathrm{op}}}_{\Sobolev^{k, p}(\R^{m})} \le C \norm{u}_{\Sobolev^{k, p}(\R^{m})},
	\]
	for some constant \(C > 0\) depending on \(m\) and \(\delta\).
\end{enumerate}

\begin{figure}
\centering
\includegraphics{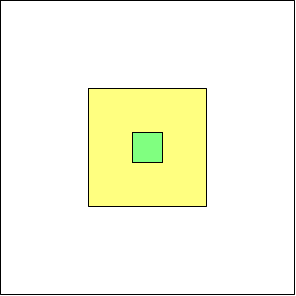}
\caption{\(u \compose \Phi^{\mathrm{op}}\) is constant in the green region}
\label{figureJEMSOpeningPoint}
\end{figure}

The construction of \( \Phi^{\mathrm{op}} \) is based on the following elementary inequality~\cite{BPVS_MO}*{Lemma~2.5}: 
Given a smooth map \(\zeta  \colon  \R^{m} \to \R^{m}\) and \(\Phi_{z}  \colon  \R^{m} \to \R^{m}\), with \(z \in \R^m\), defined by
\[{}
\Phi_{z}(x) = \zeta(x + z) - z,
\]
then for every summable function \(w  \colon  \R^{m} \to [0, +\infty]\) we have
\begin{equation}
	\label{eqJEMS-20}
\fint_{Q_{\delta}^{m}} \biggl( \int_{Q_{5\delta}^{m}} w \circ \Phi_{z} \biggr) \dif z
\le C' \int_{\R^{m}} w,
\end{equation}
where \(C' > 0\) is a constant depending on \(m\).

The proof of \eqref{eqJEMS-20} is based on Tonelli's theorem and an affine change of variables:{}
\[{}
\begin{split}
\int_{Q_{\delta}^{m}}{\biggl( \int_{Q_{5\delta}^{m}}{w(\zeta(x + z) - z) \dif x} \biggr) \dif z} 
& = \int_{Q_{\delta}^{m}}{\biggl( \int_{Q_{5\delta}^{m} + z}{w(\zeta(y) - z) \dif y} \biggr) \dif z}\\
& \le \int_{Q_{6\delta}^{m}}{\biggl( \int_{\R^{m}}{w(\zeta(y) - z) \dif z} \biggr) \dif y}\\
& = \int_{Q_{6\delta}^{m}}{\biggl(\int_{\R^{m}}{w(a) \dif a} \biggr) \dif y}
= |Q_{6\delta}^{m}| \int_{\R^{m}}{w}.
\end{split}
\]
Although this simple argument relies strongly on the Euclidean space setting, this approach fits more generally in the context of transversal perturbation of the identity that we present in Chapter~\ref{chapterGenericEllExtension}, see Example~\ref{exampleTransversalPertubationEuclidean} and Proposition~\ref{lemmaTransversalFamily}.

Starting with \(\zeta\) so that \(\zeta = 0\) in \(Q_{2\delta}^{m}\) and \(\zeta = \Id\) in \(\R^{m} \setminus Q_{3\delta}^{m}\), we then have for any \(z \in Q_{\delta}^{m}\) that \(\Phi_{z}\) is constant in \(Q_{\delta}^{m}\) and \(\Phi_{z} = \Id\) in \(\R^{m} \setminus Q_{4\delta}^{m}\).{}
Thus, for any function \(u \in \Sobolev^{k, p}(\R^{m})\), properties \((\mathrm{op}_{\ref{itemJEMS-11}})\) and \((\mathrm{op}_{\ref{itemJEMS-14}})\) are satisfied.
The issue is to choose a suitable point \(z\), now depending on \(u\), so that \((\mathrm{op}_{\ref{itemJEMS-17}})\) also holds.

Assuming that \(u\) is smooth and taking \(w = \abs{u}^{p} + \sum\limits_{i = 1}^{k}\abs{D^{i}u}^{p}\) in \eqref{eqJEMS-20}, one gets
\[{}
\fint_{Q_{\delta}^{m}}{\norm{u \circ \Phi_{z}}_{\Sobolev^{k, p}(Q_{5\delta}^{m})}^{p}} \dif z
\le C'' \norm{u}_{\Sobolev^{k, p}(\R^{m})}^{p},
\]
and then there exists \(a \in Q_{\delta}^{m}\) such that 
\[{}
\norm{u \circ \Phi_{a}}_{\Sobolev^{k, p}(Q_{5\delta}^{m})}^{p}
\le C'' \norm{u}_{\Sobolev^{k, p}(\R^{m})}^{p}.
\]
Such an averaging procedure is reminiscent of the work of Federer and Fleming~\cite{Federer-Fleming} and was adapted to Sobolev functions by Hardt, Kinderlehrer and Lin~\cite{Hardt-Kinderlehrer-Lin}.
For a general function \(u \in \Sobolev^{k, p}(\R^m)\), the argument above can be rigorously justified using an approximation of \(u\) by smooth functions, see \cite{BPVS_MO}*{Lemma 2.4} and also \cite{Hang-Lin-III}*{Section~7}.
As we explain in \cite{BPVS_MO}, where we rely on this approach via approximation, the notation \(u \compose \Phi_z\) that is suggested by the smooth case can be misleading: It is not meant to be a true composition between \(u\) and \(\Phi_z\)\,, but rather the limit of a sequence of smooth maps of the form \((u_{j} \compose \Phi_z)_{j \in \N}\).{}

Using Fuglede maps as we do here, one gets a neater approach where \(u \compose \Phi^{\mathrm{op}}\) is a true composition of functions.
As we now show, one has the following higher-order analogue of Proposition~\ref{corollaryCompositionSobolevFuglede}: 
\begin{proposition}
  \label{propositionFugledeWkp}
  Given \(u \in \Sobolev^{k, p}(\manfV)\), there exists a summable function \(w  \colon  \manfV \to [0, +\infty]\) such that \(\int_{\manfV}{w} \le 1\) and, for every \(\ell\in \N_*\), every Riemannian manifold \(\manfA\), and every smooth map \(\Phi \in \Fuglede_{w}(\manfA; \manfV)\) with bounded derivatives, we have \(u \compose \Phi \in \Sobolev^{k, p}(\manfA)\). Moreover, when \(\manfA\) and \(\manfV\) are open subsets of Euclidean spaces, for every \(t \in \{1, \dots, k\}\), 
  \begin{equation}\label{eq713}
  \norm{D^{t}(u \compose \Phi)}_{\Lebesgue^{p}(\manfA)}
  \le C''' \, \norm{w \compose \Phi}_{\Lebesgue^1(\manfA)} ^{1/p} \sum_{i=1}^{t}{B_{i} \norm{D^{i}u}_{\Lebesgue^{p}(\manfV)}}\text{,}
  \end{equation}
 where \(B_i \vcentcolon= \sum\limits_{r_{1} + \dots + r_{i} = t}{\norm{D^{r_{1}}\Phi}_{\Lebesgue^{\infty}(\manfA)} \cdots \norm{D^{r_{i}}\Phi}_{\Lebesgue^{\infty}(\manfA)}}\) and \(C''' > 0\) is a constant depending on \(k\), \(p\), \(m\) and \(\manfA\).
\end{proposition}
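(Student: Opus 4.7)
The approach parallels the proof of Proposition~\ref{corollaryCompositionSobolevFuglede}, with the ordinary chain rule replaced by the Faà di Bruno formula at orders \(1,\dots,k\). One handles first the case where \(\manfV=\Omega\) is a Lipschitz open subset of \(\R^m\); the qualitative statement on a compact manifold then follows from a finite smooth atlas by gluing local detectors, exactly as in the manifold part of the proof of Proposition~\ref{corollaryCompositionSobolevFuglede}.

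Pick a sequence \((u_j)_{j\in\N}\) in \((\Sobolev^{k,p}\cap \Smooth^\infty)(\Omega)\) converging to \(u\) in \(\Sobolev^{k,p}\). Applying Proposition~\ref{lemmaModulusLebesguesequence} \emph{simultaneously} to the \(k+1\) sequences \((u_j),(Du_j),\dots,(D^{k}u_j)\) (and all of their scalar components) yields a subsequence \((u_{j_i})_{i\in\N}\) and a summable function \(\widetilde{w}\colon\Omega\to[0,+\infty]\) with \(\widetilde{w}\ge \abs{u}^p+\sum_{s=1}^k\abs{D^s u}^p\) such that, for every Riemannian manifold \(\manfA\) and every \(\Phi\in \Fuglede_{\widetilde{w}}(\manfA;\Omega)\),
\[
D^{s}u_{j_i}\compose \Phi \to D^{s}u\compose \Phi \quad \text{in \(\Lebesgue^{p}(\manfA)\) for every \(s\in\{0,\dots,k\}\).}
\]
Setting \(w\vcentcolon= \widetilde{w}/\max\{1,\int_{\Omega}\widetilde{w}\}\) gives \(\int_\Omega w\le 1\) while preserving these stability properties up to a harmless multiplicative constant.

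Now fix \(\Phi\in \Fuglede_{w}(\manfA;\Omega)\) smooth with bounded derivatives. Since each \(u_{j_i}\) is smooth, Faà di Bruno's formula reads, for every \(t\in\{1,\dots,k\}\),
\[
D^{t}(u_{j_i}\compose \Phi) = \sum_{s=1}^{t}\;\sum_{\substack{r_1+\cdots+r_s=t\\ r_\alpha\ge 1}} c_{r_1,\dots,r_s}\,\bigl(D^{s}u_{j_i}\compose \Phi\bigr)\bigl[D^{r_1}\Phi,\dots,D^{r_s}\Phi\bigr],
\]
where the \(c_{r_1,\dots,r_s}\) are absolute combinatorial constants. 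Taking pointwise norms gives \(\abs{D^{t}(u_{j_i}\compose \Phi)}\le C_1 \sum_{s=1}^{t}B_s\,\abs{D^{s}u_{j_i}\compose \Phi}\); raising to the \(p\)-th power, integrating over \(\manfA\), and using \(\abs{D^{s}u_{j_i}}^p\le \max\{1,\int_{\Omega}\widetilde{w}\}\,w\) produces the uniform estimate
\[
\norm{D^{t}(u_{j_i}\compose \Phi)}_{\Lebesgue^{p}(\manfA)}\le C_2\,\norm{w\compose \Phi}_{\Lebesgue^{1}(\manfA)}^{1/p}\sum_{s=1}^{t}B_s\,\norm{D^{s}u}_{\Lebesgue^{p}(\Omega)}.
\]

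The boundedness of the derivatives of \(\Phi\), together with the \(\Lebesgue^p(\manfA)\) convergence of each \(D^{s}u_{j_i}\compose \Phi\), ensures that the right-hand side of Faà di Bruno's formula converges in \(\Lebesgue^p(\manfA)\) to the analogous expression with \(u\) replacing \(u_{j_i}\). Combined with the \(\Lebesgue^p(\manfA)\) convergence \(u_{j_i}\compose \Phi\to u\compose \Phi\), the classical stability of weak derivatives yields \(u\compose \Phi\in \Sobolev^{k,p}(\manfA)\) with derivatives given by the limiting Faà di Bruno sum, and passing to the limit in the previous bound gives~\eqref{eq713}. The chief technical subtlety is the combinatorial bookkeeping of Faà di Bruno: the polynomial in derivatives of \(\Phi\) must remain under \(\Lebesgue^{\infty}\)-control so that strong \(\Lebesgue^p\) convergence of each term is preserved after composition, which is precisely why the hypothesis that \(\Phi\) has bounded derivatives is imposed.
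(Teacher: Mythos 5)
Your qualitative argument is sound and follows the paper's route: approximate by smooth functions, apply Proposition~\ref{lemmaModulusLebesguesequence} to \((D^{s}u_{j})_{j}\) for \(s\in\{0,\dots,k\}\), use Fa\`a di Bruno together with the boundedness of the derivatives of \(\Phi\), and pass to the limit by stability of weak derivatives; the manifold case by charts is also as in the paper. The gap is in the derivation of \eqref{eq713}. From the pointwise domination \(\abs{D^{s}u_{j_i}}^{p}\le \max\{1,\int_{\Omega}\widetilde{w}\}\,w\) you can only conclude
\[
\norm{D^{s}u_{j_i}\compose\Phi}_{\Lebesgue^{p}(\manfA)}
\le \Bigl(\max\Bigl\{1,\int_{\Omega}\widetilde{w}\Bigr\}\Bigr)^{1/p}\norm{w\compose\Phi}_{\Lebesgue^{1}(\manfA)}^{1/p},
\]
which contains no factor \(\norm{D^{s}u}_{\Lebesgue^{p}(\Omega)}\); the ``uniform estimate'' you write down, with both \(\norm{w\compose\Phi}_{\Lebesgue^{1}}^{1/p}\) and \(\sum_{s}B_{s}\norm{D^{s}u}_{\Lebesgue^{p}(\Omega)}\) on the right, simply does not follow from that inequality (take \(D^{s}u\) nonzero but with very small \(\Lebesgue^{p}\) norm: your right-hand side becomes much smaller than what the pointwise bound can deliver). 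Moreover your constant \(C_{2}\) absorbs \(\max\{1,\int_{\Omega}\widetilde{w}\}^{1/p}\), which depends on \(u\), whereas \(C'''\) in \eqref{eq713} must depend only on \(k\), \(p\), \(m\) and \(\manfA\).

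The missing ingredient is a renormalization of the detector order by order, which is exactly what the paper's Lemma~\ref{remarkLemmaFuglede} provides: for each \(t\) one replaces the detector \(w_{t}\) coming from Proposition~\ref{lemmaModulusLebesguesequence} by \(\widetilde{w}_{t}\ge\lambda_{t}w_{t}\), essentially a normalized version of \(\alpha_{t}w_{t}+\abs{D^{t}u}^{p}\) with \(\int_{\manfV}\widetilde{w}_{t}\le 2\), so that every \(\Phi\in\Fuglede_{\tilde{w}_{t}}(\manfA;\manfV)\) satisfies
\[
\int_{\manfA}\abs{D^{t}u\compose\Phi}^{p}
\le \Bigl(\int_{\manfA}\widetilde{w}_{t}\compose\Phi\Bigr)\int_{\manfV}\abs{D^{t}u}^{p},
\]
with the degenerate case \(\norm{D^{t}u}_{\Lebesgue^{p}}=0\) handled separately by a detector equal to \(+\infty\) on \(\{D^{t}u\ne 0\}\). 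Averaging, \(w=\frac{1}{2(k+1)}\sum_{t}\widetilde{w}_{t}\) gives \(\int_{\manfV}w\le 1\), and this product-form estimate, inserted into the Fa\`a di Bruno bound, yields \eqref{eq713} with a constant independent of \(u\). A fix in the spirit of your proposal would be to include in \(\widetilde{w}\) the terms \(\abs{D^{s}u}^{p}/\norm{D^{s}u}_{\Lebesgue^{p}(\Omega)}^{p}\) (when nonzero) rather than \(\abs{D^{s}u}^{p}\), which is the same idea; as written, your argument does not establish the stated estimate.
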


When \(\manfV = \manfM\) is a compact Riemannian manifold, the higher order Sobolev spaces \(\Sobolev^{k,p}(\manfM)\) are defined through a finite covering of \(\manfM\) by local charts, exactly as in the first-order case when \(k=1\). In particular, a sequence \((u_j)_{j\in \N}\) converges to some \(u\) in \(\Sobolev^{k,p}(\manfM)\) whenever for every chart \((U,\varphi)\) such that \(U\) is an open subset of \(\manfM\) and \(\varphi\) is a diffeomorphism from an open neighborhood of \(\overline{U}\) onto an open subset of  \(\R^m\), we have
\[
u_j\compose \varphi^{-1} \to u\compose \varphi^{-1} \quad \textrm{in  } \Sobolev^{k,p}(\varphi(U)).
\]
An analogue of the estimate~\eqref{eq713} also holds when \(\manfA\) or \(\manfV\) are Riemannian manifolds, but requires a suitable interpretation of \(D^t(u\circ \Phi)\) and \(D^tu\) that we do not pursue in this work.

We recall that in Proposition~\ref{lemmaModulusLebesguesequence} one has \( u \compose \gamma \in \Lebesgue^{p}(X) \) whenever \( w \compose \gamma \in \Lebesgue^{1}(X) \).
The conclusion is still valid if one replaces \( w \) by \( \lambda w \), with \( \lambda > 0\), or by any other summable function \( \widetilde{w} \ge w \).
The latter choice reduces in practice the family of maps \( \gamma \) that can be used, but shows the great flexibility at our disposal in the actual choice of summable function.
We now exploit this issue to have a robust way of getting an estimate for \( u \compose \gamma \) in terms of \( w \compose \gamma \) without having to keep track the way \( w \) is obtained.
To obtain the estimate in Proposition~\ref{propositionFugledeWkp}, we rely on the following

\begin{lemma}
\label{remarkLemmaFuglede}
Given \(u \in \Lebesgue^p(\manfV)\) and a summable function \(w  \colon  \manfV \to [0, \infty]\), there exist \(\lambda > 0\) and a summable function \(\widetilde w \ge \lambda w\) with \(\int_{\manfV} \widetilde w \le 2\)
such that, for every metric measure space \((X, d, \mu)\) and every measurable map \(\gamma  \colon  X \to \manfV\) with \(\widetilde w \compose \gamma \in \Lebesgue^{1}(X)\), we have 
\[
\norm{u \compose \gamma}_{\Lebesgue^p(X)}
\le \norm{\widetilde w \compose \gamma}_{\Lebesgue^1(X)}^{1/p} \norm{u}_{\Lebesgue^p(\manfV)}.
\]
\end{lemma}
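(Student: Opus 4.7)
The strategy is to choose \(\widetilde w\) as the sum of two ingredients: a normalized version of \(\abs{u}^{p}\) that forces a pointwise upper bound on \(\abs{u}^{p}\) by \(\widetilde w\), plus a small scalar multiple of \(w\) that keeps Fuglede maps for \(\widetilde w\) inside the class of Fuglede maps for \(w\). Once the pointwise estimate \(\abs{u}^{p} \le \norm{u}_{\Lebesgue^{p}(\manfV)}^{p} \widetilde w\) holds in \(\manfV\), the integral inequality follows mechanically by composing with \(\gamma\) and integrating over \(X\).

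First I would handle the generic case \(\norm{u}_{\Lebesgue^{p}(\manfV)} > 0\). Pick \(\lambda \in (0, 1]\) with \(\lambda \int_{\manfV} w \le 1\); for instance, \(\lambda \vcentcolon= 1/(1 + \int_{\manfV} w)\). Define
\[
\widetilde w \vcentcolon= \frac{\abs{u}^{p}}{\norm{u}_{\Lebesgue^{p}(\manfV)}^{p}} + \lambda w \quad \text{in } \manfV.
\]
Then \(\widetilde w \ge \lambda w\) pointwise, and by construction
\[
\int_{\manfV} \widetilde w = 1 + \lambda \int_{\manfV} w \le 2.
\]
Since \(\abs{u(y)}^{p} \le \norm{u}_{\Lebesgue^{p}(\manfV)}^{p} \widetilde w(y)\) holds at every \(y \in \manfV\), for any measurable \(\gamma \colon X \to \manfV\) with \(\widetilde w \compose \gamma \in \Lebesgue^{1}(X)\) we get
\[
\int_{X} \abs{u \compose \gamma}^{p} \dif\mu \le \norm{u}_{\Lebesgue^{p}(\manfV)}^{p} \int_{X} \widetilde w \compose \gamma \dif\mu,
\]
which is the desired estimate after extracting \(p\)-th roots.

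The only point requiring care is the degenerate case \(\norm{u}_{\Lebesgue^{p}(\manfV)} = 0\), where \(u\) vanishes almost everywhere but could be nonzero on a negligible set, so the pointwise bound above is unusable. Here I would instead set
\[
\widetilde w(y) \vcentcolon= \infty \cdot \mathbf{1}_{\{u \ne 0\}}(y) + \lambda w(y),
\]
which is still summable, satisfies \(\widetilde w \ge \lambda w\), and has \(\int_{\manfV} \widetilde w = \lambda \int_{\manfV} w \le 1\). If \(\widetilde w \compose \gamma \in \Lebesgue^{1}(X)\), the first term forces \(\gamma^{-1}(\{u \ne 0\})\) to have \(\mu\)-measure zero, hence \(u \compose \gamma = 0\) \(\mu\)-almost everywhere in \(X\), and both sides of the claimed inequality vanish.

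I do not anticipate any serious obstacle: the lemma is essentially a bookkeeping device that combines the dominating role of \(w\) with a sharp pointwise control of \(\abs{u}^{p}\), and the only genuine subtlety is to respect the pointwise (rather than almost-everywhere) convention for functions adopted throughout the monograph, which is exactly why the degenerate case needs to be addressed separately by an \(\infty\)-valued adjustment on a null set.
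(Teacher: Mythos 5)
Your proof is correct and follows essentially the same route as the paper's: both take \(\widetilde w\) to be a normalized combination of \(\abs{u}^{p}\) and a small multiple of \(w\) (you normalize the \(\abs{u}^{p}\) term directly, the paper normalizes the sum \(\alpha w + \abs{u}^{p}\) afterwards, which is only a cosmetic difference), and both handle the degenerate case \(\norm{u}_{\Lebesgue^{p}(\manfV)} = 0\) by the identical device \(\widetilde w = \lambda w + \infty\,\chi_{\{u \ne 0\}}\). No gaps.
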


\begin{proof}[Proof of Lemma~\ref{remarkLemmaFuglede}]
    We first assume that \(\int_{\manfV}|u|^p > 0\).
    In this case, let \(w_1 = \alpha w + |u|^p\) in \(\manfV\) where \(\alpha > 0\) is such that 
    \[
    \alpha \int_{\manfV} w \le \int_{\manfV} |u|^p.
    \]
    Thus,
    \begin{equation}
        \label{eqFuglede-728}
    \int_{\manfV} w_1 \le 2 \int_{\manfV} |u|^p.
    \end{equation}
    Since \(|u|^p \le w_1\), for every measurable map \(\gamma  \colon  X \to \manfV\) we have \(\abs{u \compose \gamma}^{p} \le w_1 \compose \gamma\) and then, by integration,
    \begin{equation}
        \label{eqFuglede-733}
    \int_{X}{\abs{u \compose \gamma}^{p} \dif\mu}
    \le
    \int_{X}{w_1 \compose \gamma \dif\mu}.
    \end{equation}
    Combining \eqref{eqFuglede-728} and \eqref{eqFuglede-733}, we then have
    \[
    \int_{X}{\abs{u \compose \gamma}^{p} \dif\mu}
    \le
     \frac{2 \int_{X}{w_1 \compose \gamma \dif\mu}}{\int_{\manfV} w_1}\int_{\manfV} |u|^p.
    \]
    The conclusion then follows with \(\widetilde w \vcentcolon= 2w_1/\int_{\manfV} w_1\) and \(\lambda \vcentcolon= 2\alpha/\int_{\manfV} w_1\).

    Now assuming that \(\int_{\manfV} |u|^p = 0\), the set \(\{u \ne 0\}\) is negligible in \( \manfV \).
    We take \(\alpha > 0\) such that \(\alpha \norm{w}_{\Lebesgue^1(\manfV)} \le 2\) and define the summable function \(\widetilde w = \alpha w + \infty \chi_{\{u \ne 0\}}\) in \(\manfV\).
    In particular, \(\int_{\manfV} \widetilde w \le 2\).
    For every measurable map \(\gamma  \colon  X \to \manfV\) with \(\widetilde w \compose \gamma \in \Lebesgue^1(X)\), we have \(\gamma(x) \in \{u = 0\}\) for \(\mu\)-almost every \(x \in X\) and then \(u \compose \gamma = 0\) \(\mu\)-almost everywhere in \(X\).
    Hence, 
    \[
    \int_{X}
		{\abs{u \compose \gamma}^{p} \dif\mu}
    = 0.
    \]
    As a result, the estimate in the statement also holds in this case.
\end{proof}

\resetconstant
\begin{proof}[Proof of Proposition~\ref{propositionFugledeWkp}]
We first assume that \(\manfV=\Omega\) is a Lipschitz open set.
  Let \((u_{j})_{j \in \N}\) be a sequence in \((\Sobolev^{k, p} \cap \Smooth^{\infty})(\Omega)\) that converges to \(u\) in \(\Sobolev^{k, p}(\Omega)\).
  Applying inductively Proposition~\ref{lemmaModulusLebesguesequence} to the sequences \((D^{t}u_{j})_{j \in \N}\) for each \(t \in \{0, \dots, k\}\), one finds summable functions \(w_{t}  \colon  \Omega \to [0, +\infty]\) and an increasing sequence of positive integers \(({j_{i}})_{i \in \N}\) such that, for every \(\Phi \in \Fuglede_{w_{t}}(\manfA; \Omega)\), the sequence \((D^{t} u_{j_i} \compose \Phi)_{i \in \N}\) converges to \(D^{t} u \compose \Phi\) in \(\Lebesgue^{p}(\manfA)\). 
  By Lemma~\ref{remarkLemmaFuglede} applied to \(w_t\)  and \(\abs{D^t u}\) for each \(t \in \{0, \dots, k\}\), there exist summable functions  \(\widetilde{w}_{t}  \colon  \Omega \to [0, +\infty]\) such that \(\widetilde{w}_t\ge \lambda_t w_t \) for some \(\lambda_t>0\), \(\int_{\Omega}{\widetilde{w}_{t}} \leq 2\)
    and moreover, every  \(\Phi \in \Fuglede_{\tilde{w}_{t}}(\manfA ; \Omega)\) verifies
\begin{equation}
\label{eqJEMS-274}
\int_{\manfA}{\abs{D^{t}u \compose \Phi}^{p}}
\le \int_{\manfA}{\widetilde{w}_{t} \compose \Phi}
\int_{\Omega}{\abs{D^{t}u}^{p}}.
\end{equation}

Take
\[{}
w = \frac{1}{2(k+1)} \sum_{t=0}^{k}\widetilde{w}_{t}.
\]
Then, \(\int_{\Omega}w \leq 1\).
Since \(\widetilde{w}_{t} \le 2(k+1)  w\), we have 
\[{}
\int_{\manfA}{\widetilde{w}_{t} \compose \Phi} 
\le 2(k+1) \int_{\manfA}{w \compose \Phi},
\]
which by \eqref{eqJEMS-274} implies that
\begin{equation}
\label{eq-901}
\int_{\manfA}{\abs{D^{t}u \compose \Phi}^{p}}
\le 2(k+1) \biggl(\int_{\manfA}{w \compose \Phi}\biggr) \int_{\Omega}{\abs{D^{t}u}^{p}}.
\end{equation}

Using the Fa\'a di Bruno composition formula for \(D^{t}(u_{j_{i}} \compose \Phi)\) and proceeding as in the proof of Proposition~\ref{corollaryCompositionSobolevFuglede}, one shows that if in addition \(\Phi\) is smooth and has bounded derivatives in \(\manfA\), then \(u \compose \Phi \in \Sobolev^{k, p}(\manfA)\).{}
When \(\manfA\) is an open subset of a Euclidean space, the composition formula implies that:
\[{}
\resetconstant
\abs{D^{t}(u \compose \Phi)}^{p}
\le \Cl{cteJEMS-303} \sum_{i = 1}^{t}{B_{i}^{p} \, \abs{D^{i}u \compose \Phi}^{p}},
\]
where the \(B_{i}\)s are the constants of the statement.
Integrating the above estimate and applying \eqref{eq-901}, 
\[{}
\begin{aligned}
\int_{\manfA}{\abs{D^{t}(u \compose \Phi)}^{p}}
& \le \Cr{cteJEMS-303} \sum_{i = 1}^{t}{B_{i}^{p} \int_{\manfA}{\abs{D^{i}u \compose \Phi}^{p}}}\\
& \le 2(k+1) \Cr{cteJEMS-303}  \biggl(\int_{\manfA}{w \compose \Phi}\biggr) \sum_{i = 1}^{t}{ B_{i}^{p} \int_{\Omega}{\abs{D^{i}u}^{p}}},
\end{aligned}
\]
which completes the proof of the proposition when \(\manfV=\Omega\) is a Lispchitz open set. When \(\manfV=\manfM\) is a compact manifold without boundary, we can repeat the above arguments for each chart of an open covering of \(\manfM\), exactly as in the proof of Proposition~\ref{corollaryCompositionSobolevFuglede}. 
\end{proof}

We also mention the following analogue of Proposition~\ref{lemmaModulusSobolevsequenceHighk} in the setting of \(\Sobolev^{k,p}\) spaces:
\begin{proposition}
\label{lemmaModulusSobolevsequenceHighkp}
Let \((u_j)_{j \in \N}\) be a sequence of functions in \(\Sobolev^{k, p}(\manfV)\) such that
\[{}
u_{j} \to u
\quad \text{in \(\Sobolev^{k, p}(\manfV)\).}
\]
Then, there exist a summable function \(w  \colon  \manfV \to [0, +\infty]\) and a subsequence \((u_{j_i})_{i \in \N}\) such that, for every Riemannian manifold \(\manfA\) and every \(\gamma \in \Fuglede_{w}(\manfA; \manfV)\), the sequence \((u_{j_{i}} \compose \gamma)_{i \in \N}\) is contained in \(\Sobolev^{k, p}(\manfA)\) and 
\[{}
u_{j_{i}} \compose \gamma \to u \compose \gamma \quad \text{in \(\Sobolev^{k, p}(\manfA)\).}
\]
\end{proposition}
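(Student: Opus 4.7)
The plan is to combine the strategy used for Proposition~\ref{lemmaModulusSobolevsequenceHighk} with the higher-order regularity detector supplied by Proposition~\ref{propositionFugledeWkp}. First, I would extract a subsequence $(u_{j_i})_{i \in \N}$ along which each of the derivative sequences $(D^t u_{j_i})_{i \in \N}$, for $t \in \{0, \ldots, k\}$, converges to $D^t u$ in $\Lebesgue^p(\manfV)$ sufficiently fast. Applying Proposition~\ref{lemmaModulusLebesguesequence} to each of these $k+1$ sequences and passing to a common further subsequence, one obtains summable functions $\overline{w}_0, \ldots, \overline{w}_k$ on $\manfV$ with the property that, for any Riemannian manifold $\manfA$ and any measurable map $\gamma$ with $\overline{w}_t \compose \gamma \in \Lebesgue^1(\manfA)$, we have $D^t u_{j_i} \compose \gamma \to D^t u \compose \gamma$ in $\Lebesgue^p(\manfA)$.

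To ensure that the compositions themselves lie in $\Sobolev^{k,p}(\manfA)$, I would apply Proposition~\ref{propositionFugledeWkp} to $u$ and to each $u_{j_i}$, obtaining summable detectors $\widetilde{w}$ and $\widetilde{w}_i$ of integral at most~$1$, and then set
\[
w \vcentcolon= \sum_{t=0}^{k} \overline{w}_t + \widetilde{w} + \sum_{i=0}^{\infty} 2^{-(i+1)} \widetilde{w}_i,
\]
which is a summable function on $\manfV$. Any smooth $\gamma \in \Fuglede_w(\manfA; \manfV)$ with bounded derivatives is then simultaneously Fuglede with respect to each of the individual detectors, so by Proposition~\ref{propositionFugledeWkp} we have $u \compose \gamma \in \Sobolev^{k,p}(\manfA)$ and $u_{j_i} \compose \gamma \in \Sobolev^{k,p}(\manfA)$ for every $i \in \N$, while the $\Lebesgue^p$ convergences of $D^t u_{j_i} \compose \gamma$ from the previous step continue to hold.

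To conclude the $\Sobolev^{k,p}$ convergence, I would use the Fa\`a di Bruno formula to expand, for each $t \in \{1, \ldots, k\}$,
\[
D^t(u_{j_i} \compose \gamma) - D^t(u \compose \gamma) = \sum_{r=1}^{t} Q_r(D\gamma, \ldots, D^t\gamma) \cdot \bigl( (D^r u_{j_i} - D^r u) \compose \gamma \bigr),
\]
where the $Q_r$ are universal polynomial expressions in the derivatives of $\gamma$, hence uniformly bounded by hypothesis. Combined with the $\Lebesgue^p$ convergence of $D^r u_{j_i} \compose \gamma$, this yields $u_{j_i} \compose \gamma \to u \compose \gamma$ in $\Sobolev^{k,p}(\manfA)$. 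The main delicate point is the justification of the chain rule at the Sobolev level, since $u$ and the $u_{j_i}$ are only in $\Sobolev^{k,p}$; I would address it exactly as in the proof of Proposition~\ref{propositionFugledeWkp}, by approximating each $u_{j_i}$ (and $u$) by smooth functions and passing to the limit, absorbing the resulting auxiliary sequences into a further refinement of the detector $w$ via one additional application of Proposition~\ref{lemmaModulusLebesguesequence} so that all relevant convergences are preserved.
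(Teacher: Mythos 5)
Your proposal is correct and takes essentially the same route as the paper's own (omitted) argument: apply Proposition~\ref{lemmaModulusLebesguesequence} to the sequences \((D^t u_j)_{j\in\N}\) for \(t\in\{0,\dots,k\}\), add detectors coming from Proposition~\ref{propositionFugledeWkp} for the individual maps, and conclude via the Fa\`a di Bruno formula together with the stability of Sobolev functions under \(\Lebesgue^p\) convergence of all derivatives. The only point worth flagging is your explicit restriction to smooth \(\gamma\) with bounded derivatives; this matches the hypotheses of Proposition~\ref{propositionFugledeWkp} and is the regularity implicitly required for the \(\Sobolev^{k,p}\) conclusion when \(k\ge 2\), so it is a faithful (indeed slightly more careful) rendering of the paper's intended proof rather than a deviation from it.
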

The proof closely follows that of Proposition~\ref{lemmaModulusSobolevsequenceHighk}. The only modification needed is to replace the chain rule for first-order derivatives with the Faà di Bruno formula. The details of the argument are omitted.

\bigskip
We now return to the problem introduced at the beginning of this section:
Applying Proposition~\ref{propositionFugledeWkp} with \(\manfA=\manfV = \R^{m}\) for a given \(u\in \Sobolev^{k,p}(\R^m)\), one gets a summable function \(w\), which is then used in the estimate \eqref{eqJEMS-20}. 
In view of the latter, we may take \(a \in Q_{\delta}^{m}\) such that 
\[
\int_{Q_{5\delta}^{m}} w \circ \Phi_{a}
\le C' \int_{\R^{m}} w.
\]
Since by the choice of \(\zeta\) we have \(\Phi_{a} = \Id\) in \(\R^{m} \setminus Q_{5\delta}^{m}\)\,, we deduce that \(\Phi_{a}\) verifies properties~\((\mathrm{op}_{\ref{itemJEMS-11}})\)--\((\mathrm{op}_{\ref{itemJEMS-17}})\). 

Let us mention two additional features that we have not exploited so far but are important in the application of the opening technique.
Firstly, relying on Proposition~\ref{propositionFugledeWkp} with \(\manfA = \manfV = Q_{5\delta}^{m}\) we get an estimate that does not involve \(u\) on the entire space \(\R^{m}\).{}
Secondly, it is possible to obtain an estimate for each derivative \(D^{j}u\) and then better track the dependence on the factor \(\delta\).{}
Combining those ideas in our case, we finally get a map \(\Phi^{\mathrm{op}}\) with the desired properties and such that
  \[{}
  \norm{D^{j}(u \compose \Phi^{\mathrm{op}})}_{\Lebesgue^{p}(Q_{5\delta}^{m})}
  \le C'''' \sum_{i=1}^{j}{\delta^{i - j}\norm{D^{i}u}_{\Lebesgue^{p}(Q_{5\delta}^{m})}}
  \quad \text{for every \(j \in \{1, \dots, k\}\),}
  \]
for some constant \(C'''' > 0\) depending on \(m\) and \(p\).
In particular, when \(j = 1\) such an estimate is invariant under \(\delta\).
Observe that the estimate in \(\R^{m}\) can be deduced from this one since \(\Phi^{\mathrm{op}} = \Id\) in \(\R^{m} \setminus Q_{5\delta}^{m}\)\,.

\section{Convergence in generic 1-dimensional complexes}
\label{sectionSimplex}

Sobolev \(\Sobolev^{1, p}\)~functions on an interval are equal almost everywhere to a conti\-nuous function.
In this section, we consider a generic version of this property with respect to the skeleton of a \(1\)-dimensional simplicial complex.
We begin by recalling the notions of a simplex and a simplicial complex in a Euclidean space \(\R^{a}\) with  \(a\in \N_*\).{}

\begin{definition}
\label{definitionSimplex}
Given \(\ell\in \N\) and points \(x_0, \dots, x_\ell \in \R^{a}\) that are not contained in any affine subspace of dimension less than \(\ell\), the \emph{simplex} \(\Simplex^\ell\) of dimension \(\ell\) with vertices \(x_0, \dots, x_\ell\) is the convex set
\[
\Simplex^\ell{}
\vcentcolon= \biggl\{\sum_{i=0}^\ell \mu_i x_i : \mu_i\geq 0 \ \text{and}\  \sum_{i=0}^{\ell}\mu_i=1\biggr\}.
\] 
\end{definition}

We then introduce a \emph{face} \(\Sigma^{r}\) of \(\Simplex^\ell\) of dimension \(r\in \{0, \dots, \ell\}\) as the convex hull of \(r+1\) vertices of \(\Simplex^\ell\). 
In particular, \(\Simplex^\ell\) is the unique \(\ell\)-dimensional face of itself.

\begin{definition}
\label{definitionComplex}
A \emph{simplicial complex} \(\cK^\ell\) of dimension \(\ell\) is a finite set of simplices with the following properties:
	\begin{enumerate}[$(i)$]
\item each simplex of \(\cK^\ell\) is a face of an \(\ell\)-dimensional simplex in \(\cK^{\ell}\), 
\item each face of a simplex in \(\cK^{\ell}\) belongs to \(\cK^{\ell}\), 
\item if \(\Sigma_{1}^{r_1}, \Sigma_{2}^{r_2} \in \cK^{\ell}\), then \(\Sigma_{1}^{r_1} \cap \Sigma_{2}^{r_2}\) is either empty or a face of both \(\Sigma_{1}^{r_1}\) and \(\Sigma_{2}^{r_2}\).
	\end{enumerate}
\end{definition}

An \(r\)-dimensional subcomplex \(\cL^{r}\) of \(\cK^{\ell}\) is a subset of \(\cK^{\ell}\) which is  a simplicial complex of dimension \(r\) in the above sense.
In particular,  the set of all simplices of dimension less than or equal to \(r\) in \(\cK^{\ell}\) is a subcomplex, which we denote by \(\cK^{r}\).

The notation \(K^\ell\) refers to the union of the simplices in \(\cK^\ell\). 
Since the \emph{polytope} \(K^\ell\) is a subset of \(\R^a\), we may equip \(K^{\ell}\) with the distance induced by the Euclidean norm \(\abs{\cdot}\) in \(\R^a\). 
For every \(x\in K^{\ell}\) and \(\rho>0\), we then denote by \(B^{\ell}_\rho(x)\) the open ball of center \(x\) and radius \(\rho\) in \(K^\ell\), which is the intersection of the corresponding Euclidean ball with \( K^{\ell} \).
To each \(K^{r}\) with \(r \in \{0, \ldots, \ell\}\), we associate the Hausdorff measure \(\cH^r\). 

A Sobolev function can be simply defined in the \(1\)-dimension polytope \(K^{1}\) by imposing a continuity condition to handle compatibility at the vertices of \(K^{1}\).{}
More precisely,

\begin{definition}
	Let \(1 \le p < \infty\).
	The Sobolev space \(\Sobolev^{1, p} (K^1)\) on a simplicial complex \(\cK^{1}\) is the set of measurable functions \(v  \colon  K^{1} \to \R\) such that
	\begin{enumerate}[$(i)$]
	\item for every \(1\)-dimensional face \(\Sigma^{1} \in \cK^1\), \(v \vert_{\Sigma^{1}} \in \Sobolev^{1, p} (\Sigma^{1})\),{}
	\item there exists a continuous function \(\widehat v  \colon  K^{1} \to \R\), which we call the continuous representative of \(v\), with \(\widehat v = v\) \(\cH^{1}\)-almost everywhere in \(K^{1}\).
	\end{enumerate} 
\end{definition}

We prove a genericity property of Sobolev functions on \(1\)-dimensional polytopes:{}

\begin{proposition}
\label{corollaryCompositionSobolevFugledeDim1} 
Given \(u \in \Sobolev^{1, 1}(\manfV)\), there exists a summable function \(w  \colon  \manfV \to [0, +\infty]\) with
\begin{equation}
\label{eqFuglede-652}
\norm{w}_{\Lebesgue^{1}(\manfV)}
\le 2 \norm{u}_{\Sobolev^{1, 1}(\manfV)}
\end{equation}
such that, for every simplicial complex \(\cK^{1}\) and every \(\gamma \in \Fuglede_{w}(K^{1}; \manfV)\), we have \(u \compose \gamma \in \Sobolev^{1, 1}(K^{1})\) and its continuous representative \( \widehat{u \circ \gamma} \) verifies
\begin{equation}
\label{eqFuglede-937}
\norm{\widehat{u \circ \gamma}}_{\Smooth^{0}(K^{1})}
\le C (1 + \abs{\gamma}_{\Lip}) \norm{w \circ \gamma}_{\Lebesgue^{1}(K^{1})} \text{,}
\end{equation}
where \(C > 0\) depends on \(K^{1}\).
\end{proposition}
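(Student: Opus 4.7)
The plan is to combine the chain rule from Proposition~\ref{corollaryCompositionSobolevFuglede}, which gives \(\Sobolev^{1,1}\) regularity edge by edge, with the approximation by smooth maps from Proposition~\ref{lemmaModulusSobolevsequenceHighk}, which will be used to glue these pieces into a continuous function on the whole polytope \(K^1\). I pick a sequence \((u_j)_{j \in \N}\) in \((\Sobolev^{1,1} \cap \Smooth^{\infty})(\manfV)\) converging to \(u\) in \(\Sobolev^{1,1}(\manfV)\) and extract, via Proposition~\ref{lemmaModulusSobolevsequenceHighk}, a summable function \(w_0 \colon \manfV \to [0,+\infty]\) and a subsequence \((u_{j_i})_{i \in \N}\) such that \(u_{j_i} \compose \gamma \to u \compose \gamma\) in \(\Sobolev^{1,1}(\manfA)\) for every Riemannian manifold \(\manfA\) and every \(\gamma \in \Fuglede_{w_0}(\manfA; \manfV)\). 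Choosing \(\alpha > 0\) with \(\alpha \int_{\manfV} w_0 \le \norm{u}_{\Sobolev^{1,1}(\manfV)}\), I set
\[
w \vcentcolon= \abs{u} + \abs{Du} + \alpha w_0,
\]
so that \(\int_{\manfV} w \le 2 \norm{u}_{\Sobolev^{1,1}(\manfV)}\), which is \eqref{eqFuglede-652}. (In the degenerate case \(\norm{u}_{\Sobolev^{1,1}(\manfV)} = 0\), the same construction works by allowing \(w\) to take the value \(+\infty\) on the negligible set \(\{u \ne 0\} \cup \{Du \ne 0\}\), as in Lemma~\ref{remarkLemmaFuglede}.)

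\textbf{Edge-by-edge Sobolev bound.} Fix a simplicial complex \(\cK^1\) and \(\gamma \in \Fuglede_w(K^1; \manfV)\). Since \(w \ge \alpha w_0\), the restriction \(\gamma|_{\Sigma^1}\) to each \(1\)-simplex \(\Sigma^1 \in \cK^1\) lies in \(\Fuglede_{w_0}(\Sigma^1; \manfV)\). Proposition~\ref{corollaryCompositionSobolevFuglede} then yields \(u \compose \gamma|_{\Sigma^1} \in \Sobolev^{1,1}(\Sigma^1)\) together with the chain rule \(D(u \compose \gamma)(x) = Du(\gamma(x))[D\gamma(x)]\) for \(\cH^1\)-almost every \(x \in \Sigma^1\). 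Since \(w\) dominates \(\abs{u}\) and \(\abs{Du}\) pointwise, the estimates \(\abs{u \compose \gamma} \le w \compose \gamma\) and \(\abs{D(u \compose \gamma)} \le \abs{\gamma}_{\Lip}\,(w \compose \gamma)\) integrate to
\[
\norm{u \compose \gamma}_{\Lebesgue^{1}(\Sigma^1)} + \norm{D(u \compose \gamma)}_{\Lebesgue^{1}(\Sigma^1)} \le (1 + \abs{\gamma}_{\Lip}) \int_{\Sigma^1} w \compose \gamma \dif \cH^{1}.
\]

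\textbf{Continuity at vertices and \(\Smooth^0\) estimate.} Applying Proposition~\ref{lemmaModulusSobolevsequenceHighk} on each edge \(\Sigma^1\), I obtain \(u_{j_i} \compose \gamma \to u \compose \gamma\) in \(\Sobolev^{1,1}(\Sigma^1)\); the one-dimensional embedding \(\Sobolev^{1,1}(\Sigma^1) \hookrightarrow \Smooth^{0}(\Sigma^1)\) upgrades this to uniform convergence of the continuous representatives. Since each \(u_{j_i}\) is smooth on \(\manfV\) and \(\gamma\) is continuous on the whole of \(K^1\), every \(u_{j_i} \compose \gamma\) is continuous on \(K^1\); passing to the limit, the uniform limits on adjacent edges coincide at every shared vertex and paste into a continuous function on \(K^1\) that equals \(u \compose \gamma\) \(\cH^1\)-almost everywhere. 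This is the continuous representative \(\widehat{u \compose \gamma}\) and certifies \(u \compose \gamma \in \Sobolev^{1,1}(K^1)\). Combining the one-dimensional bound \(\norm{\widehat v}_{\Smooth^{0}(\Sigma^1)} \le \max\{1/\cH^1(\Sigma^1),\,1\}\,\norm{v}_{\Sobolev^{1,1}(\Sigma^1)}\), applied edge by edge, with the estimate of the previous paragraph delivers \eqref{eqFuglede-937} with a constant \(C\) depending only on the minimum edge length of \(\cK^1\). The delicate step is the pasting at the vertices: the pointwise chain-rule argument of Proposition~\ref{corollaryCompositionSobolevFuglede} sees one edge at a time and gives no information about compatibility of the continuous representatives at junction points, which is exactly the role played by the smooth approximation.
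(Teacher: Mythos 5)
Your proof is correct in substance and reaches the statement by a route that differs from the paper's in the key "gluing" step. The paper applies Proposition~\ref{lemmaModulusLebesguesequence} to \((u_j)_{j\in\N}\) \emph{and} \((Du_j)_{j\in\N}\), so that \(Du_{j_i}\compose\gamma \to Du\compose\gamma\) in \(\Lebesgue^1(K^1)\); equi-integrability of \((Du_{j_i}\compose\gamma)_{i}\) combined with the connectedness-based Poincaré--Wirtinger estimate of Lemma~\ref{lemmaPoincareWirtinger-New} gives equicontinuity of \((u_{j_i}\compose\gamma)_{i}\) on the whole polytope, hence uniform convergence to \(\widehat{u\compose\gamma}\), and the sup bound follows from the limiting oscillation estimate with \(\rho = \Diam{K^1}\) plus averaging. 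You instead work edge by edge: \(\Sobolev^{1,1}\) convergence on each \(\Sigma^1\) via Proposition~\ref{lemmaModulusSobolevsequenceHighk}, the one-dimensional imbedding \(\Sobolev^{1,1}(\Sigma^1)\subset\Smooth^0(\Sigma^1)\) to upgrade to uniform convergence, and global continuity of the approximants \(u_{j_i}\compose\gamma\) to match values at shared vertices. This avoids any connectedness discussion (your constant is governed by the minimum edge length, the paper's by the connectivity geometry of \(K^1\); both are admissible since \(C\) may depend on \(K^1\)), at the price of invoking the heavier Proposition~\ref{lemmaModulusSobolevsequenceHighk} rather than only the \(\Lebesgue^1\)-level stability.

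One point needs repair: when you invoke Proposition~\ref{corollaryCompositionSobolevFuglede} for the chain rule on an edge, you only know \(\gamma|_{\Sigma^1}\in\Fuglede_{w_0}(\Sigma^1;\manfV)\), but that proposition comes with its \emph{own} detector, say \(w_1\), and membership in \(\Fuglede_{w_0}\) does not license its conclusion. As written, the pointwise bound \(\abs{D(u\compose\gamma)} \le \abs{\gamma}_{\Lip}\,\abs{Du\compose\gamma} \le \abs{\gamma}_{\Lip}\,(w\compose\gamma)\) is therefore not justified (note that the statement of Proposition~\ref{lemmaModulusSobolevsequenceHighk} gives convergence of \(D(u_{j_i}\compose\gamma)\), not of \(Du_{j_i}\compose\gamma\), so you cannot bypass the chain rule with what you have). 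The fix is purely bookkeeping: take
\[
w \vcentcolon= \abs{u} + \abs{Du} + \alpha\,(w_0 + w_1),
\qquad \alpha \int_{\manfV}(w_0 + w_1) \le \norm{u}_{\Sobolev^{1,1}(\manfV)},
\]
so that \(\gamma\in\Fuglede_w\) implies \(\gamma|_{\Sigma^1}\) is Fuglede for both \(w_0\) and \(w_1\), and \eqref{eqFuglede-652} still holds; alternatively, follow the paper and apply Proposition~\ref{lemmaModulusLebesguesequence} directly to \((Du_j)_{j\in\N}\), folding that detector into \(w\), which yields \(Du_{j_i}\compose\gamma\to Du\compose\gamma\) in \(\Lebesgue^1\) and the same bound without the chain rule for \(u\). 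With this adjustment your argument is complete; a minor cosmetic remark is that each edge should be identified with an interval so that Propositions~\ref{corollaryCompositionSobolevFuglede} and~\ref{lemmaModulusSobolevsequenceHighk} (stated for Riemannian manifolds in the paper's sense) apply.
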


We begin with a standard \(1\)-dimensional estimate for Lipschitz functions based on the Fundamental theorem of Calculus:

\begin{lemma}
	\label{lemmaPoincareWirtinger-New}
	Let \(\cK^{1}\) be a simplicial complex such that \(K^1\) is connected.
	There exists \(\kappa > 0\) such that, for every \(x \in K^{1}\), every \(\rho > 0\), and every Lipschitz function \(v  \colon  K^{1} \to \R\), 
\[{}
\abs{v(y) - v(z)}
\le 2 \norm{Dv}_{\Lebesgue^{1}(B_{\kappa\rho}^{1}(x))}
\quad \text{for every \(y, z \in B_{\rho}^{1}(x)\)}.
\]
\end{lemma}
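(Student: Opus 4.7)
My plan is to reduce the estimate to integrating $|Dv|$ along a carefully chosen path in $K^1$ from $y$ to $z$, the path being designed so that it lies entirely in $B^1_{\kappa\rho}(x)$ and traverses each $1$-simplex of $\cK^1$ at most twice. Once such a path is at hand, the conclusion is short: for any Lipschitz map $\pi \colon [0, L] \to K^1$ parametrized by arc length with $\pi(0) = y$ and $\pi(L) = z$, applying the fundamental theorem of calculus on each maximal subinterval on which $\pi$ sits in a single $1$-simplex gives
\[
\abs{v(y) - v(z)} \le \int_0^L \abs{Dv(\pi(t))} \dif t,
\]
since $|\pi'| = 1$ almost everywhere. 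If each $1$-simplex of $\cK^1$ is traversed at most twice, the area formula bounds the right-hand side by $2\int_{\pi([0, L])}\abs{Dv}\dif\cH^1$, hence by $2\norm{Dv}_{\Lebesgue^1(B^1_{\kappa\rho}(x))}$ when $\pi([0, L]) \subset B^1_{\kappa\rho}(x)$.

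The main obstacle is therefore the following geometric claim, which I would establish separately: \emph{there exists $\kappa > 0$ depending only on $\cK^1$ such that, for every $x \in K^1$, every $\rho > 0$, and every $y, z \in B^1_\rho(x)$, there is a piecewise affine path from $y$ to $z$ contained in $K^1 \cap B^1_{\kappa\rho}(x)$ that traverses each $1$-simplex of $\cK^1$ at most twice.} I would prove this by a dichotomy based on a feature size $\delta_0 = \delta_0(\cK^1) > 0$ of the complex, e.g., the minimum of the lengths of the $1$-simplices. When $\rho \ge \delta_0$, taking $\kappa \ge \Diam K^1 / \delta_0$ forces $B^1_{\kappa\rho}(x) \cap K^1 = K^1$, and connectedness of $K^1$ together with its graph structure produces a simple path from $y$ to $z$ directly. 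When $\rho < \delta_0$, the ball $B^1_\rho(x)$ intersects only a bounded collection of $1$-simplices incident to a single ``center'' $x_0$, which is either the unique vertex of $\cK^1$ closest to $x$ or any point of the $1$-simplex containing $x$; every point of $B^1_\rho(x)$ is then joined to $x_0$ by a segment of a single $1$-simplex of length at most $C\rho$, where $C$ depends only on the local incidence of edges at vertices of $\cK^1$, and concatenating the segment from $y$ to $x_0$ with the one from $x_0$ to $z$ yields the desired path.

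Taking the maximum of the two constants produced by the case analysis gives a uniform $\kappa$, and the final estimate follows by combining the path construction with the integration bound of the first paragraph. The factor $2$ in the statement is consistent with the possibility that the two halves of the path through the central point $x_0$ overlap along a common $1$-simplex, each piece contributing once to the bound, so I do not aim to chase a sharper constant. The delicate point in Step~2 is the uniformity of $\kappa$ near the transition $\rho \sim \delta_0$, which I would handle by enlarging $\kappa$ to absorb the intermediate regime into the large-scale case, possibly via a compactness argument over $x \in K^1$ using the finiteness of $\cK^1$.
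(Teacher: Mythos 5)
Your overall strategy — join the two points by a path in \(K^1\) staying in \(B^1_{\kappa\rho}(x)\), then integrate \(\abs{Dv}\) along it via the fundamental theorem of calculus and the area formula — is exactly the paper's strategy, and your factor \(2\) via multiplicity at most two is only cosmetically different from the paper's route (it connects \(y\) and \(z\) separately to \(x\) by \emph{injective} paths and uses the triangle inequality). The gap is in your geometric claim, specifically in the choice of the feature size governing the dichotomy. You take \(\delta_0\) to be the minimum \emph{length} of a \(1\)-simplex and assert that for \(\rho < \delta_0\) all points of \(B^1_\rho(x)\) lie on edges incident to a single center \(x_0\). That is false in general: two \emph{disjoint} edges of \(\cK^1\) can pass at Euclidean distance far smaller than the shortest edge length (e.g.\ in \(\R^3\), two long skew edges at distance \(\epsilon\), joined elsewhere by a third edge to make \(K^1\) connected). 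Taking \(x\) where they nearly meet and \(\epsilon < \rho < \delta_0\), the ball \(B^1_\rho(x)\) contains points \(y, z\) on the two disjoint edges, which share no vertex; every path in \(K^1\) from \(y\) to \(z\) has length bounded below by a constant of the complex, so no path of the kind you describe (through a common incidence point, confined to \(B^1_{C\rho}(x)\)) exists. Your large-scale case only kicks in when \(\kappa\rho \ge \Diam K^1\), so this intermediate regime — which is not merely ``near the transition \(\rho \sim \delta_0\)'' but can extend over the whole interval between the gap width of the complex and \(\delta_0\) — is left uncovered by the argument as written.

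The repair is precisely what the paper does: take the threshold \(\delta\) to be the minimum \emph{distance between disjoint} \(1\)-simplices (not the minimum edge length). Then \(\rho \le \delta\) and \(y \in B^1_\rho(x)\) force the edges carrying \(x\) and \(y\) to intersect, in a common vertex \(\xi\), and the quantitative step you delegate to ``local incidence of edges'' is made precise by a uniform angle estimate \(\theta\abs{y - \xi} \le d(y, \Sigma_1^1) \le \abs{y - x} < \rho\), so the two segments through \(\xi\) stay in \(B^1_{2\rho/\theta}(x)\); for \(\rho > \delta\) one takes \(\kappa \ge \Diam{K^1}/\delta\) so that \(B^1_{\kappa\rho}(x) \supset K^1\) and connectedness alone suffices. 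Your closing suggestion of ``enlarging \(\kappa\)'' does point in this direction, but without identifying \(\delta\) as the correct quantity the dichotomy does not close; with that substitution your proof coincides with the paper's.
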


\begin{proof}[Proof of Lemma~\ref{lemmaPoincareWirtinger-New}]
We first observe that by connectedness of \(K^{1}\) there exists \(\kappa>0\) such that, for every \(\rho>0\), \(x\in K^1\) and \(y\in B^{1}_\rho(x)\), there exists a Lipschitz path \(\sigma \colon   [0, 1]\to K^{1}\cap B_{\kappa\rho}^{1}(x)\) such that \(\sigma(0)=x\) and \(\sigma(1) = y\).
Indeed, take
\[
\delta = \min{\Bigl\{ d(\Sigma_1^1, \Sigma_2^1) :  \Sigma_1^1, \Sigma_2^1 \in \cK^1 \ \text{and}\ \Sigma_1^1 \cap \Sigma_2^1 = \emptyset\Bigr\}}.
\]
If all elements in \(\cK^1\) intersect each other, then simply take any \(\delta > 0\).

Let \(0 < \rho \le \delta\). 
If \(x\in K^1\) and \(y\in B^{1}_\rho(x)\) belong to the same face \(\Sigma^1 \in \cK^1\), then it suffices to take \(\sigma(t) = (1-t)x + ty \in B^{1}_\rho(x)\).
Otherwise, since \(\rho \le \delta\), by the definition of \(\delta\) we must have \(x \in \Sigma_1^1\) and \(y \in \Sigma_2^1\) with \(\Sigma_1^1 \cap \Sigma_2^1\) non-empty, and then  \(\Sigma_1^1 \cap \Sigma_2^1 = \{\xi\}\) for some vertex \(\xi \in K^0\). 
By finiteness of \(\cK^1\) and an elementary trigonometric argument, there exists \(0 < \theta \le 1\), which can be taken independent of \(\Sigma_1^1\) and \(\Sigma_2^1\), such that, for \(i, j \in \{1, 2\}\) with \(i \ne j\),
\[
\theta \abs{z - \xi} \le d(z, \Sigma_j^1)
\quad \text{for every \(z \in \Sigma_{i}^1\)}.
\]
We thus have, with \(j = 1\) and \(i = 2\),
\[
\theta \abs{y - \xi} 
\le d(y, \Sigma_1^1)
\le \abs{y - x}
< \rho.
\]
Similarly, we also have \(\theta \abs{x - \xi} < \rho\). 
It then follows that both segments \([y, \xi]\) and \([x, \xi]\) are contained in the ball \(B_{2\rho/\theta}(x)\).
Hence, one finds a Lipschitz path \(\sigma\) with \(\kappa = 2/\theta\).
Finally, for \(\rho > \delta\), by connectedness of the polytope \(K^1\) there exists a Lipschitz path \(\sigma\) from \(x\) to \(y\). 
Its image is necessarily contained in \(B_{\kappa\rho}^{1}(x)\) with \(\kappa=\Diam{K^1}/\delta\).{}
As a result, it thus suffices to choose
\[{}
\kappa = \max{\bigl\{ 1, 2/\theta, \Diam{K^1}/\delta  \bigr\}}.{}
\]

Given \(\rho>0\), \(x\in K^1\) and \(y \in B^{1}_\rho(x)\), take a Lipschitz path \(\sigma\) as above. 
We can further require  that \(\sigma\) is either constant or injective on \([0, 1]\).
Then,
\begin{align*}
|v(y)-v(x)|
&= |v\circ \sigma (1)-v\circ \sigma(0)|\\
&\leq \int_{0}^{1}|Dv(\sigma(t))||\sigma'(t)| \dif t{}
=\int_{\sigma([0,1])} |Dv(s)| \dif\cH^1(s),
\end{align*}
where the last equality follows from the area formula. 
Since \(\sigma([0, 1]) \subset  B_{\kappa\rho}^{1}(x)\), one gets
\[
|v(y)-v(x)| \leq \int_{ B_{\kappa\rho}^{1}(x)} |Dv(s)| \dif\cH^1(s).
\]
The conclusion then follows from the triangle inequality.
\end{proof}

We now proceed by approximation as in the proof of Proposition~\ref{corollaryCompositionSobolevFuglede}:

\begin{proof}[Proof of Proposition~\ref{corollaryCompositionSobolevFugledeDim1}]
We first assume that \(\norm{u}_{\Sobolev^{1, 1}(\manfV)} > 0\).
We take a sequence \((u_{j})_{j \in \N}\) in \((\Sobolev^{1, 1} \cap \Smooth^{\infty})(\manfV)\) that converges to \(u\) in \(\Sobolev^{1, 1}(\manfV)\).
From Proposition~\ref{lemmaModulusLebesguesequence} applied recursively with \( (u_{j})_{j \in \N} \) and \( (Du_{j})_{j \in \N} \)\,, there exist a summable function \(w  \colon  \manfV \to [0, +\infty]\) and a subsequence \((u_{j_{i}})_{i \in \N}\) in \((\Sobolev^{1, 1} \cap \Smooth^{\infty})(\manfV)\) converging to \(u\) in \(\Sobolev^{1, 1}(\manfV)\) and such that, if \(\gamma \in \Fuglede_{w}(K^{1}; \manfV)\), then \(u \compose \gamma\) and \(Du \compose \gamma\) belong to \(\Lebesgue^{1}(K^{1})\), and
\begin{equation}
\label{eqFuglede-989}
u_{j_i} \compose \gamma \to u \compose \gamma \quad \text{and} \quad 
D u_{j_i} \compose \gamma \to D u \compose \gamma \quad \text{in \(\Lebesgue^1 (K^1)\).}
\end{equation}
Since \(\gamma\) is Lipschitz-continuous, we also deduce from the stability property of Sobolev functions that \(u \compose \gamma\vert_{\Sigma^{1}} \in \Sobolev^{1, 1}(\Sigma^{1})\) for every \(\Sigma^{1} \in \cK^{1}\).

In the spirit of Lemma~\ref{remarkLemmaFuglede}, we now replace \( w \) by a summable function \( \widetilde{w} \colon \manfV \to [0, +\infty] \) with more suitable properties, but the choice we make here is slightly different.
In our case, it suffices to take
\begin{equation}
\label{eqFuglede-709}
\widetilde{w} 
= \alpha w + \abs{u} + \abs{Du},
\end{equation}
where \( \alpha > 0 \) is such that \( \alpha \norm{w}_{\Lebesgue^1(\manfV)} \le \norm{u}_{\Sobolev^{1, 1}(\manfV)} \), which is possible since \(\norm{u}_{\Sobolev^{1, 1}(\manfV)} > 0\).
In particular, estimate \eqref{eqFuglede-652} holds with \( w \) replaced by \( \widetilde{w} \).

We may henceforth assume that \(K^1\) is connected.
Indeed, when that is not the case, it suffices to apply the conclusion of the statement to each connected component of \(K^1\).
Note that if estimate \eqref{eqFuglede-937} is satisfied on each component then it also holds on the entire polytope \(K^1\).

Take \( \gamma \in \Fuglede_{\tilde w}(K^{1}; \manfV) \).
Since \( \widetilde{w} \ge \alpha w \), we have \( \gamma \in \Fuglede_{w}(K^{1}; \manfV) \) and then \eqref{eqFuglede-989} is satisfied.
As we are assuming that \(K^1\) is connected, we may apply Lemma~\ref{lemmaPoincareWirtinger-New} with \(v \vcentcolon= u_{j_{i}} \compose \gamma\).
Using that 
\[
\abs{Dv} \le \abs{Du_{j_{i}} \compose \gamma} \, \abs{\gamma}_{\Lip}
\quad \text{\(\cH^{1}\)-almost everywhere in \(K^{1}\),}
\]
for each \( x \in K^{1} \) and \( \rho > 0 \) we have
\begin{equation}
\label{eqFuglede-563}
\abs{u_{j_{i}} \compose \gamma(y) - u_{j_{i}} \compose \gamma(z)}
\le 2\abs{\gamma}_{\Lip} \norm{Du_{j_{i}} \compose \gamma}_{\Lebesgue^{1}(B_{\alpha\rho}^{1}(x))}
\quad \text{for every \(y, z \in B_{\rho}^{1}(x)\)}.
\end{equation}
As the sequence \((D u_{j_i} \compose \gamma)_{i \in \N}\) converges in \(\Lebesgue^{1}(K^{1})\), it is equi-integrable.
We then have from \eqref{eqFuglede-563} that \((u_{j_{i}} \compose \gamma)_{i \in \N}\) is equicontinuous in \(K^{1}\).
Since \(u_{j_{i}} \compose \gamma \to u \compose \gamma\) in \(\Lebesgue^{1}(K^{1})\), we conclude that there exists \(f \in \Smooth^{0}(K^{1})\) such that
\[{}
u_{j_{i}} \compose \gamma{} \to f
\quad \text{in \(\Smooth^{0}(K^{1})\),}
\]
and \(f = u \compose \gamma\) \(\cH^{1}\)-almost everywhere in \(K^{1}\).{}
Hence, \(f = \widehat{u \compose \gamma}\) and, for each \(x \in K^{1}\) and \(\rho > 0\),
\begin{equation}
\label{eqFuglede-670}
\abs{\widehat{u \compose \gamma}(y) - \widehat{u \compose \gamma}(z)}
\le 2\abs{\gamma}_{\Lip} \norm{Du \compose \gamma}_{\Lebesgue^{1}(B_{\kappa\rho}^{1}(x))}
\quad \text{for every \(y, z \in B_{\rho}^{1}(x)\).}
\end{equation}
In particular, \(\widehat{u \compose \gamma}\) is uniformly continuous.
Taking \(\rho = \Diam{K^{1}}\) and using the triangle inequality, we get
\[{}
\abs{\widehat{u \compose \gamma}(y)}
\le \abs{\widehat{u \compose \gamma}(z)}
+ 2\abs{\gamma}_{\Lip} \norm{Du \compose \gamma}_{\Lebesgue^{1}(K^{1})}.
\]
To obtain the uniform estimate for the continuous representative, it suffices to integrate with respect to \(z \in K^{1}\) and apply the pointwise estimate \( \widetilde w \ge \abs{u} + \abs{Du} \).
We thus have the conclusion when \(\norm{u}_{\Sobolev^{1, 1}(\manfV)} > 0\) by taking the summable function \( \widetilde{w} \).

If we have \(\norm{u}_{\Sobolev^{1, 1}(\manfV)} = 0\), then \(u = 0\) almost everywhere in \(\manfV\) and we take \(w  \colon  \manfV \to [0, +\infty]\) defined by \(w = \infty \chi_{\{u \ne 0\}}\).
In particular, \eqref{eqFuglede-652} holds since both sides equal zero.
Note that for every simplicial complex \(\cK^1\) and every \(\gamma \in \Fuglede_w(K^1; \manfV)\), we have \(u \compose \gamma = 0\) almost everywhere in \(K^1\) and then 
\begin{equation}
\label{eqFuglede-1073}
    \widehat{u \compose \gamma} = 0
    \quad \text{in \(K^1\)}
\end{equation} 
since \(K^1\) is the finite union of \(1\)-dimensional simplices.
Thus, \eqref{eqFuglede-937} is also satisfied in this case. 
\end{proof}

We next investigate uniform convergence on generic \(1\)-dimensional polytopes for \(\Sobolev^{1, 1}\)~functions.{}
We begin with a convergent sequence in \(\Sobolev^{1, 1}\):{}

\begin{proposition}
	\label{lemmaFugledeSobolevDetectorDim1}
	Let \(u \in \Sobolev^{1, 1}(\manfV)\) and let \((u_{j})_{j \in \N}\) be a sequence in \(\Sobolev^{1, 1}(\manfV)\) such that
	\[{}
	u_{j} \to u
	\quad \text{in \(\Sobolev^{1, 1}(\manfV)\).}
	\]
	Then, there exist a summable function \(w  \colon  \manfV \to [0, +\infty]\) and a subsequence \((u_{j_{i}})_{i \in \N}\) such that, for every simplicial complex \(\cK^{1}\) and every \(\gamma \in \Fuglede_{w}(K^{1}; \manfV)\), the functions
	\(u \compose \gamma\) and all \(u_{j_{i}} \compose \gamma\) belong to \(\Sobolev^{1, 1}(K^{1})\), and
	\[{}
	\widehat{u_{j_{i}} \compose \gamma} \to \widehat{u \compose \gamma}
	\quad \text{in \(\Smooth^{0}(K^{1})\).}
	\]
\end{proposition}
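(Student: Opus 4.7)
The plan is to combine Proposition~\ref{corollaryCompositionSobolevFugledeDim1}, applied both to \(u\) and to the differences \(u_{j_i} - u\) along a rapidly convergent subsequence, with a weighted-sum argument that encodes all the resulting detectors into a single summable function \(w\). The estimate \eqref{eqFuglede-937} on the continuous representative then automatically upgrades \(\Sobolev^{1, 1}\)~approximation of \(u\compose\gamma\) to uniform approximation of \(\widehat{u\compose\gamma}\).

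First, I would extract a subsequence \((u_{j_i})_{i \in \N}\) such that \(\norm{u_{j_i} - u}_{\Sobolev^{1, 1}(\manfV)} \le 4^{-i}\). Proposition~\ref{corollaryCompositionSobolevFugledeDim1} applied to \(u\) provides a summable function \(w_{*}  \colon  \manfV \to [0, +\infty]\) such that \(u \compose \gamma \in \Sobolev^{1, 1}(K^{1})\) for every simplicial complex \(\cK^{1}\) and every \(\gamma \in \Fuglede_{w_{*}}(K^{1}; \manfV)\). Applied to each \(u_{j_i} - u \in \Sobolev^{1, 1}(\manfV)\), the same proposition together with estimate \eqref{eqFuglede-652} gives a summable function \(w_{i}  \colon  \manfV \to [0, +\infty]\) with \(\norm{w_{i}}_{\Lebesgue^{1}(\manfV)} \le 2 \cdot 4^{-i}\), such that for every \(\gamma \in \Fuglede_{w_{i}}(K^{1}; \manfV)\) one has \((u_{j_i} - u) \compose \gamma \in \Sobolev^{1, 1}(K^{1})\) together with
\[
\norm{\widehat{(u_{j_i} - u) \compose \gamma}}_{\Smooth^{0}(K^{1})} \le C(1 + \abs{\gamma}_{\Lip}) \norm{w_{i} \compose \gamma}_{\Lebesgue^{1}(K^{1})}.
\]

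Next, I would set
\[
w \vcentcolon= w_{*} + \sum_{i = 0}^{\infty} 2^{i} w_{i}.
\]
The geometric decay \(2^{i} \norm{w_{i}}_{\Lebesgue^{1}(\manfV)} \le 2^{1-i}\) makes \(w\) summable. For any \(\gamma \in \Fuglede_{w}(K^{1}; \manfV)\), the pointwise inequalities \(w_{*} \le w\) and \(2^{i} w_{i} \le w\) ensure that \(\gamma\) is simultaneously a Fuglede map with respect to \(w_{*}\) and each \(w_{i}\), and that
\(\norm{w_{i} \compose \gamma}_{\Lebesgue^{1}(K^{1})} \le 2^{-i} \norm{w \compose \gamma}_{\Lebesgue^{1}(K^{1})}\). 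In particular both \(u \compose \gamma\) and every \((u_{j_i} - u) \compose \gamma\) belong to \(\Sobolev^{1, 1}(K^{1})\), and so does \(u_{j_i} \compose \gamma = u \compose \gamma + (u_{j_i} - u) \compose \gamma\). By uniqueness, and hence additivity, of the continuous representative on \(\Sobolev^{1, 1}(K^{1})\),
\[
\widehat{u_{j_i} \compose \gamma} - \widehat{u \compose \gamma} = \widehat{(u_{j_i} - u) \compose \gamma},
\]
whence
\[
\norm{\widehat{u_{j_i} \compose \gamma} - \widehat{u \compose \gamma}}_{\Smooth^{0}(K^{1})}
\le C(1 + \abs{\gamma}_{\Lip}) \, 2^{-i} \norm{w \compose \gamma}_{\Lebesgue^{1}(K^{1})},
\]
which tends to zero as \(i \to \infty\).

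The strategy is essentially mechanical once the sharp quantitative estimate \eqref{eqFuglede-937} is available: the role of Proposition~\ref{corollaryCompositionSobolevFugledeDim1} is precisely to trade a \(\Sobolev^{1, 1}\)~norm against the \(\Lebesgue^{1}\)~norm of a detector, and the weighted-sum construction then accelerates this into uniform convergence. I do not expect any serious obstacle; the only mild point to verify is the additivity of the continuous representative, which is immediate since two continuous functions on \(K^{1}\) that agree \(\cH^{1}\)-almost everywhere agree everywhere.
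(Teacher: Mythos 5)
Your proof is correct and follows essentially the same route as the paper: both apply Proposition~\ref{corollaryCompositionSobolevFugledeDim1} to \(u\) and to the differences \(u_{j_i}-u\), combine the resulting detectors into a single summable \(w\), and use estimate \eqref{eqFuglede-937} together with additivity of the continuous representative to upgrade to uniform convergence. The only difference is cosmetic: where you pick a rapidly convergent subsequence and sum the detectors with geometric weights to force \(\norm{w_i\compose\gamma}_{\Lebesgue^1(K^1)}\le 2^{-i}\norm{w\compose\gamma}_{\Lebesgue^1(K^1)}\), the paper obtains the same decay by feeding the sequence of detectors \((w_j)\), which tends to zero in \(\Lebesgue^1(\manfV)\) by \eqref{eqFuglede-652}, into Proposition~\ref{lemmaModulusLebesguesequence} — whose proof is precisely this weighted-sum device.
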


\resetconstant
\begin{proof}
	Let \(\widetilde{w}_{1}\) be the summable function for \(u\) given by Proposition~\ref{corollaryCompositionSobolevFugledeDim1} and, for each \(j \in \N\), let \(w_{j}\) be the summable function associated to each \(u_{j} - u\) given by the same proposition.
	By estimate \eqref{eqFuglede-652} and the \(\Sobolev^{1, 1}\) convergence of \((u_{j})_{j \in \N}\), we have \(w_{j} \to 0\) in \(\Lebesgue^{1}(\manfV)\).{}
	We then apply Proposition~\ref{lemmaModulusLebesguesequence} to \((w_{j})_{j \in \N}\) and get a subsequence \((w_{j_{i}})_{i \in \N}\) and a summable function \(\widetilde{w}_{2}\) satisfying its conclusion.
	
	Let \(w = \widetilde{w}_{1} + \widetilde{w}_{2}\).{}
	For any simplicial complex \(\cK^{1}\) and \(\gamma \in \Fuglede_{w}(K^{1}; \manfV)\), 
	we have \(u \compose \gamma \in \Sobolev^{1, 1}(K^{1})\).{}
	We also have, for each \(i \in \N\), \(w_{j_{i}} \compose \gamma \in \Lebesgue^{1}(K^{1})\) and then \((u_{j_{i}} - u) \compose \gamma \in \Sobolev^{1, 1}(K^{1})\).{}
	Hence, \(u_{j_{i}} \compose \gamma \in \Sobolev^{1, 1}(K^{1})\).{}
	The continuous representative of \((u_{j_{i}} - u) \compose \gamma\) is thus given by \(\widehat{u_{j_{i}} \circ \gamma} - \widehat{u \circ \gamma}\) and, by the estimate in Proposition~\ref{corollaryCompositionSobolevFugledeDim1}, for every \(i \in \N\) we have
	\[{}
	\norm{\widehat{u_{j_{i}} \circ \gamma} - \widehat{u \circ \gamma}}_{\Smooth^{0}(K^{1})}
	\le C (1 + \abs{\gamma}_{\Lip}) \norm{w_{j_{i}} \circ \gamma}_{\Lebesgue^{1}(K^{1})}.
	\]
	Since \(w_{j_{i}} \compose \gamma \to 0\) in \(\Lebesgue^{1}(K^{1})\), the conclusion follows. 
\end{proof}

We next consider uniform convergence by composition with generic sequences of Fuglede maps.
We begin by introducing a notion of convergence in \(\Fuglede_{w}\) classes that we shall often refer to.

\begin{definition}
	\label{definitionFugledeConvergence}
    Let \(w  \colon  \manfV \to [0, +\infty]\) be a summable function and let \((\gamma_{j})_{j \in \N}\) be a sequence in \(\Fuglede_{w}(X; \manfV)\), where \(X\) is a metric measure space.
    Given \(\gamma \in \Fuglede_{w}(X; \manfV)\), we say that \((\gamma_{j})_{j \in \N}\) converges to \(\gamma\) in \(\Fuglede_{w}(X; \manfV)\), which we denote by
	\[{}
	\gamma_{j} \to \gamma \quad \text{in \(\Fuglede_{w}(X; \manfV)\),}
	\] 
	whenever  \((\gamma_{j})_{j \in \N}\) converges uniformly to \(\gamma\) in \(X\) and there exist constants \(C_{1}, C_{2} > 0\) such that, for every \(j \in \N\),
\[{}
\abs{\gamma_{j}}_{\Lip}
\le C_{1}
\quad \text{and} \quad
\norm{w \compose \gamma_{j}}_{\Lebesgue^{1}(X)}
\le C_{2}.
\]
\end{definition}

Using this notation, we have the following

\begin{proposition}
	\label{propositionFugledeApproximationMapsDim1}
	If \(u \in \Sobolev^{1, 1}(\manfV)\) and \(w\) is the summable function given by the proof of Proposition~\ref{corollaryCompositionSobolevFugledeDim1}, then there exists a summable function \(\widetilde w  \colon  \manfV \to [0, +\infty]\) with \(\widetilde w \ge w\) in \(\manfV\)
	such that, for every simplicial complex \(\cK^{1}\) and every sequence \((\gamma_{j})_{j \in \N}\) in \(\Fuglede_{\tilde w}(K^{1}; \manfV)\) with
	\[{}
	\gamma_{j} \to \gamma \quad \text{in \(\Fuglede_{\tilde w}(K^{1}; \manfV)\),}
	\]
	we have
	\[{}
	\widehat{u \compose \gamma_{j}} \to \widehat{u \compose \gamma} \quad \text{in \(\Smooth^{0}(K^{1})\).}
	\]
\end{proposition}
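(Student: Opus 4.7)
The plan is to enlarge the given detector $w$ for $u$ by adding weighted detectors $w_i$ attached to a rapidly convergent smooth approximation of $u$, so that the $\Lebesgue^{1}$-boundedness of $\widetilde w\compose\gamma_j$ forces uniform-in-$j$ control of the tails. First I will pick a sequence $(u_i)_{i\in\N}$ in $(\Sobolev^{1,1}\cap\Smooth^{\infty})(\manfV)$ with
\[
\sum_{i=0}^{\infty} i\,\norm{u-u_i}_{\Sobolev^{1,1}(\manfV)}<\infty,
\]
and, for each $i$, I will apply Proposition~\ref{corollaryCompositionSobolevFugledeDim1} to the difference $u-u_i$ in order to obtain a summable function $w_i\colon\manfV\to[0,+\infty]$ satisfying $\norm{w_i}_{\Lebesgue^{1}(\manfV)}\le 2\norm{u-u_i}_{\Sobolev^{1,1}(\manfV)}$ and such that, for every simplicial complex $\cK^{1}$ and every $\eta\in\Fuglede_{w_i}(K^{1};\manfV)$,
\[
\norm{\widehat{(u-u_i)\compose\eta}}_{\Smooth^{0}(K^{1})}\le C(1+\abs{\eta}_{\Lip})\norm{w_i\compose\eta}_{\Lebesgue^{1}(K^{1})},
\]
where $C$ depends on $\cK^{1}$ but not on $i$. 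The sought detector will then be
\[
\widetilde w\vcentcolon= w+\sum_{i=0}^{\infty} i\,w_i,
\]
which is summable on $\manfV$ by the choice of the approximation and satisfies $\widetilde w\ge w$ pointwise.

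Next I will take a simplicial complex $\cK^{1}$ and a sequence $(\gamma_j)_{j\in\N}$ in $\Fuglede_{\tilde w}(K^{1};\manfV)$ converging to $\gamma\in\Fuglede_{\tilde w}(K^{1};\manfV)$ with associated bounds $\abs{\gamma_j}_{\Lip}\le C_1$ and $\norm{\widetilde w\compose\gamma_j}_{\Lebesgue^{1}(K^{1})}\le C_2$. The uniform convergence of the $\gamma_j$ forces $\abs{\gamma}_{\Lip}\le C_1$, and the pointwise inequality $\widetilde w\ge i\,w_i$ immediately yields $\norm{w_i\compose\gamma_j}_{\Lebesgue^{1}(K^{1})}\le C_2/i$ uniformly in $j$, together with an analogous bound at $\gamma$ coming from $\widetilde w\compose\gamma\in\Lebesgue^{1}(K^{1})$. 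I will then decompose
\[
\widehat{u\compose\gamma_j}-\widehat{u\compose\gamma}=\bigl(\widehat{(u-u_i)\compose\gamma_j}-\widehat{(u-u_i)\compose\gamma}\bigr)+\bigl(u_i\compose\gamma_j-u_i\compose\gamma\bigr),
\]
taking advantage of the fact that $u_i\compose\gamma_j$ and $u_i\compose\gamma$ are continuous and hence coincide with their continuous representatives.

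The first parenthesis is bounded in $\Smooth^{0}(K^{1})$ by $2C(1+C_1)C_2/i$, which can be made arbitrarily small uniformly in $j$ by taking $i$ large. For the second parenthesis, the equi-Lipschitz continuity of the $\gamma_j$ together with their uniform convergence forces the set $\gamma(K^{1})\cup\bigcup_j\gamma_j(K^{1})$ to be contained in a compact subset of $\manfV$, so $u_i$ is uniformly continuous on a neighborhood of this set and $u_i\compose\gamma_j\to u_i\compose\gamma$ uniformly on $K^{1}$ for each fixed $i$. A standard $\epsilon/3$ argument will then give $\widehat{u\compose\gamma_j}\to\widehat{u\compose\gamma}$ in $\Smooth^{0}(K^{1})$.

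The delicate choice here is the divergent weight $i$ in front of $w_i$: without it, the $\Lebesgue^{1}$-boundedness of $\widetilde w\compose\gamma_j$ would only provide summability over $i$ of $\norm{w_i\compose\gamma_j}_{\Lebesgue^{1}(K^{1})}$ for each fixed $j$, which is too weak to yield a control on each individual term that is uniform in $j$ and vanishes as $i\to\infty$. Balancing this divergence against the Sobolev approximation rate so that $\widetilde w$ remains summable is the main technical point of the construction.
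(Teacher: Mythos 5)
Your proof is correct, but it takes a genuinely different route from the paper's. The paper applies Proposition~\ref{propositionFugledeApproximationMaps} to \(u\) and \(Du\) to produce a summable \(w_{1}\) for which \(u\compose\gamma_{j}\to u\compose\gamma\) and \(Du\compose\gamma_{j}\to Du\compose\gamma\) in \(\Lebesgue^{1}(K^{1})\), sets \(\widetilde w=w+w_{1}\), and then uses the oscillation estimate \eqref{eqFuglede-670}: the \(\Lebesgue^{1}\)~convergence of \((Du\compose\gamma_{j})_{j}\) gives equi-integrability, hence equicontinuity of \((\widehat{u\compose\gamma_{j}})_{j}\), and equicontinuity combined with \(\Lebesgue^{1}\)~convergence yields uniform convergence. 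You avoid the \(\Lebesgue^{1}\)-stability machinery altogether: you approximate \(u\) by smooth \(u_{i}\) at a summable rate, attach to each difference \(u-u_{i}\) the detector \(w_{i}\) and the sup-norm estimate \eqref{eqFuglede-937} of Proposition~\ref{corollaryCompositionSobolevFugledeDim1} (whose constant indeed depends only on \(K^{1}\)), and weight these detectors by the divergent factor \(i\), so that \(\norm{\widetilde w\compose\gamma_{j}}_{\Lebesgue^{1}(K^{1})}\le C_{2}\) converts into the tail bound \(\norm{w_{i}\compose\gamma_{j}}_{\Lebesgue^{1}(K^{1})}\le C_{2}/i\) uniformly in \(j\); the smooth piece \(u_{i}\compose\gamma_{j}\) converges uniformly by plain uniform continuity on a compact neighborhood of the images, and an \(\epsilon/3\) argument concludes. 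This is the strategy of the paper's Proposition~\ref{lemmaFugledeSobolevDetectorDim1} and the weighting trick of Lemma~\ref{remarkLemmaFuglede} transplanted to sequences of maps: what it buys is a more elementary argument (no equicontinuity or compactness-type step, no appeal to Proposition~\ref{propositionFugledeApproximationMaps}), at the price of shrinking the Fuglede class via the divergent weights, which the statement permits since only \(\widetilde w\ge w\) is required. Two harmless bookkeeping points: the bound on the first parenthesis should be \(C(1+C_{1})\bigl(C_{2}+\norm{\widetilde w\compose\gamma}_{\Lebesgue^{1}(K^{1})}\bigr)/i\) rather than \(2C(1+C_{1})C_{2}/i\), since \(\norm{\widetilde w\compose\gamma}_{\Lebesgue^{1}(K^{1})}\) need not be bounded by \(C_{2}\); and the \(i=0\) term of your series is vacuous, so the sum effectively starts at \(i=1\). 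Neither affects the argument, and the degenerate case \(\norm{u}_{\Sobolev^{1,1}(\manfV)}=0\) is covered as well.
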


\begin{proof}
	Applying Proposition~\ref{propositionFugledeApproximationMaps} to \(u\) and \(Du\), one finds a summable function \(w_{1}\) such that, for every simplicial complex \(\cK^{1}\) and every sequence \((\gamma_{j})_{j \in \N}\) with \(\gamma_{j} \to \gamma\) in \(\Fuglede_{w_{1}}(K^{1}; \manfV)\),{}
	we have
	\begin{equation}
		\label{eqFuglede-837}
	u \compose \gamma_{j} \to u \compose \gamma 
	\quad \text{and} \quad 
	Du \compose \gamma_{j} \to Du \compose \gamma 
	\quad \text{in \(\Lebesgue^{1}(K^{1})\).}
	\end{equation}
	Let \(\widetilde w = w + w_{1}\) and assume that \(\gamma_{j} \in \Fuglede_{\tilde w}(K^{1}; \manfV)\).
    If \( \norm{u}_{\Sobolev^{1, 1}(\manfV)} > 0 \),
	then by \eqref{eqFuglede-670} for every \(x \in K^{1}\) and every \( \rho > 0 \) we have
	\[{}
	\abs{\widehat{u \circ \gamma_{j}}(y) - \widehat{u \circ \gamma_{j}}(z)}
	\le 2 \abs{\gamma_{j}}_{\Lip} \norm{Du \circ \gamma_{j}}_{\Lebesgue^{1}(B_{\kappa\rho}^{1}(x))}
	\quad \text{for every \(y, z \in B_{\rho}^{1}(x)\),}
	\]
 where \(\kappa\) is the parameter introduced in Lemma~\ref{lemmaPoincareWirtinger-New}.
	If \(\gamma_{j} \to \gamma\) in \(\Fuglede_{\tilde w}(K^{1}; \manfV)\), then \(\abs{\gamma_{j}}_{\Lip} \le C_{1}\) for every \(j \in \N\) and \eqref{eqFuglede-837} holds, so that \((Du \circ \gamma_{j})_{j \in \N}\) is equi-integrable.
	We deduce that \((\widehat{u \circ \gamma_{j}})_{j \in \N}\) is equicontinuous in \(K^{1}\) and then, by \(\Lebesgue^{1}\)~convergence of \((u_{j} \compose \gamma)_{j \in \N}\) to \(u \compose \gamma\), we get
	\[{}
	\widehat{u \compose \gamma_{j}} \to \widehat{u \compose \gamma}
	\quad \text{in \(\Smooth^{0}(K^{1})\).}
	\]
    When  \( \norm{u}_{\Sobolev^{1, 1}(\manfV)} = 0 \), for every \( j \in \N \) we have by \eqref{eqFuglede-1073} that \( \widehat{u \compose \gamma_{j}} = \widehat{u \compose \gamma} = 0 \) in \( K^{1} \) and the conclusion is trivially true in this case.
\end{proof}

Proposition~\ref{lemmaFugledeSobolevDetectorDim1} also has a valid counterpart for the generic \(\Sobolev^{1, 1}\) convergence involving a subsequence \( (u_{j_{i}}) \) in the spirit of Proposition~\ref{lemmaModulusSobolevsequenceHighk} where the manifold \(\manfA\) is replaced by the simplicial complex \(\cK^1\). 
However, under the assumptions of Proposition~\ref{propositionFugledeApproximationMapsDim1}, one should not expect generic convergence in \(\Sobolev^{1, 1}\) as the following example illustrates:

\begin{example}
	Let \(u \in \Smooth_{c}^{\infty}(\R^{2})\) and \( 1 \le p < \infty\).
	For every summable function \(w  \colon  \R^{2} \to \R\) and for almost every \(a \in \R^{2}\), there exists a sequence \((\gamma_{j})_{j \in \N}\) in \(\Fuglede_{w}([0, 1]; \R^{2})\) such that \(\gamma_{j} \to a\) in \(\Fuglede_{w}([0, 1]; \R^{2})\) and
	\begin{equation}
		\label{eqFuglede-856}
	\norm{(u \compose \gamma_{j})'}_{\Lebesgue^{p}(0, 1)} \to C_{p} |\nabla u(a)|,
	\end{equation}
	for some constant \(C_{p} > 0\).
	Hence, if \(\nabla u(a) \ne 0\), then
	\[{}
	u \compose \gamma_{j} \not\to u(a)
	\quad \text{in \(\Sobolev^{1, p}(0, 1)\).}
	\]	
	Indeed, take a sequence of positive numbers \((\epsilon_{j})_{j \in \N_{*}}\) converging to zero to be chosen later on.
	Given \(a \in \R^{2}\), let \(\gamma_{a, j}  \colon  [0, 1] \to \R^{2}\) be defined by
	\[{}
	\gamma_{a, j}(t)
	= a + \epsilon_{j} \Bigl( \cos{\frac{t}{\epsilon_{j}}}, \sin{\frac{t}{\epsilon_{j}}} \Bigr).{}
	\]
	Then, \((\gamma_{a, j})_{j \in \N_{*}}\) is equiLipschitz and converges uniformly in \([0, 1]\) to the constant map \(a\) as \(j \to \infty\).{}
	We claim that, for almost every \(a \in \R^{2}\), there exist a subsequence \((\gamma_{a, j_{i}})_{i \in \N}\) and \(C > 0\) such that
	\[{}
	\int_{0}^{1} w \compose \gamma_{a, j_{i}}
	\le C
	\quad \text{for every \(i \in \N\).}
	\]
	Indeed, for every \(j \in \N\), by Tonelli's theorem and a change of variables,
	\[{}
 \begin{split}
	\int_{\R^{2}} \biggl( \int_{0}^{1} w \compose \gamma_{a, j}  \biggr) \dif a
	& = \int_{0}^{1} \biggl( \int_{\R^{2}} w\Bigl( a + \epsilon_{j} \Bigl( \cos{\frac{t}{\epsilon_{j}}}, \sin{\frac{t}{\epsilon_{j}}} \Bigr)  \Bigr) \dif a  \biggr) \dif t\\
	& = \int_{0}^{1} \biggl( \int_{\R^{2}} w(b) \dif b \biggr) \dif t
	= \int_{\R^{2}} w.
 \end{split}
	\]
	It thus follows from Fatou's lemma that
	\[{}
	\int_{\R^{2}} \biggl(\liminf_{j \to \infty} \int_{0}^{1} w \compose \gamma_{a, j}  \biggr) \dif a
	\le \int_{\R^{2}} w < \infty.
	\]
	For almost every \(a \in \R^{2}\), the integrand is thus finite and we can extract a subsequence \((\gamma_{a, j_{i}})_{i \in \N}\) such that \(\gamma_{a, j_{i}} \to a\) in \(\Fuglede_{w}([0, 1]; \R^{2})\).{}
	Observe that, for every \(j \in \N\) and every \(t \in [0, 1]\),
	\[{}
	\begin{split}
	(u \compose \gamma_{a, j})'(t)
	= \nabla u(\gamma_{a, j}(t)) \cdot \gamma'_{a, j}(t)
	& = \nabla u(a) \cdot \Bigl( - \sin{\frac{t}{\epsilon_{j}}}, \cos{\frac{t}{\epsilon_{j}}} \Bigr) + o(1)\\
	& = -|\nabla u(a)| \sin{\Bigl(\frac{t}{\epsilon_{j}} - \theta \Bigr)} + o(1),
	\end{split}
	\]
	for some \(0 \le \theta < 2\pi\) depending on \(\nabla u(a)/|\nabla u(a)|\) when \(\nabla u(a) \ne 0\).{}
	Thus,
	\[{}
	\norm{(u \compose \gamma_{a, j})'}_{\Lebesgue^{p}(0, 1)}
	= \abs{\nabla u(a)} \biggl(\int_{0}^{1} \Bigl|\sin{\Bigl(\frac{t}{\epsilon_{j}} - \theta \Bigr)} \Bigr|^{p} \dif t\biggr)^{\frac{1}{p}} + o(1). 
	\]
	Choosing \(\epsilon_{j} = 1/2\pi j\), we then get by a change of variables and \(\pi\) periodicity of \(\abs{\sin}\),{}
	\[{}
 \begin{split}
	\int_{0}^{1} \Bigl|\sin{\Bigl(\frac{t}{\epsilon_{j}} - \theta \Bigr)} \Bigr|^{p} \dif t
	& = \frac{1}{2\pi j} \int_{-\theta}^{2\pi j - \theta} |\sin{s}|^{p} \dif s\\
	  & = \frac{1}{2\pi j} \int_{0}^{2\pi j} |\sin{s}|^{p} \dif s
	= \frac{1}{\pi} \int_{0}^{\pi} |\sin{s}|^{p} \dif s.
 \end{split}
	\]
	Taking 
	\[{}
	C_{p} = \biggl(\frac{1}{\pi} \int_{0}^{\pi} |\sin{s}|^{p} \dif s\biggr)^{\frac{1}{p}},
	\]	
    one then obtains \eqref{eqFuglede-856}.
\end{example}

\section{Connection to Fuglede's work}
\label{section_Fuglede}

Let us compare the notion of genericity introduced in this chapter with Fuglede's original work~\cite{Fuglede}, where he introduced the concept of \emph{modulus} to quantify the size of a family \(\Gamma\) of Borel measures on \(\manfV\). The modulus of \(\Gamma\) is defined as
\[
\Mod{\Gamma} = \inf{\left\{ \int_{\manfV} f : f \ge 0, \ f \ \text{is measurable in} \ \manfV \ \text{and} \ \int_{\manfV} f \, \dif\nu \ge 1 \ \text{for all} \ \nu \in \Gamma \right\}}.
\]
A property \(\mathcal{P}\) is then said to hold for almost every measure if it fails only for a family of measures with modulus zero.
For example, by~\cite{Fuglede}*{Theorem~3}, if \(f_j \to f\) in \(\Lebesgue^{1}(\manfV)\), then there exists a subsequence \((f_{j_i})_{i \in \N}\) such that, for almost every Borel measure \(\nu\) on \(\manfV\),
\[
\lim_{i \to \infty}{\int_{\manfV} |f_{j_i} - f| \dif\nu} = 0.
\]

This robust notion of genericity can be reformulated without explicitly mentioning the modulus. According to~\cite{Fuglede}*{Theorem~2}, a property \(\mathcal{P}\) holds for almost every measure if and only if there exists a summable function \(w \colon \manfV \to [0, +\infty]\) such that \(\mathcal{P}\) holds for any measure \(\nu\) on \(\manfV\) satisfying
\begin{equation}
\label{eqFuglede-1340}
\int_{\manfV} w \dif\nu < \infty.
\end{equation}

We may now apply this formulation to pushforward measures.
We recall that, given a continuous map \(\gamma \colon X \to \manfV\) on a metric measure space \((X, d, \mu)\), the pushforward measure \(\nu \vcentcolon= \gamma_* \mu\) is defined for any Borel set \(U \subset \manfV\) by
\[
\gamma_* \mu (U) = \mu (\gamma^{-1} (U)).
\] 
Since
\[
\int_{\manfV} w \dif\nu = \int_{X} w \circ \gamma \dif\mu
\quad \text{and} \quad 
\int_{\manfV} |f_{j_i} - f| \dif\nu
= \int_{X} |f_{j_i} \circ \gamma - f \circ \gamma| \dif\mu,
\]
it follows that Proposition~\ref{lemmaModulusLebesguesequence} above can be deduced from Fuglede's theory applied to these pushforward measures.

Fuglede uses this notion of genericity to characterize limits of irrotational vector fields in Lebesgue spaces and to describe Sobolev functions through integration of \(\Lebesgue^p\) differential forms.
These problems are handled using measures \(\mathcal{H}^k \lfloor_{\gamma(\manfA)}\) given by biLipschitz homeomorphisms \(\gamma \colon \manfA \to \gamma(\manfA) \subset \manfV\) defined on an open subset \(\manfA\) of \(\R^k\) with \(k \in \{1, \ldots, m-1\}\).
These measures \(\mathcal{H}^k \lfloor_{\gamma(\manfA)}\) are defined for any Borel subset \(U \subset \manfV\) by 
\[
(\mathcal{H}^k \lfloor_{\gamma(\manfA)})(U) = \mathcal{H}^k (\gamma(\manfA) \cap U)
= \int_{\gamma^{-1}(U)} \Jacobian{k}{\gamma},
\]
where \(\mathcal{H}^k\) is the \(k\)-dimensional Hausdorff measure on \(\manfV\) and \(\Jacobian{k}{\gamma}\) is the \(k\)-dimensional Jacobian of \(\gamma\),
\begin{equation}
\label{eqFugledeJacobian}
\Jacobian{k}{\gamma} (\xi) = \bigl[\det{\bigl(D \gamma (\xi)^* \compose D \gamma (\xi) \bigr)}\bigr]^{\frac{1}{2}}.    
\end{equation}
The genericity condition of a property \(\mathcal{P}\) for the measures \(\mathcal{H}^k \lfloor_{\gamma(\manfA)}\) in terms of \eqref{eqFuglede-1340} means that there exists a summable function \(w \colon \manfV \to [0, +\infty]\) such that  \(\mathcal{P}\) holds for any biLipschitz homeomorphism \(\gamma \colon \manfA \to \gamma(\manfA) \subset \manfV\) satisfying
\begin{equation}
\label{eqFuglede-1308}
\int_{\manfA} w \circ \gamma \, \Jacobian{k}{\gamma}
= \int_{\gamma(\manfA)} w \dif\mathcal{H}^k \lfloor_{\gamma(\manfA)}{} 
 < \infty.
\end{equation}

Here, our viewpoints diverge as we allow maps \(\gamma\) that are not necessarily biLipschitz or even injective, and we replace the Jacobian-weighted integral \eqref{eqFuglede-1308} by the condition
\begin{equation}
\label{eqFuglede-1384}
\int_{\manfA} w \circ \gamma < \infty.
\end{equation}
These differences from Fuglede's work are crucial for us.
More specifically, non-injective Lipschitz transformations naturally arise in homotopical constructions, even in the basic case of a homotopy to a constant map. The generic opening of Sobolev functions, introduced in Section~\ref{section_opening} and applied later in Chapter~\ref{chapter-approximation-Sobolev-manifolds} to the approximation of Sobolev mappings, also relies on compositions of maps that are never injective.
By expanding the class of admissible maps \(\gamma\), we require a stronger integral condition given by \eqref{eqFuglede-1384}.
Our approach to generic properties on lower-dimensional sets, introduced in Section~\ref{sectionSimplex} and pursued in the next chapter, thus follows a different path from Fuglede's original one.

\cleardoublepage

\chapter{VMO detectors}
\label{chapter-3-Detectors}

Given an integer \(p > 1\), \(\Sobolev^{1,p}\) functions defined on a \(p\)-dimensional domain generally lack continuity. However, they exhibit a vanishing mean oscillation (\(\VMO\)) property, which, as we explain in the next chapter, serves as a robust substitute for continuity in capturing the topological features of Sobolev maps with values in a manifold.

In this chapter, we investigate the stability properties of Sobolev functions under generic composition within the \(\VMO\) framework. To this end, we introduce the notion of an \(\ell\)-detector \(w\) for a given measurable function \(u\). An \(\ell\)-detector is a summable function \(w\) such that \(u \circ \gamma \in \VMO(K^\ell)\) whenever \(K^\ell\) is a polytope of dimension \(\ell\) and \(\gamma\) belongs to \(\Fuglede_w(K^\ell)\). This concept provides a precise mechanism for identifying compositions that preserve the generic \(\VMO\) nature of \(u\) at the \(\ell\)-dimensional level.

We establish the existence of \(\ell\)-detectors for Sobolev functions \(u\) in \(\Sobolev^{1,p}\) with \(p \geq \ell\) or in \(\Sobolev^{s,p}\) with \(0 < s < 1\) and \(sp \geq \ell\), regardless of whether \(p\) is integer or not. These results demonstrate the versatility of \(\VMO\) as a framework for generalizing continuity-like properties across different Sobolev scales.

To unify these findings, we introduce the space \(\VMO^\ell\) as a common framework encompassing these various Sobolev spaces concerning generic \(\VMO\) stability at dimension \(\ell\). Functions in \(\VMO^\ell\) enjoy a fundamental stability property: If \(\gamma_j \to \gamma\) in \(\Fuglede_w(K^\ell)\), then \(u \circ \gamma_j \to u \circ \gamma\) in \(\VMO(K^\ell)\). This extends the one-dimensional stability result involving \(\Smooth^0\) convergence from the previous chapter to the higher-dimensional setting.

\section{VMO functions}
\label{sectionVMOFunctions}

Continuity provides a natural framework for classical topological problems, such as homotopy theory and continuous extensions.  
However, many topological constructions can be extended to a slightly broader setting where functions need not be continuous. 
In this context, the space \(\VMO\) of vanishing mean oscillations offers a good replacement for continuity.
In fact, we develop in the next chapter a \(\VMO\)~theory for the topological degree based on the work of Brezis and Nirenberg~\cite{BrezisNirenberg1995}.
In this section, we focus on the definition and examples of \(\VMO\)~functions.

We begin by recalling the notion of a \(\VMO\) function defined in a metric measure space \( X \) equipped with a Borel measure \( \mu \)\,:

\begin{definition}
We say that a function \(v \in \Lebesgue^{1}(X)\) has \emph{vanishing mean oscillation} whenever, for each \( \epsilon > 0 \), there exists \( \delta > 0 \) such that, for every \( 0 < \rho \le \delta \) and \( x \in X \), we have
\[
\int_{B_{\rho}(x)}{\int_{B_{\rho}(x)}{\abs{v(y) - v(z)}  \dif\mu(z) \dif\mu(y)}}
\le \epsilon \, \mu(B_{\rho}(x))^2 \text{,}
\]
where \(B_\rho(x)\) is the open ball of center \(x \in X\) and radius \(\rho > 0\). 
\end{definition}

Assuming that \(  \mu \) is nondegenerate, that is, \( \mu(B_{\rho}(x)) > 0 \) for every \( x \in X \) and \( \rho > 0 \), we may define the \emph{mean oscillation} of \(v\) at the scale \(\rho > 0\) as
\begin{equation}
\label{eqDetector-33}
\seminorm{v}_{\rho} = \sup_{x \in X}{\fint_{B_{\rho}(x)}{\fint_{B_{\rho}(x)}{\abs{v(y) - v(z)}  \dif\mu(z) \dif\mu(y)}}}.
\end{equation}
Then, \( v \) has vanishing mean oscillation if and only if 
\begin{equation*}
\label{conditionVMO}
\lim_{\rho \to 0}{\seminorm{v}_{\rho}} 
= 0.
\end{equation*}
Although we rely on the double average of \(v\) to define the mean oscillation, it is also customary to use instead the equivalent quantity
\[
\fint_{B_{\rho}(x)}{\biggabs{v(y) - \fint_{B_{\rho}(x)} v}  \dif\mu(y)}.
\]
Their equivalence is easily established as follows.
On the one hand, by monotonicity of the integral,
\begin{equation}
\label{eqDetector-48}
\begin{split}
\fint_{B_{\rho}(x)}{\biggabs{v(y) - \fint_{B_{\rho}(x)} v}  \dif\mu(y)}
& = \fint_{B_{\rho}(x)}{\biggabs{ \fint_{B_{\rho}(x)} [v(y) - v(z)] \dif\mu(z)}  \dif\mu(y)} \\
& \le \fint_{B_{\rho}(x)}{\fint_{B_{\rho}(x)}{\abs{v(y) - v(z)}  \dif\mu(z) \dif\mu(y)}}.
\end{split}
\end{equation}
On the other hand, by the triangle inequality, for every \(y, z \in B_{\rho}(x)\) we have
\[
\abs{v(y) - v(z)}
\le \biggabs{v(y) - \fint_{B_{\rho}(x)} v} + \biggabs{v(z) - \fint_{B_{\rho}(x)} v}
\]
and then, taking the average integrals with respect to \(y\) and \(z\), 
\begin{equation}
\label{eqDetector-61}
\fint_{B_{\rho}(x)}{\fint_{B_{\rho}(x)}{\abs{v(y) - v(z)}  \dif\mu(z) \dif\mu(y)}}
\le 2 \fint_{B_{\rho}(x)}{\biggabs{v(y) - \fint_{B_{\rho}(x)} v}  \dif\mu(y)}.
\end{equation}
The combination of \eqref{eqDetector-48} and \eqref{eqDetector-61} gives the equivalence we claimed.

\bigskip

Denote by \(\UContinuous(X)\) the vector space of bounded uniformly continuous functions on \(X\) equipped with the sup norm.
When \(X\) is compact, \(\UContinuous(X)\) coincides with \(\Smooth^{0}(X)\).
We also denote by \(\VMO(X)\) the vector space of all functions \(v \in \Lebesgue^{1}(X)\) that have vanishing mean oscillation and we associate to it the norm
\begin{equation}
\label{eqDetector-VMO}
\norm{v}_{\VMO(X)}
\vcentcolon= \norm{v}_{\Lebesgue^{1}(X)} + \seminorm{v}_{\VMO(X)}\,,
\end{equation}
where
\begin{equation}
\label{eqDetectorSeminorm}
\seminorm{v}_{\VMO(X)} \vcentcolon= \sup_{\rho > 0}{\seminorm{v}_{\rho}}\,.
\end{equation}
When \(X\) has finite measure the continuous imbedding occurs
\[{}
\UContinuous(X) \subset \VMO(X),
\]
with strict inclusion.
For example, a function \(v \colon  (-1/2, 1/2) \to \R\) defined for \(x \ne 0\) by \(v(x) = \log{\abs{\log{\abs{x}}}}\) belongs to \(\VMO(-1/2, 1/2)\).{}

We shall assume that \( X \) is a locally compact and separable metric space and that the measure \(\mu\) satisfies
\begin{itemize}
\item  the \emph{doubling property}: 
There exists \(C>0\) such that
\begin{equation*}
\mu(B_{2\rho}(x)) \leq C \mu(B_\rho(x)){}
\quad
\text{for every \(x \in X\) and \(\rho > 0\),}
\end{equation*}
\item the \emph{metric continuity}:
For every \(\epsilon > 0\), there exists \(\delta > 0\) such that
\begin{equation*}
\mu(B_{\rho_{1}}(x_{1}) \triangle B_{\rho_{2}}(x_{2})) \le \epsilon{}
\quad \text{whenever \(d(x_{1}, x_{2}) + \abs{\rho_{1} - \rho_{2}} \le \delta\),}
\end{equation*}
where \(A \triangle B \vcentcolon= (A \setminus B) \cup (B \setminus A)\) is the symetric difference between two sets \(A\) and \(B\),{}
\item the \emph{uniform nondegeneracy}: 
For every \(\rho > 0\), there exists \(\eta > 0\) such that
\begin{equation}\label{eq-uniform-nondegeneracy}
\mu(B_{\rho}(x)) \ge \eta{}
\quad \text{for every \(x \in X\).}
\end{equation}
\end{itemize}

We prove in Section~\ref{sectionBUC} that under these assumptions \(\UContinuous(X)\) is dense in \( \VMO(X) \).
Observe that, as a consequence of the metric continuity with \(x_{1} = x_{2} = x \) one has 
\[
\mu(\partial B_{\rho}(x)) = 0
\quad
\text{for every \(x \in X\) and \(\rho > 0\).}
\]
When the measure \(\mu\) is clear from the context, we shall omit it.
Some examples of spaces \(X\) that we consider later on satisfying these assumptions are: \(X = \Omega\) is an open subset of \(\R^{m}\) with Lipschitz boundary, \(X = \manfM\) is a compact Riemannian manifold and \(X = K^{\ell}\) is the polytope of a simplicial complex \(\cK^{\ell}\), see Definition~\ref{definitionComplex}.
Let us clarify why the latter is included in this setting:

\begin{example}
Let \(\cK^{\ell}\) be a simplicial complex and consider the polytope \(K^{\ell} \subset \R^a\) equipped with the Hausdorff measure \(\mu \vcentcolon= \cH^\ell\lfloor_{K^\ell}\).
If \(\Sigma^{\ell}\) is a simplex in \(\cK^{\ell}\), then there exist constants \(c, C>0\)  such that, for every \(x\in \Sigma^{\ell}\) and \(0 < \rho \le \Diam{\Sigma^{\ell}}\),
\[
c \rho^{\ell} 
\leq \cH^{\ell}(B^\ell_{\rho}(x)\cap \Sigma^{\ell})
\leq C \rho^{\ell}.
\]
Since \(\cK^{\ell}\) is finite, we can choose \(c, C>0\) so that both inequalities hold true for every \(\Sigma^{\ell}\in \cK^{\ell}\). 
In particular, the above estimate implies the doubling property for \(\mu\). 
In a similar vein, the metric continuity of \(\cH^\ell\) on every \(\ell\)-dimensional affine plane in \(\R^a\) implies the metric continuity of \(\mu\) on every \(\Sigma^\ell \in \cK^{\ell}\), and finally on the whole \(K^\ell\). 
\end{example}

\begin{example}
	\label{exampleDetectorsVMOSobolev}
	Given a bounded convex open set \(\Omega \subset \R^{m}\), then every \(v \in \Sobolev^{1, m}(\Omega)\) has vanishing mean oscillation.
	Indeed, by the Poincaré-Wirtinger inequality, one has for every \(x \in \Omega\) and every \(\rho > 0\),
	\[{}
	\fint_{B_{\rho}^{m}(x) \cap \Omega}\fint_{B_{\rho}^{m}(x) \cap \Omega}{\abs{v(y) - v(z)} \dif z \dif y}
	\le C \norm{Dv}_{\Lebesgue^{m}(B_{\rho}^{m}(x) \cap \Omega)}.
	\]
	By summability of \(\abs{Dv}^{m}\) and Lebesgue's dominated convergence theorem, the right-hand side converges uniformly to zero with respect to \(x\) as \(\rho \to 0\).{}
\end{example}

\begin{example}
	\label{exampleVMOSobolevFractional}
	Given \(0 < s < 1\) and a compact Riemannian manifold \(\manfM\) of dimension \(m\), each \(v \in \Sobolev^{s, {m}/{s}}(\manfM)\) has vanishing mean oscillation.
Indeed, for every \(\xi \in \manfM\) and \(0 < \rho \le \Diam{\manfM}\), we have
\(\cH^{m}(B_{\rho}^{m}(\xi)) \ge c \rho^{m}\).
	Using Hölder's inequality one finds that, for every \(x \in \manfM\) and every \(0 < \rho \le \Diam{\manfM}\),
	\begin{multline*}
	\fint_{B_{\rho}^{m}(x)}\fint_{B_{\rho}^{m}(x)}{\abs{v(y) - v(z)} \dif\cH^{m}(z)\dif\cH^{m}(y)}\\
	\le C \biggl(\int_{B_{\rho}^{m}(x)} \int_{B_{\rho}^{m}(x)} \frac{\abs{v (y) - v(z)}^m}{d(y, z)^{2m}} \dif\cH^{m}(z) \dif\cH^{m}(y)\biggr)^{\frac{1}{m}},
	\end{multline*}
	where the right-hand side converges uniformly to zero with respect to \(x\) as \(\rho \to 0\) by Lebesgue's dominated convergence theorem.{}
\end{example}

\section{\texorpdfstring{$\ell$}{l}-detectors}

One deduces from Proposition~\ref{corollaryCompositionSobolevFugledeDim1} that given a \(\Sobolev^{1, 1}\) function \(u\), then for generic maps \(\gamma\) defined on a \(1\)-dimensional polytope \(K^{1}\), the continuous representative  \(\widehat{u \compose \gamma}\) belongs to \(\Smooth^{0}(K^{1})\) and in particular \(u \compose \gamma\) itself belongs to \(\VMO(K^{1})\).{}
In Example~\ref{exampleDetectorsVMOSobolev} we saw that \(\VMO\) is the right substitute to continuity for \(\Sobolev^{1, m}\) maps in dimension \(m\).{}
We now pursue \(\VMO\) genericity for Sobolev functions on \(\ell\)-dimensional polytopes for any given \(\ell \in \N\).
We begin with the concept of detector to help us identify Fuglede maps that yield the \(\VMO\) property under composition, where \(\manfV\) is a Lipschitz open subset of \(\R^m\) or a compact Riemannian manifold of dimension \(m\) without boundary. 

\begin{definition}
	Given \(\ell \in \N\) and a measurable function \(u \colon  \manfV \to \R\), we say that a summable function \(w \colon  \manfV \to [0, +\infty]\) is a \emph{\((\VMO, \ell)\)-detector} for \(u\) whenever,	for each simplicial complex \(\cK^{\ell}\) of dimension \(\ell\) and each \(\gamma \in \Fuglede_{w}(K^{\ell}; \manfV)\),
	\[{}
	u \compose \gamma{}
	\in \VMO(K^{\ell}).
	\]
\end{definition}

We simply call \(w\) an \emph{\(\ell\)-detector} for \(u\).{}
Note that a detector is strongly associated to the function it refers to.
That said, when \(u\) is clear from the context, we shall simply say that \(w\) is an \(\ell\)-detector.
We do not explicitly mention the \(\VMO\) nature of \(u\) with respect to composition as we shall  systematically be dealing with this property.
It might be interesting in other contexts to keep track of other spaces like \(\Lebesgue^{p}\) (Proposition~\ref{lemmaModulusLebesguesequence}), \(\Sobolev^{1, p}\) (Proposition~\ref{corollaryCompositionSobolevFuglede}) or \(\Sobolev^{s, p}\) for \(0 < s < 1\) (Proposition~\ref{propositionModulusFractionalSobolev}) which we do not further pursue in this work.  

\begin{proposition}
	\label{propositionFugledeDetector}
	Let \(u \colon  \manfV \to \R\) be a measurable function.
	Every \(\ell\)-detector for \(u\) is also an \(r\)-detector with \(r \in \{0, \dots, \ell - 1\}\).
\end{proposition}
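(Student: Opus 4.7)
The natural strategy is to reduce $r$-detection to $\ell$-detection by a product construction: extend an arbitrary Fuglede map on an $r$-complex to a Fuglede map on an $\ell$-complex by making it independent of the added $(\ell-r)$ variables, apply the hypothesis, and then slice back.

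In detail, let $\cK^{r}$ be a simplicial complex of dimension $r \le \ell - 1$ and let $\gamma \in \Fuglede_{w}(K^{r}; \manfV)$. Fix an $(\ell-r)$-dimensional simplex $\Delta^{\ell-r} \subset \R^{\ell-r}$ with a chosen interior point $t_{0}$, and let $\cL^{\ell}$ be a simplicial complex of dimension exactly $\ell$ triangulating the polytope $L^{\ell} \vcentcolon= K^{r} \times \Delta^{\ell-r} \subset \R^{a}\times \R^{\ell-r}$ (for instance via the standard prism subdivision applied to each factor $\Sigma^{r}\times \Delta^{\ell-r}$ with $\Sigma^{r} \in \cK^{r}$). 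The induced measure is $\cH^{\ell} = \cH^{r} \otimes \cL^{\ell-r}$ by Fubini, and $L^{\ell}$ carries the Euclidean product metric.

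Define $\tilde{\gamma} \colon L^{\ell} \to \manfV$ by $\tilde{\gamma}(x,t) \vcentcolon= \gamma(x)$. Then $\tilde{\gamma}$ is Lipschitz with $\abs{\tilde{\gamma}}_{\Lip} = \abs{\gamma}_{\Lip}$, and Tonelli's theorem gives
\[
\int_{L^{\ell}} w \compose \tilde{\gamma} \dif\cH^{\ell}
= \cH^{\ell-r}(\Delta^{\ell-r}) \int_{K^{r}} w \compose \gamma \dif\cH^{r} < \infty,
\]
so $\tilde{\gamma} \in \Fuglede_{w}(L^{\ell}; \manfV)$. The $\ell$-detector property of $w$ then yields $u \compose \tilde{\gamma} \in \VMO(L^{\ell})$.

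The main technical step is to transfer this $\VMO$ control back to $K^{r}$. For $x_{0} \in K^{r}$ and for $\rho$ smaller than the distance from $t_{0}$ to $\partial \Delta^{\ell-r}$, the product-metric structure gives the sandwich
\[
B_{\rho/\sqrt{2}}^{r}(x_{0}) \times B_{\rho/\sqrt{2}}^{\ell-r}(t_{0})
\subset B_{\rho}^{\ell}\bigl((x_{0},t_{0})\bigr)
\subset B_{\rho}^{r}(x_{0}) \times B_{\rho}^{\ell-r}(t_{0}).
\]
Since $u \compose \tilde{\gamma}$ depends only on $x$, a direct Fubini computation combined with the doubling property of $\cH^{r}$ on $K^{r}$ (and of $\cL^{\ell-r}$ on $\R^{\ell-r}$) yields a constant $c>0$, depending only on $\ell$, $r$, and $\cK^{r}$, such that
\[
\seminorm{u \compose \gamma}_{B_{\rho/\sqrt{2}}^{r}(x_{0})} \le \frac{1}{c}\, \seminorm{u \compose \tilde{\gamma}}_{B_{\rho}^{\ell}((x_{0},t_{0}))}
\]
uniformly in $x_{0} \in K^{r}$, where $\seminorm{\cdot}_{B}$ denotes the double-average mean oscillation \eqref{eqDetector-33}. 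Taking the supremum over $x_{0}$ and letting $\rho \to 0$, the right-hand side vanishes by $\VMO$ of $u \compose \tilde{\gamma}$, hence so does the left-hand side, proving $u \compose \gamma \in \VMO(K^{r})$.

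The principal obstacle is precisely this last slicing step: guaranteeing that averaging over the added $t$-coordinates does not inflate the oscillation estimate enough to lose control over $u \compose \gamma$. This is ensured by the Euclidean product structure, which sandwiches balls of $L^{\ell}$ between products of balls in $K^{r}$ and in $\Delta^{\ell-r}$, and by doubling, which keeps the ratio of measures of nested balls uniformly bounded.
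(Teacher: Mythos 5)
Your proof is correct and follows essentially the same route as the paper's: there, \(\gamma\) is extended by a dummy variable to \(E^{\ell} = K^{r} \times [0,1]^{\ell-r}\), the \(\ell\)-detector property is applied to \(\widetilde{\gamma}(z,t) = \gamma(z)\), and one slices back via Lemma~\ref{lemmaVMOellExtension}, which is exactly your product construction (with a cube factor and an equivalent product metric in place of your simplex factor and Euclidean ball sandwich). The only tiny omission is that \(\VMO(K^{r}) \subset \Lebesgue^{1}(K^{r})\), so you should also record that \(u \compose \gamma \in \Lebesgue^{1}(K^{r})\), which follows from the same Tonelli computation you use to verify the Fuglede condition for \(\tilde{\gamma}\).
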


To go from a polytope of dimension \(r\) to another one of dimension \(\ell > r\), we merely add a dummy variable \(t \in [0, 1]^{\ell - r}\).
To this end, we rely on an elementary extension property for \(\VMO\) functions:

\begin{lemma}
	\label{lemmaVMOellExtension}
	Given a simplicial complex \(\cK^{r}\) and a function \(v \colon  K^{r} \to \R\), let \(\widetilde v \colon  K^{r} \times [0, 1]^{\ell - r} \to \R\) be defined by
	\[{}
	\widetilde{v}(x, t)
	= v(x).
	\]
	If\/ \(\widetilde{v} \in \VMO(K^{r} \times [0, 1]^{\ell - r})\), then \(v \in \VMO(K^{r})\) and 
	\[{}
	\norm{v}_{\VMO(K^{r})}
	\le C \norm{\widetilde{v}}_{\VMO(K^{r} \times [0, 1]^{\ell - r})}.
	\]
\end{lemma}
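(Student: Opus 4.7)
The plan is to compare balls in the product space $K^r \times [0,1]^{\ell-r}$ with suitable products of balls in the factors, and then use Fubini to relate the double averages on both sides. The bulk of the work is a quantitative comparison of mean oscillations at comparable scales; the $\Lebesgue^1$ bound follows immediately from Fubini since $\norm{v}_{\Lebesgue^1(K^r)} = \norm{\widetilde v}_{\Lebesgue^1(K^r \times [0,1]^{\ell-r})}$.

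To handle scales, I fix $t_0$ to be the center of $[0,1]^{\ell-r}$ and restrict to $0 < \rho \le \rho_0$ with $\rho_0$ small enough that the Euclidean ball $B_{\rho}^{\ell-r}(t_0)$ stays inside the cube. For $x \in K^r$, the product-space ball $B_{\rho}((x, t_0))$ lies inside $B_{\rho}^{K^r}(x) \times B_{\rho}^{\ell-r}(t_0)$ and contains the sub-product $B_{\rho/2}^{K^r}(x) \times B_{\rho\sqrt{3}/2}^{\ell-r}(t_0)$ (the only metric input I need; with the $\ell^\infty$ product metric the balls are already exact products and the argument simplifies, but the same bound is obtained up to a dimensional constant for any equivalent product metric).

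Then I apply Fubini to $\int\!\int_{B_\rho((x,t_0))^2} \abs{\widetilde v(y,s)-\widetilde v(z,u)} \dif(y,s)\dif(z,u)$. Since the integrand depends only on $(y,z)$, I restrict to $y,z \in B_{\rho/2}^{K^r}(x)$; for such $y,z$ the $s$- and $u$-sections of $B_\rho((x,t_0))$ contain $B_{\rho\sqrt{3}/2}^{\ell-r}(t_0)$, so their $\cH^{\ell-r}$-measure is at least $c_1 \rho^{\ell-r}$. This yields
\[
\int\!\!\int_{B_\rho((x,t_0))^2} \abs{\widetilde v(y,s)-\widetilde v(z,u)} \dif(y,s)\dif(z,u) \ge c_1^2 \rho^{2(\ell-r)} \!\int\!\!\int_{B_{\rho/2}^{K^r}(x)^2} \abs{v(y) - v(z)} \dif y \dif z.
\]
Conversely, Fubini and the trivial upper bound $c_2 \rho^{\ell-r}$ on sections, together with the doubling property of the underlying measure on $K^r$, give
\[
\mu(B_\rho((x,t_0)))^2 \le C\, \mu_{K^r}(B_{\rho/2}^{K^r}(x))^2 \,\rho^{2(\ell-r)}.
\]
Dividing, the $\rho^{2(\ell-r)}$ factors cancel and I obtain
\[
\fint_{B_{\rho/2}^{K^r}(x)}\!\fint_{B_{\rho/2}^{K^r}(x)}\abs{v(y)-v(z)}\dif z\dif y \;\le\; C' \fint_{B_\rho((x,t_0))}\!\fint_{B_\rho((x,t_0))}\abs{\widetilde v - \widetilde v} ,
\]
whence $\seminorm{v}_{\rho/2} \le C' \seminorm{\widetilde v}_\rho$ after taking the supremum in $x$. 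This controls the small-scale mean oscillation and, in the limit $\rho \to 0$, yields $v \in \VMO(K^r)$. For scales $\rho \ge \rho_0$, the quantity $\seminorm{v}_\rho$ is bounded by $2\norm{v}_{\Lebesgue^1(K^r)}/\eta$ using the uniform nondegeneracy \eqref{eq-uniform-nondegeneracy} on $K^r$, and this is in turn controlled by $\norm{\widetilde v}_{\Lebesgue^1}$. Combining the two regimes gives the seminorm estimate $\norm{v}_{\VMO(K^r)} \le C \norm{\widetilde v}_{\VMO(K^r \times [0,1]^{\ell-r})}$.

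The main obstacle is the geometric mismatch between Euclidean balls in the product and products of balls in the factors: these are not equal, which prevents the double average from factoring. The sub-product restriction at level $\rho/2$ circumvents this at the price of a dimensional constant, and it is the only nontrivial step; the remaining bookkeeping (doubling, Fubini, the $\Lebesgue^1$ estimate, and the two-regime split over scales) is standard.
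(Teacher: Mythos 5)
Your proof is correct and takes essentially the same route as the paper's: both exploit the product structure and the independence of \(\widetilde{v}\) in the cube variable to factor the double average via Fubini, the paper doing so exactly by switching to the equivalent max product metric with balls centered at \((x, 0')\) (so the factorization is an identity at every scale up to \(\Diam{K^{r}}\)), while you sandwich Euclidean balls between products at radii \(\rho/2\) and \(\rho\) and absorb the mismatch with doubling plus a separate large-scale bound from uniform nondegeneracy. The only cosmetic point is that your two regimes as stated cover \(\rho \le \rho_0/2\) and \(\rho \ge \rho_0\); the intermediate scales are handled by the same nondegeneracy bound, so nothing substantive is missing.
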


Note that \(E^{\ell} \vcentcolon= K^{r} \times [0, 1]^{\ell - r}\) can be seen as the polytope of a simplicial complex \(\cE^\ell\) such that \(\cK^r \subset \cE^r\), see \cite{Munkres}*{Chapter~2, Lemma~7.8}.
Assuming that \(K^{r} \subset \R^{a}\), we have 
\begin{equation}
\label{eqDetectors-168}
E^{\ell}= K^{r} \times [0, 1]^{\ell - r}\subset \R^{a + \ell - r}.
\end{equation}
On \(E^{\ell}\), the product measure \(\mu \vcentcolon= \cH^{r} \otimes \cH^{\ell - r}\) coincides with the Hausdorff measure \(\cH^{\ell}\) associated to the distance inherited from \( \R^{a + \ell - r} \).
To simplify the computation that follows, we work with an equivalent product metric on \(E^{\ell}\), namely
\begin{equation*}
\widetilde d((y, s), (z, t))
\vcentcolon= \max{\bigl\{ d(y, z), \abs{s - t} \bigr\}}
\quad \text{for every \((y, s), (z, t) \in E^{\ell}\).}
\end{equation*}
If we denote by \(\widetilde{B}_\rho(x, 0')\) the open ball in \(E^{\ell}\) with respect to \(\widetilde d\) of center \((x, 0') \in E^{\ell}\) and radius \(\rho\), then
	\begin{equation}
		\label{eqDetectorBallProduct}
	\widetilde{B}_{\rho}(x, 0'){}
	= B_{\rho}^{r}(x) \times \bigl(B_{\rho}^{\ell - r} \cap [0, 1]^{\ell - r}\bigr),
	\end{equation}
	where \(B_{\rho}^{r}(x)\) and \(B_{\rho}^{\ell - r}= B_{\rho}^{\ell - r}(0')\) are open balls in \(K^{r}\) and \(\R^{\ell - r}\), respectively.
	It then follows from \eqref{eqDetectorBallProduct} that, for every measurable function \(\varphi \colon  K^{r} \times [0, 1]^{\ell - r} \to [0, +\infty]\) and every \(x \in K^{r}\),
	\begin{equation}
		\label{eqDetectorEstimateVMOellExtension}
	\int_{\widetilde B_{\rho}(x, 0')}{\varphi \dif\mu}
	=
	\int_{B_{\rho}^{r}(x)}\biggl( \int_{B_{\rho}^{\ell - r} \cap [0, 1]^{\ell - r}} \varphi \dif\cH^{\ell - r}  \biggr) \dif\cH^{r}.
	\end{equation}

\resetconstant
\begin{proof}[Proof of Lemma~\ref{lemmaVMOellExtension}]
	By Tonelli's theorem,
	\[{}
	\norm{\widetilde v}_{\Lebesgue^{1}(E^{\ell})}
	= \norm{v}_{\Lebesgue^{1}(K^{r})},
	\]
	where \(E^{\ell}\) is given by \eqref{eqDetectors-168}.
	We now prove that, for every \(0 < \rho \le 1\),
	\begin{equation}
	\label{eqExtension-104}
	\seminorm{\widetilde{v}}_{\rho}
	\ge c \seminorm{v}_{\rho} \,.
	\end{equation}
	To this end, we apply \eqref{eqDetectorEstimateVMOellExtension} twice to \(\varphi\) of the form \(\abs{\widetilde{v} - \widetilde{v}(\xi)}\) with fixed \(\xi \in E^{\ell}\) and use the independence of \(\widetilde{v}(x, t) = v(x)\) with respect to \(t \in [0, 1]^{\ell - r}\).{}
	We get
	\begin{multline*}
	\int_{\widetilde B_{\rho}(x, 0')}\int_{\widetilde B_{\rho}(x, 0')}{\abs{\widetilde{v}(\eta) - \widetilde{v}(\zeta)} \dif\mu(\eta)}\dif\mu(\zeta)\\
	= \left(\cH^{\ell-r}\left(B^{\ell-r}_\rho\cap [0,1]^{\ell-r}\right)\right)^{2}\int_{B_{\rho}^{r}(x)}\int_{B_{\rho}^{r}(x)}\abs{v(y) - v(z)} \dif\cH^{r}(y)\dif\cH^{r}(z).
	\end{multline*}	
	When \(\rho \le 1\), one has 
	\[{}
	\cH^{\ell-r}\left(B^{\ell-r}_\rho\cap [0,1]^{\ell-r}\right) = \C \rho^{\ell - r}
	\]
	and \eqref{eqExtension-104} then follows for every \(0 < \rho \le 1\).
	Estimate \eqref{eqExtension-104} also holds for \(1 < \rho \le \Diam{K^{r}}\), with possibly a different constant.
	Taking the supremum with respect to \(\rho\), one gets the estimate involving the \(\VMO\)~norms.
\end{proof}

\begin{proof}[Proof of Proposition~\ref{propositionFugledeDetector}]
	Let \(w\) be an \(\ell\)-detector for \(u\).
	For every simplicial complex \(\cK^{r}\), we equip the set \(E^{\ell} = K^{r} \times [0, 1]^{\ell - r}\) with an \(\ell\)-dimensional simplicial structure. 
	For \(\gamma \in \Fuglede_{w}(K^{r}; \manfV)\), consider the map \(\widetilde{\gamma} \colon  E^{\ell} \to \manfV\) given by
	\begin{equation}
	\label{eqExtension-113}
	\widetilde{\gamma}(z, t) = \gamma(z).
	\end{equation}
	As \(\widetilde{\gamma} \in \Fuglede_{w}(E^{\ell}; \manfV)\), we have \(u \compose \widetilde\gamma \in \VMO(E^{\ell})\).{}
	Hence, by Lemma~\ref{lemmaVMOellExtension}, \(u \compose \gamma \in \VMO(K^{r})\),{}
	which implies that \(w\) is also an \(r\)-detector for \(u\).
\end{proof}

We now focus on the main result of this section concerning the existence of \(\ell\)-detectors for Sobolev functions:

\begin{proposition}
	\label{propositionSobolevVMO>1}
	If \(u \in \Sobolev^{1, p}(\manfV)\) with \(p \ge \ell\) and \(p \ne 1\), then \(u\) has an \(\ell\)-detector \(w \colon  \manfV \to [0, +\infty]\) such that
	\[{}
	\norm{w}_{\Lebesgue^{1}(\manfV)}
	\le C \norm{u}_{\Sobolev^{1, p}(\manfV)}^{p}
	\]
	and, for every simplicial complex \(\cK^{\ell}\) and every \(\gamma \in \Fuglede_{w}(K^{\ell}; \manfV)\), 
	\begin{equation}
	\label{eqDetectors-280}
	\norm{u \compose \gamma}_{\VMO(K^{\ell})}
	\le C' (1 + \abs{\gamma}_{\Lip}) \norm{w \compose \gamma}_{\Lebesgue^{1}(K^{\ell})}^{1/p} \text{,}
	\end{equation}
	where the constants \(C, C'> 0\) depend on \(p\), \(m\), \(\manfV\) and \(C' > 0\) also depend on \(K^{\ell}\).
\end{proposition}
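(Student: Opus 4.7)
The plan is to leverage Proposition~\ref{corollaryCompositionSobolevFuglede}: starting from \(u \in \Sobolev^{1,p}(\manfV)\) I obtain a summable function \(w_0 \colon \manfV \to [0,+\infty]\) such that for every Riemannian manifold \(\manfA\) and every \(\gamma \in \Fuglede_{w_0}(\manfA;\manfV)\), the composition \(u \circ \gamma\) belongs to \(\Sobolev^{1,p}(\manfA)\) with the chain rule \(D(u\circ\gamma) = (Du \circ \gamma)[D\gamma]\). Applied to each top-dimensional open simplex \(\Sigma^\ell \in \cK^\ell\), viewed as a Lipschitz open subset of its affine \(\ell\)-plane, this yields \(u\circ\gamma|_{\Sigma^\ell} \in \Sobolev^{1,p}(\Sigma^\ell)\) with \(|D(u\circ\gamma)| \le |\gamma|_{\Lip}\,|Du \circ \gamma|\) almost everywhere on \(K^\ell\). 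To secure the quantitative bound, I enlarge \(w_0\) in the spirit of Lemma~\ref{remarkLemmaFuglede} by taking \(w \vcentcolon= \alpha w_0 + |u|^p + |Du|^p\) with \(\alpha > 0\) chosen so that \(\int_\manfV w \le C \|u\|_{\Sobolev^{1,p}(\manfV)}^p\); this gives, for every \(\gamma \in \Fuglede_w(K^\ell;\manfV)\),
\[
\|u \circ \gamma\|_{\Lebesgue^p(K^\ell)}^p + |\gamma|_{\Lip}^{-p}\,\|D(u \circ \gamma)\|_{\Lebesgue^p(K^\ell)}^p \le \int_{K^\ell} w \circ \gamma.
\]

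The proposition is thereby reduced to establishing a critical Sobolev embedding \(\Sobolev^{1,p}(K^\ell) \hookrightarrow \VMO(K^\ell)\) with quantitative control. The heart of the argument is a uniform Poincaré-Sobolev inequality on \(K^\ell\): there exists a constant \(\kappa \ge 1\) depending only on \(\cK^\ell\) such that, for every \(x \in K^\ell\), every \(\rho > 0\), and every \(v\) lying in \(\Sobolev^{1,p}\) on the \(\ell\)-simplices meeting \(B_{\kappa\rho}(x)\),
\[
\fint_{B_\rho(x)}\biggabs{v - \fint_{B_\rho(x)} v}\dif\cH^\ell \le C \rho^{1 - \ell/p} \biggl(\int_{B_{\kappa\rho}(x)}|Dv|^p \dif\cH^\ell\biggr)^{1/p}.
\]
When \(x\) lies in the interior of a single top-dimensional simplex and \(\rho\) is small, this is the classical inequality on a Euclidean ball combined with Hölder. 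When \(x\) lies near a face of lower dimension, \(B_\rho(x) \cap K^\ell\) is contained in a union of finitely many \(\ell\)-simplices sharing a common face; I would prove the inequality on such a union by a chainability argument in the spirit of Lemma~\ref{lemmaPoincareWirtinger-New}, exploiting that only finitely many configurations of such unions occur up to biLipschitz equivalence, and that each is a quasi-convex Lipschitz subset of \(\R^a\) supporting a \((1,p)\)-Poincaré inequality with uniform constant. This local-to-global step is the main technical obstacle.

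Applying this inequality to \(v = u \circ \gamma\) and using \(|Dv| \le |\gamma|_{\Lip}\,|Du \circ \gamma|\) together with \(|Du|^p \le w\), I obtain
\[
\seminorm{u \circ \gamma}_\rho \le C \rho^{1 - \ell/p}\,|\gamma|_{\Lip}\,\biggl(\int_{K^\ell} w \circ \gamma\biggr)^{1/p}.
\]
When \(p > \ell\), the factor \(\rho^{1-\ell/p}\) tends to zero, directly giving \(u \circ \gamma \in \VMO(K^\ell)\). When \(p = \ell\) (so necessarily \(p \ge 2\) since \(p \ne 1\)), I instead keep the localized estimate
\[
\seminorm{u \circ \gamma}_\rho \le C\,|\gamma|_{\Lip}\,\sup_{x \in K^\ell}\biggl(\int_{B_{\kappa\rho}(x)}|Du \circ \gamma|^p \dif\cH^\ell\biggr)^{1/p};
\]
since \(\cH^\ell(B_\rho(x))\) is uniformly bounded above by \(C\rho^\ell\), the uniform absolute continuity of the integrand \(|Du\circ\gamma|^p \in \Lebesgue^1(K^\ell)\) forces the right-hand side to vanish as \(\rho \to 0\). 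Adding the \(\Lebesgue^1\) bound on \(u \circ \gamma\) (which follows from \(|u|^p \le w\) and finiteness of \(\cH^\ell(K^\ell)\)) assembles the \(\VMO\) norm and yields the estimate \eqref{eqDetectors-280}, while the control \(\int_\manfV w \le C\|u\|_{\Sobolev^{1,p}(\manfV)}^p\) gives the claimed bound on \(\|w\|_{\Lebesgue^1(\manfV)}\).
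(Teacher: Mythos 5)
Your reduction to an intrinsic critical embedding on the polytope is where the argument breaks down. The uniform Poincaré--Sobolev inequality you postulate on \(K^{\ell}\) is false for general simplicial complexes: take the complex of Example~\ref{exampleUglyComplex}, two solid triangles in \(\R^{2}\) meeting only at the vertex \((0,0)\) (a perfectly legitimate \(\cK^{2}\) in the sense of Definition~\ref{definitionComplex}), and the function equal to \(+1\) on one triangle and \(-1\) on the other. It is \(\Sobolev^{1,p}\) on each simplex with \(Dv = 0\) almost everywhere on \(K^{2}\), yet at \(x = (0,0)\) the left-hand side of your inequality is bounded below by a positive constant for every \(\rho\), while the right-hand side vanishes. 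The chainability argument you invoke in the spirit of Lemma~\ref{lemmaPoincareWirtinger-New} is specific to dimension one, where Sobolev functions are continuous up to the shared vertices; for \(\ell \ge 2\), two \(\ell\)-simplices sharing only a face of dimension \(\le \ell - 2\) impose no compatibility at all on a simplex-wise Sobolev function, the union is not quasi-convex in any useful sense, and no \((1,p)\)-Poincaré inequality holds across the junction. This is precisely the obstruction, due to White, that the paper records after the proof: the ``natural alternative approach'' of establishing \(\Sobolev^{1,\ell}(K^{\ell}) \subset \VMO(K^{\ell})\) fails on polytopes that are not sufficiently regular, so no local-to-global patching can rescue it. (There is also a secondary issue: your reduction only gives \(u \circ \gamma \in \Sobolev^{1,p}\) on each top simplex, and the Fuglede condition on \(K^{\ell}\) alone does not guarantee agreement of traces across shared \((\ell-1)\)-faces, so even the global simplex-wise regularity you would feed into the inequality is not secured.)

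The way around, which is the route the paper takes, is to never measure oscillation intrinsically on \(K^{\ell}\): for \(r, t \in B_{\rho}^{\ell}(x)\) one compares \(u \circ \gamma(r)\) and \(u \circ \gamma(t)\) with the average of \(u\) over the \emph{ambient} ball \(B_{\alpha\rho}^{m}(\gamma(x)) \subset \manfV\), which is legitimate because \(\gamma\) is globally Lipschitz and so maps all of \(B_{\rho}^{\ell}(x)\), including points on different simplices joined only through a low-dimensional face, into a single small ball of \(\manfV\). The resulting averages are controlled pointwise by \(\rho \, \Maximal\abs{Du}(\gamma(r))\) via the precise-representative estimate (Lemmas~\ref{lemmaEstimateMaximalFunction} and~\ref{lemmaEstimateMaximalFunctionManifold}), Hölder then produces the factor \(\rho^{(p-\ell)/p}\) together with the local \(\Lebesgue^{1}\)~norm of \(w \circ \gamma\), and the detector is \(w = \abs{u}^{p} + (\Maximal\abs{Du})^{p}\), summable by the Hardy--Littlewood maximal inequality — this is also exactly where the hypothesis \(p \ne 1\) enters, which your construction never uses and which should have been a warning sign. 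Your quantitative bookkeeping (Hölder for the \(\Lebesgue^{1}\)~norm, \(\abs{u}^{p} \le w\), the equi-integrability argument at \(p = \ell\)) is fine in itself, but it sits on top of an inequality that is simply not available on \(K^{\ell}\).
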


To handle Proposition~\ref{propositionSobolevVMO>1}, we rely on the following classical estimate of an integral average on balls involving the value of the precise representative.
We recall that \(v(z)\) is the \emph{precise representative} of a function \(v\) at \(z\) whenever
\begin{equation*}
\lim_{\rho \to 0}{\fint_{B_{\rho}^m(z)}{\abs{v - v(z)}}}
= 0.
\end{equation*}
More precisely, we have

\begin{lemma}
	\label{lemmaEstimateMaximalFunction}
	Let \(\rho > 0\) and \(z \in \R^{m}\).
    If \(v \in \Sobolev^{1, 1}(B_{\rho}^{m}(z))\) and \(v(z)\) is the precise representative of \(v\) at \(z\), then
	\begin{equation}\label{eq-Poincare-exact}
	\fint_{B_{\rho}^{m}(z)}{\abs{v - v(z)}}
	\le C'' \rho \, \sup_{0<s<\rho}{\fint_{B^{m}_s(z)}\abs{Dv}}.
	\end{equation}
\end{lemma}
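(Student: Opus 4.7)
The strategy is a standard dyadic telescoping argument based on the classical Poincaré--Wirtinger inequality on balls. The plan is to compare the average of \(v\) on \(B_\rho^m(z)\) with \(v(z)\) by inserting averages on a sequence of shrinking balls \(B_{\rho/2^j}^m(z)\), and control the successive differences by the sup of the gradient averages.

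First, I would recall the classical Poincaré--Wirtinger inequality on balls: For every \(0 < s \le \rho\), there exists a constant \(\widetilde{C}_m > 0\) depending only on \(m\) such that
\[
\fint_{B_s^m(z)}\Bigabs{v - \fint_{B_s^m(z)} v} \le \widetilde{C}_m \, s \fint_{B_s^m(z)} \abs{Dv}.
\]
Setting \(r_j = \rho/2^j\) and \(M \vcentcolon= \sup_{0 < s < \rho}\fint_{B_s^m(z)}\abs{Dv}\), I would then estimate the difference between successive averages by the triangle inequality and the fact that \(\abs{B_{r_{j+1}}^m(z)} = 2^{-m}\abs{B_{r_j}^m(z)}\):
\[
\Bigabs{\fint_{B_{r_{j+1}}^m(z)} v - \fint_{B_{r_j}^m(z)} v} \le 2^m \fint_{B_{r_j}^m(z)}\Bigabs{v - \fint_{B_{r_j}^m(z)} v} \le 2^m \widetilde{C}_m \, \frac{\rho}{2^j}\, M.
\]

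Since \(v(z)\) is the precise representative, the averages \(\fint_{B_{r_j}^m(z)} v\) converge to \(v(z)\) as \(j \to \infty\). Summing the telescoping series then yields
\[
\Bigabs{v(z) - \fint_{B_\rho^m(z)} v} \le \sum_{j=0}^{\infty} 2^m \widetilde{C}_m\, \frac{\rho}{2^j}\, M = 2^{m+1}\widetilde{C}_m\, \rho\, M.
\]
Finally, applying the triangle inequality one more time together with Poincaré--Wirtinger on \(B_\rho^m(z)\) itself,
\[
\fint_{B_\rho^m(z)} \abs{v - v(z)} \le \fint_{B_\rho^m(z)}\Bigabs{v - \fint_{B_\rho^m(z)} v} + \Bigabs{v(z) - \fint_{B_\rho^m(z)} v} \le (1 + 2^{m+1})\widetilde{C}_m\, \rho\, M,
\]
which gives \eqref{eq-Poincare-exact} with \(C'' \vcentcolon= (1 + 2^{m+1})\widetilde{C}_m\).

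There is no substantial obstacle here; the argument is routine once one knows Poincaré--Wirtinger on balls. The only subtle point is that \(v\) need only be \(\Sobolev^{1,1}\), so one cannot invoke any continuity of \(v\) directly, and the convergence \(\fint_{B_{r_j}^m(z)} v \to v(z)\) must come exactly from the definition of the precise representative used in the statement. The rest is a careful bookkeeping of dyadic constants.
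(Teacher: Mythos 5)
Your proof is correct, but it takes a genuinely different route from the paper's. The paper first proves \eqref{eq-Poincare-exact} for \emph{smooth} \(v\) via the potential estimate \(\fint_{B^m_\rho}\abs{v - v(0)} \le C \int_{B^m_\rho} \abs{Dv}(y)/\abs{y}^{m-1}\dif y\) from \cite{Evans_Gariepy}*{Lemma~4.1}, converts the right-hand side by Cavalieri's principle and the substitution \(t = 1/s^{m-1}\) into \(C\rho\,\sup_{0<s<\rho}\fint_{B^m_s}\abs{Dv}\) (with the case \(m=1\) handled separately by the Fundamental Theorem of Calculus), and then extends to general \(v \in \Sobolev^{1,1}\) by mollification: the estimate is applied to \(\varphi_\epsilon * v\) on a slightly smaller ball, the hypothesis on the precise representative is used to pass to the limit \(\varphi_\epsilon * v(0) \to v(0)\), and finally the smaller radius is sent to \(\rho\). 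Your dyadic telescoping with the classical Poincaré--Wirtinger inequality works directly at the \(\Sobolev^{1,1}\) level: no smoothing step, no case distinction on \(m\), and the precise representative enters only through the convergence of the ball averages \(\fint_{B_{r_j}^m(z)} v \to v(z)\). What the paper's route buys is the intermediate Riesz-potential estimate, which fits the maximal-function framework used immediately afterwards; what yours buys is a shorter, more elementary argument with an explicit dimensional constant. One small point to make explicit: in your step \(j = 0\) and in the final application of Poincaré--Wirtinger on \(B_\rho^m(z)\) itself you bound \(\fint_{B_\rho^m(z)}\abs{Dv}\) by \(M = \sup_{0<s<\rho}\fint_{B^m_s(z)}\abs{Dv}\), where the supremum is over \(s\) \emph{strictly} less than \(\rho\); this is legitimate because \(\fint_{B^m_s(z)}\abs{Dv} \to \fint_{B_\rho^m(z)}\abs{Dv}\) as \(s \to \rho^-\), but it deserves a line (or, equivalently, start the dyadic sequence at a radius \(\rho' < \rho\) and let \(\rho' \to \rho\) at the end).
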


\resetconstant
\begin{proof}[Proof of Lemma~\ref{lemmaEstimateMaximalFunction}]
We may assume without loss of generality that \(z = 0\).
We first assume that \( v \) is smooth.
By \cite{Evans_Gariepy}*{Lemma~4.1},
	\[{}
	\fint_{B^{m}_\rho}\abs{v - v(0)}
	\le \C \int_{B^{m}_\rho} \frac{\abs{Dv}(y)}{\abs{y}^{m - 1}} \dif y.
	\]
When \(m = 1\), this inequality can be deduced directly from the Fundamental theorem of Calculus and already gives \eqref{eq-Poincare-exact} when \( v \) is smooth.
We therefore proceed assuming that \( m \ge 2 \).
 By Cavalieri's principle,
	\[{}
	\int_{B^{m}_\rho} \frac{\abs{Dv}(y)}{\abs{y}^{m - 1}} \dif y
	= \int_{0}^{\infty} \biggl( \int_{\{y \in B^{m}_\rho \colon  1/\abs{y}^{m - 1} > t\}}{\abs{Dv}}  \biggr) \dif t.
	\]
	Since \( m > 1 \), the change of variable \(t = 1/s^{m - 1}\) then gives
	\[{}
	\int_{B^{m}_\rho} \frac{\abs{Dv}(y)}{\abs{y}^{m - 1}} \dif y
	= (m - 1) \int_{0}^{\infty} \biggl( \int_{B^{m}_{\min{\{s, \rho\}}}} \abs{Dv} \biggr) \frac{\dif s}{s^{m}}
	\le \C \rho \, \sup_{0<s<\rho}\fint_{B^{m}_s}\abs{Dv}.
	\]
	This implies the desired inequality \eqref{eq-Poincare-exact} when \(v\) is smooth.
	
	We next justify that \eqref{eq-Poincare-exact} holds for \(v \in \Sobolev^{1, 1}(B^{m}_\rho)\) when \( m \ge 1 \) and \(v(0)\) is the precise representative of \(v\) at \(0\). 
	To this end, let \(\varphi\) be a \emph{mollifier}, that is,
\[
\varphi \in \Smooth_c^\infty(B^{m}_{1}),  
\quad
\varphi \ge 0
\text{ in \(B^{m}_{1}\)}
\quad \text{and} \quad
\int_{B^{m}_{1}} \varphi = 1.
\]
We then define for every \(\epsilon>0\) the function 
\[
\varphi_{\epsilon} \colon  x\in \R^m \longmapsto \frac{1}{\epsilon^m}\varphi\left(\frac{x}{\epsilon}\right).
\]	
Given \(0 < \rho_1<\rho\), we apply \eqref{eq-Poincare-exact} on \(B^{m}_{\rho_{1}}\) to the smooth function \(\varphi_{\epsilon}*v\), for every \(0<\epsilon<\rho-\rho_1\). 
This yields
\begin{equation}\label{eq-613}
\begin{split}
\fint_{B^{m}_{\rho_1}}\abs{(\varphi_{\epsilon} *v)-(\varphi_{\epsilon}*v)(0)}
&\leq C \rho_1 \sup_{0<s<\rho_1}\fint_{B^{m}_s}\abs{\varphi_{\epsilon}*Dv}\\
&\leq C \rho_1 \sup_{0<s<\rho_1}\fint_{B^{m}_s}\varphi_{\epsilon}*\abs{Dv}.
\end{split}
\end{equation}
We observe that, for \( s > 0 \) and \(s + \epsilon < \rho \),
\[ 
\fint_{B^{m}_s}\varphi_{\epsilon}*\abs{Dv}\leq \Cl{cte-630}\fint_{B^{m}_{s + \epsilon}}\abs{Dv}\leq \Cr{cte-630}\sup_{0< t <\rho}{\fint_{B^{m}_{t}}\abs{Dv}}.
\]
By \eqref{eq-613}, this implies that
\[
\fint_{B^{m}_{\rho_{1}}}\abs{(\varphi_{\epsilon} *v)-(\varphi_{\epsilon}*v)(0)}
\leq \Cl{cte-6222}	\rho_1\sup_{0<t<\rho}{\fint_{B^{m}_{t}}\abs{Dv}}.
\]
Since \(v(0)\) is the precise representative of \(v\) at \(0\), we have \(\varphi_{\epsilon}*v(0)\to v(0)\) when \(\epsilon\to 0\). 
This yields
\[
\fint_{B^{m}_{\rho_{1}}}\abs{v-v(0)}
\leq \Cr{cte-6222} \rho_1	\sup_{0<s<\rho}\fint_{B^{m}_s}\abs{Dv}.
\]
Letting \(\rho_1\to \rho\) gives the desired result.
\end{proof}

We now state the counterpart of Lemma~\ref{lemmaEstimateMaximalFunction} on a compact manifold in terms of the maximal function of \(\abs{Dv}\).
We recall that the \emph{maximal function} \(\Maximal f \colon  \manfV \to [0, +\infty]\) of a measurable function \(f \colon  \manfV \to [0, +\infty]\) is defined for every \(z \in \manfV\) as
\begin{equation}
\label{eqMaximal}
\Maximal f(z)
= \sup_{\rho > 0}{\fint_{B_{\rho}^{m}(z)}{f}}.
\end{equation}
We then have

\begin{lemma}
	\label{lemmaEstimateMaximalFunctionManifold}
	Let \( \manfM \) be a compact manifold of dimension \(m\) without boundary.
    If \(v \in \Sobolev^{1, 1}(\manfM)\) and \(v(z)\) is the precise representative of \(v\) at \(z \in \manfM\), then
	\[{}
	\fint_{B_{\rho}^{m}(z)}{\abs{v - v(z)}}
	\le C''' \rho \, \Maximal \abs{Dv}(z)
	\quad \text{for every \(\rho>0\).}
	\]
\end{lemma}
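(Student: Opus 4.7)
The plan is to reduce the estimate to the Euclidean case established in Lemma~\ref{lemmaEstimateMaximalFunction} via a local-chart argument for small radii, and to handle large radii using the global Poincaré-Wirtinger inequality available on the compact manifold \(\manfM\). Since \(\manfM\) is compact, one can cover it by a finite atlas of biLipschitz charts \((U_i, \varphi_i)_{i \in I}\) whose images are Lipschitz open subsets of \(\R^m\); by a Lebesgue-number argument there exists a threshold \(\rho_0 > 0\) such that, for every \(z \in \manfM\), the geodesic ball \(B_{\rho_0}^m(z)\) is contained in some \(U_i\).

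For \(0 < \rho \le \rho_0\), I would work in a chart \((U_i, \varphi_i)\) containing \(B_\rho^m(z)\) and consider the push-forward \(\widetilde{v} = v \compose \varphi_i^{-1}\) at the point \(\widetilde{z} = \varphi_i(z)\). The biLipschitz character of \(\varphi_i\), with a constant \(L\) depending only on the atlas, ensures that the geodesic and Euclidean balls at scale \(\rho\) are comparable, that the Jacobian of \(\varphi_i\) is bounded from above and from below, and that \(\widetilde{z}\) remains a precise representative point of \(\widetilde{v}\). Applying Lemma~\ref{lemmaEstimateMaximalFunction} at the scale \(L\rho\) to \(\widetilde{v}\) yields a bound by \(L\rho\) times the supremum over \(0 < s < L\rho\) of \(\fint_{B_s^m(\widetilde{z})} \abs{D\widetilde{v}}\), which, after transferring back via the chain rule \(D\widetilde{v} = Dv \compose \varphi_i^{-1}[D\varphi_i^{-1}]\) together with the change-of-variables formula on the comparable geodesic ball \(B_{Ls}^m(z)\), is dominated by a constant multiple of \(\Maximal \abs{Dv}(z)\). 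The resulting constant depends only on the atlas, hence on \(\manfM\).

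For \(\rho > \rho_0\), I would decompose using the triangle inequality
\begin{equation*}
\fint_{B_\rho^m(z)} \abs{v - v(z)} \le \fint_{B_\rho^m(z)} \abs{v - v_{\manfM}} + \abs{v_{\manfM} - v_{B_{\rho_0}^m(z)}} + \abs{v_{B_{\rho_0}^m(z)} - v(z)},
\end{equation*}
where \(v_{\manfM}\) denotes the average of \(v\) over \(\manfM\). The last term is handled by the small-radius estimate applied at \(\rho_0\), giving at most \(C \rho_0 \Maximal \abs{Dv}(z) \le C \rho \Maximal \abs{Dv}(z)\). Since \(\abs{B_\rho^m(z)} \ge \abs{B_{\rho_0}^m(z)} \ge c\) uniformly in \(z\), the first two terms are both controlled by \(\norm{v - v_{\manfM}}_{\Lebesgue^1(\manfM)}\), which by the global Poincaré-Wirtinger inequality on \(\manfM\) is bounded by \(C \norm{Dv}_{\Lebesgue^1(\manfM)} \le C \Maximal \abs{Dv}(z)\); since \(\rho > \rho_0\), this constant is absorbed into a multiple of \(\rho \Maximal \abs{Dv}(z)\).

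The main obstacle will be the careful bookkeeping of biLipschitz constants in the chart step, ensuring in particular that averages over geodesic balls of radius \(\rho\) and Euclidean balls of radius \(L\rho\) or \(\rho/L\) interact correctly with the supremum appearing in the Euclidean estimate, so that the final bound is expressed solely in terms of the manifold maximal function \(\Maximal \abs{Dv}(z)\) with the correct linear scaling in \(\rho\). Once this local scaling is verified, the passage to large radii is essentially a compactness-plus-global-Poincaré argument.
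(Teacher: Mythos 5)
Your proposal is correct and follows essentially the same route as the paper: a local reduction of Lemma~\ref{lemmaEstimateMaximalFunction} to the manifold for small radii (the paper uses the exponential map below the injectivity radius where you use a finite biLipschitz atlas with a Lebesgue-number threshold, a purely cosmetic difference), and for large radii a triangle-inequality decomposition through an intermediate average combined with the global Poincaré--Wirtinger inequality on \(\manfM\), absorbing constants since \(\rho\) is bounded below. The chart bookkeeping you flag (e.g.\ enlarging the threshold by a factor \(L^2\) so the Euclidean ball of radius \(L\rho\) stays inside the chart image) is routine and does not affect the argument.
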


\begin{proof}[Proof of Lemma~\ref{lemmaEstimateMaximalFunctionManifold}]
By using the exponential map, estimate \eqref{eq-Poincare-exact} can be transferred to functions \(v \in \Sobolev^{1, 1}(\manfM)\) provided that \(\rho<\kappa\), where \(\kappa\) is the injectivity radius of \(\manfM\). 
More precisely,
\begin{equation}\label{eq-646}
\fint_{B^{m}_\rho(z)}\abs{v-v(z)}\leq \Cl{cte-647} \rho \sup_{0<s<\rho}\fint_{B^{m}_s(z)}\abs{Dv}\leq \Cr{cte-647}\rho \,\Maximal \abs{Dv}(z),
\end{equation}
for every \(z\in \manfM\) such that \(v(z)\) is the precise representative of \(v\) at \(z\).

We proceed to establish that \eqref{eq-646} remains true for \(\rho\geq \kappa\), with possibly a different constant.  
For every \(\rho>0\), we have
\begin{align*}
\fint_{B^{m}_\rho(z)}\abs{v-v(z)}
&\leq 
\fint_{B^{m}_\rho(z)}\biggl|v-\fint_{B^{m}_{\kappa/2}(z)}v\biggr| +  \biggl|\fint_{B^{m}_{\kappa/2}(z)}v-v(z)\biggr|\\
&\leq \fint_{B^{m}_\rho(z)}\fint_{B^{m}_{\kappa/2}(z)}\abs{v(x)-v(y)} \dif y \dif x
  + \fint_{B^{m}_{\kappa/2}(z)}\abs{v-v(z)}.
\end{align*}
By \eqref{eq-646}, the last term is not larger than \(\Cr{cte-647}\frac{\kappa}{2}\Maximal \abs{Dv}(z)\). 
Assuming that \(\rho\geq \kappa\), we get
\begin{equation}\label{eq667}
\fint_{B^{m}_\rho(z)}\abs{v-v(z)}\leq \C\left(\int_{\manfM}\int_{\manfM}\abs{v(y)-v(z)}\dif y \dif z + \Maximal \abs{Dv}(z)\right).
\end{equation}
Since \(\manfM\) is compact and connected, by the standard Poincaré inequality in \(\manfM\), see e.g.\@ \cite{Hebey}*{Lemma~3.8},
\[
\int_{\manfM}\left|v-\fint_{\manfM}v\right| \leq \Cl{cte-675}\int_{\manfM}\abs{Dv}.
\]
Since \(\manfM\) has finite measure, by \eqref{eqDetector-61} we then have 
\[
	\int_{\manfM}\int_{\manfM}\abs{v(x)-v(y)}\dif y\dif x  
	\le \C \int_{\manfM}\left|v-\fint_{\manfM}v\right|
    \le \C \int_{\manfM}\abs{Dv}
	\le \C \Maximal \abs{Dv}(z).
\]
In conjunction with \eqref{eq667}, this gives the desired result for every \(\rho\geq \kappa\). 
\end{proof}

\resetconstant
\begin{proof}[Proof of Proposition~\ref{propositionSobolevVMO>1}]
We first assume that \( \manfV \) is \( \R^{m} \) or a compact manifold without boundary \(\manfM\). 
Let \(\gamma \colon  K^{\ell} \to \manfV\) be a Lipschitz map.
We claim that if \(u \compose \gamma(r)\) is the precise representative of \(u\) at \(\gamma(r)\) for almost every \(r \in K^{\ell}\), then for every \(\rho>0\),
\begin{equation}
\label{eqGeneric-477}
\seminorm{u \compose \gamma}_{\rho}
\le \Cr{cteGeneric-340}\abs{\gamma}_{\Lip} \, \rho^{\frac{p - \ell}{p}} \sup_{x \in K^{\ell}}\bignorm{(\Maximal \abs{Du})^{p} \compose \gamma}_{\Lebesgue^{1}(B_{\rho}^{\ell}(x))}^{1/p}.
\end{equation}

Let \(\alpha = \abs{\gamma}_{\Lip}\) that we may assume, without loss of generality, to be positive since \eqref{eqGeneric-477} is trivially true when \(\gamma\) is constant.{}
For every \(r, t \in K^{\ell}\) and every \(y \in \manfV\), by the triangle inequality one has 
\begin{equation*}
\abs{u \compose \gamma (r) - u \compose \gamma (t)}
\le \abs{u(y) - u \compose \gamma (r) } +  \abs{u(y) - u \compose \gamma (t)}.
\end{equation*}	
Taking the average integral of both sides with respect to \(y\) over \(B^{m}_{\alpha \rho}(\gamma(x))\) with \(x \in K^{\ell}\), we obtain
\[
\abs{u \compose \gamma (r) - u \compose \gamma (t)}
\le \fint_{B_{\alpha\rho}^{m}(\gamma(x))} \abs{u(y) - u \compose \gamma (r)} \dif y
+ \fint_{B_{\alpha\rho}^{m}(\gamma(x))} \abs{u(y) - u \compose \gamma (t)} \dif y.
\]	
Taking now the averages with respect to \(r\) and \(t\) over the ball \(B_{\rho}^{\ell}(x)\), we then get
\begin{multline}
    \label{eq693}
\fint_{B_{\rho}^{\ell}(x)}\fint_{B_{\rho}^{\ell}(x)}
\abs{u \compose \gamma (r) - u \compose \gamma (t)} \dif\cH^{\ell}(t)\dif\cH^{\ell}(r)\\
\le 2 \fint_{B_{\rho}^{\ell}(x)} \biggl(\fint_{B_{\alpha\rho}^{m}(\gamma(x))} \abs{ u(y) - u \compose \gamma (r)} \dif y \biggr) \dif\cH^{\ell}(r).
\end{multline}
Observe that for \(r \in B_{\rho}^{\ell}(x)\), we have
\[{}
d(\gamma(r), \gamma(x))
\le \alpha d(r, x)
\le \alpha \rho
\]
and then
\[{}
B_{\alpha\rho}^{m}(\gamma(x)) 
\subset B_{2 \alpha \rho}^{m}(\gamma(r)). 
\]
Therefore, denoting the left-hand side of \eqref{eq693} by \(I_\rho\), we get 
\[
I_{\rho}
\le \C \fint_{B_{\rho}^{\ell}(x)} \biggl(\fint_{B_{2 \alpha \rho}^{m}(\gamma(r))} \abs{u(y) - u (\gamma (r))} \dif y \biggr) \dif\cH^{\ell}(r).
\]
We now assume that \(u (\gamma(r))\) is the precise representative of \(u\) at \(\gamma(r)\) for \(\cH^{\ell}\)-almost every \(r \in K^{\ell}\).{}
Applying Lemma~\ref{lemmaEstimateMaximalFunction} when \(\manfV = \R^{m}\) or Lemma~\ref{lemmaEstimateMaximalFunctionManifold} when \(\manfV = \manfM\),  we have in both cases that
\[
\fint_{B_{2 \alpha \rho}^{m}(\gamma(r))} \abs{u(y) - u(\gamma (r))} \dif y
\le \C\alpha\rho \Maximal \abs{Du}(\gamma(r))
\]
and then
\[{}
 I_{\rho}
\le \C \alpha  \rho \fint_{B_{\rho}^{\ell}(x)} \Maximal \abs{Du} \compose \gamma \dif\cH^{\ell}.
\]
Using Hölder's inequality, we get
\[{}
I_{\rho}
\le \Cl{cteGeneric-340} \alpha \rho^{\frac{p - \ell}{p}} \biggl( \int_{B_{\rho}^{\ell}(x)} (\Maximal \abs{Du})^{p} \compose \gamma \dif\cH^{\ell} \biggr)^{\frac{1}{p}}.
\]
Since \(\alpha = \abs{\gamma}_{\Lip}\), estimate \eqref{eqGeneric-477} follows.

We recall that from the Hardy-Littlewood maximal inequality we have \(\Maximal \abs{Du} \in \Lebesgue^{p}(\manfV)\) for \(1 < p < \infty\).{}
We then take \(w \colon  \manfV \to [0, +\infty]\) defined by
\[{}
w(z) = 
\begin{cases}
	\abs{u}^{p}(z) + (\Maximal \abs{Du})^{p}(z)	& \text{if \(u(z)\) is the precise representative of \(u\) at \(z\),}\\
	+\infty	& \text{otherwise.}
\end{cases}
\]
By the Lebesgue differentiation theorem, \(u(z)\) is the precise representative of \(u\) at almost every point \(z\) and then 
\begin{equation}
	\label{eqDetectors-480}
	w = \abs{u}^{p} + (\Maximal \abs{Du})^{p}
	\quad \text{almost everywhere in \(\manfV\).}
\end{equation}
In particular, \(w\) is summable in \(\manfV\) and
\[{}
\norm{w}_{\Lebesgue^{1}(\manfV)}
= \norm{u}_{\Lebesgue^{p}(\manfV)}^{p} + \norm{\Maximal \abs{Du}}_{\Lebesgue^{p}(\manfV)}^{p}
\le \C \norm{u}_{\Sobolev^{1, p}(\manfV)}^{p}.
\]
Observe that if \(\gamma \in \Fuglede_{w}(K^{\ell}; \manfV)\), then \(w \compose \gamma(r) < +\infty\) for \(\cH^{\ell}\)-almost every \(r \in K^{\ell}\).{}
Hence, estimate \eqref{eqGeneric-477} holds.
By Hölder's inequality and \eqref{eqDetectors-480}, we thus have
\[{}
\norm{u \compose \gamma}_{\Lebesgue^{1}(K^{\ell})}
\le \cH^{\ell}(K^{\ell})^{\frac{p - 1}{p}}\norm{u \compose \gamma}_{\Lebesgue^{p}(K^{\ell})}
\le \cH^{\ell}(K^{\ell})^{\frac{p - 1}{p}}\norm{w \compose \gamma}_{\Lebesgue^{1}(K^{\ell})}^{1/p}
\]
and
\begin{equation}
\label{eqGeneric-517}
\seminorm{u \compose \gamma}_{\rho}
\le \Cr{cteGeneric-340} \abs{\gamma}_{\Lip} \, \rho^{\frac{p - \ell}{p}} \sup_{x \in K^{\ell}}\norm{w \compose \gamma}_{\Lebesgue^{1}(B_{\rho}^{\ell}(x))}^{1/p}
\quad \text{for every \(\rho > 0\).}
\end{equation}
For \(p \ge \ell\), the right-hand side of \eqref{eqGeneric-517} converges to zero as \(\rho \to 0\) and then \(u \compose \gamma \in \VMO(K^{\ell})\). 
Hence, \(w\) is an \(\ell\)-detector for \(u\).{}
Moreover, taking the supremum over \(\rho \leq \Diam{K^{\ell}}\) in \eqref{eqGeneric-517} we get
\[{}
\seminorm{u \compose \gamma}_{\VMO(K^{\ell})}
\le \C \abs{\gamma}_{\Lip} \, \norm{w \compose \gamma}_{\Lebesgue^{1}(K^{\ell})}^{1/p},
\]
which gives the conclusion when \( \manfV \) is \( \R^{m} \) or a compact manifold \(\manfM\). 

We now assume that \( \manfV = \Omega \) is a Lipschitz open subset of \( \R^m \).
Given \(u \in \Sobolev^{1, p}(\Omega)\), we take an extension \(\overline{u} \in \Sobolev^{1, p}(\R^m)\) such that
\[
\norm{\overline{u}}_{\Sobolev^{1, p}(\R^m)}
\le \Cl{cteDetector-571} \norm{u}_{\Sobolev^{1, p}(\Omega)},
\]
for some constant \( \Cr{cteDetector-571} > 0 \) depending on \(m\), \(p\) and \(\Omega\).
We may apply the previous case to \(\overline{u}\) on \(\R^m\) and deduce the existence of an \(\ell\)-detector \(\overline{w} \colon \R^{m} \to [0, +\infty]\) for \(\overline{u}\).
Then, \(w \vcentcolon= \overline{w}|_{\Omega}\) satisfies
\[
\norm{w}_{\Lebesgue^{1}(\Omega)}
\le \norm{\overline{w}}_{\Lebesgue^{1}(\R^{m})}
\le \C \norm{u}_{\Sobolev^{1, p}(\R^{m})}^{p}
\le \C \norm{u}_{\Sobolev^{1, p}(\Omega)}^{p}.
\]
Moreover, for every simplicial complex \( \cK^{\ell} \) and every map \(\gamma \in \Fuglede_{w}(K^{\ell}; \Omega)\) we have \(\gamma \in \Fuglede_{\overline{w}}(K^{\ell}; \R^{m})\) and then \(u \compose \gamma = \overline{u} \compose \gamma \in \VMO(K^{\ell})\).
Hence, \(w\) is an \(\ell\)-detector for \(u\) and \eqref{eqDetectors-280} is satisfied.
It is worth mentioning that this argument also implies \eqref{eqGeneric-517}.
The conclusion thus follows for a Lipschitz open subset \( \Omega \) of \(\R^{m}\), which completes the proof of the proposition.
\end{proof}

\begin{remark}
	\label{remarkDetectorsCasep=1}
Although the proof of Proposition~\ref{propositionSobolevVMO>1} leaves aside the case \(p = 1\) and \(\ell \in \{0, 1\}\), the conclusion is still true for any \(u \in \Sobolev^{1, 1}(\manfV)\).
Indeed, when \(\ell=0\) one simply takes \(w = \abs{u}\) as \(0\)-detector for \(u\). 
That \eqref{eqDetectors-280} is satisfied merely follows from the equivalence between the norms in \(\Lebesgue^{1}(K^{0})\) and \(\VMO(K^{0})\).{}

When \(\ell = 1\), the summable function \( w \) given by the proof of Proposition~\ref{corollaryCompositionSobolevFugledeDim1} is a \(1\)-detector for \(u\).{}
Indeed, take \(\gamma \in \Fuglede_{w}(K^{1}; \manfV)\).{}
Since \(u \compose \gamma = \widehat{u \compose \gamma}\) almost everywhere in \(K^{1}\), for every \(\rho > 0\) we then have
\[{}
\seminorm{u \compose \gamma}_{\rho}
= \seminorm{\widehat{u \compose \gamma}}_{\rho}
\le \sup_{x \in K^{1}}{ \sup_{r, t \in B_{\rho}^{1}(x)}{{\abs{\widehat{u \compose \gamma}(r) - \widehat{u \compose \gamma}(t)}}}}.
\]
When \( \norm{u}_{\Sobolev^{1, 1}(\manfV)} > 0 \), by \eqref{eqFuglede-670} and the choice of \(w \ge \abs{u} + \abs{Du}\), see \eqref{eqFuglede-709}, we have
\[{}
\seminorm{u \compose \gamma}_{\VMO(K^{1})}
\le 2\abs{\gamma}_{\Lip} \norm{Du \compose \gamma}_{\Lebesgue^{1}(K^{1})}
\le 2\abs{\gamma}_{\Lip} \norm{w \compose \gamma}_{\Lebesgue^{1}(K^{1})},
\]
which then implies \eqref{eqDetectors-280} for \(p = 1\).
When \( \norm{u}_{\Sobolev^{1, 1}(\manfV)} = 0 \), it follows from \eqref{eqFuglede-1073} that \( \norm{u \compose \gamma}_{\VMO(K^{1})} = 0 \).
Hence, in both cases we have the desired conclusion.
\end{remark}

When \(p > \ell\), one deduces from estimate \eqref{eqGeneric-517} that, for every \(\gamma\in \Fuglede_{w}(K^\ell;\manfV)\), the function \(u\compose \gamma\) belongs to a Campanato space in \(K^\ell\).
As in the Euclidean setting \citelist{\cite{Giusti}*{Theorem~2.9} \cite{Giaquinta-Martinazzi}*{Theorem~5.5}}, it follows that 
there exists a Hölder-continuous function \(g \colon  K^{\ell} \to \R\) with exponent \(1 - \ell/p\) such that 
\[{}
u \compose \gamma = g
\quad \text{\(\cH^{\ell}\)-almost everywhere in \(K^{\ell}\).}
\]

A natural alternative approach to prove Proposition~\ref{propositionSobolevVMO>1} is to rely on a counterpart of the generic Sobolev property (Proposition~\ref{corollaryCompositionSobolevFuglede}) that replaces Riemannian manifolds \(\manfA\) by simplicial complexes \(\cK^{\ell}\), with a suitable adaptation of the notion of \(\Sobolev^{1, p}\) on a polytope.
The next step would be to prove a counterpart of Example~\ref{exampleDetectorsVMOSobolev} concerning the imbedding of \(\Sobolev^{1, \ell}\) into \(\VMO\) on \(\ell\)-dimensional domains.
However, as observed by White~\cite{White}, the latter step fails on polytopes \(K^{\ell}\) that are not sufficiently regular:

\begin{figure}
\centering
\hfill\includegraphics{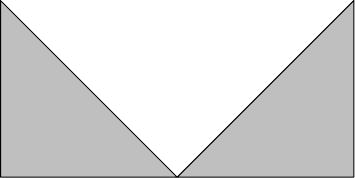}
\hfill{}
\caption{The polytope of Example~\ref{exampleUglyComplex}}
\label{figureUglyComplex}
\end{figure}

\begin{example}
\label{exampleUglyComplex}
	Let \(K^{2}\) be the polytope in \(\R^{2}\) formed by the union of two (solid) triangles: \(\Sigma_{1}^{2}\) with vertices \((0, 0)\), \((1, 0)\), \((1, 1)\), and \(\Sigma_{2}^{2}\) with vertices \((0, 0)\), \((-1, 0)\), \((-1, 1)\), see Figure~\ref{figureUglyComplex}.{}
	Then, \(u \colon  K^{2} \to \R\) defined for \(x = (x_{1}, x_{2})\) by
	\[{}
	u(x)
	= 
	\begin{cases}
		1	& \text{if \(x_{1} \ge 0\),}\\
		-1	& \text{if \(x_{1} < 0\),}
	\end{cases}	
	\]
	belongs to \(\Sobolev^{1, p}(\Int{K^{2}})\) for every \(p \ge 1\).{}
	However, for any \(\rho > 0\),{}
	\[{}
	\fint_{B_{\rho}^{2} \cap K^{2}}	\fint_{B_{\rho}^{2} \cap K^{2}} |u(y) - u(z)| \dif z \dif y 
	= 4 \biggl(\frac{\cH^{2}(B_{\rho}^{2} \cap \Sigma_{1}^{2})}{\cH^{2}(B_{\rho}^{2} \cap K^{2})}\biggr)	^{2}
	= 1.
	\]
	Hence, \(u \not\in \VMO(K^{2})\).
\end{example}

\section{Generic stability in VMO}
\label{sectionDetectorsStability}

We now study the generic \(\VMO\) convergence on \(\ell\)-dimensional polytopes.
We begin in the context of Sobolev functions, which motivates the introduction of the larger class of \(\VMO^{\ell}\) functions in the next section.

\begin{proposition}
	\label{propositionSobolevApproximationFuglede}
	If \(u \in \Sobolev^{1, p}(\manfV)\) with \(p \ge \ell\), then \(u\) has an \(\ell\)-detector \(\widetilde{w}\) such that, for every simplicial complex \(\cK^{\ell}\) and every map \(\gamma\) and sequence \((\gamma_{j})_{j \in \N}\) in \(\Fuglede_{\tilde{w}}(K^{\ell}; \manfV)\) with
	\begin{equation}
    \label{eqDetector-665}
    \gamma_{j} \to \gamma \quad \text{in \(\Fuglede_{\tilde{w}}(K^{\ell}; \manfV)\),}
	\end{equation}
	we have
	\begin{equation}
    \label{eqDetector-670}
	u \compose \gamma_{j} \to u \compose \gamma 
	\quad \text{in \(\VMO(K^{\ell})\).}
	\end{equation}
\end{proposition}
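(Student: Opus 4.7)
The plan is to construct \(\widetilde w\) as the sum of three summable functions, each tailored to one of the estimates that appear in the decomposition of the \(\VMO\)~norm. Using Proposition~\ref{propositionSobolevVMO>1}, I would first fix the \(\ell\)-detector \(w_0\) provided for \(u\), which is essentially \(\abs{u}^p + (\Maximal \abs{Du})^p\) and is the function actually appearing in the estimate~\eqref{eqGeneric-517}. Applying Proposition~\ref{propositionFugledeApproximationMaps} once to the \(\Lebesgue^p\) function \(u\) itself, I would obtain a summable \(w_2\) guaranteeing the \(\Lebesgue^p(K^\ell)\) convergence \(u \compose \gamma_j \to u \compose \gamma\) along any sequence converging in \(\Fuglede_{w_2}\); applied once more to \(w_0^{1/p} \in \Lebesgue^p(\manfV)\), it would provide a summable \(w_1\) such that \(w_0 \compose \gamma_j \to w_0 \compose \gamma\) in \(\Lebesgue^1(K^\ell)\), which in particular will yield equi-integrability of the family \((w_0 \compose \gamma_j)_{j \in \N}\). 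I then define \(\widetilde w \vcentcolon= w_0 + w_1 + w_2\); the dominance \(\widetilde w \ge w_0\) immediately makes \(\widetilde w\) an \(\ell\)-detector for \(u\).

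Given \(\gamma_j \to \gamma\) in \(\Fuglede_{\tilde w}(K^\ell; \manfV)\), I would split the norm defined in~\eqref{eqDetector-VMO} into its \(\Lebesgue^1\) part and its mean-oscillation part, and deal with them separately. The \(\Lebesgue^1(K^\ell)\) convergence \(u \compose \gamma_j \to u \compose \gamma\) is immediate from the choice of \(w_2\), since \(\Fuglede_{\tilde w} \subset \Fuglede_{w_2}\). For the seminorm~\eqref{eqDetectorSeminorm}, I would fix \(\varepsilon > 0\) and split the supremum over \(\rho\) at a scale \(\rho_0\) to be chosen. In the small-scale regime \(\rho \le \rho_0\), the triangle inequality and the key estimate~\eqref{eqGeneric-517} (available for both \(\gamma_j\) and \(\gamma\) since they are Fuglede with respect to \(w_0\)) would give
\[
\seminorm{u \compose \gamma_j - u \compose \gamma}_\rho
\le C_1 \rho^{\frac{p - \ell}{p}} \sup_{x \in K^\ell} \norm{w_0 \compose \gamma_j}_{\Lebesgue^1(B_\rho^\ell(x))}^{1/p} + C_1 \rho^{\frac{p - \ell}{p}} \sup_{x \in K^\ell} \norm{w_0 \compose \gamma}_{\Lebesgue^1(B_\rho^\ell(x))}^{1/p},
\]
where \(C_1\) depends on the uniform Lipschitz bound provided by Fuglede convergence. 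In the large-scale regime \(\rho > \rho_0\), the uniform nondegeneracy~\eqref{eq-uniform-nondegeneracy} of the measure on \(K^\ell\) gives \(\mu(B_\rho^\ell(x)) \ge \eta(\rho_0) > 0\) uniformly in \(x\), so that each oscillation at scale \(\rho > \rho_0\) is controlled by \(2/\eta(\rho_0)\) times the \(\Lebesgue^1\) norm of the difference, which has already been shown to tend to zero.

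The main obstacle in this scheme is the critical case \(p = \ell\), where the prefactor \(\rho^{(p-\ell)/p}\) degenerates to \(1\) and provides no smallness as \(\rho \to 0\). This is precisely the reason for introducing the auxiliary summable function \(w_1\): the \(\Lebesgue^1(K^\ell)\) convergence \(w_0 \compose \gamma_j \to w_0 \compose \gamma\) forces the family \((w_0 \compose \gamma_j)_{j \in \N}\) to be equi-integrable, and combined with the uniform upper bound \(\mu(B_\rho^\ell(x)) \le C' \rho^\ell\) on the polytope, this implies \(\sup_{x \in K^\ell,\, j \in \N} \norm{w_0 \compose \gamma_j}_{\Lebesgue^1(B_\rho^\ell(x))} \to 0\) as \(\rho \to 0\), compensating exactly for the loss of the scaling factor. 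When \(p > \ell\), the factor \(\rho^{(p-\ell)/p}\) already does the work, since \(\sup_j \norm{w_0 \compose \gamma_j}_{\Lebesgue^1(K^\ell)}\) is bounded by the Fuglede convergence hypothesis, and \(w_1\) is in fact superfluous. The borderline case \(p = 1\), \(\ell \in \{0, 1\}\), excluded from Proposition~\ref{propositionSobolevVMO>1}, would be handled along the same lines using Remark~\ref{remarkDetectorsCasep=1} and Proposition~\ref{propositionFugledeApproximationMapsDim1} in place of Propositions~\ref{propositionSobolevVMO>1} and~\ref{propositionFugledeApproximationMaps}.
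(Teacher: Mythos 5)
Your proposal is correct and follows essentially the same route as the paper: the detector is assembled from the one in Proposition~\ref{propositionSobolevVMO>1} together with two applications of Proposition~\ref{propositionFugledeApproximationMaps} (to \(u\) and, in your variant, to \(w_0^{1/p}\) instead of to \(w_0 \in \Lebesgue^{1}\), which is equivalent), and the convergence is obtained from the \(\Lebesgue^{p}\) convergence of \(u \compose \gamma_j\) combined with \eqref{eqGeneric-517} and the equi-integrability of \((w_0 \compose \gamma_j)_{j \in \N}\), exactly as in the paper. Your explicit small-scale/large-scale splitting of the seminorm is just an inline re-derivation of Lemma~\ref{lemmaEquiVMO}, which the paper invokes directly, and your treatment of the cases \(\ell \in \{0, 1\}\) via Remark~\ref{remarkDetectorsCasep=1} and Proposition~\ref{propositionFugledeApproximationMapsDim1} matches the paper's.
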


To handle the case \(\ell \ge 2\), we rely on the concept of equiVMO sequence that yields a characterization of convergence in \(\VMO\):

\begin{definition}
	A sequence \((v_{j})_{j \in \N}\) in \(\VMO(X)\) is \emph{equiVMO} whenever, for each \(\epsilon > 0\), there exists \(\delta > 0\) such that
	\begin{equation}
		\label{eqEquiVMO}
	\seminorm{v_{j}}_{\rho}
	\le \epsilon{}
	\quad \text{for every \(0 < \rho \le \delta\) and \(j \in \N\).}
	\end{equation}	
\end{definition}

	Then, see \cite{BrezisNirenberg1995}*{Lemma~A.16},

\begin{lemma}
	\label{lemmaEquiVMO}
	Let \((v_{j})_{j \in \N}\) be a sequence in \(\VMO(X)\).{}
	Given \(v \in \VMO(X)\), we have
	\[{}
	v_{j} \to v 
	\quad \text{in \(\VMO(X)\)}
	\]
	if and only if
	\[{}
	v_{j} \to v
	\quad \text{in \(\Lebesgue^{1}(X)\)} \quad \text{and} \quad 
	(v_{j})_{j \in \N}
	\ \text{is equiVMO.}
	\]
\end{lemma}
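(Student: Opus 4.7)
The plan is to work directly with the decomposition \(\norm{v_j - v}_{\VMO(X)} = \norm{v_j - v}_{\Lebesgue^{1}(X)} + \sup_{\rho > 0}\seminorm{v_j - v}_{\rho}\) and split the supremum over \(\rho\) into small-scale and large-scale regimes, handled respectively by the equiVMO condition and the uniform nondegeneracy assumption \eqref{eq-uniform-nondegeneracy}.

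For the forward implication, assume \(v_j \to v\) in \(\VMO(X)\). The convergence in \(\Lebesgue^{1}(X)\) is immediate from \(\norm{\cdot}_{\Lebesgue^{1}(X)} \le \norm{\cdot}_{\VMO(X)}\). To verify that \((v_j)_{j \in \N}\) is equiVMO, given \(\epsilon > 0\), I first use that \(v \in \VMO(X)\) to pick \(\delta_0 > 0\) with \(\seminorm{v}_{\rho} \le \epsilon/2\) for \(0 < \rho \le \delta_0\), and then choose \(N \in \N\) so that \(\seminorm{v_j - v}_{\VMO(X)} \le \epsilon/2\) for every \(j \ge N\). The triangle inequality for the double-average seminorm \eqref{eqDetector-33} yields \(\seminorm{v_j}_{\rho} \le \seminorm{v_j - v}_{\rho} + \seminorm{v}_{\rho} \le \epsilon\) for every \(j \ge N\) and \(0 < \rho \le \delta_0\). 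The finitely many remaining terms \(v_0, \dots, v_{N-1}\) each belong to \(\VMO(X)\), so one shrinks \(\delta\) further to dominate all of them simultaneously, and \eqref{eqEquiVMO} follows.

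For the reverse implication, assume \(v_j \to v\) in \(\Lebesgue^{1}(X)\) and that \((v_j)_{j\in\N}\) is equiVMO. Fix \(\epsilon > 0\). By the equiVMO assumption, together with \(v \in \VMO(X)\), there exists \(\delta > 0\) such that \(\seminorm{v_j}_{\rho} \le \epsilon/4\) and \(\seminorm{v}_{\rho} \le \epsilon/4\) for every \(j \in \N\) and every \(0 < \rho \le \delta\); hence the triangle inequality gives \(\seminorm{v_j - v}_{\rho} \le \epsilon/2\) at these small scales. For \(\rho > \delta\), I estimate the double average using
\[
\seminorm{v_j - v}_{\rho}
\le 2 \sup_{x \in X} \fint_{B_{\rho}(x)} \abs{v_j - v} \dif \mu
\le \frac{2}{\eta} \norm{v_j - v}_{\Lebesgue^{1}(X)},
\]
where \(\eta > 0\) is the lower bound on \(\mu(B_{\delta}(x))\) provided by \eqref{eq-uniform-nondegeneracy} (and monotonicity of \(\mu\) in the radius ensures \(\mu(B_{\rho}(x)) \ge \eta\) for all \(\rho \ge \delta\)). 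The right-hand side tends to \(0\) as \(j \to \infty\) uniformly in \(\rho \ge \delta\), so combining the two regimes yields \(\seminorm{v_j - v}_{\VMO(X)} \to 0\).

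The only delicate point is the treatment of large scales in the converse direction, where one must convert an \(\Lebesgue^{1}\) bound into a uniform bound on the mean oscillation over balls of radius at least \(\delta\); this is precisely where the uniform nondegeneracy \eqref{eq-uniform-nondegeneracy} of the underlying measure intervenes, and without it the equivalence would fail on spaces such as \(\R^m\) equipped with measures that are not bounded below on balls of a fixed radius.
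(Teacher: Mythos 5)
Your proof is correct and follows essentially the same route as the paper: the forward direction uses the triangle inequality for the mean-oscillation seminorm together with the VMO property of \(v\) and of the finitely many initial terms, and the reverse direction splits the scales at a threshold \(\delta\), controlling small scales by equiVMO plus \(v \in \VMO(X)\) and large scales by converting the \(\Lebesgue^{1}\) bound into an oscillation bound via the uniform nondegeneracy \eqref{eq-uniform-nondegeneracy}. The only cosmetic difference is that the paper first records the intermediate inequality \eqref{eqBMOLipschitz} and then passes to the limit in \(j\) and \(\delta\), whereas you run the same estimates as a direct \(\epsilon\)-argument.
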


\begin{proof}[Proof of Lemma~\ref{lemmaEquiVMO}]
	``\(\Longrightarrow\)''.{}
	It suffices to prove that \((v_{j})_{j \in \N}\) is equiVMO.
	For every \(\rho > 0\),{}
	\[{}
	\seminorm{v_{j}}_{\rho}
	\le \seminorm{v_{j} - v}_{\VMO(X)} + \seminorm{v}_{\rho}\,.
	\]
	Thus, by convergence of \((v_{j})_{j \in \N}\) in \(\VMO(X)\), for every \(\delta > 0\) we have
	\[{}
	\limsup_{j \to \infty}{\Bigl(\sup_{\rho \le \delta}{\seminorm{v_{j}}_{\rho}} \Bigr)}
	\le \sup_{\rho \le \delta}{\seminorm{v}_{\rho}}\,.
	\]
	Since \(v \in \VMO(X)\), given \(\epsilon > 0\) there exists \(\delta_{1} > 0\) such that
	\[{}
	\sup_{\rho \le \delta_{1}}{\seminorm{v}_{\rho}}
	\le \frac{\epsilon}{2}.
	\]
	We then take \(J \in \N\) such that
	\[{}
	\sup_{\rho \le \delta_{1}}{\seminorm{v_{j}}_{\rho}}
	\le \sup_{\rho \le \delta_{1}}{\seminorm{v}_{\rho}} + \frac{\epsilon}{2}
	\le \epsilon
	\quad \text{for every \(j \ge J\).}
	\]
	Since each function \(v_{j}\) belongs to \(\VMO(X)\), there exists \(\delta_{2} > 0\) such that 
	\[{}
	\sup_{\rho \le \delta_{2}}{\seminorm{v_{j}}_{\rho}}
	\le \epsilon 
	\quad \text{for every \(j \le J-1\).}
	\]
	Inequality \eqref{eqEquiVMO} thus holds with \(\delta = \min{\{\delta_{1}, \delta_{2}\}}\).{}
	
	``\(\Longleftarrow\)''.{}
We claim that, for every \(\delta > 0\),
\begin{equation}
	\label{eqBMOLipschitz}
	\seminorm{v_{j} - v}_{\VMO(X)}
	\le C_{\delta} \norm{v_{j} - v}_{\Lebesgue^{1}(X)} + \sup_{\rho \le \delta}{\seminorm{v}_{\rho}} + \sup_{\rho \le \delta}{\seminorm{v_{j}}_{\rho}}\,,
\end{equation}	
for some constant \(C_{\delta} > 0\) that depends on \(\delta > 0\).
To this end, given \(x \in X\), \(j \in \N\) and \(\rho > 0\), let 
	\[{}
I_{j, \rho}(x)
={}
\fint_{B_{\rho}(x)}\fint_{B_{\rho}(x)}
\bigabs{v_{j}(y) - v(y) - v_{j}(z) + v(z)} \dif\mu(z) \dif\mu(y).
\]
Then, 
\[
\seminorm{v_{j} - v}_{\VMO(X)} = \sup_{x\in X, \rho>0}I_{j,\rho}(x).
\]

Take \(\delta > 0\).
For \(\rho \ge \delta\), by the triangle inequality and monotonicity of \(\mu\) we have
\[{}
I_{j, \rho}(x)
\le 2 \fint_{B_{\rho}(x)}{\abs{v_{j} - v} \dif\mu}
\le \frac{2}{\mu(B_{\delta}(x))} \norm{v_{j} - v}_{L^{1}(X)}.
\]
By uniform nondegeneracy of \(\mu\), see~\eqref{eq-uniform-nondegeneracy}, we then get
\[{}
I_{j, \rho}(x)
\le C_{\delta} \norm{v_{j} - v}_{L^{1}(X)}.
\]
For \(\rho \le \delta\) we estimate \(I_{j, \rho}(x)\) as
\[{}
I_{j, \rho}(x)
\le \seminorm{v_{j}}_{\rho} + \seminorm{v}_{\rho}\,.
\]
Inequality \eqref{eqBMOLipschitz} thus holds.

We now rely on \eqref{eqBMOLipschitz} to show that \(v_j\to v\) in \(\VMO(X)\). 
Let \(\epsilon > 0\). Using the equiVMO property of \((v_{j})_{j \in \N}\)\,, we take \(\delta_{1} > 0\) such that
	\[{}
	\sup_{\rho \le \delta_{1}}{\seminorm{v_{j}}_{\rho}}
	\le \epsilon
	\quad \text{for every \(j \in \N\).}
	\]
	Thus, for every \(0 < \delta \le \delta_{1}\) and \(j \in \N\),{} by \eqref{eqBMOLipschitz} we get
	\[	
	\seminorm{v_{j} - v}_{\VMO(X)}
	\le C_{\delta} \norm{v_{j} - v}_{\Lebesgue^{1}(X)} + \sup_{\rho \le \delta}{\seminorm{v}_{\rho}} + \epsilon.
	\]
	As \(j \to \infty\) in this estimate,
\[{}
\limsup_{j \to \infty}{\seminorm{v_{j} - v}_{\VMO(X)}}
\le \sup_{\rho \le \delta}{\seminorm{v}_{\rho}}  + \epsilon.
\]
The conclusion thus follows as \(\delta \to 0\) since \(v \in \VMO(X)\) and  \(\epsilon\) is arbitrary.
\end{proof}

\resetconstant
\begin{proof}[Proof of Proposition~\ref{propositionSobolevApproximationFuglede}]	
    The case \( \ell = 0 \) concerns the generic pointwise convergence for \( \Lebesgue^{1} \) functions and can be deduced from Example~\ref{exampleFugledePointwiseConvergence}.
	Concerning the next case \(\ell = 1\), we take the summable function \( \widetilde{w} \) given by Proposition~\ref{propositionFugledeApproximationMapsDim1}.
    From Remark~\ref{remarkDetectorsCasep=1}, \(\widetilde{w}\) is a \( 1 \)-detector.
    By Proposition~\ref{propositionFugledeApproximationMapsDim1}, for any Fugelde maps satisfying \eqref{eqDetector-665} one has
    \[
	\widehat{u \compose \gamma_{j}} \to \widehat{u \compose \gamma} \quad \text{in \(\Smooth^{0}(K^{1})\).}
	\]
    In view of the continuous imbedding \( \Smooth^{0}(K^{1}) \subset \VMO(K^{1}) \), the convergence \eqref{eqDetector-670} is satisfied.
 
	We now assume that \(p \ge \ell \ge 2\).
	Given the \(\ell\)-detector \(w\) provided by the proof of   Proposition~\ref{propositionSobolevVMO>1},
	let \(\widetilde{w}\) be a summable function that satisfies the conclusion of Proposition~\ref{propositionFugledeApproximationMaps} applied to \(u \in \Lebesgue^{p}(\manfV)\) and to \(w \in \Lebesgue^{1}(\manfV)\).
	Let us assume that \(\gamma\) and \((\gamma_{j})_{j \in \N}\) satisfy \eqref{eqDetector-665}.
	From Proposition~\ref{propositionFugledeApproximationMaps},
	\[{}
	u \compose \gamma_{j} \to u \compose \gamma{}
	\quad \text{in \(\Lebesgue^{p}(K^{\ell})\)}.
	\]
    As \( \widetilde{w} \ge w \), we have that \(\widetilde{w}\) is also an \(\ell\)-detector for \(u\) and then \(u \compose \gamma_{j} \in \VMO(K^{\ell})\) for every \(j \in \N\).{}
    Since the sequence \((\gamma_{j})_{j \in \N}\) is equiLipschitz, we have by \eqref{eqGeneric-517} that
	\begin{equation}
		\label{eqSobolevVMOInteger}
	\seminorm{u \compose \gamma_{j}}_{\rho}
	\le \C \rho^{\frac{p - \ell}{p}} \sup_{x \in K^{\ell}}{\norm{w \compose \gamma_{j}}_{\Lebesgue^{1}(B_{\rho}^{\ell}(x))}^{1/p}  }
	\quad \text{for every \(\rho > 0\) and \(j \in \N\).}
	\end{equation}
	By Proposition~\ref{propositionFugledeApproximationMaps} we also have the convergence of \((w \compose \gamma_{j})_{j \in \N}\) in \(\Lebesgue^{1}(X)\) and, in particular, this sequence is equi-integrable.
	Thus, for every \(\epsilon > 0\), there exists \(\delta > 0\) such that
	\[{}
	\sup_{x \in K^{\ell}}\norm{w \compose \gamma_{j}}_{\Lebesgue^{1}(B_{\rho}^{\ell}(x))}
	\le \epsilon^{p}
	\quad \text{for every \(0 < \rho \le \delta\) and \(j \in \N\).}
	\]
    Hence, \((u \compose \gamma_{j})_{j \in \N}\) is equiVMO and it then follows from Lemma~\ref{lemmaEquiVMO} that \eqref{eqDetector-670} holds.
\end{proof}

\section{\texorpdfstring{$\VMO^\ell$}{VMOl} functions}

Motivated by Proposition~\ref{propositionSobolevApproximationFuglede}, we introduce the class of \(\VMO^{\ell}\) functions that are stable under \(\VMO\) convergence on generic polytopes of dimension \(\ell\).{}
This notion will serve as a common roof for introducing later the notion of \(\ell\)-extendability for maps between manifolds.

\begin{definition}
\label{definitionVMOell}
Given \(\ell \in \N\), we say that a measurable function \(u \colon  \manfV \to \R\) is \(\VMO^{\ell}\) whenever there exists an \(\ell\)-detector \(w \colon  \manfV \to [0, +\infty]\) such that, for every simplicial complex \(\cK^{\ell}\) and every map \(\gamma\) and sequence \((\gamma_{j})_{j \in \N}\) in \(\Fuglede_{w}(K^{\ell}; \manfV)\) with
	\[{}
	\gamma_{j} \to \gamma{}
	\quad \text{in \(\Fuglede_{w}(K^{\ell}; \manfV)\),}
	\]
	we have
	\[{}
	u \compose \gamma_{j} \to u \compose \gamma{}
	\quad \text{in \(\VMO(K^{\ell})\).}
	\]
\end{definition}

We denote by \(\VMO^{\ell}(\manfV)\) the class of \(\VMO^{\ell}\) functions defined in \(\manfV\).
Note that, for every \(\ell \in \N\),{}
\[{}
\Smooth^{0}(\manfV) \subset \VMO^{\ell}(\manfV).
\]
For \(\ell = 0\), it follows from Proposition~\ref{propositionFugledeApproximationMaps}, see Example~\ref{exampleFugledePointwiseConvergence},
\[{}
\Lebesgue^{1}(\manfV) \subset \VMO^{0}(\manfV).
\]
By Proposition~\ref{propositionSobolevApproximationFuglede}, we also have the inclusions
\begin{equation}
\label{eqDetector-775}
\Sobolev^{1, p}(\manfV) \subset \VMO^{\ell} (\manfV)
\quad \text{for every \(p \ge \ell\).}
\end{equation}
In particular, when \(\manfV = \Omega\) is a Lipschitz open set and \(k \in \N_*\), by the Gagliardo-Nirenberg interpolation inequality we have \((\Sobolev^{k, p} \cap \Lebesgue^\infty)(\Omega) \subset \Sobolev^{1, kp}(\Omega)\), and we deduce from \eqref{eqDetector-775} that
\begin{equation}
\label{eqDetector-882}
(\Sobolev^{k, p} \cap \Lebesgue^\infty)(\Omega) \subset \VMO^{\ell} (\Omega)
\quad \text{for every \(kp \ge \ell\).}
\end{equation}

We now present another example, which is not necessarily included in the previous ones.
We first introduce the notion of a structured singular set.

\begin{definition}
    \label{defnStructuredSingularSet}
    Given \( i \in \{0, \ldots, m - 1\} \), we say that \(T^i \subset \manfV\) is a \emph{structured singular set of rank \(i\)}, whenever
    \begin{description}
        \item[if \(\manfV = \Omega\) is an open subset of \(\R^m\)] \(T^i\) is contained in finite unions of \(i\)-dimensional affine spaces of \(\R^{m}\) that are orthogonal to \(m - i\) coordinate axes;
        \item[if \(\manfV = \manfM\) is a compact manifold] \(T^{i}\) is contained in a finite union of \(i\)-dimensional compact submanifolds \((S_j)_{j\in J}\) of \(\manfM\) without boundary that intersect cleanly, that is, for every \(S_{j_1}, \dots, S_{j_k}\), their intersection is either empty or a submanifold whose tangent space at any point \(x\) is given by \(\Tangent{x} S_{j_1} \cap \ldots \cap \Tangent{x} S_{j_k}\).
    \end{description}
\end{definition}

\begin{proposition}
\label{proposition_Extension_Property_R_0-New}
Assume that \( \manfV \) has finite measure.
Let \(\ell \in \{1, \ldots, m-1\}\) and let \(T^{\ell^{*}}\) be a structured singular set of rank \(\ell^{*} \vcentcolon= m - \ell - 1\).
Then, every measurable function \(u \colon \manfV \to \R\) that is continuous in \(\manfV\setminus T^{\ell^{*}}\) belongs to \(u \in \VMO^{\ell}(\manfV)\).
\end{proposition}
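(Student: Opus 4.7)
The plan is to construct an $\ell$-detector $w$ that blows up on the closed set $T^{\ell^{*}}$ fast enough that every Fuglede map $\gamma\in\Fuglede_{w}(K^{\ell};\manfV)$ has its image disjoint from $T^{\ell^{*}}$. Once this disjointness is achieved, the continuity of $u$ on $\manfV\setminus T^{\ell^{*}}$ makes $u\compose\gamma$ continuous, hence $\VMO$, and the stability requirement under $\gamma_{j}\to\gamma$ will follow from uniform continuity of $u$ on compact sets avoiding $T^{\ell^{*}}$.

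Concretely, I would define
\[
w(x)\vcentcolon=\frac{1}{d(x,T^{\ell^{*}})^{\ell}},
\]
with $w=+\infty$ on $T^{\ell^{*}}$. Because $T^{\ell^{*}}$ sits inside a finite union of $\ell^{*}$-dimensional strata (affine planes parallel to coordinate axes in the open-set case, cleanly intersecting submanifolds in the manifold case), a Tonelli computation in coordinates adapted to each stratum shows that $w$ is summable on $\manfV$: near a single stratum the integrand behaves like $\abs{y}^{-\ell}$ over the $(\ell+1)$-dimensional normal directions, which is locally summable since $\ell<\ell+1$. The contributions from several strata are combined via $d(\cdot,T^{\ell^{*}})^{-\ell}=\max_{i}d(\cdot,S_{i})^{-\ell}\le\sum_{i}d(\cdot,S_{i})^{-\ell}$, and the finite measure of $\manfV$ handles the region away from $T^{\ell^{*}}$.

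The main step will be to show that if $\gamma\in\Fuglede_{w}(K^{\ell};\manfV)$ for some simplicial complex $\cK^{\ell}$, then $\gamma(K^{\ell})\cap T^{\ell^{*}}=\emptyset$. I would argue by contradiction: if $\gamma(x_{0})\in T^{\ell^{*}}$ for some $x_{0}\in K^{\ell}$, the Lipschitz bound yields $d(\gamma(x),T^{\ell^{*}})\le\abs{\gamma}_{\Lip}\,d(x,x_{0})$, hence $w\compose\gamma(x)\ge c\,d(x,x_{0})^{-\ell}$ for $x$ near $x_{0}$. By condition~$(i)$ of Definition~\ref{definitionComplex}, $x_{0}$ lies in some $\ell$-dimensional simplex $\Sigma^{\ell}\in\cK^{\ell}$, on which polar coordinates centered at $x_{0}$ give $\cH^{\ell}(\Sigma^{\ell}\cap B^{\ell}_{\rho}(x_{0}))\ge c'\rho^{\ell}$, and consequently
\[
\int_{K^{\ell}\cap B^{\ell}_{\rho}(x_{0})}d(x,x_{0))^{-\ell}\dif\cH^{\ell}(x)\ge c''\int_{0}^{\rho}r^{-1}\dif r=+\infty,
\]
contradicting $w\compose\gamma\in\Lebesgue^{1}(K^{\ell})$.

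With the disjointness established, $\gamma(K^{\ell})$ is a compact subset of the open set $\manfV\setminus T^{\ell^{*}}$ on which $u$ is continuous, so $u\compose\gamma\in\Smooth^{0}(K^{\ell})\subset\VMO(K^{\ell})$, proving that $w$ is an $\ell$-detector for $u$. To verify the stability in Definition~\ref{definitionVMOell}, given $\gamma_{j}\to\gamma$ in $\Fuglede_{w}(K^{\ell};\manfV)$ I would fix a compact neighbourhood $V\subset\manfV\setminus T^{\ell^{*}}$ of $\gamma(K^{\ell})$; the uniform convergence together with the disjointness applied to each $\gamma_{j}$ forces $\gamma_{j}(K^{\ell})\subset V$ for $j$ large, and the uniform continuity of $u$ on $V$ then gives $u\compose\gamma_{j}\to u\compose\gamma$ in $\Smooth^{0}(K^{\ell})$, hence in $\VMO(K^{\ell})$. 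The heart of the argument is the sharp match between the exponent $\ell$ in $w$ (making $w\compose\gamma$ nonintegrable whenever the image touches $T^{\ell^{*}}$) and the codimension $\ell+1$ of $T^{\ell^{*}}$ (making $w$ globally integrable); any looser detector would either fail to force the image to avoid $T^{\ell^{*}}$ or fail to be summable on $\manfV$.
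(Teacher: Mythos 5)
Your proof is correct and follows essentially the same route as the paper's: the detector \(1/d(\cdot,T^{\ell^{*}})^{\alpha}\) (the paper allows any \(\ell\le\alpha<\ell+1\), you take \(\alpha=\ell\)), summability coming from the codimension-\((\ell+1)\) structure of the strata and the finite measure of \(\manfV\), the Lipschitz contradiction forcing \(\gamma(K^{\ell})\cap T^{\ell^{*}}=\emptyset\) for every Fuglede map, and stability from uniform convergence of the \(\gamma_{j}\) together with continuity of \(u\) away from \(T^{\ell^{*}}\). One cosmetic remark: since \(T^{\ell^{*}}\) is only \emph{contained in} \(\bigcup_{i}S_{i}\), the relation \(d(\cdot,T^{\ell^{*}})^{-\ell}=\max_{i}d(\cdot,S_{i})^{-\ell}\) should be an inequality \(\le\), which is all your summability argument actually uses.
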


\begin{proof}
Given \(\ell \le \alpha < \ell + 1\), let \(w \colon  \manfV \to [0, +\infty]\) be defined by
\[{}
w (x)=
\begin{cases}
1/d(x, T^{\ell^{*}})^{\alpha} &\text{if \(x \in \manfV \setminus T^{\ell^{*}}\)},\\
  +\infty & \text{if \(x \in T^{\ell^{*}}\)}.
\end{cases}
\]
If \( \manfV = \manfM\) is a compact manifold, then \(T^{\ell^{*}}\) is contained in a finite union of \(\ell^{*}\)-dimensional compact submanifolds \((S_j)_{j\in J}\) of \(\manfM\) that intersect cleanly.
If \( \manfV = \Omega\) is an open subset of \(\R^m\), then \(T^{\ell^{*}}\) is contained in a finite union of sets \((S_j)_{j\in J}\) that are \(\ell^{*}\)-dimensional affine spaces of \(\R^{m}\) orthogonal to \(m - \ell^{*} = \ell + 1\) coordinate axes.
In both cases, one has
\[
\frac{1}{d(x,T^{\ell^{*}})^\alpha}
\leq \sum_{j\in J}\frac{1}{d(x,S_j)^\alpha}\qquad \text{for every \(x\in \manfV\setminus T^{\ell^{*}}\).}
\]
Since \(\manfV\) has finite measure and \(\alpha < \ell + 1\), this implies that  \(w\) is summable in \(\manfV\). 
We claim that every \(\gamma \in \Fuglede_{w}(K^{\ell}; \manfV)\) satisfies
\begin{equation}
	\label{eqDetectors-886}
\gamma (K^\ell) \cap T^{\ell^{*}} = \emptyset.
\end{equation}
To this end, assume by contradiction that \(\gamma (K^\ell) \cap T^{\ell^{*}} \neq \emptyset\) and take \(x \in K^\ell\) such that \(\gamma (x) \in T^{\ell^{*}}\).{}
By our choice of \(w\), for every \(y \in K^\ell\) we have
\[
w (\gamma (y)) 
\ge \frac{1}{d (\gamma (x), \gamma (y))^{\alpha}}
\ge \frac{1}{\abs{\gamma}_{\Lip}^{\alpha} d (x, y)^{\alpha}}
\]
and thus
\begin{equation}
\label{eqEstimateT}
\int_{K^\ell} w \compose \gamma \dif\cH^{\ell}
\ge \frac{1}{\abs{\gamma}_{\Lip}^{\alpha}}
\int_{K^\ell} \frac{1}{d (x, y)^{\alpha}} \dif\cH^{\ell}(y).
\end{equation}
Since \(\alpha \ge \ell\), the right-hand side of \eqref{eqEstimateT} is infinite and, as a consequence, \(w \compose \gamma\) is not summable in \(K^{\ell}\).{} 
This is a contradiction and \eqref{eqDetectors-886} follows.
	By continuity of \(u\) in \(\manfV\setminus T^{\ell^{*}}\), it then follows that \(u \compose \gamma\) is continuous and, in particular, \(\VMO\).{}
	Hence, \(w\) is an \(\ell\)-detector for \(u\). 
	
	For the stability of \(u\), observe that if \(\cK^{\ell}\) is a simplicial complex and \((\gamma_{j})_{j \in \N}\) is a sequence in \(\Fuglede_{w}(K^{\ell}; \manfV)\) such that
	\[{}
	\gamma_{j} \to \gamma \quad \text{in \(\Fuglede_{\tilde{w}}(K^{\ell}; \manfV)\),}
	\]
	then by uniform convergence of \((\gamma_{j})_{j \in \N}\) and compactness of \(K^{\ell}\), it follows from \eqref{eqDetectors-886} applied to \(\gamma\) and each \(\gamma_{j}\) that there exists \(\epsilon > 0\) such that \(d(\gamma_{j}(K^{\ell}),  T^{\ell^{*}}) \ge \epsilon\) for every \(j \in \N\).{}
	Hence, by continuity of \(u\) on \(\manfV\setminus T^{\ell^{*}}\), we have
	\[{}
	u \compose \gamma_{j} \to u \compose \gamma \quad \text{in \(\Smooth^{0}(K^{\ell})\),}
	\]
	and in particular in \(\VMO(K^{\ell}; \R)\).
\end{proof}

Another family of \(\VMO^{\ell}\)~functions arises in the setting of fractional Sobolev spaces \(\Sobolev^{s, p}\)\,: 

\begin{proposition}
	\label{propositionVMOSobolevFractional}
	If \(0 < s < 1\) and \(sp \ge \ell\), then \(\Sobolev^{s, p}(\manfV) \subset \VMO^{\ell}(\manfV)\).	
    More precisely, for every \(u \in \Sobolev^{s, p}(\manfV)\), the summable function \(w  \colon  \manfV \to [0, +\infty]\) defined for every \(x \in \manfV\) by
\begin{equation*}
w(x) = \abs{u(x)}^p + \int_{\manfV} \frac{\abs{u (x) - u (y)}^p}{d(x, y)^{sp + m}} \dif y
\end{equation*}
is an \(\ell\)-detector for \(u\) such that, for every simplicial complex \(\cK^{\ell}\) and every \(\gamma \in \Fuglede_{w}(K^{\ell}; \manfV)\), 
	\begin{equation*}
	\norm{u \compose \gamma}_{\VMO(K^{\ell})}
	\le C (1 + \abs{\gamma}_{\Lip}^s) \norm{\widetilde{w} \compose \gamma}_{\Lebesgue^{1}(K^{\ell})}^{1/p} \text{,}
	\end{equation*}
	for a constant \(C > 0\) depending on \(p\), \(m\), \(\manfV\) and \(K^{\ell}\).
    Moreover, there exists a summable function \( \widetilde{w} \ge w \) in \(\manfV\) such that, whenever \(\gamma\) and \((\gamma_{j})_{j \in \N}\) belong to \(\Fuglede_{w}(K^{\ell}; \manfV)\) and satisfy \(\gamma_{j} \to \gamma\) in \(\Fuglede_{w}(K^{\ell}; \manfV)\), we have
	\[{}
	u \compose \gamma_{j} \to u \compose \gamma{}
	\quad \text{in \(\VMO(K^{\ell})\).}
	\]
\end{proposition}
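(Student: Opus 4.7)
The plan is to adapt the proof of Proposition~\ref{propositionModulusFractionalSobolev} to the situation where the domain is the polytope \(K^\ell\) of a simplicial complex and then combine the resulting \(\Sobolev^{s,p}\)~estimate with a Morrey--Hölder argument in the spirit of Example~\ref{exampleVMOSobolevFractional}. The hypotheses needed for Proposition~\ref{propositionModulusFractionalSobolev} are all in place: any simplicial complex has \(\cH^\ell(B^\ell_\rho(a)) \le C\rho^\ell\), and the lower bound \(\cH^m(B_\rho^m(\xi)) \ge c\rho^m\) on \(\manfV\) is part of our standing assumption on \(\manfV\).

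The central quantitative step is the following localized version of \eqref{eqSobolevFractionalEstimateSubset}: for every Lipschitz map \(\gamma \colon K^\ell \to \manfV\) and every measurable \(E \subset K^\ell\),
\[
\int_E \int_E \frac{|u\compose\gamma(y) - u\compose\gamma(z)|^p}{d(y,z)^{sp+\ell}} \dif\cH^\ell(z)\dif\cH^\ell(y)
\le C_1 |\gamma|_{\Lip}^{sp} \int_E w\compose\gamma \dif\cH^\ell.
\]
To pass from this Gagliardo-type quantity to the mean oscillation, I would apply Hölder's inequality to the integrand, writing \(|u\compose\gamma(y) - u\compose\gamma(z)| = \frac{|u\compose\gamma(y)-u\compose\gamma(z)|}{d(y,z)^{(sp+\ell)/p}}\cdot d(y,z)^{(sp+\ell)/p}\) over \(B^\ell_\rho(x) \times B^\ell_\rho(x)\), using the trivial bound \(d(y,z) \le 2\rho\) and the two-sided control \(c\rho^\ell \le \cH^\ell(B^\ell_\rho(x)) \le C\rho^\ell\). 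A short computation yields
\[
\seminorm{u\compose\gamma}_\rho
\le C_2\, \rho^{s - \ell/p} |\gamma|_{\Lip}^s \sup_{x\in K^\ell}\norm{w\compose\gamma}_{\Lebesgue^1(B^\ell_\rho(x))}^{1/p}.
\]
Since \(sp \ge \ell\), the factor \(\rho^{s-\ell/p}\) stays bounded as \(\rho \to 0\); and by absolute continuity of \(w\compose\gamma \in \Lebesgue^1(K^\ell)\), the supremum above tends to zero uniformly, so \(u\compose\gamma \in \VMO(K^\ell)\). Taking the supremum over \(\rho \le \Diam K^\ell\) (and bounding \(\norm{u\compose\gamma}_{\Lebesgue^1(K^\ell)}\) through the pointwise inequality \(|u|^p \le w\)) gives the desired \(\VMO\) estimate.

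For the stability part, I would apply Proposition~\ref{propositionFugledeApproximationMaps} to both \(u \in \Lebesgue^p(\manfV)\) and \(w \in \Lebesgue^1(\manfV)\), producing a summable \(\widetilde{w} \ge w\) such that any sequence with \(\gamma_j \to \gamma\) in \(\Fuglede_{\tilde w}(K^\ell;\manfV)\) satisfies \(u\compose\gamma_j \to u\compose\gamma\) in \(\Lebesgue^p(K^\ell)\) and \(w\compose\gamma_j \to w\compose\gamma\) in \(\Lebesgue^1(K^\ell)\). The latter provides uniform equi-integrability of \((w\compose\gamma_j)_{j\in\N}\); feeding this into the pointwise bound above with \(\gamma = \gamma_j\) shows that \((u\compose\gamma_j)_{j\in\N}\) is equi-\(\VMO\). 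Lemma~\ref{lemmaEquiVMO} then converts \(\Lebesgue^1\)~convergence and equi-\(\VMO\) into \(\VMO\)~convergence, mirroring the endgame of the proof of Proposition~\ref{propositionSobolevApproximationFuglede}.

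The main obstacle is the borderline case \(sp = \ell\), where the exponent \(s - \ell/p\) is zero and the smallness as \(\rho \to 0\) must come entirely from the equi-integrability of \(w\compose\gamma\) (or the family \((w\compose\gamma_j)_{j\in\N}\)) over balls of \(K^\ell\), uniformly in the center \(x\). Care is needed to preserve this uniformity through the Hölder step, but this is straightforward since the constant \(C_2\) is independent of \(x\) and of the sequence index \(j\).
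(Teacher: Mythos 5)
Your argument is correct and follows essentially the same route as the paper: the localized Gagliardo estimate \eqref{eqSobolevFractionalEstimateSubset} transferred to \(E \subset K^{\ell}\) (valid since \(\cH^{\ell}\lfloor_{K^{\ell}}\) is uniform), the Hölder/Poincaré–Wirtinger step giving \(\seminorm{u\compose\gamma}_{\rho} \le C\,|\gamma|_{\Lip}^{s}\rho^{(sp-\ell)/p}\sup_{x}\norm{w\compose\gamma}_{\Lebesgue^{1}(B_{\rho}^{\ell}(x))}^{1/p}\), and then the stability via Proposition~\ref{propositionFugledeApproximationMaps} applied to \(u\) and \(w\), equi-integrability, equiVMO and Lemma~\ref{lemmaEquiVMO}. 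Your explicit treatment of the borderline case \(sp=\ell\) via uniform absolute continuity of \(w\compose\gamma\) is exactly what the paper's argument relies on implicitly.
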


\resetconstant
\begin{proof}
	Let \(\gamma \in \Fuglede_{w}(K^{\ell}; \manfV)\). 
    Since \(\abs{u}^p \le w\) and \( K^{\ell} \) has finite measure, by Hölder's inequality we have
    \[
    \norm{u \compose \gamma}_{\Lebesgue^{1}(K^{\ell})}
    \le \Cl{cteDetector-980} \norm{u \compose \gamma}_{\Lebesgue^{p}(K^{\ell})}
    \le \Cr{cteDetector-980} \norm{w \compose \gamma}_{\Lebesgue^{1}(K^{\ell})}^{1/p}.
    \]
    We next recall the following straightforward form of the Poincaré-Wirtinger inequality: 	
	\begin{multline*}
	\fint_{B_{\rho}^{\ell}(x)}\fint_{B_{\rho}^{\ell}(x)}{\abs{u \compose \gamma(r) - u \compose \gamma(t)}^{p} \dif\cH^{\ell}(t)\dif\cH^{\ell}(r)}\\
	\le \C \rho^{sp - \ell} \int_{B_{\rho}^{\ell}(x)}\int_{B_{\rho}^{\ell}(x)}{ \frac{\abs{u \compose \gamma (r) - u \compose \gamma (t)}^p}{d(s, t)^{s p + \ell}} \dif\cH^{\ell}(t) \dif\cH^{\ell}(r)},
	\end{multline*}
	for every \(\rho>0\) and every \(x \in K^{\ell}\).{}
	We apply the counterpart of \eqref{eqSobolevFractionalEstimateSubset} for measurable sets \(E \subset K^{\ell}\); this is possible because \eqref{eqFuglede-411} holds on \(K^{\ell}\) with \(\mu = \cH^{\ell}\lfloor_{K^{\ell}}\).
	It thus follows from Hölder's inequality and \eqref{eqSobolevFractionalEstimateSubset} with \(E = B_{\rho}^{\ell}(x)\) that
	\begin{multline*}
	\fint_{B_{\rho}^{\ell}(x)}\fint_{B_{\rho}^{\ell}(x)}{\abs{u \compose \gamma(r) - u \compose \gamma(t)} \dif\cH^{\ell}(t)\dif\cH^{\ell}(r)}\\
	\le \Cl{cteDetector-995} \abs{\gamma}_{\Lip}^{s} \, \rho^{\frac{sp - \ell}{p}} \norm{w \compose \gamma}_{\Lebesgue^{1}(B_{\rho}^{\ell}(x))}^{1/p}.
	\end{multline*}
	We deduce that
	\begin{equation}
		\label{eqGeneric-799}
	\seminorm{u \compose \gamma}_{\rho}
	\le \Cr{cteDetector-995} \abs{\gamma}_{\Lip}^{s} \, \rho^{\frac{sp - \ell}{p}} \sup_{x \in K^{\ell}}{\norm{w \compose \gamma}_{\Lebesgue^{1}(B_{\rho}^{\ell}(x))}^{1/p}}
	\quad \text{for every \( \rho >0\)}.
	\end{equation}
	When \(sp \ge \ell\) and \(\gamma \in \Fuglede_{w}(K^{\ell}; \manfV)\), the right-hand side converges to zero as \(\rho \to 0\).{}
	Hence, \(u \compose \gamma \in \VMO(K^{\ell})\).
    Moreover, taking the supremum of the left-hand side with respect to \(0 < \rho \le \Diam{K^{\ell}}\), we get
    \[
   	\seminorm{u \compose \gamma}_{\VMO(K^{\ell})}
	\le \C \abs{\gamma}_{\Lip}^s \norm{w \compose \gamma}_{\Lebesgue^{1}(K^{\ell})}^{1/p}.
    \]
	
    Let \(w_1\) and \(w_2\) be two summable functions obtained from Proposition~\ref{propositionFugledeApproximationMaps} applied to \( u \in \Lebesgue^{p}(\manfV) \) and to \( w \in \Lebesgue^{1}(\manfV) \). 
    We then take \( \widetilde{w} = w_1 + w_2 \).
    Given any equiLipschitz sequence  \((\gamma_{j})_{j \in \N}\) in \(\Fuglede_{\tilde{w}}(K^{\ell}; \manfV)\), by estimate \eqref{eqGeneric-799} we have
	\begin{equation}
	\label{eqDetectors-1004}
	\seminorm{u \compose \gamma_{j}}_{\rho}
	\le \C \, \rho^{\frac{sp - \ell}{p}} \sup_{x \in K^{\ell}}{\norm{w \compose \gamma_{j}}_{\Lebesgue^{1}(B_{\rho}^{\ell}(x))}^{1/p}}
	\quad \text{for every \(\rho > 0\) and \(j \in \N\).}
	\end{equation}
	One then proceeds as in the proof of Proposition~\ref{propositionSobolevApproximationFuglede}, replacing \eqref{eqSobolevVMOInteger} by \eqref{eqDetectors-1004}.	
\end{proof}

We now prove an inclusion among \(\VMO^{\ell}\) spaces with respect to different dimensions that follows from the use of a dummy variable in the spirit of Lemma~\ref{lemmaVMOellExtension}:

\begin{proposition}
\label{propositionVMOellInclusion}
For every  \(r \in \{0, \dots, \ell - 1\}\),{}
\[{}
\VMO^{\ell}(\manfV) \subset \VMO^{r}(\manfV).
\]
\end{proposition}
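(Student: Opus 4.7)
The plan is to reduce the $r$-dimensional case to the $\ell$-dimensional case by extending polytopes and maps with dummy variables, exactly as in the proof of Proposition~\ref{propositionFugledeDetector}. More precisely, given $u \in \VMO^\ell(\manfV)$ with associated $\ell$-detector $w$, I would show that the same function $w$ serves as a detector witnessing the $\VMO^r$ property of $u$.

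First I would fix a simplicial complex $\cK^r$ and equip $E^\ell \vcentcolon= K^r \times [0,1]^{\ell - r}$ with an $\ell$-dimensional simplicial structure $\cE^\ell$, as already invoked after Lemma~\ref{lemmaVMOellExtension}. For any Fuglede map $\gamma \in \Fuglede_w(K^r; \manfV)$, define the extension $\widetilde\gamma \colon E^\ell \to \manfV$ by $\widetilde\gamma(x, t) = \gamma(x)$. Then $\widetilde\gamma$ is Lipschitz with $|\widetilde\gamma|_{\Lip} = |\gamma|_{\Lip}$ (for the product metric) and, since $w \compose \widetilde\gamma(x, t) = w \compose \gamma(x)$, Tonelli's theorem gives
\[
\int_{E^\ell} w \compose \widetilde\gamma \dif\cH^\ell = \cH^{\ell - r}([0,1]^{\ell - r}) \int_{K^r} w \compose \gamma \dif\cH^r < \infty,
\]
so $\widetilde\gamma \in \Fuglede_w(E^\ell; \manfV)$. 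By the $\VMO^\ell$ property, $u \compose \widetilde\gamma \in \VMO(E^\ell)$; since $u \compose \widetilde\gamma(x, t) = u \compose \gamma(x)$, Lemma~\ref{lemmaVMOellExtension} then yields $u \compose \gamma \in \VMO(K^r)$. Thus $w$ is an $r$-detector for $u$.

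For the stability condition, I would take a sequence $(\gamma_j)_{j \in \N}$ in $\Fuglede_w(K^r; \manfV)$ converging to $\gamma$ in the sense of Definition~\ref{definitionFugledeConvergence}, and form the extensions $\widetilde\gamma_j$ as above. Uniform convergence of $(\gamma_j)_{j \in \N}$ to $\gamma$ on $K^r$ transfers directly to uniform convergence of $(\widetilde\gamma_j)_{j \in \N}$ to $\widetilde\gamma$ on $E^\ell$; the uniform Lipschitz bound is preserved; and the integrals $\int_{E^\ell} w \compose \widetilde\gamma_j \dif\cH^\ell$ are uniformly bounded by the above identity and the hypothesis that $\|w \compose \gamma_j\|_{\Lebesgue^1(K^r)}$ is bounded. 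Hence $\widetilde\gamma_j \to \widetilde\gamma$ in $\Fuglede_w(E^\ell; \manfV)$, and the $\VMO^\ell$ hypothesis gives $u \compose \widetilde\gamma_j \to u \compose \widetilde\gamma$ in $\VMO(E^\ell)$.

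To conclude, I would apply Lemma~\ref{lemmaVMOellExtension} to the difference $v_j \vcentcolon= u \compose \gamma_j - u \compose \gamma$, noting that its extension to $E^\ell$ in the sense of that lemma is precisely $u \compose \widetilde\gamma_j - u \compose \widetilde\gamma$; by linearity of the statement one obtains
\[
\|u \compose \gamma_j - u \compose \gamma\|_{\VMO(K^r)} \le C \|u \compose \widetilde\gamma_j - u \compose \widetilde\gamma\|_{\VMO(E^\ell)},
\]
which tends to $0$ as $j \to \infty$. There is no real obstacle here: the argument is entirely a dummy-variable extension, and all the required pieces — the preservation of Lipschitz constants, summability under Tonelli, the product simplicial structure, and the linear comparison of $\VMO$ norms — are already at hand from Lemma~\ref{lemmaVMOellExtension} and the proof of Proposition~\ref{propositionFugledeDetector}. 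The only mildly delicate point is verifying that Lemma~\ref{lemmaVMOellExtension} applies to the difference, which follows by its linear form.
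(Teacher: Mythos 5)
Your proposal is correct and follows essentially the same route as the paper: the paper likewise extends $\gamma$ and $\gamma_j$ by a dummy variable to $E^\ell = K^r \times [0,1]^{\ell-r}$, invokes the $\VMO^\ell$ hypothesis on the lifted sequence, and concludes with Lemma~\ref{lemmaVMOellExtension} (citing Proposition~\ref{propositionFugledeDetector} for the detector part, which you re-derive by the same argument). Your explicit application of Lemma~\ref{lemmaVMOellExtension} to the difference $u \compose \gamma_j - u \compose \gamma$ is exactly how the paper's final step works implicitly.
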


\begin{proof}
	Given \(u \in \VMO^{\ell}(\manfV)\), let \(w\) be a summable function given by Definition~\ref{definitionVMOell}.
	By Proposition~\ref{propositionFugledeDetector}, \(w\) is an \(r\)-detector for \(u\).
	Next, given a map \(\gamma\) and a sequence \((\gamma_{j})_{j \in \N}\) in  \(\Fuglede_{w}(K^{r}; \manfV)\) such that 
	\[{}
	\gamma_{j} \to \gamma{}
	\quad \text{in \(\Fuglede_{w}(K^{r}; \manfV)\),}
	\]
	then the corresponding sequence \((\widetilde\gamma_{j})_{j \in \N}\) defined as in \eqref{eqExtension-113} is contained in \(\Fuglede_{w}(E^{\ell}; \manfV)\) with \(E^{\ell} \vcentcolon= K^{r} \times [0, 1]^{\ell - r}\) and satisfies
	\[{}
	\widetilde\gamma_{j} \to \widetilde\gamma{}
	\quad \text{in \(\Fuglede_{w}(E^{\ell}; \manfV)\).}
	\]
	By the choice of \(w\), we thus have
	\[{}
	u \compose \widetilde\gamma_{j} \to u \compose \widetilde\gamma{}
	\quad \text{in \(\VMO(E^{\ell})\).}
	\]
	It then follows from Lemma~\ref{lemmaVMOellExtension} that
	\[{}
	u \compose \gamma_{j} \to u \compose \gamma{}
	\quad \text{in \(\VMO(K^{r})\).}
	\qedhere
	\]
\end{proof}

There is a natural notion of convergence in the setting of \(\VMO^\ell\) spaces:

\begin{definition}
\label{def_conv_VMOl}
Let \((u_j)_{j \in \N}\) be a sequence in \(\VMO^\ell(\manfV)\).
We say that \((u_j)_{j \in \N}\) converges to \(u\) in \(\VMO^\ell(\manfV)\), which we denote by
\[
u_j \to u \quad \text{in \(\VMO^\ell(\manfV)\),}
\]
whenever \(u \in \VMO^\ell(\manfV)\) and there exists an \(\ell\)-detector \(w\) for \(u\) and all \(u_j\) such that, for every simplicial complex \(\cK^\ell\) and \(\gamma \in \Fuglede_w(K^\ell; \manfV)\),
\[
u_j \compose \gamma \to u \compose \gamma
\quad \text{in \(\VMO(K^\ell)\).}
\]
\end{definition}

A connection between \(\VMO^\ell\) convergence and \(\Sobolev^{s, p}\) convergence for \(sp \ge \ell\) is given by the following:

\begin{proposition}
	\label{lemmaFugledeSobolevDetector}
	Let \(0 < s \le 1\) and \(1 \le p < \infty\).
	If \((u_{j})_{j \in \N}\) is a sequence in \(\Sobolev^{s, p}(\manfV)\) with \(sp \ge \ell\) such that
	\[{}
	u_{j} \to u
	\quad \text{in \(\Sobolev^{s, p}(\manfV)\),}
	\]
	then there exists a subsequence \((u_{j_i})_{i \in \N}\) such that
	\[{}
	u_{j_i} \to u
	\quad \text{in \(\VMO^\ell(\manfV)\).}
	\]
\end{proposition}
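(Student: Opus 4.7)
The plan is to construct, through a weighted combination, a single summable $\ell$-detector $w$ that simultaneously serves $u$ and all terms of a carefully chosen subsequence $(u_{j_i})_{i \in \N}$, while at the same time forcing the generic $\VMO$~convergence of $(u_{j_i} \compose \gamma)_{i \in \N}$ to $u \compose \gamma$. The starting point is the explicit, quantitative construction of detectors available in both cases at hand: Proposition~\ref{propositionSobolevVMO>1} together with Remark~\ref{remarkDetectorsCasep=1} (for $s = 1$) and Proposition~\ref{propositionVMOSobolevFractional} (for $0 < s < 1$) associate to each $v \in \Sobolev^{s, p}(\manfV)$ a summable $\ell$-detector $w_v$ that satisfies $\norm{w_v}_{\Lebesgue^1(\manfV)} \le C \norm{v}_{\Sobolev^{s, p}(\manfV)}^{p}$ and for which
\[
\norm{v \compose \gamma}_{\VMO(K^\ell)}
\le C (1 + \abs{\gamma}_{\Lip}^s) \norm{w_v \compose \gamma}_{\Lebesgue^{1}(K^\ell)}^{1/p}
\]
for every simplicial complex $\cK^\ell$ and every $\gamma \in \Fuglede_{w_v}(K^\ell; \manfV)$. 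The estimate depends linearly on the summable weight, which is the key feature I will exploit.

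Applying this to the differences $v = u - u_j$, the associated detectors satisfy $\norm{w_{u - u_j}}_{\Lebesgue^1(\manfV)} \to 0$, since $u_j \to u$ in $\Sobolev^{s, p}(\manfV)$. I then apply Proposition~\ref{lemmaModulusLebesguesequence} to this $\Lebesgue^1$-null sequence to extract a subsequence $(w_{u - u_{j_i}})_{i \in \N}$ and obtain a summable function $w_1  \colon  \manfV \to [0, +\infty]$ with the property that $w_1 \compose \gamma \in \Lebesgue^1(X)$ forces $w_{u - u_{j_i}} \compose \gamma \to 0$ in $\Lebesgue^1(X)$ for every metric measure space $X$ and every measurable $\gamma  \colon  X \to \manfV$. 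Using that each $u_{j_i}$ lies in $\Sobolev^{s, p}(\manfV)$ with uniformly bounded norm, I pick positive weights $(\alpha_i)_{i \in \N}$ such that $\sum_{i = 0}^\infty \alpha_i (1 + \norm{w_{u_{j_i}}}_{\Lebesgue^1(\manfV)}) < \infty$ and define
\[
w = w_u + w_1 + \sum_{i = 0}^\infty \alpha_i w_{u_{j_i}},
\]
which is summable on $\manfV$.

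For any simplicial complex $\cK^\ell$ and any $\gamma \in \Fuglede_w(K^\ell; \manfV)$, the pointwise bounds $w \ge w_u$ and $w \ge \alpha_i w_{u_{j_i}}$ make $u \compose \gamma$ and every $u_{j_i} \compose \gamma$ belong to $\VMO(K^\ell)$, so $w$ is simultaneously an $\ell$-detector for $u$ and for each $u_{j_i}$. Since $w \ge w_1$, the auxiliary property yields $w_{u - u_{j_i}} \compose \gamma \to 0$ in $\Lebesgue^1(K^\ell)$; moreover $w \ge \alpha_i w_{u_{j_i}}$ combined with $w \ge w_u$ implies $w \ge \min(\alpha_i, 1)(w_{u_{j_i}} + w_u) \ge c_i w_{u - u_{j_i}}$ up to a modification of the constant, so that $\gamma \in \Fuglede_{w_{u - u_{j_i}}}(K^\ell; \manfV)$ as well. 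The estimate from the first paragraph applied to $u - u_{j_i}$ then gives
\[
\norm{(u - u_{j_i}) \compose \gamma}_{\VMO(K^\ell)}
\le C(1 + \abs{\gamma}_{\Lip}^s) \norm{w_{u - u_{j_i}} \compose \gamma}_{\Lebesgue^1(K^\ell)}^{1/p}
\to 0,
\]
yielding $u_{j_i} \compose \gamma \to u \compose \gamma$ in $\VMO(K^\ell)$ and hence $u_{j_i} \to u$ in $\VMO^\ell(\manfV)$ in the sense of Definition~\ref{def_conv_VMOl}. The delicate point of the argument is the simultaneous fitting of countably many detectors into a single summable $w$, which must dominate each $w_{u_{j_i}}$ (up to a multiplicative constant) to control $u_{j_i} \compose \gamma$ and must dominate $w_1$ to drive the remainders to zero; this is precisely what the weighted-series construction delivers.
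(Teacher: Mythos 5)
Your proof is correct and follows essentially the same route as the paper: detectors for the differences \(u - u_j\) with \(\Lebesgue^1\) norms controlled by the \(\Sobolev^{s,p}\) norms, Proposition~\ref{lemmaModulusLebesguesequence} applied to that \(\Lebesgue^1\)-null sequence of detectors to extract the subsequence and a common weight, and a summed detector yielding the \(\VMO\) convergence through the quantitative estimate; the paper simply takes \(w = \widetilde w_1 + \widetilde w_2\) and obtains the detector property for \(u_{j_i}\) by linearity (\(u_{j_i} = u + (u_{j_i}-u)\)) instead of your weighted series \(\sum_i \alpha_i w_{u_{j_i}}\). One small caveat: your pointwise comparison \(w_u + w_{u_{j_i}} \ge c_i\, w_{u-u_{j_i}}\) is not justified in the case \(s = p = 1\), where the \(1\)-detector of Proposition~\ref{corollaryCompositionSobolevFugledeDim1} is produced by an approximation argument rather than an explicit formula in \(v\), but this step is also unnecessary, since \(\gamma \in \Fuglede_{w_{u-u_{j_i}}}(K^{\ell}; \manfV)\) already follows from \(w \ge w_1\) and the conclusion of Proposition~\ref{lemmaModulusLebesguesequence}.
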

\resetconstant
\begin{proof}
	Let \(\widetilde{w}_{1}\) be an \(\ell\)-detector for \(u\) given by Proposition~\ref{propositionSobolevVMO>1} for \(s = 1\) and \(p \ne 1\), by Remark~\ref{remarkDetectorsCasep=1} for \(s = p = 1\) and by Proposition~\ref{propositionVMOSobolevFractional} for \(0 < s < 1\).{}
	Applying the aforementioned results, for each \(j \in \N\) we also get an \(\ell\)-detector \(w_{j}\) for \(u_{j} - u\) such that
	\begin{equation}
		\label{eq-Tools-Convolution-401}
	\norm{w_{j}}_{\Lebesgue^{1}(\manfV)}
	\le \C \norm{u_{j} - u}_{\Sobolev^{s, p}(\manfV)}^{p}
	\end{equation}
	and, for every simplicial complex \(\cK^{\ell}\) and every \(\gamma \in \Fuglede_{w_{j}}(K^{\ell}; \manfV)\),{}
	\begin{equation}
		\label{eq-Tools-Convolution-407}
	\norm{u_{j} \compose \gamma - u \compose \gamma}_{\VMO(K^{\ell})}
	\le \Cl{eq-Toolos-Convolution-409}(1 + \abs{\gamma}_{\Lip}^s) \norm{w_{j} \compose \gamma}_{\Lebesgue^{1}(K^{\ell})}^{1/p}.
	\end{equation}
	By \eqref{eq-Tools-Convolution-401} and \(\Sobolev^{s, p}\) convergence of \((u_{j})_{j \in \N}\), we have 
	\[{}
	w_{j} \to 0
	\quad \text{in \(\Lebesgue^{1}(\manfV)\).}
	\]
	We now take a summable function \(\widetilde w_{2} \colon  \manfV \to [0, +\infty]\) and a subsequence \((w_{j_{ i}})_{i \in \N}\) given by Proposition~\ref{lemmaModulusLebesguesequence} applied to \((w_{j})_{j \in \N}\).{}
	Observe that \(\widetilde w_{2}\) is an \(\ell\)-detector for each \(u_{j_{i}} - u\).{}
	Indeed, if \(\gamma \in \Fuglede_{\tilde w_{2}}(K^{\ell}; \manfV)\), then \(\gamma \in \Fuglede_{w_{j_{i}}}(K^{\ell}; \manfV)\) for every \(i \in \N\), which justifies our claim.
		
	Let \(w = \widetilde w_{1} + \widetilde w_{2}\).{}
	Then, \(w\) is an \(\ell\)-detector for \(u\) and for each \(u_{j_{i}} - u\), whence also for \(u_{j_{i}}\).
	Moreover, for every simplicial complex \(\cK^{\ell}\) and every \(\gamma \in \Fuglede_{w}(K^{\ell}; \manfV)\), as we have 
	\[{}
	w_{j_{i}} \compose \gamma \to 0
	\quad \text{in \(\Lebesgue^{1}(K^{\ell})\),}
	\]
	it follows from \eqref{eq-Tools-Convolution-407} with \(j = j_{i}\) that \(u_{j_{i}} \compose \gamma \to u \compose \gamma\) in \(\VMO(K^{\ell})\).
\end{proof}

Although we have introduced \(\ell\)-detectors and \(\VMO^{\ell}\) functions using Fuglede maps defined on polytopes, it is a simple observation that these concepts are equally suitable for composition with Fuglede maps on spheres.
In fact,

\begin{proposition}
	\label{propositionExtensionVMOSphereDetector}
	If a measurable function \(u \colon \manfV \to \R\) has an \(\ell\)-detector  \(w \colon \manfV \to [0, +\infty]\), then for every \(\gamma \in \Fuglede_{w}(\Sphere^{\ell}; \manfV)\) we have \(u \compose \gamma \in \VMO(\Sphere^{\ell})\).{}
\end{proposition}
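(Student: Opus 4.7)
The plan is to transfer the defining property of an $\ell$-detector from polytopes to spheres by means of a biLipschitz identification. Let $\Simplex^{\ell+1}$ be a geometric $(\ell+1)$-dimensional simplex and let $\cK^{\ell}$ be the $\ell$-dimensional simplicial complex consisting of all faces of $\Simplex^{\ell+1}$ of dimension at most $\ell$, so that the polytope $K^{\ell} = \partial \Simplex^{\ell+1}$ is the boundary of $\Simplex^{\ell+1}$. Since $\partial \Simplex^{\ell+1}$ is a Lipschitz topological sphere, there exists a biLipschitz homeomorphism $\phi \colon \partial \Simplex^{\ell+1} \to \Sphere^{\ell}$, for instance the radial projection from an interior point of $\Simplex^{\ell+1}$.

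Given $\gamma \in \Fuglede_{w}(\Sphere^{\ell}; \manfV)$, the map $\widetilde{\gamma} \vcentcolon= \gamma \compose \phi \colon K^{\ell} \to \manfV$ is Lipschitz, and the change of variables formula together with the fact that the Jacobian of $\phi$ is bounded above and below yield
\[
\int_{K^{\ell}} w \compose \widetilde{\gamma} \dif\cH^{\ell}
\le C \int_{\Sphere^{\ell}} w \compose \gamma \dif\cH^{\ell}
< \infty\text{,}
\]
so that $\widetilde{\gamma} \in \Fuglede_{w}(K^{\ell}; \manfV)$. Since $w$ is an $\ell$-detector for $u$, we deduce from the very definition of detector that $u \compose \widetilde{\gamma} \in \VMO(K^{\ell})$.

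It remains to show that $u \compose \gamma = (u \compose \widetilde{\gamma}) \compose \phi^{-1}$ belongs to $\VMO(\Sphere^{\ell})$. This step, which I expect to be the main technical point, rests on the stability of $\VMO$ under biLipschitz changes of variable on doubling metric measure spaces. Concretely, since $\phi$ is biLipschitz, there exist constants $0 < c_{1} \le c_{2}$ such that for every $x \in \Sphere^{\ell}$ and every $\rho > 0$,
\[
\phi^{-1}\bigl( B_{\rho}^{\ell}(x) \bigr)
\subset B_{c_{2}\rho}^{\ell}\bigl(\phi^{-1}(x)\bigr) \subset \phi^{-1}\bigl( B_{c_{2}\rho/c_{1}}^{\ell}(x) \bigr),
\]
and the pushforward of $\cH^{\ell}\lfloor_{K^{\ell}}$ under $\phi$ is comparable to $\cH^{\ell}\lfloor_{\Sphere^{\ell}}$. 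A direct computation then gives
\[
\seminorm{u \compose \gamma}_{\rho, \Sphere^{\ell}}
\le C' \seminorm{u \compose \widetilde{\gamma}}_{c_{2}\rho/c_{1}, K^{\ell}}\text{,}
\]
and since the right-hand side tends to zero as $\rho \to 0$, we conclude that $u \compose \gamma \in \VMO(\Sphere^{\ell})$, which completes the argument.
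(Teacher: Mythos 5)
Your argument is correct and is essentially the paper's own proof: you identify \(\Sphere^{\ell}\) with the boundary \(\partial\Simplex^{\ell+1}\) of a simplex via a biLipschitz homeomorphism, check by a change of variables that \(\gamma\compose\phi\) is a Fuglede map, apply the detector on the polytope, and transfer the \(\VMO\) property back. The final biLipschitz-invariance computation you sketch is precisely the content of Lemma~\ref{lemmaDetectorsVMObiLipschitz}, which the paper invokes at this point instead of redoing the estimate.
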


\begin{proposition}
	\label{propositionExtensionVMOSphere}
	Let \(u \in \VMO^{\ell}(\manfV)\) and let \(w\) be an \(\ell\)-detector given by Definition~\ref{definitionVMOell}.
    Then, for every map \(\gamma\) and every sequence \((\gamma_{j})_{j \in \N}\) in \(\Fuglede_{w}(\Sphere^{\ell}; \manfV)\) such that
	\(\gamma_{j} \to \gamma\) in \(\Fuglede_{w}(\Sphere^{\ell}; \manfV)\), we have
	\[{}
	u \compose \gamma_{j} \to u \compose \gamma{}
	\quad \text{in \(\VMO(\Sphere^{\ell})\).}
	\]
\end{proposition}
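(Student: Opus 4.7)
The plan is to transfer the statement from the setting of Fuglede maps on polytopes, where \(\VMO^{\ell}\)~stability is built in by Definition~\ref{definitionVMOell}, to the setting of Fuglede maps on \(\Sphere^{\ell}\) via a biLipschitz identification. A convenient choice is the polytope \(K^{\ell} \vcentcolon= \partial\Simplex^{\ell+1}\), the boundary of a regular \((\ell+1)\)-dimensional simplex centered at the origin in \(\R^{\ell+1}\), which carries the simplicial structure inherited from its \(\ell\)-dimensional faces. The radial projection \(\pi \colon K^{\ell} \to \Sphere^{\ell}\), \(\pi(x) = x/\abs{x}\), is then a biLipschitz homeomorphism.

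First, I would verify that composition with \(\pi\) yields a bijective correspondence between Fuglede maps:~a map \(\gamma \colon \Sphere^{\ell} \to \manfV\) is in \(\Fuglede_{w}(\Sphere^{\ell}; \manfV)\) if and only if \(\gamma \compose \pi \in \Fuglede_{w}(K^{\ell}; \manfV)\), the key point being that \(\pi\) is Lipschitz with Lipschitz inverse and its Jacobian is bounded above and below, so the change-of-variables formula gives
\[
c\int_{\Sphere^{\ell}} w \compose \gamma \dif\cH^{\ell}
\le \int_{K^{\ell}} w \compose \gamma \compose \pi \dif\cH^{\ell}
\le C \int_{\Sphere^{\ell}} w \compose \gamma \dif\cH^{\ell},
\]
and Lipschitz constants are comparable. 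In particular, if \(\gamma_{j} \to \gamma\) in \(\Fuglede_{w}(\Sphere^{\ell}; \manfV)\) in the sense of Definition~\ref{definitionFugledeConvergence}, then \(\gamma_{j} \compose \pi \to \gamma \compose \pi\) in \(\Fuglede_{w}(K^{\ell}; \manfV)\).

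Second, I would apply Definition~\ref{definitionVMOell} to the detector \(w\) and the sequence \((\gamma_{j} \compose \pi)_{j \in \N}\) to obtain
\[
u \compose \gamma_{j} \compose \pi \to u \compose \gamma \compose \pi
\quad \text{in \(\VMO(K^{\ell})\).}
\]

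Third, I would transfer this convergence back to \(\Sphere^{\ell}\) using the biLipschitz nature of \(\pi\). The main observation is that a biLipschitz homeomorphism between doubling spaces with uniform measures preserves \(\VMO\) and the associated seminorm up to multiplicative constants: if \(\Phi \colon X \to Y\) is biLipschitz and measure-comparable, then for each \(\rho > 0\) the balls \(B_{\rho}(\pi^{-1}(y))\) and \(\pi^{-1}(B_{\rho/\abs{\pi^{-1}}_{\Lip}}(y))\) are comparable, yielding \(\seminorm{v \compose \pi}_{c\rho} \le C \seminorm{v}_{\rho}\) after a change of variables in the double integral defining the mean oscillation (cf.\@ \eqref{eqDetector-33}). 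Applying this to \(v_{j} \vcentcolon= u \compose \gamma_{j} - u \compose \gamma\) and combining with the convergence in \(\Lebesgue^{1}\) (again by change of variables), Lemma~\ref{lemmaEquiVMO} gives the desired \(\VMO\)~convergence on \(\Sphere^{\ell}\).

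The main potential obstacle is the bookkeeping in the third step: one has to check carefully that the biLipschitz pullback of balls on \(\Sphere^{\ell}\) to \(K^{\ell}\) behaves uniformly across scales, in particular near the singular set of \(K^{\ell}\) where the polytope structure is not smooth. This is handled by the uniformity of the Jacobian of \(\pi\) together with the doubling property of \(\cH^{\ell} \lfloor_{K^{\ell}}\) established earlier in this chapter, which guarantees the equivalence of seminorms without loss of uniformity. Proposition~\ref{propositionExtensionVMOSphereDetector} can be proved by the same reduction, simply dropping the sequence and keeping only one Fuglede map.
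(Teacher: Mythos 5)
Your proposal is correct and follows essentially the same route as the paper: the paper also identifies \(\Sphere^{\ell}\) with \(\partial\Simplex^{\ell+1}\) via a biLipschitz homeomorphism \(\Phi\), checks that precomposition with \(\Phi\) sends convergent sequences in \(\Fuglede_{w}(\Sphere^{\ell}; \manfV)\) to convergent sequences in \(\Fuglede_{w}(\partial\Simplex^{\ell+1}; \manfV)\), applies Definition~\ref{definitionVMOell}, and transfers the \(\VMO\) convergence back with Lemma~\ref{lemmaDetectorsVMObiLipschitz}, which is exactly what your third step reproves by hand. Only note that the seminorm inequality you need is the one for \(\pi^{-1}\) (controlling the oscillation on \(\Sphere^{\ell}\) by that on \(K^{\ell}\)), which holds by the same change-of-variables argument since \(\pi\) is biLipschitz.
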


\begin{proposition}
	\label{propositionExtensionVMOSphereConvergence}
	Let \(u \in \VMO^{\ell}(\manfV)\).
    If \( (u_{j})_{j \in \N} \) is a sequence in \( \VMO^{\ell}(\manfV) \) such that \(u_{j} \to u\)
	in \(\VMO^\ell(\manfV)\), then an \(\ell\)-detector for \(u\) and all \(u_{j}\) given by Definition~\ref{def_conv_VMOl} is such that, for every \(\gamma \in \Fuglede_{w}(\Sphere^{\ell}; \manfV)\),
	\[{}
	u_{j} \compose \gamma \to u \compose \gamma{}
	\quad \text{in \(\VMO(\Sphere^{\ell})\).}
	\]
\end{proposition}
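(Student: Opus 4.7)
The plan is to reduce the statement to the polytope case covered by Definition~\ref{def_conv_VMOl} via a biLipschitz identification of \(\Sphere^\ell\) with the boundary of an \((\ell+1)\)-dimensional simplex. Fix such a simplex \(\Simplex^{\ell+1}\); its boundary \(\partial\Simplex^{\ell+1}\) is the polytope \(K^\ell\) of an \(\ell\)-dimensional simplicial complex \(\cK^\ell\) consisting of the \(\ell\)-dimensional faces of \(\Simplex^{\ell+1}\) and their subfaces. Choose a biLipschitz homeomorphism \(\Psi \colon \Sphere^\ell \to \partial\Simplex^{\ell+1}\), for instance the radial projection from the barycenter of \(\Simplex^{\ell+1}\) after identifying \(\Sphere^\ell\) with a small sphere centered there.

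Let \(w\) be the \(\ell\)-detector for \(u\) and all the \(u_j\) supplied by Definition~\ref{def_conv_VMOl}. Given \(\gamma \in \Fuglede_w(\Sphere^\ell; \manfV)\), set \(\widetilde\gamma \vcentcolon= \gamma \compose \Psi^{-1} \colon K^\ell \to \manfV\). Since \(\Psi^{-1}\) is Lipschitz, so is \(\widetilde\gamma\), and because \(\Psi\) is biLipschitz the pushforward under \(\Psi\) of the surface measure on \(\Sphere^\ell\) is comparable to \(\cH^\ell\lfloor_{K^\ell}\); hence \(w \compose \widetilde\gamma\) is summable on \(K^\ell\), so \(\widetilde\gamma \in \Fuglede_w(K^\ell; \manfV)\). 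Applying Definition~\ref{def_conv_VMOl} to the simplicial complex \(\cK^\ell\) therefore gives
\[
u_j \compose \widetilde\gamma \to u \compose \widetilde\gamma
\quad \text{in \(\VMO(K^\ell)\).}
\]

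It remains to transfer this convergence back to \(\Sphere^\ell\) via \(\Psi\), noting that \(u_j \compose \gamma = (u_j \compose \widetilde\gamma) \compose \Psi\) and analogously for \(u\). The \(\Lebesgue^1(\Sphere^\ell)\) convergence is immediate by change of variables. For the seminorm \eqref{eqDetectorSeminorm}, the biLipschitz property of \(\Psi\) produces a constant \(c > 0\) such that \(\Psi(B^\ell_\rho(x)) \subset B^\ell_{c\rho}(\Psi(x)) \cap K^\ell\) with Hausdorff measures of \(B^\ell_\rho(x)\) and \(\Psi(B^\ell_\rho(x))\) comparable; combining this inclusion with the doubling property on \(K^\ell\) yields an estimate of the form
\[
\seminorm{u_j \compose \gamma - u \compose \gamma}_\rho
\le C \, \seminorm{u_j \compose \widetilde\gamma - u \compose \widetilde\gamma}_{c\rho}
\quad \text{for every \(\rho>0\),}
\]
with \(C > 0\) independent of \(j\) and \(\rho\). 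Using Lemma~\ref{lemmaEquiVMO}, the \(\VMO(K^\ell)\) convergence of \((u_j \compose \widetilde\gamma)_{j\in\N}\) supplies an equiVMO bound on \((u_j \compose \widetilde\gamma - u \compose \widetilde\gamma)_{j\in\N}\); the displayed inequality transfers this bound to \((u_j \compose \gamma - u \compose \gamma)_{j\in\N}\), and the converse direction of Lemma~\ref{lemmaEquiVMO} together with the \(\Lebesgue^1\) convergence yields the conclusion. The main delicate point is the comparison between balls on \(\Sphere^\ell\) and their images in \(K^\ell\) near the lower-dimensional strata of \(\partial\Simplex^{\ell+1}\), where the intrinsic metric of the polytope differs from the ambient Euclidean one; however, the biLipschitz character of \(\Psi\) is precisely designed to absorb this discrepancy up to multiplicative constants.
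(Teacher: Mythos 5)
Your proof is correct and follows essentially the same route as the paper: transfer the problem to the boundary of a simplex via a biLipschitz homeomorphism, apply the polytope-level convergence from Definition~\ref{def_conv_VMOl} to \(\gamma\) composed with that homeomorphism, and carry the \(\VMO\) convergence back to \(\Sphere^{\ell}\). The only difference is that your ball-comparison argument combined with Lemma~\ref{lemmaEquiVMO} re-derives what the paper simply quotes as Lemma~\ref{lemmaDetectorsVMObiLipschitz}, whose norm comparability transfers the convergence directly.
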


We rely on the following property of composition of \(\VMO\) functions with a biLipschitz homeomorphism:

\begin{lemma}
	\label{lemmaDetectorsVMObiLipschitz}
	Let \(\Simplex^{\ell + 1}\) be a simplex and let \(v \colon  \partial\Simplex^{\ell + 1} \to \R\) be a measurable function.
	If \(\Phi \colon  \partial\Simplex^{\ell + 1} \to \Sphere^{\ell}\) is a biLipschitz homeomorphism, then \(v \in \VMO(\partial\Simplex^{\ell + 1})\) if and only if \(v \compose \Phi^{-1} \in \VMO(\Sphere^{\ell})\).{}
	Moreover,
	\[{}
	\frac{1}{C} \norm{v}_{\VMO(\partial\Simplex^{\ell + 1})}
	\le \norm{v \compose \Phi^{-1}}_{\VMO(\Sphere^{\ell})}
	\le C \norm{v}_{\VMO(\partial\Simplex^{\ell + 1})}.
	\]
\end{lemma}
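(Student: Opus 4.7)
Let $L \ge 1$ be a biLipschitz constant for $\Phi$, so that
\[
\tfrac{1}{L}\,d(x_1,x_2) \le d(\Phi(x_1),\Phi(x_2)) \le L\,d(x_1,x_2)
\quad \text{for every $x_1,x_2 \in \partial\Simplex^{\ell+1}$.}
\]
The plan is to verify the equivalence of the $\Lebesgue^1$ and mean-oscillation parts of the $\VMO$ norm (see \eqref{eqDetector-VMO}) separately, using only a change-of-variables together with the fact that $\Phi$ maps metric balls into metric balls of comparable radius.

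First I would check the $\Lebesgue^1$ equivalence. Since $\Phi$ is biLipschitz between two $\ell$-rectifiable metric spaces, the area formula (in the form of \eqref{eqFugledeJacobian}) yields a Jacobian $\Jacobian{\ell}{\Phi}$ with $L^{-\ell} \le \Jacobian{\ell}{\Phi} \le L^{\ell}$ $\cH^\ell$-almost everywhere on $\partial\Simplex^{\ell+1}$, whence
\[
\tfrac{1}{L^\ell}\norm{v}_{\Lebesgue^1(\partial\Simplex^{\ell+1})}
\le \norm{v\compose\Phi^{-1}}_{\Lebesgue^1(\Sphere^\ell)}
\le L^\ell \norm{v}_{\Lebesgue^1(\partial\Simplex^{\ell+1})}.
\]

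Next I would handle the mean-oscillation seminorm \eqref{eqDetectorSeminorm}. For $y \in \Sphere^\ell$ and $\rho > 0$, set $x \vcentcolon= \Phi^{-1}(y)$. The biLipschitz bounds give the inclusions
\[
B^{\ell}_{\rho/L}(x)\cap\partial\Simplex^{\ell+1}
\subset \Phi^{-1}\bigl(B^{\ell}_\rho(y)\bigr)
\subset B^{\ell}_{L\rho}(x)\cap\partial\Simplex^{\ell+1}.
\]
Since both $\partial\Simplex^{\ell+1}$ and $\Sphere^\ell$ are Ahlfors $\ell$-regular, for every $\rho$ up to the diameter the $\cH^{\ell}$-measures of $B^{\ell}_{\rho/L}(x)\cap\partial\Simplex^{\ell+1}$, $\Phi^{-1}(B^{\ell}_\rho(y))$, $B^{\ell}_{L\rho}(x)\cap\partial\Simplex^{\ell+1}$ and $B^{\ell}_\rho(y)$ are all comparable, with comparability constants depending only on $L$, $\ell$ and $\partial\Simplex^{\ell+1}$. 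Performing the change of variables $y_i = \Phi(x_i)$ inside the double average then yields
\[
\fint_{B^\ell_\rho(y)}\fint_{B^\ell_\rho(y)} \abs{v\compose\Phi^{-1}(y_1)-v\compose\Phi^{-1}(y_2)}\,\dif\cH^\ell(y_1)\dif\cH^\ell(y_2)
\le C_1 \seminorm{v}_{L\rho}\,,
\]
with $C_1$ independent of $y$ and $\rho$, since the averages over $\Phi^{-1}(B^\ell_\rho(y))$ are controlled by those over the larger ball $B^\ell_{L\rho}(x)\cap\partial\Simplex^{\ell+1}$ up to the measure ratio.

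Taking the supremum over $y$ and $\rho$ I would then obtain $\seminorm{v\compose\Phi^{-1}}_{\VMO(\Sphere^\ell)} \le C_2 \seminorm{v}_{\VMO(\partial\Simplex^{\ell+1})}$. The reverse inequality follows by applying exactly the same argument to the biLipschitz map $\Phi^{-1}\colon\Sphere^\ell\to\partial\Simplex^{\ell+1}$. Combining with the $\Lebesgue^1$ comparison completes the proof of the equivalence of $\VMO$ norms, and in particular one belongs to $\VMO$ if and only if the other does. The only delicate step is bookkeeping the constants in the change of variables and ensuring Ahlfors regularity holds uniformly on $\partial\Simplex^{\ell+1}$ for all scales $\rho \le \Diam{\partial\Simplex^{\ell+1}}$; this is straightforward since $\partial\Simplex^{\ell+1}$ is a finite union of Lipschitz $\ell$-simplices.
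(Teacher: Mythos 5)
Your proposal is correct and follows essentially the same route as the paper: Lipschitz continuity gives the inclusion of $\Phi^{-1}(B^{\ell}_{\rho}(y))$ in a ball of comparable radius on $\partial\Simplex^{\ell+1}$, a change of variables with the bounded Jacobian transfers the double integral, and a comparison of ball measures (which the paper obtains via the area formula and the doubling property on $\Sphere^{\ell}$, while you invoke Ahlfors $\ell$-regularity of both spaces — an equivalent justification here) yields $\seminorm{v\compose\Phi^{-1}}_{\rho}\le C\seminorm{v}_{L\rho}$, with the reverse bound by symmetry. No gaps.
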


\resetconstant
\begin{proof}[Proof of Lemma~\ref{lemmaDetectorsVMObiLipschitz}]
	To prove the direct implication, it suffices to show that	 
	\begin{equation}
		\label{eqDetectors-982}
	\norm{v \compose \Phi^{-1}}_{\VMO(\Sphere^{\ell})}
	\le \C \norm{v}_{\VMO(\partial\Simplex^{\ell + 1})}.
	\end{equation}
	The \(\Lebesgue^{1}\) estimate immediately follows from a change of variable.
	We now prove that, for every \(\rho > 0\), the mean oscillations of \(v \compose \Phi^{-1}\) and \(v\) satisfy
	\begin{equation}
	\label{eqDetectors-975}
	\seminorm{v \compose \Phi^{-1}}_{\rho}
	\le \Cl{cte-977} \seminorm{v}_{\alpha\rho},
	\end{equation}
	where \(\alpha \vcentcolon= \abs{\Phi}_{\Lip} + \abs{\Phi^{-1}}_{\Lip}\).
	To this end, take \(x \in \partial\Simplex^{\ell + 1}\) and \(\xi = \Phi(x) \in \Sphere^{\ell}\).{}
	By Lipschitz continuity of \(\Phi^{-1}\),{}
	\[{}
	\Phi^{-1}(B_{\rho}^{\ell}(\xi)) \subset B_{\alpha\rho}^{\ell}(x).
	\]
	Thus, since the Jacobian \(\Jacobian{\ell}{\Phi}\) is bounded, by a change of variables and the inclusion above,
	\begin{multline}
	\label{eqDetectors-986}
	\int_{B_{\rho}^{\ell}(\xi)} \int_{B_{\rho}^{\ell}(\xi)} |v \compose \Phi^{-1}(\eta) - v \compose \Phi^{-1}(\zeta)|{}
	\dif\cH^{\ell}(\zeta) \dif\cH^{\ell}(\eta)\\
	\le \C \int_{B_{\alpha\rho}^{\ell}(x)} \int_{B_{\alpha\rho}^{\ell}(x)} |v(y) - v(z)| \dif\cH^{\ell}(z) \dif\cH^{\ell}(y).
	\end{multline}
	Next, by Lipschitz continuity of \(\Phi\),{}
	\begin{equation}\label{eq1236}
	\Phi(B_{\alpha\rho}^{\ell}(x)) \subset B_{\alpha^{2}\rho}^{\ell}(\xi).
	\end{equation}
	Since the Jacobian \(\Jacobian{\ell}{\Phi}\) is bounded from below by a positive constant, it follows from the area formula that 
    \[
    \cH^{\ell}(B_{\alpha\rho}^{\ell}(x))\le \C \cH^\ell(\Phi(B_{\alpha\rho}^{\ell}(x))).
    \]
    Together with the inclusion ~\eqref{eq1236} and the fact that the Hausdorff measure \(\cH^{\ell}\) satisfies the doubling property on \(\Sphere^{\ell}\), one gets
	\begin{equation}
	\label{eqDetectors-997}
	\cH^{\ell}(B_{\alpha\rho}^{\ell}(x))
	\le \C \cH^{\ell}(B_{\rho}^{\ell}(\xi)).
	\end{equation}
	Combining \eqref{eqDetectors-986} and \eqref{eqDetectors-997}, we obtain \eqref{eqDetectors-975}.  
	Taking the supremum with respect to \(\rho > 0\), we then get
	\[{}
	\seminorm{v \compose \Phi^{-1}}_{\VMO(\Sphere^{\ell})}
	\le \Cr{cte-977} \seminorm{v}_{\VMO(\partial\Simplex^{\ell + 1})},
	\]
	which completes the proof of \eqref{eqDetectors-982} and the direct implication.
	The reverse implication is proved along the same lines.
\end{proof}

\begin{proof}[Proofs of Propositions~\ref{propositionExtensionVMOSphereDetector}, \ref{propositionExtensionVMOSphere} and \ref{propositionExtensionVMOSphereConvergence}]
	Take a simplex \(\Simplex^{\ell + 1}\) and let \(\Phi \colon  \partial\Simplex^{\ell + 1} \to \Sphere^{\ell}\) be a biLipschitz homeomorphism.
	If \(u \colon \manfV \to \R \) has an \(\ell\)-detector \(w\) and \(\gamma \in \Fuglede_{w}(\Sphere^{\ell}; \manfV)\), then \(\gamma \compose \Phi\) is Lipschitz-continuous and, by a change of variable,
    \[
    \int_{\partial\Simplex^{\ell + 1}} w \compose \gamma \compose \Phi 
    \le C \int_{\Sphere^{\ell}} w \compose \gamma
    < \infty.
    \]
    Hence, \(\gamma \compose \Phi \in \Fuglede_{w}(\partial\Simplex^{\ell + 1}; \manfV)\).{}
	Since \(w\) is an \(\ell\)-detector, we have \(u \compose \gamma \compose \Phi \in \VMO(\partial\Simplex^{\ell + 1})\) and then, by composition with \(\Phi^{-1}\) and applying Lemma~\ref{lemmaDetectorsVMObiLipschitz}, we get 
    \[
    u \compose \gamma \in \VMO(\Sphere^{\ell}),
    \]
    which proves Proposition~\ref{propositionExtensionVMOSphereDetector}.{}
	
	Next, if \(\gamma_{j} \to \gamma\) in \(\Fuglede_{w}(\Sphere^{\ell}; \manfV)\), then \(\gamma_{j} \compose \Phi \to \gamma \compose \Phi\) in \(\Fuglede_{w}(\partial\Simplex^{\ell + 1}; \manfV)\).
	Thus, assuming that \(u\) is \(\VMO^{\ell}\) and \(w\) is an \(\ell\)-detector given by Definition~\ref{definitionVMOell},
	\[{}
	u \compose \gamma_{j} \compose \Phi \to u \compose \gamma \compose \Phi
	\quad \text{in \(\VMO(\partial\Simplex^{\ell + 1})\),}
	\]
	and the conclusion of Proposition~\ref{propositionExtensionVMOSphere} follows by composition with \(\Phi^{-1}\) using Lemma~\ref{lemmaDetectorsVMObiLipschitz}.

    Finally, let \( (u_{j})_{j \in \N} \) be a sequence in \( \VMO^{\ell}(\manfV) \) such that \(u_{j} \to u\) in \(\VMO^\ell(\manfV)\) and let \(w\) be an \(\ell\)-detector given by Definition~\ref{def_conv_VMOl}.
    For every \(\gamma \in \Fuglede_{w}(\Sphere^{\ell}; \manfV)\), we have \(\gamma \compose \Phi  \in \Fuglede_{w}(\partial\Simplex^{\ell + 1}; \manfV)\) and then, by the choice of \(w\),
	\[{}
	u_{j} \compose \gamma \compose \Phi \to u \compose \gamma \compose \Phi{}
	\quad \text{in \(\VMO(\partial\Simplex^{\ell + 1})\).}
	\]
    Hence, by composition with \(\Phi^{-1}\) and applying Lemma~\ref{lemmaDetectorsVMObiLipschitz},
	\[{}
	u_{j} \compose \gamma \to u \compose \gamma{}
	\quad \text{in \(\VMO(\Sphere^{\ell})\),}
	\]
    which settles Proposition~\ref{propositionExtensionVMOSphereConvergence}.
\end{proof}

\cleardoublepage
\chapter{Topology and VMO}
\label{section_VMO}

Brezis and Nirenberg~\cite{BrezisNirenberg1995} developed a comprehensive \(\VMO\) theory that extends classical topological notions, including the degree of continuous functions between manifolds, to the broader \(\VMO\) framework. This chapter builds on their foundational work based on the concept of \(\VMO\) homotopy and explores its applications to maps defined on metric measure spaces \(X\), such as Riemannian manifolds or polytopes. 

Our self-contained approach extends the setting of~\cite{BrezisNirenberg1995}, which deals mainly with maps defined on manifolds, to more general metric measure spaces \(X\). Furthermore, we revisit the proof of the density of continuous functions in \(\VMO(X; \manfN)\) by adapting it to this more general setting.
We highlight key ideas from the original proof that are essential for our purposes.

Given a compact manifold \(\manfN\), we define two \(\VMO\) maps \(u, v \colon X \to \manfN\) as homotopic in \(\VMO\) whenever they can be connected by a continuous path in \(\VMO(X; \manfN)\). In other words, \(u\) and \(v\) belong to the same path-connected component of \(\VMO(X; \manfN)\). We show that these path-connected components coincide with the connected components of \(\VMO(X; \manfN)\), strengthening the topological consistency of the \(\VMO\) framework.

Furthermore, we establish that each connected component of \(\VMO(X; \manfN)\) contains exactly one connected component of \(\Smooth^{0}(X; \manfN)\). This result underscores the compatibility of \(\VMO\) homotopy with classical topological concepts, providing a natural bridge between the \(\Smooth^{0}\) and \(\VMO\) settings.

\section{Density of BUC}
\label{sectionBUC}
We denote by \(\UContinuous(X)\) the space of bounded uniformly continuous functions on \(X\) equipped with the sup norm.
When \(X\) is compact, \(\UContinuous(X) = \Smooth^{0}(X)\).{}
We assume throughout the chapter that \(X\) is a locally compact and separable metric space and has a finite measure \(\mu\) that satisfies the doubling property, metric continuity and uniform nondegeneracy conditions introduced in Section~\ref{sectionVMOFunctions}.
In particular, we have the inclusion 
\[{}
\UContinuous(X) \subset \VMO(X).
\]
The space \(\VMO(X)\) can generally be characterized as the completion of \(\UContinuous(X)\) with respect to the \(\VMO\)~norm.{}
Indeed, the classical argument for manifolds \cite{Sarason_1975}*{Theorem~1} is also valid for polytopes, see also \cite{Hadwin_Yousefi_2008}*{Theorem~3.4} for a statement with a general measure.

\begin{proposition}
	\label{propositionDensityVMO}
	The space \(\UContinuous(X)\) is dense in \(\VMO(X)\).{}
	More precisely, for every \(v \in \VMO(X)\) and  every \(s > 0\), the averaged function \(v_{s} \colon  X \to \R\) defined for every \(x \in X\) by
\begin{equation}
\label{eqVMOConvolution}
v_{s}(x) = \fint_{B_{s}(x)}{v \dif\mu}
\end{equation} 
belongs to \(\UContinuous(X)\) and we have
\[{}
\lim_{s \to 0}{\norm{v_{s} - v}_{\VMO(X)}}
= 0.
\]
\end{proposition}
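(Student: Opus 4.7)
The strategy splits into two claims: first, $v_s \in \UContinuous(X)$ for each fixed $s > 0$; second, $v_s \to v$ in $\VMO(X)$ as $s \to 0$. The second claim will be established through Lemma~\ref{lemmaEquiVMO}, reducing it to $\Lebesgue^{1}$ convergence together with an equiVMO bound on the family \((v_s)_{s\leq \delta}\) for some \(\delta>0\).

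Uniform continuity of $v_s$ for fixed $s > 0$ will follow from the decomposition
\[
v_s(x) - v_s(x') = \frac{1}{\mu(B_s(x))}\int v(\chi_{B_s(x)} - \chi_{B_s(x')}) \dif\mu + \Bigl(\frac{1}{\mu(B_s(x))} - \frac{1}{\mu(B_s(x'))}\Bigr)\int v\chi_{B_s(x')} \dif\mu.
\]
Uniform nondegeneracy gives $\mu(B_s(\cdot)) \ge \eta_s > 0$; metric continuity yields $\mu(B_s(x) \triangle B_s(x')) \to 0$ uniformly as $d(x, x') \to 0$; and equi-integrability of $|v| \in \Lebesgue^{1}(X)$ controls $\int_{B_s(x)\triangle B_s(x')} |v|$. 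This same bound from below also yields the boundedness $\|v_s\|_{\Lebesgue^{\infty}} \le \|v\|_{\Lebesgue^{1}}/\eta_s$, hence $v_s \in \UContinuous(X)$.

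For the $\Lebesgue^{1}$ convergence, the key ingredient is the contraction $\|f_s\|_{\Lebesgue^{1}(X)} \le C\|f\|_{\Lebesgue^{1}(X)}$ for every $f \in \Lebesgue^{1}(X)$, with $C$ depending only on the doubling constant. This will be proven by Tonelli's theorem and the estimate $\int_{B_s(\xi)} \dif\mu(y)/\mu(B_s(y)) \le C$, which follows from doubling since $B_s(\xi) \subset B_{2s}(y)$ whenever $d(y,\xi) < s$. Approximating $v$ by $g \in \Smooth^{0}_c(X)$ in $\Lebesgue^{1}(X)$, the uniform convergence $g_s \to g$ (immediate from uniform continuity of $g$ and bounded support) together with the contraction estimate applied to $v - g$ yields $v_s \to v$ in $\Lebesgue^{1}(X)$.

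The heart of the proof will be the uniform mean-oscillation estimate
\[
\seminorm{v_s}_\rho \le C \seminorm{v}_{2\max(s,\rho)}
\quad \text{for every \(s, \rho > 0\),}
\]
with $C$ depending only on the doubling constant. In the regime $s \le \rho$, for $y \in B_\rho(x)$ one has $B_s(y) \subset B_{2\rho}(x)$, so Tonelli and the integral bound $\int_{B_s(\xi)} \dif\mu(y)/\mu(B_s(y)) \le C$ give $\fint_{B_\rho(x)} |v_s - c| \dif\mu \le C \fint_{B_{2\rho}(x)} |v - c| \dif\mu$ with $c = \fint_{B_{2\rho}(x)} v$. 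The regime $\rho < s$ is the main technical obstacle, because the ratio $\mu(B_{s+\rho}(x))/\mu(B_\rho(x))$ need not be controlled uniformly as $s/\rho \to \infty$; it will be handled by enlarging the reference ball to $B_{s+\rho}(x) \subset B_{3s}(y)$ (valid since $\rho < s$) and comparing $\mu(B_{s+\rho}(x))$ to $\mu(B_s(y))$ via a bounded number of doublings, yielding $|v_s(y) - c| \le C \seminorm{v}_{2s}$ for $c = \fint_{B_{s+\rho}(x)} v$. Granted this bound, the equiVMO property follows: given $\epsilon > 0$, pick $\delta > 0$ so that $C\seminorm{v}_r < \epsilon$ for all $r \le 2\delta$; then $\seminorm{v_s}_\rho < \epsilon$ for all $s, \rho \le \delta$. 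Applying Lemma~\ref{lemmaEquiVMO} to an arbitrary sequence $s_j \to 0$ and combining with the $\Lebesgue^{1}$ convergence concludes that $v_s \to v$ in $\VMO(X)$.
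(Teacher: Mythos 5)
Your proposal is correct and follows essentially the same route as the paper's proof: the \(\Lebesgue^{1}\) contraction via Tonelli and doubling, the \(\UContinuous\) property from uniform nondegeneracy and metric continuity, the oscillation bound \(\seminorm{v_s}_\rho \le C\max\{\seminorm{v}_{2\rho},\seminorm{v}_{2s}\}\), and the conclusion via Lemma~\ref{lemmaEquiVMO}. The only tiny omission is the finitely many indices of a sequence \(s_j \to 0\) with \(s_j > \delta\) in the equiVMO check, which, as in the paper, are absorbed by shrinking \(\delta\) using the uniform continuity of each \(v_{s_j}\) that you have already established.
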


The strategy of the proof is standard.
We present the details here to clarify the role played by the various assumptions satisfied by \(\mu\).{}
We begin with some properties of \(v_{s}\):

\begin{lemma}
	\label{lemmaVMOLebesque}
	If \(v \in \Lebesgue^{1}(X)\), then, for every \(s > 0\), 
	\[{}
  \norm{v_{s}}_{\Lebesgue^{1}(X)} \le C \norm{v}_{\Lebesgue^{1}(X)}.
	\]
\end{lemma}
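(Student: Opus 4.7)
The plan is to establish the bound by duality/Tonelli after controlling the kernel $1/\mu(B_s(x))$ via the doubling property. The uniform nondegeneracy of $\mu$ first ensures that $\mu(B_s(x)) \geq \eta > 0$ for every $x \in X$, so that $v_s(x)$ is well-defined pointwise for every $x$.

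First I would bound $|v_s(x)|$ pointwise by moving the absolute value inside the average:
\[
|v_s(x)| \le \frac{1}{\mu(B_s(x))} \int_{B_s(x)} |v(y)| \dif\mu(y).
\]
Integrating with respect to $x \in X$ and applying Tonelli's theorem (using the symmetry $y \in B_s(x) \Leftrightarrow x \in B_s(y)$), I obtain
\[
\norm{v_s}_{\Lebesgue^1(X)}
\le \int_X |v(y)| \biggl( \int_{B_s(y)} \frac{1}{\mu(B_s(x))} \dif\mu(x) \biggr) \dif\mu(y).
\]

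The key step is to show that the inner integral is uniformly bounded in $y$ and $s$. For any $x \in B_s(y)$, the inclusion $B_s(y) \subset B_{2s}(x)$ combined with the doubling property yields
\[
\mu(B_s(y)) \le \mu(B_{2s}(x)) \le C \mu(B_s(x)),
\]
so $1/\mu(B_s(x)) \le C/\mu(B_s(y))$. Integrating this estimate over $B_s(y)$ gives
\[
\int_{B_s(y)} \frac{1}{\mu(B_s(x))} \dif\mu(x) \le C.
\]
Inserting this bound yields $\norm{v_s}_{\Lebesgue^1(X)} \le C \norm{v}_{\Lebesgue^1(X)}$, which is the desired inequality.

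The main (and only substantial) obstacle is extracting the uniform bound on the kernel integral, for which the doubling property is precisely tailored. No delicate analysis is required beyond this, and the argument is insensitive to whether $X$ is a manifold or a polytope since it only relies on the assumptions imposed on $(X, d, \mu)$ at the beginning of the section.
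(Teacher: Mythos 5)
Your argument is correct and is essentially identical to the paper's proof: the same Tonelli interchange followed by the inclusion \(B_{s}(y) \subset B_{2s}(x)\) and the doubling property to bound the kernel integral \(\int_{B_{s}(y)} \frac{1}{\mu(B_{s}(x))}\dif\mu(x)\) by a constant. The additional remark about uniform nondegeneracy ensuring \(v_{s}\) is defined pointwise is harmless but not needed for the estimate itself.
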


\resetconstant
\begin{proof}[Proof of Lemma~\ref{lemmaVMOLebesque}]
	By Tonelli's theorem,
	\[{}
	\begin{split}
	\norm{v_{s}}_{\Lebesgue^{1}(X)}
	& \le \int_{X} \frac{1}{\mu(B_{s}(x))} \int_{B_{s}(x)}{\abs{v(y)} \dif\mu(y)} \dif\mu(x){}\\
	& = \int_{X} \biggl(\int_{B_{s}(y)}{\frac{1}{\mu(B_{s}(x))}} \dif\mu(x) \biggr) \abs{v(y)} \dif\mu(y).
	\end{split}
	\]
	For every \(x \in B_{s}(y)\), we have \(B_{s}(y) \subset B_{2s}(x)\).{}
	Thus, by monotonicity and the doubling property of \(\mu\),
	\[{}
	\mu(B_{s}(y)) 
	\le \mu(B_{2s}(x)) 
	\le \Cl{cteVMO-133} \mu(B_{s}(x)). 
	\]
	Hence,
	\[{}
	\norm{v_{s}}_{\Lebesgue^{1}(X)}
	\le \Cr{cteVMO-133} \int_{X} \abs{v(y)} \dif\mu(y).{}
	\qedhere
	\]
\end{proof}
	
We now give a separate argument that \(v_{s}\) is bounded and uniformly continuous.
	
\begin{lemma}
	\label{lemmaVMOConvolutionCompactness}
	Let \(s > 0\).{}
	If \(v \in \Lebesgue^{1}(X)\), then \(v_{s} \in \UContinuous(X)\) and
	\[{}
	\norm{v_{s}}_{\Smooth^{0}(X)} \le C' \norm{v}_{\Lebesgue^{1}(X)}.
	\]	
\end{lemma}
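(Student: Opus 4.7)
The plan is to establish the two assertions separately, leaning on the three standing hypotheses on the measure \(\mu\) (doubling, metric continuity, uniform nondegeneracy) together with the equi-integrability of \(|v|\) that is automatic from \(v \in \Lebesgue^{1}(X)\).

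For the boundedness assertion, I would apply uniform nondegeneracy at the fixed scale \(s\) to obtain \(\eta > 0\) such that \(\mu(B_{s}(x)) \ge \eta\) for every \(x \in X\). Pulling absolute values inside the average \(v_{s}(x) = \fint_{B_{s}(x)} v \dif \mu\) and using the above lower bound gives
\[
|v_{s}(x)| \le \frac{1}{\mu(B_{s}(x))} \int_{X} |v| \dif \mu \le \frac{1}{\eta} \norm{v}_{\Lebesgue^{1}(X)},
\]
uniformly in \(x\); the constant \(C' = 1/\eta\) then works. This step is not the delicate one.

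For uniform continuity, I would fix two points \(x_{1}, x_{2} \in X\) with \(d(x_{1}, x_{2}) \le \delta\) and decompose
\[
v_{s}(x_{1}) - v_{s}(x_{2}) = \frac{1}{\mu(B_{s}(x_{1}))} \int_{B_{s}(x_{1}) \setminus B_{s}(x_{2})} v \dif \mu - \frac{1}{\mu(B_{s}(x_{2}))} \int_{B_{s}(x_{2}) \setminus B_{s}(x_{1})} v \dif \mu
\]
plus the cross term
\[
\biggl(\frac{1}{\mu(B_{s}(x_{1}))} - \frac{1}{\mu(B_{s}(x_{2}))}\biggr) \int_{B_{s}(x_{1}) \cap B_{s}(x_{2})} v \dif \mu.
\]
The first two pieces are controlled by combining the lower bound \(\mu(B_{s}(x_{i})) \ge \eta\) with the fact that \(|v|\) is equi-integrable, so that its integral over the symmetric difference \(B_{s}(x_{1}) \triangle B_{s}(x_{2})\) is small once that set has small \(\mu\)-measure. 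For the cross term, I would use \(\bigabs{\mu(B_{s}(x_{1})) - \mu(B_{s}(x_{2}))} \le \mu(B_{s}(x_{1}) \triangle B_{s}(x_{2}))\) together with the lower bound on \(\mu(B_{s}(x_{i}))\), so that the prefactor is bounded by \(\eta^{-2} \mu(B_{s}(x_{1}) \triangle B_{s}(x_{2}))\), while the integral is trivially at most \(\norm{v}_{\Lebesgue^{1}(X)}\).

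The only genuine subtlety is that \(\delta\) must be chosen independently of \(x_{1}, x_{2}\), which is precisely what the metric continuity hypothesis delivers: For any prescribed \(\delta' > 0\), one can find a uniform \(\delta > 0\) such that \(\mu(B_{s}(x_{1}) \triangle B_{s}(x_{2})) \le \delta'\) whenever \(d(x_{1}, x_{2}) \le \delta\). Given \(\epsilon > 0\), I would therefore first use equi-integrability of \(|v|\) to pick \(\delta' > 0\) ensuring \(\int_{A} |v| \dif \mu \le \epsilon \eta\) for every measurable \(A\) with \(\mu(A) \le \delta'\), and then invoke metric continuity to obtain the corresponding \(\delta > 0\). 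Assembling the three pieces yields a uniform modulus of continuity for \(v_{s}\), completing the proof. No conceptual obstacle beyond this bookkeeping is expected.
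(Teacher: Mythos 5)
Your proof is correct and follows essentially the same route as the paper: uniform nondegeneracy gives the sup bound with \(C' = 1/\eta\), and the modulus of continuity comes from the same final estimate \(\abs{v_{s}(x_{1}) - v_{s}(x_{2})} \le \frac{1}{\eta}\int_{B_{s}(x_{1}) \triangle B_{s}(x_{2})}\abs{v}\dif\mu + \frac{1}{\eta^{2}}\mu(B_{s}(x_{1}) \triangle B_{s}(x_{2}))\norm{v}_{\Lebesgue^{1}(X)}\), concluded via absolute continuity of the integral (what you call equi-integrability of \(\abs{v}\)) together with metric continuity of \(\mu\). Your three-piece split versus the paper's triangle-inequality decomposition is only a cosmetic difference in bookkeeping.
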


\begin{proof}[Proof of Lemma~\ref{lemmaVMOConvolutionCompactness}]
	Given \(v \in \Lebesgue^{1}(X)\), for every \(x \in X\) we have
	\[{}
	\abs{v_{s}(x)} \le \frac{1}{\mu(B_{s}(x))} \norm{v}_{\Lebesgue^{1}(X)}.
	\]
	By the uniform nondegeneracy of \(\mu\), there exists \(\eta > 0\) independent of \(x\) such that \(\mu(B_{s}(x)) \ge \eta\).{}
	Hence,
	\begin{equation*}
		\sup_{x \in X}{\abs{v_{s}(x)}} \le \frac{1}{\eta} \norm{v}_{\Lebesgue^{1}(X)}.
	\end{equation*}

	To prove that \(v_{s}\) is uniformly continuous, we rely on the metric continuity of \(\mu\).{}
	For \(y, z \in X\), we first use the triangle inequality to estimate 
	\begin{multline*}
	\abs{v_{s}(y) - v_{s}(z)}\\
	\le 
	\frac{1}{\mu(B_{s}(y))} \biggabs{\int_{B_{s}(y)}{v \dif\mu} - \int_{B_{s}(z)}{v \dif\mu}}
	+ \frac{\bigabs{\mu(B_{s}(y)) - \mu(B_{s}(z))}}{\mu(B_{s}(y))\mu(B_{s}(z))}
	\int_{B_{s}(z)}{\abs{v} \dif\mu}.
	\end{multline*}
	By the uniform nondegeneracy of \(\mu\) and additivity of the integral, we thus have
	\begin{equation*}
	\abs{v_{s}(y) - v_{s}(z)}
	\le \frac{1}{\eta} \int_{B_{s}(y) \triangle B_{s}(z)}{\abs{v} \dif\mu} + \frac{1}{\eta^{2}} \mu(B_{s}(y) \triangle B_{s}(z)) \int_{X}\abs{v} \dif\mu.
	\end{equation*}
	The uniform continuity of \(v_{s}\) thus follows from the absolute continuity of the integral of a summable function and the metric continuity of \(\mu\).{}
\end{proof}

We now estimate the mean oscillations of \(v_{s}\) in terms of those of \(v\):

\begin{lemma}
  \label{lemmaVMOUpperBound}
  Let \(s > 0\).{}
  If \(v \in \Lebesgue^{1}(X)\), then{}
\begin{equation}
	\label{eq-lemmaVMOUpperBound1}
  \seminorm{v_{s}}_{\rho} \le C'' \max{\bigl\{\seminorm{v}_{2\rho}, \seminorm{v}_{2s} \bigr\}}
  \quad \text{for every \(\rho, s > 0\).}
  \end{equation}
\end{lemma}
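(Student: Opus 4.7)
The plan is to write the difference \(v_s(y) - v_s(z)\) as a double average of \(v(\eta) - v(\zeta)\) over the balls \(B_s(y)\) and \(B_s(z)\), then integrate over \(y, z \in B_\rho(x)\) and swap the order of integration with Tonelli's theorem. Specifically, since
\[
  v_s(y) - v_s(z) = \fint_{B_s(y)} \fint_{B_s(z)} \bigl(v(\eta) - v(\zeta)\bigr) \, d\mu(\zeta) \, d\mu(\eta),
\]
one obtains
\[
  \fint_{B_\rho(x)} \fint_{B_\rho(x)} |v_s(y) - v_s(z)| \, d\mu(y) \, d\mu(z) \le \int_X \int_X |v(\eta) - v(\zeta)| A(\eta) A(\zeta) \, d\mu(\zeta) \, d\mu(\eta),
\]
where, after exchanging the roles of \(y\) and \(\eta\) via \(\chi_{B_s(y)}(\eta) = \chi_{B_s(\eta)}(y)\),
\[
  A(\eta) \vcentcolon= \fint_{B_\rho(x)} \frac{\chi_{B_s(\eta)}(y)}{\mu(B_s(y))} \, d\mu(y).
\]
The goal is then to show that \(A(\eta)\) is comparable to \(1/\mu(B_{2\sigma}(x))\) with \(\sigma = \max\{\rho, s\}\), and that \(A\) is supported in \(B_{2\sigma}(x)\).

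Next I would analyze \(A(\eta)\). Whenever \(y \in B_s(\eta)\), the inclusion \(B_s(\eta) \subset B_{2s}(y)\) combined with the doubling property gives \(\mu(B_s(y)) \ge \mu(B_s(\eta))/C\), so
\[
  A(\eta) \le \frac{C \, \mu(B_s(\eta) \cap B_\rho(x))}{\mu(B_\rho(x)) \, \mu(B_s(\eta))}.
\]
The indicator \(\chi_{B_s(\eta)}(y)\) is nonzero only if \(d(x, \eta) \le s + \rho \le 2\sigma\), so \(A(\eta) = 0\) outside \(B_{2\sigma}(x)\). Now I split into two cases. When \(\rho \ge s\) (so \(\sigma = \rho\)), I bound the numerator by \(\mu(B_s(\eta))\), obtaining \(A(\eta) \le C/\mu(B_\rho(x))\), which by doubling is at most \(C'/\mu(B_{2\rho}(x))\). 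When \(\rho \le s\) (so \(\sigma = s\)), I bound the numerator by \(\mu(B_\rho(x))\), yielding \(A(\eta) \le C/\mu(B_s(\eta))\); since \(\eta \in B_{2s}(x)\) gives \(B_{2s}(x) \subset B_{4s}(\eta)\) and doubling, this upgrades to \(A(\eta) \le C'/\mu(B_{2s}(x))\).

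In both cases one gets \(A(\eta) \le C'/\mu(B_{2\sigma}(x)) \chi_{B_{2\sigma}(x)}(\eta)\), and symmetrically for \(A(\zeta)\). Substituting into the double integral produces exactly a constant multiple of the double average defining \(\seminorm{v}_{2\sigma}\), and taking the supremum over \(x\) yields the stated estimate \eqref{eq-lemmaVMOUpperBound1} with \(C'' = (C')^2\).

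The main technical point is handling the ratio of measures \(\mu(B_s(y))\) versus \(\mu(B_s(\eta))\) or \(\mu(B_\rho(x))\) with a constant that is uniform in both \(\rho\) and \(s\); this is where the doubling property of \(\mu\) enters crucially, and the dichotomy \(\rho \lessgtr s\) is precisely what forces the maximum on the right-hand side. The uniform nondegeneracy of \(\mu\) is not needed here since all the estimates are relative (ratios of measures of nested balls), which is a nice feature of this argument.
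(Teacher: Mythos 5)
Your proof is correct and follows essentially the same route as the paper's: the same double-average identity for \(v_s(y)-v_s(z)\) followed by a Tonelli swap, the same doubling estimate \(\mu(B_s(\eta)) \le C\,\mu(B_s(y))\) for \(y \in B_s(\eta)\), and the same dichotomy \(\rho \ge s\) versus \(\rho \le s\) using the inclusion \(B_{2s}(x) \subset B_{4s}(\eta)\). Packaging the inner integrals as the kernel \(A(\eta)\) and noting that nondegeneracy is not needed are only presentational differences.
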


\resetconstant
\begin{proof}[Proof of Lemma~\ref{lemmaVMOUpperBound}]
For every \(y, z \in X\),
\[{}
\abs{v_{s} (y) - v_{s} (z)}
\le  \fint_{B_s (y)} \fint_{B_s (z)} \abs{v (\xi) - v (\zeta)} \dif\mu(\zeta) \dif\mu(\xi)
\]
and then, by Tonelli's theorem,
\begin{multline*}
\fint_{B_{\rho} (x)} \fint_{B_{\rho} (x)} \abs{v_{s} (y) - v_{s} (z)}\dif\mu(z) \dif\mu(y)\\
\le \frac{1}{\mu(B_\rho(x))^2} \int_{B_{\rho + s} (x)} \int_{B_{\rho + s} (x)} \abs{v (\xi) - v (\zeta)} 
\\
\times
\int_{B_{\rho}(x) \cap B_{s}(\xi)}{\frac{\dif\mu(y)}{\mu(B_{s}(y))}}
\int_{B_{\rho}(x) \cap B_{s}(\zeta)}{\frac{\dif\mu(z)}{\mu(B_{s}(z))}}
 \dif\mu(\zeta) \dif\mu(\xi).{}
\end{multline*}
For every \(y \in B_{s}(\xi)\), we have \(B_{s}(\xi) \subset B_{2s}(y)\).{}
Hence, by monotonicity of \(\mu\) and the doubling property,
\[
\mu(B_{s}(\xi)) 
\leq \mu(B_{2s}(y))
\leq \Cl{cteVMO-157} \mu(B_s(y)).{}
\]
We thus have
\[{}
\int_{B_{\rho}(x) \cap B_{s}(\xi)}{\frac{\dif\mu(y)}{\mu(B_{s}(y))}}
\le \Cr{cteVMO-157} \, \frac{\mu(B_\rho (x) \cap B_{s}(\xi))}{\mu(B_s (\xi))}
\]
and a similarly estimate holds for the integral with respect to \(z\).{}
Hence,
\begin{multline*}
\fint_{B_{\rho} (x)} \fint_{B_{\rho} (x)} \abs{v_{s} (y) - v_{s} (z)}\dif\mu(y) \dif\mu(z)\\
\le \Cr{cteVMO-157}^{2} \int_{B_{\rho + s} (x)} \int_{B_{\rho  + s} (x)} \abs{v (\xi) - v (\zeta)} 
\, 
\frac{\mu(B_\rho (x) \cap B_{s}(\xi))}{\mu(B_{\rho}(x))\mu(B_s (\xi))}
\,{}
\frac{\mu(B_\rho (x) \cap B_{s}(\zeta))}{\mu(B_{\rho}(x))\mu(B_s (\zeta))}
\dif\mu(\zeta) \dif\mu(\xi).{}
\end{multline*}
When \(s \le \rho\), the monotonicity of \(\mu\) gives
\[{}
\frac{\mu(B_\rho (x) \cap B_{s}(\xi))}{\mu(B_{\rho}(x))\mu(B_s (\xi))}
\,{}
\frac{\mu(B_\rho (x) \cap B_{s}(\zeta))}{\mu(B_{\rho}(x))\mu(B_s (\zeta))}
\le \frac{1}{\mu(B_{\rho}(x))} \, \frac{1}{\mu(B_{\rho}(x))}
= \frac{1}{\mu(B_{\rho}(x))^{2}}
\]
and we deduce \eqref{eq-lemmaVMOUpperBound1} from the doubling property.
When \(\rho < s\), we use instead that
\[{}
\frac{\mu(B_\rho (x) \cap B_{s}(\xi))}{\mu(B_{\rho}(x))\mu(B_s (\xi))}
\,{}
\frac{\mu(B_\rho (x) \cap B_{s}(\zeta))}{\mu(B_{\rho}(x))\mu(B_s (\zeta))}
\le \frac{1}{\mu(B_s (\xi))\mu(B_s (\zeta))}.
\]
Observe that for \(\xi \in B_{\rho + s} (x)\), we have \(B_{2s}(x) \subset B_{4s}(\xi)\) and then, by monotonicity of \(\mu\) and the doubling property,
\(\mu(B_{2s}(x)) \le \Cr{cteVMO-157}^{2} \mu(B_{s}(\xi))\).{}
A similar estimate holds for \(\mu(B_{s}(\zeta))\) and the lemma then follows.
\end{proof}

\begin{proof}[Proof of Proposition~\ref{propositionDensityVMO}]
	By Lemma~\ref{lemmaVMOConvolutionCompactness}, each \(v_{s}\) is bounded and uniformly continuous.
	Since \(X\) is locally compact and separable, the set \(\Smooth_{c}^{0}(X)\) is dense in \(\Lebesgue^{1}(X)\) and thus a standard argument based on Lemma~\ref{lemmaVMOLebesque} implies the convergence of \(v_{s}\) to \(v\) in \(\Lebesgue^{1}(X)\) as \(s \to 0\).{}
	To conclude, by Lemma~\ref{lemmaEquiVMO} it suffices to prove that, for any sequence \((s_{j})_{j \in \N}\)  of positive numbers that converges to zero, \((v_{s_{j}})_{j \in \N}\) is equiVMO.
	Since \(v\) is a \(\VMO\)~function, given \(\epsilon > 0\)  there exists \(\delta_{1} > 0\) such that
	\[{}
	\sup_{\tilde{\rho} \le 2\delta_{1}}{\seminorm{v}_{\tilde{\rho}}}
	\le \frac{\epsilon}{C''},
	\]
	where \(C'' > 0\) is the constant in Lemma~\ref{lemmaVMOUpperBound}.
	By this lemma, for \(s_{j} \le \delta_{1}\) and \(\rho \le \delta_{1}\), we then have
	\[{}
	\seminorm{v_{s_{j}}}_{\rho}
	\le C'' \max{\bigl\{\seminorm{v}_{2\rho}, \seminorm{v}_{2s_j} \bigr\}}
	\le C'' \sup_{\tilde{\rho} \le 2\delta_{1}}{\seminorm{v}_{\tilde{\rho}}}
	\le \epsilon.
	\]		
	We then take \(J \in \N\) such that, for \(j \ge J\), \(s_{j} \le \delta_{1}\).{}
	By Lemma~\ref{lemmaVMOLebesque}, each \(v_{s_{j}}\) is uniformly continuous.
	Hence, there exists \(\delta_{2} > 0\) such that
	\[{}
	\seminorm{v_{s_{j}}}_{\rho} \le \epsilon{}
	\quad \text{for every \(0 < \rho \le \delta_{2}\) and \(j \le J - 1\).}
	\]
	The sequence \((v_{s_{j}})_{j \in \N}\) thus satisfies \eqref{eqEquiVMO} with \(\delta \vcentcolon= \min{\{\delta_{1}, \delta_{2}\}}\).
\end{proof}

\section{Homotopy in VMO}
\label{sectionVMOHomotopy}

One defines the space \(\VMO(X; \R^{\nu})\) of vector-valued maps \(v \colon  X \to \R^{\nu}\) with vanishing mean oscillation by requiring that each of the components of \(v\) belongs to \(\VMO(X)\). All the results obtained in the previous section for real-valued \(\VMO\) functions remain true in the vector-valued case.
{}
Given a compact manifold \(\manfN\) imbedded in \(\R^{\nu}\), we then consider
\[{}
\VMO(X; \manfN)
\vcentcolon= \bigl\{ v \in \VMO(X; \R^{\nu}) : v(x) \in \manfN\ \text{for every \(x \in X\)}  \bigr\}.
\]
From the topological point of view, \(\VMO\) maps inherit several properties that are satisfied by continuous maps.
For example, one can continuously extend the notion of topological degree in \(\Smooth^{0}(\Sphere^{\ell}; \Sphere^{\ell})\) to the larger space \(\VMO(\Sphere^{\ell}; \Sphere^{\ell})\), see \cite{BrezisNirenberg1995}.{}

\begin{definition}
\label{defnVMOHomotopy}
Two maps \(v_{0}, v_{1} \in \VMO(X; \manfN)\) are homotopic in \(\VMO(X; \manfN)\) whenever there exists a continuous path \(H\colon  [0, 1] \to \VMO(X; \manfN)\) with \(H(0) = v_{0}\) and \(H(1) = v_{1}\).{}
We write in this case 
\[{}
v_{0} \sim v_{1} 
\quad \text{in \(\VMO(X; \manfN)\)}.{}
\]
\end{definition}

Two maps in \(\VMO(X; \manfN)\) are thus homotopic provided that they belong to the same path-connected component with respect to the \(\VMO\) topology.
We prove that every path-connected component of \(\VMO(X; \manfN)\) has an element in \(\UContinuous(X; \manfN)\).

\begin{proposition}
	\label{propositionHomotopyVMOContinuousMap}
	For every \(v \in \VMO(X; \manfN)\), there exists \(f \in \UContinuous(X; \manfN)\) such that
	\[{}
	v \sim f 
	\quad \text{in \(\VMO(X; \manfN)\)}.{}
	\]
\end{proposition}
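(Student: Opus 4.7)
My plan is to produce \(f\) by smoothing \(v\) through the averaging operator of Proposition~\ref{propositionDensityVMO} and then projecting the result onto \(\manfN\), using the smoothing parameter itself as the homotopy parameter.

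First, I would invoke the property — referenced in the introduction as Lemma~\ref{lemmaVMOUniformConvergence} and essentially due to Brezis and Nirenberg — that
\[
\lim_{s \to 0}\, \sup_{x \in X}\, \dist\bigl(v_{s}(x), \manfN\bigr) = 0
\]
whenever \(v \in \VMO(X; \manfN)\). Since \(\manfN\) is a smooth compact submanifold of \(\R^{\nu}\), the nearest-point projection \(\Pi\) onto \(\manfN\) is well defined and smooth, hence Lipschitz, on some tubular neighborhood \(\manfN + B_{\iota}^{\nu}\). I then pick \(s_{0} > 0\) so small that \(v_{s}(X) \subset \manfN + B_{\iota/2}^{\nu}\) for every \(0 < s \le s_{0}\), and set \(f \vcentcolon= \Pi \compose v_{s_{0}}\). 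By Lemma~\ref{lemmaVMOConvolutionCompactness}, \(v_{s_{0}} \in \UContinuous(X; \R^{\nu})\), hence \(f \in \UContinuous(X; \manfN)\).

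To build the homotopy, define \(H \colon [0, 1] \to \VMO(X; \manfN)\) by \(H(0) = v\) and \(H(t) = \Pi \compose v_{t s_{0}}\) for \(t \in (0, 1]\). By construction, \(H\) takes values in \(\manfN\) throughout and \(H(1) = f\). Continuity of \(H\) at \(t \in (0, 1]\) follows from the continuity of \(s \mapsto v_{s}\) as a map from \((0, \infty)\) into \(\UContinuous(X; \R^{\nu})\) — a direct consequence of the metric continuity of \(\mu\), essentially as in the proof of Lemma~\ref{lemmaVMOConvolutionCompactness} — combined with the fact that post-composition with a Lipschitz map of constant \(L\) contracts both \(\Lebesgue^{1}\) norms and mean oscillations \(\seminorm{\cdot}_{\rho}\) by a factor \(L\), so uniform norm convergence is preserved and, in particular, \(\VMO\) convergence follows.

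The main point is thus to establish continuity of \(H\) at \(t = 0\). Since \(v\) itself takes values in \(\manfN\), we have \(v = \Pi \compose v\), and continuity at \(0\) amounts to showing \(\Pi \compose v_{ts_{0}} \to \Pi \compose v\) in \(\VMO(X; \R^{\nu})\) as \(t \to 0\). The \(\Lebesgue^{1}\) convergence is immediate from the Lipschitz inequality
\[
\abs{\Pi(\xi) - \Pi(\eta)} \le L \abs{\xi - \eta}
\]
and the convergence \(v_{ts_{0}} \to v\) in \(\VMO(X)\) given by Proposition~\ref{propositionDensityVMO}. The equiVMO character of the family \((\Pi \compose v_{ts_{0}})_{t \in (0, 1]}\) follows from the pointwise estimate \(\seminorm{\Pi \compose v_{ts_{0}}}_{\rho} \le L \seminorm{v_{ts_{0}}}_{\rho}\) together with the equiVMO character of \((v_{ts_{0}})\), itself a consequence of Lemma~\ref{lemmaEquiVMO} applied to the \(\VMO\)-convergent family. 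The converse direction of Lemma~\ref{lemmaEquiVMO} then yields \(\Pi \compose v_{ts_{0}} \to v\) in \(\VMO(X; \R^{\nu})\), completing the proof. The chief technical ingredient here is the uniform smallness of \(\dist(v_{s}, \manfN)\), which is precisely where the \(\VMO\) assumption on \(v\) is essentially used; everything else is a direct synthesis of the \(\VMO\)-approximation machinery from Section~\ref{sectionBUC} with the Lipschitz stability of composition.
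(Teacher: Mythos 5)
Your proof is correct and follows essentially the same route as the paper: both take \(f = \Pi \compose v_{s}\) for a small averaging parameter (using Lemma~\ref{lemmaVMOUniformConvergence} to ensure the averaged maps land in the tubular neighborhood of \(\manfN\)) and use the smoothing parameter itself as the homotopy parameter. The only difference is that the paper invokes Lemma~\ref{lemma_VMO_average_homotopy} for the continuity of \(t \mapsto \Pi \compose v_{\delta t}\), whereas you reprove its content directly — at interior parameters via sup-norm continuity of \(s \mapsto v_{s}\) coming from the metric continuity of \(\mu\), and at \(t = 0\) via the same equiVMO argument with the estimate \(\seminorm{\Pi \compose v_{s}}_{\rho} \le L \seminorm{v_{s}}_{\rho}\) and Lemma~\ref{lemmaEquiVMO} that the paper uses inside that lemma's proof — a valid, mildly more self-contained variant of the same argument.
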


The main tool in the proof of Proposition \ref{propositionHomotopyVMOContinuousMap} is the family \(v_s\) introduced in Section \ref{sectionBUC}. 
However, as \(v_{s}\) need not take its values in \(\manfN\), we first justify why \(v_{s}\) can be projected to \(\manfN\) for small parameters \(s\).
As a result, we construct a continuous path \(H \colon  [0, 1] \to \VMO(X; \manfN)\) between \(v\) and \(f\) which satisfies 
\[{}
H(t) \in \UContinuous(X; \manfN)
\quad \text{for every \(0 < t \le 1\).}
\] 
One deduces in particular that 

\begin{corollary}
\label{corollaryVMODensity}
The subset \(\UContinuous(X; \manfN)\) is dense in \(\VMO(X; \manfN)\).{}
\end{corollary}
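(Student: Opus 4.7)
The plan is to extract Corollary~\ref{corollaryVMODensity} directly from Proposition~\ref{propositionHomotopyVMOContinuousMap}, using the extra information recorded immediately above the corollary statement. Fix \(v \in \VMO(X; \manfN)\). The proposition provides a continuous path \(H \colon [0,1] \to \VMO(X; \manfN)\) with \(H(0) = v\) and \(H(t) \in \UContinuous(X; \manfN)\) for every \(0 < t \le 1\). By continuity of \(H\) at \(0\) in the \(\VMO\)~topology, \(H(t) \to v\) in \(\VMO(X; \manfN)\) as \(t \to 0^{+}\). Picking any sequence \((t_j)_{j \in \N}\) in \((0,1]\) with \(t_j \to 0\), the sequence \((H(t_j))_{j \in \N}\) lies in \(\UContinuous(X; \manfN)\) and converges to \(v\) in \(\VMO(X; \manfN)\). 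Since \(v\) was arbitrary, this establishes the density.

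In this sense the corollary is immediate once the proposition is in hand, and the real work is carried out in the proof of Proposition~\ref{propositionHomotopyVMOContinuousMap}. There the expected construction is to take \(H(t) \vcentcolon= \pi \compose v_t\) for \(t \in (0,1]\) and \(H(0) \vcentcolon= v\), where \(v_t\) is the averaged map defined in~\eqref{eqVMOConvolution} and \(\pi\) is the smooth nearest-point projection onto \(\manfN\) on a tubular neighborhood. The map \(v_t\) is bounded and uniformly continuous by Lemma~\ref{lemmaVMOConvolutionCompactness} and converges to \(v\) in \(\VMO\) by Proposition~\ref{propositionDensityVMO}, while \(\sup_{x \in X} \dist(v_t(x), \manfN) \to 0\) as \(t \to 0^{+}\) by the Brezis-Nirenberg property announced in the introduction as Lemma~\ref{lemmaVMOUniformConvergence}, so that \(\pi \compose v_t\) is well-defined for small \(t\) and takes values in \(\manfN\). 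The main obstacle, addressed at the level of the proposition rather than of the corollary, is then the continuity of \(H\) at \(t = 0\); this follows from the Lipschitz character of \(\pi\) on the tubular neighborhood, which transfers the \(\Lebesgue^{1}\) and mean oscillation control of \(v_t - v\) to \(\pi \compose v_t - v\).
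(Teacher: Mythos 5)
Your derivation of the corollary is correct and is essentially the paper's: the density is read off from the path \(H(t) = \Pi \compose v_{\delta t}\) constructed in the proof of Proposition~\ref{propositionHomotopyVMOContinuousMap}, which lies in \(\UContinuous(X; \manfN)\) for \(t > 0\) and converges to \(v\) in \(\VMO(X; \manfN)\) as \(t \to 0^{+}\). The only caveat concerns your side remark on the proposition itself: the continuity of \(H\) at \(t = 0\) is not obtained by transferring mean-oscillation control of \(v_t - v\) through the Lipschitz projection (the seminorm \(\seminorm{\Pi \compose v_t - \Pi \compose v}_{\VMO(X)}\) is not bounded by \(\abs{\Pi}_{\Lip}\seminorm{v_t - v}_{\VMO(X)}\), since the difference of compositions is not a composition of the difference), but via Lemma~\ref{lemma_VMO_average_homotopy}, whose proof combines \(\Lebesgue^{1}\) convergence with the equiVMO criterion of Lemma~\ref{lemmaEquiVMO}.
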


We begin with the standard continuity of composition in \(\VMO\) with Lipschitz functions:

\begin{lemma}
\label{lemma_VMO_average_homotopy}
   Let \(\Phi \colon  \R^{\nu} \to \R^{\nu}\) be a Lipschitz function.
   For every \(v \in \VMO(X; \R^{\nu})\), the map \(s \in {[}0, +\infty{)} \mapsto \Phi \circ v_{s}\) is continuous in \(\VMO(X; \R^{\nu})\), where \(v_{s}\) is defined by \eqref{eqVMOConvolution} for \(s > 0\) and \(v_{0} \vcentcolon= v\).
\end{lemma}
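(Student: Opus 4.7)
The plan is to decompose the statement into two independent pieces: (i) continuity of the averaging map \(s \in [0,+\infty) \mapsto v_s\) from \([0,+\infty)\) to \(\VMO(X;\R^\nu)\); (ii) continuity of post-composition with \(\Phi\) as a map from \(\VMO(X;\R^\nu)\) to itself. Composing these continuous maps yields the conclusion.

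For (ii), I would argue directly via Lemma~\ref{lemmaEquiVMO}. Given any sequence \(w_j \to w\) in \(\VMO(X;\R^\nu)\), the pointwise estimate \(\abs{\Phi(w_j(x)) - \Phi(w(x))} \le \abs{\Phi}_{\Lip}\abs{w_j(x) - w(x)}\) immediately gives \(\Phi \compose w_j \to \Phi \compose w\) in \(\Lebesgue^1(X)\). Moreover, \(\seminorm{\Phi\compose w_j}_\rho \le \abs{\Phi}_{\Lip}\seminorm{w_j}_\rho\), so the equiVMO property of \((w_j)_{j\in\N}\) supplied by Lemma~\ref{lemmaEquiVMO} transfers to \((\Phi\compose w_j)_{j\in\N}\). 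Invoking Lemma~\ref{lemmaEquiVMO} once more yields \(\Phi\compose w_j \to \Phi\compose w\) in \(\VMO(X;\R^\nu)\), hence continuity of post-composition.

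For (i), continuity at \(s_0 = 0\) is precisely the content of Proposition~\ref{propositionDensityVMO}. For \(s_0 > 0\), I plan to establish the stronger fact that \(v_{s_j} \to v_{s_0}\) uniformly on \(X\) whenever \(s_j \to s_0\); since \(\mu(X) < \infty\), uniform convergence trivially implies both \(\Lebesgue^1\) convergence and \(\sup_\rho \seminorm{v_{s_j} - v_{s_0}}_\rho \to 0\), hence \(\VMO\) convergence. Writing
\[
v_{s_j}(x) - v_{s_0}(x) = \frac{\mu(B_{s_0}(x)) - \mu(B_{s_j}(x))}{\mu(B_{s_j}(x))\mu(B_{s_0}(x))} \int_{B_{s_j}(x)} v \dif\mu + \frac{1}{\mu(B_{s_0}(x))}\int_{B_{s_j}(x)\triangle B_{s_0}(x)} v \dif\mu,
\]
the denominators are bounded below by the uniform nondegeneracy constant \(\eta > 0\) (for \(j\) large, say \(s_j \ge s_0/2\)), and the metric continuity of \(\mu\) gives \(\sup_{x \in X}\mu(B_{s_j}(x) \triangle B_{s_0}(x)) \to 0\).

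The main obstacle is to obtain uniform-in-\(x\) control of \(\int_{B_{s_j}(x)\triangle B_{s_0}(x)}\abs{v}\dif\mu\), because \(v\) need not be bounded. I will handle this through equi-integrability: given \(\epsilon > 0\), the absolute continuity of the integral of \(\abs{v} \in \Lebesgue^1(X)\) provides \(\delta > 0\) such that \(\mu(E) < \delta\) implies \(\int_E \abs{v}\dif\mu < \epsilon\); combined with the uniform smallness of \(\mu(B_{s_j}(x)\triangle B_{s_0}(x))\) from metric continuity, this yields the required uniform bound. With both terms in the decomposition above controlled uniformly in \(x\), one concludes \(\sup_{x\in X}\abs{v_{s_j}(x) - v_{s_0}(x)} \to 0\), completing the argument.
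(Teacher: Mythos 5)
Your proposal is correct, and its second half (continuity of post-composition with the Lipschitz map \(\Phi\), via the pointwise bounds \(\abs{\Phi\compose w_j - \Phi\compose w}\le \abs{\Phi}_{\Lip}\abs{w_j-w}\) and \(\seminorm{\Phi\compose w_j}_\rho \le \abs{\Phi}_{\Lip}\seminorm{w_j}_\rho\) together with both directions of Lemma~\ref{lemmaEquiVMO}) is exactly the paper's argument. Where you genuinely diverge is the continuity of \(s\mapsto v_s\) at a point \(s_0>0\). The paper never proves uniform convergence there; it argues by approximation: for \(f\in\UContinuous(X;\R^\nu)\) it writes \(\norm{v_s-v_{s_0}}_{\VMO}\le\norm{(v-f)_s}_{\VMO}+\norm{f_s-f_{s_0}}_{\VMO}+\norm{(f-v)_{s_0}}_{\VMO}\), controls the outer terms by \(C\norm{v-f}_{\VMO}\) using the operator bounds of Lemmas~\ref{lemmaVMOLebesque} and~\ref{lemmaVMOUpperBound}, sends \(s\to s_0\) using the uniform continuity of \(f\), and then lets \(f\to v\) via the density statement of Proposition~\ref{propositionDensityVMO}. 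You instead prove the stronger fact \(\sup_{x\in X}\abs{v_{s_j}(x)-v_{s_0}(x)}\to 0\) directly from the structural hypotheses on \(\mu\): metric continuity gives \(\sup_{x}\mu(B_{s_j}(x)\triangle B_{s_0}(x))\to 0\), uniform nondegeneracy plus monotonicity (for \(s_j\ge s_0/2\)) bounds the denominators, and absolute continuity of the integral of \(\abs{v}\) upgrades the measure-smallness of the symmetric differences to a uniform-in-\(x\) bound on \(\int_{B_{s_j}(x)\triangle B_{s_0}(x)}\abs{v}\dif\mu\); since \(\seminorm{w}_\rho\le 2\norm{w}_\infty\) and \(\mu(X)<\infty\), sup-norm convergence indeed gives \(\VMO\) convergence. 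Your route is more self-contained (it does not need Lemma~\ref{lemmaVMOUpperBound}) and yields the stronger uniform convergence for \(s_0>0\), while the paper's approximation argument recycles lemmas it needs anyway and treats the mean-oscillation estimate once and for all through \(\seminorm{v_s}_\rho\le C''\max\{\seminorm{v}_{2\rho},\seminorm{v}_{2s}\}\); both rely on Proposition~\ref{propositionDensityVMO} for the endpoint \(s_0=0\), exactly as you do. One cosmetic point: your displayed decomposition is not an identity as written — the second term should be the signed difference of the integrals over \(B_{s_j}(x)\setminus B_{s_0}(x)\) and \(B_{s_0}(x)\setminus B_{s_j}(x)\) — but since only its absolute value enters, the bound by \(\mu(B_{s_0}(x))^{-1}\int_{B_{s_j}(x)\triangle B_{s_0}(x)}\abs{v}\dif\mu\) that you actually use is correct, so this does not affect the proof.
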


\begin{proof}[Proof of Lemma~\ref{lemma_VMO_average_homotopy}]
By Proposition~\ref{propositionDensityVMO}, the map \(s \in {}[0, \infty) \mapsto v_{s} \in \VMO(X; \R^{\nu})\) is continuous at \(a = 0\).
To obtain the continuity at \(a > 0\), we take  \(f \in \UContinuous(X; \R^{\nu})\).
Applying the triangle inequality and Lemmas~\ref{lemmaVMOLebesque} and~\ref{lemmaVMOUpperBound} componentwise, for every \(s, a > 0\) we have
\[{}
\begin{split}
\norm{v_{s} - v_{a}}_{\VMO(X)}
& \le \norm{v_{s} - f_{s}}_{\VMO(X)}
+ \norm{f_{s} - f_{a}}_{\VMO(X)}
+ \norm{f_{a} - v_{a}}_{\VMO(X)}\\
& \le C \norm{v - f}_{\VMO(X)}
+ \norm{f_{s} - f_{a}}_{\VMO(X)}\,.
\end{split}
\]
Thus, by uniform continuity of \(f\),{}
\[{}
\limsup_{s \to a}{\norm{v_{s} - v_{a}}_{\VMO(X)}}
\le C \norm{v - f}_{\VMO(X)}\,.
\]
Taking the infimum in the right-hand side with respect to \(f\), we deduce from Proposition~\ref{propositionDensityVMO} that the limsup vanishes.{}

Now, given any \(a\geq 0\), we take a Lipschitz map \(\Phi\) and a sequence of positive numbers \((s_{j})_{j \in \N}\) that converges to \(a\).{}
	The convergence 
	\begin{equation}
	\label{eq-VMO297}
	\Phi \compose v_{s_{j}} \to \Phi \compose v_{a}
	\quad \text{in \(\Lebesgue^{1}(X; \R^{\nu})\)}
	\end{equation}
	follows from the Lipschitz continuity of \(\Phi\).
	We next observe that, for every \(\rho > 0\) and \(j \in \N\),
	\begin{equation}
		\label{eq-VMO345}
	\seminorm{\Phi \compose v_{s_{j}}}_{\rho}
	\le \abs{\Phi}_{\Lip} \seminorm{v_{s_{j}}}_{\rho}\, .
	\end{equation}
	Since \(v_{s_{j}} \to v_{a}\) in \(\VMO(X; \R^{\nu})\), Lemma~\ref{lemmaEquiVMO} implies that the sequence \((v_{s_{j}})_{j \in \N}\) is equiVMO.
	By estimate~\eqref{eq-VMO345}, we deduce that \((\Phi \compose v_{s_{j}})_{j \in \N}\) is also equiVMO.
	Using \eqref{eq-VMO297}, the reverse implication of Lemma~\ref{lemmaEquiVMO} gives 
	\[{}
	\Phi \compose v_{s_{j}} \to \Phi \compose v_{a}
	\quad \text{in \(\VMO(X; \R^{\nu})\).}
	\qedhere
	\]
\end{proof}

A map \(v \in \VMO(X; \manfN)\) cannot be uniformly approximated by the family \((v_{s})_{s > 0} \) unless \(v\) itself is continuous.
We show however that the image of \(v_{s}\) is uniformly close to the target manifold \(\manfN\) due to the \(\VMO\) property of \(v\).
This is a simple but fundamental observation that lies at the heart of the proof of Theorem~\ref{theoremSchoen-Uhlenbeck}:

\begin{lemma}
	\label{lemmaVMOUniformConvergence}
	If \(v \in \Lebesgue^{1}(X; \manfN)\), then
	\[{}
	\sup_{x \in X}{d(v_{s}(x), \manfN)} \le \seminorm{v}_{s}
	\quad \text{for every \(s > 0\).}
	\]
	Hence, for every \(v \in \VMO(X; \manfN)\) we have
	\[{}
	\lim_{s \to 0}{\Bigl(\,\sup_{x \in X}{d(v_{s}(x), \manfN) \Bigr)}} = 0.
	\]
\end{lemma}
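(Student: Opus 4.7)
The plan is to bound the distance \(d(v_{s}(x), \manfN)\) pointwise by the mean oscillation of \(v\) at scale \(s\). The starting observation is that for \(\mu\)-almost every \(y \in B_{s}(x)\) we have \(v(y) \in \manfN\), hence
\[
d(v_{s}(x), \manfN) \le \abs{v_{s}(x) - v(y)}.
\]
Since \(v_{s}(x)\) is itself an average, I would rewrite \(v_{s}(x) - v(y) = \fint_{B_{s}(x)}(v(z) - v(y))\dif\mu(z)\) and apply Jensen's inequality to obtain
\[
\abs{v_{s}(x) - v(y)} \le \fint_{B_{s}(x)} \abs{v(z) - v(y)} \dif\mu(z).
\]

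Next I would average the previous estimate over \(y \in B_{s}(x)\). This gives
\[
d(v_{s}(x), \manfN) \le \fint_{B_{s}(x)} \fint_{B_{s}(x)} \abs{v(z) - v(y)} \dif\mu(z) \dif\mu(y),
\]
and the right-hand side is bounded by \(\seminorm{v}_{s}\) by the very definition \eqref{eqDetector-33} of the mean oscillation at scale \(s\) (applied componentwise and then combined, or directly with the vector-valued norm). Taking the supremum over \(x \in X\) yields the first assertion.

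The second assertion is then immediate: by definition of \(\VMO(X; \manfN)\), the membership \(v \in \VMO(X; \manfN)\) means \(\seminorm{v}_{s} \to 0\) as \(s \to 0\), so the uniform estimate just established forces \(\sup_{x \in X} d(v_{s}(x), \manfN) \to 0\). There is no real obstacle in this proof; the only mild point of care is the measure-zero set where \(v(y)\) may fail to lie in \(\manfN\), which is harmless since we integrate with respect to \(\mu\).
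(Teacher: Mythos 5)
Your proof is correct and follows essentially the same argument as the paper: bound \(d(v_{s}(x), \manfN)\) by \(\abs{v_{s}(x)-v(y)}\) for a point \(y\) where \(v(y)\in\manfN\), pass the average inside, average again over the ball, and recognize the mean oscillation \(\seminorm{v}_{s}\). The only cosmetic difference is your caution about a null set, which is unnecessary here since maps in \(\VMO(X;\manfN)\) take values in \(\manfN\) at every point by the paper's convention, but this does not affect the argument.
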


\begin{proof}[Proof of Lemma~\ref{lemmaVMOUniformConvergence}]
As \(v\) takes its values in the manifold \(\manfN\), for every \(x, z \in X\) we have
\[{}
d(v_{s}(x), \manfN)
\le \abs{v_{s}(x) - v(z)}
\le \fint_{B_{s}(x)}{\abs{v(y) - v(z)}} \dif\mu(y).
\]
Hence, averaging both sides with respect to \(z\) over the ball \(B_s (x)\), we get
\begin{equation*}
d(v_{s}(x), \manfN)
\le \fint_{B_{s}(x)}{\fint_{B_{s}(x)}{\abs{v(y) - v(z)}  \dif\mu(y) \dif\mu(z)}} 
\le \seminorm{v}_{s}\,.
\end{equation*}
The quantity in the right-hand side converges uniformly to zero with respect to \(x\) as \(s \to 0\) when \(v \in \VMO(X; \manfN)\).
\end{proof}

\begin{proof}[Proof of Proposition~\ref{propositionHomotopyVMOContinuousMap}]
	Since \(\manfN\) is compact, the nearest point projection \(\Pi\)  to \(\manfN\) is well defined and smooth in a tubular neighborhood \(\manfN + B_{\iota}^{\nu}\), with \(\iota > 0\).
    Since \(v\) has vanishing mean oscillation, we can take \(\delta > 0\) such that \(\sup\limits_{\rho \le \delta}{\seminorm{v}_{\rho}} < \iota\).{}
	By Lemma~\ref{lemmaVMOUniformConvergence}, for every \(0 < s \le \delta\) the set \(v_{s}(X)\) is contained in \(\manfN + B_{\iota}^{\nu}\).
	By Lemma~\ref{lemma_VMO_average_homotopy}, the map
	\begin{equation*}
	H \colon  t \in [0, 1] \longmapsto \Pi \compose v_{\delta t}
	\in \VMO(X; \manfN)
	\end{equation*}
	is a continuous path between \(v\) and \(f \vcentcolon= \Pi \compose v_{\delta}\) in \(\VMO(X; \manfN)\).{}
\end{proof}

We now prove the following properties taken from \cite{BrezisNirenberg1995}*{Lemmas~A.19 and~A.20}, where they are stated for functions defined in manifolds.
The first implies that the path-connected components of \(\VMO(X; \manfN)\) are open and closed.
As a consequence, they coincide with the connected components of \(\VMO(X; \manfN)\).

\begin{proposition}
\label{propositionHomotopyVMOLimit}
  Let \(v \in \VMO(X; \manfN)\).{}
  If \((v_{j})_{j \in \N}\) is a sequence in \(\VMO(X; \manfN)\) with
  \[{}
  v_{j} \to v
  \quad \text{in \(\VMO(X; \R^{\nu})\),}
  \]
  then there exists \(J \in \N\) such that, for every \(j \ge J\),{}
  \[{}
  v_{j} \sim v
  \quad \text{in \(\VMO(X; \manfN)\).}
  \]
\end{proposition}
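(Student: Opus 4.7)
The plan is to construct, for all sufficiently large $j$, an explicit continuous path in $\VMO(X;\manfN)$ between $v_j$ and $v$, using the averaging operator $v_s$ introduced in \eqref{eqVMOConvolution} together with the nearest point projection $\Pi$ onto $\manfN$. First, extend $\Pi$ (initially defined only on the tubular neighborhood $\manfN + B_\iota^\nu$) to a globally Lipschitz map $\widetilde\Pi\colon\R^\nu\to\R^\nu$ that agrees with $\Pi$ on $\manfN + \overline{B}{}_{\iota/2}^\nu$. Since $v_j\to v$ in $\VMO(X;\R^\nu)$, Lemma~\ref{lemmaEquiVMO} implies that $(v_j)_{j\in\N}$ is equiVMO; together with $v \in \VMO(X;\manfN)$, this yields a single scale $\delta>0$ such that
\[
\seminorm{v}_\rho \le \frac{\iota}{4} \quad \text{and} \quad \sup_{j\in\N}\seminorm{v_j}_\rho \le \frac{\iota}{4} \quad \text{for every } 0<\rho\le\delta.
\]
By Lemma~\ref{lemmaVMOUniformConvergence}, this guarantees $v_s(X),(v_j)_s(X)\subset \manfN+B_{\iota/4}^\nu$ for every $0<s\le\delta$ and every $j\in\N$.

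Next, the $L^1$-part of the VMO convergence combined with Lemma~\ref{lemmaVMOConvolutionCompactness} gives
\[
\norm{(v_j)_\delta-v_\delta}_{\Smooth^0(X)} = \norm{(v_j-v)_\delta}_{\Smooth^0(X)} \le C'\norm{v_j-v}_{\Lebesgue^1(X)}\longrightarrow 0,
\]
so there exists $J\in\N$ such that $\norm{(v_j)_\delta-v_\delta}_{\Smooth^0(X)}\le \iota/4$ for every $j\ge J$. Fix such a $j$ and define $H_j\colon[0,1]\to\VMO(X;\manfN)$ in three segments:
\begin{itemize}
\item for $t\in[0,\tfrac13]$: $H_j(t)=\widetilde\Pi\compose (v_j)_{3\delta t}$, with the convention $(v_j)_0\vcentcolon= v_j$;
\item for $t\in[\tfrac13,\tfrac23]$: $H_j(t)=\widetilde\Pi\compose \bigl((2-3t)(v_j)_\delta+(3t-1)v_\delta\bigr)$;
\item for $t\in[\tfrac23,1]$: $H_j(t)=\widetilde\Pi\compose v_{3\delta(1-t)}$, with $v_0\vcentcolon= v$.
\end{itemize}
At $t=0$ one has $\widetilde\Pi\compose v_j=v_j$ since $v_j(X)\subset \manfN$, and similarly $H_j(1)=v$. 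The definitions on the two overlapping endpoints $t=\tfrac13$ and $t=\tfrac23$ agree, so $H_j$ is well defined. Continuity of the first and third segments in $\VMO(X;\R^\nu)$ follows from Lemma~\ref{lemma_VMO_average_homotopy} applied to the Lipschitz map $\widetilde\Pi$; continuity of the middle segment is even stronger, since $\lambda\mapsto (1-\lambda)(v_j)_\delta+\lambda v_\delta$ is continuous in $\Smooth^0(X;\R^\nu)$ (both endpoints lie in $\UContinuous(X;\R^\nu)$ by Lemma~\ref{lemmaVMOConvolutionCompactness}) and composition with the Lipschitz map $\widetilde\Pi$ preserves $\Smooth^0$ continuity, which embeds in $\VMO$.

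The main obstacle is ensuring that every value of $H_j(t)$ actually lies in $\VMO(X;\manfN)$, which requires the argument of $\widetilde\Pi$ to stay in the region $\manfN+B^\nu_{\iota/2}$ where $\widetilde\Pi=\Pi$ takes values in $\manfN$. For the outer segments this is immediate from the uniform estimate $v_s(X),(v_j)_s(X)\subset \manfN+B_{\iota/4}^\nu$. For the middle segment, for every $x\in X$ and $\lambda\in[0,1]$,
\[
d\bigl((1-\lambda)(v_j)_\delta(x)+\lambda v_\delta(x),\manfN\bigr) \le d\bigl((v_j)_\delta(x),\manfN\bigr)+\lambda\norm{(v_j)_\delta-v_\delta}_{\Smooth^0(X)} \le \frac{\iota}{4}+\frac{\iota}{4}=\frac{\iota}{2},
\]
so this argument lies in $\manfN+\overline{B}{}_{\iota/2}^\nu$ where $\widetilde\Pi=\Pi$. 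Thus $H_j$ is a continuous path in $\VMO(X;\manfN)$ from $v_j$ to $v$, giving $v_j\sim v$ for every $j\ge J$.
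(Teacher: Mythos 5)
Your proof is correct and follows essentially the same route as the paper's: equiVMO gives a common scale \(\delta\), Lemmas~\ref{lemma_VMO_average_homotopy} and~\ref{lemmaVMOUniformConvergence} connect \(v_j\) and \(v\) to the projected averages \(\Pi\compose (v_j)_\delta\) and \(\Pi\compose v_\delta\), and for \(j\) large these are joined by the projected linear interpolation, using uniform closeness coming from the \(\Lebesgue^1\)-convergence. The only differences are presentational — you concatenate the three pieces into one explicit path and make the globally Lipschitz extension \(\widetilde\Pi\) explicit, while the paper invokes transitivity of the homotopy relation — so nothing substantive is missing.
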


\begin{proof}
  By Lemma~\ref{lemmaEquiVMO}, the sequence \((v_{j})_{j \in \N}\) is equiVMO.
  Thus, there exists \(\delta > 0\) such that, for every \(j \in \N\), 
  \( \sup\limits_{\rho \le \delta}{\seminorm{v_{j}}_{\rho}} < \iota\).
  We deduce from Lemmas~\ref{lemma_VMO_average_homotopy} and~\ref{lemmaVMOUniformConvergence} that the map 	
  \[{}
	t \in [0, 1] \longmapsto \Pi \compose v_{j, \delta t}
	\in \VMO(X; \manfN)
	\]
	is a continuous path between \(v_{j}\) and \(\Pi \compose v_{j, \delta}\)\,.{}
	Then, for every \(j \in \N\),
  \begin{equation}
  \label{eqVMO-553}
  v_{j} 
  \sim \Pi \compose v_{j, \delta}
  \quad \text{in \(\VMO(X; \manfN)\).}
  \end{equation}
  We may also assume that \(\sup\limits_{\rho \le \delta}{\seminorm{v}_{\rho}} < \iota\).{}
  Similarly, we have in this case
  \begin{equation*}
   v \sim \Pi \compose v_{\delta} 
  \quad \text{in \(\VMO(X; \manfN)\).}
  \end{equation*}
  Since \(v_{j} \to v\) in \(L^{1}(X)\), the equicontinuous sequence \((v_{j, \delta})_{j \in \N}\) converges uniformly to \(v_{\delta}\) on \(X\), as a consequence of Lemma~\ref{lemmaVMOConvolutionCompactness}.
  We may thus take \(J \in \N\) such that, for every \(j \ge J\), there exists a map \(H \in \UContinuous(X \times [0, 1]; \manfN)\) such that \(H(\cdot, 0) = \Pi \compose v_{j, \delta}\) and \(H(\cdot, 1) = \Pi \compose v_{\delta}\).{}
  In particular,
  \begin{equation}
  \label{eqVMO-568}
  \Pi \compose v_{j, \delta} \sim \Pi \compose v_{\delta}
  \quad \text{in \(\VMO(X; \manfN)\).}
  \end{equation}
  It thus follows from \eqref{eqVMO-553}--\eqref{eqVMO-568} and the transitivity of the homotopy relation that, for \(j \ge J\),
  \[
  v_{j} \sim v
  \quad \text{in \(\VMO(X; \manfN)\).}
  \qedhere
  \]
\end{proof}

We conclude this chapter by proving that each path-connected component of \(\VMO(X; \manfN)\) contains exactly one path-connected component of \(\UContinuous(X)\).

\begin{proposition}
\label{propositionHomotopyVMOtoC}
If \(v_{0}, v_{1} \in \UContinuous(X; \manfN)\) are such that
\[{}
v_{0} \sim v_{1} 
\quad \text{in \(\VMO(X; \manfN)\),}
\] 
then 
\[{}
v_{0} \sim v_{1} 
\quad \text{in \(\BUC(X; \manfN)\).}
\]
\end{proposition}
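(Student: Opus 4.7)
The plan is to convert the given $\VMO$-homotopy into a $\BUC$-homotopy via the averaging-and-projection construction already used in the proof of Proposition~\ref{propositionHomotopyVMOContinuousMap}, but carried out uniformly in the homotopy parameter. Fix a continuous path $H \colon [0,1] \to \VMO(X;\manfN)$ with $H(0) = v_0$ and $H(1) = v_1$, and let $\iota > 0$ be a radius such that the nearest-point projection $\Pi$ onto $\manfN$ is well-defined and smooth on $\manfN + B_{\iota}^{\nu}$.

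First I would argue that the image $H([0,1])$, being the continuous image of a compact set, is compact in $\VMO(X;\manfN)$. Applying Lemma~\ref{lemmaEquiVMO} together with a standard finite-cover argument then yields that the family $\{H(t)\}_{t \in [0,1]}$ is equi-$\VMO$. In particular, there exists $\delta > 0$ such that $\seminorm{H(t)}_\rho < \iota$ for every $\rho \le \delta$ and every $t \in [0,1]$. By Lemma~\ref{lemmaVMOUniformConvergence}, this uniform bound guarantees that $H(t)_\delta(X) \subset \manfN + B_\iota^\nu$ for all $t \in [0,1]$, and so the map
\[
G \colon t \in [0,1] \longmapsto \Pi \compose H(t)_\delta \in \BUC(X;\manfN)
\]
is well-defined. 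Continuity of $G$ as a $\BUC$-valued path follows by combining (i)~the $\Lebesgue^{1}$ continuity of $H$ implied by its $\VMO$ continuity, (ii)~Lemma~\ref{lemmaVMOConvolutionCompactness}, which turns $\Lebesgue^{1}$ convergence at fixed scale $\delta$ into uniform convergence of the averages, and (iii)~the Lipschitz regularity of $\Pi$ on the tubular neighborhood.

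It then remains to link $v_0$ to $G(0) = \Pi \compose (v_0)_\delta$ inside $\BUC(X;\manfN)$, and analogously $G(1)$ to $v_1$. For this I would use the auxiliary path $s \in [0, \delta] \mapsto \Pi \compose (v_0)_s$, with the convention $(v_0)_0 \vcentcolon= v_0$. Since $v_0 \in \BUC(X;\manfN)$, its uniform continuity yields $(v_0)_s \to v_0$ in $\BUC(X;\R^{\nu})$ as $s \to 0^+$; for $s \to a > 0$, the metric continuity and uniform nondegeneracy of $\mu$ imply $\BUC$-continuity of $s \mapsto (v_0)_s$ by the same arguments used in the proof of Lemma~\ref{lemmaVMOConvolutionCompactness}. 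Since $H(0) = v_0$, the equi-$\VMO$ bound from the first step gives $\seminorm{v_0}_s < \iota$ for all $s \le \delta$, so Lemma~\ref{lemmaVMOUniformConvergence} keeps $(v_0)_s(X)$ inside $\manfN + B_\iota^\nu$ throughout, and composition with $\Pi$ produces a $\BUC$-continuous path from $v_0$ to $G(0)$. The same construction applied to $v_1$ provides the path from $G(1)$ to $v_1$, and concatenating these three paths yields the desired homotopy in $\BUC(X;\manfN)$.

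The main technical point is the equi-$\VMO$ property of the compact image $H([0,1])$: it is what supplies a \emph{single} scale $\delta$ that simultaneously permits the projection of the main path $G$ and of the two linking paths at the endpoints. Everything else amounts to combining the stability and approximation properties of the averaging operator already established for a \emph{single} $\VMO$-map in Sections~\ref{sectionBUC} and~\ref{sectionVMOHomotopy}.
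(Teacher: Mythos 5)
Your proposal is correct and follows essentially the same route as the paper: a uniform (equi-VMO) scale $\delta$ obtained by compactness of the path, the projected averaged path $t \mapsto \Pi \compose H(t)_{\delta}$ giving the main $\BUC$-homotopy, and the linking paths $s \mapsto \Pi \compose (v_i)_s$ at the endpoints, concatenated by transitivity. The only cosmetic differences are your explicit finite-cover phrasing of the compactness step and the parametrization of the endpoint paths over $[0,\delta]$ instead of $[0,1]$.
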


\begin{proof}
  By assumption, there exists a continuous path \(H \colon  [0, 1] \to \VMO(X; \manfN)\) such that \(H(0) = v_{0}\) and \(H(1) = v_{1}\).{}
  By Lemma~\ref{lemmaEquiVMO}, the continuity of \(H\) implies that, for every convergent sequence \((t_{n})_{n \in \N}\) in \([0, 1]\), the sequence \((H(t_{n}))_{n \in \N}\) is equiVMO. 
  It thus follows from a compactnesss argument over \([0 ,1]\) that 
\[{}
\lim_{\rho \to 0}{\seminorm{H(t)}_{\rho}} = 0
\]
uniformly with respect to \(t \in [0, 1]\), see also \cite{BrezisNirenberg1995}*{Lemma~A.16}.
The proposition relies on this property, as we can now take \(\delta > 0\) such that
\begin{equation}
\label{eqHomotopyEquiVMOPath}
\sup_{\rho \le \delta}{\seminorm{H(t)}_{\rho}}
< \iota{}
\quad \text{for every \(t \in [0, 1]\).}
\end{equation}
Since the map
\[{}
t \in [0, 1] \longmapsto H(t) \in \Lebesgue^{1}(X; \R^{\nu})
\]
is continuous, by Lemmas~\ref{lemmaVMOConvolutionCompactness} and \ref{lemmaVMOUniformConvergence} and by Lipschitz-continuity of \(\Pi\), the map
\begin{equation}
\label{eqHomotopyConvolution}
t \in [0, 1] \longmapsto \Pi \compose (H(t)_{\delta})
\in \BUC(X; \manfN)
\end{equation}
is a well defined continuous path.
Hence,
\[{}
\Pi \compose v_{0, \delta} \sim \Pi \compose v_{1, \delta}
\quad \text{in \(\BUC(X; \manfN)\).}
\]
Since \(v_{i} \in \UContinuous(X; \manfN)\) for \(i \in \{0, 1\}\) and \(\mu\) is metric continuous, the map 
\[
t\in [0,1] \longmapsto v_{i, \delta t} \in \BUC(X; \R^{\nu})
\]
is a continuous path, with the convention that \(v_{i,0}=v_i\). 
Moreover, by \eqref{eqHomotopyEquiVMOPath}, we have in particular that \(\sup\limits_{\rho \le \delta}{\seminorm{v_{i}}_{\rho}} < \iota\). 
By Lemma~\ref{lemmaVMOUniformConvergence} and by Lipschitz-continuity of \(\Pi\), it follows that the map 
  \[{}
  t \in [0, 1] \longmapsto \Pi \compose v_{i, \delta t} 
  \in \BUC(X; \manfN)
  \]
  is also a continuous path.
  Hence,
  \[{}
  v_{i} \sim \Pi \compose v_{i, \delta}
  \quad \text{in \(\BUC(X; \manfN)\).}
  \]
  Summarizing, we have
  \[{}
  v_{0} 
  \sim \Pi \compose v_{0, \delta}
  \sim \Pi \compose v_{1, \delta}
  \sim v_{1}
  \quad \text{in \(\BUC(X; \manfN)\)}.
  \]
  We thus have the conclusion by transitivity of the homotopy relation.
\end{proof}

\cleardoublepage
\chapter{Generic \texorpdfstring{$\ell$}{l}-extension}
\label{chapterGenericEllExtension}

Given \(\ell \in \N_*\), consider a map \(u \colon \R^{\ell + 1} \to \Sphere^{\ell}\) defined for \(x \ne 0\) by \(u(x)=x/\abs{x}\), which is continuous except at the origin. The topological nature of this singularity can be quantified by the degree of the restriction \(u|_{\Sphere^\ell} \colon \Sphere^\ell \to \Sphere^\ell\). More generally, for \(a \in \R^{\ell + 1}\) and \(\gamma_a \colon t \in \Sphere^\ell \mapsto t + a \in \R^{\ell + 1}\), we observe that
\[
\deg{(u \circ \gamma_a)} =
\begin{cases}
1 & \text{if } \abs{a} < 1, \\
0 & \text{if } \abs{a} > 1.
\end{cases}
\] 
Therefore, the connected component of \(\Smooth^0(\Sphere^\ell; \Sphere^\ell)\) to which \(u \circ \gamma_a\) belongs depends on the choice of the Lipschitz map \(\gamma_a\). 
This shows that, for an arbitrary map \(u \in \Sobolev^{1, \ell}(\manfV; \manfN)\), it is not always possible to associate a unique homotopy class in \(\pi_{\ell}(\manfN)\) by considering compositions \(u \circ \gamma\) with generic Lipschitz maps \(\gamma \colon \Sphere^{\ell} \to \manfV\).

Such a correspondence, however, becomes possible when \(u\) is \emph{\(\ell\)-extendable}, a notion introduced in this chapter in the broad setting of \(\VMO^\ell\) maps. When \(\pi_{\ell}(\manfN) \simeq \{0\}\), every map in \(\VMO^{\ell}(\manfV; \manfN)\) is \(\ell\)-extendable. Even without this topological property, a prominent example of an \(\ell\)-extendable map is given by \(u \colon \Ball^{\ell+1} \to \manfN\), defined for \(x \neq 0\) by \(u(x) = f(x / \abs{x})\), where \(f \in \Smooth^{\infty}(\cBall^{\ell+1}; \manfN)\).

The \(\ell\)-extendability of a map \(u\) implies the following topological stability property:
For generic Lipschitz maps \(\gamma_0\) and \(\gamma_1\) that are homotopic in \(\Smooth^0(\Sphere^\ell; \manfV)\), the maps \(u \circ \gamma_0\) and \(u \circ \gamma_1\) are homotopic in \(\VMO(\Sphere^\ell; \manfN)\). This result relies on \emph{Fuglede approximations}: 
Given a summable map \(w \colon \manfV \to [0, +\infty]\), every equiLipschitz sequence \((\gamma_j)_{j \in \N}\) converging to a generic map \(\gamma\) can be slightly modified to ensure convergence in \(\Fuglede_w\).

This chapter lays the groundwork for understanding \(\ell\)-extendability, a concept that will later be generalized to \((\ell, e)\)-extendability in Chapter~\ref{chapterExtensionGeneral}. 
By focusing here on the specific case of \(\ell\)-extendability, we establish key properties and results that will guide the broader discussion of extendability properties on simplicial complexes introduced later in this work.

\section{Extension and homotopy invariance}

The \(\ell\)-extension property for \(\VMO^{\ell}\) maps is motivated by the close interaction between the existence of continuous extensions and homotopies in classical topology theory.
For example, a map \(f \in \Smooth^{0}(\Sphere^{\ell}; \manfN)\) has an extension \(\Bar f \in \Smooth^{0}(\cBall^{\ell + 1}; \manfN)\) if and only if \(f\) is homotopic to a constant in \(\Smooth^{0}(\Sphere^{\ell}; \manfN)\).{}
Indeed, given a continuous extension \(\Bar f\), one may take the homotopy \(H \colon  \Sphere^{\ell} \times [0, 1] \to \manfN\) defined by \(H(x, t) = \Bar{f}((1-t)x)\) between \(f\) and \(c \vcentcolon= \Bar f(0)\).
Conversely, given a homotopy \(H\) with \(H(\cdot, 0) = f\) and \(H(\cdot, 1) =\vcentcolon c\in \manfN\), one can take 
\[{}
\Bar f(x) 
=
\begin{cases}
	H\bigl(x/|x|, 1 - \abs{x}\bigr)
	& \quad \text{if \(x \ne 0\),}\\
	c
	& \quad \text{if \(x = 0\).}
\end{cases}
\]

When \(f\) merely belongs to \(\VMO(\Sphere^{\ell}; \manfN)\), it is unclear how to proceed, since there is no analogue in the \(\VMO\) setting for the notion of \emph{continuous extension}. 
Alternatively, there is no suitable definition of traces for \(\VMO\) maps defined on \(\Ball^{\ell+1}\).   
We saw nevertheless in Chapter~\ref{section_VMO} that Brezis and Nirenberg~\cite{BrezisNirenberg1995} developed a robust homotopy theory for \(\VMO\) maps that coincides with the classical one.
This is the approach that we shall pursue as we consider \(\VMO^{\ell}\)~maps that are generically homotopic to a constant:

\begin{definition}
\label{definitionExtensionVMO-1}
Let \(\ell \in \N\).  
A map \(u \in \VMO^{\ell}(\manfV ; \manfN)\) is \emph{\(\ell\)-extendable} whenever it has an \(\ell\)-detector \(w \colon  \manfV \to [0,+\infty]\) such that, for every Lipschitz map \(\gamma \colon  \cBall^{\ell+1} \to  \manfV\) with 
\(\gamma|_{\Sphere^{\ell}} \in \Fuglede_{w}(\Sphere^\ell; \manfV)\),
the restricted map \(u \compose \gamma |_{\Sphere^{\ell}}\) is homotopic to a constant in \(\VMO(\Sphere^{\ell}; \manfN)\).
\end{definition}

Observe that \(\gamma\) itself being defined on \(\cBall^{\ell+1}\), its restriction \(\gamma|_{\Sphere^{\ell}}\) is homotopic to a constant. 
The \(\ell\)-extendability is related to the topology of \(\manfN\) at the level of its homotopy group \(\pi_{\ell}(\manfN)\)\,:

\begin{proposition}
	\label{corollaryExtensionHomotopyGroupEll}
	If \(\pi_{\ell}(\manfN) \simeq \{0\}\), then every map \(u \in \VMO^{\ell}(\manfV; \manfN)\) is \(\ell\)-extendable.
\end{proposition}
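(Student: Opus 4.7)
The plan is to combine three earlier results to show that the composition $u \compose \gamma|_{\Sphere^\ell}$ falls into the (unique) path-connected component of $\VMO(\Sphere^\ell;\manfN)$ that contains the constant maps. Since $\pi_\ell(\manfN) \simeq \{0\}$ means that $\Smooth^0(\Sphere^\ell;\manfN)$ is path-connected, it will suffice to reduce to a continuous representative and then invoke classical homotopy theory.

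First, I would pick as $\ell$-detector $w$ the summable function provided by Definition~\ref{definitionVMOell} associated with $u \in \VMO^\ell(\manfV;\manfN)$. This is the natural candidate. Given any Lipschitz map $\gamma \colon \cBall^{\ell+1} \to \manfV$ with $\gamma|_{\Sphere^\ell} \in \Fuglede_w(\Sphere^\ell;\manfV)$, Proposition~\ref{propositionExtensionVMOSphereDetector} yields $u \compose \gamma|_{\Sphere^\ell} \in \VMO(\Sphere^\ell;\manfN)$, so the composition makes sense in the $\VMO$ framework.

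Second, I would apply Proposition~\ref{propositionHomotopyVMOContinuousMap} to obtain a continuous map $f \in \BUC(\Sphere^\ell;\manfN) = \Smooth^0(\Sphere^\ell;\manfN)$ such that
\[
u \compose \gamma|_{\Sphere^\ell} \sim f \quad \text{in } \VMO(\Sphere^\ell;\manfN).
\]
The hypothesis $\pi_\ell(\manfN) \simeq \{0\}$ then furnishes a continuous homotopy $H \in \Smooth^0(\Sphere^\ell \times [0,1];\manfN)$ between $f$ and a constant map $c \in \manfN$. This continuous homotopy induces a continuous path in $\VMO(\Sphere^\ell;\manfN)$ (since uniform convergence implies $\VMO$ convergence in a finite-measure setting), so $f \sim c$ in $\VMO(\Sphere^\ell;\manfN)$.

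By transitivity of the $\VMO$ homotopy relation (which follows directly from Definition~\ref{defnVMOHomotopy} via concatenation of continuous paths), we conclude $u \compose \gamma|_{\Sphere^\ell} \sim c$ in $\VMO(\Sphere^\ell;\manfN)$, which is precisely the $\ell$-extendability condition. There is no substantial obstacle here: the argument is a straightforward assembly of the $\VMO$ toolkit built in Chapters~\ref{chapter-3-Detectors} and~\ref{section_VMO}, together with the classical interpretation of $\pi_\ell(\manfN) \simeq \{0\}$. The only point that requires a brief verification is that the continuous homotopy $H$ indeed induces a continuous path in the $\VMO$ topology, which is immediate from the continuous inclusion $\BUC(\Sphere^\ell;\manfN) \subset \VMO(\Sphere^\ell;\manfN)$ and the uniform continuity of $H$ on the compact product $\Sphere^\ell \times [0,1]$.
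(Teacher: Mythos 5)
Your proposal is correct and follows essentially the same route as the paper: take an \(\ell\)-detector for \(u\), use Proposition~\ref{propositionExtensionVMOSphereDetector} to get \(u \compose \gamma|_{\Sphere^{\ell}} \in \VMO(\Sphere^{\ell}; \manfN)\), apply Proposition~\ref{propositionHomotopyVMOContinuousMap} to replace it by a homotopic continuous map, invoke \(\pi_{\ell}(\manfN) \simeq \{0\}\), and conclude by transitivity of the \(\VMO\) homotopy relation. Your extra remark that a \(\Smooth^{0}\) homotopy yields a continuous path in \(\VMO\) is exactly the observation the paper uses implicitly when passing from homotopy in \(\Smooth^{0}(\Sphere^{\ell}; \manfN)\) to homotopy in \(\VMO(\Sphere^{\ell}; \manfN)\).
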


\begin{proof}
	Take any \(\ell\)-detector \(w\) for \(u\).{}
	If \(\gamma \in \Fuglede_{w}(\Sphere^{\ell}; \manfV)\), then \(u \compose \gamma \in \VMO(\Sphere^{\ell}; \manfN)\).{}
	By Proposition~\ref{propositionHomotopyVMOContinuousMap}, there exists \(f \in \Smooth^{0}(\Sphere^{\ell}; \manfN)\) such that
	\[{}
	u \compose \gamma \sim f 
	\quad \text{in \(\VMO(\Sphere^{\ell}; \manfN)\)}.
	\]
	Since \(\pi_{\ell}(\manfN) \simeq \{0\}\), the map \(f\) is homotopic to a constant in \(\Smooth^{0}(\Sphere^{\ell}; \manfN)\) and thus also in \(\VMO(\Sphere^{\ell}; \manfN)\). 
	By transitivity of the homotopy relation, \(u\compose \gamma\) is then homotopic to a constant in \(\VMO(\Sphere^{\ell}; \manfN)\).
\end{proof}

In the absence of the topological assumption \(\pi_{\ell}(\manfN) \simeq \{0\}\) one finds examples of maps that are not \(\ell\)-extendable, in the spirit of the work of Schoen and Uhlenbeck~\cite{Schoen-Uhlenbeck}:
 
\begin{proposition}
	\label{propositionExtensionHomotopyGroupEll}
	If \( \ell \le m-1 \) and \(\pi_{\ell}(\manfN) \not\simeq \{0\}\), then there exists a map \(u\) which is not \(\ell\)-extendable that belongs to \(\Sobolev^{k, p}(\Ball^{m}; \manfN)\) for any \(k \in \N_{*}\) and \(1 \le p < \infty\) with \(\floor{kp} \le \ell\).
\end{proposition}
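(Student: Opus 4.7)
The plan is to construct an explicit counterexample in the spirit of Examples~\ref{exampleIntroduction1} and~\ref{exampleIntroduction1bis}. Since \(\pi_{\ell}(\manfN) \not\simeq \{0\}\), I would first pick a smooth map \(f \colon \Sphere^{\ell} \to \manfN\) that is not homotopic to a constant in \(\Smooth^{0}(\Sphere^{\ell}; \manfN)\), and then define \(u \colon \Ball^{m} \to \manfN\) for \(x = (x', x'') \in (\R^{\ell + 1} \setminus \{0\}) \times \R^{m - \ell - 1}\) by
\[
u(x', x'') = f\Bigl(\frac{x'}{\abs{x'}}\Bigr).
\]
A standard scaling computation shows \(\abs{D^{j} u(x)} \le C/\abs{x'}^{j}\), and since \(\floor{kp} \le \ell\) means \(kp < \ell + 1\), the weight \(\abs{x'}^{-kp}\) is integrable on \(\Ball^{m}\), giving \(u \in \Sobolev^{k, p}(\Ball^{m}; \manfN)\).

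To show \(u\) is not \(\ell\)-extendable, I would argue by contradiction: suppose \(w\) is an \(\ell\)-detector for \(u\) as in Definition~\ref{definitionExtensionVMO-1}. Fix small radii \(\rho, r > 0\) with \(\rho < r\) and \(\rho^{2} + (\rho + r)^{2} < 1\), and for \((a', a'') \in B^{\ell + 1}_{\rho} \times B^{m - \ell - 1}_{\rho}\) consider the Lipschitz maps
\[
\gamma_{a', a''} \colon \cBall^{\ell + 1} \to \Ball^{m}, \qquad \gamma_{a', a''}(y) = (a' + r y, a'').
\]
A Tonelli-type computation like the one in the introduction gives
\[
\int_{B^{\ell + 1}_{\rho}} \int_{B^{m - \ell - 1}_{\rho}} \int_{\Sphere^{\ell}} w \compose \gamma_{a', a''} \dif\cH^{\ell} \dif a'' \dif a'
\le \cH^{\ell}(\Sphere^{\ell}) \int_{\Ball^{m}} w
< \infty,
\]
so for almost every such pair \((a', a'')\) the restriction \(\gamma_{a', a''}\vert_{\Sphere^{\ell}}\) belongs to \(\Fuglede_{w}(\Sphere^{\ell}; \Ball^{m})\). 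In particular such a pair exists with \(\abs{a'} < r\) and with \(\gamma_{a', a''}|_{\Sphere^{\ell}}\) in \(\Fuglede_{w}\), so by the assumed \(\ell\)-extendability the map \(u \compose \gamma_{a', a''}\vert_{\Sphere^{\ell}}\) is homotopic to a constant in \(\VMO(\Sphere^{\ell}; \manfN)\).

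Now I would show this contradicts our choice of \(f\). Because \(\abs{a'} < r\), the map \(\psi \colon y \in \Sphere^{\ell} \mapsto (a' + r y)/\abs{a' + r y} \in \Sphere^{\ell}\) is well defined and continuous, and the straight-line homotopy \((y, t) \mapsto (t a' + r y)/\abs{t a' + r y}\) (whose denominator is bounded below by \(r - \abs{a'} > 0\)) connects the identity of \(\Sphere^{\ell}\) to \(\psi\) inside \(\Smooth^{0}(\Sphere^{\ell}; \Sphere^{\ell})\). Hence \(u \compose \gamma_{a', a''}\vert_{\Sphere^{\ell}} = f \compose \psi\) is homotopic to \(f\) in \(\Smooth^{0}(\Sphere^{\ell}; \manfN)\), and therefore in \(\VMO(\Sphere^{\ell}; \manfN)\). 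Combining with the previous step and the transitivity of the \(\VMO\) homotopy relation, \(f\) itself would be homotopic to a constant in \(\VMO(\Sphere^{\ell}; \manfN)\). Since \(f\) is continuous, Proposition~\ref{propositionHomotopyVMOtoC} would then force \(f\) to be homotopic to a constant in \(\Smooth^{0}(\Sphere^{\ell}; \manfN)\), contradicting the choice of \(f\).

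The only mildly delicate step is ensuring that one can simultaneously arrange the Fuglede integrability condition and the inclusion \(\abs{a'} < r\); this is handled by the Tonelli argument above since the set of admissible \((a', a'')\) has full measure in a region where \(\abs{a'} < r\) holds. The \(\Sobolev^{k, p}\) integrability of \(u\) and the passage from \(\VMO\) homotopy back to \(\Smooth^{0}\) homotopy are standard, quoting the material of Chapters~\ref{chapterFuglede} and~\ref{section_VMO}.
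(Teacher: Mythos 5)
Your proof is correct, and it uses the same counterexample as the paper: the homogeneous map \(u(x',x'') = f(x'/\abs{x'})\) with \(f\) smooth and not null-homotopic, together with the observation that \(\floor{kp}\le\ell\) is exactly \(kp<\ell+1\), which gives the \(\Sobolev^{k,p}\) membership. Where you diverge is in how the contradiction is organized. The paper first uses homogeneity to pass from \(\Ball^m\) to the box \(\Ball^{\ell+1}\times(0,1)^{m-\ell-1}\), then applies Tonelli only in the slice variable \(a''\) to produce a generic slice on which the restricted map \(x'\mapsto u(x',a'')\) is \(\ell\)-extendable, and finally invokes the model Lemma~\ref{propositionExtensionHomogeneity} on \(\Ball^{\ell+1}\) (whose proof uses concentric spheres \(\gamma(x)=rx\)). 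You instead argue directly in \(\Ball^m\): you average in the pair \((a',a'')\) to find, via Tonelli, an off-center sphere \(\gamma_{a',a''}|_{\Sphere^\ell}\) that is a Fuglede map for the assumed detector, and you compensate for the off-centering with the explicit straight-line homotopy \((y,t)\mapsto (ta'+ry)/\abs{ta'+ry}\), which identifies \(u\compose\gamma_{a',a''}|_{\Sphere^\ell}=f\compose\psi\) with \(f\) up to \(\Smooth^0\)-homotopy; the contradiction then follows from transitivity and Proposition~\ref{propositionHomotopyVMOtoC}, exactly as in the paper. Your version is slightly more self-contained (no reduction to Lemma~\ref{propositionExtensionHomogeneity}, no rescaling to \(B^m_{\sqrt{m-\ell}}\)), at the cost of the extra homotopy step; the paper's modular route reuses a lemma that is needed elsewhere anyway. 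One cosmetic remark: \(\ell\)-extendability is defined for maps in \(\VMO^{\ell}(\Ball^m;\manfN)\), but since you argue by contradiction from the assumption that \(u\) is \(\ell\)-extendable, that membership is part of the hypothesis you refute (and in any case it holds by Proposition~\ref{proposition_Extension_Property_R_0-New}), so nothing is missing.
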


To prove Proposition~\ref{propositionExtensionHomotopyGroupEll}, we begin with the observation that a naive way of extending a map \(f \in \Smooth^{0}(\Sphere^{\ell}; \manfN)\) is to take \(u \colon  \Ball^{\ell + 1} \to \manfN\) defined for \(x \ne 0\) by
\begin{equation}
	\label{eqLExtension-113}
	u(x) = f\Bigl( \frac{x}{\abs{x}} \Bigr){}.
\end{equation}
	By Proposition~\ref{proposition_Extension_Property_R_0-New} applied to each component of \(u\) with \(m = \ell + 1\), \(\manfV = \Ball^{\ell + 1}\) and \(T^{\ell^{*}} = \{0\}\), we have \(u \in \VMO^{\ell}(\Ball^{\ell + 1}; \manfN)\).{}
	However, there is no suitable choice for \(u(0)\) that makes \(u\) continuous unless \(f\) itself is constant.
	The following equivalence relates \(f\) and \(u\) in terms of extensions:

\begin{lemma}
	\label{propositionExtensionHomogeneity}
	Let \(\ell \in \N\) and \(f \in \Smooth^{0}(\Sphere^{\ell}; \manfN)\).{}
	Then, \(u \colon  \Ball^{\ell + 1} \to \manfN\) satisfying \eqref{eqLExtension-113} is \(\ell\)-extendable if and only if \(f\) has a continuous extension to \(\cBall^{\ell + 1}\).
\end{lemma}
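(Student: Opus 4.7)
\bigskip
\noindent\textbf{Proof plan for Lemma~\ref{propositionExtensionHomogeneity}.} The plan is to handle the two implications independently, using the radial structure of \(u\) to reduce the \(\ell\)-extendability of \(u\) to the continuous extendability of \(f\). Recall first that \(u \in \VMO^{\ell}(\Ball^{\ell + 1}; \manfN)\) by Proposition~\ref{proposition_Extension_Property_R_0-New} applied componentwise with \(T^{\ell^{*}} = \{0\}\).

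For the converse implication, assume that \(f\) admits a continuous extension \(F \in \Smooth^{0}(\cBall^{\ell+1}; \manfN)\). Choose as an \(\ell\)-detector for \(u\) the function \(w(x) = 1/\abs{x}^\alpha\) produced by the proof of Proposition~\ref{proposition_Extension_Property_R_0-New}, for some \(\ell \le \alpha < \ell + 1\); this function is summable on \(\Ball^{\ell + 1}\) and, as noted there, any \(\gamma|_{\Sphere^{\ell}} \in \Fuglede_{w}(\Sphere^{\ell}; \Ball^{\ell+1})\) satisfies \(\gamma(\Sphere^{\ell}) \cap \{0\} = \emptyset\). In particular \(u \compose \gamma|_{\Sphere^\ell}(x) = F(\gamma(x)/\abs{\gamma(x)})\) is continuous, and the map
\[
H \colon (x, t) \in \Sphere^\ell \times [0, 1] \longmapsto F\bigl((1-t)\gamma(x)/\abs{\gamma(x)}\bigr) \in \manfN
\]
is a continuous homotopy from \(u \compose \gamma|_{\Sphere^{\ell}}\) to the constant \(F(0)\). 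This gives a homotopy in \(\Smooth^{0}(\Sphere^{\ell}; \manfN)\), and a fortiori in \(\VMO(\Sphere^{\ell}; \manfN)\), establishing \(\ell\)-extendability.

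For the direct implication, suppose that \(u\) is \(\ell\)-extendable and let \(w\) be an \(\ell\)-detector for \(u\) as in Definition~\ref{definitionExtensionVMO-1}. The key step is to select a single Lipschitz map \(\gamma \colon \cBall^{\ell + 1} \to \Ball^{\ell+1}\) such that \(\gamma|_{\Sphere^\ell} \in \Fuglede_w\) and \(u \compose \gamma|_{\Sphere^\ell} = f\). The natural candidate is the dilation \(\gamma_r(x) = rx\) for some \(0 < r < 1\), which by homogeneity of \(u\) gives \(u \compose \gamma_r|_{\Sphere^\ell}(x) = f(x)\). To find a good \(r\), apply the integration formula in polar coordinates together with Tonelli's theorem:
\[
\int_0^1 \biggl(\int_{\Sphere^{\ell}} w(rx) \dif\cH^\ell(x)\biggr) r^\ell \dif r
= \int_{\Ball^{\ell+1}} w < \infty,
\]
so for almost every \(r \in (0, 1)\) the restriction \(\gamma_r|_{\Sphere^\ell}\) lies in \(\Fuglede_w(\Sphere^\ell; \Ball^{\ell + 1})\). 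Picking such an \(r\), the \(\ell\)-extendability of \(u\) implies that \(f = u \compose \gamma_r|_{\Sphere^\ell}\) is homotopic to a constant in \(\VMO(\Sphere^\ell; \manfN)\). Since \(f \in \Smooth^0(\Sphere^\ell; \manfN) \subset \UContinuous(\Sphere^\ell; \manfN)\), Proposition~\ref{propositionHomotopyVMOtoC} upgrades this to a homotopy in \(\Smooth^0(\Sphere^\ell; \manfN)\), which as recalled at the start of Section~7.1 is equivalent to \(f\) admitting a continuous extension to \(\cBall^{\ell + 1}\).

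The only place requiring real care is the forward direction, where one must confirm both that \(\gamma_r\) is admissible as a test map in Definition~\ref{definitionExtensionVMO-1} (it is Lipschitz on the whole \(\cBall^{\ell+1}\)) and that the genericity condition can be met for a value of \(r\) independent of the particular detector provided by the assumption; the Fubini-type computation above takes care of this uniformly. The converse direction, by contrast, is essentially a calculation with the explicit detector of Proposition~\ref{proposition_Extension_Property_R_0-New}.
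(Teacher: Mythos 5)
Your proof is correct and follows essentially the same route as the paper: the reverse implication uses the radial detector \(1/\abs{x}^{\alpha}\) (the paper takes \(\alpha = \ell\)) to force Fuglede maps to avoid the origin and then a radial homotopy built from the continuous extension of \(f\), while the direct implication tests the given detector on the dilations \(\gamma_r(x)=rx\) via a Tonelli/polar-coordinates genericity argument and concludes with Proposition~\ref{propositionHomotopyVMOtoC}. The only stray remark is your comment about choosing \(r\) ``independent of the particular detector,'' which is unnecessary (and not quite what your computation gives): it suffices that, for the detector furnished by the extendability assumption, almost every \(r\) works.
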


\resetconstant
\begin{proof}[Proof of Lemma~\ref{propositionExtensionHomogeneity}]
	``\(\Longleftarrow\)''. 	
	From the proof of Proposition~\ref{proposition_Extension_Property_R_0-New}, an \(\ell\)-detector for \(u\) is given by the function \(w \colon  \Ball^{\ell + 1} \to [0, +\infty]\) defined by 
	\[
	w(x) = 
 \begin{cases}
  {1}/{\abs{x}^{\ell}} & \textrm{if } x\not=0,\\
+\infty & \textrm{if } x=0,
\end{cases}	
 \]
	and, for every \(\gamma \in \Fuglede_{w}(\Sphere^{\ell}; \Ball^{\ell + 1})\), we have 
	\begin{equation*}
		0 \not\in \gamma(\Sphere^{\ell}).{}
	\end{equation*}
    This implies that
    \begin{equation}
        \label{eqLExtension-131}
        u \compose \gamma 
	    = f \compose ({\gamma}/{\abs{\gamma}}).
    \end{equation}
	We now show that Definition~\ref{definitionExtensionVMO-1} is satisfied.
	To this end, take a Lipschitz map \(\gamma \colon  \cBall^{\ell+1} \to \Ball^{\ell + 1}\) with \(\gamma|_{\Sphere^{\ell}} \in \Fuglede_{w}(\Sphere^{\ell}; \Ball^{\ell + 1})\).{}
	Since \(f\) has a continuous extension to \(\cBall^{\ell+1}\), the map \(f \compose (\gamma/\abs{\gamma})|_{\Sphere^{\ell}}\) is homotopic to a constant in \(\Smooth^{0}(\Sphere^{\ell}; \manfN)\).{}
    We deduce from \eqref{eqLExtension-131} that \(u \compose \gamma|_{\Sphere^{\ell}}\) is homotopic to a constant in \(\Smooth^{0}(\Sphere^{\ell}; \manfN)\) and then also in \(\VMO(\Sphere^{\ell}; \manfN)\).{}
	
	``\(\Longrightarrow\)''.{}
	Take an \(\ell\)-detector \(w\) for \(u\) given by Definition~\ref{definitionExtensionVMO-1}.
	By Tonelli's theorem, for almost every \(0 < r < 1\), the restriction \(w|_{\partial B_{r}^{\ell + 1}}\) is summable in \(\partial B_{r}^{\ell + 1}\).
	Taking such an \(r\) and \(\gamma \colon  \cBall^{\ell+1} \to \Ball^{\ell + 1}\) defined by \(\gamma(x) = rx\), we have \(\gamma|_{\Sphere^{\ell}} \in \Fuglede_{w}(\Sphere^{\ell}; \Ball^{\ell + 1})\) and then \(f = u \compose \gamma|_{\Sphere^{\ell}}\) is homotopic to a constant in \(\VMO(\Sphere^{\ell}; \manfN)\).{}
	Since \(f\) is continuous, by Proposition~\ref{propositionHomotopyVMOtoC} we deduce that \(f\) is homotopic to a constant in \(\Smooth^{0}(\Sphere^{\ell}; \manfN)\).
    Hence, \(f\) has a continuous extension to \(\cBall^{\ell+1}\).
\end{proof}

\begin{proof}[Proof of Proposition~\ref{propositionExtensionHomotopyGroupEll}]
	Since \(\pi_{\ell}(\manfN) \not\simeq \{0\}\), there exists \(f \in \Smooth^{\infty}(\Sphere^{\ell}; \manfN)\) that cannot be continuously extended to \(\cBall^{\ell+1}\).{}
	Take \(\widetilde u \colon  \R^{\ell + 1} \times \R^{m - \ell - 1} \to \manfN\) defined for \(x = (x', x'') \in \R^{\ell + 1} \times \R^{m - \ell - 1}\) and \(x'\neq 0'\) by
	\[{}
	\widetilde u(x)
	= f\Bigl( \frac{x'}{\abs{x'}}  \Bigr).
	\]
	For every \(j \in \N_*\),{}
	\[{}
	\abs{D^{j}\widetilde u(x)}
	\le \frac{C_{j}}{\abs{x'}^{j}}.
	\]
	This implies that \(\widetilde u \in \Sobolev\loc^{k, p}(\R^{m}; \manfN)\) for \(kp < \ell + 1\).{}
	
	Let us assume by contradiction that \(\widetilde u|_{\Ball^{m}}\) is \(\ell\)-extendable.{}
	That is also the case for \(\widetilde u|_{B_{r}^{m}}\) with \(r \vcentcolon= \sqrt{m - \ell}\) by homogeneity of \(\widetilde u\), and then also for \(v \vcentcolon= \widetilde u|_{\Ball^{\ell + 1} \times (0, 1)^{m - \ell - 1}}\) since \(\Ball^{\ell + 1} \times (0, 1)^{m - \ell - 1} \subset B_{r}^{m}\).{}
	Let \(w\) be an \(\ell\)-detector that verifies the \(\ell\)-extendability of \(v\).{}
	By Tonelli's theorem, there exists \(a'' \in (0, 1)^{m - \ell - 1}\) with \(w(\cdot, a'')\) summable in \(\Ball^{\ell + 1}\).{}
	Let us prove that the map
	\begin{equation}
	\label{eqExtension-239}
	x' \in \Ball^{\ell+1} \longmapsto v(x', a'') \in \manfN
	\end{equation}
	is \(\ell\)-extendable using \(w(\cdot, a'')\) as \(\ell\)-detector.	
	Indeed, for any Lipschitz map \(\gamma \colon  \cBall^{\ell+1} \to \Ball^{\ell + 1}\) with \(\gamma|_{\Sphere^{\ell}} \in \Fuglede_{w(\cdot, a'')}(\Sphere^{\ell}; \Ball^{\ell + 1})\), we have 
	\[{}
	(\gamma|_{\Sphere^{\ell}}, a'') \in \Fuglede_{w}\bigl(\Sphere^{\ell}; \Ball^{\ell + 1} \times (0, 1)^{m - \ell - 1}\bigr)
	\]
	and then \(v \compose (\gamma|_{\Sphere^{\ell}}, a'')\) is homotopic to a constant in \(\VMO(\Sphere^{\ell}; \manfN)\).{}
	Therefore, the map \eqref{eqExtension-239} is \(\ell\)-extendable.
	Note however that \(v(x', a'') = f(x'/\abs{x'})\) for \(x' \ne 0'\).{}
	We thus have a contradiction with Lemma~\ref{propositionExtensionHomogeneity}.
	Therefore, \(\widetilde u|_{\Ball^{m}}\) is not \(\ell\)-extendable
\end{proof}

A large class of \(\ell\)-extendable maps is provided by a subset of \(\Sobolev^{k, p}(\manfV;\manfN)\):

\begin{definition}
	\label{definitionHkp}
	Let \(k \in \N_*\) and \(1 \le p < \infty\).{}
	We denote by \(\Hilbert^{k, p}(\manfV; \manfN)\) the set of maps \(u \in \Sobolev^{k, p}(\manfV; \manfN)\) for which there exists a sequence \((u_{j})_{j \in \N}\) in \((\Smooth^{\infty}\cap \Sobolev^{k, p})(\manfV; \manfN)\) such that 
	\[{}
	u_{j} \to u \quad \text{in \(\Sobolev^{k, p}(\manfV; \R^\nu)\).}
	\]
\end{definition}

When \(\manfV = \Omega\) is a bounded open set in \(\R^m\) with Lipschitz boundary, the set \(\Hilbert^{k, p}(\Omega; \manfN)\) coincides with the closure of \(\Smooth^{\infty}(\overline{\Omega}; \manfN)\) for the \(\Sobolev^{k,p}\)~distance, see Remark~\ref{rk-equivalence-of-definition}. 

\begin{proposition}
    \label{propositionDensityExtendable}
    If \(u \in \Hilbert^{k, p}(\manfV; \manfN)\) with \(kp \ge \ell\), then \(u\) is \(\ell\)-extendable.
\end{proposition}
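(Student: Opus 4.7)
The plan is to transfer the extendability of the smooth approximating maps to $u$ via stability of $\VMO$ homotopy under generic composition. First I would exploit that $u$ is bounded (as $\manfN$ is compact) to upgrade $\Sobolev^{k,p}$ convergence to $\Sobolev^{1,kp}$ convergence. Take a sequence $(u_j)_{j\in\N}$ in $(\Smooth^\infty \cap \Sobolev^{k,p})(\manfV;\manfN)$ with $u_j \to u$ in $\Sobolev^{k,p}(\manfV;\R^\nu)$. Since all $u_j$ take values in the bounded set $\manfN$, the Gagliardo-Nirenberg interpolation inequality (applied in local charts when $\manfV = \manfM$) yields
\[
\norm{D(u_j - u)}_{\Lebesgue^{kp}(\manfV)}
\le C \norm{u_j - u}_{\Lebesgue^\infty(\manfV)}^{1 - 1/k} \norm{D^k(u_j - u)}_{\Lebesgue^{p}(\manfV)}^{1/k},
\]
whose right-hand side tends to zero; combined with the uniform bound and $\Lebesgue^p$-convergence, this gives $u_j \to u$ in $\Sobolev^{1, kp}(\manfV; \R^\nu)$.

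Since $1 \cdot kp = kp \ge \ell$, I would then apply Proposition~\ref{lemmaFugledeSobolevDetector} componentwise to extract a subsequence $(u_{j_i})_{i \in \N}$ such that $u_{j_i} \to u$ in $\VMO^\ell(\manfV; \R^\nu)$. According to Definition~\ref{def_conv_VMOl}, this provides a common $\ell$-detector $w \colon \manfV \to [0, +\infty]$ for $u$ and all $u_{j_i}$, and this $w$ will be the detector used to establish $\ell$-extendability of $u$. By Proposition~\ref{propositionExtensionVMOSphereConvergence} applied to this same $w$, for every $\gamma \in \Fuglede_w(\Sphere^\ell; \manfV)$ one has
\[
u_{j_i} \compose \gamma \to u \compose \gamma \quad \text{in } \VMO(\Sphere^\ell; \R^\nu).
\]

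Now fix an arbitrary Lipschitz map $\gamma \colon \cBall^{\ell+1} \to \manfV$ with $\gamma|_{\Sphere^\ell} \in \Fuglede_w(\Sphere^\ell; \manfV)$. For each $i$, the map $u_{j_i} \compose \gamma \colon \cBall^{\ell+1} \to \manfN$ is continuous and therefore provides a continuous $\manfN$-valued extension to $\cBall^{\ell+1}$ of its trace on the sphere. Consequently $u_{j_i} \compose \gamma|_{\Sphere^\ell}$ is homotopic to a constant in $\Smooth^0(\Sphere^\ell; \manfN)$, and hence in $\VMO(\Sphere^\ell; \manfN)$. The convergence $u_{j_i} \compose \gamma|_{\Sphere^\ell} \to u \compose \gamma|_{\Sphere^\ell}$ in $\VMO(\Sphere^\ell; \R^\nu)$ lets me invoke Proposition~\ref{propositionHomotopyVMOLimit}: for $i$ sufficiently large,
\[
u_{j_i} \compose \gamma|_{\Sphere^\ell} \sim u \compose \gamma|_{\Sphere^\ell} \quad \text{in } \VMO(\Sphere^\ell; \manfN).
\]
Transitivity of the $\VMO$ homotopy relation then yields that $u \compose \gamma|_{\Sphere^\ell}$ is homotopic to a constant in $\VMO(\Sphere^\ell; \manfN)$, which by Definition~\ref{definitionExtensionVMO-1} gives the $\ell$-extendability of $u$.

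The conceptual content of the argument is the chain of propositions already at hand; the only mildly delicate point is the Gagliardo-Nirenberg upgrade, which on a compact manifold requires reducing to a finite atlas, but poses no serious obstacle. The topological mechanism, namely that smooth maps compose with Lipschitz maps defined on the ball to produce continuous extensions, and this extendability persists under $\VMO$ convergence by Proposition~\ref{propositionHomotopyVMOLimit}, is precisely the purpose for which the $\VMO$ framework of Chapter~\ref{section_VMO} was developed.
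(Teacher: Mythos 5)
Your proof is correct and follows essentially the same route as the paper's: Gagliardo–Nirenberg to upgrade to \(\Sobolev^{1, kp}\) convergence, then Propositions~\ref{lemmaFugledeSobolevDetector} and~\ref{propositionExtensionVMOSphereConvergence} to get a common \(\ell\)-detector with generic \(\VMO\) convergence, and finally Proposition~\ref{propositionHomotopyVMOLimit} plus transitivity to transfer the null-homotopy of the smooth compositions to \(u \compose \gamma|_{\Sphere^{\ell}}\). The only difference is that you spell out the interpolation estimate explicitly, which the paper leaves implicit.
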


\begin{proof}
    Take a sequence \((u_{j})_{j \in \N}\) in \((\Smooth^{\infty} \cap \Sobolev^{k, p})(\manfV; \manfN)\) that converges to \(u\) in \(\Sobolev^{k, p}(\manfV; \R^{\nu})\).
    Since \(\manfN\) is compact, by the Gagliardo-Nirenberg interpolation inequality, \(u_j \to u\) in \(\Sobolev^{1, kp}(\manfV; \manfN)\).
    By Propositions~\ref{lemmaFugledeSobolevDetector} and~\ref{propositionExtensionVMOSphereConvergence}, there exist a subsequence \((u_{j_{i}})_{i \in \N}\) and an \(\ell\)-detector \(w\) for \(u\) and all \(u_{j_{i}}\) such that, for every \(\gamma \in \Fuglede_{w}(\Sphere^{\ell}; \manfV)\),
    \[
    u_{j_{i}} \compose \gamma \to u \compose \gamma 
    \quad \text{in \(\VMO(\Sphere^{\ell}; \R^{\nu})\).}
    \]
    Then, by Proposition~\ref{propositionHomotopyVMOLimit} there exists \(J \in \N\) such that, for every \(j \ge J\),{}
  \[{}
  u_{j_{i}} \compose \gamma \sim u \compose \gamma
  \quad \text{in \(\VMO(\Sphere^{\ell}; \manfN)\).}
  \]
    Assuming that \(\gamma\) has a Lipschitz continuous extension  \(\Bar{\gamma} \colon \cBall^{\ell + 1} \to \manfV\), then, for every \(i\), by continuity of \(u_{j_{i}}\) the map \(u_{j_{i}} \compose \gamma\) is homotopic to a constant in \(\Smooth^{0}(\Sphere^{\ell}; \manfN)\), whence also in \(\VMO(\Sphere^{\ell}; \manfN)\).
    We deduce by transitivity of the homotopy relation that \(u \compose \gamma\) is homotopic to a constant in \(\VMO(\Sphere^{\ell}; \manfN)\).
\end{proof}

One of the main results proved by White in \cite{White-1986} states that every map \(u\in \Hilbert^{1,p}(\manfV;\manfN)\) has a well defined \(\floor{p}\)-homotopy type, in the sense that restrictions of \(u\) to generic \(\floor{p}\)-dimensional skeletons of \(\manfV\) are all homotopic.
In a subsequent work~\cite{White}, White showed that every \(u\in \Sobolev^{1,p}(\manfV;\manfN)\) has a well defined \((\floor{p}-1)\)-homotopy type.
We unify and extend these two properties in the framework of \(\ell\)-extendable functions in \(\VMO^{\ell}(\manfV; \manfN)\):

\begin{theorem}
\label{proposition_Reference_Homotopy-Sphere}
If \(u \in \VMO^{\ell}(\manfV; \manfN)\) is \(\ell\)-extendable, then there exists an \(\ell\)-detector \(\widetilde{w}\) such that, for every \(\gamma_0, \gamma_1 \in \Fuglede_{\tilde{w}}(\Sphere^\ell ; \manfV)\) with 
\[{}
\gamma_{0} \sim \gamma_{1}
\quad \text{in \(\Smooth^{0}(\Sphere^{\ell}; \manfV)\),} 
\] 
we have
\[
u \compose \gamma_0  \sim u \compose \gamma_1{}
\quad \text{in \(\VMO (\Sphere^\ell; \manfN)\).}
\]
\end{theorem}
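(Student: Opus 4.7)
The strategy is to construct a $\VMO$-continuous path between $u \compose \gamma_{0}$ and $u \compose \gamma_{1}$ by time-slicing a suitably perturbed Lipschitz approximation of a continuous homotopy $H \colon \Sphere^{\ell} \times [0, 1] \to \manfV$ provided by the hypothesis $\gamma_{0} \sim \gamma_{1}$ in $\Smooth^{0}(\Sphere^{\ell}; \manfV)$.

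I would first fix the detector by summing an $\ell$-detector $w_{1}$ witnessing the $\ell$-extendability of $u$ via Definition~\ref{definitionExtensionVMO-1} with an $\ell$-detector $w_{2}$ witnessing the $\VMO^{\ell}$-stability of $u$ via Definition~\ref{definitionVMOell}; the sum $\widetilde w = w_{1} + w_{2}$ remains a summable $\ell$-detector enjoying both properties. Given $\gamma_{0}, \gamma_{1} \in \Fuglede_{\tilde w}(\Sphere^{\ell}; \manfV)$ and the continuous homotopy $H$, I would approximate $H$ by a Lipschitz map $\tilde H \colon \Sphere^{\ell} \times [0, 1] \to \manfV$ uniformly close to $H$ and with $\tilde H(\cdot, i) = \gamma_{i}$ for $i = 0, 1$; this is standard, obtained by mollifying $H$ in a tubular neighborhood of $\manfV$ in the ambient Euclidean space, projecting back via the nearest point retraction, and interpolating in time to preserve the Lipschitz boundary maps. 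Using the transversal perturbation of Proposition~\ref{lemmaTransversalFamily}, I would then produce a parametrized family $\tilde H_{a}$ such that, for almost every parameter $a$ in a small Euclidean neighborhood of the origin, $\tilde H_{a}$ belongs to $\Fuglede_{\tilde w}(\Sphere^{\ell} \times [0, 1]; \manfV)$ and stays uniformly close to $\tilde H$. For $a$ small enough, $\tilde H_{a}(\cdot, i)$ is $\Fuglede_{\tilde w}$-close to $\gamma_{i}$, so by the $\VMO^{\ell}$-stability and Proposition~\ref{propositionHomotopyVMOLimit} one has $u \compose \tilde H_{a}(\cdot, i) \sim u \compose \gamma_{i}$ in $\VMO(\Sphere^{\ell}; \manfN)$ for $i = 0, 1$; it thus suffices to establish the $\VMO$-homotopy $u \compose \tilde H_{a}(\cdot, 0) \sim u \compose \tilde H_{a}(\cdot, 1)$.

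By Tonelli's theorem, $\tilde H_{a}(\cdot, t) \in \Fuglede_{\tilde w}(\Sphere^{\ell}; \manfV)$ for almost every $t \in [0, 1]$, with $\int_{\Sphere^{\ell}} \widetilde w \compose \tilde H_{a}(\cdot, t) \dif \sigma$ integrable in $t$. For a large threshold $M > 0$, the ``good'' set $G_{M}$ of times on which this integral is at most $M$ occupies most of $[0, 1]$, including both endpoints. I would select a partition $0 = t_{0} < t_{1} < \dots < t_{N} = 1$ of good times with small mesh. For consecutive good times $t_{i-1}, t_{i}$ sufficiently close, the convergence $\tilde H_{a}(\cdot, t_{i}) \to \tilde H_{a}(\cdot, t_{i-1})$ takes place in $\Fuglede_{\tilde w}$: uniform convergence and equiLipschitzness follow from the joint Lipschitz regularity of $\tilde H_{a}$, while membership in $G_{M}$ supplies the uniform $L^{1}$-bound on the detector. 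The $\VMO^{\ell}$-stability of $u$ then yields $u \compose \tilde H_{a}(\cdot, t_{i}) \to u \compose \tilde H_{a}(\cdot, t_{i-1})$ in $\VMO$, and Proposition~\ref{propositionHomotopyVMOLimit} upgrades this to a $\VMO$-homotopy between consecutive slices.

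The hard part will be to handle isolated ``bad'' times $t^{*}$ at which $\int_{\Sphere^{\ell}} \widetilde w \compose \tilde H_{a}(\cdot, t) \dif \sigma$ blows up: near $t^{*}$ the good set $G_{M}$ exhibits an irreducible gap for each fixed $M$, so the Fuglede convergence argument alone cannot bridge this singular time. This is precisely where the $\ell$-extendability assumption must be invoked. For each bad time $t^{*}$, I would choose straddling good times $s, t \in G_{M}$ with $s < t^{*} < t$, and construct from the Lipschitz restriction of $\tilde H_{a}$ to the sub-cylinder $\Sphere^{\ell} \times [s, t]$ an auxiliary Lipschitz map $\Gamma \colon \cBall^{\ell+1} \to \manfV$ whose boundary trace on $\Sphere^{\ell}$ encodes the two slice maps $\tilde H_{a}(\cdot, s)$ and $\tilde H_{a}(\cdot, t)$, exploiting the cylindrical structure to reconcile them into a single map on the ball. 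Applying the $\ell$-extendability of $u$ to $\Gamma$ would give the null-homotopy of $u \compose \Gamma|_{\Sphere^{\ell}}$ in $\VMO$, and unpacking this null-homotopy through the boundary construction would deliver the required $\VMO$-homotopy $u \compose \tilde H_{a}(\cdot, s) \sim u \compose \tilde H_{a}(\cdot, t)$ spanning $t^{*}$. Concatenating the homotopies across consecutive good times together with those across the straddled bad times would then produce the $\VMO$-homotopy $u \compose \tilde H_{a}(\cdot, 0) \sim u \compose \tilde H_{a}(\cdot, 1)$, completing the proof.
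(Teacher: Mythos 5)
Your overall frame (Lipschitz homotopy, transversal perturbation, fixing a detector that combines extendability and $\VMO^{\ell}$-stability) starts along the right lines, but the step you yourself flag as the hard part is a genuine gap, and it is where the argument fails. The definition of $\ell$-extendability only produces a null-homotopy of $u \compose \Gamma|_{\Sphere^{\ell}}$ for a Lipschitz map $\Gamma$ defined on the \emph{ball} $\cBall^{\ell+1}$; it gives no way to compare two distinct slices of a cylinder. Your proposed $\Gamma$ "encoding" both slices $\tilde H_{a}(\cdot,s)$ and $\tilde H_{a}(\cdot,t)$ on the single boundary sphere has no valid implementation: the sub-cylinder $\Sphere^{\ell}\times[s,t]$ is not a ball, there is no Lipschitz map $\cBall^{\ell+1}\to\Sphere^{\ell}\times[s,t]$ restricting suitably on $\partial\cBall^{\ell+1}$ (that would make the identity of $\Sphere^{\ell}$ null-homotopic), and the only classical surrogate --- a "difference map" glued from the two slices along an equator --- requires continuity and basepoint matching that VMO maps do not have, and in any case the implication "glued map null-homotopic $\Rightarrow$ the two maps homotopic" is not available in this unbased, non-continuous setting. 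There are also two secondary problems in the "easy" part: the set of bad times is merely a measurable set of small measure, not a collection of isolated points, and the threshold $\delta(s)$ for which nearby good slices give VMO-homotopic compositions is obtained by a sequential argument at each fixed $s$ and is not uniform, so the finite chaining along a partition of good times is not justified as stated.

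The paper avoids time-slicing altogether. After producing the Lipschitz homotopy $H$ (Lemma~\ref{lemmaHomotopyLipschitz}), it triangulates the whole cylinder $E^{\ell+1}=K^{\ell}\times[0,1]$ as an $(\ell+1)$-dimensional simplicial complex with subcomplex $L^{\ell}=K^{\ell}\times\{0,1\}$, and uses Lemma~\ref{propositionGenericStability} to replace $H$ by $\widetilde H=\tau_{\xi}\compose H$ which is Fuglede on the \emph{entire} $\ell$-skeleton $E^{\ell}$ (not just on horizontal slices) while $u\compose\widetilde H|_{L^{\ell}}\sim u\compose H|_{L^{\ell}}$ in VMO. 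Then the simplicial-complex characterization of $\ell$-extendability (Proposition~\ref{lemma_l_l1_property}) is applied once, to $\widetilde H$ on $E^{\ell+1}$, yielding a single continuous $F\in\Smooth^{0}(E^{\ell+1};\manfN)$ with $u\compose\widetilde H|_{E^{\ell}}\sim F|_{E^{\ell}}$; restricting to $L^{\ell}$ (Lemma~\ref{lemmaVMORestriction}) and to the two end copies, and using that $F$ is continuous across the cylinder so $F|_{K^{\ell}\times\{0\}}\sim F|_{K^{\ell}\times\{1\}}$, gives the conclusion by transitivity. In short, extendability is invoked globally on the triangulated homotopy cylinder, where the continuity of the extension $F$ is exactly the bridge your slicing argument is missing; if you want to repair your proof, this is the mechanism to adopt (the sphere case then follows by a biLipschitz identification of $\Sphere^{\ell}$ with $\partial\Simplex^{\ell+1}$ as in Proposition~\ref{propositionExtendabilitySimplex}).
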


A generalization of the above statement where the sphere \(\Sphere^\ell\) is replaced by a general simplicial complex is presented in Section~\ref{sectionLExtensionIndependence}, see Theorem~\ref{proposition_Reference_Homotopy} and also Corollary~\ref{corollatyWhiteHomotopyType}. 
The combination of Proposition~\ref{propositionDensityExtendable} and Corollary~\ref{corollatyWhiteHomotopyType} allows us to recover White's property~\cite{White-1986} concerning the existence of a \(\floor{kp}\)-homotopy type for maps in \(\Hilbert^{k,p}(\manfV;\manfN)\). 
We show in the next section, see Proposition~\ref{propositionLExtensionSobolev}, that any map \(u\in \Sobolev^{k,p}(\manfV;\manfN)\) is \((\floor{kp}-1)\)-extendable, from which one may deduce by the same argument that \(u\) has a \((\floor{kp} - 1)\)-homotopy type.

A consequence of Theorem~\ref{proposition_Reference_Homotopy-Sphere} is that the function
\begin{equation}
\label{eqExtension-284}
\gamma \in \Fuglede_{w}(\Sphere^{\ell}; \manfV)
\longmapsto u \compose \gamma \in \VMO(\Sphere^{\ell}; \manfN)
\end{equation}
may be quotiented in terms of homotopy equivalence classes on both sides to yield a well defined function among homotopy classes,
\begin{equation}
\label{eqExtension-290}
[\gamma] \in [\Sphere^\ell, \manfV]
\longmapsto [u \compose \gamma] \in [\Sphere^\ell, \manfN].
\end{equation}
In some cases, for example, if \(\pi_1 (\manfV)\) and \(\pi_{1} (\manfN)\)
are trivial or if, more generally, their actions on \(\pi_{\ell} (\manfV)\) and \(\pi_{\ell} (\manfN)\) are trivial, then \eqref{eqExtension-290} becomes a mapping from \(\pi_{\ell} (\manfV)\) into \(\pi_{\ell} (\manfN)\).
Map \eqref{eqExtension-290} intervenes, for example, in the homotopy classification of Ginzburg-Landau minimizers that has been implemented by Rubinstein and Sternberg~\cite{Rubinstein-Sternberg} in connection with the Sobolev space \(\Sobolev^{1, 2}(\Ball^3; \Sphere^1)\). 
Note that, by Proposition~\ref{propositionLExtensionSobolev} below, every map in this space is \(1\)-extendable.

Let us now explain how \eqref{eqExtension-290} is defined from \eqref{eqExtension-284}.
Firstly, the fact that \(\Smooth^0(\Sphere^\ell; \manfV)\) is not contained in the domain of \eqref{eqExtension-284} should be addressed.
It suffices however to observe that every element \(\alpha \in [\Sphere^\ell, \manfV]\) can be written as
\begin{equation}
    \label{eqExtension-305}
    \alpha = [\gamma]
    \quad \text{with \(\gamma \in \Fuglede_w(\Sphere^\ell; \manfV)\).}
\end{equation}
To justify \eqref{eqExtension-305}, note that by an approximation argument one can always write \(\alpha\) in this form using a Lipschitz continuous representative \(\gamma \colon  \Sphere^\ell \to \manfV\), although \(\gamma\) itself need not be a Fuglede map.
However, with the help of a transversal approximation of the identity that we implement in Section~\ref{sectionExtensionApproximation}, 
one can find a sequence \((\gamma_j)_{j \in \N}\) in \(\Fuglede_w(\Sphere^\ell; \manfV)\) that converges uniformly to \(\gamma\)\,, see assertion~\eqref{eqExtension-568}.
In particular, \(\gamma_j \sim \gamma\) in \(\Smooth^0(\Sphere^\ell; \manfV)\) for every \(j\) sufficiently large.
Thus, replacing \(\gamma\) by \(\gamma_j\) if necessary, we have \eqref{eqExtension-305}.

Next, the meaning of \([u \compose \gamma]\) in \eqref{eqExtension-290} is the following: The path-connected component of \(\VMO(\Sphere^{\ell}; \manfN)\) that contains \(u \compose \gamma\) also contains some \(f \in \Smooth^{0}(\Sphere^{\ell}; \manfN)\), see Proposition~\ref{propositionHomotopyVMOContinuousMap}, and any other \(\widetilde f \in \Smooth^{0}(\Sphere^{\ell}; \manfN)\) in this component is homotopic to \(f\) in \(\Smooth^{0}(\Sphere^{\ell}; \manfN)\), see Proposition~\ref{propositionHomotopyVMOtoC}.{}
We then set
\[{}
[u \compose \gamma] = [f],
\]
as the equivalence class that contains \(f\).

We postpone the proof of Theorem~\ref{proposition_Reference_Homotopy-Sphere} to Section~\ref{sectionLExtensionIndependence}.
The main tool is the approximation of Fuglede maps from Section~\ref{sectionExtensionApproximation}.
Observe that the converse of Theorem~\ref{proposition_Reference_Homotopy-Sphere} readily follows from the definition of \(\ell\)-extendability.
In fact, if \(u \in \VMO^{\ell}(\manfV ; \manfN)\) is not \(\ell\)-extendable, then for every \(\ell\)-detector \(w\) there exists a Lipschitz map \(\gamma \colon  \cBall^{\ell+1} \to  \manfV\) with 
\(\gamma|_{\Sphere^{\ell}} \in \Fuglede_{w}(\Sphere^{\ell}; \manfV)\) so that \(u \compose \gamma|_{\Sphere^{\ell}}\) is not homotopic to a constant in \(\VMO (\Sphere^{\ell}; \manfN)\).
We thus have
\[{}
\gamma|_{\Sphere^{\ell}} \sim \cte
\quad \text{in \(\Smooth^{0}(\Sphere^{\ell}; \manfV)\),} 
\] 
where the constant map \(\cte\) can be adjusted so that \(\cte \in \Fuglede_w(\Sphere^{\ell}; \manfV)\). 
However,
\[
u \compose \gamma|_{\Sphere^{\ell}}  \not\sim u \compose \cte
\quad \text{in \(\VMO (\Sphere^{\ell}; \manfN)\).}
\]

\section{Approximation of Fuglede maps}
\label{sectionExtensionApproximation}

An important example of \(\ell\)-extendability in the setting of Sobolev maps regardless of the topology of \(\manfN\) is the following:

\begin{proposition}
	\label{propositionLExtensionSobolev}
	If \(u \in \Sobolev^{k, p}(\manfV; \manfN)\) with \(kp \ge \ell + 1\), then \(u\) is \(\ell\)-extendable.
\end{proposition}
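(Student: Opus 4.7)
The plan is to realize \(u \compose \gamma|_{\Sphere^\ell}\) as the trace of a Sobolev map on \(\cBall^{\ell+1}\), and then use the embedding \(\Sobolev^{1, kp} \subset \VMO\) on \((\ell+1)\)-dimensional domains to produce the desired \(\VMO\)-homotopy to a constant. Since \(\manfN\) is bounded, the Gagliardo-Nirenberg interpolation yields \(u \in \Sobolev^{1, kp}(\manfV; \R^\nu)\), and applying Proposition~\ref{corollaryCompositionSobolevFuglede} componentwise produces a summable function \(w_0 \colon \manfV \to [0, +\infty]\) such that every Lipschitz map \(\Gamma \colon \cBall^{\ell+1} \to \manfV\) belonging to \(\Fuglede_{w_0}(\cBall^{\ell+1}; \manfV)\) with \(\Gamma|_{\Sphere^\ell} \in \Fuglede_{w_0}(\Sphere^\ell; \manfV)\) satisfies \(u \compose \Gamma \in \Sobolev^{1, kp}(\cBall^{\ell+1}; \R^\nu)\) and \(\Trace{(u \compose \Gamma)} = u \compose \Gamma|_{\Sphere^\ell}\) \(\cH^{\ell}\)-almost everywhere. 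Combining \(w_0\) with the \(\ell\)-detector supplied by Proposition~\ref{propositionSobolevVMO>1} at exponent \(kp\) gives an initial candidate detector \(w\), that will be enlarged below while remaining summable.

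The obstacle is that the given \(\gamma\) is only assumed to be Fuglede on \(\Sphere^\ell\), not on the interior of the ball. I would therefore approximate \(\gamma\) by a sequence of Lipschitz maps \((\gamma_j)_{j \in \N}\) on \(\cBall^{\ell+1}\) into \(\manfV\) satisfying
\begin{enumerate}[$(i)$]
    \item \(\gamma_j \to \gamma\) uniformly on \(\cBall^{\ell+1}\) with equiLipschitz constants;
    \item \(\gamma_j \in \Fuglede_{w_0}(\cBall^{\ell+1}; \manfV)\) and \(\gamma_j|_{\Sphere^\ell} \in \Fuglede_{w_0}(\Sphere^\ell; \manfV)\) for each \(j\);
    \item \(\gamma_j|_{\Sphere^\ell} \to \gamma|_{\Sphere^\ell}\) in \(\Fuglede_w(\Sphere^\ell; \manfV)\).
\end{enumerate}
Such an approximation will be produced by the transversal-perturbation machinery developed in the remainder of this section, in the spirit of Example~\ref{exampleTransversalPertubationEuclidean}: one averages \(\gamma\) against a family of perturbations of the identity in \(\manfV\) and invokes Fubini to extract a generic perturbation that is simultaneously Fuglede on the interior and on the boundary. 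The detector \(w\) is enlarged to encode the averaging, forcing the boundary convergence in (iii) via Lemma~\ref{remarkLemmaFuglede}.

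Granting the approximation, each \(v_j \vcentcolon= u \compose \gamma_j\) lies in \(\Sobolev^{1, kp}(\cBall^{\ell+1}; \manfN)\) and, since \(kp \ge \ell + 1\) is the dimension of the ball, also in \(\VMO(\cBall^{\ell+1}; \manfN)\) by Example~\ref{exampleDetectorsVMOSobolev}. Applying Proposition~\ref{propositionHomotopyVMOContinuousMap} on \(\cBall^{\ell+1}\) produces a \(\VMO\)-path \(t \in [0, 1] \mapsto \Pi \compose (v_j)_{\delta t}\) from \(v_j\) to a continuous map \(f_j \in \Smooth^{0}(\cBall^{\ell+1}; \manfN)\). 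Each averaged map \(\Pi \compose (v_j)_{\delta t}\) is continuous on the closed ball, so its restriction to \(\Sphere^\ell\) yields a continuous path in \(\VMO(\Sphere^\ell; \manfN)\); at \(t = 0\) this path starts at \(v_j|_{\Sphere^\ell} = u \compose \gamma_j|_{\Sphere^\ell}\) by the trace identification in Proposition~\ref{corollaryCompositionSobolevFuglede} together with the convergence of averages of a \(\Sobolev^{1, kp}\)~map to its trace on \(\Sphere^\ell\). Since \(f_j\) is a continuous extension of \(f_j|_{\Sphere^\ell}\) to the ball, the latter is homotopic to a constant in \(\Smooth^{0}(\Sphere^\ell; \manfN)\); concatenating this with the previous \(\VMO\)-path shows that \(u \compose \gamma_j|_{\Sphere^\ell}\) is \(\VMO\)-homotopic to a constant.

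Finally, Proposition~\ref{propositionExtensionVMOSphere} converts (iii) into \(u \compose \gamma_j|_{\Sphere^\ell} \to u \compose \gamma|_{\Sphere^\ell}\) in \(\VMO(\Sphere^\ell; \manfN)\), and Proposition~\ref{propositionHomotopyVMOLimit} forces these to lie in the same path-connected component of \(\VMO(\Sphere^\ell; \manfN)\) for all sufficiently large \(j\); transitivity of \(\VMO\)-homotopy then yields the claim. The hard part will be the construction of the approximating sequence with properties (i)--(iii), which depends on the forthcoming transversal-perturbation technology and on a careful enlargement of the detector \(w\) ensuring the simultaneous bulk-Fuglede and boundary-Fuglede conditions; a secondary delicate point is the continuity at \(t = 0\) of the boundary restriction of the Brezis-Nirenberg mollification homotopy, which relies on trace-convergence for \(\Sobolev^{1, kp}\) rather than on any general trace theory in \(\VMO\).
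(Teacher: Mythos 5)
Your first half — perturbing \(\gamma\) via the transversal-perturbation machinery so that the perturbed maps are Fuglede both on \(\cBall^{\ell+1}\) and on \(\Sphere^{\ell}\), then transferring the null-homotopy back to \(u \compose \gamma|_{\Sphere^{\ell}}\) through generic \(\VMO\) stability and Proposition~\ref{propositionHomotopyVMOLimit} — is exactly the paper's argument. The divergence, and the gap, is in how you show that each \(u \compose \gamma_j|_{\Sphere^{\ell}}\) is null-homotopic in \(\VMO(\Sphere^{\ell}; \manfN)\). You restrict the Brezis--Nirenberg mollification homotopy \(t \mapsto \Pi \compose (v_j)_{\delta t}\) from the closed ball to the sphere and claim the resulting path is continuous on all of \([0,1]\) with value \(u \compose \gamma_j|_{\Sphere^{\ell}}\) at \(t = 0\). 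For \(t > 0\) this is fine (the averaged maps vary continuously in the sup norm), but the endpoint is precisely the point where the argument is unproved: you need \(\Pi \compose (v_j)_{\delta t}|_{\Sphere^{\ell}} \to \Trace{v_j}\) \emph{in \(\VMO(\Sphere^{\ell})\)} as \(t \to 0\). Standard trace theory only gives convergence of half-ball averages to the trace at \(\cH^{\ell}\)-almost every boundary point, hence \(\Lebesgue^{1}(\Sphere^{\ell})\) convergence after dominated convergence; by Lemma~\ref{lemmaEquiVMO} you would additionally need the family of boundary restrictions \(\{(v_j)_{s}|_{\Sphere^{\ell}}\}_{s}\) to be equiVMO. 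That is a crossed-oscillation estimate (boundary averages of interior averages), and the naive Fubini argument in the spirit of Lemma~\ref{lemmaVMOUpperBound} degenerates in the regime \(s \ll \rho\): controlling it requires comparing half-ball averages of \(v_j\) with boundary-ball averages of \(\Trace{v_j}\) via an estimate of the type \(\bigl|\fint_{B_{s}(y) \cap \Ball^{\ell+1}} v_j - \fint_{B_{s}(y) \cap \Sphere^{\ell}} \Trace{v_j}\bigr| \le C s \fint_{B_{2s}(y) \cap \Ball^{\ell+1}} \abs{Dv_j}\), plus the fact that \(\Trace{v_j}\) itself is \(\VMO\) on \(\Sphere^{\ell}\). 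None of this is supplied, and it is not a "secondary delicate point" reducible to trace convergence: it is the entire content of the null-homotopy step, carrying the same weight as the lemma it would replace. Recall also that the paper's own discussion at the start of Chapter~\ref{chapterGenericEllExtension} stresses that there is no trace theory for \(\VMO\) maps on the ball, which is exactly the difficulty your restriction argument runs into at \(t=0\).

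There is a cleaner route with the tools you already cite, and it is the one the paper takes (Lemma~\ref{lemmaTraceVMO}): since \(kp \ge \ell + 1\) equals the dimension of \(\Ball^{\ell+1}\), approximate \(v_j = u \compose \gamma_j \in \Sobolev^{1, kp}(\Ball^{\ell+1}; \manfN)\) by smooth \(\manfN\)-valued maps on \(\cBall^{\ell+1}\) (Theorem~\ref{theoremSchoen-Uhlenbeck}); by continuity of the trace operator the boundary values converge in \(\Sobolev^{1 - 1/kp, kp}(\Sphere^{\ell})\), which embeds into \(\VMO(\Sphere^{\ell})\) because \((1 - 1/kp)kp = kp - 1 \ge \ell\) (Example~\ref{exampleVMOSobolevFractional}); Proposition~\ref{propositionHomotopyVMOLimit} then puts \(\Trace{v_j}\) in the same \(\VMO\) homotopy class as the restriction of a smooth map on the ball, which is null-homotopic. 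Substituting this for your mollification-restriction step closes the gap; alternatively, you must prove the equiVMO crossed estimate sketched above as a separate lemma.
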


The proof relies on the approximation of Fuglede maps that we develop in this section.
The first step is to establish a robust setting where generic perturbations of arbitrary Lipschitz maps \(\gamma \colon  X \to \manfV\) yield legitimate Fuglede maps. 
When \(\manfV\) is a bounded open subset of the Euclidean space, such perturbations can be naturally achieved through the translation of \(\gamma\)\,:

\begin{example}
    Let \(\manfV = \Omega\) be an open subset of \(\R^m\), let \(w \colon  \Omega \to [0, +\infty]\) be a summable function and let \(\gamma \colon  X \to \Omega\) be a Lipschitz continuous map defined on a metric measure space \((X, d, \mu)\) with finite measure.
    If \(\delta > 0\) is such that \(d(\gamma(X), \partial\Omega) \ge \delta > 0\),
    then, for almost every \(\xi \in B_\delta^m\),
    \[
    \gamma + \xi \in \Fuglede_w(X; \Omega).
    \]
    Indeed, by Tonelli's theorem and a change of variables we have
    \[
    \begin{split}
    \int_{B_\delta^m} \biggl(  \int_X w(\gamma(x) + \xi) \dif\mu(x) \biggr) \dif \xi
    & =  \int_X \biggl(  \int_{B_\delta^m} w(\gamma(x) + \xi) \dif\xi \biggr) \dif\mu(x)\\
    & = \int_X \biggl(  \int_{\gamma(x) + B_\delta^m} w(\zeta) \dif\zeta \biggr) \dif\mu(x).
    \end{split}
    \]
    Thus,
    \[
    \int_{B_\delta^m} \biggl(  \int_X w(\gamma(x) + \xi) \dif\mu(x) \biggr) \dif \xi
    \le \int_X \biggl(  \int_\Omega w(\zeta) \dif\zeta \biggr) \dif\mu(x)
    = \mu(X) \int_\Omega w < \infty,
    \]
    which implies that, for almost every \(\xi \in B_\delta^m\), 
    \[
    \int_X w(\gamma(x) + \xi) \dif\mu(x) < \infty,
    \]
    whence \(\gamma + \xi\) is a Fuglede map.
\end{example}

The notion of transversal perturbation of the identity gathers essential properties of translations in a way that is adapted to general manifolds.

\begin{definition}
\label{definitionTransversalFamily}
Given an open set \(U \subset \manfV \times \R^{q}\) with \(q \ge m\) and \(\manfV \times \{0\} \subset U\), we say that a smooth map \(\tau \colon  U \to \manfV\) is a \emph{transversal perturbation of the identity} whenever
\begin{enumerate}[(a)]
\item \label{propertyTransversalLipschitz} 
for every \(z \in \manfV\), \(\tau(z, 0) = z\), and there exists \(\alpha>0\) such that
\[{}
d (\tau (z, \xi), z) \le \alpha\abs{\xi}
\quad{}
\text{for every \((z, \xi) \in U\),}
\]
\item \label{propertyTranversality} 
there exists \(\lambda > 0\) such that the \(m\)-dimensional Jacobian of \(\tau^z \vcentcolon= \tau (z, \cdot)\) satisfies 
\[{}
\Jacobian{m}{\tau^z}(\xi) \ge \lambda{}
\quad \text{for every \((z, \xi)\in U\),}
\]
\item \label{propertyBoundInverseMeasure} 
there exists \(\Lambda > 0\) such that, for each \(\delta>0\), 
\[{}
\cH^{q - m} \bigl(B^{q}_\delta \cap (\tau^z)^{-1} (\{y\})\bigr) \le \Lambda \delta^{q - m}
\quad \text{for every \(y, z \in \manfV\)}.
\]
\end{enumerate}
\end{definition}

Denoting the differential map of \(\tau^z\) at \(\xi\) by \(D\tau^z(\xi)\colon \R^q\to T_{\tau(z, \xi)}\manfV\), where \(T_{\tau(z, \xi)}\manfV\) is the tangent plane to \(\manfV\) at \(\tau(z, \xi)\), the \(m\)-dimensional Jacobian of \(\tau^z\) at \(\xi\) can be computed as
\begin{equation}
\label{eqLExtensionJacobian}
\Jacobian{m}{\tau^z} (\xi) = \bigl[\det{\bigl(D \tau^z (\xi) \compose D \tau^z (\xi)^* \bigr)}\bigr]^{\frac{1}{2}}.
\end{equation}
Definition~\ref{definitionTransversalFamily} is reminiscent of a notion by Hang and Lin~\cite{Hang-Lin} of parametrized families of mappings from a subdomain of a compact manifold with values in \(\manfV\), see \cite{Hang-Lin}*{p.~68, \((H_1)\)--\((H_3)\)}. 
The variable \(\xi\) in \(\tau(z, \xi)\) can be seen as a perturbation parameter, and the map
\[{}
\tau_{\xi} \vcentcolon= \tau(\cdot, \xi){}
\]
is a perturbation of the identity in \(\manfV\).

One of the goals of this section is to establish the following generic approximation property:

\begin{proposition}
\label{propositionFugledeApproximation}
Let \(\tau \colon  U \to \manfV\) be a transversal perturbation of the identity.
Given a summable function \(w \colon  \manfV \to [0, +\infty]\), there exists a summable function 
\(\widetilde{w} \ge w\) in \(\manfV\) with the following property:
For every metric measure space \((X, d, \mu)\) with finite measure, every \(\gamma \in \Fuglede_{\tilde{w}}(X ; \manfV)\) such that \(B_\delta^m(\gamma(X)) \times B_{\delta}^{q} \subset U\)
for some \(\delta > 0\), and every equiLipschitz sequence of maps \(\gamma_{j} \colon  X \to \manfV\) that converges uniformly to \(\gamma\) in \(X\),
there exist a subsequence \((\gamma_{j_{i}})_{i \in \N}\) and a sequence \((\xi_{i})_{i \in \N}\) in \(B_{\delta}^{q}\) that converges to \(0\)  such that 
\(\tau_{\xi_{i}} \compose \gamma_{j_{i}} \in \Fuglede_{w}(X ; \manfV)\) for every \(i \in \N\) and
\[{}
\tau_{\xi_i} \compose \gamma_{j_{i}}
\to \gamma \quad \text{in \(\Fuglede_{w}(X ; \manfV)\).}
\]
More generally, there exists a sequence of positive numbers \((\epsilon_{i})_{i \in \N}\) that converges to zero such that
\begin{equation}
\label{eqTransversalPerturbationLimit}
\lim_{i \to \infty}{\fint_{B_{\epsilon_{i}}^{q}}{\biggl( \int_{X}{\abs{w \compose \tau_{\xi} \compose \gamma_{j_{i}} - w \compose \gamma} \dif\mu} \biggr)} \dif \xi}
= 0.
\end{equation}
\end{proposition}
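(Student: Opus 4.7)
The plan is to couple a Fubini--coarea calculation exploiting the transversality of \(\tau\) with a smooth approximation of \(w\), joined by a diagonal extraction. First I would fix a sequence \((f_k)_{k \in \N}\) in \(\Smooth_c^{0}(\manfV)\) with \(f_k \to w\) in \(\Lebesgue^{1}(\manfV)\) and positive numbers \(\kappa_k \to \infty\) such that \(\sum_{k = 0}^{\infty}{\kappa_k \norm{w - f_k}_{\Lebesgue^{1}(\manfV)}} < \infty\), and set
\[
\widetilde{w} \vcentcolon= w + \sum_{k = 0}^{\infty}{\kappa_k \abs{w - f_k}}.
\]
Then \(\widetilde{w} \ge w\) is summable on \(\manfV\); moreover, every \(\gamma \in \Fuglede_{\tilde{w}}(X; \manfV)\) satisfies
\[
\int_X{\abs{w - f_k} \compose \gamma \dif\mu} \le \frac{1}{\kappa_k} \int_X{\widetilde{w} \compose \gamma \dif\mu} \to 0 \quad \text{as \(k \to \infty\).}
\]

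The analytic heart of the argument is the following transversal estimate. For any nonnegative measurable \(g \colon \manfV \to [0, +\infty]\), any \(z \in \manfV\), and any \(\epsilon > 0\) with \(\{z\} \times B_\epsilon^q \subset U\), combining the coarea formula applied to \(\tau^z\) with properties~\eqref{propertyTransversalLipschitz}--\eqref{propertyBoundInverseMeasure} yields
\[
\int_{B_\epsilon^q}{g \compose \tau^z \dif\xi}
\le \frac{1}{\lambda}\int_{B_\epsilon^q}{(g \compose \tau^z)\,\Jacobian{m}{\tau^z} \dif\xi}
\le \frac{\Lambda \epsilon^{q-m}}{\lambda}\int_{B_{\alpha\epsilon}^m(z)}{g \dif\cH^m},
\]
where the first inequality uses~\eqref{propertyTranversality}, and the second combines the coarea formula with the fiber bound~\eqref{propertyBoundInverseMeasure} and the inclusion \(\tau^z(B_\epsilon^q) \subset B_{\alpha\epsilon}^m(z)\) from~\eqref{propertyTransversalLipschitz}. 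Dividing by \(\abs{B_\epsilon^q} \simeq \epsilon^q\) and integrating over \(y \in X\) via Tonelli gives, for any Lipschitz \(\beta \colon X \to \manfV\) with \(\beta(X) \times B_\epsilon^q \subset U\),
\[
\fint_{B_\epsilon^q}{\int_X{g \compose \tau_\xi \compose \beta \dif\mu}} \dif\xi
\le \frac{C \mu(X)}{\epsilon^m} \norm{g}_{\Lebesgue^{1}(\manfV)}.
\]

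Given the sequence \((\gamma_j)_{j \in \N}\), I would decompose
\[
\abs{w\compose\tau_\xi\compose\gamma_j - w\compose\gamma}
\le \abs{w - f_k}\compose\tau_\xi\compose\gamma_j + \abs{f_k\compose\tau_\xi\compose\gamma_j - f_k\compose\gamma} + \abs{f_k - w}\compose\gamma,
\]
and average the integral over \(X\) with respect to \(\xi \in B_\epsilon^q\). The first piece is controlled by \(C\mu(X)\norm{w-f_k}_{\Lebesgue^{1}(\manfV)}/\epsilon^m\) via the estimate above; the second tends to \(0\) as \(j \to \infty\) and \(\epsilon \to 0\) since \(f_k\) is uniformly continuous and \(\tau_\xi \compose \gamma_j \to \gamma\) uniformly in \(y\), thanks to~\eqref{propertyTransversalLipschitz} and the uniform convergence of \((\gamma_j)\); the third tends to \(0\) as \(k \to \infty\) by the construction of \(\widetilde{w}\). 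A diagonal extraction now yields~\eqref{eqTransversalPerturbationLimit}: pick \(\epsilon_i \to 0\), then \(k_i \to \infty\) with \(\norm{w - f_{k_i}}_{\Lebesgue^{1}(\manfV)} \le \epsilon_i^{2m}\) and \(\int_X{\abs{f_{k_i} - w} \compose \gamma \dif\mu} \le 1/i\), and finally \(j_i\) so large that the supremum in the second piece is at most \(1/(i\,\mu(X))\) for \(\xi \in B_{\epsilon_i}^q\). For each \(i\), the mean value principle produces \(\xi_i \in B_{\epsilon_i}^q\) with \(\int_X{\abs{w\compose\tau_{\xi_i}\compose\gamma_{j_i} - w\compose\gamma}\dif\mu} \to 0\); this forces \(\tau_{\xi_i}\compose\gamma_{j_i} \in \Fuglede_w(X; \manfV)\), and together with the equiLipschitz bound on \((\tau_{\xi_i}\compose\gamma_{j_i})\) inherited from the smoothness of \(\tau\) and the uniform convergence \(\tau_{\xi_i}\compose\gamma_{j_i} \to \gamma\) from~\eqref{propertyTransversalLipschitz}, it delivers the required \(\Fuglede_w\)-convergence.

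The delicate point is the balancing of the two competing scales in the diagonal argument: the factor \(\epsilon^{-m}\) produced by the transversal estimate must be strictly dominated by the \(\Lebesgue^{1}\)-decay \(\norm{w - f_k}_{\Lebesgue^{1}(\manfV)}\), while \(\epsilon\) itself must tend to \(0\) for \(\tau_\xi\) to approach the identity. The function \(\widetilde{w}\) is engineered precisely so that the remaining error term \(\int_X{\abs{w - f_k} \compose \gamma \dif\mu}\) --- the only one not handled by the \(\xi\)-average --- can be made small in \(k\) independently of \(\epsilon\), allowing all three pieces to be tuned separately.
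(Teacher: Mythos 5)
Your transversal estimate is correct --- it is exactly Lemma~\ref{lemmaFundamentalTransversalFamily} averaged in \(\xi\) and integrated over \(X\) via Tonelli --- and the closing remarks (equiLipschitz bounds, uniform convergence, and deducing \(\tau_{\xi_i}\compose\gamma_{j_i}\in\Fuglede_w\) from the smallness of \(\int_X\abs{w\compose\tau_{\xi_i}\compose\gamma_{j_i}-w\compose\gamma}\dif\mu\)) are fine. The gap is in the diagonal extraction that is supposed to give \eqref{eqTransversalPerturbationLimit}. Once \(\epsilon_i\) and \(k_i\) are fixed, the quantity \(\sup_{\xi\in B^q_{\epsilon_i}}\sup_X\abs{f_{k_i}\compose\tau_\xi\compose\gamma_j-f_{k_i}\compose\gamma}\) does \emph{not} tend to zero as \(j\to\infty\): by property \eqref{propertyTransversalLipschitz} it only converges to the oscillation of \(f_{k_i}\) under displacements of size up to \(\alpha\epsilon_i\), so your last step ``choose \(j_i\) so large that the second piece is at most \(1/(i\,\mu(X))\)'' is not available. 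Reversing the order (pick \(\epsilon_i\) small \emph{after} \(k_i\), according to the modulus of continuity of \(f_{k_i}\)) destroys the first piece, because your bound for it is \(C\mu(X)\,\epsilon^{-m}\norm{w-f_k}_{\Lebesgue^1(\manfV)}\), and one cannot in general have \(\norm{w-f_k}_{\Lebesgue^1}\ll\epsilon^m\) at scales \(\epsilon\) where \(f_k\) oscillates little: already for \(w=\chi_Q\) in \(\R^m\), any \(f\) whose oscillation at scale \(\epsilon\) is at most \(1/4\) satisfies \(\norm{w-f}_{\Lebesgue^1}\ge c\,\epsilon\), so \(\norm{w-f_k}_{\Lebesgue^1}/\epsilon^m\) stays bounded below when \(m=1\) and blows up when \(m\ge 2\). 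The two constraints on \((\epsilon_i,k_i)\) are therefore incompatible, and no reordering of the quantifiers repairs the argument.

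The loss occurs when you discard the localization by estimating \(\int_{B^m_{\alpha\epsilon}(\gamma_j(x))}\abs{w-f_k}\le\norm{w-f_k}_{\Lebesgue^1(\manfV)}\). Keeping the local average, what actually has to be proved is that \(\int_X\bigl(\fint_{B^m_{C\epsilon}(\gamma(x))}\abs{w-w(\gamma(x))}\bigr)\dif\mu(x)\to 0\), i.e.\ a statement about composing the mean-oscillation functions of \(w\) with \(\gamma\); and your \(\widetilde w=w+\sum_k\kappa_k\abs{w-f_k}\) does not detect this --- membership in \(\Fuglede_{\tilde w}\) only controls \(\int_X\abs{w-f_k}\compose\gamma\dif\mu\). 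This missing ingredient is precisely what the paper builds into \(\widetilde w\): it sets \(V_j(z)=\fint_{B^m_{(1+\alpha)s_j}(z)}\abs{w-w(z)}\), notes that \(V_j\to 0\) in \(\Lebesgue^1(\manfV)\), and applies Proposition~\ref{lemmaModulusLebesguesequence} to \((V_j)\) to produce a summable \(w_1\) (and the subsequence \((j_i)\)) such that \(V_{j_i}\compose\gamma\to 0\) in \(\Lebesgue^1(X)\) whenever \(\gamma\in\Fuglede_{w+w_1}\); the pointwise bound \(\fint_{B^q_{s_j}}\abs{w\compose\tau_\xi\compose\gamma_j(x)-w\compose\gamma(x)}\dif\xi\le C\,V_j(\gamma(x))\) then closes the argument with no splitting of \(w\) at all. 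Some oscillation detector of this type must enter the construction of \(\widetilde w\); it cannot be recovered from \(\Lebesgue^1\)-closeness to continuous functions alone.
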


In the statement above, we denote by \(B_\delta^m(A)\) the neighborhood of radius \(\delta > 0\) of a subset \(A \subset \manfV\).
We first present ways to construct transversal perturbations of the identity in various cases:

\begin{example} 
\label{exampleTransversalPertubationEuclidean}
If \(\manfV = \Omega\) is an open subset of the Euclidean space \(\R^m\), we take
\[{}
\tau(z, \xi) = z + \xi
\] 
defined in the open set 
\(
U = \bigl\{(z, \xi) \in \Omega \times \R^m \st z + \xi \in \Omega\bigr\}
\).
Then, \(\tau\) is smooth and, for every \(z \in \Omega\), we have \(\Jacobian{m}{\tau^z}  = 1\).{}
Since 
\((\tau^{z})^{-1} (\{ y\}) = \{y - z\}\) for every \(y \in \Omega\), we also have \(\cH^{0}((\tau^{z})^{-1} (\{ y\})) = 1\).
\end{example}

\begin{example}
If \(\manfV=\manfM\) is a compact Riemannian manifold isometrically embedded into \(\R^{\varkappa}\), then there exists a smooth retraction \(\pi \colon  \overline{V} \to \manfM\), where \(V \subset \R^{\varkappa}\) is open, that is, \(\pi|_{\manfM}=\Id_{\manfM}\) and \(D\pi(x)\colon \R^\varkappa\to T_{\pi(x)}\manfM\) is surjective for every \(x\in \overline{V}\).  
The map
\[{}
\tau (z, \xi) = \pi (z + \xi)
\]
defined in the open set
\(
U = \{ (z, \xi) \in \manfM \times \R^{\varkappa} \st z + \xi \in V\}
\)
is a transversal perturbation of the identity, see \citelist{\cite{Hang-Lin}*{Remark 4.1}\cite{White-1986}*{\S 3, proof of Lemma}}.
\end{example}

\begin{example}
	If the manifold \(\manfM\) is a Lie group  --- for example \(\manfM = \Sphere^1 \simeq \SO (2)\) or \(\manfM = \Sphere^3 \simeq \SU (2)\) --- then we can take  \(\tau\) defined by the group action 
	\[{}
	\tau (z, \xi) = \xi z
	\] 
	in \(U = \manfM \times V\), where \(V \subset \R^m\) is, up to a diffeomorphism, a small neighborhood of the origin in \(\manfM\).
	When \(\manfM\) is a symmetric space like \(\Sphere^m\) or \(\RP^m\), there exists a Lie group \(G\) of isometries that acts transversally on \(\manfM\) and we can take in this case \(\tau (z, \xi)\) to be the action of the element \(\xi \in G\) on \(z\). 
\end{example}

Before tackling Proposition~\ref{propositionFugledeApproximation}, we begin with a property of transversal perturbations of the identity:

\begin{proposition}
\label{lemmaTransversalFamily}
Let \(\tau \colon  U \to \manfV\) be a transversal perturbation of the identity, let \((X, d, \mu)\) be a metric measure space  with finite measure and let \(w \colon  \manfV \to [0, +\infty]\) be a summable function.
If \(\gamma \colon  X \to \manfV\) is a Lipschitz map and \(\delta > 0\) is such that \(\gamma(X) \times B_{\delta}^{q} \subset U\), then
\[
\int_{{B}^q_\delta} \biggl( \int_{X} w \compose \tau_\xi \compose \gamma \dif\mu \biggr) \dif \xi
\le \frac{\Lambda}{\lambda} \delta^{q - m} \mu(X) \int_{\manfV} w.
\]
\end{proposition}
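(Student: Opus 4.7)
The plan is to apply Tonelli's theorem to interchange the integrations with respect to $\xi \in B_\delta^q$ and $x \in X$, so that it suffices to bound, uniformly in $z = \gamma(x) \in \manfV$, the quantity
\[
\int_{B_\delta^q} w(\tau^z(\xi)) \dif\xi.
\]
This reduces the problem to a one-variable estimate for the smooth map $\tau^z \colon B_\delta^q \to \manfV$, whose Jacobian is controlled from below by property~\eqref{propertyTranversality} of Definition~\ref{definitionTransversalFamily}.

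The key step is the coarea formula applied to $\tau^z$. Writing
\[
\int_{B_\delta^q} w(\tau^z(\xi)) \dif\xi
= \int_{B_\delta^q} \frac{w(\tau^z(\xi))}{\Jacobian{m}{\tau^z}(\xi)} \, \Jacobian{m}{\tau^z}(\xi) \dif\xi,
\]
the coarea formula rewrites the right-hand side as
\[
\int_{\manfV} w(y) \biggl( \int_{(\tau^z)^{-1}(\{y\}) \cap B_\delta^q} \frac{1}{\Jacobian{m}{\tau^z}} \dif\cH^{q-m} \biggr) \dif y.
\]
Invoking the lower bound $\Jacobian{m}{\tau^z} \ge \lambda$ from~\eqref{propertyTranversality} together with the measure bound $\cH^{q-m}((\tau^z)^{-1}(\{y\}) \cap B_\delta^q) \le \Lambda \delta^{q-m}$ from~\eqref{propertyBoundInverseMeasure}, the inner integral is bounded by $(\Lambda/\lambda) \delta^{q-m}$ uniformly in $y$ and $z$, yielding
\[
\int_{B_\delta^q} w(\tau^z(\xi)) \dif\xi
\le \frac{\Lambda}{\lambda} \delta^{q-m} \int_{\manfV} w.
\]

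Integrating this uniform bound with respect to $x \in X$ against $\mu$ and then applying Tonelli's theorem to swap the order of integration delivers the desired estimate. The only delicate point I foresee is verifying joint measurability of $(\xi, x) \mapsto w(\tau_\xi(\gamma(x)))$ so that Tonelli applies, but this follows routinely since $w$ is Borel measurable, $\tau$ is smooth, and $\gamma$ is (Lipschitz, hence) Borel measurable; and one should take care that the coarea formula genuinely applies to the Lipschitz (indeed smooth) map $\tau^z$ restricted to $B_\delta^q \subset \R^q$ with target of dimension $m \le q$.
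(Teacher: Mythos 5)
Your proof is correct and follows essentially the same route as the paper: Tonelli's theorem to reduce to a uniform bound in $z = \gamma(x)$, then the coarea formula for $\tau^z$ combined with the lower Jacobian bound~\eqref{propertyTranversality} and the level-set measure bound~\eqref{propertyBoundInverseMeasure}. The only cosmetic difference is that the paper first proves a localized version of the key estimate (Lemma~\ref{lemmaFundamentalTransversalFamily}, which also uses property~\eqref{propertyTransversalLipschitz} to replace $\int_{\manfV} w$ by $\int_{B_{\alpha\delta}^m(z)} w$) and then relaxes it, whereas you bound directly by the global integral, which is all the statement requires.
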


Since \(\mu(X) < \infty\), it follows from the above statement that  
\begin{equation}
    \label{eqExtension-568}
    \tau_\xi \compose \gamma \in \Fuglede_{w}(X; \manfV)
\quad \text{for almost every \(\xi \in {B}^q_\delta\)}
\end{equation}
and one can choose \(\widetilde{\xi} \in {B}^q_\delta\) such that
\[
\int_{X} w \compose \tau_{\tilde\xi} \compose \gamma \dif\mu
\le C \int_{\manfV} w,
\]
for some constant \(C > 0\) depending on \(\Lambda/\lambda\), \(\delta\), \(q\), \(m\) and \(\mu(X)\).

To prove Proposition~\ref{lemmaTransversalFamily}, we need the following fundamental property of transversal perturbations. 
In the case of translations in the Euclidean space, it is a straightforward consequence of a linear change of variables in the integral:
\[
  \int_{{B}^m_\delta} w (z + \xi) \dif \xi 
  = \int_{B^m_\delta (z)} w (y) \dif y.
\]

\begin{lemma}
\label{lemmaFundamentalTransversalFamily}
Let \(\tau \colon  U \to \manfV\) be a transversal perturbation of the identity and \(w \colon  \manfV \to [0, +\infty]\) be a summable function.
For every \(z \in \manfV\) and every \(\delta > 0\) such that \(\{z\}  \times {B}_\delta^q \subset U\), we have
\[
\int_{{B}^q_\delta} w \compose \tau^z 
\le \frac{\Lambda}{\lambda} \delta^{q - m} \int_{{B}_{\alpha\delta}^{m} (z)} w \text{,}
\]
where \(\alpha>0\) is the constant given by \eqref{propertyTransversalLipschitz} in Definition~\ref{definitionTransversalFamily} and \({B}_{\alpha\delta}^{m} (z) \subset \manfV\).
\end{lemma}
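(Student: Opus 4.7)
The plan is to prove Lemma~\ref{lemmaFundamentalTransversalFamily} by applying the coarea formula to the smooth map $\tau^z \colon B_\delta^q \to \manfV$, which goes from a $q$-dimensional domain to an $m$-dimensional target with $q \geq m$. Using the transversality and Jacobian bounds from Definition~\ref{definitionTransversalFamily}, the Jacobian in the coarea formula can be controlled from below, and the fiber measures from above, yielding the desired inequality.

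More precisely, I would proceed as follows. First, by property~\eqref{propertyTransversalLipschitz} of Definition~\ref{definitionTransversalFamily}, for every $\xi \in B_\delta^q$ one has $d(\tau^z(\xi), z) \leq \alpha|\xi| < \alpha\delta$, so
\[
\tau^z(B_\delta^q) \subset B_{\alpha\delta}^m(z).
\]
Next, applying the coarea formula (see e.g.~\cite{Evans_Gariepy}) to the smooth map $\tau^z$ with the nonnegative measurable integrand $w \compose \tau^z$ and using the Jacobian formula~\eqref{eqLExtensionJacobian}, one obtains
\[
\int_{B_\delta^q} (w \compose \tau^z)(\xi) \, \Jacobian{m}{\tau^z}(\xi) \dif \xi
= \int_{\manfV} w(y) \, \cH^{q - m}\bigl(B_\delta^q \cap (\tau^z)^{-1}(\{y\})\bigr) \dif \cH^m(y).
\]
By the lower bound~\eqref{propertyTranversality}, the left-hand side dominates $\lambda \int_{B_\delta^q} w \compose \tau^z$, while by the upper bound~\eqref{propertyBoundInverseMeasure} and the inclusion $\tau^z(B_\delta^q) \subset B_{\alpha\delta}^m(z)$, the right-hand side is at most $\Lambda \delta^{q-m} \int_{B_{\alpha\delta}^m(z)} w$. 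Rearranging yields the estimate in the statement.

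The main issue to handle carefully is the legitimacy of the coarea formula in this mixed setting where the source is an open subset of a Euclidean space and the target is a smooth manifold (possibly $\manfV = \manfM$ rather than $\Omega$). This is standard: one can either work in local charts on $\manfV$ and piece together the resulting identities using a smooth partition of unity, or invoke the general manifold-valued version of the coarea formula directly. Since $\tau^z$ is smooth and its Jacobian is bounded below by $\lambda > 0$, there are no regularity obstructions, and the formula applies to nonnegative Borel integrands $w \compose \tau^z$ without further hypotheses on $w$.
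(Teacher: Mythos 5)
Your argument is correct and follows essentially the same route as the paper: both apply the coarea formula to \(\tau^{z}\) on \(B_{\delta}^{q}\), use property~\eqref{propertyTranversality} to bound the Jacobian from below on the left-hand side, and combine property~\eqref{propertyTransversalLipschitz} (which confines the image, so the fibers over \(y \notin B_{\alpha\delta}^{m}(z)\) are empty) with property~\eqref{propertyBoundInverseMeasure} to bound the right-hand side by \(\Lambda \delta^{q-m}\int_{B_{\alpha\delta}^{m}(z)} w\). The paper cites Federer's version of the coarea formula, which already covers smooth maps into a Riemannian manifold, so your concern about local charts is not an issue.
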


Since \(\cH^{m}({B}_{\alpha\delta}^{m} (z))\leq C' \delta^m\) for every \(\delta>0\), we can restate the estimate above in terms of average integrals as
\[{}
\fint_{B_{\delta}^{q}}{w \compose \tau^{z}}
\le C'' \fint_{{B}_{\alpha\delta}^{m} (z)} w,
\]
with a constant \(C'' > 0\) independent of \(\delta\).

\begin{proof}
[Proof of Lemma~\ref{lemmaFundamentalTransversalFamily}]
By the co-area formula, see \cite{Federer}*{Theorem~3.2.22},
\begin{equation}\label{eq846}
\int_{{B}^q_\delta}  (w \compose \tau^{z})\, \Jacobian{m}{\tau^z}
= \int_{\manfV} w (y) \cH^{q - m} \bigl({B}^q_\delta \cap (\tau^{z})^{-1} (\{y\})\bigr) \dif y.
\end{equation}
On the one hand, by \eqref{propertyTranversality} in Definition~\ref{definitionTransversalFamily}, we have 
\begin{equation}\label{eq844}
\lambda \int_{{B}^q_\delta} w \compose \tau^z
\le  \int_{{B}^q_\delta}  (w \compose \tau^{z})\, \Jacobian{m}{\tau^z}.
\end{equation}
On the other hand, we observe that if \({B}_\delta^q \cap (\tau^{z})^{-1} (\{y\}) \ne \emptyset\), then there exists \(\xi \in {B}^q_\delta\) such that 
\(y = \tau (z, \xi)\) and thus, by \eqref{propertyTransversalLipschitz} in Definition~\ref{definitionTransversalFamily}, we deduce that 
\[{}
d (z, y) = d (z, \tau (z, \xi)) \le  \alpha\abs{\xi} <  \alpha\delta.{}
\]
Hence, the integrand in the right-hand side of \eqref{eq846} vanishes in \(\manfV \setminus B_{\alpha\delta}^{m}(z)\).
In view of \eqref{propertyBoundInverseMeasure} in Definition~\ref{definitionTransversalFamily}, we then have
\begin{equation}
\label{eq417}
\int_{\manfV} w (y) \cH^{q - m} \bigl({B}^q_\delta \cap (\tau^{z})^{-1} (\{y\})\bigr) \dif y
\le \Lambda \delta^{q - m} \int_{B_{ \alpha\delta}^{m} (z)} w.
\end{equation}
The estimate then follows from \eqref{eq846}, \eqref{eq844} and \eqref{eq417}.
\end{proof}

\begin{proof}[Proof of Proposition~\ref{lemmaTransversalFamily}]
	Let \(w\) and \(\gamma\) be as in the statement.
By Tonelli's theorem, we have 
\[
\int_{{B}^q_\delta} \biggl( \int_{X} w \compose \tau_\xi \compose \gamma \dif\mu \biggr) \dif \xi
=  \int_{X} \biggl( \int_{{B}^q_\delta} w \compose \tau^{\gamma (x)}(\xi) \dif\xi \biggr) \dif \mu(x).
\]
Applying Lemma~\ref{lemmaFundamentalTransversalFamily} to estimate the integral between parentheses in the right-hand side, we get
\[
\int_{{B}^q_\delta} \biggl( \int_{X} w \compose \tau_\xi \compose \gamma \dif\mu \biggr) \dif \xi
\le \frac{\Lambda}{\lambda} \delta^{q - m}
\int_{X} \biggl(\int_{B_{\alpha\delta}^{m} (\gamma (x))} w(\zeta) \dif\zeta \biggr) \dif \mu(x).
\]
The conclusion follows since \(B_{\alpha\delta}^{m}(\gamma (x)) \subset \manfV\) and the latter set is independent of \(x\).
\end{proof}

We now turn to the proof of Proposition~\ref{propositionFugledeApproximation}, where we use the fact that, for any summable function \(v \colon  \manfV \to [0, +\infty]\), one has
\[{}
\lim_{s \to 0}
\int_{\manfV} \biggl(\fint_{B_{s}^{m}(z)} \abs{v - v(z)} \biggr) \dif z = 0.
\]
This is a consequence of the fact that \(\Smooth^{0}_{c}(\manfV)\) is dense in \(\Lebesgue^{1}(\manfV)\) and the measure  on \(\manfV\) satisfies the doubling property.

\resetconstant
\begin{proof}[Proof of Proposition~\ref{propositionFugledeApproximation}]
Let \((\gamma_{j})_{j \in \N}\) be an equiLipschitz sequence of maps that converges uniformly to a Lipschitz map \(\gamma \colon  X \to \manfV\) such that 
\begin{equation}
\label{eqFugledeApproximation-790}
B_{\delta}^{m}(\gamma(X)) \times B_{\delta}^{q} \subset U.
\end{equation}
Take a sequence of positive numbers \((s_{j})_{j \in \N}\) in \((0, \delta)\) that converges to \(0\).{}
Passing to a subsequence of \((\gamma_{j})_{j \in \N}\) if necessary, we may assume that, for every \(j \in \N\),
\begin{equation}
	\label{eqLipschitzClose}
\sup_{x \in X}{d(\gamma_{j}(x), \gamma(x))}
\le s_{j}.
\end{equation}
Since \(s_{j} < \delta\) and \eqref{eqFugledeApproximation-790} holds, for every \(j \in \N\) we have 
\[{}
\gamma_{j}(X) \times B^{q}_{s_{j}} \subset U.
\]

We next recall that
\[{}
\tau_{\xi} \circ \gamma_{j}(x) 
= \tau(\gamma_{j}(x), \xi){}
= \tau^{\gamma_{j}(x)}(\xi).
\]
For every \(x \in X\), we then have by Lemma~\ref{lemmaFundamentalTransversalFamily} applied to the summable function \(\abs{w - w \circ \gamma(x)}\), 
\begin{equation}
\label{eqFugledeApproximation-815}
\begin{split}
\fint_{{B}^{q}_{s_{j}}} \abs{w \circ \tau_{\xi} \circ \gamma_{j}(x) - w \circ \gamma (x)} \dif\xi
& = \fint_{{B}^{q}_{s_{j}}} \abs{w \circ \tau^{\gamma_{j}(x)} - w \circ \gamma (x)}\\
& \le \C
\fint_{B_{\alpha s_{j}}^{m}(\gamma_{j}(x))} \abs{w - w \circ \gamma (x)}.
\end{split}
\end{equation}
By \eqref{eqLipschitzClose}, for every \(x \in X\) we have
\begin{equation}
\label{eqFugledeApproximation-752}
B_{\alpha s_{j}}^{m}(\gamma_{j}(x)) \subset B_{(1+\alpha)s_{j}}^{m}(\gamma(x)).
\end{equation}
Applying again \eqref{eqLipschitzClose} and the doubling property on \(\manfV\),{}
\begin{equation}
\label{eqFugledeApproximation-757}
\abs{B^{m}_{(1 + \alpha)s_{j}}(\gamma(x))}
\le \abs{B^{m}_{(2 + \alpha)s_{j}}(\gamma_{j}(x))}
\le \C \abs{B^{m}_{\alpha s_{j}}(\gamma_{j}(x))}.
\end{equation}
Combining \eqref{eqFugledeApproximation-815}, \eqref{eqFugledeApproximation-752} and \eqref{eqFugledeApproximation-757}, we get
\[
\begin{split}
\fint_{{B}^{q}_{s_{j}}} \abs{w \circ \tau_{\xi} \circ \gamma_{j}(x) - w \circ \gamma (x)} \dif\xi
 &\le \Cl{cstxdpry} \fint_{B_{(1+\alpha)s_{j}}^{m} (\gamma(x))} \abs{w - w \circ \gamma (x)}\\
&= \Cr{cstxdpry} \, V_{j} \circ \gamma(x),
\end{split}
\]
where the constant \(\Cr{cstxdpry} > 0\) is independent of \(j\) and
\[{}
V_{j}(z) \vcentcolon= \fint_{B_{(1+\alpha)s_{j}}^{m}(z)} \abs{w - w(z)}.
\] 
We then integrate both sides of the estimate with respect to \(x \in X\).
By Tonelli's theorem,
\begin{equation}
\label{eqGenericApproximationFubini}
\fint_{{B}^q_{s_{j}}} \biggl( \int_{X} \abs{w \compose \tau_{\xi} \circ \gamma_{j} - w \compose \gamma} \dif\mu \biggr) \dif \xi
\le  \Cr{cstxdpry} \int_{X} V_{j} \circ \gamma \dif\mu.
\end{equation}

We now observe that 
\[{}
V_{j} \to 0 
\quad \text{in \(\Lebesgue^{1}(\manfV)\).}
\]
Then, by Proposition~\ref{lemmaModulusLebesguesequence} applied to the sequence \((V_{j})_{j \in \N}\)\,, there exist a summable function \(w_{1} \colon  \manfV \to [0, + \infty]\) and an increasing sequence of positive integers \((j_{i})_{i \in \N}\) such that, for every \(\gamma \in \Fuglede_{w_{1}}(X; \manfV)\),
\begin{equation}
\label{eqGenericApproximationConvergence}
V_{j_{i}} \circ \gamma \to 0
\quad \text{in \(\Lebesgue^{1}(X)\).}
\end{equation}
Let \(\widetilde{w} = w + w_{1}\).{}
By \eqref{eqGenericApproximationFubini} and \eqref{eqGenericApproximationConvergence}, we have for any \(\gamma \in \Fuglede_{\tilde{w}}(X; \manfV)\),
\[{}
\lim_{i \to \infty}{\fint_{{B}^q_{s_{j_{i}}}} \biggl( \int_{X} \abs{w \compose \tau_{\xi} \circ \gamma_{j_{i}} - w \compose \gamma} \dif\mu \biggr) \dif \xi{}}
= 0.
\]
In particular, taking \(\xi_{i} \in {B}^q_{s_{j_{i}}}\) such that
\[{}
\int_{X} \abs{w \compose \tau_{\xi_{i}} \circ \gamma_{j_{i}} - w \compose \gamma} \dif\mu
\le \fint_{{B}^q_{s_{j_{i}}}} \biggl( \int_{X} \abs{w \compose \tau_{\xi} \circ \gamma_{j_{i}} - w \compose \gamma}  \dif\mu \biggr) \dif \xi,
\]
we have
\[{}
\int_{X} \abs{w \compose \tau_{\xi_i} \compose \gamma_{j_{i}}} \dif\mu
\le \C{}
\quad \text{for every \(i \in \N\).}
\]
Since \((\gamma_{j_{i}})_{i \in \N}\) is equiLipschitz, \((\tau_{\xi_i} \compose \gamma_{j_{i}})_{i \in \N}\) is also equiLipschitz. 
As \((\tau_{\xi_i} \compose \gamma_{j_{i}})_{i \in \N}\) converges uniformly to \(\gamma\), we deduce that
\[{}
\tau_{\xi_i} \compose \gamma_{j_{i}} \to \gamma{}
\quad \text{in \(\Fuglede_{w}(X; \manfV)\).}
\qedhere
\]
\end{proof}

We now turn to the proof of Proposition~\ref{propositionLExtensionSobolev}.
We need the following property of traces of Sobolev maps that is reminiscent of the classical extension property for continuous maps:

\begin{lemma}
	\label{lemmaTraceVMO}
	If \(v \in \Sobolev^{1, p}(\Ball^{\ell + 1}; \manfN)\) with \(p \ge \ell + 1\), then its trace \(\Trace{v}\) belongs to \(\VMO(\Sphere^{\ell}; \manfN)\) and is homotopic to a constant in \(\VMO(\Sphere^{\ell}; \manfN)\).
\end{lemma}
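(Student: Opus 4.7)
The plan is to proceed in two steps: first establish the \(\VMO\) regularity of \(\Trace v\), then build a \(\VMO\) homotopy from \(\Trace v\) to a constant via an approximation by smooth maps afforded by the critical exponent condition \(p \ge \ell + 1 = \dim \Ball^{\ell+1}\).

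For the first step, the standard trace theorem yields \(\Trace v \in \Sobolev^{1-1/p, p}(\Sphere^\ell; \manfN)\). Setting \(s \vcentcolon= 1 - 1/p\), the hypothesis \(p \ge \ell + 1\) gives \(sp = p - 1 \ge \ell = \dim \Sphere^\ell\). When \(p > \ell + 1\), one has \(sp > \ell\), so a Morrey-type estimate places \(\Trace v\) in \(\Smooth^{0,\alpha}(\Sphere^\ell; \manfN)\), and a fortiori in \(\VMO(\Sphere^\ell; \manfN)\). In the critical case \(p = \ell + 1\), where \(sp = \ell\), the \(\VMO\) conclusion follows from Example~\ref{exampleVMOSobolevFractional} applied with \(\manfM = \Sphere^\ell\), \(m = \ell\), and \(s = \ell/(\ell+1)\), since then \(m/s = \ell + 1 = p\).

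For the second step, the crucial observation is that since \(p \ge \ell + 1 = \dim \Ball^{\ell+1}\), Theorem~\ref{theoremSchoen-Uhlenbeck} produces a sequence \((v_j)_{j \in \N}\) in \(\Smooth^\infty(\cBall^{\ell+1}; \manfN)\) with \(v_j \to v\) in \(\Sobolev^{1,p}(\Ball^{\ell+1}; \R^\nu)\). Continuity of the trace operator then gives \(\Trace v_j \to \Trace v\) in \(\Sobolev^{s, p}(\Sphere^\ell; \R^\nu)\). I invoke Proposition~\ref{lemmaFugledeSobolevDetector}, whose hypothesis \(sp \ge \ell\) is satisfied, to extract a subsequence \((\Trace v_{j_i})_{i \in \N}\) converging to \(\Trace v\) in \(\VMO^\ell(\Sphere^\ell)\). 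Since the associated detector \(w\) is summable on the compact manifold \(\Sphere^\ell\), the identity map \(\Id_{\Sphere^\ell}\) belongs to \(\Fuglede_w(\Sphere^\ell; \Sphere^\ell)\), and Proposition~\ref{propositionExtensionVMOSphereConvergence} applied with \(\gamma = \Id_{\Sphere^\ell}\) upgrades the convergence to \(\Trace v_{j_i} \to \Trace v\) in \(\VMO(\Sphere^\ell; \R^\nu)\). Proposition~\ref{propositionHomotopyVMOLimit} then guarantees that \(\Trace v_{j_i} \sim \Trace v\) in \(\VMO(\Sphere^\ell; \manfN)\) for all sufficiently large \(i\).

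To conclude, each \(v_{j_i}\) is smooth on the closed ball, so its trace \(v_{j_i}|_{\Sphere^\ell}\) is continuous and admits the continuous extension \(v_{j_i}\) to \(\cBall^{\ell+1}\); this makes it homotopic to a constant in \(\Smooth^{0}(\Sphere^\ell; \manfN)\), and hence in \(\VMO(\Sphere^\ell; \manfN)\). Transitivity of the \(\VMO\) homotopy relation then yields the claim. The main delicate point is upgrading the \(\Sobolev^{s,p}\) convergence of traces to genuine \(\VMO\) convergence on \(\Sphere^\ell\) rather than the formally weaker \(\VMO^\ell\) notion; this is handled by testing against the identity map, which qualifies as a Fuglede map precisely because the detector is summable on the finite-measure manifold \(\Sphere^\ell\).
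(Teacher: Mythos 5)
Your proof is correct and follows essentially the same route as the paper: approximate \(v\) by smooth maps via Theorem~\ref{theoremSchoen-Uhlenbeck}, pass to traces in \(\Sobolev^{1-1/p,p}(\Sphere^{\ell})\), upgrade to \(\VMO\) convergence, invoke Proposition~\ref{propositionHomotopyVMOLimit}, note that each smooth trace is null-homotopic, and conclude by transitivity. The only (harmless) deviation is that you obtain the \(\VMO\) convergence of the traces through the detector machinery (Proposition~\ref{lemmaFugledeSobolevDetector} plus Proposition~\ref{propositionExtensionVMOSphereConvergence} tested on \(\gamma = \Id_{\Sphere^{\ell}}\), after extracting a subsequence), whereas the paper gets it directly, without subsequences, from the critical embedding estimate behind Example~\ref{exampleVMOSobolevFractional}.
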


\begin{proof}[Proof of Lemma~\ref{lemmaTraceVMO}]
	Since \( p \ge \ell + 1\), by Theorem~\ref{theoremSchoen-Uhlenbeck} there exists a sequence \((v_{j})_{j \in \N}\) in \(\Smooth^{\infty}(\cBall^{\ell+1}; \manfN)\) that converges to \(v\) in \(\Sobolev^{1, p}(\Ball^{\ell + 1}; \manfN)\).{}
	Then, by continuity of the trace operator, 
	\[{}
	\Trace{v_{j}} \to \Trace{v}
	\quad \text{in \(\Sobolev^{1 - \frac{1}{p}, p}(\Sphere^{\ell}; \manfN)\).}
	\]
	Since \((1 - 1/p)p = p - 1 \ge \ell\), we have by Example~\ref{exampleVMOSobolevFractional} that \(\Trace{v}\) and  \((\Trace{v_j})_{j \in \N}\) are contained in \(\VMO(\Sphere^{\ell}; \manfN)\) and
	\[{}
	\Trace{v_{j}} \to \Trace{v}
	\quad \text{in \(\VMO(\Sphere^{\ell}; \manfN)\).}
	\]
	By Proposition~\ref{propositionHomotopyVMOLimit}, for \(j\) large enough we then have
	\[{}
	\Trace{v_{j}} \sim \Trace{v}
	\quad \text{in \(\VMO(\Sphere^{\ell}; \manfN)\).}
	\]
	Since \(v_{j}\) is smooth in \(\cBall^{\ell+1}\), \(\Trace{v_{j}} = v_{j}|_{\Sphere^{\ell}}\) and we conclude that \(\Trace{v_{j}}\) is homotopic to a constant in \(\VMO(\Sphere^{\ell}; \manfN)\).{}
	The conclusion then follows from the transitivity of the homotopy relation.
\end{proof}

\begin{proof}[Proof of Proposition~\ref{propositionLExtensionSobolev}]
	Since \(\manfN\) is compact, by the Gagliardo-Nirenberg interpolation inequality we have \(u \in \Sobolev^{1, kp}(\manfV; \manfN)\).
	Let \(w\) be a summable function satisfying the conclusions of Propositions~\ref{corollaryCompositionSobolevFuglede} and~\ref{propositionSobolevApproximationFuglede} with order of integrability \(kp\).{}
	Let \(\tau \colon U  \to \manfV\) be a transversal perturbation of the identity, where  \(U\) is an open subset of \(\manfV \times \R^{q}\) with \(q \ge m\) and \(\manfV \times \{0\} \subset U\). 
    Let \(\widetilde{w} \ge w\) be a summable function provided by Proposition~\ref{propositionFugledeApproximation}.
	Given a Lipschitz map \(\gamma \colon  \cBall^{\ell+1} \to \manfV\), there exists \(\delta>0\) such that \(B^{m}_{\delta}\bigl(\gamma(\cBall^{\ell+1})\bigr) \times B^{q}_{\delta}\subset U\).
	By Proposition~\ref{lemmaTransversalFamily}, for almost every \(\xi \in B_{\delta}^{q}\) we have
	\begin{equation}
		\label{eqLExtension-1026}
		\tau_{\xi} \compose \gamma \in \Fuglede_{w}(\cBall^{\ell+1}; \manfV)
		\quad \text{and} \quad
		\tau_{\xi} \compose \gamma|_{\Sphere^{\ell}} \in \Fuglede_{w}(\Sphere^{\ell}; \manfV),
	\end{equation}
	which by Proposition~\ref{corollaryCompositionSobolevFuglede} imply that \(u \compose \tau_{\xi} \compose \gamma \in \Sobolev^{1, kp}(\Ball^{\ell + 1}; \manfN)\) and 
	\begin{equation}
		\label{eqLExtension-1033}
	\Trace{(u \compose \tau_{\xi} \compose \gamma )} = u \compose \tau_{\xi} \compose \gamma|_{\Sphere^{\ell}}
	\quad \text{almost everywhere in \(\Sphere^{\ell}\).}
	\end{equation}
	Assuming in addition that \(\gamma\) satisfies 
	\[{}
	\gamma|_{\Sphere^{\ell}} \in \Fuglede_{\tilde{w}}(\Sphere^{\ell}; \manfV),
	\]
	it follows from Proposition~\ref{propositionFugledeApproximation} that we may choose a sequence \((\xi_{i})_{i \in \N}\) in \(B_{\delta}^{q}\) with \(\xi_{i} \to 0\) such that each \(\xi_{i}\) satisfies \eqref{eqLExtension-1026} and
	\[{}
	\tau_{\xi_{i}} \compose \gamma|_{\Sphere^{\ell}} \to \gamma|_{\Sphere^{\ell}}
	\quad \text{in \(\Fuglede_{w}(\Sphere^{\ell}; \manfV)\).}
	\]
	By the choice of \(w\), it follows from Proposition~\ref{propositionSobolevApproximationFuglede} and Proposition~\ref{propositionExtensionVMOSphere} that
	\[{}
	u \compose \tau_{\xi_{i}} \compose \gamma|_{\Sphere^{\ell}} \to u \compose \gamma|_{\Sphere^{\ell}}
	\quad \text{in \(\VMO(\Sphere^{\ell}; \manfN)\).}
	\]
	By Proposition~\ref{propositionHomotopyVMOLimit}, for \(i\) large enough we then have
	\[{}
	u \compose \tau_{\xi_{i}} \compose \gamma|_{\Sphere^{\ell}} \sim u \compose \gamma|_{\Sphere^{\ell}}
	\quad \text{in \(\VMO(\Sphere^{\ell}; \manfN)\).}
	\]	
	Since \(kp \ge \ell + 1\), we may combine \eqref{eqLExtension-1033}, applied with \(\xi = \xi_{i}\), and Lemma~\ref{lemmaTraceVMO}, to deduce that \(u \compose \tau_{\xi_{i}} \compose \gamma|_{\Sphere^{\ell}}\) is homotopic to a constant in \(\VMO(\Sphere^{\ell}; \manfN)\).{}
	The conclusion then follows from the transitivity of the homotopy relation.
\end{proof}

The conclusion of Proposition~\ref{propositionLExtensionSobolev} is still true when \(u\in \VMO^{\ell+1}(\manfV; \manfN)\) as we shall see in a forthcoming chapter, see Corollary~\ref{propositionVMOlExtension}.

\section{\texorpdfstring{\(\ell\)}{l}-extension on simplicial complexes}

We have chosen to introduce the concept of \(\ell\)-extendability for Fuglede maps on the sphere \(\Sphere^\ell\) as it yields a comfortable setting to have explicit examples.
An alternative but equivalent approach that we develop in this section is to replace \(\Sphere^\ell\) by the boundary of a simplex \(\Simplex^{\ell + 1}\).
We then show that such an equivalence can be further developed using simplicial complexes.
We later return to this viewpoint in Chapter~\ref{chapterExtensionGeneral} to pursue the more general notion of \((\ell, e)\)-extendability.

We first prove that the definition of \(\ell\)-extendability could have been stated in the setting of simplices:

\begin{proposition}
	\label{propositionExtendabilitySimplex}
	Let \(\Simplex^{\ell + 1}\) be a simplex.
	A map \(u \in \VMO^{\ell}(\manfV; \manfN)\) is \(\ell\)-extendable if and only if there exists an \(\ell\)-detector \(w\) such that, for every Lipschitz map \(\gamma \colon  \Simplex^{\ell + 1} \to  \manfV\) with 
	\(\gamma|_{\partial\Simplex^{\ell + 1}} \in \Fuglede_{w}(\Simplex^{\ell + 1}; \manfV)\), the restricted map \(u \compose \gamma |_{\partial\Simplex^{\ell + 1}}\) is homotopic to a constant in \(\VMO(\partial\Simplex^{\ell + 1}; \manfN)\).
\end{proposition}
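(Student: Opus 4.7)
The strategy is to reduce the two formulations to one another by means of a biLipschitz homeomorphism between the sphere and the boundary of a simplex that extends biLipschitz-ly to their convex hulls. The transfer of VMO norms is already guaranteed by Lemma~\ref{lemmaDetectorsVMObiLipschitz}, so the entire proof consists in checking that the Fuglede property and the VMO-homotopy to a constant are both preserved under composition with such a biLipschitz map.

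First, I would fix a biLipschitz homeomorphism \(\overline{\Phi}\colon \Simplex^{\ell+1} \to \cBall^{\ell+1}\) whose restriction \(\Phi \vcentcolon= \overline{\Phi}|_{\partial\Simplex^{\ell+1}}\colon \partial\Simplex^{\ell+1} \to \Sphere^{\ell}\) is also a biLipschitz homeomorphism. Such a map exists: for instance, translate \(\Simplex^{\ell+1}\) so that its barycenter is at the origin and take the radial rescaling \(x \mapsto x/\rho(x)\), where \(\rho(x)\) is the unique positive number with \(x/\rho(x)\in \partial\Simplex^{\ell+1}\) when \(x\in \Simplex^{\ell+1}\setminus\{0\}\), composed with the radial projection onto \(\Sphere^{\ell}\). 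The two key properties I would use are that for any summable function \(w\colon \manfV \to [0,+\infty]\) and any Lipschitz map \(\sigma\colon \partial\Simplex^{\ell+1} \to \manfV\), the change of variables formula gives
\[
\frac{1}{C}\int_{\Sphere^{\ell}} w\compose \sigma\compose \Phi^{-1}\dif\cH^{\ell}
\le \int_{\partial\Simplex^{\ell+1}} w\compose \sigma \dif\cH^{\ell}
\le C\int_{\Sphere^{\ell}} w\compose \sigma\compose \Phi^{-1} \dif\cH^{\ell},
\]
so that \(\sigma\in \Fuglede_{w}(\partial\Simplex^{\ell+1};\manfV)\) if and only if \(\sigma\compose\Phi^{-1}\in \Fuglede_{w}(\Sphere^{\ell};\manfV)\); and that, by Lemma~\ref{lemmaDetectorsVMObiLipschitz}, a map \(v\colon \partial\Simplex^{\ell+1}\to \manfN\) is homotopic to a constant in \(\VMO(\partial\Simplex^{\ell+1};\manfN)\) if and only if \(v\compose \Phi^{-1}\) is homotopic to a constant in \(\VMO(\Sphere^{\ell};\manfN)\), since composition with \(\Phi^{-1}\) is a continuous operator between these VMO spaces that maps continuous paths to continuous paths.

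For the direct implication, assume \(u\) is \(\ell\)-extendable with \(\ell\)-detector \(w\). Take any Lipschitz map \(\gamma\colon \Simplex^{\ell+1}\to \manfV\) with \(\gamma|_{\partial\Simplex^{\ell+1}}\in \Fuglede_{w}(\partial\Simplex^{\ell+1};\manfV)\) and set \(\widetilde{\gamma} \vcentcolon= \gamma\compose \overline{\Phi}^{-1}\colon \cBall^{\ell+1}\to \manfV\), which is Lipschitz. The change of variables identity above shows that \(\widetilde{\gamma}|_{\Sphere^{\ell}} = \gamma|_{\partial\Simplex^{\ell+1}}\compose \Phi^{-1} \in \Fuglede_{w}(\Sphere^{\ell};\manfV)\), so by \(\ell\)-extendability \(u\compose \widetilde{\gamma}|_{\Sphere^{\ell}}\) is homotopic to a constant in \(\VMO(\Sphere^{\ell};\manfN)\). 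Recomposing with \(\Phi\) and invoking Lemma~\ref{lemmaDetectorsVMObiLipschitz} for the preservation of VMO homotopies, we conclude that \(u\compose \gamma|_{\partial\Simplex^{\ell+1}}\) is homotopic to a constant in \(\VMO(\partial\Simplex^{\ell+1};\manfN)\). The converse implication is entirely symmetric: starting from a Lipschitz \(\gamma\colon \cBall^{\ell+1}\to \manfV\) with \(\gamma|_{\Sphere^{\ell}}\in \Fuglede_{w}(\Sphere^{\ell};\manfV)\), one considers \(\gamma\compose \overline{\Phi}\colon \Simplex^{\ell+1}\to \manfV\) and repeats the argument.

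There is no real obstacle here beyond a careful bookkeeping of the biLipschitz transfer: the two essential ingredients --- the invariance of the Fuglede condition under biLipschitz reparametrization and the invariance of VMO homotopy classes under Lemma~\ref{lemmaDetectorsVMObiLipschitz} --- are already at our disposal. The only mild subtlety worth spelling out is the existence of a single biLipschitz homeomorphism that extends to the convex hulls, but as mentioned above this is achieved by the radial rescaling construction.
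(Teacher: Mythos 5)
Your proof is correct and follows essentially the same route as the paper: compose with a biLipschitz homeomorphism between \(\Simplex^{\ell+1}\) and \(\cBall^{\ell+1}\), transfer the Fuglede condition by a change of variables, and transfer the \(\VMO\) homotopy via Lemma~\ref{lemmaDetectorsVMObiLipschitz}. The extra details you supply (the explicit radial construction of the homeomorphism and the spelled-out converse) are fine but not a different argument.
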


\begin{proof}
We assume that \(u\) is \(\ell\)-extendable and take an \(\ell\)-detector  \(w\) given by Definition~\ref{definitionExtensionVMO-1}. 
We take a biLipschitz homeomorphism \(\Psi \colon  \Simplex^{\ell + 1} \to \cBall^{\ell+1}\).
For every Lipschitz map \(\gamma \colon  \Simplex^{\ell + 1} \to \manfV\) with \(\gamma\vert_{\partial\Simplex^{\ell + 1}} \in \Fuglede_{w}(\partial\Simplex^{\ell + 1}; \manfV)\), the map \(\gamma \compose \Psi^{-1} \colon  \cBall^{\ell+1} \to \manfV\) is also Lipschitz continuous and, by a change of variables, \(\gamma \compose \Psi^{-1}\vert_{\Sphere^{\ell}} \in \Fuglede_{w}(\Sphere^{\ell}; \manfV)\).{}
By \(\ell\)-extendability of \(u\), the map \(u \compose \gamma \compose \Psi^{-1}|_{\Sphere^{\ell}}\) is homotopic to a constant in \(\VMO(\Sphere^{\ell}; \manfN)\).
	By composition with \(\Psi\vert_{\partial\Simplex^{\ell + 1}}\), it then follows from Lemma~\ref{lemmaDetectorsVMObiLipschitz} that \(u \compose \gamma \vert_{\partial\Simplex^{\ell + 1}}\) is homotopic to a constant in \(\VMO (\partial\Simplex^{\ell + 1}; \manfN)\).
	The proof of the reverse implication is similar.
\end{proof}

We now show a global aspect of \(\ell\)-extendability, where instead of being defined on the sphere or the boundary of a simplex, the Fuglede maps are taken on general polytopes.

\begin{proposition}
	\label{lemma_l_l1_property}
A map \(u \in \VMO^{\ell}(\manfV; \manfN)\) is \(\ell\)-extendable if and only if \(u\) has an \(\ell\)-detector \(w\) with the following property:
For every simplicial complex \(\cK^{\ell + 1}\) and every Lipschitz map \(\gamma \colon  K^{\ell + 1} \to  \manfV\) with 
\(\gamma|_{K^{\ell}} \in \Fuglede_{w}(K^\ell; \manfV)\), there exists \(F \in \Smooth^{0}(K^{\ell + 1}; \manfN)\) such that
\[{}
u \compose \gamma |_{K^{\ell}} \sim F|_{K^{\ell}}
\quad \text{in \(\VMO(K^{\ell}; \manfN)\).}
\]
\end{proposition}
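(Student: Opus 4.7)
The proof will be a double implication. For the easy direction (``$\Longleftarrow$''), I would apply the global simplicial-complex property with $\cK^{\ell+1}$ the complex of all faces of a single simplex $\Simplex^{\ell+1}$, so that $K^{\ell+1}=\Simplex^{\ell+1}$ and $K^{\ell}=\partial\Simplex^{\ell+1}$. The hypothesis produces $F\in\Smooth^{0}(\Simplex^{\ell+1};\manfN)$ with $u\compose\gamma|_{\partial\Simplex^{\ell+1}}\sim F|_{\partial\Simplex^{\ell+1}}$ in $\VMO$. Since $F$ extends continuously to the full simplex, $F|_{\partial\Simplex^{\ell+1}}$ is homotopic to a constant in $\Smooth^{0}$ (straight-line retraction to an interior point), hence in $\VMO$. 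By transitivity of the $\VMO$ homotopy relation, $u\compose\gamma|_{\partial\Simplex^{\ell+1}}$ is itself homotopic to a constant in $\VMO(\partial\Simplex^{\ell+1};\manfN)$, and Proposition~\ref{propositionExtendabilitySimplex} yields $\ell$-extendability.

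For the direct implication (``$\Longrightarrow$''), I would take $w$ an $\ell$-detector provided by Proposition~\ref{propositionExtendabilitySimplex}. Given $\cK^{\ell+1}$ and a Lipschitz $\gamma\colon K^{\ell+1}\to\manfV$ with $\gamma|_{K^{\ell}}\in\Fuglede_{w}(K^{\ell};\manfV)$, I first apply Proposition~\ref{propositionHomotopyVMOContinuousMap} to $u\compose\gamma|_{K^{\ell}}\in\VMO(K^{\ell};\manfN)$ to obtain $F_{0}\in\Smooth^{0}(K^{\ell};\manfN)$ with $u\compose\gamma|_{K^{\ell}}\sim F_{0}$ in $\VMO(K^{\ell};\manfN)$. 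For each $(\ell+1)$-simplex $\Sigma^{\ell+1}\in\cK^{\ell+1}$, since $\partial\Sigma^{\ell+1}\subset K^{\ell}$ we have $\gamma|_{\partial\Sigma^{\ell+1}}\in\Fuglede_{w}(\partial\Sigma^{\ell+1};\manfV)$, and Proposition~\ref{propositionExtendabilitySimplex} applied to $\gamma|_{\Sigma^{\ell+1}}$ gives that $u\compose\gamma|_{\partial\Sigma^{\ell+1}}$ is homotopic to a constant in $\VMO(\partial\Sigma^{\ell+1};\manfN)$. Combining these two pieces of information with transitivity, and then applying Proposition~\ref{propositionHomotopyVMOtoC} (which is permitted since $F_{0}|_{\partial\Sigma^{\ell+1}}$ is continuous), $F_{0}|_{\partial\Sigma^{\ell+1}}$ is homotopic to a constant in $\Smooth^{0}(\partial\Sigma^{\ell+1};\manfN)$. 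By classical topology it then extends continuously to $\Sigma^{\ell+1}$; gluing these extensions over the different $(\ell+1)$-simplices, which agree with $F_{0}$ on $K^{\ell}$, produces the desired $F\in\Smooth^{0}(K^{\ell+1};\manfN)$.

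The missing technical link, which is the main obstacle, is the following bridge step: the $\VMO$ homotopy $u\compose\gamma|_{K^{\ell}}\sim F_{0}$ obtained on the whole skeleton must be known to \emph{restrict} to a $\VMO$ homotopy on each $\partial\Sigma^{\ell+1}$. I would prove this by showing that the linear restriction operator $\VMO(K^{\ell})\to\VMO(\partial\Sigma^{\ell+1})$ is bounded, so that every continuous path $H\colon[0,1]\to\VMO(K^{\ell};\manfN)$ induces a continuous path $t\mapsto H(t)|_{\partial\Sigma^{\ell+1}}$. Boundedness of restriction reduces to the seminorm estimate $\seminorm{v|_{\partial\Sigma^{\ell+1}}}_{\rho}\le C\,\seminorm{v}_{\rho}$, which I would derive from the trivial inclusion
\[
\int_{B_{\rho}(x)\cap\partial\Sigma^{\ell+1}}\!\!\int_{B_{\rho}(x)\cap\partial\Sigma^{\ell+1}}\!\!\abs{v(y)-v(z)}\dif\cH^{\ell}(y)\dif\cH^{\ell}(z)\le\int_{B_{\rho}(x)\cap K^{\ell}}\!\!\int_{B_{\rho}(x)\cap K^{\ell}}\!\!\abs{v(y)-v(z)}\dif\cH^{\ell}(y)\dif\cH^{\ell}(z),
\]
together with the Ahlfors regularity of both $K^{\ell}$ and $\partial\Sigma^{\ell+1}$, which ensures that the two normalizing measures $\cH^{\ell}(B_{\rho}(x)\cap K^{\ell})$ and $\cH^{\ell}(B_{\rho}(x)\cap\partial\Sigma^{\ell+1})$ are comparable to $\rho^{\ell}$ uniformly in $x$ and $\rho$. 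The $\Lebesgue^{1}$ control is immediate from $\partial\Sigma^{\ell+1}\subset K^{\ell}$. Once this comparison estimate is in place, the rest of the argument proceeds as sketched above.
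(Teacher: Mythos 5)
Your proposal is correct and follows essentially the same route as the paper: the reverse implication via the single-simplex complex and Proposition~\ref{propositionExtendabilitySimplex}, and the direct implication by projecting \(u \compose \gamma|_{K^{\ell}}\) onto a continuous map (Proposition~\ref{propositionHomotopyVMOContinuousMap}), restricting the \(\VMO\) homotopy to each \(\partial\Sigma^{\ell+1}\), upgrading to a \(\Smooth^{0}\) homotopy (Proposition~\ref{propositionHomotopyVMOtoC}) and gluing the resulting extensions. Your ``bridge step'' — boundedness of the restriction operator via the comparability of \(\cH^{\ell}(B_{\rho}(x)\cap K^{\ell})\) and \(\cH^{\ell}(B_{\rho}(x)\cap\partial\Sigma^{\ell+1})\) with \(\rho^{\ell}\) — is exactly the paper's Lemma~\ref{lemmaVMORestriction}, proved the same way, and your inline argument reproduces the paper's Lemma~\ref{lemmaExtensionHomotopyConstant}.
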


We first need a straightforward observation concerning the restriction of a \(\VMO\) function in a polytope \(K^\ell\) to a subpolytope  \(L^\ell \subset K^\ell\) with the same dimension \(\ell\), which implies in particular that when two maps \(u, v \in \VMO(K^\ell;\manfN)\) are \(\VMO\)-homotopic, their restrictions \(u|_{L^\ell}\) and \(v|_{L^{\ell}}\) are also \(\VMO\)-homotopic.

\begin{lemma}
	\label{lemmaVMORestriction}
	Let \(\cK^{\ell}\) be a simplicial complex and let \(\cL^{\ell}\) be a subcomplex of \(\cK^{\ell}\). 
	If \(v \in \VMO(K^{\ell})\), then \(v\vert_{L^{\ell}} \in \VMO(L^{\ell})\) and the restriction map
	\[{}
	v \in \VMO(K^{\ell}) \longmapsto v\vert_{L^{\ell}} \in \VMO(L^{\ell})
	\]
	is continuous.
\end{lemma}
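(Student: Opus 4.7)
The plan is to show that the restriction map is bounded between the two spaces; since it is obviously linear, continuity will follow at once. The two ingredients of the \(\VMO\)-norm, as recalled in \eqref{eqDetector-VMO}, have to be controlled separately.

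First, for the \(\Lebesgue^1\) part the argument is immediate: since \(L^{\ell}\subset K^{\ell}\) and the measure on each polytope is the restriction of the ambient Hausdorff measure \(\cH^{\ell}\), one has
\[
\norm{v|_{L^{\ell}}}_{\Lebesgue^{1}(L^{\ell})}=\int_{L^{\ell}}\abs{v}\dif\cH^{\ell}\le\int_{K^{\ell}}\abs{v}\dif\cH^{\ell}=\norm{v}_{\Lebesgue^{1}(K^{\ell})}.
\]
For the mean oscillation part, I will show that there exists a constant \(C>0\), depending only on \(\cK^{\ell}\), such that
\begin{equation}
\label{eqPlanBallComparison}
\cH^{\ell}(B_{\rho}^{\ell}(x)\cap K^{\ell})\le C\,\cH^{\ell}(B_{\rho}^{\ell}(x)\cap L^{\ell})
\end{equation}
for every \(x\in L^{\ell}\) and every \(\rho>0\). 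The two-sided bound \(c\rho^{\ell}\le \cH^{\ell}(B_{\rho}^{\ell}(x)\cap\Sigma^{\ell})\le C'\rho^{\ell}\), valid on any simplex \(\Sigma^{\ell}\) of \(\cK^{\ell}\) and recalled in the example following the definition of simplicial complex, gives a uniform upper bound on \(\cH^{\ell}(B_{\rho}^{\ell}(x)\cap K^{\ell})\) by summing over the finitely many \(\ell\)-simplices of \(\cK^{\ell}\). Conversely, since \(x\in L^{\ell}\) lies in at least one \(\ell\)-simplex of \(\cL^{\ell}\), the same lower bound restricted to that simplex gives \(\cH^{\ell}(B_{\rho}^{\ell}(x)\cap L^{\ell})\ge c\rho^{\ell}\) for \(\rho\) up to the diameter of the simplex, and by monotonicity for all larger \(\rho\) too. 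This proves \eqref{eqPlanBallComparison}.

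Once \eqref{eqPlanBallComparison} is available, observing that \(B_{\rho}^{\ell}(x)\cap L^{\ell}\subset B_{\rho}^{\ell}(x)\cap K^{\ell}\), I would estimate for every \(x\in L^{\ell}\) and \(\rho>0\),
\[
\begin{split}
&\fint_{B_{\rho}^{\ell}(x)\cap L^{\ell}}\fint_{B_{\rho}^{\ell}(x)\cap L^{\ell}}\abs{v(y)-v(z)}\dif\cH^{\ell}(z)\dif\cH^{\ell}(y)\\
&\qquad\le\frac{\cH^{\ell}(B_{\rho}^{\ell}(x)\cap K^{\ell})^{2}}{\cH^{\ell}(B_{\rho}^{\ell}(x)\cap L^{\ell})^{2}}\fint_{B_{\rho}^{\ell}(x)\cap K^{\ell}}\fint_{B_{\rho}^{\ell}(x)\cap K^{\ell}}\abs{v(y)-v(z)}\dif\cH^{\ell}(z)\dif\cH^{\ell}(y)\\
&\qquad\le C^{2}\seminorm{v}_{\rho}.
\end{split}
\]
Taking the supremum over \(x\in L^{\ell}\) and then over \(\rho>0\) yields \(\seminorm{v|_{L^{\ell}}}_{\VMO(L^{\ell})}\le C^{2}\seminorm{v}_{\VMO(K^{\ell})}\), and in particular the mean oscillation at scale \(\rho\) of \(v|_{L^{\ell}}\) tends to \(0\) as \(\rho\to 0\). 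Combined with the \(\Lebesgue^{1}\) estimate, this shows \(v|_{L^{\ell}}\in\VMO(L^{\ell})\) with \(\norm{v|_{L^{\ell}}}_{\VMO(L^{\ell})}\le (1+C^{2})\norm{v}_{\VMO(K^{\ell})}\), and the continuity of the (linear) restriction map follows.

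The only delicate point is the lower bound on \(\cH^{\ell}(B_{\rho}^{\ell}(x)\cap L^{\ell})\) uniformly in \(x\in L^{\ell}\), for which it is crucial that \(\cL^{\ell}\) is a \emph{subcomplex} of \(\cK^{\ell}\) of the same dimension \(\ell\): every point of \(L^{\ell}\) lies in some \(\ell\)-simplex of \(\cL^{\ell}\), which provides the necessary thickness at the scale of the ball. Were \(\cL^{\ell}\) allowed to contain only lower-dimensional simplices, this comparison would fail and the restriction need not land in \(\VMO\).
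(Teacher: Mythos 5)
Your proof is correct and follows essentially the same route as the paper's: the whole lemma rests on the measure comparison \(\cH^{\ell}(B_{\rho}^{\ell}(x)) \le C\,\cH^{\ell}(B_{\rho}^{\ell}(x)\cap L^{\ell})\) for \(x \in L^{\ell}\), from which the estimate \(\seminorm{v|_{L^{\ell}}}_{\rho} \le C^{2}\seminorm{v}_{\rho}\) and the conclusion follow, exactly as you argue. Your elaboration of why the comparison holds (every point of \(L^{\ell}\) lies in an \(\ell\)-simplex of the subcomplex, giving the lower bound \(c\rho^{\ell}\)) and of the trivial \(\Lebesgue^{1}\) bound simply fills in details the paper leaves implicit.
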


\begin{proof}[Proof of Lemma~\ref{lemmaVMORestriction}]
	There exists a constant \(C > 0\) such that, for every \(x \in L^{\ell}\) and every \(\rho > 0\),{}
	\[{}
	\cH^{\ell}(B_{\rho}^{\ell}(x))
	\le C \cH^{\ell}(B_{\rho}^{\ell}(x) \cap L^{\ell}).
	\]
	Thus, \(\seminorm{v|_{L^{\ell}}}_{\rho}	\le C^{2} \seminorm{v}_{\rho}\)	and the conclusion readily follows.
\end{proof}

We now prove that \(\ell\)-extendability at a local level propagates to the entire simplicial complex:

\begin{lemma}
\label{lemmaExtensionHomotopyConstant}
	Let \(\cK^{\ell + 1}\) be a simplicial complex and \(v \in \VMO(K^{\ell}; \manfN)\).{}
	If \(v|_{\partial\Sigma^{\ell + 1}}\) is homotopic to a constant in \(\VMO(\partial\Sigma^{\ell + 1}; \manfN)\) for every \(\Sigma^{\ell + 1} \in \cK^{\ell + 1}\), then there exists \(F \in \Smooth^{0}(K^{\ell + 1}; \manfN)\) such that
\[{}
v|_{K^{\ell}} \sim F|_{K^{\ell}}
\quad \text{in \(\VMO(K^{\ell}; \manfN)\).}
\]
\end{lemma}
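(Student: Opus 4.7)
The plan is to reduce the construction of $F$ to a continuous extension problem on each top-dimensional simplex of $\cK^{\ell + 1}$ and then glue the extensions along the $\ell$-skeleton. The first step is to replace $v$ by a genuinely continuous representative on $K^{\ell}$. Since $K^{\ell}$ is a compact metric measure space satisfying the assumptions of Chapter~\ref{section_VMO}, Proposition~\ref{propositionHomotopyVMOContinuousMap} applied to $v$ yields some $f \in \Smooth^{0}(K^{\ell}; \manfN)$ with
\[
v \sim f \quad \text{in \(\VMO(K^{\ell}; \manfN)\).}
\]
By transitivity of the homotopy relation, it is then enough to construct $F \in \Smooth^{0}(K^{\ell + 1}; \manfN)$ such that $F|_{K^{\ell}} = f$.

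The next step is to check, simplex by simplex, that $f|_{\partial\Sigma^{\ell + 1}}$ has a continuous extension to $\Sigma^{\ell + 1}$ for every $\Sigma^{\ell + 1} \in \cK^{\ell + 1}$. By Lemma~\ref{lemmaVMORestriction} applied to the subcomplex $\partial\Sigma^{\ell + 1}$ of $\cK^{\ell}$, the $\VMO$-homotopy between $v$ and $f$ restricts to a $\VMO$-homotopy between $v|_{\partial\Sigma^{\ell + 1}}$ and $f|_{\partial\Sigma^{\ell + 1}}$. Combined with the assumption that $v|_{\partial\Sigma^{\ell + 1}}$ is $\VMO$-homotopic to a constant and transitivity, this gives
\[
f|_{\partial\Sigma^{\ell + 1}} \sim \cte \quad \text{in \(\VMO(\partial\Sigma^{\ell + 1}; \manfN)\).}
\]
Since $f|_{\partial\Sigma^{\ell + 1}}$ and the constant map are continuous, Proposition~\ref{propositionHomotopyVMOtoC} upgrades this to a $\Smooth^{0}$-homotopy on $\partial\Sigma^{\ell + 1} \simeq \Sphere^{\ell}$. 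By the classical equivalence between null-homotopy and continuous extendability recalled at the beginning of Section~\ref{sectionExtensionApproximation}, there exists $F_{\Sigma} \in \Smooth^{0}(\Sigma^{\ell + 1}; \manfN)$ such that $F_{\Sigma}|_{\partial\Sigma^{\ell + 1}} = f|_{\partial\Sigma^{\ell + 1}}$.

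Finally, the family $\{F_{\Sigma}\}_{\Sigma^{\ell + 1} \in \cK^{\ell + 1}}$ glues into a single continuous map $F \colon K^{\ell + 1} \to \manfN$ by setting $F|_{\Sigma^{\ell + 1}} \vcentcolon= F_{\Sigma}$ on each top-dimensional simplex and $F|_{K^{\ell}} \vcentcolon= f$. Compatibility on overlaps is automatic from the simplicial complex structure: if $\Sigma_{1}^{\ell + 1}, \Sigma_{2}^{\ell + 1} \in \cK^{\ell + 1}$ meet, their intersection is a face contained in $K^{\ell}$, and on any such face both $F_{\Sigma_{1}}$ and $F_{\Sigma_{2}}$ agree with $f$. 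Continuity of $F$ at points of $K^{\ell}$ follows because each $F_{\Sigma}$ is continuous up to the boundary of $\Sigma^{\ell + 1}$ and coincides with $f$ there, while $f$ itself is continuous on $K^{\ell}$. Thus $F \in \Smooth^{0}(K^{\ell + 1}; \manfN)$ with $F|_{K^{\ell}} = f$, which yields the desired $\VMO$-homotopy between $v|_{K^{\ell}}$ and $F|_{K^{\ell}}$. The main conceptual point is the passage from a $\VMO$-homotopy on each $\partial\Sigma^{\ell + 1}$ to a $\Smooth^{0}$-homotopy, which is precisely what allows the use of classical extension theory; everything else is bookkeeping.
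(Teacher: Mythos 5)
Your proof is correct and follows essentially the same route as the paper's: replace \(v\) by a continuous representative \(f\) via Proposition~\ref{propositionHomotopyVMOContinuousMap}, restrict the homotopy to each \(\partial\Sigma^{\ell+1}\) (Lemma~\ref{lemmaVMORestriction}), upgrade to a \(\Smooth^{0}\)-homotopy by Proposition~\ref{propositionHomotopyVMOtoC}, extend continuously over each simplex, and glue along \(K^{\ell}\). The only differences are cosmetic: you spell out the gluing step that the paper leaves implicit, and the classical null-homotopy/extension equivalence you invoke is the one recalled at the start of Chapter~\ref{chapterGenericEllExtension}, not of Section~\ref{sectionExtensionApproximation}.
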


Observe that, by Lemma~\ref{lemmaVMORestriction}, \(v|_{\partial\Sigma^{\ell + 1}} \in \VMO(\partial\Sigma^{\ell + 1}; \manfN)\).

\begin{proof}[Proof of Lemma~\ref{lemmaExtensionHomotopyConstant}]
Since \(v \in \VMO(K^{\ell}; \manfN)\), by Proposition~\ref{propositionHomotopyVMOContinuousMap} there exists a map \(f \in \Smooth^{0}(K^{\ell};\manfN)\) such that 
\begin{equation*}
v\vert_{K^{\ell}}
\sim f
\quad \text{in \(\VMO(K^{\ell};\manfN)\).}
\end{equation*}
We prove that \(f\) has a continuous extension to \(K^{\ell + 1}\).
To this end, we take a simplex \(\Sigma^{\ell+1} \in \cK^{\ell+1}\).{}
By assumption, \(v\vert_{\partial\Sigma^{\ell + 1}}\) is homotopic to a constant in \(\VMO(\partial\Simplex^{\ell + 1}; \manfN)\). 
Thus, by transitivity of the homotopy relation, \(f\vert_{\partial \Sigma^{\ell+1}}\) is homotopic to a constant in \(\VMO(\partial \Sigma^{\ell+1}; \manfN)\), and then, by  Proposition~\ref{propositionHomotopyVMOtoC}, also in \(\Smooth^{0}(\partial \Sigma^{\ell+1}; \manfN)\).
Hence, \(f\vert_{\partial \Sigma^{\ell+1}}\) has a continuous extension to \(\Sigma^{\ell+1}\) as a map with values in \(\manfN\). 
Since this is true for every \(\Sigma^{\ell+1} \in \cK^{\ell+1}\), 
we deduce that \(f\) has a continuous extension to the entire simplicial complex \(K^{\ell+1}\). 
\end{proof}

\begin{proof}[Proof of Proposition~\ref{lemma_l_l1_property}]
The reverse implication is a special case of Proposition~\ref{propositionExtendabilitySimplex} by taking the simplicial complex \(\cK^{\ell + 1}\) associated to a simplex \(\Simplex^{\ell + 1}\).
For the direct implication, let \(w\) be the \(\ell\)-detector from   Definition~\ref{definitionExtensionVMO-1}, let \(\cK^{\ell+1}\) be a simplicial complex and let \(\gamma \colon  K^{\ell+1} \to \manfV\) be a Lipschitz map such that \(\gamma\vert_{K^{\ell}} \in \Fuglede_{w}(K^{\ell}; \manfV)\). 
In particular, for each simplex \(\Sigma^{\ell+1} \in \cK^{\ell+1}\), as \(\partial\Sigma^{\ell + 1} \subset K^{\ell}\) we have \(\gamma\vert_{\partial \Sigma^{\ell+1}} \in \Fuglede_{w}(\partial \Sigma^{\ell+1}; \manfV)\).{}
We obtain that 
\(
u \compose \gamma\vert_{\partial\Sigma^{\ell + 1}}\) is homotopic to a constant in \(\VMO(\partial\Simplex^{\ell + 1}; \manfN)\). 
The conclusion then follows from Lemma~\ref{lemmaExtensionHomotopyConstant}.
\end{proof}

\section{Independence of homotopy classes}
\label{sectionLExtensionIndependence}

The goal of this section is to prove the following generalization of Theorem~\ref{proposition_Reference_Homotopy-Sphere} where, instead of considering homotopies over \(\Sphere^\ell\), we consider homotopies over \(\ell\)-dimensional polytopes:

\begin{theorem}
\label{proposition_Reference_Homotopy}
If \(u \in \VMO^{\ell}(\manfV; \manfN)\) is \(\ell\)-extendable, then there exists an \(\ell\)-detector \(\widetilde{w}\) such that, for every simplicial complex \(\cK^{\ell}\) and every Lipschitz maps \(\gamma_0, \gamma_1 \in \Fuglede_{\tilde{w}}(K^\ell ; \manfV)\) with 
\[{}
\gamma_{0} \sim \gamma_{1}
\quad \text{in \(\Smooth^{0}(K^{\ell}; \manfV)\),} 
\] 
we have
\[
u \compose \gamma_0  \sim u \compose \gamma_1{}
\quad \text{in \(\VMO (K^\ell; \manfN)\).}
\]
\end{theorem}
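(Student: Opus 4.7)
The plan is to reduce the statement to Proposition~\ref{lemma_l_l1_property} applied on the prism $K^\ell \times [0,1]$, interpreted as an $(\ell+1)$-dimensional simplicial complex, with the interior supplied by a Lipschitz homotopy from $\gamma_0$ to $\gamma_1$. To fix the detector, let $w_0$ be an $\ell$-detector for $u$ provided by Proposition~\ref{lemma_l_l1_property}, let $w_1$ be an $\ell$-detector given by Definition~\ref{definitionVMOell} governing the continuity of $\gamma \mapsto u \compose \gamma$ under convergence in $\Fuglede$, and fix a transversal perturbation of the identity $\tau \colon U \to \manfV$. Apply Proposition~\ref{propositionFugledeApproximation} to $w \vcentcolon= w_0 + w_1$ to obtain the detector $\widetilde{w} \ge w$; since $\widetilde{w} \ge w_0$, it is still an $\ell$-detector for $u$.

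Given $\gamma_0, \gamma_1 \in \Fuglede_{\tilde{w}}(K^\ell; \manfV)$ continuously homotopic, I would first construct a Lipschitz homotopy $\Gamma \colon K^\ell \times [0,1] \to \manfV$ with $\Gamma(\cdot, 0) = \gamma_0$ and $\Gamma(\cdot, 1) = \gamma_1$, obtained by smoothing (in some ambient Euclidean space) a continuous homotopy extended to be constant in $t$ near the endpoints and projecting back to $\manfV$ via the nearest point retraction (when $\manfV$ is a compact manifold) or keeping the convolution within a tubular neighborhood (when $\manfV$ is open in $\R^m$). Triangulate $K^\ell \times [0,1]$ as a simplicial complex $\cL^{\ell+1}$ (by the standard prismatic subdivision of each $\Sigma^\ell \times [0,1]$) in such a way that $K^\ell \times \{0\}$ and $K^\ell \times \{1\}$ are subcomplexes of the $\ell$-skeleton $L^\ell$.

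The central step is to select a sequence $\xi_i \to 0$ in the perturbation parameter so that $\Gamma_i \vcentcolon= \tau_{\xi_i} \compose \Gamma$ satisfies simultaneously
\[
\Gamma_i|_{L^\ell} \in \Fuglede_{w_0}(L^\ell; \manfV)
\quad \text{and} \quad
\tau_{\xi_i} \compose \gamma_j \to \gamma_j \text{ in } \Fuglede_{w_1}(K^\ell; \manfV) \text{ for } j \in \{0, 1\}.
\]
The first property follows from Proposition~\ref{lemmaTransversalFamily} applied to $\Gamma|_{L^\ell}$, which guarantees that for each small ball $B_\eta^q$ the Fuglede condition with respect to $w_0$ holds for $\xi$ in a subset of positive measure; the latter two are supplied by Proposition~\ref{propositionFugledeApproximation} applied to the constant sequences $\gamma_0$ and $\gamma_1$, which lie in $\Fuglede_{\tilde{w}}$ by hypothesis. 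A Fubini-type argument ensures that the intersection of these three generic conditions is nonempty inside arbitrarily small balls, producing the required $\xi_i$. Applying Proposition~\ref{lemma_l_l1_property} to $\Gamma_i$ then yields $F_i \in \Smooth^0(L^{\ell+1}; \manfN)$ with $u \compose \Gamma_i|_{L^\ell} \sim F_i|_{L^\ell}$ in $\VMO(L^\ell; \manfN)$. By Lemma~\ref{lemmaVMORestriction} this homotopy passes to the subcomplexes $K^\ell \times \{0\}$ and $K^\ell \times \{1\}$; since $F_i$ itself provides a continuous homotopy between $F_i|_{K^\ell \times \{0\}}$ and $F_i|_{K^\ell \times \{1\}}$ in $\Smooth^0(K^\ell; \manfN)$, transitivity of the $\VMO$-homotopy relation gives $u \compose \tau_{\xi_i} \compose \gamma_0 \sim u \compose \tau_{\xi_i} \compose \gamma_1$ in $\VMO(K^\ell; \manfN)$. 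Finally, the convergence $\tau_{\xi_i} \compose \gamma_j \to \gamma_j$ in $\Fuglede_{w_1}$, combined with Definition~\ref{definitionVMOell} and Proposition~\ref{propositionHomotopyVMOLimit}, yields $u \compose \tau_{\xi_i} \compose \gamma_j \sim u \compose \gamma_j$ in $\VMO(K^\ell; \manfN)$ for $i$ large and $j \in \{0, 1\}$, and one more application of transitivity concludes.

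The main obstacle I expect is the simultaneous selection of $\xi_i$: one must ensure that the three generic conditions above hold for a common $\xi$ in arbitrarily small balls, which amounts to combining the averaging estimates of Propositions~\ref{lemmaTransversalFamily} and~\ref{propositionFugledeApproximation} on the full $\ell$-skeleton $L^\ell$ and on its boundary pieces $K^\ell \times \{0, 1\}$, which sit inside $L^\ell$ as subcomplexes of the same dimension. The preliminary construction of the Lipschitz homotopy $\Gamma$ from a merely continuous one is a standard but technical step that must be executed without disturbing the prescribed boundary values $\gamma_0, \gamma_1$.
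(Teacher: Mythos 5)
Your proposal is correct and follows essentially the same route as the paper's proof: a Lipschitz homotopy on the prism \(K^{\ell}\times[0,1]\), a generic transversal perturbation \(\tau_{\xi}\) making its \(\ell\)-skeleton a Fuglede map, Proposition~\ref{lemma_l_l1_property} to produce the continuous map \(F\) on the prism, restriction to the top and bottom copies of \(K^{\ell}\), and stability under \(\Fuglede\)-convergence plus Proposition~\ref{propositionHomotopyVMOLimit} to return to the unperturbed \(\gamma_{0},\gamma_{1}\). The only packaging differences are that the paper obtains the Lipschitz homotopy with exact boundary values via a Young-transform lemma (Lemma~\ref{lemmaHomotopyLipschitz}), whereas your mollification-and-projection construction needs the extra care you already flag, and that your simultaneous selection of \(\xi_{i}\) is exactly the content of the paper's Lemma~\ref{propositionGenericStability}, which you reprove inline.
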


The counterpart of Theorem~\ref{proposition_Reference_Homotopy-Sphere} for Fuglede maps on \(\Sphere^{\ell}\) is obtained by first restricting the conclusion of Theorem~\ref{proposition_Reference_Homotopy} to Fuglede maps defined on the boundary of a simplex \(\Simplex^{\ell + 1}\) and then arguing by composition with a biLipschitz homeomorphism as in the proof of Proposition~\ref{propositionExtendabilitySimplex}. 

Let us begin with a classical observation that the existence of a continuous homotopy between two Lipschitz maps yields a Lipschitz homotopy between those maps.
	
\begin{lemma}
	\label{lemmaHomotopyLipschitz}
	Let \(X\) be a compact metric space. 
	If \(\gamma_{0}, \gamma_{1} \colon  X \to \manfV\) are Lipschitz continuous and
	\[{}
	\gamma_{0} \sim \gamma_{1}
	\quad \text{in \(\Smooth^{0}(X; \manfV)\),}
	\]
	then there exists a Lipschitz homotopy \(H \colon  X \times [0, 1] \to \manfV\) such that \(H(\cdot, 0) = \gamma_{0}\) and \(H(\cdot, 1) = \gamma_{1}\).
\end{lemma}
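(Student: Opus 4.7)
The plan is to approximate a given continuous homotopy by a Lipschitz map into the ambient Euclidean space, glue in the exact boundary values $\gamma_0$ and $\gamma_1$ by a linear interpolation in $t$, and then project back onto $\manfV$ by a retraction.

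To set the stage, I would isometrically embed $\manfV$ in some $\R^{\varkappa}$. When $\manfV = \manfM$ is a compact manifold, I would use the smooth nearest-point retraction $\Pi \colon \manfM + B_{\iota}^{\varkappa} \to \manfM$ from a tubular neighborhood. When $\manfV = \Omega$ is a bounded Lipschitz open set, I would take $\Pi = \Id$ and exploit that any continuous homotopy $H_{0} \colon X \times [0,1] \to \Omega$ has compact image, which therefore lies at positive distance $\iota > 0$ from $\partial \Omega$. In either case, anything within $\iota/2$ of $H_{0}(X \times [0,1])$ lies in the domain of $\Pi$ and is mapped by $\Pi$ into $\manfV$.

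Given such an $H_{0}$ with $H_{0}(\cdot, 0) = \gamma_{0}$ and $H_{0}(\cdot, 1) = \gamma_{1}$, I view it as an $\R^{\varkappa}$-valued continuous map on the compact metric space $X \times [0,1]$. By uniform continuity and a standard partition-of-unity construction, for every $\epsilon > 0$ there exists a Lipschitz map $\widetilde H_{\epsilon} \colon X \times [0,1] \to \R^{\varkappa}$ with $\sup_{X \times [0,1]} \abs{\widetilde H_{\epsilon} - H_{0}} \le \epsilon$: explicitly, pick a finite $\epsilon$-net $\{(x_{i}, t_{i})\}_{i=1}^{N}$, define the Lipschitz functions
\[
\psi_{i}(y) \vcentcolon= \frac{\max\{0, 2\epsilon - d(y, (x_{i}, t_{i}))\}}{\sum_{j=1}^{N} \max\{0, 2\epsilon - d(y, (x_{j}, t_{j}))\}}
\]
and set $\widetilde H_{\epsilon} \vcentcolon= \sum_{i=1}^{N} \psi_{i}\, H_{0}(x_{i}, t_{i})$.

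Choosing $\epsilon < \iota/4$, I would then restore the boundary data by linear interpolation in $t$ on the two slabs $[0, \epsilon]$ and $[1 - \epsilon, 1]$, defining
\[
H^{*}(x, t) \vcentcolon=
\begin{cases}
\dfrac{\epsilon - t}{\epsilon}\, \gamma_{0}(x) + \dfrac{t}{\epsilon}\, \widetilde H_{\epsilon}(x, \epsilon), & 0 \le t \le \epsilon,\\[1ex]
\widetilde H_{\epsilon}(x, t), & \epsilon \le t \le 1 - \epsilon,\\[1ex]
\dfrac{t - 1 + \epsilon}{\epsilon}\, \gamma_{1}(x) + \dfrac{1 - t}{\epsilon}\, \widetilde H_{\epsilon}(x, 1 - \epsilon), & 1 - \epsilon \le t \le 1.
\end{cases}
\]
The three pieces match at $t = \epsilon$ and $t = 1 - \epsilon$, so $H^{*}$ is Lipschitz, and it satisfies $H^{*}(\cdot, 0) = \gamma_{0}$ and $H^{*}(\cdot, 1) = \gamma_{1}$. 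Since $\widetilde H_{\epsilon}(\cdot, \epsilon)$ is within $\epsilon < \iota/4$ of $H_{0}(\cdot, \epsilon) \in \manfV$, and similarly at $t = 1 - \epsilon$, every convex combination appearing above stays within $\iota/4$ of $\manfV$; on the middle slab the image is within $\epsilon$ of $\manfV$. Hence $H^{*}(X \times [0,1])$ lies in the domain of $\Pi$, and $H \vcentcolon= \Pi \compose H^{*}$ is the required Lipschitz homotopy, since $\Pi$ is smooth (hence Lipschitz on the relevant compact neighborhood) and fixes $\gamma_{0}$ and $\gamma_{1}$. There is no serious obstacle here; the only subtlety worth flagging is that the interpolation to restore the boundary values must be performed \emph{after} the Lipschitz approximation, because $\widetilde H_{\epsilon}(\cdot, 0)$ and $\widetilde H_{\epsilon}(\cdot, 1)$ need not coincide with $\gamma_{0}$ and $\gamma_{1}$.
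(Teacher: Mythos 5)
Your construction is essentially sound, but one step is not justified as written. From the fact that \(\gamma_{0}(x)\) and \(\widetilde H_{\epsilon}(x,\epsilon)\) each lie within \(\iota/4\) of \(\manfV\) you cannot conclude that their convex combinations do: a segment joining two points of (or near) \(\manfV\) can leave every small neighborhood of \(\manfV\) when its endpoints are far apart (midpoint of two antipodal points of a sphere), and likewise a segment joining two points of a non-convex \(\Omega\) can exit \(\Omega\). What actually saves the interpolation slabs is that the two endpoints are close \emph{to each other}: since \(H_{0}\) is uniformly continuous on the compact set \(X \times [0,1]\), with modulus \(\omega\), one has
\(\abs{\widetilde H_{\epsilon}(x,\epsilon) - \gamma_{0}(x)} \le \abs{\widetilde H_{\epsilon}(x,\epsilon) - H_{0}(x,\epsilon)} + \abs{H_{0}(x,\epsilon) - H_{0}(x,0)} \le \epsilon + \omega(\epsilon)\),
so for \(\epsilon\) small the whole segment stays in a ball of radius \(\iota/4\) centered at \(\gamma_{0}(x) \in \manfV\), hence in the domain of \(\Pi\) (resp.\ in \(\Omega\)); similarly at \(t = 1-\epsilon\). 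You should insert this estimate — it is the genuine reason the gluing is harmless. A second, cosmetic point: the explicit partition-of-unity formula, whose bumps have support of radius \(2\epsilon\), gives \(\sup\abs{\widetilde H_{\epsilon} - H_{0}} \le \omega(2\epsilon)\) rather than \(\le \epsilon\); decouple the net parameter from the error parameter (or just invoke that the error tends to zero with the mesh) so that all your smallness requirements can be met simultaneously.

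With these repairs your argument is correct, and it takes a different route from the paper's. The paper first replaces the given homotopy by one equal to \(\gamma_{0}\) for \(t \le 1/3\) and to \(\gamma_{1}\) for \(t \ge 2/3\), and then applies componentwise in \(\R^{\varkappa}\) the inf-convolution \(H_{j,\kappa}(x,t) = \inf\{F_{\kappa}(y,s) + j(d(x,y) + \abs{t-s})\}\), which is automatically \(j\)-Lipschitz, converges uniformly to \(F\), and — this is the point of the pre-flattening — reproduces \(\gamma_{0}\) and \(\gamma_{1}\) \emph{exactly} at \(t = 0,1\) once \(j \ge \max\{\abs{\gamma_{0}}_{\Lip}, \abs{\gamma_{1}}_{\Lip}, 6\norm{F}_{\Smooth^{0}}\}\); then it projects by \(\Pi\). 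So the paper avoids your endpoint-regluing (and the segment issue above) by building the boundary values into the approximation itself, at the price of the inf-convolution trick, whereas your approach uses only a generic Lipschitz approximation plus an explicit linear interpolation in \(t\). Both are legitimate; the paper's is slightly cleaner precisely at the step where your write-up has the gap.
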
	

\begin{proof}[Proof of Lemma~\ref{lemmaHomotopyLipschitz}]
	Let \(F \colon  X \times [0, 1] \to \manfV\) be a continuous homotopy with 
	\begin{equation}
	\label{eqGeneric-1263}
	F(\cdot, t) = \gamma_{0} 
	\quad \text{when \(t \le 1/3\)}
	\quad \text{and} \quad 
	F(\cdot, t) = \gamma_{1}
	\quad \text{when \(t \ge 2/3\).}
	\end{equation}
	We may assume that \(\manfV\) is isometrically imbedded in a Euclidean space \(\R^{\varkappa}\).{}
	For each \(j \in \N\), we then define \(H_{j} \colon  X \times [0, 1] \to \R^{\varkappa}\) for each component \(\kappa \in \{1, \dots, \varkappa\}\) using Young's transform:
	\[{}
	H_{j, \kappa}(x, t)
	\vcentcolon= \inf_{(y, s) \in X \times [0, 1]}{\Bigl\{  F_{\kappa}(y, s) + j \bigl(d(x, y) + \abs{t - s}\bigr)\Bigr\}}.
	\] 
	Then, each component \(H_{j, \kappa}\) is Lipschitz continuous with Lipschitz constant less than or equal to \(j\) and \(H_{j} \to F\) uniformly in \(X \times [0, 1]\).{}
	Moreover, by \eqref{eqGeneric-1263} we have
	\[{}
	H_{j}(\cdot, 0) = \gamma_{0} 
	\quad \text{and} \quad 
	H_{j}(\cdot, 1) = \gamma_{1} 	
	\]
	for every \(j \ge \max{\bigl\{\abs{\gamma_{0}}_{\Lip}, \abs{\gamma_{1}}_{\Lip}, 6 \norm{F}_{\Smooth^{0}}\bigr\}}\).
	Thus, it suffices to take \(H = \Pi \compose H_{j}\) for \(j\) sufficiently large, where \(\Pi\) denotes the nearest point projection to \(\manfV\).
\end{proof}

An important ingredient in the proof of Theorem~\ref{proposition_Reference_Homotopy-Sphere} is the possibility of perturbing a given Fuglede map to obtain another one that is Fuglede with respect to a larger skeleton.

\begin{lemma}
	\label{propositionGenericStability}
	Let \(u \in \VMO^{\ell}(\manfV; \manfN)\), let \(w\) be an \(\ell\)-detector for \(u\) and let \( \tau \colon U \to \manfV \) be a transversal perturbation of the identity.{}
	Then, there exists an \(\ell\)-detector \(\widetilde{w} \ge w\) in \(\manfV\) with the following property:
	For every \( \epsilon > 0 \), every simplicial complex \(\cK^{e}\), every subcomplex \(\cL^{\ell}\) and every Lipschitz map \(\gamma \colon  K^{e} \to \manfV\) with \(\gamma\vert_{L^{\ell}} \in \Fuglede_{\tilde{w}}(L^{\ell}; \manfV)\), there exists \( \xi \in B_{\epsilon}^{q} \) such that \( \gamma(K^{e}) \times \{\xi\} \subset U \), 
	\[{}
	\tau_{\xi} \compose \gamma\vert_{K^{i}} \in \Fuglede_{w}(K^{i}; \manfV)
	\quad \text{for every \(i \in \{0, \dots, e\}\)}
	\]
	and
	\[
	u \compose \tau_{\xi} \compose \gamma\vert_{L^{\ell}} {}
	\sim u \compose \gamma\vert_{L^{\ell}} {}
	\quad \text{in \(\VMO(L^{\ell}; \manfN)\).}
	\]
\end{lemma}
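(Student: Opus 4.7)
The strategy combines the given \(\ell\)-detector \(w\) with a stability detector coming from \(u \in \VMO^{\ell}(\manfV; \manfN)\), then uses the transversal perturbation machinery of Propositions~\ref{lemmaTransversalFamily} and~\ref{propositionFugledeApproximation} to select a single \(\xi\) that simultaneously makes every skeletal restriction of \(\tau_{\xi} \compose \gamma\) a Fuglede map and preserves the \(\VMO\)-homotopy class of \(u \compose \gamma|_{L^{\ell}}\).

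First I would invoke Definition~\ref{definitionVMOell} to fix an \(\ell\)-detector \(w_{0}\) of \(u\) governing its \(\VMO\) stability: whenever \(\gamma_{j} \to \gamma\) in \(\Fuglede_{w_{0}}(L^{\ell}; \manfV)\), one has \(u \compose \gamma_{j} \to u \compose \gamma\) in \(\VMO(L^{\ell}; \manfN)\). Setting \(\bar{w} \vcentcolon= w + w_{0}\) and applying Proposition~\ref{propositionFugledeApproximation} to \(\bar{w}\) and \(\tau\) furnishes a summable function \(\widetilde{w} \ge \bar{w}\). Since \(\widetilde{w} \ge w_{0}\), it is also an \(\ell\)-detector for \(u\), which is the function announced in the statement.

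Given \(\cK^{e}\), \(\cL^{\ell}\), a Lipschitz map \(\gamma \colon K^{e} \to \manfV\) with \(\gamma|_{L^{\ell}} \in \Fuglede_{\tilde{w}}(L^{\ell}; \manfV)\), and \(\epsilon > 0\), I would first shrink \(\epsilon\) so that \(\gamma(K^{e}) \times B_{\epsilon}^{q} \subset U\), using compactness of \(\gamma(K^{e})\) and the openness of \(U \supset \manfV \times \{0\}\). Applying Proposition~\ref{lemmaTransversalFamily} to \(\gamma|_{K^{i}}\) and \(w\) at each dimension \(i \in \{0, \ldots, e\}\) shows that the set
\[
A \vcentcolon= \bigcap_{i = 0}^{e} \bigl\{\xi \in B_{\epsilon}^{q} : \tau_{\xi} \compose \gamma|_{K^{i}} \in \Fuglede_{w}(K^{i}; \manfV)\bigr\}
\]
has full measure in \(B_{\epsilon}^{q}\). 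Simultaneously, applying Proposition~\ref{propositionFugledeApproximation} to the trivially equiLipschitz constant sequence \(\gamma_{j} \equiv \gamma|_{L^{\ell}}\) provides a sequence of radii \(\epsilon_{i} \to 0\) with \(\epsilon_{i} < \epsilon\), together with the averaged control \eqref{eqTransversalPerturbationLimit} for \(\bar{w}\). Following the proof of Proposition~\ref{propositionFugledeApproximation}, any point \(\xi_{i} \in B_{\epsilon_{i}}^{q}\) at which the inner integral lies below its average works; since that condition selects a positive-measure subset of \(B_{\epsilon_{i}}^{q}\) and \(A\) is conull there, I can further impose \(\xi_{i} \in A\). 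This produces a sequence \(\xi_{i} \to 0\) such that \(\tau_{\xi_{i}} \compose \gamma|_{K^{j}} \in \Fuglede_{w}(K^{j}; \manfV)\) for every \(j \in \{0, \ldots, e\}\), and \(\tau_{\xi_{i}} \compose \gamma|_{L^{\ell}} \to \gamma|_{L^{\ell}}\) in \(\Fuglede_{\bar{w}}(L^{\ell}; \manfV)\).

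Since \(\bar{w} \ge w_{0}\), this convergence also holds in \(\Fuglede_{w_{0}}(L^{\ell}; \manfV)\), so the stability property of \(w_{0}\) yields
\[
u \compose \tau_{\xi_{i}} \compose \gamma|_{L^{\ell}} \to u \compose \gamma|_{L^{\ell}} \quad \text{in \(\VMO(L^{\ell}; \manfN)\).}
\]
Proposition~\ref{propositionHomotopyVMOLimit} then ensures that, for all \(i\) sufficiently large, \(u \compose \tau_{\xi_{i}} \compose \gamma|_{L^{\ell}} \sim u \compose \gamma|_{L^{\ell}}\) in \(\VMO(L^{\ell}; \manfN)\); any such \(\xi_{i}\) serves as the desired \(\xi\). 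The main obstacle I foresee is precisely the joint selection step: the Fuglede condition at every skeletal dimension is a full-measure condition, whereas the averaged estimate underlying the convergence in \(\Fuglede_{\bar{w}}\) only pins \(\xi_{i}\) down to a positive-measure subset of \(B_{\epsilon_{i}}^{q}\). Reconciling them requires opening up the proof of Proposition~\ref{propositionFugledeApproximation} to intersect these two sets, rather than using that proposition as a black box.
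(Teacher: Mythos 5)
Your proof is correct and follows essentially the same route as the paper's: you build \(\widetilde{w}\) from \(w\) plus the stability detector of Definition~\ref{definitionVMOell} via Proposition~\ref{propositionFugledeApproximation}, and select \(\xi\) by intersecting the full-measure Fuglede condition on all skeletons (Proposition~\ref{lemmaTransversalFamily}) with the averaged control \eqref{eqTransversalPerturbationLimit}, then conclude with the \(\VMO^\ell\) stability and Proposition~\ref{propositionHomotopyVMOLimit}, exactly as in the paper. The obstacle you flag is not one: \eqref{eqTransversalPerturbationLimit} is part of the statement of Proposition~\ref{propositionFugledeApproximation}, so the Chebyshev-type positive-measure selection intersected with the conull set requires no reopening of its proof.
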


\resetconstant
\begin{proof}[Proof of Lemma~\ref{propositionGenericStability}]
	Let \(w_{1}\) be an \(\ell\)-detector for \(u\) from Definition~\ref{definitionVMOell}.
	We then take the \(\ell\)-detector \(\widetilde{w} \ge w + w_{1}\) given by Proposition~\ref{propositionFugledeApproximation}.
	Let  \(\gamma \colon  K^{e} \to \manfV\) be any Lipschitz map as in the statement.{}
	Since \(\gamma(K^{e}) \times \{0\} \subset U\), by compactness of \(\gamma(K^{e})\) there exists \(0<\delta <\varepsilon\) such that \( B_{\delta}^{m}(\gamma(K^{e})) \times B_{\delta}^{q} \subset U\).	
	Proposition~\ref{lemmaTransversalFamily} applied with \(X = K^{0}, \ldots, K^{e}\) gives
	\begin{equation}
	\label{eqExtension-532}
	\int_{{B}^q_\delta} \biggl( \sum_{i = 0}^{e} \int_{K^{i}} w \compose \tau_\xi \compose \gamma \dif\cH^{i} \biggr) \dif \xi
	\le \C \int_{\manfV} w.
	\end{equation}
	Since \(\gamma\vert_{L^{\ell}} \in \Fuglede_{\tilde{w}}(L^{\ell}; \manfV)\), we may apply Proposition~\ref{propositionFugledeApproximation} with the constant sequence \(\gamma_{j} = \gamma\) and \(X = L^{\ell}\) to find a sequence of positive numbers \((\epsilon_{j})_{j \in \N}\) in \((0, \epsilon)\) that converges to zero and such that
	\begin{equation}
	\label{eqExtension-540}
	\lim_{j \to \infty}{\fint_{B_{\epsilon_{j}}^{q}}{\biggl( \int_{L^{\ell}}{\abs{w \compose \tau_{\xi} \compose \gamma - w \compose \gamma}} \dif\cH^{\ell} \biggr)} \dif \xi}
	= 0.
	\end{equation}
	Combining \eqref{eqExtension-532} and \eqref{eqExtension-540}, one finds a sequence \((\xi_{j})_{j \in \N}\) in \(B_{\delta}^{q}\) that converges to \(0\) in \(\R^{q}\) such that
	\[{}
	\sum_{i = 0}^{e} \int_{K^{i}} w \compose \tau_{\xi_{j}} \compose \gamma \dif\cH^{i}
	< \infty 
	\quad \text{for every \(j \in \N\)}
	\]
	and
	\[{}
	\lim_{j \to \infty}{\int_{L^{\ell}}{\abs{w \compose \tau_{\xi_{j}} \compose \gamma - w \compose \gamma} \dif\cH^{\ell}}}
	= 0.
	\]
	Hence, the sequence \((\tau_{\xi_{j}} \compose \gamma)_{j \in \N}\) satisfies
	\[{}
	\tau_{\xi_{j}} \compose \gamma\vert_{K^{i}} \in \Fuglede_{w}(K^{i}; \manfV)
	\quad \text{for every \(i \in \{0, \dots, e\}\) and \(j \in \N\)}
	\]	
	and
	\[{}
	\tau_{\xi_{j}} \compose \gamma\vert_{L^{\ell}} \to \gamma\vert_{L^{\ell}}
	\quad \text{in \(\Fuglede_{w}(L^{\ell}; \manfV)\).}
	\]
	Since \(u\) is \(\VMO^{\ell}\) with \(\ell\)-detector \(w\), 
	we then have
	\[{}
	u \compose \tau_{\xi_{j}} \compose \gamma\vert_{L^{\ell}} \to u \compose \gamma\vert_{L^{\ell}}
	\quad \text{in \(\VMO(L^{\ell}; \manfN)\).}
	\]
	Hence, by Proposition~\ref{propositionHomotopyVMOLimit} 
	there exists \(J \in \N\) such that, for every \(j \ge J\),{}
	\[{}
	u \compose \tau_{\xi_{j}} \compose \gamma|_{L^{\ell}} \sim u \compose \gamma|_{L^{\ell}}
	\quad \text{in \(\VMO(L^{\ell}; \manfN)\).}
	\]
	It thus suffices to take \(\xi = \xi_{j}\) for any \(j \ge J\).
\end{proof}

\begin{proof}[Proof of Theorem~\ref{proposition_Reference_Homotopy}]
	Let \(w_{1}\) be an \(\ell\)-detector for \(u\) given by Proposition~\ref{lemma_l_l1_property} and then let \(\widetilde w \ge w_{1}\) be an \(\ell\)-detector for \(u\) given by Lemma~\ref{propositionGenericStability}.
	Given Lipschitz maps \(\gamma_0\) and \(\gamma_1\) as in the statement, by Lemma~\ref{lemmaHomotopyLipschitz} there exists a Lipschitz homotopy  \(H \colon  K^{\ell} \times [0, 1] \to \manfV\) such that \(H(\cdot, 0) = \gamma_{0}\) and \(H(\cdot, 1) = \gamma_{1}\).
	Take a simplicial complex \(\cE^{\ell + 1}\) and a subcomplex \(\cL^{\ell}\) with
	\[{}
	E^{\ell + 1}
	= K^{\ell} \times [0, 1]
	\quad \text{and} \quad 
	L^{\ell}
	= K^{\ell} \times \{0, 1\}.
	\]
	Since \(\gamma_{0}\) and \(\gamma_{1}\) are Fuglede maps with respect to \(\widetilde{w}\), we have \(H|_{L^{\ell}} \in \Fuglede_{\tilde w}(L^{\ell}; \manfV)\).{}
	Thus, by Lemma~\ref{propositionGenericStability} with \(e \vcentcolon= \ell + 1\) there exists a Lipschitz map \(\widetilde{H} \colon  E^{\ell + 1} \to \manfV\) of the form \( \widetilde{H} = \tau_{\xi} \compose H \) such that 
	\begin{equation}
		\label{eqVMOell+1}
	\widetilde{H}|_{E^{\ell}}
	\in \Fuglede_{w_{1}}(E^{\ell}; \manfV)
	\end{equation}
	and
	\begin{equation}
		\label{eqVMOell+1Homotopy}
	u \compose \widetilde{H}|_{L^{\ell}}
	\sim 
	u \compose H|_{L^{\ell}}
	\quad
	\text{in \(\VMO(L^{\ell}; \manfN)\).}
	\end{equation}	
	By \eqref{eqVMOell+1} and Proposition~\ref{lemma_l_l1_property}, there exists \(F \in \Smooth^{0}(E^{\ell + 1}; \manfN)\) such that
	\begin{equation}
	\label{eqLExtension-1034}
	u \compose \widetilde{H}|_{E^{\ell}}
	\sim F|_{E^{\ell}}
	\quad \text{in \(\VMO(E^{\ell}; \manfN)\).}
	\end{equation}
	By Lemma~\ref{lemmaVMORestriction}, we may restrict \eqref{eqLExtension-1034} to \(L^{\ell}\).{}
	Then, by \eqref{eqVMOell+1Homotopy} and transitivity of the homotopy relation, we get
	\[{}
	u \compose H|_{L^{\ell}}
	\sim F|_{L^{\ell}}
	\quad
	\text{in \(\VMO(L^{\ell}; \manfN)\).}
	\]
	By further restriction to \(K^{\ell} \times \{0\}\) and \(K^{\ell} \times \{1\}\), the latter can be restated as
	\[{}
	u \compose \gamma_{0}
	\sim 
	F|_{K^{\ell} \times \{0\}}
	\quad \text{and} \quad{}
	u \compose \gamma_{1}
	\sim 
	F|_{K^{\ell} \times \{1\}}
	\quad
	\text{in \(\VMO(K^{\ell}; \manfN)\).}
	\]	
	Since \(F \in \Smooth^{0}(K^{\ell} \times [0, 1]; \manfN)\), we also have
	\[{}
	F|_{K^{\ell} \times \{0\}}
	\sim 
	F|_{K^{\ell} \times \{1\}}
	\quad
	\text{in \(\VMO(K^{\ell}; \manfN)\).}
	\]	
	The conclusion follows from transitivity of the homotopy relation.
\end{proof}

It is possible to reformulate the conclusion of Theorem~\ref{proposition_Reference_Homotopy} based on the perturbation of Lipschitz maps as in White~\cite{White-1986}, without explicit reference to Fuglede maps:

\begin{corollary}
    \label{corollatyWhiteHomotopyType}
Let \(\tau \colon U \to \manfV\) be a transversal perturbation of the identity.
If \(u \in \VMO^{\ell}(\manfV; \manfN)\) is \(\ell\)-extendable, then \(u\) has an \emph{\(\ell\)-homotopy type} in the sense that, for every simplicial complex \(\cK^{\ell}\), every Lipschitz maps \(\gamma_0, \gamma_1 \colon K^\ell \to \manfV\) such that
\[
\gamma_0 \sim \gamma_1 \quad \text{in \(\Smooth^0(K^\ell; \manfV)\),}
\]
and for almost every \(\xi_0, \xi_1 \in \R^q\) in a neighborhood of \(0\), we have
\[
u \compose \tau_{\xi_0} \compose \gamma_0 \sim u \compose \tau_{\xi_1} \compose \gamma_1
\quad \text{in \(\VMO(K^\ell; \manfN)\).}
\]
\end{corollary}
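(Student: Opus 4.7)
The plan is to deduce the corollary directly from Theorem~\ref{proposition_Reference_Homotopy} by perturbing $\gamma_0$ and $\gamma_1$ into Fuglede maps without altering their $\Smooth^0$ homotopy class. Let $\widetilde w$ be an $\ell$-detector associated to $u$ by Theorem~\ref{proposition_Reference_Homotopy}. Since $\gamma_0(K^\ell) \cup \gamma_1(K^\ell)$ is a compact subset of $\manfV$ and $\manfV \times \{0\} \subset U$, there exists $\delta > 0$ such that $B^m_\delta(\gamma_i(K^\ell)) \times B^q_\delta \subset U$ for both $i = 0, 1$.

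Applying Proposition~\ref{lemmaTransversalFamily} to $\widetilde w$ with $X = K^\ell$ and $\gamma = \gamma_i$, one obtains
\[
\int_{B^q_\delta} \Bigl( \int_{K^\ell} \widetilde w \compose \tau_\xi \compose \gamma_i \dif\cH^\ell \Bigr) \dif\xi < \infty
\quad \text{for \(i \in \{0,1\}\),}
\]
so that for almost every $\xi \in B^q_\delta$, one has $\tau_\xi \compose \gamma_i \in \Fuglede_{\widetilde w}(K^\ell; \manfV)$. Thus for almost every $\xi_0, \xi_1 \in B^q_\delta$, both $\tau_{\xi_0} \compose \gamma_0$ and $\tau_{\xi_1} \compose \gamma_1$ are admissible test maps in Theorem~\ref{proposition_Reference_Homotopy}.

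It remains to observe that $\tau_{\xi_i} \compose \gamma_i \sim \gamma_i$ in $\Smooth^0(K^\ell; \manfV)$. Indeed, by the inclusion $B^m_\delta(\gamma_i(K^\ell)) \times B^q_\delta \subset U$, the map
\[
H_i \colon (x, t) \in K^\ell \times [0,1] \longmapsto \tau(\gamma_i(x), t \xi_i) \in \manfV
\]
is a well-defined continuous homotopy between $\gamma_i = \tau_0 \compose \gamma_i$ and $\tau_{\xi_i} \compose \gamma_i$. Combined with the hypothesis $\gamma_0 \sim \gamma_1$ in $\Smooth^0(K^\ell; \manfV)$ and transitivity of the homotopy relation, we obtain
\[
\tau_{\xi_0} \compose \gamma_0 \sim \tau_{\xi_1} \compose \gamma_1 \quad \text{in \(\Smooth^0(K^\ell; \manfV)\).}
\]
Theorem~\ref{proposition_Reference_Homotopy} then yields the desired $\VMO$ homotopy $u \compose \tau_{\xi_0} \compose \gamma_0 \sim u \compose \tau_{\xi_1} \compose \gamma_1$ in $\VMO(K^\ell; \manfN)$.

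The argument is almost routine once the right objects are assembled; the only subtle point is guaranteeing that the perturbed maps genuinely fall inside $\Fuglede_{\widetilde w}$ for a full-measure set of $\xi$, which is precisely the role of Proposition~\ref{lemmaTransversalFamily}. Since Theorem~\ref{proposition_Reference_Homotopy} is already proved for any pair of Fuglede maps with respect to $\widetilde w$ that are $\Smooth^0$-homotopic, no further work is needed beyond verifying these two ingredients.
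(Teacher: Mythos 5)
Your proof is correct and follows essentially the same route as the paper: take the $\ell$-detector from Theorem~\ref{proposition_Reference_Homotopy}, use Proposition~\ref{lemmaTransversalFamily} (equivalently \eqref{eqExtension-568}) to make $\tau_{\xi_i}\compose\gamma_i$ Fuglede maps for almost every $\xi_i$ near $0$, note they are $\Smooth^0$-homotopic to the $\gamma_i$, and conclude by transitivity and the theorem. Your explicit homotopy $H_i(x,t)=\tau(\gamma_i(x),t\xi_i)$ is simply a written-out version of a step the paper asserts without detail.
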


\begin{proof}
    Let \(\widetilde{w}\) be the \(\ell\)-detector given by Theorem~\ref{proposition_Reference_Homotopy}.
    Take \(\delta > 0\) such that \(\gamma_i(K^\ell) \times B_\delta^q \subset U\) for \(i \in \{0, 1\}\).
    By \eqref{eqExtension-568}, for almost every \(\xi \in B_\delta^q\) we have \(\tau_\xi \compose \gamma_i \in \Fuglede_{\tilde w}(K^\ell; \manfV)\).
    Take \(\xi_i \in B_\delta^q\) satisfying this property.
    Note that 
    \[
     \tau_{\xi_i} \compose \gamma_i \sim \gamma_i
     \quad \text{in \(\Smooth^0(K^\ell; \manfV)\).}
    \]
    Then, since \(\gamma_0\) and \(\gamma_1\) are homotopic, by transitivity of the homotopy relation we get 
    \[
     \tau_{\xi_0} \compose \gamma_0 \sim \tau_{\xi_1} \compose \gamma_1
     \quad \text{in \(\Smooth^0(K^\ell; \manfV)\).}
    \]
    It then follows from Theorem~\ref{proposition_Reference_Homotopy} that
\[
u \compose \tau_{\xi_0} \compose \gamma_0 \sim u \compose \tau_{\xi_1} \compose \gamma_1
\quad \text{in \(\VMO(K^\ell; \manfN)\).}
\qedhere
\]
\end{proof}

We pursue this approach in Chapter~\ref{chapterExtendability} to identify \(\ell\)-extendable maps, based on almost every translation of Lipschitz maps in the case where \(\manfV = \Omega\) is an open subset of \(\R^m\).
We note that the converse of Corollary~\ref{corollatyWhiteHomotopyType} holds and follows along the lines of the proof of Proposition~\ref{proposition-Fuglede-translations} in the next chapter.
In Section~\ref{sectionHEP}, using Corollary~\ref{corollatyWhiteHomotopyType}, we revisit the notion introduced by White of \(\ell\)-homotopy between two Sobolev maps.
This concept has been used in the minimization of the energy functional
\[
E_{1, p} \colon u \in \Sobolev^{1, p}(\manfM;\manfN) \longmapsto \int_{\manfM} |Du|^p\dif x. 
\]
for \(p \ge \ell\) within equivalent classes defined by \(\ell\)-homotopy to find critical points of \(E_{1, p}\) in \(\Sobolev^{1, p}(\manfM;\manfN) \).

\cleardoublepage
\chapter{Characterizations of \texorpdfstring{$\ell$}{l}-extendability}
\label{chapterExtendability}

As discussed in the previous chapter, every map \(u \in \VMO^{\ell}(\manfV; \manfN)\) is \(\ell\)-extendable when \(\pi_{\ell}(\manfN) \simeq \{0\}\). In cases where this topological condition on \(\manfN\) does not hold, determining whether a given map \(u\) is generically \(\ell\)-extendable becomes significantly more challenging. Specifically, one must verify that \(u \circ \gamma\) is homotopic to a constant in \(\VMO(\Sphere^{\ell}; \manfN)\) for each Fuglede map \(\gamma \colon \Sphere^{\ell} \to \manfV\) associated with some \(\ell\)-detector \(w\).

To address this difficulty, this chapter provides equivalent formulations of \(\ell\)-extendability intended to simplify the verification process. When \(\manfV = \Omega\) is an open subset of \(\R^{m}\), we prove that \(\ell\)-extendability can be reduced to checking restrictions of \(u\) on the boundaries of simplices or embedded disks, with the notion of genericity adapted to each specific setting.

For Riemannian manifolds \(\manfV = \manfM\), we show that \(\ell\)-extendability for maps on the entire manifold \(\manfM\) can be deduced from \(\ell\)-extendability on local charts. 
This result highlights that verifying \(\ell\)-extendability locally is sufficient to establish the property for the entire manifold.

\section{Generic restrictions}
\label{sectionRestrictionsGeneric}

We begin by introducing the notions of disk and circle in \(\manfV\) using smooth imbeddings of the Euclidean ball and sphere in \(\R^{\ell + 1}\), respectively.

\begin{definition}
	Given any smooth imbedding \(\Phi\) from a neighborhood of \(\cBall^{\ell+1}\) into \(\manfV\), we define the set \(\Disk^{\ell + 1} = \Phi(\cBall^{\ell+1})\) as an \((\ell + 1)\)-dimensional \emph{disk} of \(\manfV\) and the set \(\partial \Disk^{\ell + 1} = \Phi(\Sphere^{\ell})\) as an \(\ell\)-dimensional \emph{circle}.
	We equip \(\partial \Disk^{\ell + 1}\) with the Hausdorff measure \(\cH^{\ell}\) induced by the distance on \(\manfV\).
\end{definition}

It is a simple observation in the spirit of Proposition~\ref{propositionExtensionVMOSphereDetector} that \(\ell\)-detectors are also suitable to identify \(\ell\)-dimensional circles where a given \(\VMO^{\ell}\) function is \(\VMO\).

\begin{proposition}
	Let \(u \in \VMO^{\ell}(\manfV)\).{}
	If \(w\) is an \(\ell\)-detector, then for any disk \(\Disk^{\ell + 1} \subset \manfV\) such that \(w|_{\partial\Disk^{\ell + 1}}\) is summable in \(\partial\Disk^{\ell + 1}\), we have
	\(u|_{\partial\Disk^{\ell + 1}} \in \VMO(\partial\Disk^{\ell + 1})\).
\end{proposition}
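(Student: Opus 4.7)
The plan is to reduce the statement to the case of Fuglede maps on $\Sphere^{\ell}$, which is already covered by Proposition~\ref{propositionExtensionVMOSphereDetector}. The key observation is that a disk is, by definition, the image of $\cBall^{\ell+1}$ under a smooth imbedding $\Phi$ defined in a neighborhood of $\cBall^{\ell+1}$, so its boundary is the image of $\Sphere^{\ell}$ under the restriction $\Phi|_{\Sphere^{\ell}}$, which is a smooth diffeomorphism onto $\partial\Disk^{\ell+1}$ with smooth inverse. In particular, $\Phi|_{\Sphere^{\ell}}$ is a biLipschitz homeomorphism between $\Sphere^{\ell}$ and $\partial\Disk^{\ell+1}$, and its $\ell$-dimensional Jacobian is bounded above and below away from zero.

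First, I would set $\gamma \vcentcolon= \Phi|_{\Sphere^{\ell}} \colon \Sphere^{\ell} \to \manfV$ and verify that $\gamma$ is a Fuglede map in $\Fuglede_{w}(\Sphere^{\ell}; \manfV)$. This step uses the summability hypothesis on $w|_{\partial\Disk^{\ell+1}}$ together with a change of variables on $\Sphere^{\ell}$: since $\Jacobian{\ell}{\gamma}$ is bounded away from zero, one has
\[
\int_{\Sphere^{\ell}} w \compose \gamma \dif\cH^{\ell}
\le C \int_{\Sphere^{\ell}} w \compose \gamma \, \Jacobian{\ell}{\gamma} \dif\cH^{\ell}
= C \int_{\partial\Disk^{\ell+1}} w \dif\cH^{\ell} < \infty.
\]
Since $w$ is an $\ell$-detector for $u$, Proposition~\ref{propositionExtensionVMOSphereDetector} then yields $u \compose \gamma \in \VMO(\Sphere^{\ell})$.

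Next, I would transfer this $\VMO$ property from $\Sphere^{\ell}$ to $\partial\Disk^{\ell+1}$ via the biLipschitz homeomorphism $\gamma = \Phi|_{\Sphere^{\ell}}$. This is exactly the content of Lemma~\ref{lemmaDetectorsVMObiLipschitz}, whose proof relies only on the doubling property on $\Sphere^{\ell}$ (and analogously on $\partial\Disk^{\ell+1}$, as $\Disk^{\ell+1}$ is the image of an imbedding of a Euclidean ball) and on the bilateral bounds on the Jacobian of $\gamma$. The argument given there for $\partial\Simplex^{\ell+1}$ transfers verbatim to the present situation: the key inclusions $\gamma(B_{\alpha\rho}^{\ell}(x)) \subset B_{\alpha^{2}\rho}^{\ell}(\xi)$ and $\gamma^{-1}(B_{\rho}^{\ell}(\xi)) \subset B_{\alpha\rho}^{\ell}(x)$ together with the doubling property give the equivalence $\norm{v \compose \gamma^{-1}}_{\VMO(\partial\Disk^{\ell+1})} \simeq \norm{v}_{\VMO(\Sphere^{\ell})}$. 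Applying this to $v \vcentcolon= u \compose \gamma$ yields $u|_{\partial\Disk^{\ell+1}} = (u \compose \gamma) \compose \gamma^{-1} \in \VMO(\partial\Disk^{\ell+1})$.

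There is no serious obstacle in this plan: the whole statement is a direct consequence of the definition of a disk through a smooth imbedding, combined with two facts already established, namely Proposition~\ref{propositionExtensionVMOSphereDetector} and (an immediate adaptation of) Lemma~\ref{lemmaDetectorsVMObiLipschitz}. The only point that deserves a short verification is that $\partial\Disk^{\ell+1}$, equipped with the induced Hausdorff measure $\cH^{\ell}$, is a locally compact separable metric space whose measure satisfies the doubling, metric continuity and uniform nondegeneracy conditions required to make sense of $\VMO$ on it, which follows at once from the fact that $\Phi|_{\Sphere^{\ell}}$ is a smooth diffeomorphism with bounded derivatives in both directions.
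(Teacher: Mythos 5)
Your proof is correct and follows essentially the same route as the paper: compose with the imbedding $\Phi|_{\Sphere^{\ell}}$, check via a change of variables that it is a Fuglede map, invoke Proposition~\ref{propositionExtensionVMOSphereDetector}, and transfer the $\VMO$ property back to $\partial\Disk^{\ell+1}$ through the (bi)Lipschitz change of variables. The paper compresses the last step into "by another smooth change of variables," which is exactly the content you spell out via Lemma~\ref{lemmaDetectorsVMObiLipschitz}.
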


\begin{proof}
	Let \(\Phi \colon  \cBall^{\ell+1} \to \Disk^{\ell + 1}\) be a diffeomorphism.
	If \(w|_{\partial\Disk^{\ell + 1}}\) is summable in \(\partial\Disk^{\ell + 1}\), then by a smooth change of variables we have
	\(w \compose \Phi|_{\Sphere^{\ell}}\) summable in \(\Sphere^{\ell}\).{}
	Thus, \(\Phi|_{\Sphere^{\ell}} \in \Fuglede_{w}(\Sphere^{\ell}; \manfV)\) and then \(u \compose \Phi|_{\Sphere^{\ell}} \in \VMO(\Sphere^{\ell})\), see Proposition~\ref{propositionExtensionVMOSphereDetector}.
	By another smooth change of variables, we get \(u|_{\partial\Disk^{\ell + 1}} \in \VMO(\partial\Disk^{\ell + 1})\).
\end{proof}

We then have the following characterization of \(\ell\)-extendability based on generic restriction to \(\ell\)-dimensional circles:

\begin{proposition}
	\label{propositionCharacterizationDisks}
	Let \(u \in \VMO^{\ell}(\manfV; \manfN)\).{}
	Then, \(u\) is \(\ell\)-extendable if and only if there exists an \(\ell\)-detector \(\widetilde w\) such that, for every disk\/ \(\Disk^{\ell + 1} \subset \manfV\) for which \(\widetilde{w}|_{\partial\Disk^{\ell + 1}}\) is summable in \(\partial\Disk^{\ell + 1}\), the map \(u|_{\partial\Disk^{\ell + 1}}\) is homotopic to a constant in \(\VMO(\partial\Disk^{\ell + 1}; \manfN)\).
\end{proposition}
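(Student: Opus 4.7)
The forward implication is immediate from the definition of $\ell$-extendability. I would take $\widetilde w = w$ where $w$ is the $\ell$-detector given by Definition~\ref{definitionExtensionVMO-1}. For a disk $\Disk^{\ell+1} = \Phi(\cBall^{\ell+1})$, the smooth imbedding $\Phi$ is in particular Lipschitz on a neighborhood of $\cBall^{\ell+1}$, and $\Phi|_{\Sphere^\ell}$ is a biLipschitz homeomorphism onto $\partial\Disk^{\ell+1}$. If $w|_{\partial\Disk^{\ell+1}}$ is summable, then by the area formula the Jacobian of $\Phi|_{\Sphere^\ell}$ being bounded above and below gives that $w \compose \Phi|_{\Sphere^\ell}$ is summable on $\Sphere^\ell$; thus $\Phi|_{\Sphere^\ell} \in \Fuglede_w(\Sphere^\ell; \manfV)$. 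Applying $\ell$-extendability to the Lipschitz extension $\Phi$ and then transporting the $\VMO$-homotopy back to $\partial\Disk^{\ell+1}$ via the biLipschitz map $(\Phi|_{\Sphere^\ell})^{-1}$ (using the analogue of Lemma~\ref{lemmaDetectorsVMObiLipschitz} for biLipschitz homeomorphisms between $\Sphere^\ell$ and embedded $\ell$-circles) gives the conclusion.

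The reverse implication is the substantive content. Starting from the disk hypothesis with $\ell$-detector $\widetilde w$, I would construct an $\ell$-detector $w'$ (slightly enlarging $\widetilde w$ and combining it with the $\ell$-detector coming from Definition~\ref{definitionVMOell}) with the following property: for any Lipschitz $\gamma \colon \cBall^{\ell+1} \to \manfV$ with $\gamma|_{\Sphere^\ell} \in \Fuglede_{w'}(\Sphere^\ell; \manfV)$, the composition $u \compose \gamma|_{\Sphere^\ell}$ is null-homotopic in $\VMO(\Sphere^\ell; \manfN)$. The plan is to approximate $\gamma$ by maps whose restriction to $\cBall^{\ell+1}$ is actually a smooth imbedding, hence factors through a disk: (a) mollify $\gamma$ through a finite atlas (or via an isometric imbedding of $\manfV$ in $\R^\varkappa$ followed by the nearest point projection to $\manfV$) to produce smooth maps $\gamma_j \to \gamma$ uniformly with a uniform Lipschitz bound; (b) fix a transversal perturbation of the identity $\tau \colon U \to \manfV$ and apply Proposition~\ref{propositionFugledeApproximation} to obtain a subsequence and parameters $\xi_i \to 0$ such that $\tilde\gamma_i \vcentcolon= \tau_{\xi_i} \compose \gamma_{j_i}$ satisfies $\tilde\gamma_i|_{\Sphere^\ell} \to \gamma|_{\Sphere^\ell}$ in $\Fuglede_{\tilde w}(\Sphere^\ell; \manfV)$; (c) by a generic choice of $\xi_i$ (using a standard Sard/transversality argument on the smooth map $\tau_\xi \compose \gamma_{j_i}$ supported by the free parameters of $\tau$), make $\tilde\gamma_i$ a smooth imbedding on $\cBall^{\ell+1}$, so that $\Disk_i^{\ell+1} \vcentcolon= \tilde\gamma_i(\cBall^{\ell+1})$ is a genuine disk; (d) apply the disk hypothesis to deduce that $u|_{\partial\Disk_i^{\ell+1}}$, equivalently $u \compose \tilde\gamma_i|_{\Sphere^\ell}$, is null-homotopic in $\VMO(\Sphere^\ell; \manfN)$; (e) since $u \in \VMO^\ell(\manfV; \manfN)$, the Fuglede convergence yields $u \compose \tilde\gamma_i|_{\Sphere^\ell} \to u \compose \gamma|_{\Sphere^\ell}$ in $\VMO$ (Proposition~\ref{propositionExtensionVMOSphere}), and Proposition~\ref{propositionHomotopyVMOLimit} then transfers the null-homotopy to the limit $u \compose \gamma|_{\Sphere^\ell}$.

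The main obstacle is step (c): guaranteeing that the perturbed maps $\tilde\gamma_i$ can be chosen to be smooth imbeddings. When $\dim \manfV \ge 2(\ell+1)+1$ this is standard by Whitney's theorem (a generic small perturbation of a smooth map is an imbedding). In the intermediate range $\ell+1 \le \dim \manfV \le 2(\ell+1)$ imbeddings do not exist generically, so the workaround is to enlarge the ambient target: replace $\manfV$ by the product $\widetilde\manfV \vcentcolon= \manfV \times (-1,1)^{\ell+1}$ and $u$ by $\widetilde u(y,t) \vcentcolon= u(y)$, which preserves $\VMO^\ell$-regularity and $\ell$-detectors via Lemma~\ref{lemmaVMOellExtension}, and carries enough free directions for the transversal perturbation to produce imbeddings. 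The delicate verification is that the disk hypothesis on $\manfV$ can be promoted to the corresponding disk hypothesis on $\widetilde\manfV$: given a disk in $\widetilde\manfV$, one must homotope its boundary $\VMO$-circle to one factoring through a disk in $\manfV$, which follows from a straight-line retraction in the extra coordinates of $\widetilde\manfV$ combined with the stability statement Proposition~\ref{propositionExtensionVMOSphere}. Once this reduction is in place, step (c) is routine and the remaining steps proceed as outlined.
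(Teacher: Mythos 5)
Your forward implication is fine and coincides with the paper's argument. The reverse implication, however, has a genuine gap at step (c), and the proposed workaround does not repair it. First, the perturbations \(\tau_{\xi}\) in the framework of Proposition~\ref{propositionFugledeApproximation} are translations in the Euclidean case and, more generally, injective on the relevant compact range for small \(\xi\); composing a smooth map \(\gamma_{j}\) with such a \(\tau_{\xi}\) never removes self-intersections, so no choice of \(\xi\) (generic or not) turns a non-injective \(\gamma_{j}\) into an imbedding. To produce imbeddings one must perturb \(\gamma_{j}\) itself by a general-position argument, which is only available when \(\dim \manfV \ge 2(\ell+1)+1\). Second, the product trick \(\widetilde\manfV = \manfV \times (-1,1)^{\ell+1}\), \(\widetilde u(y,t) = u(y)\), is circular: the hypothesis you are given concerns disks \emph{in \(\manfV\)} only, and you would now need it for disks in \(\widetilde\manfV\). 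If \(\Phi = (\Phi_1,\Phi_2)\) parametrizes such a disk, then \(\widetilde u \compose \Phi|_{\Sphere^{\ell}} = u \compose \Phi_1|_{\Sphere^{\ell}}\), where \(\Phi_1 \colon \cBall^{\ell+1} \to \manfV\) is a smooth map that in general is \emph{not} an imbedding; proving that \(u \compose \Phi_1|_{\Sphere^{\ell}}\) is null-homotopic is exactly the original difficulty. The straight-line retraction in the extra coordinates changes nothing here, since \(\widetilde u\) does not depend on those coordinates, and it does not make \(\Phi_1\) factor through a disk of \(\manfV\).

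The paper avoids the need for global imbeddings altogether. It first reduces the manifold case to open sets \(\Omega \subset \R^m\) by charts together with the locality statement (Lemma~\ref{proposition_Local_Charts}), and then reduces the disk hypothesis to the simplex criterion of Proposition~\ref{propositionCharacterizationExtensionSimplex}: a simplex \(\Simplex^{\ell+1} \subset \Omega\) with \(\widetilde w\)-summable boundary is approximated by smoothly imbedded disks \(\tau_{\xi_i}\compose\Psi_{j_i}(\cBall^{\ell+1})\) (here one only needs to approximate the biLipschitz parametrization of a simplex by smooth imbeddings, which is easy in \(\R^m\)), and the null-homotopy passes to the limit by generic \(\VMO\) stability and Proposition~\ref{propositionHomotopyVMOLimit}. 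The passage from simplices to arbitrary Lipschitz maps \(\gamma \colon \cBall^{\ell+1}\to\Omega\) — the step your approach tries to do by imbedding the whole ball — is instead handled by Proposition~\ref{lemmaApproximationSimplexLipschitz}: \(\gamma\) is approximated by piecewise-affine maps that are injective only on each cell of a fine subdivision, the hypothesis is applied cell by cell (the affine injective image of each small simplex is again a simplex), and the cellwise null-homotopies are glued through the continuous-extension argument of Lemma~\ref{lemmaExtensionHomotopyConstant}. That local-injectivity-plus-gluing mechanism is the missing idea; without it, or a correct substitute, your reverse implication does not go through in codimension below the Whitney range.
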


The proof of Proposition~\ref{propositionCharacterizationDisks} is presented in Section~\ref{sectionCharacterizationsDisks}.
Using local charts, we may restrict our attention to \(\VMO^{\ell}\) maps defined on an open set \(\Omega \subset \R^{m}\).{}
In this case, it is more convenient to manipulate restrictions to the boundary of generic simplices.

\begin{proposition}
	\label{propositionCharacterizationExtensionSimplex}
	Let \(\Omega \subset \R^{m}\) be an open subset and let \(u \in \VMO^{\ell}(\Omega; \manfN)\).{}
	Then, \(u\) is \(\ell\)-extendable if and only if there exists an \(\ell\)-detector \(w\) such that, for every simplex \(\Simplex^{\ell + 1} \subset \Omega\) for which \(w|_{\partial\Simplex^{\ell + 1}}\) is summable in \(\partial\Simplex^{\ell + 1}\), the map \(u|_{\partial\Simplex^{\ell + 1}}\) is homotopic to a constant in \(\VMO(\partial\Simplex^{\ell + 1}; \manfN)\).
\end{proposition}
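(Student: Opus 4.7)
The forward direction is immediate from the simplex reformulation of $\ell$-extendability given in Proposition~\ref{propositionExtendabilitySimplex}. Given the $\ell$-detector $w$ witnessing the $\ell$-extendability of $u$, for any simplex $\Simplex^{\ell+1} \subset \Omega$ with $w|_{\partial \Simplex^{\ell+1}}$ summable, I apply the simplex formulation to the Lipschitz inclusion $\iota \colon \Simplex^{\ell+1} \hookrightarrow \Omega$: its boundary restriction $\iota|_{\partial \Simplex^{\ell+1}}$ lies in $\Fuglede_w(\partial \Simplex^{\ell+1}; \Omega)$, and hence $u \circ \iota|_{\partial \Simplex^{\ell+1}} = u|_{\partial \Simplex^{\ell+1}}$ is VMO-homotopic to a constant.

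For the reverse direction, the plan is to verify the simplex formulation of $\ell$-extendability from Proposition~\ref{propositionExtendabilitySimplex}. Starting from the hypothesis detector $w$, I construct an enlarged $\ell$-detector $\widetilde w \geq w$ using Lemma~\ref{propositionGenericStability} with the Euclidean transversal perturbation $\tau(z, \xi) = z + \xi$ of Example~\ref{exampleTransversalPertubationEuclidean}. Given any Lipschitz $\gamma \colon \Simplex^{\ell+1} \to \Omega$ with $\gamma|_{\partial \Simplex^{\ell+1}} \in \Fuglede_{\tilde w}$, I first invoke Lemma~\ref{propositionGenericStability} with $K^{\ell+1} = \Simplex^{\ell+1}$ (viewed as a simplicial complex consisting of one top simplex and its faces) and $L^\ell = \partial \Simplex^{\ell+1}$ to obtain a small $\xi \in \R^m$ such that $\gamma + \xi \in \Fuglede_w(\Simplex^{\ell+1}; \Omega)$ (in particular Fuglede on $\partial \Simplex^{\ell+1}$) and
\[
u \circ (\gamma + \xi)|_{\partial \Simplex^{\ell+1}} \sim u \circ \gamma|_{\partial \Simplex^{\ell+1}} \quad \text{in } \VMO(\partial \Simplex^{\ell+1}; \manfN).
\]
After this reduction, it suffices to produce the VMO null-homotopy assuming $\gamma \in \Fuglede_w(\Simplex^{\ell+1}; \Omega)$.

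The core step — and the place where the simplex hypothesis is actually used — is to construct this VMO null-homotopy from only the simplex data. I plan to approximate $\gamma$ by a piecewise affine map: subdivide $\Simplex^{\ell+1}$ sufficiently finely into subsimplices $(\widetilde\Simplex_i^{\ell+1})_i$ and consider the piecewise affine map $\gamma_\Delta$ agreeing with $\gamma$ on the vertices of the subdivision. For a sufficiently fine subdivision and a generic small translation $\xi'$, each image $(\gamma_\Delta + \xi')(\widetilde\Simplex_i^{\ell+1})$ is an honest nondegenerate simplex in $\Omega$ whose boundary has $w$-summable restriction, so the simplex hypothesis yields that $u$ restricted to each such boundary is VMO-homotopic to a constant. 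Gluing these local null-homotopies across the common faces of the induced subdivision of $\partial \Simplex^{\ell+1}$ via Lemma~\ref{lemmaExtensionHomotopyConstant} (applied to the $(\ell+1)$-complex formed by the image subsimplices meeting along $\partial \Simplex^{\ell+1}$) produces a VMO null-homotopy of $u \circ (\gamma_\Delta + \xi')|_{\partial \Simplex^{\ell+1}}$. Finally, by choosing the subdivision to be progressively finer and using the $\VMO^\ell$ stability of $u$ from Definition~\ref{definitionVMOell} together with the Fuglede-approximation machinery of Section~\ref{sectionExtensionApproximation}, the piecewise affine approximation is transferred to $\gamma$ itself: $u \circ \gamma_\Delta|_{\partial \Simplex^{\ell+1}} \to u \circ \gamma|_{\partial \Simplex^{\ell+1}}$ in $\VMO(\partial \Simplex^{\ell+1}; \manfN)$, whence Proposition~\ref{propositionHomotopyVMOLimit} finishes the argument.

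The principal obstacle is coordinating the detector enlargements across the three separate perturbation steps (the initial reduction to Fuglede on the full simplex, the generic translation $\xi'$ of the piecewise affine map, and the limit process as the subdivision is refined), so that a \emph{single} $\ell$-detector $\widetilde w$ is produced that simultaneously witnesses all the required Fuglede properties and VMO convergences. This is delicate but can be managed by applying Proposition~\ref{propositionFugledeApproximation} and Lemma~\ref{propositionGenericStability} successively, carefully exploiting the monotonicity of the Fuglede property with respect to enlargement of the detector.
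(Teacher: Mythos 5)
Your overall route is the same as the paper's: the forward direction via Proposition~\ref{propositionExtendabilitySimplex} applied to the inclusion map, and the converse via a piecewise affine (secant) approximation of \(\gamma\) on fine subdivisions, a generic perturbation to make the relevant restrictions Fuglede, an application of the simplex hypothesis to the image simplices, gluing through Lemmas~\ref{lemmaVMORestriction} and~\ref{lemmaExtensionHomotopyConstant}, and a passage to the limit with the \(\VMO^{\ell}\) stability of \(u\) and Proposition~\ref{propositionHomotopyVMOLimit}; in the paper all of the approximation work is packaged in Proposition~\ref{lemmaApproximationSimplexLipschitz}. (Your preliminary reduction making \(\gamma\) Fuglede on all of \(\Simplex^{\ell+1}\) via Lemma~\ref{propositionGenericStability} is harmless but never actually used afterwards.)

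There is, however, one concrete step that fails as you wrote it: you claim that for a sufficiently fine subdivision and a \emph{generic small translation} \(\xi'\), each image \((\gamma_\Delta+\xi')(\widetilde\Simplex^{\ell+1}_i)\) is a nondegenerate \((\ell+1)\)-simplex. Translating the whole map by one vector cannot repair degeneracy: if \(\gamma\) takes its values in an \(\ell\)-dimensional affine subspace (for instance \(\gamma(x)=(Px,0)\) with \(P\) the orthogonal projection onto the first \(\ell\) coordinates), then every secant map \(\gamma_\Delta\) has rank at most \(\ell\) on each subsimplex, and so does \(\gamma_\Delta+\xi'\) for every \(\xi'\), no matter how fine the subdivision. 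Nondegeneracy, i.e.\ injectivity of the affine map on each subsimplex, is essential to your argument: it is what guarantees \(\partial\bigl(\gamma_\Delta(\widetilde\Simplex^{\ell+1}_i)\bigr)=\gamma_\Delta(\partial\widetilde\Simplex^{\ell+1}_i)\) and lets you pull the null-homotopy of \(u\) on the image boundary back through an affine biLipschitz homeomorphism; with a degenerate image the simplex hypothesis does not even apply. The correct device, used in Lemma~\ref{lemmaApproximationSimplexLipschitz-2}, is to perturb the values of the secant map at the \emph{vertices individually} by small multiples of vectors in general position (this uses \(\ell+1\le m\)), keeping equi-Lipschitz control through the thickness bound of Lemma~\ref{lemmaApproximationSimplexLipschitz-0}; the generic translation, as in Proposition~\ref{propositionFugledeApproximation}, then serves only to make the perturbed maps Fuglede on the full \(\ell\)-skeleton of the subdivision (which you also need, since Lemma~\ref{lemmaExtensionHomotopyConstant} requires \(u\compose\gamma_\Delta\) to be \(\VMO\) on \(K^{\ell}\), and the gluing should be performed on the domain complex, the image simplices being possibly overlapping) and to get convergence in \(\Fuglede_{w}(\partial\Simplex^{\ell+1};\Omega)\). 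With that correction your argument coincides with the paper's proof.
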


The proof of Proposition~\ref{propositionCharacterizationExtensionSimplex} is postponed to Section~\ref{sectionCharacterizationsSimplices}.
Up to this point, we have relied on the notion of genericity based on detectors. 
An alternative approach in the spirit of Theorem~\ref{propositionSimplexLipschitz-NewBall} uses almost every translation of simplices in \(\R^{m}\). 
We justify that these two approaches yield the same result in the following sense:

\begin{proposition}
\label{proposition-Fuglede-translations}
	Let \(u \in \VMO^{\ell}(\Omega; \manfN)\) and let \(\tau_{\xi} \colon  \R^{m} \to \R^{m}\) be the translation by \(\xi \in \R^{m}\) defined by \(\tau_{\xi}(x) = x + \xi\).{}
	Then, the following properties are equivalent:
	\begin{enumerate}[\((i)\)]
		\item{}
		\label{itemCharacterization-96}
		 for every simplex \(\Simplex^{\ell + 1} \subset \Omega\) and for almost every \(\xi \in \R^{m}\) such that \(\abs{\xi} < d(\Simplex^{\ell + 1}, \partial\Omega)\), the map \(u \compose \tau_{\xi}|_{\partial\Simplex^{\ell + 1}}\) is homotopic to a constant in \(\VMO(\partial\Simplex^{\ell + 1}; \manfN)\),{}
		\item there exists an \(\ell\)-detector \(\widetilde w\) such that, for every simplex \(\Simplex^{\ell + 1} \subset \Omega\) for which \(\widetilde{w}|_{\partial\Simplex^{\ell + 1}}\) is summable in \(\partial\Simplex^{\ell + 1}\), the map \(u|_{\partial\Simplex^{\ell + 1}}\) is homotopic to a constant in \(\VMO(\partial\Simplex^{\ell + 1}; \manfN)\).
	\end{enumerate}
\end{proposition}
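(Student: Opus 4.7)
The plan is to establish the two implications $(ii) \Rightarrow (i)$ and $(i) \Rightarrow (ii)$ separately. The forward direction reduces to a Fubini computation on translates of a fixed simplex, while the reverse direction combines the generic approximation of Fuglede maps from Proposition~\ref{propositionFugledeApproximation} with the homotopical stability under VMO convergence provided by Proposition~\ref{propositionHomotopyVMOLimit}.

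For $(ii) \Rightarrow (i)$, I would fix a simplex $\Simplex^{\ell+1} \subset \Omega$ and set $\delta = d(\Simplex^{\ell+1}, \partial\Omega)$. By Tonelli's theorem and a translation change of variables,
\[
\int_{B_\delta^m} \int_{\partial\Simplex^{\ell+1}} \widetilde w(y+\xi) \dif\cH^\ell(y) \dif\xi = \int_{\partial\Simplex^{\ell+1}} \int_{B_\delta^m(y)} \widetilde w \dif\cH^\ell(y) \le \cH^\ell(\partial\Simplex^{\ell+1}) \int_\Omega \widetilde w < \infty,
\]
so the restriction $\widetilde w|_{\partial\Simplex^{\ell+1}+\xi}$ is summable for almost every $\xi \in B_\delta^m$. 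Since $\Simplex^{\ell+1}+\xi \subset \Omega$, hypothesis $(ii)$ applied to $\Simplex^{\ell+1}+\xi$ yields that $u|_{\partial(\Simplex^{\ell+1}+\xi)}$ is VMO-homotopic to a constant. As the translation $\tau_\xi$ is an isometry from $\partial\Simplex^{\ell+1}$ onto $\partial(\Simplex^{\ell+1}+\xi)$, composition with $\tau_\xi$ preserves the VMO structure by the same argument as in Lemma~\ref{lemmaDetectorsVMObiLipschitz}, and hence $u \compose \tau_\xi|_{\partial\Simplex^{\ell+1}}$ is also VMO-homotopic to a constant.

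For $(i) \Rightarrow (ii)$, I would start with an $\ell$-detector $w_0$ for $u$ satisfying the stability property of Definition~\ref{definitionVMOell}, and apply Proposition~\ref{propositionFugledeApproximation} to $w_0$ with the Euclidean transversal perturbation $\tau(z,\xi) = z+\xi$ of Example~\ref{exampleTransversalPertubationEuclidean} to produce a summable $\widetilde w \ge w_0$; this $\widetilde w$ will be the detector in $(ii)$. Given any simplex $\Simplex^{\ell+1} \subset \Omega$ with $\widetilde w|_{\partial\Simplex^{\ell+1}}$ summable, the inclusion $\gamma \colon \partial\Simplex^{\ell+1} \hookrightarrow \Omega$ lies in $\Fuglede_{\widetilde w}$. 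Applying Proposition~\ref{propositionFugledeApproximation} to the constant sequence $\gamma_j = \gamma$ then yields a sequence $(\xi_i)$ in $\R^m$ converging to $0$ such that $\tau_{\xi_i} \compose \gamma \in \Fuglede_{w_0}$ and $\tau_{\xi_i} \compose \gamma \to \gamma$ in $\Fuglede_{w_0}$. By the stability property of $w_0$, this gives $u \compose \tau_{\xi_i} \compose \gamma \to u \compose \gamma$ in $\VMO(\partial\Simplex^{\ell+1}; \manfN)$, whence Proposition~\ref{propositionHomotopyVMOLimit} produces $u \compose \tau_{\xi_i}|_{\partial\Simplex^{\ell+1}} \sim u|_{\partial\Simplex^{\ell+1}}$ in VMO for all sufficiently large $i$.

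The main obstacle will be ensuring that the $\xi_i$ can be chosen so that, simultaneously, $u \compose \tau_{\xi_i}|_{\partial\Simplex^{\ell+1}}$ is VMO-homotopic to a constant via the full-measure set supplied by $(i)$. Revisiting the proof of Proposition~\ref{propositionFugledeApproximation}, one sees that each $\xi_i$ is selected from a subset of positive measure of $B_{\epsilon_i}^m$ with $\epsilon_i \to 0$; intersecting this positive-measure subset with the full-measure set from $(i)$ restricted to $B_{\epsilon_i}^m$ still leaves a set of positive measure from which to pick $\xi_i$. With this choice, each $u \compose \tau_{\xi_i}|_{\partial\Simplex^{\ell+1}}$ is VMO-homotopic to a constant, and transitivity of the VMO-homotopy relation then yields that $u|_{\partial\Simplex^{\ell+1}}$ is VMO-homotopic to a constant, establishing $(ii)$.
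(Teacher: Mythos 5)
Your proposal is correct and follows essentially the same route as the paper: a Tonelli/translation argument (the paper invokes Proposition~\ref{lemmaTransversalFamily}, which in the Euclidean case is exactly your computation) for $(ii)\Rightarrow(i)$, and for $(i)\Rightarrow(ii)$ the combination of Proposition~\ref{propositionFugledeApproximation} applied to the constant sequence, the generic VMO stability of $u$, Proposition~\ref{propositionHomotopyVMOLimit}, and transitivity. Your explicit remark that the $\xi_i$ are chosen from positive-measure subsets of $B_{\epsilon_i}^m$, which can be intersected with the full-measure set from $(i)$, is precisely the point the paper leaves implicit when it selects $\xi_i$ satisfying both conditions.
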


\resetconstant
\begin{proof}
	``\((ii) \Rightarrow (i)\)''.{}
	Given a simplex \(\Simplex^{\ell + 1} \subset \Omega\) and \(0 < \delta < d(\Simplex^{\ell + 1}, \partial\Omega)\), by Proposition~\ref{lemmaTransversalFamily} we have
	\[
	\int_{{B}^m_\delta} \biggl( \int_{\partial\Simplex^{\ell + 1}} \widetilde w \compose \tau_\xi \dif\cH^{\ell} \biggr) \dif \xi
	\le \C \int_{\Omega} \widetilde w.
	\]
	In particular, for almost every \(\xi \in B_{\delta}^{m}\), we see that \(\widetilde w \compose \tau_\xi|_{\partial\Simplex^{\ell + 1}}\) is summable in \(\partial\Simplex^{\ell + 1}\) and then, by assumption, \(u \compose \tau_{\xi}|_{\partial\Simplex^{\ell + 1}}\) is homotopic to a constant in \(\VMO(\partial\Simplex^{\ell + 1}; \manfN)\).{}
	
	``\((i) \Rightarrow (ii)\)''.{}
	Let \(w\) be an \(\ell\)-detector given by Definition~\ref{definitionVMOell} of \(\VMO^{\ell}\) and let \(\widetilde w \ge w\) be an \(\ell\)-detector given by Proposition~\ref{propositionFugledeApproximation}.
	Given a simplex \(\Simplex^{\ell + 1} \subset \Omega\) for which \(\widetilde w|_{\partial\Simplex^{\ell + 1}}\) is summable in \(\partial\Simplex^{\ell + 1}\), we apply Proposition~\ref{propositionFugledeApproximation} to the identity map \(\gamma \vcentcolon= \Id\) and the constant sequence \(\gamma_{j} \vcentcolon= \Id\).{}
	Then, there exists a sequence of positive numbers \((\epsilon_{i})_{i \in \N}\) converging to zero such that
	\[{}
	\lim_{i \to \infty}{\fint_{B_{\epsilon_{i}}^{m}}{\biggl( \int_{\partial\Simplex^{\ell + 1}}{\abs{w \compose \tau_{\xi} - w} \dif\cH^{\ell}} \biggr)} \dif \xi}
	= 0.
	\]
	We may thus take a sequence \(\xi_{i} \to 0\) in \(\R^{m}\) such that \((\tau_{\xi_{i}}|_{\partial\Simplex^{\ell + 1}})_{i \in \N}\) is contained in \(\Fuglede_{w}(\partial\Simplex^{\ell + 1}; \Omega)\), 
	\begin{equation}
    \label{eqCharacterization-95}
    \tau_{\xi_{i}}|_{\partial\Simplex^{\ell + 1}} \to \Id|_{\partial\Simplex^{\ell + 1}}
	\quad \text{in \(\Fuglede_{w}(\partial\Simplex^{\ell + 1}; \Omega)\)}
	\end{equation}
	and, by Assumption~\((\ref{itemCharacterization-96})\), \(u \compose \tau_{\xi_{i}}|_{\partial\Simplex^{\ell + 1}}\) is homotopic to a constant in \(\VMO(\partial\Simplex^{\ell + 1}; \manfN)\) for every \(i \in \N\).
	By \eqref{eqCharacterization-95} and the generic \(\VMO\) stability of \(u\),{}
	\[{}
	u \compose \tau_{\xi_{i}}|_{\partial\Simplex^{\ell + 1}} \to u|_{\partial\Simplex^{\ell + 1}}
	\quad \text{in \(\VMO(\partial\Simplex^{\ell + 1}; \manfN)\).}
	\]
	Then, by Proposition~\ref{propositionHomotopyVMOLimit}, there exists \(J \in \N\) such that, for every \(i \ge J\),
	\[{}
	u \compose \tau_{\xi_{i}}|_{\partial\Simplex^{\ell + 1}} \sim u|_{\partial\Simplex^{\ell + 1}}
	\quad \text{in \(\VMO(\partial\Simplex^{\ell + 1}; \manfN)\).}
	\]
	Since \(u \compose \tau_{\xi_{i}}|_{\partial\Simplex^{\ell + 1}}\) is homotopic to a constant in \(\VMO(\partial\Simplex^{\ell + 1}; \manfN)\), we then conclude by transitivity of the homotopy relation.
\end{proof}

\begin{proof}[Proof of Theorem~\ref{propositionSimplexLipschitz-NewBall}]
    Let \(u \in \Sobolev^{k, p}(\Ball^{m}; \manfN)\).
    Since \(\manfN\) is compact, \(u\) is bounded.
    It then follows from the inclusion \eqref{eqDetector-882} applied to each component of \(u\) that \(u \in \VMO^{\ell}(\Ball^m; \manfN)\) with \(\ell = \floor{kp}\).
    Theorem~\ref{propositionSimplexLipschitz-NewBall} then follows by combining Proposition~\ref{propositionCharacterizationExtensionSimplex} and~\ref{proposition-Fuglede-translations}, where we take \(\Omega = \Ball^m\).
\end{proof}

\section{Testing on simplices}
\label{sectionCharacterizationsSimplices}

In this section, we prove Proposition~\ref{propositionCharacterizationExtensionSimplex} by approximating Lipschitz maps defined on a simplex \(\Simplex^{\ell + 1}\) with maps that are affine and injective on each element of a simplicial complex decomposition \(\cK^{\ell+1}\) of \(\Simplex^{\ell + 1}\). Such maps are uniquely determined by linearity once their values at the vertices of \(\cK^{\ell+1}\) are specified. 
More precisely, a map \(\gamma \colon \Simplex^{\ell + 1} \to \R^m\) is \emph{simplicial} whenever its restriction \(\gamma|_{\Sigma^{\ell+1}}\) to any simplex \(\Sigma^{\ell + 1}\in \cK^{\ell+1}\)  is affine, that is, denoting by \(v_0, \ldots, v_{\ell + 1}\) the vertices of \(\Sigma^{\ell + 1}\),  one has
\[
\gamma(t_0 v_0 + \cdots + t_{\ell + 1} v_{\ell + 1}) = 
t_0 \gamma(v_0) + \cdots + t_{\ell + 1} \gamma(v_{\ell + 1}),
\]
for all nonnegative numbers such that \(t_0 + \cdots + t_{\ell + 1} = 1\).

\begin{proposition}
\label{lemmaApproximationSimplexLipschitz}
Let \(\ell + 1 \le m\) and let \(\Simplex^{\ell + 1}\) be a simplex.
Given a summable function \(w \colon  \Omega \to [0, +\infty]\), there exists a summable function \(\widetilde w \ge w\) in \(\Omega\) with the following property:
For every Lipschitz map \(\gamma \colon  \Simplex^{\ell + 1} \to \Omega\) with \(\gamma|_{\partial\Simplex^{\ell + 1}} \in \Fuglede_{\tilde w}(\partial\Simplex^{\ell + 1}; \Omega)\), there exist a sequence of simplicial complexes \((\cK_{j}^{\ell+1})_{j \in \N}\) with \(K_{j}^{\ell + 1} = \Simplex^{\ell + 1}\) and a sequence of  Lipschitz maps \(\gamma_{j} \colon  \Simplex^{\ell + 1} \to \Omega\) with \(\gamma_{j}|_{K_{j}^{\ell}} \in \Fuglede_{w}(K_{j}^{\ell}; \Omega)\) such that
\begin{enumerate}[\((i)\)]
	\item
	\label{itemCharacterization-140}
	 \(\gamma_{j}|_{\Sigma^{\ell + 1}}\) is affine and injective for every \(\Sigma^{\ell + 1} \in \cK_{j}^{\ell + 1}\), 
	\item{}
	\label{itemCharacterization-143}
	 \(\gamma_{j}|_{\partial\Simplex^{\ell + 1}}  \to \gamma|_{\partial\Simplex^{\ell + 1}} \) in \(\Fuglede_{w}(\partial\Simplex^{\ell + 1}; \Omega)\).
	\end{enumerate} 
\end{proposition}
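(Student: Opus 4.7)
The plan is to approximate $\gamma$ by maps that are affine on each simplex of a sequence of increasingly fine subdivisions $\cK_j^{\ell+1}$ of $\Simplex^{\ell+1}$, with vertex images slightly perturbed. The hypothesis $\ell + 1 \le m$ enters through a general position argument: any $\ell + 2$ points of \(\R^m\) can be made affinely independent by an arbitrarily small perturbation when $m \ge \ell + 1$. The detector $\widetilde w \ge w$ is produced by applying Proposition~\ref{propositionFugledeApproximation} on \(X = \partial\Simplex^{\ell + 1}\) with the translation transversal perturbation $\tau(z, \xi) = z + \xi$ (see Example~\ref{exampleTransversalPertubationEuclidean}).

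For the subdivisions, I would take $(\cK_j^{\ell+1})_{j \in \N}$ to be the iterated barycentric subdivisions of $\Simplex^{\ell+1}$, so that the mesh sizes tend to zero and, by a direct induction using that a barycenter at step $j$ has strictly positive barycentric coordinates in $\Simplex^{\ell+1}$, every top-dimensional simplex of $\cK_j^{\ell+1}$ contains at least one vertex in the interior of $\Simplex^{\ell+1}$. Let $V_j^\partial$ and $V_j^\circ$ denote the boundary and interior vertex sets of $\cK_j^{\ell+1}$. Given $\gamma$ with $\gamma|_{\partial\Simplex^{\ell+1}} \in \Fuglede_{\tilde w}$, I would let $\gamma_j^0 \colon  \Simplex^{\ell+1} \to \R^m$ be the continuous map that equals $\gamma$ on every vertex of $\cK_j^{\ell+1}$ and is affine on each simplex. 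The sequence $(\gamma_j^0)_{j \in \N}$ is $\abs{\gamma}_{\Lip}$-equiLipschitz and converges uniformly to $\gamma$. Proposition~\ref{propositionFugledeApproximation} applied to its boundary restrictions then yields, after extraction, translations $\xi_j \to 0$ such that
\[
(\gamma_j^0 + \xi_j)|_{\partial\Simplex^{\ell+1}} \to \gamma|_{\partial\Simplex^{\ell+1}}
\quad \text{in \(\Fuglede_w(\partial\Simplex^{\ell+1}; \Omega)\).}
\]
For $j$ large enough, $\gamma_j^0 + \xi_j$ takes values in $\Omega$, since $\gamma(\Simplex^{\ell+1})$ is compactly contained in $\Omega$ and $\xi_j \to 0$.

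I would then define $\gamma_j$ by keeping the vertex images of $\gamma_j^0 + \xi_j$ on $V_j^\partial$ and replacing each interior vertex image by $\gamma_j^0(v) + \xi_j + \zeta_v$ for a small $\zeta_v \in B_{\delta_j}^m$, then extending by affine interpolation. As boundary vertices are unchanged, the restriction $\gamma_j|_{\partial\Simplex^{\ell+1}}$ coincides with $(\gamma_j^0 + \xi_j)|_{\partial\Simplex^{\ell+1}}$, which delivers condition~(\ref{itemCharacterization-143}). The family $(\zeta_v)_{v \in V_j^\circ}$ is then chosen to satisfy: (a) for every top simplex of $\cK_j^{\ell+1}$, the $\ell + 2$ vertex images are affinely independent in $\R^m$; and (b) \(\int_{K_j^\ell} w \compose \gamma_j \dif\cH^\ell < \infty\). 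Condition (a) is an open-dense condition in the $m\abs{V_j^\circ}$-dimensional parameter space, made possible by $\ell + 1 \le m$ together with the availability of an interior vertex in every top simplex. For (b), the $\ell$-simplices of $\cK_j^\ell$ lying in $\partial\Simplex^{\ell+1}$ contribute a finite amount by the Fuglede convergence on the boundary, while for an $\ell$-simplex $\Sigma$ with an interior vertex $v_0$, a Fubini and change-of-variable computation as in the proof of Proposition~\ref{lemmaTransversalFamily} gives
\[
\int_{B_{\delta_j}^m} \biggl(\int_\Sigma w \compose \gamma_j \dif\cH^\ell\biggr) \dif\zeta_{v_0}
\le C \int_\Omega w.
\]
Since $\cK_j^{\ell+1}$ has finitely many simplices, both (a) and (b) are simultaneously achieved for almost every choice of $(\zeta_v)$ in a small box, which provides a valid $\gamma_j$ and also yields condition~(\ref{itemCharacterization-140}).

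The main obstacle is coordinating the boundary Fuglede convergence --- inherited via Proposition~\ref{propositionFugledeApproximation} from the hypothesis on $\gamma|_{\partial\Simplex^{\ell+1}}$ --- with the injectivity and skeleton Fuglede conditions, which are obtained from independent vertex perturbations. The coordination is ensured by (i) perturbing only interior vertices, so that the boundary restriction is preserved, and (ii) choosing subdivisions in which every top simplex has an interior vertex available for perturbation.
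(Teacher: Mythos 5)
Your construction has two genuine gaps. First, the equiLipschitz claim for the secant maps on iterated barycentric subdivisions is false. The Lipschitz constant of an affine map on a simplex is controlled by its vertex values only up to a factor involving the thickness of the simplex (this is exactly the content of Lemma~\ref{lemmaApproximationSimplexLipschitz-0}, where the bound is of order \(\abs{\gamma}_{\Lip}/\tau^{\ell+1}\)), and under repeated barycentric subdivision the thickness of the simplices degenerates to zero. Hence the secant maps \(\gamma_j^0\) need not be \(\abs{\gamma}_{\Lip}\)-Lipschitz, nor equiLipschitz at all. This is not cosmetic: Proposition~\ref{propositionFugledeApproximation} requires an equiLipschitz sequence, and convergence in \(\Fuglede_{w}\) in the sense of Definition~\ref{definitionFugledeConvergence} includes a uniform Lipschitz bound, so both your extraction of the translations \(\xi_j\) and your conclusion~(\ref{itemCharacterization-143}) are unsupported. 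The paper's proof uses the subdivisions of \cite{Munkres}*{Lemma~9.4}, whose simplices have thickness bounded from below uniformly in \(j\), precisely to secure equiLipschitzness via Lemma~\ref{lemmaApproximationSimplexLipschitz-0}.

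Second, injectivity cannot be achieved by perturbing interior vertices only. In a barycentric-type subdivision, every top simplex touching \(\partial\Simplex^{\ell+1}\) has \(\ell+1\) vertices on the boundary and a single interior vertex. If the images of those \(\ell+1\) boundary vertices under \(\gamma_j^0+\xi_j\) are affinely dependent --- which happens, for instance, whenever \(\gamma\) is constant on \(\partial\Simplex^{\ell+1}\), a perfectly admissible Fuglede map --- then the \(\ell+2\) vertex images span an affine subspace of dimension at most \(\ell\) regardless of where the interior vertex is sent, so affine independence is unattainable and your ``open and dense'' claim fails (the good set is empty). This is why Lemma~\ref{lemmaApproximationSimplexLipschitz-2} in the paper perturbs \emph{all} vertex images, and the boundary and skeleton Fuglede conditions are recovered afterwards by composing with a small generic translation \(\tau_\xi\) via Proposition~\ref{propositionFugledeApproximation} and \eqref{eqExtension-568}, rather than by freezing the boundary restriction. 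Relatedly, your Fubini estimate for a single-vertex perturbation is not the computation of Proposition~\ref{lemmaTransversalFamily}: the displacement at a point \(x\) of an \(\ell\)-simplex \(\Sigma\) with vertex \(v_0\) is \(\lambda_{v_0}(x)\,\zeta_{v_0}\), whose Jacobian in \(\zeta_{v_0}\) degenerates like \(\lambda_{v_0}(x)^{m}\) near the face opposite to \(v_0\); the resulting bound is \(\lambda_{v_0}(x)^{-m}\int_\Omega w\), which is not integrable over \(\Sigma\), so the inequality \(\int_{B_{\delta_j}^m}\bigl(\int_\Sigma w\compose\gamma_j \dif\cH^\ell\bigr)\dif\zeta_{v_0}\le C\int_\Omega w\) does not follow. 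Global translations have Jacobian equal to \(1\), which is what makes the paper's averaging argument work for all skeletons simultaneously.
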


The proof of Proposition~\ref{lemmaApproximationSimplexLipschitz} relies on the existence of subdivisions of \(\Simplex^{\ell + 1}\) with arbitrarily small diameters. To be more specific, we need to introduce the following definitions:
Given a simplex \(\Sigma^{\ell + 1} \subset \R^m\), the \emph{thickness} of \(\Sigma^{\ell + 1}\) is defined as the ratio
\[
\tau
= \frac{r}{\Diam \Sigma^{\ell + 1}},
\] 
where \(r > 0\) is the \emph{radius} of \(\Sigma^{\ell + 1}\) given by the distance from the barycenter of \(\Sigma^{\ell + 1}\) to \(\partial \Sigma^{\ell + 1}\). 
An estimate of the thickness of a simplex \(\Sigma^{\ell + 1}\) entitles one to control the Lipschitz constant \(\abs{\gamma}_{\Lip}\) of an affine map \(\gamma\colon \Sigma^{\ell + 1}\to \R^m\) in terms of its values at the vertices of \(\Sigma^{\ell + 1}\)\,: 

\begin{lemma}
\label{lemmaApproximationSimplexLipschitz-0}
Given a simplex \(\Sigma^{\ell + 1}\) with thickness \(\tau > 0\), the Lipschitz constant of any affine map \(\gamma\colon \Sigma^{\ell + 1}\to \R^m\) satisfies 
\begin{equation}
\label{eqCharacterization-421}
\abs{\gamma}_{\Lip} 
\le \frac{C}{\tau^{\ell + 1}}
    \max{ \biggl\{ \frac{\abs{\gamma(v_{1}) - \gamma(v_{2})}}{\abs{v_{1} - v_{2}}} : v_1 \ne v_2\ \text{are vertices of \(\Sigma^{\ell + 1}\)} \biggr\}}\text{,}
\end{equation}
for some constant \(C>0\) depending only on \(\ell\). 
\end{lemma}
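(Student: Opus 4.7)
The plan is to reduce the estimate of the operator norm of the linear part of $\gamma$ to a well-conditioned basis argument, where the conditioning is controlled by the thickness $\tau$.

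Fix a vertex $v_0$ of $\Sigma^{\ell+1}$ and let $e_i \vcentcolon= v_i - v_0$ for $i=1,\dots,\ell+1$, so that $(e_1,\dots,e_{\ell+1})$ is a basis of the $(\ell+1)$-dimensional affine hull $W$ of $\Sigma^{\ell+1}$. Since $\gamma$ is affine, one may write $\gamma(x) = A(x-v_0) + \gamma(v_0)$ for a linear map $A \colon W \to \R^m$, so that $|\gamma|_{\Lip}$ equals the operator norm $\|A\|$ of $A$, and moreover
\[
A e_i = \gamma(v_i) - \gamma(v_0)
\quad \text{for every \(i \in \{1, \dots, \ell + 1\}\).}
\]
Denoting by $M$ the maximum appearing on the right-hand side of \eqref{eqCharacterization-421}, one has $|A e_i| \le M |e_i| \le M \Diam \Sigma^{\ell+1}$.

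Next, for a unit vector $u \in W$, expand $u = \sum_{i=1}^{\ell+1} c_i e_i$ in this basis. Using Cramer's rule (expressed in an orthonormal basis of $W$),
\[
c_i = \frac{\det(e_1,\dots,e_{i-1},u,e_{i+1},\dots,e_{\ell+1})}{\det(e_1,\dots,e_{\ell+1})}.
\]
By Hadamard's inequality, the numerator is bounded in absolute value by $\prod_{j \ne i} |e_j| \le (\Diam \Sigma^{\ell+1})^{\ell}$, while the absolute value of the denominator equals $(\ell+1)!\,\text{Vol}(\Sigma^{\ell+1})$. Since the ball of radius $r$ centered at the barycenter of $\Sigma^{\ell+1}$ is contained in $\Sigma^{\ell+1}$, one has $\text{Vol}(\Sigma^{\ell+1}) \ge c' r^{\ell+1}$. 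Combining these estimates,
\[
|c_i|\,|e_i| \le \frac{(\Diam \Sigma^{\ell+1})^{\ell+1}}{c''\, r^{\ell+1}} = \frac{1}{c''\, \tau^{\ell+1}}.
\]

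Finally, by linearity and the triangle inequality,
\[
|Au| = \Bigabs{\sum_{i=1}^{\ell+1} c_i\, A e_i} \le M \sum_{i=1}^{\ell+1} |c_i|\,|e_i| \le \frac{(\ell+1)}{c''\, \tau^{\ell+1}}\, M.
\]
Taking the supremum over unit vectors $u \in W$ yields $|\gamma|_{\Lip} = \|A\| \le C \tau^{-(\ell+1)} M$, with a constant $C$ depending only on $\ell$, which is the desired estimate.

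The only mildly delicate point is the volume lower bound $\text{Vol}(\Sigma^{\ell+1}) \ge c' r^{\ell+1}$: it follows immediately from the fact that the ball of radius $r$ around the barycenter lies inside $\Sigma^{\ell+1}$ by the very definition of $r$ as the distance from the barycenter to $\partial \Sigma^{\ell+1}$. Everything else is standard linear algebra.
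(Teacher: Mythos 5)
Your proof is correct and follows essentially the same route as the paper's: both reduce the Lipschitz bound to inverting the matrix of edge vectors \(v_i - v_0\) and control that inverse through the determinant lower bound \(\ge c\, r^{\ell+1}\) coming from the ball of radius \(r\) inscribed about the barycenter. The only difference is packaging — you bound the operator norm of the linear part via Cramer's rule and Hadamard's inequality, while the paper estimates \(\norm{L^{-1}}\) for the normalized edge vectors via the cofactor formula after a Cauchy–Schwarz step in barycentric coordinates.
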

\resetconstant
\begin{proof}[Proof of Lemma~\ref{lemmaApproximationSimplexLipschitz-0}]
Without loss of generality, one can assume that \(\Sigma^{\ell + 1} \subset \R^{\ell + 1}\). 
Let \(v_0, \ldots, v_{\ell + 1}\) be the vertices of \(\Sigma^{\ell + 1}\).  
     Take two points \(x, y \in \Sigma^{\ell + 1}\), which we write \(x = \sum\limits_{i = 0}^{\ell + 1}{t_i v_i}\) and \(y = \sum\limits_{i = 0}^{\ell + 1}{s_i v_i}\), with nonnegative coefficients whose sums equal \(1\).
    As \(t_0 = 1 - \sum\limits_{i = 1}^{\ell + 1}{t_i}\), we may then rewrite \(x\) as 
    \[
    x = \sum\limits_{i = 1}^{\ell + 1}{t_i (v_i - v_0)} + v_0.
    \]
    Since \(\gamma\) is affine, we also have
    \[
    \gamma(x) = \sum\limits_{i = 1}^{\ell + 1}{t_i \bigl(\gamma(v_i) - \gamma(v_0)\bigr)} + \gamma(v_0).
    \]
    We also write \(y\) and \(\gamma(y)\) accordingly.
    By the triangle inequality, we then have
    \[
    \abs{\gamma(x) - \gamma(y)}
    = \biggabs{\sum\limits_{i = 1}^{\ell + 1}{(t_i - s_i) (\gamma(v_i) - \gamma(v_0))}}
    \le \sum\limits_{i = 1}^{\ell + 1}{\abs{t_i - s_i} \abs{\gamma(v_i) - \gamma(v_0)}}.
    \]
    Denoting by \(A\) the maximum in \eqref{eqCharacterization-421}, we then have
    \begin{equation}
    \label{eqCharacterization-460}
    \abs{\gamma(x) - \gamma(y)}
    \le A \sum\limits_{i = 1}^{\ell + 1}{\abs{t_i - s_i} \abs{v_i - v_0}}.
    \end{equation}
    Let \((e_i)_{i \in \{1, \ldots, \ell+1\}}\) be the canonical basis of \(\R^{\ell + 1}\).
    By the Cauchy-Schwarz inequality,
    \begin{equation}
    \label{eqCharacterization-467}
    \sum\limits_{i = 1}^{\ell + 1}{\abs{t_i - s_i} \bigabs{v_i - v_0}}
    \le \sqrt{\ell + 1} \: \biggabs{\sum_{i=1}^{\ell + 1} (t_i - s_i) \abs{v_i - v_0} e_i}
    \end{equation}
We now take the linear transformation \(L \colon \R^{\ell + 1} \to \R^{\ell + 1}\) such that, for every \(i \in \{1, \ldots, \ell + 1\}\),
\[
L(e_i) = \frac{v_i-v_0}{\abs{v_i-v_0}}.
\]
Since \((v_i-v_0)_{i \in \{1, \ldots, \ell + 1\}}\) is a basis of \(\R^{\ell + 1}\), \(L\) is invertible and we have \(\abs{v_i-v_0} e_i = L^{-1}(v_i - v_0)\).
It then follows from \eqref{eqCharacterization-460}, \eqref{eqCharacterization-467} and the linearity of \(L^{-1}\) that
\begin{equation}
    \label{eqCharacterization-465}
    \abs{\gamma(x) - \gamma(y)}
    \le A\sqrt{\ell + 1} \: \abs{L^{-1}(x - y)}
    \le A\sqrt{\ell + 1} \:  \norm{L^{-1}}\abs{x - y}.
\end{equation}

We now estimate \(\norm{L^{-1}}\).
To this end, denote by \(\Matrix{L}\) and \(\Matrix{L^{-1}}\) the \((\ell + 1) \times (\ell + 1)\) matrices associated to \(L\) and \(L^{-1}\) in the canonical basis of \(\R^{\ell + 1}\), respectively.
We recall that 
\begin{equation}
    \label{eqCharacterization-488}
    \Matrix{L^{-1}} = \frac{1}{\det{\Matrix{L}}} D,
\end{equation}
where \(D\) is the cofactor matrix of \(\Matrix{L}\).
Let \(r > 0\) be the radius of \(\Sigma^{\ell + 1}\).
Since \(\Sigma^{\ell + 1}\) contains the ball of radius \(r\) centered at the barycenter, we deduce that its volume is at least \({\Cl{volball}} r^{\ell + 1}\) and then
\begin{equation}
    \label{eqCharacterization-495}
\begin{split}
{\Cr{volball}} r^{\ell + 1}
\leq \det{(v_1-v_0, \dots, v_{\ell + 1} - v_0)} 
& = \det{\Matrix{L}} \, \prod_{i=1}^{\ell + 1}\abs{v_j-v_0}\\
& \leq \det{\Matrix{L}} \, (\Diam{\Sigma^{\ell + 1}})^{\ell + 1}.
\end{split}
\end{equation}
Since each column of \(\Matrix{L}\) is obtained from the coordinates of a unit vector in the canonical basis,  
we have \(\norm{D}\leq \Cl{norm479}\).
Hence, by \eqref{eqCharacterization-488} and \eqref{eqCharacterization-495}, we get
\[
\norm{{L^{-1}}}
= \norm{\Matrix{L^{-1}}}
\le \Cl{cte-489} \frac{(\Diam{\Sigma^{\ell + 1}})^{\ell + 1}}{r^{\ell + 1}}
= \frac{\Cr{cte-489}}{\tau^{\ell + 1}},
\]
which combined with \eqref{eqCharacterization-465} gives the estimate in the statement.
\end{proof}

The core of the proof of Proposition~\ref{lemmaApproximationSimplexLipschitz} is contained in the following lemma presenting the approximation of a Lipschitz map by a sequence of simplicial maps. 
The proof is standard and relies on the decomposition of \(\Simplex^{\ell+1}\) by simplices having arbitrarily small diameters but thicknesses bounded from below by a positive constant, see \cite{Munkres}*{Lemma~9.4}.

\begin{lemma}
\label{lemmaApproximationSimplexLipschitz-1}
Let \(\Simplex^{\ell + 1}\) be a simplex.
For every Lipschitz map \(\gamma \colon  \Simplex^{\ell + 1} \to \R^m\), there exist a sequence of simplicial complexes \((\cK_{j}^{\ell+1})_{j \in \N}\) with \(K_{j}^{\ell + 1} = \Simplex^{\ell + 1}\) and an equiLipschitz sequence of simplicial maps \(\gamma_{j} \colon  \Simplex^{\ell + 1} \to \R^m\)  such that
\begin{enumerate}[\((i)\)]
	\item
	 \(\gamma_{j}|_{\Sigma^{\ell + 1}}\) is affine for every \(\Sigma^{\ell + 1} \in \cK_{j}^{\ell + 1}\), 
	\item{}
	 \(\gamma_{j}|_{\Simplex^{\ell + 1}}  \to \gamma|_{\Simplex^{\ell + 1}} \) in \(\Smooth^0(\Simplex^{\ell + 1}; \R^m)\).
	\end{enumerate} 
\end{lemma}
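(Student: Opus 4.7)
The plan is to obtain the approximating simplicial maps by repeated subdivision of $\Simplex^{\ell+1}$ into smaller simplices whose thicknesses stay uniformly bounded from below, and then to take $\gamma_{j}$ to be the simplicial interpolant of $\gamma$ on the vertex set of the subdivision. The thickness lower bound is exactly the hypothesis needed to apply Lemma~\ref{lemmaApproximationSimplexLipschitz-0}, which will control the Lipschitz norm of each affine piece by the Lipschitz constant of $\gamma$ itself.

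First, I would invoke a subdivision procedure --- for instance, iterated barycentric subdivision \cite{Munkres}*{Lemma~9.4} or, more conveniently, the standard subdivisions into ``small'' simplices of Whitney type --- to produce, for every $j \in \N_*$, a simplicial complex $\cK_{j}^{\ell+1}$ whose polytope is $\Simplex^{\ell+1}$, whose mesh $\max_{\Sigma \in \cK_{j}^{\ell+1}} \Diam{\Sigma}$ tends to zero as $j \to \infty$, and whose simplices all have thickness bounded from below by some $\tau_{0} > 0$ independent of $j$. I would then let $V_{j}$ denote the set of vertices of $\cK_{j}^{\ell+1}$ and define $\gamma_{j} \colon \Simplex^{\ell+1} \to \R^{m}$ as the unique map that agrees with $\gamma$ on $V_{j}$ and is affine on each simplex of $\cK_{j}^{\ell+1}$. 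Property~$(i)$ then holds by construction.

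Next, I would establish the equiLipschitz bound. Fix $\Sigma \in \cK_{j}^{\ell+1}$ with vertices $v_{0}, \dots, v_{\ell+1}$. For any two distinct vertices one has
\[
\frac{\abs{\gamma_{j}(v_{i}) - \gamma_{j}(v_{i'})}}{\abs{v_{i} - v_{i'}}}
= \frac{\abs{\gamma(v_{i}) - \gamma(v_{i'})}}{\abs{v_{i} - v_{i'}}}
\le \abs{\gamma}_{\Lip},
\]
so Lemma~\ref{lemmaApproximationSimplexLipschitz-0} gives $\abs{\gamma_{j}|_{\Sigma}}_{\Lip} \le C \tau_{0}^{-(\ell+1)} \abs{\gamma}_{\Lip}$, with a constant independent of $j$ and $\Sigma$. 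To upgrade this piecewise bound to a global Lipschitz bound on $\Simplex^{\ell+1}$, I would use the fact that for any $x, y \in \Simplex^{\ell+1}$ the segment $[x,y]$ crosses finitely many simplices of $\cK_{j}^{\ell+1}$: writing $x = z_{0}, z_{1}, \dots, z_{k} = y$ for its intersection points with the faces traversed, consecutive $z_{i}, z_{i+1}$ lie in a common simplex, so the triangle inequality gives
\[
\abs{\gamma_{j}(x) - \gamma_{j}(y)}
\le \sum_{i=0}^{k-1} \abs{\gamma_{j}(z_{i}) - \gamma_{j}(z_{i+1})}
\le C \tau_{0}^{-(\ell+1)} \abs{\gamma}_{\Lip} \sum_{i=0}^{k-1} \abs{z_{i} - z_{i+1}}
= C \tau_{0}^{-(\ell+1)} \abs{\gamma}_{\Lip} \, \abs{x - y}.
\]

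Finally, I would verify the uniform convergence. Given $\varepsilon > 0$, uniform continuity of $\gamma$ on the compact set $\Simplex^{\ell+1}$ provides $\delta > 0$ such that $\abs{\gamma(x) - \gamma(y)} \le \varepsilon$ whenever $\abs{x - y} \le \delta$. For $j$ large enough that the mesh of $\cK_{j}^{\ell+1}$ is less than $\delta$, every $x \in \Simplex^{\ell+1}$ lies in some simplex $\Sigma \in \cK_{j}^{\ell+1}$ with vertices $v_{0}, \dots, v_{\ell+1}$, and writing $x = \sum_{i} t_{i} v_{i}$ with $t_{i} \ge 0$ and $\sum_{i} t_{i} = 1$, the affineness of $\gamma_{j}$ on $\Sigma$ gives
\[
\abs{\gamma_{j}(x) - \gamma(x)}
= \biggabs{\sum_{i=0}^{\ell+1} t_{i} \bigl(\gamma(v_{i}) - \gamma(x)\bigr)}
\le \sum_{i=0}^{\ell+1} t_{i} \abs{\gamma(v_{i}) - \gamma(x)}
\le \varepsilon,
\]
since each $\abs{v_{i} - x} \le \Diam{\Sigma} \le \delta$. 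The main subtle point is really the passage from the piecewise to the global Lipschitz bound, but the segment-crossing argument above takes care of it; everything else is essentially bookkeeping about the subdivision.
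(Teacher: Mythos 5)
Your proposal is correct and takes essentially the same route as the paper: subdivisions of \(\Simplex^{\ell+1}\) with mesh tending to zero and thickness uniformly bounded from below (as provided by \cite{Munkres}*{Lemma~9.4}), the secant/simplicial interpolant of \(\gamma\), Lemma~\ref{lemmaApproximationSimplexLipschitz-0} for the Lipschitz bound on each simplex upgraded to a global bound via convexity of \(\Simplex^{\ell+1}\) (your segment-crossing argument makes this explicit), and an elementary estimate for the uniform convergence. One small caution: iterated barycentric subdivision does \emph{not} keep the thickness bounded away from zero, so it cannot serve as the example subdivision; the uniform thickness lower bound you correctly insist on is exactly what \cite{Munkres}*{Lemma~9.4} supplies.
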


\resetconstant

\begin{proof}[Proof of Lemma~\ref{lemmaApproximationSimplexLipschitz-1}]
By \cite{Munkres}*{Lemma~9.4}, there exist \(\eta > 0\) and a sequence of simplicial complexes \((\cK_{j}^{\ell+1})_{j \in \N}\) with \(K_{j}^{\ell + 1} = \Simplex^{\ell + 1}\) such that, for each simplex \(\Sigma^{\ell+1}\in \cK_j^{\ell+1}\), we have \(\Diam{\Sigma^{\ell+1}} \leq 1/2^{j}\) and the thickness of \(\Sigma^{\ell+1}\) is bounded from below by \(\eta\). 
Let \(\gamma_j\) be the secant map associated to \(\gamma\) on this subdivision, that is, \(\gamma_j|_{\Sigma^{\ell+1}}\) is affine for every \(\Sigma^{\ell+1}\in \cK_{j}^{\ell+1}\) and \(\gamma_j(v)=\gamma(v)\) for every \(v\in \cK_{j}^{0}\). 

Since \(\Simplex^{\ell+1}\) is convex, one has 
\[
\abs{\gamma_j}_{\Lip(\Simplex^{\ell + 1})}
\le \max_{\Sigma^{\ell+1}\in \cK_{j}^{\ell+1}}\abs{\gamma_j}_{\Lip(\Sigma^{\ell+1})}.
\]
Since \(\gamma\) is Lipschitz continuous and \(\gamma_j\) coincides with \(\gamma\) on the vertices of each simplex \(\Sigma^{\ell + 1} \in \cK_{j}^{\ell+1}\), by Lemma~\ref{lemmaApproximationSimplexLipschitz-0} we have
\begin{equation}\label{eq154}
\abs{\gamma_j}_{\Lip(\Sigma^{\ell+1})}
\leq \frac{\C}{\eta^{\ell+1}}\abs{\gamma}_{\Lip(\Simplex^{\ell + 1})}.
\end{equation}
This shows that the sequence \((\gamma_j)_{j\in \N}\) is equiLipschitz. 
To obtain the uniform convergence in \(\Simplex^{\ell + 1}\), it suffices to observe that if \(x \in \Sigma^{\ell + 1} \in \cK_{j}^{\ell+1}\) and \(v\) is a vertex of \(\Sigma^{\ell + 1}\), we have
\[
\abs{\gamma_j(x) - \gamma(x)}
\le \abs{\gamma_j(x) - \gamma_j(v)}
+ \abs{\gamma(v) - \gamma(x)}
\le \Cl{cteCharacterization-548} \abs{x - v}
\le \Cr{cteCharacterization-548} \Diam{\Sigma^{\ell + 1}}
\le  \frac{\Cr{cteCharacterization-548}}{2^{j}}.
\]
The conclusion follows.
\end{proof}

\resetconstant
\begin{lemma}
\label{lemmaApproximationSimplexLipschitz-2}
If \(\ell + 1 \le m\), then in the conclusion of Lemma~\ref{lemmaApproximationSimplexLipschitz-1} we can further require that, for every \(j\in \N\) and every \(\Sigma^{\ell+1}\in \cK_{j}^{\ell+1}\), the restriction \(\gamma_j|_{\Sigma^{\ell+1}}\) is injective.
\end{lemma}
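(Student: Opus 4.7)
The plan is to start from the equi-Lipschitz sequence $(\gamma_j)_{j \in \N}$ and the subdivisions $(\cK_j^{\ell+1})_{j \in \N}$ provided by Lemma~\ref{lemmaApproximationSimplexLipschitz-1}, and then produce a new sequence $\widetilde{\gamma}_j$ by perturbing the values $\gamma_j(v)$ at the vertices $v \in \cK_j^0$ by small vectors $\epsilon_v \in \R^m$, and extending affinely on each $\Sigma^{\ell + 1} \in \cK_j^{\ell + 1}$. The perturbations will be chosen generically to enforce injectivity on each top-dimensional simplex, while being small enough to preserve both the uniform convergence to $\gamma$ and the equi-Lipschitz property.

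For a fixed simplex $\Sigma^{\ell + 1} \in \cK_j^{\ell + 1}$ with vertices $v_0, \dots, v_{\ell + 1}$, the affine restriction $\widetilde{\gamma}_j|_{\Sigma^{\ell + 1}}$ is injective if and only if the vectors $\widetilde{\gamma}_j(v_i) - \widetilde{\gamma}_j(v_0) \in \R^m$, $i = 1, \dots, \ell + 1$, are linearly independent. Failure of injectivity corresponds to the simultaneous vanishing of all $(\ell + 1) \times (\ell + 1)$ minors of the $m \times (\ell + 1)$ matrix formed by these vectors, which defines an algebraic variety in the perturbation space $(\R^m)^{\ell + 2}$. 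Since $\ell + 1 \le m$, one can exhibit perturbations for which these vectors are linearly independent, so this variety is proper and has Lebesgue measure zero. Taking the union over the finitely many simplices of $\cK_j^{\ell + 1}$ still gives a measure-zero subset of $(\R^m)^{\abs{\cK_j^0}}$, and we may thus choose perturbations $(\epsilon_v)_{v \in \cK_j^0}$ avoiding this bad set with $\abs{\epsilon_v} < \delta_j$ for any prescribed $\delta_j > 0$.

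It remains to choose $\delta_j$ carefully. Since both $\widetilde{\gamma}_j$ and $\gamma_j$ are affine on every simplex of $\cK_j^{\ell + 1}$, their difference attains its maximum at a vertex, whence $\norm{\widetilde{\gamma}_j - \gamma_j}_{\Smooth^0(\Simplex^{\ell + 1})} \le \delta_j$, so $\widetilde{\gamma}_j \to \gamma$ uniformly as soon as $\delta_j \to 0$. For the equi-Lipschitz property, we invoke Lemma~\ref{lemmaApproximationSimplexLipschitz-0} together with the fact that the subdivisions from \cite{Munkres}*{Lemma~9.4} have thicknesses bounded from below by some $\eta > 0$ and diameters comparable to $1/2^j$, which implies a lower bound of the form $\abs{v_1 - v_2} \ge c/2^j$ for distinct vertices of the same simplex in $\cK_j^{\ell + 1}$. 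Thus
\[
\abs{\widetilde{\gamma}_j}_{\Lip(\Sigma^{\ell + 1})}
\le \frac{C}{\eta^{\ell + 1}} \max_{v_1 \ne v_2} \frac{\abs{\widetilde{\gamma}_j(v_1) - \widetilde{\gamma}_j(v_2)}}{\abs{v_1 - v_2}}
\le \frac{C}{\eta^{\ell + 1}} \Bigl( \abs{\gamma_j}_{\Lip} + \frac{2\, \delta_j\, 2^j}{c} \Bigr),
\]
so the choice $\delta_j = 1/4^j$ (or any sequence with $\delta_j 2^j \to 0$) keeps the right-hand side uniformly bounded. Property~$(i)$ of Lemma~\ref{lemmaApproximationSimplexLipschitz-1} and the new injectivity requirement follow directly from the construction.

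The main technical obstacle is the delicate balancing of scales: the perturbation $\delta_j$ must be chosen small enough compared with the minimum edge length $\sim 1/2^j$ of the subdivision $\cK_j^{\ell + 1}$ to prevent the amplification factor in Lemma~\ref{lemmaApproximationSimplexLipschitz-0} from blowing up, yet the genericity argument only furnishes a measure-zero avoidance condition on arbitrarily small balls. Once $\delta_j$ is chosen as above, all three requirements --- injectivity, uniform convergence, and uniform Lipschitz bounds --- are met simultaneously, and the proof concludes.
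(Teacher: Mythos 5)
Your construction is correct in spirit and close to the paper's: both proofs keep the subdivisions and the secant maps of Lemma~\ref{lemmaApproximationSimplexLipschitz-1}, perturb the values at the vertices so that on each \(\Sigma^{\ell+1}\in\cK_j^{\ell+1}\) the image vectors \(\widetilde\gamma_j(v_i)-\widetilde\gamma_j(v_0)\) become linearly independent (possible precisely because \(\ell+1\le m\)), and then recover the equiLipschitz bound from Lemma~\ref{lemmaApproximationSimplexLipschitz-0}. The difference is how genericity is implemented. You perturb every vertex independently and avoid a measure-zero algebraic set in \((\R^m)^{\abs{\cK_j^0}}\); the paper instead fixes a family \(e_1,\dots,e_J\) in general position, perturbs along the single parameter \(t\mapsto \gamma(v_i)+t e_i\), and picks \(t_j\) avoiding the roots of finitely many nontrivial polynomials of degree \(\ell+1\), subject to the normalization \(t_j\max_{i\ne i'}\abs{e_i-e_{i'}}/\abs{v_i-v_{i'}}\le 1\). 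That normalization is the key bookkeeping device: it bounds the vertex difference quotients by \(\abs{\gamma}_{\Lip}+1\) without ever needing an absolute lower bound on edge lengths, because the smallness of the perturbation is measured directly against the distances \(\abs{v_i-v_{i'}}\) of the complex at hand.

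This is where your argument has a genuine (though easily repaired) gap. You deduce \(\abs{v_1-v_2}\ge c/2^j\) from ``thickness \(\ge\eta\) and diameters comparable to \(1/2^j\)'', but the subdivision lemma quoted in the paper only provides \(\Diam\Sigma^{\ell+1}\le 1/2^j\) together with the thickness lower bound; thickness being scale-invariant, these two facts only bound edge lengths from below \emph{relative to the diameter of the same simplex}, not by an absolute constant times \(2^{-j}\). Consequently the specific choice \(\delta_j=1/4^j\) (or any \(\delta_j\) with \(\delta_j 2^j\to 0\)) is not justified as written. The fix is immediate: since each \(\cK_j^{\ell+1}\) is finite, its minimal edge length \(m_j>0\), and choosing \(\delta_j\le m_j/2^j\) (say) makes your amplification term \(2\delta_j/m_j\) tend to zero while \(\delta_j\to 0\), so injectivity, uniform convergence to \(\gamma\), and the uniform Lipschitz bound all hold. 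With that adjustment your proof is complete and essentially parallel to the paper's.
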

\begin{proof}[Proof of Lemma~\ref{lemmaApproximationSimplexLipschitz-2}]
We modify the secant sequence \(\gamma_j\) given by Lemma~\ref{lemmaApproximationSimplexLipschitz-1} as follows. 
Fix \(j\in \N\) and let \(J \in \N\) be the cardinality of \(\cK_{j}^{0}\). 
We enumerate the vertices of \(\cK_{j}^0\) denoting them by \(v_1, \ldots, v_J\).
Let \(e_{1}, \ldots, e_J \in \R^m\) be such that  any  \(\ell+1\) vectors \(e_{i_1}, \dots, e_{i_{\ell+1}}\) are linearly independent and the affine space generated by \(e_{i_1}, \dots, e_{i_{\ell+1}}\) does not contain any other vector \(e_{i_0}\) with \(i_{0}\not\in \{i_1, \dots, i_{\ell+1}\}\). Such a family exists because \(\ell+1\leq m\). 
In particular, the family \((e_{i_1}-e_{i_0}, \dots, e_{i_{\ell+1}}-e_{i_0})\) is free. 
We now take \(\varepsilon_{\ell+2}, \dots, \varepsilon_{m}\in \R^m\) such that \((e_{i_1}-e_{i_0}, \dots, e_{i_{\ell+1}}-e_{i_0}, \varepsilon_{\ell+2}, \dots, \varepsilon_{m})\) is a basis  of \(\R^m\). It follows that the function 
\begin{multline*}
t \in \R \longmapsto \Det{\big(\gamma(v_{i_1})-\gamma(v_{i_0})+t(e_{i_1}-e_{i_0}), \dots,}
\\  
{\gamma(v_{i_{\ell+1}})-\gamma(v_{i_0})+t(e_{i_{\ell+1}}-e_{i_0}), \varepsilon_{\ell+2}, \dots, \varepsilon_{m}\big)}
\end{multline*}
is a polynomial of degree \(\ell+1\). 
Hence, there exists \(0 < t_j < 1/2^{j}\) that is not a root of such a polynomial for any choice of integers \(i_0 < \ldots < i_{\ell+1}\) in \(\{1, \dots, J\}\). 
We further require that
\[
t_j \max_{i\not= i'}{\frac{\abs{e_{i}-e_{i'}}}{\abs{v_i-v_{i'}}}}
\leq 1.
\]
We then define the simplicial map \(\widetilde{\gamma}_j \colon \Simplex^{\ell + 1} \to \R^m\) which is affine on each \(\Sigma^{\ell+1}\in \cK_{j}^{\ell+1}\) and such that, for every \(i \in \{i, \ldots, J\}\),
\[
\widetilde{\gamma}_j(v_i)=\gamma(v) + t_j e_{i}.
\]
Note that
\begin{multline*}
    \max_{i\not= i'}{ \frac{\abs{\widetilde{\gamma}_j(v_{i}) - \widetilde{\gamma}_j(v_{i'})}}{\abs{v_{i} - v_{i'}}}}
    \leq \abs{\gamma}_{\Lip(\Simplex^{\ell + 1})} + t_j \max_{i\not= i'}{\frac{\abs{e_{i}-e_{i'}}}{\abs{v_i-v_{i'}}}}
    \leq \abs{\gamma}_{\Lip(\Simplex^{\ell + 1})} + 1.
\end{multline*}
As for \eqref{eq154}, one gets
\[
\abs{\widetilde{\gamma}_j}_{\Lip(\Simplex^{\ell + 1})} 
\leq \frac{\C}{\eta^{\ell+1}} (\abs{\gamma}_{\Lip(\Simplex^{\ell + 1})} + 1),
\]
which implies again that \((\widetilde{\gamma}_j)_{j\in \N}\) is equiLipschitz. Moreover, for every \(j\in \N\) and every simplex \(\Sigma^{\ell+1}\in \cK^{\ell+1}_{j}\) with vertices \(v_{i_0}, \dots, v_{i_{\ell+1}}\), the vectors  
\[
\widetilde{\gamma}_j(v_{i_s})-\widetilde{\gamma}_j(v_{i_0})=\gamma(v_{i_s})-\gamma(v_{i_0})+t_j(e_{v_{i_s}}-e_{v_{i_0}})
\]
with \(1\leq s \leq \ell+1\) are, by construction, linearly independent. This proves that \(\widetilde{\gamma_j}|_{\Sigma^{\ell+1}}\) is injective.
We thus have the conclusion taking the sequence \( (\widetilde\gamma_j)_{j \in \N} \). 
\end{proof}

The completion of the proof of Proposition~\ref{lemmaApproximationSimplexLipschitz} now relies on an argument involving a transversal perturbation of the identity:

\begin{proof}[Proof of Proposition~\ref{lemmaApproximationSimplexLipschitz}]
Let \(\tau \colon U \to \Omega\) be a transversal perturbation of the identity, given by translation as in Example~\ref{exampleTransversalPertubationEuclidean}.
We then take a summable function \(\widetilde w \ge w\) as provided by Proposition~\ref{propositionFugledeApproximation}.
Let \(\gamma \colon  \Simplex^{\ell + 1} \to \Omega\) be a Lipschitz map such that \(\gamma|_{\partial\Simplex^{\ell + 1}} \in \Fuglede_{\tilde w}(\partial\Simplex^{\ell + 1}; \Omega)\).

Let \((\cK^{\ell+1}_j)_{j\in \N}\) be a sequence of simplicial complexes as in Lemma~\ref{lemmaApproximationSimplexLipschitz-1} and let \((\gamma_j)_{j\in \N}\) be the corresponding sequence of simplicial maps converging to \(\gamma\) in \(\Smooth^{0}(\Simplex^{\ell+1};\R^m)\). 
In view of Lemma~\ref{lemmaApproximationSimplexLipschitz-2}, we can require that each \(\gamma_j|_{\Sigma^{\ell+1}}\) is injective for every \(j\in \N\) and \(\Sigma^{\ell+1}\in \cK^{\ell+1}_j\).

Since \(\gamma(\Simplex^{\ell+1})\) is a compact subset of \(\Omega\), there exists \(\delta > 0\) such that \((\gamma(\Simplex^{\ell + 1}) + B_{\delta}^m) \times B_{\delta}^m \subset U\). By uniform convergence, we can  assume that this inclusion also holds for \(\gamma_{j}\) instead of \(\gamma\). By Proposition~\ref{propositionFugledeApproximation},  there exist a subsequence \((\gamma_{j_{i}})_{i \in \N}\) and a sequence \((\xi_{i})_{i \in \N}\) that converges to \(0\) in \(\R^{m}\) such that 
\(\tau_{\xi_{i}} \compose \gamma_{j_{i}}|_{\partial\Simplex^{\ell + 1}} \in \Fuglede_{w}(\partial\Simplex^{\ell + 1} ; \Omega)\) for every \(i \in \N\) and
\[{}
\tau_{\xi_i} \compose \gamma_{j_{i}}|_{\partial\Simplex^{\ell + 1}}
\to \gamma|_{\partial\Simplex^{\ell + 1}} \quad \text{in \(\Fuglede_{w}(\partial\Simplex^{\ell + 1} ; \Omega)\).}
\]
In view of \eqref{eqTransversalPerturbationLimit}, we may choose each \(\xi_i\) from a subset of positive measure in \(B_{\epsilon_i}^m\).
Thus, by applying property~\eqref{eqExtension-568} with \(X = K_{j_{i}}^{\ell}\), we can further assume that \(\xi_i\) also satisfies \(\tau_{\xi_{i}} \compose \gamma_{j_{i}}|_{K_{j_{i}}^{\ell}} \in \Fuglede_{w}(K_{j_{i}}^{\ell} ; \Omega)\).
We therefore have the conclusion with the sequence \((\tau_{\xi_i} \compose \gamma_{j_{i}})_{i \in \N}\)\,.
\end{proof}

\begin{proof}[Proof of Proposition~\ref{propositionCharacterizationExtensionSimplex}]
For the direct implication, let \(\Simplex^{\ell + 1} \subset \Omega\). 
The conclusion follows from Proposition~\ref{propositionExtendabilitySimplex} by taking as \(\gamma\colon \Simplex^{\ell+1}\to \Omega\) the inclusion map.
To prove the converse, let us take the \(\ell\)-detectors \( w_{0} \) given by the assumption
and \( w_{1} \) from Definition~\ref{definitionVMOell} of \( \VMO^{\ell} \).
Let \( w = w_{0} + w_{1} \) and \(\widetilde w \ge w\) be an \(\ell\)-detector given by Proposition~\ref{lemmaApproximationSimplexLipschitz}.
Let \(\Simplex^{\ell+1}\) be a simplex. Given a Lipschitz map \(\gamma \colon  \Simplex^{\ell + 1} \to \Omega\) with \(\gamma|_{\partial\Simplex^{\ell + 1}} \in \Fuglede_{\tilde w}(\partial\Simplex^{\ell + 1}; \Omega)\), we take a sequence of simplicial complexes \((\cK_{j}^{\ell + 1})_{j \in \N}\) and a sequence of Lipschitz maps \((\gamma_{j})_{j \in \N}\) that satisfy the conclusion of Proposition~\ref{lemmaApproximationSimplexLipschitz}.
	
	Fix \(j \in \N\) and denote the \((\ell + 1)\)-dimensional simplices of \(\cK_{j}^{\ell + 1}\) by \((\Sigma_{i})_{i \in I}\).{}
	Since each \(\gamma_{j}(\Sigma_{i})\) is a simplex and \(\gamma_{j}|_{\Sigma_{i}}\) is an affine diffeomorphism, we have \(\partial \gamma_{j}(\Sigma_{i}) = \gamma_{j}(\partial\Sigma_{i})\).{}
	As a result, the property \(\gamma_{j}|_{\partial\Sigma_{i}} \in \Fuglede_{w}(\partial\Sigma_{i}; \Omega)\) shows that \(w|_{\gamma_{j}(\partial\Sigma_{i})}\) is summable in \(\gamma_{j}(\partial\Sigma_{i})\). 
	Since \( w \ge w_{0} \), the map \(u|_{\gamma_{j}(\partial\Sigma_{i})}\) is homotopic to a constant in \(\VMO(\gamma_{j}(\partial\Sigma_{i}); \manfN)\), which then implies that \(u \compose \gamma_{j}|_{\partial\Sigma_{i}}\) is homotopic to a constant in \(\VMO(\partial\Sigma_{i}; \manfN)\).{}
	Hence, by Lemmas~\ref{lemmaVMORestriction} and~\ref{lemmaExtensionHomotopyConstant}, there exists \(F_{j} \in \Smooth^{0}(\Simplex^{\ell + 1}; \manfN)\) such that
	\[{}
	u \compose \gamma_{j}|_{\partial\Simplex^{\ell + 1}} 
	\sim F_{j}|_{\partial\Simplex^{\ell + 1}}
	\quad \text{in \(\VMO(\partial\Simplex^{\ell + 1}; \manfN)\).}
	\]
	Since the map \(F_{j}|_{\partial\Simplex^{\ell + 1}}\) is homotopic to a constant in \(\Smooth^{0}(\partial\Simplex^{\ell + 1}; \manfN)\), whence also in \(\VMO(\partial\Simplex^{\ell + 1}; \manfN)\), we deduce by transitivity of the homotopy relation that \(u \compose \gamma_{j}|_{\partial\Simplex^{\ell + 1}} \) is homotopic to a constant in \(\VMO(\partial\Simplex^{\ell + 1}; \manfN)\).
	As \( w \ge w_{1} \) and
	\[{}
	\gamma_{j}|_{\partial\Simplex^{\ell + 1}}  \to \gamma|_{\partial\Simplex^{\ell + 1}} 
	\quad \text{in \(\Fuglede_{w}(\partial\Simplex^{\ell + 1}; \Omega)\),}
	\]
	by generic VMO stability of \(u\) we have
	\[{}
	u \compose \gamma_{j}|_{\partial\Simplex^{\ell + 1}}  \to u \compose \gamma|_{\partial\Simplex^{\ell + 1}} 
	\quad \text{in \(\VMO(\partial\Simplex^{\ell + 1}; \manfN)\).}
	\]
	By Proposition~\ref{propositionHomotopyVMOLimit}, there exists \(J \in \N\) such that, for every \(j \ge J\),
	\[{}
	u \compose \gamma_{j}|_{\partial\Simplex^{\ell + 1}}  \sim u \compose \gamma|_{\partial\Simplex^{\ell + 1}} 
	\quad \text{in \(\VMO(\partial\Simplex^{\ell + 1}; \manfN)\).}
	\]
	Therefore, by transitivity of the homotopy relation, \(u \compose \gamma|_{\partial\Simplex^{\ell + 1}}\) is homotopic to a constant in \(\VMO(\partial\Simplex^{\ell + 1}; \manfN)\).	
\end{proof}

\section{Testing on disks}
\label{sectionCharacterizationsDisks}

Before proving Proposition~\ref{propositionCharacterizationDisks}, we show that if a \(\VMO^{\ell}\) map is locally \(\ell\)-extendable then it is \(\ell\)-extendable.{}
More precisely,

\begin{lemma}
\label{proposition_Local_Charts}
Let \(u \in \VMO^{\ell}(\manfV; \manfN)\) and let \((V_i)_{i \in I}\) be an open covering of \(\manfV\).{}
If \(u|_{V_{i}}\) is \(\ell\)-extendable for every \(i \in I\), then \(u\) is \(\ell\)-extendable.
\end{lemma}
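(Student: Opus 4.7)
The plan is to reduce the verification of $\ell$-extendability to a local statement on each chart $V_i$, combined with a subdivision argument on a simplex. By Proposition~\ref{propositionExtendabilitySimplex}, I reformulate $\ell$-extendability in the simplex setting: it suffices to produce an $\ell$-detector $w$ on $\manfV$ such that for every Lipschitz map $\gamma \colon \Simplex^{\ell+1} \to \manfV$ with $\gamma|_{\partial\Simplex^{\ell+1}} \in \Fuglede_w(\partial\Simplex^{\ell+1}; \manfV)$, the composition $u \compose \gamma|_{\partial\Simplex^{\ell+1}}$ is homotopic to a constant in $\VMO(\partial\Simplex^{\ell+1}; \manfN)$.

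First, I would build a global detector that encodes all the local ones. Since $\manfV$ is separable, I may assume $I$ is countable. Let $w_0$ be a global $\ell$-detector for $u$ furnished by the assumption $u \in \VMO^\ell(\manfV; \manfN)$, and for each $i$ let $w_i \colon V_i \to [0, +\infty]$ be an $\ell$-detector witnessing the $\ell$-extendability of $u|_{V_i}$. I extend each $w_i$ by zero to a function $\bar w_i$ on $\manfV$ and choose positive weights $\lambda_i$ such that $\sum_{i} \lambda_i \norm{\bar w_i}_{\Lebesgue^1(\manfV)} < \infty$, so that
\[
w \vcentcolon= w_0 + \sum_{i} \lambda_i \bar w_i
\]
is summable on $\manfV$ and remains an $\ell$-detector for $u$, as $w \ge w_0$. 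Because Fuglede classes are invariant under positive rescaling, any $\gamma \in \Fuglede_w$ restricts to a Fuglede map for $w_i$ on the preimage of $V_i$.

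Next, I take a transversal perturbation of the identity $\tau$ on $\manfV$ (which exists in both cases, $\manfV = \Omega$ or $\manfV = \manfM$), and let $\widetilde w \ge w$ be the enlarged $\ell$-detector given by Lemma~\ref{propositionGenericStability}. Given a Lipschitz $\gamma \colon \Simplex^{\ell+1} \to \manfV$ with $\gamma|_{\partial\Simplex^{\ell+1}} \in \Fuglede_{\tilde w}$, the compact image $\gamma(\Simplex^{\ell+1})$ is covered by finitely many $V_{i_1}, \ldots, V_{i_N}$; by the Lebesgue number lemma applied to $(\gamma^{-1}(V_{i_k}))_{k=1}^{N}$ there exists $\delta_0 > 0$ such that every set of diameter less than $\delta_0$ in $\Simplex^{\ell+1}$ is mapped into some $V_{i_k}$. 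By iterated barycentric subdivision I produce a simplicial complex $\cK^{\ell+1}$ with $K^{\ell+1} = \Simplex^{\ell+1}$ whose simplices have diameter below $\delta_0$, arranged so that $\partial\Simplex^{\ell+1}$ is the polytope of a subcomplex of $\cK^\ell$. Each $(\ell+1)$-simplex $\Sigma^{\ell+1} \in \cK^{\ell+1}$ thus satisfies $\gamma(\Sigma^{\ell+1}) \subset V_{i(\Sigma^{\ell+1})}$ for some index $i(\Sigma^{\ell+1})$.

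The hard part is to guarantee, after this subdivision, that the restriction of $\gamma$ to the full $\ell$-skeleton $K^\ell$ (not only to $\partial\Simplex^{\ell+1}$) is a Fuglede map, so that the local extendability can be invoked on the boundary of every subsimplex. This is exactly the role of Lemma~\ref{propositionGenericStability}: applied with $e = \ell+1$ and with $L^\ell \subset \cK^\ell$ the subcomplex whose polytope is $\partial\Simplex^{\ell+1}$, it yields $\xi$ arbitrarily close to $0$ such that $\tau_\xi \compose \gamma|_{K^i} \in \Fuglede_w$ for every $i \in \{0, \ldots, \ell+1\}$, while $u \compose \tau_\xi \compose \gamma|_{\partial\Simplex^{\ell+1}}$ stays $\VMO$-homotopic to $u \compose \gamma|_{\partial\Simplex^{\ell+1}}$. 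For $\xi$ small enough, uniform continuity of $\gamma$ ensures that $\tau_\xi \compose \gamma$ still sends each $\Sigma^{\ell+1}$ into $V_{i(\Sigma^{\ell+1})}$. Replacing $\gamma$ by $\tau_\xi \compose \gamma$, the $\ell$-extendability of $u|_{V_{i(\Sigma^{\ell+1})}}$, combined with Proposition~\ref{propositionExtendabilitySimplex}, yields that $u \compose \gamma|_{\partial\Sigma^{\ell+1}}$ is homotopic to a constant in $\VMO(\partial\Sigma^{\ell+1}; \manfN)$ for every $(\ell+1)$-subsimplex. Lemma~\ref{lemmaExtensionHomotopyConstant} then furnishes $F \in \Smooth^0(\Simplex^{\ell+1}; \manfN)$ with $u \compose \gamma|_{K^\ell} \sim F|_{K^\ell}$ in $\VMO(K^\ell; \manfN)$, and restriction to the subpolytope $\partial\Simplex^{\ell+1}$ via Lemma~\ref{lemmaVMORestriction}, together with the fact that $F|_{\partial\Simplex^{\ell+1}}$ is null-homotopic in $\Smooth^0$ (since $F$ extends continuously across $\Simplex^{\ell+1}$), closes the argument by transitivity of the homotopy relation.
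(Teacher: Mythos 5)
Your proof is correct and follows essentially the same route as the paper: combine the local detectors (plus a detector for \(u\)) into one summable \(w\), enlarge it via Lemma~\ref{propositionGenericStability}, subdivide the simplex so each cell maps into a chart, perturb \(\gamma\) by a small \(\tau_\xi\) to make the full \(\ell\)-skeleton Fuglede while preserving the homotopy class on \(\partial\Simplex^{\ell+1}\) and the chart inclusions, and conclude via Proposition~\ref{propositionExtendabilitySimplex}. The only cosmetic difference is that you invoke Lemma~\ref{lemmaExtensionHomotopyConstant} to produce the continuous extension \(F\), whereas the paper reproduces that argument inline (via Propositions~\ref{propositionHomotopyVMOContinuousMap} and~\ref{propositionHomotopyVMOtoC}); both yield the same conclusion.
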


\begin{proof}
[Proof of Lemma~\ref{proposition_Local_Charts}]
Without loss of generality, we may assume that \(I\) is countable. 
For each \(i \in I\), let \(w_{i} \colon  V_{i} \to [0, +\infty]\) be an \(\ell\)-detector given by the \(\ell\)-extendability of \(u|_{V_{i}}\).{}
We consider that \(w_{i}\) is defined on \(\manfV\) by assigning the value \(0\) in \(\manfV \setminus V_{i}\).{}
Take a family of positive numbers \((\lambda_{i})_{i \in I}\) such that \(\sum\limits_{i \in I}{\lambda_{i} w_{i}}\) is summable in \(\manfV\) and then take an \(\ell\)-detector \(w\) for \(u\) with \(w \ge \sum\limits_{i \in I}{\lambda_{i} w_{i}}\).
Finally, let  \( \tau \colon U \to \manfV \) be a transversal perturbation of the identity and let \(\widetilde w \ge w\) be an \(\ell\)-detector for \(u\) given by Lemma~\ref{propositionGenericStability}.

Given a simplex \(\Simplex^{\ell + 1}\), let \(\gamma \colon  \Simplex^{\ell + 1} \to \manfV\) be a Lipschitz map such that \(\gamma|_{\partial\Simplex^{\ell + 1}} \in \Fuglede_{\tilde w}(\partial\Simplex^{\ell + 1}; \manfV)\).{}
We prove that \(u \compose \gamma|_{\partial\Simplex^{\ell + 1}}\) is homotopic to a constant map in \(\VMO(\partial\Simplex^{\ell + 1}; \manfN)\), which by Proposition~\ref{propositionExtendabilitySimplex} implies that \(u\) is \(\ell\)-extendable.

To this end, we proceed with barycentric subdivisions of \(\Simplex^{\ell + 1}\) to get a simplicial complex \(\cK^{\ell+1}\) such that \(K^{\ell + 1} = \Simplex^{\ell + 1}\) and, for every \(\Sigma^{\ell + 1} \in \cK^{\ell + 1}\), there exists \(i = i(\Sigma^{\ell + 1}) \in I\) with
\begin{equation*}
	\gamma(\Sigma^{\ell + 1}) \subset V_{i}.
\end{equation*}
Since \( \cK^{\ell + 1} \) is finite and \( \Sigma^{\ell + 1} \) is compact, there exists \( \epsilon > 0 \) independent of \( \Sigma^{\ell + 1} \) such that, for every \( \xi \in B_{\epsilon}^q \),
\begin{equation}
\label{eqCharacterizationsInclusion}
\tau_{\xi} \compose \gamma (\Sigma^{\ell + 1}) \subset V_{i}\,.
\end{equation}

Let \(\cL^{\ell}\) be a subskeleton of \(\cK^{\ell}\) such that \(L^{\ell} = \partial\Simplex^{\ell + 1}\).{}
By assumption on \(\gamma\), we have \(\gamma|_{L^{\ell}} \in \Fuglede_{\tilde w}(L^{\ell}; \manfV)\).
Applying Lemma~\ref{propositionGenericStability}, we take \( \xi \in B_{\epsilon}^q \) such that the Lipschitz map \(\widetilde\gamma \vcentcolon= \tau_{\xi} \compose \gamma \) satisfies
\begin{equation}
	\label{eqCharacterizations-552}
\widetilde\gamma|_{K^{\ell}}
\in \Fuglede_{w}(K^{\ell}; \manfV)
\end{equation}
and, as \(L^{\ell} = \partial\Simplex^{\ell + 1}\),
\begin{equation}
	\label{eqCharacterizations-558}
u \compose \widetilde\gamma|_{\partial\Simplex^{\ell + 1}}
\sim u \compose \gamma|_{\partial\Simplex^{\ell + 1}}
\in \VMO(\partial\Simplex^{\ell + 1}; \manfV).
\end{equation}
Since \(w\) is an \(\ell\)-detector and \eqref{eqCharacterizations-552} holds, we see that \(u \compose \widetilde\gamma|_{K^{\ell}} \in \VMO(K^{\ell}; \manfN)\). 
Then, by Proposition~\ref{propositionHomotopyVMOContinuousMap}, there exists \(f \in \Smooth^{0}(K^{\ell}; \manfN)\) such that
\begin{equation}
	\label{eqCharacterizations-569}
u \compose \widetilde\gamma|_{K^{\ell}}
\sim f
\quad \text{in \(\VMO(K^{\ell}; \manfN)\).}
\end{equation}
By restriction of \eqref{eqCharacterizations-569} to \(\partial\Simplex^{\ell + 1}\), Lemma~\ref{lemmaVMORestriction} implies
\[{}
u \compose \widetilde\gamma|_{\partial\Simplex^{\ell + 1}}
\sim f|_{\partial\Simplex^{\ell + 1}}
\quad \text{in \(\VMO(\partial\Simplex^{\ell + 1}; \manfN)\)}
\]
and then, by \eqref{eqCharacterizations-558} and transitivity of the homotopy relation,
\begin{equation}
\label{eqCharacterizations-269}
u \compose \gamma|_{\partial\Simplex^{\ell + 1}}
\sim f|_{\partial\Simplex^{\ell + 1}}
\quad \text{in \(\VMO(\partial\Simplex^{\ell + 1}; \manfN)\)}.
\end{equation}

Let us show that \(f\) has an extension \(F \in \Smooth^{0}(\Simplex^{\ell + 1}; \manfN)\).
For each \(\Sigma^{\ell + 1} \in \cK^{\ell + 1}\), we have that \( \widetilde{\gamma} = \tau_{\xi} \compose \gamma \)  satisfies \eqref{eqCharacterizationsInclusion} with \( i = i(\Sigma^{\ell + 1}) \in I \).
Since \(w \ge \lambda_{i}w_{i}\)\,, by \eqref{eqCharacterizations-552} we have \(\widetilde\gamma|_{\partial\Sigma^{\ell+1}}
\in \Fuglede_{w_{i}}(\partial\Sigma^{\ell+1}; V_{i})\).
Thus, by \(\ell\)-extendability of \(u|_{V_{i}}\), we deduce that \(u \compose \widetilde\gamma\) is homotopic to a constant  in \(\VMO(\partial\Sigma^{\ell + 1}; \manfN)\).{}
By restriction of \eqref{eqCharacterizations-569} to \(\partial\Sigma^{\ell + 1}\) and transitivity of the homotopy relation, we deduce that \(f|_{\partial\Sigma^{\ell + 1}}\) is homotopic to a constant in \(\VMO(\partial\Sigma^{\ell + 1}; \manfN)\).{}
Since both functions are continuous, they are also homotopic in \(\Smooth^{0}(\partial\Sigma^{\ell + 1}; \manfN)\).{}
Thus, \(f|_{\partial\Sigma^{\ell + 1}}\) has a continuous extension to \(\Sigma^{\ell + 1}\).{}
Combining these extensions, we deduce that \(f\) has a continuous extension \(F\) to \(\Simplex^{\ell + 1}\), and then it is homotopic to a constant in \(\Smooth^{0}(\partial\Simplex^{\ell + 1}; \manfN)\).{}
We conclude from \eqref{eqCharacterizations-269} and Proposition~\ref{propositionExtendabilitySimplex} that \(u\) is \(\ell\)-extendable.
\end{proof}

\begin{proof}[Proof of Proposition~\ref{propositionCharacterizationDisks}]
	``\(\Longrightarrow\)''. 
	Let \(w\) be an \(\ell\)-detector given by the definition of \(\ell\)-extendability of \(u\).{}
	Given a disk \(\Disk^{\ell + 1} \subset \manfV\) for which \(w|_{\partial\Disk^{\ell + 1}}\) is summable in \(\partial\Disk^{\ell + 1}\), let \(\Phi \colon  \cBall^{\ell+1} \to \Disk^{\ell + 1}\) be a diffeomorphism.{}
	Then, by a change of variables, \(w \compose \Phi|_{\Sphere^{\ell}}\) is summable in \(\Sphere^{\ell}\).{}
	Thus, \(\Phi|_{\Sphere^{\ell}} \in \Fuglede_{w}(\Sphere^{\ell}; \manfV)\).{}
	Hence, \(u \compose \Phi|_{\Sphere^{\ell}}\) is homotopic to a constant in \(\VMO(\Sphere^{\ell}; \manfN)\) and then \(u|_{\partial\Disk^{\ell + 1}}\) is homotopic to a constant in \(\VMO(\partial\Disk^{\ell + 1}; \manfN)\).

	``\(\Longleftarrow\)''. 
	We first assume that \(\manfV = \Omega\) is an open subset of \(\R^{m}\).{}
	Let \(w\) be an \(\ell\)-detector from the assumption and let \(w_{1}\) be an \(\ell\)-detector from Definition~\ref{definitionVMOell} of \(\VMO^{\ell}\).
	Given a transversal perturbation of the identity \( \tau \colon U \to \manfV\), we then select an \(\ell\)-detector \(\widetilde{w} \geq w + w_{1}\), as ensured by Proposition~\ref{propositionFugledeApproximation}.

	Now consider a simplex \(\Simplex^{\ell + 1} \subset \Omega\) for which \(\widetilde w|_{\partial\Simplex^{\ell + 1}}\) is summable in \(\partial\Simplex^{\ell + 1}\).
    Take a biLipschitz homeomorphism \(\Psi \colon  \cBall^{\ell+1} \to \Simplex^{\ell + 1}\) and an equiLipschitz sequence of smooth imbeddings \(\Psi_{j} \colon  \cBall^{\ell+1} \to \Omega\) such that \(\Psi_{j} \to \Psi\) uniformly in \(\cBall^{\ell+1}\).{}
	By Proposition~\ref{propositionFugledeApproximation}, we can extract a subsequence \((\Psi_{j_{i}})_{i \in \N}\) and take a sequence \((\xi_{i})_{i \in \N}\) in \(\R^{m}\) that converges to \(0\) such that \((\tau_{\xi_{i}} \compose \Psi_{j_{i}}|_{\Sphere^{\ell}})_{i \in \N}\) is contained in \(\Fuglede_{w + w_{1}}(\Sphere^{\ell}; \Omega)\) and
	\begin{equation}
		\label{eqExtension-816}
	\tau_{\xi_{i}} \compose \Psi_{j_{i}}|_{\Sphere^{\ell}}
	\to \Psi|_{\Sphere^{\ell}}
	\quad \text{in \(\Fuglede_{w + w_{1}}(\Sphere^{\ell}; \Omega)\).}
	\end{equation}

	Denote \(\Disk_{i}^{\ell + 1} = \tau_{\xi_{i}} \compose \Psi_{j_{i}}(\cBall^{\ell+1})\).
	Since \(\tau_{\xi_{i}} \compose \Psi_{j_{i}}|_{\Sphere^{\ell}} \in \Fuglede_{w}(\Sphere^{\ell}; \Omega)\), by a change of variable we have that \(w|_{\partial\Disk_{i}^{\ell + 1}}\) is summable in \(\partial\Disk_{i}^{\ell + 1}\).{}
	Hence, by assumption, \(u|_{\partial\Disk_{i}^{\ell + 1}}\) is homotopic to a constant in \(\VMO(\partial\Disk_{i}^{\ell + 1}; \manfN)\).{}
	Since \(\tau_{\xi_{i}} \compose\Psi_{j_{i}}|_{\Sphere^{\ell}}\) is a diffeomorphism between \(\Sphere^{\ell}\) and \(\partial\Disk_{i}^{\ell + 1}\), the map \(u \compose \tau_{\xi_{i}} \compose \Psi_{j_{i}}|_{\Sphere^{\ell}}\)  is homotopic to a constant in \(\VMO(\Sphere^{\ell}; \manfN)\).{}
	By \eqref{eqExtension-816} and generic VMO stability of \(u\), we have
	\[{}
	u \compose \tau_{\xi_{i}} \compose \Psi_{j_{i}}|_{\Sphere^{\ell}}
	\to u \compose \Psi|_{\Sphere^{\ell}}{}
	\quad \text{in \(\VMO(\Sphere^{\ell}; \manfN)\).}
	\]
	By Proposition~\ref{propositionHomotopyVMOLimit}, for \(i \in \N\) sufficiently large,
	\[{}
	u \compose \tau_{\xi_{i}} \compose \Psi_{j_{i}}|_{\Sphere^{\ell}}
	\sim u \compose \Psi|_{\Sphere^{\ell}}{}
	\quad \text{in \(\VMO(\Sphere^{\ell}; \manfN)\).}
	\]
	Thus, by transitivity of the homotopy relation, we have that \(u \compose \Psi|_{\Sphere^{\ell}}\)  is homotopic to a constant in \(\VMO(\Sphere^{\ell}; \manfN)\). 
	Since \(\Psi|_{\Sphere^{\ell}}\) is a biLipschitz homeomorphism between \(\Sphere^{\ell}\) and \(\partial\Simplex^{\ell + 1}\), we conclude that \(u|_{\partial\Simplex^{\ell + 1}}\)  is homotopic to a constant in \(\VMO(\partial\Simplex^{\ell + 1}; \manfN)\).{}
	It thus follows from Proposition~\ref{propositionCharacterizationExtensionSimplex} that \(u\) is \(\ell\)-extendable when \(\manfV = \Omega\).
	
	When \(\manfV = \manfM\) is a compact manifold we take a covering \((V_{i})_{i \in I}\) of \(\manfM\) such that, for each \(i \in I\), there exists an imbedding \(h_{i} \colon  W_{i} \to \R^{m}\) defined on an open set \(W_{i} \Supset V_{i}\)\,.{}
	The map \(u \compose h_{i}^{-1}\) satisfies the assumption of the proposition on the open set \(h_{i}(V_{i})\). 
	Thus, by the first part of the proof concerning open subsets of \(\R^{m}\), the map \(u \compose h_{i}^{-1}\) is \(\ell\)-extendable in \(h_{i}(V_{i})\), which implies that \(u|_{V_{i}}\) is \(\ell\)-extendable in \(V_{i}\).{}
	We deduce from Lemma~\ref{proposition_Local_Charts} that \(u\) is \(\ell\)-extendable in \(\manfM\).
\end{proof}

A natural question is to determine whether the topological disks can be replaced by standard Euclidean disks in Proposition~\ref{propositionCharacterizationDisks}.
Thanks to the cohomological tools of Chapter~\ref{chapter-cohomological-obstructions}, we show in Corollary~\ref{proposition-spheres-jacobian} that this is possible when homotopy classes can be identified cohomologically.

\cleardoublepage
\chapter{Cohomological obstructions}
\label{chapter-cohomological-obstructions}

Identifying homotopy classes for maps from \(\Sphere^\ell\) to \(\manfN\) and computing the homotopy group \(\pi_{\ell} (\manfN)\) are challenging tasks in general. 
However, in many cases, cohomological tools can provide powerful insights into these questions. 
In this chapter, we build on the classical connection between homotopy and homology to develop criteria for \(\ell\)-extendability, using analytical tools such as the distributional Jacobian and its generalization, the Hurewicz current.

We begin by introducing the Hurewicz degree in the classical setting and extending it to the \(\VMO\) framework, as enabled by the results of Chapter~\ref{section_VMO}. 
The Hurewicz current is then introduced to detect and quantify topological singularities of Sobolev maps. By studying its properties, we establish conditions under which a map is \(\ell\)-extendable when its Hurewicz current vanishes.

To achieve this, we rely on a generic condition for deciding whether a differential form is closed and a generic integral formulation of the Hurewicz degree for Sobolev maps. In the spirit of the results in the previous chapter, we also show that when the Hurewicz degree identifies null-homotopic maps from \(\Sphere^{\ell}\) to \(\manfN\), the property of \(\ell\)-extendability can be verified using restrictions to Euclidean disks parallel to the coordinate axes, rather than simplices.

Finally, we address cases where the Hurewicz current alone is insufficient and consider instead Hopf invariants that provide an alternative framework for analyzing topological singularities and extendability.

\section{Hurewicz degree}
\label{section_Hurewicz}
Given an integer \(\ell\in \N_*\), the topological degree of a map \(u\in \Smooth^{\infty}(\Sphere^\ell;\Sphere^\ell)\) is defined by
\begin{equation}\label{eq-def-degree}
\Deg{u} =\int_{\Sphere^\ell}u^\sharp \omega_{\Sphere^\ell},
\end{equation}
where \(\omega_{\Sphere^\ell}\) is any smooth volume form on \(\Sphere^\ell\) such that \(\int_{\Sphere^\ell}\omega_{\Sphere^\ell}=1\) and \(u^\sharp \omega_{\Sphere^\ell}\) denotes the pullback of this form by \(u\).
The topological degree takes its values in \(\Z\) and, by the Hopf theorem, characterizes the homotopy classes in \(\Smooth^{\infty}(\Sphere^\ell;\Sphere^\ell)\), in the sense that for every \(u, \widetilde{u}\in \Smooth^{\infty}(\Sphere^\ell;\Sphere^\ell)\) one has
\begin{equation}\label{equivalence-degree}
u\sim \widetilde{u}\ \text{in  } \Smooth^0(\Sphere^\ell;\Sphere^\ell) \quad \Longleftrightarrow \quad \Deg{u} = \Deg{\widetilde{u}}\,.
\end{equation}
More generally, the topological degree can be defined for continuous maps between compact oriented manifolds of the same dimension. 
However, the equivalence \eqref{equivalence-degree} need not be true in such a general setting.

In this section,  we extend the explicit definition \eqref{eq-def-degree} of topological degree to cases where the domain and the target manifold do not need to have the same dimension. 
More specifically, given an integer \(\ell\in \N_*\) and denoting by \(\Smooth^{\infty}(\manfN; \Forms^\ell)\) the class of smooth \(\ell\)-differential forms in \(\manfN\), we introduce the following topological invariant:

\begin{definition}
	\label{definitionHurewiczDegree}
	Let \(\varpi \in \Smooth^{\infty}(\manfN; \Forms^\ell)\) be a closed differential form on \(\manfN\).
	For every \(f \in \Smooth^{\infty}(\Sphere^{\ell}; \manfN)\), we define the \emph{Hurewicz degree} of \(f\) relative to \(\varpi\) as
	\[{}
	\hur_{\varpi}(f)
	= \int_{\Sphere^{\ell}} f^{\sharp}\varpi.
	\]
\end{definition}

	That \(\hur_{\varpi}\) is a topological invariant is a consequence of the fact that if \(f_{0}, f_{1} \in \Smooth^{\infty}(\Sphere^{\ell}; \manfN)\) are homotopic in \(\Smooth^{0}(\Sphere^{\ell}; \manfN)\), then
	\begin{equation}
		\label{eqJacobian-53}
	\hur_{\varpi}(f_{0}) = \hur_{\varpi}(f_{1}).
	\end{equation}
	Indeed, taking a smooth homotopy \(H \colon  [0, 1] \times \Sphere^{\ell} \to \manfN\) with \(H(0, \cdot) = f_{0}\) and \(H(1, \cdot) = f_{1}\), by Stokes's theorem we have
	\[{}
	\hur_{\varpi}(f_{1}) - \hur_{\varpi}(f_{0})
  =\int_{\Sphere^\ell} f_{1}^\sharp \varpi
  - \int_{\Sphere^\ell} f_{0}^\sharp \varpi
  = \int_{[0, 1] \times \Sphere^\ell} \dext (H^\sharp \varpi).
	\]
	Since \(\dext (H^\sharp \varpi) = H^{\sharp}(\dext\varpi)\) and \(\varpi\) is closed, the right-hand side vanishes and \eqref{eqJacobian-53} is satisfied.

The Hurewicz degree is only interesting for closed forms \( \varpi \) that are not exact:

\begin{example}
\label{Examplehurewicz-exact}
If \(\varpi  \in \Smooth^{\infty}(\manfN; \Forms^\ell)\) is exact, then \(\varpi = \dext \chi\) with \(\chi\in\Smooth^{\infty}(\manfN; \Forms^{\ell - 1})\) and
\[
\hur_{\varpi}(f)
	= \int_{\Sphere^{\ell}} f^{\sharp}(\dext \chi)
	= \int_{\Sphere^{\ell}} \dext(f^{\sharp}\chi) = 0,
\]
according to Stokes's theorem.
\end{example}

There can also be closed differential forms that are not exact for which the Hurewicz degree is trivial:

\begin{example}
Let \(k \in \{1, \dotsc, \ell - 1\}\) and let \(\varpi_1 \in \Smooth^\infty (\manfN; \Forms^k)\) and \(\varpi_2 \in \Smooth^\infty (\manfN; \Forms^{\ell - k})\) be  two closed forms.
Note that \(\varpi \vcentcolon= \varpi_1 \wedge \varpi_2\) is closed but need not be exact.
Nevertheless, for every \(f \in \Smooth^{\infty}(\Sphere^\ell;\manfN)\) we have
\begin{equation}
\label{eqHurewiczTrivial}
\hur_{\varpi}(f)
= 0.
\end{equation}
In fact, note that \(\dext(f^\sharp \varpi_1) =f^\sharp (\dext \varpi_1) = 0\).
Since \( k < \ell \), the de Rham cohomology \(H^k_{\mathrm{dR}} (\Sphere^\ell)\) is trivial, whence there exists \(\eta \in  \Smooth^\infty (\Sphere^{\ell}; \Forms^{k - 1})\) such that \(f^\sharp \varpi_1 = \dext \eta\). 
Therefore,
\[
\begin{split}
\hur_{\varpi}(f)
=
\int_{\Sphere^\ell} f^{\sharp} \varpi_1 \wedge f^{\sharp} \varpi_2
&=\int_{\Sphere^\ell} \dext \eta \wedge f^{\sharp} \varpi_2\\
&= \int_{\Sphere^\ell} \dext (\eta \wedge f^{\sharp} \varpi_2)
- (-1)^{k-1}\int_{\Sphere^\ell} \eta \wedge \dext (f^{\sharp} \varpi_2).
\end{split}
\]
Applying Stokes's theorem and the fact that we also have \(\dext(f^\sharp \varpi_2) = 0\), we then deduce \eqref{eqHurewiczTrivial}.
\end{example}

\begin{remark}
Since the map \(\varpi \mapsto \hur_{\varpi} (f)\) is linear in \(\varpi\) and vanishes on exact forms, Definition~\ref{definitionHurewiczDegree} and a quotient procedure provide a linear map \(\hur{(f)}\) from the de Rham cohomology group \(H^\ell_{\mathrm{dR}} (\manfN)\) into \(\R\).
By homotopy invariance, \(\hur\) is well defined on homotopy classes of maps from \(\Sphere^\ell\) to \(\manfN\) and induces a homomorphism from \(\pi_\ell (\manfN)\) to the linear maps from \(H^\ell_{\mathrm{dR}} (\manfN)\) to \(\R\).
\end{remark}

In the spirit of Chapter~\ref{section_VMO}, we proceed to show that the invariant \(\hur_{\varpi}\) can be continuously extended to maps in \(\VMO(\Sphere^{\ell}; \manfN)\):

\begin{proposition}
	\label{propositionHurewiczVMO}
	Given a closed differential form \(\varpi \in \Smooth^{\infty}(\manfN; \Forms^{\ell})\), the Hurewicz degree extends as a continuous function \(\hur_{\varpi} \colon  \VMO(\Sphere^{\ell}; \manfN) \to \R\) such that,
 whenever \(v_{0}, v_{1} \in \VMO(\Sphere^{\ell}; \manfN)\) verify \(v_{0} \sim v_{1}\) in \(\VMO(\Sphere^{\ell}; \manfN)\), we have
\[
 \hur_{\varpi}(v_{0}) = \hur_{\varpi}(v_{1}) \text{,}
\]
and, for every \(v \in \Sobolev^{1, \ell}(\Sphere^{\ell}; \manfN)\),
\begin{equation}
\label{eqHurewicz-123}
\hur_{\varpi}(v)
	= \int_{\Sphere^{\ell}} v^{\sharp}\varpi.
\end{equation}
\end{proposition}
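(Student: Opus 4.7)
The plan is to extend $\hur_{\varpi}$ from $\Smooth^\infty(\Sphere^\ell;\manfN)$ to $\VMO(\Sphere^\ell;\manfN)$ by exploiting the homotopy structure developed in Chapter~\ref{section_VMO}, and only then to verify the integral formula \eqref{eqHurewicz-123} by an approximation argument. The key observation is that $\hur_\varpi$ is already known to be locally constant on homotopy classes of $\Smooth^\infty$, so by Proposition~\ref{propositionHomotopyVMOtoC} it descends to a well defined function on the set of path-connected components of $\VMO(\Sphere^\ell;\manfN)$. Since by Proposition~\ref{propositionHomotopyVMOContinuousMap} every VMO map is VMO-homotopic to some $f\in\UContinuous(\Sphere^\ell;\manfN)=\Smooth^0(\Sphere^\ell;\manfN)$, which can be further approximated uniformly by a map in $\Smooth^\infty(\Sphere^\ell;\manfN)$ lying in the same $\Smooth^0$ homotopy class, I define $\hur_\varpi(v)\vcentcolon= \hur_\varpi(g)$ for any such smooth representative $g$. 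This value is independent of the representative by classical homotopy invariance combined with Proposition~\ref{propositionHomotopyVMOtoC}, which guarantees that two continuous representatives of the same VMO homotopy class are homotopic in $\Smooth^0$.

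Next I would establish continuity of the extended map. By Proposition~\ref{propositionHomotopyVMOLimit}, if $v_j\to v$ in $\VMO(\Sphere^\ell;\manfN)$, then for $j$ large enough $v_j\sim v$ in $\VMO(\Sphere^\ell;\manfN)$, so $\hur_\varpi(v_j)=\hur_\varpi(v)$ eventually. In other words, $\hur_\varpi$ is locally constant on $\VMO(\Sphere^\ell;\manfN)$, which gives simultaneously continuity and homotopy invariance. This takes care of the first two assertions of the proposition.

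To prove the integral formula \eqref{eqHurewicz-123} for $v\in\Sobolev^{1,\ell}(\Sphere^\ell;\manfN)$, I would first note that $v^\sharp\varpi$ is a well defined $L^1$ form on $\Sphere^\ell$: indeed $\varpi$ has bounded coefficients on the compact target $\manfN$, the pullback involves an $\ell$-linear expression in $Dv$, and $|v^\sharp\varpi|\le C|Dv|^\ell$, which is summable. Moreover, by multilinearity of the pullback, the map $v\in\Sobolev^{1,\ell}(\Sphere^\ell;\R^\nu)\mapsto \int_{\Sphere^\ell}v^\sharp\varpi\in\R$ is continuous for the strong $\Sobolev^{1,\ell}$ topology, as follows from H\"older's inequality applied to differences of $\ell$-fold products of the components of $Dv$. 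By Theorem~\ref{theoremSchoen-Uhlenbeck} (with $k=1$, $p=\ell$, and domain of dimension $\ell$), there exists a sequence $(v_j)_{j\in\N}$ in $\Smooth^\infty(\Sphere^\ell;\manfN)$ converging to $v$ in $\Sobolev^{1,\ell}(\Sphere^\ell;\manfN)$. Hence
\[
\int_{\Sphere^\ell}v_j^\sharp\varpi\longrightarrow \int_{\Sphere^\ell}v^\sharp\varpi.
\]
On the other hand, by Example~\ref{exampleDetectorsVMOSobolev} applied componentwise (via a Poincar\'e-Wirtinger type estimate on geodesic balls of $\Sphere^\ell$, or equivalently Proposition~\ref{lemmaFugledeSobolevDetector} combined with Proposition~\ref{propositionHomotopyVMOLimit}) one has $v_j\to v$ in $\VMO(\Sphere^\ell;\manfN)$, so eventually $v_j\sim v$ in $\VMO$. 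Since each $v_j$ is smooth, $\hur_\varpi(v_j)=\int_{\Sphere^\ell}v_j^\sharp\varpi$, and therefore $\hur_\varpi(v)=\hur_\varpi(v_j)\to\int_{\Sphere^\ell}v^\sharp\varpi$, which yields \eqref{eqHurewicz-123}.

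The main obstacle is the last step: bridging the analytic convergence $v_j\to v$ in $\Sobolev^{1,\ell}$ with the topological convergence in $\VMO$, which is needed to compare $\hur_\varpi(v_j)$ with the abstractly defined $\hur_\varpi(v)$. For domains of the critical dimension $\ell$, the Poincar\'e-Wirtinger inequality gives a clean $\Sobolev^{1,\ell}\hookrightarrow\VMO$ embedding on $\Sphere^\ell$, so this compatibility is not particularly deep here; the real care lies in handling $\varpi$ as an element of the de Rham complex and ensuring the pullback $v^\sharp\varpi$ is continuously dependent on $v$ in $\Sobolev^{1,\ell}$, which is the routine but essential multilinear estimate mentioned above.
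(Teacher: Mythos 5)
Your proposal is correct and follows essentially the same route as the paper: you extend $\hur_{\varpi}$ to $\VMO(\Sphere^{\ell};\manfN)$ by passing to a smooth representative of the VMO homotopy class (using Propositions~\ref{propositionHomotopyVMOContinuousMap}, \ref{propositionHomotopyVMOLimit} and~\ref{propositionHomotopyVMOtoC} together with density of smooth maps), observe that the extension is locally constant hence continuous and homotopy invariant, and then obtain \eqref{eqHurewicz-123} by combining the $\Sobolev^{1,\ell}$-continuity of $v\mapsto\int_{\Sphere^{\ell}}v^{\sharp}\varpi$, the Schoen--Uhlenbeck density of smooth maps, and the continuous imbedding $\Sobolev^{1,\ell}(\Sphere^{\ell})\subset\VMO(\Sphere^{\ell})$. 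The only differences (defining the value via a continuous intermediate representative rather than directly via an approximating smooth sequence) are cosmetic.
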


\begin{proof}
By Corollary~\ref{corollaryVMODensity}, the set \( \Smooth^{0}(\Sphere^\ell; \manfN) \) is dense in \(\VMO(\Sphere^{\ell}; \manfN)\) and, analogously, so is \( \Smooth^{\infty}(\Sphere^\ell; \manfN) \).
Given \(v \in \VMO(\Sphere^{\ell}; \manfN)\) and any sequence \((u_j)_{j\in \N}\) in \( \Smooth^{\infty}(\Sphere^\ell; \manfN) \) which converges to \(v\) in \(\VMO(\Sphere^{\ell};\manfN)\), Proposition~\ref{propositionHomotopyVMOLimit} implies that there exists \(J \in \N\) such that, for every \(j \ge J\),
	\[
	u_{j} \sim v
	\quad \text{in \(\VMO(\Sphere^{\ell}; \manfN)\).}
	\]
	In particular, taking \(j_{0}, j_{1} \ge J\), we have by transitivity of homotopies that 
	\[{}
	u_{j_{0}} \sim u_{j_{1}}
	\quad \text{in \(\VMO(\Sphere^{\ell}; \manfN)\).}
	\]
	Since both \(u_{j_{0}}\) and \(u_{j_{1}}\) are smooth, we then have by Proposition~\ref{propositionHomotopyVMOtoC} that
	\[{}
	u_{j_{0}} \sim u_{j_{1}}
	\quad \text{in \(\Smooth^{0}(\Sphere^{\ell}; \manfN)\).}
	\]
	In this case, 
	\[{}
	\hur_{\varpi}(u_{j_{0}}) = \hur_{\varpi}(u_{j_{1}}).
	\]
    We then define \(\hur_{\varpi}(v)\) to be this common number.
    A standard argument based on the combination of two sequences that converge to \( v \) implies that this definition does not depend on the choice of the sequence that converges to \( v \).
	
	To obtain the invariance with respect to \(\VMO\) homotopy, we take \(v_{0}, v_{1} \in \VMO(\Sphere^{\ell}; \manfN)\) that are homotopic in \(\VMO(\Sphere^{\ell}; \manfN)\).{}
	Let \((u_{0, j})_{j \in \N}\) and \((u_{1, j})_{j \in \N}\) be sequences in \(\Smooth^{\infty}(\Sphere^{\ell}; \manfN)\) that converge to \(v_{0}\) and \(v_{1}\) in \(\VMO(\Sphere^{\ell}; \manfN)\), respectively.
	Then, for \(j\) sufficiently large, by Proposition~\ref{propositionHomotopyVMOLimit}, transitivity of the homotopy relation and Proposition~\ref{propositionHomotopyVMOtoC}, we have
	\[{}
	u_{0, j} \sim u_{1, j}
	\quad \text{in   \(\Smooth^{0}(\Sphere^{\ell}; \manfN)\).}
	\]
By definition of \(\hur_{\varpi}\) and its invariance for homotopic smooth maps, this implies that
	\[{}
	\hur_{\varpi}(v_{0})
	= \hur_{\varpi}(u_{0, j})
	= \hur_{\varpi}(u_{1, j})
	= \hur_{\varpi}(v_{1}).
	\]

	To prove the integral formula for every \(v \in \Sobolev^{1, \ell}(\Sphere^{\ell}; \manfN)\), observe that the pullback \(v^{\sharp}\varpi(x)\) at a point \(x \in \Sphere^{\ell}\) depends continuously on \(v(x)\) and \(Dv(x)\) and satisfies the estimate
	\[{}
	\abs{v^{\sharp}\varpi(x)}
	\le C \abs{Dv(x)}^{\ell} \norm{\varpi}_{\Lebesgue^{\infty}}.
	\]
	Hence, the function
    \begin{equation}
    \label{eqJacobian-197}
    v \in \Sobolev^{1, \ell}(\Sphere^{\ell}; \manfN)
    \longmapsto \int_{\Sphere^{\ell}} v^{\sharp}\varpi \in \R
    \end{equation}
    is well defined and continuous. 
	By the counterpart of Theorem~\ref{theoremSchoen-Uhlenbeck} for maps defined on \( \Sphere^{\ell} \), see \cite{Schoen-Uhlenbeck}, \(\Smooth^{\infty}(\Sphere^{\ell}; \manfN)\) is dense in \(\Sobolev^{1, \ell}(\Sphere^{\ell}; \manfN)\).
    Since \eqref{eqJacobian-197} coincides with \( \hur_{\varpi} \) on a dense set, the space \(\Sobolev^{1, \ell}(\Sphere^{\ell}; \manfN)\) imbeds continuously in \(\VMO(\Sphere^{\ell}; \manfN)\), and \( \hur_{\varpi} \) is continuous with respect to the \(\VMO\) topology, we then have that \eqref{eqHurewicz-123} is valid for every \(v\in \Sobolev^{1, \ell}(\Sphere^{\ell}; \manfN)\).
\end{proof}

Since \(\hur_{\varpi}\) vanishes on constant maps,
we get as an immediate corollary of Proposition~\ref{propositionHurewiczVMO},

\begin{corollary}	
\label{corollaryHurewiczHomotopicConstant}
	Given a closed differential form \(\varpi \in \Smooth^{\infty}(\manfN; \Forms^{\ell})\),
	if \(v \in \VMO(\Sphere^{\ell}; \manfN)\)
	is homotopic to a constant in \(\VMO(\Sphere^{\ell}; \manfN)\),
	then
	\begin{equation*}
        \hur_{\varpi}(v) = 0.
	\end{equation*}
\end{corollary}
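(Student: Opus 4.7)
The plan is to reduce the statement directly to the homotopy invariance established in Proposition~\ref{propositionHurewiczVMO} together with the trivial evaluation of $\hur_{\varpi}$ on constant maps. By hypothesis, there exists a constant map $c \colon \Sphere^{\ell} \to \manfN$ with $v \sim c$ in $\VMO(\Sphere^{\ell}; \manfN)$. Proposition~\ref{propositionHurewiczVMO} then gives
\[
\hur_{\varpi}(v) = \hur_{\varpi}(c),
\]
so it only remains to show that $\hur_{\varpi}(c) = 0$.

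For this last point, I would observe that $c$ is in particular smooth, so the extended functional $\hur_{\varpi}(c)$ coincides with the original definition $\int_{\Sphere^{\ell}} c^{\sharp}\varpi$. Since $c$ is constant, its differential $Dc$ vanishes identically, so the pullback $c^{\sharp}\varpi$ is the zero $\ell$-form on $\Sphere^{\ell}$, whence the integral vanishes. Combining the two displays yields $\hur_{\varpi}(v) = 0$.

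There is no real obstacle here: the statement is essentially a repackaging of the two properties bundled into Proposition~\ref{propositionHurewiczVMO} (continuous extension and $\VMO$-homotopy invariance), together with the elementary fact that the Hurewicz degree of a constant smooth map vanishes. One could alternatively phrase the argument by approximating $v$ and $c$ by smooth maps, applying Propositions~\ref{propositionHomotopyVMOLimit} and~\ref{propositionHomotopyVMOtoC} to conclude that some smooth approximant $u_j$ of $v$ is homotopic in $\Smooth^{0}(\Sphere^{\ell}; \manfN)$ to the constant $c$, and then invoking the classical identity \eqref{eqJacobian-53}, but this is already built into the construction of the extended $\hur_{\varpi}$.
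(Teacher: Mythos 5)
Your argument is correct and is exactly the paper's proof: the corollary is stated there as an immediate consequence of the homotopy invariance in Proposition~\ref{propositionHurewiczVMO} together with the fact that \(\hur_{\varpi}\) vanishes on constant maps, which is precisely the reduction you carry out. Your verification that \(\hur_{\varpi}(c) = \int_{\Sphere^{\ell}} c^{\sharp}\varpi = 0\) for a constant map \(c\) just makes explicit what the paper leaves implicit.
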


The converse of Corollary~\ref{corollaryHurewiczHomotopicConstant}  holds for a class of manifolds \( \manfN \) for which the Hurewicz degree identifies null-homotopic maps:

\begin{definition}
\label{definition_Hurewicz_identification}
Given \(\ell \in \N_{*}\), we say that the \emph{Hurewicz degree identifies null-homotopic maps from \(\Sphere^\ell\) to \(\manfN\)} whenever any smooth map \(f \in \Smooth^\infty (\Sphere^\ell; \manfN)\) that satisfies \(\hur_{\varpi}(f) = 0\) for all closed differential forms \(\varpi \in \Smooth^{\infty}(\manfN; \Forms^{\ell})\) is homotopic to a constant map in \(\Smooth^{0}(\Sphere^{\ell}; \manfN)\).
\end{definition}

\begin{example}
     \label{exampleHopfDegree}
    Let \(\manfN = \Sphere^{n} \) and \( \ell = n \).
    By the Hopf theorem, a map \( f \in \Smooth^{\infty}(\Sphere^{n}; \Sphere^{n}) \) is homotopic to a constant in \(\Smooth^{0}(\Sphere^{n}; \Sphere^{n})\) if and only if \( \Deg{f} = 0 \). 
    Since \(\Deg = \hur_{\omega_{\Sphere^{n}}}\), we deduce that the Hurewicz degree identifies null-homotopic maps from \( \Sphere^{n} \) to \( \Sphere^{n} \).
\end{example}

\begin{example}
    \label{exampleHopfFibration}
    Let \(\manfN = \Sphere^{2} \) and \( \ell = 3 \).
    The Hopf fibration \(f \colon  \Sphere^{3} \to \Sphere^{2}\) cannot be continuously extended as a map from \(\Ball^{4}\) to \(\Sphere^{2}\), whence it is not homotopic to a constant in \( \Smooth^{0}(\Sphere^{3}; \Sphere^{2})\). 
    On the other hand, if \(\varpi \in \Smooth^{\infty}(\Sphere^{2}; \Forms^{3})\), then necessarily \(\varpi = 0\) and we trivially have \(\hur_{\varpi} (f) = 0\).
    Hence, the Hurewicz degree does not identify null-homotopic maps from \( \Sphere^{3} \) to \( \Sphere^{2} \). 
\end{example}

\begin{example}
\label{example-Chap7-Hurewicz-product}
Let \(\manfN_1\) and \(\manfN_2\) be two compact manifolds without boundary whose Hurewicz degrees identify null-homotopic maps from \(\Sphere^\ell\) to \(\manfN_i\) with \(i \in \{1, 2\}\). 
Then, the Hurewicz degree identifies null-homotopic maps from \(\Sphere^\ell\) to \(\manfN_1 \times \manfN_2\).

Indeed, let \(f\in \Smooth^\infty(\Sphere^\ell ; \manfN_1\times \manfN_2)\) be such that \(\hur_{\varpi}(f)=0\) for every closed differential form \(\varpi\in \Smooth^{\infty}(\manfN_1\times \manfN_2 ; \Lambda^\ell)\). 
Consider the projections 
\[
p_i \colon (x_1,x_2) \in \manfN_1\times \manfN_2 \longmapsto x_i \in \manfN_i\,.
\]
For every closed differential form \(\varpi_i \in \Smooth^{\infty}(\manfN_i; \Forms^\ell)\), the pull-back \(p_{i}^\sharp \varpi_i\) is closed in \(\manfN_1 \times \manfN_2\). 
By assumption on the Hurewicz degree of \(f\), we then have
\[
\int_{\Sphere^\ell} (p_i \compose f)^\sharp\varpi_i
=\int_{\Sphere^\ell} f^\sharp (p_{i}^\sharp \varpi_i) 
= 0.
\]
Since the Hurewicz degree identifies null-homotopic maps from \(\Sphere^\ell\) to \(\manfN_i\)\,, we deduce that \(p_i \compose f\) is homotopic to a constant map in \(\Smooth^{0}(\Sphere^\ell; \manfN_i)\). 
It then follows that \(f\) itself is homotopic to a constant map in \(\Smooth^{0}(\Sphere^\ell;\manfN_1\times \manfN_2)\).
\end{example}

Recall that, by Example~\ref{Examplehurewicz-exact}, the condition \(\hur_{\varpi}(f) = 0\) is automatically satisfied when \(\varpi\) is exact. 
As a result, to decide whether the Hurewicz degree identifies null-homotopic maps from \(\Sphere^\ell\) to \(\manfN\), it is only necessary to test this condition for the finitely many closed forms \(\varpi\) representing the elements of a given basis in the cohomology group \(H^{\ell}_{\mathrm{dR}}(\manfN)\), see \cite{BottTu1982}*{Proposition~5.3.1}.

The Hurewicz theorem \cite{Hatcher_2002}*{Theorem~4.32} provides a sufficient condition on the homotopy groups of \(\manfN\) under which the Hurewicz degree identifies null-homotopic maps:

\begin{proposition}
\label{lemma-homotopy-homology}
Let \( \manfN \) be an orientable compact manifold without boundary.
If \( \pi_0 (\manfN)\simeq \ldots \simeq \pi_{\ell - 1} (\manfN) \simeq \{0\} \)
and \( \pi_{\ell} (\manfN) \) is abelian and torsion-free, then the Hurewicz degree identifies null-homotopic maps from \(\Sphere^\ell\) to \(\manfN\).
\end{proposition}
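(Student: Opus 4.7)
The plan is to deduce Proposition~\ref{lemma-homotopy-homology} from the classical Hurewicz theorem together with de Rham's theorem, by unwinding the analytic definition of \(\hur_\varpi\) into the purely topological pairing between homology and cohomology.

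First, under the assumption \(\pi_0(\manfN) \simeq \ldots \simeq \pi_{\ell-1}(\manfN) \simeq \{0\}\), the Hurewicz theorem asserts that the Hurewicz homomorphism
\[
h \colon \pi_\ell(\manfN) \longrightarrow H_\ell(\manfN; \Z)
\]
is an isomorphism; see \cite{Hatcher_2002}*{Theorem~4.32}. In particular, for every \(f \in \Smooth^\infty(\Sphere^\ell; \manfN)\) one has \(h([f]) = f_*[\Sphere^\ell]\), where \([\Sphere^\ell]\) is the fundamental homology class of the sphere (the target manifold \(\manfN\) being orientable by assumption, so that these classes pair well with de Rham cohomology). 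The torsion-freeness hypothesis on \(\pi_\ell(\manfN)\) transfers to \(H_\ell(\manfN; \Z)\) through \(h\).

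The second step is to relate the analytic Hurewicz degree to this topological setting. For any closed form \(\varpi \in \Smooth^\infty(\manfN; \Forms^\ell)\) representing a class \([\varpi] \in H^\ell_{\mathrm{dR}}(\manfN)\), the change-of-variables formula together with Stokes's theorem gives
\[
\hur_\varpi(f) = \int_{\Sphere^\ell} f^\sharp \varpi = \bigl\langle [\varpi], f_*[\Sphere^\ell]\bigr\rangle_{\mathrm{dR}},
\]
where the right-hand side denotes the de Rham pairing between \(H^\ell_{\mathrm{dR}}(\manfN)\) and \(H_\ell(\manfN; \R)\) (evaluated on the image of the integral class \(f_*[\Sphere^\ell]\) under the natural map \(H_\ell(\manfN; \Z) \to H_\ell(\manfN; \R)\)). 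By de Rham's theorem, this pairing is nondegenerate. Hence the hypothesis that \(\hur_\varpi(f) = 0\) for every closed form \(\varpi\) implies that the image of \(f_*[\Sphere^\ell]\) in \(H_\ell(\manfN; \R)\) vanishes.

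The third step exploits torsion-freeness: an element of \(H_\ell(\manfN; \Z)\) which becomes zero in \(H_\ell(\manfN; \R) \simeq H_\ell(\manfN; \Z) \otimes_\Z \R\) is necessarily a torsion element. Since \(H_\ell(\manfN; \Z) \simeq \pi_\ell(\manfN)\) is torsion-free, we conclude that \(f_*[\Sphere^\ell] = 0\) in \(H_\ell(\manfN; \Z)\), and the Hurewicz isomorphism then yields \([f] = 0\) in \(\pi_\ell(\manfN)\), which is exactly the desired null-homotopy of \(f\) in \(\Smooth^0(\Sphere^\ell; \manfN)\).

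The principal subtlety I would need to check carefully is the second step, namely the identification of the analytic quantity \(\int_{\Sphere^\ell} f^\sharp \varpi\) with the de Rham pairing evaluated on \(f_*[\Sphere^\ell]\). This is standard once one fixes an orientation on \(\Sphere^\ell\) normalized so that \(\int_{\Sphere^\ell} \omega_{\Sphere^\ell} = 1\), together with the compatibility of the de Rham isomorphism \(H^\ell_{\mathrm{dR}}(\manfN) \simeq \mathrm{Hom}(H_\ell(\manfN; \R), \R)\) with pullback of forms and pushforward of cycles; however, all notational conventions about orientations and normalizations must be set consistently. Everything else (Hurewicz, universal coefficients reducing to torsion-freeness, de Rham) is invoked as a black box.
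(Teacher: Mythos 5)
Your argument is correct and follows exactly the route the paper indicates: the Hurewicz isomorphism \(\pi_\ell(\manfN) \simeq H_\ell(\manfN;\Z)\) (where the abelian hypothesis is what is needed when \(\ell = 1\)), the identification \(\hur_{\varpi}(f) = \langle [\varpi], f_*[\Sphere^\ell]\rangle\) via de Rham's theorem, and torsion-freeness to conclude that vanishing in \(H_\ell(\manfN;\R)\) forces \(f_*[\Sphere^\ell] = 0\) in \(H_\ell(\manfN;\Z)\). The paper itself does not spell this out but delegates the proof to \cite{DetailleMironescuXiao}*{Appendix~A}, so your write-up supplies the standard argument the paper has in mind; the only cosmetic remark is that orientability of \(\manfN\) is not actually used in the pairing step.
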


We refer the reader to \cite{DetailleMironescuXiao}*{Appendix~A} for a proof of Proposition~\ref{lemma-homotopy-homology}.

\begin{remark}
\label{remark_torsion}
When \(\pi_\ell(\manfN)\) is not torsion-free, there exists  \(f\in \Smooth^{\infty}(\Sphere^{\ell};\manfN)\) which has finite order \(q\) in \(\pi_{\ell}(\manfN)\). 
Then, according to the Hurewicz theorem, the current
\[
f_\sharp [\Sphere^\ell]\colon \varpi \in \Smooth^{\infty}(\manfN;\Forms^{\ell}) \longmapsto \int_{\Sphere^\ell} f^\sharp \varpi
\]
defines an element of order \(q\) in the integer singular homology group \(H_\ell(\manfN;\Z)\), which is thus trivial when considered as an element of the real singular homology group.
Indeed, for every closed differential form \(\varpi\in\Smooth^{\infty}(\manfN;\Lambda^\ell)\),
\[
\int_{\Sphere^{\ell}}f^\sharp \varpi 
= \langle f_{\sharp}[\Sphere^\ell], \varpi \rangle 
= \langle q f_{\sharp}[\Sphere^\ell], \varpi/q \rangle 
= 0.
\]
In this situation, there is a loss of topological information for maps \(v\in \Sobolev^{1,\ell}(\manfV;\manfN)\) in terms of \(\hur_{\varpi}\).
A similar phenomenon occurs when \(\pi_\ell (\manfN)\) is not abelian, in which case one necessarily has \(\ell = 1\).
It then suffices to take \(f\in \Smooth^{\infty}(\Sphere^{1};\manfN)\) whose homotopy class is a non-trivial element of the commutator subgroup of \(\pi_1 (\manfN)\).
\end{remark}

From the previous remark we see that the assumption on \( \pi_{\ell}(\manfN) \) is necessary in Proposition~\ref{lemma-homotopy-homology}.
In contrast, the conclusion may still hold when the condition \( \pi_0 (\manfN) \simeq \ldots \simeq \pi_{\ell - 1} (\manfN) \simeq \{0\} \) on the lower homotopy groups fails:

\begin{remark}
For every \(\ell \in \N_*\)\,, let \( \manfN = \Sphere^\ell \times \Sphere^1\).
Then, \(\pi_{1}(\Sphere^\ell \times \Sphere^1) \not\simeq \{0\}\), but the Hurewicz degree identifies null-homotopic maps from \(\Sphere^\ell\) to \(\Sphere^\ell \times \Sphere^1\).
Indeed, by Proposition~\ref{lemma-homotopy-homology} we see that the Hurewicz degree identifies null-homotopic maps from \(\Sphere^\ell\) into itself and also from \(\Sphere^\ell\) into \(\Sphere^1\) when \(\ell=1\).
The Hurewicz degree also identifies null-homotopic maps  from \(\Sphere^\ell\) into \(\Sphere^1\) when \(\ell\geq 2\) due to the triviality of \(\pi_\ell(\Sphere^1)\).
In all cases, we then conclude that the Hurewicz degree identifies null-homotopic maps from \(\Sphere^\ell\) into \(\Sphere^\ell \times \Sphere^1\) as a consequence of Example~\ref{example-Chap7-Hurewicz-product}.
\end{remark}

As a consequence of Proposition \ref{propositionHurewiczVMO}, we have

\begin{corollary}
\label{corollaryHurewiczHomotopicConstantConverse}
If the Hurewicz degree identifies null-homotopic maps from \( \Sphere^{\ell} \) to \( \manfN \) and if \(v \in \VMO(\Sphere^{\ell}; \manfN)\) is such that \(\hur_{\varpi}(v) = 0\) for every closed differential form \(\varpi \in \Smooth^{\infty}(\manfN; \Forms^{\ell})\),  then \(v\) is homotopic to a constant in \(\VMO(\Sphere^{\ell}; \manfN)\).
\end{corollary}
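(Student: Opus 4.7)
The plan is to reduce the statement to its smooth analogue using the density and stability results already established in Chapter~\ref{section_VMO} and in Proposition~\ref{propositionHurewiczVMO}. First, I would invoke Proposition~\ref{propositionHomotopyVMOContinuousMap} (or more directly Corollary~\ref{corollaryVMODensity} combined with smooth approximation on the compact manifold $\manfN$) to produce a sequence $(u_j)_{j \in \N}$ in $\Smooth^{\infty}(\Sphere^{\ell}; \manfN)$ with
\[
u_j \to v \quad \text{in \(\VMO(\Sphere^{\ell}; \manfN)\).}
\]
By Proposition~\ref{propositionHomotopyVMOLimit}, there exists $J \in \N$ such that $u_j \sim v$ in $\VMO(\Sphere^{\ell}; \manfN)$ for every $j \geq J$.

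Next, I would exploit the homotopy invariance of $\hur_{\varpi}$ in the $\VMO$ sense, which is exactly the content of Proposition~\ref{propositionHurewiczVMO}. For every closed differential form $\varpi \in \Smooth^{\infty}(\manfN; \Forms^{\ell})$ and every $j \geq J$, this gives
\[
\hur_{\varpi}(u_j) = \hur_{\varpi}(v) = 0,
\]
the last equality being our standing assumption on $v$. Here we use crucially that the extension $\hur_{\varpi}$ constructed in Proposition~\ref{propositionHurewiczVMO} agrees with the classical Hurewicz degree on smooth maps.

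At this point the smooth map $u_j$ (for any fixed $j \geq J$) has vanishing Hurewicz degree relative to every closed $\ell$-form on $\manfN$. By the hypothesis that the Hurewicz degree identifies null-homotopic maps from $\Sphere^{\ell}$ to $\manfN$ (Definition~\ref{definition_Hurewicz_identification}), the map $u_j$ is homotopic to a constant in $\Smooth^{0}(\Sphere^{\ell}; \manfN)$. A continuous homotopy between $u_j$ and a constant is \emph{a fortiori} a continuous path in $\VMO(\Sphere^{\ell}; \manfN)$ (since $\UContinuous \hookrightarrow \VMO$ continuously), so $u_j$ is homotopic to a constant in $\VMO(\Sphere^{\ell}; \manfN)$ as well. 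Combining this with $v \sim u_j$ in $\VMO$ and using transitivity of the $\VMO$-homotopy relation yields that $v$ is homotopic to a constant in $\VMO(\Sphere^{\ell}; \manfN)$.

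There is no real obstacle here: every ingredient has already been assembled, and the proof is essentially a three-line concatenation of (approximation + Proposition~\ref{propositionHomotopyVMOLimit}), (the extension and invariance in Proposition~\ref{propositionHurewiczVMO}), and (the identification hypothesis together with transitivity of homotopy). The only mild care needed is to record that the homotopy invariance of $\hur_{\varpi}$ used in the second step is the one on $\VMO$ classes established in Proposition~\ref{propositionHurewiczVMO}, which is precisely what allows us to transfer the vanishing of $\hur_{\varpi}(v)$ to the smooth approximants.
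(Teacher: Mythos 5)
Your proposal is correct and follows essentially the same route as the paper: approximate \(v\) by smooth maps, use Proposition~\ref{propositionHomotopyVMOLimit} to get a smooth map \(\VMO\)-homotopic to \(v\), transfer the vanishing of \(\hur_{\varpi}\) via the invariance in Proposition~\ref{propositionHurewiczVMO}, invoke the identification hypothesis, and conclude by transitivity. The only cosmetic difference is that the paper works with a single smooth representative \(u\) rather than a sequence \((u_j)\) with a fixed large index.
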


\begin{proof}
    By the density of \( \Smooth^{\infty}(\Sphere^{\ell}; \manfN) \) in \( \VMO(\Sphere^{\ell}; \manfN) \) and Proposition~\ref{propositionHomotopyVMOLimit}, there exists \( u \in \Smooth^{\infty}(\Sphere^{\ell}; \manfN) \) such that 
    \begin{equation}
        \label{eqHurewiczHomotopicConstant}
        u \sim v
        \quad \text{in \(\VMO(\Sphere^{\ell}; \manfN)\).}
    \end{equation}
    Then, see Proposition~\ref{propositionHurewiczVMO}, for every closed differential form \(\varpi \in \Smooth^{\infty}(\manfN; \Forms^{\ell})\),
    \[
    \hur_{\varpi}(u) = \hur_{\varpi}(v) = 0.
    \]
    Since the Hurewicz degree identifies null-homotopic maps, \( u \) is homotopic to a constant in \( \Smooth^{0}(\Sphere^{\ell}; \manfN) \) and then in \( \VMO(\Sphere^{\ell}; \manfN) \).
    By \eqref{eqHurewiczHomotopicConstant} and the transitivity of the homotopy relation, we deduce that \( v \) is also homotopic to a constant in \( \VMO(\Sphere^{\ell}; \manfN) \).
\end{proof}

\section{Hurewicz current}

Closely related to the Hurewicz degree is the  Hurewicz current defined for Sobolev maps on \( \manfV\): 

\begin{definition}
\label{defnJacobianHurewicz}
Let \( \ell \in \{ 1, \dots, m-1 \} \).
Given a map \(u \in \Sobolev^{1, \ell} (\manfV; \manfN)\) and a closed differential form \(\varpi \in \Smooth^\infty (\manfN; \Forms^\ell)\),{}
we define the \emph{Hurewicz current} of \(u\) relative to \(\varpi\) as
\[
\dualprod{\Hur_{\varpi}(u)}{\alpha}
= \int_{\manfV}  u^\sharp \varpi \wedge \dext \alpha{}
\quad \text{for every \(\alpha \in \Smooth^\infty_c (\manfV; \Forms^{m - \ell - 1})\).}
\]
\end{definition}

We assume throughout the chapter that \(\manfV\) is orientable and we denote by \(\Smooth^\infty_c (\manfV; \Forms^{m - \ell - 1})\) the class of smooth \((m - \ell - 1)\)-differential forms that are compactly supported in \(\manfV\). 
Since \(u \in \Sobolev^{1, \ell} (\manfV; \manfN)\) and
\begin{equation}
\label{eqJacobian-estimatePullBack}
 \abs{u^\sharp \varpi}
 \le C \abs{Du}^\ell \norm{\varpi}_{\Lebesgue^\infty},
\end{equation}
the left-hand side of \eqref{eqJacobian-estimatePullBack} is summable and, as a result,  \(\Hur_{\varpi}(u)\) is well defined.
When the target manifold \(\manfN\) is the sphere \(\Sphere^{n}\) and \(\varpi\) is the standard volume form \(\omega_{\Sphere^{n}}\), the Hurewicz current is the \emph{distributional Jacobian} \cites{Ball,Brezis-Coron-Lieb} that is defined for \(m \ge n + 1\) and \(u \in \Sobolev^{1, n}(\manfV; \Sphere^{n})\) by
	\begin{equation}
 \label{eq-def-jacobian}{}
	\jac{u} = \Hur_{\omega_{\Sphere^{n}}}(u).
	\end{equation}

The Hurewicz current vanishes on smooth maps:

\begin{example}
\label{exampleHurewiczSmooth}
For every \(u \in (\Sobolev^{1, \ell} \cap \Smooth^{\infty})(\manfV; \manfN)\) and every closed differential form \(\varpi \in \Smooth^\infty (\manfN; \Forms^\ell)\), one has
\[
 \Hur_{\varpi}(u) = 0
		\quad \text{in the sense of currents in \(\manfV\).}
\]
	Given \(\alpha \in \Smooth^\infty_c (\manfV; \Forms^{m - \ell - 1})\),
	we need to show that
	\[{}
	\dualprod{\Hur_{\varpi}(u)}{\alpha}
	= 0
\quad \text{for every \(\alpha \in \Smooth^\infty_c (\manfV; \Forms^{m - \ell - 1})\).}
	\]
    Since \(u\) is smooth, the \(m\)-differential form \(u^\sharp \varpi\wedge \alpha\) satisfies
\[
\dext(u^{\sharp}\varpi \wedge \alpha)
	= \dext(u^{\sharp}\varpi) \wedge \alpha + (-1)^{\ell} u^{\sharp}\varpi \wedge \dext \alpha.
\]
Since the differential form \( u^{\sharp}\varpi \wedge \alpha \) has compact support in \( \manfV \), by Stokes's theorem we get
\[
\dualprod{\Hur_{\varpi}(u)}{\alpha} = (-1)^{\ell + 1} \int_{\manfV}\dext(u^{\sharp}\varpi)\wedge \alpha.
\]
Closedness of \(\varpi\) implies that \( \dext(u^{\sharp}\varpi) = u^{\sharp}\dext\varpi = 0 \) and the conclusion follows.
\end{example}

One deduces from Example~\ref{exampleHurewiczSmooth} that \(\Hur_{\varpi}\) is trivial in the closure of smooth maps with respect to the \( \Sobolev^{1, \ell} \)~convergence (see Definition~\ref{definitionHkp}):

\begin{proposition}
	\label{propositionHurewiczTrivial-Hilbert}
	Let \(\varpi \in \Smooth^\infty(\manfN; \Forms^\ell)\) be a closed differential form.
	For every \(u \in \Hilbert^{1, \ell}(\manfV; \manfN)\), we have
	\[{}
	\Hur_{\varpi}(u) = 0
	\quad \text{in the sense of currents in \(\manfV\).}
	\]
\end{proposition}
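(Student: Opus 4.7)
The plan is to deduce the conclusion from Example~\ref{exampleHurewiczSmooth} by a density argument: since $\Hur_{\varpi}(u_j) = 0$ for every smooth map $u_j$, it suffices to establish continuity of $u \mapsto \Hur_{\varpi}(u)$ with respect to $\Sobolev^{1,\ell}$~convergence. Concretely, given $u \in \Hilbert^{1,\ell}(\manfV;\manfN)$, I choose a sequence $(u_j)_{j\in\N}$ in $(\Sobolev^{1,\ell} \cap \Smooth^\infty)(\manfV;\manfN)$ with $u_j \to u$ in $\Sobolev^{1,\ell}(\manfV;\R^\nu)$. Passing to a subsequence, I may also assume that $u_j \to u$ and $Du_j \to Du$ almost everywhere in $\manfV$.

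The key step is to show that $u_j^\sharp \varpi \to u^\sharp \varpi$ in $\Lebesgue^1\loc(\manfV;\Forms^\ell)$. In local coordinates and a trivialization of the exterior bundle, $u_j^\sharp \varpi$ is a finite sum of terms of the form $\varphi(u_j)\, Du_j^{i_1} \wedge \cdots \wedge Du_j^{i_\ell}$ where $\varphi$ is a smooth bounded function on the compact manifold $\manfN$. For each such term, I will write the difference in telescoping form
\[
Du_j^{i_1}\cdots Du_j^{i_\ell} - Du^{i_1}\cdots Du^{i_\ell}
= \sum_{s=1}^{\ell} Du_j^{i_1}\cdots Du_j^{i_{s-1}} (Du_j^{i_s} - Du^{i_s}) Du^{i_{s+1}}\cdots Du^{i_\ell},
\]
and estimate its $\Lebesgue^1$~norm by Hölder's inequality using that each factor lies in $\Lebesgue^\ell$; this gives convergence of the product in $\Lebesgue^1(\manfV)$. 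Then, writing
\[
\varphi(u_j)\,\Pi_j - \varphi(u)\,\Pi = \varphi(u_j)(\Pi_j - \Pi) + (\varphi(u_j) - \varphi(u))\,\Pi,
\]
with $\Pi_j, \Pi$ the corresponding products of derivatives, the first term tends to $0$ in $\Lebesgue^1$ since $\varphi$ is bounded on $\manfN$, and the second tends to $0$ by Lebesgue's dominated convergence theorem (using $\abs{\varphi(u_j) - \varphi(u)} \to 0$ almost everywhere, the bound $\abs{\varphi(u_j)-\varphi(u)}\abs{\Pi} \le 2\norm{\varphi}_\infty \abs{\Pi}$, and $\abs{\Pi} \in \Lebesgue^1(\manfV)$).

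With this $\Lebesgue^1\loc$~convergence in hand, for any test form $\alpha \in \Smooth^\infty_c(\manfV;\Forms^{m-\ell-1})$ the bounded form $\dext\alpha$ has compact support, so
\[
\dualprod{\Hur_{\varpi}(u_j)}{\alpha}
= \int_{\manfV} u_j^\sharp\varpi \wedge \dext\alpha
\;\longrightarrow\; \int_{\manfV} u^\sharp\varpi \wedge \dext\alpha
= \dualprod{\Hur_{\varpi}(u)}{\alpha}.
\]
By Example~\ref{exampleHurewiczSmooth} the left-hand side vanishes for every $j$, whence the right-hand side is zero for every $\alpha$, establishing that $\Hur_\varpi(u) = 0$ as a current in $\manfV$. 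The main obstacle is the joint treatment of the nonlinear dependence on $u$ (through $\varphi(u_j)$) together with the multilinear dependence on $Du_j$ (through the wedge of derivatives); this is resolved by the telescoping identity and a separate application of dominated convergence, both of which rely crucially on the compactness of $\manfN$ (which makes $\varphi$ bounded and uniformly continuous) and on the critical integrability $\abs{Du}\in\Lebesgue^\ell$.
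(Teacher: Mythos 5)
Your argument is correct and is essentially the paper's proof: both deduce the result from Example~\ref{exampleHurewiczSmooth} by density of \((\Sobolev^{1,\ell}\cap\Smooth^{\infty})(\manfV;\manfN)\) in \(\Hilbert^{1,\ell}(\manfV;\manfN)\) together with the continuity of \(u\mapsto u^{\sharp}\varpi\) from \(\Sobolev^{1,\ell}\) into \(\Lebesgue^{1}\). The only difference is that the paper simply asserts this continuity from the pointwise dependence on \((u,Du)\) and the bound \eqref{eqJacobian-estimatePullBack}, whereas you spell it out via the telescoping identity, Hölder, and dominated convergence.
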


\begin{proof}
For every \(x\in \manfV\), \((u^\sharp \varpi) (x)\) is a continuous function of \(u (x)\) and \(D u (x)\).
By \eqref{eqJacobian-estimatePullBack}, the map 
\begin{equation}
\label{eqJacobian-26}
u \in \Sobolev^{1, \ell} (\manfV; \manfN) \longmapsto u^\sharp \varpi \in \Lebesgue^{1}(\manfV; \Forms^\ell)
\end{equation}
is then continuous.
	The conclusion then follows from Example~\ref{exampleHurewiczSmooth}  since the set \((\Sobolev^{1, \ell} \cap \Smooth^{\infty})({\manfV}; \manfN)\) is dense in \(\Hilbert^{1, \ell}(\manfV; \manfN)\).
\end{proof}

The following fundamental result establishes a relation between the condition \(\Hur_{\varpi}(u) = 0\) and the vanishing of the Hurewicz degree \(\hur_{\varpi}\) for generic compositions of \(u\) with maps from \( \Sphere^{\ell} \) to \( \manfV \).

\begin{proposition}
\label{propositionJacobianDegreeZero-Fuglede}
Let \(u \in \Sobolev^{1, \ell}(\manfV; \manfN)\) and let \(\varpi \in \Smooth^\infty(\manfN; \Forms^\ell)\) be a closed differential form. 
Then,
\[{}
\Hur_{\varpi}(u) = 0
\quad \text{in the sense of currents in \(\manfV\)}
\]
if and only if there exists an \(\ell\)-detector \(w\) such that, for every Lipschitz map \(\gamma \colon  \cBall^{\ell+1} \to \manfV\) with \(\gamma\vert_{\Sphere^{\ell}} \in \Fuglede_w(\Sphere^\ell;\manfV)\), we have 
\[{}
\hur_{\varpi}(u \compose \gamma\vert_{\Sphere^\ell}) = 0.
\] 
\end{proposition}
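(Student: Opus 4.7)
The plan is to prove both directions using Stokes's theorem in a distributional form, exploiting that $u^\sharp \varpi$ is an $L^1$ $\ell$-form on $\manfV$ whose distributional differential is (up to sign) the Hurewicz current $\Hur_\varpi(u)$. I would choose the $\ell$-detector $w$ large enough so that for every Lipschitz map $\gamma\colon \cBall^{\ell+1}\to \manfV$ with $\gamma|_{\Sphere^\ell}\in\Fuglede_w(\Sphere^\ell;\manfV)$, Proposition~\ref{corollaryCompositionSobolevFuglede} gives $u\compose\gamma \in \Sobolev^{1,\ell}(\Ball^{\ell+1};\manfN)$ together with the chain rule, and Proposition~\ref{propositionSobolevVMO>1} gives $u\compose\gamma|_{\Sphere^\ell} \in \VMO(\Sphere^\ell;\manfN)$, so that $\hur_\varpi(u\compose\gamma|_{\Sphere^\ell})$ is defined and equals $\int_{\Sphere^\ell}(u\compose\gamma|_{\Sphere^\ell})^\sharp \varpi$ via the integral formula~\eqref{eqHurewicz-123}.

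For the forward direction, the key intermediate step is a chain rule for the Hurewicz current: if $\Hur_\varpi(u)=0$ on $\manfV$, then $(u\compose\gamma)^\sharp\varpi$ is distributionally closed on $\Ball^{\ell+1}$. I would establish this by extending $\varpi$ through the nearest-point retraction $\Pi$ to the closed form $\hat\varpi \vcentcolon= \Pi^\sharp\varpi$ on a tubular neighborhood of $\manfN$, approximating $u$ by a sequence $(u_j)_{j\in\N}$ of smooth maps that take values in this tubular neighborhood for which the classical chain rule gives $\dext[(u_j\compose\gamma)^\sharp\hat\varpi]=(u_j\compose\gamma)^\sharp\dext\hat\varpi=0$, and passing to the limit via the bound $\abs{v^\sharp\varpi}\le C\abs{Dv}^\ell$ combined with the $\Fuglede_w$ convergence of $u_j\compose\gamma$ provided by Proposition~\ref{lemmaModulusSobolevsequenceHighk}. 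Once closedness on the star-shaped domain $\Ball^{\ell+1}$ is known, testing against radial cutoffs $\beta(x)=\chi(\abs{x})$ together with the coarea formula shows that $\phi(r) \vcentcolon= \int_{\partial B^{\ell+1}_r}(u\compose\gamma)^\sharp\varpi$ is essentially constant in $r\in (0,1)$; a cone-homotopy representation of the closed $L^1$ form $(u\compose\gamma)^\sharp\varpi$ as $\dext \eta$ combined with Stokes's theorem on the closed manifold $\Sphere^\ell$ then forces the common value $\phi(1)=\hur_\varpi(u\compose\gamma|_{\Sphere^\ell})$ to vanish.

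For the reverse direction, I would localize via a partition of unity and work in a chart $\manfV\simeq \R^m=\R^{\ell+1}_x\times\R^{m-\ell-1}_y$. Testing against forms of the special shape $\alpha=f(x,y)\, dy_1\wedge\dotsb\wedge dy_{m-\ell-1}$, a direct Fubini computation gives
\[
\int_{\manfV}u^\sharp\varpi\wedge\dext\alpha = \int_{\R^{m-\ell-1}}\biggl(\int_{\R^{\ell+1}}(u_y)^\sharp\varpi \wedge \dext_x f(\cdot,y)\biggr)\dif y,
\]
where $u_y(x)\vcentcolon= u(x,y)$ belongs to $\Sobolev^{1,\ell}(\R^{\ell+1};\manfN)$ for almost every $y$ via the Fuglede properties of $w$. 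To show that the inner integral vanishes, I would apply the coarea/layer-cake decomposition of $f(\cdot,y)$ as a superposition of characteristic functions of sublevel sets and invoke Sard's theorem: for almost every $t$ the level set $\{f(\cdot,y)=t\}$ is a smooth compact $\ell$-submanifold of $\R^{\ell+1}$, and restricting to Morse-type $f$ (dense in $\Smooth^\infty_c$) one may further assume it is a finite union of topological $\ell$-spheres each bounding a Lipschitz $(\ell+1)$-disk parametrized by $\gamma\colon \cBall^{\ell+1}\to\R^{\ell+1}$ with $\gamma|_{\Sphere^\ell}\in\Fuglede_w$; the corresponding boundary integrals are then Hurewicz degrees $\hur_\varpi(u_y\compose\gamma|_{\Sphere^\ell})=0$ by hypothesis. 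Summing over the possible choices of coordinate split and invoking density of such cylindrical forms among all elements of $\Smooth^\infty_c(\manfV;\Forms^{m-\ell-1})$ closes the argument.

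The hard part is the chain rule in the forward direction: justifying that the distributional identity $\dext[(u\compose\gamma)^\sharp\varpi]=0$ survives the combination of the Sobolev approximation of $u$ and the composition with a merely Lipschitz $\gamma$ requires coupling $\Sobolev^{1,\ell}$ convergence of the approximants with $\Fuglede_w$ convergence of their $\gamma$-compositions, so that the $L^1$ convergence of the pullbacks—and hence the vanishing of their distributional differentials—passes to the limit.
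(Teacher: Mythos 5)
There are two genuine gaps, one in each direction. In the forward direction your plan hinges on showing that \((u \compose \gamma)^{\sharp}\varpi\) is a closed \(\Lebesgue^{1}\) form on the solid ball \(\Ball^{\ell+1}\), and on applying Proposition~\ref{lemmaModulusSobolevsequenceHighk} with \(\manfA = \Ball^{\ell+1}\). Both steps require \(\gamma\) itself to be a Fuglede map on \(\cBall^{\ell+1}\) (so that \(u \compose \gamma \in \Sobolev^{1,\ell}(\Ball^{\ell+1})\) and the pullback is summable there), but the statement only assumes \(\gamma\vert_{\Sphere^{\ell}} \in \Fuglede_{w}(\Sphere^{\ell};\manfV)\); the interior behaviour of \(\gamma\) is completely unconstrained, so \(w \compose \gamma\) need not be summable on the ball and \(u \compose \gamma\) need not even be Sobolev in the interior. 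The paper's proof avoids this entirely: it approximates the \emph{form} \(u^{\sharp}\varpi\) on \(\manfV\) by \emph{smooth closed} forms \(\eta_{j}\) in \(\Lebesgue^{1}\) (Proposition~\ref{proposition_char_closed_diff_form-Fuglede-converse}, using \cite{Goldshtein-Troyanov-2006}), applies Stokes's theorem to \(\gamma^{\sharp}\eta_{j}\) — which is a bounded form on \(\cBall^{\ell+1}\) because \(\eta_j\) is smooth and \(\gamma\) is Lipschitz, with no Sobolev information about \(u \compose \gamma\) needed in the interior — and passes to the limit only on the boundary sphere via the Fuglede lemma for differential forms (Proposition~\ref{propositionFugledeForms}). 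Your map-approximation route could only be salvaged by first replacing \(\gamma\) by an interior extension that is Fuglede on the whole ball and showing the Hurewicz degree is unchanged, an argument you do not make. (A smaller inaccuracy in the same direction: formula~\eqref{eqHurewicz-123} needs \(u \compose \gamma\vert_{\Sphere^{\ell}} \in \Sobolev^{1,\ell}(\Sphere^{\ell};\manfN)\), not just \(\VMO\); this is what the paper's Lemma~\ref{lemma-Fuglede-Hurewicz-VMO} supplies.)

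In the reverse direction, the decisive step of your slicing argument is false as stated: a regular level set of a compactly supported (even Morse) function on \(\R^{\ell+1}\) is a compact hypersurface, but it need not be a finite union of topological \(\ell\)-spheres bounding Lipschitz disks — already in \(\R^{3}\) level sets can be tori — whereas the hypothesis only controls Hurewicz degrees of compositions with maps \(\gamma \colon \cBall^{\ell+1} \to \manfV\), i.e.\ integrals over images of spheres that bound Lipschitz balls. Reducing integrals over general level hypersurfaces to sphere contributions is precisely the nontrivial point; the paper sidesteps it by approximating arbitrary test functions by finite superpositions of \emph{radial} bump functions (Lemma~\ref{lemma-approx-radial-functions}), so that after the coarea formula (Lemmas~\ref{lemma-coarea-forms} and~\ref{lemmaIntegralFormRadial}) only round concentric spheres \(\partial B_{r}^{\ell+1}(a) \times \{x''\}\) occur, and then closedness follows from Proposition~\ref{proposition_char_closed_diff_form-Fuglede-Euclidean} together with the identification of \(\int_{\Sphere^{\ell}} \gamma^{\sharp}(u^{\sharp}\varpi)\) with the Hurewicz degree (Lemma~\ref{lemma-Fuglede-Hurewicz-VMO}). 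Your Fubini-plus-Sard scheme would need an additional argument of this kind to be complete.
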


We postpone the proof of Proposition~\ref{propositionJacobianDegreeZero-Fuglede} to Section~\ref{sectionHurewiczGeneric}. 
The main ingredient to prove the reverse implication ``\(\Longleftarrow\)'' is a general characterization of closed forms in terms of integration of these forms on \(\ell\)-dimensional spheres in \(\R^m\).
As a by-product of this characterization, we also obtain a sufficient condition to get \(\Hur_{\varpi}(u)=0\) in terms of the restrictions of \(u\) to such spheres.
The direct implication ``\(\Longrightarrow\)'' in Proposition~\ref{propositionJacobianDegreeZero-Fuglede} relies on approximating \( \Lebesgue^{1} \)~differential forms that are closed in the sense of currents by \emph{smooth} closed differential forms and then applying a version of Fuglede's lemma for sequences of differential forms.

As a consequence of Proposition~\ref{propositionJacobianDegreeZero-Fuglede}, we derive the following property of \(\ell\)-extendable maps in connection with the Hurewicz current:

\begin{corollary}
	\label{corollaryJacobianFugledeTrivial}
	Let \(u \in \Sobolev^{1, \ell}(\manfV; \manfN)\).
	If \(u\) is \(\ell\)-extendable, then, for every closed differential form \(\varpi \in \Smooth^\infty(\manfN; \Forms^\ell)\),
	\[{}
	\Hur_{\varpi}(u)
	= 0 \quad \text{in the sense of currents in \(\manfV\).}
	\]
\end{corollary}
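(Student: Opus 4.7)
The plan is to chain together three results already established: the definition of $\ell$-extendability, the homotopy invariance of the Hurewicz degree on $\VMO$ maps (Corollary~\ref{corollaryHurewiczHomotopicConstant}), and the Fuglede-type characterization of the vanishing of the Hurewicz current (Proposition~\ref{propositionJacobianDegreeZero-Fuglede}). Since $u \in \Sobolev^{1,\ell}(\manfV;\manfN)$, the inclusion $\Sobolev^{1,\ell}(\manfV) \subset \VMO^\ell(\manfV)$ provided by Proposition~\ref{propositionSobolevApproximationFuglede} (applied componentwise, using that $\manfN$ is bounded) guarantees that the notion of $\ell$-extendability from Definition~\ref{definitionExtensionVMO-1} applies directly to $u$.

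First, I would unfold the $\ell$-extendability hypothesis: it supplies an $\ell$-detector $w_1 \colon \manfV \to [0,+\infty]$ such that for every Lipschitz map $\gamma \colon \cBall^{\ell+1} \to \manfV$ with $\gamma|_{\Sphere^\ell} \in \Fuglede_{w_1}(\Sphere^\ell;\manfV)$, the trace $u \compose \gamma|_{\Sphere^\ell}$ is homotopic to a constant in $\VMO(\Sphere^\ell;\manfN)$. Fix a closed form $\varpi \in \Smooth^\infty(\manfN;\Forms^\ell)$. By Corollary~\ref{corollaryHurewiczHomotopicConstant}, the Hurewicz degree vanishes on any map homotopic to a constant in $\VMO$, so $\hur_\varpi(u \compose \gamma|_{\Sphere^\ell}) = 0$ for every such $\gamma$.

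Next, I would invoke the reverse implication of Proposition~\ref{propositionJacobianDegreeZero-Fuglede} to conclude that $\Hur_\varpi(u) = 0$ in the sense of currents. A minor technical point is that Proposition~\ref{propositionJacobianDegreeZero-Fuglede} is stated with ``there exists an $\ell$-detector $w$''; to apply it with the detector produced by $\ell$-extendability, one simply observes that the sum of any $\ell$-detector $w_1$ for $u$ with any other summable function is again an $\ell$-detector for $u$, and that every Fuglede map with respect to a larger weight is a fortiori Fuglede with respect to $w_1$. Thus, after possibly replacing $w_1$ by $w_1$ plus whatever summable function the statement of Proposition~\ref{propositionJacobianDegreeZero-Fuglede} implicitly requires, the hypothesis $\hur_\varpi(u\compose \gamma|_{\Sphere^\ell}) = 0$ on all admissible $\gamma$ remains valid and the conclusion $\Hur_\varpi(u) = 0$ follows.

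There is no substantive obstacle here; all the work has been done in Proposition~\ref{propositionJacobianDegreeZero-Fuglede} and Corollary~\ref{corollaryHurewiczHomotopicConstant}. The corollary is essentially a dictionary entry translating the topological statement of $\ell$-extendability into the cohomological statement about the Hurewicz current, via the bridge provided by generic composition with Fuglede maps on $\Sphere^\ell$.
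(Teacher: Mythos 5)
Your proposal is correct and follows essentially the same route as the paper: use the $\ell$-detector furnished by $\ell$-extendability, conclude $\hur_{\varpi}(u \compose \gamma|_{\Sphere^{\ell}}) = 0$ from the homotopy invariance of the Hurewicz degree (Corollary~\ref{corollaryHurewiczHomotopicConstant}), and then apply the reverse implication of Proposition~\ref{propositionJacobianDegreeZero-Fuglede}. Your ``minor technical point'' about enlarging the detector is unnecessary, since that proposition's hypothesis is purely existential and the extendability detector is already a valid witness, but this does not affect the correctness of the argument.
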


\begin{proof}
	Let \(w\) be an \(\ell\)-detector that verifies the \(\ell\)-extendability for \(u\).{}
	Given a Lipschitz map \(\gamma \colon  \cBall^{\ell+1} \to \manfV\) such that \(\gamma|_{\Sphere^{\ell}} \in \Fuglede_{w}(\Sphere^{\ell}; \manfV)\), the map \(u \compose \gamma|_{\Sphere^{\ell}}\) is homotopic to a constant in \(\VMO(\Sphere^{\ell}; \manfN)\).
	By invariance of \(\hur_{\varpi}\) under VMO homotopies, we then have
	\begin{equation}
		\label{eqHopf-665}
		\hur_{\varpi}(u \compose \gamma|_{\Sphere^{\ell}}) = 0.{}
	\end{equation}
	Since \(\gamma\) is arbitrary, the conclusion follows from Proposition~\ref{propositionJacobianDegreeZero-Fuglede}.
\end{proof}

Theorem~\ref{theoremhkpGreatBall} establishes the equivalence between \(\ell\)-extendability and the condition \(u \in \Hilbert^{1,\ell}(\manfV; \manfN)\) when \(\manfV = \Ball^m\). 
This fundamental result yields an alternative proof of Corollary~\ref{corollaryJacobianFugledeTrivial} based on Proposition~\ref{propositionHurewiczTrivial-Hilbert} that relies on the triviality of \(\Hur_{\varpi}\) on smooth maps and the continuity of \(\Hur_{\varpi}\) with respect to \(\Sobolev^{1, \ell}\)~convergence. 
Instead, by deducing this result from Proposition~\ref{propositionJacobianDegreeZero-Fuglede}, we present an alternative approach that avoids the approximation of Sobolev maps.

In Chapter~\ref{chapterGenericEllExtension}, given \(f \in \Smooth^{\infty}(\Sphere^{n}; \manfN)\)  we rely on the map \(u \colon  \Ball^{n + 1} \to \manfN\) defined for \(x \ne 0\) by 
\begin{equation}
    \label{eqHurewiczExample}
    u(x) = f\Bigl( \frac{x}{|x|} \Bigr)
\end{equation}
as a model example for constructing maps that are \( n \)-extendable, depending on the existence of a continuous extension of \( f \) to \( \cBall^{n + 1} \), see Lemma~\ref{propositionExtensionHomogeneity}.
As originally observed by Brezis, Coron and Lieb~\cite{Brezis-Coron-Lieb}*{Appendix~B} for the distributional Jacobian, this example also illustrates for \(m = n + 1\) and \(\ell = n\) how the Hurewicz current detects topological singularities.
Here, the presence of a singularity at \(0\) is indicated by a Dirac mass \(\delta_{0}\) and its intensity is given in terms of the degree \(\hur_{\varpi}(f)\)\,:

\begin{proposition}
	\label{lemmaJacobianDegree}
	Given \(f \in \Smooth^{\infty}(\Sphere^{n}; \manfN)\), the map \( u \) defined by \eqref{eqHurewiczExample}  belongs to \(\Sobolev^{1, n}(\Ball^{n + 1}; \manfN)\), and for every closed differential form \(\varpi \in \Smooth^\infty(\manfN; \Forms^n)\) we have
\[
\Hur_{\varpi}{(u)} = (-1)^{n + 1} \hur_{\varpi}(f) \, \delta_0
\quad \text{in the sense of currents in \(\Ball^{n + 1}\).}
\]
\end{proposition}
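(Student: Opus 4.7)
The plan is to exploit the fact that \(u\) is smooth away from the origin and reduce the computation to an application of Stokes's theorem on the region \(\Ball^{n+1} \setminus \cBall^{n+1}_\epsilon\), sending \(\epsilon \to 0\) to isolate the topological contribution concentrated at \(0\).

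First, since \(f\) is smooth and bounded on \(\Sphere^{n}\), the map \(u\) is smooth on \(\Ball^{n+1} \setminus \{0\}\) and satisfies \(\abs{Du(x)} \le C/\abs{x}\). As \(\int_{\Ball^{n+1}} \abs{x}^{-n} \dif x < \infty\), the map \(u\) belongs to \(\Sobolev^{1,n}(\Ball^{n+1}; \manfN)\). In particular, by \eqref{eqJacobian-estimatePullBack}, \(u^{\sharp} \varpi \in \Lebesgue^{1}(\Ball^{n+1}; \Forms^{n})\), so that \(\Hur_{\varpi}(u)\) is a well-defined current. Here \(m - \ell - 1 = 0\), so test forms \(\alpha \in \Smooth^{\infty}_{c}(\Ball^{n+1}; \Forms^{0})\) are compactly supported smooth functions.

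Fix such an \(\alpha\). For any \(0 < \epsilon < d(0, \supp \alpha \cup \partial\Ball^{n+1})\), by Lebesgue's dominated convergence theorem,
\[
\dualprod{\Hur_{\varpi}(u)}{\alpha} = \lim_{\epsilon \to 0} \int_{\Ball^{n+1} \setminus \cBall^{n+1}_{\epsilon}} u^{\sharp}\varpi \wedge \dext \alpha.
\]
On \(\Ball^{n+1} \setminus \cBall^{n+1}_{\epsilon}\) the map \(u\) is smooth, and since \(\varpi\) is closed we have \(\dext(u^{\sharp}\varpi) = u^{\sharp}(\dext \varpi) = 0\). Thus, the Leibniz rule gives
\[
u^{\sharp}\varpi \wedge \dext \alpha = (-1)^{n} \dext(u^{\sharp}\varpi \wedge \alpha) \quad \text{in } \Ball^{n+1} \setminus \cBall^{n+1}_{\epsilon}.
\]
As \(\alpha\) is compactly supported in \(\Ball^{n+1}\), Stokes's theorem yields
\[
\int_{\Ball^{n+1} \setminus \cBall^{n+1}_{\epsilon}} u^{\sharp}\varpi \wedge \dext \alpha = (-1)^{n+1} \int_{\partial \Ball^{n+1}_{\epsilon}} \alpha\, u^{\sharp}\varpi,
\]
where \(\partial\Ball^{n+1}_{\epsilon}\) carries its outward orientation as the boundary of the small ball (the minus sign compensates the inward orientation inherited from the region with a hole).

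It remains to identify the limit of the boundary integral. The scaling map \(\phi_{\epsilon} \colon y \in \Sphere^{n} \mapsto \epsilon y \in \partial\Ball^{n+1}_{\epsilon}\) is an orientation-preserving diffeomorphism, and \(u \compose \phi_{\epsilon} = f\) since \(u\) is \(0\)-homogeneous. Hence \(\phi_{\epsilon}^{\sharp}(u^{\sharp}\varpi) = f^{\sharp}\varpi\), so
\[
\int_{\partial \Ball^{n+1}_{\epsilon}} \alpha\, u^{\sharp}\varpi = \int_{\Sphere^{n}} (\alpha \compose \phi_{\epsilon})\, f^{\sharp}\varpi \xrightarrow[\epsilon \to 0]{} \alpha(0) \int_{\Sphere^{n}} f^{\sharp}\varpi = \alpha(0)\, \hur_{\varpi}(f),
\]
by uniform continuity of \(\alpha\) near \(0\) and Definition~\ref{definitionHurewiczDegree}. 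Combining everything,
\[
\dualprod{\Hur_{\varpi}(u)}{\alpha} = (-1)^{n+1} \hur_{\varpi}(f)\, \alpha(0) = (-1)^{n+1} \hur_{\varpi}(f)\, \dualprod{\delta_{0}}{\alpha},
\]
which is the desired identity. The only delicate point is keeping the orientation/sign bookkeeping consistent when writing \(\dext(u^{\sharp}\varpi \wedge \alpha)\) and when passing from \(\partial(\Ball^{n+1} \setminus \cBall^{n+1}_{\epsilon})\) to \(\partial\Ball^{n+1}_{\epsilon}\); the remaining analytic estimates are routine given the \(\Sobolev^{1,n}\) bound for \(u\).
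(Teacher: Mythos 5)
Your proof is correct and follows essentially the same route as the paper: Sobolev membership from the bound \(\abs{Du(x)} \le C/\abs{x}\), the Leibniz rule and Stokes's theorem on \(\Ball^{n+1} \setminus B_{\epsilon}^{n+1}\) giving \((-1)^{n+1}\int_{\partial B_{\epsilon}^{n+1}} \alpha\, u^{\sharp}\varpi\), and then identification of the limit of this boundary term as \(\hur_{\varpi}(f)\,\alpha(0)\). The only (harmless) difference is in that last step: the paper applies Stokes a second time to see that \(\int_{\partial B_{\epsilon}^{n+1}} u^{\sharp}\varpi = \hur_{\varpi}(f)\) for every \(\epsilon\) and then estimates \(\int_{\partial B_{\epsilon}^{n+1}} (\alpha - \alpha(0))\, u^{\sharp}\varpi\) via the pointwise bound \(\abs{u^{\sharp}\varpi(x)} \le C \norm{\varpi}_{\Lebesgue^{\infty}}/\abs{x}^{n}\), whereas you pull back through the scaling \(\phi_{\epsilon}\) and use the \(0\)-homogeneity \(u \compose \phi_{\epsilon} = f\), which is equally valid and avoids the pointwise estimate.
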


\resetconstant
\begin{proof}
As there exists \(\Cr{cteJacobian-450}>0\) such that, for every \( x\in \Ball^{n + 1} \setminus \{0\}\),
 \begin{equation}\label{eq113}
 \abs{Du(x)}\leq \frac{\Cl{cteJacobian-450}}{|x|},
 \end{equation}
we see that \(u\in \Sobolev^{1,n}(\Ball^{n+1};\manfN)\).
Given \(\alpha \in \Smooth_{c}^{\infty}(\Ball^{n + 1})\), we now have to prove that
 \begin{equation}
	\label{eqJacobian-66}
	\dualprod{\Hur_{\varpi}(u)}{\alpha} 
	= (-1)^{n + 1} \hur_{\varpi}(f) \, \alpha(0)
	\quad \text{for every \(\alpha \in \Smooth_{c}^{\infty}(\Ball^{n + 1})\).}
	\end{equation}
Since \(u^{\sharp}\varpi\) is smooth and closed in \(\Ball^{n + 1} \setminus \{0\}\),
	\[{}
	\dext(\alpha \, u^{\sharp}\varpi){}
	= (-1)^{n} u^{\sharp}\varpi \wedge \dext\alpha{}
	\quad \text{in \(\Ball^{n + 1} \setminus \{0\}\).}
	\]
	Thus, for every \(0 < \epsilon < 1\), by Stokes's theorem,
	\begin{equation}
    \label{eqJacobian-474}
    \int_{\Ball^{n + 1} \setminus B_{\epsilon}^{n + 1}} u^{\sharp}\varpi \wedge \dext\alpha{}
	= (-1)^{n} \int_{\Ball^{n + 1} \setminus B_{\epsilon}^{n + 1}} \dext(\alpha \, u^{\sharp}\varpi)
	= (-1)^{n + 1} \int_{\partial B_{\epsilon}^{n + 1}} \alpha \, u^{\sharp}\varpi.
	\end{equation}
	Applying Stokes's theorem again to the closed form \(u^{\sharp}\varpi\) on \(\Ball^{n + 1}\setminus B_{\epsilon}^{n + 1}\), one gets
 \[
	\int_{\partial B_{\epsilon}^{n + 1}} u^{\sharp}\varpi
	= \int_{\partial \Ball^{n + 1}} u^{\sharp}\varpi
	= \int_{\Sphere^{n}} f^{\sharp}\varpi 
	= \hur_{\varpi}(f).
	\]
We may then write
\[
	\int_{\partial B_{\epsilon}^{n + 1}} \alpha \, u^{\sharp}\varpi
	=  \hur_{\varpi}(f) \, \alpha(0)+ \int_{\partial B_{\epsilon}^{n + 1}} (\alpha - \alpha(0)) \, u^{\sharp}\varpi.
\]
By \eqref{eqJacobian-estimatePullBack} and \eqref{eq113}, for \(x \ne 0\) we have
\(
\abs{u^{\sharp}\varpi(x)}
\leq \C {\norm{\varpi}_{\Lebesgue^{\infty}}}/{\abs{x}^n}.
\)
Hence,
\[
\biggabs{\int_{\partial B_{\epsilon}^{n + 1}} (\alpha - \alpha(0)) \, u^{\sharp}\varpi}
\le \C \norm{\alpha - \alpha(0)}_{\Lebesgue^{\infty}(\partial B_{\epsilon}^{n + 1})}.
\]
The continuity of \(\alpha\) at \(0\) then implies that
\[{}
	\lim_{\epsilon \to 0}{\int_{\partial B_{\epsilon}^{n + 1}} \alpha \, u^{\sharp}\varpi} 
	=  \hur_{\varpi}(f) \, \alpha(0).
\]
By \eqref{eqJacobian-474} and Lebesgue's dominated convergence theorem, we thus have
	\[{}
	\dualprod{\Hur_{\varpi}(u)}{\alpha} 
	= \lim_{\epsilon \to 0}{\int_{\Ball^{n + 1} \setminus B_{\epsilon}^{n + 1}} u^{\sharp}\varpi \wedge \dext\alpha{}}  
	= (-1)^{n + 1} \hur_{\varpi}(f) \, \alpha(0),
	\]
	which completes the proof of \eqref{eqJacobian-66}.
\end{proof}

\section{Generic condition for closed forms}

A differential form \(\eta\in \Lebesgue^{1}(\manfV; \Forms^\ell)\) is closed whenever \( \dext\eta = 0 \) in the sense of currents, that is,
\[
\int_{\manfV} \eta \wedge \dext\alpha = 0
\quad \text{for every \(\alpha \in \Smooth^\infty_c(\manfV; \Forms^{m - \ell - 1})\).}
\]
In this section, we first show that, when \(\manfV\) is an open set \(\Omega \subset \R^m\), determining whether \(\eta\) is closed can be achieved by computing its integral along the boundaries of \((\ell+1)\)-dimensional discs parallel to the coordinate axes, in the following sense:

\begin{definition}
\label{def-standard-disc}
A set \(\Disk^{\ell+1}\) is a \emph{standard \((\ell + 1)\)-dimensional disk in \(\R^m\) parallel to the coordinate axes} whenever it can be written as
\[
\Disk^{\ell+1} = rT(\cBall^{\ell + 1}) + \xi,
\]
where \(r > 0\), \(\xi \in \R^m\) and  \(T \colon  \R^{\ell + 1} \to \R^{m}\) is a linear isometry that maps the canonical basis of \(\R^{\ell+1}\) into a subset of the canonical basis of \(\R^m\).
We denote its boundary by
\(\partial\Disk^{\ell+1} = rT(\Sphere^{\ell}) + \xi\).
\end{definition}

We may then identify closed differential forms in the following way:

\begin{proposition}
\label{proposition_char_closed_diff_form-Fuglede-Euclidean}
Let \(\Omega \subset \R^m\) be an open set, and let \(\eta \in \Lebesgue^1 (\Omega; \Forms^\ell)\).
If there exists a summable function \(w \colon \Omega \to [0, +\infty]\) such that, for every standard \((\ell + 1)\)-dimensional disk \(\Disk^{\ell+1} \subset \Omega\) parallel to the coordinate axes for which \(w|_{\partial\Disk^{\ell + 1}}\) is summable in \(\partial\Disk^{\ell + 1}\), the \(\ell\)-differential form \(\eta \) is summable in \(\partial\Disk^{\ell+1}\) and 
\[
\int_{\partial\Disk^{\ell+1}} \eta = 0,
\] 
then \(\eta\) is closed in the sense of currents in \(\Omega\).
\end{proposition}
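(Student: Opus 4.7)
My plan is to proceed by mollification: convolve \(\eta\) componentwise with a standard mollifier \(\varphi_{\epsilon}\) to obtain a smooth form \(\eta_{\epsilon}\) defined on \(\Omega_{\epsilon} \vcentcolon= \{x \in \Omega : \dist(x, \partial\Omega) > \epsilon\}\), and then transfer the vanishing of \(\int_{\partial \Disk^{\ell+1}} \eta\) to \(\eta_{\epsilon}\) so that Stokes's theorem becomes applicable. Fixing a standard \((\ell + 1)\)-dimensional disk \(\Disk^{\ell + 1}\) with \(\overline{\Disk^{\ell + 1}} \subset \Omega\) and taking \(\epsilon\) small enough so that every translate \(\Disk^{\ell + 1} - z\) with \(z \in B_{\epsilon}^{m}\) remains in \(\Omega\), I first intend to apply Fubini's theorem:
\[
\int_{B_{\epsilon}^{m}} \biggl( \int_{\partial(\Disk^{\ell + 1} - z)} w \dif\cH^{\ell} \biggr) \dif z
\le \cH^{\ell}(\partial\Disk^{\ell + 1}) \norm{w}_{\Lebesgue^{1}(\Omega)} < \infty,
\]
so that for almost every \(z \in B_{\epsilon}^{m}\) the restriction \(w|_{\partial(\Disk^{\ell + 1} - z)}\) is summable. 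The hypothesis then applies to the translated disk, yielding \(\int_{\partial(\Disk^{\ell + 1} - z)} \eta = 0\) for almost every such \(z\).

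Next, writing \(\eta = \sum_{I} a_{I} \dext x^{I}\) and \(\eta_{\epsilon} = \sum_{I} (\varphi_{\epsilon} \ast a_{I}) \dext x^{I}\), a second application of Fubini combined with the translation invariance of the \(\ell\)-dimensional Hausdorff measure on the translates of \(\partial\Disk^{\ell + 1}\) gives
\[
\int_{\partial\Disk^{\ell + 1}} \eta_{\epsilon}
= \int_{B_{\epsilon}^{m}} \varphi_{\epsilon}(z) \biggl( \int_{\partial(\Disk^{\ell + 1} - z)} \eta \biggr) \dif z
= 0.
\]
Since \(\eta_{\epsilon}\) is smooth on \(\Omega_{\epsilon}\), Stokes's theorem then delivers \(\int_{\Disk^{\ell + 1}} \dext\eta_{\epsilon} = 0\) for every standard disk \(\Disk^{\ell + 1} \subset \Omega_{\epsilon}\).

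I then plan to extract from this the pointwise identity \(\dext\eta_{\epsilon} = 0\) on \(\Omega_{\epsilon}\). For a fixed ordered \((\ell + 1)\)-tuple \(J\) of coordinate indices, only the coefficient of \(\dext x^{J}\) in \(\dext\eta_{\epsilon}\) contributes to the integral of \(\dext\eta_{\epsilon}\) over a disk parallel to the axes indexed by \(J\); by varying the center \(\xi\) and radius \(r > 0\) of such disks inside \(\Omega_{\epsilon}\), the continuity of this coefficient forces it to vanish identically. Running over every ordered \((\ell + 1)\)-tuple \(J\) of coordinate indices then yields \(\dext\eta_{\epsilon} = 0\) in \(\Omega_{\epsilon}\).

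Finally, for every test form \(\alpha \in \Smooth^{\infty}_{c}(\Omega; \Forms^{m - \ell - 1})\) and every \(\epsilon < \dist(\supp\alpha, \partial\Omega)\), integration by parts gives \(\int_{\Omega} \eta_{\epsilon} \wedge \dext\alpha = 0\); passing to the limit \(\epsilon \to 0\) using \(\eta_{\epsilon} \to \eta\) in \(\Lebesgue\loc^{1}(\Omega; \Forms^{\ell})\) produces \(\int_{\Omega} \eta \wedge \dext\alpha = 0\), as required. The key subtle step will be the third one, where it is essential to exploit disks parallel to \emph{every} coordinate \((\ell + 1)\)-plane: restricting to a single choice of \(J\) would capture only one component of \(\dext\eta_{\epsilon}\) and would be insufficient to conclude that the form is closed.
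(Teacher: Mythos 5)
Your argument is correct, but it follows a genuinely different route from the paper. The paper never mollifies \(\eta\): it localizes to a cube, uses a coarea (slicing) formula for \(\Lebesgue^1\) differential forms to convert the vanishing of \(\int_{\partial B_r^{\ell+1}(a)\times\{x''\}}\eta\) for generic \(r\) and \(x''\) into the identity \(\int \eta \wedge \dext(\zeta\,\dif x'') = 0\) for test functions \(\zeta\) that are radial in the \(x'\)-variables, and then approximates an arbitrary \(g \in \Smooth^\infty_c\) in the \(\Smooth^1\) norm by finite sums of such radial functions before summing over multi-indices. You instead regularize the form itself, transfer the hypothesis to \(\eta_\epsilon\) by averaging over translated disks (the genericity in \(z\) supplied by your first Fubini computation plays the role of the paper's genericity in \((r,x'')\)), apply classical Stokes to the smooth form, and recover pointwise closedness of \(\eta_\epsilon\) by shrinking disks parallel to every coordinate \((\ell+1)\)-plane, before letting \(\epsilon \to 0\). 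Your approach is more elementary and self-contained — it avoids both the coarea formula for rough forms and the radial-approximation lemma — and as a by-product it produces smooth closed forms \(\eta_\epsilon\) converging to \(\eta\) in \(\Lebesgue^1\loc\), an ingredient the paper has to import from the literature elsewhere (in the proof of the converse statement). The paper's slicing argument, on the other hand, works directly at the level of the current without regularizing \(\eta\) and extends more readily to the manifold setting via charts. Two small points you should make explicit when writing this up: the interchange giving \(\int_{\partial\Disk^{\ell+1}}\eta_\epsilon = \int_{B_\epsilon^m}\varphi_\epsilon(z)\bigl(\int_{\partial(\Disk^{\ell+1}-z)}\eta\bigr)\dif z\) is legitimized by Tonelli, since \(\varphi_\epsilon \ast \abs{a_I}\) is smooth, hence bounded, on a neighborhood of the compact sphere, so the iterated integral of absolute values is finite; and the integral \(\int_{\Disk^{\ell+1}}\dext\eta_\epsilon\) in your Stokes step should be read as the integral of the pullback of \(\dext\eta_\epsilon\) to the coordinate \((\ell+1)\)-plane containing the disk, which is exactly why, as you correctly stress, all choices of the \((\ell+1)\)-tuple \(J\) are needed to conclude \(\dext\eta_\epsilon = 0\).
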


To prove Proposition~\ref{proposition_char_closed_diff_form-Fuglede-Euclidean}, we need some preliminary results.
We begin by recalling a coarea formula for differential forms:
 
\begin{lemma}
\label{lemma-coarea-forms}
Let \(\Omega \subset \R^m\) be an open set and let \(\eta\in \Lebesgue^1(\Omega; \Forms^{\ell})\).  
If \(f\in \Smooth^{\infty}(\Omega ; \R^{m-\ell})\) has a bounded derivative, then, for almost every \(y\in \R^{m-\ell}\),   the \(\ell\)-differential form \(\eta\) is summable on the oriented smooth submanifold \(f^{-1}(y)\) and
\[
\int_{\Omega} \eta\wedge f^\sharp\dext y 
= \int_{\R^{m-\ell}} \biggl( \int_{f^{-1}(y)}\eta \biggr) \dif y .
\]
\end{lemma}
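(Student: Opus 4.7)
The plan is to reduce the statement to Federer's classical coarea formula via a pointwise identification at regular points of $f$, followed by an orientation check. First, I would expand $\eta = \sum_{\abs{I}=\ell} \eta_I \dext x_I$ in the canonical basis of $\ell$-forms and note that, by anti-commutativity of the wedge product, for each ordered multi-index $I$ with ordered complement $I^c$,
\[
\dext x_I \wedge f^\sharp (\dif y_1 \wedge \cdots \wedge \dif y_{m-\ell})
= \epsilon_I \, \det\bigl(\partial_{x_j} f_i\bigr)_{\substack{i=1,\dots,m-\ell\\ j \in I^c}} \, \dif x_1 \wedge \cdots \wedge \dif x_m,
\]
for a sign $\epsilon_I \in \{\pm 1\}$ depending only on the permutation reordering $(I,I^c)$ into $(1,\dots,m)$. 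Since $f$ is smooth with bounded derivative, the coefficients above are bounded, so $\eta \wedge f^\sharp \dif y \in \Lebesgue^1(\Omega;\Forms^m)$.

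Next, I would pass to an intrinsic description. At any regular point $x$ of $f$ (Sard's theorem ensures that non-regular points contribute nothing to either side), let $(v_1,\dots,v_\ell)$ be an orthonormal basis of $T_x f^{-1}(f(x)) = \ker Df(x)$ completed by an orthonormal basis $(v_{\ell+1},\dots,v_m)$ of $(\ker Df(x))^\perp$, with the orientation fixed so that $(v_1,\dots,v_m)$ is positively oriented in $\R^m$ and $(v_{\ell+1},\dots,v_m)$ yields the orientation on $(\ker Df)^\perp$ inherited from $Df$. Since $f^\sharp \dif y$ annihilates every tuple containing a vector from $\ker Df(x)$, the expansion of the wedge product collapses to a single term, giving
\[
(\eta \wedge f^\sharp \dif y)(v_1,\dots,v_m) = \eta_x(v_1,\dots,v_\ell) \cdot \Jacobian{m-\ell}{f}(x),
\]
where $\Jacobian{m-\ell}{f}(x) = \bigl[\det{(Df(x) \compose Df(x)^*)}\bigr]^{1/2}$ is the coarea Jacobian of $f$ as in \eqref{eqFugledeJacobian}. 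Setting $g(x) \vcentcolon= \eta_x(v_1(x),\dots,v_\ell(x))$ at every regular point, this yields the pointwise identity $\eta \wedge f^\sharp \dif y = g\, \Jacobian{m-\ell}{f}\, \dif x_1 \wedge \cdots \wedge \dif x_m$, with $|g| \le C\abs{\eta}$, hence $g \in \Lebesgue^1(\Omega)$ after multiplication by $\Jacobian{m-\ell}{f}$ by the $\Lebesgue^\infty$ bound on $Df$.

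Applying Federer's coarea formula \cite{Federer}*{Theorem~3.2.22} to the summable function $g$, for almost every $y \in \R^{m-\ell}$ the function $g$ is $\cH^\ell$-summable on $f^{-1}(y)$ and
\[
\int_\Omega g(x)\, \Jacobian{m-\ell}{f}(x) \dif x = \int_{\R^{m-\ell}} \biggl(\int_{f^{-1}(y)} g \dif \cH^\ell\biggr) \dif y.
\]
By the very definition of integration of an $\ell$-form on an oriented $\ell$-dimensional submanifold, $\int_{f^{-1}(y)} g \dif \cH^\ell = \int_{f^{-1}(y)} \eta$, where $f^{-1}(y)$ carries the orientation making $(v_1,\dots,v_\ell)$ positively oriented at each regular point. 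Combining this with the pointwise identity of the previous paragraph produces the desired equality.

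The main technical obstacle is the orientation bookkeeping: one must verify that the signs $\epsilon_I$ produced by the algebraic expansion of $\dext x_I \wedge f^\sharp \dif y$ match precisely the orientation convention chosen on $f^{-1}(y)$, and that this choice is consistent across the set of regular points so that $g$ is well defined and measurable. This verification reduces to a single linear-algebra computation on an orthonormal frame, standard in the theory of currents and differential forms, and it is the only place where care is required.
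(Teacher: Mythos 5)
Your proposal is correct and follows essentially the same route as the paper's proof: the pointwise identity \(\eta \wedge f^\sharp\dext y = \langle \eta, \omega\rangle\, \Jacobian{m-\ell}{f}\,\dif x\) on the set of regular points (your \(g\) is exactly the paper's \(\langle\eta,\omega\rangle\), with \(\omega\) the unit volume form of the oriented kernel of \(Df\)), with both sides vanishing at critical points, followed by the classical coarea formula and the identification of the inner integral \(\int_{f^{-1}(y)} g\,\dif\cH^\ell\) with \(\int_{f^{-1}(y)}\eta\). The only cosmetic difference is that you evaluate the wedge product on an adapted orthonormal frame of vectors, while the paper performs the equivalent computation by expanding on basis covectors and using linearity.
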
 

We recall that, by Sard's lemma, for almost every \(y\in \R^{m-\ell}\), \(f^{-1}(y)\) is a smooth oriented  submanifold in \(\Omega\) of dimension \(\ell\). 
The statement above is a particular case of the slicing theory for currents, in the specific case where the current is given by \(\beta \mapsto \int_{\Omega}\eta\wedge \beta\), see e.g.\@ \cite{Giaquinta-Modica-Soucek-I}*{Section~2.2.5}.

\begin{proof}[Proof of Lemma~\ref{lemma-coarea-forms}]
Let \(U\) be the open set of points \(x\in \Omega\) such that the tangent map \(Df(x)\) has maximal rank \(m - \ell\).
We denote by \(\langle\cdot,\cdot\rangle\) the standard inner product on \(\ell\)-covectors in \(\R^m\) and by \(\abs{\cdot}\) the corresponding norm.
There exists \(\omega \in \Lebesgue^{\infty}(\R^m; \Forms^{\ell})\) such that, for every \(x\in U\), \(\omega(x)\) is the volume form associated to the oriented \(\ell\)-dimensional space \(\Ker{Df(x)}\) which satisfies \(\abs{\omega(x)}=1\). 

We claim that, for every \(\eta\in \Lebesgue^1(\Omega; \Forms^{\ell})\),
\begin{equation}\label{eq253}
\eta\wedge f^\sharp\dext y 
= \langle\eta, \omega\rangle \Jacobian{m - \ell}{f} \dif x,
\end{equation}
where \(\Jacobian{m - \ell}{f}\) is the \((m - \ell)\)-dimensional Jacobian of \(f\), \(\dext y\) is the standard volume form in \(\R^{m - \ell}\) and \(\dext x\) is the standard volume form in \(\R^{m}\).
In fact, given \(x\in U\), let \((e_1, \dots, e_\ell)\) be an oriented orthonormal basis of \(\Ker{Df(x)}\) and let \((e_{\ell+1}, \dots, e_m)\) be an oriented orthonormal basis of \((\Ker{Df(x)})^\perp\). Then,
\(\omega=e_{1}^{*}\wedge\cdots\wedge e_{\ell}^{*}\) and 
\begin{align*}
f^{\sharp}\dext y &= \det \left(Df(x)[e_{\ell+1}], \dots, Df(x)[e_m]\right)e_{\ell+1}^*\wedge\cdots\wedge e_{m}^*\\
&=(\det \phi(x))e_{\ell+1}^*\wedge\cdots\wedge e_{m}^* \,,
\end{align*}
where \(\phi(x)\vcentcolon=Df(x)|_{(\Ker{Df(x)})^\perp}\)\,.
It follows that, for every \(1\leq i_1<\ldots<i_\ell\leq m\),
\[
e_{i_1}^*\wedge \cdots \wedge e_{i_m}^* \wedge f^\sharp\dext y =
\begin{cases}
\det \phi(x) \dif x & \textrm{ if } (i_1, \ldots, i_\ell)=(1, \dots, \ell),\\
0 & \textrm{ otherwise}.
\end{cases}
\]
Moreover,
\[
\Jacobian{m - \ell}{f}(x)
=\bigl[\det{(Df(x)\compose Df(x)^*)} \bigr]^{\frac{1}{2}}
=\bigl[\det{(\phi(x)\compose \phi(x)^*)} \bigr]^{\frac{1}{2}}
=\det{\phi(x)}.
\]
Hence,  for every \(1\leq i_1<\ldots<i_\ell\leq m\),
\[
e_{i_1}^*\wedge \cdots \wedge e_{i_\ell}^* \wedge f^\sharp\dext y = \langle e_{i_1}^*\wedge \cdots \wedge e_{i_\ell}^* , \omega\rangle \Jacobian{m - \ell}{f} \dif x.
\]
By linearity, we deduce that \eqref{eq253} holds on \(U\). 
When evaluated at some \(x\not\in U\), one has \(f^{\sharp}\dext y=0\) and \(\Jacobian{m - \ell}{f} \dif x=0\), so \eqref{eq253} is actually true in the entire domain \(\Omega\).
By the classical coarea formula, this implies  that
\begin{align*}
\int_{\Omega} \eta\wedge f^\sharp \dext y 
&= \int_{\Omega} \langle\eta, \omega\rangle \Jacobian{m - \ell}{f} \dif \cH^{m}\\
&=\int_{\R^{m-\ell}} \biggl(\int_{f^{-1}(y)} \langle\eta, \omega\rangle \dif \cH^\ell \biggr) \dif y  
= \int_{\R^{m-\ell}} \biggl( \int_{f^{-1}(y)}\eta \biggr) \dif y.
\qedhere{}
\end{align*}
\end{proof} 

In the following lemma, we demonstrate how to extract information about the integral of a differential form \(\eta\) against radial functions, assuming that the integrals of \(\eta\) vanish over spheres.

\begin{lemma}
\label{lemmaIntegralFormRadial}
    Let \( a \in Q_{1}^{\ell + 1} \).
    If \( \eta \in \Lebesgue^{1}(Q_{2}^{m}; \Forms^{\ell}) \) is such that, for almost every \( r \in (0, 1) \) and almost every \( x'' \in Q_{1}^{m - \ell + 1} \),
    \[
    \int_{\partial B^{\ell + 1}_r(a) \times \{x''\}} \eta = 0 \text{,}
    \]
    then, for every \( \zeta \in \Smooth_c^\infty( \R^{m} ) \) supported in \(Q^{m}_1\) that can be written as \( \zeta(x)
= \phi(\abs{x' - a}, x'') \) with \(x = (x', x'') \in \R^{\ell + 1} \times \R^{m - \ell - 1}\) and \(\phi \in \Smooth^{\infty}_c\bigl((0, \infty) \times Q_{1}^{m-\ell-1}\bigr)\), we have
\begin{equation}
    \label{eqIntegralFormRadial}
\int_{Q_{1}^{m}} \eta \wedge \dext ( \zeta \dif x'') = 0 \text{,}
\end{equation}
where \(\dext x'' \vcentcolon= \dext x_{\ell+2}\wedge\cdots\wedge \dext x_m\).
\end{lemma}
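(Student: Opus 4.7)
The plan is to reduce the integral in \eqref{eqIntegralFormRadial} to a one-dimensional family of integrals of $\eta$ over the spheres $\partial B_r^{\ell+1}(a) \times \{x''\}$, each of which vanishes by hypothesis. Since $\dif x''$ has constant coefficients it is closed, and so $\dext(\zeta \dif x'') = \dext \zeta \wedge \dext x''$. Writing $\rho(x) \vcentcolon= |x'-a|$ and using the chain rule,
\[
\dext \zeta = \partial_r \phi(\rho,x'')\, \dext\rho + \sum_{j=\ell+2}^m \partial_{x_j}\phi(\rho,x'')\, \dext x_j.
\]
Wedging with $\dif x'' = \dext x_{\ell+2}\wedge\cdots\wedge \dext x_m$ annihilates every term of the sum, leaving
\[
\dext(\zeta \dif x'') = \partial_r \phi(\rho,x'')\, \dext\rho \wedge \dext x''.
\]

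Next, I would apply the coarea formula provided by Lemma~\ref{lemma-coarea-forms} with the smooth map $f \colon \R^{\ell+1}\setminus\{a\} \times \R^{m-\ell-1} \to \R^{m-\ell}$ defined by $f(x) = (\rho(x), x_{\ell+2}, \ldots, x_m)$. Crucially, although $\rho$ fails to be smooth at $x'=a$, the compact support hypothesis $\phi \in \Smooth_c^\infty((0,\infty)\times Q_1^{m-\ell-1})$ keeps $\supp(\partial_r\phi \compose f)$ bounded away from the singular set $\{x'=a\}$, so we may restrict attention to an open set $U \subset Q_2^m$ on which $f$ is smooth with $|Df|\leq \sqrt{2}$. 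A direct computation gives $f^\sharp \dext y = \dext\rho\wedge\dext x''$, hence the form of interest can be written as $(\partial_r\phi\compose f)\, f^\sharp\dext y$. A straightforward variant of Lemma~\ref{lemma-coarea-forms} (obtained by replacing $\dext y$ with $\psi\,\dext y$ for $\psi = \partial_r\phi$ and invoking the classical coarea formula for $\psi\compose f\cdot\langle\eta,\omega\rangle\,\Jacobian{m-\ell}{f}$) then yields
\[
\int_{Q_1^m} \eta \wedge \dext(\zeta\,\dif x'')
= \int_{\R^{m-\ell}} \partial_r\phi(r,x'') \biggl(\int_{f^{-1}(r,x'')} \eta \biggr) \dif r\, \dif x''.
\]
For $r>0$, the level set $f^{-1}(r,x'')$ coincides as an oriented submanifold with $\partial B_r^{\ell+1}(a)\times\{x''\}$, up to a fixed sign depending only on orientation conventions.

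By the hypothesis, the inner integral vanishes for almost every $(r,x'')\in(0,1)\times Q_1^{m-\ell-1}$, and since $\supp\partial_r\phi$ is compactly contained in that set, the right-hand side equals zero, proving \eqref{eqIntegralFormRadial}. The main technical point in this plan is the singular behavior of $f$ at $\{x'=a\}$ and the need to track orientations through the coarea formula; both issues are resolved by the support condition on $\phi$, which localizes the computation to the smooth locus of $f$ and makes any orientation sign immaterial for the vanishing conclusion.
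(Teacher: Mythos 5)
Your proposal is correct and follows essentially the same route as the paper: you write \(\dext(\zeta\,\dif x'') = (\partial_r\phi\compose f)\, f^{\sharp}\dext y\) with \(f(x)=(\abs{x'-a},x'')\), use the compact support of \(\phi\) in \((0,\infty)\times Q_1^{m-\ell-1}\) to localize to the region where \(f\) is smooth, and apply the coarea formula of Lemma~\ref{lemma-coarea-forms} (applied to \((\partial_r\phi\compose f)\,\eta\), which is exactly your "variant") to reduce to the vanishing of \(\int_{\partial B_r^{\ell+1}(a)\times\{x''\}}\eta\) for almost every \((r,x'')\). This is the paper's argument, so no further comment is needed.
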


\begin{proof}[Proof of Lemma~\ref{lemmaIntegralFormRadial}]  
Since \(\zeta\) is supported in \(Q_{1}^{m}\), we have in particular, 
\[
\phi(t, x'') = 0
\quad \text{for \(t \ge 1 - \abs{a}_{\infty}\),}
\]
where \( \abs{a}_{\infty} \vcentcolon= \max{\{ \abs{a_{1}}, \ldots, \abs{a_{\ell + 1}}\}}\).
We also take \( \delta > 0 \) such that 
\[
\phi(t, x'') = 0
\quad \text{for \(t \le \delta\).}
\]
We now write \(\zeta = \phi \compose f\), where \(f(x)=(\abs{x' - a}, x'')\).
Note that \(f\) is smooth in \( \omega \vcentcolon= \bigl( B_{1 - \abs{a}_{\infty}}^{\ell + 1}(a) \setminus B_{\delta}^{\ell + 1}(a)  \bigr) \times Q_{1}^{m - \ell - 1}\) and \(\zeta = 0\) in the complement of this set.
We then have
\[{}
\dext ( \zeta \dif x''){}
= \phi_{r} \compose f \; \dif r \wedge \dext x''
= \phi_{r} \compose f \; f^{\sharp}\dext y,
\]
where \(\phi_{r}\) denotes the derivative of \(\phi\) with respect to the first variable, \(\dext r\) is the linear form \(v \in \R^{\ell+1}\mapsto \langle \frac{x'-a}{\abs{x'-a}}, v\rangle\), and \(\dif y = \dif y_{1} \wedge \cdots \wedge \dext y_{m - \ell}\) is the standard volume form in \(\R^{m - \ell}\).
Thus, by the assumption on the support of \(\phi\) and the co-area formula (Lemma~\ref{lemma-coarea-forms}) applied to \( \phi_r \compose f \; \eta \) on \(\omega\), 
\[
\begin{split}
\int_{Q_{1}^{m}} \eta \wedge \dext ( \zeta \dif x'')  
 = \int_{\omega} \eta \wedge \dext ( \zeta \dif x'')  
& =\int_{\R^{m-\ell}} \biggl( \int_{f^{-1}(y) \cap \omega} \phi_{r} \compose f \; \eta \biggr) \dif y\\
& =\int_{(\delta, 1 - \abs{a}_{\infty}) \times Q_{1}^{m - \ell - 1}} \phi_{r}(y) \biggl( \int_{f^{-1}(y) \cap \omega}  \eta \biggr) \dif y.
\end{split}
\]
By Fubini's theorem, we then get
\[
\int_{Q_{2}^{m}} \eta \wedge \dext ( \zeta \dif x'')  
=\int_{Q_{1}^{m - \ell - 1}} \biggl( \int_{\delta}^{1 - \abs{a}_\infty} \phi_{r}(r,x'') \biggl( \int_{\partial B^{\ell + 1}_r(a) \times \{x''\}}\eta \biggr) \dif r \biggr) \dif x''.
\]
By the assumption on \(\eta\), the innermost integral vanishes for almost every \(r \in (\delta , 1 - \abs{a}_\infty)\) and almost every \(x'' \in Q_{1}^{m - \ell - 1}\).
This implies \eqref{eqIntegralFormRadial}.
\end{proof}

We next observe that smooth functions can be approximated by a linear combination of smooth radial functions.
More generally,

\begin{lemma}
\label{lemma-approx-radial-functions}
Let \(g \in \Smooth^{\infty}_c( Q_{1}^{m} )\).
Then, for every \(\epsilon > 0\), there exists a function \(\zeta \in \Smooth_{c}^{\infty}(Q_{1}^{m})\) such that
\begin{equation}
\label{eqApproximationRadialEstimate}
\norm{g - \zeta}_{\Smooth^{1}(Q_{1}^{m})}
\le \epsilon
\end{equation}
which can be written for every \(x = (x', x'') \in Q_1^{\ell + 1} \times Q_ 1^{m - \ell - 1}\) in the form
\[
\zeta(x){}
= \sum_{j\in J}{\psi(\abs{x'-a_j}) \, g(a_{j}, x'')}\text{,}
\]
where \(J\) is a finite set, \(\psi \in \Smooth_{c}^{\infty}(0, \infty)\), \((a_j)_{j\in J}\subset Q_{1}^{\ell+1}\)
and each term of the sum has compact support in \(Q_{1}^{m}\).
\end{lemma}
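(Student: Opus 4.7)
The plan is to approximate $g$ in the $\Smooth^1$ norm by a partial convolution in the variable $x'$ against a \emph{radial annular} mollifier, and then discretize this convolution as a Riemann sum to obtain a function of the prescribed form. Since $g$ has compact support in $Q_1^m$, I fix $\eta > 0$ small enough that
\[
\supp g \subset Q_{1-3\eta}^{\ell+1} \times Q_{1-\eta}^{m-\ell-1}.
\]

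First, I pick $\psi_0 \in \Smooth_c^\infty((0, \eta))$ with $\int_{\R^{\ell+1}} \psi_0(\abs{y}) \, dy = 1$; such a choice amounts to rescaling any nontrivial bump supported in $(0, \eta)$. Because $\psi_0$ vanishes on a neighborhood of $0$, the function $y \mapsto \psi_0(\abs{y})$ extends smoothly to all of $\R^{\ell+1}$, so the partial convolution
\[
G(x', x'') \vcentcolon= \int_{\R^{\ell+1}} \psi_0(\abs{x'-a}) \, g(a, x'') \, da
\]
is smooth and, by the standard mollifier estimate applied to $g$ and its first derivatives, satisfies $\norm{G - g}_{\Smooth^1(Q_1^m)} \to 0$ as the support of $\psi_0$ shrinks to $\{0\}$ within $(0, \eta)$. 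I fix $\psi_0$ so that $\norm{G - g}_{\Smooth^1(Q_1^m)} \le \epsilon/2$.

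Next, for $h > 0$ I let $J = \{j \in \Z^{\ell+1} : a_j \vcentcolon= hj \in Q_1^{\ell+1}\}$ and set
\[
\zeta(x) \vcentcolon= h^{\ell+1} \sum_{j \in J} \psi_0(\abs{x'-a_j}) \, g(a_j, x''),
\]
which is the Riemann sum associated to the integral defining $G$. The integrand $a \mapsto \psi_0(\abs{x'-a}) \, g(a, x'')$ is smooth in $a$, with all derivatives bounded uniformly in $(x', x'')$, and differentiation in $x'$ or $x''$ commutes both with the integral and with the finite sum. Standard Riemann sum estimates for smooth integrands on a bounded domain thus yield $\norm{G - \zeta}_{\Smooth^1(Q_1^m)} \le Ch$. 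Choosing $h$ small enough and setting $\psi \vcentcolon= h^{\ell+1} \psi_0 \in \Smooth_c^\infty(0, \infty)$ puts $\zeta$ into the form required by the lemma with $\norm{g - \zeta}_{\Smooth^1(Q_1^m)} \le \epsilon$. The compact support of each term is easy to verify: if $j \in J$ satisfies $g(a_j, \cdot) \not\equiv 0$, then $a_j \in Q_{1-3\eta}^{\ell+1}$, so $\psi(\abs{x'-a_j})$ vanishes outside $Q_{1-2\eta}^{\ell+1} \subset Q_1^{\ell+1}$ while $g(a_j, x'')$ vanishes outside $Q_{1-\eta}^{m-\ell-1}$.

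The step requiring the most care is the $\Smooth^1$ convergence of the Riemann sum. It is essential here that $\psi_0$ vanish in a neighborhood of $0$, for otherwise $y \mapsto \psi_0(\abs{y})$ would in general only be Lipschitz at the origin and the $a$-derivatives of the integrand would blow up near $a = x'$, destroying the uniform Riemann sum estimate. This also explains, and is compatible with, the requirement $\psi \in \Smooth_c^\infty(0, \infty)$ imposed on the representation.
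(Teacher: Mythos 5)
Your proof is correct and follows essentially the same route as the paper's: a partial convolution in the $x'$ variable with a radial kernel vanishing near the origin (so that $y \mapsto \psi_0(\abs{y})$ is smooth and the resulting $\psi$ lies in $\Smooth_c^{\infty}(0,\infty)$), followed by a Riemann-sum discretization in the convolution variable and a support check. The only differences are cosmetic — you fix the annular kernel directly at the final scale instead of rescaling a fixed mollifier by $\delta$, and you quote a quantitative $O(h)$ Riemann-sum bound where the paper argues via uniform continuity of the integrand.
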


\begin{proof}[Proof of Lemma~\ref{lemma-approx-radial-functions}]
We extend \(g\) by zero outside \(Q_1^{m}\).
Let \(\varphi \in \Smooth^{\infty}_c(\R^{\ell + 1})\) be a radial mollifier supported in \(\Ball^{\ell + 1} \setminus \{0\}\).{}
Given \(0 < \delta < 1\), we consider the convolution of \(g\) with respect to the \(x'\) variable that, since \(\supp{g} \subset Q_{1}^m\), we may write for every \(x = (x', x'') \in Q_{1}^{\ell + 1} \times Q_{1}^{m - \ell - 1}\) as
\begin{equation}
\label{eqApproximationRadialConvolution}
g_{\delta}(x)
= \int_{\Ball^{\ell + 1}} \varphi(z') g(x' + \delta z', x'') \dif z'
= \frac{1}{\delta^{\ell+1}} \int_{Q_{1}^{\ell + 1}} \varphi \Bigl(\frac{x' - y'}{\delta}\Bigr) g(y', x'') \dif y'.
\end{equation}
By smoothness of \(g\) in both \(x'\) and \(x''\) variables and by compact support of \( g \), we have
\[{}
g_{\delta} \to g
\quad \text{and} \quad{}
Dg_{\delta} \to Dg
\quad \text{uniformly in \(Q_{1}^{m}\).}
\]
Given \(\epsilon>0\), we then take \(0 < \delta \le 1\) such that 
\begin{equation}
\label{eqJacobian-727}
\norm{g - g_{\delta}}_{\Smooth^{1}(Q_{1}^{m})}
\le \frac{\epsilon}{2}.
\end{equation}

Since the integrand of the second integral of \eqref{eqApproximationRadialConvolution} over \( Q_{1}^{\ell + 1}\) is uniformly continuous, we may approximate \(g_{\delta}\) uniformly using Riemann sums.
The same reasoning applies to \(Dg_{\delta}\) using the derivative of the same integral representation.
More precisely, given \( \tau = 1/k \) with \( k \in \N_* \)\,, let \(J_{\tau}\) be the subset of points \(h\in \Z^{\ell+1}\) such that 
\begin{equation*}
(0, \tau)^{\ell + 1} + \tau h \subset Q_{1}^{\ell + 1}.
\end{equation*}
Taking the function \( \zeta_{\tau} \colon Q_{1}^{m} \to \R \) defined by
\[
\zeta_{\tau}(x){}
= \frac{\tau^{\ell+1}}{\delta^{\ell+1}} \sum_{h \in J_{\tau}} \varphi \Bigl(\frac{x' - \tau h}{\delta}\Bigr) g(\tau h, x''),
\]  
there exists \(\tau\) sufficiently small such that, for every  \(x \in Q_{1}^{m}\), 
\[
\abs{g_{\delta}(x) - \zeta_{\tau}(x)} \le \frac{\epsilon}{4}
\quad \text{and} \quad 
\abs{Dg_{\delta}(x) - D\zeta_{\tau}(x)} \le \frac{\epsilon}{4}.
\]
Then, by \eqref{eqJacobian-727} and the triangle inequality, \(\zeta_{\tau}\) satisfies estimate \eqref{eqApproximationRadialEstimate}.

Since \(\varphi\) is radial and has compact support in \(\Ball^{\ell + 1} \setminus \{0\}\),  there exists
\(\psi \in \Smooth_{c}^{\infty}(0, \infty)\) that satisfies, for every \(x' \in \R^{\ell + 1}\),
\[
\psi(\abs{x'}) = \frac{\tau^{\ell+1}}{\delta^{\ell+1}} \varphi\Bigl(\frac{x'}{\delta} \Bigr).
\]
Hence, for every \( h \in J_\tau\), we may rewrite each term in the definition of \(\zeta_{\tau}\) as
\begin{equation}
\label{eqApproximationSumTerm}
\frac{\tau^{\ell+1}}{\delta^{\ell+1}} \varphi \Bigl(\frac{x' - \tau h}{\delta}\Bigr) g(\tau h, x'') = \psi(|x'-\tau h|)g(\tau h, x'').
\end{equation}
We are left to verify the property on the supports of these functions. To this end, take \(0 < \lambda < 1\) such that \( \supp{g} \subset Q_{1 - \lambda}^{m} \). 
Note that when \( \tau h \not\in Q_{1 - \lambda}^{\ell + 1} \), the function in \eqref{eqApproximationSumTerm} is zero.
On the other hand, if \( \tau h \in Q_{1 - \lambda}^{\ell + 1} \), the function in \eqref{eqApproximationSumTerm} is supported in \( B_{\delta}^{\ell + 1}(\tau h) \times Q_{1 - \lambda}^{m - \ell - 1} \).
By choosing \( \delta \le \lambda \), this set is contained in \( Q_{1}^{m} \).
We deduce in particular that \( \zeta_{\tau} \) is supported in \( Q_{1}^{m} \).
\end{proof}

Based on Lemmas~\ref{lemmaIntegralFormRadial} and~\ref{lemma-approx-radial-functions}, we can establish Proposition~\ref{proposition_char_closed_diff_form-Fuglede-Euclidean}:

\resetconstant{}
\begin{proof}[Proof of Proposition~\ref{proposition_char_closed_diff_form-Fuglede-Euclidean}]
Take \(\eta \in \Lebesgue^1 (\Omega; \Forms^{\ell})\)  satisfying the assumption. Since the conclusion has a local nature and is invariant by translation and dilation of the domain, we can assume without loss of generality that \( \Omega = Q_{2}^{m} \) and shall prove that \(\eta\) is a closed differential form in \(Q_{1}^{m}\).

Fix a point \(a\in Q_{1}^{\ell+1}\).
By Tonelli's theorem and a linear change of variables,
\[
\int_{Q_{1}^{m - \ell - 1}} \int_{0}^{1} \biggl( \int_{\partial B^{\ell + 1}_r(a)}  w(x', x'') \dif\cH^{\ell}(x')  \biggr) \dif r \dif x''
\le  \int_{Q_2^m} w < \infty.
\]
We then have for almost every \(r\in (0, 1)\) and almost every \(x''\in Q_{1}^{m-\ell-1}\),
\begin{equation}\label{eq1011}
\int_{\partial B^{\ell + 1}_r(a) \times \{x''\}} w 
= \int_{\partial B^{\ell + 1}_r(a)}  w(x', x'') \dif\cH^{\ell}(x') 
< \infty.
\end{equation}
By assumption on \(\eta\), we deduce that \(\eta\) is summable in
\(\partial B^{\ell + 1}_r(a) \times \{x''\}\) and satisfies
\[
\int_{\partial B^{\ell + 1}_r(a) \times \{x''\}}\eta=0.
\]

Given \(g\in \Smooth^{\infty}_c(Q_{1}^{m})\) and \(\epsilon > 0\), by Lemma~\ref{lemma-approx-radial-functions} there exists \(\zeta \in \Smooth_{c}^{\infty}(Q_{1}^{m})\) such that
\begin{equation}
\label{eqJacobian-801}
\norm{g - \zeta}_{\Smooth^{1}(Q_{1}^{m})} \le \epsilon
\end{equation}
and \(\zeta\) is the finite sum of smooth functions with compact support in \(Q_{1}^{m}\) 
of the form
\[
\psi(|x'-a|)g(a,x'')
\]
where  \(a\) is a point in \(Q_{1}^{\ell+1}\) and \(\psi \in C^{\infty}_c(0, \infty)\).
By Lemma~\ref{lemmaIntegralFormRadial} applied to each function of this form and by linearity of the integral,
\[{}
\int_{Q_{1}^{m}}\eta \wedge \dext(\zeta\dif x'') = 0.
\]
In view of \eqref{eqJacobian-801}, we get
\[
 \biggabs{\int_{Q_{1}^{m}} \eta \wedge \dext(g \dif x'')}
 \le \Cl{cteJacobian-811} \epsilon,
\]
for some constant \(\Cr{cteJacobian-811} > 0\) independent of \(\epsilon\).
Since \(\epsilon > 0\) is arbitrary, this implies
\begin{equation}\label{eq1091}{}
\int_{Q_{1}^{m}} \eta \wedge \dext(g \dif x'') = 0.
\end{equation}
Finally, given \(\alpha \in \Smooth_{c}^{\infty}(Q_{1}^{m}; \Forms^{m - \ell - 1})\), we write
\[{}
\alpha = \sum_{I}{g_{I} \dif x_{I}},
\]
where \(g_{I} \in \Smooth_{c}^{\infty}(Q_{1}^{m})\) and the sum is taken over all multi-indices \(I = (i_{1}, \ldots, i_{m - \ell - 1})\) with \(1 \le i_{1} < \ldots < i_{m - \ell - 1} \le m\).
By \eqref{eq1091} applied to every \(g_I\) (with the variable \(x''\) replaced by \(x_I\)) and by linearity of the integral, we obtain
\[{}
\int_{Q_{1}^{m}} \eta \wedge \dext\alpha = 0,
\]
which proves that \(\ell\)-differential form \(\eta\) is closed in the sense of currents in \(Q_{1}^{m}\).
\end{proof}

More generally, to identify closed differential forms on manifolds we need a version of Proposition~\ref{proposition_char_closed_diff_form-Fuglede-Euclidean} adapted to this setting:

\begin{proposition}
\label{proposition_char_closed_diff_form-Fuglede}
Let \(\eta \in \Lebesgue^1 (\manfV; \Forms^\ell)\). 
If
there exists a summable function \(w \colon  \manfV \to [0, + \infty]\) such that, for every Lipschitz map \(\gamma \colon  \cBall^{\ell+1} \to \manfV\) with \(\gamma\vert_{\Sphere^\ell}\in \Fuglede_{w}(\Sphere^\ell ; \manfV)\), the \(\ell\)-differential form \((\gamma\vert_{\Sphere^\ell})^\sharp \eta \) is summable in \(\Sphere^{\ell}\) and 
 \[
  \int_{\Sphere^\ell} (\gamma\vert_{\Sphere^\ell})^\sharp \eta = 0 \text{,}
 \]
then \(\eta\) is closed in the sense of currents in \(\manfV\). 
\end{proposition}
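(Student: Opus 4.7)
The plan is to reduce Proposition~\ref{proposition_char_closed_diff_form-Fuglede} to its Euclidean counterpart, Proposition~\ref{proposition_char_closed_diff_form-Fuglede-Euclidean}, by transporting the data to charts and gluing with a partition of unity.

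First, I would cover \(\manfV\) by finitely many charts \((U_i, \varphi_i)_{i \in I}\), where each \(\varphi_i\) is a smooth diffeomorphism from a neighborhood of \(\overline{U_i}\) in \(\manfV\) onto a Lipschitz open set \(\Omega_i \subset \R^m\). On each \(\Omega_i\) I would introduce the pushed-forward form \(\eta_i \vcentcolon= (\varphi_i^{-1})^\sharp \eta\), which belongs to \(\Lebesgue^{1}(\Omega_i; \Forms^\ell)\) by the smooth change-of-variables formula, together with the pushed-forward detector \(w_i \vcentcolon= w \compose \varphi_i^{-1}\), summable on \(\Omega_i\) since \(\varphi_i\) extends smoothly past \(\overline{U_i}\).

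Next, I would verify that each \(\eta_i\) meets the hypothesis of Proposition~\ref{proposition_char_closed_diff_form-Fuglede-Euclidean} with detector \(w_i\). Fix a standard disk \(\Disk^{\ell+1} = r T(\cBall^{\ell+1}) + \xi \subset \Omega_i\) parallel to the coordinate axes for which \(w_i|_{\partial \Disk^{\ell+1}}\) is summable, and let \(\iota \colon \cBall^{\ell+1} \to \Disk^{\ell+1}\) be the affine parametrization \(\iota(x) = r T(x) + \xi\). Then \(\gamma \vcentcolon= \varphi_i^{-1} \compose \iota \colon \cBall^{\ell+1} \to \manfV\) is Lipschitz, and the area formula applied to the scaled isometry \(\iota|_{\Sphere^\ell}\) gives
\[
\int_{\Sphere^\ell} w \compose \gamma \dif \cH^\ell = r^{-\ell} \int_{\partial \Disk^{\ell+1}} w_i \dif \cH^\ell < \infty,
\]
so \(\gamma|_{\Sphere^\ell} \in \Fuglede_w(\Sphere^\ell; \manfV)\). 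Naturality of pullback under composition with a smooth diffeomorphism and a smooth embedding, followed by the change of variables for integration of \(\ell\)-forms on \(\ell\)-dimensional submanifolds, yields
\[
\int_{\Sphere^\ell} (\gamma|_{\Sphere^\ell})^\sharp \eta = \int_{\Sphere^\ell} \iota^\sharp \eta_i = \int_{\partial \Disk^{\ell+1}} \eta_i,
\]
and the hypothesis of Proposition~\ref{proposition_char_closed_diff_form-Fuglede} forces this integral to vanish. Proposition~\ref{proposition_char_closed_diff_form-Fuglede-Euclidean} then gives that \(\eta_i\) is closed in \(\Omega_i\), whence \(\eta\) is closed in the sense of currents on \(U_i\).

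Finally, I would globalize with a smooth partition of unity \((\psi_i)_{i \in I}\) subordinate to \((U_i)_{i \in I}\): for every \(\alpha \in \Smooth^{\infty}_c(\manfV; \Forms^{m-\ell-1})\), writing \(\alpha = \sum_i \psi_i \alpha\) with each \(\psi_i \alpha\) compactly supported in \(U_i\) yields
\[
\int_{\manfV} \eta \wedge \dext \alpha = \sum_{i \in I} \int_{U_i} \eta \wedge \dext(\psi_i \alpha) = 0.
\]
The main delicate point will be justifying the identity \(\int_{\Sphere^\ell} (\gamma|_{\Sphere^\ell})^\sharp \eta = \int_{\partial \Disk^{\ell+1}} \eta_i\) for the merely \(\Lebesgue^1\) form \(\eta\). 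Since \(\varphi_i^{-1}\) is a smooth diffeomorphism, the identity \((\varphi_i^{-1} \compose \iota)^\sharp \eta = \iota^\sharp \eta_i\) holds \(\cH^\ell\)-almost everywhere on \(\Sphere^\ell\) (because \(\iota\) pulls back \(\cH^\ell\)-negligible subsets of \(\partial\Disk^{\ell+1}\) to \(\cH^\ell\)-negligible subsets of \(\Sphere^\ell\) and \(\eta_i\) is defined \(\cH^m\)-almost everywhere), after which the area formula for the scaled isometry \(\iota|_{\Sphere^\ell}\) finishes the identification.
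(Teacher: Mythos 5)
Your proposal is correct and follows essentially the same route as the paper's proof: localize with charts and a partition of unity, transport the form and the detector to Euclidean space, verify the hypothesis of Proposition~\ref{proposition_char_closed_diff_form-Fuglede-Euclidean} by composing the chart inverse with the affine parametrization of a standard disk parallel to the axes, and conclude closedness chartwise. The only cosmetic difference is your pushed-forward detector \(w \compose \varphi_i^{-1}\) without a Jacobian weight (the paper takes \(w \compose h_j^{-1}\,\Jacobian{m}{h_j^{-1}}\) because its charts map onto all of \(\R^m\)), which is legitimate since your charts extend smoothly past \(\overline{U_i}\) so the Jacobian factor is bounded.
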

\begin{proof}
When \(\manfV\) is an open set in \(\R^m\), this statement is a consequence of Proposition~\ref{proposition_char_closed_diff_form-Fuglede-Euclidean}. We can thus assume that \(\manfV\) is a compact manifold without boundary. Hence, there exists a finite covering of \(\manfV\)  by open sets \((V_j)_{j \in J}\)    such that, for every \(j \in J\), there exists a smooth diffeomorphism \(h_j \colon  V_j \to \R^{m}\).
We also consider a smooth partition of unity \((\theta_j)_{j \in J}\) given by nonnegative functions in \(\Smooth^\infty (\manfV)\) such that each \(\theta_j \) is compactly supported in \(V_j\) and \(\sum\limits_{j \in J}{\theta_j} = 1\).
Given \(\eta\) satisfying the assumption and \(\alpha \in \Smooth^\infty_c (\manfV; \Forms^{m - \ell - 1})\), we write
\begin{equation}
\label{eq_closed_dist_manifold_sum}
 \int_{\manfV} \eta \wedge \dext \alpha
 = \sum_{j \in J} \int_{\manfV} \eta \wedge \dext (\theta_j \alpha)
 = \sum_{j \in J} \int_{\R^m} (h_j^{-1})^\sharp \eta \wedge \dext \bigl( (h_j^{-1})^\sharp(\theta_j \alpha) \bigr).
\end{equation}

For every \(j \in J\), we show that \((h_j^{-1})^\sharp \eta\) satisfies the assumptions of Proposition~\ref{proposition_char_closed_diff_form-Fuglede-Euclidean} with \( \Omega = \R^m \).
To this end, let \(w\) be the summable function in \(\manfV\) associated to \(\eta\) and define 
\[{}
w_j =w\compose h_j^{-1} \, \Jacobian{m}{h_j^{-1}}.
\] 
Then, by the change of variables formula, \(w_j\) is summable in \(\R^m\). 
Moreover, let  \(\gamma \colon  \cBall^{\ell+1} \to \R^m\) be a Lipschitz map such that \(\gamma\vert_{\Sphere^{\ell}} \in \Fuglede_{w_j}(\Sphere^{\ell}; \R^m)\). 
Since \(\Jacobian{m}{h_{j}^{-1}}\) is bounded from below by a positive constant on the compact set \(\gamma(\Sphere^\ell)\), we have 
\(h_j^{-1}\compose\gamma\vert_{\Sphere^\ell} \in \Fuglede_{w}(\Sphere^{\ell};\manfV)\). 
Note that
\[
(\gamma|_{\Sphere^{\ell}})^\sharp (h_j^{-1})^\sharp \eta
= (h_j^{-1} \compose \gamma|_{\Sphere^{\ell}})^\sharp \eta.
\]
Hence, by assumption on \(\eta\), the \( \ell \)-differential form \( (\gamma|_{\Sphere^{\ell}})^\sharp (h_j^{-1})^\sharp \eta \) is summable in \( \Sphere^\ell \) and
\begin{equation}
\label{eqJacobian-878}
  \int_{\Sphere^\ell} (\gamma|_{\Sphere^{\ell}})^\sharp (h_j^{-1})^\sharp \eta
  = \int_{\Sphere^\ell} (h_j^{-1} \compose \gamma|_{\Sphere^{\ell}})^\sharp \eta = 0.
\end{equation}

Given \(r>0\), \(\xi \in \R^m\) and an isometry \(T \colon  \R^{\ell + 1} \to \R^{m}\) that maps the canonical basis of \(\R^{\ell+1}\) to a subset of the canonical basis of \(\R^m\), we wish to apply \eqref{eqJacobian-878} to \(\gamma \colon \cBall^{\ell + 1} \to \R^m\) defined by \(\gamma(x) =  r T(x)+\xi\).
Observe in particular that \(\Disk^{\ell + 1} \vcentcolon= \gamma(\cBall^{\ell + 1})\) is a standard \((\ell + 1)\)-dimensional disk parallel to the coordinate axes.
Since \(T\) is an isometry, we have
\[
\int_{\partial\Disk^{\ell + 1}} w_j
= r^\ell
\int_{\Sphere^{\ell}} w_j \compose \gamma|_{\Sphere^\ell}
\quad \text{and} \quad 
\int_{\partial\Disk^{\ell + 1}}  (h_j^{-1})^\sharp \eta
= \int_{\Sphere^\ell} (\gamma|_{\Sphere^{\ell}})^\sharp (h_j^{-1})^\sharp \eta. 
\]
It then follows that if \(w_j|_{\partial\Disk^{\ell + 1}}\) is summable in \(\partial\Disk^{\ell + 1}\), then \(\gamma\vert_{\Sphere^{\ell}} \in \Fuglede_{w_j}(\Sphere^{\ell}; \R^m)\) and, by \eqref{eqJacobian-878}, we deduce that 
\[
\int_{\partial\Disk^{\ell + 1}} (h_j^{-1})^\sharp \eta = 0.
\]
The assumptions of Proposition~\ref{proposition_char_closed_diff_form-Fuglede-Euclidean}  are then satisfied by the \(\ell\)-differential form \((h_j^{-1})^\sharp \eta\) on the open set \(\Omega=\R^m\) using the summable function \(w_j\).
We deduce therefrom that \((h_{j}^{-1})^\sharp\eta\) is closed in the sense of currents in \(\R^{m}\).{}
We thus get
\[
 \int_{\R^m} (h_j^{-1})^\sharp \eta \wedge \dext\bigl( (h_j^{-1})^\sharp(\theta_j \alpha)\bigr) = 0.
\]
The conclusion now follows from \eqref{eq_closed_dist_manifold_sum}.
\end{proof}

\section{Genericity of the Hurewicz degree}
\label{sectionHurewiczGeneric}

In order to apply Proposition~\ref{proposition_char_closed_diff_form-Fuglede-Euclidean} to the \(\ell\)-differential form \(\eta \vcentcolon= u^\sharp \varpi\), where \(u\in \Sobolev^{1,\ell}(\Omega;\manfN)\),
we begin by establishing the genericity of the Hurewicz degree based on Fuglede maps:

\begin{lemma}\label{lemma-Fuglede-Hurewicz-VMO}
Let \(\varpi\in \Smooth^{\infty}(\manfN; \Forms^\ell)\) be a closed differential form.
Given \(u \in \Sobolev^{1, \ell}(\manfV; \manfN)\), there exists an \(\ell\)-detector \(w \colon  \manfV \to [0, +\infty]\) such that, for every \(\gamma \in \Fuglede_{w}(\Sphere^{\ell}; \manfV)\),  the \(\ell\)-differential form \(\gamma^{\sharp}(u^{\sharp}\varpi)\) is summable on \(\Sphere^\ell\) and the Hurewicz degree of the \(\VMO\) map \(u\compose \gamma\) satisfies
\begin{equation}\label{eq803}
\hur_{\varpi}{(u \compose \gamma)}
= \int_{\Sphere^{\ell}}\gamma^{\sharp}(u^{\sharp}\varpi).
\end{equation}
\end{lemma}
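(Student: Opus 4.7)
The approach is to produce the desired detector as the sum of two pre-existing detectors, and then leverage the integral representation of the Hurewicz degree for Sobolev maps on $\Sphere^\ell$. Applying Proposition~\ref{corollaryCompositionSobolevFuglede} to $u \in \Sobolev^{1,\ell}(\manfV)$ (componentwise, using the embedding $\manfN \subset \R^\nu$) yields a summable function $w_1 \colon \manfV \to [0,+\infty]$ with the property that, for every $\gamma \in \Fuglede_{w_1}(\Sphere^\ell; \manfV)$, one has $u \compose \gamma \in \Sobolev^{1,\ell}(\Sphere^\ell; \manfN)$ together with the pointwise chain rule
\[
D(u \compose \gamma)(x) = Du(\gamma(x))[D\gamma(x)] \quad \text{for $\cH^\ell$-almost every $x \in \Sphere^\ell$.}
\]
Separately, Proposition~\ref{propositionSobolevVMO>1} (or Remark~\ref{remarkDetectorsCasep=1} when $\ell = 1$) provides an $\ell$-detector $w_2$ for $u$. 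The candidate detector in the statement is then $w \vcentcolon= w_1 + w_2$, which is summable and remains an $\ell$-detector for $u$.

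For any $\gamma \in \Fuglede_w(\Sphere^\ell; \manfV)$, one thus has simultaneously $u \compose \gamma \in \Sobolev^{1,\ell}(\Sphere^\ell; \manfN)$ and, via Proposition~\ref{propositionExtensionVMOSphereDetector}, $u \compose \gamma \in \VMO(\Sphere^\ell; \manfN)$. The pointwise chain rule for derivatives transfers directly to pullbacks of $\ell$-forms: since the pullback at a point is a fixed algebraic expression in the corresponding differential, the identity $D(u \compose \gamma) = Du \compose \gamma \cdot D\gamma$ implies
\[
(u \compose \gamma)^\sharp \varpi = \gamma^\sharp(u^\sharp \varpi) \quad \text{$\cH^\ell$-almost everywhere on $\Sphere^\ell$.}
\]
Summability of this $\ell$-form follows at once from the bound $|(u \compose \gamma)^\sharp \varpi| \le C \norm{\varpi}_{\Lebesgue^\infty} |D(u \compose \gamma)|^\ell$ and $D(u \compose \gamma) \in \Lebesgue^\ell(\Sphere^\ell)$.

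To conclude \eqref{eq803}, I would invoke the integral representation \eqref{eqHurewicz-123} of Proposition~\ref{propositionHurewiczVMO}, which asserts that for every $v \in \Sobolev^{1,\ell}(\Sphere^\ell; \manfN)$,
\[
\hur_\varpi(v) = \int_{\Sphere^\ell} v^\sharp \varpi.
\]
Applying this to $v \vcentcolon= u \compose \gamma$ and combining with the pullback chain rule above yields the announced formula. No genuine obstacle is expected: the construction is essentially a bookkeeping of detectors that reduces the statement to the Sobolev case handled abstractly in Chapter~\ref{chapterFuglede}. The only mildly delicate point is confirming the algebraic compatibility of the pullback chain rule in the Sobolev framework, but this collapses to the pointwise identity on derivatives that the detector $w_1$ already guarantees.
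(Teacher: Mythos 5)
Your proof is correct and follows essentially the same route as the paper: obtain \(u \compose \gamma \in \Sobolev^{1,\ell}(\Sphere^{\ell};\manfN)\) together with the chain rule via Proposition~\ref{corollaryCompositionSobolevFuglede}, deduce \((u\compose\gamma)^{\sharp}\varpi = \gamma^{\sharp}(u^{\sharp}\varpi)\) almost everywhere, and conclude with the integral formula \eqref{eqHurewicz-123} of Proposition~\ref{propositionHurewiczVMO}. The only difference is bookkeeping on the detector: the paper keeps the single summable function from Proposition~\ref{corollaryCompositionSobolevFuglede} and invokes the embedding \(\Sobolev^{1,\ell}(\Sphere^{\ell};\manfN)\subset\VMO(\Sphere^{\ell};\manfN)\) to call it an \(\ell\)-detector, whereas you add the detector from Proposition~\ref{propositionSobolevVMO>1} (Remark~\ref{remarkDetectorsCasep=1} when \(\ell=1\)), which is, if anything, slightly more careful with respect to the polytope-based definition of an \(\ell\)-detector.
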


\begin{proof}
Since \(u \in \Sobolev^{1, \ell}(\manfV; \manfN)\), by Proposition~\ref{corollaryCompositionSobolevFuglede} there exists a summable function \(w \colon  \manfV \to [0, +\infty]\) such that, for every \(\gamma \in \Fuglede_{w}(\Sphere^{\ell}; \manfV)\), we have \(u \compose \gamma \in \Sobolev^{1, \ell}(\Sphere^{\ell}; \manfN)\). 
Since \(\Sobolev^{1, \ell}(\Sphere^{\ell}; \manfN)\subset \VMO(\Sphere^{\ell};\manfN)\), we have in particular that \(w\) is an \(\ell\)-detector. 
Moreover,  almost everywhere in \(\Sphere^{\ell}\), we have
\[
D(u \compose \gamma){}
= (Du \compose \gamma) [D\gamma]
\]
and thus, 
\[
(u \compose \gamma)^{\sharp}\varpi
= \gamma^{\sharp}(u^{\sharp}\varpi).
\]
This identity implies the first equality in \eqref{eq803} due to the integral formula in Proposition~\ref{propositionHurewiczVMO}. 
The second equality follows from the change of variables formula.
\end{proof}

We now have all the ingredients to prove the implication ``\(\Longleftarrow\)'' in Proposition~\ref{propositionJacobianDegreeZero-Fuglede}:

\begin{proof}[Proof of Proposition~\ref{propositionJacobianDegreeZero-Fuglede} ``\(\Longleftarrow\)'']
We denote by \(w_{1}\) the \(\ell\)-detector given by Lemma~\ref{lemma-Fuglede-Hurewicz-VMO} and by \(w_2\) the \(\ell\)-detector given by the assumption. 
Take \(\widetilde{w} = w_1 + w_2\) and let \(\gamma \colon \cBall^{\ell+1}\to \manfV\) be a Lipschitz map such that \(\gamma|_{\Sphere^\ell} \in \Fuglede_{\tilde w}(\Sphere^{\ell}; \manfV)\).
Since \(\widetilde{w} \ge w_1\), we have
\begin{equation}
\label{eqJacobian-969}
\hur_{\varpi}{(u \compose \gamma)}
= \int_{\Sphere^{\ell}}\gamma^{\sharp}(u^{\sharp}\varpi).
\end{equation}
Since \(\widetilde{w}\geq w_2\), we deduce from \eqref{eqJacobian-969} that
\[
\int_{\Sphere^{\ell}}\gamma^{\sharp}(u^{\sharp}\varpi)
= 0.
\]
The conclusion thus follows from Proposition~\ref{proposition_char_closed_diff_form-Fuglede} applied to the \(\ell\)-differential form \(\eta \vcentcolon= u^\sharp \varpi\).
\end{proof}

We now turn ourselves to the direct implication ``\(\Longrightarrow\)'' in Proposition~\ref{propositionJacobianDegreeZero-Fuglede}. 
To prove it, we begin with a counterpart of Proposition~\ref{lemmaModulusLebesguesequence} for differential forms:

\begin{proposition}
	\label{propositionFugledeForms}
	If \((\eta_{j})_{j \in \N}\) is a sequence of differential forms in \(\Lebesgue^{p}(\manfV; \Forms^{\ell})\) such that
	\[{}
	\eta_{j_{i}} \to \eta{}
	\quad \text{in \(\Lebesgue^{p}(\manfV; \Forms^{\ell})\),}
	\]
	then there exist a summable function \(w \colon  \manfV \to [0, +\infty]\) and a subsequence \((\eta_{j_{i}})_{i \in \N}\) such that, for every Riemannian manifold \(\manfA\) and every \(\gamma \in \Fuglede_{w}(\manfA; \manfV)\), we have that \(\gamma^{\sharp}\eta\) and  \((\gamma^{\sharp}\eta_{j_{i}})_{i \in \N}\) are contained in \(\Lebesgue^{p}(\manfA; \Forms^{\ell})\) and
	\begin{equation}
    \label{eqJacobian-976}
    \gamma^{\sharp}\eta_{j} \to \gamma^{\sharp}\eta{}
	\quad \text{in \(\Lebesgue^{p}(\manfA; \Forms^{\ell})\).}
	\end{equation}
\end{proposition}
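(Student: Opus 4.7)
The plan is to reduce the statement to the scalar case of Proposition~\ref{lemmaModulusLebesguesequence} by a componentwise analysis, exploiting the fact that the pullback $\gamma^\sharp$ acts linearly on the coefficients of $\eta$ with factors controlled by the Lipschitz constant $|\gamma|_\Lip$. First I would treat the case $\manfV = \Omega$, a Lipschitz open subset of $\R^m$, writing $\eta_j = \sum_{|I|=\ell} \eta_{j,I} \dif x_I$ in the canonical basis of $\ell$-covectors. The hypothesis then gives $\eta_{j,I} \to \eta_I$ in $\Lebesgue^p(\Omega)$ for every $\ell$-multi-index $I \subset \{1,\dots,m\}$.

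Applying Proposition~\ref{lemmaModulusLebesguesequence} successively to each of the (finitely many) scalar sequences $(\eta_{j,I})_{j\in\N}$ and taking a diagonal subsequence, I obtain a single summable function $w \colon \Omega \to [0,+\infty]$ and a subsequence $(\eta_{j_i})_{i \in \N}$ such that, for every Riemannian manifold $\manfA$ and every $\gamma \in \Fuglede_w(\manfA; \Omega)$,
\[
\eta_{j_i, I} \compose \gamma \to \eta_I \compose \gamma \quad \text{in } \Lebesgue^p(\manfA)
\]
for every multi-index $I$, and moreover the pointwise bound $|\eta_I(\gamma(x))|^p \le w \compose \gamma(x)$ holds (up to enlarging $w$) so that $\eta_I \compose \gamma \in \Lebesgue^p(\manfA)$. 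Writing the pullback as
\[
\gamma^\sharp \eta = \sum_{|I|=\ell} (\eta_I \compose \gamma)\, \gamma^\sharp(\dif x_I),
\]
the coefficients of $\gamma^\sharp(\dif x_I)$ in any local frame on $\manfA$ are $\ell \times \ell$ minors of $D\gamma$, hence bounded almost everywhere by $C_\ell |\gamma|_\Lip^\ell$. This yields the pointwise estimate
\[
|\gamma^\sharp(\eta_{j_i} - \eta)|
\le C_\ell |\gamma|_\Lip^\ell \sum_{|I|=\ell} |\eta_{j_i, I}\compose \gamma - \eta_I \compose \gamma|,
\]
and integration together with the componentwise $\Lebesgue^p$~convergence gives \eqref{eqJacobian-976}. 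The same estimate applied to $\eta$ itself (and to each $\eta_{j_i}$) shows that $\gamma^\sharp \eta$ and the $\gamma^\sharp \eta_{j_i}$ belong to $\Lebesgue^p(\manfA; \Forms^\ell)$.

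When $\manfV = \manfM$ is a compact Riemannian manifold, I would argue exactly as in the second part of the proof of Proposition~\ref{corollaryCompositionSobolevFuglede}: fix a finite atlas $(U_i, \varphi_i)_{i \in I}$ with $\varphi_i$ a smooth biLipschitz diffeomorphism onto a Lipschitz open set of $\R^m$, apply the Euclidean case to each pushforward $(\varphi_{i}^{-1})^\sharp \eta_j$ in $\varphi_i(U_i)$ to obtain summable functions $w_i$ and a common subsequence, and then assemble a global $w$ on $\manfM$ by setting $w \vcentcolon= \sum_{i \in I} \widetilde w_i$ where $\widetilde w_i \vcentcolon= (w_i \compose \varphi_i) \chi_{U_i}$; the chain rule for pullbacks transfers the convergence statement from each chart to the corresponding restriction $\gamma^{-1}(U_i)$, and finiteness of the covering yields the conclusion on all of $\manfA$.

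Since the componentwise reduction is essentially a rewriting of Proposition~\ref{lemmaModulusLebesguesequence}, no step presents a genuine obstacle; the only point that deserves care is to track that the bound $|\gamma^\sharp(\dif x_I)| \le C_\ell |\gamma|_\Lip^\ell$ holds \emph{uniformly} in the subsequence index $i$, which it does because it depends only on $\gamma$.
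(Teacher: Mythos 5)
Your proof is correct and follows essentially the same route as the paper: a componentwise reduction to Proposition~\ref{lemmaModulusLebesguesequence} for the coefficient functions, combined with the almost-everywhere bound on the minors of \(D\gamma\) by \(\abs{\gamma}_{\Lip}^{\ell}\), and a finite chart covering for the manifold case. The only cosmetic difference is that the paper localizes the forms themselves via a partition of unity \((\theta_l)_{l}\) pushed to \(\R^m\) and sums the localized pieces, whereas you glue the convergence over the sets \(\gamma^{-1}(U_i)\); both variants are equivalent.
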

\begin{proof}
We first consider the case when \(\manfV = \manfM\) is a compact manifold without boundary.
	Take a  finite family of open subsets \((V_{l})_{l \in J}\) that cover \(\manfM\) and, for each \(l \in J\), a diffeomorphism \(h_{l} \colon  V_{l} \to \R^{m}\).
    Let \((\theta_{l})_{l \in J}\) be a partition of unity subordinated to this covering.
	For each \(l \in J\),{}
	 since \(\theta_{l}\eta_j\) is supported in \(V_{l}\), we can write the differential form \((h_{l}^{-1})^{\sharp}(\theta_{l}\eta_{j})\) in \(\R^m\) as
	 \[{}
	 (h_{l}^{-1})^{\sharp}(\theta_{l}\eta_{j})
	 = \sum_{I} g_{l, j, I} \dif x_{I}\,,
	 \]
	 where \(g_{l, j, I} \in \Lebesgue^{p}(\R^m)\) vanishes outside \(h_l(\supp{\theta_{l}})\) and the sum is taken over all multi-indices \(I = (i_{1}, \ldots, i_{\ell})\) with \(1 \le i_{1} < \ldots < i_{\ell} \le m\).{}
	 As \(j \to \infty\), we have
	 \[{}
	 g_{l, j, I} \to g_{l, I}
	 \quad \text{in \(\Lebesgue^{p}(\R^m)\),}
	 \]
	 where the functions \(g_{l, I}\) are the coefficients of the \(\ell\)-differential form \((h_{l}^{-1})^{\sharp}(\theta_{l}\eta)\).{}
	 Applying recursively Proposition~\ref{lemmaModulusLebesguesequence} with respect to \(l \in J\) and the multi-indices \(I\), we find summable functions \(\widetilde w_{l} \colon  \R^m \to [0, +\infty]\) and an increasing sequence of  positive integers \((j_{i})_{i \in \N}\) such that, for every metric measure space \((X, d, \mu)\) and every \(\widetilde\gamma \in \Fuglede_{\tilde w_{l}}(X; \R^m)\), we have that \(g_{l, I} \compose \widetilde\gamma\) and \( (g_{l, j_{i}, I} \compose \widetilde\gamma)_{i \in \N} \) are contained in \(\Lebesgue^{p}(X)\) and satisfy
	 \begin{equation}
   \label{eqJacobian-998}
   g_{l, j_{i}, I} \compose \widetilde\gamma \to g_{l, I} \compose \widetilde\gamma{}
	 \quad \text{in \(\Lebesgue^{p}(X)\)}.
	 \end{equation}
	 For each \(l \in J\), the function
	 \[{}
	 w_{l}
	 \vcentcolon=
	 \begin{cases}
		 \widetilde w_{l} \compose h_{l}
		 & \text{in \(\supp{\theta_{l}}\),}\\
		 0
		 & \text{in \(\manfM \setminus \supp{\theta_{l}}\),}
	 \end{cases}
	 \]
  is summable in \( \manfM \). 
  In particular, the function \(w \vcentcolon= \sum\limits_{l \in J}w_{l}\) is also summable in \(\manfM\).{}
	 
	 Let \(\manfA\) be a Riemannian manifold and let \(\gamma \in \Fuglede_{w}(\manfA; \manfM)\).{}
	We introduce the sets \(X_{l} = \gamma^{-1}(\supp{\theta_{l}})\) and write for every \(l\in J\),
	 \begin{equation}
	 \label{eqJacobian-1042}
  \begin{split}
	 (\gamma|_{X_{l}})^{\sharp}(\theta_{l}\eta_{j_{i}})
	 & = \sum_{I} (\gamma|_{X_{l}})^{\sharp}h_{l}^{\sharp}( g_{l, j_{i}, I} \dif x_{I})\\
	 & = \sum_{I} g_{l, j_{i}, I} \compose h_{l} \compose \gamma|_{X_{l}} \, (h_{l} \compose \gamma|_{X_{l}})^{\sharp}(\dext x_{I}).
	 \end{split}
  \end{equation}
	 Since \(\gamma(X_{l}) \subset \supp{\theta_{l}} \), we have \(h_{l} \compose \gamma|_{X_{l}} \in \Fuglede_{\tilde w_{l}}(X_{l};\R^{m})\), which implies that \(g_{l, I} \compose h_{l} \compose \gamma|_{X_{l}} \) and \((g_{l, j_{i}, I} \compose h_{l} \compose \gamma|_{X_{l}})_{i \in \N}\) are contained in \(\Lebesgue^{p}(X_l)\) and, by \eqref{eqJacobian-998},
 \[
    g_{l, j_{i}, I} \compose h_{l} \compose \gamma|_{X_{l}} 
    \to g_{l, I} \compose h_{l} \compose \gamma|_{X_{l}}
    	 \quad \text{in \(\Lebesgue^{p}(X_l)\)}.
 \] 
  Hence, each term of the finite sum \eqref{eqJacobian-1042} belongs to \(\Lebesgue^{p}(X_l)\) and converges to the corresponding component of \((\gamma|_{X_l})^{\sharp}(\theta_l\eta)\) as \(i \to \infty\).   Thus, \(\gamma^{\sharp}(\theta_{l}\eta_{j_{i}})\) converges to \(\gamma^{\sharp}(\theta_{l}\eta)\) and by summing over \(l\in J\), we finally get  \eqref{eqJacobian-976}.

When \(\manfV = \Omega\) is an open set in \(\R^m\), we can repeat the above argument by considering the trivial covering of \(\Omega\), namely \(J=\{1\}\) and \(V_1=\Omega\). 
Then, \(h_1\) is replaced by the identity map from \(\Omega\) into itself and instead of \(\theta_1\), one can take the constant map equal to \(1\) on \(\Omega\). 
The rest of the proof is the same and we omit it.
\end{proof}

To complete the proof of Proposition~\ref{propositionJacobianDegreeZero-Fuglede}, we also need the following converse to Proposition~\ref{proposition_char_closed_diff_form-Fuglede}:

\begin{proposition}
\label{proposition_char_closed_diff_form-Fuglede-converse}
If \(\eta \in \Lebesgue^1 (\manfV; \Forms^\ell)\) is closed in the sense of currents in \(\manfV\), then there exists a summable function \(w \colon  \manfV \to [0, + \infty]\) such that, for every Lipschitz map \(\gamma \colon  \cBall^{\ell+1} \to \manfV\) with \(\gamma\vert_{\Sphere^\ell}\in \Fuglede_{w}(\Sphere^\ell ; \manfV)\), the \(\ell\)-differential form \((\gamma\vert_{\Sphere^\ell})^\sharp \eta \) is summable in \(\Sphere^{\ell}\) and
 \[
  \int_{\Sphere^\ell} (\gamma\vert_{\Sphere^\ell})^\sharp \eta = 0.
 \]
\end{proposition}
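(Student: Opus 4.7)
The plan is to approximate $\eta$ by smooth closed forms, apply Stokes's theorem to each of them, and then pass to the limit using the generic stability of pullbacks provided by Proposition~\ref{propositionFugledeForms}. Concretely, I would proceed as follows.

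First, I would produce a sequence $(\eta_j)_{j \in \N}$ of smooth \emph{closed} $\ell$-differential forms on $\manfV$ with $\eta_j \to \eta$ in $\Lebesgue^1(\manfV; \Forms^\ell)$. When $\manfV = \Omega$ is a Lipschitz open subset of $\R^m$, this is achieved by extending $\eta$ by zero, convolving with a standard mollifier, and restricting to a slightly smaller open set (since convolution commutes with $\dext$, closedness in the sense of currents is preserved), followed by an exhaustion argument. When $\manfV = \manfM$ is a compact Riemannian manifold, I would rely on the fact that the smoothing operator associated with de Rham regularization — obtained through a partition of unity together with convolution in local charts and combined with a chain homotopy operator $R$ such that $\mathrm{Id} - S = \dext R + R\dext$ — takes closed $\Lebesgue^1$-forms to smooth closed forms and converges to the identity in $\Lebesgue^1$; this is classical and can be quoted directly.

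Next, I would apply Proposition~\ref{propositionFugledeForms} with $p = 1$ to the sequence $(\eta_j)_{j \in \N}$ in order to extract a subsequence $(\eta_{j_i})_{i \in \N}$ and to obtain a summable function $w \colon \manfV \to [0, +\infty]$ with the following property: for every Riemannian manifold $\manfA$ and every $\gamma \in \Fuglede_{w}(\manfA; \manfV)$, the pullbacks $\gamma^\sharp \eta_{j_i}$ and $\gamma^\sharp \eta$ belong to $\Lebesgue^1(\manfA; \Forms^\ell)$ and
\[
\gamma^\sharp \eta_{j_i} \to \gamma^\sharp \eta \quad \text{in } \Lebesgue^1(\manfA; \Forms^\ell).
\]
Applying this with $\manfA = \Sphere^\ell$, we conclude that for any Lipschitz map $\gamma \colon \cBall^{\ell + 1} \to \manfV$ with $\gamma\vert_{\Sphere^\ell} \in \Fuglede_w(\Sphere^\ell; \manfV)$, the form $(\gamma\vert_{\Sphere^\ell})^\sharp \eta$ is summable on $\Sphere^\ell$ and
\[
\int_{\Sphere^\ell} (\gamma\vert_{\Sphere^\ell})^\sharp \eta_{j_i} \longrightarrow \int_{\Sphere^\ell} (\gamma\vert_{\Sphere^\ell})^\sharp \eta.
\]

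The third step is to show that each integral on the left-hand side vanishes. For this, fix $i \in \N$ and observe that since $\eta_{j_i}$ is smooth and $\gamma$ is Lipschitz, the pullback $\gamma^\sharp \eta_{j_i}$ is an $\Lebesgue^\infty$-form on $\cBall^{\ell + 1}$ whose exterior derivative in the sense of currents satisfies the chain rule $\dext(\gamma^\sharp \eta_{j_i}) = \gamma^\sharp(\dext \eta_{j_i}) = 0$. By Stokes's theorem for Lipschitz domains and $\Lebesgue^\infty$-forms with closed exterior derivative (applied after a standard truncation and mollification in the interior to justify integration by parts), we deduce
\[
\int_{\Sphere^\ell} (\gamma\vert_{\Sphere^\ell})^\sharp \eta_{j_i} = \int_{\Ball^{\ell + 1}} \dext(\gamma^\sharp \eta_{j_i}) = 0.
\]
Letting $i \to \infty$ and invoking the convergence from the previous step then yields $\int_{\Sphere^\ell} (\gamma\vert_{\Sphere^\ell})^\sharp \eta = 0$, which is the desired conclusion.

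The main obstacle is the first step, namely the construction of a smooth closed approximating sequence on a compact manifold; the Euclidean case is immediate from mollification, but in the manifold setting one must either quote the de Rham regularization theorem or construct such an approximation by hand using a partition of unity together with a cohomological correction to restore closedness. A secondary, but comparatively mild, difficulty is justifying Stokes's theorem in the third step for merely Lipschitz $\gamma$; this can be handled either by a density argument that approximates $\gamma$ by smooth maps while controlling the boundary integrals via the generic integrability provided by $w$, or by invoking the known extension of Stokes's formula to Lipschitz maps acting on smooth forms.
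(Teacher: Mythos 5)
Your proposal is correct and follows essentially the same route as the paper: approximate the closed $\Lebesgue^1$-form by smooth closed forms (the paper simply cites \cite{Goldshtein-Troyanov-2006}*{Theorem~12.5} and \cite{DeRham} for this, covering both the Euclidean and manifold cases), apply Proposition~\ref{propositionFugledeForms} to obtain the detector $w$ and the generic $\Lebesgue^1$ convergence of pullbacks on $\Sphere^{\ell}$, use Stokes's theorem to see that each $\int_{\Sphere^\ell}(\gamma|_{\Sphere^{\ell}})^\sharp\eta_{j_i}$ vanishes, and pass to the limit. The two "obstacles" you flag are handled in the paper exactly by the cited approximation result and by the standard validity of Stokes's formula for smooth closed forms pulled back by Lipschitz maps, so no further work is needed.
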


\begin{proof}
Since \(\eta\) is closed, there exists a sequence of closed differential forms \((\eta_j)_{j\in \N}\) in \((\Smooth^{\infty}\cap\Lebesgue^1)(\manfV; \Forms^{\ell})\) that converges to \(\eta\) in \(\Lebesgue^1(\manfV; \Forms^\ell)\), see e.g.\@ \citelist{\cite{Goldshtein-Troyanov-2006}*{Theorem~12.5} \cite{DeRham}}.
By Proposition~\ref{propositionFugledeForms}, there exist a summable function \(w\colon \manfV\to [0,+\infty]\) and a subsequence \((\eta_{j_{i}})_{i \in \N}\) such that, for every Lipschitz map \(\gamma\in \Fuglede_{w}(\Sphere^\ell; \manfV)\), we have \(\gamma^\sharp \eta \in \Lebesgue^{1}(\Sphere^{\ell}; \Forms^{\ell})\) and
\begin{equation}
\label{eqJacobian-1069}
\gamma^\sharp\eta_{j_{i}} \to \gamma^\sharp \eta 
\quad \text{in \(\Lebesgue^1 (\Sphere^\ell; \Lambda^{\ell})\).}
\end{equation}
Take a Lipschitz map \(\gamma \colon  \cBall^{\ell+1} \to \manfV\) with \(\gamma\vert_{\Sphere^\ell}\in \Fuglede_{w}(\Sphere^\ell ; \manfV)\).
Then, the \(\ell\)-differential form \((\gamma\vert_{\Sphere^\ell})^\sharp \eta \) is summable in \(\Sphere^{\ell}\). 
Moreover, since every \(\eta_{j_{i}}\) is closed, by Stokes's theorem we have
\[
\int_{\Sphere^\ell} (\gamma|_{\Sphere^{\ell}})^\sharp \eta_{j_{i}} 
= \int_{\Ball^{\ell+1}}\gamma^{\sharp}\dext \eta_{j_{i}}
=0.
\]
As \(i \to \infty\), we then deduce from \eqref{eqJacobian-1069} that 
\[
\int_{\Sphere^\ell} (\gamma|_{\Sphere^{\ell}})^\sharp \eta = 
  \lim_{i \to \infty}{\int_{\Sphere^\ell} (\gamma|_{\Sphere^{\ell}})^\sharp \eta_{j_{i}}} = 0.
  \qedhere
\]
\end{proof}

The direct implication in Proposition~\ref{propositionJacobianDegreeZero-Fuglede} now readily follows:

\begin{proof}[Proof of Proposition~\ref{propositionJacobianDegreeZero-Fuglede} ``\(\Longrightarrow\)'']
Let \(w_{1} \colon  \manfV \to [0, +\infty]\) be the \(\ell\)-detector given by Lemma~\ref{lemma-Fuglede-Hurewicz-VMO}.
Then, for every \(\gamma \in \Fuglede_{w_{1}}(\Sphere^{\ell}; \manfV)\),
\begin{equation}
\label{eq1210}
\hur_{\varpi}{(u \compose \gamma)}
= \int_{\Sphere^{\ell}}\gamma^{\sharp}(u^{\sharp}\varpi).
\end{equation}
Since we assume that \(\Hur_{\varpi}(u) = 0\) or, equivalently,  that \(u^{\sharp}\varpi\) is closed in the sense of currents in \(\manfV\), we may take a summable function \(w_2 \colon \manfV \to [0, +\infty]\) given by Proposition~\ref{proposition_char_closed_diff_form-Fuglede-converse} applied to \(\eta\vcentcolon =u^{\sharp}\varpi\). 
Let \(w = w_1 + w_2\).
Since \(w\geq w_2\), for every Lipschitz map \(\gamma \colon  \cBall^{\ell+1} \to \manfV\) with \(\gamma\vert_{\Sphere^\ell}\in \Fuglede_{w}(\Sphere^\ell ; \manfV)\), by Proposition~\ref{proposition_char_closed_diff_form-Fuglede-converse} the \(\ell\)-differential form  \((\gamma\vert_{\Sphere^\ell})^\sharp (u^{\sharp}\varpi) \) is summable in \(\Sphere^{\ell}\) and 
 \[
  \int_{\Sphere^\ell} (\gamma\vert_{\Sphere^\ell})^\sharp (u^{\sharp}\varpi) = 0.
 \]
Then, as \(w \ge w_1\), by \eqref{eq1210} this implies that
\[
\hur_{\varpi}(u \compose \gamma\vert_{\Sphere^\ell}) = 0
\] 
and gives the desired conclusion.
\end{proof}

From the direct implication in Proposition~\ref{propositionJacobianDegreeZero-Fuglede} one can derive a partial converse to Corollary~\ref{corollaryJacobianFugledeTrivial} under the topological assumption on \(\manfN\) that we introduced in Section~\ref{section_Hurewicz}:

\begin{corollary}
	\label{corollaryHurewiczNul} 
    Let \(m\geq \ell+1\) and assume that the Hurewicz degree identifies null-homotopic maps from \(\Sphere^\ell\) to \(\manfN\).
 	If \(u \in \Sobolev^{1, \ell}(\manfV; \manfN)\) and if, for every closed differential form \(\varpi \in \Smooth^{\infty}(\manfN; \Forms^{\ell})\),
	\[{}
	\Hur_{\varpi}{(u)}
	= 0
	\quad \text{in the sense of currents in \(\manfV\),}
	\]
 then \(u\) is \(\ell\)-extendable.
\end{corollary}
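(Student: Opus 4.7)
The plan is to reduce the \(\ell\)-extendability of \(u\) to checking, for generic Fuglede maps \(\gamma\colon\Sphere^\ell\to\manfV\), that \(\hur_\varpi(u\compose\gamma)\) vanishes for every closed form \(\varpi\), and then to invoke Corollary~\ref{corollaryHurewiczHomotopicConstantConverse}. First, I would observe that \(u \in \VMO^\ell(\manfV;\manfN)\): applying the inclusion \eqref{eqDetector-775} componentwise to the ambient map \(u\colon\manfV\to\R^\nu\) yields an \(\ell\)-detector \(w_0\) for \(u\) in the sense of Definition~\ref{definitionVMOell}.

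Next, I would exploit that, since \(\manfN\) is a compact manifold, the de Rham cohomology \(H^\ell_{\mathrm{dR}}(\manfN)\) is finite-dimensional, so we can fix closed forms \(\varpi_1,\dots,\varpi_N\in\Smooth^\infty(\manfN;\Forms^\ell)\) whose classes span it. By hypothesis \(\Hur_{\varpi_i}(u)=0\) for every \(i\), so the direct implication of Proposition~\ref{propositionJacobianDegreeZero-Fuglede} provides, for each \(i\), an \(\ell\)-detector \(w_i\) such that
\[
\hur_{\varpi_i}(u\compose\gamma|_{\Sphere^\ell})=0
\]
for every Lipschitz \(\gamma\colon\cBall^{\ell+1}\to\manfV\) with \(\gamma|_{\Sphere^\ell}\in\Fuglede_{w_i}(\Sphere^\ell;\manfV)\). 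I would then define \(w \vcentcolon= w_0 + w_1 + \cdots + w_N\), which remains summable and is an \(\ell\)-detector for \(u\); any Fuglede map with respect to \(w\) is simultaneously a Fuglede map with respect to \(w_0\) and each \(w_i\).

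The conclusion would then follow by combining linearity of the \(\VMO\) Hurewicz degree with the fact that it vanishes on exact forms. Indeed, given any closed \(\varpi\in\Smooth^\infty(\manfN;\Forms^\ell)\), one can write \(\varpi=\sum_{i=1}^N c_i\varpi_i+\dext\chi\) with \(\chi\in\Smooth^\infty(\manfN;\Forms^{\ell-1})\); since \(\hur_{\dext\chi}\equiv 0\) on smooth maps (Example~\ref{Examplehurewicz-exact}), the continuous \(\VMO\) extension given by Proposition~\ref{propositionHurewiczVMO} also satisfies \(\hur_{\dext\chi}\equiv 0\), so \(\hur_\varpi=\sum_i c_i\hur_{\varpi_i}\) on \(\VMO(\Sphere^\ell;\manfN)\). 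Thus, for any Lipschitz \(\gamma\colon\cBall^{\ell+1}\to\manfV\) with \(\gamma|_{\Sphere^\ell}\in\Fuglede_w(\Sphere^\ell;\manfV)\), the map \(u\compose\gamma|_{\Sphere^\ell}\) lies in \(\VMO(\Sphere^\ell;\manfN)\) and annihilates \(\hur_\varpi\) for every closed \(\varpi\); Corollary~\ref{corollaryHurewiczHomotopicConstantConverse} then gives that \(u\compose\gamma|_{\Sphere^\ell}\) is homotopic to a constant in \(\VMO(\Sphere^\ell;\manfN)\), which is exactly Definition~\ref{definitionExtensionVMO-1}.

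The only delicate step I foresee is the passage from the (finitely many) representatives \(\varpi_i\) to all closed forms, which requires both the linearity of the \(\VMO\) Hurewicz degree in \(\varpi\) and its vanishing on exact forms. Both are natural in view of Proposition~\ref{propositionHurewiczVMO}, but they must be made explicit since \(\hur_\varpi\) on \(\VMO\) is defined by approximation rather than by an integral formula. Aside from this bookkeeping, the argument is a clean combination of Proposition~\ref{propositionJacobianDegreeZero-Fuglede}, finite-dimensionality of \(H^\ell_{\mathrm{dR}}(\manfN)\), and Corollary~\ref{corollaryHurewiczHomotopicConstantConverse}.
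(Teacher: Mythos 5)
Your proposal is correct and follows essentially the same route as the paper: apply the direct implication of Proposition~\ref{propositionJacobianDegreeZero-Fuglede} to produce an \(\ell\)-detector along which \(\hur_{\varpi}(u \compose \gamma|_{\Sphere^{\ell}}) = 0\), and then conclude via Corollary~\ref{corollaryHurewiczHomotopicConstantConverse}. The extra bookkeeping you carry out --- summing the detectors attached to a finite basis of \(H^{\ell}_{\mathrm{dR}}(\manfN)\) and using linearity of \(\hur_{\varpi}\) together with its vanishing on exact forms to reach \emph{all} closed \(\varpi\) with a single detector --- is a point the paper leaves implicit (cf.\ the remark after Definition~\ref{definition_Hurewicz_identification}), and making it explicit is a sound refinement rather than a different argument.
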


\begin{proof}
	Let \(w\) be an \(\ell\)-detector given by Proposition~\ref{propositionJacobianDegreeZero-Fuglede} and let \(\varpi \in \Smooth^{\infty}(\manfN; \Forms^{\ell})\) be a closed differential form.
	For every Lipschitz map \(\gamma \colon  \cBall^{\ell+1} \to \manfV\) with \(\gamma|_{\Sphere^{\ell}} \in \Fuglede_{w}(\Sphere^{\ell}; \manfV)\), we have
	\[{}
	\hur_{\varpi}{(u \compose \gamma|_{\Sphere^{\ell}})} = 0.
	\]
Applying Corollary~\ref{corollaryHurewiczHomotopicConstantConverse} to the map \(v \vcentcolon=u \compose \gamma|_{\Sphere^{\ell}}\), we can conclude that \(v\) is homotopic to a constant in \(\VMO(\Sphere^\ell; \manfN)\) and thus \(u\) is \(\ell\)-extendable.{}
\end{proof}

Combining Corollaries~\ref{corollaryJacobianFugledeTrivial} and~\ref{corollaryHurewiczNul}, we deduce Theorem~\ref{propositionJacobianSphereIntro} from the Introduction:

\begin{proof}[Proof of Theorem~\ref{propositionJacobianSphereIntro}]
Let \(u \in \Sobolev^{k, p}(\Ball^{m}; \Sphere^n)\).
Since \(\manfN\) is a compact subset of \(\R^\nu\), \(u\) is bounded and then by the Gagliardo-Nirenberg interpolation inequality we have \(u \in \Sobolev^{1, kp}(\Ball^{m}; \Sphere^n)\).
As \(n \le kp < n + 1 \le m\), we then deduce that \(\floor{kp} = n\) and, by Hölder's inequality, \(u \in \Sobolev^{1, n}(\Ball^{m}; \Sphere^n)\).
Assuming that \(u\) is \(\floor{kp}\)-extendable, then by Corollary~\ref{corollaryJacobianFugledeTrivial} with \(\ell \vcentcolon= n\), \(\varpi \vcentcolon= \omega_{\Sphere^n}\), \(\manfV \vcentcolon= \Ball^{m}\) and \(\manfN \vcentcolon= \Sphere^n\) we deduce that
\begin{equation}
\label{eqJacobian-1131}
\dext (u^{\#} \omega_{\Sphere^n}) 
=  \Hur_{\omega_{\Sphere^n}}{(u)}
	= 0
\quad \text{in the sense of currents in \(\Ball^m\),}
\end{equation}
Conversely, since by Example~\ref{exampleHopfDegree} the Hurewicz degree identifies hull-homotopic maps from \(\Sphere^n\) to \(\Sphere^n\), we know from Corollary~\ref{corollaryHurewiczNul} that, when \eqref{eqJacobian-1131} holds, \(u\) is \(n\)-extendable.
\end{proof}

It follows from Theorem~\ref{theoremhkpGreatBall} that \(\ell\)-extendability is equivalent to approximability by smooth maps in \(\Sobolev^{1, p}(\Ball^{m}; \manfN)\) for any \(\ell \leq p < \ell + 1 \leq m\). Consequently, combining Corollaries~\ref{corollaryJacobianFugledeTrivial} and~\ref{corollaryHurewiczNul} one gets

\begin{corollary}
\label{corollaryHurewiczDensityEquivalence}
    Let \(m\geq \ell+1\) and assume that the Hurewicz degree identifies null-homotopic maps from \(\Sphere^\ell\) to \(\manfN\).
 	Given \(u \in \Sobolev^{1, \ell}(\Ball^{m}; \manfN)\), one has \(u \in \Hilbert^{1, \ell}(\Ball^{m}; \manfN)\) if and only if, for every closed differential form \(\varpi \in \Smooth^{\infty}(\manfN; \Forms^{\ell})\),
	\[{}
	\Hur_{\varpi}{(u)}
	= 0
	\quad \text{in the sense of currents in \(\Ball^{m}\).}
	\]
\end{corollary}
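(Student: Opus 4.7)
The plan is to combine Theorem~\ref{theoremhkpGreatBall} (which, applied with \(k = 1\) and \(p = \ell\), gives the equivalence between \(u \in \Hilbert^{1, \ell}(\Ball^m; \manfN)\) and \(\ell\)-extendability of \(u\), since \(\floor{kp} = \ell\) and the hypothesis \(kp < m\) is ensured by \(m \ge \ell + 1\)) with the two Corollaries~\ref{corollaryJacobianFugledeTrivial} and~\ref{corollaryHurewiczNul}, which characterize \(\ell\)-extendability in terms of the vanishing of all Hurewicz currents \(\Hur_\varpi(u)\). After this reduction, no further work is needed; the corollary is essentially a restatement assembling three previously established results.

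More concretely, for the direct implication, I would first invoke Theorem~\ref{theoremhkpGreatBall} to deduce from \(u \in \Hilbert^{1, \ell}(\Ball^m; \manfN)\) that \(u\) is \(\ell\)-extendable. Since \(u \in \Sobolev^{1, \ell}(\Ball^m; \manfN)\), Corollary~\ref{corollaryJacobianFugledeTrivial} then yields \(\Hur_\varpi(u) = 0\) in the sense of currents in \(\Ball^m\) for every closed form \(\varpi \in \Smooth^\infty(\manfN; \Forms^\ell)\). (Alternatively, one could appeal directly to Proposition~\ref{propositionHurewiczTrivial-Hilbert} and bypass the \(\ell\)-extendability step.)

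For the converse, assuming that \(\Hur_\varpi(u) = 0\) for every closed \(\ell\)-form \(\varpi\) on \(\manfN\), I would apply Corollary~\ref{corollaryHurewiczNul}, which is the place where the topological assumption on \(\manfN\) (that the Hurewicz degree identifies null-homotopic maps from \(\Sphere^\ell\) to \(\manfN\)) genuinely enters: it upgrades the vanishing of all \(\Hur_\varpi(u)\) to \(\ell\)-extendability of \(u\). Applying Theorem~\ref{theoremhkpGreatBall} once more then yields \(u \in \Hilbert^{1, \ell}(\Ball^m; \manfN)\), closing the equivalence.

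There is no real obstacle here: the substantive work has already been carried out. The cohomological obstruction \(\Hur_\varpi(u)\) is shown in Section~\ref{sectionHurewiczGeneric} to coincide, via generic composition with Fuglede maps, with the Hurewicz degrees \(\hur_\varpi(u \compose \gamma|_{\Sphere^\ell})\) on generic \(\ell\)-spheres, and the topological hypothesis on \(\manfN\) lets these degrees detect nullhomotopy in \(\VMO(\Sphere^\ell; \manfN)\); the approximation content is then delivered by Theorem~\ref{theoremhkpGreatBall}. Thus the proof amounts to a short chain of implications and should be written in only a few lines.
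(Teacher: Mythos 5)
Your proposal is correct and follows exactly the paper's own argument: the paper obtains this corollary by combining Theorem~\ref{theoremhkpGreatBall} (with \(k=1\), \(p=\ell\), so that \(\Hilbert^{1,\ell}\) coincides with the \(\ell\)-extendable maps in \(\Sobolev^{1,\ell}\)) with Corollaries~\ref{corollaryJacobianFugledeTrivial} and~\ref{corollaryHurewiczNul}, precisely as you do. Your remark that the direct implication could alternatively be obtained from Proposition~\ref{propositionHurewiczTrivial-Hilbert} is also consistent with the paper, which notes this route as well.
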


The fact that the domain is not a general manifold is crucial here. A similar cohomological identification for \(\Hilbert^{1, \ell}(\Ball^m; \Sphere^\ell)\) has been previously established in \cites{BethuelH1, Demengel}, see also \cite{Giaquinta-Modica-Soucek-II} for an alternative approach using Cartesian currents. 
For more general manifolds \(\manfN\), the case where \(m = \ell + 1\) is treated in \cite{BethuelCoronDemengelHelein}*{Theorem~1}.
Note however that the converse of Corollary~\ref{corollaryHurewiczDensityEquivalence} is false for a general manifold \(\manfN\), see \cite{BethuelCoronDemengelHelein}*{Remark~f}:

\begin{example}
When the Hurewicz degree does not identify null-homotopic maps from \(\Sphere^\ell\) to \(\manfN\), there exists a map \(f \in \Smooth^\infty (\Sphere^\ell; \manfN)\) that is not homotopic to a constant in \(\Smooth^{0} (\Sphere^\ell; \manfN)\)  that satisfies \(\hur_{\varpi} (f) = 0\)
for every closed differential form \(\varpi \in \Smooth^\infty(\manfN; \Forms^\ell)\).
Since \(f\) cannot be continuously extended as a map from \(\cBall^{\ell + 1}\) to \(\manfN\), by Lemma~\ref{propositionExtensionHomogeneity} the map \(u\) defined by \eqref{eqLExtension-113} is not \(\ell\)-extendable, whence applying Proposition~\ref{propositionDensityExtendable} we see that \(u \not\in \Hilbert^{1, \ell}(\Ball^m; \Sphere^\ell)\).{}
	 On the other hand, by Proposition~\ref{lemmaJacobianDegree} we have
	 \[
  \Hur_{\varpi}{(u)} = (-1)^{\ell + 1} \hur_{\varpi}(f) \, \delta_0=0
  \]
  for any closed differential form \(\varpi \in \Smooth^{\infty}(\manfN; \Forms^{\ell})\).
\end{example}

The Hurewicz current allows one to substantially reduce the family of sets needed to check the extendability condition in the spirit of the criteria presented in Chapter~\ref{chapterExtendability}. 
Indeed, using Proposition~\ref{proposition_char_closed_diff_form-Fuglede-Euclidean} and Corollary~\ref{corollaryHurewiczNul}, we show that it suffices to consider restrictions of the detectors only on standard disks parallel to the coordinate axes in the sense of Definition~\ref{def-standard-disc}.

\begin{corollary}
\label{proposition-spheres-jacobian}
Let \(\Omega \subset \R^{m}\) be an open set and let \(\varpi\in \Smooth^{\infty}(\manfN;\Forms^\ell)\) be a closed differential form. 
	If \(u \in \Sobolev^{1, \ell}(\Omega; \manfN)\) and if there exists an \(\ell\)-detector \(w\)  such that, for every standard \((\ell + 1)\)-dimensional disk \(\Disk^{\ell+1} \subset \Omega\) parallel to the coordinate axes for which \(w|_{\partial\Disk^{\ell + 1}}\) is summable in \(\partial\Disk^{\ell + 1}\), the restriction \(u|_{\partial\Disk^{\ell + 1}}\) is homotopic to a constant in \(\VMO(\partial\Disk^{\ell + 1};\manfN)\), 
 	then
 \[
 \Hur_{\varpi}(u)=0
 \quad \text{in the sense of currents in \( \Omega \).}
 \]
 If in addition the Hurewicz degree identifies null-homotopic maps from \(\Sphere^\ell\) to \(\manfN\), then \( u \) is \( \ell \)-extendable.
\end{corollary}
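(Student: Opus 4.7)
The strategy is to reduce the topological hypothesis on restrictions of $u$ to generic standard disks to the integral criterion of Proposition~\ref{proposition_char_closed_diff_form-Fuglede-Euclidean} applied to the pullback $\eta \vcentcolon= u^\sharp \varpi$. The bridge between the two is the integral representation of the Hurewicz degree on Fuglede spheres provided by Lemma~\ref{lemma-Fuglede-Hurewicz-VMO}, combined with Corollary~\ref{corollaryHurewiczHomotopicConstant}, which asserts that $\hur_\varpi$ vanishes on maps homotopic to a constant in $\VMO$.

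First I would invoke Lemma~\ref{lemma-Fuglede-Hurewicz-VMO} to obtain an $\ell$-detector $w_1$ for $u$ such that, for every $\gamma \in \Fuglede_{w_1}(\Sphere^\ell; \Omega)$, the pullback $\gamma^\sharp(u^\sharp \varpi)$ is summable on $\Sphere^\ell$ and
\[
\hur_\varpi(u \compose \gamma) = \int_{\Sphere^\ell} \gamma^\sharp(u^\sharp \varpi).
\]
I then set $\widetilde{w} \vcentcolon= w + w_1$, which remains a summable $\ell$-detector for $u$. Next, given any standard $(\ell+1)$-dimensional disk $\Disk^{\ell+1} = rT(\cBall^{\ell+1}) + \xi \subset \Omega$ parallel to the coordinate axes for which $\widetilde w|_{\partial \Disk^{\ell+1}}$ is summable, I parameterize its boundary by $\gamma \colon x \in \Sphere^\ell \mapsto rT(x) + \xi \in \partial \Disk^{\ell+1}$. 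Since $T$ is a linear isometry, $\gamma$ is a biLipschitz homeomorphism onto $\partial \Disk^{\ell+1}$ whose $\ell$-dimensional Jacobian equals $r^\ell$; by the area formula, the summability of $\widetilde w|_{\partial \Disk^{\ell+1}}$ transfers to summability of $\widetilde w \compose \gamma$ on $\Sphere^\ell$, so $\gamma \in \Fuglede_{\widetilde w}(\Sphere^\ell; \Omega)$.

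Because $\widetilde w \ge w$, the standing assumption furnishes that $u|_{\partial \Disk^{\ell+1}}$ is homotopic to a constant in $\VMO(\partial \Disk^{\ell+1}; \manfN)$; precomposing the homotopy with the biLipschitz parameterization $\gamma$ and invoking Lemma~\ref{lemmaDetectorsVMObiLipschitz} (after rescaling $\partial \Disk^{\ell+1}$ to a sphere) transfers this property to $u \compose \gamma$ in $\VMO(\Sphere^\ell; \manfN)$. Then Corollary~\ref{corollaryHurewiczHomotopicConstant} yields $\hur_\varpi(u \compose \gamma) = 0$, and combining with the identity from Lemma~\ref{lemma-Fuglede-Hurewicz-VMO} together with a change of variables gives
\[
\int_{\partial \Disk^{\ell+1}} u^\sharp \varpi = \int_{\Sphere^\ell} \gamma^\sharp(u^\sharp \varpi) = 0,
\]
and also the summability of $u^\sharp \varpi$ on $\partial \Disk^{\ell+1}$. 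Applying Proposition~\ref{proposition_char_closed_diff_form-Fuglede-Euclidean} with $\eta \vcentcolon= u^\sharp \varpi$ and detector $\widetilde w$ then concludes that $u^\sharp \varpi$ is closed in the sense of currents in $\Omega$, which is exactly $\Hur_\varpi(u) = 0$.

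For the final assertion, I observe that the topological hypothesis does not involve $\varpi$: running the argument above for every closed differential form $\varpi \in \Smooth^\infty(\manfN; \Forms^\ell)$ (the auxiliary detector $w_1$ may depend on $\varpi$ through the pullback, but its existence is guaranteed in each case) yields $\Hur_\varpi(u) = 0$ for \emph{every} such $\varpi$. Under the additional hypothesis that the Hurewicz degree identifies null-homotopic maps from $\Sphere^\ell$ to $\manfN$, Corollary~\ref{corollaryHurewiczNul} then delivers the $\ell$-extendability of $u$. The main technical point to handle carefully is the clean transfer of the $\VMO$-homotopy through the biLipschitz parameterization $\gamma$; everything else is a chain of change-of-variables computations together with direct applications of results already developed in the chapter.
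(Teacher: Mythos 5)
Your proposal is correct and follows essentially the same route as the paper: the auxiliary detector $w_1$ from Lemma~\ref{lemma-Fuglede-Hurewicz-VMO}, the combined detector $\widetilde{w} = w + w_1$, the affine parameterization $\gamma(x) = rT(x) + \xi$ of $\partial\Disk^{\ell+1}$, the transfer of the null-homotopy to $u \compose \gamma|_{\Sphere^\ell}$, Corollary~\ref{corollaryHurewiczHomotopicConstant} plus the change of variables to get $\int_{\partial\Disk^{\ell+1}} u^\sharp\varpi = 0$, and finally Proposition~\ref{proposition_char_closed_diff_form-Fuglede-Euclidean} and Corollary~\ref{corollaryHurewiczNul}. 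Your explicit remark that the argument must be repeated for every closed form $\varpi$ (with $w_1$ depending on $\varpi$) before invoking Corollary~\ref{corollaryHurewiczNul} is a point the paper leaves implicit, but it is the same argument.
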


\begin{proof}
Let \(w_1\) be the \(\ell\)-detector given by Lemma~\ref{lemma-Fuglede-Hurewicz-VMO} and take \(\widetilde{w} = w + w_1\), where \(w\) is the \(\ell\)-detector given by the assumption.
Let \(\Disk^{\ell + 1} \subset \Omega\) be a standard disk parallel to the coordinate axes for which \(\widetilde{w}|_{\partial\Disk^{\ell + 1}}\) is summable in \(\partial\Disk^{\ell + 1}\). 
Since  \(\widetilde{w} \ge w\), by assumption on \(u\) the restriction \(u|_{\partial\Disk^{\ell + 1}}\) is homotopic to a constant in \(\VMO(\partial\Disk^{\ell + 1};\manfN)\).

As in Definition~\ref{def-standard-disc}, we now write \(\Disk^{\ell + 1} = r T(\cBall^{\ell+1}) + \xi\) where \(r > 0\), \(\xi \in \R^m\) and \(T \colon \R^{\ell + 1} \to \R^m\) is a linear isometry.
We then consider the map \(\gamma \colon \cBall^{\ell+1} \to \Omega\) defined by \(\gamma(x) = r T(x)+\xi\). 
Then, \(\gamma|_{\Sphere^\ell}\) belongs to \(\Fuglede_{\tilde{w}}(\Sphere^\ell;\Omega)\).
Since \(\gamma|_{\Sphere^\ell}\) is a diffeomorphism between \(\Sphere^\ell\) and \(\partial \Disk^{\ell+1}\), we deduce that \(u\compose \gamma|_{\Sphere^\ell}\) is homotopic to a constant in \(\VMO(\Sphere^{\ell};\manfN)\). 
By Corollary~\ref{corollaryHurewiczHomotopicConstant}, it follows that 
\[
\hur_{\varpi}(u\compose \gamma|_{\Sphere^{\ell}})=0.
\]
Since \(\widetilde{w} \ge w_{1}\) and \(\gamma(\Sphere^\ell)=\partial\Disk^{\ell+1}\), we deduce from Lemma~\ref{lemma-Fuglede-Hurewicz-VMO} and the change of variable formula that 
\[ 
\int_{\partial\Disk^{\ell+1}}u^{\sharp}\varpi=0.
\]
In view of Proposition~\ref{proposition_char_closed_diff_form-Fuglede-Euclidean}, this implies that \(u^{\sharp}\varpi\) is closed or, equivalently, \(\Hur_{\varpi}(u)=0\) in the sense of currents in \(\Omega\). 
To conclude, when the Hurewicz degree identifies null-homotopic maps from \(\Sphere^\ell\) to \(\manfN\), we deduce from Corollary~\ref{corollaryHurewiczNul} that \( u \) is \( \ell \)-extendable.
\end{proof}

We do not know whether the last conclusion of Corollary~\ref{proposition-spheres-jacobian} is true without the additional topological assumption of \(\manfN\). 
Note, however, that the conclusion holds when parallel disks are replaced by generic simplices, see Proposition~\ref{propositionCharacterizationExtensionSimplex}.

\begin{remark}
\label{remark-Canevari-Orlandi}
When \(\pi_0 (\manfN)\simeq \ldots \simeq \pi_{\ell - 1} (\manfN) \simeq \{0\}\) and  \(\pi_\ell (\manfN)\) is abelian,  it is possible to construct for every \(u\in \Sobolev^{1,\ell}(\Ball^m; \manfN)\) an \((m-\ell-1)\)-dimensional flat chain \( \F_{m-\ell-1}(\Ball^m, \pi_{\ell}(\manfN)) \) that identifies the topological singularities of \(u\), without any further restriction on \(\pi_{\ell}(\manfN)\), see \cites{Pakzad-Riviere, Canevari-Orlandi}. 
More precisely, given \(\ell \le p < \ell+1\),  there exists a continuous map
\[
S^{PR}\colon \Sobolev^{1,p}(\Ball^m ; \manfN) \to \F_{m-\ell-1}(\Ball^m, \pi_{\ell}(\manfN))
\]
such that \(S^{PR}(u)=0\) if and only if \(u\in \Hilbert^{1,p}(\Ball^m;\manfN)\), see \cite{Canevari-Orlandi}*{Theorem~1}. 
When \(\manfN=\Sphere^\ell\), the operator \(S^{PR}(u)\) coincides with the distributional Jacobian \(\jac{u}\) introduced in \eqref{eq-def-jacobian}. 
In that specific situation and when \(p=\ell\), the range of \(S^{PR}(u)\) is the set  of boundaries of integer multiplicity rectifiable currents with finite mass, see \cite{Alberti-Baldo-Orlandi}. Determining the range of \(\jac \) in other settings is widely open, see however \cites{Bourgain-Brezis-Mironescu-2005, Bousquet-Mironescu, Bousquet-2007, Canevari-Orlandi-2}. 
\end{remark}

\section{Hopf invariants}
\label{section_Hopf}

We know from Example~\ref{exampleHopfFibration} that the Hurewicz degree does not identify null-homotopic maps from \( \Sphere^{3} \) to \( \Sphere^{2} \).
A more adapted topological invariant in this case is the Hopf degree~\cite{Hopf1931} which can be defined more generally for maps \( v \in \Smooth^{\infty}(\Sphere^{2n-1}; \Sphere^{n}) \) with \( n \ge 2 \) using Whitehead's integral formula~\cite{Whitehead1947}:
	 \begin{equation}
		\label{eqJacobian-1500}
	   \hopf{v} = \int_{\Sphere^{2 n - 1}} \chi \wedge v^\sharp \omega_{\Sphere^n},
	 \end{equation}
where  the differential form \(\chi \in  \Smooth^{\infty}(\Sphere^{2n-1}; \Forms^{n - 1})\) satisfies 
\begin{equation}
\label{eqJacobianChi}
\dext\chi = v^\sharp \omega_{\Sphere^n}
\quad \text{in \(\Sphere^{2 n - 1}\).}
\end{equation}
The existence of \( \chi \) is a consequence of the facts that \( v^\sharp \omega_{\Sphere^n} \) is closed and \(H^{n}_{\mathrm{dR}} (\Sphere^{2 n - 1})\) is trivial.

One shows that the value of the integral in \eqref{eqJacobian-1500} is always an integer does not depend on the choice of \(\chi\) and if \(v_{0}, v_{1}\in \Smooth^\infty (\Sphere^{2 n - 1}; \Sphere^{n})\) are homotopic in \(\Smooth^0(\Sphere^{2 n - 1}; \Sphere^{n})\), then 
\begin{equation}
\label{eqHopf-1289}
\hopf{v_{0}} = \hopf{v_{1}},
\end{equation}
see \cite{BottTu1982}*{Proposition~17.22}.
As a result, the map \(\hopf\) induces a homomorphism from \(\pi_{2n-1}(\Sphere^n)\) to \(\Z\).

When \(n = 2\) and \(v \colon  \Sphere^{3} \to \Sphere^{2}\) is the classical Hopf fibration, one has \(\hopf{v} = 1\) and, in this case, \(\hopf\) induces a group isomorphism between \(\pi_{3}(\Sphere^{2})\) and \(\Z\).
In contrast, when \(n\) is odd, one sees that
\[{}
\chi \wedge v^\sharp \omega_{\Sphere^n} 
= \chi \wedge \dext \chi = \dext (\chi \wedge \chi)/2
\]
and, as a consequence of Stokes's theorem, the Hopf degree of every smooth map is trivial.

As for the Hurewicz degree, the Hopf degree can be extended to VMO maps:

\begin{proposition}
\label{propositionSmoothHopfInvariant-VMO}
	Let \( n \ge 2 \).
    The Hopf degree extends as a continuous function \(\hopf \colon  \VMO(\Sphere^{2 n - 1}; \Sphere^n) \to \Z\) so that \eqref{eqHopf-1289} holds for all \(v_{0}, v_{1} \in \VMO(\Sphere^{2 n - 1}; \Sphere^{n})\) that are homotopic in \(\VMO(\Sphere^{2 n - 1}; \Sphere^{n})\) and this extension \(\hopf\) satisfies Whitehead's integral formula \eqref{eqJacobian-1500} for every \(v \in \Sobolev^{1, p} (\Sphere^{2 n - 1}; \Sphere^n)\) with \(p \ge 2n - 1\), where the differential form \(\chi \in \Lebesgue^\frac{p}{p - n} (\Sphere^{2 n - 1}; \Forms^{n - 1})\)  verifies \eqref{eqJacobianChi} in the sense of currents.
\end{proposition}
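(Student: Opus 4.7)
The extension of $\hopf$ to $\VMO(\Sphere^{2 n - 1}; \Sphere^n)$ and its invariance under $\VMO$ homotopy follow verbatim the strategy used for $\hur_\varpi$ in Proposition~\ref{propositionHurewiczVMO}. By Corollary~\ref{corollaryVMODensity}, the space $\Smooth^\infty(\Sphere^{2 n - 1}; \Sphere^n)$ is dense in $\VMO(\Sphere^{2 n - 1}; \Sphere^n)$, and by Propositions~\ref{propositionHomotopyVMOLimit} and~\ref{propositionHomotopyVMOtoC} any sequence of smooth maps converging to $v$ in $\VMO$ is eventually contained in a single smooth homotopy class, on which the classical Hopf degree takes a fixed integer value. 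One defines $\hopf(v) \in \Z$ to be that common value; an interlacing argument shows independence of the chosen sequence, and $\VMO$-homotopy invariance follows from Proposition~\ref{propositionHomotopyVMOtoC} applied to smooth approximations of $v_0$ and $v_1$.

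For Whitehead's integral formula when $v \in \Sobolev^{1, p}(\Sphere^{2 n - 1}; \Sphere^n)$ with $p \ge 2 n - 1$, the first step is to produce the primitive $\chi$. The pointwise estimate $\abs{v^\sharp \omega_{\Sphere^n}} \le C \abs{Dv}^n$ gives $v^\sharp \omega_{\Sphere^n} \in \Lebesgue^{p/n}(\Sphere^{2 n - 1}; \Forms^n)$, and the continuity of the pullback on Sobolev maps combined with the density of $\Smooth^\infty$ in $\Sobolev^{1, p}(\Sphere^{2 n - 1}; \Sphere^n)$ (Theorem~\ref{theoremSchoen-Uhlenbeck}, which applies on the $(2 n - 1)$-dimensional sphere under $p \ge 2n - 1$) yields that $v^\sharp \omega_{\Sphere^n}$ is closed in the sense of currents. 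Since $n \ge 2$ ensures $n < 2 n - 1$, the group $H^n_{\mathrm{dR}}(\Sphere^{2 n - 1})$ is trivial, and $\Lebesgue^{p/n}$-Hodge theory on the sphere produces $\chi \in \Sobolev^{1, p/n}(\Sphere^{2 n - 1}; \Forms^{n - 1})$ with $\dext \chi = v^\sharp \omega_{\Sphere^n}$. The Sobolev embedding on $\Sphere^{2 n - 1}$ then places $\chi$ in $\Lebesgue^{p/(p - n)}$, the target exponent being exactly the Hölder conjugate for $n/p + (p - n)/p = 1$; the condition $p \ge 2 n - 1$ is sharp for reaching this exponent since $\tfrac{n}{p} - \tfrac{1}{2 n - 1} \le \tfrac{p - n}{p}$ is equivalent to $p \ge 2 n - 1$.

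The last step is to verify independence of $\chi$ and to match the formula with the $\VMO$ extension. If $\chi_1$ and $\chi_2$ both satisfy $\dext \chi_i = v^\sharp \omega_{\Sphere^n}$, the difference $\chi_1 - \chi_2$ is a closed form in $\Lebesgue^{p/(p - n)}$; the triviality of $H^{n - 1}_{\mathrm{dR}}(\Sphere^{2 n - 1})$ for $n \ge 2$ allows one to write $\chi_1 - \chi_2 = \dext \eta$ via Hodge decomposition, and a smoothing argument applied to Stokes's theorem together with the closedness of $v^\sharp \omega_{\Sphere^n}$ gives $\int (\chi_1 - \chi_2) \wedge v^\sharp \omega_{\Sphere^n} = 0$. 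For smooth $v$ the Whitehead integral reduces to the classical definition, so the formula is valid on a dense subset. For general $v$, approximate by smooth maps $v_j \to v$ in $\Sobolev^{1, p}(\Sphere^{2 n - 1}; \Sphere^n)$; a continuous Hodge-type selection delivers $\chi_j \to \chi$ in $\Lebesgue^{p/(p - n)}$, Hölder's inequality passes $\int \chi_j \wedge v_j^\sharp \omega_{\Sphere^n}$ to $\int \chi \wedge v^\sharp \omega_{\Sphere^n}$, and simultaneously the embedding $\Sobolev^{1, p}(\Sphere^{2 n - 1}) \hookrightarrow \VMO(\Sphere^{2 n - 1})$ combined with the continuity of the $\VMO$ extension yields $\hopf(v_j) \to \hopf(v)$. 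The two limits agree, establishing \eqref{eqJacobian-1500}. The main obstacle will be the $\Lebesgue^{p/n}$-Hodge construction of $\chi$ with a continuous dependence on $v$ at the critical exponent $p = 2 n - 1$, where the Sobolev embedding barely reaches the target space $\Lebesgue^{p/(p - n)}$.
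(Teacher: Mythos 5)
Your proposal is correct and follows essentially the same route as the paper's proof: the \(\VMO\) extension and its homotopy invariance are obtained exactly as for \(\hur_{\varpi}\), and Whitehead's formula is derived from smooth approximation on \(\Sphere^{2n-1}\), the Hodge system with the Scott-type estimate, and the Sobolev embedding \(\Sobolev^{1,p/n}\hookrightarrow\Lebesgue^{p/(p-n)}\), which holds precisely when \(p\ge 2n-1\). The only, harmless, deviations are that you solve the Hodge system directly for the rough closed form \(v^{\sharp}\omega_{\Sphere^n}\) (after first checking its closedness) and obtain strong convergence of the primitives from the linearity and boundedness of the solution operator, whereas the paper solves it only for the smooth data \(u_j^{\sharp}\omega_{\Sphere^n}\) and extracts \(\chi\) as a weak limit; your additional independence-of-\(\chi\) argument is not required by the statement but is correct, and the paper establishes the analogous fact separately in Proposition~\ref{lemmaHopfIndependence}.
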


\resetconstant
\begin{proof}
	The continuous extension of \(\hopf\) to \(\VMO(\Sphere^{2 n - 1}; \Sphere^n)\) is performed as we did for \(\hur_{\varpi}\).
	To this end, take \(v \in \VMO(\Sphere^{2 n - 1}; \Sphere^n)\) and a sequence \((u_{j})_{j \in \N}\) in \(\Smooth^{\infty}(\Sphere^{2 n - 1}; \Sphere^n)\) that converges to \(v\) in \(\VMO(\Sphere^{2 n - 1}; \R^{n + 1})\).
    By Proposition~\ref{propositionHomotopyVMOLimit}, these maps \(u_{j}\) are homotopic in \(\VMO(\Sphere^{2 n - 1}; \Sphere^n)\) for every \(j \ge J\), where \(J \in \N\) is sufficiently large.
    Since they are smooth, by Proposition~\ref{propositionHomotopyVMOtoC} they are also homotopic in \(\Smooth^{0}(\Sphere^{2 n - 1}; \Sphere^n)\) and then, by invariance of the Hopf degree, the number \(\hopf{u_{j}}\) is independent of \(j \ge J\).
	We thus set
	\[{}
	\hopf{v} = \hopf{u_{j}}
	\quad \text{for any \(j \ge J\).}
	\]
	One shows that the Hopf degree is well defined and continuous in \(\VMO(\Sphere^{2 n - 1}; \Sphere^n)\).
	
	We now prove the validity of the integral formula when \(v \in \Sobolev^{1, p} (\Sphere^{2 n - 1}; \Sphere^n)\) and  \( p \ge 2n - 1\).
    To this end, by the counterpart of Theorem~\ref{theoremSchoen-Uhlenbeck} for maps defined on \( \Sphere^{2n - 1} \), see \cite{Schoen-Uhlenbeck}, we may take a sequence of maps \((u_j)_{j \in \N} \) in \( \Smooth^\infty (\Sphere^{2 n - 1}; \Sphere^n)\) such that \(u_{j} \to v\) in \(\Sobolev^{1, p} (\Sphere^{2 n - 1}; \R^{n + 1})\).{}
	Since \(p \ge 2n - 1\), the convergence also holds in VMO and 
\begin{equation}
	\label{eqHopfVMO}
\lim_{j \to \infty}{\hopf{u_j}} 
= \hopf{v}.
\end{equation}
For each \(j \in \N\), we consider \(\chi_j \in \Smooth^\infty (\Sphere^{2 n - 1}; \Forms^{n - 1})\) satisfying the Hodge system 
\begin{equation}
\label{problemHopfBeta}
 \left\{
 \begin{aligned}
   \dext {\chi}_j &= u_j^\sharp \omega_{\Sphere^n} & & \text{in \(\Sphere^{2 n - 1}\)},\\
   \dco {\chi}_j &= 0  & & \text{in \(\Sphere^{2 n - 1}\)},
 \end{aligned}
 \right.
\end{equation}
where \(\dco\) is the coexterior differentiation.
For the existence of \(\chi_j\)\,, we refer e.g.\@ to \cite{Morrey}*{Section~7.4}.
By classical regularity estimates \cite{Scott1995}*{Proposition 5.9}, one has
\[
 \norm{{\chi}_j}_{\Sobolev^{1, {p}/{n}}(\Sphere^{2 n - 1})}
 \le \C \norm{u_j^\sharp \omega_{\Sphere^n}}_{\Lebesgue^{{p}/{n}}(\Sphere^{2 n - 1})}
 \le \C \norm{Du_j}_{\Lebesgue^{p}(\Sphere^{2 n - 1})}^{n}.
\]
Since \(p \ge 2 n - 1\), we have \(\frac{p - n}{p} \ge \frac{n}{p} - \frac{1}{2n - 1}\), and by the Sobolev inequality we deduce that \(\Sobolev^{1, \frac{p}{n}}(\Sphere^{2n-1})\) imbeds into \(\Lebesgue^{\frac{p}{p-n}}(\Sphere^{2n-1})\).
We can thus assume, by considering a subsequence if necessary, that the sequence \((\chi_j)_{j \in \N}\) converges weakly in \(\Lebesgue^{\frac{p}{p-n}}\)  to some \({\chi} \in \Sobolev^{1, \frac{p}{n}} (\Sphere^{2 n - 1}; \Forms^{n - 1})\). 
Since \(u_j^\sharp \omega_{\Sphere^n}\) converges strongly in \(\Lebesgue^{\frac{p}{n}}\) to \(v^\sharp \omega_{\Sphere^n}\), we then have 
\[
 \left\{
 \begin{aligned}
   \dext {\chi} &= v^\sharp \omega_{\Sphere^n} & & \text{in \(\Sphere^{2 n - 1}\)},\\
   \dco {\chi} &=  0 & & \text{in \(\Sphere^{2 n - 1}\)},
 \end{aligned}
 \right. 
\]
and
\[
\label{eqHopfTildeBeta}
 \lim_{j \to \infty} \int_{\Sphere^{2 n - 1}} {\chi}_j \wedge u_j^\sharp \omega_{\Sphere^n}
=   \int_{\Sphere^{2 n - 1}} {\chi} \wedge v^\sharp \omega_{\Sphere^n}.
\]
Combining \eqref{eqHopfVMO} and the integral formula \eqref{eqJacobian-1500} for \( u_{j} \), we deduce the integral formula for \( v \).
\end{proof}

The Hopf fibration is responsible for local obstructions to the density of smooth maps in \(\Sobolev^{1, p} (\Ball^m; \Sphere^2)\) with \(3 \le p < 4\). 
In this section, our aim is to define a Hopf current not only to detect these specific obstructions but also to help identify elements in \( \Hilbert^{1, p}(\Ball^{m}; \Sphere^{2}) \) in situations where the distributional Jacobian is inadequate, see Corollary~\ref{corollaryHopfIsobe}.
We consider more generally \( \Sphere^{n} \)-valued maps and assume throughout this section that \( n \ge 2 \).
For analytical reasons, we first introduce the notion of Hopf-friendly Sobolev maps.

\begin{definition}
	Given \(p > n\), we say that a map \(u \in \Sobolev^{1, p} (\manfV; \Sphere^n)\) is \emph{Hopf-friendly} whenever there exists 
	a differential form \(\chi \in \Lebesgue^{\frac{p}{p - n}} (\manfV; \Forms^{n - 1})\) such that 
	\begin{equation}
	\label{eqJacobian-Chi}
		\dext \chi = u^\sharp \omega_{\Sphere^n}
		\quad \text{in the sense of currents in \(\manfV\).}
	\end{equation}
\end{definition}

The issue behind this concept is twofold: 
(1) the existence of \(\chi\) that requires the \( n \)-differential form \(u^\sharp \omega_{\Sphere^n}\) to be closed and (2) a suitable integrability of \(\chi\) which need not follow from the fact that \(u^\sharp \omega_{\Sphere^n} \in  \Lebesgue^{\frac{p}{n}} (\manfV; \Forms^{n})\).

\begin{proposition}
    \label{example-hopf-friendly}
    If \( m \ge 3 \) and \(p \ge \frac{2mn}{m+1}\), then every map \(u \in \Sobolev^{1, p} (\Ball^{m}; \Sphere^n)\) is Hopf-friendly.
\end{proposition}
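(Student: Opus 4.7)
The strategy is to first show that $u^\sharp \omega_{\Sphere^n}$ is closed as a current on $\Ball^m$, and then to construct a primitive $\chi$ via standard $\Lebesgue^p$ Hodge theory on the ball, improving its integrability by Sobolev embedding at the very end.

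A preliminary observation shows that, under the hypotheses $m \geq 3$, $n \geq 2$ and $p \geq \frac{2mn}{m+1}$, one has $p \geq n+1$. Indeed, $\frac{2mn}{m+1} \geq n+1$ is equivalent to $(m-1)(n-1) \geq 2$, which holds since $m - 1 \geq 2$ and $n - 1 \geq 1$. This inequality is what unlocks an $\Lebesgue^{p/(n+1)}$ convergence of Jacobians in the next step.

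To establish that $\dext(u^\sharp \omega_{\Sphere^n}) = 0$ as a current on $\Ball^m$, I would extend $\omega_{\Sphere^n}$ to the explicit smooth $n$-form
\[
\tilde\omega = \frac{1}{\abs{\Sphere^n}} \sum_{i=1}^{n+1} (-1)^{i-1} y_i \, \dif y_1 \wedge \cdots \wedge \widehat{\dif y_i} \wedge \cdots \wedge \dif y_{n+1}
\]
on $\R^{n+1}$, so that $\tilde\omega|_{\Sphere^n} = \omega_{\Sphere^n}$ and $\dext \tilde\omega = \frac{n+1}{\abs{\Sphere^n}} \, \dif y_1 \wedge \cdots \wedge \dif y_{n+1}$. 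Approximate $u$ in $\Sobolev^{1,p}(\Ball^m; \R^{n+1})$ by smooth $\R^{n+1}$-valued maps $u_k \in \Smooth^\infty(\cBall^m; \R^{n+1})$, not necessarily $\Sphere^n$-valued. Since $u$ is bounded and the entries of $\tilde\omega$ and $\dext\tilde\omega$ are polynomial, H\"older's inequality delivers both $(u_k)^\sharp \tilde\omega \to u^\sharp \tilde\omega$ in $\Lebesgue^{p/n}$ and $(u_k)^\sharp \dext\tilde\omega \to u^\sharp \dext\tilde\omega$ in $\Lebesgue^{p/(n+1)}$, the latter convergence requiring precisely $p \geq n+1$. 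Because $\abs{u}=1$ almost everywhere yields $u \cdot Du = 0$, the image of $Du(x)$ lies in the $n$-dimensional subspace $u(x)^\perp$; hence the top-degree form $\dif u_1 \wedge \cdots \wedge \dif u_{n+1}$ vanishes almost everywhere, and $u^\sharp \dext\tilde\omega = 0$ in $\Lebesgue^{p/(n+1)}$. Passing to the limit in the smooth Stokes identity
\[
\int_{\Ball^m} (u_k)^\sharp \tilde\omega \wedge \dext\alpha = (-1)^{n+1} \int_{\Ball^m} (u_k)^\sharp \dext\tilde\omega \wedge \alpha
\]
for $\alpha \in \Smooth^\infty_c(\Ball^m; \Forms^{m-n-1})$ yields $\dext(u^\sharp \omega_{\Sphere^n}) = 0$ as a current, since $u^\sharp \omega_{\Sphere^n}$ and $u^\sharp \tilde\omega$ agree almost everywhere on the $\Sphere^n$-valued map $u$.

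Once closedness is established, the remainder is pure $\Lebesgue^p$ Hodge theory on the ball: the Poincar\'e homotopy operator on a star-shaped domain, through the Iwaniec-Lutoborski regularity estimate, provides a primitive $\chi \in \Sobolev^{1,p/n}(\Ball^m; \Forms^{n-1})$ with $\dext\chi = u^\sharp \omega_{\Sphere^n}$. The Sobolev embedding $\Sobolev^{1,q}(\Ball^m) \hookrightarrow \Lebesgue^{q^*}(\Ball^m)$ with $q = p/n$ and $q^* = \frac{mq}{m-q}$ (or into $\Lebesgue^\infty$ if $p/n \geq m$) upgrades this to $\chi \in \Lebesgue^{p/(p-n)}(\Ball^m; \Forms^{n-1})$, since the inequality $q^* \geq p/(p-n)$ is equivalent to $(m+1)p \geq 2mn$, which is exactly the standing hypothesis. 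The main obstacle is the distributional identity in the second step: one cannot, in general, approximate $u$ by $\Sphere^n$-valued smooth maps, so working with $\R^{n+1}$-valued approximations against the non-closed extension $\tilde\omega$ is unavoidable, and the closedness of $u^\sharp \omega_{\Sphere^n}$ must be extracted from the pointwise rank defect $\dif u_1 \wedge \cdots \wedge \dif u_{n+1} = 0$, genuinely interpretable only for $p \geq n+1$.
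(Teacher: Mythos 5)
Your proof is correct and takes essentially the same route as the paper: you deduce \(p\ge n+1\) from \((m-1)(n-1)\ge 2\), establish closedness of \(u^\sharp\omega_{\Sphere^n}\) by the same argument as Lemma~\ref{propositionHurewiczTrivial} (smooth \(\R^{n+1}\)-valued approximation paired against an extension of \(\omega_{\Sphere^n}\), with the rank-defect identity killing the pullback of the \((n+1)\)-form), and then produce a primitive \(\chi\in\Sobolev^{1,p/n}(\Ball^m;\Forms^{n-1})\) whose integrability is upgraded by exactly the paper's exponent computation \(\tfrac{mp}{mn-p}\ge\tfrac{p}{p-n}\iff (m+1)p\ge 2mn\). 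The only cosmetic deviations are your use of the Poincar\'e/Iwaniec--Lutoborski homotopy operator in place of the Hodge system the paper solves, and the borderline case \(p/n=m\), where \(\Sobolev^{1,m}\) gives every finite exponent (which suffices, since \(p/(p-n)<\infty\)) rather than \(\Lebesgue^\infty\).
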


In particular, when \( m = 4 \) and \( n = 2 \) one can take any exponent \( p \ge 16/5 \).
We first need the following property about the distributional Jacobian, which ensures that \(u^\sharp \omega_{\Sphere^n}\) is closed:

\begin{lemma}
    \label{propositionHurewiczTrivial}
    If \( p \ge n + 1 \), then every \( u \in \Sobolev^{1, p}(\manfV; \Sphere^{n}) \) satisfies
    \[
    \jac{u} = 0
    \quad \text{in the sense of currents in \(\manfV\).}
    \]
\end{lemma}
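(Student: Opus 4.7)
The strategy is to chain together two results already established in the manuscript: the fact that Sobolev maps above a critical exponent are $\ell$-extendable, and the fact that $\ell$-extendability forces the Hurewicz current to vanish. Since the statement is local in nature (it tests against compactly supported forms), the integrability exponent only needs to match the Hurewicz scale $\ell = n$.

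First I would apply Proposition~\ref{propositionLExtensionSobolev} with $k=1$ and $\ell = n$. The hypothesis $kp = p \ge n+1 = \ell + 1$ is precisely what is required there, so $u$ is $n$-extendable. Next, since $p \ge n+1 > n$ and the relevant domains for the local current statement have finite measure, H\"older's inequality yields $u \in \Sobolev^{1,n}(\manfV; \Sphere^{n})$ (or at least in $\Sobolev^{1,n}\loc$, which is enough for $\Hur_{\omega_{\Sphere^n}}(u)$ to be defined as a current on $\manfV$ via Definition~\ref{defnJacobianHurewicz}).

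I would then invoke Corollary~\ref{corollaryJacobianFugledeTrivial} with $\ell = n$ and the closed volume form $\varpi = \omega_{\Sphere^n}$. The corollary asserts that for any $n$-extendable map in $\Sobolev^{1,n}(\manfV; \manfN)$ and any closed $\varpi \in \Smooth^{\infty}(\manfN; \Forms^{n})$, the Hurewicz current $\Hur_{\varpi}(u)$ vanishes in the sense of currents in $\manfV$. Combined with the identification $\jac u = \Hur_{\omega_{\Sphere^n}}(u)$ from \eqref{eq-def-jacobian}, this gives the claim.

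There is no serious obstacle to overcome: the nontrivial work lies entirely in the previously established extendability result (Proposition~\ref{propositionLExtensionSobolev}, which relied on the Fuglede-type approximation of Chapter~\ref{chapterGenericEllExtension}) and in the generic characterization of vanishing Hurewicz currents (Proposition~\ref{propositionJacobianDegreeZero-Fuglede}, which feeds Corollary~\ref{corollaryJacobianFugledeTrivial}). The only minor point to verify carefully is the passage from $\Sobolev^{1,p}$ to $\Sobolev^{1,n}$ when $\manfV$ may be an unbounded Lipschitz open set; this is handled locally and does not affect the current-theoretic conclusion.
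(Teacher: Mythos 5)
Your argument is correct and non-circular: Proposition~\ref{propositionLExtensionSobolev} and Corollary~\ref{corollaryJacobianFugledeTrivial} are established before and independently of this lemma, the hypothesis \(p \ge n+1\) is exactly what the extendability result needs with \(k=1\) and \(\ell = n\), the form \(\omega_{\Sphere^n}\) is closed, and the passage to \(\Sobolev^{1,n}\) via Hölder works on finite-measure \(\manfV\) (or after restricting to a bounded Lipschitz subdomain containing the support of the test form, as you indicate). However, the paper's own proof follows a genuinely different and much lighter route: it extends \(\omega_{\Sphere^n}\) to a smooth \(n\)-form \(\varpi\) on \(\R^{n+1}\), observes that since \(u\) takes values in \(\Sphere^n\) and \(Du(x)\) maps into the \(n\)-dimensional tangent space \(T_{u(x)}\Sphere^n\), one has \(u^\sharp\varpi = u^\sharp\omega_{\Sphere^n}\) and \(u^\sharp\dext\varpi = 0\) almost everywhere; it then approximates \(u\) in \(\Sobolev^{1,p}\) by smooth \emph{unconstrained} maps \(f_j \colon \manfV \to \R^{n+1}\) (plain density, no manifold-valued approximation needed), applies Stokes's theorem to each \(f_j\), and passes to the limit, where \(p \ge n+1\) is used purely analytically to get \(\Lebesgue^1\loc\) convergence of both \(f_j^\sharp\varpi\) (degree \(n\) in \(Df_j\)) and \(f_j^\sharp\dext\varpi\) (degree \(n+1\) in \(Df_j\)). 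Your route buys generality — the same two citations yield \(\Hur_{\varpi}(u) = 0\) for every closed \(\varpi\) and every compact target \(\manfN\) as soon as \(p \ge \ell + 1\) — at the cost of invoking the full Fuglede/VMO machinery (detectors, homotopy invariance of \(\hur_{\varpi}\), and the generic characterization of closed forms behind Proposition~\ref{propositionJacobianDegreeZero-Fuglede}); the paper's argument is elementary and self-contained, needs only local summability rather than global \(\Sobolev^{1,n}\) membership (so no finite-measure or localization caveat arises), and makes transparent that \(p \ge n+1\) enters only through the summability of the \((n+1)\)-homogeneous term.
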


\begin{proof}[Proof of Lemma~\ref{propositionHurewiczTrivial}]
	Take an extension \(\varpi \in \Smooth^{\infty}(\R^{n + 1}; \Forms^{n})\) of \(\omega_{\Sphere^{n}}\).{}
	For almost every \(x \in \manfV\) and every \(\xi \in T_{x}\manfV\), we have \(u(x) \in \Sphere^{n}\) and \(Du(x)[\xi] \in T_{u(x)}\Sphere^{n}\).{}
    Moreover,
    \begin{equation}
    \label{eqFormsExtension}
    u^{\sharp}\varpi = u^{\sharp}\omega_{\Sphere^{n}} {}
	\quad \text{and} \quad{}
	u^{\sharp} \dext\varpi = u^{\sharp} \dext\omega_{\Sphere^{n}} = 0{}
	\quad \text{almost everywhere in \(\manfV\).}
    \end{equation}

    Let \( \alpha \in \Smooth_{c}^{\infty}(\manfV; \Forms^{m - n - 1})\). 
    We take a sequence \((f_{j})_{j \in \N}\) in \( (\Sobolev^{1, p} \cap \Smooth^{\infty})(\manfV; \R^{n + 1}) \) such that \(f_{j} \to u\) in \(\Sobolev^{1, p}(\manfV; \R^{n + 1})\).{}
    By Stokes's theorem, for every \( j \in \N \) we have 
 	\begin{equation}
	\label{eqJacobian-1151}
	  \int_{\manfV} f_{j}^{\sharp}\varpi \wedge \dext\alpha
	= (-1)^{n + 1} \int_{\manfV} f_{j}^{\sharp}\dext\varpi \wedge \alpha.
	\end{equation}   
    Since \( p \ge n + 1 \), we have
    \[
    f_{j}^{\sharp}\varpi \to u^{\sharp}\varpi
    \quad \text{in \(\Lebesgue^{1}\loc(\manfV; \Forms^{n})\)}
    \]
    and
    \[
    f_{j}^{\sharp}\dext\varpi \to u^{\sharp}\dext\varpi
    \quad \text{in \(\Lebesgue^{1}\loc(\manfV; \Forms^{n + 1})\)}.
    \]
    As \( j \to \infty \) in \eqref{eqJacobian-1151}, we get
    \[
    \int_{\manfV} u^{\sharp}\varpi \wedge \dext\alpha
	= (-1)^{n + 1} \int_{\manfV} u^{\sharp}\dext\varpi \wedge \alpha.
    \]
    Since \eqref{eqFormsExtension} holds, the integrand in the right-hand side equals zero almost everywhere.
    As the integral in the left-hand side equals \( \langle \jac{u}, \alpha \rangle  \), the conclusion follows.
\end{proof}

\begin{proof}[Proof of Proposition~\ref{example-hopf-friendly}]
 Since \( (m-1)(n-1) \ge 2 \), we have \(p \ge \frac{2mn}{m+1} \ge n + 1\).
   Lemma~\ref{propositionHurewiczTrivial} then implies that \(u^\sharp \omega_{\Sphere^n}\) is closed in the sense of currents in \(\Ball^{m}\). 
   Hence, the Hodge system
\[
 \left\{
 \begin{aligned}
   \dext {\chi} &= u^\sharp \omega_{\Sphere^n} & & \text{in \(\Ball^{m}\)},\\
   \dco {\chi} &=  0 & & \text{in \(\Ball^{m}\)},
 \end{aligned}
 \right. 
\]
    has a solution.
    Since \(u^\sharp \omega_{\Sphere^n} \in \Lebesgue^{\frac{p}{n}}(\Ball^m; \Forms^{n})\), we can require that \(\chi \in \Sobolev^{1, \frac{p}{n}} (\Ball^m; \Forms^{n - 1})\).{}
	When \(p/n \ge m\), we then deduce that \(\chi \in \Lebesgue^{q} (\Ball^m; \Forms^{n - 1})\) for every \(1 \le q < \infty\).
	Assuming that \(p/n < m\), by the Sobolev imbedding we get \(\chi \in \Lebesgue^{\frac{mp}{mn - p}} (\Ball^m; \Forms^{n - 1} )\).{}
	Since \(p \ge \frac{2mn}{m+1}\), one has \(\frac{mp}{mn - p} \ge \frac{p}{p - n}\) and the conclusion follows.{}
\end{proof}

We now introduce the Hopf current associated to a Hopf-friendly map:

\begin{definition}
    \label{definitionHopfCurrent}
    Let \(m \ge 2n\) and \(p > n\).
	Given a Hopf-friendly map \(u \in \Sobolev^{1, p} (\manfV; \Sphere^n)\) and a differential form \(\chi \in \Lebesgue^{\frac{p}{p - n}} (\manfV; \Forms^{n - 1})\) that satisfies \eqref{eqJacobian-Chi}, then \(\chi \wedge u^\sharp\omega_{\Sphere^n} \in \Lebesgue^{1}(\manfV; \Forms^{2n - 1})\) and we may define the \emph{Hopf current} of \(u\) as
\[{}
\dualprod{\Hopf{u}}{\alpha}
= \int_{\manfV} \chi \wedge u^\sharp\omega_{\Sphere^n} \wedge \dext\alpha{}
\quad \text{for every \(\alpha \in \Smooth_{c}^{\infty}(\manfV; \Forms^{m - 2n})\).}
\]
\end{definition}

When \(p \geq n+1\), this definition does not depend on the choice of \(\chi\):

\begin{proposition}
	\label{lemmaHopfIndependence}
	Let \(m \ge 2n\).
    If \(p \geq n+1\) and  \(u \in \Sobolev^{1, p} (\manfV; \Sphere^n)\), then  \( \Hopf{u} \) is independent of the differential form \( \chi \in \Lebesgue^{\frac{p}{p - n}} (\manfV; \Forms^{n - 1}) \) that satisfies \eqref{eqJacobian-Chi}.{}
\end{proposition}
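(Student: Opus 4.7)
\medskip

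The plan is to show that if $\chi_1, \chi_2 \in \Lebesgue^{\frac{p}{p-n}}(\manfV; \Forms^{n-1})$ both satisfy \eqref{eqJacobian-Chi}, then their difference $\eta \vcentcolon= \chi_1 - \chi_2$ is closed in the sense of currents in $\manfV$, and the invariance of $\Hopf{u}$ reduces to proving
\begin{equation}
\label{eqHopfIndependencePlan}
\int_{\manfV} \eta \wedge u^{\sharp}\omega_{\Sphere^{n}} \wedge \dext\alpha = 0
\quad \text{for every } \alpha \in \Smooth_{c}^{\infty}(\manfV; \Forms^{m - 2n}).
\end{equation}
Observe that the integrand is summable: $\eta \in \Lebesgue^{\frac{p}{p-n}}$ while $u^{\sharp}\omega_{\Sphere^{n}} \in \Lebesgue^{\frac{p}{n}}$, so their wedge product lies in $\Lebesgue^{1}$ and $\dext\alpha$ is smooth with compact support.

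My strategy is a double approximation argument. First, I approximate $\eta$ by \emph{smooth closed} forms $\eta_{j} \to \eta$ in $\Lebesgue^{\frac{p}{p-n}}(\manfV; \Forms^{n-1})$, using the same type of approximation invoked in the proof of Proposition~\ref{proposition_char_closed_diff_form-Fuglede-converse} via~\cite{Goldshtein-Troyanov-2006}. Second, I approximate $u$ by maps $u_{k} \in \Smooth^{\infty}(\manfV; \R^{n + 1})$ with $u_{k} \to u$ in $\Sobolev^{1, p}(\manfV; \R^{n+1})$, and I extend $\omega_{\Sphere^{n}}$ to a smooth compactly supported form $\varpi \in \Smooth^{\infty}_{c}(\R^{n + 1}; \Forms^{n})$, so that $u^{\sharp}\varpi = u^{\sharp}\omega_{\Sphere^{n}}$ almost everywhere in $\manfV$. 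For the smooth objects $\eta_{j}$, $u_{k}$ and $\alpha$, Stokes's theorem applied to $\dext(\eta_{j} \wedge u_{k}^{\sharp}\varpi \wedge \alpha)$ together with $\dext\eta_{j} = 0$ yields
\begin{equation*}
\int_{\manfV} \eta_{j} \wedge u_{k}^{\sharp}\varpi \wedge \dext\alpha
= (-1)^{n - 1}\int_{\manfV} \eta_{j} \wedge u_{k}^{\sharp}(\dext\varpi) \wedge \alpha.
\end{equation*}

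The crucial observation for the right-hand side is that $u^{\sharp}(\dext\varpi) = 0$ almost everywhere in $\manfV$: at a point $x$ where $u(x) \in \Sphere^{n}$, the differential $Du(x)$ takes values in the $n$-dimensional tangent space $T_{u(x)}\Sphere^{n}$, so the $(n+1)$-form $\dext\varpi$ evaluated on $n + 1$ vectors from this space vanishes by dimension. Passing to the limit $k \to \infty$ with $j$ fixed, the convergence $u_{k} \to u$ in $\Sobolev^{1, p}$ together with the hypothesis $p \ge n + 1$ give $u_{k}^{\sharp}\varpi \to u^{\sharp}\omega_{\Sphere^{n}}$ in $\Lebesgue^{\frac{p}{n}}$ and $u_{k}^{\sharp}(\dext\varpi) \to 0$ in $\Lebesgue^{\frac{p}{n + 1}}$, both obtained from almost-everywhere convergence of $(u_{k}, Du_{k})$ combined with equi-integrability. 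Hölder's inequality, using that $\eta_{j}$ and $\alpha$ are smooth with $\alpha$ compactly supported, then shows
\begin{equation*}
\int_{\manfV} \eta_{j} \wedge u^{\sharp}\omega_{\Sphere^{n}} \wedge \dext\alpha = 0,
\end{equation*}
and a further passage to the limit $j \to \infty$ via Hölder's inequality with the conjugate exponents $\frac{p}{p - n}$ and $\frac{p}{n}$ delivers~\eqref{eqHopfIndependencePlan}.

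The main technical obstacle is justifying the approximation of $\eta$ by smooth closed forms in the correct $\Lebesgue^{\frac{p}{p - n}}$ norm; this relies on the regularization results for closed $\Lebesgue^{q}$ forms already used earlier in the manuscript. A secondary point requiring care is the strong $\Lebesgue^{\frac{p}{n}}$ convergence of $u_{k}^{\sharp}\varpi$, which is where the hypothesis $p \ge n + 1$ (itself inherent to Hopf-friendliness being meaningful and to Lemma~\ref{propositionHurewiczTrivial}) is used to guarantee that $u_{k}^{\sharp}(\dext\varpi)$ has enough integrability to converge to zero.
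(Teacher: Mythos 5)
Your proof is correct and follows essentially the same route as the paper: reduce to showing that the closed difference \(\chi_1 - \chi_2\) pairs to zero against \(u^\sharp \omega_{\Sphere^n} \wedge \dext\alpha\), approximate it by smooth closed \((n-1)\)-forms via the same regularization result, integrate by parts, and use that \(u^\sharp \omega_{\Sphere^n}\) is closed in the sense of currents because \(p \ge n+1\). The only difference is that the paper invokes Lemma~\ref{propositionHurewiczTrivial} for this last fact, whereas you re-derive it inline through a second approximation of \(u\) by smooth \(\R^{n+1}\)-valued maps together with the pointwise vanishing of \(u^\sharp(\dext\varpi)\).
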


\begin{proof}
	Take \(\widetilde\chi \in \Lebesgue^{\frac{p}{p - n}} (\manfV; \Forms^{n - 1})\) that also verifies \eqref{eqJacobian-Chi}.{}
	In this case, since \(\chi - \widetilde\chi\) is closed, by \cite{Goldshtein-Troyanov-2006}*{Theorem~12.5} there exists a sequence \((\lambda_{j})_{j \in \N}\) of smooth closed \( (n-1) \)-differential forms that converge in \(\Lebesgue^{\frac{p}{p - n}}\) to \(\chi - \widetilde\chi\).{}
	Given \(\alpha \in \Smooth_{c}^{\infty}(\manfV; \Forms^{m - 2n})\), by closedness of \( \lambda_{j} \) we have
 \[
 \dext(\lambda_{j} \wedge \alpha) 
 = (-1)^{n - 1} \lambda_{j} \wedge \dext\alpha, 
 \]
 where \(\lambda_{j} \wedge \dext\alpha \in \Smooth_{c}^{\infty}(\manfV; \Forms^{m - n})\).
 Thus,
 \[
 \int_{\manfV} \lambda_{j} \wedge u^\sharp \omega_{\Sphere^n} \wedge \dext\alpha{}
	 = 
	(-1)^{n - 1} \int_{\manfV} u^\sharp \omega_{\Sphere^n} \wedge \dext(\lambda_{j} \wedge \alpha){}.
 \]
 	Since \(p \ge n + 1\), it follows from Lemma~\ref{propositionHurewiczTrivial} that \( u^\sharp \omega_{\Sphere^n} \) is closed and we deduce that
	\[{}
 \int_{\manfV} \lambda_{j} \wedge u^\sharp \omega_{\Sphere^n} \wedge \dext\alpha{}
	 = 0.
	\]
 	Thus, as \(j \to \infty\), we get
	\[{}
	\int_{\manfV} (\chi - \widetilde\chi) \wedge u^\sharp \omega_{\Sphere^n} \wedge \dext\alpha{}
	= 0.
	\qedhere
	\]
\end{proof}

	A counterpart of Proposition~\ref{propositionJacobianDegreeZero-Fuglede} for the Hopf current involves the generic composition of Hopf-friendly maps with Lipschitz maps from \(\Sphere^{2n - 1}\) to \(\Sphere^{n}\):

\begin{proposition}
\label{propositionHopfCondition}
Let \( m \ge 2n \) and \(p \ge 2n - 1\). 
If a map \(u \in \Sobolev^{1, p} (\manfV; \Sphere^{n})\) is Hopf-friendly, then  
	\begin{equation}
	\label{eqJacobian-1157}
	\Hopf{u} = 0
	\quad \text{in the sense of currents in \(\manfV\)}
	\end{equation}
if and only if there exists a \((2n - 1)\)-detector \(w\) such that, for every Lipschitz map \(\gamma \colon  \cBall^{2n} \to \manfV\) with \(\gamma\vert_{\Sphere^{2n-1}}\in \Fuglede_{w}(\Sphere^{2n-1}; \manfV)\), we have
\[{}
\hopf{(u \compose \gamma\vert_{\Sphere^{2n-1}})} = 0.
\]
\end{proposition}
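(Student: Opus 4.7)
The plan is to mirror the strategy of Proposition~\ref{propositionJacobianDegreeZero-Fuglede}, exploiting the observation that the Hopf current $\Hopf{u}$ vanishes in the sense of currents in $\manfV$ if and only if the $(2n-1)$-differential form $\chi \wedge u^\sharp \omega_{\Sphere^n} \in \Lebesgue^{1}(\manfV; \Forms^{2n-1})$ is closed in the sense of currents. Indeed, for every $\alpha \in \Smooth^\infty_c(\manfV; \Forms^{m-2n})$ we have $\langle \Hopf{u}, \alpha \rangle = \int_{\manfV} \chi \wedge u^\sharp\omega_{\Sphere^n} \wedge \dext\alpha$, and Proposition~\ref{lemmaHopfIndependence} guarantees this does not depend on the choice of primitive $\chi$, since $p \ge 2n - 1 \ge n + 1$ (using $n \ge 2$). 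Applying the criteria of Propositions~\ref{proposition_char_closed_diff_form-Fuglede} and~\ref{proposition_char_closed_diff_form-Fuglede-converse} to $\eta \vcentcolon= \chi \wedge u^\sharp \omega_{\Sphere^n}$, the problem reduces to identifying the integral $\int_{\Sphere^{2n-1}} \gamma^\sharp \eta$ with the Hopf degree $\hopf{(u \compose \gamma|_{\Sphere^{2n-1}})}$ for generic Lipschitz maps $\gamma \colon \cBall^{2n} \to \manfV$.

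The main step is thus to construct, in the spirit of Lemma~\ref{lemma-Fuglede-Hurewicz-VMO}, a $(2n-1)$-detector $w_0$ such that every $\gamma \in \Fuglede_{w_0}(\Sphere^{2n-1}; \manfV)$ extending Lipschitz continuously to $\cBall^{2n}$ satisfies the following three properties: $u \compose \gamma \in \Sobolev^{1,p}(\Sphere^{2n-1}; \Sphere^n)$ with $(u \compose \gamma)^\sharp \omega_{\Sphere^n} = \gamma^\sharp(u^\sharp \omega_{\Sphere^n})$ almost everywhere; the pullback $\gamma^\sharp \chi$ belongs to $\Lebesgue^{p/(p-n)}(\Sphere^{2n-1}; \Forms^{n-1})$; and $\dext(\gamma^\sharp \chi) = (u \compose \gamma)^\sharp \omega_{\Sphere^n}$ in the sense of currents on $\Sphere^{2n-1}$. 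The first item comes from Proposition~\ref{corollaryCompositionSobolevFuglede} applied to the components of $u$. The second and third items are obtained by approximating $\chi$ in $\Lebesgue^{p/(p-n)}(\manfV; \Forms^{n-1})$ by smooth forms $\chi_j$ with $\dext \chi_j$ approximating $u^\sharp \omega_{\Sphere^n}$ in $\Lebesgue^{p/n}$; applying Proposition~\ref{propositionFugledeForms} both to $(\chi_j)_{j \in \N}$ and to $(\dext\chi_j)_{j \in \N}$ yields, after extraction, a summable function whose associated Fuglede class ensures strong convergence of the pullbacks $\gamma^\sharp \chi_j \to \gamma^\sharp \chi$ and $\gamma^\sharp \dext \chi_j \to \gamma^\sharp(u^\sharp \omega_{\Sphere^n})$ in the appropriate $\Lebesgue^q$ spaces on $\Sphere^{2n-1}$. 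Passing to the limit in $\dext(\gamma^\sharp \chi_j) = \gamma^\sharp \dext \chi_j$ gives the distributional chain rule needed for the third item.

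With this detector in hand, for any $\gamma \in \Fuglede_{w_0}$ extending Lipschitz continuously to $\cBall^{2n}$, the form $\gamma^\sharp \chi \in \Lebesgue^{p/(p-n)}(\Sphere^{2n-1}; \Forms^{n-1})$ is a valid primitive of $(u \compose \gamma|_{\Sphere^{2n-1}})^\sharp \omega_{\Sphere^n}$, and Whitehead's integral formula (Proposition~\ref{propositionSmoothHopfInvariant-VMO}) yields
\[
\hopf{(u \compose \gamma|_{\Sphere^{2n-1}})}
= \int_{\Sphere^{2n-1}} \gamma^\sharp \chi \wedge (u \compose \gamma)^\sharp \omega_{\Sphere^n}
= \int_{\Sphere^{2n-1}} \gamma^\sharp(\chi \wedge u^\sharp \omega_{\Sphere^n}).
\]
The forward implication then follows by assuming $\Hopf{u} = 0$, applying Proposition~\ref{proposition_char_closed_diff_form-Fuglede-converse} to $\eta$ to obtain a summable $w_1$ killing the right-hand side, and taking $w \vcentcolon= w_0 + w_1$. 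Conversely, if $w$ is a detector as in the statement, adding $w_0$ to it and using the identity above shows that $\int_{\Sphere^{2n-1}} \gamma^\sharp \eta = 0$ for every admissible $\gamma$, whence Proposition~\ref{proposition_char_closed_diff_form-Fuglede} gives closedness of $\eta$, that is, $\Hopf{u} = 0$.

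The principal obstacle will be the construction of the detector $w_0$, specifically justifying the distributional chain rule $\dext(\gamma^\sharp \chi) = \gamma^\sharp(\dext \chi)$ on $\Sphere^{2n-1}$ for a merely $\Lebesgue^{p/(p-n)}$ form $\chi$ and a Lipschitz map $\gamma$; this requires the approximation argument based on Proposition~\ref{propositionFugledeForms} to be applied simultaneously to $\chi$ and $\dext \chi$, and care must be taken that the resulting detector genuinely controls both compositions while retaining summability on $\manfV$. Once this is established, the two implications follow essentially formally from the two closed-form criteria for differential forms.
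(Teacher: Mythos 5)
Your proposal is correct and follows essentially the same route as the paper: reduce $\Hopf{u}=0$ to closedness of $\chi \wedge u^{\sharp}\omega_{\Sphere^n}$, build a detector giving the generic identities $(u\compose\gamma)^{\sharp}\omega_{\Sphere^n}=\gamma^{\sharp}(u^{\sharp}\omega_{\Sphere^n})$ and $\dext(\gamma^{\sharp}\chi)=\gamma^{\sharp}(u^{\sharp}\omega_{\Sphere^n})$, apply Whitehead's formula to get $\hopf{(u\compose\gamma|_{\Sphere^{2n-1}})}=\int_{\Sphere^{2n-1}}\gamma^{\sharp}(\chi\wedge u^{\sharp}\omega_{\Sphere^n})$, and conclude with Propositions~\ref{proposition_char_closed_diff_form-Fuglede} and~\ref{proposition_char_closed_diff_form-Fuglede-converse}. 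The only differences are cosmetic: your approximation argument for the distributional chain rule is exactly the content of Lemma~\ref{lemmaFugledeFormsPullBack}, which the paper cites directly, and the paper also adds a $(2n-1)$-detector of $u$ to the sum (its $w_3$) to guarantee the detector property, a step you should include explicitly.
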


The proof of Proposition~\ref{propositionHopfCondition} relies on the genericity of the Hopf degree in the spirit of Lemma~\ref{lemma-Fuglede-Hurewicz-VMO}.
To this end, we first show that the exactness of a differential form is preserved by composition with Fuglede maps.

\begin{lemma}
	\label{lemmaFugledeFormsPullBack}
Given  \(p, q\geq 1\), let \(\chi \in \Lebesgue^{p}(\manfV; \Lambda^{\ell})\) and \(g \in \Lebesgue^{q}(\manfV; \Lambda^{\ell + 1})\) be such that
	\[{}
	\dext\chi = g
	\quad \text{in the sense of currents in \(\manfV\).}
	\]
	Then, there exists a summable function \(w \colon  \manfV \to [0, +\infty]\) such that, for every \(r \in \N_*\) and every \(\gamma \in \Fuglede_{w}(\Sphere^{r}; \manfV)\), we have \(\gamma^{\sharp}\chi \in \Lebesgue^{p}(\Sphere^{r}; \Lambda^{\ell})\), \(\gamma^{\sharp}g \in \Lebesgue^{q}(\Sphere^{r}; \Lambda^{\ell + 1})\) and
	\[{}
	\dext(\gamma^{\sharp}\chi) = \gamma^{\sharp}g
	\quad \text{in the sense of currents in \(\Sphere^{r}\).}
	\]
\end{lemma}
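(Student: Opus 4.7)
The plan is to reduce the statement to the smooth case by approximation, following the template used for Propositions~\ref{proposition_char_closed_diff_form-Fuglede-converse} and~\ref{propositionJacobianDegreeZero-Fuglede}. First, I would construct a sequence \((\chi_j)_{j\in\N}\) in \(\Smooth^{\infty}(\manfV;\Forms^{\ell})\) such that
\[
\chi_j \to \chi \quad \text{in \(\Lebesgue^{p}(\manfV;\Forms^{\ell})\)}
\quad\text{and}\quad
\dext\chi_j \to g \quad \text{in \(\Lebesgue^{q}(\manfV;\Forms^{\ell+1})\).}
\]
This is a standard regularization result obtained from a finite partition of unity subordinated to a covering by local charts together with mollification in each chart: in coordinates one has \((\dext\chi)*\varphi_{\epsilon}=\dext(\chi*\varphi_{\epsilon})\), and the terms generated by the partition of unity can be absorbed into either the \(\Lebesgue^{p}\) or the \(\Lebesgue^{q}\) error. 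Analogous density statements are used in \cite{Goldshtein-Troyanov-2006} and already invoked in the proof of Proposition~\ref{proposition_char_closed_diff_form-Fuglede-converse}.

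Next, I would apply Proposition~\ref{propositionFugledeForms} successively to \((\chi_j)_{j\in\N}\) in \(\Lebesgue^{p}\) and to \((\dext\chi_j)_{j\in\N}\) in \(\Lebesgue^{q}\), producing two summable functions whose sum we call \(w\), and a common subsequence \((\chi_{j_i})_{i\in\N}\) such that, for every Riemannian manifold \(\manfA\) and every \(\gamma\in\Fuglede_{w}(\manfA;\manfV)\), both \(\gamma^{\sharp}\chi\in\Lebesgue^{p}(\manfA;\Forms^{\ell})\) and \(\gamma^{\sharp}g\in\Lebesgue^{q}(\manfA;\Forms^{\ell+1})\), with
\[
\gamma^{\sharp}\chi_{j_i}\to\gamma^{\sharp}\chi\text{ in }\Lebesgue^{p}
\quad\text{and}\quad
\gamma^{\sharp}(\dext\chi_{j_i})\to\gamma^{\sharp}g\text{ in }\Lebesgue^{q}.
\]
Specializing to \(\manfA=\Sphere^{r}\), since each \(\chi_{j_i}\) is smooth and \(\gamma\) is Lipschitz, the classical chain rule for Lipschitz pullbacks of smooth forms yields \(\dext(\gamma^{\sharp}\chi_{j_i})=\gamma^{\sharp}(\dext\chi_{j_i})\) almost everywhere on \(\Sphere^{r}\). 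Thus \(\dext(\gamma^{\sharp}\chi_{j_i})\to\gamma^{\sharp}g\) in \(\Lebesgue^{q}(\Sphere^{r};\Forms^{\ell+1})\).

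Finally, for any \(\alpha\in\Smooth^{\infty}(\Sphere^{r};\Forms^{r-\ell-1})\), Stokes's theorem applied on the closed manifold \(\Sphere^{r}\) to the Lipschitz form \(\gamma^{\sharp}\chi_{j_i}\wedge\alpha\) gives
\[
\int_{\Sphere^{r}} \dext(\gamma^{\sharp}\chi_{j_i})\wedge\alpha
=(-1)^{\ell+1}\int_{\Sphere^{r}}\gamma^{\sharp}\chi_{j_i}\wedge\dext\alpha,
\]
and passing to the limit \(i\to\infty\) on both sides using the \(\Lebesgue^{p}\) and \(\Lebesgue^{q}\) convergences above produces
\[
\int_{\Sphere^{r}} \gamma^{\sharp}g\wedge\alpha
=(-1)^{\ell+1}\int_{\Sphere^{r}}\gamma^{\sharp}\chi\wedge\dext\alpha,
\]
which is precisely the identity \(\dext(\gamma^{\sharp}\chi)=\gamma^{\sharp}g\) in the sense of currents on \(\Sphere^{r}\). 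The main obstacle I expect is the simultaneous smooth approximation in the first step, since one must control \(\chi\) and \(\dext\chi\) in possibly different Lebesgue exponents and, for a Lipschitz open set, handle boundary behavior; both issues are resolvable by standard arguments but require care in bookkeeping the error terms generated by cutoff functions in the partition of unity.
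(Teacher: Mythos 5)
Your proposal is correct and follows essentially the same route as the paper's proof: approximate \(\chi\) by smooth forms with \(\chi_j \to \chi\) in \(\Lebesgue^{p}\) and \(\dext\chi_j \to g\) in \(\Lebesgue^{q}\) (the paper quotes \cite{IwaniecScottStroffolini}*{Corollary~3.6} for this), apply Proposition~\ref{propositionFugledeForms} to both sequences to obtain \(w\) and a common subsequence, then combine the pointwise identity \(\dext(\gamma^{\sharp}\chi_{j_i}) = \gamma^{\sharp}\dext\chi_{j_i}\) with Stokes's theorem on \(\Sphere^{r}\) and pass to the limit. The only difference is the reference invoked for the smooth approximation step, which is immaterial.
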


\begin{proof}[Proof of Lemma~\ref{lemmaFugledeFormsPullBack}]
	As in \cite{IwaniecScottStroffolini}*{Corollary~3.6}, there exists a sequence \((\chi_{j})_{j \in \N}\) in \(\Smooth^{\infty}(\manfV; \Forms^{\ell})\) such that
	\[{}
	\chi_{j} \to \chi{}
	\quad \text{in \(\Lebesgue^{p}(\manfV; \Lambda^{\ell})\),} \quad{}
	\dext\chi_{j} \to g{}
	\quad \text{in \(\Lebesgue^{q}(\manfV; \Lambda^{\ell + 1})\).}
	\]
	We then apply Proposition~\ref{propositionFugledeForms} to both sequences \((\chi_{j})_{j \in \N}\) and \((\dext\chi_{j})_{j \in \N}\) and denote by \(w \colon  \manfV \to [0, +\infty]\) the summable function and \((j_{i})_{i \in \N}\) the indices of the subsequence thus obtained.
	Take \(\gamma \in \Fuglede_{w}(\Sphere^{r}; \manfV)\).{}
	Since 
    \[
    \dext(\gamma^{\sharp}\chi_{j_{i}}) = \gamma^{\sharp}\dext{\chi_{j_{i}}}
    \quad \text{almost everywhere in \(\Sphere^{r}\),}
    \]
    we have by Stokes's theorem that, for every \(\alpha \in \Smooth^{\infty}(\Sphere^{r}; \Forms^{r - \ell - 1})\),
	\[{}
	\int_{\Sphere^{r}} \gamma^{\sharp}\chi_{j_{i}} \wedge \dext\alpha 
	= (-1)^{\ell + 1} \int_{\Sphere^{r}} \gamma^{\sharp}\dext\chi_{j_{i}} \wedge \alpha.
	\]
	Then, as \(i \to \infty\),{}
	\[{}
	\int_{\Sphere^{r}} \gamma^{\sharp}\chi \wedge \dext\alpha 
	= (-1)^{\ell + 1} \int_{\Sphere^{r}} \gamma^{\sharp}g \wedge \alpha.{}
	\]
	Since \(\alpha\) is arbitrary, the conclusion follows.	
\end{proof}

\begin{proof}[Proof of Proposition~\ref{propositionHopfCondition}]
Since \( u \) is Hopf-friendly, there exists a differential form \(\chi \in \Lebesgue^{\frac{p}{p - n}}(\manfV; \Forms^{n - 1})\) that satisfies \eqref{eqJacobian-Chi}.
Since \( p \ge 2n-1 \), by the inclusion \eqref{eqDetector-775} we have that  \( u \) is a \( \VMO^{2n-1} \)~map.
We prove that there exists a \((2n - 1)\)-detector \({w}\) such that, for every \(\gamma \in \Fuglede_{ w}(\Sphere^{2n-1}; \manfV)\), we have \(\gamma^{\sharp}(\chi \wedge u^\sharp \omega_{\Sphere^n}) \in \Lebesgue^{1}(\Sphere^{2n - 1}; \Forms^{2n-1})\) and
\begin{equation}
	\label{eqJacobian-1645}
\hopf{(u \compose \gamma)} 
= \int_{\Sphere^{2 n - 1}} \gamma^{\sharp}(\chi \wedge u^\sharp \omega_{\Sphere^n}).
\end{equation}

By \eqref{eqJacobian-Chi} and Lemma~\ref{lemmaFugledeFormsPullBack}, there exists a summable function \(w_{1} \colon  \manfV \to [0, +\infty]\) such that, for every \(\gamma \in \Fuglede_{w_{1}}(\Sphere^{2n - 1}; \manfV)\), we have \(\gamma^{\sharp}\chi \in \Lebesgue^{\frac{p}{p - n}}(\Sphere^{2n - 1}; \Lambda^{n - 1})\), \(\gamma^{\sharp}(u^{\sharp}\omega_{\Sphere^{n}}) \in \Lebesgue^{\frac{p}{n}}(\Sphere^{2n - 1}; \Lambda^{n})\) and
	\begin{equation}
\label{eqJacobian-1651}
	\dext(\gamma^{\sharp}\chi) = \gamma^{\sharp}(u^{\sharp}\omega_{\Sphere^{n}})
	\quad \text{in the sense of currents in \(\Sphere^{2n - 1}\).}
	\end{equation}
Since \(u \in \Sobolev^{1, p}(\manfV; \Sphere^{n})\), by Proposition~\ref{corollaryCompositionSobolevFuglede} there exists a summable function \(w_{2} \colon  \manfV\to [0,+ \infty]\) such that, for every \(\gamma \in \Fuglede_{w_{2}}(\Sphere^{2 n - 1}; \manfV)\),  we have \(u \compose \gamma \in \Sobolev^{1, p} (\Sphere^{2 n - 1}; \Sphere^n)\) and 
\[{}
D (u \compose \gamma) = (D u \compose \gamma) [D \gamma]
\quad \text{almost everywhere in \(\Sphere^{2n - 1}\).}
\]
Thus,
\begin{equation}
\label{eqJacobian-1662}
(u \compose \gamma)^\sharp \omega_{\Sphere^n} 
= \gamma^{\sharp}(u^\sharp \omega_{\Sphere^n})
\quad \text{almost everywhere in \(\Sphere^{2n - 1}\).}
\end{equation}
Let \(w_{3}\) be a \((2n - 1)\)-detector of \( u \) and take \(w = w_{1} + w_{2} + w_{3}\).{}
For \(\gamma \in \Fuglede_{{w}}(\Sphere^{2 n - 1}; \manfV)\), we may combine \eqref{eqJacobian-1651} and \eqref{eqJacobian-1662}
to get
\[{}
\dext(\gamma^{\sharp}\chi) 
= \gamma^{\sharp}(u^\sharp \omega_{\Sphere^n})
= (u \compose \gamma)^{\sharp}\omega_{\Sphere^n}
\quad \text{in the sense of currents in \(\Sphere^{2 n - 1}\).}
\]
Since \(u \compose \gamma \in \Sobolev^{1, p}(\Sphere^{2n - 1}; \Sphere^{n})\) with \(p \ge 2n - 1\), by Proposition~\ref{propositionSmoothHopfInvariant-VMO} we then get
\[{}
\hopf{(u \compose \gamma)} 
= \int_{\Sphere^{2 n - 1}} \gamma^{\sharp}\chi \wedge \gamma^{\sharp}(u^\sharp \omega_{\Sphere^n}).
\]
As we have
\[{}
\gamma^{\sharp}\chi \wedge \gamma^{\sharp}(u^\sharp \omega_{\Sphere^n})
= \gamma^{\sharp}(\chi \wedge u^\sharp \omega_{\Sphere^n})
\quad \text{almost everywhere in \(\Sphere^{2n - 1}\),}
\]
identity \eqref{eqJacobian-1645} follows.

To conclude the proof of the proposition, note that \(\Hopf{u} = 0\) if and only if \(\chi \wedge u^{\sharp}\omega_{\Sphere^n}\) is closed. From here, we apply Proposition~\ref{proposition_char_closed_diff_form-Fuglede-converse} to this \((2n-1)\)-differential form to establish the implication ``\(\Longrightarrow\)'', and use Proposition~\ref{proposition_char_closed_diff_form-Fuglede} for the reverse implication ``\(\Longleftarrow\)''.
\end{proof}

As a consequence of Proposition~\ref{propositionHopfCondition}, we obtain a counterpart of Corollary~\ref{corollaryJacobianFugledeTrivial} for the Hopf current:
\begin{corollary}
	\label{corollaryHopfFugledeTrivial}
	Let \(p \ge 2n - 1\) and let \(u \in \Sobolev^{1, p}(\manfV; \Sphere^{n})\) be a Hopf-friendly map. 
 If \(u\)  is \((2n-1)\)-extendable, then
	\[{}
	\Hopf{u}
	= 0 \quad \text{in the sense of currents in \(\manfV\).}
	\]
 The converse is true whenever \(\hopf\) induces an injective homomorphism from \(\pi_{2n-1}(\Sphere^{n})\) into \(\Z\).
\end{corollary}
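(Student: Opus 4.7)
The proof proposal runs along the same lines as Corollaries~\ref{corollaryJacobianFugledeTrivial} and~\ref{corollaryHurewiczNul}, using as main ingredients the generic integral formula for the Hopf invariant proved in Proposition~\ref{propositionHopfCondition}, the VMO invariance of the Hopf degree from Proposition~\ref{propositionSmoothHopfInvariant-VMO}, and the passage between VMO and continuous homotopies provided by Propositions~\ref{propositionHomotopyVMOContinuousMap} and~\ref{propositionHomotopyVMOtoC}.

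For the direct implication, the plan is to select a $(2n-1)$-detector $w$ that witnesses the $(2n-1)$-extendability of $u$. Given any Lipschitz map $\gamma \colon \cBall^{2n} \to \manfV$ with $\gamma|_{\Sphere^{2n-1}} \in \Fuglede_w(\Sphere^{2n-1}; \manfV)$, extendability yields that $u \compose \gamma|_{\Sphere^{2n-1}}$ is homotopic to a constant in $\VMO(\Sphere^{2n-1}; \Sphere^n)$. Since the Hopf degree extends continuously to VMO and is invariant under VMO homotopies, and since it vanishes on constant maps, one gets $\hopf(u \compose \gamma|_{\Sphere^{2n-1}}) = 0$. The ``$\Leftarrow$'' direction of Proposition~\ref{propositionHopfCondition}, applied to $u$ (which is Hopf-friendly by assumption), then yields $\Hopf{u} = 0$ in the sense of currents in $\manfV$.

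For the converse, assume $\Hopf{u}=0$ and that $\hopf$ induces an injective homomorphism on $\pi_{2n-1}(\Sphere^n)$. The ``$\Rightarrow$'' direction of Proposition~\ref{propositionHopfCondition} provides a $(2n-1)$-detector $w$ such that $\hopf(u \compose \gamma|_{\Sphere^{2n-1}}) = 0$ for every Lipschitz map $\gamma \colon \cBall^{2n} \to \manfV$ whose boundary restriction is Fuglede with respect to $w$. Setting $v \vcentcolon= u \compose \gamma|_{\Sphere^{2n-1}}$, since $w$ is in particular a $(2n-1)$-detector for $u$, we have $v \in \VMO(\Sphere^{2n-1}; \Sphere^n)$. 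Proposition~\ref{propositionHomotopyVMOContinuousMap} gives some $f \in \Smooth^0(\Sphere^{2n-1}; \Sphere^n)$ with $v \sim f$ in $\VMO$; by VMO invariance of $\hopf$, we get $\hopf(f) = \hopf(v) = 0$, so the homotopy class $[f] \in \pi_{2n-1}(\Sphere^n)$ lies in $\ker\hopf = \{0\}$, meaning $f$ is homotopic to a constant in $\Smooth^0$, hence in $\VMO$. Transitivity of the homotopy relation yields that $v$ is homotopic to a constant in $\VMO(\Sphere^{2n-1}; \Sphere^n)$, so $u$ is $(2n-1)$-extendable.

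The steps are essentially assemblies of earlier results and present no serious obstacle; the only item requiring slight care is verifying that the map $f$ produced by Proposition~\ref{propositionHomotopyVMOContinuousMap} inherits a well-defined and vanishing Hopf invariant, which is exactly the content of the VMO extension of $\hopf$ in Proposition~\ref{propositionSmoothHopfInvariant-VMO}. The assumption that $u$ is Hopf-friendly is used implicitly both times we invoke Proposition~\ref{propositionHopfCondition}, so no further analysis of $\Hopf{u}$ as a current is needed.
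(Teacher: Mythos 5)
Your argument is correct and follows essentially the same route as the paper: the direct implication combines the VMO-homotopy invariance of \(\hopf\) with the reverse implication of Proposition~\ref{propositionHopfCondition}, and the converse uses its direct implication together with Proposition~\ref{propositionHomotopyVMOContinuousMap}, the VMO invariance of \(\hopf\), and the injectivity of \(\hopf\) on \(\pi_{2n-1}(\Sphere^n)\). The only cosmetic difference is your passing mention of Proposition~\ref{propositionHomotopyVMOtoC}, which is not actually needed since injectivity already yields that the continuous representative is null-homotopic.
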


\begin{proof}
For the direct implication, let \(w\) be a \((2n - 1)\)-detector that verifies the \((2n - 1)\)-extendability for \(u\).
	Given a Lipschitz map \(\gamma \colon  \cBall^{2n} \to \manfV\) such that \(\gamma|_{\Sphere^{2n - 1}} \in \Fuglede_{w}(\Sphere^{2n - 1}; \manfV)\), the map \(u \compose \gamma|_{\Sphere^{2n - 1}}\) is homotopic to a constant in \(\VMO(\Sphere^{2n - 1}; \Sphere^{n})\).
	By invariance of \(\hopf\) with respect to VMO homotopy, we then have \(\hopf{(u \compose \gamma|_{\Sphere^{2n - 1}})} = 0\).
	Since \(\gamma\) is arbitrary, the conclusion follows from Proposition~\ref{propositionHopfCondition}.

 To prove the converse, let \(w\) be a \((2n-1)\)-detector given by Proposition~\ref{propositionHopfCondition}.
	Thus, for every Lipschitz map \(\gamma \colon  \cBall^{2n} \to \manfV\) with \(\gamma|_{\Sphere^{2n-1}} \in \Fuglede_{w}(\Sphere^{2n-1}; \manfV)\), we have \(\hopf{(u \compose \gamma|_{\Sphere^{2n-1}})} = 0\).
	By injectivity of \(\hopf\), this implies that \(u \compose \gamma|_{\Sphere^{2n-1}}\) is homotopic to a constant in \(\VMO(\Sphere^{2n-1}; \Sphere^{n})\).
    In fact, by Proposition~\ref{propositionHomotopyVMOContinuousMap} there exists \(v \in \Smooth^0(\Sphere^{2n - 1}; \Sphere^{n})\) that is homotopic to \(u \compose \gamma|_{\Sphere^{2n-1}}\) in \(\VMO(\Sphere^{2n-1}; \Sphere^{n})\).
    By invariance of \(\hopf\) under \(\VMO\) homotopy, we get 
    \[
    \hopf{(v)} = \hopf{(u \compose \gamma|_{\Sphere^{2n-1}})} = 0.
    \]
    Therefore, by injectivity of \(\hopf\), we find that \(v\) is homotopic to a constant in \(\Smooth^{0}(\Sphere^{2n-1}; \Sphere^n)\) and also in \(\VMO\).
    Hence, by transitivity of the homotopy relation, \(u \compose \gamma|_{\Sphere^{2n-1}}\) is homotopic to a constant in \(\VMO(\Sphere^{2n-1}; \Sphere^n)\).
    Therefore, \(u\) is \((2n-1)\)-extendable.	
\end{proof}

\begin{remark}
The injectivity assumption of \(\hopf\) holds whenever \(n\) is even and \(\pi_{2n-1}(\Sphere^n) \simeq \Z\). Indeed, the quotient group \(\pi_{2n-1}(\Sphere^n)/\ker{\hopf}\) is isomorphic to \(\hopf(\pi_{2n-1}(\Sphere^n))\) which is a nontrivial group without torsion, since \(n\) is even. 
Given that \(\pi_{2n-1}(\Sphere^n) \simeq \Z\), this implies that \(\ker{\hopf}\) is trivial, since otherwise \(\pi_{2n-1}(\Sphere^n)/\ker{\hopf}\) would be a finite group. 
We recall that the isomorphism \(\pi_{2n-1}(\Sphere^n) \simeq \Z\) holds when \(n=2\) and \(n=6\).
\end{remark}

Combining Theorem~\ref{theoremhkpGreatBall} and Corollary~\ref{corollaryHopfFugledeTrivial}, one deduces in particular the following identification of \( \Hilbert^{1, p} (\Ball^4; \Sphere^{2})\) in terms of the Hopf current:

\begin{corollary}
    \label{corollaryHopfIsobe}
    Let \(3 \le p < 4\) and let \(u \in \Sobolev^{1,p}(\Ball^4;\Sphere^2)\) be a Hopf-friendly map.
    Then, \(u \in \Hilbert^{1, p} (\Ball^4; \Sphere^{2})\) if and only if
    \[
    \Hopf{u} = 0
    \quad \text{in the sense of currents in \(\Ball^4\).}
    \]
\end{corollary}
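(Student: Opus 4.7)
The plan is to chain together the two flagship results that have just been proved in the paper, after checking that the hypotheses apply with $m=4$, $n=2$, $k=1$ and $\floor{kp}=3$. First I would note that since $3 \le p < 4$ we have $kp=p<m=4$, so Theorem~\ref{theoremhkpGreatBall} is applicable, with $\floor{kp}=3$. It asserts that $u \in \Hilbert^{1,p}(\Ball^4;\Sphere^2)$ if and only if $u$ is $3$-extendable. On the other hand, with $n=2$ we have $2n-1=3 \le p$, so Corollary~\ref{corollaryHopfFugledeTrivial} applies to the Hopf-friendly map $u$, establishing that $3$-extendability implies $\Hopf{u}=0$ in the sense of currents, and the converse provided that the Hopf invariant $\hopf$ induces an injective homomorphism from $\pi_{3}(\Sphere^{2})$ into $\Z$.

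The one verification left is precisely this injectivity of $\hopf \colon \pi_{3}(\Sphere^{2}) \to \Z$. I would invoke the classical fact that $\pi_{3}(\Sphere^{2}) \simeq \Z$ with the class of the Hopf fibration $f \colon \Sphere^{3} \to \Sphere^{2}$ as a generator, together with the normalization $\hopf{f}=1$ arising from Whitehead's integral formula \eqref{eqJacobian-1500}. Since $\hopf$ is a group homomorphism which sends a generator of $\Z$ to $1 \in \Z$, it is in fact an isomorphism, and in particular injective. This is exactly the remark following Corollary~\ref{corollaryHopfFugledeTrivial}, applied in the case $n=2$.

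Combining these observations gives the chain of equivalences
\[
u \in \Hilbert^{1,p}(\Ball^{4};\Sphere^{2})
\ \Longleftrightarrow\
u \text{ is \(3\)-extendable}
\ \Longleftrightarrow\
\Hopf{u} = 0 \text{ in the sense of currents in } \Ball^{4},
\]
which is precisely the statement of Corollary~\ref{corollaryHopfIsobe}. There is no genuine obstacle in the argument since all the analytic and topological work has been carried out in the preceding two results; the only subtle point is the injectivity of $\hopf$ on $\pi_{3}(\Sphere^{2})$, and this is a standard fact from algebraic topology used in the prescribed manner.
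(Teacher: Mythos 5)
Your proposal is correct and follows essentially the same route as the paper, which obtains the corollary precisely by combining Theorem~\ref{theoremhkpGreatBall} (with \(\floor{kp}=3\)) and Corollary~\ref{corollaryHopfFugledeTrivial} (with \(n=2\), \(2n-1=3\le p\)). Your verification that \(\hopf\colon\pi_{3}(\Sphere^{2})\to\Z\) is injective, via the Hopf fibration generating \(\pi_{3}(\Sphere^{2})\simeq\Z\) with \(\hopf = 1\), matches the observation already made in Section~\ref{section_Hopf} and the remark following Corollary~\ref{corollaryHopfFugledeTrivial}.
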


We thus recover the characterization of \(\Hilbert^{1, p} (\Ball^4; \Sphere^{2})\) by  Isobe~\cite{Isobe1995} when \(16/5 \le p < 4\). 
Recall that, by Proposition~\ref{example-hopf-friendly}, any map in \(\Sobolev^{1,16/5}(\Ball^4;\Sphere^2)\) is Hopf-friendly since the lower bound \(16/5\) ensures the summability of \(\chi \wedge u^{\sharp} \omega_{\Sphere^n}\) and we may thus consider it as a valid current.

To conclude, we observe that by Serre's theorem \cite{Serre1951}, the elements in \(\pi_k (\Sphere^n) \otimes \R\) are detected by the Hopf invariant. 
This implies that when the manifold \(\manfN\) is a sphere, the constructions of this chapter give a complete list of the local topological obstructions that can be detected by integral formulas. 
We refer the reader to \cites{Hardt-Riviere-2008} for other target manifolds \(\manfN\) that can be covered by similar methods.

\cleardoublepage
\chapter{\texorpdfstring{$(\ell, \MakeLowercase{e})$}{(l, e)}-extendability}
\label{chapterExtensionGeneral}

In Chapter~\ref{chapterGenericEllExtension}, we introduced the concept of generic \(\ell\)-extension using Fuglede maps on spheres and the boundaries of simplices, focusing on properties specific to this setting. The foundations established there provide the groundwork for the broader notion of \((\ell, e)\)-extendability, which we develop in this chapter, where Fuglede maps are defined on simplicial complexes. This progression allows us to highlight the common properties of both notions and explore their differences without redundancy.

As before, we formulate the extension property in the context of \(\VMO^\ell\) maps by relying on generic compositions. Specifically, a map \(u \in \VMO^\ell(\manfV; \manfN)\) is \emph{\((\ell, e)\)-extendable} whenever, for a generic Lipschitz map \(\gamma\) defined on an \(e\)-dimensional polytope \(K^e\), the composition \(u \circ \gamma|_{K^\ell}\) is \(\VMO\)-homotopic to a map in \(\Smooth^{0}(K^e; \manfN)\). When \(e = \ell + 1\), this recovers the \(\ell\)-extension property from Chapter~\ref{chapterGenericEllExtension}. However, when \(e \geq \ell + 2\), \((\ell, e)\)-extendability captures global properties of maps between \(\manfV\) and \(\manfN\). In fact, higher-dimensional simplicial complexes reveal global topological obstructions that are invisible in codimension \(1\) and are more adapted to capture the transition from local to global phenomena.

The set \(\VMO^{e}(\manfV; \manfN)\) provides a large class of \((\ell, e)\)-extendable maps. For example, when \(e \in \{1, \ldots, m-1\}\), a map \(u \colon \R^{m} \to \Sphere^e\) defined for \(x = (x', x'') \in \R^{e+1} \times \R^{m-e-1}\), with \(x' \neq 0\), by \(u(x) = x' / \abs{x'}\) is \(\VMO^{e}\) and thus \((\ell, e)\)-extendable. This property follows from a general consistency result that reveals a hierarchical structure among different levels of extension: If a map is \((\ell, e)\)-extendable, it is also \((i, j)\)-extendable for all \(i \le \ell\) and \(j \le e\) with \(i \leq j\). The proof of this fact involves restrictions of \(\VMO^\ell\) maps defined on a polytope to lower-dimensional polytopes.

To establish the consistency property, we address a key limitation of \(\VMO\) functions: Their lack of a well defined notion of restriction comparable to the trace for Sobolev functions. To this end, we introduce the space \(\VMO^\#(K^\ell | K^r)\), a subset of \(\VMO(K^\ell)\), whose elements exhibit vanishing crossed mean oscillations with respect to the subpolytope \(K^r\). This framework ensures that functions, when confined to \(K^r\), remain in \(\VMO\) and enables the analysis of homotopies between \(\VMO\) maps on \(K^r\). This flexible approach provides the robust tools required to handle restrictions and prove the consistency property.

\section{A general extension property}
\label{sectionExtensionHomotopyGroups}

We generalize the notion of \(\ell\)-extension using simplicial complexes in the spirit of Proposition~\ref{lemma_l_l1_property}:

\begin{definition}
\label{definitionExtensionVMO}
Let \(\ell, e\in \N\) with \(\ell \leq e\).  
A map \(u \in \VMO^{\ell}(\manfV ; \manfN)\) is \emph{\((\ell, e)\)-extendable} whenever \(u\) has an \(\ell\)-detector \(w \colon  \manfV \to [0,+\infty]\) with the following property:
For every simplicial complex \(\cK^e\) and every Lipschitz map \(\gamma \colon  K^e \to  \manfV\) with 
\[{}
\gamma|_{K^{\ell}} \in \Fuglede_{w}(K^\ell; \manfV) \text{,}
\]
there exists \(F \in \Smooth^{0}(K^{e}; \manfN)\) such that
\[{}
u \compose \gamma |_{K^{\ell}} \sim F|_{K^{\ell}}
\quad \text{in \(\VMO(K^{\ell}; \manfN)\).}
\]
\end{definition}

It follows from Proposition~\ref{lemma_l_l1_property} that \((\ell, \ell + 1)\)-extendability is equivalent to \(\ell\)-extendability in the sense of Definition~\ref{definitionExtensionVMO-1}.
Note also that every map \(u \in \VMO^{\ell}(\manfV ; \manfN)\) is \((\ell, \ell)\)-extendable as it suffices to take any \(\ell\)-detector for \(u\) and then apply Proposition~\ref{propositionHomotopyVMOContinuousMap}.
While \( \ell \)-extendability is an intrinsically local property, in the sense that one can merely rely on Fuglede maps on simplices (Proposition~\ref{propositionExtendabilitySimplex}), the same cannot be said about \( (\ell, e) \)-extendability in general. 
In Chapter~\ref{chapterDecouple}, we explain a missing global property that, combined with \( \ell \)-extendability, allows one to deduce \( (\ell, e) \)-extendability.

The above notion of extendability is stable with respect to composition with Fuglede maps:

\begin{proposition}
	\label{corollary_ouverture}
	If \(u \in \VMO^{\ell}(\manfV; \manfN)\) is \((\ell, e)\)-extendable and if \(w\) is any \(\ell\)-detector for \(u\) given by Definition~\ref{definitionExtensionVMO}, then \(u \compose \Phi\) is also \((\ell, e)\)-extendable for every  \(\Phi \in \Fuglede_{w}(\widetilde\manfV; \manfV)\), where \( \widetilde\manfV \) is a Lipschitz open subset of \( \R^{m} \) or a compact Riemannian manifold without boundary.
\end{proposition}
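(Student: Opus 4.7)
The plan is to take as $\ell$-detector for $u \compose \Phi$ the pull-back $\widetilde w \vcentcolon= w \compose \Phi$ on $\widetilde\manfV$ (possibly enlarging the initial $w$ beforehand to guarantee that it simultaneously witnesses the $\VMO^\ell$ stability of $u$, which can be done by adding to $w$ a stability detector from Definition~\ref{definitionVMOell} and re-observing that the larger function is still an extension detector). The summability of $\widetilde w$ is exactly the assumption that $\Phi \in \Fuglede_w(\widetilde\manfV; \manfV)$.

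The core mechanism is a single change-of-variable identity: for every simplicial complex $\cK^e$ and every Lipschitz map $\widetilde\gamma \colon K^e \to \widetilde\manfV$, the composite $\gamma \vcentcolon= \Phi \compose \widetilde\gamma \colon K^e \to \manfV$ is Lipschitz and, for every $i \in \{0, \dots, e\}$,
\[
\int_{K^i} w \compose \gamma \dif\cH^i = \int_{K^i} \widetilde w \compose \widetilde\gamma \dif\cH^i.
\]
Applied with $i = \ell$, this shows that if $\widetilde\gamma|_{K^\ell} \in \Fuglede_{\tilde w}(K^\ell; \widetilde\manfV)$, then $\gamma|_{K^\ell} \in \Fuglede_w(K^\ell; \manfV)$, so the $\ell$-detector property and the stability property of $w$ for $u$ transfer directly to $\widetilde w$ for $u \compose \Phi$. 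Indeed, $(u \compose \Phi) \compose \widetilde\gamma|_{K^\ell} = u \compose \gamma|_{K^\ell} \in \VMO(K^\ell)$; and given an equiLipschitz sequence $\widetilde\gamma_j \to \widetilde\gamma$ in $\Fuglede_{\tilde w}(K^\ell; \widetilde\manfV)$, the sequence $\Phi \compose \widetilde\gamma_j$ is equiLipschitz (bound $|\Phi|_{\Lip} |\widetilde\gamma_j|_{\Lip}$), converges uniformly to $\Phi \compose \widetilde\gamma$ by Lipschitz continuity of $\Phi$, and has uniformly bounded Fuglede norms by the identity above, so it converges in $\Fuglede_w(K^\ell; \manfV)$ and the stability property of $w$ gives $(u \compose \Phi) \compose \widetilde\gamma_j \to (u \compose \Phi) \compose \widetilde\gamma$ in $\VMO(K^\ell; \R^\nu)$. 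This establishes $u \compose \Phi \in \VMO^\ell(\widetilde\manfV; \manfN)$ and that $\widetilde w$ is an $\ell$-detector for it.

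Applied with $i = \ell$ and arbitrary $e \ge \ell$, the same transfer yields the extension property: given a Lipschitz map $\widetilde\gamma \colon K^e \to \widetilde\manfV$ with $\widetilde\gamma|_{K^\ell} \in \Fuglede_{\tilde w}(K^\ell; \widetilde\manfV)$, the Lipschitz map $\gamma \vcentcolon= \Phi \compose \widetilde\gamma \colon K^e \to \manfV$ satisfies $\gamma|_{K^\ell} \in \Fuglede_w(K^\ell; \manfV)$, and the $(\ell, e)$-extendability of $u$ with detector $w$ provides some $F \in \Smooth^0(K^e; \manfN)$ with
\[
(u \compose \Phi) \compose \widetilde\gamma|_{K^\ell} = u \compose \gamma|_{K^\ell} \sim F|_{K^\ell} \quad \text{in } \VMO(K^\ell; \manfN),
\]
which is the conclusion.

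The main subtlety I anticipate is the preliminary enlargement step: the statement is phrased for an arbitrary extension detector $w$, but the proof naturally requires that the same $w$ also serves as a stability detector in the sense of Definition~\ref{definitionVMOell}, so I will need to argue at the outset that any extension detector can be replaced by a larger one that is simultaneously a stability detector while preserving the hypothesis $\Phi \in \Fuglede_w$ (which is an upper-bound hypothesis on $\int w \compose \Phi$ and hence is compatible with such an enlargement once we choose the stability detector appropriately).
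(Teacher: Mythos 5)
Your core mechanism is exactly the paper's proof: take \(\widetilde w \vcentcolon= w \compose \Phi\), which is summable precisely because \(\Phi \in \Fuglede_{w}(\widetilde\manfV; \manfV)\); observe that \(\widetilde\gamma|_{K^{\ell}} \in \Fuglede_{\tilde w}(K^{\ell}; \widetilde\manfV)\) forces \(\Phi \compose \widetilde\gamma|_{K^{\ell}} \in \Fuglede_{w}(K^{\ell}; \manfV)\) --- this is the pointwise identity \(w \compose (\Phi \compose \widetilde\gamma) = \widetilde w \compose \widetilde\gamma\), no change of variables or Jacobian enters, since the Fuglede condition only asks for summability of the composition --- so \(\widetilde w\) is an \(\ell\)-detector for \(u \compose \Phi\), and the homotopy-extension property of \(u\) with detector \(w\) transfers verbatim to \(u \compose \Phi\) with detector \(\widetilde w\). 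The paper's proof consists of exactly these two observations and nothing more.

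The place where your proposal as written does not go through is the preliminary enlargement. The statement fixes an arbitrary extension detector \(w\) and quantifies over \emph{all} \(\Phi \in \Fuglede_{w}(\widetilde\manfV; \manfV)\); if you replace \(w\) by \(w + w_{1}\) with \(w_{1}\) a stability detector from Definition~\ref{definitionVMOell}, you need \((w + w_{1}) \compose \Phi\) to be summable, i.e.\ \(\Phi \in \Fuglede_{w + w_{1}}\), which is strictly stronger than the hypothesis: enlarging the detector shrinks the admissible class of maps \(\Phi\), so no ``appropriate choice'' of \(w_{1}\) can cover every \(\Phi\) allowed by the statement. The paper does not perform (or need) this step: its proof only records that \(w \compose \Phi\) is an \(\ell\)-detector for \(u \compose \Phi\) and that the extension property of Definition~\ref{definitionExtensionVMO} holds with respect to it; the \(\VMO^{\ell}\) stability of \(u \compose \Phi\) is not re-derived there, and in the applications (e.g.\ to \(u \compose \Phi^{\mathrm{op}}\) in Chapter~\ref{chapter-approximation-Sobolev-manifolds}) the membership \(u \compose \Phi \in \VMO^{\ell}\) is available independently, via \eqref{eqDetector-882}. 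So either drop the enlargement, in which case your argument coincides with the paper's, or, if you want the stability transfer through \(\Phi\), add the hypothesis that \(\Phi\) is also Fuglede with respect to a stability detector of \(u\) (your transfer argument is then correct as you wrote it).
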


\begin{proof}
	We first observe that the summable function \(w \compose \Phi\) is an \(\ell\)-detector for \(u \compose \Phi\).{}
	Indeed, for any simplicial complex \(\cK^{\ell}\) and \(\gamma \in \Fuglede_{w \compose \Phi}(K^{\ell}; \widetilde\manfV)\), we have \(\Phi \compose \gamma \in \Fuglede_{w}(K^{\ell}; \manfV)\).{}
	Since \(w\) is an \(\ell\)-detector for \(u\), we deduce that \(u \compose \Phi \compose \gamma \in \VMO(K^{\ell}; \manfN)\).{}
	
	We now show that \(u \compose \Phi\) is \((\ell, e)\)-extendable using the \(\ell\)-detector \(w \compose \Phi\).
	To this end, take any simplicial complex \(\cE^{e}\) and a Lipschitz map \(\gamma \colon  E^{e} \to \widetilde\manfV\) such that \(\gamma|_{E^{\ell}} \in \Fuglede_{w \compose \Phi}(E^{\ell}; \widetilde\manfV)\).{}
	As \(\Phi \compose \gamma|_{E^{\ell}} \in \Fuglede_{w}(E^{\ell}; \manfV)\), we can rely on the \((\ell, e)\)-extendability of \(u\) to deduce  the existence of \(F \in \Smooth^{0}(E^{e}; \manfN)\) such that
\[{}
u \compose \Phi \compose \gamma |_{E^{\ell}} \sim F|_{E^{\ell}}
\quad \text{in \(\VMO(E^{\ell}; \manfN)\).}
\qedhere
\]
\end{proof}

Extendability is also closed under \(\VMO^\ell\) convergence of maps, introduced in Definition~\ref{def_conv_VMOl}:

\begin{proposition}
    \label{propositionExtensionVMOClosed}
    Let \((u_j)_{j \in \N}\) be a sequence of \((\ell, e)\)-extendable maps in \(\VMO^\ell(\manfV; \manfN)\).
    If 
    \[
    u_j \to u
    \quad \text{in \(\VMO^\ell(\manfV; \R^\nu)\),}
    \]
    then \(u\) is \((\ell, e)\)-extendable.
\end{proposition}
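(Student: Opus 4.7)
The plan is to combine the $\ell$-detectors for $u$ from the convergence and those coming from the $(\ell,e)$-extendability of each $u_j$ into a single $\ell$-detector for $u$.

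First, by Definition~\ref{def_conv_VMOl}, the convergence $u_j \to u$ in $\VMO^\ell(\manfV; \R^\nu)$ provides an $\ell$-detector $w$ for $u$ and all $u_j$ such that, for every simplicial complex $\cK^\ell$ and every $\gamma \in \Fuglede_{w}(K^\ell; \manfV)$,
\[
u_j \compose \gamma \to u \compose \gamma \quad \text{in } \VMO(K^\ell; \R^\nu).
\]
On the other hand, for each $j \in \N$, the $(\ell,e)$-extendability of $u_j$ yields an $\ell$-detector $w_j \colon \manfV \to [0,+\infty]$ as in Definition~\ref{definitionExtensionVMO}. Choose positive numbers $(\lambda_j)_{j\in\N}$ such that $\sum_{j\in\N} \lambda_j \norm{w_j}_{\Lebesgue^1(\manfV)} < \infty$ and set
\[
\widetilde{w} = w + \sum_{j \in \N} \lambda_j\, w_j,
\]
which is summable in $\manfV$. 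Since $\widetilde{w} \geq w$, any $\widetilde{w}$-Fuglede map is a $w$-Fuglede map, so $\widetilde{w}$ is still an $\ell$-detector for $u$.

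Next, I would verify the extension property with $\widetilde{w}$. Fix a simplicial complex $\cE^e$ and a Lipschitz map $\gamma \colon E^e \to \manfV$ with $\gamma|_{E^\ell} \in \Fuglede_{\tilde w}(E^\ell; \manfV)$. By construction, $\gamma|_{E^\ell} \in \Fuglede_{w}(E^\ell; \manfV)$ and $\gamma|_{E^\ell} \in \Fuglede_{w_j}(E^\ell; \manfV)$ for every $j \in \N$. Applying $(\ell,e)$-extendability of each $u_j$, we obtain $F_j \in \Smooth^0(E^e; \manfN)$ such that
\[
u_j \compose \gamma|_{E^\ell} \sim F_j|_{E^\ell} \quad \text{in } \VMO(E^\ell; \manfN).
\]
On the other hand, the $\Fuglede_w$ selection ensures $u_j \compose \gamma|_{E^\ell} \to u \compose \gamma|_{E^\ell}$ in $\VMO(E^\ell; \R^\nu)$. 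By Proposition~\ref{propositionHomotopyVMOLimit} applied in $\VMO(E^\ell; \manfN)$, there exists $J \in \N$ such that, for every $j \ge J$,
\[
u_j \compose \gamma|_{E^\ell} \sim u \compose \gamma|_{E^\ell} \quad \text{in } \VMO(E^\ell; \manfN).
\]
Choosing any such $j \ge J$ and combining the two homotopies by transitivity gives $u \compose \gamma|_{E^\ell} \sim F_j|_{E^\ell}$ in $\VMO(E^\ell; \manfN)$, which is the required extension property with $F \vcentcolon= F_j$.

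The only delicate point is making sure that the same $\widetilde{w}$ works uniformly for all admissible Lipschitz maps $\gamma$, which is why we must combine all $w_j$ into a single summable weight rather than choosing $j$ first; once the detector is fixed, the argument is a routine transitivity chaining. Thus no additional obstacle arises, and the proof is essentially the combination of the $\VMO$-stability of homotopy classes from Proposition~\ref{propositionHomotopyVMOLimit} with the diagonal-type construction of a universal $\ell$-detector.
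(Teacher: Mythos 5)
Your proposal is correct and follows essentially the same route as the paper's proof: combine the detector from the $\VMO^\ell$ convergence with a summable weighted sum of the detectors $w_j$ from the extendability of each $u_j$, then invoke Proposition~\ref{propositionHomotopyVMOLimit} for large $j$ and conclude by transitivity of the homotopy relation. There is no substantive difference.
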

\begin{proof}
    Let \(w\) be an \(\ell\)-detector given by \(\VMO^\ell\)~convergence of \((u_j)_{j \in \N}\).
    For each \(j \in \N\), let \(w_j\) be an \(\ell\)-detector of \(u_j\) given by \((\ell, e)\)-extendability.
    Take a sequence of positive numbers \((\alpha_j)_{j \in \N}\) such that \(\widetilde{w} \vcentcolon= w + \sum\limits_{j = 0}^\infty{\alpha_j w_j}\) is summable.
    Given a simplicial complex \(\cK^e\) and a Lipschitz map \(\gamma \colon  K^e \to \manfV\) with \(\gamma|_{K^\ell} \in \Fuglede_{\tilde{w}}(K^\ell; \manfV)\), since \(\widetilde w \ge w\) we have
    \[
    u_j \compose \gamma|_{K^\ell} \to u \compose \gamma|_{K^\ell}
    \quad \text{in \(\VMO(K^\ell; \R^\nu)\).}
    \]
    By Proposition~\ref{propositionHomotopyVMOLimit}, for \(j\) large we then get
	\[{}
	u_{j} \compose \gamma|_{K^\ell} \sim u \compose \gamma|_{K^\ell}
	\quad \text{in \(\VMO(K^{\ell}; \manfN)\).}
	\] 
    Since \(\widetilde{w} \ge \alpha_{j} w_{j}\), there exists \(F \in \Smooth^0(K^e; \manfN)\) such that
	\[{}
	u_{j} \compose \gamma|_{K^\ell} \sim F|_{K^\ell}
	\quad \text{in \(\VMO(K^{\ell}; \manfN)\).}
	\] 
	We thus have the conclusion by the transitivity of the homotopy relation.
\end{proof}

\begin{corollary}
	  \label{corollary-strong-convergence-extendability}
    Let \(k \in \N_*\) and \(1 \le p < \infty\).{}	
    If \(u \in \Hilbert^{k, p}(\manfV; \manfN)\), then \(u\) is \((\ell, e)\)-extendable for every \(\ell \le \min{\{kp, e\}}\).
\end{corollary}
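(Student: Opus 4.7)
The plan is to reduce Corollary~\ref{corollary-strong-convergence-extendability} to an approximation argument, combining the generic \(\VMO^\ell\) stability of Sobolev sequences with the closedness of \((\ell,e)\)-extendability under \(\VMO^\ell\) convergence. By Definition~\ref{definitionHkp} there is a sequence \((u_j)_{j \in \N}\) in \((\Smooth^\infty \cap \Sobolev^{k,p})(\manfV; \manfN)\) with \(u_j \to u\) in \(\Sobolev^{k,p}(\manfV; \R^\nu)\). Since \(\manfN\) is compact, the sequence is uniformly bounded, and the Gagliardo-Nirenberg interpolation inequality yields \(u_j \to u\) in \(\Sobolev^{1, kp}(\manfV; \R^\nu)\).

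First I would apply Proposition~\ref{lemmaFugledeSobolevDetector} with \(s = 1\) and exponent \(kp\); since \(kp \ge \ell\), this produces a subsequence \((u_{j_i})_{i \in \N}\) such that
\[
u_{j_i} \to u \quad \text{in \(\VMO^{\ell}(\manfV; \R^\nu)\).}
\]
It then suffices, by Proposition~\ref{propositionExtensionVMOClosed}, to check that every \(u_{j_i}\) is itself \((\ell, e)\)-extendable.

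The observation underlying this last step is that smooth maps are trivially \((\ell, e)\)-extendable for every \(\ell \le e\). Indeed, fix a smooth map \(v \in (\Smooth^\infty \cap \Sobolev^{k,p})(\manfV; \manfN)\) and let \(w\) be any \(\ell\)-detector for \(v\) given by Proposition~\ref{propositionSobolevVMO>1} (or Remark~\ref{remarkDetectorsCasep=1} if \(p = 1\) and \(\ell \le 1\)). For any simplicial complex \(\cK^e\) and any Lipschitz map \(\gamma \colon K^e \to \manfV\), the composition \(v \compose \gamma\) is continuous on the compact polytope \(K^e\). Hence \(F \vcentcolon= v \compose \gamma\) belongs to \(\Smooth^{0}(K^e; \manfN)\), and the tautological identity
\[
v \compose \gamma|_{K^\ell} = F|_{K^\ell}
\]
yields the required VMO-homotopy (in fact equality), regardless of whether \(\gamma|_{K^\ell}\) lies in \(\Fuglede_{w}(K^\ell; \manfV)\).

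There is no serious obstacle in this argument: all the hard work has been done in setting up Propositions~\ref{lemmaFugledeSobolevDetector} and~\ref{propositionExtensionVMOClosed}. The only minor point to be careful about is the selection of a common \(\ell\)-detector in the application of Proposition~\ref{propositionExtensionVMOClosed}, but this is precisely what that proposition provides via the diagonal-type construction \(\widetilde{w} = w + \sum_j \alpha_j w_j\). Consequently, passing to the \(\VMO^\ell\)-limit preserves \((\ell,e)\)-extendability and the corollary follows.
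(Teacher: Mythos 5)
Your proof is correct and follows essentially the same route as the paper: Gagliardo--Nirenberg to get \(\Sobolev^{1,kp}\) convergence, Proposition~\ref{lemmaFugledeSobolevDetector} to extract a subsequence converging in \(\VMO^{\ell}\), and Proposition~\ref{propositionExtensionVMOClosed} to pass extendability to the limit. The only difference is that you spell out (correctly, via the tautological choice \(F = u_{j_i}\compose\gamma\)) why smooth maps are \((\ell,e)\)-extendable, a point the paper leaves implicit.
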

\begin{proof}
	Take a sequence \((u_{j})_{j \in \N}\) in \((\Smooth^{\infty} \cap \Sobolev^{k, p})(\manfV; \manfN)\) that converges to \(u\) in \(\Sobolev^{k, p}(\manfV;\R^\nu)\).{}
	By compactness of \(\manfN\), it follows from the Gagliardo-Nirenberg interpolation inequality that \(u \in \Sobolev^{1, kp}(\manfV; \manfN)\) and
	\[{}
	u_{j} \to u
	\quad \text{in \(\Sobolev^{1, kp}(\manfV; \R^\nu)\).}
	\]
	As \(kp \ge \ell\), by Proposition~\ref{lemmaFugledeSobolevDetector},  there exists a subsequence \((u_{j_i})_{i \in \N}\) such that
	\[{}
	u_{j_i} \to u
	\quad \text{in \(\VMO^{\ell}(\manfV; \R^\nu)\).}
	\]
	Since each \(u_{j_i}\) is \((\ell, e)\)-extendable, by Proposition~\ref{propositionExtensionVMOClosed} so is \(u\).
\end{proof}

Proposition~\ref{propositionExtensionVMOClosed} also implies a necessary condition for the weak density problem involving \(\Hilbert^{k, p}\weak (\manfV; \manfN)\), see also \cite{Hang-Lin}*{Theorem~7.1}:

\begin{definition}
	\label{definitionHkpWeak}
	Let \(k \in \N_*\) and \(1 \le p < \infty\).{}
	We denote by \(\Hilbert^{k, p}\weak (\manfV; \manfN)\)  the set of maps \(u \in \Sobolev^{k, p}(\manfV; \manfN)\) for which there exists a sequence \((u_{j})_{j \in \N}\) in \((\Smooth^{\infty}\cap \Sobolev^{k, p})(\manfV; \manfN)\) that is bounded in \(\Sobolev^{k, p}(\manfV; \R^\nu)\) and 
	\[{}
	u_{j} \to u \quad \text{in \(\Lebesgue^{p}(\manfV; \R^\nu)\).}
	\]
\end{definition}

\begin{corollary}
    \label{corollary-weak-convergence-extendability}
	Let \(k \in \N_*\) and \(1 \le p < \infty\).{}	
	If \(u \in \Hilbert^{k, p}\weak (\manfV; \manfN)\), then \(u\) is \((\ell, e)\)-extendable for every \(\ell < \min{\{kp, e\}}\).
\end{corollary}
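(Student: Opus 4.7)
The approach mirrors that of Corollary~\ref{corollary-strong-convergence-extendability}, but it requires an upgrade of the available convergence: In the weak density setting we only have \(u_{j} \to u\) in \(\Lebesgue^{p}(\manfV; \R^{\nu})\) together with a \(\Sobolev^{k,p}\)~bound, which is insufficient to apply Proposition~\ref{lemmaFugledeSobolevDetector} directly. The strict inequality \(\ell < \min\{kp, e\}\) is precisely what gives us room to interpolate between these two pieces of information.

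First I would take a sequence \((u_{j})_{j \in \N}\) in \((\Smooth^{\infty} \cap \Sobolev^{k,p})(\manfV; \manfN)\) as in Definition~\ref{definitionHkpWeak}. Since \(\manfN\) is compact, the sequence is uniformly bounded in \(\Lebesgue^{\infty}\), so the Gagliardo-Nirenberg interpolation inequality upgrades the \(\Sobolev^{k,p}\)~bound into a bound of \((u_{j})_{j \in \N}\) in \(\Sobolev^{1, kp}(\manfV; \R^{\nu})\). Next I would use a fractional Gagliardo-Nirenberg-type estimate to convert the strong \(\Lebesgue^{p}\)~convergence together with the \(\Sobolev^{1, kp}\)~bound into strong convergence \(u_{j} \to u\) in \(\Sobolev^{s, q}(\manfV; \R^{\nu})\) for suitable parameters \(s \in (0, 1)\) and \(q\) related by \(\tfrac{1}{q} = \tfrac{s}{kp} + \tfrac{1-s}{p}\). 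A direct computation shows that \(sq = \ell\) corresponds to \(s = \tfrac{\ell k}{kp + \ell(k-1)}\), which is strictly less than \(1\) precisely because \(\ell < kp\); choosing \(s\) slightly larger (still less than \(1\)) then yields a pair \((s, q)\) with \(0 < s < 1\) and \(sq \ge \ell\).

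Having established strong \(\Sobolev^{s, q}\)~convergence with \(sq \ge \ell\), I would invoke Proposition~\ref{lemmaFugledeSobolevDetector} to extract a subsequence \((u_{j_{i}})_{i \in \N}\) that converges to \(u\) in \(\VMO^{\ell}(\manfV; \R^{\nu})\). Each \(u_{j_{i}}\) is smooth, hence continuous on \(\manfV\); given any simplicial complex \(\cK^{e}\) and any Lipschitz map \(\gamma \colon K^{e} \to \manfV\), the composition \(F \vcentcolon= u_{j_{i}} \compose \gamma\) belongs to \(\Smooth^{0}(K^{e}; \manfN)\) and satisfies \(F|_{K^{\ell}} = u_{j_{i}} \compose \gamma|_{K^{\ell}}\), so every \(u_{j_{i}}\) is trivially \((\ell, e)\)-extendable (one can use any \(\ell\)-detector, as no genericity restriction on \(\gamma\) is needed). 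Proposition~\ref{propositionExtensionVMOClosed} then transfers the \((\ell, e)\)-extendability from \((u_{j_{i}})_{i \in \N}\) to the limit \(u\), concluding the argument.

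The main obstacle is Step~2, namely ensuring that the fractional Gagliardo-Nirenberg inequality is available on the Riemannian manifold or Lipschitz open-set \(\manfV\) and that it truly yields strong \(\Sobolev^{s, q}\)~convergence rather than a mere estimate. If the interpolation inequality turns out to be awkward to cite in this generality, I would instead run a compactness argument: The embedding \(\Sobolev^{1, kp}(\manfV) \hookrightarrow \Sobolev^{s, q}(\manfV)\) is compact for \(s < 1\) and admissible \(q\), so the bounded sequence \((u_{j})_{j \in \N}\) has a \(\Sobolev^{s, q}\)-convergent subsequence, and the uniqueness of the limit against the known \(\Lebesgue^{p}\)-limit \(u\) identifies the \(\Sobolev^{s, q}\)-limit as \(u\). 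Everything downstream then proceeds exactly as in the plan above.
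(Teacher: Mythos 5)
Your argument is correct and follows essentially the same route as the paper: Gagliardo--Nirenberg to get a uniform \(\Sobolev^{1,kp}\)~bound, fractional interpolation exploiting the strict inequality \(\ell < kp\) to upgrade the \(\Lebesgue^{p}\)~convergence to strong \(\Sobolev^{s,q}\)~convergence with \(sq \ge \ell\), then Proposition~\ref{lemmaFugledeSobolevDetector} and Proposition~\ref{propositionExtensionVMOClosed}. The only cosmetic difference is the interpolation scale: the paper uses the \(\Lebesgue^{\infty}\)~bound to pass to \(\Sobolev^{s,kp}\)~convergence for every \(s<1\) and then picks \(s\) with \(skp \ge \ell\), whereas you interpolate directly between \(\Lebesgue^{p}\) and \(\Sobolev^{1,kp}\).
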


\begin{proof}
	Take a sequence \((u_{j})_{j \in \N}\) in \((\Smooth^{\infty}\cap \Sobolev^{k, p})(\manfV; \manfN)\) as in Definition~\ref{definitionHkpWeak}. 
    Since \(\manfN\) is compact, \((u_{j})_{j \in \N}\) is bounded in \(\Lebesgue^{\infty}(\manfV;\R^{\nu})\). 
 This sequence is then uniformly bounded in \((\Sobolev^{k,p}\cap \Lebesgue^{\infty})(\manfV;\manfN)\) and then, by the Gagliardo-Nirenberg interpolation inequality, it is also uniformly bounded in \(\Sobolev^{1, kp}(\manfV;\manfN)\). 
	By interpolation in fractional Sobolev spaces (here, we use the fact that \(\manfV\) is a Lipschitz open set), 
 \((u_{j})_{j \in \N}\) converges to \(u\) in \(\Sobolev^{s, kp}(\manfV; \R^{\nu})\) for every \(0 < s < 1\).{}
	Since \(\ell < kp\), we may take \(s < 1\) such that \(\ell \le skp\).{}
	Thus, by Proposition~\ref{lemmaFugledeSobolevDetector} there exists a subsequence \((u_{j_i})_{i \in \N}\) such that
	\[{}
	u_{j_i} \to u
	\quad \text{in \(\VMO^{\ell}(\manfV; \R^\nu)\).}
	\]
	Each \(u_{j_i}\) is \((\ell, e)\)-extendable on \(\manfV\) for every \(e \ge \ell\), and thus, by Proposition~\ref{propositionExtensionVMOClosed}, so is \(u\). 
\end{proof}

Compared to Corollary~\ref{corollary-strong-convergence-extendability}, the necessary condition of weak approximation involves a strict inequality, which gives a weaker property when \(kp\) is an integer:

\begin{example}
    Take \(\manfV = \Ball^{n + 1}\) and \(\manfN = \Sphere^n\).
    When \(k \le n\), from \citelist{\cite{Hajlasz} \cite{BPVS:2013}*{Theorem~1.7}} we have 
    \[
    \Hilbert\weak^{k, n/k}(\Ball^{n + 1}; \Sphere^{n})
    = \Sobolev^{k, n/k}(\Ball^{n + 1}; \Sphere^{n}).
    \]
    However, the map \(u \colon \Ball^{n + 1} \to \Sphere^{n}\) defined for \(x \ne 0\) by \(u(x) = x/\abs{x}\) belongs to \(\Sobolev^{k, n/k}(\Ball^{n + 1}; \Sphere^{n})\) but  \(u\) is not \(n\)-extendable in view of Lemma~\ref{propositionExtensionHomogeneity}.
    It then follows from Corollary~\ref{corollary-strong-convergence-extendability} that \(u \not\in \Hilbert^{k, n/k}(\Ball^{n + 1}; \Sphere^{n})\).
\end{example}

\section{Comparison between two types of extendability}

Extendability of \( \VMO^{\ell} \)~maps enjoys the following consistency property related to the extension for lower-dimensional skeletons:

\begin{proposition}
\label{proposition_extension_montone_small}
If \(u \in \VMO^{\ell}(\manfV ; \manfN)\) is \((\ell, e)\)-extendable, then \(u\) is \((i, j)\)-extendable for every \(i \in \{0, \dots, \ell\}\) and every \(j \in \{0, \dots, e\}\) with \(i \le j\).
\end{proposition}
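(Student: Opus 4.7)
The plan is to reduce $(i,j)$-extendability to $(\ell,e)$-extendability by embedding the given $j$-dimensional complex into an $e$-dimensional one through a product construction, and then using the transversal perturbation technique from Lemma~\ref{propositionGenericStability} to promote a low-dimensional Fuglede condition into one on the full $\ell$-skeleton.

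First, I would set up the detectors. Let $w$ be an $\ell$-detector for $u$ furnished by the $(\ell,e)$-extendability, and fix a transversal perturbation of the identity $\tau$. Applying Lemma~\ref{propositionGenericStability} yields an $\ell$-detector $\widetilde w\ge w$ for $u$. Since $u\in\VMO^\ell(\manfV;\manfN)\subset\VMO^i(\manfV;\manfN)$ by Proposition~\ref{propositionVMOellInclusion}, and any $\ell$-detector is also an $i$-detector by Proposition~\ref{propositionFugledeDetector}, the function $\widetilde w$ qualifies as an $i$-detector, and I claim it witnesses the $(i,j)$-extendability of $u$.

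Second, given a simplicial complex $\cK^j$ and a Lipschitz map $\gamma\colon K^j\to\manfV$ with $\gamma|_{K^i}\in\Fuglede_{\widetilde w}(K^i;\manfV)$, I would build the extension $\cE^e=\cK^j\times\cI^{e-j}$ via Munkres' product triangulation (see \cite{Munkres}*{Chapter~2, Lemma~7.8}), where $\cI^{e-j}$ triangulates $[0,1]^{e-j}$. With this structure $\cK^j\times\{0\}$ is a subcomplex of $\cE^e$ isomorphic to $\cK^j$, so that $\cK^i\times\{0\}$ is an $i$-dimensional subcomplex of $\cE^e$ contained in $\cE^i\subset\cE^\ell$. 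Define the Lipschitz map $\widetilde\gamma\colon E^e\to\manfV$ by $\widetilde\gamma(x,t)=\gamma(x)$; then $\widetilde\gamma|_{K^i\times\{0\}}$ coincides with $\gamma|_{K^i}$ under the natural identification, and is therefore Fuglede with respect to $\widetilde w$.

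Third, I would apply Lemma~\ref{propositionGenericStability} to $\widetilde\gamma$ with the subcomplex $\cK^i\times\{0\}$ in place of $\cL^\ell$, noting that the proof of this lemma is valid for a subcomplex of any dimension $r\le\ell$, since $\VMO^\ell$-stability automatically yields $\VMO^r$-stability via the dummy variable argument underlying Proposition~\ref{propositionVMOellInclusion}. This produces $\xi$ close to zero such that $\tau_\xi\compose\widetilde\gamma|_{E^r}\in\Fuglede_w(E^r;\manfV)$ for every $r\in\{0,\ldots,e\}$ and
\[
u\compose\tau_\xi\compose\widetilde\gamma|_{K^i\times\{0\}}\sim u\compose\widetilde\gamma|_{K^i\times\{0\}}=u\compose\gamma|_{K^i}\quad\text{in }\VMO(K^i;\manfN).
\]
In particular $\tau_\xi\compose\widetilde\gamma|_{E^\ell}\in\Fuglede_w(E^\ell;\manfV)$, so the $(\ell,e)$-extendability of $u$ yields $F\in\Smooth^0(E^e;\manfN)$ with $u\compose\tau_\xi\compose\widetilde\gamma|_{E^\ell}\sim F|_{E^\ell}$ in $\VMO(E^\ell;\manfN)$. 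Restricting this homotopy to $\cK^i\times\{0\}\subset\cE^\ell$ via Lemma~\ref{lemmaVMORestriction} and chaining with the homotopy above gives $u\compose\gamma|_{K^i}\sim F|_{K^i\times\{0\}}$ in $\VMO(K^i;\manfN)$, while $F|_{K^j\times\{0\}}$, identified with a continuous map $K^j\to\manfN$, provides the required extension on $K^j$.

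The main obstacle is the apparent mismatch between the dimension $i$ of the subcomplex on which Fuglede control is given and the dimension $\ell$ required to trigger $(\ell,e)$-extendability on $E^\ell$. The transversal perturbation lemma resolves precisely this gap by simultaneously producing Fuglede restrictions on every skeleton of $\cE^e$, including the $\ell$-skeleton, while preserving the $\VMO$-homotopy class on the low-dimensional subcomplex $\cK^i\times\{0\}$; the only subtlety is checking that Lemma~\ref{propositionGenericStability} is insensitive to lowering the dimension of $\cL^\ell$ below $\ell$, which follows from the monotonicity $\VMO^\ell\subset\VMO^i$.
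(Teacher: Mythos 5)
Your overall route matches the paper's: the same two detectors ($w$ from the $(\ell,e)$-extendability, then $\widetilde w\ge w$ from Lemma~\ref{propositionGenericStability} used at level $i$), the same product complex $E^{e}=K^{j}\times[0,1]^{e-j}$ with the cylinder map $\widetilde\gamma(x,t)=\gamma(x)$, the same transversal perturbation $\tau_{\xi}$ to simultaneously make the perturbed map Fuglede on every skeleton (in particular on $E^{\ell}$) while preserving the $\VMO$-homotopy class on $K^{i}$, and the same invocation of $(\ell,e)$-extendability to produce $F\in\Smooth^{0}(E^{e};\manfN)$.

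However, there is a genuine gap at the restriction step. You restrict the homotopy $u\compose\tau_{\xi}\compose\widetilde\gamma|_{E^{\ell}}\sim F|_{E^{\ell}}$ in $\VMO(E^{\ell};\manfN)$ to the subcomplex $K^{i}\times\{0\}$ by citing Lemma~\ref{lemmaVMORestriction}. That lemma only covers restriction of $\VMO$ functions on $K^{\ell}$ to a subcomplex $L^{\ell}$ of the \emph{same} dimension $\ell$; when $i<\ell$, the set $K^{i}\times\{0\}$ has $\cH^{\ell}$-measure zero in $E^{\ell}$, so a $\VMO(E^{\ell})$ class has no well-defined trace there, and in particular a continuous path in $\VMO(E^{\ell};\manfN)$ does not automatically induce a continuous path in $\VMO(K^{i};\manfN)$. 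This is exactly the obstruction the paper's $\VMO^{\#}$ machinery is designed to overcome: one first checks, via Lemma~\ref{lemma-generic-vmo-sharp-Small} (using the two Fuglede conditions $\tau_{\xi}\compose\widetilde\gamma|_{E^{\ell}}\in\Fuglede_{w}(E^{\ell};\manfV)$ and $\tau_{\xi}\compose\widetilde\gamma|_{K^{i}}\in\Fuglede_{w}(K^{i};\manfV)$, both of which your perturbation step does provide), that $u\compose\tau_{\xi}\compose\widetilde\gamma|_{E^{\ell}}$ belongs to the crossed-oscillation space $\VMO^{\#}(E^{\ell}|K^{i};\manfN)$, and notes that $F|_{E^{\ell}}$, being continuous, also lies in this space; only then does Proposition~\ref{propositionVMOSharpHomotopy} allow the $\VMO(E^{\ell};\manfN)$-homotopy to be restricted to a $\VMO(K^{i};\manfN)$-homotopy. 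Replacing your appeal to Lemma~\ref{lemmaVMORestriction} by this $\VMO^{\#}$ argument closes the gap and makes your proof coincide with the paper's.
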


Since \(\ell\)-extendability is equivalent to \((\ell, \ell + 1)\)-extendability, we deduce more generally that

\begin{corollary}
    If \(u \in \VMO^{\ell}(\manfV ; \manfN)\) is \((\ell, e)\)-extendable for some \(e \ge \ell + 1\), then \(u\) is \(\ell\)-extendable.
\end{corollary}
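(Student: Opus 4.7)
The plan is to invoke Proposition~\ref{proposition_extension_montone_small} with the specific choice \(i = \ell\) and \(j = \ell + 1\). The three required inequalities \(i \le \ell\), \(j \le e\) and \(i \le j\) all hold automatically: the first is an equality, the second follows from the hypothesis \(e \ge \ell + 1\), and the third amounts to \(\ell \le \ell + 1\). The proposition then yields that \(u\) is \((\ell, \ell + 1)\)-extendable.

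To close the argument, I would simply appeal to Proposition~\ref{lemma_l_l1_property}, which characterizes \(\ell\)-extendability through the existence of an \(\ell\)-detector \(w\) such that, for every simplicial complex \(\cK^{\ell+1}\) and every Lipschitz map \(\gamma \colon K^{\ell+1} \to \manfV\) with \(\gamma|_{K^{\ell}} \in \Fuglede_{w}(K^{\ell}; \manfV)\), the composition \(u \compose \gamma|_{K^{\ell}}\) is \(\VMO\)-homotopic to the restriction of some \(F \in \Smooth^{0}(K^{\ell+1}; \manfN)\). This is exactly the condition appearing in Definition~\ref{definitionExtensionVMO} with \(e\) replaced by \(\ell + 1\), so \((\ell, \ell + 1)\)-extendability coincides with \(\ell\)-extendability in the sense of Definition~\ref{definitionExtensionVMO-1}, and the conclusion follows.

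No real obstacle is expected, since the statement is a direct specialization of Proposition~\ref{proposition_extension_montone_small} combined with the identification of \((\ell, \ell+1)\)-extendability with \(\ell\)-extendability recorded in Proposition~\ref{lemma_l_l1_property}. The substantive content of the corollary is entirely absorbed in the monotonicity statement of Proposition~\ref{proposition_extension_montone_small}, whose own proof (treated elsewhere) requires the machinery for restricting \(\VMO^{\ell}\) maps to lower-dimensional skeletons that is developed after this corollary.
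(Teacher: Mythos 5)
Your proof is correct and matches the paper's own (implicit) argument exactly: the corollary is stated immediately after Proposition~\ref{proposition_extension_montone_small} and is obtained by specializing it to \(i = \ell\), \(j = \ell + 1\) (admissible since \(e \ge \ell + 1\)), together with the identification of \((\ell, \ell+1)\)-extendability with \(\ell\)-extendability provided by Proposition~\ref{lemma_l_l1_property}. Nothing further is needed.
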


Proposition~\ref{proposition_extension_montone_small} applied with \(e = \ell + 1\) gives the following counterpart of the consistency property that refers solely to the \(\ell\)-extendability:

\begin{corollary}
    If \(u \in \VMO^{\ell}(\manfV ; \manfN)\) is \(\ell\)-extendable, then \(u\) is \(j\)-extendable for every \(j \le \ell\).
\end{corollary}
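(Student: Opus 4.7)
The plan is to derive this corollary as an immediate specialization of Proposition~\ref{proposition_extension_montone_small} by choosing the parameters appropriately. I would first invoke Proposition~\ref{lemma_l_l1_property}, which identifies \(\ell\)-extendability in the sense of Definition~\ref{definitionExtensionVMO-1} with \((\ell, \ell+1)\)-extendability in the sense of Definition~\ref{definitionExtensionVMO}. This recasts the hypothesis as: \(u\) is \((\ell, \ell+1)\)-extendable.

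Next, for each fixed \(j \in \{0, \dots, \ell\}\), I would apply Proposition~\ref{proposition_extension_montone_small} with \(e \vcentcolon= \ell + 1\) and with the pair of indices in the conclusion of that proposition chosen as \((j, j+1)\). The three numerical conditions required — namely \(j \le \ell\), \(j+1 \le e = \ell + 1\), and \(j \le j+1\) — all follow directly from \(j \le \ell\). This yields the \((j, j+1)\)-extendability of \(u\).

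Finally, I would apply Proposition~\ref{lemma_l_l1_property} once more, now in the reverse direction and with \(\ell\) replaced by \(j\), to translate \((j, j+1)\)-extendability back into \(j\)-extendability in the sense of Definition~\ref{definitionExtensionVMO-1}. Since all the conceptual content is already packaged in Proposition~\ref{proposition_extension_montone_small} and Proposition~\ref{lemma_l_l1_property}, no genuine obstacle remains; the only care required is in correctly threading the two equivalent formulations of extendability through the argument and in checking the index inequalities, which is a routine bookkeeping exercise.
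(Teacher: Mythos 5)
Your proposal is correct and is exactly the paper's route: the paper obtains this corollary by applying Proposition~\ref{proposition_extension_montone_small} with \(e = \ell + 1\), using the equivalence between \(\ell\)-extendability and \((\ell,\ell+1)\)-extendability from Proposition~\ref{lemma_l_l1_property}. Your index bookkeeping \((j, j+1)\) with \(j \le \ell\), \(j+1 \le \ell+1\) matches the intended specialization.
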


Since every map in \(\VMO^{e}(\manfV; \manfN)\) is \((e, e)\)-extendable,  Proposition~\ref{proposition_extension_montone_small} also implies a purely analytical sufficient condition for \((\ell, e)\)-extendability:

\begin{corollary}
	\label{propositionVMOlExtension}
	If \(u \in \VMO^{e}(\manfV; \manfN)\), then \(u\) is \((\ell, e)\)-extendable for every \(\ell \le e\).
\end{corollary}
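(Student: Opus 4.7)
The plan is to combine two facts already collected in the section: an automatic \((e,e)\)-extendability for any \(\VMO^{e}\) map, and the consistency statement of Proposition~\ref{proposition_extension_montone_small}. The proof should be essentially a two-line deduction, with no genuine obstacle.

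First I would check that the statement is well posed. Since \(u \in \VMO^{e}(\manfV;\manfN)\), the inclusion \(\VMO^{e}(\manfV) \subset \VMO^{\ell}(\manfV)\) provided by Proposition~\ref{propositionVMOellInclusion} places \(u\) in \(\VMO^{\ell}(\manfV;\manfN)\), so the \((\ell,e)\)-extension property is meaningful for \(u\).

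Next I would verify \((e,e)\)-extendability of \(u\) directly from Definition~\ref{definitionExtensionVMO} with \(\ell\) replaced by \(e\). Fix any \(e\)-detector \(w\colon\manfV\to[0,+\infty]\) supplied by the \(\VMO^{e}\) property. Given a simplicial complex \(\cK^{e}\) and a Lipschitz map \(\gamma\colon K^{e}\to\manfV\) with \(\gamma|_{K^{e}}\in\Fuglede_{w}(K^{e};\manfV)\), the defining property of an \(e\)-detector yields \(u\compose\gamma\in\VMO(K^{e};\manfN)\). Since \(K^{e}\) is compact, \(\BUC(K^{e};\manfN)=\Smooth^{0}(K^{e};\manfN)\), and Proposition~\ref{propositionHomotopyVMOContinuousMap} produces \(F\in\Smooth^{0}(K^{e};\manfN)\) with
\[
u\compose\gamma \sim F \quad\text{in \(\VMO(K^{e};\manfN)\).}
\]
This is exactly the condition of Definition~\ref{definitionExtensionVMO} for \((e,e)\)-extendability, which was in fact already flagged in the paragraph following that definition.

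Finally, I would apply Proposition~\ref{proposition_extension_montone_small} with the pair \((i,j)=(\ell,e)\): the hypotheses \(i\le\ell\), \(j\le e\) and \(i\le j\) are all satisfied since \(\ell\le e\) by assumption. The proposition then hands back \((\ell,e)\)-extendability of \(u\), which is the conclusion. Because every step is a direct invocation of a previously established result, there is no technical difficulty; the only thing to be mindful of is the compatibility of the detectors across the two reductions, which is automatic since Proposition~\ref{proposition_extension_montone_small} itself produces the detector needed for the \((\ell,e)\)-extendability from the one used for \((e,e)\)-extendability.
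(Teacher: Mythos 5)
Your proposal is correct and follows exactly the paper's route: the paper deduces the corollary from the remark (made right after Definition~\ref{definitionExtensionVMO}, via Proposition~\ref{propositionHomotopyVMOContinuousMap}) that every \(\VMO^{e}\) map is \((e,e)\)-extendable, combined with the consistency result of Proposition~\ref{proposition_extension_montone_small}. Your additional check that \(u\in\VMO^{\ell}\) via Proposition~\ref{propositionVMOellInclusion} is harmless and implicit in the paper's argument.
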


\begin{example}
	Let \(e \le m-1\) and let \(T^{e^{*}} \subset \manfV\) be a structured singular set of rank \(e^{*} \vcentcolon= m - e - 1\).
	If \(u \colon  \manfV \to \manfN\) is a measurable function that is continuous in \(\manfV \setminus T^{e^{*}}\), then by Proposition~\ref{proposition_Extension_Property_R_0-New} the map \(u\) is \(\VMO^{e}\).
    It then follows from Corollary~\ref{propositionVMOlExtension} that \(u\) is \((\ell, e)\)-extendable for every \(\ell \le e\).{}
\end{example}

Both concepts of extendability are equivalent under some additional topological assumption on \(\manfV\) or \(\manfN\).{}
Indeed, if \(\manfV\) satisfies
\begin{equation}
	\label{eqExtension-58}
	\pi_0 (\manfV) \simeq \ldots \simeq \pi_{\ell} (\manfV) \simeq \{0\},
\end{equation}
then every \(f \in \Smooth^{0}(K^{\ell}; \manfV)\) is homotopic to a constant map in \(\Smooth^{0}(K^{\ell}; \manfV)\).{}
As a result,

\begin{proposition}
\label{proposition_condition_domain}
If \(u \in \VMO^{\ell}(\manfV; \manfN)\) is  \(\ell\)-extendable
and \(\manfV\) satisfies \eqref{eqExtension-58}, then \(u\) is \((\ell, e)\)-extendable for every \(e \ge \ell\).
\end{proposition}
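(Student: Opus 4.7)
The plan is to exploit Theorem~\ref{proposition_Reference_Homotopy} together with the topological assumption \eqref{eqExtension-58}: since the latter implies that every continuous map from the $\ell$-dimensional polytope $K^{\ell}$ into the $\ell$-connected space $\manfV$ is null-homotopic, we can collapse the continuous homotopy type of $\gamma\vert_{K^{\ell}}$ to a constant, and then invoke the $\ell$-homotopy type of $u$ to collapse the $\VMO$ homotopy type of $u \compose \gamma\vert_{K^{\ell}}$ to a constant. A constant extension to $K^{e}$ will then provide the required map $F$.

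More precisely, I would start by taking as $\ell$-detector for $u$ the summable function $\widetilde{w}$ given by Theorem~\ref{proposition_Reference_Homotopy}. Given a simplicial complex $\cK^{e}$ and a Lipschitz map $\gamma \colon K^{e} \to \manfV$ with $\gamma\vert_{K^{\ell}} \in \Fuglede_{\tilde{w}}(K^{\ell}; \manfV)$, the restriction $\gamma\vert_{K^{\ell}}$ is a continuous map from the $\ell$-dimensional CW complex $K^{\ell}$ into the $\ell$-connected manifold $\manfV$. By the standard cellular/obstruction argument (extending skeleton by skeleton, with the obstruction to extending across each $i$-cell lying in $\pi_{i}(\manfV) \simeq \{0\}$ for $i \in \{0, \dots, \ell\}$), the map $\gamma\vert_{K^{\ell}}$ is homotopic in $\Smooth^{0}(K^{\ell}; \manfV)$ to a constant map.

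Since $\widetilde{w}$ is summable on $\manfV$, the set $\{\widetilde{w} < +\infty\}$ has positive measure; pick any $p_{0}$ in this set. The constant map $\cte_{p_{0}} \colon K^{\ell} \to \manfV$ is Lipschitz and satisfies
\[
\int_{K^{\ell}} \widetilde{w} \compose \cte_{p_{0}} \dif\cH^{\ell} = \widetilde{w}(p_{0})\,\cH^{\ell}(K^{\ell}) < \infty,
\]
so that $\cte_{p_{0}} \in \Fuglede_{\tilde{w}}(K^{\ell}; \manfV)$. Path-connectedness of $\manfV$ (which follows from $\pi_{0}(\manfV) \simeq \{0\}$) and transitivity of the homotopy relation yield $\gamma\vert_{K^{\ell}} \sim \cte_{p_{0}}$ in $\Smooth^{0}(K^{\ell}; \manfV)$. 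Applying Theorem~\ref{proposition_Reference_Homotopy} with $\gamma_{0} \vcentcolon= \gamma\vert_{K^{\ell}}$ and $\gamma_{1} \vcentcolon= \cte_{p_{0}}$, we obtain
\[
u \compose \gamma\vert_{K^{\ell}} \sim u \compose \cte_{p_{0}} \quad \text{in } \VMO(K^{\ell}; \manfN).
\]
But $u \compose \cte_{p_{0}}$ is the constant map on $K^{\ell}$ with value $u(p_{0}) \in \manfN$.

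To conclude, define $F \colon K^{e} \to \manfN$ to be the constant map with value $u(p_{0})$. Then $F \in \Smooth^{0}(K^{e}; \manfN)$ and $F\vert_{K^{\ell}} = u \compose \cte_{p_{0}}$, so that $u \compose \gamma\vert_{K^{\ell}} \sim F\vert_{K^{\ell}}$ in $\VMO(K^{\ell}; \manfN)$, as required by Definition~\ref{definitionExtensionVMO}. The only delicate point is ensuring the comparison map is a genuine Fuglede map with respect to the chosen detector, but this is handled by the freedom to choose the constant value $p_{0}$ in the full-measure set $\{\widetilde{w} < +\infty\}$; everything else reduces to the standard obstruction-theoretic fact that an $\ell$-connected target makes $\ell$-dimensional domains null-homotopic and to a direct application of Theorem~\ref{proposition_Reference_Homotopy}.
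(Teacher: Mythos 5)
Your proposal is correct and follows essentially the same route as the paper's proof: use the connectivity hypothesis on \(\manfV\) to make \(\gamma\vert_{K^{\ell}}\) null-homotopic, choose the constant map at a point where the detector is finite so that it is a Fuglede map, apply Theorem~\ref{proposition_Reference_Homotopy} to transfer the homotopy to \(u \compose \gamma\vert_{K^{\ell}} \sim u \compose \cte\), and extend the constant to \(K^{e}\). The only cosmetic differences are that you spell out the obstruction-theoretic null-homotopy and the choice of base point explicitly, while the paper states these facts more briefly (and makes an unessential preliminary reduction to \(e \ge \ell + 2\)).
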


\begin{proof}[Proof of Proposition~\ref{proposition_condition_domain}]
We recall that every \(\VMO^{\ell}\)~map \(u\) is \((\ell, \ell)\)-extendable and if, in addition, \(u\) is \(\ell\)-extendable, then \(u\) is \((\ell, \ell+1)\)-extendable.
We may thus assume that \(e \ge \ell + 2\).
Let \(w\) be an \(\ell\)-detector for \(u\) given by Theorem~\ref{proposition_Reference_Homotopy} and let \(\cK^e\) be a simplicial complex. 
Given a Lipschitz map \(\gamma \colon  K^e \to \manfV\),
by the topological assumption on \(\manfV\) there exists a constant map \(\mathrm{c} \colon  K^e \to \manfV\) such that
\[{}
\gamma\vert_{K^{\ell}} \sim \mathrm{c}\vert_{K^{\ell}}
\quad \text{in \(\Smooth^{0}(K^{\ell}; \manfV)\).}
\]
We now suppose that \(\gamma\vert_{K^{\ell}} \in \Fuglede_{w}(K^{\ell} ; \manfV)\).
We may also assume that \(w \compose \mathrm{c} < + \infty\), which implies that \(\mathrm{c} \in \Fuglede_{w}(K^\ell ; \manfV)\). 
Since \(u\) is \(\ell\)-extendable, by Theorem~\ref{proposition_Reference_Homotopy} we then have
\[{}
u \compose \gamma\vert_{K^{\ell}}
\sim u \compose \mathrm{c}\vert_{K^{\ell}}
\quad \text{in \(\VMO (K^\ell; \manfN)\)}.
\]
As \(u \compose \mathrm{c}\) is constant and in particular continuous in \(K^{e}\), the conclusion follows.
\end{proof}

Another situation where \(\ell\)-extendability is equivalent with \((\ell, e)\)-extendability arises when \(\manfN\) satisfies
\begin{equation}
	\label{eqExtension-94}
	\pi_{\ell + 1} (\manfN) \simeq \ldots \simeq \pi_{e  - 1} (\manfN) \simeq \{0\}.
\end{equation}
Indeed, this assumption on the homotopy groups of \(\manfN\) implies that every \(F \in \Smooth^{0}(K^{\ell + 1}; \manfN)\) has an extension \(\widetilde{F} \in \Smooth^{0}(K^e ; \manfN)\).{}
We then have

\begin{proposition}
\label{proposition_Extension_homotop_group_increasing}
If \(u \in \VMO^{\ell}(\manfV; \manfN)\) is \(\ell\)-extendable and \(\manfN\) satisfies \eqref{eqExtension-94} for some \(e \ge \ell + 2\), then \(u\) is \((\ell, e)\)-extendable.
\end{proposition}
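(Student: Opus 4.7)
The plan is to combine the simplicial characterization of \(\ell\)-extendability from Proposition~\ref{lemma_l_l1_property} with an inductive topological extension over the skeletons of \(\cK^e\), using the vanishing of the intermediate homotopy groups of \(\manfN\) exactly as suggested by the paragraph preceding the statement. First, I would invoke Proposition~\ref{lemma_l_l1_property} to obtain an \(\ell\)-detector \(w\) for \(u\) with the property that, for every simplicial complex \(\cL^{\ell+1}\) and every Lipschitz map \(\eta \colon L^{\ell+1} \to \manfV\) with \(\eta|_{L^{\ell}} \in \Fuglede_{w}(L^{\ell}; \manfV)\), there exists \(G \in \Smooth^{0}(L^{\ell+1}; \manfN)\) such that \(u \compose \eta|_{L^{\ell}} \sim G|_{L^{\ell}}\) in \(\VMO(L^{\ell}; \manfN)\).

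Next, given a simplicial complex \(\cK^{e}\) and a Lipschitz map \(\gamma \colon K^{e} \to \manfV\) with \(\gamma|_{K^{\ell}} \in \Fuglede_{w}(K^{\ell}; \manfV)\), I would apply the above property to the \((\ell+1)\)-dimensional skeleton \(\cK^{\ell+1}\) — which is itself a simplicial complex — and to the map \(\eta \vcentcolon= \gamma|_{K^{\ell+1}}\), whose restriction to \(K^{\ell}\) is still a Fuglede map with respect to \(w\). This produces a map \(F \in \Smooth^{0}(K^{\ell+1}; \manfN)\) with
\[
u \compose \gamma|_{K^{\ell}} \sim F|_{K^{\ell}} \quad \text{in \(\VMO(K^{\ell}; \manfN)\).}
\]
The remaining task is to extend \(F\) continuously to all of \(K^{e}\); once this extension \(\widetilde F \in \Smooth^{0}(K^{e}; \manfN)\) is produced, it satisfies \(\widetilde F|_{K^{\ell}} = F|_{K^{\ell}}\), and the homotopy above yields \(u \compose \gamma|_{K^{\ell}} \sim \widetilde F|_{K^{\ell}}\), establishing \((\ell, e)\)-extendability.

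The extension step is carried out by induction on \(j \in \{\ell+1, \dots, e-1\}\): assuming \(F\) has been extended to a continuous map \(F_{j} \colon K^{j} \to \manfN\), for each simplex \(\Sigma^{j+1} \in \cK^{j+1}\) the restriction \(F_{j}|_{\partial \Sigma^{j+1}}\) is a continuous map from a topological \(j\)-sphere to \(\manfN\), which is null-homotopic since \(\pi_{j}(\manfN) \simeq \{0\}\) by hypothesis \eqref{eqExtension-94}, and therefore admits a continuous extension to \(\Sigma^{j+1}\); gluing these extensions over the finitely many \((j+1)\)-simplices gives \(F_{j+1}\). This is the standard obstruction-theoretic argument and presents no real difficulty — indeed, the substantive work of the proposition is already done by Proposition~\ref{lemma_l_l1_property}, and the only point to watch is the purely bookkeeping verification that the \((\ell+1)\)-skeleton inherits the Fuglede property from the ambient complex, which is immediate since \(K^{\ell} \subset K^{\ell+1}\) and the detector \(w\) does not change.
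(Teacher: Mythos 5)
Your proposal is correct and follows essentially the same route as the paper: take the detector from Proposition~\ref{lemma_l_l1_property}, apply it (implicitly, via \(\gamma|_{K^{\ell+1}}\)) to produce \(F \in \Smooth^{0}(K^{\ell+1};\manfN)\) with \(u \compose \gamma|_{K^{\ell}} \sim F|_{K^{\ell}}\) in \(\VMO\), and then extend \(F\) to \(K^{e}\) using \eqref{eqExtension-94}. The only difference is that you spell out the standard skeleton-by-skeleton obstruction-theoretic extension, which the paper simply asserts in the paragraph preceding the statement.
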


\begin{proof}[Proof of Proposition~\ref{proposition_Extension_homotop_group_increasing}]
Let \(w\) be an \(\ell\)-detector for \(u\) given by Proposition~\ref{lemma_l_l1_property}, let \(\cK^{e}\) be a simplicial complex and let \(\gamma \colon  K^e \to \manfV\) be a Lipschitz map with \(\gamma\vert_{K^{\ell}} \in \Fuglede_{w}(K^\ell ; \manfV)\). 
Since \(u\) is \(\ell\)-extendable, by Proposition~\ref{lemma_l_l1_property} there exists \(F \in \Smooth^{0}(K^{\ell + 1}; \manfN)\) such that 
\[{}
u \compose \gamma\vert_{K^{\ell}} \sim F\vert_{K^{\ell}}
\quad \text{in \(\VMO (K^\ell; \manfN)\).}
\]
By the topological assumption on \(\manfN\), there exists \(\widetilde{F} \in \Smooth^{0}(K^e ; \manfN)\) such that \(\widetilde{F}\vert_{K^{\ell + 1}} = F\).{}
Hence, \(u\) is \((\ell, e)\)-extendable.
\end{proof}

The topological information on \(\manfN\) can also be used to obtain the equivalence between \((i, e)\)- and \((\ell, e)\)-extendability for \(i + 1 \le \ell\).{}
More precisely, if 
\begin{equation}
\label{eqExtension-142}
\pi_{i+1} (\manfN) \simeq \ldots \simeq \pi_{\ell} (\manfN) \simeq \{0\},
\end{equation}
then every \(f_{1}, f_{2} \in \Smooth^{0}(K^{\ell}; \manfN)\) such that
\[{}
f_{1}\vert_{K^{i}} \sim f_{2}\vert_{K^{i}}
\quad \text{in \(\Smooth^{0}(K^{i}; \manfN)\)}
\]
are homotopic in \(\Smooth^{0}(K^{\ell}; \manfN)\).
Using this property, we prove that

\begin{proposition}
\label{proposition_Extension_homotop_group_decreasing}
If \(u \in \VMO^{\ell}(\manfV; \manfN)\) is \((i, e)\)-extendable for some  \(i \in \{0, \dots, \ell - 1\}\) and if \(\manfN\) satisfies \eqref{eqExtension-142}, then \(u\) is also \((\ell, e)\)-extendable.
\end{proposition}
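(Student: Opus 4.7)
The plan is to combine the two extension data given by hypothesis — the $\VMO^{\ell}$ property of $u$ and the $(i,e)$-extendability — and then close the gap between $K^{i}$ and $K^{\ell}$ by the obstruction-theoretic fact on $\manfN$ recalled just before the statement. Specifically, let $w_{1}$ be an $\ell$-detector for $u$ given by Definition~\ref{definitionVMOell}, let $w_{2}$ be the $i$-detector provided by the $(i,e)$-extendability of $u$, and set $w \vcentcolon= w_{1} + w_{2}$. This $w$ is still an $\ell$-detector for $u$, and since $K^{i} \subset K^{\ell}$, any Lipschitz map $\gamma \colon K^{e} \to \manfV$ satisfying $\gamma|_{K^{\ell}} \in \Fuglede_{w}(K^{\ell};\manfV)$ automatically satisfies $\gamma|_{K^{i}} \in \Fuglede_{w_{2}}(K^{i};\manfV)$.

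Next, I would produce two candidate continuous models. On the one hand, $u \compose \gamma|_{K^{\ell}} \in \VMO(K^{\ell};\manfN)$, so by Proposition~\ref{propositionHomotopyVMOContinuousMap} there is $f \in \Smooth^{0}(K^{\ell};\manfN)$ with $u \compose \gamma|_{K^{\ell}} \sim f$ in $\VMO(K^{\ell};\manfN)$. On the other hand, by the $(i,e)$-extendability there is $F \in \Smooth^{0}(K^{e};\manfN)$ with $u \compose \gamma|_{K^{i}} \sim F|_{K^{i}}$ in $\VMO(K^{i};\manfN)$. Restricting the first homotopy to $K^{i}$ via Lemma~\ref{lemmaVMORestriction} and applying transitivity yields $f|_{K^{i}} \sim F|_{K^{i}}$ in $\VMO(K^{i};\manfN)$; since both maps are continuous, Proposition~\ref{propositionHomotopyVMOtoC} upgrades this to a homotopy in $\Smooth^{0}(K^{i};\manfN)$.

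At this point both $f$ and $F|_{K^{\ell}}$ lie in $\Smooth^{0}(K^{\ell};\manfN)$ and have homotopic restrictions to $K^{i}$. The obstruction-theoretic fact stated in the paragraph preceding the proposition — that under \eqref{eqExtension-142} continuous maps on $K^{\ell}$ which agree up to homotopy on $K^{i}$ are homotopic on $K^{\ell}$ — delivers $f \sim F|_{K^{\ell}}$ in $\Smooth^{0}(K^{\ell};\manfN)$, hence in $\VMO(K^{\ell};\manfN)$. One final application of transitivity gives $u \compose \gamma|_{K^{\ell}} \sim F|_{K^{\ell}}$ in $\VMO(K^{\ell};\manfN)$, which is exactly the conclusion of Definition~\ref{definitionExtensionVMO} and shows that $u$ is $(\ell,e)$-extendable with detector $w$.

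There is no real analytical obstacle here: the argument is a chain of transitivity steps once the two detectors are merged, and the only nontrivial input is the topological extension lemma from classical obstruction theory, which is explicitly provided. The most delicate point is purely bookkeeping — making sure the $\ell$-detector we output genuinely controls both the Fuglede property needed for restriction to $K^{i}$ and the $\VMO$-convergence needed to apply Proposition~\ref{propositionHomotopyVMOContinuousMap} on $K^{\ell}$ — and this is handled by the simple sum $w = w_{1}+w_{2}$.
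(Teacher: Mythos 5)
Your overall skeleton (produce a continuous model of \(u\compose\gamma|_{K^\ell}\), compare it with \(F|_{K^i}\) on the low-dimensional skeleton, and close the gap with the obstruction-theoretic fact under \eqref{eqExtension-142}) matches the paper's, but two intermediate steps fail, and for the same underlying reason. First, the claim that \(\gamma|_{K^{\ell}} \in \Fuglede_{w}(K^{\ell};\manfV)\) ``automatically'' gives \(\gamma|_{K^{i}} \in \Fuglede_{w_{2}}(K^{i};\manfV)\) is false: the Fuglede property on \(K^{\ell}\) means \(w\compose\gamma\) is summable with respect to \(\cH^{\ell}\) on \(K^{\ell}\), and since \(K^{i}\) is an \(\cH^{\ell}\)-null subset of \(K^{\ell}\) (as \(i<\ell\)), this says nothing about summability of \(w_{2}\compose\gamma\) with respect to \(\cH^{i}\) on \(K^{i}\). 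So you cannot invoke the \((i,e)\)-extendability of \(u\) for the given \(\gamma\). Second, restricting the \(\VMO(K^{\ell};\manfN)\) homotopy \(u\compose\gamma|_{K^{\ell}}\sim f\) to \(K^{i}\) is not provided by Lemma~\ref{lemmaVMORestriction}: that lemma only concerns subcomplexes of the \emph{same} dimension \(\ell\), and there is no trace operator in the plain \(\VMO\) setting that reaches lower-dimensional skeletons.

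The paper supplies exactly the missing mechanism. It first replaces \(\gamma\) by \(\widetilde\gamma = \tau_{\xi}\compose\gamma\), where \(\tau\) is a transversal perturbation of the identity and \(\xi\) is chosen via Lemma~\ref{propositionGenericStability}, so that \(\widetilde\gamma\) is a Fuglede map, for the detector \(w\) coming from the \((i,e)\)-extendability, simultaneously on \(K^{\ell}\) and on \(K^{i}\), while \(u\compose\widetilde\gamma|_{K^{\ell}}\sim u\compose\gamma|_{K^{\ell}}\) in \(\VMO(K^{\ell};\manfN)\). Having the Fuglede property on both skeletons yields \(u\compose\widetilde\gamma|_{K^{\ell}}\in\VMO^{\#}(K^{\ell}|K^{i};\manfN)\) by Lemma~\ref{lemma-generic-vmo-sharp-Small}, and it is Proposition~\ref{propositionVMOSharpHomotopy}, not Lemma~\ref{lemmaVMORestriction}, that then allows restricting the homotopy \(u\compose\widetilde\gamma|_{K^{\ell}}\sim G\) to \(K^{i}\). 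From that point on your argument (comparison with \(F|_{K^{i}}\), Proposition~\ref{propositionHomotopyVMOtoC}, the topological fact associated with \eqref{eqExtension-142}, and transitivity, finally returning from \(\widetilde\gamma\) to \(\gamma\)) coincides with the paper's. So the issue is not bookkeeping with a summed detector: it is precisely the perturbation-plus-\(\VMO^{\#}\) machinery that you replaced by two unjustified restriction claims.
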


We deduce the following assertion that is reminiscent of analogous results in the literature concerning the approximation of Sobolev maps, see~\citelist{\cite{Hang-Lin}*{Corollary~5.3}\cite{Hajlasz}}:

\begin{corollary}
	\label{corollaryHangLin}
	If \(u \in \VMO^{\ell} (\manfV; \manfN)\) and if there exists \(i \in \{0, \dots, \ell - 1\}\) such that
	\[{}
	\pi_0 (\manfV) \simeq \ldots \simeq \pi_i (\manfV)
	 \simeq 
	\pi_{i + 1} (\manfN) \simeq \ldots \simeq \pi_{\ell} (\manfN) \simeq \{0\} \text{,}
	\] 
	then \(u\) is \((\ell, e)\)-extendable for every \(e \ge \ell\).
\end{corollary}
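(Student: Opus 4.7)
The plan is to derive the conclusion by assembling four earlier results along a chain that splits the hypothesis into its $\manfN$-part and its $\manfV$-part. Concretely, the strategy is: first, extract $\ell$-extendability of $u$ from the top of the $\manfN$-chain; second, use the consistency Proposition~\ref{proposition_extension_montone_small} to descend to $i$-extendability; third, promote $i$-extendability to $(i,e)$-extendability via the $\manfV$-hypothesis; and finally, climb back up to $(\ell,e)$-extendability via the remaining part of the $\manfN$-hypothesis. The indexing constraint $i \in \{0,\ldots,\ell-1\}$, in particular $i+1 \le \ell$, is precisely what is needed to line up all four steps.

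First I would note that, since $i \le \ell-1$, the chain $\pi_{i+1}(\manfN)\simeq\ldots\simeq\pi_\ell(\manfN)\simeq\{0\}$ contains the group $\pi_\ell(\manfN)$, so Proposition~\ref{corollaryExtensionHomotopyGroupEll} applies to $u \in \VMO^\ell(\manfV;\manfN)$ and yields that $u$ is $\ell$-extendable. By Proposition~\ref{propositionVMOellInclusion}, $u \in \VMO^i(\manfV;\manfN)$ as well, and by Proposition~\ref{proposition_extension_montone_small} (applied with $(\ell,e)$ replaced by $(\ell,\ell+1)$, which is the form of $\ell$-extendability given by Proposition~\ref{lemma_l_l1_property}), $u$ is $(i,i+1)$-extendable, i.e.\ $i$-extendable.

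Next, the hypothesis $\pi_0(\manfV)\simeq\ldots\simeq\pi_i(\manfV)\simeq\{0\}$ is exactly condition \eqref{eqExtension-58} for the integer $i$, so Proposition~\ref{proposition_condition_domain} applied with $i$ in the role of $\ell$ upgrades $i$-extendability of $u \in \VMO^i(\manfV;\manfN)$ to $(i,e)$-extendability for every $e \ge i$, and in particular for every $e \ge \ell$. Finally, with $i \le \ell-1$ and the remaining hypothesis $\pi_{i+1}(\manfN)\simeq\ldots\simeq\pi_\ell(\manfN)\simeq\{0\}$ in hand, Proposition~\ref{proposition_Extension_homotop_group_decreasing} applied to $u \in \VMO^\ell(\manfV;\manfN)$ converts $(i,e)$-extendability into $(\ell,e)$-extendability, which is the desired conclusion.

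There is no serious obstacle here: all the analytic work (Fuglede-map perturbations, $\VMO$ homotopy stability, extension across skeletons) has already been absorbed into the four propositions being invoked, and the corollary is essentially a matter of threading the correct indices. The only point that deserves a moment of care is verifying that $u$ belongs to each of $\VMO^\ell$ and $\VMO^i$ as demanded by the hypotheses of Propositions~\ref{proposition_condition_domain} and~\ref{proposition_Extension_homotop_group_decreasing}, which is immediate from the inclusion in Proposition~\ref{propositionVMOellInclusion}.
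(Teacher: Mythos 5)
Your proof is correct: all four invocations are legitimate, the index bookkeeping ($i+1\le\ell$, $i\le j$, $e\ge\ell\ge i$) checks out, and your last two steps (Proposition~\ref{proposition_condition_domain} with \(i\) in place of \(\ell\), then Proposition~\ref{proposition_Extension_homotop_group_decreasing}) are exactly the paper's. The only place you diverge is in how you reach \(i\)-extendability. You first establish \(\ell\)-extendability of \(u\) via Proposition~\ref{corollaryExtensionHomotopyGroupEll}, using the hypothesis \(\pi_\ell(\manfN)\simeq\{0\}\), and then descend with Proposition~\ref{proposition_extension_montone_small} applied to \((\ell,\ell+1)\)-extendability. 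The paper instead starts from the purely analytic observation that \(u\in\VMO^{\ell}\subset\VMO^{i+1}\) is automatically \((i+1,i+1)\)-extendable and descends from there, so the topological hypothesis on \(\manfN\) is consumed only once, in the final climb via Proposition~\ref{proposition_Extension_homotop_group_decreasing}; your route spends part of it a second time at the start. Both arguments are valid and of the same length, but the paper's variant makes visible that the first descent to \(i\)-extendability needs no topology at all — a point worth noting, since it shows the hypotheses enter only through Propositions~\ref{proposition_condition_domain} and~\ref{proposition_Extension_homotop_group_decreasing}.
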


\begin{proof}[Proof of Corollary~\ref{corollaryHangLin}]
   We recall that, since \(i + 1 \le \ell\), it follows from Proposition~\ref{propositionVMOellInclusion} that every \(\VMO^{\ell}\) map is \(\VMO^{i + 1}\) and then is \((i + 1, i + 1)\)-extendable.
   Hence, by Proposition~\ref{proposition_extension_montone_small}, \(u\) is \(i\)-extendable.
   Then, for every \(e \ge \ell\), by Proposition~\ref{proposition_condition_domain} and the topological assumption on \(\manfV\), \(u\) is \((i, e)\)-extendable. 
   Finally, by Proposition~\ref{proposition_Extension_homotop_group_decreasing} and the topological assumption on \(\manfN\), \(u\) is \((\ell, e)\)-extendable.	
\end{proof}

We postpone the proofs of Propositions~\ref{proposition_extension_montone_small} and~\ref{proposition_Extension_homotop_group_decreasing} to Section~\ref{sectionExtensionProofs}.
We rely on a notion of restriction to lower dimensional sets for a subclass of \(\VMO\) functions that we denote by \(\VMO^{\#}\).{}
For example, given a simplicial complex \(\cK^{\ell}\) and \(r \in \{0, \dots, \ell - 1\}\), we consider the polytope \(K^r\) associated to the \(r\)-dimensional subskeleton \(\cK^r\) and then introduce the set \(\VMO^{\#}(K^{\ell}| K^{r})\). 
We refer the reader to the next section for its definition and main properties. 
Let us simply mention here that functions in \(\VMO^{\#}(K^{\ell}| K^{r})\) have a well defined trace in \(\VMO(K^{r})\) obtained by restriction to \(K^{r}\). 
We define accordingly the vector valued version \(\VMO^{\#}(K^{\ell}| K^{r}; \manfN)\) of this space which has the following fundamental property: 
If two maps \(v_{1}, v_{2} \in \VMO^{\#}(K^{\ell}| K^{r}; \manfN)\) are homotopic in \(\VMO(K^{\ell}; \manfN)\), then
\[{}
v_{1}|_{K^{r}} \sim v_{2}|_{K^{r}}
\quad \text{in \(\VMO(K^{r}; \manfN)\).}
\]
This assertion  is proved in the next section, see Proposition~\ref{propositionVMOSharpHomotopy}.

\medskip
A setting where \(\ell\)-extendability is weaker than \((\ell, e)\)-extendability arises for maps involving real and complex projective spaces \(\RP^{m}\) and \(\CP^{m}\), respectively.
These spaces can be introduced in two ways, see \cite{Hatcher_2002}*{Examples~0.4 and~0.6}.
Firstly, \(\RP^{m}\) is defined as the quotient of the unit sphere \(\Sphere^{m}\) in \(\R^{m + 1}\) by identifying the antipodal points \(x\) and \(-x\).
Another construction of \(\RP^{m}\) consists of starting with the unit closed ball \(\cBall^{m} \subset \R^{m}\) and then identifying antipodal points \(x\) and \(-x\), but only on the sphere \(\Sphere^{m - 1}\).
A combination of both approaches gives the inclusion \(\RP^{m - 1} \subset \RP^{m}\).{}

Similarly, \(\CP^{m}\) is defined as the quotient of the unit sphere \(\Sphere^{2m + 1}\) in \(\CC^{m + 1}\) using the equivalence relation \(x \sim y\) given by \(x = \lambda y\) for some \(\lambda \in \CC\).{}
Another way of constructing \(\CP^{m}\) consists of taking the closed unit ball \(\cBall^{2m} \subset \CC^{m}\) and then identifying any two points \(x, y \in \partial\Ball^{2m}\) whenever \(x=\lambda y\)  for some \(\lambda \in \CC\).{}
Here again one gets an inclusion \(\CP^{m - 1} \subset \CP^{m}\).{}

To provide a counterexample to extendability, we shall rely on the topological property that the identity maps on \(\RP^3\) and \(\CP^1\) do not have a continuous extension as maps \(\RP^4 \to \RP^3\) and \(\CP^2 \to \CP^1\), respectively.
In other words, there are no retractions of \(\RP^{4}\) to \(\RP^{3}\) nor of \(\CP^{2}\) to \(\CP^{1}\).{}
This latter fact is explained in the proof of \cite{Hang-Lin}*{Corollary~5.5}.

\begin{example}
\label{exampleExtensionCP}
	There exist \(\VMO^{3}\) maps \(\RP^{4} \to \RP^{3}\) and \(\CP^{2} \to \CP^{1}\) that are \(2\)-extendable but not \((2, 4)\)-extendable.
We start from the map
 \[
 x\in \cBall^{4} \setminus \{0\} \longmapsto \frac{x}{\abs{x}}\in \Sphere^{3}
 \]	
that, in view of the equivalence relations involved in the definitions of real and complex projective spaces, yields
continuous maps \(\RP^{4}\setminus \{0\}\) into \(\RP^3\) and \(\CP^{2}\setminus \{0\}\) into \(\CP^1\). 
Let us denote by \(u_r \colon  \RP^{4} \to \RP^{3}\) and \(u_c \colon  \CP^{2} \to \CP^{1}\) the resulting maps.
Their restrictions \(u_r|_{\RP^{3}}\) and \(u_c|_{\CP^{1}}\) are the identity maps and 
we have \(u_r \in \Sobolev^{1, p}(\RP^{4}; \RP^{3})\) and \(u_c \in \Sobolev^{1, p}(\CP^{2}; \CP^{1})\) for every \(1 \le p < 4\).{}
	Hence, by Proposition~\ref{propositionSobolevApproximationFuglede}, they are \(\VMO^{3}\) maps and then, applying Corollary~\ref{propositionVMOlExtension} with \(e = 3\) and \(\ell = 2\), we deduce that \(u_r\) and \(u_c\) are both \(2\)-extendable.

	However, \(u_r\) and \(u_c\) are not \((2, 4)\)-extendable. 
 We present a detailed proof for the map \(u_c\)\,, which can be readily adapted for \(u_r\)\,. 
 We assume by contradiction that there exists a \(2\)-detector \(w \colon  \CP^{2} \to [0, +\infty]\) that verifies the \((2, 4)\)-extendability condition for \(u_c\).{}
	We prove that, as a consequence, the identity map on \(\CP^{1}\) has a continuous extension as a map \(\CP^{2} \to \CP^{1}\), which is not true.
	To this end, we take a simplicial complex \(\cK^{4}\), a subcomplex \(\cL^{2}\) and a biLipschitz homeomorphism \(\gamma \colon  K^{4} \to \CP^{2}\) such that \(\gamma(L^{2}) = \CP^{1}\).{}
	Observe that \(\gamma|_{K^{2}}\) need not be a Fuglede map with respect to \(w\).{}
	We thus take a transversal perturbation of the identity \(\tau \colon  \CP^{2} \times B_{\delta}^{q}(0) \to \CP^{2}\) with \(q \ge 2\).{}
	We deduce from Proposition~\ref{lemmaTransversalFamily} that \(\tau_{\xi} \compose \gamma|_{K^{2}} \in \Fuglede_{w}(K^{2}; \CP^{2})\)
	for almost every \(\xi \in B_{\delta}^{q}(0)\).
	By the \((2, 4)\)-extendability of \(u_{c}\), for any such \(\xi\) there exists \(F \in \Smooth^{0}(K^{4}; \CP^{1})\) such that
	\[{}
	u_c \compose \tau_{\xi} \compose \gamma|_{K^{2}}{}
	\sim F|_{K^{2}}
	\quad \text{in \(\VMO(K^{2}; \CP^{1})\).}
	\]
    In view of Lemma~\ref{lemmaVMORestriction}, we can restrict the homotopy relation above to \(L^{2}\).  
	Since for \(\xi\) small, the map \(u_c \compose \tau_{\xi} \compose \gamma|_{L^{2}}\) is continuous, it follows from Proposition~\ref{propositionHomotopyVMOtoC} that
	\begin{equation}
	\label{eqExtension-924}
	u_c \compose \tau_{\xi} \compose \gamma|_{L^{2}}{}
	\sim F|_{L^{2}}
	\quad \text{in \(\Smooth^{0}(L^{2}; \CP^{1})\).}
	\end{equation}
	On the other hand, also for \(\xi\) small enough, the map 
	\(
	t \in [0, 1] 
	\mapsto \tau_{t\xi} \compose \gamma|_{L^{2}}
	\)
	is a continuous path between \(\gamma|_{L^{2}}\) and \(\tau_{\xi} \compose \gamma|_{L^{2}}\) that avoids \(0 \in \CP^{2}\).{}
	Hence, by continuity of \(u_c\) on \(\CP^{2} \setminus \{0\}\),{}
	\begin{equation}
	\label{eqExtension-938}
	u_c \compose \tau_{\xi} \compose \gamma|_{L^{2}}{}
	\sim u_c \compose \gamma|_{L^{2}}
	\quad \text{in \(\Smooth^{0}(L^{2}; \CP^{1})\).}
	\end{equation}
	Combining \eqref{eqExtension-924} and \eqref{eqExtension-938}, we thus have
	\[{}
	u_c \compose \gamma|_{L^{2}} \sim F
	\quad \text{in \(\Smooth^{0}(L^{2}; \CP^{1})\).}
	\]
	Hence,
	\[{}
	u_c|_{\CP^{1}} \sim F \compose \gamma^{-1}|_{\CP^{1}}
	\quad \text{in \(\Smooth^{0}(\CP^{1}; \CP^{1})\).}
	\]
	Since \(F \compose \gamma^{-1} \in \Smooth^{0}(\CP^{2}; \CP^{1})\), we deduce that the identity \(u_c|_{\CP^{1}}\) has a continuous extension to \(\CP^{2}\), which is a contradiction.
\end{example}

\section{\texorpdfstring{VMO$^{\#}$}{VMOsharp} functions}\label{section:VMOsharp}

A straightforward property of \(\VMO\) maps on simplicial complexes concerns the restriction to a subskeleton of the same dimension, see Lemma~\ref{lemmaVMORestriction}.
In this section, we investigate the restriction of \(\VMO\) maps to lower dimensional subskeletons.
As there is no trace operator available in the standard \(\VMO\) setting, we consider a subspace of \(\VMO\) that is more adapted to restrictions.
To this end, let \(\cL^{r_{1}} \supset \cL^{r_{2}}\) be two subskeletons of a simplicial complex  \(\cK^{\ell}\).{}
We introduce the \emph{crossed mean oscillation} of a measurable function \(v \colon  L^{r_{1}} \to \R\) as the quantity
\begin{equation}
\label{eqExtension-452}
[v]_{L^{r_{1}}, L^{\smash{r_{2}}}, \rho} 
= \sup_{x\in K^\ell}{\frac{1}{\rho^{r_{1} + r_{2}}}\int_{B^{\ell}_\rho(x)\cap L^{r_{1}}}\int_{B^{\ell}_\rho(x)\cap L^{\smash{r_{2}}}}\abs{v(y) - v(z)}\dif \cH^{\smash{r_{2}}}(z) \dif \cH^{r_{1}}(y)}.
\end{equation}
We observe that for every \(\rho > \Diam{K^{\ell}}\),
\[{}
\seminorm{v}_{L^{r_1}, L^{r_2}, \rho}
\le \seminorm{v}_{L^{r_1}, L^{r_2}, \Diam{K^{\ell}}}\,.
\]

We are mainly interested in the case where \(\cL^{r_{1}}\) is the entire skeleton \(\cK^{\ell}\) and \(\cL^{r} = \cL^{r_{2}}\) is an arbitrary subskeleton of \(\cK^{\ell}\).
In fact, 

\begin{proposition}
	\label{lemmaVMOSharpMixed}
	Let \(r_{1}, r_{2}, r \in \{0, \dots, \ell\}\) with \(r_{1} \ge r_{2}\).
    For every \(0 < \rho \le \Diam{K^{\ell}}\),{} we have
	\begin{gather*}
	\seminorm{v}_{L^{r_{1}}, L^{r_{2}}, \rho}
	   \le C' \bigl(\seminorm{v}_{K^{\ell}, L^{r_{1}}, \rho} + \seminorm{v}_{K^{\ell}, L^{r_{2}}, \rho} \bigr) \text{,}\\
    \seminorm{v}_{K^{\ell}, L^{r}, \Diam{K^{\ell}}} 
     \le C'' \bigl(\seminorm{v}_{\VMO(K^\ell)}+ \seminorm{v}_{K^\ell, L^{r}, \rho}\bigr) \text{,}
	\end{gather*}
    where \(C'\) does not depend on \(\rho\), while \(C''\) depends on \(\rho\).
\end{proposition}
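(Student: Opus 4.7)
The first estimate proceeds by inserting an intermediate point in the full-dimensional polytope via the triangle inequality. Given $x \in K^{\ell}$ and any points $y \in B_\rho^\ell(x) \cap L^{r_1}$, $z \in B_\rho^\ell(x) \cap L^{r_2}$, and $\zeta \in B_\rho^\ell(x) \cap K^\ell$, I write $\abs{v(y)-v(z)} \le \abs{v(y)-v(\zeta)} + \abs{v(\zeta)-v(z)}$ and average over $\zeta$. Using the lower bound $\cH^\ell(B_\rho^\ell(x) \cap K^\ell) \ge c\rho^\ell$ (valid for $x \in K^\ell$ and $0 < \rho \le \Diam K^\ell$ since $\cK^\ell$ is finite and every vertex of $K^\ell$ belongs to an $\ell$-simplex) and the upper bound $\cH^{r_i}(B_\rho^\ell(x) \cap L^{r_i}) \le C\rho^{r_i}$, integration of this triangle inequality over $y$ and $z$ and a change in the order of integration reduce each term to $\rho^{r_1+r_2}$ times the corresponding crossed seminorm $[v]_{K^\ell, L^{r_i}, \rho}$. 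Dividing by $\rho^{r_1+r_2}$ and taking the supremum over $x$ yields the first bound.

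For the second estimate, note first that for $\rho_0 \vcentcolon= \Diam K^\ell$ the ball $B_{\rho_0}^\ell(x)$ covers $K^\ell$, so $[v]_{K^\ell, L^r, \rho_0}$ is essentially $\rho_0^{-(\ell+r)} \int_{K^\ell}\int_{L^r} \abs{v(y)-v(z)}$. I fix a reference value $a_0 \vcentcolon= \fint_{B_\rho^\ell(x_0) \cap K^\ell} v$ for some $x_0 \in K^\ell$ and reduce, by the triangle inequality, to proving
\[
    \int_{K^\ell} \abs{v - a_0}\,\dif\cH^\ell \le C(\rho) \cH^\ell(K^\ell)\, \seminorm{v}_{\VMO(K^\ell)}
\]
and
\[
    \int_{L^r} \abs{v - a_0}\,\dif\cH^r \le C(\rho)\cH^r(L^r)\bigl(\seminorm{v}_{\VMO(K^\ell)} + [v]_{K^\ell, L^r, \rho}\bigr).
\]

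The first of these relies on the standard chain-of-balls argument: for $y \in K^\ell$, I split $\abs{v(y) - a_0} \le \abs{v(y) - V_\rho(y)} + \abs{V_\rho(y) - a_0}$ with $V_\rho(y) \vcentcolon= \fint_{B_\rho^\ell(y)\cap K^\ell} v$, the first term controlled by the mean-oscillation seminorm at scale $\rho$ after integration, and the second by chaining a uniformly bounded number $N(\rho) \lesssim \Diam K^\ell/\rho$ of overlapping balls, each consecutive pair contained in a slightly larger ball so that their average values differ by $\lesssim \seminorm{v}_{\VMO(K^\ell)}$. For the $L^r$-bound, I cover $L^r$ by balls $B_\rho^\ell(x_j)$ with $x_j \in L^r$ and set $a_j \vcentcolon= \fint_{B_\rho^\ell(x_j)\cap K^\ell} v$. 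The same chaining argument gives $\abs{a_j - a_0} \le C(\rho) \seminorm{v}_{\VMO(K^\ell)}$, while the local bridge estimate
\[
    \int_{L^r \cap B_\rho^\ell(x_j)} \abs{v - a_j}\,\dif\cH^r
    \le C \rho^r [v]_{K^\ell, L^r, \rho}
\]
(obtained by averaging $\abs{v(z)-v(\zeta)}$ over $\zeta \in B_\rho^\ell(x_j)\cap K^\ell$ and applying the definition of $[v]_{K^\ell, L^r, \rho}$) then yields the claimed bound after summation over $j$.

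The main technical hurdle is the chain-of-balls step, which requires me to carefully handle the geometry of the polytope near lower-dimensional faces to ensure uniform bilateral measure estimates on intersections of balls with $K^\ell$ and to control the chain length $N(\rho)$. Once this is in place, connectedness of each component of $K^\ell$ (so that chains do exist) and finiteness of $\cK^\ell$ (so that all constants are uniform) make the assembly of the two estimates routine.
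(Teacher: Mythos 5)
Your proof of the first inequality is essentially the paper's own argument: the same triangle inequality through an intermediate point averaged over the full-dimensional ball $B_\rho^\ell(x)\cap K^\ell$, with the same measure bounds $\cH^\ell(B_\rho^\ell(x)\cap K^\ell)\ge c\rho^\ell$ and $\cH^{r_i}(B_\rho^\ell(x)\cap L^{r_i})\le C\rho^{r_i}$, so there is nothing to add there.

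For the second inequality you take a genuinely different and noticeably heavier route. The paper simply re-uses the same three-point integral inequality with unequal radii ($\rho_1=\Diam K^{\ell}$ for the $K^{\ell}$-variable, $\rho_2=\rho_3=\rho$ for the $L^{r}$- and intermediate variables), which bounds $\int_{K^{\ell}}\int_{B_\rho^{\ell}(x)\cap L^{r}}\abs{v(y)-v(z)}$ directly by $\rho^{r-\ell}\seminorm{v}_{\VMO(K^{\ell})}$ plus $\rho^{r}\seminorm{v}_{K^{\ell},L^{r},\rho}$, and then covers $L^{r}$ by finitely many $\rho$-balls; no reference constant and no chaining are needed. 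Your plan (fix $a_0$, bound $\int_{K^{\ell}}\abs{v-a_0}$ and $\int_{L^{r}}\abs{v-a_0}$, recombine) does reach the stated estimate, and your local ``bridge'' bound $\int_{L^{r}\cap B_\rho^{\ell}(x_j)}\abs{v-a_j}\le C\rho^{r}\seminorm{v}_{K^{\ell},L^{r},\rho}$ is exactly right. The weak point is the chain-of-balls step: $K^{\ell}$ is not assumed connected, and chains only exist \emph{within} a connected component, so as written you cannot compare $a_j$ with $a_0$ when $x_j$ and $x_0$ lie in different components; invoking ``connectedness of each component'' does not repair this. Fortunately the whole chaining apparatus (and the uniform overlap estimates near low-dimensional faces that you single out as the main technical hurdle) is unnecessary: since $\seminorm{v}_{\VMO(K^{\ell})}=\sup_{\rho>0}\seminorm{v}_{\rho}$ includes the scale $\Diam K^{\ell}$, for which $B_{\Diam K^{\ell}}^{\ell}(x)\cap K^{\ell}=K^{\ell}$ up to an $\cH^{\ell}$-null set, one has in one line
\[
\abs{a_j-a_0}
\le \fint_{B_\rho^{\ell}(x_j)\cap K^{\ell}}\fint_{B_\rho^{\ell}(x_0)\cap K^{\ell}}\abs{v(y)-v(z)}\dif\cH^{\ell}(z)\dif\cH^{\ell}(y)
\le \frac{\cH^{\ell}(K^{\ell})^{2}}{c^{2}\rho^{2\ell}}\,\seminorm{v}_{\VMO(K^{\ell})},
\]
and similarly $\int_{K^{\ell}}\abs{v-a_0}\le C(\rho)\,\cH^{\ell}(K^{\ell})\seminorm{v}_{\VMO(K^{\ell})}$, with constants depending only on $\rho$ and $K^{\ell}$, which is all the statement allows for $C''$. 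With that replacement your argument is complete and correct, though longer than the paper's.
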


\resetconstant
\begin{proof}
Let \(y,z,t\in K^\ell\). By the triangle inequality, we have 
\[
|v(y) - v(z)| 
\le |v(t) - v(y)| + |v(t)- v(z)|. 
\]
Given \(\rho_1, \rho_2, \rho_3>0\) and two subskeletons \(\cL^{r_{1}}, \cL^{r_{2}}\) of \(\cK^\ell\) with \(r_1,r_2\in \{0, \dots, \ell\}\),  for every \(x\in K^{\ell}\), 
we integrate with respect to the variables \(y\) over \(B_{\rho_1}^{\ell}(x) \cap L^{r_1}\), \(z\) over \(B_{\rho_2}^{\ell}(x) \cap L^{r_2}\) and \(t\) over \(B_{\rho_3}^{\ell}(x)\) to get
\begin{multline*}
\int_{B_{\rho_1}^{\ell}(x)\cap L^{r_1}} \int_{B_{\rho_2}^{\ell}(x)\cap L^{r_2}}|v(y)- v(z)| \dif\cH^{r_2}(z) \dif\cH^{r_1}(y)\\
\leq 
\frac{\cH^{r_2}\bigl(B_{\rho_2}^{\ell}(x)\cap L^{r_2}\bigr)}{\cH^{\ell}(B_{\rho_3}^{\ell}(x))} \int_{B_{\rho_3}^{\ell}(x)} \int_{B_{\rho_1}^{\ell}(x)\cap L^{r_1}}|v(t)- v(y)| \dif\cH^{r_1}(y) \dif\cH^{\ell}(t) \\
+\frac{\cH^{r_1}\bigl(B_{\rho_1}^{\ell}(x)\cap L^{r_1}\bigr)}{\cH^{\ell}(B_{\rho_3}^{\ell}(x))} \int_{B_{\rho_3}^{\ell}(x)} \int_{B_{\rho_2}^{\ell}(x)\cap L^{r_2}}|v(t) - v(z)| \dif\cH^{r_2}(z) \dif\cH^{\ell}(t). 
\end{multline*}
Since \(\cH^{\ell}(B_{\rho_3}^{\ell}(x))
\ge c \rho_{3}^{\ell}\) and  for \(i\in \{1,2\}\),
\[{}
\cH^{r_i}\bigl(B_{\rho_i}^{\ell}(x)\cap L^{r_i}\bigr){}
\le \C \rho_{i}^{r_i},
\]
we get 
\begin{multline*}
\int_{B_{\rho_1}^{\ell}(x)\cap L^{r_1}} \int_{B_{\rho_2}^{\ell}(x)\cap L^{r_2}}|v(y)- v(z)| \dif\cH^{r_2}(z) \dif\cH^{r_1}(y)\\
\leq \C\biggl(
\frac{\rho_{2}^{r_2}}{\rho_{3}^{r_3}} \int_{B_{\rho_3}^{\ell}(x)} \int_{B_{\rho_1}^{\ell}(x)\cap L^{r_1}}|v(t)- v(y)| \dif\cH^{r_1}(y) \dif\cH^{\ell}(t) \\
+\frac{\rho_{1}^{r_1}}{\rho_{3}^{\ell}} \int_{B_{\rho_3}^{\ell}(x)} \int_{B_{\rho_2}^{\ell}(x)\cap L^{r_2}}|v(t) - v(z)| \dif\cH^{r_2}(z) \dif\cH^{\ell}(t)\biggr). 
\end{multline*}
Taking \(\rho_1=\rho_2=\rho_3=\rho\) and \(r_1\geq r_2\),  we get the first conclusion. If instead, we write the above inequality with \(\rho_1=\Diam K^{\ell}\), \(\rho_2=\rho_3=\rho\) and  \(L^{r_1}=K^\ell, L^{r_2}=L^{r}\),
we obtain
\begin{multline*}
\int_{K^\ell}\int_{B_{\rho}^{\ell}(x)\cap L^{r}}|v(y)- v(z)| \dif\cH^{r}(z) \dif\cH^{\ell}(y)\\
\leq \C\biggl(\frac{1}{\rho^{\ell-r}}
 \int_{K^{\ell}} \int_{B_{\rho}^{\ell}(x)\cap K^{\ell}}|v(t)- v(y)| \dif\cH^{\ell}(y) \dif\cH^{\ell}(t) \\
+\frac{1}{\rho^{\ell}}\int_{B_{\rho}^{\ell}(x)} \int_{B_{\rho}^{\ell}(x)\cap L^{r}}|v(t) - v(z)| \dif\cH^{r}(z) \dif\cH^{\ell}(t)\biggr). 
\end{multline*}
The last term is not larger than \(\rho^{r}\seminorm{v}_{K^{\ell}, L^{r}, \rho}\) while the first term can be estimated, up to a multiplicative constant depending only on \(\Diam{K^\ell}\),  by \(\rho^{r-\ell}\seminorm{v}_{\VMO(K^{\ell})}\). Covering \(L^{r}\) by a finite number of balls of radius \(\rho\), we get the second conclusion.
\end{proof}

We denote by \(\VMO^\#(K^{\ell}| L^{r})\) the set of measurable functions \(v\colon  K^{\ell} \to \R\) such that\label{eqExtensionVMOSharp} 
\[{}
v \in \VMO(K^{\ell})
\quad \text{and} \quad
\lim_{\rho \to 0 }{\seminorm{v}_{K^{\ell}, L^{r}, \rho}} = 0.
\]
We observe that \(\VMO^\#(K^{\ell}| K^{\ell}) = \VMO(K^{\ell})\) and, by compactness of \(K^{\ell}\), we have \(\Smooth^{0} (K^\ell)\subset \VMO^\#(K^\ell| L^r)\).

\begin{proposition}
    \label{propositionVMOSharpSeminormFinite}
    If \(v \in \VMO^\#(K^{\ell}| L^{r})\), then \(\sup\limits_{\rho > 0}{\seminorm{v}_{K^{\ell}, L^{r}, \rho}} < \infty\).
\end{proposition}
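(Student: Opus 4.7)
The plan is to split the range of $\rho$ into three regions and bound the crossed mean oscillation on each. The finiteness of the total double integral of $\abs{v(y)-v(z)}$ over $K^\ell \times L^r$ will serve as the backbone, and it will be obtained from the second inequality of Lemma~\ref{lemmaVMOSharpMixed}.

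First, since $v \in \VMO^\#(K^\ell \vert L^r)$, there exists $\delta \in (0, \Diam K^\ell)$ such that $\seminorm{v}_{K^\ell, L^r, \rho} \le 1$ for every $\rho \in (0, \delta]$, which handles small scales. Applying then the second inequality of Lemma~\ref{lemmaVMOSharpMixed} with the specific choice $\rho = \delta$, I would obtain
\[
\seminorm{v}_{K^\ell, L^r, \Diam K^\ell} \le C''\bigl(\seminorm{v}_{\VMO(K^\ell)} + \seminorm{v}_{K^\ell, L^r, \delta}\bigr) \le C''\bigl(\seminorm{v}_{\VMO(K^\ell)} + 1\bigr) < \infty,
\]
since $v \in \VMO(K^\ell)$. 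Observe that, by definition, $\seminorm{v}_{K^\ell, L^r, \Diam K^\ell}$ is comparable to $(\Diam K^\ell)^{-(\ell+r)}$ times the total integral $I \vcentcolon= \int_{K^\ell}\int_{L^r}|v(y)-v(z)| \dif\cH^r(z) \dif\cH^\ell(y)$, so $I$ is finite as well.

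For the intermediate range $\rho \in [\delta, \Diam K^\ell]$, using the inclusions $B^\ell_\rho(x) \cap K^\ell \subset K^\ell$ and $B^\ell_\rho(x) \cap L^r \subset L^r$ along with $\rho \ge \delta$, the crossed mean oscillation is bounded pointwise in $x$ by $I/\delta^{\ell+r}$, which is finite. For the remaining range $\rho > \Diam K^\ell$, every ball $B^\ell_\rho(x)$ with $x \in K^\ell$ contains $K^\ell$, so the double integral in $\seminorm{v}_{K^\ell, L^r, \rho}$ reduces to $I$ independently of $x$, and the factor $\rho^{-(\ell+r)}$ makes the whole expression decreasing in $\rho$; hence $\seminorm{v}_{K^\ell, L^r, \rho} \le \seminorm{v}_{K^\ell, L^r, \Diam K^\ell}$.

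Combining the four regions yields the uniform bound $\sup_{\rho > 0}\seminorm{v}_{K^\ell, L^r, \rho} < \infty$. The only substantive step is the verification that $I$ is finite, for which Lemma~\ref{lemmaVMOSharpMixed} is essential; the rest is routine splitting and monotonicity in $\rho$.
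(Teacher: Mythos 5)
Your proof is correct and follows essentially the same route as the paper's: smallness of the crossed oscillation at small scales from the definition of \(\VMO^{\#}\), the second inequality of Proposition~\ref{lemmaVMOSharpMixed} applied at a fixed small scale to control \(\seminorm{v}_{K^{\ell}, L^{r}, \Diam{K^{\ell}}}\), and a crude comparison for intermediate and large \(\rho\). The only difference is cosmetic: you route the intermediate-range bound through the total integral \(I\) over \(K^{\ell} \times L^{r}\) (which forces the extra, harmless observation that a ball of radius \(\Diam{K^{\ell}}\) essentially exhausts \(K^{\ell}\) and \(L^{r}\)), whereas the paper compares \(\seminorm{v}_{K^{\ell}, L^{r}, \rho}\) directly with \(\seminorm{v}_{K^{\ell}, L^{r}, \Diam{K^{\ell}}}\).
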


\begin{proof}
It follows from the definition of  \(\VMO^\#(K^{\ell}| L^{r})\) that there exists \(\rho_0>0\) such that \(\seminorm{v}_{K^{\ell},L^{r}, \rho}\leq 1\) for every \(0<\rho \leq \rho_0\). By the second inequality in Proposition~\ref{lemmaVMOSharpMixed} applied with \(\rho=\rho_0\), we also have that \(\seminorm{v}_{K^{\ell},L^{r},\Diam{K^{\ell}}}<\infty\). Finally, for every \(\rho_0\leq \rho \leq \Diam{K^{\ell}}\), 
\[
\seminorm{v}_{K^{\ell},L^{r}, \rho} \leq \rho_{0}^{-\ell-r}\seminorm{v}_{K^{\ell},L^{r}, \Diam{K^{\ell}}}<\infty.
\]
This proves that \(\sup\limits_{0<\rho\leq \Diam{K^{\ell}}}{\seminorm{v}_{K^{\ell}, L^{r}, \rho}}\) is finite.     
\end{proof}

By Proposition~\ref{propositionVMOSharpSeminormFinite}, we may introduce the following norm in  \(\VMO^\#(K^{\ell}| L^{r})\),
\begin{equation}
\label{eqExtension-586}
\norm{v}_{\VMO^\#(K^{\ell}| L^{r})} 
= {\|v\|_{\VMO(K^{\ell})}} + \seminorm{v}_{\VMO^\#(K^{\ell}| L^{r})},
\end{equation}
where
\begin{equation}
\label{eqExtension-592}
\seminorm{v}_{\VMO^\#(K^{\ell}|L^{r})}
\vcentcolon=
\sup_{0<\rho\leq \Diam{K^{\ell}} }{\seminorm{v}_{K^{\ell}, L^{r}, \rho}}=\sup_{\rho>0 }{\seminorm{v}_{K^{\ell}, L^{r}, \rho}}\,.
\end{equation}

An important property of functions in \(\VMO^\#(K^{\ell}|L^{r})\) is that their restriction to \(L^{r}\) preserves the \(\VMO\)~property:

\begin{proposition}
	\label{propositionVMOSharpRestrictionSkeleton}
	For every \(v \in \VMO^\#(K^{\ell}| L^{r})\), we have \(v|_{L^{r}} \in \VMO(L^{r})\) and
	\[{}
	\norm{v}_{\VMO(L^{r})}
	\le C \norm{v}_{\VMO^\#(K^{\ell}|L^{r})}.
	\]
\end{proposition}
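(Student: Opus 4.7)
The plan is to split the estimate on \(\norm{v}_{\VMO(L^{r})}\) into the two contributions defined by \eqref{eqDetector-VMO}, namely \(\norm{v|_{L^{r}}}_{\Lebesgue^{1}(L^{r})}\) and \(\seminorm{v|_{L^{r}}}_{\VMO(L^{r})}\), and to control each of them by \(\norm{v}_{\VMO^\#(K^{\ell}|L^{r})}\). The key geometric ingredient is that, since \(L^{r}\) is the polytope of a finite simplicial subcomplex of \(\cK^{\ell}\), there exist constants \(c, C > 0\) such that, for every \(x \in L^{r}\) and every \(0 < \rho \le \Diam{K^{\ell}}\),
\[
c \rho^{r} \le \cH^{r}(B_{\rho}^{\ell}(x) \cap L^{r}) \le C \rho^{r}.
\]

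First I would handle the seminorm. For \(x \in L^{r}\), the ball \(B_{\rho}^{r}(x)\) relative to \(L^{r}\) coincides with \(B_{\rho}^{\ell}(x) \cap L^{r}\), so the lower bound above converts the non-normalised integral appearing in \eqref{eqExtension-452} with \(r_{1} = r_{2} = r\) into the double average in \eqref{eqDetector-33}, yielding
\[
\seminorm{v|_{L^{r}}}_{\rho} \le \frac{1}{c^{2}} \seminorm{v}_{L^{r}, L^{r}, \rho}.
\]
Applying the first inequality of Proposition~\ref{lemmaVMOSharpMixed} with \(r_{1} = r_{2} = r\) then gives
\[
\seminorm{v|_{L^{r}}}_{\rho} \le \frac{2C'}{c^{2}} \seminorm{v}_{K^{\ell}, L^{r}, \rho}.
\]
Letting \(\rho \to 0\) and using the defining property of \(\VMO^{\#}(K^{\ell}|L^{r})\) shows that \(v|_{L^{r}} \in \VMO(L^{r})\), while taking the supremum over \(\rho > 0\) and invoking \eqref{eqExtension-592} delivers the required bound on the seminorm part.

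Second, for the \(\Lebesgue^{1}\) contribution, I would specialise \eqref{eqExtension-452} to \(\rho = \Diam{K^{\ell}}\) and any \(x \in K^{\ell}\), so that \(B_{\rho}^{\ell}(x) \supset K^{\ell} \supset L^{r}\), to obtain
\[
\int_{K^{\ell}} \int_{L^{r}} \abs{v(y) - v(z)} \dif\cH^{r}(z) \dif\cH^{\ell}(y) \le (\Diam{K^{\ell}})^{\ell + r} \seminorm{v}_{\VMO^{\#}(K^{\ell}|L^{r})}.
\]
The triangle inequality \(\abs{v(z)} \le \abs{v(y)} + \abs{v(y) - v(z)}\), integrated in \(y\) over \(K^{\ell}\) and then in \(z\) over \(L^{r}\), yields
\[
\cH^{\ell}(K^{\ell}) \int_{L^{r}} \abs{v} \dif\cH^{r} \le \cH^{r}(L^{r}) \int_{K^{\ell}} \abs{v} \dif\cH^{\ell} + \int_{K^{\ell}}\int_{L^{r}} \abs{v(y) - v(z)} \dif\cH^{r}(z) \dif\cH^{\ell}(y),
\]
and both terms on the right are controlled by a constant multiple of \(\norm{v}_{\VMO^{\#}(K^{\ell}|L^{r})}\) via the definition \eqref{eqExtension-586}. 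Combining the two steps finishes the proof.

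No step presents a serious obstacle once Proposition~\ref{lemmaVMOSharpMixed} is available; the only point requiring attention is the uniform comparability of \(\cH^{r}(B_{\rho}^{\ell}(x) \cap L^{r})\) to \(\rho^{r}\) for \(x \in L^{r}\), which allows one to pass freely between the crossed mean oscillation used in \(\VMO^{\#}\) and the standard mean oscillation defining \(\VMO(L^{r})\).
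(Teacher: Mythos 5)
Your proof is correct and follows essentially the same route as the paper: the seminorm is controlled by combining the lower bound \(\cH^{r}(B_{\rho}^{\ell}(x)\cap L^{r})\ge c\rho^{r}\) with the first inequality of Proposition~\ref{lemmaVMOSharpMixed}, and the \(\Lebesgue^{1}\) norm is controlled by a triangle inequality together with the crossed oscillation at the diameter scale. The only point worth noting is the harmless technicality that the open ball \(B_{\Diam K^{\ell}}^{\ell}(x)\) need not literally contain all of \(K^{\ell}\); the excluded set is \(\cH^{\ell}\)- and \(\cH^{r}\)-null, so your estimate stands as written.
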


\resetconstant
\begin{proof}[Proof of Proposition~\ref{propositionVMOSharpRestrictionSkeleton}]
	We have
	\[{}
	\begin{split}
	\int_{L^{r}}{\abs{v} \dif\cH^{r}}
	& \le \int_{L^{r}}{\biggabs{v - \fint_{K^{\ell}}{v\dif\cH^{\ell}}} \dif\cH^{r}} + \cH^{r}(L^{r}) \biggabs{\fint_{K^{\ell}}{v\dif\cH^{\ell}}}\\
	& \le \int_{L^{r}}\fint_{K^{\ell}}{\abs{v(y) - v(z)} \dif\cH^{\ell}(z)\dif\cH^{r}(y)} + \C \int_{K^{\ell}}{\abs{v}\dif\cH^{\ell}}.
	\end{split}
	\]
	Hence,
	\[{}
	\norm{v}_{\Lebesgue^{1}(L^{r})}
	\le \C\bigr(\seminorm{v}_{\VMO^\#(K^{\ell}| L^{r})} + \norm{v}_{\Lebesgue^{1}(K^{\ell})}\bigr).
	\]
	Next, for every \(x \in L^{r}\) and every \(0 < \rho \le \Diam{K^{\ell}}\),{}
	\[
	\cH^{\ell}(B_{\rho}^{\ell}(x) \cap L^{r})
	\ge c\rho^{r}.
	\]
	It then follows from Proposition~\ref{lemmaVMOSharpMixed} that the mean oscillation of \(v|_{L^{r}}\), see \eqref{eqDetector-33} with \(X = L^{r}\), satisfies
	\[{}
	\seminorm{v|_{L^{r}}}_{\rho}
	\le \frac{1}{c^{2}} \seminorm{v}_{L^{r}, L^{r}, \rho} 
	\le \Cl{cteVMO-92} \seminorm{v}_{K^{\ell}, L^{r}, \rho}\,.
	\]
	Thus, \(\lim\limits_{\rho \to 0}{\seminorm{v|_{L^{r}}}_{\rho}} = 0\) and, taking the supremum with respect to \(\rho\),
	\[{}
	\seminorm{v}_{\VMO(L^{r})}
	\le \Cr{cteVMO-92} \seminorm{v}_{\VMO^{\#}(K^{\ell}| L^{r})}.
	\qedhere
	\]
\end{proof}

We now rephrase the definition of \(\VMO^{\#}(K^{\ell}|L^{r})\) in terms of a pure \(\VMO\) extension to a suitable \(\ell\)-dimensional set:

\begin{proposition}
	\label{propositionVMOExtension}
	Given a measurable function \(v \colon  K^{\ell} \to \R\), we have \(v \in \VMO^{\#}(K^{\ell}| L^{r})\) if and only if \(\widetilde{v} \in \VMO(E^{\ell})\), where \(E^\ell\vcentcolon= (K^{\ell} \times \{0'\}) \cup (L^{r} \times [0, 1]^{\ell - r})\) is a subset of \(\R^{\ell}\times \R^{\ell-r}\) and \(\widetilde{v}(y, s) \vcentcolon= v(y)\) for every \((y, s) \in E^{\ell}\).
\end{proposition}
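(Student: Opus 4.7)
The plan is to adapt the strategy used in the proof of Lemma~\ref{lemmaVMOellExtension}, now with \(E^{\ell}\) playing the role of \(K^{r}\times[0,1]^{\ell-r}\). As a preliminary step, I will equip \(E^{\ell}\) with a natural simplicial complex structure of dimension \(\ell\) obtained by triangulating \(L^{r}\times [0,1]^{\ell-r}\) extending a given triangulation of \(L^{r}\times\{0'\}\) and gluing it along \(L^{r}\times\{0'\}\) to \(K^{\ell}\times\{0'\}\). The \(\ell\)-dimensional Hausdorff measure \(\mu\) on \(E^{\ell}\) then decomposes, up to \(\mu\)-negligible sets, as \(\cH^{\ell}\) on \(K^{\ell}\times\{0'\}\) and as the product \(\cH^{r}\otimes \cH^{\ell-r}\) on \(L^{r}\times[0,1]^{\ell-r}\). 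Following the proof of Lemma~\ref{lemmaVMOellExtension}, I will use the equivalent product metric \(\widetilde d((y,s),(z,t)) = \max\{d(y,z),\abs{s-t}\}\) on \(E^{\ell}\), which does not alter the \(\VMO\) class. For every \((\xi,\sigma)\in E^{\ell}\) and \(0<\rho\le 1\), the ball \(\widetilde B_{\rho}(\xi,\sigma)\subset E^{\ell}\) with respect to this product metric splits naturally into its intersections with the two strata, of the form \((B^{\ell}_{\rho}(\xi)\cap K^{\ell})\times\{0'\}\) when \(\sigma=0'\), and \((B^{\ell}_{\rho}(\xi)\cap L^{r})\times (B^{\ell-r}_{\rho}(\sigma)\cap [0,1]^{\ell-r})\) when \(\xi\in L^{r}\). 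Since \(\cH^{\ell-r}(B^{\ell-r}_{\rho}(\sigma)\cap [0,1]^{\ell-r})\) is comparable to \(\rho^{\ell-r}\) uniformly in \(\sigma\in [0,1]^{\ell-r}\), the measure \(\mu(\widetilde B_{\rho}(\xi,\sigma))\) is comparable to \(\rho^{\ell}\) in every case.

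For the direct implication, given \(v\in \VMO^{\#}(K^{\ell}| L^{r})\), I will expand the double integral defining the mean oscillation of \(\widetilde v\) on \(\widetilde B_{\rho}(\xi,\sigma)\) into four pieces, according to whether each integration variable belongs to \(K^{\ell}\times\{0'\}\) or to \(L^{r}\times [0,1]^{\ell-r}\). Since \(\widetilde v(y,s)=v(y)\), Fubini's theorem eliminates the \(s\) and \(t\) variables and, after normalizing by \(\mu(\widetilde B_{\rho})^{2}\), each piece reduces, up to multiplicative constants, to one of the three quantities \(\seminorm{v}_{\rho}\), \(\seminorm{v}_{K^{\ell}, L^{r}, \rho}\) or \(\seminorm{v|_{L^{r}}}_{\rho}\). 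All three tend to \(0\) as \(\rho\to 0\): the first two by the \(\VMO^{\#}\) assumption and the third by Proposition~\ref{propositionVMOSharpRestrictionSkeleton}. Taking the supremum in \((\xi,\sigma)\) will then yield \(\widetilde v \in \VMO(E^{\ell})\).

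For the converse, given \(\widetilde v\in \VMO(E^{\ell})\), the same decomposition will provide lower bounds. Focusing on balls centered at \((\xi,0')\) with \(\xi\in K^{\ell}\) and isolating the pieces of the double integral where both variables lie in \(K^{\ell}\times\{0'\}\), respectively where one lies in \(K^{\ell}\times\{0'\}\) and the other in \(L^{r}\times [0,1]^{\ell-r}\), I will extract estimates of the form \(\seminorm{v}_{\rho}\le C\seminorm{\widetilde v}_{\rho}\) and \(\seminorm{v}_{K^{\ell},L^{r},\rho}\le C'\seminorm{\widetilde v}_{\rho}\). Since \(\seminorm{\widetilde v}_{\rho}\to 0\) as \(\rho\to 0\), this gives simultaneously \(v\in \VMO(K^{\ell})\) and the vanishing of the crossed mean oscillation, so that \(v\in \VMO^{\#}(K^{\ell}| L^{r})\).

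The main technical point will be to handle uniformly the balls centered in the \emph{mixed} region where \(\xi\in L^{r}\) and \(\sigma\) is close to but distinct from \(0'\), where both strata contribute but only partially overlap the ball. Since both contributions scale like \(\rho^{\ell}\), these cases should be handled by the same bookkeeping as above, by isolating each stratum contribution and controlling the resulting integrals against \(\seminorm{v}_{\rho}\), \(\seminorm{v}_{K^{\ell}, L^{r}, \rho}\) and \(\seminorm{v|_{L^{r}}}_{\rho}\).
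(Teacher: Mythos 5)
Your proposal is correct and follows essentially the same route as the paper: the same product metric on \(E^{\ell}\), the same stratified decomposition of the balls \(\widetilde{B}_{\rho}\), and the same Fubini reduction of the oscillation of \(\widetilde{v}\) to \(\seminorm{v}_{\rho}\), the crossed oscillation \(\seminorm{v}_{K^{\ell}, L^{r}, \rho}\), and an \(L^{r}\)--\(L^{r}\) term, with the converse obtained by isolating the nonnegative pieces for balls centered at \((x, 0')\). The only cosmetic difference is that you control the \(L^{r}\)--\(L^{r}\) contribution via Proposition~\ref{propositionVMOSharpRestrictionSkeleton} (the restriction being \(\VMO(L^{r})\)), whereas the paper bounds \(\seminorm{v}_{L^{r}, L^{r}, \rho}\) by the crossed oscillations through Proposition~\ref{lemmaVMOSharpMixed}; both are legitimate and noncircular.
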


We denote by \(\mu\) the measure \(\cH^{\ell}\lfloor_{E^\ell}\), which agrees with \(\cH^{r} \otimes \cH^{\ell - r}\) on \( L^{r} \times [0, 1]^{\ell - r}\).
It is more convenient to work on \(E^{\ell}\) with the equivalent product distance
\begin{equation*}
\widetilde d((y, s), (z, t))
\vcentcolon= \max{\bigl\{ d(y, z), \abs{s - t} \bigr\}}
\quad \text{for every \((y, s), (z, t) \in E^{\ell}\)}.
\end{equation*}
Given \(\xi \in E^{\ell}\) and \(\rho>0\), we denote by \(\widetilde{B}_\rho(\xi)\) the open ball of center \(\xi\) and radius \(\rho\) associated with \(\widetilde{d}\) in \(E^{\ell}\).
	Then, for every \(\xi = (x, t) \in E^{\ell}\),
	\begin{equation}
		\label{eqBallProduct}
	\widetilde{B}_{\rho}(\xi){}
	\subset (B_{\rho}^{\ell}(x) \times \{0'\}) \cup \bigl[ (B_{\rho}^{\ell}(x) \cap L^{r}) \times (B_{\rho}^{\ell - r}(t) \cap [0, 1]^{\ell - r}) \bigr]
	\end{equation}
	with equality when \(\abs{t} < \rho\).
	From \eqref{eqBallProduct}, for a measurable function \(\varphi \colon  E^{\ell} \to [0, +\infty]\) we thus have
	\begin{equation}
		\label{eqEstimateVMOSharpExtension}
	\int_{\widetilde{B}_{\rho}(\xi)}{\varphi \dif\mu}
	\le{}
	\int_{B_{\rho}^{\ell}(x) \times \{0'\}}{\varphi \dif\cH^{\ell}}
	+ 
	\int_{B_{\rho}^{\ell}(x) \cap L^{r}}\biggl( \int_{B_{\rho}^{\ell - r}(t) \cap [0, 1]^{\ell - r}} \varphi \dif\cH^{\ell - r}  \biggr) \dif\cH^{r},
	\end{equation}
	where equality holds when \(\abs{t} < \rho\).

\resetconstant
\begin{proof}[Proof of Proposition~\ref{propositionVMOExtension}]
	``\(\Longleftarrow\)''.
	By restriction of \(\widetilde{v}\) to \(K^{\ell} \times \{0'\}\), we have \(v\in \Lebesgue^1(K^\ell)\).
	For every \(x \in K^{\ell}\) and \(\zeta \in E^{\ell}\), we apply the case of equality in \eqref{eqEstimateVMOSharpExtension} with \(\xi = (x, 0')\) and \(\varphi = \abs{\widetilde{v} - \widetilde{v}(\zeta)}\), for any \(\zeta \in E^{\ell}\), to get
	\begin{multline*}
	\int_{\widetilde{B}_{\rho}(x, 0')}{\abs{\widetilde{v}(\eta) - \widetilde{v}(\zeta)} \dif\mu(\eta)}
	=
	\int_{B_{\rho}^{\ell}(x)}{\abs{v(y) - \widetilde{v}(\zeta)} \dif\cH^{\ell}(y)}\\
	+ 
	C_{\rho} \int_{B_{\rho}^{\ell}(x) \cap L^{r}}\abs{v(y) - \widetilde{v}(\zeta)} \dif\cH^{r}(y),
	\end{multline*}
	with \(C_{\rho} \vcentcolon= \cH^{\ell - r}(B_{\rho}^{\ell - r}(0') \cap [0, 1]^{\ell - r})\).
	By a scaling argument, 
	\begin{equation}
	\label{eqVMOSets-271}
	C_{\rho} = C_{1}\rho^{\ell - r} 
	\quad \text{for every \(0 < \rho \le 1\).}
	\end{equation}
	We next integrate with respect to \(\zeta\) over \(\widetilde{B}_{\rho}(x, 0')\) and use the case of equality in \eqref{eqEstimateVMOSharpExtension} now with \(\varphi = \abs{v(y) - \widetilde{v}}\) for each \(y \in B_{\rho}^{\ell}(x)\).	
	This gives
	\begin{multline}
	\label{eqExtension-617}
    \int_{\widetilde{B}_{\rho}(x, 0')}\int_{\widetilde{B}_{\rho}(x, 0')}{\abs{\widetilde{v}(\eta) - \widetilde{v}(\zeta)} \dif\mu(\eta)}\dif\mu(\zeta)\\
	=
	\int_{B_{\rho}^{\ell}(x)}\int_{B_{\rho}^{\ell}(x)}{\abs{v(y) - v(z)} \dif\cH^{\ell}(y)}\dif\cH^{\ell}(z)\\
	\begin{aligned}
	&+ 
	2 C_{\rho} \int_{B_{\rho}^{\ell}(x)}\int_{B_{\rho}^{\ell}(x) \cap L^{r}}\abs{v(y) - v(z)} \dif\cH^{r}(y)\dif\cH^{\ell}(z)\\
	&+
	C_{\rho}^{2} \int_{B_{\rho}^{\ell}(x) \cap L^{r}}\int_{B_{\rho}^{\ell}(x) \cap L^{r}}\abs{v(y) - v(z)} \dif\cH^{r}(y)\dif\cH^{r}(z).
	\end{aligned}
	\end{multline}	
	We divide both sides by \(\rho^{2\ell}\).{}
	Since, for every \(0 < \rho \le 1\),
	\[
    \mu(\widetilde B_{\rho}(x, 0')) \le c_{1} \rho^{\ell}
	\quad \text{and} \quad
	\cH^{\ell}(B_{\rho}^{\ell}(x)) \ge c_{2} \rho^{\ell},
	\]
    we get
	\[{}
	\seminorm{\widetilde{v}}_{\rho}
	\ge c_{3} \bigl( \seminorm{v}_{\rho} + \seminorm{v}_{K^{\ell}, L^{r}, \rho} \bigr), 
	\]
    where the mean oscillation \(\seminorm{\widetilde{v}}_{\rho}\) is defined in terms of the double integral in the left-hand side of \eqref{eqExtension-617}.
	As \(\rho \to 0\), we deduce that if \(\widetilde{v} \in \VMO(E^{\ell})\), then \(v \in \VMO^{\#}(K^{\ell}| L^{r})\).

	``\(\Longrightarrow\)''.
	In this case, a similar computation with \(\xi = (x, t) \in E^{\ell}\) yields
	\begin{multline*}
	\int_{\widetilde{B}_{\rho}(\xi)}\int_{\widetilde{B}_{\rho}(\xi)}{\abs{\widetilde{v}(\eta) - \widetilde{v}(\zeta)} \dif\mu(\eta)}\dif\mu(\zeta)\\
	\le
	\int_{B_{\rho}^{\ell}(x)}\int_{B_{\rho}^{\ell}(x)}{\abs{v(y) - v(z)} \dif\cH^{\ell}(y)}\dif\cH^{\ell}(z)\\
	\begin{aligned}
	&+ 
	2 C_{\rho} \int_{B_{\rho}^{\ell}(x)}\int_{B_{\rho}^{\ell}(x) \cap L^{r}}\abs{v(y) - v(z)} \dif\cH^{r}(y)\dif\cH^{\ell}(z)\\
	&+
	C_{\rho}^{2} \int_{B_{\rho}^{\ell}(x) \cap L^{r}}\int_{B_{\rho}^{\ell}(x) \cap L^{r}}\abs{v(y) - v(z)} \dif\cH^{r}(y)\dif\cH^{r}(z).
	\end{aligned}
	\end{multline*}	
    Divide both sides by \(\rho^{2\ell}\).
    Recalling that, for  \(0 < \rho \le 1\),
    \[
     \mu(\widetilde B_{\rho}(\xi)) \ge c_{3} \rho^{\ell}
	\quad \text{and} \quad
	\cH^{\ell}(B_{\rho}^{\ell}(x)) \le c_{4} \rho^{\ell},
    \]
    and using \eqref{eqVMOSets-271} and Proposition~\ref{lemmaVMOSharpMixed},  we get
	\[{}
	\seminorm{\widetilde{v}}_{\rho}
	\le C \bigl( \seminorm{v}_{\rho} + \seminorm{v}_{K^{\ell}, L^{r}, \rho} +  \seminorm{v}_{L^{r}, L^{r}, \rho} \bigr)
	\le C' \bigl( \seminorm{v}_{\rho} + \seminorm{v}_{K^{\ell}, L^{r}, \rho} \bigr).
	\]
	When \(v \in \VMO^{\#}(K^{\ell}| L^{r})\),  the quantity in the right-hand side converges to zero as \(\rho \to 0\). Since \(\widetilde{v}\in L^{1}(E^{\ell})\),  we deduce that \(\widetilde{v} \in \VMO(E^{\ell})\).
\end{proof}

We next consider the analogue to \(\VMO^\#(K^\ell| L^{r})\) of the density of uniformly continuous functions in the classical \(\VMO\) setting:

\begin{proposition}
	\label{propositionDensityVMOSharpSets}
	For every \(v\in \VMO^\#(K^\ell| L^{r})\) and every \(s > 0\), the averaged function \(v_{s} \colon  K^{\ell} \to \R\) defined by \eqref{eqVMOConvolution} with \(\mu=\cH^{\ell}\lfloor_{K^{\ell}}\)
satisfies
\[{}
\lim_{s \to 0}{\norm{v_{s} - v}_{\VMO^\#(K^\ell|L^{r})}}
= 0.
\]
\end{proposition}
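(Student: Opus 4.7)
The plan is to follow the structure of the proof of Proposition~\ref{propositionDensityVMO}, splitting the $\VMO^\#(K^\ell | L^r)$-norm into its two constituent parts. The $\VMO(K^\ell)$ part is controlled directly by Proposition~\ref{propositionDensityVMO}, so the core task is to establish that $\sup_{0 < \rho \le \Diam K^\ell} \seminorm{v_s - v}_{K^\ell, L^r, \rho} \to 0$ as $s \to 0$.

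First, I would establish two $L^1$-convergences: $\|v_s - v\|_{\Lebesgue^1(K^\ell)} \to 0$, which is classical, and $\|v_s - v\|_{\Lebesgue^1(L^r)} \to 0$, which is the delicate part because $L^r$ is $\cH^\ell$-negligible. For the latter, note that for $z \in L^r$ one has
\[
\abs{v_s(z) - v(z)} \le \fint_{B_s(z) \cap K^\ell} \abs{v(y) - v(z)} \dif\cH^\ell(y);
\]
integrating over $z \in L^r$, using the uniform nondegeneracy $\cH^\ell(B_s(z)) \ge c s^\ell$, covering $L^r$ by $O(s^{-r})$ balls of radius $s$ (using the $r$-dimensional regularity of $L^r$), and recognizing the definition of the crossed oscillation on each such ball, one obtains a bound $\|v_s - v\|_{\Lebesgue^1(L^r)} \le C \seminorm{v}_{K^\ell, L^r, 2s}$, which tends to $0$ since $v \in \VMO^\#(K^\ell | L^r)$.

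Next, in the spirit of the proof of Lemma~\ref{lemmaEquiVMO}, for every $\delta > 0$ I would establish
\[
\seminorm{v_s - v}_{\VMO^\#(K^\ell | L^r)} \le C_\delta \bigl(\|v_s - v\|_{\Lebesgue^1(K^\ell)} + \|v_s - v\|_{\Lebesgue^1(L^r)}\bigr) + \sup_{\rho \le \delta} \seminorm{v}_{K^\ell, L^r, \rho} + \sup_{\rho \le \delta} \seminorm{v_s}_{K^\ell, L^r, \rho},
\]
where the first term handles the scales $\rho \ge \delta$ through the uniform nondegeneracy of $\cH^\ell$ and $\cH^r$ on $K^\ell$ and $L^r$, and the second term is made small by the $\VMO^\#$ hypothesis on $v$ after choosing $\delta$ small. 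It then remains to control $\sup_{\rho \le \delta} \seminorm{v_s}_{K^\ell, L^r, \rho}$ uniformly in $s$. For this, I would prove an analogue of Lemma~\ref{lemmaVMOUpperBound} with crossed seminorms: starting from $\abs{v_s(y) - v_s(z)} \le \fint_{B_s(y)} \fint_{B_s(z)} \abs{v(\eta) - v(\zeta)}$ (both averages over $K^\ell$), inserting an intermediate point $t$ averaged over $B_\sigma(x) \cap L^r$ for $\sigma$ of order $\max\{\rho, s\}$ (which is nonempty whenever $B_\rho(x) \cap L^r \ne \emptyset$, the only case in which the crossed oscillation is nonzero), applying the triangle inequality $\abs{v(\eta) - v(\zeta)} \le \abs{v(\eta) - v(t)} + \abs{v(t) - v(\zeta)}$, and finally interchanging the order of integration by Tonelli's theorem together with the doubling properties of $\cH^\ell$ on $K^\ell$ and $\cH^r$ on $L^r$. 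This yields an estimate of the form $\seminorm{v_s}_{K^\ell, L^r, \rho} \le C \max\bigl\{\seminorm{v}_{K^\ell, L^r, C'\rho}, \seminorm{v}_{K^\ell, L^r, C'' s}\bigr\}$, which is uniformly small for $\rho, s$ small.

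The main obstacle is the asymmetry between the averaging procedure, which uses $\cH^\ell$-balls in $K^\ell$, and the crossed oscillation, which integrates against $\cH^r$ on the lower-dimensional set $L^r$. This asymmetry forbids a direct appeal to Proposition~\ref{propositionDensityVMO} on $K^\ell$, and also prevents a reduction via the $\VMO$-extension of Proposition~\ref{propositionVMOExtension}, since the averaging on $E^\ell$ with respect to $\mu = \cH^\ell \lfloor_{E^\ell}$ does not correspond to the averaging with respect to $\cH^\ell \lfloor_{K^\ell}$ used here. Bridging this gap through the insertion of auxiliary points in $L^r$ and careful covering arguments that relate $r$- and $\ell$-dimensional Hausdorff measures is the quantitative heart of the proof.
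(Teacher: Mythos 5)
Your overall architecture coincides with the paper's: the characterization of \(\VMO^{\#}\)-convergence (Lemma~\ref{lemmaEquiVMOSharp}) reduces everything to the \(\Lebesgue^{1}(K^{\ell})\) and \(\Lebesgue^{1}(L^{r})\) convergences plus a bound on \(\seminorm{v_{s}}_{K^{\ell}, L^{r}, \rho}\) that is \emph{uniform in \(s\)} for \(\rho\) small, and your direct estimate \(\norm{v_{s}-v}_{\Lebesgue^{1}(L^{r})} \le C\,\seminorm{v}_{K^{\ell}, L^{r}, 2s}\) is correct (the paper reaches the same conclusion through the averaged function \(v_{s,r}\)). The gap is in your proof of the key uniform bound \(\seminorm{v_{s}}_{K^{\ell}, L^{r}, \rho} \le C \max\bigl\{\seminorm{v}_{K^{\ell}, L^{r}, C'\rho}, \seminorm{v}_{K^{\ell}, L^{r}, C''s}\bigr\}\). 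Inserting a single intermediate point \(t\) averaged over \(B_{\sigma}(x)\cap L^{r}\) with \(\sigma \sim \max\{\rho, s\}\) breaks down in the regime \(s \ll \rho\): in the term generated by the variable \(z \in L^{r}\), the quantity \(v_{s}(z)\) is an \(\cH^{\ell}\)-average over the radius-\(s\) ball \(B_{s}(z)\), carrying a normalization \(s^{-\ell}\), whereas the \(z\)-integration over \(B_{\rho}(x)\cap L^{r}\) and the Tonelli swap only produce a weight of order \(\min\{\rho,s\}^{r} = s^{r}\); bounding the resulting double integral by the crossed seminorm at scale \(\sim \rho\) (which controls the \emph{full} product \((B_{C\rho}\cap K^{\ell})\times(B_{C\rho}\cap L^{r})\) and cannot be localized in the \(K^{\ell}\)-variable to a ball of radius \(s\) with the \(\rho^{-(\ell+r)}\) normalization) then leaves an uncompensated factor of order \((\rho/s)^{\ell-r}\) at best. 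Since condition \((iii)\) of Lemma~\ref{lemmaEquiVMOSharp} requires \(\seminorm{v_{s}}_{K^{\ell}, L^{r}, \rho}\le \epsilon\) for \emph{all} small \(s\) once \(\rho \le \delta\), this blow-up as \(s \to 0\) with \(\rho\) fixed is fatal to the argument as written; the inequality you want is true, but not by this route.

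The missing device is the paper's intermediate function \(v_{s,r}(z) = \fint_{B_{s}^{\ell}(z)\cap L^{r}} v \dif\cH^{r}\): an average at the \emph{same} scale \(s\) but of the correct dimension. One has \(\norm{v_{s}-v_{s,r}}_{\Lebesgue^{\infty}(A_{s,r})} \le C\,\seminorm{v}_{K^{\ell},L^{r},s}\) on the \(s/2\)-neighborhood \(A_{s,r}\) of \(L^{r}\) (estimate \eqref{eqDensityVMOSharp2a}), so \(v_{s}\) may be replaced near \(L^{r}\) by \(v_{s,r}\) at a uniformly small cost; the crossed oscillation of the patched function \(V_{s}=\chi_{A_{s,r}}v_{s,r}+\chi_{K^{\ell}\setminus A_{s,r}}v_{s}\) is then controlled by Lemma~\ref{lemmaVMOSharpAverage}, whose Tonelli weights \(\min\{\rho,s\}^{j}\min\{\rho,s\}^{r}\) exactly cancel the \(s^{-(j+r)}\) normalizations of two same-scale averages of dimensions \(j\) and \(r\), together with Proposition~\ref{lemmaVMOSharpMixed} to convert \(\seminorm{v}_{L^{r},L^{r},\cdot}\) back into \(\seminorm{v}_{K^{\ell},L^{r},\cdot}\); this yields \eqref{eqDensityVMOSharp3}. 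In the language of your insertion scheme: the intermediate point paired with \(z\in L^{r}\) must be an \(\cH^{r}\)-average over \(B_{Cs}(z)\cap L^{r}\), centered at \(z\) and at scale \(s\), not over \(B_{\sigma}(x)\cap L^{r}\) at scale \(\max\{\rho,s\}\); once you make that change you are reconstructing precisely the paper's argument.
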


The first tool in the proof of Proposition~\ref{propositionDensityVMOSharpSets} is a characterization of convergence in \(\VMO^{\#}\) in the spirit of Lemma~\ref{lemmaEquiVMO}:

\begin{lemma}
	\label{lemmaEquiVMOSharp}
	Let \((v_{j})_{j \in \N}\) be a sequence in \(\VMO^{\#}(K^{\ell}| L^{r})\).{}
	Given \(v \in \VMO^{\#}(K^{\ell}| L^{r})\), we have
	\[{}
	v_{j} \to v
	\quad \text{in \(\VMO^{\#}(K^{\ell}| L^{r})\)}
	\]
	if and only if the following properties hold:
	\begin{enumerate}[\((i)\)]
		\item 
		\label{item-Sharp211}
		\(v_{j} \to v\) in \(\VMO(K^{\ell})\),
		\item 
		\label{item-Sharp214}
		\(v_{j}|_{L^{r}} \to v|_{L^{r}}\) in \(\Lebesgue^{1}(L^{r})\),
		\item 
		\label{item-Sharp217}
		for each \(\epsilon > 0\), there exists \(\delta > 0\) such that
		\[{}
		\seminorm{v_{j}}_{K^{\ell}, L^{r}, \rho}
		\le \epsilon{}
		\quad \text{for every \(0 < \rho \le \delta\) and \(j \in \N\).}
		\]
	\end{enumerate}
\end{lemma}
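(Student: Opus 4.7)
The plan is to follow the blueprint of the proof of Lemma~\ref{lemmaEquiVMO} for standard $\VMO$~convergence, adapting the argument to the crossed mean oscillation $\seminorm{\cdot}_{K^\ell, L^r, \rho}$ that enters the definition of the $\VMO^\#$~norm in \eqref{eqExtension-586}--\eqref{eqExtension-592}.

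For the direct implication, assertion~\eqref{item-Sharp211} is immediate since $\seminorm{\cdot}_{\VMO(K^\ell)} \le \seminorm{\cdot}_{\VMO^\#(K^\ell|L^r)}$, while assertion~\eqref{item-Sharp214} follows by applying Proposition~\ref{propositionVMOSharpRestrictionSkeleton} to $v_j - v$, which yields the restriction inequality $\norm{v_j|_{L^r} - v|_{L^r}}_{\Lebesgue^1(L^r)} \le C \norm{v_j - v}_{\VMO^\#(K^\ell|L^r)}$. For assertion~\eqref{item-Sharp217}, I would argue exactly as in the analogous part of the proof of Lemma~\ref{lemmaEquiVMO}: write $\seminorm{v_j}_{K^\ell, L^r, \rho} \le \seminorm{v_j - v}_{\VMO^\#(K^\ell|L^r)} + \seminorm{v}_{K^\ell, L^r, \rho}$, use $v \in \VMO^\#$ to make the second term small for $\rho \le \delta_1$, pass to the limit in $j$, and then invoke the fact that each individual $v_j$ is $\VMO^\#$ to handle the finitely many initial indices.

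For the reverse implication, the only new point relative to the $\Lebesgue^1$ and $\VMO$ convergences already given by~\eqref{item-Sharp211} is to prove that $\seminorm{v_j - v}_{\VMO^\#(K^\ell|L^r)} \to 0$. The core step is an analogue of the estimate~\eqref{eqBMOLipschitz}: for every $\delta > 0$,
\[
\seminorm{v_j - v}_{\VMO^\#(K^\ell|L^r)} \le D_\delta \bigl( \norm{v_j - v}_{\Lebesgue^1(K^\ell)} + \norm{v_j|_{L^r} - v|_{L^r}}_{\Lebesgue^1(L^r)} \bigr) + \sup_{\rho \le \delta}\seminorm{v}_{K^\ell, L^r, \rho} + \sup_{\rho \le \delta}\seminorm{v_j}_{K^\ell, L^r, \rho},
\]
with $D_\delta > 0$. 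For $\rho \ge \delta$ one applies the triangle inequality $|f(y)-f(z)| \le |f(y)|+|f(z)|$ with $f \vcentcolon= v_j - v$ inside the double integral defining $\seminorm{v_j - v}_{K^\ell, L^r, \rho}$ in \eqref{eqExtension-452}, then bounds $\cH^r(B^\ell_\rho(x) \cap L^r)$ and $\cH^\ell(B^\ell_\rho(x))$ by constants depending on $\delta$ to extract the two $\Lebesgue^1$~terms. For $\rho \le \delta$ one uses the triangle inequality for the crossed seminorm directly.

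With this estimate in hand, the conclusion follows as in the proof of Lemma~\ref{lemmaEquiVMO}: fix $\epsilon > 0$, use the equi-sharp-VMO assumption~\eqref{item-Sharp217} to choose $\delta \le \delta_1$ with $\sup\limits_{\rho \le \delta}\seminorm{v_j}_{K^\ell, L^r, \rho} \le \epsilon$ for all $j$, let $j \to \infty$ using~\eqref{item-Sharp211} and~\eqref{item-Sharp214} to kill the two $\Lebesgue^1$~terms, and finally let $\delta \to 0$ using $v \in \VMO^\#(K^\ell|L^r)$. The main obstacle I anticipate is just the bookkeeping in the two-regime estimate above — in particular getting the correct powers of $\rho$ in the $\rho \ge \delta$ case from the normalization $1/\rho^{\ell + r}$ in \eqref{eqExtension-452}, since $L^r$ is lower-dimensional than $K^\ell$ and the relevant measures scale differently; the rest is a direct transcription of the standard $\VMO$~argument.
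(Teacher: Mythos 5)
Your proposal is correct and follows essentially the same route as the paper: the direct implication uses the norm comparison, Proposition~\ref{propositionVMOSharpRestrictionSkeleton}, and the triangle-inequality argument from Lemma~\ref{lemmaEquiVMO}, while the reverse implication rests on the same $\rho \le \delta$ / $\rho > \delta$ splitting that yields the crossed-seminorm analogue of \eqref{eqBMOLipschitz}. The scaling concern you flag in the $\rho \ge \delta$ regime is harmless, since the explicit normalization $1/\rho^{\ell+r}$ is simply bounded by $1/\delta^{\ell+r}$ and absorbed into the constant $C_\delta$, exactly as in the paper.
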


\begin{proof}[Proof of Lemma~\ref{lemmaEquiVMOSharp}]
	``\(\Longrightarrow\)''{}
	Property \((\ref{item-Sharp211})\) follows from the definition of the \(\VMO^{\#}\) norm, while \((\ref{item-Sharp214})\) is a consequence of Proposition~\ref{propositionVMOSharpRestrictionSkeleton}.
	It thus suffices to prove \((\ref{item-Sharp217})\).{}
	Note that, for every \(j \in \N\),{}
	\[{}
	\seminorm{v_{j}}_{K^{\ell}, L^{r}, \rho}
	\le \seminorm{v_{j} - v}_{\VMO^{\#}(K^{\ell}| L^{r})} + \seminorm{v}_{K^{\ell}, L^{r}, \rho}\,.
	\]
	Hence, for every \(\delta > 0\),{}
	\[{}
	\limsup_{j \to \infty}{\Bigl( \sup_{\rho \le \delta}{\seminorm{v_{j}}_{K^{\ell}, L^{r}, \rho}}  \Bigr)}
	\le \sup_{\rho \le \delta}{\seminorm{v}_{K^{\ell}, L^{r}, \rho}}.
	\]
	One then proceeds as in the first part of the proof of Lemma~\ref{lemmaEquiVMO} using here the fact that \(v \in \VMO^{\#}(K^{\ell}| L^{r})\).
	
	``\(\Longleftarrow\)''{}
	By \((\ref{item-Sharp211})\), we have
	\[{}
	\limsup_{j \to \infty}{\norm{v_{j} - v}_{\VMO^{\#}(K^{\ell}| L^{r})}}
	\le \limsup_{j \to \infty}{\seminorm{v_{j} - v}_{\VMO^{\#}(K^{\ell}| L^{r})}}.
	\]
	As in the proof of \eqref{eqBMOLipschitz}, for every \(\delta > 0\) one considers separately the cases \(\rho \le \delta\) and \(\rho > \delta\) to get
	\begin{multline*}
	\seminorm{v_{j} - v}_{\VMO^{\#}(K^{\ell}| L^{r})}
	\le C_{\delta} \bigl(\norm{v_{j} - v}_{\Lebesgue^{1}(K^{\ell})} + \norm{v_{j} - v}_{\Lebesgue^{1}(L^{r})} \bigr)\\
	 + \sup_{\rho \le \delta}{\seminorm{v_{j}}_{K^{\ell}, L^{r}, \rho}} + \sup_{\rho \le \delta}{\seminorm{v}_{K^{\ell}, L^{r}, \rho}}\,.
	\end{multline*}
	Hence, by \((\ref{item-Sharp211})\) and \((\ref{item-Sharp214})\),
	\[
	\limsup_{j \to \infty}{\seminorm{v_{j} - v}_{\VMO^{\#}(K^{\ell}| L^{r})}}
	\le \limsup_{j \to \infty}{\Bigl( \sup_{\rho \le \delta}{\seminorm{v_{j}}_{K^{\ell}, L^{r}, \rho}} \Bigr)} + \sup_{\rho \le \delta}{\seminorm{v}_{K^{\ell}, L^{r}, \rho}}\,.
	\]
	The conclusion then follows, since the right-hand side converges to zero as \(\delta \to 0\) by  \((\ref{item-Sharp217})\) and the fact that  \(v \in \VMO^{\#}(K^{\ell}| L^{r})\).{}
\end{proof}

The proof of Proposition~\ref{propositionDensityVMOSharpSets} also relies on the following estimates:

\begin{lemma}
	\label{lemmaVMOSharpAverage}
	Let \(\cL^{r} \subset \cL^{\ell}\) be subcomplexes of  \(\cK^{\ell}\) and,  for \(j \in \{r, \ell\}\) and \(s > 0\), denote the neighborhood of \(L^{j}\) of radius \(s/2\) by 
    \[
    A_{s, j} = \bigl\{\xi \in K^{\ell} : d(\xi, L^{j}) < s/2 \bigr\}.
    \]
	If \(v \colon  L^{\ell} \to \R\) is summable and \(v|_{L^{r}}\) is summable, then the averaged function \(v_{s, j} \colon A_{s, j} \to \R\) defined for \(j \in \{r, \ell\}\) by
	\[{}
	v_{s, j}(x)
	= \fint_{B_{s}^{\ell}(x) \cap L^{j}} v \dif\cH^{j}
	\]
	is such that, for every \(x \in K^{\ell}\) and every \(0 < \rho \le \Diam{K^{\ell}}\),
	\begin{multline*}
	\frac{1}{\rho^{\ell + r}}
	\int_{B_{\rho}^{\ell}(x) \cap A_{s, j}}{\int_{B_{\rho}^{\ell}(x) \cap L^{r}}{\abs{v_{s, j}(y) -  v_{s, r}(z)} \dif\cH^{r}(z) \dif\cH^{\ell}(y)} }\\
	\le	C \max{\bigl\{ [v]_{L^{j}, L^{r}, 2\rho}\,, [v]_{L^{j}, L^{r}, 2s} \bigr\}}\,.
	\end{multline*}
\end{lemma}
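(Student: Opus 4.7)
The plan is to write the difference $v_{s,j}(y) - v_{s,r}(z)$ as a double average of the differences of $v$, apply Tonelli's theorem to reverse the order of integration, and then recognize what remains as a crossed mean oscillation of $v$ on a slightly enlarged ball. For each fixed $y \in B_\rho^\ell(x) \cap A_{s,j}$ and $z \in B_\rho^\ell(x) \cap L^r$, I would start from
\[
\abs{v_{s,j}(y) - v_{s,r}(z)}
\le \fint_{B_s^\ell(y) \cap L^j} \fint_{B_s^\ell(z) \cap L^r} \abs{v(\eta) - v(\zeta)} \dif \cH^r(\zeta) \dif \cH^j(\eta),
\]
integrate in $(y,z)$, and then use Tonelli to integrate in $(\eta,\zeta) \in L^j \times L^r$ first.

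The next step is pointwise control of the resulting weights. For fixed $\eta \in L^j$, only those $y$ with $y \in B_\rho^\ell(x) \cap A_{s,j} \cap B_s^\ell(\eta)$ contribute; since $y \in A_{s,j}$ there exists $y' \in L^j$ with $d(y,y') < s/2$, so $B_{s/2}^\ell(y') \cap L^j \subset B_s^\ell(y) \cap L^j$, yielding $\cH^j(B_s^\ell(y) \cap L^j) \ge c s^j$. Combined with $\cH^\ell(B_\rho^\ell(x) \cap B_s^\ell(\eta)) \le C \min\{\rho,s\}^\ell$, this bounds the $y$-integral by $C s^{-j} \min\{\rho,s\}^\ell$. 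An analogous argument for fixed $\zeta \in L^r$ (using $\zeta \in L^r \subset A_{s,r}$) produces the bound $C s^{-r} \min\{\rho,s\}^r$ for the $z$-integral. Moreover, for the integrand to be nonzero one needs $\eta,\zeta \in B_{\rho+s}^\ell(x)$. Putting these together,
\[
\int_{B_\rho^\ell(x) \cap A_{s,j}} \int_{B_\rho^\ell(x) \cap L^r} \abs{v_{s,j}(y) - v_{s,r}(z)} \dif \cH^r(z) \dif\cH^\ell(y)
\le \frac{C \min\{\rho,s\}^{\ell+r}}{s^{j+r}} J,
\]
where
\[
J \vcentcolon= \int_{B_{\rho+s}^\ell(x) \cap L^j} \int_{B_{\rho+s}^\ell(x) \cap L^r} \abs{v(\eta) - v(\zeta)} \dif \cH^r(\zeta) \dif\cH^j(\eta) \le (\rho+s)^{j+r} [v]_{L^j, L^r, \rho+s}.
\]

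It then remains to split into two cases. If $\rho \ge s$, then $\rho + s \le 2\rho$ and the prefactor becomes $C \, 2^{j+r} s^{\ell-j} \rho^{j+r}$; dividing by $\rho^{\ell+r}$ leaves $C'(s/\rho)^{\ell-j} [v]_{L^j, L^r, 2\rho}$, which is controlled by $C'[v]_{L^j, L^r, 2\rho}$ since $\ell - j \ge 0$ and $s/\rho \le 1$. If $\rho < s$, then $\rho + s \le 2s$ and the prefactor becomes $C \, 2^{j+r}\rho^{\ell+r}$; dividing by $\rho^{\ell+r}$ leaves $C''[v]_{L^j, L^r, 2s}$. Taking the maximum of the two estimates finishes the proof. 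The only real obstacle is the bookkeeping of the measure-theoretic estimates, in particular the uniform lower bound $\cH^j(B_s^\ell(y) \cap L^j) \ge c s^j$ valid precisely because of the condition $y \in A_{s,j}$, which is why $A_{s,j}$ appears in the statement rather than all of $K^\ell$.
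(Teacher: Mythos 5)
Your proof is correct and follows essentially the same route as the paper's: the same pointwise double-average bound for \(\abs{v_{s,j}(y)-v_{s,r}(z)}\), Tonelli, the weight estimates by powers of \(\min\{\rho,s\}\) combined with the lower bound \(\cH^{j}(B_{s}^{\ell}(y)\cap L^{j})\ge c\,s^{j}\) coming from \(y\in A_{s,j}\), and the same dichotomy \(\rho\ge s\) versus \(\rho<s\) producing the scales \(2\rho\) and \(2s\). The only detail to add is the paper's preliminary reduction to \(s<\Diam{K^{\ell}}\) (for larger \(s\) the averages \(v_{s,j}\) and \(v_{s,r}\) are constants and that case is dispatched separately), since the lower Ahlfors bound \(\cH^{j}(B_{s}^{\ell}(y)\cap L^{j})\ge c\,s^{j}\) that you invoke only holds for radii up to the diameter of \(K^{\ell}\).
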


\resetconstant
\begin{proof}[Proof of Lemma~\ref{lemmaVMOSharpAverage}]
When \(s \ge \Diam{K^\ell}\), \(v_{s, j}\) is a constant, and the estimate is straightforward.
We may thus assume in the sequel that \(0 < s < \Diam{K^\ell}\).
Take \(y \in A_{s, j}\) and \(z\in L^r\).
	By monotonicity of the integral,
	\[{}
	\abs{v_{s, j}(y) - v_{s, r}(z)}
	\le \fint_{B_{s}^{\ell}(y) \cap L^{j}}\fint_{B_{s}^{\ell}(z) \cap L^{r}}{\abs{v(\eta) - v(\zeta)} \dif\cH^{r}(\zeta)\dif\cH^{j}(\eta)}.
	\]
	Since
	\[{}
	\cH^{j}(B_{s}^{\ell}(y) \cap L^{j}) \ge c_{1}s^{j}
	\quad \text{and} \quad{}
	\cH^{r}(B_{s}^{\ell}(z) \cap L^{r}) \ge c_{1}s^{r},
	\]
	we get
	\[{}
	\abs{v_{s, j}(y) - v_{s, r}(z)}
	\le \frac{\C}{s^{j + r}} \int_{B_{s}^{\ell}(y) \cap L^{j}}\int_{B_{s}^{\ell}(z) \cap L^{r}}{\abs{v(\eta) - v(\zeta)} \dif\cH^{r}(\zeta)\dif\cH^{j}(\eta)}.
	\]
	Given \(x\in K^\ell\) and \(0 < \rho \le \Diam{K^\ell}\), we integrate with respect to \(y\) and \(z\) and apply Tonelli's theorem to get
	\begin{multline*}
	\int_{B_{\rho}^{\ell}(x) \cap A_{s, j}} \int_{B_{\rho}^{\ell}(x) \cap L^{r}} \abs{v_{s, j}(y) - v_{s, r}(z)} \dif\cH^{r}(z) \dif\cH^{\ell}(y)\\
	\le \frac{\C}{s^{j + r}} \int_{B_{\rho + s}^{\ell}(x) \cap L^{j}}{\int_{B_{\rho + s}^{\ell}(x) \cap L^{r}}{\abs{v(\eta) - v(\zeta)}\alpha_{\ell}(\eta)  \alpha_{r}(\zeta) \dif\cH^{r}(\zeta) \dif\cH^{j}(\eta)} },
	\end{multline*}
	where 
	\[{}
	\alpha_{i}(\xi) \vcentcolon= \cH^{i}\bigl(B_{\rho}^{\ell}(x) \cap B_{s}^{\ell}(\xi) \cap L^{i}\bigr)
	\le \C \min{\{\rho^{i}, s^{i}\}}.
	\]
	For \(\rho < s\), we have
	\begin{multline*}
	\int_{B_{\rho}^{\ell}(x) \cap A_{s, j}} \int_{B_{\rho}^{\ell}(x) \cap L^{r}} \abs{v_{s, j}(y) - v_{s, j}(z)} \dif\cH^{r}(z) \dif\cH^{\ell}(y)\\
	\le \Cl{cte-288} \frac{\rho^{\ell + r}}{s^{j + r}}  \int_{B_{2s}^{\ell}(x) \cap L^{j}}{\int_{B_{2 s}^{\ell}(x) \cap L^{r}}{\abs{v(\eta) - v(\zeta)} \dif\cH^{r}(\zeta) \dif\cH^{j}(\eta)} }.
	\end{multline*}
	For \(s \le \rho\), we use that \(j \le \ell\) to estimate \(s^{\ell - j} \le \rho^{\ell - j}\)
	and then
	\begin{multline*}
	\int_{B_{\rho}^{\ell}(x) \cap A_{s, j}} \int_{B_{\rho}^{\ell}(x) \cap L^{r}} \abs{v_{s, j}(y) - v_{s, j}(z)} \dif\cH^{r}(z) \dif\cH^{\ell}(y)\\
	\le \Cr{cte-288} \rho^{\ell - j} \int_{B_{2\rho}^{\ell}(x) \cap L^{j}}{\int_{B_{2\rho}^{\ell}(x) \cap L^{r}}{\abs{v(\eta) - v(\zeta)} \dif\cH^{r}(\zeta) \dif\cH^{j}(\eta)} }.
	\end{multline*}
	The conclusion then follows from both integral estimates.
\end{proof}

\resetconstant
\begin{proof}[Proof of Proposition~\ref{propositionDensityVMOSharpSets}]
Let  \((s_{j})_{j \in \N}\) be any sequence of positive numbers that converges to zero. Since each \(v_{s_j}\) is uniformly continuous in \(K^\ell\), it belongs to \(\VMO^\#(K^\ell| L^r)\).
	We then verify that the assumptions of Lemma~\ref{lemmaEquiVMOSharp} are satisfied by \((v_{s_{j}})_{j \in \N}\).
	By Proposition~\ref{propositionDensityVMO} we have
	\begin{equation*}
	v_{s_{j}} \to v
	\quad \text{in \(\VMO(K^{\ell})\).}
	\end{equation*}
	We now prove that
	\begin{equation}
	\label{eqDensityVMOSharp2}
	v_{s_{j}}|_{L^{r}} \to v|_{L^{r}}
	\quad \text{in \(\Lebesgue^{1}(L^{r})\).}
	\end{equation}
	To this end, we use the function \(v_{s, r}\) introduced in Lemma~\ref{lemmaVMOSharpAverage}.
	We recall that, by Proposition~\ref{propositionVMOSharpRestrictionSkeleton}, \(v|_{L^{r}} \in \Lebesgue^{1}(L^{r})\), whence \(v_{s, r}\) is well defined.
	By the triangle inequality,
	\[{}
	\norm{v_{s} - v}_{\Lebesgue^{1}(L^{r})}
	\le \norm{v_{s} - v_{s, r}}_{\Lebesgue^{1}(L^{r})} + \norm{v_{s, r} - v}_{\Lebesgue^{1}(L^{r})}.
	\]
	The last term converges to zero as \(s \to 0\).
    In fact, since \(v_{s, r}\) is the averaged function associated to \(v\) in \(L^{r}\), this convergence follows from a standard argument based on the density of \(\Smooth^0(L^r)\) in \(\Lebesgue^1(L^r)\) and the estimate \(\norm{f_{s,r}}_{\Lebesgue^1(L^r)}\leq C \norm{f}_{\Lebesgue^1(L^r)}\).
	We next observe that
	\begin{equation}
	\label{eqDensityVMOSharp2a}
	\norm{v_{s} - v_{s, r}}_{L^{\infty}(A_{s, r})}
	\le \C \seminorm{v}_{K^{\ell}, L^{r}, s}\,,
	\end{equation}
	where \(A_{s, r}\) is the neighborhood of \(L^{r}\) defined in Lemma~\ref{lemmaVMOSharpAverage} with \(L^{\ell}=K^\ell\). 
	Indeed, for every \(x \in A_{s, r}\)\,,
	\[{}
	\abs{v_{s}(x) - v_{s, r}(x)}
	\le \fint_{B_{s}^{\ell}(x)}\fint_{B_{s}^{\ell}(x) \cap L^{r}}{\abs{v(y) - v(z)} \dif\cH^{r}(z) \dif\cH^{\ell}(y)}.
	\]
	Observe that, for every \(x \in K^{\ell}\) and every
	 \(0 < s \le \Diam{K^{\ell}}\), we have
	\[{}
	\cH^{\ell}(B_{s}^{\ell}(x)) \ge c_{1} s^{\ell},
	\]
	while for every \(0 < s \le \Diam{K^{\ell}}\) and every \(x \in A_{s, r}\),
	\[{}
	\cH^{r}(B_{s}^{\ell}(x) \cap L^{r}) \ge c_{2} s^{r}.
	\]
	Hence,
	\[{}
	\abs{v_{s}(x) - v_{s, r}(x)}
	\le \frac{\C}{s^{\ell + r}} \int_{B_{s}^{\ell}(x)}\int_{B_{s}^{\ell}(x) \cap L^{r}}{\abs{v(y) - v(z)} \dif\cH^{r}(z) \dif\cH^{\ell}(y)},
	\]
	which implies \eqref{eqDensityVMOSharp2a}.
	From \eqref{eqDensityVMOSharp2a}, we thus have
	\[{}
	\norm{v_{s} - v_{s, r}}_{\Lebesgue^{1}(L^{r})}
	\le \cH^{r}(L^{r}) \norm{v_{s} - v_{s, r}}_{\Lebesgue^{\infty}(L^{r})}
	\le \C \seminorm{v}_{K^{\ell}, L^{r}, s}\,,
	\]
	and the quantity in the right-hand side converges to zero as \(s \to 0\) since \(v \in \VMO^{\#}(K^{\ell}| L^{r})\).
	Convergence \eqref{eqDensityVMOSharp2} thus follows.
	
	As \(v \in \VMO^{\#}(K^{\ell}| L^{r})\), to verify \((\ref{item-Sharp217})\) in Lemma~\ref{lemmaEquiVMOSharp} it suffices to prove that
	\begin{equation}
	\label{eqDensityVMOSharp3}
	\seminorm{v_{s}}_{K^{\ell}, L^{r}, \rho}
	\le \C \max{\bigl\{\seminorm{v}_{K^{\ell}, L^{r}, 2\rho}\,, \seminorm{v}_{K^{\ell}, L^{r}, 2s} \bigr\}},
	\end{equation}
	for every \(0 < \rho, s \le \Diam{K^{\ell}}\).
	To this end, we first write \(v_{s}\) as
	\[{}
	v_{s} = \chi_{A_{s, r}}(v_{s} - v_{s, r}) + V_{s}\,,
	\]
	where
	\[{}
	V_{s} \vcentcolon={}
	 \chi_{A_{s, r}} v_{s, r} + \chi_{K^{\ell} \setminus A_{s, r}} v_{s}\,.
	\]
	Then,
	\[
	\seminorm{v_{s}}_{K^{\ell}, L^{r}, \rho} 
	\le \C \norm{v_{s} - v_{s, r}}_{L^{\infty}(A_{s, r})} + \seminorm{V_{s}}_{K^{\ell}, L^{r}, \rho}.
	\]
	In view of \eqref{eqDensityVMOSharp2a}, it suffices to estimate the second term in the right-hand side.
	We take \(y \in K^{\ell}\) and \(z \in L^{r}\).{}
	In particular, \(z \in A_{s, r}\) and then \(V_{s}(z) = v_{s, r}(z)\).{}
	Thus,
	\[{}
	\begin{split}
	\abs{V_{s}(y) - V_{s}(z)}
	& \le \chi_{A_{s, r}}(y) \abs{v_{s, r}(y) - v_{s, r}(z)} + \chi_{K^{\ell} \setminus A_{s, r}}(y) \abs{v_{s}(y) - v_{s, r}(z)}\\
	& \le \chi_{A_{s, r}}(y) \abs{v_{s, r}(y) - v_{s, r}(z)} + \abs{v_{s}(y) - v_{s, r}(z)}. 
	\end{split}
	\]
	Integrating with respect to \(y\) and \(z\), for every \(x \in K^{\ell}\) we get
	\begin{multline*}
	\int_{B_{\rho}^{\ell}(x)}{\int_{B_{\rho}^{\ell}(x) \cap L^{r}}{\abs{V_{s}(y) -  V_{s}(z)} \dif\cH^{r}(z) \dif\cH^{\ell}(y)} }\\
	\le \int_{B_{\rho}^{\ell}(x) \cap A_{s, r}}{\int_{B_{\rho}^{\ell}(x) \cap L^{r}}{\abs{v_{s, r}(y) -  v_{s, r}(z)} \dif\cH^{r}(z) \dif\cH^{\ell}(y)} } \\
	+ \int_{B_{\rho}^{\ell}(x)}{\int_{B_{\rho}^{\ell}(x) \cap L^{r}}{\abs{v_{s}(y) -  v_{s, r}(z)} \dif\cH^{r}(z) \dif\cH^{\ell}(y)} }.
	\end{multline*}
	Note that \(v_s = v_{s, \ell} \).
    We may then apply Lemma~\ref{lemmaVMOSharpAverage} with \(\cL^{\ell} = \cK^{\ell}\) to both terms in the right-hand side to get
	\[{}
	\seminorm{V_{s}}_{K^{\ell}, L^{r}, \rho} 
	\le \C \Bigl(\max{\bigl\{\seminorm{v}_{L^{r}, L^{r}, 2\rho}\,, \seminorm{v}_{L^{r}, L^{r}, 2s} \bigr\}} + \max{\bigl\{\seminorm{v}_{K^{\ell}, L^{r}, 2\rho}\,, \seminorm{v}_{K^{\ell}, L^{r}, 2s} \bigr\}}\Bigr).
	\]
	Applying Proposition~\ref{lemmaVMOSharpMixed} to the first term in parentheses, we thus get
	\[{}
	\seminorm{V_{s}}_{K^{\ell}, L^{r}, \rho} 
	\le \C \max{\bigl\{\seminorm{v}_{K^{\ell}, L^{r}, 2\rho}\,, \seminorm{v}_{K^{\ell}, L^{r}, 2s} \bigr\}}\,,weak-convergence-extendability
	\]
	which implies \eqref{eqDensityVMOSharp3}.
	The proposition now follows from Lemma~\ref{lemmaEquiVMOSharp}.
\end{proof}

In analogy with the standard \(\VMO\) case, one defines the space \(\VMO^{\#}(K^{\ell}| L^{r}; \manfN)\) of functions with values in a compact manifold \(\manfN \subset \R^{\nu}\).

\begin{proposition}
	\label{propositionVMOSharpDensityManifold}
	For every \(v \in \VMO^{\#}(K^{\ell}| L^{r}; \manfN)\), we have
	\[{}
	\lim_{s \to 0}{\norm{\Pi \compose v_{s} - v}_{\VMO^{\#}(K^{\ell}| L^{r})}} = 0.
	\]
	Hence, \(\Smooth^{0}(K^{\ell}; \manfN)\) is dense in \(\VMO^{\#}(K^{\ell}| L^{r}; \manfN)\).{}
\end{proposition}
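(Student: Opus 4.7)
The plan is to combine the density of uniformly continuous real-valued functions in \(\VMO^\#(K^\ell|L^r)\) established in Proposition~\ref{propositionDensityVMOSharpSets} with the Lipschitz continuity of the nearest point projection \(\Pi\) onto \(\manfN\), following the pattern used in the proof of Proposition~\ref{propositionHomotopyVMOContinuousMap} in the ordinary \(\VMO\)~setting.

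First, since \(v \in \VMO^{\#}(K^{\ell}| L^{r}; \manfN) \subset \VMO(K^\ell; \manfN)\), Lemma~\ref{lemmaVMOUniformConvergence} ensures that \(\sup_{x \in K^\ell} d(v_s(x), \manfN) \to 0\) as \(s \to 0\). Hence, there exists \(s_0 > 0\) such that, for every \(0 < s \le s_0\), the range \(v_s(K^\ell)\) lies in the tubular neighborhood \(\manfN + B_\iota^\nu\) on which \(\Pi\) is smooth and Lipschitz. Since \(v\) is \(\manfN\)-valued, the identity \(\Pi \compose v = v\) holds, and each \(\Pi \compose v_s\) is continuous on the compact set \(K^\ell\) with values in \(\manfN\), hence belongs to \(\Smooth^0(K^\ell;\manfN) \subset \VMO^{\#}(K^{\ell}| L^{r}; \manfN)\).

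To conclude that \(\Pi \compose v_s \to v\) in \(\VMO^\#(K^\ell|L^r;\R^\nu)\), I will verify, componentwise, the three criteria of Lemma~\ref{lemmaEquiVMOSharp}. The \(\VMO(K^\ell;\R^\nu)\)~convergence follows from Lemma~\ref{lemma_VMO_average_homotopy} applied to a global Lipschitz extension of \(\Pi\), together with the fact that Proposition~\ref{propositionDensityVMO} gives \(v_s \to v\) in \(\VMO(K^\ell;\R^\nu)\). The convergence \((\Pi \compose v_s)|_{L^r} \to v|_{L^r}\) in \(\Lebesgue^1(L^r;\R^\nu)\) follows from Propositions~\ref{propositionDensityVMOSharpSets} and~\ref{propositionVMOSharpRestrictionSkeleton} --- which yield \(v_s|_{L^r} \to v|_{L^r}\) in \(\Lebesgue^1(L^r;\R^\nu)\) --- combined with Lipschitz continuity of \(\Pi\) and the identity \(\Pi \compose v|_{L^r} = v|_{L^r}\). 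The equicontinuity of the crossed mean oscillations transfers from \((v_s)_{s \to 0}\) to \((\Pi \compose v_s)_{s \to 0}\) through the pointwise estimate
\[
\abs{\Pi \compose v_s(y) - \Pi \compose v_s(z)} \le \abs{\Pi}_{\Lip} \abs{v_s(y) - v_s(z)}
\quad \text{valid for \(y, z \in K^\ell\) and \(s \le s_0\),}
\]
which upon integration gives \(\seminorm{\Pi \compose v_s}_{K^\ell, L^r, \rho} \le \abs{\Pi}_{\Lip} \seminorm{v_s}_{K^\ell, L^r, \rho}\); the required equicontinuity is then inherited from \((v_s)_{s \to 0}\) via the direct implication of Lemma~\ref{lemmaEquiVMOSharp} applied in view of Proposition~\ref{propositionDensityVMOSharpSets}.

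The density of \(\Smooth^0(K^\ell;\manfN)\) in \(\VMO^{\#}(K^{\ell}| L^{r}; \manfN)\) is then an immediate corollary. I expect no serious obstacle: the core analytic work has already been carried out in Proposition~\ref{propositionDensityVMOSharpSets}, and the Lipschitz continuity of \(\Pi\) transfers every estimate across the composition in a uniform way; the only point requiring some care is to ensure that \(v_s(K^\ell)\) lies inside the tubular neighborhood of \(\manfN\) for \(s\) small, which is precisely the content of Lemma~\ref{lemmaVMOUniformConvergence}.
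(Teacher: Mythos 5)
Your proposal is correct and follows essentially the same route as the paper's proof: well-definedness of \(\Pi \compose v_{s}\) via Lemma~\ref{lemmaVMOUniformConvergence}, \(\VMO(K^{\ell})\)~convergence via Proposition~\ref{propositionDensityVMO} and Lemma~\ref{lemma_VMO_average_homotopy}, and then the transfer of the \(\Lebesgue^{1}(L^{r})\) and crossed-oscillation estimates through the Lipschitz continuity of \(\Pi\), concluding componentwise with Proposition~\ref{propositionDensityVMOSharpSets} and Lemma~\ref{lemmaEquiVMOSharp}. The only difference is that you spell out the three criteria of Lemma~\ref{lemmaEquiVMOSharp} explicitly, which the paper leaves implicit.
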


\resetconstant
\begin{proof}
	By Proposition~\ref{propositionDensityVMO}, 
	\[{}
	\lim_{s \to 0}{\norm{v_{s} - v}_{\VMO(K^{\ell})}}
	= 0.
	\]
	Since the projected map \(\Pi \compose v_{s}\) is well defined for every \(s > 0\) sufficiently small (Lemma~\ref{lemmaVMOUniformConvergence}) and since \(\Pi \compose v = v\), by Lipschitz continuity of \(\Pi\) in a tubular neighborhood of \(\manfN\) we deduce from 	Lemma~\ref{lemma_VMO_average_homotopy} that
	\[{}
	\lim_{s \to 0}{\norm{\Pi \compose v_{s} - v}_{\VMO(K^{\ell})}} = 0.
	\]
	The Lipschitz continuity of \(\Pi\) also implies that
	\[{}
		\norm{\Pi \compose v_{s} - v}_{\Lebesgue^{1}(L^{r})}
		 \le C \norm{v_{s} - v}_{\Lebesgue^{1}(L^{r})}
		  \quad \text{and} \quad
	\seminorm{\Pi \compose v_{s}}_{K^{\ell}, L^{r}, \rho}
	\le \C \seminorm{v_{s}}_{K^{\ell}, L^{r}, \rho}\,.
	\]
	The conclusion is then a consequence of Proposition~\ref{propositionDensityVMOSharpSets} and Lemma~\ref{lemmaEquiVMOSharp} applied to each component of \(v\).
\end{proof}

\(\VMO^{\#}\) functions are well-suited to restrictions of the homotopy equivalence in \(\VMO\):

\begin{proposition}
	\label{propositionVMOSharpHomotopy}
	If \(u, v \in \VMO^{\#}(K^{\ell}| L^{r}; \manfN)\) are such that
	\[{}
	u \sim v 
	\quad \text{in \(\VMO(K^{\ell}; \manfN)\),}
	\]
	then 
	\[{}
	u|_{L^{r}} \sim v|_{L^{r}}
	\quad \text{in \(\VMO(L^{r}; \manfN)\).}
	\]
\end{proposition}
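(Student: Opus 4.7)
The plan is to adapt the strategy of Proposition~\ref{propositionHomotopyVMOtoC} to the $\VMO^{\#}$ setting, using the averaging operator $v \mapsto v_{s}$ as the bridge between the homotopy on $K^{\ell}$ and the restricted homotopy on $L^{r}$. First, take a continuous path $H \colon [0,1] \to \VMO(K^{\ell}; \manfN)$ with $H(0) = u$ and $H(1) = v$. By continuity of $H$ and compactness of $[0,1]$, the set $\{H(t) : t \in [0,1]\}$ is equiVMO, so one may select $\delta > 0$ such that
\[
\sup_{\rho \le \delta} \seminorm{H(t)}_{\rho} < \iota \quad \text{for every } t \in [0,1],
\]
where $\iota > 0$ is the radius of a tubular neighborhood of $\manfN$ on which the nearest point projection $\Pi$ is well defined and Lipschitz.

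Next, as in the proof of Proposition~\ref{propositionHomotopyVMOtoC}, the map
\[
t \in [0,1] \longmapsto \Pi \compose H(t)_{\delta} \in \BUC(K^{\ell}; \manfN)
\]
is continuous thanks to Lemmas~\ref{lemmaVMOConvolutionCompactness} and~\ref{lemmaVMOUniformConvergence} together with the Lipschitz continuity of $\Pi$. Restricting to $L^{r}$ (which is legitimate because $\BUC(K^{\ell}) \subset \BUC(L^{r})$ by metric continuity of $\cH^{r}$ on $L^{r}$) and using the continuous embedding $\BUC(L^{r}; \manfN) \subset \VMO(L^{r}; \manfN)$, we obtain
\[
\Pi \compose u_{\delta}\vert_{L^{r}} \sim \Pi \compose v_{\delta}\vert_{L^{r}} \quad \text{in } \VMO(L^{r}; \manfN).
\]

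It remains to connect $u\vert_{L^{r}}$ to $\Pi \compose u_{\delta}\vert_{L^{r}}$ (and similarly $v\vert_{L^{r}}$ to $\Pi \compose v_{\delta}\vert_{L^{r}}$) in $\VMO(L^{r}; \manfN)$. Since $u, v \in \VMO^{\#}(K^{\ell}| L^{r}; \manfN)$, Proposition~\ref{propositionVMOSharpDensityManifold} gives
\[
\Pi \compose u_{s} \to u \quad \text{and} \quad \Pi \compose v_{s} \to v \quad \text{in } \VMO^{\#}(K^{\ell}| L^{r}) \text{ as } s \to 0.
\]
Restricting to $L^{r}$ and applying the continuity of the restriction map $\VMO^{\#}(K^{\ell}| L^{r}) \to \VMO(L^{r})$ from Proposition~\ref{propositionVMOSharpRestrictionSkeleton}, we get $\VMO(L^{r}; \R^{\nu})$ convergence of the restrictions. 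By Proposition~\ref{propositionHomotopyVMOLimit}, for $s$ small enough the restrictions are homotopic in $\VMO(L^{r}; \manfN)$. Choosing $\delta$ small enough to satisfy simultaneously this smallness requirement and the equiVMO condition above, the three-step chain
\[
u\vert_{L^{r}} \sim \Pi \compose u_{\delta}\vert_{L^{r}} \sim \Pi \compose v_{\delta}\vert_{L^{r}} \sim v\vert_{L^{r}} \quad \text{in } \VMO(L^{r}; \manfN)
\]
combined with transitivity of the homotopy relation yields the conclusion.

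The main subtlety is the compatibility between the two types of smallness controls on $\delta$: the uniform equiVMO bound coming from the path $H$ (needed to define $\Pi \compose H(t)_{\delta}$) and the convergence $\Pi \compose u_{s}\vert_{L^{r}} \to u\vert_{L^{r}}$ in $\VMO(L^{r})$ coming from the $\VMO^{\#}$ membership of $u$ and $v$. Both are independent statements that can be made simultaneously true by taking $\delta$ sufficiently small, so no serious obstacle arises.
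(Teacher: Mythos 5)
Your argument is correct, but it takes a somewhat different route from the paper's. The paper approximates \(u\) and \(v\) by continuous maps \(\Pi \compose u_{s}\), \(\Pi \compose v_{s}\) using Proposition~\ref{propositionVMOSharpDensityManifold}, deduces from \(u \sim v\) and Proposition~\ref{propositionHomotopyVMOLimit} that the approximants are \(\VMO\)-homotopic on \(K^{\ell}\), upgrades this to a \(\Smooth^{0}(K^{\ell};\manfN)\) homotopy via Proposition~\ref{propositionHomotopyVMOtoC}, and then restricts that continuous homotopy to \(L^{r}\); the endpoints are reconnected to \(u|_{L^{r}}, v|_{L^{r}}\) exactly as you do, through the continuity of the restriction map (Proposition~\ref{propositionVMOSharpRestrictionSkeleton}) and Proposition~\ref{propositionHomotopyVMOLimit}. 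You instead mollify and project the given \(\VMO\) homotopy \(H\) itself, essentially unfolding the internal argument of Proposition~\ref{propositionHomotopyVMOtoC} (the uniform equiVMO bound along the path, Lemmas~\ref{lemmaVMOConvolutionCompactness} and~\ref{lemmaVMOUniformConvergence}), which produces the same pair of approximants \(\Pi\compose u_{\delta}\), \(\Pi\compose v_{\delta}\) joined by a \(\BUC(K^{\ell};\manfN)\) path that you then restrict. What the paper's version buys is modularity: all the delicate path-mollification analysis stays inside previously proven black boxes, and only the \(\VMO^{\#}\)-specific reconnection is new. What yours buys is directness: you never need \(u_{j} \sim v_{j}\) as an intermediate \(\VMO(K^{\ell})\) statement nor Proposition~\ref{propositionHomotopyVMOtoC} as such, at the cost of repeating its compactness argument. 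Your handling of the two smallness requirements on \(\delta\) is fine, since both thresholds can be imposed simultaneously. One cosmetic slip: the justification "\(\BUC(K^{\ell}) \subset \BUC(L^{r})\) by metric continuity of \(\cH^{r}\)" is not the right reason; the correct (and simpler) point is that restriction to the compact subset \(L^{r}\) preserves bounded uniform continuity and is \(1\)-Lipschitz for the sup norm, and then the continuous imbedding \(\BUC(L^{r};\manfN) \subset \VMO(L^{r};\manfN)\) (valid since \(L^{r}\) has finite measure) turns the restricted path into a \(\VMO\) homotopy.
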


\begin{proof}
	By Proposition~\ref{propositionVMOSharpDensityManifold}, we can take sequences \((u_{j})_{j \in \N}\) and \((v_{j})_{j \in \N}\) in \(\Smooth^{0}(K^{\ell}; \manfN)\) that converge to \(u\) and \(v\), respectively, in terms of the \(\VMO^{\#}(K^{\ell}| L^{r})\) norm.
	In particular, they converge in \(\VMO(K^{\ell}; \manfN)\) and, by Proposition~\ref{propositionVMOSharpRestrictionSkeleton}, their restrictions to \(L^{r}\) converge in \(\VMO(L^{r}; \manfN)\).{}
	Hence, by Proposition~\ref{propositionHomotopyVMOLimit} there exists \(j \in \N\) sufficiently large such that
	\begin{gather}
	\label{eqVMO-617}
	u_{j} \sim u
	\quad \text{and} \quad{}
	v_{j} \sim v
	\quad \text{in \(\VMO(K^{\ell}; \manfN)\),}\\
	u_{j}|_{L^{r}} \sim u|_{L^{r}}
	\quad \text{and} \quad{}
	v_{j}|_{L^{r}} \sim v|_{L^{r}}
	\quad \text{in \(\VMO(L^{r}; \manfN)\).}
	\label{eqHomotopyRestriction}
	\end{gather}
	Since we also have \(u \sim v\) in \(\VMO(K^{\ell}; \manfN)\), it thus follows from \eqref{eqVMO-617} and transitivity of the homotopy relation that
	\[{}
	u_{j} \sim v_{j}
	\quad \text{in \(\VMO(K^{\ell}; \manfN)\).}
	\]
	As \(u_{j}\) and \(v_{j}\) are both continuous, we can apply Proposition~\ref{propositionHomotopyVMOtoC} to deduce that
	\[{}
	u_{j} \sim v_{j}
	\quad \text{in \(\Smooth^{0}(K^{\ell}; \manfN)\).}
	\]
	Hence, by restriction of the homotopy to \(L^{r}\),
	\[{}
	u_{j}|_{L^{r}} \sim v_{j}|_{L^{r}}
	\quad \text{in \(\Smooth^{0}(L^{r}; \manfN)\).}
	\]
	In particular, the homotopy relation also holds in \(\VMO(L^{r}; \manfN)\).
	The conclusion now follows from \eqref{eqHomotopyRestriction} and the transitivity of the homotopy relation.	
\end{proof}

\section{Extension from subskeletons}
\label{sectionExtensionProofs}

In this section, we prove Propositions~\ref{proposition_extension_montone_small} and ~\ref{proposition_Extension_homotop_group_decreasing}.
To do this, we need the following \(\VMO^{\#}\)~genericity of \(\VMO^{\ell}\)~maps.

\begin{lemma}
\label{lemma-generic-vmo-sharp-Small}
Let \(u \colon  \manfV \to \R\) be a measurable function.{}
If \(w\) is an \(\ell\)-detector for \(u\), then, for every simplicial complex \(\cK^{\ell}\), every subcomplex \(\cL^{r}\) and every \(\gamma \colon  K^{\ell} \to \manfV\) such that
\[{}
\gamma \in \Fuglede_{w}(K^{\ell}; \manfV)
\quad \text{and} \quad
\gamma\vert_{L^{r}} \in \Fuglede_{w}(L^{r}; \manfV)
\]
we have
\[{}
u \compose \gamma \in \VMO^{\#}(K^{\ell}| L^{r}).
\]
\end{lemma}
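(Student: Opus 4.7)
The natural plan is to reduce the statement to an instance of the $\ell$-detector hypothesis by using the fattened polytope $E^\ell$ that already appears in Proposition~\ref{propositionVMOExtension}. Specifically, I would set
\[
E^\ell \vcentcolon= (K^\ell \times \{0'\}) \cup (L^r \times [0, 1]^{\ell - r}) \subset \R^a \times \R^{\ell - r}
\]
and equip it with an $\ell$-dimensional simplicial complex structure $\cE^\ell$ extending the identification of $\cK^\ell$ with $\cK^\ell \times \{0'\}$, using the standard triangulation of the prism $L^r \times [0,1]^{\ell - r}$ as in \cite{Munkres}*{Chapter~2, Lemma~7.8}. Then I define the natural dummy-variable extension $\widetilde{\gamma} \colon E^\ell \to \manfV$ by $\widetilde{\gamma}(y, s) \vcentcolon= \gamma(y)$.

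The first step is to verify that $\widetilde{\gamma} \in \Fuglede_w(E^\ell; \manfV)$. Since $\widetilde{\gamma}$ inherits the Lipschitz constant of $\gamma$, it suffices to check the integrability of $w \compose \widetilde{\gamma}$. By the product structure of the measure $\mu = \cH^\ell\lfloor_{E^\ell}$ — which restricts to $\cH^\ell$ on $K^\ell \times \{0'\}$ and to $\cH^r \otimes \cH^{\ell - r}$ on $L^r \times [0, 1]^{\ell - r}$ — and by Tonelli's theorem, one has
\[
\int_{E^\ell} w \compose \widetilde{\gamma} \dif\mu
= \int_{K^\ell} w \compose \gamma \dif\cH^\ell
+ \cH^{\ell - r}([0,1]^{\ell - r}) \int_{L^r} w \compose \gamma|_{L^r} \dif\cH^r,
\]
and both integrals on the right are finite precisely by the two Fuglede hypotheses on $\gamma$ and $\gamma|_{L^r}$. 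Since $w$ is an $\ell$-detector for $u$ and $\cE^\ell$ is an $\ell$-dimensional simplicial complex, this yields $u \compose \widetilde{\gamma} \in \VMO(E^\ell)$.

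The second step is to observe that $u \compose \widetilde{\gamma}$ is exactly the dummy-variable extension $\widetilde{v}$ of $v \vcentcolon= u \compose \gamma$ in the sense of Proposition~\ref{propositionVMOExtension}, because $u \compose \widetilde{\gamma}(y, s) = u(\gamma(y)) = v(y)$. Applying the direct implication of Proposition~\ref{propositionVMOExtension}, the fact that $\widetilde{v} = u \compose \widetilde{\gamma} \in \VMO(E^\ell)$ gives $v = u \compose \gamma \in \VMO^\#(K^\ell | L^r)$, which is the desired conclusion.

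The proof is essentially a bookkeeping exercise once the dummy-variable construction is set up: the only nontrivial point is the integrability computation, which requires \emph{both} Fuglede assumptions in the statement since the measure on $E^\ell$ charges the two pieces with different Hausdorff dimensions. I do not anticipate any serious obstacle, as the simplicial complex structure on $E^\ell$ is standard and Proposition~\ref{propositionVMOExtension} is already available in the desired form.
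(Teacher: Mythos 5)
Your proposal is correct and follows essentially the same route as the paper: form the fattened polytope \(E^{\ell} = (K^{\ell} \times \{0'\}) \cup (L^{r} \times [0, 1]^{\ell - r})\), pass to the dummy-variable extension \(\widetilde{\gamma}\), note it lies in \(\Fuglede_{w}(E^{\ell}; \manfV)\), invoke the \(\ell\)-detector property, and conclude via Proposition~\ref{propositionVMOExtension}. The only difference is that you spell out the Tonelli computation showing \(w \compose \widetilde{\gamma}\) is summable (using both Fuglede hypotheses), a point the paper leaves implicit; this is a welcome, harmless addition.
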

\begin{proof}[Proof of Lemma~\ref{lemma-generic-vmo-sharp-Small}]
Let \(\gamma\) be as in the statement and take a simplicial complex \(\cE^{\ell}\) such that \(E^{\ell} = (K^{\ell} \times \{0'\}) \cup (L^{r} \times [0, 1]^{\ell - r})\).{}
The function \(\widetilde{\gamma} \colon  E^{\ell} \to \manfV\) defined by \(\widetilde{\gamma}(z, s) = \gamma(z)\) for every \((z, s) \in E^{\ell}\) belongs to \(\Fuglede_{w}(E^{\ell}; \manfV)\).{}
Since \(w\) is an \(\ell\)-detector, we have \(u \compose \widetilde{\gamma} \in \VMO(E^{\ell})\).
It follows from Proposition~\ref{propositionVMOExtension} that \(u \compose \gamma \in \VMO^{\#}(K^{\ell}| L^{r})\).
\end{proof}

We now have all the ingredients to prove that \((\ell,e)\)-extendability implies \((i,j)\)-extendability when \(i\leq \ell\) and \(i\leq j\leq e\):

\begin{proof}[Proof of Proposition~\ref{proposition_extension_montone_small}]
Let \(w\) be an \(\ell\)-detector given by Definition~\ref{definitionExtensionVMO}. 
Since \(i \le \ell\), by Proposition~\ref{propositionFugledeDetector} \(w\) is also an \(i\)-detector for \(u\).{}
Let \( \tau \colon U \to \manfV \) be a transversal perturbation of the identity.
We then take an \(i\)-detector \(\widetilde{w} \ge w\) given by Lemma~\ref{propositionGenericStability}.

Let \(\cK^{j}\) be a simplicial complex with \(i\leq j \leq e\) and \(\gamma \colon  K^j \to \manfV\) be a Lipschitz map such that 
\[{}
\gamma\vert_{K^{i}} \in \Fuglede_{\tilde{w}}(K^{i}; \manfV).
\]
We now take any simplicial complex \(\cE^{e}\) that contains \(\cK^{j}\) as a subcomplex together with a Lipschitz extension \(\widehat{\gamma} \colon  E^{e} \to \manfV\) of \(\gamma\).
For example, one can choose \(\cE^{e}\) so that \(E^{e} = K^{j} \times [0, 1]^{e - j}\), where \(K^{j}\) is identified with \(K^{j} \times \{0'\}\), and \(\widehat{\gamma}(x, t) = \gamma(x)\) for every \((x, t) \in E^{e}\).{}

Since \(\widehat{\gamma}|_{K^i}\in \Fuglede_{\tilde{w}}(K^i;\manfV)\), we deduce from  Lemma~\ref{propositionGenericStability} that there exists \( \xi \in B_{\delta}^{q} \) with \(\widehat{\gamma}(E^{e}) \times \{ \xi \} \subset U\) such that the Lipschitz map
\(\widetilde{\gamma} \vcentcolon= \tau_{\xi} \compose \widehat{\gamma} \) verifies 
\[{}
	\widetilde\gamma\vert_{E^{\ell}} \in \Fuglede_{w}(E^{\ell}; \manfV),
	\quad
	\widetilde\gamma\vert_{K^{i}} \in \Fuglede_{w}(K^{i}; \manfV)
	\]
and
\begin{equation}
\label{eqExtension-1045}
u \compose \gamma\vert_{K^{i}}
= u \compose \widehat\gamma\vert_{K^{i}} 
\sim  
u \compose \widetilde\gamma\vert_{K^{i}} 
\quad \text{in \(\VMO(K^{i}; \manfN)\).}
\end{equation}
It follows from Lemma~\ref{lemma-generic-vmo-sharp-Small} applied to the components of \(u\) that 
\begin{equation}
	\label{eqExtension-248}
	u \compose \widetilde{\gamma}\vert_{E^{\ell}} \in \VMO^{\#}(E^{\ell}| K^{i}; \manfN).
\end{equation}
Since \(\widetilde\gamma\vert_{E^{\ell}} \in \Fuglede_{w}(E^{\ell}; \manfV)\), by \((\ell, e)\)-extendability of \(u\), there exists \(F \in \Smooth^{0}(E^{e}; \manfN)\) such that
\[{}
u \compose \widetilde{\gamma}\vert_{E^{\ell}} \sim F\vert_{E^{\ell}}
\quad \text{in \(\VMO(E^{\ell}; \manfN)\).}
\]
By \eqref{eqExtension-248} and Proposition~\ref{propositionVMOSharpHomotopy}, we are allowed to restrict this homotopy relation to \(K^{i}\).{}
Using \eqref{eqExtension-1045} we then get
\[{}
u \compose \gamma\vert_{K^{i}} \sim u \compose \widetilde{\gamma}\vert_{K^{i}} \sim F\vert_{K^{i}}
\quad \text{in \(\VMO(K^{i}; \manfN)\).}
\]
We thus deduce that \(u\) is \((i, j)\)-extendable by taking the continuous map \(F\vert_{K^{j}}\).{}
\end{proof}

We finally prove that \((i,e)\)-extendability implies \((\ell, e)\)-extendability when \(i\leq \ell\) and the homotopy groups \(\pi_j(\manfN)\) are trivial for \(i+1\leq j \leq \ell\)\,:

\begin{proof}[Proof of Proposition~\ref{proposition_Extension_homotop_group_decreasing}]
	Let \(w\) be an \(\ell\)-detector given by the \((i, e)\)-extendability of \(u\) and let \( \tau \colon U \to \manfV \) be a transversal perturbation of the identity. 	
	We take an \(\ell\)-detector \(\widetilde w \ge w\) given by Lemma~\ref{propositionGenericStability} and a Lipschitz map \(\gamma \colon  K^{e} \to \manfV\) such that \(\gamma\vert_{K^{\ell}} \in \Fuglede_{\tilde{w}}(K^{\ell}; \manfV)\).{}
	By Lemma~\ref{propositionGenericStability}, there exists \( \xi \in B_{\delta}^{q} \) with \(\gamma(E^{e}) \times \{ \xi \} \subset U\) such that the Lipschitz map \(\widetilde{\gamma} \vcentcolon= \tau_{\xi} \compose \gamma \) verifies
	\[{}
	\widetilde\gamma\vert_{K^{\ell}} \in \Fuglede_{w}(K^{\ell}; \manfV),
	\quad
	\widetilde\gamma\vert_{K^{i}} \in \Fuglede_{w}(K^{i}; \manfV)
	\]
	and
	\begin{equation}
	\label{eq-ExtensionHomotopy-1}
			u \compose \widetilde{\gamma}\vert_{K^{\ell}} \sim u \compose {\gamma}\vert_{K^{\ell}}
	\quad \text{in \(\VMO(K^{\ell}; \manfN)\).}
	\end{equation}
	It follows from Lemma~\ref{lemma-generic-vmo-sharp-Small} applied to the components of \(u\) that 
	\begin{equation}
		\label{eqExtension-1093}
		u \compose \widetilde{\gamma}|_{K^{\ell}} \in \VMO^{\#}(K^{\ell}| K^{i}; \manfN).
	\end{equation}	
	Since \(\widetilde{\gamma}|_{K^i}\in \Fuglede_w(K^i;\manfV)\), by \((i, e)\)-extendability of \(u\) there exists \(F \in \Smooth^{0}(K^{e}; \manfN)\) such that
	\begin{equation}
	\label{eqExtensionHomotopyN-extension1}
	u \compose \widetilde{\gamma}\vert_{K^{i}} \sim F\vert_{K^{i}}
	\quad \text{in \(\VMO(K^{i}; \manfN)\).}
	\end{equation}
	Since \(u \compose \widetilde{\gamma}|_{K^{\ell}} \in \VMO(K^{\ell}; \manfN)\), by Proposition~\ref{propositionHomotopyVMOContinuousMap} there exists \(G \in \Smooth^{0}(K^{\ell}; \manfN)\) such that
	\begin{equation}
	\label{eq-ExtensionHomotopy-2}
	u \compose \widetilde{\gamma}\vert_{K^{\ell}} \sim G
	\quad  \text{in \(\VMO(K^{\ell}; \manfN)\).}
	\end{equation}
	By \eqref{eqExtension-1093} and Proposition~\ref{propositionVMOSharpHomotopy}, we can restrict \eqref{eq-ExtensionHomotopy-2} to \(K^{i}\) and conclude that
	\[{}
	u \compose \widetilde{\gamma}\vert_{K^{i}} \sim G\vert_{K^{i}}
	\quad  \text{in \(\VMO(K^{i}; \manfN)\).}
	\]
	By \eqref{eqExtensionHomotopyN-extension1} and the transitivity of the homotopy relation, we then get
	\[
	G\vert_{K^{i}} \sim F\vert_{K^{i}}
	\quad \text{in \(\VMO(K^{i}; \manfN)\).}
	\] 
	As both functions are continuous, by Proposition~\ref{propositionHomotopyVMOtoC} they are also homotopic in \(\Smooth^{0}(K^{i}; \manfN)\).{}
	By the topological assumption on \(\manfN\), we thus have
	\[
	G \sim F\vert_{K^{\ell}}
	\quad \text{in \(\Smooth^{0}(K^{\ell}; \manfN)\).}
	\] 
	In particular, they are also homotopic in \(\VMO(K^{\ell}; \manfN)\).{}
	Then, by \eqref{eq-ExtensionHomotopy-1}, \eqref{eq-ExtensionHomotopy-2} and the transitivity of the homotopy relation, we conclude that
	\[{}
	u \compose \gamma\vert_{K^{\ell}} \sim F\vert_{K^{\ell}}
	\quad  \text{in \(\VMO(K^{\ell}; \manfN)\).}
	\]
    Since \(F \in \Smooth^{0}(K^{e}; \manfN)\), we deduce that \(u\) is \((\ell, e)\)-extendable.
	\end{proof}

\section{\texorpdfstring{$ \VMO^{\ell} $}{VMOl}-homotopy extension property}
\label{sectionHEP}

Given a simplicial complex \(\cK^{e}\) and a map \(f \in \Smooth^{0}(K^{\ell}; \manfN)\), the classical homotopy extension property states that if there exists \(F \in \Smooth^{0}(K^{e}; \manfN)\) such that
\[
f \sim F|_{K^{\ell}} \quad \text{in} \ \Smooth^{0}(K^{\ell}; \manfN),
\]
then \(f\) has a continuous extension in \(\Smooth^{0}(K^{e}; \manfN)\). 
The notion of \((\ell, e)\)-extendability was modeled on this property to handle the fact that we are dealing with maps that are merely \( \VMO \). 
Our goal in this section is to present an analogue of the homotopy extension property at the level of \( \VMO^{\ell} \)~maps defined on an open subset of \(\manfV\). 
We begin by introducing a notion of homotopy that is better suited to this setting.

\begin{definition}
    \label{definitionVMOEllHomotopy}
    Let \( A \subset \manfV \) be an open set.
    Two maps \( u, v \in \VMO^{\ell}(A; \manfN) \) are \emph{\( \VMO^{\ell} \)-homotopic}, which we denote by
    \[
    u \sim v
    \quad \text{in \( \VMO^{\ell}(A; \manfN) \),}
    \]     
    whenever there exists an \( \ell \)-detector \( w \) for \( u \) and \( v \) such that, for every simplicial complex \( \cK^{\ell} \) and every \( \gamma \in \Fuglede_{w}(K^{\ell}; A) \),
    \[
    u \compose \gamma \sim v \compose \gamma
    \quad \text{in \( \VMO(K^{\ell}; \manfN) \).}
    \]
\end{definition}
    
Observe that if two maps \( u, v \in \VMO^{\ell}(A; \manfN) \) are \( \VMO^{\ell} \)-homotopic,  and if \(u\) is \((\ell,e)\)-extendable,  then  \(v\) is  \((\ell,e)\)-extendable.
Moreover, this definition is stable under \( \VMO^{\ell} \) convergence in the following sense:

\begin{proposition}
    \label{propositionHomotopyVMOEll}
    Let \( v \in \VMO^{\ell}(\manfV; \manfN) \) and let \( (v_{j})_{j \in \N} \) be a sequence in \( \VMO^{\ell}(\manfV; \manfN) \) such that \( v_{j} \to v \) in \( \VMO^{\ell}(\manfV; \R^{\nu}) \).
    If a map \( u \in \VMO^{\ell}(\manfV; \manfN) \) is \(\VMO^\ell\)-homotopic to each \(v_j\), then \(u\) is also \(\VMO^\ell\)-homotopic to \(v\).
\end{proposition}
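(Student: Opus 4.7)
The plan is to mimic the strategy used in the proof of Proposition~\ref{propositionExtensionVMOClosed}, combining a single $\ell$-detector for the convergence $v_j \to v$ with the countable family of $\ell$-detectors witnessing each individual homotopy $u \sim v_j$, and then passing to the limit through Proposition~\ref{propositionHomotopyVMOLimit}.

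First, I would invoke Definition~\ref{def_conv_VMOl} to obtain an $\ell$-detector $w_0$ for $v$ and all the $v_j$ such that, for every simplicial complex $\cK^\ell$ and every $\gamma \in \Fuglede_{w_0}(K^\ell; \manfV)$,
\[
v_j \compose \gamma \to v \compose \gamma \quad \text{in } \VMO(K^\ell; \R^\nu).
\]
Next, for each $j \in \N$, since $u \sim v_j$ in $\VMO^\ell(\manfV; \manfN)$, Definition~\ref{definitionVMOEllHomotopy} supplies an $\ell$-detector $w_j$ for $u$ and $v_j$ such that, for every $\gamma \in \Fuglede_{w_j}(K^\ell; \manfV)$,
\[
u \compose \gamma \sim v_j \compose \gamma \quad \text{in } \VMO(K^\ell; \manfN).
\]

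I would then aggregate these detectors by choosing positive numbers $(\alpha_j)_{j \in \N}$ with $\sum_{j=0}^\infty \alpha_j \norm{w_j}_{\Lebesgue^1(\manfV)} < \infty$ and setting
\[
w \vcentcolon= w_0 + \sum_{j=0}^\infty \alpha_j w_j.
\]
Such a $w$ is summable, and any $\gamma \in \Fuglede_{w}(K^\ell; \manfV)$ automatically belongs to $\Fuglede_{w_0}(K^\ell; \manfV)$ and to $\Fuglede_{w_j}(K^\ell; \manfV)$ for every $j$, so one verifies as in Proposition~\ref{propositionExtensionVMOClosed} that $w$ is an $\ell$-detector for $u$, $v$ and all $v_j$.

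The conclusion then follows by a two-step homotopy argument: for any simplicial complex $\cK^\ell$ and any $\gamma \in \Fuglede_{w}(K^\ell; \manfV)$, the first property yields $v_j \compose \gamma \to v \compose \gamma$ in $\VMO(K^\ell; \R^\nu)$, so Proposition~\ref{propositionHomotopyVMOLimit} gives $J \in \N$ such that $v_j \compose \gamma \sim v \compose \gamma$ in $\VMO(K^\ell; \manfN)$ for every $j \ge J$; the second property gives $u \compose \gamma \sim v_j \compose \gamma$ in $\VMO(K^\ell; \manfN)$ for such $j$; and transitivity of the $\VMO$-homotopy relation yields $u \compose \gamma \sim v \compose \gamma$ in $\VMO(K^\ell; \manfN)$. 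No step appears to be a genuine obstacle here; the argument is essentially a structural repackaging of Proposition~\ref{propositionExtensionVMOClosed}, and the only mild subtlety is ensuring the aggregated $w$ retains the detector and convergence properties of both $w_0$ and each $w_j$ simultaneously, which is handled by the monotonicity of the Fuglede condition with respect to $w$.
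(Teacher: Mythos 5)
Your proposal is correct and takes essentially the same route as the paper's proof: a countable aggregation of the homotopy detectors, combined (via monotonicity of the Fuglede condition) with the detector witnessing the \(\VMO^{\ell}\) convergence, followed by Proposition~\ref{propositionHomotopyVMOLimit} and transitivity of the homotopy relation. The only differences are cosmetic — the paper takes a convergence detector \(\widetilde w \ge \sum_{j}\alpha_{j}w_{j}\) instead of adding it into the sum, and your symbol \(w_0\) is used both for the convergence detector and for the \(j=0\) homotopy detector, so it should be renamed.
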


\begin{proof}
    For each \( j \in \N \), let \( w_{j} \) be an \( \ell \)-detector given by Definition~\ref{definitionVMOEllHomotopy} for the homotopy between \( u \) and \( v_{j} \).
    Let \( (\alpha_{j})_{j \in \N} \) be a sequence of positive numbers such that \( w \vcentcolon= \sum_{j}{\alpha_{j}w_{j}} \) is summable.
    Note that \( w \) is a common \( \ell \)-detector for all these homotopies. 
    By convergence of the sequence \( (v_{j})_{j \in \N} \) in \( \VMO^{\ell}(\manfV; \R^{\nu}) \) we then take a summable function \( \widetilde w \) given by Definition~\ref{def_conv_VMOl} of \(\VMO^\ell\) convergence, which we may suppose to satisfy \( \widetilde w \ge w \). 
    Given a simplicial complex \( \cK^{\ell} \) and  \( \gamma \in \Fuglede_{\tilde w}(K^{\ell}; \manfV) \), then, for every \( j \in \N \), 
    \begin{equation}
        \label{eqStrong-687}
    u \compose \gamma|_{K^\ell} \sim v_{j} \compose \gamma|_{K^\ell}
    \quad \text{in \( \VMO(K^{\ell}; \manfN) \).}
    \end{equation}
    Moreover, by Definition~\ref{def_conv_VMOl},
    \[
    v_j \compose \gamma|_{K^\ell} \to v \compose \gamma|_{K^\ell}
    \quad \text{in \( \VMO(K^{\ell}; \R^{\nu}) \).}
    \]
    By Proposition~\ref{propositionHomotopyVMOLimit}, there exists \(J \in \N\) such that, for every \(j \ge J\),{}
   \begin{equation}
    \label{eqStrong-698}
    v_{j} \compose \gamma|_{K^\ell} \sim v \compose \gamma|_{K^\ell}
    \quad \text{in \(\VMO(K^{\ell}; \manfN)\).}
    \end{equation}
    Take such an integer \(j\).
    By transitivity of the homotopy relation, we may combine \eqref{eqStrong-687} and \eqref{eqStrong-698} to get
    \[
    u \compose \gamma|_{K^\ell} \sim v \compose \gamma|_{K^\ell}
    \quad \text{in \( \VMO(K^{\ell}; \manfN) \),}
    \]
    which gives the conclusion.
\end{proof}

A homotopy extension property involving \( \VMO^{\ell} \)~maps, adapted for our purposes in Chapter~\ref{chapter-approximation-Sobolev-manifolds}, is given by the following:

\begin{proposition}
    \label{propositionEllEExtensionContinuous}
    Let \( A \subset \manfV \) be an open set.
    If \( u \in \Smooth^{0}(A; \manfN) \) is such that there exists an \((\ell, e)\)-extendable map \( v \in \VMO^{\ell}(\manfV; \manfN)\) with
    \[
    u \sim v
    \quad \text{in \( \VMO^{\ell}(A; \manfN) \),}
    \]
    then, for every simplicial complex \( \cK^{e} \) and every \( \gamma \in \Smooth^{0}(K^{e}; \manfV) \) with \( \gamma(K^{\ell}) \subset A \), the map \( u \compose \gamma|_{K^{\ell}} \) has a continuous extension in \(\Smooth^0(K^{e} ; \manfN)\).
\end{proposition}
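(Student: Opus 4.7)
The strategy is to reduce the problem, via perturbation and approximation, to a situation where classical topological tools apply. The key idea is that although $\gamma$ is only continuous and a priori not a Fuglede map, we can perturb it slightly to simultaneously (i) make its restriction to $K^\ell$ a Fuglede map with respect to a suitable detector, (ii) keep its image on $K^\ell$ inside the open set $A$, and (iii) stay homotopic to the original $\gamma$. We then combine the two hypotheses (extendability of $v$ and $\VMO^\ell$-homotopy between $u$ and $v$ on $A$) along this perturbation, and translate the resulting $\VMO$ homotopy into a $\Smooth^0$ homotopy by continuity, which activates the classical homotopy extension property.

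Concretely, I would first replace $\gamma$ by an equiLipschitz approximation $\eta \in \Lip(K^e;\manfV)$ sufficiently close to $\gamma$ (as in Lemma~\ref{lemmaHomotopyLipschitz}), so that $\eta(K^\ell) \subset A$ and $\eta \sim \gamma$ in $\Smooth^0(K^e;\manfV)$ via a straight-line homotopy in a tubular neighborhood of $\gamma(K^e)$ whose trace on $K^\ell$ remains in $A$ by compactness of $\gamma(K^\ell) \subset A$. Next, fixing a transversal perturbation $\tau$ of the identity on $\manfV$, I would choose a common $\ell$-detector $w$ dominating both a detector $w_v$ witnessing the $(\ell,e)$-extendability of $v$ and a detector $w_{u,v}$ (extended by $0$ off $A$) witnessing the $\VMO^\ell$-homotopy $u \sim v$ on $A$. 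By Proposition~\ref{lemmaTransversalFamily}, for almost every $\xi$ in a small ball $B^q_\epsilon$, the map $\tau_\xi\compose\eta|_{K^\ell}$ lies in $\Fuglede_w(K^\ell;\manfV)$; picking $\epsilon$ small enough also guarantees $\tau_\xi\compose\eta(K^\ell)\subset A$.

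With such $\xi$ fixed, applying the $(\ell,e)$-extendability of $v$ produces $F\in\Smooth^0(K^e;\manfN)$ with $v\compose\tau_\xi\compose\eta|_{K^\ell} \sim F|_{K^\ell}$ in $\VMO(K^\ell;\manfN)$, while the $\VMO^\ell$-homotopy hypothesis (applied to the Fuglede map $\tau_\xi\compose\eta|_{K^\ell}$ into $A$) gives $u\compose\tau_\xi\compose\eta|_{K^\ell} \sim v\compose\tau_\xi\compose\eta|_{K^\ell}$ in $\VMO(K^\ell;\manfN)$. Transitivity yields $u\compose\tau_\xi\compose\eta|_{K^\ell} \sim F|_{K^\ell}$ in $\VMO$, and since both sides are continuous (here we use that $u$ is continuous on $A$ and $\tau_\xi\compose\eta(K^\ell)\subset A$), Proposition~\ref{propositionHomotopyVMOtoC} upgrades this to a $\Smooth^0(K^\ell;\manfN)$ homotopy. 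As $F|_{K^\ell}$ admits the continuous extension $F$ to $K^e$, the classical homotopy extension property for the CW-pair $(K^e,K^\ell)$ furnishes a continuous extension of $u\compose\tau_\xi\compose\eta|_{K^\ell}$ to $K^e$.

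It remains to transfer this extension to the original map $u\compose\gamma|_{K^\ell}$. The continuous $A$-valued homotopy $t\mapsto \tau_{t\xi}\compose\eta|_{K^\ell}$ combined with the homotopy from $\gamma$ to $\eta$ (both constructed to remain in $A$ on $K^\ell$) gives, after composition with the continuous $u$, a $\Smooth^0(K^\ell;\manfN)$-homotopy between $u\compose\gamma|_{K^\ell}$ and $u\compose\tau_\xi\compose\eta|_{K^\ell}$; a second application of the homotopy extension property then produces the desired continuous extension of $u\compose\gamma|_{K^\ell}$ to $K^e$. The main technical obstacle is the calibration in Step~2: simultaneously ensuring that the perturbed map is Fuglede with respect to the combined detector, maps $K^\ell$ into $A$, and is uniformly close enough to $\gamma$ that the homotopies produced in the final step stay in $A$; this requires shrinking $\epsilon$ after each requirement and exploiting the compactness of $\gamma(K^\ell)\subset A$.
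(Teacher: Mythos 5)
Your proposal is correct and follows essentially the same route as the paper's proof: Lipschitz approximation of $\gamma$, a transversal perturbation chosen via Proposition~\ref{lemmaTransversalFamily} to land in the Fuglede class of a combined detector while keeping $K^{\ell}$ inside $A$, then the $(\ell,e)$-extendability of $v$ plus the $\VMO^{\ell}$-homotopy, transitivity, Proposition~\ref{propositionHomotopyVMOtoC}, and the homotopy extension property. The only cosmetic difference is at the end, where the paper transfers the extension back to $u\compose\gamma|_{K^{\ell}}$ by uniform convergence of the perturbed compositions, while you make the same transfer explicit through a $\Smooth^{0}$ homotopy and a second use of the homotopy extension property — these are equivalent.
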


\begin{proof}
    Let \( \gamma \in \Smooth^{0}(K^{e}; \manfV) \) with \( \gamma(K^{\ell}) \subset A \).
    Let \( w \colon \manfV \to [0, +\infty] \) be an \( \ell \)-detector for \( v \) given by Definition~\ref{definitionExtensionVMO} and let \( \widetilde{w} \colon A \to [0, +\infty] \) be an \( \ell \)-detector for \( u \) and \( v|_{A} \) given by Definition~\ref{definitionVMOEllHomotopy}.
    We may assume that \( \widetilde{w} \ge w \) in \( A \).
    Take a sequence of Lipschitz maps \( \gamma_{j} \colon K^{e} \to \manfV \) such that \( \gamma_{j} \to \gamma \) uniformly.
    Let \( \tau \colon U \to \manfV \) be a transversal perturbation of the identity, where \(U\) is an open subset of \(\manfV\times \R^q\) with \(\manfV\times \{0\}\subset U\). 
    Since \(\tau(z,0)=z\) for every \(z\in \manfV\) and  \( \gamma(K^{\ell})\subset A\), we deduce that the compact set \(\gamma(K^{\ell})\times \{0\} \) is contained in the open set \( \tau^{-1}(A)\). 
    Hence,  there exists \(\delta>0\) such that \(B_{\delta}^m(\gamma(K^\ell))\times B^{q}_{\delta}\subset \tau^{-1}(A)\). 
    By uniform convergence of \((\gamma_j)_{j \in \N}\)\,, we may also assume that  \( \gamma_j (K^{\ell})\subset  B_{\delta}^m(\gamma(K^\ell))\)  for every \( j \in \N \).

    Using Proposition~\ref{lemmaTransversalFamily}, one finds a subsequence \((\gamma_{j_i})_{i \in \N}\) and a sequence \( (\xi_{i})_{i \in \N} \) in \( B_{\delta}^q \) that converges to  \( 0 \) such that \( \tau_{\xi_{i}} \compose \gamma_{j_i}|_{K^{\ell}} \in \Fuglede_{\tilde w}(K^{\ell}; A) \) for every \(i \in \N\).
    Since \( u \) and \(v|_{A} \) are \(\VMO^\ell\)-homotopic in \(A\), for every \( i \in \N \) we have
    \[
    u \compose \tau_{\xi_i} \compose \gamma_{j_i}|_{K^{\ell}} 
    \sim v \compose \tau_{\xi_i} \compose \gamma_{j_i}|_{K^{\ell}}
    \quad \text{in \(\VMO(K^{\ell}; \manfN)\).}
    \]
    Since we also have \( \tau_{\xi_{i}} \compose \gamma_{j_i}|_{K^{\ell}} \in \Fuglede_{w}(K^{\ell}; \manfV) \), there exists \( F_i \in \Smooth^0(K^{e}; \manfN) \) such that
    \[
    v \compose \tau_{\xi_i} \compose \gamma_{j_i}|_{K^{\ell}}
    \sim F_{i}|_{K^{\ell}}
    \quad \text{in \(\VMO(K^{\ell}; \manfN)\).}
    \]
    Thus, by transitivity of the homotopy relation,
    \[
    u \compose \tau_{\xi_i} \compose \gamma_{j_i}|_{K^{\ell}} 
    \sim F_i|_{K^{\ell}}
    \quad \text{in \(\VMO(K^{\ell}; \manfN)\).}
    \]
    As both functions are continuous, by Proposition~\ref{propositionHomotopyVMOtoC}  the homotopy relation is also true in \(\Smooth^{0}(K^{\ell}; \manfN)\).
    By the homotopy extension property, it follows that \(u \compose \tau_{\xi_i} \compose \gamma_{j_i}|_{K^{\ell}}\) has a continuous extension in \(\Smooth^0(K^{e} ; \manfN)\).
    Since the sequence \( (u \compose \tau_{\xi_i} \compose \gamma_{j_i}|_{K^{\ell}})_{i \in \N} \) converges uniformly to \( u \compose \gamma|_{K^{\ell}} \), the map \( u \compose \gamma|_{K^{\ell}} \) also has a continuous extension in \(\Smooth^0(K^{e} ; \manfN)\).
    \end{proof}

We can formulate the notion of \(\VMO^\ell\)-homotopy in the spirit of the characterization of \((\ell, e)\)-extendability by almost every translation of Lipschitz maps presented in Chapter~\ref{chapterExtendability}:

\begin{proposition}
    \label{propositionWhiteHomotopyMaps}
    Let \(A \subset \manfV\) be an open set, and let \(\tau \colon U \to \manfV\) be a transversal perturbation of the identity.
    Two maps \(u, v \in \VMO^\ell(A; \manfN)\) are \(\VMO^\ell\)-homotopic if and only if for every simplicial complex \(\cK^\ell\), every Lipschitz map \(\gamma \colon K^\ell \to A\) and for almost every \(\xi \in \R^q\) in a neighborhood of \(0\), we have
    \begin{equation}
    \label{eqExtensionProperty-1284}
    u \compose \tau_{\xi} \compose \gamma \sim v \compose \tau_{\xi} \compose \gamma
    \quad \text{in \(\VMO(K^\ell; \manfN)\).}
    \end{equation}
\end{proposition}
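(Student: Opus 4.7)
The plan is to treat the two implications separately, with the forward direction being a straightforward consequence of Proposition~\ref{lemmaTransversalFamily} and the reverse requiring a careful joint selection of transversal parameters.

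For the direct implication $(\Longrightarrow)$, let $w$ be an $\ell$-detector given by Definition~\ref{definitionVMOEllHomotopy} for the homotopy between $u$ and $v$. Fix a simplicial complex $\cK^\ell$ and a Lipschitz map $\gamma \colon K^\ell \to A$. Since $\gamma(K^\ell)$ is a compact subset of the open set $A$ and $\tau(\cdot,0) = \mathrm{Id}$, there exists $\delta > 0$ such that $B_\delta^m(\gamma(K^\ell)) \times B_\delta^q \subset U$ and $\tau_\xi \compose \gamma$ takes values in $A$ for every $\xi \in B_\delta^q$. By Proposition~\ref{lemmaTransversalFamily} applied with $X = K^\ell$, the set of $\xi \in B_\delta^q$ for which $\tau_\xi \compose \gamma \in \Fuglede_w(K^\ell; A)$ has full measure, see \eqref{eqExtension-568}. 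For any such $\xi$, the definition of $\VMO^\ell$-homotopy yields \eqref{eqExtensionProperty-1284}.

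For the reverse implication $(\Longleftarrow)$, the goal is to produce a single $\ell$-detector realizing Definition~\ref{definitionVMOEllHomotopy}. First take an $\ell$-detector $w_1 \colon A \to [0,+\infty]$ common to both $u$ and $v$ given by Definition~\ref{definitionVMOell}, so that equiLipschitz sequences converging in $\Fuglede_{w_1}$ yield compositions converging in $\VMO$. Then apply Proposition~\ref{propositionFugledeApproximation} to $w_1$ to obtain a summable function $\widetilde{w} \ge w_1$; I claim $\widetilde{w}$ is the desired $\ell$-detector. Given a simplicial complex $\cK^\ell$ and $\gamma \in \Fuglede_{\tilde w}(K^\ell; A)$, apply Proposition~\ref{propositionFugledeApproximation} to the constant sequence $\gamma_j \vcentcolon= \gamma$, which produces a sequence of positive numbers $(\epsilon_i)_{i \in \N}$ converging to zero such that \eqref{eqTransversalPerturbationLimit} holds. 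Combining this integral control with the fact that, by hypothesis, the $\xi \in \R^q$ failing \eqref{eqExtensionProperty-1284} form a set of measure zero in some neighborhood of $0$, one can select $\xi_i \in B_{\epsilon_i}^q$ satisfying simultaneously $\tau_{\xi_i} \compose \gamma \in \Fuglede_{w_1}(K^\ell; A)$,
\[
\int_{K^\ell}{\abs{w_1 \compose \tau_{\xi_i} \compose \gamma - w_1 \compose \gamma} \dif \cH^\ell} \to 0,
\]
and
\[
u \compose \tau_{\xi_i} \compose \gamma \sim v \compose \tau_{\xi_i} \compose \gamma
\quad \text{in \(\VMO(K^\ell; \manfN)\).}
\]

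The first two properties show that $\tau_{\xi_i} \compose \gamma \to \gamma$ in $\Fuglede_{w_1}(K^\ell; A)$. By the generic $\VMO$ stability of $u$ and $v$ associated with the $\ell$-detector $w_1$, this gives $u \compose \tau_{\xi_i} \compose \gamma \to u \compose \gamma$ and $v \compose \tau_{\xi_i} \compose \gamma \to v \compose \gamma$ in $\VMO(K^\ell; \R^\nu)$. Proposition~\ref{propositionHomotopyVMOLimit} upgrades these convergences to $\VMO(K^\ell; \manfN)$-homotopies for $i$ sufficiently large, and transitivity of the homotopy relation combined with the third property above then gives $u \compose \gamma \sim v \compose \gamma$ in $\VMO(K^\ell; \manfN)$. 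The main obstacle is the joint selection of $\xi_i$ meeting all three requirements: the Fuglede membership and the quantitative $\Lebesgue^1$ estimate fail only on sets whose measure inside $B_{\epsilon_i}^q$ can be made small via the limit in Proposition~\ref{propositionFugledeApproximation}, while the homotopy assumption excludes only a null set of parameters, so their intersection has positive measure for every $i$ large.
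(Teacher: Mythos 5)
Your proposal is correct and follows essentially the same route as the paper: the forward direction via Proposition~\ref{lemmaTransversalFamily} and \eqref{eqExtension-568}, and the reverse via Proposition~\ref{propositionFugledeApproximation} applied to the constant sequence, choosing $\xi_i$ that simultaneously satisfy the Fuglede condition and the generic homotopy \eqref{eqExtensionProperty-1284}, then concluding with $\VMO^\ell$ stability, Proposition~\ref{propositionHomotopyVMOLimit} and transitivity. Your closing remark spelling out the positive-measure joint selection just makes explicit what the paper phrases as "by genericity of the homotopy relation, we may choose $\xi_i$".
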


\begin{proof}
    ``\(\Longrightarrow\)''. Let \(w\) be an \(\ell\)-detector given by Definition~\ref{definitionVMOEllHomotopy}.
    Given a Lipschitz map \(\gamma \colon K^e \to A\), take \(\delta > 0\) such that \(\gamma(K^e) \times B_\delta^q \subset \tau^{-1}(A)\).
    By \eqref{eqExtension-568}, for almost every \(\xi \in B_\delta^q\) we have \(\tau_\xi \compose \gamma|_{K^\ell} \in \Fuglede_{w}(K^\ell; A)\).
    For any \(\xi \in B_\delta^q\) satisfying this property we then have 
\[
u \compose \tau_{\xi} \compose \gamma|_{K^\ell} \sim v \compose \tau_{\xi} \compose \gamma|_{K^\ell}
\quad \text{in \(\VMO(K^\ell; \manfN)\).}
\]

    ``\(\Longleftarrow\)''. 
    Let \(w\) be an \(\ell\)-detector for \(u\) and \(v\) and take a summable function \(\widetilde{w} \ge w\) given by Proposition~\ref{propositionFugledeApproximation}.
    Let \(\cK^\ell\) be a simplicial complex and \(\gamma \in \Fuglede_{\tilde w}(K^\ell; A)\).
    We apply Proposition~\ref{propositionFugledeApproximation} to the constant sequence \(\gamma_j \vcentcolon= \gamma\) to get a sequence \((\xi_i)_{i \in \N}\) converging to \(0\) in \(\R^q\) such that \(\tau_{\xi_i} \compose \gamma \to \gamma\) in \(\Fuglede_{w}(K^\ell; \manfV)\).
    By genericity of the homotopy relation \eqref{eqExtensionProperty-1284}, we may choose \(\xi_i\) such that, for every \(i \in \N\), 
    \begin{equation}
    \label{eqExtension-1386}
    u \compose \tau_{\xi_i} \compose \gamma \sim v \compose \tau_{\xi_i} \compose \gamma
    \quad \text{in \(\VMO(K^\ell; \manfN)\).}
    \end{equation}
    Then, by stability of \(\VMO^\ell\)~functions,
    \[
    u \compose \tau_{\xi_i} \compose \gamma \to u \compose \gamma
    \quad \text{and} \quad 
    v \compose \tau_{\xi_i} \compose \gamma \to v \compose \gamma
    \quad \text{in \(\VMO(K^\ell; \manfN)\).}
    \]
    By Proposition~\ref{propositionHomotopyVMOLimit}, for \(i\) sufficiently large we thus have
    \[
    u \compose \tau_{\xi_i} \compose \gamma \sim u \compose \gamma
    \quad \text{and} \quad 
    v \compose \tau_{\xi_i} \compose \gamma \sim v \compose \gamma
    \quad \text{in \(\VMO(K^\ell; \manfN)\).}
    \]
    Therefore, by \eqref{eqExtension-1386} and the transitivity of the homotopy relation,
    \[
    u \compose \gamma \sim v \compose \gamma
    \quad \text{in \(\VMO(K^\ell; \manfN)\).}
    \]
    We conclude that \(u\) and \(v\) are \(\VMO^\ell\)-homotopic.
\end{proof}

Assuming that \(u, v \in \VMO^\ell(\manfV; \manfN)\) are \(\VMO^\ell\)-homotopic and also \(\ell\)-extendable, a combination of Corollary~\ref{corollatyWhiteHomotopyType} and Proposition~\ref{propositionWhiteHomotopyMaps}, implies that for every simplicial complex \(\cK^\ell\), every Lipschitz maps \(\gamma_0, \gamma_1 \colon K^\ell \to \manfV\) that are homotopic in \(\Smooth^0(K^\ell; \manfV)\) and for almost every \(\xi_0, \xi_1 \in \R^q\) in a neighborhood of \(0\), one has
    \[
    u \compose \tau_{\xi_0} \compose \gamma_0 \sim v \compose \tau_{\xi_1} \compose \gamma_1
    \quad \text{in \(\VMO(K^\ell; \manfN)\).}
    \]
This property is the \emph{\(\ell\)-homotopy between \(u\) and \(v\)},  formulated by Hang and Lin~\cite{Hang-Lin}.
This establishes an equivalence between their definition of homotopy between two Sobolev maps and ours based on the formalism of Fuglede's maps in the context of \(\VMO^\ell\) maps.

Hang and Lin have established in \cite{Hang-Lin} the following fundamental result: Two maps \(u, v \in \Sobolev^{1,p}(\manfV; \manfN)\) can be connected by a continuous path in \(\Sobolev^{1,p}(\manfV; \manfN)\) if and only if \(u\) and \(v\) are \((\floor{p}-1)\)-homotopic in the sense of \cite{Hang-Lin}. 
It is plausible that the case \(k \geq 2\) could be addressed using the Fuglede maps used in this work. Indeed, recall that, by Proposition~\ref{propositionLExtensionSobolev}, every map in \(\Sobolev^{k,p}(\manfV; \manfN)\) is \((\floor{kp} - 1)\)-extendable. 
One may conjecture that two maps \(u, v \in \Sobolev^{k,p}(\manfV; \manfN)\) can be continuously connected in \(\Sobolev^{k,p}(\manfV; \manfN)\) whenever they are  \(\VMO^{\floor{kp}-1}\)-homotopic.

The consideration of homotopic classes of maps is useful in the study of minimization problems on compact manifolds \(\manfM\) without boundary.
Given \(1 < p <\infty \), let us consider the functional
\begin{equation}\label{min-harm-map}
E_{1, p} \colon u \in \Sobolev^{1, p}(\manfM;\manfN) \longmapsto \int_{\manfM} |Du|^p\dif x. 
\end{equation}
When \(\manfM\) has no boundary, one may look for nontrivial critical points of \(E_{1, p}\).
To prove their existence, one may minimize \(E_{1, p}\) on subclasses of \(\Sobolev^{1,p}(\manfM;\manfN)\) that are sequentially weakly closed, so that the direct method in the calculus of variations may be applied.
For example, as observed by White~\cite{White}, the \((\floor{p}-1)\)-homotopy type is stable under weak \(\Sobolev^{1,\floor{p}}\)~convergence. 
Due to the compact imbedding \(\Sobolev^{1,p}(\manfM;\manfN)\subset \Sobolev^{s,q}(\manfM;\manfN)\) for every \(0 < s < 1\) and \(1 < q < \infty\) such that \(sq< p\), this fact is also a consequence of Proposition~\ref{lemmaFugledeSobolevDetector}.
Hence, given any \(g\in \Smooth^{\infty}(\manfM;\manfN)\), the infimum of \(E_{1,p}\) is achieved on the set of maps \(u\in \Sobolev^{1,p}(\manfM;\manfN)\) that have the same \((\floor{p}-1)\)-homotopy type as \(g\). 

A related and classical question in this field involves instead the minimization of \(E_{1, p}\) on (standard) homotopy classes of \(\Smooth^{\infty}(\manfM;\manfN)\). 
This problem is highly nontrivial, even in the case \(p\geq m\) for which we know that minimizing maps on \(\Sobolev^{1,p}(\manfM;\manfN)\) are smooth. 
One difficulty is that the homotopy classes in \(\Smooth^{\infty}(\manfM;\manfN)\) are not sequentially weakly closed. 
For example, the identity map \(\Id  \colon  \Sphere^3 \to \Sphere^3\) is always a critical point of \(E_{1,2}\) but is homotopic to smooth maps in \(\Smooth^{\infty}(\Sphere^3;\Sphere^3)\) with arbitrarily small energy, so that
\[
\inf{\bigl\{E_{1,2}(u) : u\in \Smooth(\Sphere^3;\Sphere^3),\ u \textrm{ homotopic to } \Id \bigr\}} =0.
\]
Actually, White~\cite{White-1986} showed that, for every \(g\in \Smooth^{\infty}(\manfM;\manfN)\), the infimum of \(E_{1,p}\) in the homotopic class of \(g\) is \(0\) if and only if \(g\) has the same \(\floor{p}\)-homotopy type as a constant map.

\cleardoublepage
\chapter{Decoupling of local and global obstructions}
\label{chapterDecouple}

The property of \(\ell\)-extendability captures a fundamentally \emph{local} aspect of Sobolev maps, as illustrated by its characterization through generic restrictions and cohomological criteria. However, \((\ell, e)\)-extendability introduces a genuinely \emph{global} condition, which in the absence of topological assumptions on \(\manfV\) and \(\manfN\) requires additional considerations beyond local properties. This chapter addresses the relationship between these two notions by identifying a missing global condition that, when combined with \(\ell\)-extendability, becomes equivalent to \((\ell, e)\)-extendability.

We introduce the concept of \emph{coarse \((\ell, e)\)-extendability}, which bridges the gap between local and global properties. Generally speaking, a map \(u\) is coarsely \((\ell, e)\)-extendable whenever there exists a fixed triangulation that captures the global structure of the domain \(\manfV\) such that the restriction \(u|_{T^\ell}\) to its \(\ell\)-dimensional skeleton is homotopic to the restriction of some map in \(\Smooth^{0}(T^{e}; \manfN)\), defined on the \(e\)-dimensional skeleton. This notion formalizes a way to detect global obstructions to \((\ell, e)\)-extendability without requiring the full extension property across generic simplicial complexes.

The main result of this chapter establishes that a map \(u\) is \((\ell, e)\)-extendable if and only if it is both \(\ell\)-extendable and coarsely \((\ell, e)\)-extendable. This characterization rigorously decouples the local and global obstructions to \((\ell, e)\)-extendability and provides a practical framework for analyzing this property.

\section{From \texorpdfstring{$\ell$}{l}-extension to \texorpdfstring{$(\ell, \MakeLowercase{e})$}{(l, e)}-extension}

We rely on the following notion of triangulation of a compact smooth manifold \(\manfM\) without boundary.

\begin{definition}
\label{defnTriangulation}
    Let \(\cE^{m}\) be a simplicial complex and \(\Phi \colon  E^{m} \to \manfM\) be a biLipschitz homeomorphism.
	For each \(r \in \{0, \ldots, m\}\), 
	we say that 
	\(
	T^{r} \vcentcolon= \Phi(E^{r})
	\)
	equipped with the Hausdorff measure \(\cH^{r}\) is an \(r\)-dimensional \emph{skeleton} of \(\manfM\) and
	\[{}
	\cT \vcentcolon= \{T^0, \ldots, T^{m} \}
	\] 
	is a \emph{triangulation} of \(\manfM\).
\end{definition}

Such a biLipschitz homeomorphism \(\Phi\) always exists, see e.g.\@~\cite{Munkres}.
The \((\ell,m)\)-extendability of a map \(u\) gives an information about the extension of \(u\) on generic \(\ell\)-dimensional skeletons of \(\manfM\)\,:

\begin{proposition}
	\label{propositionDecoupleRestriction}
	If \(u \in \VMO^{\ell}(\manfM; \manfN)\) is \((\ell , e)\)-extendable with \(e \le m\), then there exists an \(\ell\)-detector \(w\) with the following property:
	For every triangulation \(\cT\) of \(\manfM\) with \(w|_{T^{\ell}}\) summable in \(T^{\ell}\), there exists \(F \in \Smooth^{0}(T^{e}; \manfN)\) such that
	\[{}
	u|_{T^{\ell}} \sim F|_{T^{\ell}}
	\quad \text{in \(\VMO(T^{\ell}; \manfN)\).}
	\]
\end{proposition}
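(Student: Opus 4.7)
My plan is to apply the $(\ell,e)$-extendability directly to the Lipschitz map $\Phi|_{E^{e}}$ associated with the triangulation, and then transfer the resulting homotopy back to $T^{\ell}$ via the biLipschitz homeomorphism $\Phi^{-1}$.

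More concretely, let $w$ be an $\ell$-detector for $u$ furnished by Definition~\ref{definitionExtensionVMO} of $(\ell,e)$-extendability; this will be the detector claimed in the statement. Fix a triangulation $\cT$ of $\manfM$ with $w|_{T^{\ell}}$ summable in $T^{\ell}$, and let $\Phi \colon E^{m}\to \manfM$ be the underlying biLipschitz homeomorphism, with $\cE^{m}$ the associated simplicial complex and $\cE^{e}\subset \cE^{m}$ its $e$-dimensional subcomplex. Since $\Phi|_{E^{\ell}}$ is biLipschitz from $E^{\ell}$ onto $T^{\ell}$, the area formula applied to $w\circ \Phi|_{E^{\ell}}$ shows that the summability of $w$ on $T^{\ell}$ is equivalent to the summability of $w\circ \Phi|_{E^{\ell}}$ on $E^{\ell}$, so that $\Phi|_{E^{\ell}}\in \Fuglede_{w}(E^{\ell};\manfM)$. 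Applying the $(\ell,e)$-extendability of $u$ to the simplicial complex $\cE^{e}$ and the Lipschitz map $\gamma \vcentcolon= \Phi|_{E^{e}} \colon E^{e}\to \manfM$, we obtain $\widetilde{F}\in \Smooth^{0}(E^{e};\manfN)$ such that
\[
u\compose \Phi|_{E^{\ell}}\sim \widetilde{F}|_{E^{\ell}}
\quad \text{in \(\VMO(E^{\ell};\manfN)\).}
\]

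The remaining step is to pull this homotopy back by the biLipschitz homeomorphism $\Phi|_{E^{\ell}}^{-1}\colon T^{\ell}\to E^{\ell}$. Set $F \vcentcolon= \widetilde{F}\compose \Phi^{-1}|_{T^{e}}$, which lies in $\Smooth^{0}(T^{e};\manfN)$ because $\Phi^{-1}$ is continuous on $\manfM$ and $\widetilde{F}$ is continuous on $E^{e}$. The key technical point is the polytope analogue of Lemma~\ref{lemmaDetectorsVMObiLipschitz}: if $\Psi$ is a biLipschitz homeomorphism between two $\ell$-dimensional polytopes, then $v\mapsto v\compose \Psi$ is a continuous involution of the corresponding $\VMO$ spaces, with norms that are equivalent up to a constant depending only on the biLipschitz constant. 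This is proved along exactly the same lines as Lemma~\ref{lemmaDetectorsVMObiLipschitz}, using the fact that $\Psi$ distorts distances and the Hausdorff measure $\cH^{\ell}$ by at most a multiplicative constant, together with the doubling property. Granted this, composition of the above $\VMO$ homotopy with $\Phi^{-1}|_{T^{\ell}}$ yields a continuous path in $\VMO(T^{\ell};\manfN)$ between $u|_{T^{\ell}} = (u\compose \Phi|_{E^{\ell}})\compose \Phi^{-1}|_{T^{\ell}}$ and $F|_{T^{\ell}} = \widetilde{F}|_{E^{\ell}}\compose \Phi^{-1}|_{T^{\ell}}$, giving the desired conclusion.

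The main obstacle is really just the bookkeeping in the last paragraph: one must verify that composition with a biLipschitz homeomorphism preserves $\VMO$ and, moreover, continuous paths in $\VMO$. Both facts are standard variants of the arguments already used in Lemma~\ref{lemmaDetectorsVMObiLipschitz}, so no genuinely new ideas are needed beyond adapting that proof from spheres versus simplex boundaries to arbitrary polytopes of matching dimension.
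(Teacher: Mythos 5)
Your proposal is correct and follows essentially the same route as the paper: choose the detector from Definition~\ref{definitionExtensionVMO}, note that summability of \(w|_{T^{\ell}}\) makes \(\Phi|_{E^{\ell}}\) a Fuglede map, apply \((\ell,e)\)-extendability to \(\Phi|_{E^{e}}\), and transport the resulting \(\VMO\) homotopy back through \(\Phi^{-1}\). The only difference is that you make explicit the polytope analogue of Lemma~\ref{lemmaDetectorsVMObiLipschitz} needed for the final transfer, a point the paper passes over with the phrase ``since \(\Phi\) is a biLipschitz homeomorphism''.
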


\begin{proof}
	Given a triangulation \(\cT\) of \(\manfM\), let \(\cE^{m}\) be a simplicial complex and \(\Phi \colon  E^{m} \to \manfM\) be a biLipschitz homeomorphism associated to \(\cT\).{}
	Let \(w\) be an \(\ell\)-detector given by Definition~\ref{definitionExtensionVMO}.
	If \(w|_{T^{\ell}}\) is summable in \(T^{\ell}\), then by a change of variables we have that \(w \compose \Phi|_{E^{\ell}}\) is summable in \(E^{\ell}\).{}
	In other words, \(\Phi|_{E^{\ell}} \in \Fuglede_{w}(E^{\ell}; \manfM)\).{}
	By assumption on \(w\), there exists \(\widetilde F \in \Smooth^{0}(E^{e}; \manfN)\) such that
	\[{}
	u \compose \Phi|_{E^{\ell}} \sim \widetilde F|_{E^{\ell}}
	\quad \text{in \(\VMO(E^{\ell}; \manfN)\).}
	\]
	Since \(\Phi\) is a biLipschitz homeomorphism, 
	\[{}
	u|_{T^{\ell}} \sim \widetilde F \compose \Phi^{-1}|_{T^{\ell}}
	\quad \text{in \(\VMO(T^{\ell}; \manfN)\).}
	\]
	We have the conclusion with \(F \vcentcolon= \widetilde{F} \compose \Phi^{-1}|_{T^{e}}\).
\end{proof}

From Proposition~\ref{propositionDecoupleRestriction}, we see that \((\ell , e)\)-extendability of \(u\) implicitly involves testing \(u\) over generic \(\ell\)-dimensional skeletons.
We now consider a weaker notion of extendability where only one skeleton is required for each \(\ell\)-detector.

\begin{definition}
	Let \(\ell, e \in \N\) with \(\ell < e \le m\).{}
	We say that a map \(u \in \VMO^{\ell}(\manfM; \manfN)\) is \emph{coarsely \((\ell , e)\)-extendable} whenever, for each \(\ell\)-detector \(w\), there exist a triangulation \(\cT\) of \(\manfM\) with \(w|_{T^{\ell}}\) summable in \(T^{\ell}\) and \(F \in \Smooth^{0}(T^{e}; \manfN)\) such that
	\[{}
	u|_{T^{\ell}} \sim F|_{T^{\ell}}
	\quad \text{in \(\VMO(T^{\ell}; \manfN)\).}
	\]
\end{definition}

\begin{proposition}
	\label{propositionCoarseExtension}
	If \(u \in \VMO^{\ell}(\manfM; \manfN)\) is \((\ell, e)\)-extendable with \(e \le m\), then \(u\) is coarsely \((\ell, e)\)-extendable.
\end{proposition}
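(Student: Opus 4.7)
Let $w$ be an arbitrary $\ell$-detector for $u$. The plan is to combine $w$ with the distinguished $\ell$-detector $w_{0}$ coming from the $(\ell,e)$-extendability of $u$, and to build the desired triangulation by a generic perturbation of a fixed reference triangulation via a transversal perturbation of the identity.

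First, fix $w_{0}$ as the $\ell$-detector given by Definition~\ref{definitionExtensionVMO} for $(\ell,e)$-extendability and set $\widetilde{w} = w + w_{0}$, which is again an $\ell$-detector for $u$. Pick once and for all a simplicial complex $\cE^{m}$ and a biLipschitz homeomorphism $\Phi_{0} \colon E^{m} \to \manfM$ associated to some reference triangulation of $\manfM$. Let $\tau \colon U \to \manfM$ be a transversal perturbation of the identity on $\manfM$, and choose $\delta>0$ small enough that $\Phi_{0}(E^{m}) \times B_{\delta}^{q} \subset U$ and that $\tau_{\xi}$ is a biLipschitz diffeomorphism of $\manfM$ for every $\xi \in B_{\delta}^{q}$.

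Second, apply Proposition~\ref{lemmaTransversalFamily} to the Lipschitz map $\Phi_{0}|_{E^{\ell}} \colon E^{\ell} \to \manfM$ with the summable function $\widetilde{w}$. This yields
\[
\int_{B_{\delta}^{q}} \biggl( \int_{E^{\ell}} \widetilde{w} \compose \tau_{\xi} \compose \Phi_{0}\,\dif\cH^{\ell} \biggr) \dif\xi < \infty,
\]
so that for almost every $\xi \in B_{\delta}^{q}$ the map $\Phi_{\xi} \vcentcolon= \tau_{\xi} \compose \Phi_{0}$ satisfies $\Phi_{\xi}|_{E^{\ell}} \in \Fuglede_{\tilde{w}}(E^{\ell}; \manfM)$. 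Fix any such $\xi$. By the choice of $\delta$, the composition $\Phi_{\xi} \colon E^{m} \to \manfM$ is a biLipschitz homeomorphism, so that
\[
\cT_{\xi} \vcentcolon= \bigl\{ \Phi_{\xi}(E^{0}), \ldots, \Phi_{\xi}(E^{m}) \bigr\}
\]
is a triangulation of $\manfM$. Since $\Phi_{\xi}|_{E^{\ell}}$ is biLipschitz, a change of variables together with $\widetilde{w} \ge w$ gives that $w|_{T_{\xi}^{\ell}}$ is summable in $T_{\xi}^{\ell}$, which is the first condition in the definition of coarse $(\ell,e)$-extendability.

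Third, since $\widetilde{w} \ge w_{0}$, the Fuglede map $\Phi_{\xi}|_{E^{\ell}}$ belongs to $\Fuglede_{w_{0}}(E^{\ell}; \manfM)$, and the Lipschitz extension $\Phi_{\xi}|_{E^{e}} \colon E^{e} \to \manfM$ (which exists because $e \le m$) serves as the required test map in Definition~\ref{definitionExtensionVMO}. The $(\ell,e)$-extendability of $u$ thus produces $\widetilde{F} \in \Smooth^{0}(E^{e}; \manfN)$ such that
\[
u \compose \Phi_{\xi}|_{E^{\ell}} \sim \widetilde{F}|_{E^{\ell}} \quad \text{in } \VMO(E^{\ell}; \manfN).
\]
Finally, transferring this homotopy through the biLipschitz homeomorphism $\Phi_{\xi}|_{E^{\ell}} \colon E^{\ell} \to T_{\xi}^{\ell}$ via Lemma~\ref{lemmaDetectorsVMObiLipschitz} (applied componentwise), and setting $F \vcentcolon= \widetilde{F} \compose \Phi_{\xi}^{-1}|_{T_{\xi}^{e}} \in \Smooth^{0}(T_{\xi}^{e}; \manfN)$, one obtains
\[
u|_{T_{\xi}^{\ell}} \sim F|_{T_{\xi}^{\ell}} \quad \text{in } \VMO(T_{\xi}^{\ell}; \manfN).
\]
This completes the verification of coarse $(\ell,e)$-extendability.

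The main technical point — and the only place where care is required — is step two: one must guarantee that the perturbation $\tau_{\xi}$ remains a biLipschitz homeomorphism of $\manfM$ for $\xi$ in a set of positive measure in which the Fuglede summability also holds. Once $\delta$ is chosen small enough that $\tau_{\xi}$ is a diffeomorphism for every $\xi \in B_{\delta}^{q}$ (a standard consequence of $\tau_{0} = \Id$ and smoothness of $\tau$ on the compact manifold), the almost-everywhere Fuglede property from Proposition~\ref{lemmaTransversalFamily} intersected with this open set still has positive measure, and an admissible $\xi$ can be selected.
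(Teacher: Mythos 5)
Your proposal is correct and follows essentially the same route as the paper's proof: fix a reference biLipschitz triangulation, perturb its $\ell$-skeleton with a transversal perturbation of the identity, use Proposition~\ref{lemmaTransversalFamily} to select $\xi$ making the perturbed skeleton a Fuglede map for the combined detector $w + w_{0}$, invoke the $(\ell,e)$-extendability, and transport the homotopy back through the biLipschitz homeomorphism. The only cosmetic difference is that you cite Lemma~\ref{lemmaDetectorsVMObiLipschitz} (stated for $\partial\Simplex^{\ell+1}$ and $\Sphere^{\ell}$) for the transfer step, whereas the paper treats this change of variables for general polytopes as immediate; the same argument applies, so this is not a gap.
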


\resetconstant
\begin{proof}
	We fix a simplicial complex \(\cE^{m}\) and a biLipschitz homeomorphism \(\Psi \colon  E^{m} \to \manfM\). Let \(w\) be an \(\ell\)-detector and take a transversal perturbation of the identity \(\tau \colon  \manfM \times B_{\delta}^{q}(0) \to \manfM\). Using that \(\tau\) is smooth and that \(\tau_0\) agrees with the identity, we can assume, by reducing \(\delta\) if necessary, that \(\tau_\xi\) is a smooth diffeomorphism from \(\manfM\) onto itself.
	Let \(\overline{w}\geq w\) be an \(\ell\)-detector given by Definition~\ref{definitionExtensionVMO}.
	By Proposition~\ref{lemmaTransversalFamily}, we have
	\[{}
	\int_{B_{\delta}^{q}(0)} \biggl( \int_{E^{\ell}} \overline{w} \compose \tau_{\xi} \compose \Psi \dif\cH^{\ell} \biggr) \dif\xi{}
	\le \C\int_{\manfM} w.
	\] 
    Therefore, we can take \(\xi \in B_{\delta}^{q}(0)\) such that the inner integral on the left-hand side is finite.
	Hence, \(\tau_{\xi} \compose \Psi|_{E^{\ell}} \in \Fuglede_{\Bar{w}}(E^{\ell}; \manfM)\).{}
	By the choice of \(\overline{w}\), there exists \(F \in \Smooth^{0}(E^{e}; \manfN)\) such that 
	\begin{equation}
		\label{eqDecouple-93}
	u \compose \tau_{\xi} \compose \Psi|_{E^{\ell}}
	\sim F|_{E^{\ell}}
	\quad \text{in \(\VMO(E^{\ell}; \manfN)\).}
	\end{equation}
	We introduce \(\Phi = \tau_{\xi} \compose \Psi\) and \(\cT  = \bigl\{ T^0, \ldots, T^{m} \bigr\}\) with \(T^i\vcentcolon = \Phi(E^i)\) for each \(i \in \{0, \ldots, m\}\).{}
	Since \(\Phi\) is a biLipschitz homeomorphism and \(w \compose \Phi|_{E^{\ell}}\) is summable in \(E^{\ell}\), by a change of variables we have \(w|_{T^{\ell}}\) summable in \(T^{\ell}\).
	Moreover, from \eqref{eqDecouple-93},
	\[{}
	u|_{T^{\ell}}
	\sim F \compose \Phi^{-1}|_{T^{\ell}}
	\quad \text{in \(\VMO(T^{\ell}; \manfN)\).}
	\]
	Since \(F \compose \Phi^{-1}|_{T^{e}} \in \Smooth^{0}(T^{e}; \manfN)\), we deduce that \(u\) is coarsely \((\ell, e)\)-extendable.
\end{proof}

\begin{example}
	\label{exampleCharacterization-114}
	Let \(\cT= \bigl\{ T^0, \ldots, T^{m} \bigr\}\) be a triangulation of \(\manfM\) and let \(u \in \VMO^{\ell}(\manfM; \manfN)\) be such that \(u\) is continuous in a neighborhood of \(T^{\ell}\).
	We claim that if \(u|_{T^{\ell}}\) has a continuous extension to \(T^{e}\), then \(u\) is coarsely \((\ell, e)\)-extendable.
    Indeed, take an \(\ell\)-detector \(w\). 
    It need not be true that \(w|_{T^{\ell}}\) is summable in \(T^{\ell}\).
	
	To verify that \(u\) is coarsely \((\ell, e)\)-extendable, we take a transversal perturbation of the identity \(\tau \colon  \manfM \times B_{\delta}^{q}(0) \to \manfM\) and we can assume as in the proof of Proposition~\ref{propositionCoarseExtension} that \(\tau_\xi\) is a smooth diffeomorphism from \(\manfM\) onto itself for every \(\xi\in B_{\delta}^{q}(0)\).{}
	For \(0 < \epsilon \le \delta\) sufficiently small, the map \((x,\xi)\mapsto u \compose \tau_{\xi}(x)\) is continuous in \(T^\ell\times B_{\epsilon}^{q}(0)\).{}
	Let \(\cE^{m}\) be a simplicial complex and \(\Psi \colon  E^{m} \to \manfM\) be  a biLipschitz homeormorphism associated to \(\cT\).{}
	As in the proof of Proposition~\ref{propositionCoarseExtension}, one finds \(\xi \in B_{\epsilon}^{q}(0)\) such that \(w \compose \tau_{\xi} \compose \Psi|_{E^{\ell}}\) is summable in \(E^{\ell}\).{}
	Then, taking \(\Phi  = \tau_{\xi} \compose \Psi\) and the triangulation \(\widetilde{\cT}  = \bigl\{ \widetilde{T}^0, \ldots, \widetilde{T}^{m } \bigr\}\) with \(\widetilde{T}^i\vcentcolon=\Phi(E^i)\) for \(i \in \{0, \ldots, m\}\), we have by a change of variables that \(w|_{\widetilde T^{\ell}}\) is summable in \(\widetilde T^{\ell}\).{}
	Moreover, 
	\[{}
	u \compose \Phi|_{E^{\ell}}
	\sim u \compose \Psi|_{E^{\ell}}
	\quad \text{in \(\Smooth^{0}(E^{\ell}; \manfN)\).}
	\]
	From the assumption on \(u\), the map \(u \compose \Psi|_{E^{\ell}}\) can be continuously extended to \(E^{e}\).
    Hence, by the homotopy extension property, the same is true for \(u \compose \Phi|_{E^{\ell}}\).
	We deduce that the continuous map \(u|_{\widetilde T^{\ell}}\) has a continuous extension to \(\widetilde T^{e}\), which means that \(u\) is coarsely \((\ell, e)\)-extendable.
\end{example}

A \(\VMO^{\ell}\) map may be coarsely \((\ell, e)\)-extendable without being \((\ell, e)\)-extendable:

\begin{example}
	Let \(\manfM = \Sphere^{n + 1}\), \(\manfN = \Sphere^{n}\), \(\ell = n\) and \(e = n + 1\).
    There exists a map \(u \in \Sobolev^{1, n}(\Sphere^{n + 1}; \Sphere^{n})\) that is coarsely \((n, n+1)\)-extendable but not \((n, n+1)\)-extendable.
    To construct such a map, let \(a, b \in \Ball^{n + 1}\) with \(a \ne b\) and let \(v \in \Sobolev^{1, n}(\Ball^{n + 1}; \Sphere^{n})\) be such that 
	\begin{enumerate}[(a)]
		\item \(v\) is smooth except at \(a\) and \(b\),{}
		\item for any \(\epsilon > 0\) small, \(v|_{\partial B_{\epsilon}^{n + 1}(a)}\) has degree \(1\) and \(v|_{\partial B_{\epsilon}^{n + 1}(b)}\) has degree \(-1\),
		\item \(v = c\) in \(\Ball^{n + 1} \setminus \overline{B}{}_r^{n+1}\) for some constant \(c \in \Sphere^{n + 1}\) and \(0 < r < 1\).
	\end{enumerate} 
	
    Take a simplicial complex \(\cE^{n + 1}\) and a biLipschitz homeomorphism \(\Phi \colon  E^{n + 1} \to \Sphere^{n + 1}\). 
	Given \(\Sigma^{n + 1} \in \cE^{n + 1}\), we then consider an imbedding \(\Psi \colon  \Ball^{n + 1} \to \Sphere^{n + 1}\) whose image is contained in \(\Phi(\Sigma^{n + 1})\).{}
	Then, the map
	\[{}
	u \vcentcolon = 
	\begin{cases}
		v \compose \Psi^{-1}	& \text{in \(\Psi(\Ball^{n + 1})\),}\\
		c & \text{in \(\Sphere^{n + 1} \setminus \Psi(\Ball^{n + 1})\),}
	\end{cases}
	\]
	belongs to \(\Sobolev^{1, n}(\Sphere^{n + 1}; \Sphere^{n}) \subset \VMO^{n}(\Sphere^{n + 1}; \Sphere^{n})\) and is constant in a neighborhood of the \(n\)-dimensional skeleton \(\Phi(E^{n})\).{}
	Hence, by Example~\ref{exampleCharacterization-114}, \(u\) is coarsely \((n, n+1)\)-extendable.{}
	On the other hand, we have that 
	\[{}
	\jac{u} \ne 0
	\quad \text{in the sense of currents in \(\Sphere^{n + 1}\)}
	\] 
	and then, by Corollary~\ref{corollaryJacobianFugledeTrivial} with \( \varpi = \omega_{\Sphere^{n}}\) and by Proposition~\ref{lemma_l_l1_property}, \(u\) is not \((n,n+1)\)-extendability.
    We conclude that \(u\) is not \((n, n+1)\) extendable while being coarsely \((n, n+1)\)-extendable. 
\end{example}

In Section~\ref{sectionDecoupleProof}, we prove that coarse \((\ell, e)\)-extendability is the global ingredient that lacks to \(\ell\)-extendability:

\begin{theorem}
	\label{propositionDecouple}
	Let \(u \in \VMO^{\ell}(\manfM; \manfN)\) and \(e \in \{\ell + 1, \ldots, m\}\).{}
	Then, \(u\) is \((\ell, e)\)-extendable if and only if \(u\) is \(\ell\)-extendable and coarsely \((\ell, e)\)-extendable.
\end{theorem}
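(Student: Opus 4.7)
The direct implication is immediate: Proposition~\ref{proposition_extension_montone_small} yields \((\ell, \ell+1)\)-extendability of \(u\), which by Proposition~\ref{lemma_l_l1_property} is the same as \(\ell\)-extendability, while Proposition~\ref{propositionCoarseExtension} provides coarse \((\ell, e)\)-extendability. I therefore concentrate on the reverse implication.

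The plan is to combine the local homotopy invariance of \(\ell\)-extendable maps (Theorem~\ref{proposition_Reference_Homotopy}) with the global extension data supplied by coarse \((\ell, e)\)-extendability, using the simplicial approximation theorem to transfer these two pieces of information to an arbitrary Fuglede map on a simplicial complex \(\cK^e\). I begin with an \(\ell\)-detector \(w_0\) of \(u\) witnessing both \(\ell\)-extendability (through Proposition~\ref{lemma_l_l1_property}) and coarse \((\ell,e)\)-extendability; the latter supplies a biLipschitz homeomorphism \(\Phi \colon E^m \to \manfM\) from a simplicial complex \(\cE^m\), a triangulation \(\cT = \{T^0, \dots, T^m\}\) of \(\manfM\) with \(w_0|_{T^\ell}\) summable in \(T^\ell\), and a map \(F_0 \in \Smooth^0(T^e; \manfN)\) such that \(u|_{T^\ell} \sim F_0|_{T^\ell}\) in \(\VMO(T^\ell; \manfN)\). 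Fix a transversal perturbation of the identity \(\tau\) and enlarge \(w_0\) by applying Theorem~\ref{proposition_Reference_Homotopy} and Lemma~\ref{propositionGenericStability} to obtain an \(\ell\)-detector \(\widetilde w \ge w_0\) enjoying both homotopy invariance under \(\Smooth^0\)-homotopic Fuglede maps and stability under small transversal perturbations; this \(\widetilde w\) is the detector I propose to witness \((\ell, e)\)-extendability.

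Given a Lipschitz map \(\gamma \colon K^e \to \manfM\) with \(\gamma|_{K^\ell} \in \Fuglede_{\widetilde w}(K^\ell; \manfM)\), apply the simplicial approximation theorem (cf.\@~\cite{Munkres}) to \(\Phi^{-1} \compose \gamma\) after sufficiently many barycentric subdivisions of \(\cK^e\), producing a refinement \(\cK'\) with \(K'^e = K^e\) and \(K^\ell \subset K'^\ell\), together with a simplicial map \(g \colon \cK' \to \cE^m\) such that \(g(K'^i) \subset E^i\) for every \(i \le e\) and \(\Phi \compose g \sim \gamma\) in \(\Smooth^0(K^e; \manfM)\). Set \(\widetilde\gamma \vcentcolon= \Phi \compose g\), so that \(\widetilde\gamma(K^\ell) \subset T^\ell\), \(\widetilde\gamma(K^e) \subset T^e\), and \(\widetilde\gamma|_{K^\ell} \sim \gamma|_{K^\ell}\) in \(\Smooth^0(K^\ell; \manfM)\). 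A preliminary small generic perturbation of \(\Phi\) at the outset---made possible by Lemma~\ref{propositionGenericStability} and preserving the inclusion into \(T^e\)---ensures in addition that \(\widetilde\gamma|_{K^\ell} \in \Fuglede_{w_0}(K^\ell; \manfM)\). The candidate continuous extension is then \(F \vcentcolon= F_0 \compose \widetilde\gamma \in \Smooth^0(K^e; \manfN)\). By Theorem~\ref{proposition_Reference_Homotopy} we directly obtain
\[
u \compose \gamma|_{K^\ell} \sim u \compose \widetilde\gamma|_{K^\ell} \quad \text{in } \VMO(K^\ell; \manfN),
\]
reducing the task to showing
\[
u \compose \widetilde\gamma|_{K^\ell} \sim F_0 \compose \widetilde\gamma|_{K^\ell} \quad \text{in } \VMO(K^\ell; \manfN).
\]

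The main technical obstacle lies in this last homotopy: transferring the \(\VMO\) homotopy \(u|_{T^\ell} \sim F_0|_{T^\ell}\) through composition with the possibly non-injective Lipschitz Fuglede map \(\widetilde\gamma|_{K^\ell} \colon K^\ell \to T^\ell\). I plan to handle it through continuous representatives: by Propositions~\ref{propositionHomotopyVMOContinuousMap} and~\ref{propositionHomotopyVMOLimit}, choose a sequence \((v_j)_{j \in \N}\) in \(\Smooth^0(T^\ell; \manfN)\) converging to \(u|_{T^\ell}\) in \(\VMO(T^\ell; \manfN)\), so that Proposition~\ref{propositionHomotopyVMOtoC} yields \(v_j \sim F_0|_{T^\ell}\) in \(\Smooth^0(T^\ell; \manfN)\) for all large \(j\); composition with the continuous map \(\widetilde\gamma|_{K^\ell}\) then gives \(v_j \compose \widetilde\gamma|_{K^\ell} \sim F_0 \compose \widetilde\gamma|_{K^\ell}\) in \(\Smooth^0(K^\ell; \manfN)\). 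The crucial missing link is the \(\VMO\) convergence \(v_j \compose \widetilde\gamma|_{K^\ell} \to u \compose \widetilde\gamma|_{K^\ell}\) in \(\VMO(K^\ell; \manfN)\), for which a further enlargement of \(\widetilde w\) via the generic stability arguments of Section~\ref{sectionDetectorsStability} (propagating the equi-\(\VMO\) property of \((v_j)_{j \in \N}\) through composition) together with Proposition~\ref{propositionFugledeApproximation} should suffice; an application of Proposition~\ref{propositionHomotopyVMOLimit} followed by transitivity of the homotopy relation then concludes the proof.
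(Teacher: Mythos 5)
Your direct implication matches the paper, and the overall architecture of your converse (simplicial approximation of \(\Phi^{-1}\compose\gamma\), the candidate extension \(F_0\compose\Phi\compose g\), and the use of Theorem~\ref{proposition_Reference_Homotopy} to replace \(\gamma|_{K^\ell}\) by \(\widetilde\gamma|_{K^\ell}\)) parallels the paper's route through Proposition~\ref{theoremGlobalModuloLocal}. But the step you yourself flag as the ``crucial missing link'' --- proving \(u \compose \widetilde\gamma|_{K^{\ell}} \sim F_{0} \compose \widetilde\gamma|_{K^{\ell}}\) in \(\VMO(K^{\ell}; \manfN)\) --- is a genuine gap, and the tools you propose cannot close it. The simplicial map \(g\) may collapse \(\ell\)-dimensional simplices of \(K^{\ell}\) onto faces of \(E^{\ell}\) of dimension \(r<\ell\). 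Convergence \(v_{j} \to u\compose\Phi|_{E^{\ell}}\) in \(\VMO(E^{\ell})\) carries no information whatsoever about behaviour on those lower-dimensional faces (a \(\VMO\) function has no trace on an \(\cH^{\ell}\)-negligible set), so neither the \(\Lebesgue^{1}\) convergence nor the equiVMO property of \((v_{j}\compose g)_{j}\) on \(K^{\ell}\) follows; in particular Lemma~\ref{lemmaEquiVMO} cannot be fed. Moreover, Section~\ref{sectionDetectorsStability} and Proposition~\ref{propositionFugledeApproximation} treat the opposite situation --- a \emph{fixed} \(\VMO^{\ell}\) (or Sobolev) function composed with a \emph{varying} family of Fuglede maps --- whereas here the composing map is fixed and dimension-collapsing while the functions \(v_{j}\) vary, so ``enlarging the detector'' does not act on \(v_{j}\) at all. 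This is precisely the point where the paper introduces the crossed-mean-oscillation spaces: one perturbs \(\Phi\) by Lemma~\ref{propositionGenericStability} so as to be Fuglede on \emph{every} skeleton \(E^{i}\), \(i\in\{0,\dots,m\}\) (not only \(E^{\ell}\)), deduces from Lemma~\ref{lemma-generic-vmo-sharp-Small} that \(u\compose\widetilde\varphi \in \VMO^{\#}(\cE^{\ell};\manfN)\), and then pushes the homotopy forward through the simplicial map by Proposition~\ref{lemma-composition-simplicial-VMOsharp} and Corollary~\ref{corollaryHomotopySimplicialMap}, whose estimates involve exactly the crossed oscillations \(\seminorm{\cdot}_{K^{\ell},K^{j},\rho}\) that plain \(\VMO\) convergence does not control.

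A secondary but real defect is your ``preliminary small generic perturbation of \(\Phi\) \dots preserving the inclusion into \(T^{e}\)'': a transversal perturbation \(\tau_{\xi}\compose\Phi\) moves the skeletons, so the perturbed \(\widetilde\gamma\) no longer maps into \(T^{e}\) and \(F_{0}\compose\widetilde\gamma\) is not even defined; conversely, without the perturbation \(\Phi\compose g|_{K^{\ell}}\) need not be a Fuglede map for your detector, since summability of \(w_{0}\compose\Phi\) on \(E^{\ell}\) says nothing on the lower-dimensional cells onto which \(g\) may collapse simplices. The paper resolves this tension by keeping the \emph{unperturbed} \(F=F_{0}\compose\Phi\) and \(\sigma=g\) for the final continuous extension \(F\compose\sigma\), using the perturbed \(\widetilde\varphi=\tau_{\xi}\compose\varphi\) only in the intermediate comparisons: \(\ell\)-extendability and Theorem~\ref{proposition_Reference_Homotopy} give \(u\compose\varphi|_{E^{\ell}}\sim u\compose\widetilde\varphi|_{E^{\ell}}\) and \(u\compose\widetilde\varphi\compose\sigma|_{K^{\ell}}\sim u\compose\gamma|_{K^{\ell}}\), while the \(\VMO^{\#}\) machinery transfers \(u\compose\widetilde\varphi|_{E^{\ell}}\sim F|_{E^{\ell}}\) into \(u\compose\widetilde\varphi\compose\sigma|_{K^{\ell}}\sim F\compose\sigma|_{K^{\ell}}\). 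Without an argument of this type (or an equivalent control of crossed oscillations), your proof does not go through.
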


In order to prove Theorem~\ref{propositionDecouple}, we rely on a convenient intersection of the sets \(\VMO^{\#}(K^\ell | K^r)\) that are introduced in the previous chapter. 
This choice preserves the \(\VMO\)~property after composition with simplicial maps. 
We devote the next section to a detailed description of this fact.

\section{Composition with simplicial maps}\label{Section:composition-VMO}

Given a simplicial complex \(\cK^{\ell}\) and \(r\in \{0, \dots,\ell\}\), we have introduced the set \(\VMO^\#(K^\ell|K^r)\) in Section~\ref{section:VMOsharp}. 
We then define the set \(\VMO^\#(\cK^\ell)\) by
\begin{equation}
\label{eqDecoupleVMOSharp}
\VMO^\#(\cK^\ell){}
= \bigcap_{r = 0}^{\ell - 1}{\VMO^\#(K^\ell| K^{r})},
\end{equation}
equipped with the norm
\begin{equation}
\label{eqDecoupleVMOSharpNorm}
\norm{v}_{\VMO^{\#}(\cK^{\ell})} 
\vcentcolon= \sum_{r = 0}^{\ell - 1}{\norm{v}_{\VMO^\#(K^{\ell}| K^{r})}}.
\end{equation}

The goal of this section is to prove the following ingredient in the proof of Theorem~\ref{propositionDecouple}:

\begin{proposition}
\label{lemma-composition-simplicial-VMOsharp}
For every simplicial map \(\sigma \colon  L^r\to K^{\ell}\) between two finite simplicial complexes \(\cL^{r}\) and \(\cK^{\ell}\), the linear transformation
\[{}
v \in \VMO^\#(\cK^\ell) \longmapsto v\compose \sigma \in \VMO(L^r)
\]
is well defined and continuous.
\end{proposition}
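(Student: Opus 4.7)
The goal is to establish the norm estimate
\[
\norm{v \compose \sigma}_{\VMO(L^r)} \le C \norm{v}_{\VMO^{\#}(\cK^\ell)}
\quad\text{for every } v \in \VMO^{\#}(\cK^\ell),
\]
from which both well-definedness and continuity follow at once. The analysis is inherently \emph{local}: for each simplex \(\Sigma^{r'} \in \cL^r\), the simpliciality of \(\sigma\) forces \(\sigma(\Sigma^{r'}) = \Sigma^{s'}\) to be a simplex of \(\cK^\ell\) with \(s' \le \min\{r',\ell\}\), and \(\sigma|_{\Sigma^{r'}}\) is an affine surjection onto \(\Sigma^{s'}\). Since \(\cL^r\) contains only finitely many simplices and the mean oscillation on \(L^r\) at scale \(\rho\) is controlled, for \(\rho\) small, by the oscillations on the finitely many simplices meeting a given ball, the global estimate will assemble from uniform bounds on each simplex.

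The first building block is the restriction step. Because \(\Sigma^{s'} \subset K^{s'}\), Proposition~\ref{propositionVMOSharpRestrictionSkeleton} applied to \(v \in \VMO^{\#}(K^\ell | K^{s'})\) yields \(v|_{K^{s'}} \in \VMO(K^{s'})\) with \(\norm{v|_{K^{s'}}}_{\VMO(K^{s'})} \le C \norm{v}_{\VMO^{\#}(K^\ell | K^{s'})}\), and Lemma~\ref{lemmaVMORestriction} further restricts to \(v|_{\Sigma^{s'}} \in \VMO(\Sigma^{s'})\) (when \(s'=\ell\) one uses directly \(v \in \VMO(K^\ell)\)). The second step splits into two cases. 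If \(s' = r'\), then \(\sigma|_{\Sigma^{r'}}\) is an affine biLipschitz homeomorphism between two \(r'\)-dimensional simplices, and a verbatim adaptation of Lemma~\ref{lemmaDetectorsVMObiLipschitz} gives \(v \compose \sigma|_{\Sigma^{r'}} \in \VMO(\Sigma^{r'})\) with a bound proportional to \(\norm{v|_{\Sigma^{s'}}}_{\VMO(\Sigma^{s'})}\). If \(s' < r'\), a linear change of variables on the affine span of \(\Sigma^{r'}\) factors \(\sigma|_{\Sigma^{r'}}\) as a biLipschitz identification followed by a coordinate projection, reducing the problem to the Fubini-type situation handled in Lemma~\ref{lemmaVMOellExtension}. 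The aim is then to prove a scale-by-scale comparison
\[
\seminorm{v \compose \sigma|_{\Sigma^{r'}}}_{\rho}
\le C \seminorm{v|_{\Sigma^{s'}}}_{C\rho}
\quad\text{for all } \rho > 0,
\]
together with \(\norm{v \compose \sigma|_{\Sigma^{r'}}}_{\Lebesgue^1(\Sigma^{r'})} \le C \norm{v|_{\Sigma^{s'}}}_{\Lebesgue^1(\Sigma^{s'})}\), so that the VMO property of \(v|_{\Sigma^{s'}}\) transfers to \(v \compose \sigma|_{\Sigma^{r'}}\). Finally, summing over simplices and controlling the mean oscillation on balls of \(L^r\) that cross several simplices (using the fact that for \(\rho\) small every ball meets at most a bounded number of simplices and that the \(\cH^r\)-measure of a ball is comparable to \(\rho^r\)) yields the global estimate.

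The technical core of the argument, and the main obstacle, lies in the non-injective case \(s' < r'\). The fibers \((\sigma|_{\Sigma^{r'}})^{-1}(\eta)\) for \(\eta \in \Sigma^{s'}\) are parallel affine slices of \(\Sigma^{r'}\), but their shape degenerates as \(\eta\) approaches \(\partial \Sigma^{s'}\), so one does not have a clean product structure \(\Sigma^{r'} \simeq \Sigma^{s'} \times F\). The plan is to bypass this via a coarea computation: writing
\[
\int_{B^{r'}_\rho(x) \cap \Sigma^{r'}} |v(\sigma(y)) - v(\sigma(z))| \dif\cH^{r'}(y)
= \int_{\sigma(B^{r'}_\rho(x) \cap \Sigma^{r'})} |v(\eta) - v(\sigma(z))| \, m_{\rho,x}(\eta) \dif\cH^{s'}(\eta),
\]
where \(m_{\rho,x}(\eta) = \cH^{r'-s'}\bigl((\sigma|_{\Sigma^{r'}})^{-1}(\eta) \cap B^{r'}_\rho(x)\bigr)\) satisfies \(m_{\rho,x}(\eta) \le C\rho^{r'-s'}\) uniformly in \(\eta\). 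Combined with \(\sigma(B^{r'}_\rho(x) \cap \Sigma^{r'}) \subset B^{s'}_{C\rho}(\sigma(x)) \cap \Sigma^{s'}\) and a similar integration in \(z\), this reduces the double integral on \(\Sigma^{r'}\) to a double integral on a ball in \(\Sigma^{s'}\), after which dividing by \(\cH^{r'}(B^{r'}_\rho(x))^2 \asymp \rho^{2r'}\) and comparing with \(\cH^{s'}(B^{s'}_{C\rho}(\sigma(x)))^2 \asymp \rho^{2s'}\) produces the desired scale-by-scale comparison. This is where the choice of the \(\VMO^{\#}\) space becomes essential: it is only through the crossed control of \(v\) on the lower-dimensional skeleton \(K^{s'}\) that one can afford to pass from the \(r'\)-dimensional measure to the \(s'\)-dimensional one without losing the VMO decay as \(\rho \to 0\).
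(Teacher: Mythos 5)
Your within-simplex analysis is sound and is essentially the paper's Lemma~\ref{lemmaVMOSimplicial} in local form: the coarea formula for the affine map \(\sigma|_{\Sigma^{r'}}\), the fiber bound \(m_{\rho,x}(\eta)\le C\rho^{r'-s'}\), the inclusion \(\sigma(B^{r'}_\rho(x))\subset B^{s'}_{C\rho}(\sigma(x))\), and the restriction step via Proposition~\ref{propositionVMOSharpRestrictionSkeleton} and Lemma~\ref{lemmaVMORestriction} all match the paper's mechanism. The gap is in your final assembly step, where you claim that the global estimate on \(L^r\) follows by "summing over simplices" and a bounded-overlap count for balls that meet several simplices. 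The mean oscillation of \(v\compose\sigma\) over a ball \(B^r_\rho(x)\) meeting two simplices \(\Lambda_1,\Lambda_2\in\cL^r\) contains the cross term \(\int_{B^r_\rho(x)\cap\Lambda_1}\int_{B^r_\rho(x)\cap\Lambda_2}\abs{v\compose\sigma(y)-v\compose\sigma(z)}\), and this is \emph{not} controlled by the per-simplex quantities \(\seminorm{v\compose\sigma|_{\Lambda_j}}_{\rho}\) or, after your coarea reduction, by the pure oscillations \(\seminorm{v|_{\Sigma^{i_1}}}_{C\rho}\) and \(\seminorm{v|_{\Sigma^{i_2}}}_{C\rho}\) of the restrictions to the two (generally distinct, possibly different-dimensional) image simplices. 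Applying coarea in each variable turns this cross term into a crossed oscillation of the type \(\rho^{-i_1-i_2}\int_{B^\ell_{\alpha\rho}(\sigma(x))\cap K^{i_1}}\int_{B^\ell_{\alpha\rho}(\sigma(x))\cap K^{i_2}}\abs{v(\xi)-v(\eta)}\), and to dominate it you must pass through the ambient \(\ell\)-dimensional complex, exactly as in the first inequality of Lemma~\ref{lemmaVMOSharpMixed}, which bounds \(\seminorm{v}_{K^{i_1},K^{i_2},\rho}\) by \(\seminorm{v}_{K^{\ell},K^{i_1},\rho}+\seminorm{v}_{K^{\ell},K^{i_2},\rho}\). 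This is where the full \(\VMO^{\#}(\cK^{\ell})\) norm enters in the paper's proof, whereas your write-up uses the crossed structure only through the restriction proposition (to get \(v|_{K^{s'}}\in\VMO\)) and never addresses off-diagonal pairs of image simplices. That knowing a function is VMO on each simplex separately does not yield VMO on the union is precisely the phenomenon of Example~\ref{exampleUglyComplex}, so the cross terms cannot be dismissed by a counting argument.

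To repair the argument you should formulate the coarea reduction globally on balls of \(L^r\), as in Lemma~\ref{lemmaVMOSimplicial}: for \(x\in L^r\) and \(\rho>0\), \(\int_{B^r_\rho(x)}\varphi\compose\sigma\,\dif\cH^r\le C\sum_{i=0}^{\ell}\rho^{r-i}\int_{B^\ell_{\alpha\rho}(\sigma(x))\cap K^{i}}\varphi\,\dif\cH^{i}\), apply it twice (in \(y\) and in \(z\)) to \(\abs{v(\cdot)-v(\cdot)}\), divide by \(\cH^{r}(B^r_\rho(x))^2\ge c\rho^{2r}\), and then invoke Lemma~\ref{lemmaVMOSharpMixed} to reduce all the resulting crossed oscillations \(\seminorm{v}_{K^{i_1},K^{i_2},\alpha\rho}\) to the quantities \(\seminorm{v}_{K^{\ell},K^{j},\alpha\rho}\), which tend to zero with \(\rho\) precisely because \(v\in\VMO^{\#}(\cK^{\ell})\). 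With that modification your scale-by-scale estimate, together with your \(\Lebesgue^1\) bound, gives both the membership \(v\compose\sigma\in\VMO(L^r)\) and the continuity of the composition operator.
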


We recall that a map \(\sigma \colon  L^{r} \to K^{\ell}\) is \emph{simplicial} whenever \(\sigma\) maps the vertices of each simplex \(\Lambda^{r} \in \cL^{r}\) into vertices of a simplex of \(\cK^{\ell}\) and \(\sigma|_{\Lambda^{r}}\) is affine.
As we do not require \(\sigma|_{\Lambda^{r}}\) to be injective, \(\sigma(\Lambda^{r})\) may be a simplex of dimension less than \(r\).
The proof of Proposition~\ref{lemma-composition-simplicial-VMOsharp} is based on the following estimate:
\begin{lemma}
	\label{lemmaVMOSimplicial}
	Let \(\sigma \colon  L^r\to K^{\ell}\) be a simplicial map. 
	Given a measurable function \(\varphi\colon K^\ell \to [0,+\infty]\) and  \(\alpha > \max{\{\abs{\sigma}_{\Lip}, 0\}}\), then for every \(x\in L^{r}\) and every \(\rho > 0\) we have 
\begin{equation*}
\int_{B_\rho^{r}(x)} \varphi\compose \sigma \dif\cH^{r}
\leq C\sum_{i=0}^{\ell}\rho^{r-i} \int_{B_{\alpha \rho}^{\ell}(\sigma(x))\cap K^i} \varphi \dif\cH^{i}.
\end{equation*}
\end{lemma}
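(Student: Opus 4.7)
The plan is to decompose \(B^r_\rho(x)\) according to the \(r\)-dimensional simplices of \(\cL^r\) and then apply the coarea formula piece by piece, keeping track of the rank of \(\sigma\) restricted to each such simplex. Since the \(r\)-dimensional Hausdorff measure charges only the \(r\)-dimensional part of \(L^r\), the integral on the left-hand side equals the sum, over the finite collection of \(r\)-dimensional simplices \(\Lambda\) of \(\cL^r\), of the quantities \(\int_{B^r_\rho(x) \cap \Lambda} \varphi \compose \sigma \dif\cH^r\).

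Fix such a simplex \(\Lambda\). Since \(\sigma\) is simplicial, \(\sigma|_\Lambda\) is affine of some rank \(s \in \{0, \ldots, r\}\), and \(\sigma(\Lambda)\) is a (possibly degenerate) simplex of \(\cK^\ell\) of dimension \(s\), so that in particular \(\sigma(\Lambda) \subset K^s\). The fibers of \(\sigma|_\Lambda\) are affine subspaces of \(\Lambda\) of dimension \(r - s\), and their intersections with the \(r\)-dimensional ball \(B^r_\rho(x)\) have \((r-s)\)-dimensional Hausdorff measure at most \(C_\Lambda \rho^{r-s}\). Moreover, the \(s\)-dimensional Jacobian of \(\sigma|_\Lambda\) is a strictly positive constant determined by \(\Lambda\) and \(\sigma\). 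The coarea formula applied to \(\sigma|_\Lambda\) (or, when \(s = 0\), the direct computation that \(\sigma|_\Lambda\) is constant) then yields
\[
\int_{B^r_\rho(x) \cap \Lambda} \varphi \compose \sigma \dif\cH^r
\leq C'_\Lambda \rho^{r-s} \int_{\sigma(B^r_\rho(x) \cap \Lambda)} \varphi \dif\cH^s.
\]
The Lipschitz bound \(d(\sigma(y), \sigma(x)) \leq \abs{\sigma}_{\Lip}\, d(y, x) < \alpha \rho\) for every \(y \in B^r_\rho(x)\) then gives the inclusion \(\sigma(B^r_\rho(x) \cap \Lambda) \subset B^\ell_{\alpha\rho}(\sigma(x)) \cap K^s\), which transforms the right-hand side into a term of the desired shape.

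Summing the resulting estimates over the finitely many \(r\)-dimensional simplices of \(\cL^r\), grouping contributions by the common value of the rank \(s = i\), and extending the summation range from \(\{0, \ldots, r\}\) to \(\{0, \ldots, \ell\}\) by introducing zero terms produces the announced inequality. The only point that might require care, although ultimately straightforward owing to the finiteness of the simplicial complexes, is to extract a uniform lower bound on the Jacobian across all degenerate cases \(s < r\); this follows from the observation that the finitely many affine maps \(\sigma|_\Lambda\) each have strictly positive \(s\)-dimensional Jacobian on their domain, yielding a single constant \(C\) depending only on \(\cL^r\), \(\cK^\ell\), \(\sigma\) and \(\alpha\).
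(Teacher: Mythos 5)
Your proof is correct and follows essentially the same route as the paper's: decompose over the finitely many \(r\)-dimensional simplices \(\Lambda^r\) of \(\cL^r\), apply the coarea formula to the affine map \(\sigma|_{\Lambda^r}\) using the constant positive Jacobian together with the bound \(\cH^{r-s}\) of the fibers intersected with \(B^r_\rho(x)\) by \(C\rho^{r-s}\), use the Lipschitz bound to include the image in \(B^\ell_{\alpha\rho}(\sigma(x))\cap K^s\), and sum. The separate treatment of the rank-zero case and the uniformity of constants over the finitely many simplices match the paper's argument as well.
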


\resetconstant
\begin{proof}[Proof of Lemma~\ref{lemmaVMOSimplicial}]
Let \(\Lambda^r \in \cL^r\). 
Since \(\sigma\) is simplicial, there exist \(i \in \{0, \dots, r\}\) and a simplex \(\Sigma^{i}\in \cK^i\) such that \(\sigma\) maps \(\Lambda^r\) onto \(\Sigma^{i}\). 
By the coarea formula, when \(i \in \{1, \dots, r\}\) we have
\begin{multline*}
\int_{B_\rho^{r}(x) \cap \Lambda^r} \varphi(\sigma(y)) \Jacobian{i}{\sigma}(y) \dif\cH^{r}(y)\\[-1em]
= \int_{\sigma(B_\rho^{r}(x)\cap \Lambda^r)} \varphi(\xi)\cH^{r-i}\bigl(\sigma^{-1}(\xi)\cap B_\rho^{r}(x)\cap \Lambda^r\bigr) \dif\cH^i(\xi),
\end{multline*}
where \(\Jacobian{i}{\sigma}\) is the \(i\)-dimensional Jacobian of \(\sigma\).
Since \(\sigma\) is affine from \(\Lambda^r\) onto \(\Sigma^{i}\), it follows that  \(\Jacobian{i}{\sigma}\) is a positive constant on \(\Lambda^{r}\) and then
\[
\int_{B_\rho^{r}(x) \cap \Lambda^r} \varphi \compose \sigma \dif\cH^{r}
\leq \Cl{eq-1243} \rho^{r-i} \int_{\sigma(B_\rho^{r}(x)\cap \Lambda^r)} \varphi \dif\cH^i.
\]
Since \(\alpha \ge \abs{\sigma}_{\Lip}\), we have \(\sigma(B_\rho^{r}(x) \cap \Lambda^r)\subset B_{\alpha\rho}^{\ell}(\sigma(x))\cap \Sigma^{i}\). 
Hence,
\[
\int_{B_\rho^{r}(x)\cap \Lambda^r} \varphi \compose \sigma \dif\cH^{r} 
\leq \Cr{eq-1243} \rho^{r-i} \int_{B_{\alpha \rho}^{\ell}(\sigma(x))\cap \Sigma^i} \varphi \dif\cH^i.
\] 
Since \(\alpha > 0\), this estimate also holds when \(i = 0\) by direct inspection.
Summing over the simplices \(\Lambda^r\), one gets the conclusion.
\end{proof}

\begin{proof}[Proof of Proposition~\ref{lemma-composition-simplicial-VMOsharp}]
By Lemma~\ref{lemmaVMOSimplicial} applied to \(\varphi \vcentcolon= |v|\) and \(\rho \vcentcolon= \Diam{L^{r}}\), one obtains 
\[{}
\norm{v \compose \sigma}_{\Lebesgue^{1}(L^{r})} 
\leq \C \sum_{i=0}^\ell{\norm{v}_{\Lebesgue^{1}(K^{i})}}.
\]
By Proposition~\ref{propositionVMOSharpRestrictionSkeleton}, it then follows that
\begin{equation}
	\label{eqVMOSuperL1}
\norm{v \compose \sigma}_{\Lebesgue^{1}(L^{r})} 
\leq \C \sum_{i=0}^\ell{\norm{v}_{\VMO^{\#}(K^{\ell}| K^{i})}}.
\end{equation}
To estimate the \(\VMO\) seminorm of \(v \compose \sigma\), fix \(x\in L^r\) and \(0 < \rho \le \Diam{L^{r}}\).  
For every \(z\in B_\rho^{r}(x) \cap L^{r}\), we apply Lemma~\ref{lemmaVMOSimplicial}
to the function \(\varphi \vcentcolon=|v  - v \compose \sigma(z) |\). 
This gives
\begin{multline*}
\int_{B_\rho^{r}(x)} |v\compose \sigma(y)-v\compose \sigma(z)|\dif\cH^{r}(y) \\
\leq \Cl{eq-1332} \sum_{i=0}^{\ell}\rho^{r-i} \int_{B_{\alpha \rho}^{\ell}(\sigma(x))\cap K^i} |v(\xi) - v\compose \sigma(z)|\dif\cH^{i}(\xi),
\end{multline*}
where \(\alpha > \max{\{\abs{\sigma}_{\Lip}, 0\}}\).
Then, by Tonelli's theorem,
\begin{multline*}
\int_{B_\rho^{r}(x)}\int_{B_\rho^{r}(x)} |v\compose \sigma(y)-v\compose \sigma(z)|\dif\cH^{r}(y) \dif\cH^{r}(z)
\\ \leq \Cr{eq-1332}\sum_{i=0}^{\ell}\rho^{r-i} \int_{B_{ \alpha \rho}^{\ell}(\sigma(x))\cap K^i} \int_{B_\rho^{r}(x)} |v(\xi) - v\compose \sigma(z)|\dif \cH^{r}(z) \dif \cH^{i}(\xi).
\end{multline*}
We apply again Lemma~\ref{lemmaVMOSimplicial} to the function \(\varphi \vcentcolon=|v(\xi) - v\compose \sigma|\) in order to estimate the inner integral in the right-hand side. 
One gets
\begin{multline*}
	\int_{B_\rho^{r}(x)}\int_{B_\rho^{r}(x)} |v\compose \sigma(y)-v\compose \sigma(z)|\dif \cH^{r}(y) \dif\cH^{r}(z) 
\\ \leq \Cl{cte-779}  
\sum_{i_{1}=0}^{\ell} \sum_{i_{2}=0}^{\ell} \rho^{2r-i_{1}-i_{2}} \!
\int_{B_{\alpha \rho}^{\ell}(\sigma(x))\cap K^{i_{1}}} \! \int_{B_{\alpha \rho}^{\ell}(\sigma(x))\cap K^{i_{2}}} \! |v(\xi)-v(\eta)| \dif\cH^{i_{2}}(\eta) \dif\cH^{i_{1}}(\xi).
\end{multline*}
We recall that \(\cH^{r}(B^{r}_{\rho}(x)) \ge c\rho^{r}\).
Since \(x\) is arbitrary, by Lemma~\ref{lemmaVMOSharpMixed} for every \(0 < \rho \le \Diam{L^{r}}\) we get
\begin{equation}
\label{eqVMOSuperBMO}
\seminorm{v\circ \sigma}_{\rho} \leq \Cr{cte-779} \sum_{j = 0}^{\ell}{\seminorm{v}_{K^{\ell}, K^{j}, \alpha\rho}}.
\end{equation}
We thus have \(v \compose \sigma \in \VMO(L^{r})\).
Observe that, for \(\rho > \Diam{L^{r}}\),{}
\[
[v \circ \sigma]_{\rho} 
\leq \C \norm{v \circ \sigma}_{\Lebesgue^{1}(L^{r})}.
\]
The continuity of the map \(v \mapsto v \circ \sigma\) thus follows from \eqref{eqVMOSuperL1} and \eqref{eqVMOSuperBMO}.
\end{proof}

Given a compact Riemannian manifold \(\manfN\) isometrically imbedded in the Euclidean space \(\R^\nu\), we analogously define the set \(\VMO^{\#}(\cK^{\ell}; \manfN)\) of maps \(v \colon K^\ell \to \R^\nu\) whose components belong to \(\VMO^{\#}(\cK^\ell)\) and \(v(x)\in \manfN\) for every \(x\in K^\ell\).

\begin{corollary}
	\label{corollaryHomotopySimplicialMap}
	If \(v \in \VMO^{\#}(\cK^{\ell}; \manfN)\) and \(\sigma \colon  L^{r} \to K^{\ell}\) is a simplicial map, then, for every \(f \in \Smooth^{0}(K^{\ell}; \manfN)\) such that
	\begin{equation}
    \label{eqDecouple-311}
    v \sim f 
	\quad \text{in \(\VMO(K^{\ell}; \manfN)\),}
	\end{equation}
	we have
	\[{}
	v \compose \sigma \sim f \compose \sigma 
	\quad \text{in \(\VMO(L^{r}; \manfN)\).}
	\]
\end{corollary}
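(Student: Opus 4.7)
The plan is to transport the given homotopy $H \colon [0,1] \to \VMO(K^\ell; \manfN)$ between $v$ and $f$ into $\VMO(L^r; \manfN)$ by post-composition with $\sigma$. The difficulty is that Proposition~\ref{lemma-composition-simplicial-VMOsharp} only guarantees continuity of the operator $u \mapsto u \circ \sigma$ on $\VMO^{\#}(\cK^\ell)$, not on all of $\VMO(K^\ell)$, so I will first regularise $H$ so that it takes values in $\VMO^{\#}(\cK^\ell; \manfN)$, and then separately connect the endpoints of the resulting composed path back to $v \circ \sigma$ and $f \circ \sigma$.

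For the regularisation step, following the strategy of the proof of Proposition~\ref{propositionHomotopyVMOtoC}, a compactness argument applied to the continuous curve $H$ yields $\delta > 0$ such that $\sup_{\rho \le \delta} \seminorm{H(t)}_\rho < \iota$ for every $t \in [0,1]$, where $\iota$ denotes the tubular neighborhood radius of the nearest point projection $\Pi$ onto $\manfN$. By Lemmas~\ref{lemmaVMOConvolutionCompactness} and~\ref{lemmaVMOUniformConvergence}, the map $\widetilde H (t) \vcentcolon= \Pi \circ H(t)_\delta$ defines a continuous path $[0,1] \to \BUC(K^\ell; \manfN) \subset \VMO^{\#}(\cK^\ell; \manfN)$. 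Proposition~\ref{lemma-composition-simplicial-VMOsharp} then ensures that $t \mapsto \widetilde H(t) \circ \sigma$ is a continuous path in $\VMO(L^r; \manfN)$ from $\Pi \circ v_\delta \circ \sigma$ to $\Pi \circ f_\delta \circ \sigma$.

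To close the loop, I will invoke Proposition~\ref{propositionVMOSharpDensityManifold} applied to each subcomplex $K^r$ with $r \in \{0, \ldots, \ell - 1\}$: it yields $\Pi \circ v_s \to v$ in $\VMO^{\#}(\cK^\ell; \manfN)$ as $s \to 0$. By Proposition~\ref{lemma-composition-simplicial-VMOsharp}, this convergence is preserved after post-composition with $\sigma$, so that $\Pi \circ v_s \circ \sigma \to v \circ \sigma$ in $\VMO(L^r; \manfN)$. Proposition~\ref{propositionHomotopyVMOLimit} then gives $\Pi \circ v_s \circ \sigma \sim v \circ \sigma$ in $\VMO(L^r; \manfN)$ for all sufficiently small $s$, and the same reasoning applies to $f$, which is continuous and therefore lies already in $\VMO^{\#}(\cK^\ell; \manfN)$. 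After shrinking $\delta$ if necessary so that both endpoint homotopies are available, these two homotopies combine with the one from the previous paragraph by transitivity to yield $v \circ \sigma \sim f \circ \sigma$ in $\VMO(L^r; \manfN)$. The main technical point is the uniform choice of $\delta$ in the regularisation step, but this is precisely the compactness argument already used in the proof of Proposition~\ref{propositionHomotopyVMOtoC}.
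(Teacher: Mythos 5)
Your argument is correct, and it reaches the conclusion by a slightly different route than the paper. The paper never touches the homotopy path \(H\) itself: it approximates \(v\) by the continuous maps \(f_j = \Pi \compose v_{s_j}\) in \(\VMO^{\#}(\cK^{\ell}; \manfN)\) (Proposition~\ref{propositionVMOSharpDensityManifold}), uses Proposition~\ref{propositionHomotopyVMOLimit} and transitivity to get \(f_j \sim f\) in \(\VMO(K^{\ell}; \manfN)\), upgrades this to a homotopy in \(\Smooth^{0}(K^{\ell}; \manfN)\) via Proposition~\ref{propositionHomotopyVMOtoC} used as a black box, and then composes that continuous homotopy with \(\sigma\); the remaining homotopy \(f_j \compose \sigma \sim v \compose \sigma\) is obtained exactly as in your endpoint step. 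You instead re-run the internal mechanism of the proof of Proposition~\ref{propositionHomotopyVMOtoC} --- the compactness argument giving a uniform \(\delta\) and the mollification \(\Pi \compose H(\cdot)_{\delta}\) --- to regularise the whole path before composing with \(\sigma\). This avoids the intermediate chain \(f_j \sim v \sim f\) on \(K^{\ell}\), at the price of having to correct both endpoints (for \(v\) and for \(f\)) rather than only one; both proofs ultimately rest on the same three ingredients (density of continuous maps in \(\VMO^{\#}\), continuity of \(u \mapsto u \compose \sigma\) on \(\VMO^{\#}(\cK^{\ell})\), and stability of \(\VMO\)-homotopy classes under \(\VMO\) convergence). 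Two small points: for the continuity of \(t \mapsto \widetilde{H}(t) \compose \sigma\) you do not actually need Proposition~\ref{lemma-composition-simplicial-VMOsharp}, since \(\widetilde{H}\) is sup-norm continuous into \(\Smooth^{0}(K^{\ell}; \manfN)\) and composition with \(\sigma\) preserves uniform convergence, which suffices because \(\Smooth^{0}(L^{r}) \subset \VMO(L^{r})\) continuously; if you do route it through that proposition, you should record the elementary bound \(\norm{w}_{\VMO^{\#}(\cK^{\ell})} \le C \norm{w}_{\Smooth^{0}(K^{\ell})}\), which guarantees that a sup-norm continuous path is continuous in \(\VMO^{\#}(\cK^{\ell})\). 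Your remark that shrinking \(\delta\) preserves the bound \(\sup_{\rho \le \delta} \seminorm{H(t)}_{\rho} < \iota\) is exactly what legitimises the single choice of \(\delta\) at the end.
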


\begin{proof}
	By Proposition~\ref{propositionVMOSharpDensityManifold} applied to \(\VMO^{\#}(K^{\ell}| K^{i}; \manfN)\) for each \(i \in \{0, \dots, \ell - 1\}\), there exists a sequence \((f_{j})_{j \in \N}\) in \(\Smooth^{0}(K^{\ell}; \manfN)\) such that
	\[{}
	f_{j} \to v
	\quad \text{in \(\VMO^{\#}(\cK^{\ell}; \manfN)\).}
	\]
	Observe that \((f_{j})_{j \in \N}\) can be chosen independently of \(i\) as we can take \(f_{j} = \Pi \circ v_{s_{j}}\) for a sequence of positive numbers \((s_{j})_{j \in \N}\) that converges to \(0\).
	By Proposition~\ref{propositionHomotopyVMOLimit}, there exists \(J_{1} \in \N\) such that, for every \(j \ge J_{1}\)\,,
	\[{}
	f_{j} \sim v 
	\quad \text{in \(\VMO(K^{\ell}; \manfN)\).}
	\]	
	Thus, by \eqref{eqDecouple-311} and transitivity of the homotopy relation,
	\[{}
	f_{j} \sim f 
	\quad \text{in \(\VMO(K^{\ell}; \manfN)\).}
	\]	
	Since both maps are continuous, by Proposition~\ref{propositionHomotopyVMOtoC} they are also homotopic in \(\Smooth^{0}(K^{\ell}; \manfN)\).{}
	Hence, by composition with \(\sigma\),{}
	\begin{equation}
	\label{eqVMOSets-859}
	f_{j} \compose \sigma \sim f \compose \sigma 
	\quad \text{in \(\Smooth^{0}(L^{r}; \manfN)\)}
	\end{equation}
	and then also in \(\VMO(L^{r}; \manfN)\).
	
	On the other hand, since \(\sigma\) is a simplicial map, it follows from Proposition~\ref{lemma-composition-simplicial-VMOsharp} that
	\[{}
	f_{j} \compose \sigma \to v \compose \sigma{}
	\quad \text{in \(\VMO(L^{r}; \manfN)\).}
	\]
	A further application of Proposition~\ref{propositionHomotopyVMOLimit} gives \(J_{2} \in \N\) such that, for every \(j \ge J_{2}\),
	\begin{equation}
	\label{eqVMOSets-871}
	f_{j} \compose \sigma \sim v \compose \sigma{}
	\quad \text{in \(\VMO(L^{r}; \manfN)\).}
	\end{equation}
	The conclusion follows by combining \eqref{eqVMOSets-859} and \eqref{eqVMOSets-871} for any \(j \ge \max{\{J_{1}, J_{2}\}}\).
\end{proof}

\section{\texorpdfstring{$(\ell, \MakeLowercase{e})$}{(l, e)}-extension based on homotopy equivalences}
\label{sectionDecoupleProof}

The next result provides a decoupling of \((\ell, e)\)-extendability based on the \(\ell\)-extendability, which is a \emph{local} property, and a \emph{global} assumption provided by simplicial complexes that are homotopically equivalent to \(\manfM\). Proposition~\ref{theoremGlobalModuloLocal} plays a key role in the proof of Theorem~\ref{propositionDecouple}.

\begin{proposition}
\label{theoremGlobalModuloLocal}
Let \(u \in \VMO^{\ell}(\manfM ; \manfN)\) and \(e \in \{\ell + 1, \ldots, m\}\).
If \(u\) is \(\ell\)-extendable and if, for every \(\ell\)-detector \(w\), there exist a simplicial complex \(\cE^{m}\), a Lipschitz homotopy equivalence \(\varphi \colon  E^{m} \to \manfM\)  such that
\(
\varphi\vert_{E^{\ell}} \in \Fuglede_{w}(E^{\ell}; \manfM)
\)
and  a map \(F \in \Smooth^{0}(E^{e}; \manfN)\) with
\[{}
u \compose \varphi\vert_{E^{\ell}} \sim F\vert_{E^{\ell}}
\quad \text{in \(\VMO(E^{\ell}; \manfN)\)},
\]
then \(u\) is \((\ell, e)\)-extendable.
\end{proposition}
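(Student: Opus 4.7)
The plan is as follows. Since $u$ is $\ell$-extendable, Theorem~\ref{proposition_Reference_Homotopy} furnishes an $\ell$-detector $\widetilde w$ for which the homotopy invariance of $u \compose {}\cdot{}$ holds. Fix a transversal perturbation of the identity $\tau \colon \manfM \times B^q_\delta \to \manfM$ and, using Lemma~\ref{propositionGenericStability}, enlarge $\widetilde w$ to an $\ell$-detector $\widetilde w'$ such that the perturbation procedure of that lemma applies with output detector $\widetilde w$. Apply the assumption of the proposition with the detector $\widetilde w'$ to obtain a simplicial complex $\cE^m$, a Lipschitz homotopy equivalence $\varphi \colon E^m \to \manfM$ with $\varphi|_{E^\ell} \in \Fuglede_{\tilde w'}(E^\ell; \manfM)$, and a map $F \in \Smooth^{0}(E^e; \manfN)$ such that $u \compose \varphi|_{E^\ell} \sim F|_{E^\ell}$ in $\VMO(E^\ell; \manfN)$. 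Apply Lemma~\ref{propositionGenericStability} with $\gamma = \varphi$ on the complex $\cE^\ell$ (taken both as $\cK^{e}$ and as $\cL^\ell$) to select a small $\xi_0$ so that, after replacing $\varphi$ by $\tau_{\xi_0} \compose \varphi$, one has $\varphi|_{E^i} \in \Fuglede_{\tilde w}(E^i; \manfM)$ for every $i \in \{0, \ldots, \ell\}$ while preserving the relation $u \compose \varphi|_{E^\ell} \sim F|_{E^\ell}$ in $\VMO(E^\ell; \manfN)$. Lemma~\ref{lemma-generic-vmo-sharp-Small}, applied componentwise, then places $u \compose \varphi|_{E^\ell}$ in $\VMO^{\#}(\cE^\ell; \manfN)$.

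I claim that $\widetilde w$ is an $\ell$-detector witnessing the $(\ell, e)$-extendability of $u$. Take an arbitrary simplicial complex $\cK^e$ and a Lipschitz map $\gamma \colon K^e \to \manfM$ with $\gamma|_{K^\ell} \in \Fuglede_{\tilde w}(K^\ell; \manfM)$. Fix a Lipschitz homotopy inverse $\psi \colon \manfM \to E^m$ of $\varphi$ (obtained by Lipschitz approximation of any continuous homotopy inverse as in Lemma~\ref{lemmaHomotopyLipschitz}), and apply the simplicial approximation theorem to $\psi \compose \gamma \colon K^e \to E^m$: after an $N$-fold barycentric subdivision of $\cK^e$, there is a simplicial map $\sigma \colon \cK^{(N)e} \to \cE^m$ homotopic to $\psi \compose \gamma$ in $\Smooth^{0}(K^e; E^m)$. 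The original polytope $K^\ell$ is the polytope of the subcomplex $(\cK^\ell)^{(N)}$ of $\cK^{(N)e}$, and the restriction $\sigma|_{K^\ell} \colon (\cK^\ell)^{(N)} \to \cE^\ell$ is a simplicial map into the $\ell$-skeleton because simplicial maps do not increase dimension. Composing with $\varphi$ and using $\varphi \compose \psi \sim \Id_{\manfM}$ yields
\[
\varphi \compose \sigma \sim \varphi \compose \psi \compose \gamma \sim \gamma \quad \text{in } \Smooth^{0}(K^e; \manfM),
\]
and restriction to $K^\ell$ gives $\varphi \compose \sigma|_{K^\ell} \sim \gamma|_{K^\ell}$ in $\Smooth^{0}(K^\ell; \manfM)$.

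The pivotal step is the verification that $\varphi \compose \sigma|_{K^\ell}$ is itself a Fuglede map with respect to $\widetilde w$. This follows from Lemma~\ref{lemmaVMOSimplicial} applied to the simplicial map $\sigma|_{K^\ell}$ and the function $\widetilde w \compose \varphi$: covering $K^\ell$ by finitely many balls of radius $\rho \geq \max\{\Diam K^\ell, \Diam E^\ell/\alpha\}$ and summing the pointwise estimate yields
\[
\int_{K^\ell} \widetilde w \compose \varphi \compose \sigma \dif \cH^\ell \leq C \sum_{i=0}^{\ell} \int_{E^i} \widetilde w \compose \varphi \dif \cH^i < \infty,
\]
where the finiteness on the right is precisely what the initial perturbation of $\varphi$ secured. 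The $(\ell, e)$-extendability of $u$ then follows from the combination of three homotopy relations in $\VMO(K^\ell; \manfN)$: Theorem~\ref{proposition_Reference_Homotopy} applied to the two Fuglede maps $\gamma|_{K^\ell}$ and $\varphi \compose \sigma|_{K^\ell}$ (which are continuously homotopic) gives $u \compose \gamma|_{K^\ell} \sim u \compose \varphi \compose \sigma|_{K^\ell}$; Corollary~\ref{corollaryHomotopySimplicialMap} applied to $v \vcentcolon= u \compose \varphi|_{E^\ell} \in \VMO^{\#}(\cE^\ell; \manfN)$, $f \vcentcolon= F|_{E^\ell}$ and the simplicial map $\sigma|_{K^\ell}$ gives $u \compose \varphi \compose \sigma|_{K^\ell} \sim F \compose \sigma|_{K^\ell}$; transitivity then yields $u \compose \gamma|_{K^\ell} \sim (F \compose \sigma)|_{K^\ell}$, and since $K^{(N)e} = K^e$ as a set, $G \vcentcolon= F \compose \sigma \in \Smooth^{0}(K^e; \manfN)$ is the required continuous map.

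The main obstacle lies in controlling the Fuglede property of the simplicial composition $\varphi \compose \sigma|_{K^\ell}$: a Lipschitz map that is Fuglede on a fixed skeleton need not have Fuglede restrictions to lower-dimensional subskeletons, and simplicial maps can collapse $\ell$-simplices onto lower-dimensional faces where a generic detector may integrate poorly. The resolution is to exploit the freedom granted by the hypothesis, applied with a sufficiently reinforced detector $\widetilde w'$, combined with the transversal perturbation of Lemma~\ref{propositionGenericStability}, in order to upgrade the single Fuglede condition on $E^\ell$ to Fuglede control on every subskeleton $E^i$ with $i \leq \ell$; Lemma~\ref{lemmaVMOSimplicial} then converts this into the required finiteness of $\widetilde w \compose \varphi \compose \sigma$ on $K^\ell$ and simultaneously places $u \compose \varphi|_{E^\ell}$ in $\VMO^{\#}(\cE^\ell; \manfN)$, so that Corollary~\ref{corollaryHomotopySimplicialMap} becomes applicable.
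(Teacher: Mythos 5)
Your proof is correct, and it reproduces the paper's overall architecture (Lipschitz homotopy inverse, simplicial approximation of $\psi\compose\gamma$, the $\VMO^{\#}$ space together with Corollary~\ref{corollaryHomotopySimplicialMap}, homotopy invariance via Theorem~\ref{proposition_Reference_Homotopy}, then transitivity), but it handles the pivotal technical step differently. In the paper, after $\sigma$ has been constructed for the given $\gamma$, the homotopy equivalence is re-perturbed: one chooses $\xi$ (by the averaging argument of Proposition~\ref{lemmaTransversalFamily}, applied simultaneously with $X = E^{0}, \ldots, E^{m}$ and $X = K^{\ell}$ for the map $\varphi\compose\sigma$) so that $\widetilde\varphi = \tau_{\xi}\compose\varphi$ is Fuglede on all skeletons \emph{and} $\widetilde\varphi\compose\sigma|_{K^{\ell}}$ is Fuglede; the hypothesis is invoked with the detector $w$ of Theorem~\ref{proposition_Reference_Homotopy} itself, and a second application of that theorem transfers the homotopy with $F|_{E^{\ell}}$ from $\varphi$ to $\widetilde\varphi$. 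You instead perturb $\varphi$ only once, before $\gamma$ is given, by feeding the enlarged detector of Lemma~\ref{propositionGenericStability} into the hypothesis, which secures $\varphi|_{E^{i}} \in \Fuglede_{\tilde w}(E^{i};\manfM)$ for all $i \le \ell$ together with the preserved homotopy; the Fuglede property of $\varphi\compose\sigma|_{K^{\ell}}$ for an arbitrary $\gamma$ is then deduced from the simplicial pull-back estimate of Lemma~\ref{lemmaVMOSimplicial} applied to $\widetilde w\compose\varphi$, the constant depending on $\sigma$ being harmless since only finiteness is needed, and the case of simplices collapsed onto low-dimensional faces being covered precisely because you control all skeletons down to $E^{0}$. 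What your variant buys is that the perturbation of $\varphi$ is independent of $\gamma$ and Proposition~\ref{lemmaTransversalFamily} is not needed a second time; the cost is invoking the hypothesis with a strictly larger detector and reusing the quantitative Lemma~\ref{lemmaVMOSimplicial} outside its original role in Proposition~\ref{lemma-composition-simplicial-VMOsharp}. Both routes are valid and yield the same conclusion.
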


We recall that \(\varphi \in \Smooth^{0}(E^{m}; \manfM)\) is a homotopy equivalence whenever there exists \(\psi \in \Smooth^{0}(\manfM; E^{m})\) such that \(\varphi \compose \psi\) and \(\psi \compose \varphi\) are homotopic to the identity maps \(\Id\) on \(\manfM\) and \(E^m\), respectively.

\resetconstant
\begin{proof}[Proof of Proposition~\ref{theoremGlobalModuloLocal}]
Let \(w\) be an \(\ell\)-detector given by Theorem~\ref{proposition_Reference_Homotopy}.
Take a  Lipschitz homotopy equivalence \(\varphi \colon  E^{m} \to \manfM\) as in the statement and a Lipschitz map \(\psi \colon  \manfM \to E^{m}\) such that
\begin{equation}
\label{eqExtension-649}
\varphi \compose \psi \sim \Id{}
\quad \text{in \(\Smooth^{0}(\manfM; \manfM)\).}
\end{equation}
Given a simplicial complex \(\cK^{e}\) and a Lipschitz map \(\gamma \colon  K^{e} \to \manfM\) with \(\gamma\vert_{K^{\ell}} \in \Fuglede_{w}(K^{\ell}; \manfM)\),{}
by the simplicial approximation theorem \cite{Hatcher_2002}*{Theorem~2C.1}, there exists \(\sigma \colon  K^e \to E^m\) that satisfies
\begin{equation}
	\label{eqExtension-822}
\sigma \sim \psi \compose \gamma{}
\quad \text{in \(\Smooth^{0}(K^{e}; E^{m})\),}
\end{equation}
which is a simplicial map with respect to a subdivision of \(\cK^{e}\).{}
In particular, \(\sigma(K^{i}) \subset E^{i}\) for every \(i \in \{0, \ldots, e\}\).{}
From \eqref{eqExtension-649} and \eqref{eqExtension-822}, we also have
\begin{equation}
	\label{eqDecouple-252}
\varphi \compose \sigma 
\sim \varphi \compose \psi \compose \gamma 
\sim \gamma{}
\quad \text{in \(\Smooth^{0}(K^{e}; \manfM)\).}
\end{equation}
We observe that \(\varphi \compose \sigma\vert_{K^{\ell}}\) need not belong to \(\Fuglede_{w}(K^{\ell}; \manfM)\).{}
For this reason, we need to replace \(\varphi\)\,:

\begin{Claim}
	There exists a Lipschitz homotopy equivalence \(\widetilde{\varphi} \colon  E^{m} \to \manfM\) such that \(\widetilde{\varphi} \sim \varphi\) in \(\Smooth^{0}(E^{m}; \manfM)\), with
	\begin{equation}
	\label{eq-971-Decouple}
	\widetilde{\varphi}\vert_{E^{i}} \in \Fuglede_{w}(E^{i}; \manfM)
	\quad\text{for every \(i \in \{0, \dots, m\}\)}
	\end{equation}
	and
	\[
	\widetilde{\varphi} \compose \sigma\vert_{K^{\ell}} \in \Fuglede_{w}(K^{\ell}; \manfM).
	\]
\end{Claim}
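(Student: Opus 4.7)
The plan is to construct $\widetilde{\varphi}$ as a small transversal perturbation of $\varphi$. Specifically, taking a transversal perturbation of the identity $\tau \colon U \to \manfM$ with $U \subset \manfM \times \R^{q}$, one wishes to set $\widetilde{\varphi} \vcentcolon= \tau_{\xi} \compose \varphi$ for a carefully chosen parameter $\xi \in B_{\delta}^{q}$ with $\delta > 0$ small. The smoothness of $\tau$ together with $\tau_{0} = \Id$ ensures that, for $\delta > 0$ chosen small enough so that $\varphi(E^{m}) \times B_{\delta}^{q} \subset U$, the map $(\xi, x) \in B_{\delta}^{q} \times E^{m} \mapsto \tau_{\xi} \compose \varphi(x)$ gives a continuous homotopy. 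Hence $\widetilde{\varphi} \sim \varphi$ in $\Smooth^{0}(E^{m}; \manfM)$, and since $\varphi$ is a homotopy equivalence, so is $\widetilde{\varphi}$.

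Next, I would exploit the averaging principle from Proposition~\ref{lemmaTransversalFamily} applied in turn to each polytope $X = E^{i}$ with $i \in \{0, \ldots, m\}$ equipped with the Hausdorff measure $\cH^{i}$, and to $X = K^{\ell}$ with $\gamma \vcentcolon= \varphi \compose \sigma$. Since $\sigma$ is simplicial and hence Lipschitz, these are $m + 2$ estimates of the form
\[
\fint_{B_{\delta}^{q}}{\biggl( \int_{X} w \compose \tau_{\xi} \compose \gamma \dif\cH^{j} \biggr)} \dif\xi
\le C \int_{\manfM} w < \infty,
\]
each with a finite constant $C > 0$. Summing them gives a single integral that is finite, so the set of $\xi \in B_{\delta}^{q}$ for which all $m + 2$ inner integrals are simultaneously finite has positive measure.

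Finally, I would select any such $\xi$ (reducing $\delta$ beforehand if necessary to preserve the homotopy to $\varphi$), and define $\widetilde{\varphi} \vcentcolon= \tau_{\xi} \compose \varphi$. By construction, $\widetilde{\varphi}$ is a Lipschitz map homotopic to $\varphi$ in $\Smooth^{0}(E^{m}; \manfM)$, it is therefore a Lipschitz homotopy equivalence, and it satisfies $\widetilde{\varphi}|_{E^{i}} \in \Fuglede_{w}(E^{i}; \manfM)$ for every $i \in \{0, \ldots, m\}$ as well as $\widetilde{\varphi} \compose \sigma|_{K^{\ell}} \in \Fuglede_{w}(K^{\ell}; \manfM)$.

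The argument is essentially routine once one has the averaging tool: the only subtlety is that we need a \emph{single} $\xi$ that works simultaneously for finitely many Fuglede conditions, which is handled by the elementary observation that a finite sum of summable integrands remains summable. No genuinely hard step arises, since we do not need to perturb the simplicial map $\sigma$ itself, only $\varphi$.
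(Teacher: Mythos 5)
Your proposal is correct and follows essentially the same route as the paper: both perturb \(\varphi\) by a transversal perturbation of the identity, apply the averaging estimate of Proposition~\ref{lemmaTransversalFamily} to each \(E^{i}\) and to \(K^{\ell}\) with \(\varphi \compose \sigma\), and pick a single \(\xi\) making all finitely many Fuglede conditions hold at once, with the homotopy \(\widetilde{\varphi} \sim \varphi\) coming from \(t \mapsto \tau_{t\xi} \compose \varphi\).
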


\begin{proof}[Proof of the Claim]
	We take  a transversal perturbation of the identity \(\tau \colon  \manfM\times B_{\delta}^{q}(0) \to \manfM\).
	By Proposition~\ref{lemmaTransversalFamily}, we have
	\[{}
	\int_{{B}^q_\delta(0)} \biggl( \sum_{i = 0}^{m} \int_{E^{i}} w \compose \tau_\xi \compose \varphi \dif\cH^{i} \biggr) \dif \xi
	\le \C \int_{\manfM} w
	\]
	and
	\[{}
	\int_{{B}^q_\delta(0)} \biggl( \int_{K^{\ell}} w \compose \tau_\xi \compose \varphi \compose \sigma \dif\cH^{\ell} \biggr) \dif \xi
	\le \C \int_{\manfM} w.
	\]
	Thus, there exists \(\xi \in B_{\delta}^{q}(0)\) such that
	\[
	\tau_{\xi} \compose \varphi\vert_{E^{i}} \in \Fuglede_{w}(E^{i}; \manfM)
	\quad \text{for every \(i \in \{0, \dots, m\}\)}
	\]
	and
	\[{}
	\tau_{\xi} \compose \varphi \compose \sigma\vert_{K^{\ell}} \in \Fuglede_{w}(K^{\ell}; \manfM).
	\]
	It then suffices to take \(\widetilde\varphi = \tau_{\xi} \compose \varphi\).
\end{proof}

Since  \(u \in \VMO^{\ell}(\manfM ; \manfN)\) is \(\ell\)-extendable and since the restrictions \(\varphi|_{E^{\ell}}\) and \(\widetilde{\varphi}|_{E^{\ell}}\) belong to \(\Fuglede_{w}(E^{\ell}; \manfM)\) and 
\[{}
	 \varphi|_{E^{\ell}} \sim \widetilde{\varphi}|_{E^{\ell}}
	\quad \text{in \(\Smooth^{0}(E^{\ell}; \manfM)\),}
\]
by Theorem~\ref{proposition_Reference_Homotopy} we have
\[{}
u \compose \varphi\vert_{E^{\ell}}
\sim{} 
u \compose \widetilde{\varphi}\vert_{E^{\ell}}{}
\quad \text{in \(\VMO(E^{\ell}; \manfN)\).}
\]
Hence, by assumption on \(\varphi\) and transitivity of the homotopy relation, there exists a map \(F \in \Smooth^{0}(E^{e}; \manfN)\) such that
	\begin{equation}
	\label{eqGlobal2}
u \compose \widetilde{\varphi}\vert_{E^{\ell}}
\sim{} 
F\vert_{E^{\ell}}{}
\quad \text{in \(\VMO(E^{\ell}; \manfN)\).}
	\end{equation}
By the choice of \(\widetilde{\varphi}\), we have \(\widetilde{\varphi} \compose \sigma\vert_{K^{\ell}} \in \Fuglede_{w}(K^{\ell}; \manfM)\).{}
Since \(\widetilde\varphi\) is homotopic to \(\varphi\), by \eqref{eqDecouple-252} we also have
\[{}
\widetilde\varphi \compose \sigma 
\sim \gamma{}
\quad \text{in \(\Smooth^{0}(K^{e}; \manfM)\).}
\]
A further application of Theorem~\ref{proposition_Reference_Homotopy} then gives
\begin{equation}
	\label{eqExtension-856}
u \compose \widetilde{\varphi} \compose \sigma\vert_{K^{\ell}}
\sim{} 
u \compose \gamma\vert_{K^{\ell}}
\quad \text{in \(\VMO(K^{\ell}; \manfN)\).}
\end{equation}

We now wish to combine \eqref{eqGlobal2} and \eqref{eqExtension-856}.
For this purpose, we apply \eqref{eq-971-Decouple} and Lemma~\ref{lemma-generic-vmo-sharp-Small} to deduce that \(u \compose \widetilde{\varphi} \in \VMO^{\#}(\cE^{\ell}; \manfN)\).
Since \(\sigma\) is simplicial and \(\sigma(K^{\ell}) \subset E^{\ell}\), it follows from Corollary~\ref{corollaryHomotopySimplicialMap} and \eqref{eqGlobal2} that
\begin{equation}
	\label{eqExtension-867}
	u \compose \widetilde{\varphi} \compose \sigma\vert_{K^{\ell}} \sim F \compose \sigma\vert_{K^{\ell}}
	\quad \text{in \(\VMO(K^{\ell}; \manfN)\).}
\end{equation}
Combining \eqref{eqExtension-856} and \eqref{eqExtension-867} we thus have
\[{}
u \compose \gamma\vert_{K^{\ell}}
\sim F \compose \sigma|_{K^{\ell}}
\quad \text{in \(\VMO(K^{\ell}; \manfN)\).}
\] 
Since \(F \compose \sigma \in \Smooth^{0}(K^{e}; \manfN)\), we deduce that \(u\) is \((\ell, e)\)-extendable.
\end{proof}

In the particular case where \(\varphi\) is a biLipschitz homeomorphism associated with a triangulation of \(\manfM\),  Proposition~\ref{theoremGlobalModuloLocal} implies the converse implication in Theorem~\ref{propositionDecouple}:
 
\resetconstant
\begin{proof}[Proof of Theorem~\ref{propositionDecouple}]
	If \(u\) is \((\ell, e)\)-extendable, then \(u\) is \(\ell\)-extendable by Propositions~\ref{lemma_l_l1_property} and~\ref{proposition_extension_montone_small}.
	By Proposition~\ref{propositionCoarseExtension}, \(u\) is also coarsely \((\ell, e)\)-extendable.

	We now prove the converse.
	To this end, assume that \(u\) is \(\ell\)-extendable and coarsely \((\ell, e)\)-extendable.{}
	Given an \(\ell\)-detector \(w\), 
	let \(\cT\) be a triangulation of \(\manfM\) such that \(w|_{T^{\ell}}\) is summable in \(T^\ell\) and
	\[{}
	u|_{T^{\ell}} \sim \widetilde{F}|_{T^{\ell}}
	\quad \text{in \(\VMO(T^{\ell}; \manfN)\)}
	\]
	for some \(\widetilde{F} \in \Smooth^{0}(T^{e}; \manfN)\).{}
	If \(\Phi \colon  E^{m} \to \manfM\) is a biLipschitz homeomorphism associated to \(\cT\), then
	\[{}
	u \compose \Phi|_{E^{\ell}} \sim \widetilde{F} \compose \Phi|_{E^{\ell}}
	\quad \text{in \(\VMO(E^{\ell}; \manfN)\)}.
	\]
	The assumptions of Proposition~\ref{theoremGlobalModuloLocal} are then satisfied by taking \(\varphi = \Phi\) and \(F= \widetilde{F}\circ \Phi|_{E^e}\). 
    We conclude that \(u\) is \((\ell,e)\)-extendable.
\end{proof}

\cleardoublepage
\chapter{Strong approximation of Sobolev maps}
\label{chapter-approximation-Sobolev-manifolds}

Building on the ideas and tools developed in this monograph, notably in Chapters~\ref{chapterFuglede}, \ref{chapter-3-Detectors}, \ref{section_VMO}, \ref{chapterGenericEllExtension}, and \ref{chapterExtensionGeneral}, we now establish the main theorems introduced in Chapter~\ref{chapterIntroduction} concerning the approximation of \((\floor{kp}, e)\)-extendable maps in \(\Sobolev^{k, p}(\manfM; \manfN)\). Specifically, we prove Theorem~\ref{theoremhkpGreat}, which states that maps satisfying this property with \(e = m\) can be approximated by smooth maps, and Theorem~\ref{thm_density_manifold_open_introduction}, which asserts that when the extension property is valid only for some \(e < m\), Sobolev maps can still be approximated by maps that are smooth except on a structured singular set of rank \(m - e - 1\).
We also establish their counterparts for the maps defined in a bounded Lipschitz open subset \(\Omega\) of \(\R^m\).

We revisit the opening construction presented in Chapter~\ref{chapterFuglede}, where maps were opened around points. Here, the concept is generalized to larger sets, such as segments, planes, and higher-dimensional structures, using cubications of radius \(\eta\) in \(\R^m\). The use of Fuglede maps provides a clean presentation: Compositions are genuine compositions, unlike in our previous work \cite{BPVS_MO}, where \(u \compose \Phi\) was used as a shorthand for the Sobolev limit of smooth maps \(u_j \compose \Phi\). While the presentation here is more streamlined, the properties and results themselves are essentially unchanged, adapted to the new setting.

An important advantage of the new framework is the simplicity with which the extendability property is preserved through opening. Proposition~\ref{corollary_ouverture} illustrates that the process is straightforward to implement, and the resulting map \(u_\eta^\mathrm{op}\), obtained by opening \(u\) near the \(\floor{kp}\)-dimensional skeleton of the bad cubes in a cubication, retains the same extendability as \(u\).

We also revisit and adapt several constructions from \cite{BPVS_MO}, including thickening, adaptive smoothing, and shrinking, each serving a distinct purpose in transforming Sobolev maps. Thickening generalizes zero-degree homogenization, extending Sobolev maps while preserving their target manifold \(\manfN\) but introducing structured singular sets of dimension \(m - \floor{kp} - 1\). Adaptive smoothing uses convolution with variable parameters to approximate maps obtained through opening and thickening, ensuring respect for the structured singular sets. Shrinking refines thickening by confining modifications to small neighborhoods around the dual skeleton, providing precise control over the \(\Sobolev^{k, p}\) norm. 

Theorems~\ref{theoremhkpGreat} and~\ref{thm_density_manifold_open_introduction} are first established for maps defined on bounded Lipschitz open subsets \(\manfV = \Omega\) of \(\R^m\). As \(u_\eta^\mathrm{op}\) undergoes thickening and smoothing, we obtain a map \(u_\eta^\mathrm{sm}\) that is smooth except for a structured singular set of rank \(m - \floor{kp} - 1\). A new key step, absent from \cite{BPVS_MO}, is the construction of the map \(u_\eta^\mathrm{fop}\), which is \((\floor{kp}, e)\)-homotopic to \(\Pi \compose u_\eta^\mathrm{sm}\). 
The existence of a \((\floor{kp}, e)\)-homotopy between these maps is essential for removing or reducing the size of singularities through a homotopy extension property that confines discontinuities to a structured set of rank \(m - e - 1\).

However, topological considerations are qualitative by nature, whereas the approximation of Sobolev maps requires precise quantitative control. Shrinking provides the necessary bridge by confining modifications necessary for \(\Pi \compose u_\eta^\mathrm{sm}\) to small neighborhoods of the dual skeleton of dimension \(m - \floor{kp} - 1\) where the singularities are located. The map \(u_\eta^\mathrm{sh}\) then obtained converges to \(u\) in \(\Sobolev^{k, p}\) as \(\eta\) tends to zero.

The final part of this chapter extends the analysis to maps defined on compact manifolds without boundary \(\manfV = \manfM\), a case not considered in \cite{BPVS_MO}. The maps in \(\manfM\) are extended to a neighborhood in \(\R^\varkappa\), preserving their \((\floor{kp}, e)\)-extension property. This allows us to conclude the proofs of Theorems~\ref{theoremhkpGreat} and~\ref{thm_density_manifold_open_introduction}, showing that maps satisfying the \((\floor{kp}, e)\)-extension property can be approximated by sequences in \(\ClassR_{m - e - 1}(\manfM; \manfN)\).

This chapter thus provides a comprehensive framework for constructing strong approximations of Sobolev maps.
By tying together the local and global properties explored throughout this monograph, and by emphasizing both the qualitative topological properties and the quantitative analytical precision, it sets the stage for further developments.

\section{\texorpdfstring{\(\ClassR_{\MakeLowercase{i}}\)}{Ri}~maps}

In this chapter, we approximate Sobolev maps in \(\manfV\) with values in \(\manfN\) by more regular maps which are smooth outside a structured singular set in the sense of Definition~\ref{defnStructuredSingularSet}. 
We assume henceforth that \(\manfV\) is either a bounded Lipschitz open subset \(\Omega\) of \(\R^m\) or a compact manifold without boundary \(\manfM\) imbedded in \(\R^\varkappa\).

\begin{definition}
\label{definitionExtension}
Let \(i\in \{0, \dots, m-1\}\).{}
We say that \(u  \colon  \manfV \setminus T^i \to \manfN\) is an \( \ClassR_{i} \)~map in \(\manfV\) with singular set \(T^{i}\) whenever \( u \) is smooth in \(\manfV \setminus T^{i}\) and \(T^{i}\) is a structured singular set of rank \(i\) such that, for every \(j\in \N_*\) and every \(x\in \manfV \setminus T^{i}\),{}
\begin{equation}
\label{eqRClassDerivativeEstimatePointwise}
\abs{D^ju(x)}\leq \frac{C}{(d(x, T^{i}))^j}
\end{equation}
for some constant \(C\geq 0\) depending on \(u\) and \(j\).
We denote the class of all \(\ClassR_i\)~maps in \(\manfV\) with singular sets of rank \(i\) by \(\ClassR_i(\manfV; \manfN)\).
\end{definition}

\begin{example}
The map \( u \colon \Ball^{m} \setminus \{0\} \to \Sphere^{m - 1} \) defined by \( u(x) = x/\abs{x} \) is an \( \ClassR_{0} \)~map in \(\Ball^{m}\) with singular set \(T^0 = \{0\}\).
More generally, given \( i \in \{0, \ldots, m - 2\} \), the map \( u \colon (\Ball^{m - i} \times \Ball^{i}) \setminus (\{0'\} \times \Ball^{i}) \to \Sphere^{m - i - 1} \) defined for \( x = (x', x'')\) with \( x' \ne 0 \) by \( u(x) = x'/\abs{x'} \) is an \( \ClassR_{i} \)~map in \(\Ball^{m - i} \times \Ball^{i}\) with singular set \( T^{i} = \{0'\} \times \Ball^{i} \).
\end{example}

Denote the dual dimension associated to \( e \in \{0, \ldots, m\} \) with respect to \( m \) by 
\[
e^* = m - e - 1.
\]
We have a natural inclusion of \(\ClassR_{e^*}\)~maps in Sobolev spaces.
Although an \(\ClassR_{e^*}\)~map in \(\manfV\) is not defined in its singular set, we implicitly take an arbitrary extension to have a measurable function defined pointwise in \(\manfV\).

\begin{proposition}
\label{propositionClassRInclusionSobolev} 
If \( e \in \{0, \ldots, m\} \) and \(kp < e + 1\), then
\[
\ClassR_{e^*}(\manfV; \manfN) \subset \Sobolev^{k,p}(\manfV; \manfN).
\]
\end{proposition}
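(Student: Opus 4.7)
The plan is to verify the three conditions that characterize membership in $\Sobolev^{k,p}(\manfV; \manfN)$: (i) $u \in \Lebesgue^p(\manfV; \R^\nu)$; (ii) the classical derivatives $D^j u$, defined pointwise on $\manfV \setminus T^{e^*}$, belong to $\Lebesgue^p(\manfV; \R^\nu)$ for every $j \in \{1, \ldots, k\}$; and (iii) these classical derivatives coincide with the weak derivatives of $u$ on the whole of $\manfV$. Point (i) is immediate since $u$ takes its values in the compact manifold $\manfN$.

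For (ii), the key ingredient is the integrability estimate
\[
\int_\manfV \frac{\dif x}{d(x, T^{e^*})^{s}} < \infty
\quad \text{for every \(s < e+1\).}
\]
By Definition~\ref{defnStructuredSingularSet}, $T^{e^*}$ is contained in a finite union of $(m-e-1)$-dimensional objects: affine subspaces orthogonal to $e+1$ coordinate axes in the Euclidean case, where a direct Fubini computation reduces matters to the elementary $\int_{\Ball^{e+1}} \abs{v}^{-s} \dif v < \infty$; or compact submanifolds intersecting cleanly in the manifold case, where a tubular neighborhood argument produces the same reduction. Since $jp \le kp < e+1$ for every $j \in \{1, \ldots, k\}$, the pointwise bound \eqref{eqRClassDerivativeEstimatePointwise} yields $\abs{D^j u} \in \Lebesgue^p(\manfV)$.

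For (iii), I would build a cutoff $\chi_\epsilon \in \Smooth^\infty(\manfV)$ supported in $\{d(\cdot, T^{e^*}) > \epsilon/2\}$, equal to $1$ on $\{d(\cdot, T^{e^*}) > 2\epsilon\}$, and satisfying $\abs{D^\alpha \chi_\epsilon} \le C \epsilon^{-\abs{\alpha}}$ for $\abs{\alpha} \le k$ (obtained by mollifying a Lipschitz cutoff of the distance function). For any $\phi \in \Smooth^\infty_c(\manfV; \R^\nu)$ and $j \in \{1, \ldots, k\}$, I split $\int u \cdot D^j \phi = \int u \cdot D^j(\chi_\epsilon \phi) + \int u \cdot D^j((1-\chi_\epsilon)\phi)$. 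Since $\chi_\epsilon \phi$ is smooth and compactly supported in $\manfV \setminus T^{e^*}$, classical integration by parts on the smooth region turns the first integral into $(-1)^j \int \chi_\epsilon \phi \cdot D^j u$, which converges to $(-1)^j \int \phi \cdot D^j u$ by dominated convergence, thanks to $\phi \cdot D^j u \in \Lebesgue^1$. The Leibniz formula bounds the second integral by
\[
C \norm{u}_{\Lebesgue^\infty} \norm{\phi}_{\Smooth^j} \sum_{\alpha = 0}^j \epsilon^{-\alpha} \, \cH^m\bigl(\{d(\cdot, T^{e^*}) < 2\epsilon\}\bigr),
\]
and since $T^{e^*}$ has codimension $e+1$, the measure of its $2\epsilon$-neighborhood is $O(\epsilon^{e+1})$; each summand is therefore $O(\epsilon^{e+1-\alpha})$, which vanishes as $\epsilon \to 0$ since $\alpha \le j \le k \le e$ (the last inequality coming from $kp < e+1$ together with $p \ge 1$). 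The main obstacle is precisely this dimension count: the hypothesis $kp < e+1$ is exactly what is needed to beat the codimension of $T^{e^*}$, and the bookkeeping becomes tight as $j$ approaches $k$; in the manifold case, producing $\chi_\epsilon$ with sharp derivative estimates requires a modest amount of care, but no new idea.
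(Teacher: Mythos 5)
Your proposal is correct, and the first two steps (boundedness of $u$, and the $\Lebesgue^p$ bound on $\abs{D^j u}$ via the codimension count $jp \le kp < e+1$ against the distance to the affine pieces or cleanly intersecting submanifolds making up $T^{e^*}$) are essentially the paper's argument. Where you diverge is the final step, identifying the classical derivatives with the distributional ones across the singular set: the paper disposes of this in one line by invoking a standard removability lemma for functions that are smooth off a closed set of vanishing $\cH^{m-1}$~measure (Brezis--Mironescu, Lemma~1.10), noting that $T^{e^*}$ has dimension at most $m-2$, whereas you prove removability directly with a cutoff $\chi_\epsilon$ satisfying $\abs{D^\alpha\chi_\epsilon}\le C\epsilon^{-\abs{\alpha}}$ and the neighborhood bound $\cH^m(\{d(\cdot,T^{e^*})<2\epsilon\}) = O(\epsilon^{e+1})$, so that the error terms are $O(\epsilon^{e+1-\alpha})$ and vanish because $k \le kp < e+1$. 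Your route is self-contained and makes transparent that the hypothesis $kp<e+1$ also drives the removability step (through $e+1>k$, which absorbs the $\epsilon^{-k}$ loss from $k$ derivatives of the cutoff), at the cost of some bookkeeping with the mollified cutoff on a manifold; the paper's route is shorter and only needs the much weaker information that the singular set is $\cH^{m-1}$-negligible, but relies on an external lemma. Both are valid; as a cosmetic remark, the case $e=m$ (where $\ClassR_{-1}=\Smooth^\infty$ and $T^{-1}=\emptyset$ by the paper's convention) is trivial and worth a word, and in your dominated-convergence step you should state that you argue componentwise in the multi-index $\alpha$ with $\abs{\alpha}=j$, as written your $D^j\phi$ pairing is tensor-valued shorthand.
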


\begin{proof}
    By assumption, we have
    \begin{equation}
    \label{eqStrong-45}
    e^{*} = m - e - 1 < m - kp.
    \end{equation}
    Let \(u\) be an \( \ClassR_{e^{*}} \)~map in \(\manfV\) with a structured singular set \( T^{e^{*}} \) of rank \(e^*\).
    If \( \manfV = \manfM\) is a compact manifold, then \(T^{e^{*}}\) is contained in a finite union of \(e^{*}\)-dimensional compact submanifolds \((S_\alpha)_{\alpha \in J}\) of \(\manfM\) without boundary that intersect cleanly.
    If \( \manfV = \Omega\) is a bounded open subset of \(\R^m\), then \(T^{e^{*}}\) is contained in a finite union of sets \((S_\alpha)_{\alpha\in J}\) that are \(e^{*}\)-dimensional affine spaces of \(\R^{m}\) orthogonal to \(m - e^{*} = e + 1\) coordinate axes.
    In both cases, for every \( x \in \manfV \) one has
    \[
    \frac{1}{d(x, T^{e^{*}})^{k}}
    \le \sum_{\alpha}{\frac{1}{d(x, S_{\alpha})^{k}}}.
    \]
    Since \( S_{\alpha} \) has dimension \( e^{*} \) and \eqref{eqStrong-45} holds, we have \( 1/d(\cdot, S_{\alpha})^{k}  \in \Lebesgue^p(\manfV) \).
    Here, when \(\manfV = \Omega\) is an open set we rely on the boundedness of \(\Omega\), and in the case where \(\manfV = \manfM\) is a compact manifold, we use the fact that each \(S_{\alpha}\) is compact.
Hence, for every \( j \in \{ 1, \dots, k\} \),
    \[
    \frac{1}{d(\cdot, T^{e^{*}})^{j}} \in \Lebesgue^p(\manfV).
    \]
    We deduce from \eqref{eqRClassDerivativeEstimatePointwise} that \( \abs{D^{j}u} \in \Lebesgue^p(\manfV) \).
    Since \( T^{e^{*}} \) has dimension smaller than \( m - 1 \), the conclusion then follows from a standard property of Sobolev spaces for functions that are smooth except on a closed set of \( (m-1) \)-dimensional Hausdorff measure zero, see \cite{Brezis-Mironescu}*{Lemma~1.10}.
\end{proof}

In the statement of Proposition~\ref{propositionClassRInclusionSobolev}, to accommodate the case \( e = m \), which means that \( e^{*} = -1 \),  we denote
\[
\ClassR_{-1}(\manfV; \manfN)
= \Smooth^{\infty}(\overline{\manfV}; \manfN)
\]
and that \( T^{-1} \vcentcolon= \emptyset \) for all maps in this family.

Our goal in this chapter is to establish the following result that contains Theorems~\ref{theoremhkpGreatBall}, \ref{theoremhkpGreat}, \ref{theoremIntroductionClassR} and~\ref{thm_density_manifold_open_introduction} in the Introduction:

\begin{theorem}\label{thm_density_manifold_open}
Let \(kp < m\) and \(e \in \{\floor{kp}, \dots, m\}\). 
Then, \(u \in \Sobolev^{k, p}(\manfV; \manfN)\) is \((\floor{kp}, e)\)-extendable if and only if there exists a sequence \((u_j)_{j\in \N}\) in \(\ClassR_{e^{*}}(\manfV; \manfN)\) such that
\[{}
u_{j} \to u 
\quad \text{in \(\Sobolev^{k, p}(\manfV; \manfN)\).}
\]
\end{theorem}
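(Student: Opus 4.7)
The proof splits into two implications. For the converse direction, suppose $(u_j)_{j \in \N}$ is a sequence in $\ClassR_{e^*}(\manfV; \manfN)$ converging to $u$ in $\Sobolev^{k,p}$. Each $u_j$ is continuous outside a structured singular set of rank $e^* = m - e - 1$, so by Proposition~\ref{proposition_Extension_Property_R_0-New} we have $u_j \in \VMO^{e}(\manfV; \manfN)$, and Corollary~\ref{propositionVMOlExtension} yields that each $u_j$ is $(\floor{kp}, e)$-extendable. Since $\manfN$ is compact, the Gagliardo-Nirenberg interpolation promotes $\Sobolev^{k,p}$~convergence to $\Sobolev^{1,kp}$~convergence; Proposition~\ref{lemmaFugledeSobolevDetector} then extracts a subsequence converging in $\VMO^{\floor{kp}}(\manfV;\R^\nu)$, and Proposition~\ref{propositionExtensionVMOClosed} transfers $(\floor{kp}, e)$-extendability to $u$.

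The forward direction is the heart of the statement. I would first establish it when $\manfV = \Omega$ is a bounded Lipschitz open subset of $\R^m$, then bootstrap to compact manifolds. Fix a cubication of $\R^m$ of radius $\eta > 0$, identify the ``bad'' cubes where the $\Sobolev^{k,p}$ energy of $u$ is not small, and proceed with four chained constructions. First, \emph{opening}: compose $u$ with an opening map $\Phi_\eta^{\mathrm{op}}$ produced through a transversal-perturbation averaging of the form underlying Proposition~\ref{propositionFugledeWkp}, yielding $u_\eta^{\mathrm{op}} = u \compose \Phi_\eta^{\mathrm{op}}$, locally constant along directions transverse to the $\floor{kp}$-skeleton of the bad cubes. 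Crucially, Proposition~\ref{corollary_ouverture} guarantees that $u_\eta^{\mathrm{op}}$ inherits the $(\floor{kp}, e)$-extendability of $u$, with detector $w \compose \Phi_\eta^{\mathrm{op}}$. Second, \emph{thickening} extends $u_\eta^{\mathrm{op}}$ by zero-degree homogenization from the $\floor{kp}$-skeleton across each cube, and \emph{adaptive smoothing} uses a convolution whose parameter degenerates near the dual $(m-\floor{kp}-1)$-skeleton $T^{m-\floor{kp}-1}$, producing a map $u_\eta^{\mathrm{sm}}$ which, once composed with the nearest-point projection $\Pi$ onto $\manfN$ (well defined by Lemma~\ref{lemmaVMOUniformConvergence}), lies in $\ClassR_{m-\floor{kp}-1}(\Omega; \manfN)$ and is $\Sobolev^{k,p}$-close to $u$.

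The new key ingredient compared with \cite{BPVS_MO} is to use $(\floor{kp},e)$-extendability to compress the singular set from dimension $m-\floor{kp}-1$ down to dimension $m-e-1$. Here I invoke Proposition~\ref{propositionEllEExtensionContinuous}: the map $\Pi \compose u_\eta^{\mathrm{sm}}$ is continuous in a neighborhood $A$ of the $\floor{kp}$-skeleton of the cubication and is $\VMO^{\floor{kp}}$-homotopic there to $u$; since $u$ is $(\floor{kp},e)$-extendable, the restriction of $\Pi \compose u_\eta^{\mathrm{sm}}$ to each local $\floor{kp}$-face admits a continuous extension $F_\eta$ to the corresponding $e$-face. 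A \emph{shrinking} construction then confines the replacement of $\Pi \compose u_\eta^{\mathrm{sm}}$ by an interpolation with the $F_\eta$-built extension to a thin $\eta$-scaled tubular neighborhood of the dual $(m-e-1)$-skeleton, producing $u_\eta^{\mathrm{sh}} \in \ClassR_{m-e-1}(\Omega; \manfN)$. Quantitative scaling estimates of the opening, thickening and shrinking steps, analogous to those outlined in Section~\ref{section_opening}, yield $u_\eta^{\mathrm{sh}} \to u$ in $\Sobolev^{k,p}(\Omega;\manfN)$ as $\eta \to 0$.

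To pass from $\Omega$ to a compact manifold $\manfM \subset \R^\varkappa$ without boundary, compose $u$ with the smooth retraction $\widetilde\Pi \colon \manfM + B_\iota^\varkappa \to \manfM$ to obtain $v = u \compose \widetilde\Pi$ on a Euclidean neighborhood of $\manfM$; since $\widetilde\Pi$ is smooth with bounded derivatives, Proposition~\ref{corollary_ouverture} transfers $(\floor{kp}, e)$-extendability from $u$ to $v$, the Euclidean case supplies an $\ClassR_{m-e-1}$~approximation of $v$, and restricting this approximation back to $\manfM$ yields the desired sequence. The principal obstacle is the third step: converting the qualitative $\VMO$-homotopy produced by $(\floor{kp}, e)$-extendability into a continuous extension $F_\eta$ that is genuinely compatible with the specific smooth map $\Pi \compose u_\eta^{\mathrm{sm}}$ on each cube, while preserving the quantitative $\Sobolev^{k,p}$ control necessary for the shrinking step. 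This is precisely why the homotopy extension property developed in Section~\ref{sectionHEP} and the generic homotopy stability of Chapter~\ref{chapterGenericEllExtension} are needed in the detailed form established there.
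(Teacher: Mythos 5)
Your converse direction and the overall architecture of the forward direction (good/bad cubes, opening, adaptive smoothing, thickening, an extendability-based continuous extension, then shrinking, followed by reduction of the manifold case to the Euclidean one via the retraction \(\widetilde\Pi\)) coincide with the paper's proof. However, your treatment of the decisive new step has a genuine gap. You assert that \(\Pi \compose u_\eta^{\mathrm{sm}}\) is \(\VMO^{\floor{kp}}\)-homotopic \emph{to \(u\) itself} on a neighborhood of the \(\floor{kp}\)-skeleton, and then feed this into Proposition~\ref{propositionEllEExtensionContinuous}. There is no obvious such homotopy: the natural candidate, the mollification family \(\Pi\compose(\varphi_{r\psi_\eta}*u)\), is not even well defined near the skeleton of the \emph{good} cubes, because \(u\) has not been opened there and its local averages need not lie in the tubular neighborhood of \(\manfN\). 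The paper resolves exactly this point by introducing a second auxiliary map \(u_\eta^{\mathrm{fop}} = u\compose\Phi^{\mathrm{fop}}\), obtained by opening along the \emph{full} skeleton \(S^{\floor{kp}}\): being a Fuglede composition of \(u\), it inherits \((\floor{kp},e)\)-extendability (Proposition~\ref{corollary_ouverture}); its smoothings stay in the tubular neighborhood along all of \(S^{\floor{kp}}+Q^m_{\rho\eta/2}\) (the analogue of \eqref{eqJEMS-Inclusion-bis} for the full skeleton); and it agrees with \(u_\eta^{\mathrm{op}}\) near \(U^m\), which yields the pointwise closeness of \(u_\eta^{\mathrm{sm}}\) and \(u_\eta^{\mathrm{fsm}}\) on good cubes and hence the homotopy chain \(\Pi\compose u_\eta^{\mathrm{sm}} \sim \Pi\compose u_\eta^{\mathrm{fsm}} \sim u_\eta^{\mathrm{fop}}\). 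You flag this as the ``principal obstacle'' but do not supply the device that closes it; as written, the step fails. A related slip: the subsequent shrinking must be performed around the dual skeleton \(T^{\floor{kp}^*}\) of dimension \(m-\floor{kp}-1\), where the extension replaced \(\Pi\compose u_\eta^{\mathrm{th}}\), not around the \((m-e-1)\)-dimensional dual skeleton, which is only the residual singular set.

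Two further points need repair. First, in the passage from \(\Omega\) to a compact manifold you say one ``restricts the approximation back to \(\manfM\)''; a fixed restriction is not justified, since \(\Sobolev^{k,p}\)-convergence in the neighborhood gives no control of traces on the fixed \(m\)-dimensional submanifold \(\manfM\), and the affine singular sets of the Euclidean \(\ClassR\)-maps need not meet \(\manfM\) cleanly. The paper instead restricts to generic translates \(\tau_\xi\), using a Fubini/Fuglede argument (Proposition~\ref{lemmaModulusSobolevsequenceHighkp}) for the convergence and a transversality argument (Proposition~\ref{propositionClassRGeneric}) to guarantee that the restricted maps lie in \(\ClassR_{m-e-1}(\manfM;\manfN)\), and only then undoes the translation by the diffeomorphism \(\tau_\xi\widetilde\Pi\). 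Second, for \(\manfV=\Omega\) the cubication construction needs \(u\) to be defined and extendable on a neighborhood \(O\Supset\overline\Omega\); for \(u\) merely in \(\Sobolev^{k,p}(\Omega;\manfN)\) one must first approximate \(u\) by \(u\compose\Phi_s\) with \(\Phi_s(\overline\Omega)\subset\Omega\) (these are again \((\floor{kp},e)\)-extendable by Proposition~\ref{corollary_ouverture}) and conclude by diagonalization; this reduction is absent from your sketch.
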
 

We recall that every map \( u \in \Sobolev^{k, p}(\manfV; \manfN)\) is \((\floor{kp}, \floor{kp})\)-extendable, and so by Theorem~\ref{thm_density_manifold_open} one can always approximate \( u \) by a sequence of \(\ClassR_{m - \floor{kp} - 1}\)~maps, which is Theorem~\ref{theoremIntroductionClassR} with \(\manfV = \manfM\).
On the other hand, when \( u \) is \((\floor{kp}, m)\)-extendable, then in the previous statement with \(e = m\) one gets a sequence \((u_j)_{j\in \N}\) in \(\Smooth^\infty(\overline{\manfV}; \manfN)\) as stated in Theorem~\ref{theoremhkpGreat} also with \(\manfV = \manfM\).

When \(\manfV = Q_1^m\) is a cube in \(\R^m\), Detaille~\cite{DetailleClassR} recently showed that all maps in \(\ClassR_{e^*}(Q_1^m; \manfN)\) can be approximated in the \(\Sobolev^{k, p}\)~distance by smooth maps \(u \colon Q_1^m \setminus T^{e^*} \to \manfN\) that satisfy the derivative estimate \eqref{eqRClassDerivativeEstimatePointwise}, where \(T^{e^*}\) is a closed imbedded \(e^*\)-dimensional submanifold in \(\R^m\). 
Detaille's method builds on the shrinking procedure discussed in Section~\ref{section_shrinking} below and incorporates an additional technique to further uncross singularities.
Combined with Theorem~\ref{thm_density_manifold_open}, one thus gets the approximation of  \((\floor{kp}, e)\)-extendable maps in \(\Sobolev^{k, p}(Q_1^m; \manfN)\) by maps that are smooth except on imbedded \(e^*\)-dimensional submanifolds in \(\R^m\).

\medskip

We begin with the proof of the reverse implication of Theorem~\ref{thm_density_manifold_open} that is based on the relation between \( \VMO^{\ell} \)~convergence and the extension property.

\begin{proof}[Proof of Theorem~\ref{thm_density_manifold_open}. ``\(\Longleftarrow\)'']
	Since \(\manfN\) is bounded in \(\R^{\nu}\), by the Gagliardo-Nirenberg interpolation inequality the sequence \( (u_j)_{j \in \N} \) also converges in \(\Sobolev^{1, kp}(\manfV; \manfN)\). 
    By Proposition~\ref{lemmaFugledeSobolevDetector} with \(\ell = \floor{kp}\), we may take a subsequence \( (u_{j_{i}})_{i \in \N}\) such that
    \begin{equation}
        \label{eqJEMS-117}
    u_{j_i} \to u 
    \quad \text{in \(\VMO^{\floor{kp}}(\manfV; \R^\nu)\).}
    \end{equation}
    By Proposition~\ref{proposition_Extension_Property_R_0-New}, each map \(u_{j_i}\) belongs to \(\VMO^{e}(\manfV; \manfN)\) and we deduce from Corollary~\ref{propositionVMOlExtension} that \(u_{j_i}\) is \((\floor{kp}, e)\)-extendable.
    By \eqref{eqJEMS-117} and Proposition~\ref{propositionExtensionVMOClosed}, their limit \(u\) is also \((\floor{kp}, e)\)-extendable.
\end{proof}

To prove the direct implication of Theorem~\ref{thm_density_manifold_open},
we begin with the case where \( \manfV \) is an open subset \(\Omega\) of \(\R^m\), which we then use to deduce the case where \( \manfV \) is a compact manifold \(\manfM\).
We begin by explaining in the next sections some of the tools that we shall use for open sets.

\section{Opening revisited}

Opening has been introduced in Section~\ref{section_opening} as a tool to transform a function \(u \in \Sobolev^{k, p}(\R^{m})\)  into a new Sobolev function \( \widetilde{u} \) that is constant around a given point \( b \in \R^{m} \).
This is done by composition with a smooth map \(\Phi^{\mathrm{op}}  \colon  \R^{m} \to \R^{m}\) that is constant in a neighborhood of \( b \).
In particular, \( \Phi^{\mathrm{op}} \) is never a diffeomorphism and one cannot rely on the usual composition properties of Sobolev functions to show that \( u \compose \Phi^{\mathrm{op}} \in \Sobolev^{k, p}(\R^{m}) \).
In fact, the choice of \( \Phi^{\mathrm{op}} \) is highly dependent on \( u \), and different functions usually require different maps.

However, there is some flexibility regarding opening a map around larger sets. 
For example, assume that \( \omega \subset \R^{j} \) is an open set for some \( j \in \{0, \dots, m - 1 \} \).
For each point \( x' \in \omega \), one can open \( u \) around the set \(\omega \times \{0''\}\), where \(0'' \in \R^{m - j} \), with respect to the remaining \( m - j \) variables.
This procedure gives a smooth map \( \Phi^{\mathrm{op}} \colon  \omega \times \R^{m - j} \to \omega \times \R^{m - j} \) such that
\[
u \compose \Phi^{\mathrm{op}}(x', x'')
= u \compose \Phi^{\mathrm{op}}(x', 0'')
\]
for every \( x''  \in \R^{m - j} \) in a neighborhood of \( 0'' \).
It is important here that the choice of \( \Phi^{\mathrm{op}} \) be made independently of \( x' \).
We have thus opened \( u \) in the cylindrical region \( \omega \times \R^{m - j} \).

\begin{figure}
\centering
\hfill\includegraphics{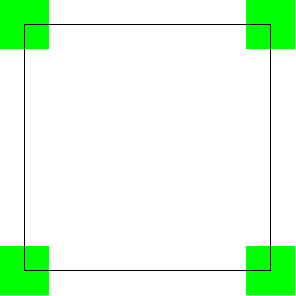}
\hfill\includegraphics{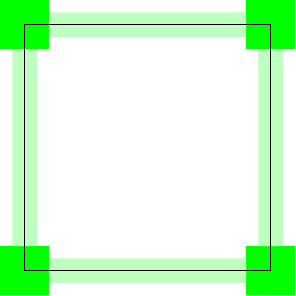}
\hfill{}
\caption{Opening around vertices (left) and then edges (right)}
\label{figureJEMSOpeningSkeletonCube}
\end{figure}

A further step consists in combining this construction at several levels.
For example, assume that we decompose \(\R^{2}\) as a grid of squares and we are given a function \(u \in \Sobolev^{k, p}(\R^{2})\).{}
We can first apply the opening technique around each vertex and then orthogonally around each side, see Figure~\ref{figureJEMSOpeningSkeletonCube}. 
Since we begin by constructing a new map that is constant near the vertices, the cylindrical constructions around the sides can be smoothly combined, and we obtain at the end a smooth map \(\Phi^{\mathrm{op}}  \colon  \R^{2} \to \R^{2}\)  that is constant around the vertices and depends on one variable near the sides of each square. 

We may pursue this construction using a cubication in \( \R^{m} \), which is the framework with which we shall deal in the proof of Theorem~\ref{thm_density_manifold_open}.
We first recall what we mean by a cubication:

\begin{definition}
\label{defnCubication}
A finite family of closed cubes \(\cS^m\) is a \emph{cubication} of \(A \subset \R^m\) if all cubes have the same radius, if \(\bigcup\limits_{\sigma^m \in \cS^m} \sigma^m = A\), and if for every \(\sigma^m_1, \sigma^m_2 \in \cS^m\) which are not disjoint, \(\sigma^m_1 \cap \sigma^m_2\) is a common face of dimension \(i \in \{0, \dotsc, m\}\).
\end{definition}

The radius of a cubication is the radius of any of its cubes.

\begin{definition}
\label{defnCubicationUnion}
Given a cubication \(\cS^m\) of \(A \subset \R^m\) and \(j \in \{0, \dotsc, m\}\), the skeleton of dimension \(j\) is the set \(\cS^j\) of all \(j\) dimensional faces of all cubes in \(\cS^m\). 
A subskeleton of dimension \(j\) of \(\cS^m\) is a subset of \(\cS^j\).
Given a skeleton \(\cS^j\), we denote by \(S^j\) the union of all elements of \(\cS^j\), that is,
\[
S^j = \bigcup_{\sigma^j \in \cS^j} \sigma^j.
\]
\end{definition}

Assuming that \( \cS^{m} \) is a cubication in \( \R^{m} \) and \( \cU^{\ell} \) is a subskeleton of dimension \( \ell < m \), we may successively open a map \( u \) around vertices in \( \cU^{0} \), around edges in \( \cU^{1} \), around faces in \( \cU^{2} \) and so on, up to \( \ell \)-dimensional cells in \( \cU^{\ell} \).
Although opening follows a cylindrical geometry, by pursuing opening from lower to higher dimensions, the construction agrees on the intersection of two adjacent cells, which is itself a cell of lower dimension where the function has already been opened.
We then obtain by combination of these constructions a smooth map \(\Phi^{\mathrm{op}}  \colon  \R^{m} \to \R^{m}\) such that \( u \compose  \Phi^{\mathrm{op}} \) belongs to \( \Sobolev^{k, p}(\R^{m}) \) and, for each cell \( \sigma^{j} \in \cU^{j} \) with \( j \le \ell \), the function \( u \compose  \Phi^{\mathrm{op}} \) is locally constant around \( \sigma^{j} \) with respect to \( m - j \)  coordinate directions that are orthogonal to \( \sigma^{j} \).
The rigorous implementation of this strategy is presented in \cite{BPVS_MO}*{Section~2}.

As we shall be dealing with Fuglede maps, it is convenient to gather properties of an opening map \( \Phi^{\mathrm{op}} \) constructed to preserve the summable nature of a given function \(w\).
As we explain below, for the sake of application \(w\) can be taken as a Sobolev detector for a Sobolev function \( u \).

\begin{proposition}\label{proposition_ouverture-new}
Let \(\cU^{\ell}\) be a subskeleton of a cubication in \( \R^{m} \) of radius \(\eta>0\) and let \(0<\rho<1/2\). 
For every summable function \(w  \colon  \R^{m} \to [0,+\infty]\), there exists a smooth map \(\Phi^{\mathrm{op}} \in \Fuglede_{w} (\R^{m}; \R^{m})\) such that \(\Phi^{\mathrm{op}} = \Id\) in a neighborhood of \(\R^{m} \setminus (U^{\ell} + Q^{m}_{2\rho\eta})\) and
\begin{enumerate}[\((i)\)]
\item 
\label{itemOpeningi}
for every \(i\in \{0, \dots, \ell\}\) and every \(\sigma^{i}\in \cU^{i}\), \(\Phi^{\mathrm{op}}\) is constant on each \(m-i\) dimensional cube of radius \(\rho \eta\) which is orthogonal to \(\sigma^{i}\),{}
\item 
\label{itemOpeningii}
for every \(i \in \{1, \dots, k\}\), \(\norm{D^{i}\Phi^{\mathrm{op}}}_{\Lebesgue^{\infty}(U^{\ell} + Q^{m}_{2\rho\eta})} \le C/\eta^{i-1}\),
\item
\label{itemOpeningiii}
for every \(\sigma^\ell \in \cU^\ell\), we have
\begin{equation}
\label{eq-density-Ushe1Ka4}
\Phi^{\mathrm{op}}(\sigma^\ell + Q^m_{2\rho\eta}) \subset \sigma^\ell + Q^m_{2\rho\eta} 
\end{equation}
 and
\begin{equation}\label{eq-density-752}
\norm{w \compose \Phi^{\mathrm{op}}}_{\Lebesgue^1(\sigma^{\ell} + Q^{m}_{2\rho\eta})}
\le C' \norm{w}_{\Lebesgue^1({\sigma^{\ell} + Q^{m}_{2\rho\eta}})}\text{,}
\end{equation}
\end{enumerate}
for some constants \(C\), \(C' > 0\) depending on \(m\).
\end{proposition}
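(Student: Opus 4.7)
The plan is to construct $\Phi^{\mathrm{op}}$ inductively on the dimension $i = 0, 1, \dotsc, \ell$ of the cells of $\cU^{\ell}$, opening first around vertices, then along edges, then faces, and so on, following the construction in \cite{BPVS_MO}*{Section~2}. The new ingredient here is the Fuglede estimate \eqref{eq-density-752}, which we obtain by an averaging argument in the spirit of \eqref{eqJEMS-20} applied to $w$ at each step of the induction.

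More precisely, at step $i$, for each cell $\sigma^i \in \cU^i$, I would first choose a local system of coordinates in which $\sigma^i = \omega \times \{0''\}$ with $\omega \subset \R^i$ open and $0'' \in \R^{m-i}$. Following the model of Section~\ref{section_opening}, I would fix a smooth cutoff map $\zeta_{\sigma^i} \colon \R^{m-i} \to \R^{m-i}$ that equals $0$ on $Q^{m-i}_{2\rho\eta}$ and the identity outside $Q^{m-i}_{3\rho\eta}$, with $\abs{D^j \zeta_{\sigma^i}} \le C/\eta^{j-1}$ for every $j \in \{1, \dotsc, k\}$, and consider the one-parameter family of maps $\Phi_{z}^{\sigma^i}(x', x'') = (x', \zeta_{\sigma^i}(x'' + z) - z)$ for $z \in Q^{m-i}_{\rho\eta}$. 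By Tonelli's theorem and a linear change of variable, as in the derivation of \eqref{eqJEMS-20},
\[
\fint_{Q^{m-i}_{\rho\eta}}\biggl(\int_{\sigma^i + Q^m_{5\rho\eta}} w \compose \Phi_z^{\sigma^i}\biggr) \dif z \le C \int_{\sigma^i + Q^m_{6\rho\eta}} w,
\]
so one can choose $z = z_{\sigma^i}$ making the inner integral no larger than twice the right-hand side average. This yields simultaneously the Fuglede control and the required derivative bounds and constancy properties on the orthogonal cubes of radius $\rho\eta$ around $\sigma^i$.

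The step $i$ opening is glued into the already-constructed map from step $i-1$ using a smooth partition of unity supported in the neighborhood $U^i + Q^m_{2\rho\eta}$ and vanishing in a thinner neighborhood of $U^{i-1}$; this is possible because the map built at step $i-1$ is already locally constant on $(m-i+1)$-dimensional orthogonal cubes around each lower-dimensional cell, so no new singularities are introduced on the overlap of adjacent cells (a lower-dimensional cell along which everything has already been opened). Properties \((\ref{itemOpeningi})\) and \((\ref{itemOpeningii})\) propagate through this inductive gluing exactly as in \cite{BPVS_MO}*{Section~2}, while the inclusion \eqref{eq-density-Ushe1Ka4} is a direct consequence of the fact that each $\Phi_{z_{\sigma^i}}^{\sigma^i}$ maps $\sigma^i + Q^m_{2\rho\eta}$ into itself by the choice of $z_{\sigma^i} \in Q^{m-i}_{\rho\eta}$ and the support of $\zeta_{\sigma^i}$.

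To finish, summing the local Fuglede estimates over the finitely many cells of $\cU^\ell$ and using the bounded overlap of the neighborhoods $\sigma^\ell + Q^m_{2\rho\eta}$ produces the global estimate \eqref{eq-density-752} and, in particular, the Fuglede property $\Phi^{\mathrm{op}} \in \Fuglede_w(\R^m; \R^m)$. The main difficulty, as in the construction of \cite{BPVS_MO}, is the bookkeeping needed to ensure that the averaging choice at step $i$ is compatible with the constancy properties already established at dimensions $j < i$ and that the derivative bounds compose correctly through the partitions of unity; once the inductive framework is in place, the Fuglede estimate is an essentially automatic consequence of the Fubini/averaging argument applied uniformly across the finitely many cells.
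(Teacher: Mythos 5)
Your overall strategy — the dimensionwise cylindrical opening of \cite{BPVS_MO}*{Section~2} combined with the Fubini/averaging argument of \eqref{eqJEMS-20} applied to the summable function \(w\) rather than to derivatives of a Sobolev function — is exactly the route the paper takes: its proof consists of invoking \cite{BPVS_MO}*{Proposition~2.1} and explaining the three adjustments (the scaling bound in part (ii), the zeroth-order averaging giving \eqref{eq-density-752}, and the per-cell inclusion, which the construction already provides, cf.\ \cite{BPVS:2017}*{Proposition~5.2}). There is, however, one step in your sketch that, as written, does not suffice. At step \(i\) you average \(\int w \compose \Phi^{\sigma^i}_{z}\) over \(z\) and then claim that summing these single-step, single-cell estimates yields \eqref{eq-density-752} for the composed map \(\Phi^{\mathrm{op}} = \Psi_{0} \compose \cdots \compose \Psi_{\ell}\). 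Controlling \(w \compose \Phi^{\sigma^i}_{z}\) alone does not control \(w \compose \Psi_{0} \compose \cdots \compose \Psi_{i-1} \compose \Phi^{\sigma^i}_{z}\): the earlier openings are not injective, so composing with them can destroy \(\Lebesgue^{1}\) bounds — which is precisely the phenomenon the averaging is meant to prevent. The repair stays inside your framework: at step \(i\) apply the Tonelli/averaging argument to the function \(w \compose \Psi_{0} \compose \cdots \compose \Psi_{i-1}\), whose local \(\Lebesgue^{1}\) norm is controlled by the previous steps (equivalently, average jointly over all translation parameters by iterated Tonelli), and the per-cell estimate \eqref{eq-density-752} then follows with a constant depending only on \(m\), since the number of steps is at most \(m\). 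This is how the cited construction actually proceeds: each step opens the already-opened object, not the original one.

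A second, smaller point: with your radii (\(\zeta = 0\) on \(Q^{m-i}_{2\rho\eta}\), \(\zeta = \Id\) outside \(Q^{m-i}_{3\rho\eta}\), \(z \in Q^{m-i}_{\rho\eta}\)), the map \(\Phi^{\sigma^i}_{z}\) is the identity only outside \(\sigma^{i} + Q^{m}_{4\rho\eta}\) and need not map \(\sigma^{\ell} + Q^{m}_{2\rho\eta}\) into itself, whereas the statement requires \(\Phi^{\mathrm{op}} = \Id\) near \(\R^{m} \setminus (U^{\ell} + Q^{m}_{2\rho\eta})\) together with the invariance \eqref{eq-density-Ushe1Ka4}; moreover, without a cutoff in the variables parallel to \(\sigma^{i}\), your map deforms the whole affine slab containing \(\sigma^{i}\), not just a neighborhood of the cell. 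Both issues are resolved by retuning the radii and localizing in the parallel directions, exactly as in \cite{BPVS_MO}, but they do need to be carried out for the statement as formulated.
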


In this statement, Property~\((\ref{itemOpeningi})\) encodes the opening nature of \(\Phi^{\mathrm{op}}\) near the set \(U^\ell\).
Property~\((\ref{itemOpeningii})\) ensures a compatible scaling of \(\Phi^{\mathrm{op}}\) with respect to the width of the neighborhood of \(U^\ell+ Q^m_{2\rho\eta}\) where \(\Phi^{\mathrm{op}}\) is deformed from the identity map.
The inclusion \eqref{eq-density-Ushe1Ka4} in Property~\((\ref{itemOpeningiii})\) encodes the localized character of the opening map \(\Phi^{\mathrm{op}}\), while \eqref{eq-density-752} is the fundamental property that relates \(\Phi^{\mathrm{op}}\) to \(w\) by ensuring the summability of \(w \compose \Phi^{\mathrm{op}}\) and the local control of its \(\Lebesgue^1\)~norm.

We refer the reader to \cite{BPVS_MO}*{Proposition~2.1} for the proof of Proposition~\ref{proposition_ouverture-new}.
There are three differences between the statement above and \cite{BPVS_MO}*{Proposition~2.1}.
The first one is that the estimate $(\ref{itemOpeningii})$ is not explicitly stated  in \cite{BPVS_MO}*{Proposition~2.1}. 
However, it can be easily deduced from a scaling argument.
The second one is that the estimate corresponding to \eqref{eq-density-752} in \cite{BPVS_MO} involves the \(\Lebesgue^{p}\) norm of the derivatives of \(u\) instead of the \(\Lebesgue^{1}\) norm of \(w\). 
We thus rely on the zeroth order counterpart of \cite{BPVS_MO}*{Proposition~2.1}, which makes unnecessary the approximation argument provided by \cite{BPVS_MO}*{Lemma~2.4}.
The idea of the proof is explained in Section~\ref{section_opening}, based on the estimate \eqref{eqJEMS-20}.
The third difference between the statement above and \cite{BPVS_MO}*{Proposition~2.1} is related to the inclusion \eqref{eq-density-Ushe1Ka4} above. 
Whereas it is now formulated in terms of each cell \(\sigma^\ell\), it was stated  in \cite{BPVS_MO} in the weaker form 
\begin{equation}
    \label{eqStrong1-207}
    \Phi^{\mathrm{op}}(U^\ell + Q^m_{2\rho\eta}) \subset U^\ell + Q^m_{2\rho\eta}.
\end{equation}
Still, we observe that the construction there actually gives the stronger inclusion \eqref{eq-density-Ushe1Ka4} above.
This is explained in detail in the proof of \cite{BPVS:2017}*{Proposition~5.2}.

\medskip

We now explain how to combine Propositions~\ref{propositionFugledeWkp} and~\ref{proposition_ouverture-new} to rigorously justify the general opening construction in the framework of Sobolev functions.
To this end, take a subskeleton \(\cU^{\ell}\) of a cubication in \( \R^{m} \) of radius \(\eta>0\) and let \(0<\rho<1/2\).
Given \( u \in \Sobolev^{k, p}(U^{\ell} + Q_{2\rho\eta}^{m}) \), denote by \( \widehat{w} \colon U^{\ell} + Q_{2\rho\eta}^{m} \to [0, + \infty] \) a Sobolev detector for \(u\) provided by Proposition~\ref{propositionFugledeWkp}.
We extend \( \widehat{w} \) by zero to \( \R^{m} \) and apply Proposition~\ref{proposition_ouverture-new} with \(\widehat{w}\) to obtain a map \( \Phi^{\mathrm{op}} \in \Fuglede_{\hat{w}}(\R^{m}; \R^{m}) \) obtained by opening.
From Property~$(\ref{itemOpeningiii})$ above, the inclusion \eqref{eqStrong1-207} is satisfied and then
\[
\Phi^{\mathrm{op}}|_{U^{\ell} + Q_{2\rho\eta}^{m}} \in \Fuglede_{\hat{w}}(U^{\ell} + Q_{2\rho\eta}^{m}; U^{\ell} + Q_{2\rho\eta}^{m}).
\]
It then follows from Proposition~\ref{propositionFugledeWkp} that
\( u \compose \Phi^{\mathrm{op}} \in \Sobolev^{k, p}(U^{\ell} + Q_{2\rho\eta}^{m}) \) and, for every \( j \in \{1, \dots, k\} \),
\resetconstant
  \begin{equation}
    \label{eq-density-281}
  \norm{D^{j}(u \compose \Phi^{\mathrm{op}})}_{\Lebesgue^{p}(U^{\ell} + Q_{2\rho\eta}^{m})}
  \le \C \, \norm{\widehat{w} \compose \Phi^{\mathrm{op}}}_{\Lebesgue^1(U^{\ell} + Q_{2\rho\eta}^{m})} ^{1/p} \sum_{i=1}^{j}{B_{i} \norm{D^{i}u}_{\Lebesgue^{p}(U^{\ell} + Q_{2\rho\eta}^{m})}}.
  \end{equation}
From the definition of the constant \( B_{i} \) in Proposition~\ref{propositionFugledeWkp} and Property~$(\ref{itemOpeningii})$, we have
\begin{equation}
    \label{eq-density-232}
    B_i 
     \le \C \sum\limits_{r_{1} + \dots + r_{i} = j}{\eta^{1-r_{1}} \cdots \eta^{1-r_{i}}}
    = \C \eta^{i - j}.
\end{equation}
Since \( \int_{\R^{m}} \widehat{w} \le 1 \), combining \eqref{eq-density-752}, \eqref{eq-density-281} and \eqref{eq-density-232}, one thus gets
  \begin{equation}
    \label{eq-density-288}
  \norm{D^{j}(u \compose \Phi^{\mathrm{op}})}_{\Lebesgue^{p}(U^{\ell} + Q_{2\rho\eta}^{m})}
  \le \C \, \sum_{i=1}^{j}{\eta^{i-j} \norm{D^{i}u}_{\Lebesgue^{p}(U^{\ell} + Q_{2\rho\eta}^{m})}}.
  \end{equation}
  
For the sake of the proof of Theorem~\ref{thm_density_manifold_open}, such a global estimate is insufficient for our purposes.
A minor modification of this argument yields a counterpart of \eqref{eq-density-288} in a neighborhood of each cell \( \sigma^{\ell} \in \cU^{\ell} \).
More precisely,

\begin{corollary}
    \label{corollaryOpeningCubicationSobolev}
    Let \(\cU^{\ell}\) be a subskeleton of a cubication in \( \R^{m} \) of radius \(\eta>0\), let \(0<\rho<1/2\), and let \(O \supset U^{\ell} + Q_{2\rho\eta}^{m}\) be an open set.
    If \( u \in \Sobolev^{k, p}(O) \), then there exist a summable function \( w \colon \R^{m} \to [0, +\infty] \) and a smooth map \(\Phi^{\mathrm{op}} \in \Fuglede_{w} (\R^{m}; \R^{m})\) as in  Proposition~\ref{proposition_ouverture-new} such that
    \[
    u \compose \Phi^{\mathrm{op}} \in \Sobolev^{k, p}(O)
    \]
    with \(u \compose \Phi^{\mathrm{op}} = u\) in \(O \setminus (U^{\ell} + Q_{2\rho\eta}^{m})\) and, for every \( \sigma^{\ell} \in \cU^{\ell} \) and every \( j \in \{1, \dots, k\} \), 
  \begin{equation}
    \label{eq-density-301}
  \norm{D^{j}(u \compose \Phi^{\mathrm{op}})}_{\Lebesgue^{p}(\sigma^{\ell} + Q_{2\rho\eta}^{m})}
  \le C'' \, \sum_{i=1}^{j}{\eta^{i - j} \norm{D^{i}u}_{\Lebesgue^{p}(\sigma^{\ell} + Q_{2\rho\eta}^{m})}}.
  \end{equation}
\end{corollary}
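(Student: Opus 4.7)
The strategy is essentially a cell-by-cell refinement of the argument already sketched between Proposition~\ref{proposition_ouverture-new} and Corollary~\ref{corollaryOpeningCubicationSobolev}, taking advantage of the localized inclusion~\eqref{eq-density-Ushe1Ka4} in Property~$(\ref{itemOpeningiii})$. The point is that the global detector $\widehat w$ of Proposition~\ref{propositionFugledeWkp} only controls the global \(\Sobolev^{k,p}\)~norm, so to get the per-cell estimate~\eqref{eq-density-301} one must build a detector that is already localized around each cell.

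Concretely, I would proceed as follows. For each \(\sigma^{\ell}\in\cU^{\ell}\), write \(V_{\sigma^{\ell}}\vcentcolon=\sigma^{\ell}+Q_{2\rho\eta}^{m}\subset O\) and apply Proposition~\ref{propositionFugledeWkp} to \(u|_{V_{\sigma^{\ell}}}\in\Sobolev^{k,p}(V_{\sigma^{\ell}})\) to obtain a summable function \(w_{\sigma^{\ell}}\colon V_{\sigma^{\ell}}\to[0,+\infty]\) with \(\int_{V_{\sigma^{\ell}}}w_{\sigma^{\ell}}\le 1\). Extend \(w_{\sigma^{\ell}}\) by zero to \(\R^{m}\), still denoted \(w_{\sigma^{\ell}}\), and set
\[
w = \sum_{\sigma^{\ell}\in\cU^{\ell}}w_{\sigma^{\ell}}\text{,}
\]
which is summable since \(\cU^{\ell}\) is finite. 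Then apply Proposition~\ref{proposition_ouverture-new} to this \(w\) to get a smooth map \(\Phi^{\mathrm{op}}\in\Fuglede_{w}(\R^{m};\R^{m})\) with Properties~$(\ref{itemOpeningi})$--$(\ref{itemOpeningiii})$; in particular \(\Phi^{\mathrm{op}}=\Id\) outside \(U^{\ell}+Q_{2\rho\eta}^{m}\), so \(u\compose\Phi^{\mathrm{op}}=u\) in \(O\setminus(U^{\ell}+Q_{2\rho\eta}^{m})\).

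Fix now \(\sigma^{\ell}\in\cU^{\ell}\). The crucial inclusion \(\Phi^{\mathrm{op}}(V_{\sigma^{\ell}})\subset V_{\sigma^{\ell}}\) from~\eqref{eq-density-Ushe1Ka4}, together with the pointwise bound \(w\ge w_{\sigma^{\ell}}\) on \(V_{\sigma^{\ell}}\), gives
\[
\Phi^{\mathrm{op}}|_{V_{\sigma^{\ell}}}\in\Fuglede_{w_{\sigma^{\ell}}}(V_{\sigma^{\ell}};V_{\sigma^{\ell}}).
\]
Proposition~\ref{propositionFugledeWkp} applied to \(u|_{V_{\sigma^{\ell}}}\) and \(\Phi=\Phi^{\mathrm{op}}|_{V_{\sigma^{\ell}}}\) then yields \(u\compose\Phi^{\mathrm{op}}\in\Sobolev^{k,p}(V_{\sigma^{\ell}})\) with, for every \(j\in\{1,\dots,k\}\),
\[
\norm{D^{j}(u\compose\Phi^{\mathrm{op}})}_{\Lebesgue^{p}(V_{\sigma^{\ell}})}
\le C'''\,\norm{w_{\sigma^{\ell}}\compose\Phi^{\mathrm{op}}}_{\Lebesgue^{1}(V_{\sigma^{\ell}})}^{1/p}\sum_{i=1}^{j}B_{i}\norm{D^{i}u}_{\Lebesgue^{p}(V_{\sigma^{\ell}})}\text{.}
\]
Estimate~\eqref{eq-density-752} gives \(\norm{w_{\sigma^{\ell}}\compose\Phi^{\mathrm{op}}}_{\Lebesgue^{1}(V_{\sigma^{\ell}})}\le C'\norm{w_{\sigma^{\ell}}}_{\Lebesgue^{1}(V_{\sigma^{\ell}})}\le C'\), while Property~$(\ref{itemOpeningii})$ and the definition of \(B_{i}\) imply \(B_{i}\le C\eta^{i-j}\) as in~\eqref{eq-density-232}. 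Combining these bounds yields precisely~\eqref{eq-density-301}.

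The remaining point is that \(u\compose\Phi^{\mathrm{op}}\in\Sobolev^{k,p}(O)\) globally: this is routine, since \(O\) is covered by the finitely many open sets \(V_{\sigma^{\ell}}\) (on each of which we have just verified \(\Sobolev^{k,p}\)~regularity) together with the open set \(O\setminus(U^{\ell}+\overline{Q}_{\rho\eta}^{m})\) on which \(u\compose\Phi^{\mathrm{op}}=u\), and Sobolev regularity is local. The only delicate aspect I anticipate is making sure that the per-cell detector \(w_{\sigma^{\ell}}\) really does govern \(\Phi^{\mathrm{op}}|_{V_{\sigma^{\ell}}}\), which is exactly where the stronger cellwise inclusion~\eqref{eq-density-Ushe1Ka4} (as opposed to the global version~\eqref{eqStrong1-207}) is indispensable — this is the reason the authors took care to emphasize this strengthening of~\cite{BPVS_MO}*{Proposition~2.1}.
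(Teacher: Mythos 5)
Your proposal is correct and follows essentially the same route as the paper's proof: cellwise Sobolev detectors \(w_{\sigma^{\ell}}\) from Proposition~\ref{propositionFugledeWkp}, a single summable \(w\) dominating them with \(\int_{\sigma^{\ell}+Q_{2\rho\eta}^{m}}w\) bounded by a dimensional constant (your sum over the finitely overlapping cells is exactly how the paper arranges this), the cellwise inclusion \eqref{eq-density-Ushe1Ka4} to run the estimate of Proposition~\ref{propositionFugledeWkp} on each \(\sigma^{\ell}+Q_{2\rho\eta}^{m}\), and locality for the global \(\Sobolev^{k,p}\)~regularity. The only cosmetic point is that \eqref{eq-density-752} is stated for the input function \(w\), so the bound on \(\norm{w_{\sigma^{\ell}}\compose\Phi^{\mathrm{op}}}_{\Lebesgue^{1}}\) should pass through \(w_{\sigma^{\ell}}\le w\), which changes nothing.
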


\begin{proof}
For each \( \sigma^{\ell} \in \cU^{\ell} \), we take the Sobolev detector \( w_{\sigma^{\ell}} \colon  \sigma^{\ell} + Q^{m}_{2\rho\eta} \to [0, +\infty] \) given by Proposition~\ref{propositionFugledeWkp} applied to \( \manfV = \sigma^{\ell} + Q^{m}_{2\rho\eta} \) and the restriction \( u|_{\sigma^{\ell} + Q^{m}_{2\rho\eta}} \).
Let \( w \colon \R^{m} \to [0, \infty] \) be a summable function such that \( w \ge w_{\sigma^{\ell}} \) in \( \sigma^{\ell} + Q^{m}_{2\rho\eta} \) for every \( \sigma^{\ell} \in \cU^{\ell} \).
Since \( \cU^{\ell} \) is a subskeleton of a cubication, there exists such a function whose integrals \( \int_{\sigma^{\ell} + Q^{m}_{2\rho\eta}} w \) are bounded from above by a constant depending only on \( m \).
Proceeding as above with \( U^{\ell} + Q_{2\rho\eta}^{m} \) replaced by \(  \sigma^{\ell} + Q^{m}_{2\rho\eta} \)\,, from \eqref{eq-density-752} we now get \eqref{eq-density-301}.
Moreover, since \(\Phi^{\mathrm{op}} = \Id\) in a neighborhood of \(O \setminus (U^{\ell} + Q_{2\rho\eta}^{m})\) and \( u \compose \Phi^{\mathrm{op}} \) is a \( \Sobolev^{k, p} \)~function on each open set \(\sigma^{\ell} + Q_{2\rho\eta}^{m} \), we conclude that \( u \compose \Phi^{\mathrm{op}} \in \Sobolev^{k, p}(O) \).
\end{proof}

Note that, around each cell \( \sigma^{\ell} \in \cU^{\ell} \), the function \( u \compose \Phi^{\mathrm{op}} \) depends on at most \( \ell \) coordinate variables.
Assuming that \emph{\( u \) is bounded}, it follows from the Gagliardo-Nirenberg interpolation inequality that the restriction \( u|_{U^{\ell} + Q_{2\rho\eta}^{m}} \) is a \( \Sobolev^{1, kp} \)~function.
For \( \ell < kp \), we then deduce from Morrey's imbedding that \( u \compose \Phi^{\mathrm{op}}|_{U^{\ell} + Q_{2\rho\eta}^{m}} \) equals almost everywhere a continuous function.
It is possible to handle the case where the equality \( \ell = kp \) holds using the \( \VMO \)~setting:

\begin{proposition}
    \label{propositionOpeningVMO}
    Let \( \cS^{m} \) be a cubication in \( \R^{m} \) of radius \( \eta > 0 \) containing \( \cU^{m} \) and let \( u \in (\Sobolev^{k, p} \cap L^{\infty})(S^{m} + Q_{2\rho\eta}^{m}) \).
    If \( \ell \le kp \), then the summable function \(w \colon \R^m \to [0,+\infty]\) and the smooth map \(\Phi^{\mathrm{op}} \in \Fuglede_{w} (\R^{m}; \R^{m})\) given by Corollary~\ref{corollaryOpeningCubicationSobolev} can be chosen so that the restriction \( u \compose \Phi^{\mathrm{op}}|_{U^{\ell} + Q_{\rho\eta}^{m}} \) is a \(\VMO\)~function such that, for every \( a \in U^{\ell} + Q^m_{\rho\eta/2} \) and every \(0 < r \le \rho\eta/2\), 
\[
\fint\limits_{Q_r^m (a)}\fint\limits_{Q_r^m (a)} \abs{u \compose \Phi^{\mathrm{op}}(x) - u \compose \Phi^{\mathrm{op}} (y)} \dif x \dif y 
\le \frac{C'''}{\eta^{\frac{m}{kp} - 1}} \norm{Du}_{\Lebesgue^{kp}(\sigma^{m} + Q_{2\rho\eta}^m)}\text{,}
\]
where \(\sigma^{m} \in \cS^{m} \) is such that \(a \in \sigma^m + Q^m_{\rho\eta/2}\)\,, and \(C''' > 0\) is a constant that depends on \(m\), \(k\), \(p\) and \(\rho\).
\end{proposition}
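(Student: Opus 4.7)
The plan is to exploit that, after opening, the composition $u \compose \Phi^{\mathrm{op}}$ depends effectively on at most $\ell$ variables in a neighborhood of each cell $\sigma^{\ell} \in \cU^{\ell}$, thereby reducing the $m$-dimensional mean oscillation estimate to an $\ell$-dimensional one in which the critical Sobolev imbedding $\Sobolev^{1, kp} \hookrightarrow \VMO$ applies because $\ell \leq kp$. As a preliminary step, I would enlarge the summable function $w$ produced by Corollary~\ref{corollaryOpeningCubicationSobolev} so that it also dominates $\abs{u}^{kp} + (\Maximal \abs{Du})^{kp}$; this is legitimate since $u \in \Sobolev^{k, p} \cap \Lebesgue^{\infty}$ yields $u \in \Sobolev^{1, kp}$ locally by the Gagliardo--Nirenberg interpolation inequality.

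The next step is a geometric reduction. Fix $\sigma^{\ell} \in \cU^{\ell}$ and choose local coordinates $x = (x', x'')$ adapted to $\sigma^{\ell}$, with $\sigma^{\ell}$ contained in $\R^{\ell} \times \{0''\}$. Property~$(\ref{itemOpeningi})$ of Proposition~\ref{proposition_ouverture-new} asserts that $\Phi^{\mathrm{op}}$ is constant in the $x''$-directions on $\sigma^{\ell} + Q^{m}_{\rho\eta}$, so there exists a Lipschitz map $\tilde{\Phi} \colon \R^{\ell} \to \R^{m}$, with Lipschitz constant bounded by property~$(\ref{itemOpeningii})$, such that $\Phi^{\mathrm{op}}(x', x'') = \tilde{\Phi}(x')$ throughout that neighborhood. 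For $a \in U^{\ell} + Q^{m}_{\rho\eta/2}$ and $0 < r \leq \rho\eta/2$, selecting a cell $\sigma^{\ell}$ with $a \in \sigma^{\ell} + Q^{m}_{\rho\eta/2}$ and an $m$-cube $\sigma^{m} \in \cS^{m}$ having $\sigma^{\ell}$ as a face, we have $Q^{m}_{r}(a) \subset \sigma^{\ell} + Q^{m}_{\rho\eta}$, and Fubini's theorem reduces the $m$-dimensional mean oscillation on $Q^{m}_{r}(a)$ to the $\ell$-dimensional mean oscillation of $\tilde{v} \vcentcolon= u \compose \tilde{\Phi}$ on $Q^{\ell}_{r}(a')$.

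The heart of the proof is then the Morrey--Campanato estimate in dimension $\ell$: since $\ell \leq kp$,
\begin{equation*}
\fint_{Q^{\ell}_{r}(a')} \fint_{Q^{\ell}_{r}(a')} \abs{\tilde{v}(x') - \tilde{v}(y')} \dif x' \dif y' \leq C r^{1 - \ell/(kp)} \norm{D\tilde{v}}_{\Lebesgue^{kp}(\sigma^{\ell})},
\end{equation*}
where it is crucial to place the $\Lebesgue^{kp}$ norm on the full cell $\sigma^{\ell}$ to capture the correct $\eta$-scaling. The pointwise bound $\abs{D\tilde{v}} \leq C \abs{Du \compose \tilde{\Phi}}$, Fubini across the $(m - \ell)$ orthogonal directions on $\sigma^{\ell} \times Q^{m-\ell}_{\rho\eta}$ (producing a factor $\eta^{-(m-\ell)/(kp)}$), and the pull-back inequality~\eqref{eq-density-752} applied with $w = \abs{Du}^{kp}$ together yield
\begin{equation*}
\norm{D\tilde{v}}_{\Lebesgue^{kp}(\sigma^{\ell})} \leq C \eta^{-(m-\ell)/(kp)} \norm{Du}_{\Lebesgue^{kp}(\sigma^{m} + Q^{m}_{2\rho\eta})}.
\end{equation*}
Using $r \leq \rho\eta/2$ together with $1 - \ell/(kp) \geq 0$ to estimate $r^{1-\ell/(kp)} \leq C \eta^{1-\ell/(kp)}$, the assembly of the $\eta$-exponents produces precisely the announced bound $C \eta^{1 - m/(kp)} \norm{Du}_{\Lebesgue^{kp}(\sigma^{m} + Q^{m}_{2\rho\eta})}$.

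The main obstacle will be the critical case $\ell = kp$: the Morrey exponent $1 - \ell/(kp)$ then vanishes and the above estimate no longer yields the required decay of the mean oscillation as $r \to 0$. In that case I would instead use the Morrey inequality with the $\Lebesgue^{kp}$ norm taken on the subcube $Q^{\ell}_{r}(a')$, so that the absolute continuity of the integral forces $\norm{D\tilde{v}}_{\Lebesgue^{\ell}(Q^{\ell}_{r}(a'))} \to 0$ as $r \to 0$ and yields the qualitative $\VMO$ property; the quantitative bound from the previous paragraph still applies without modification.
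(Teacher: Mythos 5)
Your argument is essentially the paper's proof: the paper defers to \cite{BPVS_MO}*{Addendum~2 to Proposition~2.1}, whose argument is precisely your reduction (opening makes \(u \compose \Phi^{\mathrm{op}}\) depend on at most \(\ell\) variables near each cell of \(\cU^{\ell}\), then the \(\ell\)-dimensional Poincar\'e inequality with \(\ell \le kp\), the \(\Sobolev^{1, kp}\) opening estimate, and Fubini in the \(m-\ell\) orthogonal directions), followed by the same conversion of the factor \(r^{1-\ell/(kp)}\eta^{-(m-\ell)/(kp)}\) into \(\eta^{1-m/(kp)}\) using \(r \le \rho\eta/2\) and \(\ell \le kp\). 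The only adjustment: the localized bound \(\norm{D(u\compose\Phi^{\mathrm{op}})}_{\Lebesgue^{kp}(\sigma^{\ell}+Q^{m}_{2\rho\eta})}\le C\norm{Du}_{\Lebesgue^{kp}(\sigma^{\ell}+Q^{m}_{2\rho\eta})}\) should be obtained by including a \(\Sobolev^{1,kp}\)~detector in \(w\) (Corollary~\ref{corollaryOpeningCubicationSobolev} applied with order \((1,kp)\), as in \eqref{eqJEMS-482}) rather than by invoking \eqref{eq-density-752} literally ``with \(w=\abs{Du}^{kp}\)'', since \eqref{eq-density-752} only controls the particular summable function used to construct \(\Phi^{\mathrm{op}}\).
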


As \(\ell \le kp\), the proof can be found in \cite{BPVS_MO}*{Addendum~2 to Proposition~2.1}, where the statement deals with \( \Sobolev^{1, kp} \)~functions.
The estimate there is provided for points \(a \in \sigma^m\) with \(\sigma^m \in \cU^m\) such that \(Q_r^m (a) \subset U^{\ell} + Q^m_{\rho\eta}\)\,.
However, the proof only requires that \(Q_r^m (a) \subset U^{\ell} + Q^m_{\rho\eta}\)\,, which is always the case when \( a \in U^{\ell} + Q^m_{\rho\eta/2} \) and \(0 < r \le \rho\eta/2\) as we assume in Proposition~\ref{propositionOpeningVMO} above.
The estimate in \cite{BPVS_MO}*{Addendum~2 to Proposition~2.1} for the double average is given in terms of
\resetconstant
\[
\C \, \frac{r^{1-\frac{\ell}{kp}}}{\eta^{\frac{m-\ell}{kp}}} \norm{Du}_{\Lebesgue^{kp}(\sigma^{m} + Q_{2\rho\eta}^m)}
\]
in the right-hand side.
Since in our case we assume that \(\ell \le kp\) and \(r \le \rho\eta/2\), we have
\[
\frac{r^{1-\frac{\ell}{kp}}}{\eta^{\frac{m-\ell}{kp}}}
\le \frac{(\rho\eta/2)^{1-\frac{\ell}{kp}}}{\eta^{\frac{m-\ell}{kp}}}
\le \frac{\C}{\eta^{\frac{m}{kp} - 1}},
\]
which thus implies the estimate above.

\medskip



\medskip

Before concluding this section, we investigate whether it is possible to approximate a function by opened Sobolev functions, depending on the size of the set where the opening is performed.
More precisely, assume that we start with a \( \Sobolev^{k, p} \) function \( u \) and, for each \( \eta > 0 \) we are given a cubication \( \cU^{m}_{\eta} \) in \( \R^{m} \) of radius \( \eta \).
We can then consider the map \( \Phi^{\mathrm{op}}_{\eta} \) given by Corollary~\ref{corollaryOpeningCubicationSobolev} and ask whether the functions \( u \compose \Phi^{\mathrm{op}}_{\eta}  \) converge to \( u \) in \(  \Sobolev^{k, p} \) as \( \eta \to 0 \).
An answer to this problem is given in the following statement.

\begin{proposition}
    \label{corollaryOpeningConvergence}
    Let \(O \subset \R^m\) be a Lipschitz open set, let \( u \in (\Sobolev^{k, p} \cap \Lebesgue^{\infty})(O) \), and let  \(0<\rho<1/2\).
    If \( (\cU_{\eta}^m )_{0 < \eta < 1} \) is a family of cubications such that, for every \( 0 < \eta < 1\), \(U_{\eta}^m+Q^{m}_{2\rho\eta}\subset O\) and
    \[
    \abs{U_{\eta}^m} \le C \eta^{kp}\text{,}
    \] 
    then the maps \( \Phi^{\mathrm{op}}_{\eta} \) given by Corollary~\ref{corollaryOpeningCubicationSobolev} associated to \( \cU_{\eta}^{\ell} \) and \( u \) satisfy
    \[
    \lim_{\eta \to 0}{\norm{u \compose \Phi^{\mathrm{op}}_{\eta} - u}_{\Sobolev^{k, p}(O)}}
    = 0.
    \]
\end{proposition}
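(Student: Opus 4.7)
The plan is to exploit that $\Phi^{\mathrm{op}}_\eta$ coincides with the identity outside $U_\eta^\ell + Q^m_{2\rho\eta}$, so that $u \compose \Phi^{\mathrm{op}}_\eta$ and $u$ (together with all derivatives up to order $k$) agree outside this set. A standard covering bound gives $|U_\eta^\ell + Q^m_{2\rho\eta}| \le C_1|U_\eta^m| \le C_2\eta^{kp}$: each $\ell$-face of $\cU_\eta^m$ belongs to at most a bounded number of $m$-cubes, and the $2\rho\eta$-neighborhood of a single $\ell$-face has volume comparable to $\eta^m$, which when multiplied by the number of cubes gives a constant multiple of $|U_\eta^m|$. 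Combined with the boundedness of $u$, this yields the $\Lebesgue^p$ convergence at once:
\[
\norm{u \compose \Phi^{\mathrm{op}}_\eta - u}_{\Lebesgue^p(O)}^{p} \le 2^p \norm{u}_{\Lebesgue^\infty}^p \cdot C_2 \eta^{kp} \to 0.
\]

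For each $j \in \{1, \dots, k\}$, since $D^j(u \compose \Phi^{\mathrm{op}}_\eta) - D^j u$ also vanishes outside $U_\eta^\ell + Q^m_{2\rho\eta}$, the triangle inequality reduces the problem to proving that both $\norm{D^j u}_{\Lebesgue^p(U_\eta^\ell + Q^m_{2\rho\eta})}$ and $\norm{D^j(u \compose \Phi^{\mathrm{op}}_\eta)}_{\Lebesgue^p(U_\eta^\ell + Q^m_{2\rho\eta})}$ tend to $0$. The first limit follows immediately from absolute continuity of the integral, since $|D^j u|^p \in \Lebesgue^1(O)$. For the second, I would apply Corollary~\ref{corollaryOpeningCubicationSobolev} cell by cell, raise both sides to the $p$-th power, and sum over $\sigma^\ell \in \cU_\eta^\ell$ using the bounded overlap of the neighborhoods $\sigma^\ell + Q^m_{2\rho\eta}$, which gives
\[
\norm{D^j(u \compose \Phi^{\mathrm{op}}_\eta)}_{\Lebesgue^p(U_\eta^\ell + Q^m_{2\rho\eta})}^p
\le C_3 \sum_{i=1}^j \eta^{(i-j)p} \norm{D^i u}_{\Lebesgue^p(U_\eta^\ell + Q^m_{2\rho\eta})}^p.
\]

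The main obstacle is to absorb the negative powers $\eta^{i-j}$ for $i<j$. Here the $\Lebesgue^\infty$ hypothesis on $u$ enters decisively through the Gagliardo-Nirenberg interpolation inequality, which ensures $D^i u \in \Lebesgue^{kp/i}(O)$ for every $i \in \{1, \dots, k\}$. Hölder's inequality with exponents $k/i$ and $k/(k-i)$ then gives, on any measurable set $E$,
\[
\norm{D^i u}_{\Lebesgue^p(E)}^p \le \norm{D^i u}_{\Lebesgue^{kp/i}(E)}^p \, |E|^{1 - i/k}.
\]
Taking $E = U_\eta^\ell + Q^m_{2\rho\eta}$ and using $|E| \le C_2\eta^{kp}$, the exponent bookkeeping $\eta^{(i-j)p} \cdot \eta^{kp(1-i/k)} = \eta^{(k-j)p}$ yields
\[
\norm{D^j(u \compose \Phi^{\mathrm{op}}_\eta)}_{\Lebesgue^p(U_\eta^\ell + Q^m_{2\rho\eta})}^p
\le C_4 \, \eta^{(k-j)p} \sum_{i=1}^j \norm{D^i u}_{\Lebesgue^{kp/i}(U_\eta^\ell + Q^m_{2\rho\eta})}^p.
\]
When $j<k$, the prefactor $\eta^{(k-j)p}$ alone drives the right-hand side to $0$ using only that each $\Lebesgue^{kp/i}$ norm on $E$ is bounded by $\norm{D^i u}_{\Lebesgue^{kp/i}(O)}$. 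When $j=k$ the prefactor equals $1$, but each norm on the right tends to $0$ by absolute continuity of the integral applied to $|D^i u|^{kp/i} \in \Lebesgue^1(O)$, since $|U_\eta^\ell + Q^m_{2\rho\eta}| \to 0$. This concludes the proof.
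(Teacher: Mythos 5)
Your proof is correct and follows essentially the same route as the paper: split off the region $U_\eta^\ell + Q^m_{2\rho\eta}$ where $\Phi^{\mathrm{op}}_\eta \ne \Id$, bound its measure by $C\eta^{kp}$, invoke the opening estimate with the negative powers $\eta^{i-j}$, and absorb them via Gagliardo--Nirenberg plus H\"older on a small set (which is exactly the content of the paper's Lemma~\ref{lemmaOpeningApproximation}, which you have simply inlined, splitting $j<k$ and $j=k$). The only cosmetic differences are that you sum the cell-wise estimate of Corollary~\ref{corollaryOpeningCubicationSobolev} with bounded overlap instead of quoting the global estimate \eqref{eq-density-288}, and you treat the zeroth-order term directly with the $\Lebesgue^\infty$ bound rather than via convergence in measure; both are fine.
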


To answer this question, we rely on the following

\begin{lemma}
    \label{lemmaOpeningApproximation}
    Let \( (A_{\eta})_{0 < \eta < 1} \) be a family of Borel sets in \( O \subset \R^{m} \).
    If \( v \in \Lebesgue^{kp/i}(O) \) for some \( 0 < i \le k\) and if, for every \(0 < \eta < 1\), we have \( \abs{A_{\eta}} \le C \eta^{kp} \), then
    \[
    \lim_{\eta \to 0}{\eta^{i - k} \norm{v}_{\Lebesgue^{p}(A_{\eta})}}
    = 0.
    \]
\end{lemma}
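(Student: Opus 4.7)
\textbf{Proof plan for Lemma~\ref{lemmaOpeningApproximation}.} The strategy is to control the $L^p$~norm of $v$ on the small set $A_\eta$ by interpolating between the fixed global integrability $L^{kp/i}(O)$ and the smallness of $|A_\eta|$, and then to absorb the remaining factor $\eta^{i-k}$ using the hypothesis $|A_\eta| \le C\eta^{kp}$.

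First, I would apply Hölder's inequality on $A_\eta$ with exponents $k/i$ and $k/(k-i)$ to the product $|v|^p \cdot \chi_{A_\eta}$. Since $(kp/i)(i/k) = p$, this gives
\[
\int_{A_\eta} |v|^p
\le \biggl(\int_{A_\eta} |v|^{kp/i}\biggr)^{i/k} |A_\eta|^{(k-i)/k},
\]
so that
\[
\norm{v}_{\Lebesgue^p(A_\eta)}
\le \norm{v}_{\Lebesgue^{kp/i}(A_\eta)} \, |A_\eta|^{(k-i)/(kp)}.
\]
Next, I would insert the hypothesis $|A_\eta| \le C\eta^{kp}$, which yields $|A_\eta|^{(k-i)/(kp)} \le C^{(k-i)/(kp)} \eta^{k-i}$. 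Multiplying by $\eta^{i-k}$ then gives a bound
\[
\eta^{i-k}\norm{v}_{\Lebesgue^p(A_\eta)}
\le C' \norm{v}_{\Lebesgue^{kp/i}(A_\eta)},
\]
where the factor $\eta^{i-k}$ has been completely absorbed.

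Finally, since $v \in \Lebesgue^{kp/i}(O)$ and $|A_\eta| \le C\eta^{kp} \to 0$ as $\eta \to 0$, the absolute continuity of the Lebesgue integral (equivalently, equi-integrability of $|v|^{kp/i}$) implies
\[
\lim_{\eta \to 0}\norm{v}_{\Lebesgue^{kp/i}(A_\eta)} = 0,
\]
which together with the previous inequality yields the conclusion. I do not foresee any serious obstacle: the argument is a one-line Hölder estimate combined with absolute continuity, and the edge case $i=k$ is trivial since then $\eta^{i-k} = 1$ and one just invokes absolute continuity directly on $\|v\|_{L^p(A_\eta)}$.
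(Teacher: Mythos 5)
Your argument is correct and is essentially identical to the paper's proof: the same Hölder estimate $\norm{v}_{\Lebesgue^{p}(A_{\eta})}\le \abs{A_{\eta}}^{\frac{k-i}{kp}}\norm{v}_{\Lebesgue^{kp/i}(A_{\eta})}$, the same absorption of $\eta^{i-k}$ via $\abs{A_{\eta}}\le C\eta^{kp}$, and the same conclusion by absolute continuity of the integral as $\abs{A_{\eta}}\to 0$. Your remark on the case $i=k$ is consistent with the paper, where the exponent $\frac{k-i}{kp}$ simply vanishes.
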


\resetconstant
\begin{proof}[Proof of Lemma~\ref{lemmaOpeningApproximation}]
    By Hölder's inequality, we have
    \[
    \norm{v}_{\Lebesgue^{p}(A_{\eta})}
    \le \abs{A_{\eta}}^{\frac{k - i}{kp}}
    \norm{v}_{\Lebesgue^{{kp}/{i}}(A_{\eta})}.
    \]
    Thus, by the assumption on the measure of \( A_{\eta} \),
    \[
    0 \le \eta^{i - k}\norm{v}_{\Lebesgue^{p}(A_{\eta})}
    \le C^{\frac{k-i}{kp}}\norm{v}_{\Lebesgue^{{kp}/{i}} (A_{\eta})}.
    \]
    Since \( \abs{A_{\eta}} \to 0 \) as \( \eta \to 0 \), the quantity in the right-hand side converges to zero and the conclusion follows.
\end{proof}

\begin{proof}[Proof of Proposition~\ref{corollaryOpeningConvergence}]
    Since \( \Phi^{\mathrm{op}}_{\eta} = \Id \) in the complement of \( U^{\ell}_{\eta} + Q_{2\rho\eta}^{m} \), for every \( j \in \{1, \dots, k\} \)  we have
    \begin{multline}
    \label{eqStrong1-342}
    \norm{D^{j}(u \compose \Phi^{\mathrm{op}}_{\eta}) - D^{j}u}_{\Lebesgue^{p}(O)}\\
    \le \norm{D^{j}(u \compose \Phi^{\mathrm{op}}_{\eta})}_{\Lebesgue^{p}(U^{\ell}_{\eta} + Q_{2\rho\eta}^{m})} + \norm{D^{j}u}_{\Lebesgue^{p}(U^{\ell}_{\eta} + Q_{2\rho\eta}^{m})}.
    \end{multline}
    The radius of the cubication \(\cU^{m}_\eta\) being \(\eta\),  the measures of \( U^{m}_{\eta} \) and \( U^{\ell}_{\eta} + Q_{2\rho\eta}^{m} \) are comparable and thus,
\begin{equation}\label{eq-meas-comparable}
\abs{ U^{\ell}_{\eta} + Q_{2\rho\eta}^{m}} \le \C \eta^{kp}.
\end{equation}      
In particular,  \( \abs{U^{\ell}_{\eta} + Q_{2\rho\eta}^{m}} \to 0 \) as \( \eta \to 0 \), which implies that the last term in the right-hand side of ~\eqref{eqStrong1-342} converges to zero.
    
In order to estimate the first term, we rewrite \eqref{eq-density-288} using the map \(\Phi^{\mathrm{op}}_{\eta}\)\,,
    \begin{equation}
    \label{eqStrong1-347}
    \norm{D^{j}(u \compose \Phi^{\mathrm{op}}_{\eta})}_{\Lebesgue^{p}(U^{\ell}_{\eta} + Q_{2\rho\eta}^{m})}
  \le \C \sum_{i=1}^{j}{\eta^{i - j} \norm{D^{i}u}_{\Lebesgue^{p}(U^{\ell}_{\eta} + Q_{2\rho\eta}^{m})}}.
    \end{equation}
    Since \( u \) is bounded and \(O\) is a Lipschitz open set, by the Gagliardo-Nirenberg interpolation inequality we have \( D^{i}u \in \Lebesgue^{kp/i}(O) \).
    Using \eqref{eq-meas-comparable} and the fact that \( j \le k \), it  follows from Lemma~\ref{lemmaOpeningApproximation} that the right-hand side of \eqref{eqStrong1-347} converges to zero.
    Thus, as both terms in \eqref{eqStrong1-342} converge to zero, we deduce that
    \[
    \lim_{\eta \to 0}{\norm{D^{j}(u \compose \Phi^{\mathrm{op}}_{\eta}) - D^{j}u}_{\Lebesgue^{p}(O)}}
    = 0.
    \]
    To conclude the proof it suffices to recall that \( u = u \compose  \Phi^{\mathrm{op}}_{\eta} \) in the complement of \(  U^{\ell}_{\eta} + Q_{2\rho\eta}^{m} \), which implies that \( (u \compose \Phi^{\mathrm{op}}_{\eta})_{0 < \eta < 1} \) converges in measure to \( u \).
\end{proof}

\section{Thickening}
\label{subsection_thickening}

The thickening tool has been used in problems involving compact target manifolds~\cites{Bethuel-Zheng, Bethuel, Hang-Lin}, and allows one to extend a Sobolev map \(u\) defined on the boundary of a star-shaped domain to the whole domain, preserving the range of \(u\).
	We first explain the idea using the strategy of zero-degree homogenization in a typical model situation that we illustrate in Figure~\ref{figureJEMSThickeningDisc}.

\begin{figure}
\centering{}
\hfill\includegraphics[align=c]{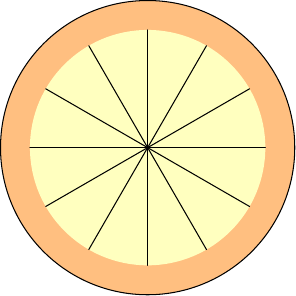}
\hfill\includegraphics[align=c,scale=0.9]{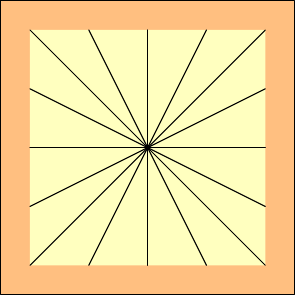}
\hfill{}
\caption{Thickening in a disc and in a square}
\label{figureJEMSThickeningDisc}
\end{figure}

Assume that we have a smooth function \(u \colon \cBall^{m} \setminus B_r^m \to \R \), with \(0 < r < 1\).
The goal is to deform \(u\) to obtain a new function defined in the thickened domain \(\cBall^m \setminus \{0\}\).
More precisely, given \(r < \rho < 1 \) we wish to construct a smooth map \(\Phi^{\mathrm{th}}  \colon  \cBall^{m} \setminus\{0\} \to \cBall^{m}\) such that
\begin{enumerate}[\((\mathrm{th}_1)\)]
	\item{}
	\label{itemJEMS-599} 
	\(u \compose \Phi^{\mathrm{th}}\) is smooth in \(\cBall^{m} \setminus \{0\}\),{}
	\item{}
	\label{itemJEMS-600}
	\(u \compose \Phi^{\mathrm{th}} = u\) in \(\cBall^{m} \setminus B_{\rho}^m\),{}
	\item{}
	\label{itemJEMS-604}
	\(u \compose \Phi^{\mathrm{th}} \in \Sobolev^{k, p}(\Ball^{m})\) and
	\[{}
	\norm{u \compose \Phi^{\mathrm{th}}}_{\Sobolev^{k, p}(\Ball^{m})}
	\le	C\norm{u}_{\Sobolev^{k, p}(\Ball^{m} \setminus \overline{B}{}_r^m)},
	\]
\end{enumerate}
for a constant \(C > 0\) depending on \(r\) and \(\rho\).

If one is willing to focus on the case of order \(k = 1\) and obtain a map \(u \compose \Phi^{\mathrm{th}}\) that is merely locally Lipschitz continuous in \(\cBall^{m} \setminus \{0\}\) instead of being smooth, then a good choice for \(\Phi^{\mathrm{th}}\) is
\[{}
\Phi^{\mathrm{th}}(x)
=
\begin{cases}
	x	& \text{if \(x \in \cBall^{m} \setminus B_{t}^{m}\),}\\
	t {x}/{\abs{x}} & \text{if \(x \in B_{t}^{m} \setminus \{0\}\),}
\end{cases}
\]
where \(r < t < \rho\) is chosen such that
	\begin{equation}
    \label{eqJEMS-510}
    \int_{\partial B_{t}^{m}} (\abs{u}^{p} + \abs{Du}^{p})  \dif\sigma
	\le \frac{1}{\rho - r} \int_{B_\rho^{m} \setminus B_r^m}{(\abs{u}^{p} + \abs{Du}^{p})}.
	\end{equation}
The existence of such a \(t\) follows from the integration formula in polar coordinates,
	\[{}
	\int_{r}^{\rho} \biggl( \int_{\partial B_{s}^{m}} (\abs{u}^{p} + \abs{Du}^{p})  \dif\sigma \biggr) \dif s
	= \int_{B_\rho^{m} \setminus B_r^m}{(\abs{u}^{p} + \abs{Du}^{p})}.
	\]

Although this standard approach nicely handles the first order case, it is not suitable for \(k \ge 2\) as it does not take into account the compatibility of the derivatives at the interface \(\partial B_{t}^{m}\).
To remedy this obstruction, we rely on an alternative approach that has been used in \cite{BPVS_MO}*{Section~4} where \(\Phi^{\mathrm{th}}  \colon  \cBall^{m} \setminus \{0\} \to \cBall^{m} \setminus \overline{B}{}^{m}_{r}\) is a diffeomorphism.
The map constructed in this case is also independent of the initial function \(u\).

\begin{proposition}
	\label{propositionJEMSThickeningModel}
	Given \(0 < r < \rho < 1\), there exists a diffeomorphism \(\Phi^{\mathrm{th}}  \colon  \cBall^{m} \setminus \{0\} \to \cBall^{m} \setminus \overline{B}{}_{r}^{m}\) such that \(\Phi^{\mathrm{th}} = \Id\) in \(\cBall^{m} \setminus B_\rho^m\) and
	\begin{enumerate}[\((i)\)]
		\item for every \(j \in \N_*\), there exists \(C' > 0\) depending on \(j\), \(m\), \(r\) and \(\rho\) such that
	\[
	\abs{D^j \Phi^{\mathrm{th}}(x)} \le  \frac{C'}{\abs{x}^j}\text{,}
	\]
	\item for every \(\beta < m\), there exists \(C'' > 0\) depending on \(\beta\), \(m\), \(r\) and \(\rho\) such that
	\[
	\Jacobian{m}{\Phi^{\mathrm{th}}}(x) \ge \frac{C''}{\abs{x}^\beta}.
	\]
	\end{enumerate}
\end{proposition}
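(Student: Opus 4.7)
The plan is to construct \(\Phi^{\mathrm{th}}\) as a radial diffeomorphism of the form \(\Phi^{\mathrm{th}}(x) = \phi(|x|)\, x/|x|\), where \(\phi \colon (0,1] \to (r,1]\) is a smooth strictly increasing bijection with \(\phi(s) = s\) for every \(s \ge \rho\). With such a \(\phi\), the map \(\Phi^{\mathrm{th}}\) is automatically smooth on \(\cBall^m \setminus \{0\}\), equals the identity on \(\cBall^m \setminus B_\rho^m\), and defines a diffeomorphism onto \(\cBall^m \setminus \overline{B}{}_r^m\). A direct computation also gives the radial formula
\[
\Jacobian{m}{\Phi^{\mathrm{th}}}(x) = \phi'(|x|) \Bigl(\frac{\phi(|x|)}{|x|}\Bigr)^{m-1},
\]
so that both (i) and (ii) reduce to pointwise conditions on \(\phi\) alone.

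I would then translate those two conditions. Writing \(\Phi^{\mathrm{th}}_k(x) = (\phi(|x|)/|x|)\,x_k\) and expanding by induction with the chain rule, one checks that \(|D^j \Phi^{\mathrm{th}}(x)|\) is controlled by sums of terms of the form \(|\phi^{(i)}(|x|)| \, |x|^{i-j}\) with \(0 \le i \le j\); since \(\phi\) is bounded, (i) follows as soon as \(|\phi^{(j)}(s)| \le C_j/s^j\) for every \(j \ge 1\). For (ii), the radial Jacobian formula turns the requirement into \(\phi'(s)\,\phi(s)^{m-1} \ge C_\beta\, s^{m-1-\beta}\), which after integration from \(0\) and using \(\phi(s) \to r\) is equivalent to
\[
\phi(s)^m - r^m \gtrsim s^{m-\beta} \quad \text{for every } \beta < m,
\]
that is, \(\phi(s)^m - r^m\) must tend to zero at \(0\) strictly more slowly than every positive power of \(s\).

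The concrete choice I would make is \(\phi(s)^m = r^m + c/(-\log s)\) for \(s \in (0, s_0]\), with constants \(c, s_0 > 0\) small enough that \(\phi(s_0) < \rho\); one then glues this profile to \(\phi(s) = s\) on \([\rho, 1]\) via a smooth monotone interpolation on \([s_0, \rho]\). The lower bound in (ii) is immediate since \(c/(-\log s)\) decays strictly more slowly than every \(s^{m-\beta}\) with \(\beta < m\). For (i), viewing \(\phi\) near the origin as \(\phi = G \compose u\) with \(u(s) = 1/(-\log s)\) and \(G\) smooth on a neighborhood of \(0\), an application of the Fa\`a di Bruno formula reduces the estimate \(|\phi^{(j)}(s)| \le C_j/s^j\) to the observation that each derivative \(u^{(j)}(s)\) is a polynomial expression in \(1/s\) and \(1/\log s\), and therefore satisfies \(s^j |u^{(j)}(s)| \le C_j\).

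The main obstacle, and the reason a purely polynomial profile is insufficient, is that any correction of the form \(\phi(s) - r \sim s^\alpha\) with \(\alpha > 0\) yields a Jacobian that blows up only like \(1/|x|^{m-1}\), excluding the range \(\beta \in (m-1, m)\). One genuinely needs a sub-polynomial (here, logarithmic) decay of \(\phi^m - r^m\); the technical crux is then to verify that this slow decay is still compatible with the polynomial bound \(|\phi^{(j)}(s)| \le C_j/s^j\) required for (i), which is precisely what the logarithmic substitution delivers.
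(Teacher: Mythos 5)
Your construction is essentially the paper's: the paper also takes a radial map \(\Phi^{\mathrm{th}}(x) = \varphi(\abs{x})\,x\) whose profile satisfies \(\varphi(s)s = r\bigl(1 + 1/\ln(1/s)\bigr)\) near the origin, obtains the same Jacobian expression \((\varphi(s))^{m-1}\bigl(\varphi(s)s\bigr)'\) (via the eigenvalues of a rank-one perturbation of \(\varphi\,\Id\), which is your \(\phi'(s)(\phi(s)/s)^{m-1}\) with \(\phi(s)=\varphi(s)s\)), and proves (i) by the Leibniz rule, so your choice \(\phi(s)^m = r^m + c/(-\log s)\) is only a cosmetic variant of the same logarithmic profile. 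One small caveat: your claim that the pointwise condition \(\phi'(s)\phi(s)^{m-1} \ge C_\beta s^{m-1-\beta}\) is ``equivalent'' after integration to \(\phi(s)^m - r^m \gtrsim s^{m-\beta}\) only holds in one direction, but this is harmless because for your explicit profile \(\phi'(s)\phi(s)^{m-1} = \tfrac{c}{m\,s(\log s)^2}\), and \(s^{m-\beta}(\log s)^2\) is bounded on \((0, s_0]\) for every \(\beta < m\), so the required pointwise bound is immediate.
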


\resetconstant
\begin{proof}
	We take \(\Phi^{\mathrm{th}}  \colon  \cBall^{m} \setminus \{0\} \to \cBall^{m} \setminus \overline{B}{}^{m}_{r}\) of the form
\(\Phi^{\mathrm{th}}(x) = \varphi(\abs{x}) x\) where \(\varphi  \colon  (0, 1]{} \to (0, \infty)\) is a smooth function such that
	\begin{enumerate}[(a)]
		\item \label{voisinage1} for every \(\rho \le s \le 1\), \(\varphi(s) = 1\),		
		\item \label{deriveevarphi} for every \(j \in \N \), there exists \(\Cl{eqJEMS-654-bis} > 0\) such that
		\(
		\abs{\varphi^{(j)}(s)} 
		\le {\Cr{eqJEMS-654-bis}}/{s^{j+1}},
		\) 
		\item \label{deriveeminoreevarphi}
		for every \(\alpha < 1\), there exists \(\Cl{eqJEMS-655} > 0\) such that
		\(
		(\varphi(s)s)' \ge {\Cr{eqJEMS-655}}/{s^{\alpha}},
		\)		
		\item \label{increasingvarphi}
		\(
		\lim\limits_{s \to 0}{\varphi(s)s} = r.
		\)		
	\end{enumerate}
	In a neighborhood of \(s = 0\), one can take for example
	\[{}
	\varphi(s){}
	= \frac{r}{s} \Bigl( 1 + \frac{1}{\ln{(1/s)}} \Bigr).
	\]
	
	We observe that \(\Phi^{\mathrm{th}}\) as above is then a diffeomorphism between \(\cBall^{m} \setminus \{0\}\) and \(\cBall^{m} \setminus \overline{B}{}_{r}^{m}\).{}
	Using the Leibniz rule, one deduces the estimate of \(\abs{D^{j}\Phi^{\mathrm{th}}}\) for every \(j \in \N_*\).{}
	To compute \(\Jacobian{m}{\Phi^{\mathrm{th}}}\), we first note that, for every \(v \in \R^{m}\),{}
	\[{}
	D\Phi^{\mathrm{th}}(x)[v]
	= \varphi(\abs{x}) v + \varphi'(\abs{x}) \frac{x \cdot v}{\abs{x}} x.
	\]
	Thus, \(D\Phi^{\mathrm{th}}(x)\) is a rank-one perturbation of \(\varphi(\abs{x})\Id\). 
	Hence, it has the eigenvalues \(\varphi(\abs{x})\), with multiplicity \(m - 1\), and \(\varphi(\abs{x}) + \varphi'(\abs{x}) \abs{x}\).{}
	We get
	\[{}
	\Jacobian{m}{\Phi^{\mathrm{th}}}(x)
	= (\varphi(\abs{x}))^{m - 1} \bigl( \varphi(\abs{x}) + \varphi'(\abs{x}) \abs{x} \bigr).
	\]
	Since \(\varphi(s) \ge r/s\) and, for every \(\alpha < 1\), we have \((\varphi(s)s)' \ge \Cr{eqJEMS-655}/s^{\alpha}\),  we deduce that
	\[{}
	\abs{\Jacobian{m}{\Phi^{\mathrm{th}}}(x)}
	\ge \frac{r^{m - 1} \Cr{eqJEMS-655}}{\abs{x}^{m - 1 + \alpha}}.
	\qedhere
	\]
\end{proof}

Using the proposition above, one can handle the model problem posed in the beginning of the section:

\begin{corollary}
    \label{corollaryThickeningBall}
 If \(kp < m\) and \(\Phi^{\mathrm{th}}\) is the diffeomorphism given by Proposition~\ref{propositionJEMSThickeningModel}, then \(u \compose \Phi^{\mathrm{th}}\) satisfies properties \((\mathrm{th}_{\ref{itemJEMS-599}})\)--\((\mathrm{th}_{\ref{itemJEMS-604}})\).
\end{corollary}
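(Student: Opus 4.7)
The plan is to verify the three conditions in turn, with the main work concentrated on the Sobolev estimate. Properties $(\mathrm{th}_{\ref{itemJEMS-599}})$ and $(\mathrm{th}_{\ref{itemJEMS-600}})$ follow immediately from the structural properties of $\Phi^{\mathrm{th}}$: Since $\Phi^{\mathrm{th}}\colon \cBall^{m}\setminus\{0\}\to\cBall^{m}\setminus\overline{B}{}^{m}_{r}$ is a diffeomorphism and $u$ is smooth on $\cBall^{m}\setminus\overline{B}{}^{m}_{r}$, the composition $u\compose \Phi^{\mathrm{th}}$ is smooth on $\cBall^{m}\setminus\{0\}$. Moreover, since $\Phi^{\mathrm{th}}=\Id$ on $\cBall^{m}\setminus B_{\rho}^{m}$, we have $u\compose \Phi^{\mathrm{th}}=u$ there.

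The core of the proof is the Sobolev estimate. I would apply the Faà di Bruno composition formula: for every $x\in\cBall^{m}\setminus\{0\}$ and every $j\in\{1,\dots,k\}$,
\[
\abs{D^{j}(u\compose\Phi^{\mathrm{th}})(x)}
\le C\sum_{i=1}^{j}\abs{D^{i}u(\Phi^{\mathrm{th}}(x))}\sum_{r_{1}+\cdots+r_{i}=j}\abs{D^{r_{1}}\Phi^{\mathrm{th}}(x)}\cdots\abs{D^{r_{i}}\Phi^{\mathrm{th}}(x)}.
\]
The pointwise bound $\abs{D^{r}\Phi^{\mathrm{th}}(x)}\le C'/\abs{x}^{r}$ from Proposition~\ref{propositionJEMSThickeningModel} makes each product of factors bounded by $C/\abs{x}^{j}$, so
\[
\abs{D^{j}(u\compose\Phi^{\mathrm{th}})(x)}^{p}
\le C\sum_{i=1}^{j}\frac{\abs{D^{i}u(\Phi^{\mathrm{th}}(x))}^{p}}{\abs{x}^{jp}}.
\]

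Next I would change variables $y=\Phi^{\mathrm{th}}(x)$ and use the lower bound on the Jacobian. This is the delicate step: One needs to absorb the factor $1/\abs{x}^{jp}$ against the reciprocal of the Jacobian. Choosing any $\beta$ with $kp\le\beta<m$, which is legitimate since $kp<m$, Proposition~\ref{propositionJEMSThickeningModel} yields $\Jacobian{m}{\Phi^{\mathrm{th}}}(x)\ge C''/\abs{x}^{\beta}$, and hence
\[
\frac{1}{\abs{x}^{jp}\Jacobian{m}{\Phi^{\mathrm{th}}}(x)}\le \frac{\abs{x}^{\beta-jp}}{C''}\le \frac{1}{C''},
\]
because $\beta\ge kp\ge jp$ and $\abs{x}\le 1$. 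Performing the change of variables in the integral then gives
\[
\int_{\Ball^{m}}\abs{D^{j}(u\compose\Phi^{\mathrm{th}})}^{p}
\le \Cl{cteProp}\sum_{i=1}^{j}\int_{\Ball^{m}\setminus\overline{B}{}_{r}^{m}}\abs{D^{i}u}^{p}
\le \Cr{cteProp}\,\norm{u}_{\Sobolev^{k,p}(\Ball^{m}\setminus\overline{B}{}_{r}^{m})}^{p}.
\]
For the $\Lebesgue^{p}$~estimate of $u\compose\Phi^{\mathrm{th}}$ itself, the same change of variables with any $0\le\beta<m$ or simply the fact that $u$ is bounded on $\cBall^{m}\setminus\overline{B}{}_{r}^{m}$ suffices. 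Summing the estimates over $j\in\{0,\dots,k\}$ yields $(\mathrm{th}_{\ref{itemJEMS-604}})$.

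The main obstacle is the careful selection of $\beta$: The whole construction hinges on $kp<m$, which is precisely what permits the Jacobian bound to dominate the singular factor $1/\abs{x}^{jp}$ up to the highest order $j=k$. Without this strict inequality, the integrals would diverge near the origin, and one would lose the Sobolev control. All remaining verifications are routine applications of Hölder's inequality and the change of variables formula.
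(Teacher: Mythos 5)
Your argument follows essentially the same route as the paper: Faà di Bruno together with the pointwise bounds \(\abs{D^{r}\Phi^{\mathrm{th}}(x)} \le C/\abs{x}^{r}\), absorption of the factor \(1/\abs{x}^{jp}\) into the Jacobian lower bound (the paper simply applies the bound with \(\beta = jp < m\), while you fix one \(\beta \in [kp, m)\); the two choices are interchangeable), and then the change of variables \(y = \Phi^{\mathrm{th}}(x)\), which lands the integrals on \(\Ball^{m} \setminus \overline{B}{}_{r}^{m}\). The estimates themselves are correct.

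The one point you leave unaddressed is the very last step of \((\mathrm{th}_{\ref{itemJEMS-604}})\): you establish that the \emph{classical} derivatives of \(u \compose \Phi^{\mathrm{th}}\), computed on \(\Ball^{m} \setminus \{0\}\), are \(p\)-summable with the right bound, but membership in \(\Sobolev^{k,p}(\Ball^{m})\) also requires that these pointwise derivatives coincide with the distributional derivatives across the singular point, i.e.\ that the singularity at \(0\) is removable in the Sobolev sense. This does not follow formally from the integral estimates alone; the paper closes the argument by invoking the standard removability lemma for functions smooth outside a closed set of vanishing \(\cH^{m-1}\) measure (\cite{Brezis-Mironescu}*{Lemma~1.10}), which applies here since \(m > kp \ge 1\) forces \(m \ge 2\). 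Adding this one citation (or a short cutoff argument near \(0\)) completes your proof; everything else is fine.
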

\resetconstant
\begin{proof}
	We take \(\Phi^{\mathrm{th}}\) given by Proposition~\ref{propositionJEMSThickeningModel}.
    Then, by construction \(u \compose \Phi^{\mathrm{th}}\) verifies \((\mathrm{th}_{\ref{itemJEMS-599}})\) and~\((\mathrm{th}_{\ref{itemJEMS-600}})\).
    We now prove that, for every \(j \in \{1, \dots, m\}\),{}
	\begin{equation}
		\label{eqJEMSThickeningCorollary-Model}
	\norm{D^{j}(u \compose \Phi^{\mathrm{th}})}_{\Lebesgue^{p}(\Ball^{m})}
	\le \C \sum_{i=1}^{j}\norm{D^{i}u}_{\Lebesgue^{p}(\Ball^{m} \setminus \overline{B}{}_{r}^{m})}.
	\end{equation}
	By the Faà di Bruno composition formula and the estimates of \(\abs{D^{i}\Phi^{\mathrm{th}}}\), for \( x \ne 0 \) we find
	\[{}
	\abs{D^{j}(u \compose \Phi^{\mathrm{th}})(x)}
	\le \C \sum_{i = 1}^{j} \abs{(D^{i} u \compose \Phi^{\mathrm{th}})(x)} \cdot \frac{1}{\abs{x}^{j}}. 
	\]
	Since \(jp \le kp < m\), we have \(1/\abs{x}^{jp} \le \C \, \Jacobian{m}{\Phi^{\mathrm{th}}}(x)\).{}
	Thus,
	\[
	\int_{\Ball^{m}}\abs{D^{j}(u \compose \Phi^{\mathrm{th}})}^{p}
	\le \C \sum_{i = 1}^{j} \int_{\Ball^{m}} \abs{D^{i} u \compose \Phi^{\mathrm{th}}}^{p} \Jacobian{m}{\Phi^{\mathrm{th}}}.
	\]
	The change of variables \(y = \Phi^{\mathrm{th}}(x)\) in the right-hand side then implies \eqref{eqJEMSThickeningCorollary-Model}.
	The \(\Lebesgue^{p}\) estimate for \(u \compose \Phi^{\mathrm{th}} \) follows along the same lines.
    Since \( u \compose \Phi^{\mathrm{th}} \) is smooth except at \( 0 \) and has its derivatives in \( \Lebesgue^{p} \), we conclude that \( u \compose \Phi^{\mathrm{th}} \in \Sobolev^{k, p}(\Ball^{m}) \), see \cite{Brezis-Mironescu}*{Lemma~1.10}.
\end{proof}

We have presented thickening with respect to a ball, but a similar strategy can be performed in a cube and more generally using a cubication \( \cS^{m} \), with \(\Phi^{\mathrm{th}}\) being the identity in a neighborhood of \(S^{m - 1}\) and smooth except at the centers of each cube in \( \cS^{m} \).
Such a construction can then be generalized to maps that are smooth in a neighborhood of the \(\ell\)-dimensional skeleton of the cubication, based on an iteration argument. 
The map obtained is smooth except on the dual skeleton of dimension \(\ell^{*} = m - \ell - 1\).
The precise meaning of dual skeleton we use is the following:

\begin{definition}
\label{definitionDualSkeleton}
Let \( \cS^{m} \) be a cubication in \( \R^{m} \) and let \( \ell \in \{0, \dots, m - 1 \}\).
The dual skeleton \(\cT^{\ell^*}\) of\/ \(\cS^\ell\) is the 
family of all \( \ell^{*} \)-dimensional cubes of the form \(\sigma^{\ell^*} + x - a\), where \(\sigma^{\ell^*} \in \cS^{\ell^*}\), \(a\) is the center and \(x\) a vertex of a cube of \(\cS^m\).
\end{definition}

For the proof of Theorem~\ref{thm_density_manifold_open}, we need a variant of Proposition~\ref{propositionJEMSThickeningModel} adapted to a cubication.
For the convenience of the reader, we write the statement using the notation that is employed in the proof of the theorem.

\begin{proposition}\label{propositionthickeningfromaskeleton}
Let \( \cU^{m} \subset \cS^m\) be cubications in \( \R^{m} \) of radius \( \eta > 0 \) and, given \( \ell \in \{0, \dots, m - 1 \}\), let \(\cZ^{\ell^*}\) be the dual skeleton of\/ \(\cU^\ell\).
Then, for every \( 0 < \rho < 1/2 \), there exists an injective smooth map \(\Phi^{\mathrm{th}}  \colon  \R^m \setminus Z^{\ell^*} \to \R^m\) such that
\begin{enumerate}[$(i)$]
\item for every \(\sigma^m \in \cS^m\), \(\Phi^{\mathrm{th}}(\sigma^m \setminus Z^{\ell^*}) \subset \sigma^m \setminus Z^{\ell^*}\),
\label{itempropositionthickeningfromaskeleton1}
\item 
\(\Phi^{\mathrm{th}}(U^m\setminus Z^{\ell^*}) \subset U^\ell + Q^{m}_{\rho\eta/2}\)\,,
\label{itempropositionthickeningfromaskeleton2}
\item \(\Phi^{\mathrm{th}} = \Id\) in \((U^\ell +Q^{m}_{\rho\eta/4}) \cup \bigl(\R^{m} \setminus(U^m + Q^{m}_{\rho\eta/2})\bigr)\),
\label{itempropositionthickeningfromaskeleton3}
\item for every \(j \in \N_*\) and every \(x \in  (U^m + Q^{m}_{\rho\eta/2})\setminus Z^{\ell^*}\),
\[
\abs{D^j \Phi^{\mathrm{th}}(x)} \le  
\frac{C'\eta}{d(x, Z^{\ell^*})^j}\text{,}
\]
for some constant \(C' > 0\) depending on \(j\), \(m\) and \(\rho\),
\label{itempropositionthickeningfromaskeleton4}
\item for every \(0 < \beta < \ell + 1\), every \(j \in \N_*\) and every \(x \in \R^m \setminus Z^{\ell^*}\),
\[
\eta^{j-1} \abs{D^j \Phi^{\mathrm{th}}(x)} 
    \le  C'' \bigl(\Jacobian{m}{\Phi^{\mathrm{th}}}(x)\bigr)^{{j}/{\beta}}\text{,}
\]
for some constant \(C'' > 0\) depending on \(\beta\), \(j\), \(m\) and \(\rho\).
\label{itempropositionthickeningfromaskeleton5}
\end{enumerate}
\end{proposition}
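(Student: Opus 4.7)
The plan is to construct $\Phi^{\mathrm{th}}$ cube by cube on $\cU^m$, following an inductive thickening scheme that mirrors the model radial construction of Proposition~\ref{propositionJEMSThickeningModel} but adapted to the combinatorial structure induced by $\cU^\ell$ and its dual skeleton $\cZ^{\ell^*}$. Outside $U^m + Q^m_{\rho\eta/2}$ we set $\Phi^{\mathrm{th}} = \Id$, which takes care of item~$(\ref{itempropositionthickeningfromaskeleton3})$ outside $\cU^m$ and immediately makes item~$(\ref{itempropositionthickeningfromaskeleton1})$ trivial for $\sigma^m \in \cS^m \setminus \cU^m$. The whole content of the proposition is thus contained in the construction within each $\sigma^m \in \cU^m$, and in verifying that the pieces glue smoothly across common faces.

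Within a single cube $\sigma^m \in \cU^m$ of radius $\eta$, I would proceed by induction on the codimension $r = m - i$ for $i = 0, 1, \ldots, \ell$, thickening successively away from the portion of $Z^{\ell^*}$ of codimension $i + 1$. Concretely: starting from the center of $\sigma^m$ (the codimension-$m$ piece of $Z^{m - 1}$ when $\ell = 0$), apply a radial zero-degree homogenization given by the model map of Proposition~\ref{propositionJEMSThickeningModel}, scaled to radius $\eta$, to push points away from the center toward the vertices of $\sigma^m$; then, working orthogonally to each $1$-dimensional edge of $\cU^1 \cap \sigma^m$, apply a product of radial homogenizations on the perpendicular $(m-1)$-cubes to thicken from the relevant $(m-2)$-dimensional dual cells toward the edge; iterate this construction up to dimension $\ell$. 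At each step the new map is chosen to equal the identity on a neighborhood of the $i$-skeleton already treated and on a neighborhood of $\partial\sigma^m$, which is exactly what ensures compatibility with the previous step and with the neighboring cubes, yielding item~$(\ref{itempropositionthickeningfromaskeleton1})$, item~$(\ref{itempropositionthickeningfromaskeleton3})$, and the smoothness of the global map off $Z^{\ell^*}$. The inclusion $(\ref{itempropositionthickeningfromaskeleton2})$ follows because the composition of these radial homogenizations maps $\sigma^m \setminus Z^{\ell^*}$ into an $\rho\eta/2$-neighborhood of $U^\ell$, by transparency of the model map (which maps $\cBall^m \setminus \{0\}$ onto $\cBall^m \setminus \overline{B}{}^m_r$) applied iteratively in each orthogonal direction.

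The derivative estimate $(\ref{itempropositionthickeningfromaskeleton4})$ follows directly from the corresponding estimate in Proposition~\ref{propositionJEMSThickeningModel}(i) after the scaling $x \mapsto x/\eta$, since at a point $x$ the relevant distance is $d(x, Z^{\ell^*})$, and the Faà di Bruno formula applied to a composition of at most $\ell + 1 \le m$ such radial homogenizations produces derivatives bounded by $C'\eta / d(x, Z^{\ell^*})^j$. For the Jacobian lower bound $(\ref{itempropositionthickeningfromaskeleton5})$, one uses Proposition~\ref{propositionJEMSThickeningModel}(ii) at each step: each radial homogenization contributes a factor $\Jacobian{m}{} \ge C''/|\cdot|^{\beta_i}$ with $\beta_i$ chosen strictly less than the codimension of the singularity treated at that step; multiplying over the $\ell + 1$ steps yields a lower bound $\Jacobian{m}{\Phi^{\mathrm{th}}}(x) \ge C''/d(x, Z^{\ell^*})^{\beta}$ for any $\beta < \ell + 1$, and combining with $(\ref{itempropositionthickeningfromaskeleton4})$ immediately gives the claimed estimate after absorbing $\eta$-powers.

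The main obstacle will be the gluing at interfaces of two cubes $\sigma_1^m, \sigma_2^m \in \cU^m$ sharing a common face: the construction inside each cube must agree on a full neighborhood of that face so that the combined map is smooth on $\R^m \setminus Z^{\ell^*}$. This is handled by choosing, at each inductive step, the radial cutoff so that the map equals the identity outside a small tube around the corresponding piece of the dual skeleton, and by exploiting the fact that $\cZ^{\ell^*}$ is, by Definition~\ref{definitionDualSkeleton}, the same geometric object from the point of view of any cube containing a given dual cell. The delicate book-keeping of which cutoffs are used at which codimension is essentially the content of~\cite{BPVS_MO}*{Section~4}, to which I would refer for the technical details, while emphasizing here that the localization~$(\ref{itempropositionthickeningfromaskeleton1})$ — a strengthening not explicitly present there — follows from the same construction since each radial building block is supported inside the cube where it is defined.
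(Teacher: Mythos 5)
Your proposal is essentially the paper's own route: the paper does not reconstruct \(\Phi^{\mathrm{th}}\) from scratch but invokes \cite{BPVS_MO}*{Proposition~4.1} (the thickening construction of Section~4 there), exactly as you do, and the only content beyond that citation is the strengthened item (iii) — the identity on \(U^\ell + Q^{m}_{\rho\eta/4}\) — which follows, in the spirit of your gluing remark, because \(\Phi^{\mathrm{th}} = \Psi_{\ell+1}\compose\cdots\compose\Psi_m\) with each \(\Psi_i = \Id\) on \(U^{i-1}+Q^{m}_{\epsilon_{i-1}\eta}\), where the parameters \(\epsilon_{i-1}\) can be chosen \(\ge \rho/4\). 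Two small corrections to your sketch: the genuinely new property relative to \cite{BPVS_MO} is (iii), not the per-cube localization (i) as you claim, and the composition consists of the \(m-\ell\) maps indexed by the face dimensions \(i=\ell+1,\dots,m\) (each pushing an \(i\)-face away from its center toward its boundary, down to dimension \(\ell+1\)), rather than \(\ell+1\) radial steps starting at the cube centers and ending at dimension \(\ell\).
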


This statement is essentially the same as \cite{BPVS_MO}*{Proposition~4.1}, where the reader can find the proof.
The only difference here is that Property~\((\ref{itempropositionthickeningfromaskeleton3})\)  appears in  \cite{BPVS_MO} in the weaker form \(\Phi^{\mathrm{th}} = \Id\) in \(\R^{m} \setminus (U^m + Q^{m}_{\rho\eta/2})\). 
However, the additional requirement that \(\Phi^{\mathrm{th}} = \Id\) on \(U^\ell + Q^{m}_{\rho\eta/{4}}\) is an easy consequence of the construction detailed in the proof. 
More specifically, \(\Phi^{\mathrm{th}}\) is defined as the compositions of maps \(\Psi_i\) with \(i = \ell+1, \dots, m\)\,:
\[
\Phi^{\mathrm{th}} = \Psi_{\ell+1}\compose \Psi_{\ell+2}\compose \dots \compose \Psi_m\,,
\]
where each map \(\Psi_i\) satisfies \( \Psi_i = \Id\) on \(U^{i-1}+Q^{m}_{\epsilon_{i-1}\eta}\) for some \(\epsilon_{i-1}\)  that can be chosen larger than \({\rho}/{4}\). 
By composition one deduces the desired property that \(\Phi^{\mathrm{th}} = \Id\) on \(U^\ell + Q^{m}_{\rho\eta/4}\).{}

We deduce the following consequence of Proposition~\ref{propositionthickeningfromaskeleton} for the composition of \(\Phi^{\mathrm{th}}\) with smooth functions:

\begin{corollary}
    \label{corollaryThickeningCubication}
    Let \(\Phi^{\mathrm{th}}  \colon  \R^m \setminus Z^{\ell^*} \to \R^m\) be the  injective smooth map given by Proposition~\ref{propositionthickeningfromaskeleton} and let \(O \supset U^{m} + Q^{m}_{\rho\eta/2} \) be an open set.
    Then, for every smooth function \( u \colon O \to \R \) with bounded derivatives, \( u \compose \Phi^{\mathrm{th}} \) is an \( \ClassR_{\ell^{*}} \)~function in \(O\) with singular set \( Z^{\ell^{*}} \) such that \( u \compose \Phi^{\mathrm{th}} = u \) in \( O \setminus (U^{m} + Q^{m}_{\rho\eta/2}) \).
    Moreover, if \( kp < \ell + 1 \) and \(O\) is bounded, then 
    \( u \compose \Phi^{\mathrm{th}} \in \Sobolev^{k, p}(O) \) and, 
    for every \( j \in \{1, \dots, k\} \), 
  \begin{equation*}
  \norm{D^{j}(u \compose \Phi^{\mathrm{th}})}_{\Lebesgue^{p}(U^{m} + Q_{\rho\eta/2}^{m})}
  \le C''' \, \sum_{i=1}^{j}{\eta^{i-j} \norm{D^{i}u}_{\Lebesgue^{p}(U^{m} + Q_{\rho\eta/2}^{m})}}.
  \end{equation*}
\end{corollary}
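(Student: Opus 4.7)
The plan is to follow the strategy of Corollary~\ref{corollaryThickeningBall}, combining the Faà di Bruno composition formula with the pointwise estimates on \(\Phi^{\mathrm{th}}\) from Proposition~\ref{propositionthickeningfromaskeleton} and a change of variables allowed by the injectivity of \(\Phi^{\mathrm{th}}\).

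First, I would settle the qualitative claims. Property~\((\ref{itempropositionthickeningfromaskeleton3})\) immediately gives \(u \compose \Phi^{\mathrm{th}} = u\) on \(O \setminus (U^m + Q^m_{\rho\eta/2})\); Properties~\((\ref{itempropositionthickeningfromaskeleton1})\) and \((\ref{itempropositionthickeningfromaskeleton3})\) ensure that \(\Phi^{\mathrm{th}}(x) \in O\) for every \(x \in O \setminus Z^{\ell^*}\), so that \(u \compose \Phi^{\mathrm{th}}\) is well defined and smooth on \(O \setminus Z^{\ell^*}\). The pointwise estimate required by Definition~\ref{definitionExtension} follows by applying the Faà di Bruno formula and inserting Property~\((\ref{itempropositionthickeningfromaskeleton4})\): for each \(j \in \N_*\) and each \(x \in (U^m + Q^m_{\rho\eta/2}) \setminus Z^{\ell^*}\),
\[
|D^j(u \compose \Phi^{\mathrm{th}})(x)| \le C(u,j) \sum_{i=1}^{j} \sum_{r_1 + \cdots + r_i = j} |D^{r_1}\Phi^{\mathrm{th}}(x)| \cdots |D^{r_i}\Phi^{\mathrm{th}}(x)| \le \frac{C'(u,j,\eta)}{d(x, Z^{\ell^*})^{j}},
\]
which, combined with the smoothness of \(u\) away from \(Z^{\ell^*}\), identifies \(Z^{\ell^*}\) as a structured singular set of rank \(\ell^*\) and gives the \(\ClassR_{\ell^*}\) property.

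The crux is the quantitative estimate under the hypothesis \(kp < \ell + 1\). I would apply Property~\((\ref{itempropositionthickeningfromaskeleton5})\) with \(\beta = jp\), which is admissible since \(jp \le kp < \ell + 1\): for any \(r_1, \dots, r_i \in \N_*\) with \(r_1 + \cdots + r_i = j\),
\[
|D^{r_1}\Phi^{\mathrm{th}}(x)| \cdots |D^{r_i}\Phi^{\mathrm{th}}(x)| \le C \, \eta^{i-j} \bigl(\Jacobian{m}{\Phi^{\mathrm{th}}}(x)\bigr)^{j/(jp)} = C \, \eta^{i-j} \bigl(\Jacobian{m}{\Phi^{\mathrm{th}}}(x)\bigr)^{1/p}.
\]
Plugging this into the Faà di Bruno expansion yields the pointwise bound
\[
|D^{j}(u \compose \Phi^{\mathrm{th}})|^{p} \le C \sum_{i=1}^{j} \eta^{(i-j)p} \, |D^{i}u \compose \Phi^{\mathrm{th}}|^{p} \, \Jacobian{m}{\Phi^{\mathrm{th}}}.
\]
Integrating over \(U^m + Q^m_{\rho\eta/2}\), the injectivity of \(\Phi^{\mathrm{th}}\) together with Property~\((\ref{itempropositionthickeningfromaskeleton1})\) (which keeps the image of each point inside its cube of \(\cS^m\), hence inside \(O\)) justifies the change of variables
\[
\int_{U^m + Q^m_{\rho\eta/2}} |D^{i}u \compose \Phi^{\mathrm{th}}|^{p} \, \Jacobian{m}{\Phi^{\mathrm{th}}} \le \int_{U^m + Q^m_{\rho\eta/2}} |D^{i}u|^{p},
\]
and the desired inequality follows. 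Since \(Z^{\ell^*}\) has dimension \(\ell^* \le m - 2\), the Brezis–Mironescu removal lemma invoked in the proof of Proposition~\ref{propositionClassRInclusionSobolev} upgrades these pointwise bounds to give \(u \compose \Phi^{\mathrm{th}} \in \Sobolev^{k,p}(O)\). The \(\Lebesgue^{p}\) bound for \(u \compose \Phi^{\mathrm{th}}\) itself is analogous and simpler since \(u\) is bounded.

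The main obstacle is tuning the exponent in Property~\((\ref{itempropositionthickeningfromaskeleton5})\): choosing \(\beta = jp\) is what makes the Jacobian factor raise to exactly the first power after the \(p\)-th power expansion, so that the change of variables absorbs it; the admissibility constraint \(\beta < \ell + 1\) is precisely what forces the hypothesis \(kp < \ell + 1\). A secondary technical point is ensuring that the image \(\Phi^{\mathrm{th}}(U^m + Q^m_{\rho\eta/2})\) stays inside \(O\) for the change-of-variables identity to hold, but this is a direct consequence of the cube-preserving Property~\((\ref{itempropositionthickeningfromaskeleton1})\) combined with \(\Phi^{\mathrm{th}} = \Id\) near the outer boundary of \(U^m + Q^m_{\rho\eta/2}\) guaranteed by Property~\((\ref{itempropositionthickeningfromaskeleton3})\).
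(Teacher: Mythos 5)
Your proposal is correct and follows essentially the same route as the paper: Faà di Bruno together with Property~\((\ref{itempropositionthickeningfromaskeleton4})\) gives the \(\ClassR_{\ell^{*}}\) structure, Sobolev membership comes from the removal argument behind Proposition~\ref{propositionClassRInclusionSobolev}, and the \(\Lebesgue^{p}\) bounds follow by Property~\((\ref{itempropositionthickeningfromaskeleton5})\) with the choice \(\beta = jp\) (admissible since \(jp \le kp < \ell+1\)) and a change of variables based on injectivity, exactly the argument the paper defers to the proof of Corollary~\ref{corollaryThickeningBall}. The only cosmetic remark is that the containment \(\Phi^{\mathrm{th}}\bigl((U^m + Q^{m}_{\rho\eta/2})\setminus Z^{\ell^*}\bigr) \subset U^m + Q^{m}_{\rho\eta/2}\), which your change-of-variables inequality implicitly uses, is obtained most directly from Property~\((\ref{itempropositionthickeningfromaskeleton3})\) combined with injectivity (if \(\Phi^{\mathrm{th}}(x)\) left the set it would be a fixed point of \(\Phi^{\mathrm{th}}\), contradicting injectivity), rather than from the cube-preserving Property~\((\ref{itempropositionthickeningfromaskeleton1})\).
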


\resetconstant
\begin{proof}
We first observe that, by Property~\((\ref{itempropositionthickeningfromaskeleton3})\) and by injectivity of \(\Phi^{\mathrm{th}}\), we have
\[
 \Phi^{\mathrm{th}}\bigl((U^m + Q^{m}_{\rho\eta/2})\setminus Z^{\ell^*}\bigr) \subset U^m + Q^{m}_{\rho\eta/2}.
\]
In particular, \( u \compose \Phi^{\mathrm{th}} \) is well-defined in \(O \setminus Z^{\ell^*}\) and \( u \compose \Phi^{\mathrm{th}} = u \) in \( O \setminus (U^{m} + Q^{m}_{\rho\eta/2}) \).
By the Fa\'a di Bruno composition formula, for every \( j \in \N_{*} \) and \( x \not\in Z^{\ell^{*}} \),
\begin{multline}
    \label{eqStrong1-611}
\abs{D^j (u \compose \Phi^{\mathrm{th}})(x)} \\
\le \C \sum_{i = 1}^j \sum_{\substack{1 \le \theta_1 \le \dotsc \le \theta_i\\\theta_1 + \dots + \theta_i = j}} \abs{D^i u(\Phi^{\mathrm{th}}(x))} \abs{D^{\theta_1}\Phi^\mathrm{th}(x)} \dotsm \abs{D^{\theta_i}\Phi^\mathrm{th}(x)}.
\end{multline}
Hence, by the pointwise estimates in Property~\((\ref{itempropositionthickeningfromaskeleton4})\) satisfied by the derivatives of \(\Phi^{\mathrm{th}}\),
\begin{multline*}
    \abs{D^j (u \compose \Phi^{\mathrm{th}})(x)}\\ 
 \le \C \sum_{i = 1}^j \sum_{\substack{1 \le \theta_1 \le \dotsc \le \theta_i\\\theta_1 + \dots + \theta_i = j}}  \frac{\eta}{(d(x, Z^{\ell^*}))^{\theta_1}} \dotsm \frac{\eta}{(d(x, Z^{\ell^*}))^{\theta_i}}
 \le  \frac{\C}{(d(x, Z^{\ell^*}))^{j}}.
\end{multline*}
We thus have that \(u \compose \Phi^{\mathrm{th}}\) is an \( \ClassR_{\ell^{*}}\)~function in \(O\) with singular set \(Z^{\ell^*}\).
For \( kp < \ell + 1 \) and \(O\) bounded, we deduce from Proposition~\ref{propositionClassRInclusionSobolev} that \( u \compose \Phi^{\mathrm{th}} \in \Sobolev^{k, p}(O) \).
The \(\Lebesgue^{p}\)~estimates for \( D^{j}(u \compose \Phi^{\mathrm{th}}) \) in \(U^m + Q^{m}_{\rho\eta/2}\) can then be proved along the line of the proof of Corollary~\ref{corollaryThickeningBall} by a change of variable using \eqref{eqStrong1-611} and the pointwise estimate of the Jacobian from Property~\((\ref{itempropositionthickeningfromaskeleton5})\), see \cite{BPVS_MO}*{Corollary~4.2}.
\end{proof}

\section{Adaptive smoothing}
\label{sectionJEMSSmoothing}

Convolution is a powerful tool for building smooth approximations of a locally summable function \(u\).
In \cite{BPVS_MO}, we relied on the adaptive smoothing, based on a convolution with variable parameter, to take into account the mean oscillations of \(u\).
We apply again this strategy in the proof of Theorem~\ref{thm_density_manifold_open}.

To illustrate a typical problem where the adaptive smoothing can be used, take a bounded function \(u \colon \R^m \setminus \{0\} \to \R\) that is smooth in \(\R^{m} \setminus \overline{B}{}_{1/2}^m\) and satisfies, for every \(y, z \in \R^m \setminus \{0\}\), the locally Lipschitz estimate
\begin{equation}
\label{eqJEMS-SmoothingEstimate}
\abs{u(y) - u(z)}
\le C \frac{\abs{y - z}}{\min{\{\abs{y}, \abs{z}\}}}.
\end{equation}
Given \(\epsilon > 0\), we wish to find a bounded smooth function \(v \colon \R^m \setminus \{0\} \to \R\) such that
\begin{enumerate}[\((\mathrm{sm}_1)\)]
	\item{}
	\label{itemJEMS-676}
	\(v= u\) in \(\R^m \setminus \Ball^{m}\),{}
	\item{}
	\label{itemJEMS-679}
	for every \(x \in \Ball^m \setminus \{0\}\), we have \(\abs{u(x) - v(x)} \le \epsilon\),
    \item
    \label{itemJEMS-682}
    for every \(j \in \N_*\) and every \(x \in \Ball^m \setminus \{0\}\),
	\[{}
	\abs{D^j v(x)}
	\le	\frac{C_j'}{\epsilon^j \abs{x}^j}.
	\]
\end{enumerate}
A natural strategy is to take \(v\) in the form
\[
v = u\zeta + (\varphi_t * u) (1 - \zeta),
\]
where \(\zeta \colon \R^m \to \R\) is a smooth function such that \(\zeta = 1\) in \(\Ball^m\) and \(\zeta = 0\) in \(B_{3/4}^m\)\,, and \(\varphi_t * u\) is the convolution defined by
\[
\varphi_{t} * u (x)
= \int_{\Ball^m}\varphi(z)u(x+ tz)\dif z
\]
using a \emph{mollifier} \(\varphi \colon \R^m \to \R\), that is, a nonnegative smooth function supported in \(\Ball^m\) which also satisfies the normalization condition
\begin{equation}
\label{eqJEMS-ConvolutionNormalization}
\int_{\Ball^m}\varphi
= 1.
\end{equation}

Although properties \((\mathrm{sm}_{\ref{itemJEMS-676}})\) and \((\mathrm{sm}_{\ref{itemJEMS-676}})\) are satisfied for any \(t > 0\), it is less clear that \((\mathrm{sm}_{\ref{itemJEMS-679}})\) holds for a suitable choice of \(t\).
To fully exploit the local Lipschitz estimate one could take, for example, a partition of the unit \((\zeta_j)_{j \in \N}\) subordinated to a covering of the set \(\Ball^m \setminus \{0\}\) by dyadic-type annuli and then choose a suitable convolution parameter \(t\) for each convolution product \(\varphi_t * (\zeta_j u)\).

To avoid this double averaging process --- partition of unit and convolution --- one may rely directly on the adaptive smoothing.
To explain the general setting on which we shall work, take a locally summable function \(u \colon A \to \R\) defined in an open set \(A \subset \R^m\).
Given a continuous nonnegative function \(\psi \colon A \to \R\), we define the open set
\begin{equation}
\label{eqJEMS-AdaptiveConvolutionDomain}
A_{\psi}
= \bigl\{ x \in A : \psi(x) < d(x, \partial A )\bigr\}.
\end{equation}
The \emph{adaptive convolution} is defined for every \(x \in A_{\psi}\) as
\begin{equation}
\label{eqJEMS-AdaptiveConvolution}
\varphi_{\psi} * u (x)
= \int_{\Ball^m} \varphi(z) u(x+\psi(x)z) \dif z.
\end{equation}
By the normalization condition \eqref{eqJEMS-ConvolutionNormalization}, this function coincides with \(u\) on the level set \(\{\psi=0\}\).

Using a suitable choice of \(\psi\), one can prove the existence of a smooth bounded function \(v\) that satisfies properties \((\mathrm{sm}_{\ref{itemJEMS-676}})\)--\((\mathrm{sm}_{\ref{itemJEMS-682}})\).
More generally, the following holds:

\begin{proposition}
	\label{propositionAdaptiveApplication-New}
Let \(A \subset \R^m\) be a bounded open set, let \( F_0 \subset F_1 \subset \R^{m} \) be closed sets, and let \(\delta>0\).
If \(u \colon A \setminus F_0 \to \R\) is a bounded  function that is smooth in \(A \setminus \overline{F_1 + B_\delta^m}\) and satisfies, for every \(y, z \in  A \setminus F_0\)\,, the locally Lipschitz estimate
	\begin{equation}\label{eqJEMS-709}
	\abs{u(y) - u(z)}
	\le C \frac{\abs{y - z}}{\min{\bigl\{d(y, F_0), d(z, F_0)\bigr\}}}\text{,}
	\end{equation}
then, for every \(0 < \epsilon \le 1\) and every open subset \(O \Subset A\), there exists a bounded smooth function \(v \colon O \setminus F_0 \to \R\) such that 
\begin{enumerate}[\((i)\)]
    \item
    \label{itemJEMS-745a}
    \(v = u\) on \(O \setminus (F_1 + B^{m}_{2\delta})\),
    \item 
    \label{itemJEMS-746}
    for every \(x \in ((F_1 + B^{m}_{2\delta}) \setminus F_0) \cap O\), we have \(\abs{u(x) - v(x)} \le \epsilon\),
    \item 
    \label{itemJEMS-propositionAdaptiveApplication-last} 
    for every \(j \in \N_{*}\) and every \(x \in ((F_1 + B^{m}_{2\delta}) \setminus F_0) \cap O\),
\[
\abs{D^{j}v(x)}
\le \frac{C_j'}{\epsilon^j d(x, F_0)^j} \text{,}
\]
for some constant \(C_j' > 0\) independent of \(x\) and \(\epsilon\).
\end{enumerate}
\end{proposition}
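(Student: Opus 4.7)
I would construct $v$ by adaptive convolution, setting $v = \varphi_\psi * u$ as in \eqref{eqJEMS-AdaptiveConvolution} for a carefully chosen scaling function $\psi \colon O \to [0, +\infty)$. The conditions that govern the design of $\psi$ are: smoothness on $O \setminus F_0$, vanishing outside $F_1 + B_{2\delta}^m$, and the scaling $\psi(x) \sim \epsilon\, d(x, F_0)$ on the core region. Concretely, I take
\[
\psi(x) = \frac{\epsilon}{C_0}\,\eta(x)\,\theta(\rho(x))\text{,}
\]
where $\eta \in \Smooth^{\infty}(\R^m; [0, 1])$ is a cutoff equal to $1$ on $F_1 + B_{7\delta/4}^m$ and supported in $F_1 + B_{2\delta}^m$; $\theta \in \Smooth^{\infty}(\R)$ is a smoothed truncation of the identity equal to $t$ on $[0, \delta/4]$ and bounded by $\delta/2$; $\rho \in \Smooth^{\infty}(O \setminus F_0)$ is a Whitney-type regularization of $d(\cdot, F_0)$ satisfying $\tfrac12 d(x, F_0) \le \rho(x) \le 2\,d(x, F_0)$ with $|D^j \rho(x)| \le C_j\,d(x, F_0)^{1-j}$; and $C_0 \ge 1$ is a constant to be fixed. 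This yields on $(F_1 + B_{2\delta}^m) \setminus F_0$ the bounds $\psi(x) \le (\epsilon/C_0)\,d(x, F_0)$ and $|D^j \psi(x)| \le C_j''\,\epsilon\, d(x, F_0)^{1-j}$, together with the lower bound $\psi(x) \ge c\,\epsilon\,d(x, F_0)/C_0$ on $(F_1 + B_{7\delta/4}^m) \cap \{d(\cdot, F_0) \le \delta/4\}$, and a uniform lower bound $\psi(x) \ge c'\epsilon$ on the remainder of the core.

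\textbf{Verification of the three properties.} Property $(i)$ is immediate since $\psi$ vanishes outside $F_1 + B_{2\delta}^m$ and the normalization \eqref{eqJEMS-ConvolutionNormalization} reduces \eqref{eqJEMS-AdaptiveConvolution} to $v(x) = u(x)$ there. For $(ii)$, choosing $C_0 \ge 2$ forces $\psi(x) \le d(x, F_0)/2$, so the integration points $x + \psi(x) z$ remain in $\{d(\cdot, F_0) \ge d(x, F_0)/2\}$; the locally Lipschitz estimate \eqref{eqJEMS-709} then gives
\[
|v(x) - u(x)| \le \sup_{z \in \Ball^m}|u(x + \psi(x)z) - u(x)| \le \frac{2C\,\psi(x)}{d(x, F_0)} \le \frac{2C\,\epsilon}{C_0}\text{,}
\]
which is at most $\epsilon$ once $C_0 \ge 2C$. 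For $(iii)$, on the open set $\{\psi > 0\} \cap O$ the change of variable $w = x + \psi(x)z$ gives the representation
\[
v(x) = \int_{\R^m}\frac{1}{\psi(x)^m}\,\varphi\Bigl(\frac{w-x}{\psi(x)}\Bigr)\,u(w)\,\dif w\text{,}
\]
which both exhibits smoothness of $v$ on that set via differentiation under the integral and, using the boundedness of $u$ and the pointwise estimates on $\psi$ and its derivatives, yields
\[
|D^j v(x)| \le \frac{C_j'''\, \|u\|_\infty}{\psi(x)^j}\text{.}
\]
The lower bound on $\psi$ translates this into $|D^j v(x)| \le C_j'/(\epsilon^j\,d(x, F_0)^j)$ on the core; on the transition shell $(F_1 + B_{2\delta}^m)\setminus(F_1 + B_\delta^m)$, $u$ is smooth on the compact set $\overline{O} \cap (F_1 + B_{2\delta}^m \setminus F_1 + B_\delta^m)$, so $|D^j u|$ is bounded there by some $M_j$; hence $|D^j v|$ is uniformly bounded by a constant depending on $M_j$, $\delta$, and $\|\psi\|_{\Smooth^j}$; since $d(x, F_0) \ge \delta$ on this shell and $d(x, F_0) \le \diam O$, the bound $C_j'/(\epsilon^j d(x, F_0)^j)$ is satisfied by enlarging $C_j'$ accordingly.

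\textbf{Main obstacle.} The delicate point is the smoothness of $v$ across the interface $\partial\{\psi > 0\}$. Since $\supp\eta \subset F_1 + B_{2\delta}^m$ while $u$ is smooth on the neighborhood $A \setminus \overline{F_1 + B_\delta^m}$ which contains $\partial\{\psi > 0\}$, a Taylor expansion at an interface point $x_0$ gives $v(x) - u(x) = O(\psi(x)^2)$ (using that $\varphi$ can be taken radially symmetric, so the first-order moment vanishes); combined with the smoothness of $\psi$ and of $u$ in a full neighborhood of such $x_0$, this ensures $v \in \Smooth^{\infty}$ across the interface. Once this geometric arrangement between $\eta$, $F_0$, $F_1$, and the smoothness region of $u$ is in place, the remaining estimates are routine consequences of differentiating the integral representation.
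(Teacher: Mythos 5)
Your construction is essentially the paper's: an adaptive convolution \(v=\varphi_\psi * u\) with \(\psi\) built from a regularized distance to \(F_0\), cut off so that it equals a multiple of \(\epsilon\, d(\cdot,F_0)\) near \(F_1\) and vanishes outside \(F_1+B^m_{2\delta}\); properties \((i)\)–\((iii)\) then follow exactly as in the paper from \(\psi\le d(\cdot,F_0)\), the lower bound \(\psi\gtrsim \epsilon\,d(\cdot,F_0)\) on the core, the estimates \(\abs{D^j\psi}\lesssim \psi^{1-j}\), and the boundedness of \(u\). The one step where your argument does not hold up is precisely the one you single out as the ``main obstacle'': smoothness of \(v\) across \(\partial\{\psi>0\}\). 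A pointwise bound \(v-u=O(\psi^2)\), even with a radially symmetric mollifier killing the first moment, does not imply that \(v\) is \(\Smooth^{\infty}\) across the interface — it controls no derivatives of the remainder \(v-u\), which is smooth on \(\{\psi>0\}\) and identically zero on \(\{\psi=0\}\), and whose derivatives are exactly what must be shown to match. As written, this step is not a proof.

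The fix is both simpler and already forced by your own geometric arrangement: the transition region of your cutoff \(\eta\) (hence the whole interface \(\partial\{\psi>0\}\)) lies in \(A\setminus\overline{F_1+B_\delta^m}\), where \(u\) is smooth, and \(\psi\) is smooth on \(O\setminus F_0\); so near any interface point the map \((x,z)\mapsto u(x+\psi(x)z)\) is smooth and differentiation under the integral sign in \(\int_{\Ball^m}\varphi(z)\,u(x+\psi(x)z)\dif z\) gives smoothness of \(v\) there directly — no Taylor expansion, no moment condition, no symmetry of \(\varphi\) needed (this is the content of the paper's Lemma~\ref{lemma-adaptative-smooth}, whose second case covers open sets where \(u\) and \(\psi\) are both smooth, including points where \(\psi\) vanishes). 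One further small point you gloss over: for \(v\) to be defined on all of \(O\setminus F_0\) you need the evaluation points \(x+\psi(x)z\) to stay in \(A\), i.e.\ \(O\subset A_\psi\); capping \(\psi\) at \(\delta/2\) does not guarantee this when \(d(O,\partial A)\) is small, so the cap (or the constant \(C_0\)) must also be chosen small relative to \(d(O,\partial A)\) — the paper does this through the choice of \(\epsilon_0\) with \(O+B^m_{M\epsilon_0}\Subset A\). With these two repairs your argument coincides with the paper's proof.
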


To obtain a function that satisfies \((\mathrm{sm}_{\ref{itemJEMS-676}})\)--\((\mathrm{sm}_{\ref{itemJEMS-682}})\), it suffices to apply this proposition with \(F_0 = F_1 = \{0\} \), \(A = B_2^m\), \(O = B_{3/2}^m\) and \(\delta = 1/2\).
In the next section, we give an application of Proposition~\ref{propositionAdaptiveApplication-New} to the problem of smooth extension of a map where \(F_0\) and \(F_1\) are taken as dual skeletons of different dimensions for a given cubication.

We prove Proposition~\ref{propositionAdaptiveApplication-New} based on a couple of properties satisfied by \(\varphi_\psi * u\).

\begin{lemma}
\label{lemma-adaptative-smooth}
The adaptive convolution \(\varphi_{\psi} * u\) is smooth on any open subset of \(A_\psi\) where \(\psi\) is smooth and positive, and also on any open subset of \(A_\psi\) where \(u\) and \(\psi\) are smooth.
\end{lemma}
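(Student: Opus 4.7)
The plan is to handle the two cases stated in the lemma by differentiation under the integral sign, but on different forms of the defining expression for \(\varphi_\psi * u\).

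For the first case, where \(\psi\) is smooth and positive on an open set \(U \subset A_\psi\), I would perform the change of variable \(y = x + \psi(x) z\) in the defining integral \eqref{eqJEMS-AdaptiveConvolution} to obtain
\[
\varphi_\psi * u (x) = \int_{A} \frac{1}{\psi(x)^m} \varphi\Bigl(\frac{y - x}{\psi(x)}\Bigr) u(y) \dif y,
\]
thereby transferring all the \(x\)-dependence onto the kernel \(K(x,y) \vcentcolon= \psi(x)^{-m} \varphi((y-x)/\psi(x))\). Because \(\psi\) is smooth and positive, \(K\) is smooth in \(x\) for each fixed \(y\), and because \(x \in A_\psi\) the support of \(y \mapsto K(x,y)\) is contained in the ball \(B^m_{\psi(x)}(x) \subset A\). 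Fixing any compact set \(L \subset U\), the positivity and smoothness of \(\psi\) give uniform lower and upper bounds for \(\psi\) on \(L\), together with uniform bounds on all its derivatives, which in turn yield uniform bounds on every \(\partial_x^\alpha K(x,y)\) for \(x \in L\) with uniform compact support in \(y\) inside a bounded subset of \(A\). Since \(u\) is locally summable, the classical differentiation-under-the-integral-sign theorem applies to every order, proving smoothness of \(\varphi_\psi * u\) on \(L\) and thus on \(U\).

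For the second case, where both \(u\) and \(\psi\) are smooth on an open subset \(U \subset A_\psi\), I would keep the original form \(\varphi_\psi * u(x) = \int_{\Ball^m} \varphi(z) u(x + \psi(x) z) \dif z\). Around any \(x_0 \in U\), by continuity of \(\psi\) there exists a small open neighborhood \(V \Subset U\) and a compact set \(\Gamma \subset U\) on which \(u\) is smooth such that \(x + \psi(x) z \in \Gamma\) for every \(x \in V\) and every \(z \in \cBall^m\). On \(V\) the integrand \((x,z) \mapsto \varphi(z) u(x + \psi(x) z)\) is smooth in \(x\), with derivatives computed via the Faà di Bruno formula and uniformly bounded in \(z\). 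Standard differentiation under the integral sign then gives smoothness of \(\varphi_\psi * u\) on \(V\), hence on \(U\).

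The only genuine obstacle is in the first case: since \(u\) is merely locally summable, one cannot differentiate the original expression in \(x\) without first moving the \(x\)-dependence out of \(u\). The change of variable is the decisive step, and it works precisely because \(\psi\) is positive on \(U\), so the kernel \(K(x,y)\) remains smooth and its denominator \(\psi(x)^m\) is bounded away from zero on compact subsets of \(U\). In the second case, where \(u\) is already smooth, no such device is needed.
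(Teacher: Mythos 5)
Your first case is exactly the paper's argument: the change of variables moves the \(x\)-dependence into a smooth, compactly supported kernel with denominator \(\psi(x)^m\) bounded away from zero on compact subsets, and differentiation under the integral sign applies. That part is fine.

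In the second case, however, there is a genuine gap. You assert that around any \(x_0 \in U\) there is a neighborhood \(V\) and a compact set \(\Gamma \subset U\) with \(x + \psi(x)z \in \Gamma\) for all \(x \in V\), \(z \in \cBall^m\), "by continuity of \(\psi\)". This is false in general: the hypothesis \(x_0 \in A_\psi\) only gives \(\psi(x_0) < d(x_0, \partial A)\), not \(\psi(x_0) < d(x_0, \partial U)\), so when \(\psi(x_0)\) exceeds the distance from \(x_0\) to \(\partial U\) the convolution ball \(B^m_{\psi(x_0)}(x_0)\) reaches outside \(U\), into a region where \(u\) may be merely summable, and no such \(\Gamma\) exists. (Take, e.g., \(\psi \equiv 1/2\) on \(A = \Ball^m\) and \(u\) smooth only on a tiny ball \(U\) around \(x_0\).) The continuity of \(\psi\) only rescues the argument at points where \(\psi(x_0) = 0\), since then \(\psi(x_0) = 0 < d(x_0, \partial U)\) forces \(\psi < d(\cdot, \partial U)\) on a neighborhood, and there the map \((x, z) \mapsto u(x + \psi(x)z)\) is indeed smooth and your differentiation under the integral sign goes through. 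The repair is the paper's decomposition of the open set \(U\): on \(U \cap \{\psi > 0\}\) invoke your (correct) first case, which needs no smoothness of \(u\); at points of \(U\) where \(\psi\) vanishes, use the small-\(\psi\) argument just described. With that splitting your proof becomes complete; as written, the second case does not cover the points of \(U\) where \(\psi\) is positive but large compared with \(d(\cdot, \partial U)\).
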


\begin{proof}[Proof of Lemma~\ref{lemma-adaptative-smooth}]
Let \(\omega_1 \subset A_\psi\) be an open set where \(\psi\) is smooth and positive.
For any \(x \in \omega_1\), since \(\varphi\) is supported in \(\Ball^m\), by a linear change of variables in the integral in \eqref{eqJEMS-AdaptiveConvolution} we have
\[
\varphi_{\psi} * u (x)
= \frac{1}{\psi(x)^m} \int_{\Ball^m} \varphi\Bigl( \frac{y - x}{\psi(x)} \Bigr) u(y) \dif y.
\]
Since \(\psi\) is smooth in \(\omega_1\), by differentiation under the integral sign we deduce that \(\varphi_{\psi} * u\) is also smooth in \(\omega_1\).

Now let \(\omega_2 \subset A_\psi\) be an open set where \(u\) and \(\psi\) are smooth.
By the previous analysis, \(\varphi_{\psi} * u\) is smooth in \(\omega_2 \cap \{\psi > 0\}\).
Since \(u\) is smooth in \(\omega_2\) and \(\varphi_{\psi} * u = u\) in \(\omega_2 \cap \{\psi = 0\}\), we also have that \(\varphi_{\psi} * u\) is smooth in \(\omega_2 \cap \Int{\{\psi = 0\}}\).
We now take any point \(y\) in the remaining set \(\omega_2 \cap \partial{\{\psi = 0\}}\).
Since \(\psi(y) = 0 < d(y, \partial\omega)\), there exists a neighborhood \(\widetilde{\omega} \Subset \omega_2\) of \(y\) where \(\psi < d(\cdot, \partial\omega)\).
Then, the map \((x, z) \in \widetilde\omega \times \Ball^{m}  \mapsto u(x + \psi(x)z)\) is smooth.
By differentiation under the integral sign in \eqref{eqJEMS-AdaptiveConvolution}, we deduce that \(\varphi_{\psi}*u\) is smooth in \(\widetilde\omega\) and the conclusion follows.
\end{proof}

Using the Faà di Bruno formula, one derives pointwise estimates for \(\varphi_\psi * u\)\,:

\begin{lemma}
	\label{lemmaDensityNewConvolution}
	Suppose that \(u \in \Lebesgue^{\infty}(A)\).
	If \(\psi\) is smooth and positive in an open subset \(\omega\subset  A_\psi \) and if, for every \(j \in \N_*\), we have
	\begin{equation}
    \label{eqPointwiseConvolutionParameter}
    \abs{D^{j}\psi}
	\le C_j'' \psi^{1 - j} \quad \text{in \(\omega\),}
	\end{equation}
	then, for every \(j \in \N_*\),{}
	\[{}
	\abs{D^{j}(\varphi_{\psi} * u)}
	\le C_j''' \psi^{- j} \quad \text{in \(\omega\).}
	\]
\end{lemma}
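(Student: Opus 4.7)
The plan is to rely on the representation obtained from the linear change of variable \(y = x + \psi(x)z\) in \eqref{eqJEMS-AdaptiveConvolution}, namely
\[
\varphi_{\psi} * u(x)
= \frac{1}{\psi(x)^m}\int_{\R^m} \varphi\bigl(\Phi(x, y)\bigr)\, u(y) \dif y,
\qquad \Phi(x, y) \vcentcolon= \frac{y-x}{\psi(x)}.
\]
Since \(\psi\) is smooth and positive on \(\omega\) and \(u \in \Lebesgue^{\infty}(A)\), one can differentiate under the integral sign. I would then apply Leibniz's rule to split \(D^{j}(\varphi_{\psi} * u)\) into a finite sum of products of a derivative of \(1/\psi^m\) and a derivative of the composition \(x \mapsto \varphi(\Phi(x,y))\), each of which is to be estimated pointwise.

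The first piece is handled by a direct application of Faà di Bruno's formula: Using the assumption \eqref{eqPointwiseConvolutionParameter} on \(D^{j}\psi\), one shows by a straightforward induction that for every \(j \in \N_*\), \(|D^j(\psi^{-m})| \le C \psi^{-m-j}\) on \(\omega\). The second piece is the heart of the argument. The key claim is that on the support \(\{y \colon |y-x| \le \psi(x)\}\) of the integrand, one has
\[
\bigabs{D_x^{j}\Phi(x,y)} \le C \,\psi(x)^{-j}
\qquad \text{for every \(j \in \N_*\)},
\]
with a constant depending only on the constants \(C_i''\) for \(i \le j\). This is proved by induction on \(j\), using the defining identity \(\Phi(x,y)\psi(x) = y-x\), differentiating both sides \(j\) times with respect to \(x\), isolating \(D_x^{j}\Phi(x,y) \psi(x)\), and bounding the remaining terms via the induction hypothesis together with \eqref{eqPointwiseConvolutionParameter} and the support condition \(|y-x|\le\psi(x)\). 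Once this is in hand, Faà di Bruno applied to \(x \mapsto \varphi(\Phi(x,y))\), together with the fact that \(\varphi\) has bounded derivatives of all orders, yields \(|D_x^{j}\varphi(\Phi(x,y))| \le C \psi(x)^{-j}\) uniformly in \(y\) in the support.

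Combining the two estimates through Leibniz's rule gives, for \(x \in \omega\),
\[
\bigabs{D^{j}(\varphi_{\psi} * u)(x)}
\le C \,\psi(x)^{-m-j} \int_{\{|y-x|\le \psi(x)\}} |u(y)|\dif y
\le C\, \norm{u}_{\Lebesgue^\infty(A)}\, \psi(x)^{-j},
\]
since the ball of integration has measure bounded by a constant times \(\psi(x)^m\). This yields the desired inequality with \(C_j''' = C\norm{u}_{\Lebesgue^\infty(A)}\).

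The step I expect to be the main technical obstacle is the inductive bound on \(|D_x^{j}\Phi(x,y)|\) on the support of the integrand: The recursion mixes derivatives of \(\psi\) with the factor \(y-x\), which is only controlled by \(\psi(x)\) thanks to the support restriction, and careful bookkeeping of the powers of \(\psi\) is needed so that the worst-case scaling \(\psi(x)^{-j}\) is not violated at any intermediate stage.
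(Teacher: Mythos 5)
Your proof is correct. It differs from the paper's in organization rather than substance: the paper introduces the two-variable function \(G(x,t) = \varphi_{t} * u(x)\), proves by induction the mixed-derivative bound \(\abs{D^{a}_x \partial_{t}^{b} G(x,t)} \le C t^{-a-b}\) via the identities \(D_x G(x,t) = -\tfrac{1}{t}(D\varphi)_t * u(x)\) and \(\partial_t G(x,t) = -\tfrac{1}{t}\widetilde{\varphi}_t * u(x)\) with \(\widetilde{\varphi}(z) = D\varphi(z)[z] + m\varphi(z)\), and then applies Faà di Bruno once to the composition \(x \mapsto G(x,\psi(x))\). You instead differentiate the explicit rescaled kernel \(\psi(x)^{-m}\varphi\bigl((y-x)/\psi(x)\bigr)\) directly, splitting by Leibniz and estimating \(D_x^{j}\Phi\) with \(\Phi(x,y) = (y-x)/\psi(x)\). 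The paper's factorization keeps every object a convolution with a new bounded kernel, so the factor \(y-x\) is absorbed once and for all into \(\widetilde{\varphi}\) and the hypothesis \eqref{eqPointwiseConvolutionParameter} enters only at the final Faà di Bruno step; your route is more hands-on and hinges on the support restriction \(\abs{y-x} \le \psi(x)\) to control \(D_x^{j}\Phi\), which is legitimate because every Faà di Bruno term for \(\varphi \compose \Phi\) carries a factor \(D^{i}\varphi(\Phi(x,y))\) that vanishes off that set — a point you implicitly use and should state when writing this up. Both yield the stated bound with \(C_j'''\) proportional to \(\norm{u}_{\Lebesgue^{\infty}(A)}\).
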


\resetconstant
\begin{proof}[Proof of Lemma~\ref{lemmaDensityNewConvolution}]
	Denoting \(G(x, t) = \varphi_{t} * u (x)\), we have \((\varphi_{\psi} * u)(x) = G(x, \psi(x))\).{}
	Then, by the Faà di Bruno formula, for every \(x\in \omega\),
\begin{multline}
\label{eqDensityNew-494}
\abs{D^{j}(\varphi_{\psi} * u)(x)}\\
\leq \C \sum_{l = 1}^{j} \sum_{a + b = l} \abs{D^{a}_x \partial_{t}^{b} G(x, \psi(x))} \sum_{s_1+\dots +s_{b}=j-a}\abs{D^{s_1}\psi(x)}\cdots\abs{D^{s_{b}}\psi(x)}.
\end{multline}
Note that
\[{}
D_{x}G(x, t)
= - \frac{1}{t} (D\varphi)_{t} * u(x)
\quad \text{and} \quad 
\partial_{t}G(x, t)
= - \frac{1}{t} \widetilde\varphi_{t} * u(x),
\]
where \(\widetilde\varphi(z) \vcentcolon= D\varphi(z)[z] + m \varphi(z)\).{}
Proceeding by induction, one deduces from the boundedness of \( u \) that, for every \( a, b \in \N \),
\begin{equation}
\label{eqDensityNew-511}
\abs{D^{a}_x \partial_{t}^{b} G(x, t)}
\le \frac{\C}{t^{a + b}}.
\end{equation}
The conclusion then follows from the combination of \eqref{eqPointwiseConvolutionParameter}, \eqref{eqDensityNew-494} and \eqref{eqDensityNew-511}. 
\end{proof}

\resetconstant
\begin{proof}[Proof of Proposition~\ref{propositionAdaptiveApplication-New}]
Let \(O\Subset A\). 
Our goal is to rely on the adaptive smoothing and take \(v\) of the form
\begin{equation*}
    v = \varphi_{\epsilon\psi} * u,
    \end{equation*}
	where \( \varphi \) is a mollifier, \(0<\epsilon<1\) and \( \psi \colon \R^m \to \R \) is a suitably chosen nonnegative Lipschitz function such that \(O \subset A_{\epsilon\psi}\)\,.
    More precisely, we introduce a parameter \(\delta<\overline{\delta}<2\delta\) and take \(\psi\) smooth on \(\R^m \setminus F_0\) and supported in \(F_1 + B_{2\delta}^{m}\) such that
	\begin{alignat}{2}
    \label{eqStrong-898}
    \psi & \leq d(\cdot,F_0) 
     &&\quad \text{in \( \R^m\),}\\
    \label{eqStrong-897}
    \psi & \geq c\,d(\cdot,F_0) 
     &&\quad \text{in \( F_1 + B_{\bar{\delta}}^{m} \)}	
    \end{alignat}
    for some constant \( 0 < c < 1 \). 
    We also require that, for every \( j \in \N_{*} \),
\begin{equation}\label{eqJEMS-846}
    \abs{D^{j}\psi}
	\le \Cl{cteStrong-731} \psi^{1 - j}
    \quad \text{in \(((F_1 + B_{\bar{\delta}}^{m})\setminus F_0)\cap A \),}
\end{equation}
for some constant \( \Cr{cteStrong-731} > 0 \) depending on \( j \).

To construct such a function \(\psi\), we start from a regularized distance \(\psi_0\) to \(F_0\), see \cite{Stein1970}*{Theorem~2, p.~171}, that is, \(\psi_0 \colon \R^m\to \R\) is a nonnegative smooth function on \(\R^m\setminus F_0\) which satisfies, for every \(x\in \R^m\setminus F_0\) and every \(j\in \N_*\), 
\[
c_1 d(x,F_0)\leq \psi_0(x)\leq c_2 d(x,F_0)
\quad \text{and} \quad 
\abs{D^j\psi_0(x)}\leq \widetilde C_j d(x,F_0)^{1-j}.
\]
Here, the constants \(c_1, c_2 > 0\) and \(\widetilde C_j> 0\) are independent of \(F_0\). Up to dividing \(\psi_0\) by \(c_2\), we can assume that \(c_2=1\). To get the function \(\psi\), we  multiply \(\psi_0\)  by a smooth cut-off function equal to \(1\) on \(F_1+ B_{\bar{\delta}}^{m}\) that vanishes outside \(F_1+B_{2\delta}^{m}\). 
The estimate \eqref{eqJEMS-846} then follows from the Leibniz formula.

Let \(M>0\) be the maximum of the continuous function \(d(\cdot,F_0)\) on the compact set \(\overline{A}\) and take \(0 < \epsilon_0 \le 1/2\) such that 
\begin{equation*}
    O+B^{m}_{M\epsilon_0} \Subset A.
\end{equation*}
Since \(\psi\leq M\) on \(A\), this implies that \(O\subset A_{\epsilon_0\psi}\)\,.

We proceed to prove that the conclusion of Proposition~\ref{propositionAdaptiveApplication-New} holds true for any \(0<\epsilon\leq \epsilon_0\) sufficiently small. 
Using that \(\psi\) is positive on \(\overline{F_1 + B_{\delta}^{m} }\setminus F_0\), one has the inclusion
\begin{equation}
    \label{eqJEMS-851}
    A_{\epsilon\psi} \setminus F_0
    \subset \{\psi>0\} \cup ( A \setminus \overline{F_1 + B_{\delta}^m} ).
    \end{equation}
Since \(\psi\) is smooth outside \(F_0\) and \( u \) is smooth in \( A \setminus \overline{F_1 + B_{\delta}^m} \), by \eqref{eqJEMS-851} and Lemma~\ref{lemma-adaptative-smooth} we deduce that \( \varphi_{\epsilon\psi} * u \) is smooth on \(A_{\epsilon\psi} \setminus F_0 \).
In addition, since \(\psi\) is supported in \(F_1 + B_{2\delta}^{m}\)\,, 
    \[
    \varphi_{\epsilon\psi} * u = u
    \quad \text{in \(A_{\epsilon\psi} \setminus (F_1 + B_{2\delta}^{m})\),}
\]
which implies assertion~\((\ref{itemJEMS-745a})\) as \(O\subset A_{\epsilon_0\psi}\)\,.

    Next, to prove the uniform estimate in \((\ref{itemJEMS-746})\), we first observe that, for every \(x \in A_{\epsilon\psi} \setminus F_0\) and every \(z \in \Ball^{m} \), by \eqref{eqStrong-898} we have
\[
d(x+\epsilon \psi(x)z,F_0)\geq d(x,F_0)-\abs{\epsilon \psi(x)z}\geq d(x,F_0)-\epsilon\abs{ \psi(x)} \geq (1-\varepsilon)d(x,F_0).
\]
Hence, by \eqref{eqJEMS-709} and \eqref{eqStrong-898},
	\[{}
	\bigabs{u(x + \epsilon \psi(x) z) - u(x)}
\le C \frac{\epsilon\psi(x) \abs{z}}{(1-\epsilon)d(x, F_0)}
	\le C \frac{\epsilon}{1-\epsilon}.
	\]
This implies that
    \[
    \abs{\varphi_{\epsilon\psi} * u(x) - u(x)}
	\le C \frac{\epsilon}{1-\epsilon}.
    \]
   As \(0 < \epsilon \le \epsilon_0 \le 1/2\), the right-hand side is bounded by \(2C\epsilon\) and \((\ref{itemJEMS-746})\) follows in this case.

To prove assertion~\((\ref{itemJEMS-propositionAdaptiveApplication-last})\), first note that the counterpart of \eqref{eqJEMS-846} with \(\psi\) replaced by \(\epsilon\psi\) is also satisfied since \(\epsilon \le 1\).
By Lemma~\ref{lemmaDensityNewConvolution} applied with \(\epsilon\psi\) and by \eqref{eqStrong-897}, one then gets
\begin{equation}\label{eq894}
	\abs{D^{j}(\varphi_{\epsilon\psi} * u)}
	\le \C (\epsilon\psi)^{-j}\le \C(\epsilon d(\cdot, F_0))^{-j}
    \quad 
    \text{in \(((F_1 + B_{\bar{\delta}}^{m}) \setminus F_0)\cap A_{\epsilon\psi} \)\,.}
\end{equation}
Before proceeding, we introduce a second parameter \(\delta<\underline{\delta}<\overline{\delta}\).
By reducing \(\epsilon_0\) if necessary, we can assume that \(\epsilon_0<(\overline{\delta}-\underline{\delta})/M\).
Since \( u \) and \(\psi\) are smooth and have bounded derivatives on the compact set \(\overline{O+B^{m}_{M\epsilon_0}}\setminus (F_1+B^{m}_{\underline{\delta}})\), we get from the definition~\eqref{eqJEMS-AdaptiveConvolution} of the adaptative convolution that
\begin{equation}\label{eq8944}
	\abs{D^{j}(\varphi_{\epsilon\psi} * u)}
	\le \Cl{cteJEMS-952} 
    \quad 
    \text{in \(\bigl(\overline{O+B^{m}_{M\epsilon_0}}\setminus (F_1+B^{m}_{\underline{\delta}})\bigr)_{\epsilon\psi}\)\,,}
\end{equation}
for some constant \(\Cr{cteJEMS-952} > 0\) depending on \(j\).
Using that \(\epsilon<(\overline{\delta}-\underline{\delta})/M\), we have  
\[
\bigl((F_1+B^{m}_{2\delta})\setminus (F_1+B^{m}_{\overline{\delta}})\bigr) \cap O 
\subset \bigl(\overline{O+B^{m}_{M\varepsilon_0}}\setminus (F_1+B^{m}_{\underline{\delta}})\bigr)_{\epsilon\psi}\,.
\]
The estimate in \((\ref{itemJEMS-propositionAdaptiveApplication-last})\) then follows from \eqref{eq894} and \eqref{eq8944}.

We thus have the conclusion for \(0 < \epsilon \le \epsilon_0\).
To establish Proposition~\ref{propositionAdaptiveApplication-New} when \(\epsilon_0 \le \epsilon \le 1\), it suffices to apply the previous case with \(\epsilon_0\).
\end{proof}

The main properties of adaptive convolution in the setting of \(\Sobolev^{k,p}\) spaces are summarized in the following statement, see \cite{BPVS_MO}*{Proposition~3.2}:

\begin{proposition}
\label{propositionConvolutionEstimates}
 Let \(\varphi\) be a mollifier, and let \(\psi\colon A \to \R\) be a nonnegative smooth function such that \(\norm{D\psi}_{\Lebesgue^{\infty}(A)}<1\). 
 For every \(u\in \Sobolev^{k,p}(A)\), we have \(\varphi_\psi*u\in \Sobolev^{k,p}(A_\psi)\) and, for every \(j\in \{1, \dots, k\}\),
 \[
\norm{D^j(\varphi_{\psi} \ast u) }_{\Lebesgue^p(A_\psi)}
\leq  \frac{C}{(1 - \norm{D\psi}_{\Lebesgue^\infty(A_\psi)})^{1/p}} \sum_{i=1}^j  \eta^{i - j} \norm{D^i u}_{\Lebesgue^p(A)}
\]
and
\begin{multline*}
\norm{D^j(\varphi_{\psi} \ast u) - D^j u}_{\Lebesgue^p(A_\psi)}\\
\leq \sup_{v \in B_1^m}{\norm{\tau_{\psi v}(D^j u) - D^j u}_{\Lebesgue^p(A_\psi)}} + \frac{C'}{(1 - \norm{D\psi}_{\Lebesgue^\infty(A_\psi)})^{1/p}} \sum_{i=1}^j  \eta^{i - j} \norm{D^i u}_{\Lebesgue^p(B)}\text{,}
\end{multline*}
where \(B \vcentcolon= \bigcup\limits_{x\in A_\psi\cap \supp D\psi}{B^{m}_{\psi(x)}(x)}\), \((\tau_{\psi v} D^j u) (x) \vcentcolon= D^ju(x + \psi(x)v)\) and \(\eta>0\) is such that, for every \(j \in \{2, \dotsc, k\}\), 
\[
  \norm{D^j \psi}_{\Lebesgue^\infty(A_\psi)} \le \eta^{1-j}.  
\]
\end{proposition}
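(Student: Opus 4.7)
\textbf{Proof proposal for Proposition~\ref{propositionConvolutionEstimates}.}
By a standard density argument it suffices to establish the estimates for \(u \in (\Smooth^\infty \cap \Sobolev^{k,p})(A)\); the membership \(\varphi_\psi * u \in \Sobolev^{k,p}(A_\psi)\) then follows by approximation once the derivative estimates are in place. The natural framework is to write, for \(x \in A_\psi\),
\[
\varphi_\psi * u(x) = \int_{\Ball^m} \varphi(z)\, u(\Phi_z(x)) \dif z,
\qquad \Phi_z(x) \vcentcolon= x + \psi(x) z,
\]
and differentiate under the integral sign. For each fixed \(z \in \Ball^m\), the map \(\Phi_z\) is smooth with \(D\Phi_z(x) = I + z \otimes D\psi(x)\); since \(z \otimes D\psi\) has rank one with nonzero eigenvalue \(z \cdot D\psi(x)\),
\[
\Jacobian{m}{\Phi_z}(x) = 1 + z \cdot D\psi(x) \ge 1 - \norm{D\psi}_{\Lebesgue^\infty(A_\psi)} > 0,
\]
so \(\Phi_z\) is a diffeomorphism from \(A\) onto its image with a uniform lower bound on its Jacobian. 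This is the key ingredient for the change of variables used throughout.

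For the first estimate, I apply the Faà di Bruno formula to \(D^j(u \compose \Phi_z)\). The first derivative \(D\Phi_z\) is bounded independently of \(\eta\), while \(D^i\Phi_z = (D^i\psi)\, z\) for \(i \ge 2\) is bounded by \(C\eta^{1-i}\). Counting degrees in the multi-index decomposition \(\theta_1 + \cdots + \theta_i = j\) with the convention that each \(\theta_s = 1\) contributes a factor \(O(1)\) and each \(\theta_s \ge 2\) contributes \(O(\eta^{1-\theta_s})\), one obtains the pointwise bound
\[
\bigabs{D^j(u \compose \Phi_z)(x)} \le C \sum_{i=1}^{j} \eta^{i-j} \bigabs{D^i u(\Phi_z(x))}.
\]
Minkowski's inequality then yields
\[
\norm{D^j(\varphi_\psi * u)}_{\Lebesgue^p(A_\psi)} \le C \sum_{i=1}^{j} \eta^{i-j} \int_{\Ball^m} \varphi(z) \norm{D^i u \compose \Phi_z}_{\Lebesgue^p(A_\psi)} \dif z,
\]
and the change of variables \(y = \Phi_z(x)\) on each inner integral converts \(\norm{D^i u \compose \Phi_z}_{\Lebesgue^p(A_\psi)}\) into \(\norm{D^i u}_{\Lebesgue^p(A)}\) with a multiplicative loss of \((1 - \norm{D\psi}_{\Lebesgue^\infty})^{-1/p}\) coming from the Jacobian lower bound. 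This delivers the first estimate.

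For the second estimate, I decompose
\[
D^j(\varphi_\psi * u) - D^j u = \bigl[D^j(\varphi_\psi * u) - \varphi_\psi * (D^j u)\bigr] + \bigl[\varphi_\psi * (D^j u) - D^j u\bigr].
\]
The translation term \(\varphi_\psi * (D^j u) - D^j u\) is controlled pointwise by \(\int \varphi(z)\, \abs{\tau_{\psi z}(D^j u)(x) - D^j u(x)}\dif z\), hence in \(\Lebesgue^p(A_\psi)\) by \(\sup_{v \in B_1^m}\norm{\tau_{\psi v}(D^j u) - D^j u}_{\Lebesgue^p(A_\psi)}\). The commutator term is more delicate: among the Faà di Bruno summands for \(D^j(u \compose \Phi_z)\), exactly one corresponds to the multi-index with \(\theta_1 = \cdots = \theta_j = 1\), in which only the first derivative of \(\Phi_z\) appears, and when one extracts the \emph{identity part} \(I\) from \(D\Phi_z = I + z \otimes D\psi\), this single contribution reproduces \(D^j u \compose \Phi_z\), i.e.\ \(\varphi_\psi * (D^j u)\) after integration. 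The remaining summands all carry at least one factor \(z \otimes D\psi\) or one \(D^i \psi\) with \(i \ge 2\), and each such factor contributes either a bounded term that is supported in \(\supp D\psi\) or a factor \(O(\eta^{1-i})\). After a change of variables as before, these terms are bounded by \(C (1 - \norm{D\psi}_{\Lebesgue^\infty})^{-1/p}\sum_{i=1}^{j} \eta^{i-j}\norm{D^i u}_{\Lebesgue^p(B)}\), where \(B\) is the image of \(\Phi_z\) restricted to \(A_\psi \cap \supp D\psi\), which is precisely the set defined in the statement.

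The main obstacle will be the careful bookkeeping in the commutator term: one must isolate exactly the contribution that rebuilds \(\varphi_\psi * (D^j u)\) and verify that every remaining term is genuinely supported where \(D\psi \ne 0\), which is what localizes the integration to \(B\) rather than to all of \(A\). The rest is a routine but involved application of Faà di Bruno together with the uniform Jacobian lower bound \(1 - \norm{D\psi}_{\Lebesgue^\infty}\).
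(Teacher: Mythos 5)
Your argument is correct and follows essentially the same route as the proof the paper relies on (the paper does not reprove this statement but cites \cite{BPVS_MO}*{Proposition~3.2}): differentiation under the integral sign for \(\Phi_z(x)=x+\psi(x)z\), the Jacobian lower bound \(1+z\cdot D\psi(x)\ge 1-\norm{D\psi}_{\Lebesgue^\infty(A_\psi)}\) together with injectivity of \(\Phi_z\), Faà di Bruno with the \(\eta\)-scaling of \(D^i\psi\), and the splitting into the translation term \(\varphi_\psi*(D^j u)-D^j u\) plus a commutator whose summands all carry a factor \(D\psi\) or \(D^i\psi\), \(i\ge 2\), hence are supported in \(\supp D\psi\) and, after the change of variables, are estimated on \(B\). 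The one point to phrase more carefully is your claim that \(B\) is the image of \(\Phi_z\) restricted to \(A_\psi\cap\supp D\psi\): at points where \(\psi=0\) one has \(\Phi_z(x)=x\), which need not lie in \(B\); this is harmless because \(\psi\ge 0\) forces \(D\psi=0\) on \(\{\psi=0\}\) and, almost everywhere on that set, all higher derivatives of \(\psi\) vanish as well, so the commutator integrand vanishes a.e. there and the integration may be restricted to \(\{\psi>0\}\cap\supp D\psi\), whose image under \(\Phi_z\) is indeed contained in \(B\).
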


\section{Smooth extensions}

Our next goal is to remove the singularity that is created by thickening, or when that is not possible due to a topological obstruction, to decrease the dimension of the singular set. 
We wish to construct an \( \ClassR_i \)~map with \(i\) as small as possible.
To illustrate the strategy that we pursue in this section, we begin with a model example that involves a continuous map with values in a manifold \( \manfN \), where we can fully remove the singularity.

\begin{example}
    \label{exampleSingularityBallRemovable}
    Suppose that we are given \( u \in \Smooth^{0}(\cBall^{m} \setminus \{0\}; \manfN) \) and we know that \( u|_{\partial \Ball^{m}} \) has a continuous extension from \( \cBall^{m} \) to \( \manfN \).
    Then, for any \( 0 < r < 1 \), one can find \( v \in \Smooth^{0}(\cBall^{m}; \manfN) \) such that
    \begin{equation*}
    v = u
    \quad \text{in \(\cBall^{m} \setminus B_{r}^{m}\).}
    \end{equation*}
    To this end, denote by \( \overline{u} \) a continuous extension of \( u|_{\partial \Ball^{m}} \) from \( \cBall^{m} \) to \( \manfN \).
    Then, it suffices to take
    \[
    v(x)
    = 
    \begin{cases}
    u(x)  & \text{if \(x \in \cBall^{m} \setminus B_{r}^{m}\),}\\
    \overline{u}(2x/r)       & \text{if \(x \in B_{r/2}^{m}\)\,,}
    \end{cases}
    \]
    and to extend it continuously on \(\overline{B}{}_{r}^{m} \setminus B_{r/2}^{m}\) using the values of \( u \) on \( \cBall^{m} \setminus B_{r}^{m}\).
\end{example}

We pursue this example in greater generality in the context of cubications.
Given a cubication \( \cS^{m} \) in \( \R^{m} \) and the dual skeleton \( \cT^{\ell^{*}} \) of \( \cS^{\ell} \) with \( \ell \in \{0, \dots, m\} \), we rely on the existence of a continuous retraction from \(S^m \setminus T^{\ell^{*}}\) to \(S^{\ell}\) which is homotopic to the identity map in \(S^{m} \setminus T^{\ell^{*}}\).{}
More precisely, we work with a locally Lipschitz homotopy \(H_\ell  \colon  (S^m \setminus T^{\ell^*}) \times [0, 1]  \to S^m \setminus T^{\ell^*}\) such that 
\begin{enumerate}[$({a}_1)$]
\item for every \(y \in S^m\setminus T^{\ell^*}\), \(H_\ell(y, 0) = y\),
\label{item15431}
\item for every \(y \in S^m \setminus T^{\ell^*}\), \(H_\ell(y, 1) \in S^\ell\),
\label{item15432}
\item for every \(y \in S^\ell\), \(H_\ell(y, 1) = y\).
\label{item15433}
\end{enumerate}
This map \( H_\ell(\cdot, t) \) can be constructed taking the map \( F_{1, 1-t} \) defined on \( S^{m} \) by White~\cite{White-1986}*{p.\,130} for \( 0 \leq  t < 1 \) and, in his notation, \(k = \ell\). 
When one removes the dual skeleton \( T^{\ell^{*}} \) from White's construction, one obtains by continuous extension a locally Lipschitz map on \( S^m\setminus T^{\ell^*} \) that includes \( t = 1 \).
In this case, \( F_{1, 0} \) is the usual retraction \(P_\ell \colon S^{m}\setminus T^{\ell^*}\to S^{\ell}\).

To focus on the topological aspect of the construction, we begin by dealing with  the following generalization of Example~\ref{exampleSingularityBallRemovable} for continuous maps on cubications:

\begin{proposition}
	\label{propositionDensityNewExtension}
    Let \( e \in \{ \ell, \ldots, m \} \),
    let \( \eta > 0 \) be the radius of the cubication \( \cS^{m} \), and let \( 0 < \mu < 1 \).  
    If \(u\in \Smooth^{0}(S^m\setminus T^{\ell^*};\manfN)\) is a map whose restriction \( u|_{S^{\ell}} \) has a continuous extension from \( S^{e} \) to \( \manfN \), then there exists \(v \in \Smooth^{0}(S^{m} \setminus T^{e^{*}}; \manfN)\) such that 
\begin{equation}
\label{eqStrong2-634}
    v= u
    \quad \text{in \(S^{m} \setminus (T^{\ell^{*}} + Q_{\mu\eta}^{m})\).}
\end{equation}
\end{proposition}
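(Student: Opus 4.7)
The strategy is to glue \(u\), continuous only on \(S^m \setminus T^{\ell^*}\), with a new map \(\widetilde{u}\) whose singularities lie on the smaller set \(T^{e^*}\); the extension hypothesis on \(u|_{S^\ell}\) provides the bridge. First, use the hypothesis to pick \(\overline{u} \in \Smooth^0(S^e; \manfN)\) with \(\overline{u}|_{S^\ell} = u|_{S^\ell}\). Applying at the level \(e\) the same construction of \cite{White-1986} that produced \(H_\ell\), one obtains a locally Lipschitz homotopy \(H_e \colon (S^m \setminus T^{e^*}) \times [0, 1] \to S^m \setminus T^{e^*}\) whose endpoint \(P_e \vcentcolon= H_e(\cdot, 1)\) is a retraction onto \(S^e\). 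Set \(\widetilde{u} \vcentcolon= \overline{u} \compose P_e \in \Smooth^0(S^m \setminus T^{e^*}; \manfN)\). Since \(e \geq \ell\) gives \(T^{e^*} \subset T^{\ell^*}\), the map \(\widetilde{u}\) is defined on a strictly larger set than \(u\), and \(\widetilde{u}|_{S^\ell} = u|_{S^\ell}\) because \(P_e\) fixes \(S^e \supset S^\ell\).

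Next, construct a continuous homotopy \(G \colon (S^m \setminus T^{\ell^*}) \times [0, 1] \to \manfN\) from \(u\) to \(\widetilde{u}|_{S^m \setminus T^{\ell^*}}\) by routing through the common function \(u \compose P_\ell\). Since \(P_e\) fixes \(S^\ell\), one has \(P_e \compose P_\ell = P_\ell\), and consequently \(\widetilde{u} \compose P_\ell = \overline{u} \compose P_\ell = u \compose P_\ell\) on \(S^m \setminus T^{\ell^*}\). Define
\[
G(x, t) =
\begin{cases}
u(H_\ell(x, 2t)) & \text{if } 0 \leq t \leq 1/2, \\
\widetilde{u}(H_\ell(x, 2 - 2t)) & \text{if } 1/2 \leq t \leq 1;
\end{cases}
\]
the two formulas agree at \(t = 1/2\), yielding a continuous \(G\) with \(G(\cdot, 0) = u\) and \(G(\cdot, 1) = \widetilde{u}\).

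Finally, glue by a cutoff. Fix \(0 < \mu' < \mu\) and a continuous \(\chi \colon S^m \to [0, 1]\) with \(\chi = 0\) on \(T^{\ell^*} + Q^m_{\mu'\eta}\) and \(\chi = 1\) outside \(T^{\ell^*} + Q^m_{\mu\eta}\); then set
\[
v(x) =
\begin{cases}
G(x, 1 - \chi(x)) & \text{if } x \in S^m \setminus T^{\ell^*}, \\
\widetilde{u}(x) & \text{if } x \in (T^{\ell^*} + Q^m_{\mu'\eta}) \setminus T^{e^*}.
\end{cases}
\]
On the overlap \((T^{\ell^*} + Q^m_{\mu'\eta}) \setminus T^{\ell^*}\), where \(\chi = 0\), both formulas give \(\widetilde{u}(x)\); outside \(T^{\ell^*} + Q^m_{\mu\eta}\), where \(\chi = 1\), one recovers \(v = u\), which is relation~\eqref{eqStrong2-634}. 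The main delicate point is the continuity of \(v\) at points of \(T^{\ell^*} \setminus T^{e^*}\): this follows from the continuity of \(\widetilde{u}\) on \(S^m \setminus T^{e^*}\) together with the fact that, on an entire neighborhood of any such point, the first formula reduces to \(\widetilde{u}\), since \(\chi\) vanishes on the neighborhood \(T^{\ell^*} + Q^m_{\mu'\eta}\) of \(T^{\ell^*}\). The only other item to confirm is the existence of \(H_e\) with the three analogues of \((a_{\ref{item15431}})\)--\((a_{\ref{item15433}})\), which is obtained verbatim from White's construction with \(k = e\).
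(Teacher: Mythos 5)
Your proof is correct and follows essentially the same route as the paper: you use the retraction homotopy \(H_\ell\), the retraction \(P_e\), and the agreement \(u=\overline{u}\compose P_e\) on \(S^\ell\) to interpolate between \(u\) outside \(T^{\ell^*}+Q^m_{\mu\eta}\) and \(\overline{u}\compose P_e\) near \(T^{\ell^*}\). The paper merely writes this as a single three-region formula \(v(x)=(u\compose H_\ell)(x,\theta(x))\), \((\overline{u}\compose P_e\compose H_\ell)(x,\theta(x))\), \(\overline{u}\compose P_e(x)\) with a spatial transition function \(\theta\), which is exactly your concatenated homotopy \(G\) evaluated at the cutoff time \(1-\chi(x)\).
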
	

\begin{proof}
    We denote by \( \overline{u}\) a continuous extension of \( u|_{S^{\ell}} \) from \( S^{e}\) to \(\manfN\).
    Given \(0 < \underline\delta < \delta < \mu\), we take \(v  \colon  S^m \setminus T^{e^{*}} \to \manfN\) as
\begin{equation}
\label{eqStrong2-614}
v(x) = 
\begin{cases}
(u \circ H_\ell)(x, \theta(x))		& \text{if \(x \in S^m \setminus (T^{\ell^*} + Q_{\delta\eta}^m)\),}\\
(\overline{u} \circ P_{e} \circ H_\ell)(x, \theta(x))		& \text{if \(x \in (T^{\ell^*} + Q_{\delta\eta}^m) \setminus (T^{\ell^*} + Q_{\underline{\delta}\eta}^m)\),}\\
(\overline{u} \circ P_{e})(x)		& \text{if \(x \in (T^{\ell^*} + Q_{\underline{\delta}\eta}^m) \setminus T^{e^{*}} \),}
\end{cases}
\end{equation}
where \(P_e \colon S^{m}\setminus T^{e^*}\to S^{e}\) is the usual retraction and \(\theta  \colon  S^m \to [0, 1]\) is a Lipschitz function such that 	
\begin{enumerate}[$({b}_1)$]
	\item
    \label{itemExtension610}
    \(\supp{\theta} \subset (T^{\ell^*} + Q_{\mu\eta}^m) \setminus \overline{T^{\ell^*} + Q_{\underline\delta \eta}^m}\),
    \item 
    \label{itemExtension607}
    \(\theta = 1\) in \(\bigl(\partial(T^{\ell^*} + Q_{\delta\eta}^m) \bigr) \cap S^m\).
\end{enumerate}

By \((a_{\ref{item15431}})\) and \((b_{\ref{itemExtension610}})\), we have
\[
v = u
\quad \text{in \(S^m \setminus (T^{\ell^*} + Q_{\mu\eta}^m)\).}
\]
To verify that \(v\) is continuous, observe that, by \((a_{\ref{item15431}})\) and \((b_{\ref{itemExtension610}})\), 
\[
v = \overline{u} \compose P_{e} 
\quad \text{in a neighborhood of \(\partial(T^{\ell^*} + Q_{\underline{\delta}\eta}^m)\) in \(S^m\).}
\]
Since \( u = \overline{u} \) on \( S^{\ell} \) and \( P_{e} \) coincides with the identity map on \( S^{e} \), we have \( u = \overline{u} \compose P_{e} \) on \( S^{\ell} \).
Then, by \((a_{\ref{item15432}})\),
\[
(u \compose H_{\ell})(\cdot, 1) 
= (\overline{u} \compose P_{e} \compose H_{\ell})(\cdot, 1) 
\quad \text{in \(S^{m} \setminus T^{\ell^{*}}\).}
\]
It then follows from \((b_{\ref{itemExtension607}})\) that \(v\) is continuous in a neighborhood of \(\partial(T^{\ell^*} + Q_{\delta\eta}^m)\) in \(S^m\).
This implies that \( v \) is continuous on \( S^m \setminus T^{e^{*}} \).
\end{proof}

We now investigate the smooth analogue of Proposition~\ref{propositionDensityNewExtension} that we need in the proof of Theorem~\ref{thm_density_manifold_open}.

\begin{proposition}
	\label{propositionDensityNewExtensionSmooth}
    Let \( e \in \{ \ell, \ldots, m \} \),
  let \( \eta > 0 \) be the radius of the cubication \( \cS^{m} \), and let \( 0 < \mu < 1 \).
  If \(u\in \Smooth^{0}(S^m\setminus T^{\ell^*};\manfN)\) is smooth and if \( u|_{S^{\ell}} \) has a continuous extension from \( S^{e} \) to \( \manfN \), then there exists \(v \in \ClassR_{e^{*}}(\Int{S^{m}}; \manfN)\) with singular set \( T^{e^{*}} \) such that  
  \[
    v= u
    \quad \text{in \((\Int{S^{m}}) \setminus (T^{\ell^{*}} + Q_{\mu\eta}^{m})\).}
  \]
\end{proposition}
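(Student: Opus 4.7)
The plan is to mimic the construction in Proposition~\ref{propositionDensityNewExtension}, but replace $\overline{u}$ by a smooth map $\widetilde{u} \in \Smooth^\infty(S^e; \manfN)$ that agrees with $u$ in a neighborhood (in $S^e$) of the smooth skeleton $S^\ell$, and upgrade $\theta$ and the retractions so that every ingredient entering the formula is smooth away from the relevant singular set. Concretely, the output $v$ will again be defined by the three-region prescription
\[
v(x) =
\begin{cases}
u \compose H_\ell(x,\theta(x)) & x \in S^m \setminus (T^{\ell^{*}} + Q^m_{\delta\eta}),\\
\widetilde{u}\compose P_e \compose H_\ell(x,\theta(x)) & x \in (T^{\ell^{*}} + Q^m_{\delta\eta})\setminus (T^{\ell^{*}} + Q^m_{\underline{\delta}\eta}),\\
\widetilde{u}\compose P_e(x) & x \in (T^{\ell^{*}} + Q^m_{\underline{\delta}\eta})\setminus T^{e^{*}},
\end{cases}
\]
with $\theta$ chosen smooth (instead of merely Lipschitz), identically $1$ in a neighborhood of $\partial(T^{\ell^{*}}+Q^m_{\delta\eta})\cap S^m$ and identically $0$ in a neighborhood of $\partial(T^{\ell^{*}}+Q^m_{\underline{\delta}\eta})\cap S^m$.

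First I would construct $\widetilde{u}$. Since $\overline{u}\in \Smooth^0(S^e;\manfN)$ coincides with the smooth map $u$ on $S^\ell$, a standard procedure produces a smooth approximation $u_{\mathrm{sm}}\in \Smooth^\infty(S^e;\R^\nu)$ with range in an arbitrarily small tubular neighborhood of $\manfN$ (e.g.\ convolve $\overline{u}$ skeleton by skeleton, see Proposition~\ref{propositionDensityVMO}), so that $\Pi\compose u_{\mathrm{sm}}\in \Smooth^\infty(S^e;\manfN)$ is homotopic to $\overline{u}$. Using that $u|_{S^\ell}$ is smooth and admits a smooth extension $u_{\mathrm{loc}}$ in a tubular neighborhood of $S^\ell$ inside $S^e$ (with values close to $\manfN$), I interpolate between $u_{\mathrm{loc}}$ and $u_{\mathrm{sm}}$ via a smooth partition of unity subordinated to $S^\ell$ and a slightly thicker neighborhood, then project onto $\manfN$ with $\Pi$. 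The resulting $\widetilde{u}$ is smooth on $S^e$, takes values in $\manfN$, and agrees with $u$ in a neighborhood of $S^\ell$ in $S^e$. This is the key step that allows the three regions of $v$ to glue smoothly.

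Next I would verify the gluing and the estimates. The retractions $P_e \colon S^m\setminus T^{e^{*}}\to S^e$ and $H_\ell(\cdot,t)$ can be chosen smooth away from $T^{e^{*}}$ and $T^{\ell^{*}}$ respectively, with the scale-invariant derivative estimates $|D^j P_e(x)|\le C\,d(x,T^{e^{*}})^{-j}$ and analogous bounds for $H_\ell$ on the region where $\theta$ is supported (the construction of such smooth retractions is modelled on the thickening maps of Proposition~\ref{propositionthickeningfromaskeleton} applied skeleton by skeleton). On the outer interface $\theta\equiv 1$, both the first and second definitions reduce to $u\compose P_\ell(x)$ because $H_\ell(\cdot,1)$ lands in $S^\ell$ and $\widetilde{u}=u$ there; on the inner interface $\theta\equiv 0$, the second and third definitions coincide with $\widetilde{u}\compose P_e$. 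Hence $v$ is smooth on $\Int S^m\setminus T^{e^{*}}$. On the inner region, a Faà di Bruno argument using the smoothness of $\widetilde{u}$ and the derivative bounds on $P_e$ yields $|D^j v(x)|\le C\,d(x,T^{e^{*}})^{-j}$, exactly the $\ClassR_{e^{*}}$ estimate; in the other two regions $x$ stays at distance bounded below (uniformly in $\eta$-rescaled units) from $T^{e^{*}}$, so the required estimates hold trivially after absorbing the Lipschitz constants of $H_\ell$ and $\theta$. Finally, $v=u$ on $S^m\setminus (T^{\ell^{*}}+Q^m_{\mu\eta})$ follows from $(a_{\ref{item15431}})$ and the support condition on $\theta$.

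The main obstacle I foresee is the simultaneous demand that $\widetilde{u}$ be smooth on all of $S^e$ \emph{and} agree identically with $u$ in an open neighborhood of $S^\ell$ (not merely on $S^\ell$): without the stronger neighborhood-agreement, the first and second formulas would only match continuously at $\theta=1$, not smoothly. Handling this cleanly requires the interpolation via $\Pi$ described above and a careful choice of the transition widths $\underline{\delta}<\delta<\mu$ together with the tubular neighborhood of $S^\ell$ on which $\widetilde{u}=u_{\mathrm{loc}}$; once this is set up, the remaining verifications reduce to the Faà di Bruno estimates already used in Corollary~\ref{corollaryThickeningCubication}.
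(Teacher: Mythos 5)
There is a genuine gap, and it sits at the heart of your plan: the claim that the retractions \(P_e \colon S^m \setminus T^{e^*} \to S^e\) and \(H_\ell(\cdot, t)\) ``can be chosen smooth away from \(T^{e^*}\), \(T^{\ell^*}\)'' is false. The polytope \(S^e\) has corners along \(S^{e-1}\), and no map \(r\) defined on an open neighborhood of a point \(x_0\) in the relative interior of an \((e-1)\)-cell, taking values in \(S^e\) and fixing \(S^e\) pointwise, can be differentiable at \(x_0\) (for \(e \le m-1\)): choosing two distinct normal directions \(n_1, n_2\) along which \(S^e\) contains segments through \(x_0\), differentiability would force \(Dr(x_0)[n_1 + n_2] = n_1 + n_2\), so the curve \(t \mapsto r(x_0 + t(n_1 + n_2))\) would have both normal components strictly positive for small \(t>0\), while near \(x_0\) every point of \(S^e\) has at most one nonzero normal coordinate --- a contradiction. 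The same obstruction applies to \(H_\ell(\cdot,1) = P_\ell\), and White's homotopy \(H_\ell\) is in any case only locally Lipschitz. Consequently, no matter how smooth you make \(\widetilde{u}\) and \(\theta\), the glued map (\(\widetilde{u} \compose P_e\) on the inner region, \(u \compose H_\ell(\cdot,\theta)\) and \(\widetilde{u}\compose P_e \compose H_\ell(\cdot,\theta)\) elsewhere) is only locally Lipschitz away from \(T^{e^*}\); it is not an \(\ClassR_{e^*}\)~map, and the Fa\`a di Bruno estimates you invoke are meaningless at the points where \(P_e\) and \(H_\ell\) fail to be differentiable.

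This is precisely the difficulty the paper's proof is built to circumvent, and the step you omit cannot be omitted. The paper runs the continuous construction of Proposition~\ref{propositionDensityNewExtension} (at scale \(\mu/4\)), but with a \emph{Lipschitz} extension \(\overline{u}\) of \(u|_{S^\ell}\) supplied by Lemma~\ref{lemma-Lipschitz-extension}, and proves only the quantitative locally Lipschitz bound \(\abs{v(y)-v(z)} \le C \abs{y-z}/\min\{d(y,T^{e^*}), d(z,T^{e^*})\}\), using \(\abs{DP_e} \le C/d(\cdot,T^{e^*})\) in the a.e.\ sense. It then applies the adaptive-smoothing step of Lemma~\ref{propositionAdaptiveApplication} (built on Proposition~\ref{propositionAdaptiveApplication-New}): a convolution at variable scale comparable to \(d(\cdot,T^{e^*})\), supported near \(T^{\ell^*}\), followed by the nearest point projection \(\Pi\). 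This produces an \(\ClassR_{e^*}\)~map with singular set \(T^{e^*}\) while leaving \(v\) unchanged outside \(T^{\ell^*} + Q^m_{\mu\eta}\), which is exactly the statement to be proved. So your construction of a smooth \(\widetilde{u}\) agreeing with \(u\) near \(S^\ell\) does not help where help is needed: smoothness must be restored \emph{after} gluing, by smoothing the Lipschitz map \(v\) itself, not by smoothing its ingredients.
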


Since the domain \(S^m\setminus T^{\ell^*}\) is not open, by saying that \(u\) is smooth we mean that there exist an open set \(A \supset S^m\setminus T^{\ell^*}\) and a smooth map \(\widetilde{u} \colon A \to \R^\nu\) such that \(\widetilde{u}|_{S^m\setminus T^{\ell^*}} = u\).

We begin with a known extension lemma in the context of Lipschitz maps with values in \(\manfN\). 
We rely here on the existence of some parameter \(\iota>0\) such that the nearest point projection \(\Pi\) to \(\manfN\) is well defined and smooth in a neighborhood of the tubular neighborhood \(\overline{\manfN+B^{\nu}_{\iota}}\) of \(\manfN\).

\begin{lemma}\label{lemma-Lipschitz-extension}
     Let \( e \in \{ \ell, \ldots, m \} \).
 If \(u \in \Smooth^0(S^\ell; \manfN)\) has a continuous extension from \(S^e\) to \(\manfN\) and if \(u\) is Lipschitz continuous, then \(u\) has a Lipschitz continuous extension from \(S^e\) to \(\manfN\).  
\end{lemma}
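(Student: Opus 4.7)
The plan is to build the Lipschitz extension in one go by interpolating between a Lipschitz extension of $u$ to all of $S^e$ (taking values in $\R^\nu$) and a Lipschitz approximation of the continuous extension $\bar{u}$ of $u$, then projecting back to $\manfN$ using the nearest-point retraction $\Pi$ on the tubular neighborhood $\manfN + B^\nu_\iota$.

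First, I would use Tietze's theorem to extend the continuous map $\bar u \colon S^e \to \manfN \subset \R^\nu$ to a bounded continuous map on an open neighborhood $U$ of the compact set $S^e$ in $\R^m$, and then convolve with a standard mollifier at scale $s > 0$ to obtain a smooth map on a slightly smaller neighborhood containing $S^e$. By uniform continuity of $\bar u$, for $s$ small enough this mollification takes values in $\manfN + B^\nu_{\iota/2}$, and composing with $\Pi$ yields a smooth (hence Lipschitz) map $w \colon S^e \to \manfN$ that is uniformly close to $\bar u$. Separately, the McShane--Whitney theorem applied componentwise to the Lipschitz map $u \colon S^\ell \to \R^\nu$, viewed on the closed subset $S^\ell$ of the compact polytope $S^e$, produces a Lipschitz extension $\tilde u \colon S^e \to \R^\nu$. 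By the Lipschitz bound, $d(\tilde u(x), \manfN) \le L \, d(x, S^\ell)$ for every $x \in S^e$, where $L$ is the Lipschitz constant of $\tilde u$.

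Next, I would fix a small parameter $\delta > 0$ and a Lipschitz cutoff $\theta \colon S^e \to [0,1]$ that vanishes on the $\delta/2$-neighborhood of $S^\ell$ in $S^e$ and equals $1$ outside the $\delta$-neighborhood of $S^\ell$, and define
\[
\Psi(x) \vcentcolon= \Pi\bigl((1 - \theta(x))\, \tilde u(x) + \theta(x)\, w(x)\bigr) \quad \text{for } x \in S^e.
\]
On $S^\ell$ we have $\theta = 0$ and $\tilde u = u \in \manfN$, so $\Psi = \Pi \circ u = u$. Hence $\Psi$ extends $u$, takes values in $\manfN$, and provided the argument of $\Pi$ stays in $\manfN + B^\nu_\iota$, it is Lipschitz because $\Pi$, $\tilde u$, $w$, and $\theta$ all are.

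The main obstacle is the quantitative coordination of the small parameters so that the convex combination inside $\Pi$ indeed stays in the tubular neighborhood of $\manfN$. On the region where $\theta(x) = 0$ we only see $\tilde u(x)$, which lies within $L\delta/2$ of $\manfN$ by the McShane--Whitney estimate; elsewhere we combine $\tilde u(x)$ (within $L\delta$ of $\manfN$) and $w(x) \in \manfN$, and the convex combination therefore lies within $L\delta$ of $\manfN$. It suffices to choose $\delta$ so that $L\delta < \iota$ and $s$ so that the mollification error is below $\iota/2$; both choices depend only on the Lipschitz constant of $u$, on $\iota$, and on the modulus of continuity of the continuous extension $\bar u$. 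Once these are fixed, the Lipschitz continuity of $\Psi$ follows directly from the chain rule for Lipschitz maps.
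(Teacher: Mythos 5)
Your overall architecture --- a McShane--Whitney Lipschitz extension of $u$ with values in $\R^\nu$, a Lipschitz $\manfN$-valued approximation of the continuous extension, a cutoff interpolation near $S^\ell$, and a final projection by $\Pi$ --- is essentially the paper's, with the roles of the cutoff reversed. But the key quantitative step fails as written. In the transition region where $0<\theta(x)<1$ you assert that the convex combination of $\tilde u(x)$ (within $L\delta$ of $\manfN$) and $w(x)\in\manfN$ ``therefore lies within $L\delta$ of $\manfN$''. This is false for a non-convex target: if $p$ is close to $\manfN$ and $q\in\manfN$, the segment $[p,q]$ can pass far from $\manfN$ (take $\manfN=\Sphere^{n}$ and nearly antipodal $p,q$; the midpoint is at distance about $1$ from $\Sphere^{n}$). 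Closeness of each endpoint to $\manfN$ gives no control on the chord, so the argument of $\Pi$ need not lie in $\manfN+B_{\iota}^{\nu}$ and $\Psi$ is not even defined there.

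What the estimate actually requires is that $\tilde u(x)$ and $w(x)$ be close \emph{to each other} in the transition region; then, since $w(x)\in\manfN$, the combination is within $\abs{\tilde u(x)-w(x)}$ of $\manfN$. To obtain this you must compare both maps to a common anchor $u(y)$, with $y$ a nearest point of $S^\ell$ to $x$: one has $\abs{\tilde u(x)-u(y)}\le L\,d(x,S^\ell)$, $\abs{w(x)-\bar u(x)}\le\epsilon$ from the mollification, and $\abs{\bar u(x)-u(y)}=\abs{\bar u(x)-\bar u(y)}\le\omega(\delta)$ by the modulus of continuity $\omega$ of $\bar u$ --- a quantity your written estimate never actually invokes, even though you list it among the data your parameters depend on. This anchoring is exactly how the paper argues: it uses the retraction $P_\ell$ onto $S^\ell$ and chooses a neighborhood $U$ of $S^\ell$ on which both $\abs{u_c(x)-u_c(P_\ell(x))}<\iota/2$ and $\abs{u_l(x)-u_l(P_\ell(x))}<\iota/2$, so that both terms of the interpolation stay within $\iota$ of the common value $u(P_\ell(x))$. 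With that correction, and with $\delta$ and $\epsilon$ chosen accordingly, your construction goes through.
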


\begin{proof}[Proof of Lemma~\ref{lemma-Lipschitz-extension}]
Let us denote by \(u_{c}\colon S^{e}\to \manfN\) a continuous extension  of  \(u\) and by  \(v\colon S^{e}\to \R^{\nu}\) a Lipschitz approximation of \(u_{c}\) such that, for every \(x\in S^e\),
        \begin{equation}\label{eq850}
            \abs{v(x)-u_{c}(x)}<\frac{\iota}{2}.
        \end{equation}
    Let \(u_{l} \colon \R^m \to \R^{\nu}\) be a Lipschitz continuous extension of \(u\).
    Denoting by \(P_\ell \colon S^m\setminus T^{\ell^*} \to S^\ell\)  the usual retraction, there exists a bounded open neighborhood \(U\subset \R^m \setminus T^{\ell^*}\) of \(S^\ell\) such that, for every \(x\in U \cap S^e\),
        \begin{equation}\label{eq855}
        \abs{u_{c}(x)-u_{c}(P_\ell(x))}<\frac{\iota}{2}
        \end{equation}
        and, for every \(x\in U\),
        \begin{equation}\label{eq859}
        \abs{u_{l}(x)-u_{l}(P_\ell(x))}<\frac{\iota}{2}.
        \end{equation}
        
        Let \(\eta\in \Smooth^{\infty}_c(\R^m; [0,1])\) be such that \(\eta = 1\) in \(S^\ell\) and \(\eta = 0\) in \(S^{e}\setminus U\). 
        We claim that, for every \(x\in S^e\), 
        \begin{equation}\label{eq860}
           d\bigl((1-\eta(x))v(x)+\eta(x)u_{l}(x), \manfN\bigr)<\iota. 
        \end{equation}
        Indeed, when \(x\in S^{e}\setminus U\), the left-hand side equals \(d(v(x),\manfN)\), which in turn is not larger than \(\abs{v(x)-u_{c}(x)}\).  
        We can thus conclude using \eqref{eq850} in this case.
        When instead \(x\in S^{e}\cap U\), the left-hand side of \eqref{eq860} is not larger than
\begin{multline*}
\bigl|(1-\eta(x))v(x)+\eta(x)u_{l}(x)- u(P_\ell(x))\bigr| \\
\leq (1-\eta(x))\abs{v(x)-u(P_\ell(x))} + \eta(x)\abs{u_{l}(x)-u(P_\ell(x))}.
\end{multline*}
Using \(u_{c}(P_{\ell}(x))=u(P_{\ell}(x))\) and the estimates \eqref{eq850} and \eqref{eq855}, we get
\[
\abs{v(x)-u(P_\ell(x))}
\leq \abs{v(x)-u_{c}(x)}+\abs{u_{c}(x)-u_{c}(P_{\ell}(x))}
<\iota.
\]
Based on identity \(u_{l}(P_{\ell}(x))=u(P_\ell(x))\) and the estimate \eqref{eq859},
\[
\abs{u_{l}(x)-u(P_\ell(x))}=\abs{u_{l}(x)-u_{l}(P_\ell(x))}<\iota.
\]
It follows that
\[
\bigl|(1-\eta(x))v(x)+\eta(x)u_{l}(x)- u(P_\ell(x))\bigr| <\iota
\]
and then \eqref{eq860} also holds in the case \(x\in S^{e}\cap U\). 
We can thus define
\[
\overline{u}\colon x\in S^e \longmapsto \Pi\bigl((1-\eta(x))v(x)+\eta(x)u_{l}(x)\bigr).
\]
Then, \(\overline{u}\) is a Lipschitz extension of \(u\) from \(S^e\) to \(\manfN\). 
\end{proof}

We also need the following variant of the McShane-Whitney extension lemma for Lipschitz functions:

\begin{lemma}
\label{lemmaMcShane-Whitneysingular}
Let \( e \in \{0, \ldots, m \} \).
If \(v \in \Smooth^0(S^m\setminus T^{e^*})\) is a bounded function such that, 
for every \(y,z \in S^m\setminus T^{e^*}\),
\begin{equation}\label{eq59}
|v(y)-v(z)|
\leq C\frac{|y-z|}{\min{\bigl\{ d(y,T^{e^*}),d (z,T^{e^*}) \bigr\}}} \text{,}
\end{equation}
then \(v\) has an extension to \(\R^m\setminus T^{e^*}\) which satisfies this estimate for every \(y, z \in \R^m\setminus T^{e^*}\), involving some constant that depends on \(C\) and on the sup norm \(\norm{v}_{\infty}\).
\end{lemma}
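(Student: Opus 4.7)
The plan is to convert the weighted Lipschitz condition \eqref{eq59} into a genuine Lipschitz condition with respect to a suitably chosen metric on $\R^m \setminus T^{e^*}$, and then to invoke the classical McShane–Whitney extension on that metric space. Concretely, writing $r(y) \vcentcolon= d(y,T^{e^*})$, I would introduce the weight
\[
q(y,z) \vcentcolon= \frac{|y-z|}{\min\{r(y),r(z)\}}\qquad (y,z \in \R^m\setminus T^{e^*})
\]
and set $\rho(y,z) \vcentcolon= \log(1 + q(y,z))$. The key preliminary step is to show that $\rho$ is a genuine metric on $\R^m\setminus T^{e^*}$; it is clearly symmetric and separates points, so everything reduces to the multiplicative quasi-triangle inequality
\[
1 + q(y,z) \le \bigl(1 + q(y,w)\bigr)\bigl(1 + q(w,z)\bigr),
\]
which, after taking logarithms, yields the triangle inequality for $\rho$.

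To prove this quasi-triangle inequality one can assume $r(y)\le r(z)$ and split into the cases $r(w)\ge r(y)$ and $r(w)<r(y)$. In the first case the bound $r(w)\le r(y)+|y-w|$ gives $r(w)/r(y)\le 1 + q(y,w)$, and combining with $|w-z|\le q(w,z)\min\{r(w),r(z)\}\le q(w,z)\,r(w)$ produces the desired estimate; in the second case $1/r(y)\le 1/r(w)$ already yields $q(y,z)\le q(y,w)+q(w,z)$. This is the main technical step, and I expect it to be the main obstacle, but once the case analysis is carried out the inequality follows cleanly.

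Once $\rho$ is a metric, I would show that $v\colon(S^m\setminus T^{e^*},\rho)\to\R$ is Lipschitz. Indeed, \eqref{eq59} gives $|v(y)-v(z)|\le C\,q(y,z) = C(e^{\rho(y,z)}-1)$, while the boundedness of $v$ gives $|v(y)-v(z)|\le 2\|v\|_\infty$. Since $(e^\rho-1)/\rho$ is monotone and bounded by $1/\log 2$ on $(0,\log 2]$, one obtains
\[
|v(y)-v(z)| \le L\,\rho(y,z),\qquad L \vcentcolon= \tfrac{1}{\log 2}\,\max\{C,\,2\|v\|_\infty\},
\]
so $v$ is $L$-Lipschitz with respect to $\rho$, with a constant depending only on $C$ and $\|v\|_\infty$.

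The final step is to apply the classical McShane–Whitney formula on the metric space $(\R^m\setminus T^{e^*},\rho)$: set
\[
\tilde v(y) \vcentcolon= \inf_{x\in S^m\setminus T^{e^*}}\{\,v(x) + L\,\rho(x,y)\,\}.
\]
Standard arguments show $\tilde v = v$ on $S^m\setminus T^{e^*}$ and $|\tilde v(y)-\tilde v(z)|\le L\,\rho(y,z)$ everywhere. Using the elementary inequality $\log(1+q)\le q$ one then deduces
\[
|\tilde v(y)-\tilde v(z)| \le L\,q(y,z) = L\,\frac{|y-z|}{\min\{r(y),r(z)\}},
\]
which is exactly the required estimate with constant depending on $C$ and $\|v\|_\infty$. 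Continuity of $\tilde v$ on $\R^m\setminus T^{e^*}$ in the Euclidean topology follows from the fact that Euclidean convergence to a point of $\R^m\setminus T^{e^*}$ forces convergence in the $\rho$-metric (since $r$ is continuous and strictly positive there), concluding the proof.
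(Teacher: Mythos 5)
Your proof is correct, but it takes a genuinely different route from the paper. The paper's argument is a short algebraic trick: it multiplies $v$ by the distance, shows that $f \vcentcolon= v\, d(\cdot, T^{e^*})$ is Lipschitz on $S^m\setminus T^{e^*}$ (with constant $\norm{v}_\infty + C$), extends $f$ to all of $\R^m$ by the classical Euclidean Lipschitz extension, observes that the extension vanishes on $T^{e^*}$ and hence obeys $\abs{f_l}\le(\norm{v}_\infty+C)\,d(\cdot,T^{e^*})$, and finally divides back by the distance; a triangle-inequality computation then yields \eqref{eq59} with constant $2(\norm{v}_\infty+C)$. You instead change the metric: you verify the multiplicative quasi-triangle inequality $1+q(y,z)\le(1+q(y,w))(1+q(w,z))$ (your two-case argument, resting only on the $1$-Lipschitz character of $d(\cdot,T^{e^*})$, is correct), conclude that $\rho=\log(1+q)$ is a metric, show $v$ is $\rho$-Lipschitz with constant $\max\{C,2\norm{v}_\infty\}/\log 2$, apply McShane--Whitney in the metric space $(\R^m\setminus T^{e^*},\rho)$, and convert back via $\log(1+t)\le t$. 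Both approaches use boundedness of $v$ in an essential way and both give a constant depending only on $C$ and $\norm{v}_\infty$, so your proof establishes the lemma as stated. What each buys: the paper's construction is more elementary and automatically produces a \emph{bounded} extension ($\abs{\overline v}\le\norm{v}_\infty+C$), which is convenient in the subsequent application (Lemma~\ref{propositionAdaptiveApplication}), whereas your McShane extension can grow (logarithmically) as one approaches $T^{e^*}$ away from $S^m$; this is not required by the statement, and if boundedness were ever needed you could simply truncate at $\pm\norm{v}_\infty$, which preserves \eqref{eq59}. Your metric-space argument has the advantage of working verbatim on an arbitrary metric space, at the price of the quasi-triangle verification being the main technical step.
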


\begin{proof}[Proof of Lemma~\ref{lemmaMcShane-Whitneysingular}]
We begin by showing that \(f \colon S^m\setminus T^{e^*} \to \R\) defined by
\[
f(x) = v(x)d(x, T^{e^*})
\]
is Lipschitz continuous.
To this end, let \(y, z \in S^m\setminus T^{e^*}\). 
Assume, for example, that \(d(z,T^{e^*})\leq d(y,T^{e^*})\). 
By the triangle inequality and the Lipschitz continuity of \(d(\cdot, T^{e^*})\), we have 
\begin{align*}
|f(y)-f(z)|&\leq |v(y)||d(y,T^{e^*})-d(z,T^{e^*})|+d(z,T^{e^*})|v(y)-v(z)|\\
&\leq \|v\|_{\infty}|y-z|+d(z,T^{e^*})|v(y)-v(z)|.
\end{align*}
By \eqref{eq59}, this gives
\[
|f(y)-f(z)|\leq (\|v\|_{\infty}+C)|y-z|.
\]
We can then extend \(f\) to \(\R^m\) as a Lipschitz continuous function of rank \(\|v\|_{\infty}+C\) that we denote by \(f_l\).
Since \(|f|\leq \|v\|_{\infty}d(\cdot, T^{e^*})\) in \(S^m\setminus T^{e^*}\), the function \(f_l\) vanishes on \(T^{e^*}\) and thus satisfies, for every \(y\in \R^m\),
\begin{equation}\label{eq84}
|f_l(y)|\leq (\|v\|_{\infty}+C)d(y, T^{e^*}).
\end{equation}
We define \(\overline{v} \colon \R^m \setminus T^{e^*} \to \R\) by
\[
\overline{v}(x) = \frac{f_l(x)}{d(x, T^{e^*})}.
\]
Then, \(\overline{v}\) coincides with \(v\) in \(S^m\setminus T^{e^*}\).
Let us show that \(\overline{v}\) is locally Lipschitz outside \(T^{e^*}\) and satisfies \eqref{eq59} for every \(y, z \in \R^m\setminus T^{e^*}\), with a different constant. 
Indeed, assume for instance that \(d(z,T^{e^*}) \leq d(y,T^{e^*})\). 
Then, we write
\begin{align*}
|\overline{v}(y)-\overline{v}(z)|&= \frac{|f_l(y)d(z,T^{e^*})- f_l(z)d(y,T^{e^*})|}{d(z,T^{e^*})d(y,T^{e^*})}\\
&\leq |f_l(y)|\frac{|d(z,T^{e^*})-d(y, T^{e^*})|}{d(z,T^{e^*})d(y,T^{e^*})}+ d(y,T^{e^*})\frac{|f_l(y)-f_l(z)|}{d(z,T^{e^*})d(y,T^{e^*})}.
\end{align*}
Using the Lipschitz continuity of \(d(\cdot,T^{e^*})\) and \(f_l\)\,, one gets
\[
|\overline{v}(y)-\overline{v}(z)|
\leq |f_l(y)|\frac{|y-z|}{d(z,T^{e^*})d(y,T^{e^*})}+(\|v\|_{\infty} + C)\frac{|y-z|}{d(z,T^{e^*})}.
\] 
In view of \eqref{eq84}, this yields
\[
|\overline{v}(y)-\overline{v}(z)|\leq 2(\|v\|_{\infty}+C)\frac{|y-z|}{d(z,T^{e^*})},
\]
which is the desired estimate.
\end{proof}

The above Lipschitz extension is used in the following application of adaptive smoothing:

\begin{lemma}
	\label{propositionAdaptiveApplication}
     Let \( e \in \{ \ell, \ldots, m \} \),
     let \( \cS^{m} \) be a cubication in \( \R^{m} \) of radius \( \eta > 0 \), let \(\cT^{\ell^*}\) and \(\cT^{e^*}\) be the dual skeletons of \(\cS^\ell\) and \(\cS^e\) respectively, and let \(0 < \mu < 1/2\).
 If  \(v  \in \Smooth^0(S^{m} \setminus T^{e^{*}} ; \manfN)\) is smooth in \(S^m \setminus (T^{\ell^*} + Q^{m}_{\mu\eta/4})\) and satisfies, for every \(y, z \in  S^{m} \setminus T^{e^{*}}\), the locally Lipschitz estimate
	\begin{equation}\label{eq709}{}
	\abs{v(y) - v(z)}
	\le C' \frac{\abs{y - z}}{\min{\bigl\{d(y, T^{e^{*}}), d(z, T^{e^{*}})\bigr\}}} \text{,}
	\end{equation}
then there exists \(\widetilde v  \in \ClassR_{e^*}(\Int{S^{m}} ; \manfN)\) with singular set \( T^{e^{*}}\) such that 
\[
\widetilde v = v
\quad \text{in \((\Int{S^m}) \setminus (T^{\ell^*} + Q^{m}_{\mu\eta})\).}
\]
\end{lemma}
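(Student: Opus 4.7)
The plan is to apply the adaptive smoothing machinery of Proposition~\ref{propositionAdaptiveApplication-New} componentwise to \(v\), and then project back onto \(\manfN\) using the nearest point projection \(\Pi\). Since adaptive smoothing requires a function defined on an open set containing \(\Int{S^m}\), the first step is to extend \(v\). Using Lemma~\ref{lemmaMcShane-Whitneysingular} applied to each component of \(v\), we obtain a bounded function \(\overline{v}\colon \R^m\setminus T^{e^*}\to \R^\nu\) satisfying the same kind of locally Lipschitz estimate \eqref{eq709} globally, with a possibly larger constant. The extension need not take values in \(\manfN\), but this will be harmless because the smoothing only affects points inside \(\Int{S^m}\), near \(T^{\ell^*}\).

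Next, we set \(F_0\vcentcolon=T^{e^*}\) and \(F_1\vcentcolon=T^{\ell^*}+\overline{Q^m_{\mu\eta/4}}\). With these choices, \(v\) (and hence \(\overline{v}\)) is smooth on \(\R^m\setminus (F_1\cup T^{e^*})\). Choosing \(\delta>0\) small enough that both \(\overline{F_1+B^m_\delta}\subset T^{\ell^*}+Q^m_{\mu\eta/2}\) and \(F_1+B^m_{2\delta}\subset T^{\ell^*}+Q^m_{\mu\eta}\) hold (any \(\delta\) with \(0<\delta<\mu\eta/8\) works, since the dual skeleton lies in \(\Int S^m\)), we apply Proposition~\ref{propositionAdaptiveApplication-New} componentwise with \(A\) a bounded open neighborhood of \(S^m\), \(O\) an open set with \(\Int S^m\subset O\Subset A\), and a parameter \(0<\epsilon\le 1\) to be fixed. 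This produces a smooth map \(v_\epsilon\colon O\setminus T^{e^*}\to \R^\nu\) satisfying:
\begin{enumerate}[\((i)\)]
\item \(v_\epsilon=\overline{v}=v\) on \(O\setminus(F_1+B^m_{2\delta})\), hence on \(\Int S^m\setminus(T^{\ell^*}+Q^m_{\mu\eta})\);
\item \(|v_\epsilon(x)-\overline{v}(x)|\le \epsilon\) on \(((F_1+B^m_{2\delta})\setminus T^{e^*})\cap O\);
\item \(|D^j v_\epsilon(x)|\le C_j/(\epsilon^j d(x,T^{e^*})^j)\) on the same set, for every \(j\in\N_*\).
\end{enumerate}

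Now we compose with \(\Pi\). By choosing \(\epsilon>0\) small enough, property~\((ii)\) together with \(v(S^m\setminus T^{e^*})\subset \manfN\) guarantees that \(v_\epsilon(x)\) lies in the tubular neighborhood \(\manfN+B^\nu_\iota\) for every \(x\in\Int S^m\setminus T^{e^*}\); here a slightly more careful estimate is needed to control points \(x\) outside \(S^m\) contributing to the average, but since \(\psi\) vanishes outside a small neighborhood of \(T^{\ell^*}\Subset \Int S^m\), the averaging happens inside \(\Int S^m\) and involves only values of \(\overline{v}\) on \(S^m\) (where \(\overline{v}=v\in\manfN\)), provided \(\delta\) is chosen small enough compared to the distance from \(T^{\ell^*}\) to \(\partial S^m\). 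We then define \(\widetilde v\vcentcolon=\Pi\compose v_\epsilon\) on \(\Int S^m\setminus T^{e^*}\).

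It remains to check that \(\widetilde v\in \ClassR_{e^*}(\Int S^m;\manfN)\). The map \(\widetilde v\) is smooth on \(\Int S^m\setminus T^{e^*}\) as a composition of smooth maps, and it agrees with \(v\) on \((\Int S^m)\setminus(T^{\ell^*}+Q^m_{\mu\eta})\) because \(\Pi|_\manfN=\Id\). The pointwise derivative bound required by Definition~\ref{definitionExtension} follows from \((iii)\) and the Faà di Bruno formula applied to \(\Pi\compose v_\epsilon\), using that \(\Pi\) is smooth with bounded derivatives on the compact set \(\overline{\manfN+B^\nu_\iota}\). Finally, \(T^{e^*}\) is by construction a structured singular set of rank \(e^*\), because it is a finite union of \(e^*\)-dimensional cubes in \(\R^m\) orthogonal to \(m-e^*=e+1\) coordinate axes, and so fits Definition~\ref{defnStructuredSingularSet}. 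The main delicate point of the argument is the geometric bookkeeping in the first paragraph: one must verify that the non-smooth region of \(v\) is captured by an enlargement of \(F_1\) that is still at positive distance from \(\partial S^m\), so that the adaptive smoothing is genuinely an interior operation and the Lipschitz extension \(\overline{v}\) never leaks non-\(\manfN\) values into the integral defining \(\varphi_{\epsilon\psi}*\overline{v}\).
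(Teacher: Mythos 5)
Your overall strategy (extend \(v\), apply Proposition~\ref{propositionAdaptiveApplication-New} with \(F_0 = T^{e^*}\) and \(F_1\) a neighborhood of \(T^{\ell^*}\), then project with \(\Pi\) and check the \(\ClassR_{e^*}\) bounds via Faà di Bruno) is the same as the paper's, but there is a genuine gap in your first step. You extend \(v\) using Lemma~\ref{lemmaMcShane-Whitneysingular} alone and then assert that ``\(v\) (and hence \(\overline{v}\)) is smooth on \(\R^m\setminus(F_1\cup T^{e^*})\)''. This is false: \(v\) is only defined on \(S^m\setminus T^{e^*}\), and the McShane--Whitney-type extension \(\overline v = f_l/d(\cdot,T^{e^*})\) is merely locally Lipschitz outside \(S^m\) (neither \(f_l\) nor \(d(\cdot,T^{e^*})\) is smooth there), and at points of \(\partial S^m\) it is smooth only in the one-sided sense. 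Proposition~\ref{propositionAdaptiveApplication-New} is applied with open sets \(O\Subset A\) containing \(S^m\), and its hypotheses require the function being smoothed to be smooth in \(A\setminus\overline{F_1+B^m_\delta}\); this fails for your \(\overline v\). Concretely, two of the conclusions you use are then not available: the smoothness of \(\varphi_{\epsilon\psi}*\overline v\) on the region where \(\psi=0\) (there the output \emph{equals} \(\overline v\), so it is only Lipschitz where \(\overline v\) is), and the derivative bounds on the outer transition annulus, whose proof (estimate \eqref{eq8944}) uses that the function is smooth with bounded derivatives on a compact set that reaches outside \(O\), hence outside \(S^m\). Your closing remark about the averaging staying inside \(\Int S^m\) addresses the ``values leaking outside \(\manfN\)'' issue, but not this smoothness hypothesis, and you cannot shrink \(A\) to \(\Int S^m\) either, since then \(O\Subset A\) would not cover all of \(\Int S^m\) as the lemma requires.

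The missing idea is the smooth gluing that the paper performs before invoking the adaptive smoothing: since \(v\) is smooth on the closed set \(S^m\setminus(T^{\ell^*}+Q^m_{\mu\eta/4})\), take a smooth (Whitney-type) extension \(v_s\colon\R^m\to\R^\nu\) of its restriction to that set, take the singular Lipschitz extension \(v_c\) from Lemma~\ref{lemmaMcShane-Whitneysingular}, and set \(\overline v=(1-\zeta)v_s+\zeta v_c\) with a cutoff \(\zeta\) supported in \(T^{\ell^*}+Q^m_{\mu\eta/2}\) and equal to \(1\) on \(T^{\ell^*}+Q^m_{\mu\eta/4}\). This \(\overline v\) agrees with \(v\) on \(S^m\setminus T^{e^*}\), is smooth on \(\R^m\setminus(T^{\ell^*}+Q^m_{\mu\eta/2})\), and still satisfies the estimate \eqref{eq709} globally, so the hypotheses of Proposition~\ref{propositionAdaptiveApplication-New} are genuinely met. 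With this correction the rest of your argument (projection by \(\Pi\) for small \(\epsilon\), the derivative bounds, and the observation that \(T^{e^*}\) is a structured singular set of rank \(e^*\)) goes through as in the paper.
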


\resetconstant
\begin{proof}[Proof of Lemma~\ref{propositionAdaptiveApplication}]
    We first show that \(v\) has a locally Lipschitz extension \(\overline{v} \colon \R^m \setminus T^{e^{*}} \to \R^\nu\) that is smooth in \(\R^m\setminus (T^{\ell^*} + Q^{m}_{\mu\eta/2})\) and satisfies estimate \eqref{eq709} for every \(y, z \in  \R^m \setminus T^{e^{*}}\).
    Since \(v\) is smooth in the closed set \(S^m \setminus (T^{\ell^*} + Q^{m}_{\mu\eta/4})\), its restriction to this set has a smooth extension \(v_s \colon \R^m \to \R^\nu\).
    To handle \(v\) in a neighborhood of \(((T^{\ell^*} + Q^{m}_{\mu\eta/4}) \setminus T^{e^*}) \cap S^m\), we rely on Lemma~\ref{lemmaMcShane-Whitneysingular} applied to each component of \(v\) to get a locally Lipschitz map \(v_c \colon \R^m\setminus T^{e^*} \to \R^\nu\) which agrees with \(v\) on \(S^m\setminus T^{e^*}\) and satisfies \eqref{eq709} for every \(y, z \in \R^m\setminus T^{e^*}\), possibly with a different constant.
    
Let \(\zeta\in \Smooth^{\infty}_c(\R^m)\) supported in \(T^{\ell^*}+Q^{m}_{\mu\eta/2}\) and such that \(\zeta = 1\) in \(T^{\ell^*}+Q^{m}_{\mu\eta/4}\). 
We then set
\[
\overline{v} = (1-\zeta) v_s+ \zeta v_c\, .
\]
    Since  \(v_s = v\) in \(S^m \setminus (T^{\ell^*}+Q^{m}_{\mu\eta/4})\) and  \(v_c=v\) in \(S^m\setminus T^{e^*}\), it follows that 
    \[
    \overline{v} = v
    \quad \text{in \(S^m \setminus T^{e^*}\).}
    \]
    Moreover, \(\overline{v}\) is smooth in 
    \(\R^m \setminus (T^{\ell^*} + Q^{m}_{\mu\eta/2}) \).
    Since \(v_s\) is locally Lipschitz continuous in \(\R^m\) and \(v_c\) satisfies \eqref{eq709} in \(\R^m \setminus T^{e^*}\), we deduce that \(\overline{v}\) is locally Lipschitz continuous and also verifies estimate \eqref{eq709}.
    The map \(\overline{v}\) thus has the required properties.

    We now apply an easy variant of Proposition~\ref{propositionAdaptiveApplication-New} where neighborhoods are defined using cubes instead of balls.
    Taking \(F_0 = T^{e^*}\), \(F_1 = T^{\ell^*}\), \(\delta = \mu\eta/2\), and open sets \(O \Subset A\) containing \(S^m\), we obtain a smooth function \(\widehat{v} \colon O \setminus T^{e^*} \to \R^\nu\) such that 
\begin{enumerate}[\((i)\)]
    \item \(\widehat{v} = \overline{v}\) on \(O \setminus (T^{\ell^*} + Q^{m}_{\mu\eta})\),
    \item 
    \label{itemJEMS-1512}
    for every \(x \in O \setminus T^{e^*}\), we have \(\abs{\overline{v}(x) - \widehat{v}(x)} \le \epsilon\),
    \item 
    for every \(j \in \N_{*}\) and every \(x \in ((T^{\ell^*} + Q^{m}_{\mu\eta}) \setminus T^{e^*}) \cap O\),
\[
\abs{D^{j}\widehat{v}(x)}
\le \frac{C_j''}{\epsilon^j d(x, T^{e^*})^j}.
\]
\end{enumerate}
    Since \(\overline{v} = v\) on \(S^m \setminus T^{e^*}\) and the image of \(v\) is contained in \(\manfN\), from \((\ref{itemJEMS-1512})\) we have
    \[
    \widehat{v}(S^m \setminus T^{e^*})
    \subset \manfN + B_{\epsilon}^\nu\,.
    \]
    We choose \(\epsilon > 0\) such that the nearest point projection \(\Pi \colon \overline{\manfN + B_\epsilon^\nu} \to \manfN\) is well defined and smooth.
    It then suffices to take \(\widetilde v = \Pi \compose \widehat{v}|_{(\Int{S^m}) \setminus T^{e^*}}\).
\end{proof}

\resetconstant
\begin{proof}[Proof of Proposition~\ref{propositionDensityNewExtensionSmooth}]
We begin with a local Lipschitz counterpart of Proposition~\ref{propositionDensityNewExtension} applied with \(\mu/4\) instead of \(\mu\).
    The standard retraction \(P_e \colon S^{m}\setminus T^{e^*}\to S^{e}\) satisfies, for every \( y, z \in  S^{m} \setminus T^{e^{*}} \),
	\[{}
	\abs{P_{e}(y) - P_{e}(z)}
	\le \C \frac{\abs{y - z}}{\min{\bigl\{d(y, T^{e^{*}}), d(z, T^{e^{*}})\bigr\}}},
	\]
    which can be proved using the estimate \( \abs{DP_{e}(x)} \le \C/d(x, T^{e^{*}})\) for \( x \in S^{m} \setminus T^{e^{*}} \), see \cite{White-1986}*{p.\,130}.
    Let \(\overline{u}:S^e\to \manfN\) be a Lipschitz continuous extension of \(u|_{S^\ell}\), whose existence is provided by Lemma~\ref{lemma-Lipschitz-extension}. 
    For every \(y, z \in  S^{m} \setminus T^{e^{*}}\), we then have
\[{}
	\bigabs{(\overline{u} \compose P_{e})(y) - (\overline{u} \compose P_{e})(z)}
	\le \C \frac{\abs{y - z}}{\min{\bigl\{d(y, T^{e^{*}}), d(z, T^{e^{*}})\bigr\}}}.
\]
 
    We recall that the map \( v \) defined by \eqref{eqStrong2-614} is continuous and satisfies \eqref{eqStrong2-634} with \(\mu/4\) instead of \(\mu\). 
    Observe that \(H_{\ell}\) is Lipschitz on \(K \vcentcolon= \bigl( S^{m} \setminus (T^{\ell^*} + Q_{\underline\delta\eta}^m) \bigr) \times [0, 1]\) and \(H_{\ell}(K)\) is a compact subset of \(S^{m} \setminus T^{\ell^{*}} \subset S^{m} \setminus T^{e^{*}}\).{}
Thus, \(v\) is Lipschitz continuous on \(S^{m} \setminus (T^{\ell^*} + Q_{\underline\delta\eta}^m)\).{}
Since \(v = \overline{u} \compose P_{e}\) on \((T^{\ell^*} + Q_{\underline\delta\eta}^m) \setminus T^{e^{*}}\), we deduce that \(v\) satisfies 
	\[{}
	\abs{v(y) - v(z)}
	\le \C \frac{\abs{y - z}}{\min{\bigl\{d(y, T^{e^{*}}), d(z, T^{e^{*}})\bigr\}}}.
	\]
The conclusion now follows from Lemma~\ref{propositionAdaptiveApplication}.
\end{proof}

\section{Shrinking}
\label{section_shrinking}

Proposition~\ref{propositionDensityNewExtensionSmooth} does not address the question of how to get a convenient control of the \(\Sobolev^{k, p}\) norm of \(v\), as the topological assumption does not contain quantitative information on the derivatives of \(v\).
This goal can be achieved by using a follow-up of the thickening tool, which we call shrinking.
The idea is to confine the modifications on the map \( u \) to a small neighborhood of the dual skeleton.

To illustrate the idea, we take a smooth function \(u \colon \cBall^{m} \to \R\).
We wish to construct a new one of the form \(u \compose \Phi^{\mathrm{sh}}\) that dramatically reduces the contribution of the derivatives of \(u\) in a neighborhood of \(0\) when we compute the \(\Sobolev^{k, p}\)~norm of \(u \compose \Phi^{\mathrm{sh}}\).{}
More precisely, given \(0 < r < \rho < 1\), for every \(\epsilon > 0\) we look for a smooth map \(\Phi^{\mathrm{sh}}  \colon  \cBall^{m} \to \cBall^{m}\) such that
\begin{enumerate}[\((\mathrm{sh}_1)\)]
	\item{} 
	\label{itemJEMS-601}\(\Phi^{\mathrm{sh}} = \Id\) in \(\cBall^{m} \setminus B_\rho^m\),{}
	\item{}
	\label{itemJEMS-604-bis}
    \(\norm{u \compose \Phi^{\mathrm{sh}}}_{\Sobolev^{k, p}(\Ball^{m})}
	\le	C\norm{u}_{\Sobolev^{k, p}(\Ball^{m} \setminus \overline B{}_{r}^{m})} + \epsilon \norm{u}_{\Sobolev^{k, p}(B_{r}^{m})}\),
\end{enumerate}
for some constant \(C > 0\) independent of \(u\) and \(\epsilon\).

Shrinking can be thought of as a desingularized version of thickening that requires careful estimates near the origin.
The argument is based on a variant of the construction of Proposition~\ref{propositionJEMSThickeningModel}, which is recovered as \(\epsilon \to 0\). {}

As we did for thickening when \(k = 1\), when one is satisfied with having a Lipschitz function instead of a smooth one it is possible to take \(\Phi^{\mathrm{sh}}\) of the form
\[{}
\Phi^{\mathrm{sh}}(x)
=
\begin{cases}
	x	& \text{if \(x \in \cBall^{m} \setminus B_{t}^{m}\),}\\
	t {x}/{\abs{x}} & \text{if \(x \in B_{t}^{m} \setminus B_{\tau}^{m}\),}\\
	t {x}/{\tau} & \text{if \(x \in B_{\tau}^{m}\),}\\
\end{cases}
\]
where \(\tau\) and \(t\) are such that \(0 < \tau < r < t < \rho < 1\).
Picking \(t\) that satisfies \eqref{eqJEMS-510}, one gets
\resetconstant
	\begin{multline*}
    \int_{\Ball^{m}}{(\abs{u \compose \Phi^{\mathrm{sh}}}^{p} + \abs{D (u\compose \Phi^{\mathrm{sh}}) }^{p})}\\
	\le \C \int_{\Ball^{m} \setminus \overline B{}_r^m}{(\abs{u}^{p} + \abs{Du}^{p})}
    + \Bigl( \frac{\tau}{t} \Bigr)^{m - p} \int_{B_t^{m}}{(\abs{u}^{p} + \abs{Du}^{p})}.
	\end{multline*}
Since \(\tau /t \le 1\), when \(p < m\) one can choose \(\tau\) sufficiently small that \((\mathrm{sh}_{\ref{itemJEMS-604-bis}})\) holds.

\begin{figure}
\centering{}
\includegraphics{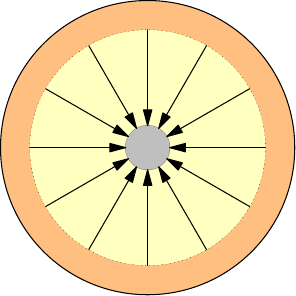}
\caption{Values of \(u \compose \Phi^{\mathrm{sh}}\) on the smaller disk (gray) are computed using \(u\) mostly from the larger yellow region}
\label{figureJEMSShrinkingBall}
\end{figure}

A smooth counterpart of this construction that is valid for every \( k \in \N_{*} \) can be achieved using the following proposition that allows one to take \(\Phi^{\mathrm{sh}} \) as a diffeomorphism independent of \(u\).

\begin{proposition}
	\label{propositionJEMSShrinkingModel}
	Given \(0 < \tau < r < \rho < 1\), there exists a diffeomorphism \(\Phi^{\mathrm{sh}}  \colon  \cBall^{m} \to \cBall^{m}\) with \(\Phi^{\mathrm{sh}}(B_{\tau}^{m}) = B_{r}^{m}\)  such that \(\Phi^{\mathrm{sh}} = \Id\) in \(\cBall^{m} \setminus B_\rho^m\) and
	\begin{enumerate}[\((i)\)]
	\item \label{propositionJEMSShrinkingModel-i} for every \(j \in \N_*\), there exists \(C' > 0\) depending on \(j\), \(m\), \(r\) and \(\rho\) such that
	\[
	\abs{D^j \Phi^{\mathrm{sh}}(x)} \le  \frac{C'}{(\abs{x}^{2} + \tau^{2})^{{j}/{2}}}
	\quad \text{for every \(x \in \Ball^{m}\),}	
	\]
	\item \label{propositionJEMSShrinkingModel-ii} for every \(\beta < m\), there exists \(C'' > 0\) depending on \(\beta\), \(m\), \(r\) and \(\rho\)  such that
	\[
	\Jacobian{m}{\Phi^{\mathrm{sh}}}(x) \ge \frac{C''}{\abs{x}^\beta} 
	\quad \text{for every \(x \in \Ball^{m} \setminus B_{\tau}^{m}\),}
	\]
	\item \label{propositionJEMSShrinkingModel-iii} there exists \(C''' > 0\) depending on \(m\), \(r\) and \(\rho\)  such that
	\[
	\Jacobian{m}{\Phi^{\mathrm{sh}}}(x) \ge \frac{C'''}{\tau^{m}}
	\quad \text{for every \(x \in B_{\tau}^{m}\).}
	\]
	\end{enumerate}
\end{proposition}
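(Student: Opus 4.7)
Mimicking the radial construction in Proposition~\ref{propositionJEMSThickeningModel}, I will seek $\Phi^{\mathrm{sh}}$ of the form $\Phi^{\mathrm{sh}}(x) = \varphi(\lvert x\rvert)\,x$ for $x\ne 0$ and $\Phi^{\mathrm{sh}}(0) = 0$, where $\varphi\colon [0,1] \to (0,\infty)$ is a smooth profile chosen to satisfy:
\begin{enumerate}[(a)]
\item $\varphi(s) = 1$ for $s \ge \rho$;
\item $\varphi(s) = r/\tau$ for $s \le \tau/2$, so that $\Phi^{\mathrm{sh}}$ extends smoothly across $0$ as the linear map $x\mapsto (r/\tau)x$;
\item $s\mapsto \varphi(s)s$ is strictly increasing on $[0,1]$, with $\varphi(\tau)\tau = r$;
\item on the transition band $\tau/2\le s\le \rho$, the profile is obtained by smoothly gluing a suitably rescaled version of the thickening-type profile from Proposition~\ref{propositionJEMSThickeningModel} onto the two constant regimes.
\end{enumerate}
Property (c) guarantees that $\Phi^{\mathrm{sh}}$ is a diffeomorphism of $\cBall^m$ with $\Phi^{\mathrm{sh}}(B_\tau^m) = B_r^m$, and (a) yields $\Phi^{\mathrm{sh}} = \Id$ outside $B_\rho^m$.

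Once $\varphi$ is in hand, the verification of (\ref{propositionJEMSShrinkingModel-i})--(\ref{propositionJEMSShrinkingModel-iii}) proceeds via elementary computations already used in the proof of Proposition~\ref{propositionJEMSThickeningModel}. Applying the Leibniz rule to $\varphi(\lvert x\rvert)x$ together with the derivative estimates $\lvert\varphi^{(j)}(s)\rvert \le \tilde C_j/(s^2+\tau^2)^{(j+1)/2}$ enforced by construction, one obtains (\ref{propositionJEMSShrinkingModel-i}): near zero this reduces to the trivial bound coming from (b), whereas for $s\gtrsim \tau$ it reproduces the thickening estimate $\tilde C_j/s^{j+1}$ up to constants depending only on $r$, $\rho$, $m$. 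The Jacobian is computed exactly as in Proposition~\ref{propositionJEMSThickeningModel}: the differential $D\Phi^{\mathrm{sh}}(x)$ is a rank-one perturbation of $\varphi(\lvert x\rvert)\Id$ with eigenvalues $\varphi(\lvert x\rvert)$, of multiplicity $m-1$, and $\varphi(\lvert x\rvert) + \varphi'(\lvert x\rvert)\lvert x\rvert = (\varphi(s)s)'\rvert_{s=\lvert x\rvert}$. Combined with (b), this gives $\Jacobian{m}{\Phi^{\mathrm{sh}}} = (r/\tau)^m$ on $B_{\tau/2}^m$, hence (\ref{propositionJEMSShrinkingModel-iii}) on $B_\tau^m$ after a slight refinement near the transition. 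Combined on $[\tau, 1]$ with the thickening-type lower bounds $\varphi(s) \ge r/s$ and $(\varphi(s)s)' \ge C s^{-\alpha}$ for every $\alpha<1$, it yields (\ref{propositionJEMSShrinkingModel-ii}) exactly as in the thickening proof.

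\textbf{Main obstacle.} The delicate part is the construction of $\varphi$ on the transition band $[\tau/2,\rho]$ with estimates \emph{uniform in $\tau$}: the constants $C'$, $C''$, $C'''$ must depend only on $j$, $\beta$, $m$, $r$, $\rho$. The difficulty is to avoid derivative blow-ups caused by the smooth gluing at $s\approx \tau$, where $\varphi$ transitions from the constant value $r/\tau$ (of size $\tau^{-1}$) to the thickening-like profile. The natural remedy is to choose $\varphi$ as the rescaling $\varphi(s) = \tau^{-1}\,\Psi(s/\tau)$ for $s$ of order $\tau$ and to match this, via a fixed partition of unity at scale $\tau$, with a universal profile $\psi(s)$ depending only on $r$ and $\rho$ for $s$ bounded away from $\tau$. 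A direct chain-rule computation then gives $\lvert\varphi^{(j)}(s)\rvert \le \tilde C_j/(s^2+\tau^2)^{(j+1)/2}$ uniformly in $\tau$, which is precisely what (\ref{propositionJEMSShrinkingModel-i}) demands; no new conceptual input beyond Proposition~\ref{propositionJEMSThickeningModel} is required to conclude.
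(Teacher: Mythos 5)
Your route is genuinely different from the paper's, and as written it has a real gap exactly at the point you flag as the ``main obstacle''. Your rescaling/partition-of-unity argument only delivers the \emph{upper} bounds $\abs{\varphi^{(j)}(s)}\le \tilde C_j/(s^2+\tau^2)^{(j+1)/2}$, i.e.\ assertion (\ref{propositionJEMSShrinkingModel-i}); but to conclude you also need uniform-in-$\tau$ \emph{lower} bounds through the gluing band: strict monotonicity of $s\mapsto\varphi(s)s$ there (this is the injectivity of $\Phi^{\mathrm{sh}}$, which you list as hypothesis (c) but never verify for the glued profile), the bound $(\varphi(s)s)'\ge C s^{-\alpha}$ that enters (\ref{propositionJEMSShrinkingModel-ii}), and (\ref{propositionJEMSShrinkingModel-iii}) precisely on the annulus $\tau/2\le\abs{x}\le\tau$ where the glue sits -- your phrase ``after a slight refinement near the transition'' asserts exactly what has to be proved. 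Writing $\varphi(s)s=\chi(s/\tau)f(s)+(1-\chi(s/\tau))g(s)$ with $f(s)=\Psi(s/\tau)\,(s/\tau)$ and $g(s)=\psi(s)s$, the derivative contains the signless term $\tau^{-1}\chi'(s/\tau)\bigl(f(s)-g(s)\bigr)$, which the chain-rule upper bounds do not control from below; if $\psi$ is the thickening profile with its logarithmic correction, the mismatch $f-g$ is of order $r/\ln(1/\tau)$ at the matching point, producing a term of size $\tau^{-1}/\ln(1/\tau)$ that swamps the needed lower bound $c\,\tau^{-\alpha}$ for any fixed $\alpha<1$ as $\tau\to0$. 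Nor can you make the matching exact by taking $\psi(s)=a/s$ near $0$, since then $\varphi(s)s$ is constant, the eigenvalue $(\varphi(s)s)'$ vanishes and $\Jacobian{m}{\Phi^{\mathrm{sh}}}$ degenerates. The construction could probably be repaired (e.g.\ by prescribing the product $h(s)=\varphi(s)s$ directly with explicit uniform lower bounds on $h'$), but that work is the heart of the proposition and is missing.

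For comparison, the paper avoids gluing altogether: it keeps the single thickening profile $\varphi$ of Proposition~\ref{propositionJEMSThickeningModel} (normalized so that $\varphi(s)s\to\theta r$ with $1<\theta<1/r$) and sets $\Phi^{\mathrm{sh}}(x)=\varphi\bigl(\sqrt{\abs{x}^2+\gamma\tau^2}\bigr)x$, choosing the regularization parameter $\gamma$ by the intermediate value theorem so that $\varphi(\tau\sqrt{1+\gamma})\,\tau=r$, which gives $\Phi^{\mathrm{sh}}(B_\tau^m)=B_r^m$ exactly. Injectivity follows from the monotonicity of $t\mapsto\varphi(\sqrt{t^2+\gamma\tau^2})\,t$, uniformity in $\tau$ from the a priori bounds $\theta<\sqrt{1+\gamma}\le C/r$, and the Jacobian splits into two nonnegative terms, $(\varphi(z))^m\gamma\tau^2/z^2$ and $(\varphi(z))^{m-1}(\varphi(z)+\varphi'(z)z)\abs{x}^2/z^2$ with $z=\sqrt{\abs{x}^2+\gamma\tau^2}$: discarding the second term gives (\ref{propositionJEMSShrinkingModel-iii}) on $B_\tau^m$, discarding the first gives (\ref{propositionJEMSShrinkingModel-ii}) outside, with no transition region to handle.
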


\resetconstant
\begin{proof}
	Given \(\gamma > 0\), we take \(\Phi^{\mathrm{sh}}  \colon  \cBall^{m} \to \cBall^{m}\) of the form 
	\[{}
	\Phi^{\mathrm{sh}}(x) = \varphi\bigl(\sqrt{\abs{x}^{2} + \gamma \tau^{2}}\bigr) \, x
	\]
	where \(\varphi \colon (0,\infty)\to (0,\infty)\) is non-increasing and  such that \(\varphi|_{(0, 1]}\) satisfies properties \eqref{voisinage1}--\eqref{deriveeminoreevarphi} in the proof of Proposition~\ref{propositionJEMSThickeningModel}. 
    We also assume that, for every \(x \ge 1\), \(\varphi(x) = 1\) and, in replacement of \eqref{increasingvarphi}, we require that
	\begin{equation}
		\label{eqJEMS-938}
	\lim_{s \to 0}{\varphi(s)s} = \theta r,
	\end{equation}
	for some fixed \(1 < \theta < 1/r\).

    To prove that \(\Phi^{\mathrm{sh}}\) is injective, it suffices to show that
\begin{equation}
\label{eqJEMSb-113}
t \in [0, \infty) \longmapsto \varphi \bigl(\sqrt{t^2+\gamma \tau^2} \bigr) \, t \quad \text{is increasing.}
\end{equation}
Let \(0<t_1<t_2\). Since the function \(s\mapsto \varphi(s)s\) is increasing, we have
\[
 \varphi\bigl(\sqrt{\smash[b]{t_{1}^2+\gamma \tau^2}} \bigr) < \varphi\bigl(\sqrt{\smash[b]{t_{2}^2+\gamma \tau^2}} \bigr)\frac{\sqrt{t_{2}^2+\gamma \tau^2}}{\sqrt{t_{1}^2+\gamma \tau^2}}.
\]
 By an elementary computation, the last factor on the right-hand side is not larger than \(t_2/t_1\), which implies that
\[
\varphi\bigl(\sqrt{\smash[b]{t_{1}^2+\gamma \tau^2}}\bigr) \, t_1
< \varphi\bigl(\sqrt{\smash[b]{t_{2}^2+\gamma \tau^2}} \bigr) \, t_2.
\]
This proves \eqref{eqJEMSb-113}.

    We then take \(\gamma > 0\) such that
	\begin{equation}\label{eq1306}
	\varphi\bigl(\tau \sqrt{1 + \gamma}\bigr) \, \tau = r
	\end{equation}
	which, in view of \eqref{eqJEMSb-113}, ensures that \(\Phi^{\mathrm{sh}}(B_{\tau}^{m}) = B_{r}^{m}\).{}
	The existence of such a \(\gamma\) follows from the intermediate value theorem.
    In fact, we first observe that
    \[
    \varphi(\tau/\tau)= 1 < r/\tau.
    \]
    Next, since by \eqref{deriveeminoreevarphi} the function \(s \mapsto \varphi(s)s\) is increasing, it follows from \eqref{eqJEMS-938} that
	\(\varphi(\tau\theta)\tau\theta > \theta r\), and then
    \[
    \varphi(\tau\theta)\tau >r.
    \]
    By continuity of \(\varphi\), there exists \(\gamma>0 \) such that \eqref{eq1306} holds. 
    Moreover, since \(\varphi\) is non-increasing, we must have 
    \begin{equation}
    \label{eqJEMSb-145}
    \theta<\sqrt{1 + \gamma}< {1}/{\tau} .
    \end{equation}

    One can estimate \(D^{j}\Phi^{\mathrm{sh}}\) as in the proof of Proposition~\ref{propositionJEMSThickeningModel}.
    Before proceeding with the lower bound of \(\Jacobian{m}{\Phi^{\mathrm{sh}}}\), note that, from the first inequality in \eqref{eqJEMSb-145} and the fact that \(\theta > 1\), there exists \(\delta > 0\) independent of \(\tau\) such that \(\gamma \ge \delta\). 
    From the second inequality in \eqref{eqJEMSb-145} and the fact that \(\varphi|_{(0, 1]}\) satisfies \eqref{deriveevarphi} for \(j=0\), we deduce that 
    \[
    \frac{r}{\tau}
    = \varphi(\tau\sqrt{1+\gamma})
    \leq \frac{\Cr{eqJEMS-654-bis}}{\tau \sqrt{1+\gamma}}
    \]
    and thus \(\sqrt{1+\gamma}\leq {\Cr{eqJEMS-654-bis}}/r\).    
	Now, denoting \(z =\sqrt{\abs{x}^{2} + \gamma \tau^{2}}\), one has
	\[{}
	\begin{split}
	\Jacobian{m}{\Phi^{\mathrm{sh}}}(x)
	&= (\varphi(z))^{m - 1}\Bigl(\varphi(z) + \varphi'(z) \frac{\abs{x}^{2}}{z}\Bigr){}\\
	&= (\varphi(z))^{m} \frac{\gamma\tau^{2}}{z^{2}} + (\varphi(z))^{m - 1} (\varphi(z) + \varphi'(z)z) \frac{\abs{x}^{2}}{z^2}.
	\end{split}
	\]
	Observe that each term is nonnegative.
	When \(\abs{x} \ge \tau\), we discard the first one and obtain the estimate for \(\Jacobian{m}{\Phi^{\mathrm{sh}}}(x)\) as in the proof of Proposition~\ref{propositionJEMSThickeningModel}.
	Here, we also use the fact that \(\sqrt{1+\gamma}\) is bounded from above by \({\Cr{eqJEMS-654-bis}}/r\).
    When \(\abs{x} \le \tau\), we discard the second one. 
    Since \(\gamma \ge \delta\), \(\sqrt{1+\gamma}\leq {\Cr{eqJEMS-654-bis}}/r\) and \(\varphi(z)\geq \theta r/z\), we have
	\[{}
	\Jacobian{m}{\Phi^{\mathrm{sh}}}(x)
	\ge (\varphi(z))^{m} \frac{\gamma\tau^{2}}{z^{2}}
	\ge \frac{\C}{\tau^{m}}.
	\qedhere
	\]
\end{proof}

\begin{corollary}
    \label{corollaryShrinkingBall}
    Let \(kp < m\) and \( 0 < r < \rho < 1 \).
    Given \( \epsilon > 0 \), there exists \( 0 < \tau < r \) such that the diffeomorphism \(\Phi^{\mathrm{sh}}\) provided by Proposition~\ref{propositionJEMSShrinkingModel} verifies properties \((\mathrm{sh}_{\ref{itemJEMS-601}})\) and \((\mathrm{sh}_{\ref{itemJEMS-604-bis}})\).
\end{corollary}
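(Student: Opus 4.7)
Property $(\mathrm{sh}_{\ref{itemJEMS-601}})$ is immediate from Proposition~\ref{propositionJEMSShrinkingModel}, which already asserts that $\Phi^{\mathrm{sh}} = \Id$ in $\cBall^m \setminus B_\rho^m$ independently of the choice of $\tau$. Thus only $(\mathrm{sh}_{\ref{itemJEMS-604-bis}})$ has to be established, and the task reduces to producing a bound of the form claimed by choosing $\tau$ small enough in a second step. The strategy mirrors the proof of Corollary~\ref{corollaryThickeningBall}, but we must separate the contributions coming from $\Ball^m \setminus B_\tau^m$, where the thickening-type estimates apply, and from $B_\tau^m$, where we exploit the uniform lower bound on the Jacobian provided by item $(\ref{propositionJEMSShrinkingModel-iii})$ of Proposition~\ref{propositionJEMSShrinkingModel} to generate the small parameter $\epsilon$.

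The first step is to apply the Faà di Bruno composition formula together with the pointwise estimate $\abs{D^j \Phi^{\mathrm{sh}}(x)} \le C'/(\abs{x}^2 + \tau^2)^{j/2}$ from item $(\ref{propositionJEMSShrinkingModel-i})$. A standard combinatorial collapse of the multi-index sums then gives, for each $j \in \{1,\dots,k\}$ and every $x \in \Ball^m$,
\[
\abs{D^j(u \compose \Phi^{\mathrm{sh}})(x)}
\le C \sum_{i=1}^{j}\frac{\abs{D^i u(\Phi^{\mathrm{sh}}(x))}}{(\abs{x}^2 + \tau^2)^{j/2}}.
\]
On $\Ball^m \setminus B_\tau^m$ we drop $\tau^2$ to obtain the weight $1/\abs{x}^{jp}$; since $jp \le kp < m$, item $(\ref{propositionJEMSShrinkingModel-ii})$ applied with $\beta \vcentcolon= jp$ gives $1/\abs{x}^{jp} \le C \Jacobian{m}{\Phi^{\mathrm{sh}}}(x)$, and a change of variables using the identity $\Phi^{\mathrm{sh}}(\Ball^m \setminus B_\tau^m) = \Ball^m \setminus B_r^m$ yields
\[
\int_{\Ball^m \setminus B_\tau^m} \abs{D^j(u \compose \Phi^{\mathrm{sh}})}^p
\le C \sum_{i=1}^{j} \int_{\Ball^m \setminus \overline{B}{}_r^m} \abs{D^i u}^p.
\]
The same Jacobian argument with $\beta = 0$ handles the $\Lebesgue^p$ piece $\int_{\Ball^m \setminus B_\tau^m} \abs{u \compose \Phi^{\mathrm{sh}}}^p \le C \int_{\Ball^m \setminus \overline{B}{}_r^m} \abs{u}^p$, producing the first term in $(\mathrm{sh}_{\ref{itemJEMS-604-bis}})$.

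The second step treats $B_\tau^m$. There we bound $1/(\abs{x}^2 + \tau^2)^{j/2} \le 1/\tau^j$, and item $(\ref{propositionJEMSShrinkingModel-iii})$ gives $1 \le C\tau^m \Jacobian{m}{\Phi^{\mathrm{sh}}}(x)$, so that after a change of variables (using $\Phi^{\mathrm{sh}}(B_\tau^m) = B_r^m$)
\[
\int_{B_\tau^m} \abs{D^j(u \compose \Phi^{\mathrm{sh}})}^p
\le C\, \tau^{m - jp} \sum_{i=1}^{j} \int_{B_r^m} \abs{D^i u}^p,
\qquad
\int_{B_\tau^m} \abs{u \compose \Phi^{\mathrm{sh}}}^p
\le C\, \tau^{m} \int_{B_r^m} \abs{u}^p.
\]
Because $jp \le kp < m$, every exponent $m - jp$ (and $m$) is strictly positive, so all these contributions vanish as $\tau \to 0$; choosing $\tau$ small enough that $C\tau^{m - jp} \le \epsilon^p$ for each $j \in \{0, 1, \dots, k\}$ produces the factor $\epsilon \norm{u}_{\Sobolev^{k,p}(B_r^m)}$ in $(\mathrm{sh}_{\ref{itemJEMS-604-bis}})$, and combining with the first step concludes the proof.

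The only slightly delicate point is the need to match the scaling $\tau^{m - jp}$ with the loss $1/\tau^{jp}$ coming from differentiating $\Phi^{\mathrm{sh}}$ inside $B_\tau^m$: this is precisely what the two estimates $(\ref{propositionJEMSShrinkingModel-i})$ and $(\ref{propositionJEMSShrinkingModel-iii})$ are engineered to provide, and the surplus $\tau^{m - jp}$ produced by the Jacobian lower bound is exactly what is needed for the argument to succeed under the assumption $kp < m$.
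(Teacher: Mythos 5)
Your proposal is correct and follows essentially the same route as the paper: Faà di Bruno with the pointwise bounds of item \((\ref{propositionJEMSShrinkingModel-i})\), the Jacobian lower bounds \((\ref{propositionJEMSShrinkingModel-ii})\)–\((\ref{propositionJEMSShrinkingModel-iii})\) on \(\Ball^m \setminus B_\tau^m\) and \(B_\tau^m\) respectively, a change of variables, and the observation that \(\tau^{m-jp} \to 0\) since \(jp \le kp < m\). The only cosmetic difference is that you split the two regions into separate displays and spell out the \(\Lebesgue^p\) term for \(u \compose \Phi^{\mathrm{sh}}\), which the paper leaves implicit.
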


\resetconstant
\begin{proof}
The fact that \((\mathrm{sh}_{\ref{itemJEMS-601}})\) is satisfied is already contained in the statement of Proposition~\ref{propositionJEMSShrinkingModel}. For \((\mathrm{sh}_{\ref{itemJEMS-604-bis}})\), the Faá di Bruno composition formula and the pointwise estimates of \(D^{i}\Phi^{\mathrm{sh}}\) imply that, for every \(j \in \{1, \dots, k\}\), we have
\[{}
\abs{D^{j}(u \compose \Phi^{\mathrm{sh}})(x)}^{p}
\le \C \sum_{i = 1}^{j} \abs{(D^{i}u \compose \Phi^{\mathrm{sh}})(x)}^{p} \cdot \frac{1}{(\abs{x}^{2} + \tau^{2})^{{{jp}/{2}}}}.
\]
Since \(jp \le kp < m\), using the pointwise estimates of the Jacobian we have
\[{}
\frac{1}{(\abs{x}^{2} + \tau^{2})^{{jp}/{2}}}
\le \C \Jacobian{m}{\Phi^{\mathrm{sh}}}(x)
\quad \text{for every \(x \in \Ball^{m} \setminus B_{\tau}^{m}\)}
\]
and
\[{}
\frac{1}{(\abs{x}^{2} + \tau^{2})^{{jp}/{2}}}
\le \C \tau^{m - jp} \Jacobian{m}{\Phi^{\mathrm{sh}}}(x)
\quad \text{for every \(x \in B_{\tau}^{m}\)}.
\]
Thus, by additivity of the integral and the change of variables \(y = \Phi^{\mathrm{sh}}(x)\), we get
	\begin{multline*}
	\int_{\Ball^{m}}\abs{D^{j}(u \compose \Phi^{\mathrm{sh}})}^{p}\\
    \begin{aligned}
	& \le \Cl{eqJEMS-1370} \sum_{i = 1}^{j}\biggl(  \int_{\Ball^{m} \setminus B_{\tau}^{m}} \abs{D^{i}u \compose \Phi^{\mathrm{sh}}}^{p} \Jacobian{m}{\Phi^{\mathrm{sh}}}
	+ \tau^{m - jp} \int_{B_{\tau}^{m}} \abs{D^{i}u \compose \Phi^{\mathrm{sh}}}^{p} \Jacobian{m}{\Phi^{\mathrm{sh}}} \biggr)\\
	& = \Cr{eqJEMS-1370} \sum_{i = 1}^{j} \biggl(  \int_{\Ball^{m} \setminus B_{r}^{m}} \abs{D^{i}u}^{p} + \tau^{m - jp} \int_{B_{r}^{m}} \abs{D^{i}u}^{p} \biggr).
	\end{aligned}
	\end{multline*}
To conclude, it suffices to observe that \(\tau^{m - jp} \to 0\) as \(\tau \to 0\).
\end{proof}

Proposition~\ref{propositionJEMSShrinkingModel} has the following counterpart that involves shrinking around the dual skeleton of a cubication, see \cite{BPVS_MO}*{Proposition~8.1}:

\begin{proposition}
\label{lemmaShrinkingFaceNearDualSkeletonGlobal}
Let \( \cS^{m} \) be a cubication in \( \R^{m} \) of radius \( \eta > 0 \) and let  \(\cT^{\ell^*}\) be  the dual skeleton of\/ \(\cS^\ell\).
Then, for every \(0 < \tau < \mu < 1/2\), there exists a diffeomorphism \(\Phi^{\mathrm{sh}}  \colon  \R^m  \to \R^m\) such that 
\begin{enumerate}[$(i)$]
\item 
\label{itempropositionshrinkingfromaskeleton2}
for every \(\sigma^m \in \cS^m\),
\(\Phi^{\mathrm{sh}}(\sigma^m)\subset \sigma^m\),
\item{}
\label{itempropositionshrinkingfromaskeleton1}
 \(T^{\ell^*} + Q^m_{\mu\eta} \subset \Phi^{\mathrm{sh}}(T^{\ell^*} + Q^m_{\tau\mu\eta})\),
\item 
\label{itempropositionshrinkingfromaskeletonIdentity}
\(\Phi^{\mathrm{sh}} = \Id\) in \(\R^{m} \setminus (T^{\ell^*} + Q^m_{2\mu\eta})\), 
\item 
\label{itempropositionshrinkingDistance}
for every \(x \in S^{m}\) and every \( i \in \{\ell, \dots, m\} \),
\[{}
d(\Phi^{\mathrm{sh}}(x), T^{i^{*}}) \geq C d(x, T^{i^{*}}) \text{,}
\]
\item{} 
\label{itempropositionshrinkingfromaskeleton5}
for every \(0 < \beta < \ell + 1\), for every \(j \in \N_*\) and every \(x\in \R^m\), 
\[
(\mu\eta)^{j-1}\abs{D^j \Phi^{\mathrm{sh}}(x)} \le  C' \bigl(\Jacobian{m}{\Phi^{\mathrm{sh}}}(x)\bigr)^{{j}/{\beta}} \text{,}
\]
\item{}
\label{itempropositionshrinkingfromaskeleton4}
for every \(0 < \beta < \ell + 1\), every \(j \in \N_*\) and every \(x\in \Phi^{-1}(T^{\ell^*} + Q^m_{\mu\eta})\), 
\[
(\mu\eta)^{j-1}\abs{D^j \Phi^{\mathrm{sh}}(x)} \le  C'' \tau^{j(\frac{\ell + 1}{\beta} - 1)} \bigl(\Jacobian{m}{\Phi^{\mathrm{sh}}}(x)\bigr)^{{j}/{\beta}} \text{,}
\]
\end{enumerate}
for some constants \(C, C' > 0\) depending on \(\beta\), \(j\) and \(m\).
\end{proposition}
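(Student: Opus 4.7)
My plan is to mirror the construction of the thickening map in Proposition~\ref{propositionthickeningfromaskeleton}, replacing, at each step, the model thickening of Proposition~\ref{propositionJEMSThickeningModel} by the model shrinking of Proposition~\ref{propositionJEMSShrinkingModel}. Concretely, I would define
\[
\Phi^{\mathrm{sh}} \vcentcolon= \Psi_{\ell+1} \compose \Psi_{\ell+2} \compose \cdots \compose \Psi_m,
\]
where each $\Psi_i \colon \R^m \to \R^m$ is a diffeomorphism supported in a small neighborhood of the dual skeleton $T^{(i-1)^*}$ and performs a shrinking in the $i$ directions transverse to each dual cell of dimension $(i-1)^*$. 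This is possible because, by the product structure of cubications, a tubular neighborhood of a dual cell $\sigma^{(i-1)^*}$ splits isometrically as $\sigma^{(i-1)^*} \times Q^i_{2\mu\eta}$, so one can apply the model shrinking of Proposition~\ref{propositionJEMSShrinkingModel} slicewise in the transverse $i$-dimensional cube. In the outermost factor $\Psi_{\ell+1}$, which handles the final shrinking around $T^{\ell^*}$ itself, I would take the shrinking parameter of the model map to be $\tau\mu\eta$; in the intermediate factors $\Psi_{\ell+2}, \ldots, \Psi_m$, I would take the shrinking parameter equal to $\mu\eta$, which reproduces the thickening construction and preserves the product compatibility at each interface.

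Once this composition is in place, properties $(\ref{itempropositionshrinkingfromaskeleton2})$--$(\ref{itempropositionshrinkingfromaskeletonIdentity})$ should follow directly from the corresponding pointwise properties of the model shrinking and from the iterative structure: each $\Psi_i$ equals the identity outside a neighborhood of $T^{(i-1)^*} + Q^m_{2\mu\eta}$ and preserves each cube $\sigma^m \in \cS^m$ by the slicewise nature of the construction, yielding $(\ref{itempropositionshrinkingfromaskeleton2})$ and $(\ref{itempropositionshrinkingfromaskeletonIdentity})$; the inclusion $(\ref{itempropositionshrinkingfromaskeleton1})$ comes from the fact that $\Psi_{\ell+1}$ sends $T^{\ell^*} + Q^m_{\tau\mu\eta}$ onto $T^{\ell^*} + Q^m_{\mu\eta}$ (by Proposition~\ref{propositionJEMSShrinkingModel} with the above parameters) while each of the subsequent maps $\Psi_i$ for $i \ge \ell+2$ fixes $T^{\ell^*}$. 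The distance estimate $(\ref{itempropositionshrinkingDistance})$ is the one genuinely new feature compared to the thickening case; I would obtain it by observing that each $\Psi_i$ only contracts in directions transverse to $T^{(i-1)^*}$ and leaves tangential directions essentially intact, so that the monotonicity of the radial profile $\varphi$ built into Proposition~\ref{propositionJEMSShrinkingModel} gives a slicewise estimate $d(\Psi_i(x), T^{j^*}) \ge C\, d(x, T^{j^*})$ whenever $j \ge i - 1$, and then composition across $i = \ell+1, \ldots, m$ delivers $(\ref{itempropositionshrinkingDistance})$.

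The main obstacle will be the precise Jacobian estimates $(\ref{itempropositionshrinkingfromaskeleton5})$ and $(\ref{itempropositionshrinkingfromaskeleton4})$, especially the sharp $\tau$ dependence in $(\ref{itempropositionshrinkingfromaskeleton4})$. These should be derived by combining the Faà di Bruno formula with the pointwise bounds $(\ref{propositionJEMSShrinkingModel-i})$--$(\ref{propositionJEMSShrinkingModel-iii})$ of Proposition~\ref{propositionJEMSShrinkingModel} in the transverse slices: the derivative bound of order $j$ in a slice becomes $|D^j \Psi_i| \lesssim (\mu\eta)^{1-j}$ away from the shrunken core and is enhanced by a factor $\tau^{-j}$ on the core of $\Psi_{\ell+1}$, while the transverse Jacobian is bounded from below by $(\mu\eta)^{-\beta}$ outside the core and by $(\tau\mu\eta)^{-(\ell+1)}$ inside it. Keeping in mind that the core of $\Psi_{\ell+1}$ acts only in the $(\ell+1)$-dimensional transverse direction, the ratio of these two contributions produces exactly the $\tau^{j((\ell+1)/\beta - 1)}$ factor of $(\ref{itempropositionshrinkingfromaskeleton4})$. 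The hard technical point will be to keep track of the Jacobian contributions across overlaps between different dual cells, to verify that the composed estimates survive the chain rule uniformly in $\eta$ and $\tau$, and to check that the various parameter choices ($2\mu\eta$, $\mu\eta$, $\tau\mu\eta$) leave enough room for the nested shrinking regions to fit consistently inside the outer shell where $\Phi^{\mathrm{sh}} = \Id$.
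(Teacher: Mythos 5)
Your plan diverges from what the paper actually does: here the map \(\Phi^{\mathrm{sh}}\) is not rebuilt at all. Items (i)--(iii), (v) and (vi) are imported from the construction of \cite{BPVS_MO}*{Proposition~8.1}, and the only thing proved in the text is the new item (iv), which follows in two lines from three extra features of that construction --- \(\Phi^{\mathrm{sh}}(\sigma^m)=\sigma^m\) for every cube, translation equivariance from cube to cube, and above all \(T^{i^*}\subset \Phi^{\mathrm{sh}}(T^{i^*})\) --- combined with the global bi-Lipschitz lower bound enjoyed by a diffeomorphism of \(\R^m\) equal to the identity outside a compact set: the nearest point of \(T^{i^*}\) to \(\Phi^{\mathrm{sh}}(x)\) can be written \(\Phi^{\mathrm{sh}}(y)\) with \(y\in T^{i^*}\), whence \(d(\Phi^{\mathrm{sh}}(x),T^{i^*})=\abs{\Phi^{\mathrm{sh}}(x)-\Phi^{\mathrm{sh}}(y)}\ge C\abs{x-y}\ge C\,d(x,T^{i^*})\). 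Rebuilding the map from the model of Proposition~\ref{propositionJEMSShrinkingModel} is legitimate in principle, but your outline breaks down exactly where such a reconstruction has to do real work.

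Two gaps are concrete. First, the parameter scheme: putting \(\tau\) only into \(\Psi_{\ell+1}\) and letting \(\Psi_{\ell+2},\dots,\Psi_m\) ``reproduce the thickening construction'' cannot give the statement. Thickening maps are not diffeomorphisms of \(\R^m\) (they are singular on the corresponding dual skeleton), so with that choice the composition is not a diffeomorphism and (v) fails on all of \(\R^m\); if instead those intermediate factors are essentially the identity, then near the lower-dimensional faces of \(T^{\ell^*}\) (for instance the dual vertices, i.e.\ the cube centers) the slicewise prescriptions attached to the different \(\ell^*\)-cells meeting there conflict on their overlapping product neighborhoods, so \(\Psi_{\ell+1}\) is not even well defined, and a map expanding only in the \(\ell+1\) directions transverse to one top-dimensional dual cell cannot produce the inclusion (ii) nor the uniform bounds (v)--(vi) near those faces. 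This corner region is precisely the difficulty; in the cited construction the \(\tau\)-dependent expansion enters at every stage of the induction, and your proposal explicitly defers this point (``overlaps between different dual cells'', ``enough room for the nested shrinking regions'') rather than resolving it. Second, your route to (iv) does not close: you claim \(d(\Psi_i(x),T^{j^*})\ge C\,d(x,T^{j^*})\) only for \(j\ge i-1\), so chaining the factors \(\Psi_{\ell+1},\dots,\Psi_m\) controls \(d(\Phi^{\mathrm{sh}}(x),T^{j^*})\) only when \(j\ge m-1\); for \(j\in\{\ell,\dots,m-2\}\), in particular the main case \(j=\ell\), the factors \(\Psi_i\) with \(i\ge j+2\) are not covered by your claim and no argument is offered, nor is the slicewise estimate itself justified. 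The argument for (iv) recalled above bypasses both issues; if you insist on your construction, the efficient fix is to verify \(T^{i^*}\subset\Phi^{\mathrm{sh}}(T^{i^*})\) for it and then conclude as the paper does.
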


Let us explain why assertion~\( (\ref{itempropositionshrinkingDistance}) \), that is not present in the statement of \cite{BPVS_MO}*{Proposition~8.1}, is true.
We begin by observing that the proof of \cite{BPVS_MO}*{Proposition~8.1} gives the following additional properties of \( \Phi^{\mathrm{sh}} \)\,: 
\begin{enumerate}[\((a)\)]
    \item \(\Phi^{\mathrm{sh}}(\sigma^m) = \sigma^m\) for every cube \(\sigma^{m}\in \cS^{m}\),
    \item for every cubes \(\sigma^{m}_1\) and \(\sigma^{m}_2\) with centers \(a_1\) and \(a_2\) respectively, 
\[
\Phi^{\mathrm{sh}}\vert_{\sigma_{1}^m} 
= \Phi^{\mathrm{sh}}\vert_{\sigma_{2}^m}(\cdot+a_2-a_1).  
\]
    \item 
    for every \( i \in \{ \ell, \dots, m \} \),
\begin{equation}
\label{eqStrong2-1188}
T^{i^*} \subset \Phi^{\mathrm{sh}} (T^{i^*}).
\end{equation}
\end{enumerate}
From \eqref{eqStrong2-1188}, there exists \(y \in T^{i^{*}}\) such that 
\[
d(\Phi^{\mathrm{sh}}(x), T^{i^{*}}) = |\Phi^{\mathrm{sh}}(x) - \Phi^{\mathrm{sh}}(y)|.
\]
Using that \(\Phi^{\mathrm{sh}}\) is a diffeomorphism, there exists \(C>0\) that does not depend on \(x\) or \(y\) such that
\[
|\Phi^{\mathrm{sh}}(x) - \Phi^{\mathrm{sh}}(y)| 
\geq C|x - y|\geq C d(x, T^{i^{*}}).
\]
Property~\( (\ref{itempropositionshrinkingDistance}) \) then follows by combining both estimates.

\medskip

In what concerns the proof of Theorem~\ref{thm_density_manifold_open}, we need the following property of \(\ClassR_i\)~functions obtained from shrinking around the singular set, where we use a notation adapted to the context of that proof:

\begin{corollary}
    \label{corollaryShrinkingCubication}
    Let \(\Phi^{\mathrm{sh}}  \colon  \R^m \to \R^m\) be the diffeomorphism given by Proposition~\ref{lemmaShrinkingFaceNearDualSkeletonGlobal} and let \( e \in \{ \ell, \ldots, m \} \).
    Then, for every \(\ClassR_{e^{*}}\)~function \(u\) in \(\Int{S^{m}}\) with singular set \( T^{e^{*}} \), \( u \compose \Phi^{\mathrm{sh}} \) is also an \(\ClassR_{e^{*}}\)~function in \( \Int{S^{m}} \) with singular set \( T^{e^{*}} \) that satisfies
    \[
    u \compose \Phi^{\mathrm{sh}} = u \quad \text{in \((\Int{S^{m}}) \setminus (T^{\ell^{*}} + Q^{m}_{2\mu\eta}) \).}
    \]
    Moreover, if \( kp < \ell + 1 \), then 
    \( u \compose \Phi^{\mathrm{sh}} \in \Sobolev^{k, p}(\Int{S^{m}}) \) and, for every \( j \in \{1, \dots, k\} \), 
  \begin{multline*}
  \norm{D^{j}(u \compose \Phi^{\mathrm{sh}})}_{\Lebesgue^{p}((\Int{S^{m}}) \cap (T^{\ell^{*}} + Q^{m}_{2\mu\eta}))}\\
\le C'' \sum_{i=1}^j  (\mu\eta)^{i - j} \norm{D^{i} u}_{\Lebesgue^p((\Int{S^{m}}) \cap (T^{\ell^*} + Q^m_{2\mu\eta}) \setminus (T^{\ell^*} + Q^m_{\mu\eta}))}\\
+ C''' \tau^{\frac{\ell+1-kp}{p}} \sum_{i=1}^j  (\mu\eta)^{i - j} \norm{D^{i} u}_{\Lebesgue^p((\Int{S^{m}}) \cap (T^{\ell^*} + Q^m_{\mu\eta}))}.
  \end{multline*}
\end{corollary}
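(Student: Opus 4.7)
The identity $u \compose \Phi^{\mathrm{sh}} = u$ on $(\Int S^{m}) \setminus (T^{\ell^{*}} + Q^{m}_{2\mu\eta})$ is an immediate consequence of property~$(\ref{itempropositionshrinkingfromaskeletonIdentity})$ of Proposition~\ref{lemmaShrinkingFaceNearDualSkeletonGlobal}. To verify that $u \compose \Phi^{\mathrm{sh}}$ is an $\ClassR_{e^{*}}$~function with singular set $T^{e^{*}}$, I would first combine \eqref{eqStrong2-1188} with property~$(\ref{itempropositionshrinkingDistance})$ applied with $i = e$: the latter forces $d(x, T^{e^{*}}) = 0$ whenever $\Phi^{\mathrm{sh}}(x) \in T^{e^{*}}$, and together they give $(\Phi^{\mathrm{sh}})^{-1}(T^{e^{*}}) \cap \Int S^{m} = T^{e^{*}}$, so $u \compose \Phi^{\mathrm{sh}}$ is smooth outside $T^{e^{*}}$. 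The pointwise derivative bound \eqref{eqRClassDerivativeEstimatePointwise} for $u \compose \Phi^{\mathrm{sh}}$ then follows from the Faà di Bruno formula, the $\ClassR_{e^{*}}$~estimate on $D^{i}u$, the boundedness of the derivatives of the smooth diffeomorphism $\Phi^{\mathrm{sh}}$ on the compact set $\overline{S^{m}}$, and property~$(\ref{itempropositionshrinkingDistance})$ to replace $d(\Phi^{\mathrm{sh}}(x), T^{e^{*}})$ by $d(x, T^{e^{*}})$. When $kp < \ell + 1$, the membership $u \compose \Phi^{\mathrm{sh}} \in \Sobolev^{k,p}(\Int S^{m})$ then comes from Proposition~\ref{propositionClassRInclusionSobolev}.

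For the quantitative Sobolev estimate I would partition the integration domain as
\[
D_{1} \vcentcolon= (\Int S^{m}) \cap \bigl((T^{\ell^{*}} + Q^{m}_{2\mu\eta}) \setminus \Phi^{-1}(T^{\ell^{*}} + Q^{m}_{\mu\eta})\bigr), \quad D_{2} \vcentcolon= (\Int S^{m}) \cap \Phi^{-1}(T^{\ell^{*}} + Q^{m}_{\mu\eta}),
\]
and bound $\norm{D^{j}(u \compose \Phi^{\mathrm{sh}})}_{L^{p}}$ separately on each piece. The starting point on both is the Faà di Bruno formula \eqref{eqStrong1-611}, and the crucial analytical choice is to take $\beta \vcentcolon= jp$ in properties~$(\ref{itempropositionshrinkingfromaskeleton5})$ and~$(\ref{itempropositionshrinkingfromaskeleton4})$; this is admissible since $jp \le kp < \ell + 1$. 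Multiplying the pointwise estimates over the indices $\theta_{1}, \dots, \theta_{i}$ appearing in Faà di Bruno and using $\theta_{1} + \dots + \theta_{i} = j$, property~$(\ref{itempropositionshrinkingfromaskeleton5})$ delivers on $D_{1}$ the bound
\[
\prod_{k=1}^{i} \abs{D^{\theta_{k}}\Phi^{\mathrm{sh}}(x)}^{p} \le C (\mu\eta)^{(i-j)p}\, \Jacobian{m}{\Phi^{\mathrm{sh}}}(x),
\]
while on $D_{2}$ property~$(\ref{itempropositionshrinkingfromaskeleton4})$ yields the same bound multiplied by the additional gain $\tau^{\ell + 1 - jp}$ coming from the exponent $j((\ell+1)/\beta - 1) = (\ell + 1)/p - j$ before raising to the $p$-th power.

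To conclude, I would perform the change of variables $y = \Phi^{\mathrm{sh}}(x)$, absorbing $\Jacobian{m}{\Phi^{\mathrm{sh}}}(x)\dif x$ into $\dif y$ and replacing $|D^{i}u(\Phi^{\mathrm{sh}}(x))|^{p}$ by $|D^{i}u(y)|^{p}$. Properties~$(\ref{itempropositionshrinkingfromaskeleton2})$ and~$(\ref{itempropositionshrinkingfromaskeletonIdentity})$ guarantee that $\Phi^{\mathrm{sh}}$ is a bijection of $\Int S^{m}$ preserving each cube and the equals the identity outside $T^{\ell^{*}} + Q^{m}_{2\mu\eta}$, so that $\Phi^{\mathrm{sh}}(D_{1}) \subset (\Int S^{m}) \cap ((T^{\ell^{*}} + Q^{m}_{2\mu\eta}) \setminus (T^{\ell^{*}} + Q^{m}_{\mu\eta}))$ and $\Phi^{\mathrm{sh}}(D_{2}) = (\Int S^{m}) \cap (T^{\ell^{*}} + Q^{m}_{\mu\eta})$, matching exactly the sets appearing in the target estimate. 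Taking $p$-th roots, the $D_{2}$ contribution carries a factor $\tau^{(\ell + 1 - jp)/p}$; since $0 < \tau < 1$ and $j \le k$, this is bounded by the uniform factor $\tau^{(\ell + 1 - kp)/p}$, and summing over the two pieces with subadditivity of the $L^{p}$~norm gives the claimed bound. The main obstacle I anticipate is bookkeeping: one must track the exponent $\beta = jp$ changing with $j$, verify that the change-of-variables images land precisely in the annular regions of the right-hand side, and handle the multi-indices of Faà di Bruno compatibly with the two Jacobian-type estimates on the two pieces.
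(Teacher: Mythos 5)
Your proposal is correct and follows essentially the same route as the paper's proof, which defers the quantitative estimate to \cite{BPVS_MO}*{Corollary~8.2} and the pattern of Corollary~\ref{corollaryShrinkingBall}; you have simply written out those deferred details (the choice $\beta = jp$, the $D_1/D_2$ decomposition, and the change of variables). The only minor overclaim is the asserted set equality $(\Phi^{\mathrm{sh}})^{-1}(T^{e^*}) \cap \Int S^m = T^{e^*}$ — property~$(\ref{itempropositionshrinkingDistance})$ and \eqref{eqStrong2-1188} give only the inclusion $(\Phi^{\mathrm{sh}})^{-1}(T^{e^*}) \cap \Int S^m \subset T^{e^*}$, but that inclusion is all that is needed for the smoothness of $u \compose \Phi^{\mathrm{sh}}$ outside $T^{e^*}$, so the argument is unaffected.
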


\resetconstant
\begin{proof}
Since \( \Phi^{\mathrm{sh}} \) is a diffeomorphism, by 
\( (\ref{itempropositionshrinkingfromaskeleton2}) \) and
\( (\ref{itempropositionshrinkingfromaskeletonIdentity}) \), we have 
\(\Phi^{\mathrm{sh}}(\Int{S^{m}}) \subset \Int{S^{m}}\). 
By injectivity of \(\Phi^\mathrm{sh}\) and \eqref{eqStrong2-1188} with \(i = e\), we thus get
\[
 \Phi^{\mathrm{sh}}((\Int{S^{m}}) \setminus T^{e^{*}} ) 
 \subset (\Int{S^{m}}) \setminus T^{e^{*}} .
\]
Since \( u \) is smooth in the complement of \( T^{e^{*}} \), we deduce that \(u \compose \Phi^{\mathrm{sh}}\) is also smooth in the complement of \( T^{e^{*}} \).
To prove that \( u \compose \Phi^{\mathrm{sh}}\) is an \(\ClassR_{e^{*}}\)~function in \(\Int{S^{m}}\) with singular set \( T^{e^{*}} \), we rely on the Faá di~Bruno composition formula and the fact that \(\Phi^{\mathrm{sh}}\) is smooth. 
In fact, for every \(j \in \N_*\),
\[
|D^{j} (u \compose \Phi^{\mathrm{sh}})|
\leq \C \sum_{i=1}^{j} |D^i u \compose \Phi^{\mathrm{sh}}|.
\]
Since \( u \) is an \(\ClassR_{e^{*}}\)~function with singular set \(T^{e^{*}}\), for every \(  x \in (\Int{S^{m}}) \setminus T^{e^{*}} \), we then have
\[
|D^{j}(u \compose \Phi^{\mathrm{sh}})(x)|
\leq \frac{\C}{d(\Phi^{\mathrm{sh}}(x) ,T^{e^{*}})^{j}}.
\]
Finally, from property~\( (\ref{itempropositionshrinkingDistance}) \) in Proposition~\ref{lemmaShrinkingFaceNearDualSkeletonGlobal} we obtain
\[
|D^{j}(u \compose \Phi^{\mathrm{sh}})(x)|
\leq \frac{\C}{d(x, T^{e^{*}})^{j}}.
\]
This proves that \(u \compose \Phi^{\mathrm{sh}}\) is also an \(\ClassR_{e^{*}}\)~function with singular set \( T^{e^{*}} \).
For \( kp < \ell + 1 \), by Proposition~\ref{propositionClassRInclusionSobolev}, we have \( u \compose \Phi^{\mathrm{sh}}  \in \Sobolev^{k, p}(\Int{S^{m}}) \).
The \(\Lebesgue^{p}\)~estimates for \( D^{j}(u \compose \Phi^{\mathrm{th}}) \) in \((\Int{S^{m}}) \cap (T^{\ell^{*}} + Q^{m}_{2\mu\eta})\) can be proved by a change of variable along the lines of the proof of Corollary~\ref{corollaryShrinkingBall}, see \cite{BPVS_MO}*{Corollary~8.2}.
\end{proof}

\section{Density on open sets}

We now focus on the proof of the direct implication of Theorem~\ref{thm_density_manifold_open} when \( \manfV = \Omega \) is a bounded Lipschitz open subset of \( \R^m \).
To this end, in Parts~\ref{partThmOpen-1}--\ref{partThmOpen-6} we consider the case where there exists some bounded Lipschitz open neighborhood \(O\) of \(\overline{\Omega}\) such that \(u \in \Sobolev^{k, p}(O; \manfN)\)  and \(u\) is \((\ell, e)\)-extendable in \(O\) with
\[
\ell \vcentcolon= \floor{kp}.
\]
Later on in Part~\ref{partThmOpen-7} we then explain how the analytical assumption in \(\Omega\) allows us to reduce the case \(u \in \Sobolev^{k, p}(\Omega; \manfN)\) to this one by approximation.

\resetconstant

\begin{PartThm}
\label{partThmOpen-1}
We proceed to define good and bad cubes, and then estimate the total volume of bad cubes in terms of the \(\Lebesgue^{kp}\) norm of \(Du\). 
\end{PartThm}

\begin{proof}[\nopunct]
We take a cubication  \(\cS^m\) of radius \( \eta > 0\) so that the polytope \(S^m = \bigcup\limits_{\sigma^m \in \cS^m}\sigma^m\) satisfies
\begin{equation*}
\label{eqJEMS-80}
\overline{\Omega} \subset S^m \subset S^{m} + Q_{2\eta}^{m} \subset O.
\end{equation*}
This condition gives us some room around \(S^{m}\) to deal with the outermost cubes in the cubication that intersect the boundary of \(S^{m}\).
We rely on the fundamental idea of Bethuel~\cite{Bethuel} based on the classification of the elements in \(\cS^m\) as good or bad cubes.
Figure~\ref{figureJEMSCubication} illustrates a cubication of a square in \(\R^{2}\).
We rely on this model case to exemplify our tools throughout the proof.

\begin{figure}
\centering
\hfill{}
\includegraphics{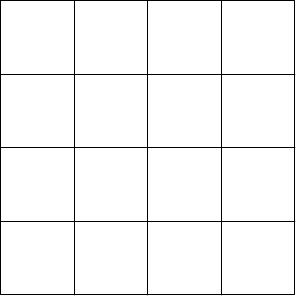}\hfill{}
\includegraphics{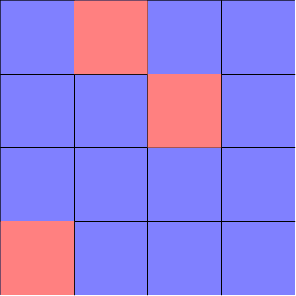}\hfill{}
\caption{Cubication of a square and its good and bad cubes (colored blue and red, respectively)}
\label{figureJEMSCubication}
\label{figureJEMSCubication-GB}
\end{figure}

Since \(\manfN\) is compact, the map \(u\) is bounded and then, by the Gagliardo-Nirenberg interpolation inequality, we have \(Du \in \Lebesgue^{kp}(O)\).
Let \(\iota>0\) be such that the nearest point projection \(\Pi\) is well defined and smooth on   \(\overline{\manfN +  B^{\nu}_\iota}\).
Having fixed \(0 < \rho < 1/2\), we say that \(\sigma^{m} \in \cS^m\) is a \emph{good cube} provided that
\begin{equation}
\label{eqJEMSGoodBall}
\frac{\alpha}{\eta^{\frac{m}{kp} - 1}} \norm{Du}_{L^{kp}(\sigma^m + Q_{2\rho\eta}^{m})} \le \iota,
\end{equation}
for some constant \(\alpha > 0\) to be chosen later on.
Otherwise, we call \(\sigma^{m}\) a \emph{bad cube}.
Let us denote by \(\cE^{m}\) the subskeleton of \(\cS^{m}\) that contains all bad cubes and, accordingly, \(E^{m}\) is the union of bad cubes.\label{eqJEMSBadBall}
In a good cube, most of the values of the map \(u\) lie in a geodesic ball centered at some fixed point \(\xi \in \manfN\), and \(u\) does not oscillate too much at the scale \(2\rho\eta\).{}
The choice of \(\alpha\) is made later to ensure the control of this oscillation and guarantee that the images of the maps that we construct lie in a tubular neighborhood of \(\manfN\).

It is straightforward to estimate the number of bad cubes to find that their total volume tends to zero as \(\eta \to 0\).
More precisely, the Lebesgue measure of the set \(E^m + Q^m_{2\rho\eta}\) satisfies
\begin{equation}
\label{eqJEMS-103}
 \bigabs{E^m + Q^m_{2\rho\eta}}
	\le {\Cl{cteStrong2-59} } \eta^{kp} \norm{Du}_{\Lebesgue^{kp}(O)}^{kp},
\end{equation}
for some \(\Cr{cteStrong2-59}>0\) depending on \(m\), \(\alpha\) and \(\iota\).
Indeed, for each \(\sigma^m \in \cE^{m}\) we have
\[
1  < \frac{\alpha^{kp}}{\iota^{kp} \eta^{m-kp}} \int_{\sigma^m + Q_{2\rho\eta}^m} \abs{Du}^{kp}.
\]
Since the cubes \(\sigma^m + Q_{2 \rho \eta}^m\) have finite overlapping depending solely on the dimension \(m\), we estimate the number \(\#\cE^m \) of bad cubes as follows:
\[
\#\cE^m
 <  \sum_{\sigma^{m} \in \cE^{m}} \frac{\alpha^{kp}}{\iota^{kp} \eta^{m-kp}} \int_{\sigma^m + Q_{2\rho\eta}^m} \abs{Du}^{kp}
 \le  \frac{\C } {\eta^{m-kp}} \int_{E^m + Q_{2\rho\eta}^m}  \abs{Du}^{kp}.
\]
By finite subadditivity of the Lebesgue measure, we also have 
\[
\bigabs{E^m + Q^m_{2\rho\eta}} 
\le \sum_{\sigma^{m} \in \cE^{m}} \bigabs{\sigma^m + Q^m_{2\rho\eta}}
 \le (4\eta)^{m} \, (\#\cE^m). 
\]
Combining both estimates, we get \eqref{eqJEMS-103}.

It is more convenient to work in the sequel with a cubication larger than \(\cE^{m}\).{}
To this end, we define \(\cU^{m}\) as the subskeleton of \(\cS^{m}\) that contains \(\cE^{m}\) and is formed by all closed cubes \(\sigma^{m} \in \cS^{m}\) that intersect some element of \(\cE^{m}\).\label{eqJEMSUglyBall} 
A simple argument based on dilations of cubes shows that the number of elements in \(\cU^{m}\) is comparable to that of \(\cE^{m}\).{}
In particular, see \cite{BPVS_MO}*{p.~799}, the set \(U^{m} + Q_{2 \rho\eta}^{m}\) satisfies an estimate analogous to \eqref{eqJEMS-103}, namely
\begin{equation}
    \label{eq-claim_measure_bad}
     \bigabs{U^m + Q^m_{2\rho\eta}}
	\le {\C } \eta^{kp} \norm{Du}_{\Lebesgue^{kp}(O)}^{kp}\,.
\end{equation}
This concludes Part~\ref{partThmOpen-1} of the proof of Theorem~\ref{thm_density_manifold_open} for open sets.
\end{proof}

\begin{PartThm}
\label{partThmOpen-2}
We apply opening to modify the map \( u \) in a neighborhood of \(U^\ell\) and \(S^\ell\) to obtain maps \(u_{\eta}^{\mathrm{op}}\) and \(u_{\eta}^{\mathrm{fop}} \colon O \to \manfN\), respectively, that are also \((\ell, e)\)-extendable.
\end{PartThm}

\begin{proof}[\nopunct]
We begin by showing the existence of a summable function \(\widetilde{w}  \colon  \R^{m} \to [0,+\infty]\) for which, using the smooth map \(\Phi^{\mathrm{op}} \in \Fuglede_{\tilde{w}} (\R^{m}; \R^{m})\) given by Proposition~\ref{proposition_ouverture-new} with subskeleton \( \cU^{\ell} \), we have that \(u \compose \Phi^{\mathrm{op}}\) is an \( (\ell, e) \)-extendable map in \(O\) that belongs to \((\Sobolev^{k, p} \cap \Sobolev^{1, kp})(O)\) and is such that, for every \(\sigma^{\ell} \in \cU^{\ell} \) and every \(j\in \{1, \dots, k\}\),
\begin{align}
	\label{eqJEMS-478}
    \norm{D^{j}(u \compose \Phi^{\mathrm{op}})}_{\Lebesgue^{p}(\sigma^\ell + Q^{m}_{2\rho \eta})}
	&\le {\C } \, \sum_{i=1}^{j}{\eta^{i - j} \norm{D^{i}u}_{\Lebesgue^{p}(\sigma^\ell + Q^{m}_{2\rho \eta})}},\\
    \label{eqJEMS-482}
    \norm{D(u \compose \Phi^{\mathrm{op}})}_{\Lebesgue^{kp}(\sigma^\ell + Q^{m}_{2\rho \eta})}
	&\le {\C } \norm{Du}_{\Lebesgue^{kp}(\sigma^\ell + Q^{m}_{2\rho \eta})},
\end{align}
where the constants are independent of \(u\), \(\eta\) and \(\cU^{m}\).

Indeed, by Corollary~\ref{corollaryOpeningCubicationSobolev} there exists a summable function \( w_{1} \) for which \eqref{eqJEMS-478} holds when \(\Phi^{\mathrm{op}} \in \Fuglede_{w_{1}} (\R^{m}; \R^{m})\).
Since we also have that \( u \) is a \( \Sobolev^{1, kp} \)~map, the same corollary gives a summable function \( w_{2} \) for which \eqref{eqJEMS-482} holds for \(\Phi^{\mathrm{op}} \in \Fuglede_{w_{2}} (\R^{m}; \R^{m})\).
Finally, since \( u \) is \( (\ell, e) \)-extendable in \(O\), there exists an \( \ell \)-detector \( w_{3} \) for which Definition~\ref{definitionExtensionVMO} is satisfied. 
Extending \(w_3\) by zero outside \(O\), we may assume that \(w_3\) is a summable function defined in \(\R^m\).
It then suffices to choose
\begin{equation}
    \label{eqJEMSb-438}
\widetilde{w} = w_{1} + w_{2} + w_{3} .
\end{equation}
To see that \(u \compose \Phi^{\mathrm{op}}\) is also \((\ell, e)\)-extendable in \( O \),  observe that \(\Phi^{\mathrm{op}}(O) \subset O\).
Since \(\Phi^{\mathrm{op}} \in \Fuglede_{\tilde w}(\R^m; \R^m)\) and \( w_{3} \le \widetilde w\), we then have \(\Phi^{\mathrm{op}}|_O \in \Fuglede_{\tilde w_3}(O; O)\) and the \((\ell, e)\)-extendability of \( u \compose \Phi^{\mathrm{op}} \) follows from Proposition~\ref{corollary_ouverture}.

\begin{figure}
\centering{}
\hfill\includegraphics{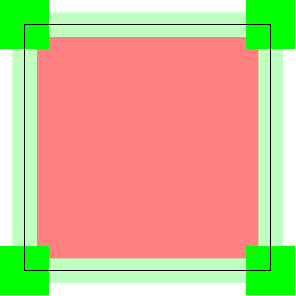}
\hfill\includegraphics{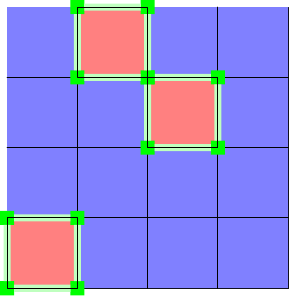}
\hfill{}
\caption{Construction of \(u^{\mathrm{op}}\) on a bad cube (left) and on the entire domain (right)}
\label{figureJEMSOpeningSkeleton}
\end{figure}

We now let
\begin{equation}
\label{eqJEMS-uop}
u_{\eta}^{\mathrm{op}} = u \compose \Phi^{\mathrm{op}}
\quad \text{in \(O\)}.
\end{equation}
Figure~\ref{figureJEMSOpeningSkeleton} illustrates opening applied to the \(1\)-dimensional skeleton of the bad cubes in our model case \(\R^{2}\).
As we mentioned earlier, we also need to work with neighboring cubes, but for the sake of clarity, we systematically illustrate the construction using \(\cE^{m}\) instead of \(\cU^{m}\).

Since \(u_{\eta}^{\mathrm{op}} = u\) in the complement of \(U^{\ell} + Q_{2\rho\eta}^{m}\)\,, by \eqref{eqJEMS-478} and the additivity of the Lebesgue integral, for every \(j \in \{1, \dots, k\}\) we deduce that
\begin{equation}
\label{inequalityMainOpening}
\begin{split}
\norm{D^j u_{\eta}^\mathrm{op} - D^j u}_{\Lebesgue^p(O)} 
& = \norm{D^j u_{\eta}^\mathrm{op} - D^j u}_{\Lebesgue^p(U^\ell + Q^m_{2\rho\eta})}\\
& \le \norm{D^j u_{\eta}^\mathrm{op}}_{\Lebesgue^p(U^\ell + Q^m_{2\rho\eta})} + \norm{D^j u}_{\Lebesgue^p(U^\ell + Q^m_{2\rho\eta})}\\
& \le {\C } \sum_{i = 1}^j \eta^{i - j} \norm{D^i u}_{\Lebesgue^p(U^\ell + Q^m_{2\rho\eta})}.
\end{split}
\end{equation}

Since \(\ell \le kp\) and \(u_{\eta}^\mathrm{op}\) is bounded and depends on at most \(\ell\) variables in a neighborhood of each element of \(\cU^{\ell}\), by Proposition~\ref{propositionOpeningVMO} the choices of \(\widetilde{w}\) and \(\Phi^{\mathrm{op}}\) we be made so that the restriction \( u_{\eta}^{\mathrm{op}}|_{U^\ell + Q^m_{\rho\eta}}\) is a \(\VMO\)~map and,
for every \( a \in U^{\ell} + Q^m_{\rho\eta/2} \) and every \(0 < r \le \rho\eta/2\), 
\[
\fint\limits_{Q_r^m (a)}\fint\limits_{Q_r^m (a)} \abs{u_\eta^{\mathrm{op}}(y) - u_\eta^{\mathrm{op}} (z)} \dif z \dif y 
\le \frac{\C}{\eta^{\frac{m}{kp} - 1}} \norm{Du}_{\Lebesgue^{kp}(\sigma^{m} + Q_{2\rho\eta}^m)},
\]
where \(\sigma^{m} \in \cS^{m} \) is such that \(a \in \sigma^m + Q^m_{\rho\eta/2}\)\,.

To fully explore the \((\ell, e)\)-extendability of \(u\), later on in Part~\ref{partThmOpen-4} we rely on a variant of the map \(u^{\mathrm{op}}\) that we denote by \(u_{\eta}^{\mathrm{fop}}\).{}
It is obtained from the opening technique using the entire skeleton \(\cS^{\ell}\) instead of \(\cU^{\ell}\), see Figure~\ref{figureCubicationDensityNew}.
To construct this additional map, we need to work from the beginning with a larger \(\ell\)-detector \(\widetilde{w}\), where in \eqref{eqJEMSb-438} we take as \(w_1\) a summable function provided by Corollary~\ref{corollaryOpeningCubicationSobolev} and Proposition~\ref{propositionOpeningVMO} using the larger \(\ell\)-dimension skeleton \(S^\ell\) instead of \(U^\ell\).
We then take the map \(\Phi^{\mathrm{fop}} \in \Fuglede_{\tilde{w}} (\R^{m}; \R^{m})\) given by these statements using \(S^\ell\) and define
\begin{equation}
\label{eqJEMS-ufop}
u_{\eta}^{\mathrm{fop}} = u \compose \Phi^{\mathrm{fop}}
\quad \text{in \( O \)\,.} 
\end{equation}

\begin{figure}
\centering
\hfill{}
\includegraphics{opening_skeleton-light}\hfill{}
\includegraphics{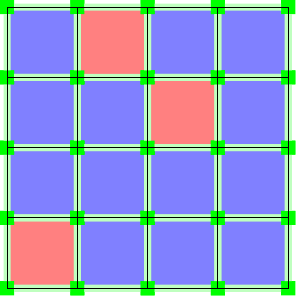}\hfill{}
\caption{Opening used to construct \(u_{\eta}^{\mathrm{op}}\) (left) and \(u_{\eta}^{\mathrm{fop}}\) (right)}
\label{figureCubicationDensityNew}
\end{figure}

Since the maps \(\Phi^{\mathrm{op}}\) and \(\Phi^{\mathrm{fop}}\) are constructed by iterating opening on each cell \(\sigma^i\) successively, from \(i=0\) up to \(i=\ell\), we can assume that in property~\((\ref{itemOpeningi})\) of Proposition~\ref{proposition_ouverture-new} the two maps \(\Phi^{\mathrm{op}}\)  and \(\Phi^{\mathrm{fop}}\) are equal to the same constants on the \(m-\ell\) dimensional cubes of radius \(\rho \eta\) which are orthogonal to each \(\sigma^{\ell} \in \cU^{\ell} \), that is,
 \begin{equation}\label{eqDensityNew-142}
\Phi^{\mathrm{op}} = \Phi^{\mathrm{fop}}
\quad \text{in \(U^{m} + Q_{\rho\eta}^{m}\)\,.}     
\end{equation}

As for the map \( u_{\eta}^{\mathrm{op}} \), by Proposition~\ref{corollary_ouverture} we have that \( u_{\eta}^{\mathrm{fop}} \) is \( (\ell, e) \)-extendable in \(O\)\,.
By Corollary~\ref{corollaryOpeningCubicationSobolev}, the family of maps \(u_{\eta}^{\mathrm{fop}}\) is bounded in \(\Sobolev^{k, p}(O; \manfN)\) but, unlike \(u_{\eta}^{\mathrm{op}}\), it does not need to converge strongly to \(u\).
The reason is that the measure of the set \(S^{\ell} + Q_{2\rho\eta}^{m}\) where opening takes place is bounded from below by some positive constant independent of \(\eta\), cf.\@~Proposition~\ref{corollaryOpeningConvergence}.
This concludes Part~\ref{partThmOpen-2} of the proof of Theorem~\ref{thm_density_manifold_open} for open sets.
\end{proof}

The next step of the proof relies on the adaptive smoothing introduced in Section~\ref{sectionJEMSSmoothing}. 

\begin{PartThm}
    \label{partThmOpen-3}
    We apply the adaptive smoothing to \(u_\eta^\mathrm{op}\) to obtain a map \(u_\eta^\mathrm{sm} \colon S^{m}+Q^{m}_{\rho\eta} \to \R^\nu\) whose images of \(S^m \setminus U^m\) and \(U^\ell + Q_{\rho\eta/2}^m\) are contained in a small tubular neighborhood of \(\manfN\).
\end{PartThm}

\begin{proof}[\nopunct]
We exploit the properties of \(u\) in various regions with the help of adaptive smoothing at different scales.  
To this end, we use a positive smooth function \(\psi_\eta \colon S^{m}+Q^{m}_{\rho\eta} \to \R\) with \({\psi_\eta} \le \rho\eta\) of the form
\begin{equation}
\label{eqJEMSb-551}
{\psi_\eta} = t\zeta + s (1 - \zeta),
\end{equation}
where \(s\) and \(t\) are positive parameters such that \(s \ll t\) and \(t \sim \eta\), and \(\zeta \colon S^{m}+Q^{m}_{\rho\eta} \to \R\) is a smooth function that satisfies
\begin{enumerate}[$(i)$]
\item \(0 \le \zeta \le 1\) in \(S^m+Q^{m}_{\rho\eta}\)\,,
\item \(\zeta = 1\) in \((S^m+Q^{m}_{\rho\eta}) \setminus (E^m+ Q^{m}_{3\rho\eta/4})\),
\item \(\zeta = 0\) in \(E^m+Q^{m}_{\rho\eta/2}\)\,,
\item  for every \( j\in \{1, \ldots, k\} \), \( \eta^j\norm{D^{j} \zeta}_{\Lebesgue^{\infty}}\leq \Cl{cteStrong2-185} \),
\end{enumerate}
where the constant \( \Cr{cteStrong2-185} > 0 \) depends only on \( m \). 
Actually, we take
\begin{equation}
\label{eqJEMSb-563}
t =\min{ \Bigl\{\rho, \frac{1}{\Cr{cteStrong2-185}} \Bigr\} } \frac{\eta}{4},
\end{equation}
while the choice of \(s\) will be specified subsequently.

Given a mollifier \( \varphi\in \Smooth^{\infty}_{c}(\R^m)\), we then set
\begin{equation}
\label{eqJEMS-usm}
u_{\eta}^\mathrm{sm} = \varphi_{{\psi_\eta}} * u_{\eta}^{\mathrm{op}} 
\quad \text{in \(S^{m}+Q^{m}_{\rho\eta}\)\,.}
\end{equation}
We can rely on  \cite{BPVS_MO}*{Estimate~(5.2)} to deduce from the estimates for the derivatives in the adaptive smoothing and opening that, for every \(j \in \{1, \dots, k\}\), 
\begin{multline}
\label{inequalityMainSmoothening}
\norm{D^j u_{\eta}^\mathrm{sm} - D^j u_{\eta}^\mathrm{op}}_{\Lebesgue^p(S^{m})}\\
\le \sup_{v \in B_1^m}{\norm{\tau_{{\psi_\eta} v}(D^j u) - D^j u}_{\Lebesgue^p(S^{m})}}
+ \C  \sum_{i=1}^j  \eta^{i - j} \norm{D^i u}_{\Lebesgue^p(U^m + Q^m_{2\rho\eta})}.
\end{multline}

\begin{figure}
\centering
\includegraphics{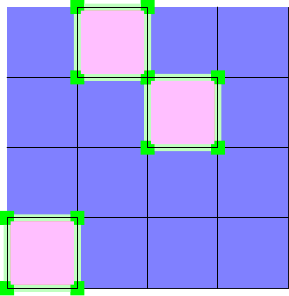}
\caption{Inside the pink squares, \(u^{\mathrm{sm}}\) can be far from \(\manfN\)}
\label{figureJEMSCubication-Far}
\end{figure}

We claim that a suitable choice of \(s\) in \eqref{eqJEMSb-551} yields the inclusions
\begin{equation}
\label{eqJEMS-Inclusion}
u_{\eta}^{\mathrm{sm}}( S^{m} \setminus U^{m} )
\subset \manfN + B_{\iota}^{\nu}
\end{equation}
and
\begin{equation}
\label{eqJEMS-Inclusion-bis}
(\varphi_{r{\psi_\eta}} * u_{\eta}^{\mathrm{op}})\bigl(U^{\ell} + Q^{m}_{\rho\eta/2} \bigr){}
\subset \manfN + B_{\iota}^{\nu}
\quad \text{for every \( 0 < r \le 1\),}
\end{equation}
see Figure~\ref{figureJEMSCubication-Far}.
To this end, for every \( 0 < r \le 1\), we rely on the inequality
\begin{equation}
\label{eqStrong2-227}
d(\varphi_{r{\psi_\eta}} * u_{\eta}^{\mathrm{op}}(x), \manfN)
\le {\C }  \fint_{B_{r{\psi_\eta}(x)}^m(x)}\fint_{B_{r{\psi_\eta}(x)}^m(x)} \abs{u^\mathrm{op}_\eta(y) - u^\mathrm{op}_\eta(z)} \dif z\dif y.
\end{equation}
that follows from a straightforward variant of Lemma~\ref{lemmaVMOUniformConvergence}.

We first establish \eqref{eqJEMS-Inclusion}.
For every \(x\) in the set \(S^{m} \setminus U^{m}\subset S^{m}\setminus (E^m+Q^{m}_{3\rho\eta/4})\), we may write \({\psi_\eta}(x)=t=c \eta\) for some \(0<c<\rho/2\).
Since \({\psi_\eta} \le \rho\eta/2\),
then, by assumption on \(x\), there exists \(\sigma^m \in \cS^m \setminus \cU^m\) such that 
\[
B^{m}_{{\psi_\eta}(x)}(x) \subset \sigma^m + Q^{m}_{\rho\eta}\,.
\]
Combining \eqref{eqStrong2-227} with \( r = 1 \) and the Poincaré-Wirtinger inequality, we get
\[
d(u_{\eta}^{\mathrm{sm}}(x), \manfN)
\le \max_{\sigma^m \in \cS^m \setminus \cU^m} \frac{\C}{\eta^{\frac{m}{kp} - 1}} \norm{Du_{\eta}^{\mathrm{op}}}_{\Lebesgue^{kp}(\sigma^m + Q_{2\rho\eta}^m)}.
\]
By~\eqref{eqJEMS-482} and using also the fact that \(u_{\eta}^{\mathrm{op}}=u\) in \((\sigma^m + Q_{2\rho\eta}^m)\setminus (U^\ell+Q_{2\rho\eta}^m)\), we have
\begin{equation}\label{eq-Addendum1}
\norm{Du_{\eta}^{\mathrm{op}}}_{\Lebesgue^{kp}(\sigma^m + Q_{2\rho\eta}^m)}\leq \C
\norm{Du}_{\Lebesgue^{kp}(\sigma^m + Q_{2\rho\eta}^m)}.
\end{equation}
It follows that
\[{}
d(u_{\eta}^{\mathrm{sm}}(x), \manfN)
\le \max_{\sigma^m \in \cS^m \setminus \cU^m} \frac{\Cl{cteStrong2-239} }{\eta^{\frac{m}{kp} - 1}} \norm{Du}_{\Lebesgue^{kp}(\sigma^m + Q_{2\rho\eta}^m)}.
\]
This implies the inclusion \eqref{eqJEMS-Inclusion} by taking the constant \(\alpha > 0\) in \eqref{eqJEMSGoodBall} larger than \(\Cr{cteStrong2-239}\).

To prove \eqref{eqJEMS-Inclusion-bis}, we consider two different cases.
For every \(x \in (U^{\ell} + Q^{m}_{\rho\eta/2}) \setminus (E^{m}+Q^{m}_{\rho\eta/2})\), there exists \(\sigma^m \in \cS^m \setminus \cE^m\) such that \(x\in \sigma^m+Q^{m}_{\rho\eta/2}\)\,.
We recall that \(u_{\eta}^{\mathrm{op}}\) in \(U^{\ell} + Q^{m}_{\rho\eta}\) depends locally on \(\ell\) coordinate variables.
Since \(\ell\leq kp\), by \eqref{eqStrong2-227} and Proposition~\ref{propositionOpeningVMO}, for every \(0 < r \le 1\) we then have
\[{}
d(\varphi_{r{\psi_\eta}}*u_{\eta}^{\mathrm{op}}(x), \manfN)
\le \max_{\sigma^m \in \cS^m \setminus \cE^m} \frac{\C }{\eta^{\frac{m}{kp} - 1}} \norm{Du}_{\Lebesgue^{kp}(\sigma^m + Q_{2\rho\eta}^m)}.
\]
By increasing the constant \(\alpha > 0\) in \eqref{eqJEMSGoodBall} if necessary, this implies that 
\[
(\varphi_{r{\psi_\eta}}*u_{\eta}^{\mathrm{op}})\bigl( (U^{\ell} + Q^{m}_{\rho\eta/2}) \setminus (E^{m}+Q^{m}_{\rho\eta/2}) \bigr)
\subset \manfN + B_{\iota}^{\nu}\,.
\]

By Proposition~\ref{propositionOpeningVMO}, the restriction of \(u_{\eta}^{\mathrm{op}}\) in \(U^{\ell} + Q^{m}_{\rho\eta}\) is \(\VMO\).
As in Lemma~\ref{lemmaVMOUniformConvergence}, when \(s \to 0\) we then deduce that
\[{}
d(\varphi_{s} * u_{\eta}^{\mathrm{op}}, \manfN) \to 0
\quad \text{uniformly in \((U^{\ell} + Q^{m}_{\rho\eta/2})\cap (E^m+Q^{m}_{\rho\eta/2})\)\,.}
\]
Since, for every \(x \in (U^{\ell} + Q^{m}_{\rho\eta/2}) \cap (E^{m}+Q^{m}_{\rho\eta/2})\), we have \({\psi_\eta}(x) = s\), by choosing \(s \le \rho\eta/2\) sufficiently small in \eqref{eqJEMSb-551} we get
\[
(\varphi_{r{\psi_\eta}} * u_{\eta}^{\mathrm{op}})\bigl((U^{\ell} + Q^{m}_{\rho\eta/2})\cap (E^{m}+Q^{m}_{\rho\eta/2}) \bigr){}
\subset \manfN + B_{\iota}^{\nu}\,,
\]
thus completing the proof of \eqref{eqJEMS-Inclusion-bis}.
This concludes Part~\ref{partThmOpen-3} of the proof of Theorem~\ref{thm_density_manifold_open} for open sets.
\end{proof}

The next step of the proof relies on the thickening introduced in Section~\ref{subsection_thickening}.

\begin{PartThm}
 \label{partThmOpen-5}
Denoting by \( \cZ^{\ell^{*}} \) the dual skeleton of \(\cU^{\ell}\), we apply thickening to obtain an \(\ClassR_{\ell^{*}}\)~map \(u_\eta^\mathrm{th} \colon (S^{m} + Q^{m}_{\rho\eta/2}) \setminus Z^{\ell^{*}} \to \R^\nu\) with singular set \( Z^{\ell^{*}}\) such that \(u_\eta^\mathrm{th}(S^m \setminus Z^{\ell^{*}}) \subset \manfN + B_\iota^\nu\). \label{eqJEMSUglyCubesDualSkeleton} 
\end{PartThm}

\begin{proof}[\nopunct]
Let \(\Phi^{\mathrm{th}} \colon \R^{m} \setminus Z^{\ell^{*}} \to \R^{m} \) be the map given by Proposition~\ref{propositionthickeningfromaskeleton} and take
\begin{equation}
\label{eqJEMS-uth}
u_{\eta}^{\mathrm{th}} = u_{\eta}^{\mathrm{sm}} \compose \Phi^{\mathrm{th}}
\quad \text{in \((S^m + Q^m_{\rho\eta/2}) \setminus Z^{\ell^{*}} \).}{}
\end{equation}
Figure~\ref{figureJEMSThickeningCube} illustrates thickening applied to the 1-dimensional skeleton of the
bad cubes in our model case \(\R^{2}\), where the dual skeleton \( \cZ^{\ell^{*}} \) is given by the centers of the bad cubes.

\begin{figure}
\centering{}
\hfill\includegraphics{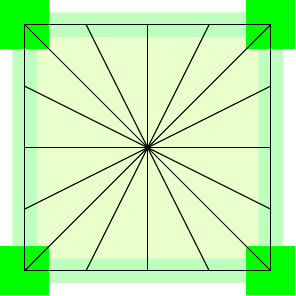}
\hfill\includegraphics{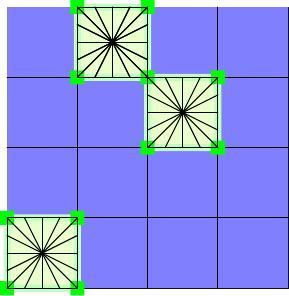}
\hfill{}
\caption{\(u_{\eta}^\mathrm{th}\) is singular at the center of each cube}
\label{figureJEMSThickeningCube}
\end{figure}

It follows from Corollary~\ref{corollaryThickeningCubication} applied to \(u^\mathrm{sm}_\eta\) that \( u_{\eta}^{\mathrm{th}} \) is an \(\ClassR_{\ell^{*}}\)~map with singular set \(Z^{\ell^{*}}\)   and, for every \(j \in \{1, \dots, k\}\),
\begin{equation}
\label{inequalityMainThickening}
\norm{D^j u^\mathrm{th}_\eta - D^j u^\mathrm{sm}_\eta}_{\Lebesgue^p(\Int{S^{m}})} 
\le {\C } \sum_{i = 1}^j \eta^{i-j} \norm{D^i u^\mathrm{sm}_\eta}_{\Lebesgue^p(U^m + Q^m_{\rho\eta/2})}.
\end{equation}
As we show in \cite{BPVS_MO}*{p.\,794}, by a combination of \eqref{inequalityMainOpening}, \eqref{inequalityMainSmoothening} and \eqref{inequalityMainThickening}, we then get
\begin{multline}
\label{eqStrong2-289}
\norm{D^j u^\mathrm{th}_\eta - D^j u}_{\Lebesgue^p(\Int{S^{m}})}
\le \C  \sup_{v \in \Ball^{m}}{\norm{\tau_{\psi_\eta v}(D^j u) - D^j u}_{\Lebesgue^p(\Int{S^{m}})}}\\
+ \C  \sum_{i = 1}^j \eta^{i-j} \norm{D^i u}_{\Lebesgue^p(U^m_\eta + Q^m_{2\rho\eta})},
\end{multline}
where we explicit the dependence of \(U^m\) with respect to \(\eta\).
Since \eqref{eq-claim_measure_bad} holds, we may apply Lemma~\ref{lemmaOpeningApproximation} to deduce that
\begin{equation}
	\label{eqJEMS-964}
\lim_{\eta \to 0} \sum_{i=1}^j  \eta^{i-j} \norm{D^i u}_{\Lebesgue^p(U^m_\eta + Q^m_{2\rho\eta})} = 0.
\end{equation}
We now combine \eqref{eqStrong2-289} and \eqref{eqJEMS-964}.
Since \(\psi_\eta \le c\eta\),  we then have that, for every \(j \in \{1, \dots, k\}\),
\begin{equation}
\label{eqStrong2-385}
\lim_{\eta \to 0}{\norm{D^j u^\mathrm{th}_\eta - D^j u}}_{\Lebesgue^{p}(\Int{S^{m}})}
= 0.
\end{equation}
Finally, from properties~\((\ref{itempropositionthickeningfromaskeleton1})\) and~\((\ref{itempropositionthickeningfromaskeleton2})\) in Proposition~\ref{propositionthickeningfromaskeleton}, we have
\[
\Phi^{\mathrm{th}}((S^m\setminus U^m)\setminus Z^{\ell^{*}}) \subset S^m\setminus U^m
\quad \text{and} \quad
\Phi^{\mathrm{th}}(U^m\setminus Z^{\ell^*}) \subset U^\ell + Q^{m}_{\rho\eta/2}\,.
\]
It thus follows from inclusions \eqref{eqJEMS-Inclusion} and \eqref{eqJEMS-Inclusion-bis} that
\begin{equation}\label{eq-target-uth}
u^\mathrm{th}_\eta(S^m\setminus Z^{\ell^*})\subset \manfN+B^{\nu}_{\iota}.
\end{equation}
In particular, we can define the composition \(\Pi\compose u^\mathrm{th}_\eta\) in \(S^m\setminus Z^{\ell^*}\).
This concludes Part~\ref{partThmOpen-5} of the proof of Theorem~\ref{thm_density_manifold_open} for open sets.
\end{proof}

So far, the proof of Theorem~\ref{thm_density_manifold_open} follows along the lines of the strong density of \(\ClassR_{\ell^*}\)~maps in \(\Sobolev^{k, p}(\manfV; \manfN)\) with \(\ell = \floor{kp}\) from \cite{BPVS_MO}.
There, to prove the density of smooth maps, we next rely on the assumption \( \pi_{\floor{kp}}(\manfN) \simeq \{ 0 \}  \), which need not be satisfied here.
To pursue our proof, we now focus on the connection between \( (\ell, e) \)-extendability and the existence of a continuous extension.

\begin{PartThm}
    \label{partThmOpen-4}
    Relying on the map \(u_\eta^\mathrm{fop}\) introduced in Part~\ref{partThmOpen-2}, we show that \(\Pi \compose u^{\mathrm{sm}}_{\eta}|_{S^{\ell}}\) can be continuously extended to \(S^{e}\).
\end{PartThm}

\begin{proof}[\nopunct]
We begin by recalling that \(u^{\mathrm{sm}}_{\eta}=\varphi_{\psi_\eta} * u^{\mathrm{op}}_\eta\) for some suitable functions \(\varphi\) and \({\psi_\eta}\) introduced in Part~\ref{partThmOpen-3}. 
Using the same functions \(\varphi\) and \({\psi_\eta}\), we then define accordingly the map 
\begin{equation}
\label{eqJEMS-ufsm}
u^{\mathrm{fsm}}_{\eta} =\varphi_{\psi_\eta} * u^{\mathrm{fop}}_\eta
\quad \text{in \( S^{m} + Q_{\rho\eta}^{m} \)\,,}
\end{equation}
where \(u^{\mathrm{fop}}_\eta\) is constructed by applying opening in a neighborhood of the full skeleton \(S^\ell\), see Figure~\ref{figureCubicationDensityNew}.
Although \(u^{\mathrm{sm}}_{\eta}\) and \( u^{\mathrm{fsm}}_{\eta} \) need not be close with respect to the \( \Sobolev^{k, p} \)~distance, we claim that they satisfy the pointwise estimate
\begin{equation}\label{eq296}
\abs{u^{\mathrm{sm}}_\eta - u^{\mathrm{fsm}}_\eta}
\leq \iota{}
\quad \text{in \(S^{m}+ Q_{\rho\eta/2}^{m}\)\,.}
\end{equation}
We also have the following counterpart of \eqref{eqJEMS-Inclusion-bis} for \(u^{\mathrm{fop}}_{\eta}\), 
\begin{equation}
    \label{eqStrong2-470}
(\varphi_{r{\psi_\eta}} * u^{\mathrm{fop}}_\eta) (S^{\ell} + Q_{\rho\eta/2}^{m})
\subset \manfN + B_{\iota}^{\nu}
\quad \text{for every \( 0 < r \le 1 \)}.
\end{equation}
In particular, 
\begin{equation}
\label{eqStrong2-367}
u^{\mathrm{fsm}}_{\eta} (S^{\ell} + Q_{\rho\eta/2}^{m})
\subset \manfN + B_{\iota}^{\nu}.
\end{equation}
We later rely on \eqref{eq296} and \eqref{eqStrong2-470} to establish the existence of a continuous extension for \(\Pi \compose u^{\mathrm{sm}}_{\eta}|_{S^{\ell}}\) to \( S^{e} \), based on the homotopy extension property.
It is unclear whether the analogue of \eqref{eqStrong2-470} holds for the map \(u^{\mathrm{op}}_\eta\) in the set \((S^{\ell} \setminus U^{\ell}) + Q_{\rho\eta/2}^{m}\) and justifies the utility of \(u^{\mathrm{fop}}_\eta\).{}

Let us first justify \eqref{eq296}.
Since \eqref{eqDensityNew-142} holds, we have
\begin{equation}
\label{eqJEMSb-777}
u_{\eta}^{\mathrm{op}} = u_{\eta}^{\mathrm{fop}}
\quad \text{in \(U^{m} + Q^{m}_{\rho\eta}\)\,.}  
\end{equation}
As \(\psi_\eta \le \rho\eta/4\), we get  
\[{}
u_{\eta}^{\mathrm{sm}} = u_{\eta}^{\mathrm{fsm}}
\quad \text{in \(U^{m}+ Q^{m}_{3\rho\eta/4}\)\,.}
\] 
Recall that \(\psi_\eta = c\eta\) in \((S^{m}+ Q^{m}_{\rho\eta})\setminus (U^m+Q^{m}_{3\rho\eta/4})\) for some constant \(0<c<\rho\) independent of \(\eta\). 
Given \(\sigma^m\in \cS^{m} \setminus \cU^{m}\), by a linear change of variables for every \(x\in (\sigma^m+ Q^{m}_{\rho\eta/2})\setminus (U^m+Q^{m}_{3\rho\eta/4})\) we have
\[
\begin{split}
\abs{u_{\eta}^{\mathrm{sm}}(x) -  u_{\eta}^{\mathrm{fsm}}(x)} 
& \leq \int_{Q_{c\eta}^{m}} \varphi(z) \abs{u^{\mathrm{op}}_\eta(x+c\eta z) - u^{\mathrm{fop}}_\eta(x+c\eta z)}\dif z\\
& \leq \C\fint_{\sigma^m+Q^{m}_{(c+\rho/2)\eta}} \abs{u^{\mathrm{op}}_\eta(y)-u^{\mathrm{fop}}_\eta(y)}\dif y.
\end{split}
\]
Since \(u^{\mathrm{op}}_\eta=u=u^{\mathrm{fop}}_\eta\) in \(\sigma^m\setminus (\partial \sigma^m+Q^{m}_{2\rho\eta})\), the Poincar\'e inequality implies that
\[
\abs{u_{\eta}^{\mathrm{sm}}(x) -  u_{\eta}^{\mathrm{fsm}}(x)} 
\le \frac{\Cl{cteStrong2-415} }{\eta^{\frac{m}{kp} - 1}} \norm{Du^{\mathrm{op}}_\eta - Du_{\eta}^{\mathrm{fop}}}_{\Lebesgue^{kp}(\sigma^m+Q^{m}_{2\rho\eta})}.
\]
Since \(u_{\eta}^{\mathrm{fop}}=u_{\eta}^{\mathrm{op}}=u\) outside \(S^\ell+Q^{m}_{2\rho\eta}\), it then follows from the triangle inequality that
\[
\abs{u_{\eta}^{\mathrm{sm}}(x) -  u_{\eta}^{\mathrm{fsm}}(x)} 
\le \frac{\Cr{cteStrong2-415}}{\eta^{\frac{m}{kp}-1}}\left( \norm{Du_{\eta}^{\mathrm{op}}}_{\Lebesgue^{kp}(\sigma^\ell+Q^{m}_{2\rho\eta})} + \norm{Du_{\eta}^{\mathrm{fop}}}_{\Lebesgue^{kp}(\sigma^\ell+Q^{m}_{2\rho\eta})} \right),
\]
where \(\sigma^\ell\) denotes the \(\ell\)-dimensional skeleton of \(\sigma^m\).
Applying \eqref{eq-Addendum1} and its counterpart for \(u_{\eta}^{\mathrm{fop}}\), this gives
\[
\abs{u_{\eta}^{\mathrm{sm}}(x) -  u_{\eta}^{\mathrm{fsm}}(x)} 
\leq\frac{\Cl{cteStrong2-422} }{\eta^{\frac{m}{kp}-1}}\norm{Du}_{\Lebesgue^{kp}(\sigma^m+Q^{m}_{2\rho\eta})}.
\]
Since \(\sigma^{m}\) is a good cube, estimate \eqref{eqJEMSGoodBall} holds and we deduce that
\[
\abs{u_{\eta}^{\mathrm{sm}}(x) -  u_{\eta}^{\mathrm{fsm}}(x)} 
\leq \frac{\Cr{cteStrong2-422}}{\alpha} \iota.
\]
By increasing \(\alpha\) if necessary, one can assume that \(\Cr{cteStrong2-422}/\alpha \le 1\).  
Estimate \eqref{eq296} is thus satisfied.

By \eqref{eqJEMS-Inclusion-bis} and \eqref{eqJEMSb-777}, to prove \eqref{eqStrong2-470} it suffices to show that
\begin{equation}
\label{eqStrong2-436}
(\varphi_{r{\psi_\eta}} * u^{\mathrm{fop}}_\eta) ((S^{\ell}\setminus U^{\ell}) + Q_{\rho\eta/2}^{m})
\subset \manfN + B_{\iota}^{\nu}
\quad \text{for every \( 0 < r \le 1 \)}.
\end{equation}
By the counterpart of \eqref{eqStrong2-227} for \(u_{\eta}^{\mathrm{fop}}\),
\begin{equation}
\label{eqStrong2-443}
d(\varphi_{r{\psi_\eta}} * u^{\mathrm{fop}}_\eta(x), \manfN)
\le \C   \fint_{B_{r{\psi_\eta}(x)}^m(x)}\fint_{B_{r{\psi_\eta}(x)}^m(x)} \abs{u^\mathrm{fop}_\eta(y) - u^\mathrm{fop}_\eta(z)} \dif z\dif y.
\end{equation}
Given a cube \( \sigma^{m} \in \cS^{m}\setminus\cU^{m} \), denote as before by \(\sigma^{\ell}\) its \(\ell\)-dimensional skeleton.
For every \( x \in \sigma^{\ell} + Q_{\rho\eta/2}^{m} \) and every \( 0 < r\leq  1 \), we have \( B_{r{\psi_\eta}(x)}^m(x) \subset Q^{m}_{r{\psi_\eta}(x)}(x) \).
Since \(\ell \le kp\), it then follows from \eqref{eqStrong2-443} and Proposition~\ref{propositionOpeningVMO} applied with \(S^\ell\) instead of \(U^\ell\) that
\[
d(\varphi_{r{\psi_\eta}} * u^{\mathrm{fop}}_\eta(x), \manfN)
\le \frac{\C }{\eta^{\frac{m}{kp}-1}}\norm{Du}_{\Lebesgue^{kp}(\sigma^m+Q^{m}_{2\rho\eta})}.
\]
Since \( \sigma^{m} \) is a good cube, we obtain \eqref{eqStrong2-436} by choosing \( \alpha \) larger if necessary.
We conclude that the assertions~\eqref{eq296} and~\eqref{eqStrong2-470} are satisfied.

In order to prove that \(\Pi \compose u^{\mathrm{sm}}_{\eta}|_{S^{\ell}} \) has a continuous extension to \( S^{e} \), we first establish that
\begin{equation}
\label{eqStrong2-480}
\Pi \compose u^{\mathrm{fsm}}_{\eta}
\sim u_{\eta}^{\mathrm{fop}}
\quad \text{in \(\VMO^{\ell}(S^{\ell} + Q_{\rho\eta/2}^m; \manfN) \),}
\end{equation}
where the \(\VMO^{\ell}\)~homotopy is introduced in Definition~\ref{definitionVMOEllHomotopy}.
Recall that the nearest point projection \(\Pi\) is defined and smooth in the tubular neighborhood \(\overline{\manfN+B^{\nu}_{\iota}}\). By taking \(\iota\) smaller, we may assume that this is the case even in the larger set \(\overline{\manfN+B^{\nu}_{2\iota}}\)\,.
As a consequence of \eqref{eqStrong2-367}, the map \(\Pi \compose u^{\mathrm{fsm}}_{\eta}\) is well defined and smooth in \(S^{\ell} + Q_{\rho\eta/2}^{m}\)\,.{}

We now show \eqref{eqStrong2-480}.
Take a sequence \((r_{j})_{j \in \N}\) in \((0, 1]\) that converges to \(0\) and let 
\[
v_{j} = \Pi \compose (\varphi_{r_{j}{\psi_\eta}} * u^{\mathrm{fop}}_{\eta})
\quad  \text{in \(S^{\ell} + Q_{\rho\eta/2}^{m}\)\,,}
\]
which is well defined and smooth by \eqref{eqStrong2-470}.
By Proposition~\ref{propositionConvolutionEstimates}, we have
	\[
	v_j \to \Pi \compose u^{\mathrm{fop}}_{\eta} 
    = u^{\mathrm{fop}}_{\eta}
    \quad \text{in  \(\Sobolev^{1, kp}(S^\ell+Q^{m}_{\rho\eta/2}; \manfN)\)}.
	\]
	Since \(kp \ge \ell\), it follows from Proposition~\ref{lemmaFugledeSobolevDetector} that there exists a subsequence \((v_{j_{i}})_{i \in \N}\) such that
    \begin{equation}
    \label{eqStrong2-491}
    v_{j_{i}} \to u^{\mathrm{fop}}_{\eta}
    \quad \text{in  \(\VMO^{\ell}(S^\ell+Q^{m}_{\rho\eta/2}; \manfN)\)}.
    \end{equation}
    By \eqref{eqStrong2-470}, the map
    \[
    r \in (0, 1] \longmapsto \Pi \compose (\varphi_{r {\psi_\eta}} * u^{\mathrm{fop}}_{\eta}) \in \Smooth^{0}(S^\ell+Q^{m}_{\rho\eta/2}; \manfN)
    \]
    is well defined and continuous.
    Thus, for every \( j \in \N \), we have
    \begin{equation}
    \label{eqStrong2-502}
    v_{j} \sim \Pi \compose  (\varphi_{{\psi_\eta}} * u^{\mathrm{fop}}_\eta) = \Pi \compose u^{\mathrm{fsm}}_\eta
    \quad \text{in  \(\Smooth^{0}(S^\ell+Q^{m}_{\rho\eta/2}; \manfN)\)}
    \end{equation}
    and in particular also in \( \VMO^{\ell}(S^\ell+Q^{m}_{\rho\eta/2}; \manfN) \).
    The homotopy relation \eqref{eqStrong2-480} then follows from \eqref{eqStrong2-491}, \eqref{eqStrong2-502} and Proposition~\ref{propositionHomotopyVMOEll}.
    
    Using \eqref{eqStrong2-480}, we now conclude the proof that \(\Pi \compose u^{\mathrm{sm}}_{\eta}|_{S^{\ell}} \) has a continuous extension to \( S^{e} \).
    We first observe that
	\begin{equation}
    \label{eqStrong2-515}
    \Pi \compose u^{\mathrm{sm}}_{\eta}
    \sim \Pi \compose u^{\mathrm{fsm}}_{\eta}
	\quad \text{in \(\Smooth^{0}(S^{\ell} + Q_{\rho\eta/2}^{m}; \manfN)\),}
	\end{equation}
	that follows from the homotopy
	\[{}
	t \in [0, 1] \longmapsto \Pi \compose \bigl( (1 - t) u^{\mathrm{sm}}_{\eta} + t u^{\mathrm{fsm}}_{\eta} \bigr)|_{S^{\ell}}\,.
	\]
	That this homotopy is well defined is a consequence of \eqref{eq296}.
    Combining \eqref{eqStrong2-480} and \eqref{eqStrong2-515}, we then obtain 
\begin{equation}
\label{eqStrongPart5Goal}
\Pi \compose u^{\mathrm{sm}}_{\eta}
\sim u_{\eta}^{\mathrm{fop}}
\quad \text{in \(\VMO^{\ell}(S^{\ell} + Q_{\rho\eta/2}^m; \manfN) \). }
\end{equation}

We now take a simplicial complex \( \cK^{e} \) such that \( K^{e} = S^{e} \) and \( K^{\ell} = S^{\ell} \), which can be constructed using a baricentric subdivision of \(S^e\).
Since \( \Pi \compose u^{\mathrm{sm}}_{\eta}\) is continuous in \( S^{\ell} + Q_{\rho\eta/2}^m \)\,, the map \(u_{\eta}^{\mathrm{fop}}\) is \( (\ell, e) \)-extendable in \( S^{m}+ Q_{\rho\eta/2}^m \) and \eqref{eqStrongPart5Goal} is satisfied, it then follows from Proposition~\ref{propositionEllEExtensionContinuous}, using as \( \gamma \) the identity map in \( K^{e} = S^{e} \), that \(\Pi \compose u^{\mathrm{sm}}_{\eta}|_{S^{\ell}} \) has a continuous extension to \( S^{e} \).
Since \(  u_{\eta}^{\mathrm{sm}} = u_{\eta}^{\mathrm{th}} \) in \( S^{\ell} \), the same property is true for \(\Pi \compose u_{\eta}^{\mathrm{th}}|_{S^{\ell}} \)\,.
Denoting by \( \cT^{e^{*}} \) and \( \cT^{\ell^{*}} \) the dual skeletons of \( \cS^{e} \) and \( \cS^{\ell} \), respectively, Proposition~\ref{propositionDensityNewExtensionSmooth} applied to \(\Pi \compose u_{\eta}^{\mathrm{th}}\) with \( 0 < \mu < 1/2 \) then gives us a smooth map
\begin{equation}
    \label{eqJEMS-uex}
    u_{\eta, \mu}^{\mathrm{ex}} \colon S^{m} \setminus T^{e^{*}} \to \manfN
\end{equation} 
such that \(u_{\eta, \mu}^{\mathrm{ex}} = \Pi \compose u_{\eta}^{\mathrm{th}}\) in \((\Int{S^m}) \setminus (T^{\ell^*} + Q^m_{\mu\eta})\).
This concludes Part~\ref{partThmOpen-4} of the proof of Theorem~\ref{thm_density_manifold_open} for open sets.
\end{proof}

Although \(u_{\eta, \mu}^{\mathrm{ex}}\) is an \(\ClassR_{e^*}\)~map in \(\Int{S^m}\), it is not suitable for the approximation problem in the \(\Sobolev^{k, p}\)~scale as we have no \(\Lebesgue^p\)~estimate of its derivatives in a neighborhood of \(T^{\ell^{*}}\).
To remedy this issue, the next step of the proof relies on shrinking that is explained in Section~\ref{section_shrinking}.

\begin{figure}
\centering{}
\hfill\includegraphics{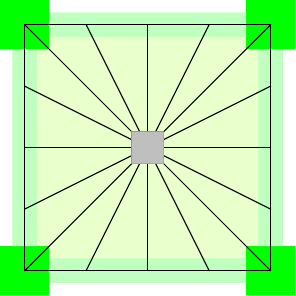}
\hfill\includegraphics{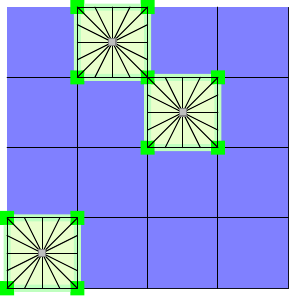}
\hfill{}
\caption{\(u_{\eta, \tau, \mu}^{\mathrm{sh}}\) is smooth in each cube}
\label{figureJEMSShrinkingCube}
\end{figure}

\begin{PartThm}
\label{partThmOpen-6}
We apply shrinking to replace \(u_{\eta, \mu}^{\mathrm{ex}}\) by a smooth map \(u_{\eta, \tau, \mu}^{\mathrm{sh}} \colon (\Int{S^m}) \setminus T^{e^*} \to \manfN\) whose derivatives on a neighborhood of  \(T^{\ell^*}\) provide only a small contribution in the \(\Lebesgue^p\)~scale.
\end{PartThm}

\begin{proof}[\nopunct]
Given \( 0 < \tau < \mu \), let \(\Phi^{\mathrm{sh}} \colon \R^{m} \to \R^{m} \) be the diffeomorphism given by Proposition~\ref{lemmaShrinkingFaceNearDualSkeletonGlobal}.
We then denote
\begin{equation}
\label{eqJEMS-ush}
u_{\eta, \tau, \mu}^{\mathrm{sh}}
= u_{\eta, \mu}^{\mathrm{ex}} \compose \Phi^{\mathrm{sh}}
\quad \text{in \((\Int{S^m}) \setminus T^{e^*}\).}
\end{equation}
By Corollary~\ref{corollaryShrinkingCubication}, we have that \(u_{\eta, \tau, \mu}^{\mathrm{sh}}\) belongs to \( \ClassR_{e^{*}}(\Int{S^m}; \manfN)\) with singular set \(T^{e^{*}}\).
As illustrated in Figure~\ref{figureJEMSShrinkingCube}, when \( T^{e^{*}} = \emptyset \) the map \( u_{\eta, \tau, \mu}^{\mathrm{sh}} \) is smooth in the entire domain \(\Int{S^m}\).

Since 
\[
u_{\eta, \tau, \mu}^{\mathrm{sh}} 
= u_{\eta, \mu}^{\mathrm{ex}} 
= \Pi \compose u_{\eta}^{\mathrm{th}}
\quad \text{in \((\Int{S^m}) \setminus (T^{\ell^*} + Q^m_{2\mu\eta})\),}
\]
by the triangle inequality for every \( j \in \{1, \dots, k \} \) we have
\begin{multline*}
\norm{D^{j} u_{\eta, \tau, \mu}^{\mathrm{sh}} - D^{j}(\Pi \compose u_{\eta}^{\mathrm{th}}) }_{\Lebesgue^p(\Int{S^m})}\\
\leq  \norm{D^{j} u_{\eta, \tau, \mu}^{\mathrm{sh}}}_{\Lebesgue^p((\Int{S^m}) \cap (T^{\ell^*} + Q^m_{2\mu\eta}))}
+ \norm{D^{j}(\Pi \compose u_{\eta}^{\mathrm{th}})}_{\Lebesgue^p((\Int{S^m}) \cap (T^{\ell^*} + Q^m_{2\mu\eta}))}.
\end{multline*}
By Corollary~\ref{corollaryShrinkingCubication},
\begin{multline}
\label{eqStrong2-594}
\norm{D^j u_{\eta, \tau, \mu}^{\mathrm{sh}}}_{\Lebesgue^p((\Int{S^m}) \cap (T_\eta^{\ell^*} + Q^m_{2\mu\eta}))}\\
 \leq 
 \C  \sum_{i=1}^j{  (\mu\eta)^{i - j} \norm{D^i(\Pi \compose u_{\eta}^{\mathrm{th}})}_{\Lebesgue^p((\Int{S^m}) \cap (T_\eta^{\ell^*} + Q^m_{2\mu\eta}) \setminus (T_\eta^{\ell^*} + Q^m_{\mu\eta}))}}\\
+ \C  \tau^{\frac{\ell+1-kp}{p}} \sum_{i=1}^j{(\mu\eta)^{i - j} \norm{D^i u_{\eta, \mu}^{\mathrm{ex}}}_{\Lebesgue^p((\Int{S^m}) \cap (T_\eta^{\ell^*} + Q^m_{\mu\eta}))}},
\end{multline}
where we now make explicit the dependence of \(T^{\ell^*}\) on \(\eta\).
By the Gagliardo-Nirenberg interpolation inequality, we have \( D^{i}(\Pi \compose u_{\eta}^{\mathrm{th}}) \in \Lebesgue^{kp/i}(\Int{S^m}) \).
Since \( \ell+1 > kp \) and there exists a constant \(\Cr{cteJEMS-605} > 0\) depending on \(\eta\) such that
\[
\abs{T_\eta^{\ell^*} + Q^m_{\mu\eta}} \le \Cl{cteJEMS-605}\mu^{\ell + 1},
\]
by Lemma~\ref{lemmaOpeningApproximation} we have
\[
\lim_{\mu \to 0}{(\mu\eta)^{i - j} \norm{D^i(\Pi \compose u_{\eta}^{\mathrm{th}})}_{\Lebesgue^p((\Int{S^m}) \cap (T_\eta^{\ell^*} + Q^m_{2\mu\eta}) \setminus (T_\eta^{\ell^*} + Q^m_{\mu\eta}))}} = 0.
\]
Using again \( \ell + 1 - kp > 0 \), the second term on the right-hand side of \eqref{eqStrong2-594} converges to zero as \( \tau \to 0 \), with \(\eta\) and \(\mu\) fixed.
Hence, by a diagonalization argument, there exists a family  \((\tau_\mu)_{\mu\searrow 0}\) converging to zero such that, as \(\mu \to 0\),
\[
u_{\eta, \tau_\mu, \mu}^{\mathrm{sh}} \to  \Pi \compose u_{\eta}^{\mathrm{th}}
\quad \text{in \(\Sobolev^{k, p}(\Int{S^m}; \manfN)\).}
\] 
Combining this with \eqref{eqStrong2-385}, one gets a further family \((\mu_\eta)_{\eta\searrow 0}\) converging to zero such that, as \(\eta \to 0\),
\[
u_{\eta, \tau_{\mu_\eta}, \mu_\eta}^{\mathrm{sh}} \to u
\quad \text{in \(\Sobolev^{k, p}(\Int{S^m}; \manfN)\).}
\]
This concludes Part~\ref{partThmOpen-6} of the proof of Theorem~\ref{thm_density_manifold_open} for open sets.
\end{proof}

\begin{PartThm}
\label{partThmOpen-7}
 We explain why every \((\ell,e)\)-extendable map \(u\in \Sobolev^{k,p}(\Omega; \manfN)\) can be approximated by \((\ell,e)\)-extendable maps defined in larger open sets \(O \Supset \Omega\).
\end{PartThm}

\begin{proof}[\nopunct]
By Lipschitz regularity of \( \Omega \), there exists a sequence of diffeomorphisms \(\Phi_s  \colon  \R^m \to \R^m\) with \(\Phi_{s}(\overline{\Omega}) \subset \Omega\) such that,  for every \(j \in \N \),
\begin{equation}
\label{item-enlarging-convergence}
D^{j}\Phi_{s} \to D^{j} \Id
\quad \text{uniformly in \(\R^{m}\),}
\end{equation}
see e.g.~\cite{Detaille}*{Lemma~6.2}.
Since \(u\in \Sobolev^{k, p}(\Omega; \manfN)\), the Faá di Bruno composition formula and \eqref{item-enlarging-convergence} imply that \((u\compose \Phi_s\vert_{\Omega})_{s\in \N}\) converges to \(u\) in \(\Sobolev^{k,p}(\Omega; \manfN)\). 
Moreover,  \(u\compose \Phi_s\) is defined in \(O_{s} \vcentcolon= \Phi_{s}^{-1}(\Omega)\) which is an open set containing \(\overline{\Omega}\).{}
Let \( w \) be an \( \ell \)-detector for \( u \) given by Definition~\ref{definitionExtensionVMO}.
Since \( \Phi_{s} \) is a diffeormorphism with \( \Phi_s(O_s) = \Omega  \), we have \( \Phi_{s} \in \Fuglede_{w}(O_{s}; \Omega) \).
By Proposition~\ref{corollary_ouverture}, the map \(u\compose \Phi_s\) is \( (\ell, e) \)-extendable in \( O_{s} \)\,.

Since every map \(u\compose \Phi_s\vert_{\Omega}\) can be approximated in \(\Sobolev^{k,p}(\Omega; \manfN)\) by a sequence in \(\ClassR_{e^{*}}(\Omega; \manfN)\), we can conclude by a diagonalization argument that \(u\) itself can be approximated by such maps. 
This completes the proof of the converse part in Theorem~\ref{thm_density_manifold_open} for open sets.
\end{proof}

Instead of approximating \(u\) by a sequence of functions defined in a larger set, one could try to follow an alternative approach by extending \(u\) outside \(\Omega\). 
More specifically, given \(k\geq 2\) and \(u\in \Sobolev^{k, p}(\Omega; \manfN)\), one can wonder whether there exist an open set \(O \Supset \Omega\) and \(v\in \Sobolev^{k, p}(O; \manfN)\) such that \(v\vert_\Omega=u\) in \(\Omega\). 
Even when \(\Omega= \Ball^{m}\), we do not know the answer to this problem. 
Observe that the solution is straightforward when \(k = 1\) by using a reflection near the boundary.

\begin{remark}\label{rk-equivalence-of-definition}
In Definition~\ref{definitionHkp}, we define the space \(\Hilbert^{k,p}(\Omega;\manfN)\) as the set of all \(\Sobolev^{k,p}\) limits of sequences \((u_{j})_{j \in \N}\) in \((\Smooth^{\infty}\cap \Sobolev^{k, p})(\Omega; \manfN)\). 
Since \(\Hilbert^{k,p}(\Omega;\manfN)\) is closed in \(\Sobolev^{k,p}(\Omega;\manfN)\), it contains the \(\Sobolev^{k,p}\) closure of \(\Smooth^{\infty}(\overline{\Omega};\manfN)\). 
The reverse inclusion holds when \(\Omega\) is Lipschitz since the sequence \((\Phi_s)_{s \in \N}\) of diffeomorphisms from \(\R^m\) into itself that satisfies \(\Phi_s(\overline{\Omega})\subset \Omega\) and \eqref{item-enlarging-convergence}
is such that any \(u\in (\Smooth^{\infty}\cap \Sobolev^{k, p})(\Omega; \manfN)\) can be approximated in \(\Sobolev^{k,p}(\Omega;\manfN)\) by the maps  \(u\compose \Phi_s\in \Smooth^{\infty}(\overline{\Omega};\manfN)\).
\end{remark}

\section{Genericity in a neighborhood of \texorpdfstring{\( \manfM \)}{M}}

In this section, we consider a smooth compact manifold \(\manfM\) of dimension \( m \) without boundary and isometrically imbedded in the Euclidean space \(\R^\varkappa\), for some \(\varkappa\geq m\geq 1\).
    We may take in this case the nearest point projection  \(\widetilde{\Pi} \colon  \overline{\Omega} \to \manfM\) using a bounded Lipschitz open neighborhood \(\Omega\) of \( \manfM \) so that \(\widetilde{\Pi}\) is a smooth submersion.
    This projection allows one to transform a Sobolev function \(u\) in \(\manfM\) into a Sobolev function \( u \compose \widetilde{\Pi} \) in \(\Omega\).
    We first show that generic extendability is preserved in this process:

\begin{proposition}
    \label{propositionExtensionProjection}
    If \(u \in \Sobolev^{k, p}(\manfM; \manfN)\) is \((\ell, e)\)-extendable for some \(\ell \le \min{\{kp, e\}}\), then \( u \compose \widetilde{\Pi} \) is also \((\ell, e)\)-extendable.
\end{proposition}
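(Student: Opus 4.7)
The plan is to reduce the statement to a direct application of Proposition~\ref{corollary_ouverture}, which says that $(\ell, e)$-extendability is preserved under composition with any Fuglede map with respect to an $\ell$-detector. Since $\widetilde{\Pi}$ is fixed, the entire task reduces to viewing $\widetilde{\Pi}$ as such a Fuglede map. First I would verify that $u \compose \widetilde{\Pi} \in \Sobolev^{k, p}(\Omega; \manfN)$: this follows from the Faà di Bruno formula, using that $\widetilde{\Pi}$ is smooth on the compact set $\overline{\Omega}$ with uniformly bounded derivatives. Since $\ell \le kp$, the Gagliardo-Nirenberg interpolation and Proposition~\ref{propositionSobolevApproximationFuglede} then yield $u \compose \widetilde{\Pi} \in \VMO^{\ell}(\Omega; \manfN)$, which is needed to talk about extendability in the sense of Definition~\ref{definitionExtensionVMO}.

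The central step is to take any $\ell$-detector $w \colon \manfM \to [0, +\infty]$ for $u$ provided by the $(\ell, e)$-extendability of $u$, and to show that $w \compose \widetilde{\Pi}$ is summable on $\Omega$. Here I would exploit the fact that $\widetilde{\Pi} \colon \overline{\Omega} \to \manfM$ is a smooth submersion defined on a compact set: its $m$-dimensional Jacobian $\Jacobian{m}{\widetilde{\Pi}}$ is bounded from below by some $c > 0$, and the fibers $\widetilde{\Pi}^{-1}(y) \cap \Omega$ have $(\varkappa - m)$-dimensional Hausdorff measure bounded from above by a constant $C > 0$ independent of $y$. The coarea formula then gives
\begin{equation*}
c \int_{\Omega} w \compose \widetilde{\Pi}
\le \int_{\Omega} (w \compose \widetilde{\Pi}) \, \Jacobian{m}{\widetilde{\Pi}}
= \int_{\manfM} w(y) \, \cH^{\varkappa - m}(\widetilde{\Pi}^{-1}(y) \cap \Omega) \, \dif \cH^{m}(y)
\le C \int_{\manfM} w < \infty.
\end{equation*}
Moreover, $\widetilde{\Pi}$ is Lipschitz on $\overline{\Omega}$ (hence on $\Omega$) because it is smooth on the compact set $\overline{\Omega}$. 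Therefore $\widetilde{\Pi} \in \Fuglede_{w}(\Omega; \manfM)$.

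Applying Proposition~\ref{corollary_ouverture} with $\widetilde{\manfV} = \Omega$ (a Lipschitz open subset of $\R^{\varkappa}$) and $\Phi = \widetilde{\Pi}$ then yields at once that $u \compose \widetilde{\Pi}$ is $(\ell, e)$-extendable, concluding the proof. There is no real obstacle in this argument: all the needed machinery is already in place, and the only content is checking the Fuglede condition via the coarea identity, which is immediate from the submersion structure on a compact domain.
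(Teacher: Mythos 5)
Your proposal is correct and follows essentially the same route as the paper: establish $u \compose \widetilde{\Pi} \in \VMO^{\ell}$ via Gagliardo--Nirenberg and Proposition~\ref{propositionSobolevApproximationFuglede}, prove summability of $w \compose \widetilde{\Pi}$ by the coarea formula using the submersion structure of $\widetilde{\Pi}$ on the compact $\overline{\Omega}$, and conclude the transfer of $(\ell,e)$-extendability. The only cosmetic difference is that you invoke Proposition~\ref{corollary_ouverture} directly, whereas the paper re-verifies Definition~\ref{definitionExtensionVMO} by hand, which amounts to inlining the proof of that same proposition with $\Phi = \widetilde{\Pi}$.
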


\resetconstant
\begin{proof}
Since \(\widetilde{\Pi}\) is a smooth submersion, we have \( u \compose \widetilde{\Pi} \in \Sobolev^{k, p}(\Omega; \manfN)\).
Since \( \manfN \) is compact, \( u \compose \widetilde{\Pi} \) is bounded, and then by the Gagliardo-Nirenberg interpolation inequality, \( u \compose \widetilde{\Pi} \in \Sobolev^{1, kp}(\Omega; \manfN)\).
By Proposition~\ref{propositionSobolevApproximationFuglede}, we deduce that \( u \compose \widetilde{\Pi} \) is a \( \VMO^{\ell} \)~map for every \( \ell \le kp \).

Let \(w  \colon  \manfM\to [0,+\infty]\) be an \(\ell\)-detector for \(u\) given by Definition~\ref{definitionExtensionVMO}. 
Then, by the coarea formula,
\[
\begin{split}
\int_{\Omega} w \compose \widetilde{\Pi} 
\leq \Cl{cteDensityNew-861} \int_{\Omega} ( w\compose \widetilde{\Pi}) \, \Jacobian{\varkappa}{\widetilde{\Pi}}
&= \Cr{cteDensityNew-861} \int_{\manfM} w(x) \, \cH^{\varkappa - m}(\widetilde{\Pi}^{-1}(\{x\}))\dif\cH^{m}(x)\\
& \leq \C \int_{\manfM} w(x) \dif\cH^{m}(x)< +\infty.
\end{split}
\]
Hence, \(w\compose \widetilde{\Pi}\) is summable in \( \Omega \). 
To see that \(w\compose \widetilde{\Pi}\) is an \(\ell\)-detector for \(u \compose \widetilde{\Pi}\), take a simplicial complex  \(\cK^{\ell}\) and \(\gamma \in \Fuglede_{w\compose \widetilde{\Pi}}(K^{\ell}; \Omega)\). 
Then, \(\widetilde{\Pi} \compose \gamma \in \Fuglede_{w}(K^{\ell}; \manfM)\). 
Since \(w\) is an \(\ell\)-detector for \(u\), 
\[
(u\compose \widetilde{\Pi})\compose \gamma = u\compose (\widetilde{\Pi}\compose \gamma) \in \VMO(K^\ell;\manfN).
\] 
Therefore, \(w\compose \widetilde{\Pi}\) is an \(\ell\)-detector for \(u \compose \widetilde{\Pi}\).

We finally observe that \(u \compose \widetilde{\Pi}\) satisfies Definition~\ref{definitionExtensionVMO} using the \(\ell\)-detector \(w\compose \widetilde{\Pi}\). 
Let \(\cK^{e}\) be a simplicial complex and let \(\gamma  \colon  K^{e} \to \Omega\) be a Lipschitz map such that \(\gamma|_{K^{\ell}} \in \Fuglede_{w\compose \widetilde{\Pi}}(K^{\ell}; \Omega)\). Then, as above, \(\widetilde{\Pi} \compose \gamma|_{K^{\ell}} \in \Fuglede_{w}(K^{\ell}; \manfM)\).
Since \(u\) is \((\ell, e)\)-extendable, there exists \(F \in \Smooth^{0}(K^{e};  \manfN)\) such that 
\[{}
u \compose \widetilde{\Pi} \compose \gamma|_{K^{\ell}}
= u \compose (\widetilde{\Pi} \compose \gamma|_{K^{\ell}})
\sim F|_{K^{\ell}}
\quad \text{in \(\VMO(K^{\ell}; \manfN)\).}
\] 
Thus, \(u \compose \widetilde{\Pi}\) is \((\ell, e)\)-extendable.
\end{proof}

We now prove the following generic perturbation property of \( \ClassR \)~maps. 

\begin{proposition}
\label{propositionClassRGeneric}
    Let \( \Omega \subset \R^{\varkappa} \) be an open set containing \( \manfM \), let \( \delta > 0 \) be such that \( \manfM + B_{\delta}^{\varkappa} \subset \Omega \) and, for \( \xi \in \R^{\varkappa} \), let \( \tau_{\xi} \colon \R^{\varkappa} \to \R^{\varkappa} \) be the translation \( \tau_{\xi}(x) = x + \xi \).
    If \( u \) is an \(\ClassR_{\varkappa - j} \)~function in \(\Omega\) for some \( j \in \{ 0, \dots, m+1 \} \), then \( \tau_{\xi}u|_{\manfM}\) is an \( \ClassR_{m - j} \)~function in \( \manfM \) for almost every \( \xi \in B_{\delta}^{\varkappa} \).
\end{proposition}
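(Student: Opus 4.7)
The overall strategy is to combine a Sard-type transversality argument, which identifies a structured singular set for \(v := u \circ \tau_\xi|_\manfM\) on \(\manfM\), with a transversal distance comparison that establishes the pointwise derivative estimate \eqref{eqRClassDerivativeEstimatePointwise}.

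First I would unpack the hypothesis: by Definition~\ref{defnStructuredSingularSet}, the singular set \(T^{\varkappa-j}\) of \(u\) is contained in a finite family \((S_\alpha)_{\alpha \in J}\) of \((\varkappa-j)\)-dimensional affine subspaces of \(\R^\varkappa\), each orthogonal to \(j\) coordinate axes. Using \(T^{\varkappa-j} \subset \bigcup_\alpha S_\alpha\) to weaken \eqref{eqRClassDerivativeEstimatePointwise} yields the operational bound
\[
\abs{D^i u(x)} \leq \frac{C_i}{(\min_\alpha d(x, S_\alpha))^i} \quad \text{for every } x \in \Omega \setminus \bigcup_\alpha S_\alpha.
\]
Next, for each nonempty subfamily \(J' \subset J\), the intersection \(S_{J'} := \bigcap_{\alpha \in J'} S_\alpha\) is either empty or an affine subspace. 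Applying Sard's theorem to the orthogonal projection of \(\manfM\) onto \((S_{J'})^\perp\), together with the observation that a linear surjection sends preimages of measure-zero sets to measure-zero sets (by Fubini, using a direct-sum decomposition \(\R^\varkappa = (S_{J'})^\perp \oplus \mathrm{dir}(S_{J'})\)), shows that the set of \(\xi \in B_\delta^\varkappa\) for which \(S_{J'} - \xi\) fails to meet \(\manfM\) transversally has measure zero. Taking the union over the finitely many subfamilies, for almost every \(\xi \in B_\delta^\varkappa\) every translate \(S_{J'} - \xi\) is transversal to \(\manfM\).

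Fix such a generic \(\xi\) and set \(T^{m-j}_\xi := \manfM \cap \bigcup_{\alpha \in J}(S_\alpha - \xi)\). When \(j > m\), transversality forces each intersection \(\manfM \cap (S_\alpha - \xi)\) to be empty by dimension counting, so \(T^{m-j}_\xi = \emptyset\) and \(v \in \Smooth^\infty(\manfM; \manfN) = \ClassR_{-1}(\manfM; \manfN)\). When \(j \leq m\), each nonempty \(\manfM \cap (S_\alpha - \xi)\) is a smooth compact \((m-j)\)-dimensional submanifold of \(\manfM\) without boundary, and the transversality of each \(S_{J'} - \xi\) with \(\manfM\) delivers the clean intersection condition \(\bigcap_{\alpha \in J'}(\manfM \cap (S_\alpha - \xi)) = \manfM \cap (S_{J'} - \xi)\) with the correct tangent spaces at each point. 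Hence \(T^{m-j}_\xi\) is a structured singular set of rank \(m-j\) in \(\manfM\). Moreover, \(v\) is smooth on \(\manfM \setminus T^{m-j}_\xi\) since for every such \(x\) we have \(x + \xi \notin \bigcup_\alpha S_\alpha \supset T^{\varkappa-j}\).

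The derivative bound follows from two ingredients. First, the chain rule for \(v = u \circ \tau_\xi \circ \iota\) (with \(\iota : \manfM \hookrightarrow \R^\varkappa\) the smooth isometric inclusion) combined with the bound on \(|D^i u|\) above yields \(\abs{D^i v(x)} \leq C'_i/(\min_\alpha d(x, S_\alpha - \xi))^i\), where we also use that \(d(x, S_\alpha - \xi)\) is uniformly bounded from above. Second, a uniform transversal distance comparison
\[
d(x, S_\alpha - \xi) \geq c_\alpha \, d_\manfM\bigl(x, \manfM \cap (S_\alpha - \xi)\bigr) \quad \text{for every } x \in \manfM,
\]
holds for each \(\alpha\) with nonempty intersection: locally near \(y \in \manfM \cap (S_\alpha - \xi)\), the transversal splitting \(T_y \manfM + T_y(S_\alpha - \xi) = \R^\varkappa\) makes the orthogonal projection \(T_y \manfM \to (T_y(S_\alpha - \xi))^\perp\) surjective with lower bound on the norm depending on the angle of transversality, and this estimate extends uniformly using compactness of \(\manfM \cap (S_\alpha - \xi)\) and of the region of \(\manfM\) at positive distance from this intersection. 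Taking \(c := \min_\alpha c_\alpha\) and using \(T^{m-j}_\xi = \bigcup_\alpha \manfM \cap (S_\alpha - \xi)\) gives \(\min_\alpha d(x, S_\alpha - \xi) \geq c \, d_\manfM(x, T^{m-j}_\xi)\), which combined with the chain-rule estimate completes the proof. The main obstacle is precisely this transversal distance comparison: the local linearization only yields lower bounds in neighborhoods of the intersection submanifolds, and compactness of \(\manfM\) together with compactness of each transversal intersection must be invoked to pass to uniform constants and to handle the regions of \(\manfM\) far from \(\manfM \cap (S_\alpha - \xi)\).
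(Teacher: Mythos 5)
Your proposal is correct and follows essentially the same two-step strategy as the paper: a Sard-type genericity argument ensuring that, for almost every \(\xi\), each translated plane \(S_\alpha-\xi\) and each subfamily intersection \(S_{J'}-\xi\) meets \(\manfM\) cleanly (the paper's Lemma~\ref{fact_transversality_planes}, which also yields emptiness when \(j=m+1\)), followed by the transversal distance comparison \(d(x,S_\alpha-\xi)\ge c\, d\bigl(x,\manfM\cap(S_\alpha-\xi)\bigr)\) (the paper's Lemma~\ref{fact_transversality_estimates}) to transfer the pointwise derivative bounds to the singular set of rank \(m-j\). Your implementations of these two lemmas --- Sard applied to the orthogonal projection of \(\manfM\) onto \((S_{J'})^{\perp}\) combined with Fubini, in place of the paper's parametric map \(f(x,\xi)=x+\xi\), and a local submersion estimate plus compactness in place of the paper's contradiction argument --- are only minor variants of the same ideas.
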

Here, the notation \(\tau_{\xi}u\) refers to the function \(x \in \manfM \mapsto u(x + \xi)\).
When \(j=m+1\), the conclusion of the proposition means that  \( \tau_{\xi}u|_{\manfM} \in \Smooth^{\infty}(\manfM)\).

We need a couple of elementary known facts, which we present here for the sake of completeness:

\begin{lemma}\label{fact_transversality_planes}
Let \( j \in \{ 0, \dots, m +1\} \) with \(j\leq \varkappa\) and \(P\) be an affine subspace of\/ \(\R^\varkappa\) of dimension \(\varkappa-j\).
Then, for almost every \(\xi\in\R^\varkappa\), the set \((P - \xi)\) intersects \(\manfM\) cleanly. 
More precisely, \(\manfM\cap (P - \xi)\) is either empty or a smooth submanifold of \(\manfM\) of dimension \(m-j\), and in this case we have, for every \(x\in \manfM\cap (P - \xi)\), 
\[
\Tangent{x}\bigl( \manfM\cap(P - \xi)\bigr)=\Tangent{x} \manfM\cap \Tangent{x}(P - \xi).
\]  
\end{lemma}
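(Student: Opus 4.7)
The result is a classical transversality statement, and my plan is to deduce it from Sard's theorem applied to a suitable smooth map between the relevant manifolds. Introduce the smooth map
\[
\Psi \colon \manfM \times P \longrightarrow \R^\varkappa,
\qquad
\Psi(x, y) = y - x.
\]
The key observation is that \(x \in \manfM \cap (P - \xi)\) if and only if \((x, x + \xi) \in \manfM \times P\) and \(\Psi(x, x + \xi) = \xi\); thus the projection \((x, y) \mapsto x\) induces a bijection between \(\Psi^{-1}(\xi)\) and \(\manfM \cap (P - \xi)\).

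Since \(\manfM\) is smooth of dimension \(m\) and \(P\) is a smooth submanifold of \(\R^{\varkappa}\) of dimension \(\varkappa - j\), the product \(\manfM \times P\) is a smooth manifold of dimension \(m + \varkappa - j\), and \(\Psi\) is smooth. By Sard's theorem, the set of critical values of \(\Psi\) is negligible in \(\R^\varkappa\), so the conclusion will follow once I show that at every regular value \(\xi\) the intersection \(\manfM \cap (P - \xi)\) has the desired clean structure.

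Fix such a regular value \(\xi\) and take any \((x, y) \in \Psi^{-1}(\xi)\). The differential \(D\Psi(x, y) \colon \Tangent{x}\manfM \times P_0 \to \R^\varkappa\), where \(P_0\) denotes the direction of \(P\), is given by \((v, w) \mapsto w - v\), so its image equals \(\Tangent{x}\manfM + P_0\). Regularity therefore amounts to the transversality condition
\[
\Tangent{x}\manfM + P_0 = \R^\varkappa.
\]
When \(j = m + 1\), this forces \(\dim(\Tangent{x}\manfM) + \dim(P_0) \ge \varkappa\), i.e.\ \(m + \varkappa - j \ge \varkappa\), which contradicts \(j = m+1\); hence \(\Psi^{-1}(\xi)\) is empty and so is \(\manfM \cap (P - \xi)\). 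When \(j \le m\), standard transversality (regular value theorem) makes \(\Psi^{-1}(\xi)\) a smooth submanifold of \(\manfM \times P\) of dimension \(m - j\), and the bijection above transports this structure to \(\manfM \cap (P - \xi)\), identifying it as a smooth \((m-j)\)-dimensional submanifold of \(\manfM\). The tangent space identity
\[
\Tangent{x}\bigl(\manfM \cap (P - \xi)\bigr) = \Tangent{x}\manfM \cap \Tangent{x}(P - \xi)
\]
then follows from the elementary linear-algebraic identity: if \(V + W = \R^\varkappa\), then the intersection \(V \cap W\) has dimension \(\dim V + \dim W - \varkappa\), applied with \(V = \Tangent{x}\manfM\) and \(W = P_0 = \Tangent{x}(P - \xi)\). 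Both sides have the correct dimension \(m - j\), and the inclusion \(\Tangent{x}(\manfM \cap (P - \xi)) \subset \Tangent{x}\manfM \cap \Tangent{x}(P - \xi)\) is immediate. No step of the argument appears to be a serious obstacle; the only care needed is the bookkeeping for the boundary case \(j = m+1\), which is handled exactly as above.
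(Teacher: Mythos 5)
Your proof is correct and takes essentially the same route as the paper: you apply Sard's theorem to the map \(\Psi(x,y)=y-x\) on \(\manfM\times P\), whose fibers are the intersections \(\manfM\cap(P-\xi)\), while the paper applies Sard to the projection \(p_2(x,\xi)=\xi\) on \(f^{-1}(P)\) with \(f(x,\xi)=x+\xi\); the two setups are conjugate under the diffeomorphism \((x,\xi)\mapsto(x,x+\xi)\). The only cosmetic difference is that you obtain the tangent-space identity from the inclusion plus a dimension count using transversality, whereas the paper computes \(\Ker Dp_2\) directly — both are valid.
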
 

When \(j=m+1\), the conclusion of the lemma means that \(\manfM\cap (P - \xi)\) is empty for almost every \(\xi \in \R^{\varkappa}\). 

\begin{proof}[Proof of Lemma~\ref{fact_transversality_planes}]
The function \(f  \colon  \manfM \times \R^\varkappa \to \R^\varkappa\) defined by \( f (x, \xi) = x + \xi \) is a surjective submersion. 
Since \( P \) has dimension \( \varkappa - j \), we deduce that \(f^{-1}(P)\) is  a submanifold of \(\manfM\times \R^\varkappa\) of dimension \(m + \varkappa - j\). 
Let \(p_2  \colon  f^{-1}(P) \to \R^\varkappa\) be given by \( p_{2}(x, \xi) =  \xi \). 
Observe that
\begin{equation}\label{eq784}
p_{2}^{-1}(\xi){}
= \bigl\{ (x, \xi)\in \manfM\times \R^\varkappa : x + \xi \in P \bigr\} 
= (\manfM\cap (P-\xi))\times \{\xi\}.
\end{equation}
By Sard's lemma, almost every \(\xi\in \R^\varkappa\) is a regular value of \(p_2\). This implies that \(p_{2}^{-1}(\xi)\)  is a submanifold of \(f^{-1}(P)\) and for every \(x\in \manfM\cap (P-\xi)\), we have
\[
\Ker{Dp_2(x,\xi)}=\Tangent{(x,\xi)}p_2^{-1}(\xi).
\]
By \eqref{eq784}, the right-hand side is \(\Tangent{x}(\manfM\cap (P-\xi))\times \{0\}\), while by the definition of \(p_2\) the left-hand side is equal to 
\[
(\Tangent{x}\manfM\times \{0\}) \cap \Tangent{(x,\xi)}f^{-1}(P)=\bigl(\Tangent{x}\manfM \cap \Tangent{x}(P-\xi)\bigr)\times \{0\}.
\]
The conclusion thus holds for every such \(\xi\). 
\end{proof}

\begin{lemma}\label{fact_transversality_estimates}
Let \(P\) be an affine subspace of \(\R^\varkappa\) that intersects \(\manfM\) cleanly.
Then, there exists \(C>0\) such that, for every \(x\in \manfM\), 
\[
d(x,\manfM\cap P)
\leq C d(x, P).
\]
\end{lemma}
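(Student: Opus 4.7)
I may assume \(\manfM\cap P\neq\emptyset\), as otherwise the inequality is vacuous (and by compactness of \(\manfM\) one has \(d(\manfM,P)>0\) so there is nothing to estimate). Write \(P=p_0+V\) with \(V\subset\R^\varkappa\) a linear subspace, and let \(\pi\colon\R^\varkappa\to V^\perp\) be the orthogonal projection. Then \(d(x,P)=\abs{\pi(x-p_0)}\) for every \(x\in\R^\varkappa\), and the smooth map \(g\vcentcolon=\pi(\cdot-p_0)|_{\manfM}\colon\manfM\to V^\perp\) satisfies \(g^{-1}(0)=\manfM\cap P\). The strategy is to prove a local version of the desired inequality near every point of \(\manfM\cap P\) using the clean intersection hypothesis, and then to patch these local estimates together using the compactness of \(\manfM\).

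For the local step, fix \(y\in\manfM\cap P\). Since \(\ker Dg(y)=\Tangent{y}\manfM\cap V\), the clean intersection hypothesis asserts precisely that \(\ker Dg(y)=\Tangent{y}(\manfM\cap P)\). Thus \(Dg(y)\) has rank \(r\vcentcolon=m-\dim(\manfM\cap P)\) at every \(y\in\manfM\cap P\), and \(g\) has constant rank \(r\) on a neighborhood of \(\manfM\cap P\). By the constant rank theorem, I can choose a neighborhood \(U_y\subset\manfM\) of \(y\) and smooth local coordinates \((u,v)\in U_0\times V_0\subset\R^{m-r}\times\R^r\) centered at \(y\), such that \(U_y\cap(\manfM\cap P)\) corresponds to \(\{v=0\}\) and \(g\) reads \((u,v)\mapsto\Psi(v)\) for a smooth immersion \(\Psi\colon V_0\to V^\perp\) with \(\Psi(0)=0\). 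Shrinking \(V_0\) if necessary, there exists \(c_y>0\) such that \(\abs{\Psi(v)}\ge c_y\abs{v}\) for every \(v\in V_0\). Since \(\manfM\) is isometrically embedded in \(\R^\varkappa\), there exists \(C_y>0\) such that for every \(x\in U_y\) with coordinates \((u,v)\), the point \(x_0\in\manfM\cap P\) with coordinates \((u,0)\) satisfies \(\abs{x-x_0}\le C_y\abs{v}\). Combining these inequalities yields
\[
d(x,\manfM\cap P)\le\abs{x-x_0}\le C_y\abs{v}\le\frac{C_y}{c_y}\abs{\Psi(v)}=\frac{C_y}{c_y}d(x,P)
\qquad\text{for every }x\in U_y.
\]

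For the global step, note that \(\manfM\cap P\) is closed in the compact set \(\manfM\), hence compact, so it can be covered by finitely many neighborhoods \(U_{y_1},\dots,U_{y_N}\) of the above type. Let \(U\vcentcolon=U_{y_1}\cup\dots\cup U_{y_N}\); on \(U\) the inequality holds with constant \(C_1\vcentcolon=\max_i C_{y_i}/c_{y_i}\). On the compact set \(\manfM\setminus U\), the continuous function \(x\mapsto d(x,P)\) attains a positive minimum \(c_0>0\) (it is strictly positive on \(\manfM\setminus U\) since its zero set \(\manfM\cap P\) is contained in \(U\)), while \(d(x,\manfM\cap P)\le\Diam\manfM\); hence the inequality also holds there with constant \(C_2\vcentcolon=\Diam\manfM/c_0\). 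Taking \(C\vcentcolon=\max\{C_1,C_2\}\) concludes the proof.

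The only genuinely non-routine point is the local estimate, which relies entirely on extracting the \emph{rank} information (not just the tangent-space information) from the clean intersection hypothesis in order to apply the constant rank theorem; once this is done the rest is a standard compactness patching.
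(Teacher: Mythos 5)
Your reduction to \(g=\pi(\cdot-p_0)|_{\manfM}\) and the final compactness patching are fine, but the key local step contains a genuine gap: from the clean-intersection hypothesis you only know the rank of \(Dg\) \emph{at points of} \(\manfM\cap P\) (where \(\ker Dg(y)=\Tangent{y}\manfM\cap V=\Tangent{y}(\manfM\cap P)\)); this does not give "\(g\) has constant rank \(r\) on a neighborhood of \(\manfM\cap P\)". The rank of a smooth map is only lower semicontinuous, so it can jump \emph{up} arbitrarily close to the intersection, and clean intersection does nothing to prevent this. Concretely, take \(\varkappa=4\), \(P=V=\{x_3=x_4=0\}\), and \(\manfM\) locally parametrized by \((t,s)\mapsto(t,s,s,s^2t)\). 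Then \(\manfM\cap P=\{(t,0,0,0)\}\) and the intersection is clean (the tangent space of the curve equals \(\Tangent{x}\manfM\cap V\)), yet in these coordinates \(g(t,s)=(s,s^2t)\) has rank \(1\) on \(\{s=0\}\) and rank \(2\) for every \(s\neq 0\). In particular no normal form \(g(u,v)=\Psi(v)\) with \(\Psi\) an immersion of one variable can exist, since the generic fibers of \(g\) are points rather than curves; so the constant rank theorem is simply not applicable, and the local estimate as you derive it collapses. (Note that the inequality itself still holds in this example, with constant about \(\sqrt{2}\); it is the method, not the lemma, that fails.)

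For comparison, the paper avoids any normal form and argues by contradiction: assuming sequences \(x_j\in\manfM\), \(y_j\in P\), \(z_j\in\manfM\cap P\) realizing the two distances with \(\abs{x_j-z_j}>j\abs{x_j-y_j}\), it shows \(z_j\) also minimizes the distance from \(y_j\) to \(\manfM\cap P\), extracts limits by compactness of \(\manfM\), and obtains two unit vectors \(a\in\Tangent{z}\manfM\) and \(b\in\Tangent{z}P\), both orthogonal to \(\Tangent{z}(\manfM\cap P)\), which must be colinear if the ratio degenerates; the clean-intersection identity \(\Tangent{z}(\manfM\cap P)=\Tangent{z}\manfM\cap\Tangent{z}P\) forbids this colinearity and gives the contradiction. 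If you want to keep a direct local argument instead, you must exploit only the injectivity of \(\pi\) on a complement of \(\Tangent{y}\manfM\cap V\) inside \(\Tangent{y}\manfM\) together with a quantitative graph description of \(\manfM\) and of \(\manfM\cap P\) near \(y\), rather than a constant-rank normal form.
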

\begin{proof}[Proof of Lemma~\ref{fact_transversality_estimates}]
Let us assume by contradiction that there exist sequences \((x_{j})_{j \in \N}\subset \manfM\), \((y_{j})_{j \in \N} \subset P\) and 
\((z_{j})_{j \in \N} \subset \manfM \cap P\) such that 
\begin{equation}
    \label{eqStrong2-759}
    \abs{x_{j}-y_{j}}= d(x_{j}, P),
    \quad \abs{x_{j}-z_{j}}= d(x_{j}, \manfM\cap P)
\end{equation}
and
\begin{equation}\label{eq1748}
|x_{j}-z_{j}|
> j|x_{j}-y_{j}|.
\end{equation}
Since the above inequality is strict, one has \(x_{j}\neq z_{j}\), which by \eqref{eqStrong2-759} implies that \(x_{j}\not\in P\).
In particular, \(x_{j}\neq y_{j}\). 

Consider the conical region with the central axis orthogonal to \(P\) passing through \( x_{j} \) and \( y_{j} \), where \( x_j \) is the vertex of the cone and the surface of the cone passes through \( z_{j} \).
Since \( z_{j} \) is a closest point from \( \manfM \cap P \) to \( x_{j} \), there are no points of \( \manfM \cap P \) in the interior of the cone.
Hence, \( z_j \) also minimizes the distance between \( y_{j} \) and \( \manfM \cap P \),
\begin{equation*}
  |y_{j}-z_{j}|
  = d(y_{j}, \manfM\cap P).
\end{equation*}
Let \( T \) be the right triangle with vertices \(x_{j}\), \(y_{j}\) and  \(z_{j}\), where the right angle is located at \( y_{j} \).
Denoting by \( \theta \) the angle of \( T \) at \( z_{j} \), we have
\[
\sin{\theta}
= \frac{\abs{x_{j}-y_{j}}}{\abs{x_{j}-z_{j}}}
\quad \text{and} \quad
\cos{\theta}
= a_{j} \cdot b_{j}\,,
\]
where \(a_{j}\) and \(b_{j}\) are the unit vectors in \((\Tangent{z_{j}}(\manfM\cap P))^\perp\) defined by
\[
a_{j}=\frac{x_{j}-z_{j}}{\abs{x_{j}-z_{j}}}
\quad \text{and} \quad b_{j}=\frac{y_{j}-z_{j}}{\abs{y_{j}-z_{j}}}.
\]
In particular, we have
\begin{equation}\label{eq1756}
\frac{|x_{j}-y_{j}|}{|x_{j}-z_{j}|}
= \sqrt{\smash[b]{1- (a_{j} \cdot b_{j})^2}}\, .
\end{equation}
Since \(\manfM\) is compact, up to extraction of subsequences we may assume that  \((x_{j})_{j \in \N}\), \((y_{j})_{j \in \N}\) and 
\((z_{j})_{j \in \N}\) converge to some \(x\in \manfM\), \(y\in P\) and \(z\in \manfM\cap P\), and that \((a_{j})_{j \in \N}\) and \((b_{j})_{j \in \N}\) converge to unit vectors \(a, b\in (\Tangent{z}(\manfM\cap P))^\perp\), respectively. 
By \eqref{eq1748}, one has \(x=y\), which implies that \(x\in \manfM\cap P\), and thus by \eqref{eqStrong2-759} we get \(x=z\). 
Since  \(x_{j}, z_{j}\in \manfM\),  the vector \(a\) also belongs to \(\Tangent{z}\manfM\). 
Since \(P\) intersects \(\manfM\) cleanly, 
\[
\Tangent{z}(\manfM\cap P)
= \Tangent{z} \manfM \cap \Tangent{z} P.
\]
We conclude that \(a\in (\Tangent{z}\manfM \cap \Tangent{z}P)^\perp\cap \Tangent{z}\manfM\). 
Similarly, \(b\in (\Tangent{z}\manfM\cap \Tangent{z}P)^\perp\cap  \Tangent{z} P\).
It follows that \(a\) and \(b\) are not colinear.   
By \eqref{eq1748}, the left-hand side of \eqref{eq1756} converges to \(0\) while the right-hand side converges to \(\sqrt{1-(a \cdot b)^2}\), which is strictly positive since the two unit vectors \(a\) and \(b\) are not colinear. 
This yields the desired contradiction.
\end{proof}

\begin{proof}[Proof of Proposition~\ref{propositionClassRGeneric}]
    By assumption, \(u \) is smooth in \(\Omega\) outside a finite union \(T^{\varkappa - j}\) of \((\varkappa - j)\)-dimensional planes, whence \(\tau_\xi u\vert_{\manfM}\) is smooth outside \(\manfM \cap (T^{\varkappa - j} - \xi)\) for every \(\xi \in B_\delta^{\varkappa}\).
    When \(j = m + 1\), it follows from Lemma~\ref{fact_transversality_planes}  that \(\manfM \cap (T^{\varkappa - j} - \xi) = \emptyset\) for almost every \(\xi\in B^{\varkappa}_{\delta}\) and then \( \tau_{\xi}u\vert_{\manfM}\) is smooth.
    When \(j \le m\), Lemma~\ref{fact_transversality_planes} applied to each intersection of planes in \(T^{\varkappa - j}\) implies that, for almost every \(\xi \in B^{\varkappa}_{\delta}\), the set \(\manfM \cap (T^{\varkappa - j} - \xi)\) is a finite union of compact manifolds of dimension \(m - j\) that intersect cleanly. 
    By Lemma~\ref{fact_transversality_estimates}, the derivatives of \( \tau_{\xi}u\vert_{\manfM} \) satisfy estimate \eqref{eqRClassDerivativeEstimatePointwise} and then \(\tau_\xi u\vert_{\manfM}\) is an \(\ClassR_{m - j}\)~function in \(\manfM\) with a singular set \(\manfM \cap (T^{\varkappa - j} - \xi)\).
\end{proof}

\section{Density on manifolds}

We now conclude the proof of the direct implication of Theorem~\ref{thm_density_manifold_open} in the remaining case where \( \manfV = \manfM \) is a compact manifold of dimension \( m \) without boundary.

\begin{proof}[Proof of Theorem~\ref{thm_density_manifold_open} when \( \manfV = \manfM \) is a compact manifold]
Let \(\ell = \floor{kp}\) and let \(u \in \Sobolev^{k, p}(\manfM; \manfN)\) be an \((\ell, e)\)-extendable map for some \(e \in \{\ell, \dots, m\}\).
We take an open neighborhood \(\Omega \subset \R^\varkappa\) of \(\manfM\) with smooth boundary such that the nearest point projection \(\widetilde{\Pi} \colon  \overline{\Omega} \to \manfM\) is a smooth submersion. 
The map \(\widetilde{u} \vcentcolon= u\compose \widetilde{\Pi}\) belongs to \(\Sobolev^{k, p}(\Omega; \manfN)\) and, by Proposition~\ref{propositionExtensionProjection}, is \( (\ell, e) \)-extendable.

Let \(\delta > 0\) be such that \(\manfM+B_{\delta}^\varkappa\subset \Omega\).
By Theorem~\ref{thm_density_manifold_open} for open sets,  there exists a sequence \((u_j)_{j\in \N}\) in \( \ClassR_{\varkappa-e-1}(\Omega; \manfN)\) with
\[{}
u_{j} \to \widetilde{u} \quad \text{in \(\Sobolev^{k, p}(\Omega; \manfN)\).}
\]
It follows from Morrey's property for \(\Sobolev^{k, p}\)~maps, see Proposition~\ref{lemmaModulusSobolevsequenceHighkp}, that there exists a subsequence \((u_{j_{i}})_{i \in \N}\) such that, for almost every \(\xi \in B^{\varkappa}_{\delta}\), 
\begin{equation}
\label{eqDensityNew-922}
\tau_{\xi}u_{j_{i}}\vert_{\manfM} \to \tau_{\xi}\widetilde{u}\vert_{\manfM}
\quad \text{in \(\Sobolev^{k,p}(\manfM; \manfN)\).}
\end{equation}

Given \(i\in \N\), by Proposition~\ref{propositionClassRGeneric} for almost every \(\xi \in B^{\varkappa}_{\delta}\) we have 
\begin{equation}
\label{eqDensityNew-930}
\tau_{\xi}u_{j_{i}}\vert_{\manfM} \in \ClassR_{m-e-1}(\manfM; \manfN).
\end{equation}
Since \(\manfM\) is compact and \(\widetilde{\Pi}\) is the identity on \(\manfM\), the map \(\tau_{\xi}\widetilde{\Pi}\) is a smooth diffeomorphism from \(\manfM\) into \(\manfM\), for every \(\xi\) sufficiently small. 
We now take \(\xi\) where the latter holds, including \eqref{eqDensityNew-922} and \eqref{eqDensityNew-930} for every \(i \in \N\).
Then, the sequence \((\tau_{\xi}u_{j_{i}} \compose (\tau_{\xi}\widetilde{\Pi})^{-1})_{i\in \N}\) is contained in \(\ClassR_{m-e-1}(\manfM;\manfN)\) and converges with respect to the \( \Sobolev^{k, p} \)~distance to 
\[
\tau_{\xi}\widetilde{u} \compose (\tau_{\xi}\widetilde{\Pi})^{-1}
= u\compose \tau_{\xi} \widetilde{\Pi}\compose (\tau_{\xi}\widetilde{\Pi})^{-1}=u.
\]
This completes the proof of the theorem.
\end{proof}

Whether \(\manfV\) is a Lipschitz open set \(\Omega\) or a compact manifold \(\manfM\), we have the following identification of \(\Hilbert^{k, p} (\manfV; \manfN)\) that exploits the fact that only the product \(kp\) intervenes in the statement of Theorem~\ref{thm_density_manifold_open} and reformulates Corollary~\ref{corollaryHkpH1kp} when \(\manfV = \manfM\):

\begin{corollary}\label{corollary_h1kp_manifold}
If \( kp < m \), then
\[
(\Sobolev^{k, p} \cap \Hilbert^{1, \floor{kp}}) (\manfV; \manfN) 
= \Hilbert^{k, p} (\manfV; \manfN).
\]
\end{corollary}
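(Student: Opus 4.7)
The plan is to establish the two inclusions separately, using Theorem~\ref{thm_density_manifold_open} as the bridge between the analytic condition \(\Hilbert^{k,p}\) and the topological condition of \((\floor{kp},m)\)-extendability.

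For the direct inclusion \(\Hilbert^{k,p}(\manfV;\manfN) \subset \Hilbert^{1,\floor{kp}}(\manfV;\manfN)\), I would pick \(u \in \Hilbert^{k,p}(\manfV;\manfN)\) and a sequence \((u_j)_{j\in\N}\) in \((\Smooth^\infty \cap \Sobolev^{k,p})(\manfV;\manfN)\) converging to \(u\) in \(\Sobolev^{k,p}(\manfV;\R^\nu)\). Since \(\manfN\) is compact, the maps \(u_j\) and \(u\) are uniformly bounded, so the Gagliardo-Nirenberg interpolation inequality (used already in the proofs of Corollaries~\ref{corollary-strong-convergence-extendability} and~\ref{corollary-weak-convergence-extendability}) yields convergence in \(\Sobolev^{1,kp}(\manfV;\R^\nu)\). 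Since \(\floor{kp} \le kp\) and \(\manfV\) has finite measure, Hölder's inequality gives the continuous imbedding \(\Sobolev^{1,kp}(\manfV) \subset \Sobolev^{1,\floor{kp}}(\manfV)\), so \((u_j)_{j\in \N}\) converges to \(u\) in \(\Sobolev^{1,\floor{kp}}(\manfV;\R^\nu)\) as well, proving that \(u \in \Hilbert^{1,\floor{kp}}(\manfV;\manfN)\).

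For the reverse inclusion, I would start with \(u \in (\Sobolev^{k,p} \cap \Hilbert^{1,\floor{kp}})(\manfV;\manfN)\) and show that \(u\) is \((\floor{kp},m)\)-extendable, so that Theorem~\ref{thm_density_manifold_open} applied with \(e = m\) gives \(u \in \Hilbert^{k,p}(\manfV;\manfN)\). To obtain extendability, apply Corollary~\ref{corollary-strong-convergence-extendability} at order \(k=1\) and exponent \(\floor{kp}\): since \(u \in \Hilbert^{1,\floor{kp}}(\manfV;\manfN)\), \(u\) is \((\ell,e)\)-extendable for every \(\ell \le \min\{\floor{kp}, e\}\). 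Taking \(\ell = \floor{kp}\) and \(e = m\) (which is legitimate as \(kp < m\) guarantees \(\floor{kp} \le m\)) yields the desired \((\floor{kp},m)\)-extendability of \(u\).

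Neither direction requires any genuinely new construction; the whole content sits in Theorem~\ref{thm_density_manifold_open} and Corollary~\ref{corollary-strong-convergence-extendability}, which together decouple the problem into a purely topological statement about extendability. The only delicate point worth emphasizing in the write-up is that \(\floor{kp}\)-extendability here must be upgraded to full \((\floor{kp},m)\)-extendability to reach \(\Hilbert^{k,p}\) rather than just some intermediate class; this is precisely what Corollary~\ref{corollary-strong-convergence-extendability} provides, since its hypothesis is formulated in terms of \(\Hilbert^{1,\floor{kp}}\) with no further topological restriction on \(\manfM\) or \(\manfN\).
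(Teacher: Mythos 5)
Your proof is correct and follows essentially the same route as the paper: the easy inclusion \(\Hilbert^{k,p}(\manfV;\manfN) \subset \Hilbert^{1,\floor{kp}}(\manfV;\manfN)\) via Gagliardo--Nirenberg and Hölder, and the converse by deducing \((\floor{kp},m)\)-extendability of \(u\) from its membership in \(\Hilbert^{1,\floor{kp}}\) and then invoking the direct implication of Theorem~\ref{thm_density_manifold_open} with \(e=m\). The only cosmetic difference is that you derive the extendability from Corollary~\ref{corollary-strong-convergence-extendability}, while the paper cites the reverse implication of Theorem~\ref{thm_density_manifold_open} applied with \((1,\floor{kp})\) in place of \((k,p)\); for \(e=m\) these amount to the same fact.
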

\begin{proof}
It suffices to prove the inclusion ``\( \subset \)''.
To this end, let \(u\in (\Sobolev^{k, p} \cap \Hilbert^{1,\floor{kp}}) (\manfV; \manfN)\). 
By Theorem~\ref{thm_density_manifold_open}, since \( u \in \Hilbert^{1,\floor{kp}} (\manfV; \manfN)\),  we have that \(u\) is \((\floor{kp}, m)\)-extendable. 
The direct implication of the same theorem then implies that \(u\in \Hilbert^{k,p}(\manfV; \manfN)\).
\end{proof}

Note that \( kp < \floor{kp} + 1 \) and then \( p < \frac{\floor{kp} + 1}{k} \). 
Hence,
\[
\Sobolev^{k, \frac{\floor{kp} + 1}{k}}(\manfV; \manfN)
\subset \Sobolev^{k, p}(\manfV; \manfN).
\]
We deduce from Theorem~\ref{thm_density_manifold_open} a criterion of local approximation by smooth maps, which has been already observed in \cite{Isobe2006}*{Theorem~1.1} in the study of obstructions to approximations problems of order \(1\):

\begin{corollary}
\label{corollary_more_regular}
If \(kp < m\), then
\[
\Sobolev^{k, \frac{\floor{kp} + 1}{k}}(\Ball^m; \manfN)
\subset \Hilbert^{k, p}(\Ball^m; \manfN).
\]
\end{corollary}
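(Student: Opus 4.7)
The plan is to chain together three results that have already been proved: the inclusion $\Sobolev^{k,q} \subset \Sobolev^{k,p}$ for $p \le q$ on a bounded domain, the automatic $\floor{kp}$-extendability of slightly more regular Sobolev maps (Proposition~\ref{propositionLExtensionSobolev}), and the characterization of $\Hilbert^{k,p}(\Ball^m;\manfN)$ by $\floor{kp}$-extendability (Theorem~\ref{theoremhkpGreatBall}). There is no real obstacle — the statement is essentially a matching of indices.

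Concretely, set $\ell \vcentcolon= \floor{kp}$ and $q \vcentcolon= (\ell+1)/k$, so that $kq = \ell + 1$. Since $kp < \ell + 1 = kq$ one has $p \le q$, and because $\Ball^m$ has finite measure this gives the elementary inclusion
\[
\Sobolev^{k,q}(\Ball^m;\manfN) \subset \Sobolev^{k,p}(\Ball^m;\manfN).
\]
In particular, any $u$ in the left-hand space belongs to $\Sobolev^{k,p}(\Ball^m;\manfN)$, so the target space $\Hilbert^{k,p}(\Ball^m;\manfN)$ makes sense for it.

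Next, I would apply Proposition~\ref{propositionLExtensionSobolev} with Sobolev exponent $q$ (playing the role of $p$ in that statement) and with the integer parameter $\ell = \floor{kp}$. The hypothesis there is $kq \ge \ell + 1$, which holds with equality by construction. The proposition therefore yields that $u$ is $\ell$-extendable, i.e.\ $\floor{kp}$-extendable.

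Finally, since $kp < m$ by assumption, Theorem~\ref{theoremhkpGreatBall} asserts that a map in $\Sobolev^{k,p}(\Ball^m;\manfN)$ belongs to $\Hilbert^{k,p}(\Ball^m;\manfN)$ if and only if it is $\floor{kp}$-extendable. Combined with the previous step, this gives $u \in \Hilbert^{k,p}(\Ball^m;\manfN)$ and concludes the proof. The only subtlety is bookkeeping: $\floor{kp}$ must be the value used both as the integer $\ell$ in Proposition~\ref{propositionLExtensionSobolev} and as $\floor{kp}$ in Theorem~\ref{theoremhkpGreatBall}; this is consistent because $\floor{kq} = \ell+1$ whereas $\floor{kp} = \ell$, so one uses Proposition~\ref{propositionLExtensionSobolev} for the stronger integrability $q$ to conclude extendability at the level $\ell$ dictated by the weaker exponent $p$.
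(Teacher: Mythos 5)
Your proof is correct and follows essentially the same route as the paper: both obtain \(\floor{kp}\)-extendability from Proposition~\ref{propositionLExtensionSobolev} applied with the exponent \(q = (\floor{kp}+1)/k\), and then conclude via the characterization of \(\Hilbert^{k,p}(\Ball^m;\manfN)\) by \(\floor{kp}\)-extendability. The only cosmetic difference is that you invoke Theorem~\ref{theoremhkpGreatBall} directly, while the paper passes through Proposition~\ref{proposition_condition_domain} (topological triviality of \(\Ball^m\)) and Theorem~\ref{thm_density_manifold_open}, which is exactly how Theorem~\ref{theoremhkpGreatBall} is obtained.
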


\begin{proof}
Take \( u \in \Sobolev^{k, \frac{\floor{kp} + 1}{k}}(\Ball^m; \manfN) \).
By Proposition~\ref{propositionLExtensionSobolev}, \( u \) is  \(\floor{kp}\)-extendable and then, by Proposition~\ref{proposition_condition_domain}, also \((\floor{kp}, m)\)-extendable since \(\Ball^m\) is topologically trivial. 
The conclusion then follows from Theorem~\ref{thm_density_manifold_open} for open sets.
\end{proof}

We can derive some consequences from Theorem~\ref{thm_density_manifold_open} for the weak density problem.
We refer to Definition~\ref{definitionHkpWeak} for the notation \(\Hilbert^{k,p}\weak(\manfV;\manfN)\)  and the precise meaning of the weak convergence of a sequence in \(\Sobolev^{k,p}(\manfV;\manfN)\).
The following proposition contains \cite{Bethuel}*{Theorem~3} concerning the weak density problem of order \( 1 \).

\begin{corollary}
\label{corollaryWeak1}
If \( kp < m\) and  \( kp\not\in \N\), then 
\[
\Hilbert^{k,p}\weak(\manfV;\manfN)
=\Hilbert^{k,p}(\manfV;\manfN).
\]
\end{corollary}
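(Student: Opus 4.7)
The inclusion $\Hilbert^{k,p}(\manfV;\manfN) \subset \Hilbert^{k,p}\weak(\manfV;\manfN)$ is immediate from the definitions: any strongly approximating sequence in $\Smooth^\infty$ is in particular bounded in $\Sobolev^{k,p}$ and converges in $\Lebesgue^p$. The content of the statement is therefore the reverse inclusion, and the plan is to reduce it to Theorem~\ref{thm_density_manifold_open} via the extendability criteria already at our disposal.

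Let $u \in \Hilbert^{k,p}\weak(\manfV;\manfN)$. By Corollary~\ref{corollary-weak-convergence-extendability}, $u$ is $(\ell, e)$-extendable for every integers $\ell, e$ satisfying $\ell < \min\{kp, e\}$. This is precisely where the noninteger hypothesis enters: since $kp \notin \N$, one has $\floor{kp} < kp$, so we may choose $\ell = \floor{kp}$ and $e = m$, and the strict inequality $\floor{kp} < kp$ is satisfied. We then obtain that $u$ is $(\floor{kp}, m)$-extendable.

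Now we invoke the direct implication of Theorem~\ref{thm_density_manifold_open} with $e = m$. Under the convention $\ClassR_{-1}(\manfV;\manfN) = \Smooth^\infty(\overline{\manfV};\manfN)$ (with the understanding from Remark~\ref{rk-equivalence-of-definition} that, for Lipschitz open sets, this closure coincides with $\Hilbert^{k,p}$), the $(\floor{kp}, m)$-extendability of $u$ yields a sequence $(u_j)_{j \in \N}$ in $\Smooth^\infty(\overline{\manfV};\manfN)$ converging to $u$ in $\Sobolev^{k,p}(\manfV;\manfN)$. By the definition of $\Hilbert^{k,p}$, this is exactly the statement $u \in \Hilbert^{k,p}(\manfV;\manfN)$.

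There is no genuine obstacle here since the heavy lifting has already been carried out in Theorem~\ref{thm_density_manifold_open} and in Corollary~\ref{corollary-weak-convergence-extendability}; the role of the hypothesis $kp \notin \N$ is exactly to bridge the strict inequality in the weak extendability criterion and the extendability assumption required for strong approximation. One could also rephrase the argument via Corollary~\ref{corollary_h1kp_manifold}: weak density in $\Sobolev^{k,p}$ gives, by fractional interpolation with $s < 1$ close to $1$, strong convergence in some $\Sobolev^{s, kp}$ with $skp \geq \floor{kp}$, whence a $\VMO^{\floor{kp}}$ limit, and thus $u \in \Hilbert^{1, \floor{kp}}(\manfV;\manfN)$, which combined with $u \in \Sobolev^{k,p}$ and Corollary~\ref{corollary_h1kp_manifold} yields $u \in \Hilbert^{k,p}(\manfV;\manfN)$.
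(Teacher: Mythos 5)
Your main argument is correct and is essentially the paper's own proof: Corollary~\ref{corollary-weak-convergence-extendability} gives \((\ell,m)\)-extendability for every \(\ell<kp\), the hypothesis \(kp\notin\N\) makes the choice \(\ell=\floor{kp}\) admissible, and the direct implication of Theorem~\ref{thm_density_manifold_open} with \(e=m\) (so \(\ClassR_{-1}=\Smooth^\infty\)) concludes. The closing alternative via Corollary~\ref{corollary_h1kp_manifold} is fine in spirit, but note that passing from a \(\VMO^{\floor{kp}}\) limit of smooth maps to \(u\in\Hilbert^{1,\floor{kp}}(\manfV;\manfN)\) is not immediate: it requires Proposition~\ref{propositionExtensionVMOClosed} to get \((\floor{kp},m)\)-extendability and then Theorem~\ref{thm_density_manifold_open} again (with \(k=1\), \(p=\floor{kp}\)), so it is not genuinely shorter than the direct route.
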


\begin{proof}
    We observe that \(\Hilbert^{k,p}(\manfV;\manfN)\subset\Hilbert^{k,p}\weak(\manfV;\manfN)\).
Moreover, in view of Corollary~\ref{corollary-weak-convergence-extendability}, any map \(u\in \Hilbert\weak^{k,p}(\manfV;\manfN)\) is \((\ell, m)\)-extendable for every \(\ell< kp\). 
In particular, if \(kp\) is not an integer, the choice \(\ell=\floor{kp}\) is admissible. 
Together with Theorem~\ref{thm_density_manifold_open}, the conclusion follows.
\end{proof}

When \(kp\) is an integer, the situation is more subtle and additional assumptions are needed.

\begin{corollary}
\label{corollaryWeak2}
If \( kp < m\) is such that \( kp \in \N\) and \(\pi_{kp}(\manfN)\simeq \{0\}\), then 
\[
\Hilbert\weak^{k,p}(\manfV;\manfN) = \Hilbert^{k,p}(\manfV;\manfN).
\]
\end{corollary}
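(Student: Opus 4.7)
The plan is to exploit the chain of extendability properties already developed in the monograph, combined with Theorem~\ref{thm_density_manifold_open}. The inclusion $\Hilbert^{k,p}(\manfV;\manfN) \subset \Hilbert\weak^{k,p}(\manfV;\manfN)$ is immediate, since any sequence witnessing strong convergence is in particular bounded and $\Lebesgue^p$-convergent. The content of the statement is the reverse inclusion.

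Let $u \in \Hilbert\weak^{k,p}(\manfV;\manfN)$. By Corollary~\ref{corollary-weak-convergence-extendability}, $u$ is $(\ell, e)$-extendable for every $\ell < \min\{kp,e\}$. Taking $e = m$ and the largest admissible integer $\ell$, we get that $u$ is $(kp-1, m)$-extendable, since $kp \in \N$ forces the strict inequality $\ell < kp$ to mean $\ell \le kp-1$. This is exactly one level short of the assumption needed to invoke Theorem~\ref{thm_density_manifold_open} with $e=m$, which would give $u \in \Hilbert^{k,p}(\manfV;\manfN)$.

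The gap is bridged by the topological assumption on $\manfN$. First observe that since $\manfN$ is compact, $u$ is bounded, so by the Gagliardo--Nirenberg interpolation inequality $u \in \Sobolev^{1,kp}(\manfV;\manfN)$, and by the inclusion~\eqref{eqDetector-775} we have $u \in \VMO^{kp}(\manfV;\manfN) = \VMO^{\floor{kp}}(\manfV;\manfN)$. We now apply Proposition~\ref{proposition_Extension_homotop_group_decreasing} with $i = kp-1$, $\ell = kp = \floor{kp}$, and $e = m$: the hypothesis on $\manfN$ reduces to the single condition $\pi_{kp}(\manfN) \simeq \{0\}$, which is precisely what is assumed. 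The conclusion is that $u$ is $(\floor{kp}, m)$-extendable.

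With $(\floor{kp}, m)$-extendability in hand, the direct implication of Theorem~\ref{thm_density_manifold_open} (with $e = m$ and the convention $\ClassR_{-1} = \Smooth^\infty$) produces a sequence in $\Smooth^\infty(\overline{\manfV};\manfN)$ converging strongly to $u$ in $\Sobolev^{k,p}(\manfV;\manfN)$, which is exactly the statement $u \in \Hilbert^{k,p}(\manfV;\manfN)$. There is no serious obstacle: all the machinery has been set up. The only subtle point is recognizing that the strict inequality $\ell < kp$ in Corollary~\ref{corollary-weak-convergence-extendability} leaves a one-dimensional gap when $kp \in \N$, and that the hypothesis $\pi_{kp}(\manfN) \simeq \{0\}$ is tailor-made to close this gap via Proposition~\ref{proposition_Extension_homotop_group_decreasing}.
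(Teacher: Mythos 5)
Your proposal is correct and follows essentially the same route as the paper's proof: Corollary~\ref{corollary-weak-convergence-extendability} gives \((kp-1,m)\)-extendability, Proposition~\ref{proposition_Extension_homotop_group_decreasing} with \(i = kp-1\) and the hypothesis \(\pi_{kp}(\manfN)\simeq\{0\}\) upgrades this to \((kp,m)\)-extendability, and Theorem~\ref{thm_density_manifold_open} concludes. Your explicit check that \(u \in \VMO^{kp}(\manfV;\manfN)\) via Gagliardo--Nirenberg is a detail the paper leaves implicit, but otherwise the arguments coincide.
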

\begin{proof}
We deduce from Corollary~\ref{corollary-weak-convergence-extendability} that every \(u\in \Hilbert\weak^{k,p}(\manfV;\manfN)\) is \((kp-1, m)\)-extendable. 
By the assumption \(\pi_{kp}(\manfN)\simeq \{0\}\), Proposition~\ref{proposition_Extension_homotop_group_decreasing} applies with \(i=kp-1\) and \(e=m\).
It then follows that \(u\) is \((kp, m)\)-extendable. 
We can conclude using Theorem~\ref{thm_density_manifold_open}.
\end{proof}

As mentioned in the Introduction, equality \(\Hilbert\weak^{k,p}(\manfV;\manfN) = \Hilbert^{k,p}(\manfV;\manfN)\) is false in general when \(kp\) is an integer. 
A natural question concerns how to detect elements of \(\Hilbert\weak^{k,p}(\manfV;\manfN)\).
Given \(\ell\in \{0, \dots, m-1\}\) and \(u\in \ClassR_{\ell^{*}}(\manfM;\manfN)\), by \cite{Hang-Lin-III}*{Theorem~5.5} when \(\manfV = \manfM\) is a compact manifold we know that \(u\in \Hilbert^{1,\ell}\weak(\manfM;\manfN)\) if and only if \(u\) is \((\ell-1,m)\)-extendable.
It is plausible that a suitable adaptation of shrinking could provide a counterpart of this property in the \( \Hilbert^{k, p}\weak \)~setting when \( k \ge 2 \) and \(kp\) is an integer.

Theorem~\ref{thm_density_manifold_open} is closely related to the regularity of minimizers for variational problems that involve constraints.
The infimum of the energy functional 
\begin{equation}\label{min-harm-map-bis}
E_{1, p} \colon u \in \Sobolev^{1, p}(\manfV;\manfN) \longmapsto \int_{\manfV} |Du|^p\dif x
\end{equation}
is zero and attained by the constant maps. 
When \(\manfV = \manfM\) is a compact manifold with non-empty boundary, it is natural to impose boundary conditions: For every \(g\in \Sobolev^{1,p}(\manfM;\manfN)\), one can minimize \(E_{1,p}\) in the subset \(\Sobolev^{1,p}_g(\manfM;\manfN)\) of maps \(u\in \Sobolev^{1,p}(\manfM;\manfN)\) so that \(u = g\) in \(\partial\manfM\) in the sense of traces. 
The minimum in this case is a critical point of \(E_{1, p}\) that is not constant, provided that \(g\) itself is not a constant map.

The minimizers of the functional \(E_{1, p}\) in \eqref{min-harm-map-bis} with \(p = 2\) are called \emph{minimizing harmonic maps}.
They are known to be smooth outside a set of Hausdorff dimension at most \(m-3\). 
This regularity result has been extended to the case \(1 < p <\infty\) in \cite{HardtLin1987} in the sense that minimizers of \(E_{1, p}\) have a singular set of Hausdorff dimension at most \(m-\floor{p}-1\). 
Observe that this is the dimension of the singular sets of maps in \(\ClassR_{m-\floor{p}-1}(\manfM;\manfN)\), which is dense in \(\Sobolev^{1, p}(\manfM;\manfN)\) by Theorem~\ref{theoremIntroductionClassR}. 

At this point, another connection arises between the minimization of \(E_{1,p}\) and the density results in \(\Sobolev^{1,p}(\manfM;\manfN)\), the \emph{gap phenomenon}.  
Analogously to \(\Sobolev^{1,p}_g(\manfM;\manfN)\), we define the set \(\Hilbert^{1,p}_g(\manfM;\manfN)\) as the strong limit in \(\Sobolev^{1,p}(\manfM;\R^{\nu})\) of sequences \((u_j)_{j\in \N}\) in \(\Smooth^{\infty}(\manfM; \manfN)\)  such that \(u_j=g\) in \(\partial M\).
It is an open problem whether it is possible to identify the elements in \(\Hilbert^{1,p}_g(\manfM;\manfN)\) in terms of \((\floor{p}, m)\)-extendability when \(p < m\).

Even in the case \(\manfM=\cBall^3\), \(\manfN=\Sphere^2\) and \(p=2\), there exist maps \(g\in \Smooth^{\infty}(\Sphere^2;\Sphere^2)\) which are homotopic to a constant and the following gap holds, see \cite{Hardt-Lin-1986},
\begin{equation}\label{eq-gap}
\min{\bigl\{E_{1,2}(u) : u\in \Sobolev^{1,2}_g(\Ball^3;\Sphere^2)\bigr\}} 
< \inf{\bigl\{E_{1,2}(u) : u\in \Hilbert^{1,2}_g(\Ball^3;\Sphere^2) \bigr\}}.
\end{equation}
This strict inequality dramatically emphasizes the lack of density of smooth maps in the Sobolev space \(\Sobolev^{1,2}_g(\Ball^3;\Sphere^2)\). 
More generally, if \(p \le \min{\{m-1, n\}}\) is an integer and if there exists \(G\in \Smooth^{\infty}(\Sphere^{\floor{p}}; \manfN)\) such that, apart from the zero multiple, no integer multiple of the image of \(G\) is homologous to zero, then one can find \(g\in \Smooth^{\infty}(\cBall^m;\manfN)\) such that, see \cite{Giaquinta-Modica-Soucek-gap},
\begin{equation}\label{eq-gap2}
\min{\bigl\{E_{1,p}(u) : u\in \Sobolev^{1,p}_g(\Ball^m;\manfN)\bigr\}} < \inf{\bigl\{E_{1,p}(u) : u\in \Hilbert^{1,p}_g(\Ball^m;\manfN)\bigr\}}.
\end{equation}
Of course, this gap can only occur when \(\Hilbert^{1,p}_g(\Ball^m;\manfN) \neq \Sobolev^{1,p}_g(\Ball^m;\manfN)\), and we refer to \cites{Bethuel, Hang-Lin-IV} for specific conditions on \(\manfN\) and \(g\) implying that they coincide.
One of the most fascinating open problems in this field is to determine whether the infima on the right-hand sides of \eqref{eq-gap} or \eqref{eq-gap2} are indeed minima and whether minimizers are continuous, see \citelist{\cite{Brezis-2023}*{Section~4} \cite{Brezis-Mironescu}*{Chapter~13}}.

Gap phenomena have also been studied in the setting of compact manifolds \(\manfM\) without boundary, see Section~\ref{sectionHEP}. 
The substitute for the boundary condition is a constraint on the \(\ell\)-homotopy type of functions with \(\ell = \floor{p}-1\) or \(\ell = \floor{p}\) depending on whether the minimization is performed on \(\Sobolev^{1,p}(\manfM;\manfN)\) or \(\Hilbert^{1,p}(\manfM;\manfN)\), respectively. 
According to \cite{Lin-1999}, for every integer \(p\geq 2\), a minimizing sequence of \(E_{1,p}\) in the space \(\Smooth^{\infty}(\manfM;\manfN)\) with the same homotopy class as \(g\) contains a weakly converging subsequence whose weak limit \(u\) is \(\Smooth^{1, \alpha}\) away from a closed \((m-p)\)-dimensional set \(T\) and \(u\) has the same \((p-1)\)-homotopy type as \(g\). 
If, in addition, smooth maps are dense in \(\Sobolev^{1,p}(\manfM;\manfN)\), then \(u\) is a minimizer of \(E_{1,p}\) which has the same \(p\)-homotopy type as \(g\) and is \(\Smooth^{1,\alpha}\) away from an \((m-p-1)\)-dimensional set. 
This result has been partially extended to the setting of higher-order Sobolev spaces \(\Sobolev^{k,p}(\manfM;\manfN)\) when \(p=2\) and \(m=2k\), see \cite{He-Jiang-Lin}.


\backmatter

\cleardoublepage
\chapter*{Notation}

\subsection*{Basic Notation and General Conventions}
\begin{notationitemize}
    \item \(\N = \{0, 1, 2, \ldots\}\), \(\N_* = \{1, 2, \ldots\}\),
    \item \(\floor{p}\): largest integer less than or equal to \(p\),
    \item \(\ell^* = m - e - 1\): dual dimension in \(\R^m\) with respect to \(\ell\),
    \item \(\Ball^m\): unit open ball centered at \(0\) in \(\R^m\), and \(\cBall^m\): its closure,
    \item \(B_r^m\), \(B_r^m(a)\): open balls of radius \(r\) centered at \(0\) or \(a\) in \(\R^m\),
    \item \(B_r^m(A)\): open neighborhood of radius \(r > 0\) of a set \(A\),
    \item \(Q_r^m\), \(Q_r^m(a)\): open cubes of radius \(r\) (side length \(2r\)) centered at \(0\) or \(a\) in \(\R^m\),
    \item \(|a|\): Euclidean norm of \(a \in \R^m\); \(\abs{a}_{\infty} = \max{\{ \abs{a_{1}}, \ldots, \abs{a_{m}}\}}\): its sup norm,
    \item \((X, d, \mu)\) or \(X\): metric measure space with distance \(d\) and Borel measure \(\mu\) (in Chapters~\ref{chapter-3-Detectors} and~\ref{section_VMO}, \(X\) satisfies the doubling property, metric continuity and uniform nondegeneracy),
    \item \(\cH^\ell\): Hausdorff measure of dimension \(\ell\).
\end{notationitemize}

\subsection*{Manifolds and Domains}
\begin{notationitemize}
    \item \(\manfM\): compact \(m\)-dimensional manifold without boundary, imbedded in \(\R^\varkappa\),
    \item \(\manfN\): compact \(n\)-dimensional manifold without boundary, imbedded in \(\R^\nu\),
    \item \(\Omega\): Lipschitz open subset of \(\R^m\) (also bounded in Chapters~\ref{chapterIntroduction} and~\ref{chapter-approximation-Sobolev-manifolds}),
    \item \(\manfV\): denotes \(\Omega\) or \(\manfM\), as described above,
    \item \(\manfA\): Riemannian manifold \(=\) a Lipschitz open set or a compact manifold without boundary,
    \item \(\Tangent{x}\manfA\): tangent plane at \(x \in A\),
    \item \(\Int{F}\): set of interior points of \(F\).
\end{notationitemize}

\subsection*{Simplices, Complexes, and Cubications}
\begin{notationitemize}
    \item \(\Simplex^\ell\): simplex of dimension \(\ell\) (Definition~\ref{definitionSimplex}),
    \item \(\cK^\ell, \cL^\ell, \cE^\ell\): simplicial complexes of dimension \(\ell\) (Definition~\ref{definitionComplex}), and \(K^\ell\), \(L^\ell\), \(E^\ell\): their polytopes,
    \item \(\cS^m\): cubication in \(\R^m\) (Definition~\ref{defnCubication}),
    \item \(\cT^m\): triangulation of \(\manfM\) (Definition~\ref{defnTriangulation}) or dual skeleton of a cubication (Definition~\ref{definitionDualSkeleton}).
\end{notationitemize}

\subsection*{Functions, Maps and Perturbations}
\begin{notationitemize}
    \item \(\norm{u}_\infty = \sup{\abs{u}}\): sup norm of a bounded function \(u\),
    \item \(\abs{\gamma}_{\Lip}\): smallest Lipschitz constant of a Lipschitz function \(\gamma\),
    \item \(\Fuglede_w(X; \manfV)\): Fuglede maps associated to a summable function \(w\) (Definition~\ref{defnFuglede}),
    \item \(\tau \colon U \to \manfV\): transversal perturbation of the identity (Definition~\ref{definitionTransversalFamily}),
    \item \(\Pi\), \(\widetilde{\Pi}\): nearest point projections to \(\manfN\) and \(\manfM\), respectively.
\end{notationitemize}

\subsection*{Functions and Differentiation}
\begin{notationitemize}
  \item \(\Maximal f\): maximal function of \(f\) (Eq.~\eqref{eqMaximal}),
  \item \(\Jacobian{k}{\gamma}\): Jacobian of dimension \(k\) of a Lipschitz map \(\gamma\) (Eq.~\eqref{eqFugledeJacobian} or \eqref{eqLExtensionJacobian}, depending on the dimensions),
  \item \(\varphi_\psi * u\): adaptive convolution, defined on \(A_\psi\) (Eq.~\eqref{eqJEMS-AdaptiveConvolution} and~\eqref{eqJEMS-AdaptiveConvolutionDomain}).
\end{notationitemize}

\subsection*{Function Spaces}
\begin{notationitemize}
    \item \(\Sobolev^{s, p}(\manfV)\): fractional Sobolev functions of order \(0 < s < 1\) (Definition~\ref{definitionSobolevFractional}),
    \item \(\Sobolev^{k, p}(\manfV)\): Sobolev real functions of order \(k \in \N_*\) (Eq.~\eqref{eq-Introduction-distance}),
    \item \(\Sobolev^{k, p}(\manfV; \manfN)\): Sobolev maps of order \(k \in \N_*\) with values in \(\manfN\),
    \item \(\Hilbert^{k, p}(\manfV; \manfN)\): strong closure of \((\Smooth^\infty \cap \Sobolev^{k, p})(\manfV; \manfN)\) with respect to the \(\Sobolev^{k, p}\)~distance (Definition~\ref{definitionHkp}), and \(\Hilbert\weak^{k, p}(\manfV; \manfN)\): its weak closure (Definition~\ref{definitionHkpWeak}),
    \item \(\ClassR_{i}(\manfM; \manfN)\): maps  with values in \(\manfN\) that are smooth except on a structured singular set of rank \(i\) (Definition~\ref{definitionExtension}),
    \item \(\VMO(X)\): functions with vanishing mean oscillation (p.~\pageref{eqDetector-VMO}),
    \item \(\VMO^\ell(\manfV)\): functions that are generically \(\VMO\) and generically stable under \(\VMO\) convergence on \(\ell\)-dimensional polytopes (Definition~\ref{definitionVMOell}),
    \item \(\VMO^\#(K^\ell | L^r)\) and \(\VMO^\#(\cK^\ell)\): functions with vanishing crossed oscillations (p.~\pageref{eqExtensionVMOSharp} and Eq.~\eqref{eqDecoupleVMOSharp}).
\end{notationitemize}

\subsection*{Oscillations and \(\VMO\)-norms}
\begin{notationitemize}
    \item \(\seminorm{v}_{\rho}\): mean oscillation at scale \(\rho\) (Eq.~\eqref{eqDetector-33}),
    \item \(\seminorm{v}_{\VMO(X)}\) and \(\norm{v}_{\VMO(X)}\): \(\VMO\)~seminorm and norm (Eq.~\eqref{eqDetectorSeminorm} and \eqref{eqDetector-VMO}),
    \item \([v]_{L^{r_{1}}, L^{r_{2}}, \rho}\): crossed mean oscillation  at scale \(\rho\) (Eq.~\eqref{eqExtension-452}),
    \item \(\seminorm{v}_{\VMO^\#(K^{\ell}|L^{r})}\) and \(\norm{v}_{\VMO^\#(K^{\ell}|L^{r})}\): seminorm and norm in \(\VMO^\#(K^{\ell}|L^{r})\) (Eq.~\eqref{eqExtension-592} and \eqref{eqExtension-586}),
    \item \(\norm{v}_{\VMO^{\#}(\cK^{\ell})} \): norm in \(\VMO^{\#}(\cK^{\ell})\) (Eq.~\eqref{eqDecoupleVMOSharpNorm}).
\end{notationitemize}

\subsection*{Convergences and Homotopies}
\begin{notationitemize}
    \item convergence in \(\Fuglede_w(X; \manfV)\) (Definition~\ref{definitionFugledeConvergence}),
    \item convergence in \(\VMO^\ell\) (Definition~\ref{def_conv_VMOl}),
    \item homotopy in \(\VMO\) (Definition~\ref{defnVMOHomotopy}),
    \item homotopy in \(\VMO^\ell\) (Definition~\ref{definitionVMOEllHomotopy}).
\end{notationitemize}

\subsection*{Jacobians, Currents, and Topological Invariants}
\begin{notationitemize}
    \item \(\varpi\): closed differential form in \(\Smooth^{\infty}(\manfN; \Forms^\ell)\),
    \item \(\hur_\varpi(v)\): Hurewicz degree of a smooth or \(\VMO\) map \(v \colon \Sphere^{\ell} \to \manfN\) (Definition~\ref{definitionHurewiczDegree} and Proposition~\ref{propositionHurewiczVMO}),
    \item \(\Hur_{\varpi}(u)\): Hurewicz current of \(u \in \Sobolev^{1, \ell}(\manfV; \manfN)\) (Definition~\ref{defnJacobianHurewicz}),
    \item \(\omega_{\Sphere^\ell}\): volume form on \(\Sphere^\ell\) such that \(\int_{\Sphere^\ell} \omega_{\Sphere^\ell} = 1\),    
    \item \(\jac{u} = \Hur_{\omega_{\Sphere^n}}(u)\): distributional Jacobian of  \(u \in \Sobolev^{1, n}(\manfV; \Sphere^n)\) (Eq.~\eqref{eq-def-jacobian}),
    \item \(\hopf{v}\): Hopf degree of a smooth or \(\VMO\) map \(v \colon \Sphere^{2n-1} \to \Sphere^{n} \) (Eq.~\eqref{eqJacobian-1500} and Proposition~\ref{propositionSmoothHopfInvariant-VMO}),
    \item \(\Hopf{u}\): Hopf current of \(u \in \Sobolev^{1, p} (\manfV; \Sphere^n)\) (Definition~\ref{definitionHopfCurrent}).
\end{notationitemize}

\subsection*{Sobolev approximation}
\begin{notationitemize}
    \item \(\cS^m\): cubication in \(\R^m\) (Definition~\ref{defnCubication}),
    \item \(S^j\): union of all elements in \(\cS^j\) (Definition~\ref{defnCubicationUnion}),
    \item \(\cT^{\ell^*}\): dual skeleton of \(\cS^{\ell}\) (Definition~\ref{definitionDualSkeleton}),
    \item \(\cE^m\): subskeleton of bad cubes in \(\cS^m\) (p.~\pageref{eqJEMSBadBall}),
    \item \(\cU^m \supset \cE^m\): subskeleton of cubes in \(\cS^m\) that intersects a bad cube
    (p.~\pageref{eqJEMSUglyBall}),
    \item \(\cZ^{\ell^*}\): dual skeleton of \(\cU^{\ell}\) (p.~\pageref{eqJEMSUglyCubesDualSkeleton}),
    \item \(u_{\eta}^{\mathrm{op}} = u \compose \Phi^{\mathrm{op}}\): opening of \(u\) is a neighborhood of \(U^\ell\) (Eq.~\eqref{eqJEMS-uop}),
    \item \(u_{\eta}^{\mathrm{fop}} = u \compose \Phi^{\mathrm{fop}}\): opening of \(u\) is a neighborhood of \(S^\ell\) (Eq.~\eqref{eqJEMS-ufop}),
    \item \(u_{\eta}^\mathrm{sm} = \varphi_{\psi} * u_{\eta}^{\mathrm{op}}\): adaptive smoothing of \(u_{\eta}^{\mathrm{op}}\)  (Eq.~\eqref{eqJEMS-usm}),
    \item \(u_{\eta}^{\mathrm{th}} = u_{\eta}^{\mathrm{sm}} \compose \Phi^{\mathrm{th}}\): thickening of \(u_{\eta}^{\mathrm{sm}}\) with respect to \(T^{\ell^*} \cap U^m\) (Eq.~\eqref{eqJEMS-uth}),
    \item \(u_{\eta}^\mathrm{fsm} = \varphi_{\psi} * u_{\eta}^{\mathrm{fop}}\): adaptive smoothing of \(u_{\eta}^{\mathrm{fop}}\)  (Eq.~\eqref{eqJEMS-ufsm}),
    \item \(u_{\eta, \mu}^{\mathrm{ex}}\): smooth extension of \(u_{\eta}^{\mathrm{th}}\) to \(S^{m} \setminus T^{e^{*}}\) (Eq.~\eqref{eqJEMS-uex}),
    \item \(u_{\eta, \tau, \mu}^{\mathrm{sh}} = u_{\eta, \mu}^{\mathrm{ex}} \compose \Phi^{\mathrm{sh}}\): shrinking of \(u_{\eta, \mu}^{\mathrm{ex}}\) in a neighborhood of \(T^{\ell^*}\) (Eq.~\eqref{eqJEMS-ush}).   
\end{notationitemize}


\begin{bibdiv}
\begin{biblist}

\bib{Alberti-Baldo-Orlandi}{article}{
   author={Alberti, G.},
   author={Baldo, S.},
   author={Orlandi, G.},
   title={Functions with prescribed singularities},
   journal={J. Eur. Math. Soc. (JEMS)},
   volume={5},
   date={2003},
   pages={275--311},
}

\bib{AngelsbergPumberger2009}{article}{
  author={Angelsberg, Gilles},
  author={Pumberger, David},
  title={A regularity result for polyharmonic maps with higher integrability},
  journal={Ann. Global Anal. Geom.},
  volume={35},
  date={2009},
  pages={63--81},
}

\bib{Ball}{article}{
   author={Ball, John M.},
   title={Convexity conditions and existence theorems in nonlinear
   elasticity},
   journal={Arch. Rational Mech. Anal.},
   volume={63},
   date={1976/77},
   number={4},
   pages={337--403},
}

\bib{BallZarnescu2011}{article}{
   author={Ball, John M.},
   author={Zarnescu, Arghir},
   title={Orientability and energy minimization in liquid crystal models},
   journal={Arch. Ration. Mech. Anal.},
   volume={202},
   date={2011},
   number={2},
   pages={493--535},
   issn={0003-9527},
}

\bib{Bernard_Remacle_Kowalski_Geuzaine}{article}{
   author = {Bernard, P.-E.},
   author = {Remacle, J.-F.},
   author=  {Kowalski, N.},
   author = {Geuzaine, C.},
   title = {Hex-dominant meshing approach based on frame field smoothness},
   journal = {Procedia Engineering},
   volume = {82},
   pages = {175--186}, 
   year = {2014},
}

\bib{BethuelH1}{article}{
     author={Bethuel, Fabrice},
   title={A characterization of maps in $H^1(B^3,S^2)$ which can be
   approximated by smooth maps},
   journal={Ann. Inst. H. Poincar\'{e} C Anal. Non Lin\'{e}aire},
   volume={7},
   date={1990},
   pages={269--286},
}

\bib{Bethuel}{article}{
   author={Bethuel, Fabrice},
   title={The approximation problem for Sobolev maps between two manifolds},
   journal={Acta Math.},
   volume={167},
   date={1991},
   pages={153--206},
}

\bib{Bethuel-2020}{article}{
   author={Bethuel, Fabrice},
   title={A counterexample to the weak density of smooth maps between manifolds in Sobolev spaces},
   journal={Invent. Math.},
   volume={219},
   date={2020},
   pages={507--651},
}

\bib{BethuelCoronDemengelHelein}{article}{
   author={Bethuel, F.},
   author={Coron, J.-M.},
   author={Demengel, F.},
   author={H\'elein, F.},
   title={A cohomological criterion for density of smooth maps in Sobolev
   spaces between two manifolds},
   conference={
      title={Nematics},
      address={Orsay},
      date={1990},
   },
   book={
      series={NATO Adv. Sci. Inst. Ser. C Math. Phys. Sci.},
      volume={332},
      publisher={Kluwer Acad. Publ.},
      address={Dordrecht},
   },
   date={1991},
   pages={15--23},
}

\bib{Bethuel-Zheng}{article}{
   author={Bethuel, Fabrice},
   author={Zheng, Xiao Min},
   title={Density of smooth functions between two manifolds in Sobolev
   spaces},
   journal={J. Funct. Anal.},
   volume={80},
   date={1988},
   pages={60--75},
}

\bib{BottTu1982}{book}{
   author={Bott, Raoul},
   author={Tu, Loring W.},
   title={Differential forms in algebraic topology},
   series={Graduate Texts in Mathematics},
   volume={82},
   publisher={Springer-Verlag},
   address={New York},
   date={1982},
}

\bib{Bourgain-Brezis-Mironescu-2005}{article}{
   author={Bourgain, Jean},
   author={Brezis, Ha\"{\i}m},
   author={Mironescu, Petru},
   title={Lifting, degree, and distributional Jacobian revisited},
   journal={Comm. Pure Appl. Math.},
   volume={58},
   date={2005},
   pages={529--551},
}

\bib{Bousquet-2007}{article}{
   author={Bousquet, Pierre},
   title={Topological singularities in $W^{s,p}(S^N,S^1)$},
   journal={J. Anal. Math.},
   volume={102},
   date={2007},
   pages={311--346},
}

\bib{Bousquet-Mironescu}{article}{
   author={Bousquet, Pierre},
   author={Mironescu, Petru},
   title={Prescribing the Jacobian in critical spaces},
   journal={J. Anal. Math.},
   volume={122},
   date={2014},
   pages={317--373},
}

\bib{BPVS:2013}{article}{
   author={Bousquet, Pierre},
   author={Ponce, Augusto C.},
   author={Van Schaftingen, Jean},
   title={Density of smooth maps for fractional Sobolev spaces $W^{s,p}$ into $\ell$ simply connected manifolds when $s\geqslant1$},
   journal={Confluentes Math.},
   volume={5},
   date={2013},
   pages={3--22},
}

\bib{BPVS_MO}{article}{
   author={Bousquet, Pierre},
   author={Ponce, Augusto C.},
   author={Van Schaftingen, Jean},
  title={Strong density for higher order Sobolev spaces into compact manifolds},
  journal={J. Eur. Math. Soc. (JEMS)},
  volume={17},
  date={2015}, 
  pages={763--817},
}

\bib{BPVS:2017}{article}{
   author={Bousquet, Pierre},
   author={Ponce, Augusto C.},
   author={Van Schaftingen, Jean},
   title={Density of bounded maps in Sobolev spaces into complete manifolds},
   journal={Ann. Mat. Pura Appl. (4)},
   volume={196},
   date={2017},
   number={6},
   pages={2261--2301},
}

\bib{Branding-Montaldo-Oniciuc-Ratto-2020}{article}{
   author={Branding, V.},
   author={Montaldo, S.},
   author={Oniciuc, C.},
   author={Ratto, A.},
   title={Higher order energy functionals},
   journal={Adv. Math.},
   volume={370},
   date={2020},
   pages={107236},
}

\bib{Brezis1991}{article}{
   author={Brezis, Ha{\"{\i}}m},
   title={Relaxed energies for harmonic maps and liquid crystals},
   journal={Ricerche Mat.},
   volume={40},
   date={1991},
   pages={163--173},
   issn={0035-5038},
}

\bib{Brezis2003}{article}{
   author={Brezis, Ha{\"{\i}}m},
   title={The interplay between analysis and topology in some nonlinear PDE problems},
   journal={Bull. Amer. Math. Soc. (N.S.)},
   volume={40},
   date={2003},
   number={2},
   pages={179--201},
   issn={0273-0979},
}
                
\bib{Brezis}{book}{
   author={Brezis, Ha{\"{\i}}m},
   title={Functional analysis, Sobolev spaces and partial differential
   equations},
   series={Universitext},
   publisher={Springer},
   address={New York},
   date={2011},
}

\bib{Brezis-2023}{article}{
   author={Brezis, Ha{\"{\i}}m},
   title={Some of my favorite open problems},
   journal={Atti Accad. Naz. Lincei Rend. Lincei Mat. Appl.},
   volume={34},
   date={2023},
   number={2},
   pages={307--335},
}

\bib{Brezis-Coron-Lieb}{article}{
   author={Brezis, Ha\"{\i}m},
   author={Coron, Jean-Michel},
   author={Lieb, Elliott H.},
   title={Harmonic maps with defects},
   journal={Comm. Math. Phys.},
   volume={107},
   date={1986},
   number={4},
   pages={649--705},
}

\bib{Brezis-Li}{article}{
   author={Brezis, Haïm},
   author={Li, Yanyan},
   title={Topology and Sobolev spaces},
   journal={J. Funct. Anal.},
   volume={183},
   date={2001},
   pages={321--369},
}

\bib{Brezis-Mironescu}{book}{
   author={Brezis, Haïm},
   author={Mironescu, Petru},
   title={Sobolev maps to the circle: From the perspective of analysis, geometry, and topology},
   publisher={Birkhäuser/Springer},
   date={2021},
   pages={561},
}

\bib{BrezisNirenberg1995}{article}{
   author={Brezis, H.},
   author={Nirenberg, L.},
   title={Degree theory and BMO. I. Compact manifolds without boundaries},
   journal={Selecta Math. (N.S.)},
   volume={1},
   date={1995},
   pages={197--263},
}

\bib{Canevari-Orlandi}{article}{
   author={Canevari, Giacomo},
   author={Orlandi, Giandomenico},
   title={Topological singular set of vector-valued maps, I: applications to
   manifold-constrained Sobolev and BV spaces},
   journal={Calc. Var. Partial Differential Equations},
   volume={58},
   date={2019},
    number={2},
   pages={Paper No. 72, 40},
}

\bib{Canevari-Orlandi-2}{article}{
   author={Canevari, Giacomo},
   author={Orlandi, Giandomenico},
   title={Topological singular set of vector-valued maps, II:
   $\Gamma$-convergence for Ginzburg-Landau type functionals},
   journal={Arch. Ration. Mech. Anal.},
   volume={241},
   date={2021},
   number={2},
   pages={1065--1135},
   issn={0003-9527},
   review={\MR{4275752}},
   doi={10.1007/s00205-021-01671-2},
}

\bib{ChangWangYang1999}{article}{
  author={Chang, Sun-Yung A.},
  author={Wang, Lihe},
  author={Yang, Paul C.},
  title={A regularity theory of biharmonic maps},
  journal={Comm. Pure Appl. Math.},
  volume={52},
  date={1999},
  pages={1113--1137},
}

\bib{Coronetal1991}{collection}{
    title={Nematics},
    subtitle={Mathematical and physical aspects},
    series={NATO Advanced Science Institutes Series C: Mathematical and
    Physical Sciences},
    volume={332},
    conference={
        title={NATO Advanced Research Workshop on Defects, Singularities and Patterns in Nematic Liquid Crystals},
        address={Universit\'e de Paris XI, Orsay}, 
    date={May 28--June 1, 1990},},
    editor={Coron, Jean-Michel},
    editor={Ghidaglia, Jean-Michel},
    editor={H{\'e}lein, Fr{\'e}d{\'e}ric},
    publisher={Kluwer}, 
    address={Dordrecht},
    date={1991},
    pages={xiv+428},
    isbn={0-7923-1113-2},
}

\bib{Demengel}{article}{
   author={Demengel, Fran{\c{c}}oise},
   title={Une caract\'erisation des applications de $W^{1,p}(\Ball^N,\Sphere^1)$ qui peuvent \^etre approch\'ees par des fonctions r\'eguli\`eres},
   journal={C. R. Acad. Sci. Paris S\'er. I Math.},
   volume={310},
   date={1990},
   pages={553--557},
}

\bib{DeRham}{book}{
   author={de Rham, Georges},
   title={Differentiable manifolds. Forms, currents, harmonic forms},
   series={Grundlehren der Mathematischen Wissenschaften},
   volume={266},
   publisher={Springer-Verlag},
   address={Berlin},
   date={1984},
   pages={x+167},
}

\bib{Detaille}{article}{
author={Detaille, Antoine},
title={A complete answer to the strong density problem in Sobolev spaces with values into compact manifolds}, 
note={arXiv:2305.12589},
}

\bib{DetailleClassR}{article}{
author={Detaille, Antoine},
title={An improved dense class in Sobolev spaces to manifolds}, 
note={arXiv:2402.17373},
}

\bib{DetailleMironescuXiao}{article}{
author={Detaille, Antoine},
author={Mironescu, Petru},
author={Xiao, Kai},
title={Pullback of closed forms by low regularity maps to manifolds, and applications},
note={In preparation},
}

\bib{DetailleVanSchaftingen}{article}{
author={Detaille, Antoine},
author={Van Schaftingen, Jean},
title={Analytical obstructions to the weak approximation of Sobolev mappings into manifolds}, 
note={arXiv:2412.12889},
}

\bib{EellsFuglede2001}{book}{
   author={Eells, J.},
   author={Fuglede, B.},
   title={Harmonic maps between Riemannian polyhedra},
   series={Cambridge Tracts in Mathematics},
   volume={142},
   publisher={Cambridge Univ. Press}, 
   address={Cambridge},
   date={2001},
}

\bib{EellsLemaire1995}{collection}{
   author={Eells, James},
   author={Lemaire, Luc},
   title={Two reports on harmonic maps},
   publisher={World Scientific Publishing}, 
   address={River Edge, NJ},
   date={1995},
   pages={xii+216},
   isbn={981-02-1466-9},
}

\bib{EellsSampson1964}{article}{
   author={Eells, James},
   author={Sampson, J. H.},
   title={\'Energie et d\'eformations en g\'eom\'etrie diff\'erentielle},
   journal={Ann. Inst. Fourier (Grenoble)},
   volume={14},
   date={1964},
   number={1},
   pages={61--69},
   issn={0373-0956},
}

\bib{EellsSampson1966}{article}{
   author={Eells, James},
   author={Sampson, J. H.},
    title={Variational theory in fibre bundles},
   conference={
      title={Proc. U.S.-Japan Seminar in Differential Geometry},
      address={Kyoto},
      date={1965},
   },
   book={
      publisher={Nippon Hyoronsha, Tokyo},
   },
   date={1966},
   pages={22--33},
}

\bib{Evans_Gariepy}{book}{
      author={Evans, Lawrence~C.},
      author={Gariepy, Ronald~F.},
       title={Measure theory and fine properties of functions},
      series={Textbooks in Mathematics},
   publisher={CRC Press},
   address={Boca Raton, FL},
        date={2015},
}
 
\bib{Federer}{book}{
   author={Federer, Herbert},
   title={Geometric measure theory},
   series={Die Grundlehren der mathematischen Wissenschaften, Band 153},
   publisher={Springer-Verlag},
   address={New York},
   date={1969},
}

\bib{Federer-Fleming}{article}{
  author={Federer, Herbert},
  author={Fleming, Wendell H.},
  title={Normal and integral currents},
  journal={Ann. of Math. (2)},
  volume={72},
  date={1960},
  pages={458--520},
}

\bib{Fuglede}{article}{
   author={Fuglede, Bent},
   title={Extremal length and functional completion},
   journal={Acta Math.},
   volume={98},
   date={1957},
   pages={171--219},
}

\bib{Gastel-2006}{article}{
   author={Gastel, Andreas},
   title={The extrinsic polyharmonic map heat flow in the critical
   dimension},
   journal={Adv. Geom.},
   volume={6},
   date={2006},
   number={4},
   pages={501--521},
}

\bib{Gastel-2016}{article}{
   author={Gastel, Andreas},
   title={Partial regularity of polyharmonic maps to targets of sufficiently
   simple topology},
   journal={Z. Anal. Anwend.},
   volume={35},
   date={2016},
   number={4},
   pages={397--410},
}

\bib{GastelScheven2009}{article}{
  author={Gastel, Andreas},
  author={Scheven, Christoph},
  title={Regularity of polyharmonic maps in the critical dimension},
  journal={Comm. Anal. Geom.},
  volume={17},
  date={2009},
  pages={185--226},
}

\bib{Giaquinta-Martinazzi}{book}{
   author={Giaquinta, Mariano},
   author={Martinazzi, Luca},
   title={An introduction to the regularity theory for elliptic systems,
   harmonic maps and minimal graphs},
   series={Appunti. Scuola Normale Superiore di Pisa (Nuova Serie)},
   volume={11},
   edition={2},
   publisher={Edizioni della Normale},
   address={Pisa},
   date={2012},
   pages={xiv+366},
}

\bib{Giaquinta-Modica-Soucek-gap}{article}{
   author={Giaquinta, Mariano},
   author={Modica, Giuseppe},
   author={Sou\v{c}ek, Ji\v{r}\'{\i}},
   title={The gap phenomenon for variational integrals in Sobolev spaces},
   journal={Proc. Roy. Soc. Edinburgh Sect. A},
   volume={120},
   date={1992},
   number={1-2},
   pages={93--98},
}
	
\bib{Giaquinta-Modica-Soucek-I}{book}{
   author={Giaquinta, Mariano},
   author={Modica, Giuseppe},
   author={Sou\v{c}ek, Ji\v{r}\'{\i}},
   title={Cartesian currents in the calculus of variations. I. Cartesian currents},
   series={Ergebnisse der Mathematik und ihrer Grenzgebiete},
   volume={37},
   publisher={Springer-Verlag},
   address={Berlin},
   date={1998},
}

\bib{Giaquinta-Modica-Soucek-II}{book}{
   author={Giaquinta, Mariano},
   author={Modica, Giuseppe},
   author={Sou\v{c}ek, Ji\v{r}\'{\i}},
   title={Cartesian currents in the calculus of variations. II. Variational integrals},
   series={Ergebnisse der Mathematik und ihrer Grenzgebiete},
   volume={38},
   publisher={Springer-Verlag},
   address={Berlin},
   date={1998},
}

\bib{GiaquintaMucci2006}{book}{
  author={Giaquinta, Mariano},
  author={Mucci, Domenico},
  title={Maps into manifolds and currents: area and $W^{1,2}$-, $W^{1/2}$-, BV-energies},
  series={Centro di Ricerca Matematica Ennio De Giorgi (CRM) Series},
  volume={3},
  publisher={Edizioni della Normale},
  address={Pisa},
  date={2006},
}

\bib{Giusti}{book}{
   author={Giusti, Enrico},
   title={Direct methods in the calculus of variations},
   publisher={World Scientific Publishing},
   address={River Edge, NJ},
   date={2003},
   pages={viii+403},
}

\bib{GoldsteinStrzeleckiZatorska2009}{article}{
  author={Goldstein, Pawe{\l }},
  author={Strzelecki, Pawe{\l }},
  author={Zatorska-Goldstein, Anna},
  title={On polyharmonic maps into spheres in the critical dimension},
  journal={Ann. Inst. H. Poincar\'e Anal. Non Lin\'eaire},
  volume={26},
  date={2009},
  pages={1387--1405},
}

\bib{Goldshtein-Troyanov-2006}
{article}{
   author={Gol'dshtein, Vladimir},
   author={Troyanov, Marc},
   title={Sobolev inequalities for differential forms and
   $L_{q,p}$-cohomology},
   journal={J. Geom. Anal.},
   volume={16},
   date={2006},
   pages={597--631},
}

\bib{GongLammWang2012}{article}{
  author={Gong, Huajun},
  author={Lamm, Tobias},
  author={Wang, Changyou},
  title={Boundary partial regularity for a class of biharmonic maps},
  journal={Calc. Var. Partial Differential Equations},
  volume={45},
  date={2012},
  pages={165--191},
}
 
\bib{Hadwin_Yousefi_2008}{article}{
   author={Hadwin, Don},
   author={Yousefi, Hassan},
   title={A general view of BMO and VMO},
   conference={
      title={Banach spaces of analytic functions},
   },
   book={
      series={Contemp. Math.},
      volume={454},
      publisher={Amer. Math. Soc.}, 
      address={Providence, RI},
   },
   date={2008},
   pages={75--91},
}

\bib{Hajlasz}{article}{
   author={Haj\l asz, Piotr},
   title={Approximation of Sobolev mappings},
   journal={Nonlinear Anal.},
   volume={22},
   date={1994},
   pages={1579--1591},
}

\bib{Hang-W11}{article}{
   author={Hang, Fengbo},
   title={Density problems for $W^{1,1}(M,N)$},
   journal={Comm. Pure Appl. Math.},
   volume={55},
   date={2002},
   pages={937--947},
}

\bib{Hang-Lin}{article}{
   author={Hang, Fengbo},
   author={Lin, Fang-Hua},
   title={Topology of Sobolev mappings, II},
   journal={Acta Math.},
   volume={191},
   date={2003},
   pages={55--107},
}

\bib{Hang-Lin-III}{article}{
   author={Hang, Fengbo},
   author={Lin, Fang-Hua},
   title={Topology of Sobolev mappings, III},
   journal={Comm. Pure Appl. Math.},
   volume={56},
   date={2003},
   pages={1383--1415},
}

\bib{Hang-Lin-IV}{article}{
   author={Hang, Fengbo},
   author={Lin, Fang-Hua},
   title={Topology of Sobolev mappings. IV},
   journal={Discrete Contin. Dyn. Syst.},
   volume={13},
   date={2005},
   pages={1097--1124},
}

\bib{Hardt-Kinderlehrer-Lin}{article}{
  author={Hardt, R.},
  author={Kinderlehrer, D.},
  author={Lin, Fang-Hua},
  title={Stable defects of minimizers of constrained variational principles},
  journal={Ann. Inst. H. Poincar\'e Anal. Non Lin\'eaire},
  volume={5},
  date={1988},
  pages={297--322},
}

\bib{Hardt-Lin-1986}{article}{
   author={Hardt, Robert},
   author={Lin, Fang-Hua},
   title={A remark on $H^1$ mappings},
   journal={Manuscripta Math.},
   volume={56},
   date={1986},
   number={1},
   pages={1--10},
}

\bib{HardtLin1987}{article}{
   author={Hardt, Robert},
   author={Lin, Fang-Hua},
   title={Mappings minimizing the $L^p$ norm of the gradient},
   journal={Comm. Pure Appl. Math.},
   volume={40},
   date={1987},
   number={5},
   pages={555--588},
   issn={0010-3640},
}

\bib{Hardt-Riviere-2008}{article}{
   author={Hardt, Robert},
   author={Rivière, Tristan},
   title={Connecting rational homotopy type singularities},
   journal={Acta Math.},
   volume={200},
   date={2008},
   number={1},
   pages={15--83},
}

\bib{Hatcher_2002}{book}{
   author={Hatcher, Allen},
   title={Algebraic topology},
   publisher={Cambridge University Press, Cambridge},
   date={2002},
   pages={xii+544},
}

\bib{He-Jiang-Lin}{article}{
   author={He, Weiyong},
   author={Jiang, Ruiqi},
   author={Lin, Longzhi},
   title={Existence of extrinsic polyharmonic maps in critical dimensions},
   journal={J. Funct. Anal.},
   volume={285},
   date={2023},
   pages={Paper No. 110020, 34},
}

\bib{Hebey}{book}{
   author={Hebey, Emmanuel},
   title={Sobolev spaces on Riemannian manifolds},
   series={Lecture Notes in Mathematics},
   volume={1635},
   publisher={Springer-Verlag, Berlin},
   date={1996},
   pages={x+116},
}

\bib{Helein2008}{article}{
   author={H{\'e}lein, Fr{\'e}d{\'e}ric},
   author={Wood, John C.},
   title={Harmonic maps},
   book={
   title={Handbook of global analysis},
      publisher={Elsevier}, 
      address={Amsterdam},
   },
   date={2008},
   pages={417--491},
}

\bib{Herr-Lamm-Schmid-Schnaubelt-2020}{article}{
   author={Herr, Sebastian},
   author={Lamm, Tobias},
   author={Schmid, Tobias},
   author={Schnaubelt, Roland},
   title={Biharmonic wave maps: local wellposedness in high regularity},
   journal={Nonlinearity},
   volume={33},
   date={2020},
   number={5},
   pages={2270--2305},
}

\bib{Herr-Lamm-Schmid-Schnaubelt-2020-1}{article}{
   author={Herr, Sebastian},
   author={Lamm, Tobias},
   author={Schnaubelt, Roland},
   title={Biharmonic wave maps into spheres},
   journal={Proc. Amer. Math. Soc.},
   volume={148},
   date={2020},
   number={2},
   pages={787--796},
}

\bib{HoffmannTang2001}{book}{
   author={Hoffmann, K.-H.},
   author={Tang, Q.},
   title={Ginzburg-Landau phase transition theory and superconductivity},
   series={International Series of Numerical Mathematics},
   volume={134},
   publisher={Birkh\"auser},
   address={Basel},
   date={2001},
   pages={xii+384},
   isbn={3-7643-6486-6},
}

\bib{Hopf1931}{article}{
   author={Hopf, Heinz},
   title={\"Uber die Abbildungen der dreidimensionalen Sph\"are auf die
   Kugelfl\"ache},
   journal={Math. Ann.},
   volume={104},
   date={1931},
   pages={637--665},
}

\bib{HuangTongWeiBao2011}{article}{
    author={Huang, Jin},
    author={Tong, Yiying},
    author={Wei, Hongyu},
    author={Bao, Hujun},
    title={Boundary aligned smooth 3D cross-frame field},
    journal={ACM Trans. Graph.},
    volume={30},
    number={6}, 
    date={2011},
    pages={1--8},
}

\bib{HubertSchaefer1998}{book}{
    author={Hubert, A.}, 
    author={Schäfer, R.},
    title={Magnetic domains},
    subtitle={The analysis of magnetic microstructures},
    publisher={Springer}, 
    address={Berlin},
    date={1998},
}

\bib{Isobe1995}{article}{
   author={Isobe, Takeshi},
   title={Characterization of the strong closure of \(C^\infty(\Ball^4; \Sphere^2)\) in
   \(W^{1,p}(\Ball^4; \Sphere^2)\) \((\frac{16}5\leq p<4)\)},
   journal={J. Math. Anal. Appl.},
   volume={190},
   date={1995},
   pages={361--372},
}

\bib{Isobe2006}{article}{
   author={Isobe, Takeshi},
   title={On global singularities of Sobolev mappings},
   journal={Math. Z.},
   volume={252},
   date={2006},
   pages={691--730},
}

\bib{IwaniecScottStroffolini}{article}{
   author={Iwaniec, Tadeusz},
   author={Scott, Chad},
   author={Stroffolini, Bianca},
   title={Nonlinear Hodge theory on manifolds with boundary},
   journal={Ann. Mat. Pura Appl. (4)},
   volume={177},
   date={1999},
   pages={37--115},
  author={Hubert, A.}, 
  author={Schäfer, R.},
  title={Magnetic domains},
  subtitle={The analysis of magnetic microstructures},
  publisher={Springer}, 
  address={Berlin},
  date={1998},
}

\bib{Jost2011}{book}{
   author={Jost, J{\"u}rgen},
   title={Riemannian geometry and geometric analysis},
   series={Universitext},
   edition={6},
   publisher={Springer, Heidelberg},
   date={2011},
   pages={xiv+611},
   isbn={978-3-642-21297-0},
}

\bib{Lamm-2004}{article}{
   author={Lamm, Tobias},
   title={Heat flow for extrinsic biharmonic maps with small initial energy},
   journal={Ann. Global Anal. Geom.},
   volume={26},
   date={2004},
   number={4},
   pages={369--384},
}

\bib{LammWang2009}{article}{
  author={Lamm, Tobias},
  author={Wang, Changyou},
  title={Boundary regularity for polyharmonic maps in the critical dimension},
  journal={Adv. Calc. Var.},
  volume={2},
  date={2009},
  pages={1--16},
}

\bib{Li-Liu-Yang-Yu-Wang-Guo}{article}{
   author = {Li, Yufei},
   author = {Liu, Yang},
   author = {Xu, Weiwei},
   author = {Wang, Wenping},
   author = {Guo, Baining},
   title = {All-hex meshing using singularity-restricted field},
   year = {2012},
   volume = {31},
   number = {6},
   journal = {ACM Trans. Graph.},
   pages = {article 177},
}

\bib{Lieb1993}{article}{
   author={Lieb, Elliott H.},
   title={Remarks on the Skyrme model},
   conference={
      title={Differential geometry: geometry in mathematical physics and
      related topics},
      address={Los Angeles, CA},
      date={1990},
   },
   book={
      series={Proc. Sympos. Pure Math.},
      volume={54},
      publisher={Amer. Math. Soc., Providence, RI},
   },
   date={1993},
   pages={379--384},
}

\bib{Lin-1999}{article}{
   author={Lin, Fanghua},
   title={Mapping problems, fundamental groups and defect measures},
   journal={Acta Math. Sin. (Engl. Ser.)},
   volume={15},
   date={1999},
   pages={25--52},
}

\bib{LinWang2008}{book}{
  author={Lin, Fanghua},
  author={Wang, Changyou},
  title={The analysis of harmonic maps and their heat flows},
  publisher={World Scientific Publishing},
  address={Hackensack, NJ},
  date={2008},
}

\bib{Mermin1979}{article}{
   author={Mermin, N. D.},
   title={The topological theory of defects in ordered media},
   journal={Rev. Modern Phys.},
   volume={51},
   date={1979},
   number={3},
   pages={591--648},
}

\bib{Montaldo-Oniciuc-2006}{article}{
   author={Montaldo, S.},
   author={Oniciuc, C.},
   title={A short survey on biharmonic maps between Riemannian manifolds},
   journal={Rev. Un. Mat. Argentina},
   volume={47},
   date={2006},
   pages={1--22},
}

\bib{Morrey}{book}{
   author={Morrey, Charles B., Jr.},
   title={Multiple integrals in the calculus of variations},
   publisher={Springer-Verlag, Berlin},
   date={2008},
   pages={x+506},
}

\bib{Moser2005}{book}{
  author={Moser, Roger},
  title={Partial regularity for harmonic maps and related problems},
  publisher={World Scientific Publishing},
  address={Hackensack, NJ},
  date={2005},
  pages={viii+184},
  isbn={981-256-085-8},
}

\bib{Moser2008}{article}{
  author={Moser, Roger},
  title={A variational problem pertaining to biharmonic maps},
  journal={Comm. Partial Differential Equations},
  volume={33},
  date={2008},
  pages={1654--1689},
}

\bib{Mucci2012}{article}{
   author={Mucci, Domenico},
   title={Maps into projective spaces: liquid crystal and conformal
   energies},
   journal={Discrete Contin. Dyn. Syst. Ser. B},
   volume={17},
   date={2012},
   number={2},
   pages={597--635},
   issn={1531-3492},
}

\bib{Munkres}{book}{
   author={Munkres, James R.},
   title={Elementary differential topology},
   publisher={Princeton University Press},
   address={Princeton, NJ},
   date={1966},
   pages={xi+112},
}

\bib{Neff2004}{article}{
   author={Neff, P.},
   title={A geometrically exact Cosserat shell-model including size effects, avoiding degeneracy in the thin shell limit},
   part={I},
   subtitle={Formal dimensional reduction for elastic plates and existence of minimizers for positive Cosserat couple modulus},
   journal={Contin. Mech. Thermodyn.},
   volume={16},
   date={2004},
   number={6},
   pages={577--628},
   issn={0935-1175},
}

\bib{Pakzad-Riviere}{article}{
   author={Pakzad, M. R.},
   author={Rivi\`ere, T.},
   title={Weak density of smooth maps for the Dirichlet energy between
   manifolds},
   journal={Geom. Funct. Anal.},
   volume={13},
   date={2003},
   pages={223--257},
}

\bib{Ponce-VanSchaftingenW1p}{article}{
   author={Ponce, Augusto C.},
   author={Van Schaftingen, Jean},
   title={Closure of smooth maps in $W^{1,p}(B^3;S^2)$},
   journal={Differential Integral Equations},
   volume={22},
   date={2009},
   pages={881--900},
}

\bib{Rubinstein-Sternberg}{article}{
   author={Rubinstein, Jacob},
   author={Sternberg, Peter},
   title={Homotopy classification of minimizers of the Ginzburg-Landau
   energy and the existence of permanent currents},
   journal={Comm. Math. Phys.},
   volume={179},
   date={1996},
   pages={257--263},
}

\bib{Sarason_1975}{article}{
   author={Sarason, Donald},
   title={Functions of vanishing mean oscillation},
   journal={Trans. Amer. Math. Soc.},
   volume={207},
   date={1975},
   pages={391--405},
}

\bib{Scheven2008}{article}{
  author={Scheven, Christoph},
  title={Dimension reduction for the singular set of biharmonic maps},
  journal={Adv. Calc. Var.},
  volume={1},
  date={2008},
  pages={53--91},
}

\bib{Schoen-Uhlenbeck}{article}{
   author={Schoen, Richard},
   author={Uhlenbeck, Karen},
   title={Boundary regularity and the Dirichlet problem for harmonic maps},
   journal={J. Differential Geom.},
   volume={18},
   date={1983},
   pages={253--268},
}

\bib{Scott1995}{article}{
   author={Scott, Chad},
   title={$L^p$ theory of differential forms on manifolds},
   journal={Trans. Amer. Math. Soc.},
   volume={347},
   date={1995},
   pages={2075--2096},
}

\bib{Serre1951}{article}{
   author={Serre, Jean-Pierre},
   title={Homologie singuli\`ere des espaces fibr\'es. Applications},
   journal={Ann. of Math. (2)},
   volume={54},
   date={1951},
   pages={425--505},
}

\bib{Simon1996}{book}{
  author={Simon, Leon},
  title={Theorems on regularity and singularity of energy minimizing maps},
  series={Lectures in Mathematics ETH Z\"urich},
  publisher={Birkh\"auser Verlag},
  address={Basel},
  date={1996},
  pages={viii+152},
  isbn={3-7643-5397-X},
}
\bib{Stein1970}{book}{
   author={Stein, Elias M.},
   title={Singular integrals and differentiability properties of functions},
   series={Princeton Mathematical Series, No. 30},
   publisher={Princeton University Press, Princeton, NJ},
   date={1970},
   pages={xiv+290},
   review={\MR{290095}},
}

\bib{Struwe2008}{article}{
  author={Struwe, Michael},
  title={Partial regularity for biharmonic maps, revisited},
  journal={Calc. Var. Partial Differential Equations},
  volume={33},
  date={2008},
  pages={249--262},
}

\bib{Urakawa2011}{article}{
  author={Urakawa, H.},
  title={The geometry of biharmonic maps},
  conference={ title={Harmonic maps and differential geometry}, },
  book={ series={Contemp. Math.}, volume={542}, publisher={Amer. Math. Soc.}, address={Providence, RI}, },
  date={2011},
  pages={159--175},
}

\bib{White-1986}{article}{
   author={White, Brian},
   title={Infima of energy functionals in homotopy classes of mappings},
   journal={J. Differential Geom.},
   volume={23},
   date={1986},
   pages={127--142},
}

\bib{White}{article}{
   author={White, Brian},
   title={Homotopy classes in Sobolev spaces and the existence of energy
   minimizing maps},
   journal={Acta Math.},
   volume={160},
   date={1988},
   pages={1--17},
}

\bib{Whitehead1947}{article}{
   author={Whitehead, J. H. C.},
   title={An expression of Hopf's invariant as an integral},
   journal={Proc. Nat. Acad. Sci. U.S.A.},
   volume={33},
   date={1947},
   pages={117--123},
}

\end{biblist}

\end{bibdiv}

\cleardoublepage


\end{document}